\documentclass[11pt,a4paper]{amsart}
\pdfoutput=1
\usepackage{amssymb}
\usepackage{amsmath}
\usepackage{mathtools}
\usepackage{amsthm}
\usepackage{amsfonts}
\usepackage{bbm} 
\usepackage{dutchcal} 
\usepackage{tikz-cd} 



\usepackage{todonotes} 
\usepackage{graphicx} 
\usepackage[toc,page]{appendix} 
\usepackage{kpfonts} 
\usepackage[mathscr]{euscript} 
\usepackage[margin=2.5cm]{geometry} 
\usepackage{enumerate} 
\usepackage{comment} 
\usepackage{subfiles} 

\usepackage{url}
\urlstyle{same}

\usepackage[style = alphabetic,
            backref = true,
            backrefstyle = two]{biblatex}
\addbibresource{references.bib}

\renewcommand\mathfrak[1]{\mbox{\usefont{U}{euf}{m}{n}#1}}

\definecolor{winered}{rgb}{0.5,0,0}
\usepackage[citecolor = winered, 
            urlcolor = winered, 
            linkcolor = winered,
            menucolor = winered, 
            colorlinks = true]{hyperref}

\DefineBibliographyStrings{english}{%
  backrefpage = {p.}, 
  backrefpages = {pp.}, 
}

\usepackage[foot]{amsaddr}

\newtheoremstyle{theoremdd}
{\topsep}{\topsep}{\upshape}{0pt}{\bfseries}{.}{ }{\thmname{#1}\thmnumber{ #2}\thmnote{ (#3)}}

\theoremstyle{definition}
\newtheorem{Th}{Theorem}[section]
\newtheorem{Lemma}[Th]{Lemma}
\newtheorem{Cor}[Th]{Corollary}
\newtheorem{Prop}[Th]{Proposition}
\newtheorem{Def}[Th]{Definition}

\newtheorem{Rem}[Th]{Remark}
\newtheorem{Ex}[Th]{Example}

\newtheorem{Not}[Th]{Notation}


\newcommand{\C}{\mathbb{C}}
\newcommand{\R}{\mathbb{R}}
\newcommand{\Z}{\mathbb{Z}}

\newcommand{\N}{\mathbb{N}}
\newcommand{\ncat}{\mathbf} 
\newcommand{\cat}{\mathcal} 
\newcommand{\cons}{\text} 
\renewcommand{\u}{\underline}
\newcommand{\colim}{\text{colim}}
\newcommand{\ncolim}[1]{\underset{#1}{\colim} \,}
\newcommand{\coeq}{\text{coeq}}
\newcommand{\Pre}{\ncat{Pre}}
\newcommand{\Set}{\ncat{Set}}
\newcommand{\Sh}{\ncat{Sh}}
\newcommand{\Obj}{\text{Obj} \, }
\newcommand{\Mor}{\text{Mor} \, }
\newcommand{\op}{\text{op}} 
\newcommand{\Match}{\text{Match}}
\newcommand{\Fam}{\text{Fam}}
\newcommand{\im}{\text{im}}
\newcommand{\coim}{\text{coim}}
\newcommand{\reg}{\text{reg}}
\newcommand{\Sub}{\cons{Sub}}

\newcommand{\sat}[1]{\text{sat}(#1)}

\newcommand{\Gro}[1]{\text{Gro}(#1)}

\newcommand{\Cone}{\text{Cone}}
\newcommand{\fin}{\text{fin}}
\newcommand{\Fun}{\cons{Fun}}
\newcommand{\fp}{\text{fp}}
\newcommand{\Gir}{\text{Gir}}
\newcommand{\open}{\text{open}}
\newcommand{\Man}{\ncat{Man}}

\makeatletter
\newtheorem*{rep@theorem}{\rep@title}
\newcommand{\newreptheorem}[2]{%
\newenvironment{rep#1}[1]{%
 \def\rep@title{#2 \ref{##1}}%
 \begin{rep@theorem}}%
 {\end{rep@theorem}}}
\makeatother

\newreptheorem{theorem}{Theorem}

\newreptheorem{lemma}{Lemma}

\newreptheorem{cor}{Corollary}
\newreptheorem{prop}{Proposition}

\usetikzlibrary{calc}
\usetikzlibrary{decorations.pathmorphing}

\tikzset{curve/.style={settings={#1},to path={(\tikztostart)
    .. controls ($(\tikztostart)!\pv{pos}!(\tikztotarget)!\pv{height}!270:(\tikztotarget)$)
    and ($(\tikztostart)!1-\pv{pos}!(\tikztotarget)!\pv{height}!270:(\tikztotarget)$)
    .. (\tikztotarget)\tikztonodes}},
    settings/.code={\tikzset{quiver/.cd,#1}
        \def\pv##1{\pgfkeysvalueof{/tikz/quiver/##1}}},
    quiver/.cd,pos/.initial=0.35,height/.initial=0}

\tikzset{tail reversed/.code={\pgfsetarrowsstart{tikzcd to}}}
\tikzset{2tail/.code={\pgfsetarrowsstart{Implies[reversed]}}}
\tikzset{2tail reversed/.code={\pgfsetarrowsstart{Implies}}}

\definecolor{emilioeditcolor}{rgb}{0.94, 0.97, 1.0}

\title{Coverages and Grothendieck Toposes}
\author{Emilio Minichiello*}
\address{*CUNY CityTech}
\email{eminichiello67@gmail.com}

\begin{document}

\begin{abstract}
These notes detail the basics of the theory of Grothendieck toposes from the viewpoint of coverages. Typically one defines a site as a (small) category equipped with a Grothendieck topology. However, it is often desirable to generate a Grothendieck topology from a smaller structure, such as a Grothendieck pretopology, but these require some pullbacks to exist in your underlying category. There is an even more light-weight structure one can generate a Grothendieck topology from called a coverage. Coverages don't require any limits or colimits to exist in the underlying category.

We prove in detail several results about coverages, such as closing coverages under refinement and composition, to be what we call a saturated coverage, which doesn't change its category of sheaves. We show that Grothendieck topologies are in bijection with saturated coverages. We give an explicit description of the saturated coverage and the Grothendieck topology generated from a coverage. 

We furthermore give a readable account of some of the most important parts of Grothendieck topos theory, with an emphasis placed on coverages. These include constructing sheafification using the plus construction and also in ``one go,'' the equivalence between left exact localizations of presheaf toposes and saturated coverages, morphisms of sites using the fully general notion of covering flatness, points of a Grothendieck topos and Giraud's theorem. We show that Giraud's theorem is equivalent to Rezk's notion of weak descent. Also included is a section devoted to many examples of sites and Grothendieck toposes appearing in the literature, and appendices covering set theory and category theory background, localization and locally presentable categories.
\end{abstract}
\maketitle

\setcounter{tocdepth}{1} 
\tableofcontents

\newpage
    
\section{Introduction}

The theory of sheaves has become a widely-used and powerful tool in modern mathematics. Sheaves were first developed by Jean Leray in a prison camp during World War II, see the wonderful article \cite{miller2000leray} for more history of the origins of sheaf theory. Their original area of application was algebraic topology, but before long the influence of sheaf theory spread to other areas, especially algebraic geometry. Grothendieck took the theory of sheaves to new heights, inventing the concept of what is now known as a Grothendieck topology. This notion was then central to the definition of \'{e}tale cohomology in algebraic geometry, which was itself key to proving the Weil conjectures.

Today, sheaf theory has expanded horizontally, in the sense that it is being applied to ever more areas of mathematics and science \cite{rosiak2022sheaf, kearney2020sheafnetwork, hansen2020opiniondynamicsdiscoursesheaves, curry2014sheaves}, and vertically, in the sense of new developments in the theory of sheaves of spaces, i.e. higher topos theory \cite{lurie2009higher}.

There are already the wonderful books \cite{johnstone2002sketches, maclane2012sheaves, goldblatt2014topoi} amongst many other resources for the subject of topos theory. So I must justify why I wrote these notes. As a PhD student, I studied diffeological spaces, which are a certain generalization of smooth manifolds. It turns out that the category of diffeological spaces is equivalent to the category of concrete sheaves on the site of open subsets of cartesian spaces (see Example \ref{ex diffeological spaces}). 

I wanted to study these diffeological spaces using the modern machinery of higher topos theory as in \cite{schreiber2013dcct}. However, there was a certain technical difficulty that presented itself. There are many different sites that one can use that give equivalent categories of diffeological spaces (see Example \ref{ex diffeological spaces}), but only one site (Example \ref{ex j good coverage}) had a particular property\footnote{Briefly, many more simplicial presheaves tend to be $\infty$-stacks on $(\ncat{Cart}, j_{\text{good}})$ than on $(\ncat{Man}, j_{\text{emb}})$, see \cite[Remark 3.2.2]{fiorenza2011cechcocyclesdifferentialcharacteristic}, and also the former site has projective cofibrant \v{C}ech nerves for its covering families. This is a very important technical convenience for working with the projective model structure on simplicial presheaves. Furthermore the remarkable results \cite[Lemma 3.3.29, Proposition 3.3.30]{sati2022equivariant}, \cite[Proposition 4.13]{pavlov2022numerable} hold for the former site.} that was necessary for the homotopical theory of higher sheaves. Most of the time in differential geometry, one studies a Grothendieck pretopology (Definition \ref{def grothendieck pretopology}), from which it is easy to generate the resulting Grothendieck topology. But the site I mentioned above, that I needed for homotopical reasons, was not a Grothendieck pretopology. Instead it was merely a coverage. This made me realize that I really needed to better understand the theory of coverages. However, nearly all of the theory of coverages I could find in the literature was covered in three references \cite{johnstone2002sketches, shulman2012exact, low2016categories}.

In order to help myself understand this theory enough to write my PhD thesis, (which I was able to achieve, it consisted of the two papers \cite{Minichiello2024bundles, minichiello2024derham}) I decided to write these notes. It helped me immensely, and I hope it can help you as well. My justification for writing these notes consists of three points: 
\begin{enumerate}
    \item most of the references on topos theory are very long, and as a graduate student I found it quite difficult to navigate these massive texts to find the most important points. These notes are meant to be short, and to be readable by motivated graduate students. Hence I have included as many proofs as I could without extending the text massively, and included as many details in the proofs as I could within reason.
    \item most of the references on topos theory focus extensively on Grothendieck topologies. This is reasonable, as any coverage can be completed to a unique Grothendieck topology. However, in the context of differential geometry, I really needed to be able to manipulate the covering families directly. Doing this with Grothendieck topologies can be a bit cumbersome. I felt that a detailed discussion with careful proofs of closing coverages under desirable properties was missing from the literature.
    \item many of the most impressive applications of topos theory are to logic and algebraic geometry. I felt it was time to make a text that avoided these well-known examples and focus more on lesser-known areas of application, such as in differential geometry. I included an extensive section of examples (Section \ref{section some sites}) that I hope will be useful and interesting.
\end{enumerate}

For these notes I decided to focus particularly on material and subjects that I had the hardest time pinning down. You will also see this reflected in the appendix material, which consists of three subjects that I found vitally important but also somewhat more difficult to get a handle on. My hope is that these notes will serve as a useful base of material for a young researcher or graduate student to refer back to, and pave the way for learning about homotopical sheaf theory.

Now let us briefly detail the different sections of these notes. In Section \ref{section sites and sheaves} we introduce the main objects of study: coverages, presheaves and sheaves. In Section \ref{section coverage closures} we study how to close arbitrary coverages under two closure operations: refinement closure and composition closure. We prove that closing a coverage under these operations does not change its sheaves. In Section \ref{section saturated and grothendieck coverages} we discuss saturated coverages, which are precisely those coverages that are both refinement closed and composition closed. We show that saturated coverages are in bijection with Grothendieck topologies. In Section \ref{section sheafification and lex localizations} we prove the Little Giraud Theorem (Theorem \ref{th lex localizations <-> grothendieck toposes}), that left exact localizations of presheaf toposes are equivalent to Grothendieck toposes. We also give a detailed construction of sheafification using the plus construction applied twice, and then also explain another construction which ``sheafifies in one go.'' In Section \ref{section some sites} we provide a long list of examples of sites, Grothendieck toposes and categories of concrete sheaves that show up in the literature. We also discuss several ways to deal with the underlying site being a large category. In Section \ref{section morphisms of sites}, we discuss morphisms between sites, using the most general possible notion of covering flatness and prove the Comparison Lemma (Theorem \ref{th comparison lemma}). In Section \ref{section points of a site} we apply the theory of morphisms of sites to discuss points of sites and corresponding points of Grothendieck toposes. In Section \ref{section girauds theorem} we prove Giraud's Theorem (Theorem \ref{th Giraud's Theorem}), which characterizes the conditions under which a category is a Grothendieck topos, and we show that this is equivalent to the notion of weak descent. We give a little taste of how this generalizes to the case of $\infty$-toposes. Finally we include three appendices: Appendix \ref{section background} covers some background on set theory, category theory and presheaf toposes, Appendix \ref{section localizations} covers the theory of localizations of categories, and Appendix \ref{section locally presentable categories} covers the theory of locally presentable categories.

\subsection{What's been left out}

The following topics have been left out of these notes. 
\begin{enumerate}
\item There is little to no mention of algebraic geometry in these notes. The reasoning for this is due to my own ignorance on the subject, and because the wonderful resource the Stacks Project \cite{stacksproject} already exists. We highly recommend the reader go there to learn more. 
\item We do not cover \v{C}ech cohomology or sheaf cohomology of sites, you can find that in \cite[\href{https://stacks.math.columbia.edu/tag/01FQ}{Tag 01FQ}]{stacksproject}.
\item We do not cover categorical logic or the use of Lawvere-Tierney topologies in these notes. Both of these are important parts of topos theory, and are covered very well in the following references \cite{maclane2012sheaves, goldblatt2014topoi, johnstone2002sketches, lurie2018categoricallogic, awodey2009introduction}.
\end{enumerate}

\subsection{What the reader should know}

While we collect some helpful prerequisite information in the appendices, in order to keep these notes to a reasonable size, we had to omit certain topics from the appendices and make them assumed knowledge of the reader. Here we list those topics and references for them.
\begin{itemize}
    \item Basic category theory: (co)limits, adjunctions, natural transformations, etc. \cite{riehl2017category},
    \item Coend Calculus \cite{loregian2021co},
    \item Tensoring and Powering \cite[Section 3.7]{riehl2014categorical},
    \item the pasting lemma for pullbacks and pushouts \cite{bauer2012pastinglaw},
    \item Cartesian Closure and Local Cartesian Closure \cite{huang2022lcc}.
\end{itemize}

\subsection{Conventions and Notation}
\begin{itemize}
    \item We use a bold font $\ncat{C}$ to denote named categories like $\ncat{Set}$. We use the font $\cat{C}$ for unnamed categories. 
    \item We take as our set-theoretic foundation a pair $\mathbb{U} \in \mathbb{V}$ of Grothendieck universes, see Section \ref{section set theory}. We call sets that are elements of $\mathbb{U}$ small and elements of $\mathbb{V}$ large.
    \item All categories $\cat{C}$ are understood to be locally small (have small hom-sets) and all sets are understood to be small sets.
    \item We assume the law of excluded middle and the axiom of choice hold.
    \item The reader should be warned that we will not always cite the first reference where a result or definition was given. Rather we will sometimes refer to what we consider to be a readable reference which may then itself give the correct attribution of a result or definition. 
\end{itemize}

\subsection{Corrections}
This is a living document that will be updated periodically. As with any large non-peer reviewed document, it will certainly contain mistakes and typos. Please reach out by email if you find any such mistakes. I will make sure to mention you here.

For help finding errors, thanks to Quique Ruiz and Cheyne Glass.

\subsection*{Acknowledgements}
I want to thank my advisor Mahmoud Zeinalian, for introducing me to the wonderful world of sheaves. I also want to thank Matt Cushman, Cheyne Glass, Severin Bunk and Dmitri Pavlov for many helpful discussions about sheaves.

I also wish to acknowledge that these notes build heavily on the work of Peter Johnstone \cite{johnstone2002sketches}, Zhen Lin Low \cite{low2016categories} and Michael Shulman \cite{shulman2012exact}. Without these three references, these notes could not exist.

\section{Sites and Sheaves} \label{section sites and sheaves}

In this section, we introduce the main objects of study of these notes, sites and sheaves.

\subsection{Coverages}

The theory of sites is usually presented using Grothendieck topologies or pretopologies \cite{maclane2012sheaves}. These are particular kinds of collections of covering morphisms. Here we follow \cite{johnstone2002sketches} in using coverages. Coverages are more general collections of covering morphisms than Grothendieck (pre)topologies. Their definition is simpler, and apply to more examples, but proving theorems using them is typically more difficult.

\begin{Def} \label{def family of morphisms}
Let $\cat{C}$ be a category, and $U \in \cat{C}$. A \textbf{family of morphisms} (or \textbf{family} for short) over $U$ is a set of morphisms $r = \{r_i : U_i \to U \}_{i \in I}$ in $\cat{C}$ with codomain $U$. A \textbf{refinement} of a family of morphisms $t = \{t_j : V_j \to U \}_{j \in J}$ over $U$ consists of a family of morphisms $r = \{r_i : U_i \to U \}_{i \in I}$, a function $\alpha: I \to J$, which we call the \textbf{index map} of the refinement, and for each $i \in I$ a map $f_i: U_i \to V_{\alpha(i)}$, which we call the $i$th \textbf{component} of $f$, making the following diagram commute:
\begin{equation*}
    \begin{tikzcd}
	{U_i} && {V_{\alpha(i)}} \\
	& U
	\arrow["{f_i}", from=1-1, to=1-3]
	\arrow["{r_i}"', from=1-1, to=2-2]
	\arrow["{t_{\alpha(i)}}", from=1-3, to=2-2]
\end{tikzcd}
\end{equation*}
If $r$ is a refinement of $t$, with maps $f_i: U_i \to V_{\alpha(i)}$, then we write $f : r \to t$. Let $\Fam(U)$ denote the category whose objects are families over $U$ and whose morphisms are refinements. We say that $r$ refines $t$, and write $r \leq t$ if there exists a refinement $f : r \to t$.
\end{Def}

\begin{Not} \label{not canonical families}
Given any object $U \in \cat{C}$, there always exist several families over $U$. Let $\varnothing_U$ denote the \textbf{empty family} over $U$, and let $y_U$ denote the \textbf{maximal family} over $U$, which consists of all those morphisms with codomain $U$. If $f: V \to U$ is a morphism, then let $(f)$ denote the corresponding singleton family $(f) = \{ f : V \to U \}$.
\end{Not}

\begin{Def} \label{def pushforward of a family}
If $g : V \to U$ is map in $\cat{C}$ and $t = \{t_j :  V_j \to V \}_{j \in J}$ is a family on $V$, then let 
$$g_*(t) = \{ V_j \xrightarrow{t_j} V \xrightarrow{g} U \}_{j \in J}$$
denote the \textbf{pushforward} of $t$ by $g$. This makes $\Fam$ into a (strict) functor $\Fam_*: \cat{C} \to \ncat{Cat}$ by setting $\Fam_*(g) = g_*$ and that sends a refinement $f : t \to s$ to the refinement $g_*(f) : g_*(t) \to g_*(s)$ whose components agree with $f$.
We can also pullback families in the following way. Given $g$ as above and $r = \{ r_i : U_i \to U \}$ a family over $U$, let
\begin{equation*}
    g^*(r) = \{ f: W \to V \, \mid \, gf \text{ factors through a map $r_i : U_i \to U$ } \}.
\end{equation*}
This similarly makes $\Fam$ into a (strict) functor $\Fam^* : \cat{C}^{\text{op}} \to \ncat{Cat}$ by setting $\Fam^*(g) = g^*$\footnote{In fact, if $r$ is a family over $U$, then $g^*(r)$ is a sieve (Definition \ref{def sieve}) over $V$. If $h : r \to t$ is a refinement, then $g^*(r) \subseteq g^*(t)$.}.
\end{Def}

\begin{Rem}
In what follows we will often drop the indexing set from a family $r = \{r_i : U_i \to U \}_{i \in I}$, referencing it by $r$ or $\{r_i : U_i \to U \}$ when we wish to make the morphism names and domains known.
\end{Rem}

\begin{Def} \label{def collection of families}
A \textbf{collection of families} $j$ on a small category $\cat{C}$ consists of a set $j(U)$ for each $U \in \cat{C}$, whose elements $r \in j(U)$ are families of morphisms over $U$. We write $r \in j$ to mean that there exists some $U \in \cat{C}$ such that $r \in j(U)$.
\end{Def}

\begin{Def} \label{def coverage}
We say that a collection of families $j$ on a small category $\cat{C}$ is a \textbf{coverage} if
\begin{itemize}
    \item for every $U \in \cat{C}$, $(1_U) \in j(U)$, and\footnote{There are some versions of the definition of coverage in the literature that do not require this axiom, such as \cite[Definition A.2.1.9]{johnstone2002sketches}, and there are some that do, such as \cite[Definition A.2.8]{low2016categories}. We include it for several technical reasons that will become more apparent as we go along.}
    \item for every $U \in \cat{C}$, $r \in j(U)$ and map $g : V \to U$ in $\cat{C}$, there exists a family $t \in j(V)$ such that $g_*(t) \leq r$.
\end{itemize} 
If $j$ is a coverage on $\cat{C}$, then we call families $r \in j(U)$ \textbf{covering families} over $U$. If a map $r_i: U_i \to U$ belongs to a covering family $r \in j(U)$, then we say that $r_i$ is a \textbf{covering map}. If $\cat{C}$ is a small category, and $j$ is a coverage on $\cat{C}$, then we call the pair $(\cat{C}, j)$ a \textbf{site}. We also think of $j(U)$ as a poset, where $r \leq r'$ in $j(U)$ if both $r$ and $r'$ are covering families over $U$ and there is a refinement $f : r \to r'$.
\end{Def}

\begin{Rem}
Unravelling Definition \ref{def coverage} means that for every $U \in \cat{C}$, $r \in j(U)$, and $g : V \to U$, there exists a $t \in j(V)$ such that for every $t_j : V_j \to V$ in $t$ there exists a map $r_i : U_i \to U$ in $r$ and a map $s_j: V_j \to U_i$ in $\cat{C}$ making the following diagram commute:
\begin{equation*} \label{eqn coverage def}
    \begin{tikzcd} 
	{V_j} & {U_i} \\
	V & U
	\arrow["{t_j}"', from=1-1, to=2-1]
	\arrow["{s_j}", from=1-1, to=1-2]
	\arrow["{r_i}", from=1-2, to=2-2]
	\arrow["g"', from=2-1, to=2-2]
\end{tikzcd}
\end{equation*}
\end{Rem}

\begin{Ex} \label{ex set joint epi coverage}
Let $j_{\text{epi}}$ denote the collection of families on $\ncat{FinSet}$\footnote{here we let $\ncat{FinSet}$ denote a small skeleton of the category of finite sets. Explicitly, let $\ncat{FinSet}$ denote the category whose objects are the sets $[n] = \{1, \dots, n \}$ with $[0] =\varnothing$ and whose morphisms are functions.} where $r = \{ r_i : S_i \to S \}_{i \in I}$ belongs to $j_{\text{epi}}(S)$ if and only if $r$ is \textbf{jointly epimorphic}, i.e. that $\sum_{i \in I} r_i : \sum_{i \in I} S_i \to S$ is an epimorphism. Given a map $g : T \to S$ of sets, pulling back each $r_i$ gives a function
\begin{equation*}
    \begin{tikzcd}
	{g^*(S_i)} & {S_i} \\
	T & S
	\arrow[from=1-1, to=1-2]
	\arrow["{g^*(r_i)}"', from=1-1, to=2-1]
	\arrow["\lrcorner"{anchor=center, pos=0.125}, draw=none, from=1-1, to=2-2]
	\arrow["{r_i}", from=1-2, to=2-2]
	\arrow["g"', from=2-1, to=2-2]
\end{tikzcd}
\end{equation*}
and $\sum_{i \in I} g^*(r_i)$ is an epimorphism. Indeed, if $t \in T$, then there exists an $i \in I$ and an $s_i \in S_i$ such that $r_i(s_i) = g(t)$. Then $\sum_{i \in I} g^*(r_i)(t, s_i) = t$. Thus $j_{\text{epi}}$ forms a coverage on $\ncat{FinSet}$.
\end{Ex}

\begin{Ex} \label{ex open cover coverage}
Let $X$ be a topological space and let $\mathcal{O}(X)$ denote the poset of open subsets of $X$, where namely $U \leq V$ if and only if $U$ and $V$ are open subsets of $X$ and $U \subseteq V$. Let $j_X$ denote the collection of families on $\mathcal{O}(X)$ such that $j_X(U)$ is the set of all open covers of an open subset $U$, namely $\{ U_i \subseteq U \}_{i \in I} \in j_X(U)$ if $\bigcup_i U_i = U$. This collection of families is a coverage. Indeed, suppose that $\{U_i \subseteq U \}$ is an open cover of $U$ and $V \leq U$ is an open subset of $U$. Then $\{ V \cap U_i \subseteq V \}$ is an open cover of $V$, and $V \cap U_i \leq U_i$ for every $i \in I$. We call $j_X$ the \textbf{open cover coverage} of $X$, and call $(\mathcal{O}(X), j_X)$ the \textbf{site associated with $X$}.
\end{Ex}

\begin{Ex} \label{ex basis for a topology as a coverage}
Now suppose that $X$ is a topological space with topology $\tau$, and let $\mathscr{B}$ be a basis for $\tau$ in the classical sense, namely $\mathscr{B} \subseteq \tau$ is a subcollection of open sets such that the elements of $\mathscr{B}$ cover $X$ and for every open set $U$ and elements $x \in U$, there exists a basis element $B_x \in \mathscr{B}$ such that $x \in B_x$ and $B_x \subseteq U$. Let us show that $\mathscr{B}$ defines a coverage $j_{\mathscr{B}}$ on $\mathcal{O}(X)$. If $U \in \mathcal{O}(X)$, then let $j_{\mathscr{B}}(U)$ denote those open covers of $U$ consisting only of elements in $\mathscr{B}$ (and also containing the identity family $(1_U)$). So if $V \subseteq U$ is an open subset, and $\{ B_i \subseteq U \} \in j_{\mathscr{B}}(U)$, then every point $y \in V$ belongs to some $B_i$. So for every $y \in V$ there exists a basis element $B'_y \in \mathscr{B}$ such that $y \in B'_y$ and $B'_y \subseteq V \cap B_i$. Then $\{ B'_y \}_{y \in V}$ forms an open cover of $V$. Thus $j_{\mathscr{B}}$ is a coverage on $\mathcal{O}(X)$. In fact, if we let $\mathcal{O}(X, \mathscr{B})$ denote the full subcategory of $\mathcal{O}(X)$ only consisting of basis elements, then $j_{\mathscr{B}}$ also defines a coverage on this category by the same argument.
\end{Ex}

\begin{Ex} \label{ex canonical coverages}
Given a small category $\cat{C}$, there are several canonical coverages one can consider, recall Notation \ref{not canonical families}:
\begin{enumerate}
    \item let $j_{\text{triv}}$ denote the \textbf{trivial coverage}, where for every $U \in \cat{C}$, $j_{\text{triv}}(U) = \{ (1_U) \}$,
    \item let $j_{\text{iso}}$ denote the \textbf{isomorphism coverage}, where for every $U \in \cat{C}$, $j_{\text{iso}}(U)$ consists of those singleton families $(f)$ over $U$ where $f$ is an isomorphism,
    \item let $j_{\text{max}}$ denote the \textbf{maximal coverage}, where for every $U \in \cat{C}$, $j_{\text{max}}(U) = \{(1_U), y_U \}$.
\end{enumerate}
\end{Ex}

\begin{Ex} \label{ex slice site}
Suppose that $(\cat{C}, j)$ is a site with $U \in \cat{C}$. Let $\cat{C}_{/U}$ denote the slice category, and $\pi_{/U} : \cat{C}_{/U} \to \cat{C}$ denote the projection functor. Let $j_{/U}$ denote the collection of families on $\cat{C}_{/U}$ where if $g : V \to U$ is a morphism in $\cat{C}$, then $\{ f_i : r_i \to g \} \in j_{/U}(g)$ if and only if $\{\pi_{/U}(f_i)\} \in j(V)$. It is easy to see that $j_{/U}$ forms a coverage on $\cat{C}_{/U}$. We call $(\cat{C}_{/U}, j_{/U})$ the \textbf{slice site} at $U$.
\end{Ex}

\begin{Ex} \label{ex induced site}
Given a site $(\cat{C}, j)$ and a  subcategory $\cat{D} \hookrightarrow \cat{C}$, we say that a coverage $j'$ on $\cat{D}$ is \textbf{induced} if every covering family $r'$ in $j'$ is of the form $r' = r \cap \Mor(\cat{D})$. If $j'$ is induced then we write $j' = j|_{\cat{D}}$, and call it the induced coverage.
\end{Ex}


\subsection{(Pre)Sheaves}

\begin{Def} \label{def presheaf, section, matching family, amalgamation}
A \textbf{presheaf} on a category $\cat{C}$ is a functor $X: \cat{C}^{\op} \to \ncat{Set}$. An element $x \in X(U)$ for an object $U \in \cat{C}$ is called a \textbf{section} over $U$. If $f: U \to V$ is a map in $\cat{C}$, and $x \in X(V)$, then we sometimes denote $X(f)(x)$ by $x|_U$. Given a family $r = \{r_i : U_i \to U \}_{i \in I}$ over $U$, an \textbf{intersection square} on $r$ is a commutative diagram of the form
\begin{equation} \label{eq intersection square}
\begin{tikzcd}
	U_{ij} & {U_j} \\
	{U_i} & U
	\arrow["{r_i}"', from=2-1, to=2-2]
	\arrow["{r_j}", from=1-2, to=2-2]
	\arrow["u_i"', from=1-1, to=2-1]
	\arrow["u_j", from=1-1, to=1-2]
\end{tikzcd}
\end{equation}
where $r_i, r_j \in r$ and $u_i$, $u_j$, and $U_{ij}$ are arbitrary. An \textbf{$X$-matching family} over $r$ is a collection of sections $\{x_i \in X(U_i) \}_{i \in I}$ such that given any intersection square (\ref{eq intersection square}), $X(u_i)(x_i) = X(u_j)(x_j)$ for all $i,j \in I$. If the presheaf is clear from context we may say that $\{ x_i \}$ is a matching family for $r$.

If $X$ is a presheaf on $\cat{C}$, and $r$ is a family of morphisms on $U$, then let $\text{Match}(r,X)$ denote the set of $X$-matching families over $r$. Given a $X$-matching family $\{ x_i \}$ over $r$, an \textbf{amalgamation} for $\{ x_i \}$ is a section $x \in X(U)$ such that $X(r_i)(x) = x_i$ for all $i$.

If $U \in \cat{C}$, then let $y(U)$ denote the presheaf on $\cat{C}$ where for $V \in \cat{C}$, $y(U)(V) = \cat{C}(V,U)$, and if $f : V \to V'$ is a morphism in $\cat{C}$, then $y(U)(f) : y(U)(V') \to y(U)(V)$ is the precomposition map. We call $y(U)$ a \textbf{representable presheaf}, and more specifically the representable on $U$.
\end{Def}

\begin{Lemma}\label{lem pullback of matching family by refinement is a matching family}
Given a presheaf $X$ on a category $\cat{C}$, suppose that $r = \{r_i: U_i \to U \}_{i \in I}$ and $t = \{t_j: V_j \to U \}_{j \in J}$ are families over $U$ and $f : r \to t$ is a refinement, with index map $\alpha: I \to J$. If $\{ x_j \}_{j \in J}$ is an $X$-matching family over $t$, then $\{ X(f_i)(x_{\alpha(i)}) \}_{i \in I}$ is an $X$-matching family over $r$.
\end{Lemma}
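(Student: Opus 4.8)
The plan is to verify the matching-family condition for $\{y_i\}_{i \in I}$, where I write $y_i := X(f_i)(x_{\alpha(i)})$, directly from the matching condition for $\{x_j\}_{j \in J}$. First I would fix an arbitrary intersection square on $r$, that is, a pair of morphisms $u_i : W \to U_i$ and $u_{i'} : W \to U_{i'}$ (where I abbreviate the apex $U_{ii'}$ by $W$) satisfying $r_i \circ u_i = r_{i'} \circ u_{i'}$. The goal then becomes showing that $X(u_i)(y_i) = X(u_{i'})(y_{i'})$.

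The key observation is that post-composing the two legs of this square with the refinement components $f_i$ and $f_{i'}$ produces an intersection square on $t$. Concretely, I consider the morphisms $f_i \circ u_i : W \to V_{\alpha(i)}$ and $f_{i'} \circ u_{i'} : W \to V_{\alpha(i')}$. Using the defining triangle $t_{\alpha(i)} \circ f_i = r_i$ of the refinement (Definition \ref{def family of morphisms}), I compute $t_{\alpha(i)} \circ (f_i \circ u_i) = r_i \circ u_i$, and likewise $t_{\alpha(i')} \circ (f_{i'} \circ u_{i'}) = r_{i'} \circ u_{i'}$. Since these two composites coincide by the original square on $r$, the pair $(f_i \circ u_i,\, f_{i'} \circ u_{i'})$ is genuinely an intersection square on $t$ relating $t_{\alpha(i)}$ and $t_{\alpha(i')}$, in the sense of (\ref{eq intersection square}).

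Having produced this square, the matching condition for $\{x_j\}$ over $t$ yields $X(f_i \circ u_i)(x_{\alpha(i)}) = X(f_{i'} \circ u_{i'})(x_{\alpha(i')})$. Finally I would unwind each side using contravariant functoriality of $X$: since $X(f_i \circ u_i) = X(u_i) \circ X(f_i)$, the left-hand side equals $X(u_i)(X(f_i)(x_{\alpha(i)})) = X(u_i)(y_i)$, and symmetrically the right-hand side equals $X(u_{i'})(y_{i'})$. This is exactly the desired equality.

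There is no serious obstacle in this argument; it is essentially a diagram chase together with one application of functoriality. The only points requiring care are the bookkeeping around the contravariance of $X$ (so that compositions appear in the correct order), and the verification that the transported square is a \emph{bona fide} intersection square on $t$, i.e. that its outer boundary commutes. That verification is precisely where the refinement triangle identity $t_{\alpha(i)} \circ f_i = r_i$ is essential, and it is the only place the hypothesis that $f$ is a refinement (rather than an arbitrary family of maps $f_i$) gets used.
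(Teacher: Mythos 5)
Your proof is correct and is essentially identical to the paper's: both transport the given intersection square on $r$ to an intersection square on $t$ via the refinement components $f_i$, apply the matching condition for $\{x_j\}$ there, and conclude by contravariant functoriality of $X$. No issues.
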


\begin{proof}
Suppose we have morphisms $u_i: U_{ij} \to U_i$ and $u_j: U_{ij} \to U_j$ such that $r_i u_i = r_j u_j$. Then we have a commuting diagram:
\[\begin{tikzcd}
	{U_i} & {U_{ij}} & {U_j} \\
	{V_{\alpha(i)}} & U & {V_{\alpha(j)}}
	\arrow["{u_i}"', from=1-2, to=1-1]
	\arrow["{u_j}", from=1-2, to=1-3]
	\arrow["{f_i}"', from=1-1, to=2-1]
	\arrow["{f_j}", from=1-3, to=2-3]
	\arrow["{r_i}"{description}, from=1-1, to=2-2]
	\arrow["{r_j}"{description}, from=1-3, to=2-2]
	\arrow["{t_{\alpha(i)}}"', from=2-1, to=2-2]
	\arrow["{t_{\alpha(j)}}", from=2-3, to=2-2]
\end{tikzcd}\]
and therefore
$$X(u_i)X(f_i)(x_{\alpha(i)}) = X(f_i u_i)(x_{\alpha(i)}) = X(f_j u_j)(x_{\alpha(j)}) = X(u_j)X(f_j)(x_{\alpha(j)}),$$
since $\{x_j \}$ is a matching family for $t$. Thus $\{ X(f_i)(x_{\alpha(i)}) \}$ is a matching family for $r$.
\end{proof}

\begin{Rem}
If $f: r \to t$ is a refinement of families of morphisms and $\{ x_j \}$ is a matching family for a presheaf $X$ over $t$, let $f^* \{ x_j \}$ denote the corresponding matching family $\{X(f_i)(x_{\alpha(i)})\}$. Given $U \in \cat{C}$ and a presheaf $X$ on $\cat{C}$, this makes $\Match(-,X)$ into a functor $\Match(-,X) : \Fam(U)^\op \to \ncat{Set}$ by setting $\Match(f)(\{ x_j \}) = f^*\{x_j\}$. We call $f^* \{ x_j \}$ the \textbf{pullback matching family} of $\{ x_j \}$ by $f$.
\end{Rem}

\begin{Ex}
Let us consider the example of the site $(\mathcal{O}(\R^1), j_{\R^1})$ of the real line. The family $\mathcal{U}_1 = \{ (n, n+2) \subseteq \R^1 \}_{n \in \Z}$ forms an open cover of $\R^1$. The family $\mathcal{U}_2 = \{ (m, m + 3) \subseteq \R^1 \}_{m \in \Z}$ forms another open cover of $\R^1$, and there is a refinement $f: \mathcal{U}_1 \to \mathcal{U}_2$ with index map $\alpha : \mathbb{N} \to \mathbb{N}$ given by the identity map. If $X = y(\R)$ is the representable presheaf on $\R$, and $\{x_m \}$ is an $X$-matching family of continuous functions $x_m: (m, n+3) \to \R$ for $\mathcal{U}_2$, then we get the pullback matching family $f^*\{x_m \} = \{X(f_n)(x_{\alpha(n)})\}_{n \in \Z} = \{ x_{n}|_{(n,n+2)} \}_{n \in \Z}$ on $\mathcal{U}_1$. 
\end{Ex}

\begin{Def} \label{def separated and sheaf}
Given a presheaf $X$ on a category $\cat{C}$ and a family of morphisms $r$ on an object $U$ in $\cat{C}$, there is a canonical map
\begin{equation} \label{eq restriction map of presheaf to matching families}
    \text{res}_{r,X}: X(U) \to \text{Match}(r,X)
\end{equation}
which is defined for an element $x \in X(U)$ to be the matching family $\text{res}_{r,X}(x) = \{ X(r_i)(x) \}$ of $X$ over $r$. We say that $X$ is \textbf{separated} on $r$ if $\text{res}_{r,X}$ is injective. We say that $X$ is a \textbf{sheaf} on $r$ if $\text{res}_{r,X}$ is bijective. 

Given a collection of families $j$ on $\cat{C}$, we say that $X$ is (separated) a $j$-sheaf on an object $U \in \cat{C}$ if $X$ is (separated) a sheaf on $r$ for every $r \in j(U)$.

Given a site $(\cat{C}, j)$, we call $X$ a $j$-\textbf{sheaf} if it is a $j$-sheaf on every object $U$ of $\cat{C}$. Let $\ncat{Sh}(\cat{C}, j)$ denote the full subcategory of $\ncat{Pre}(\cat{C})$ whose objects are $j$-sheaves.
\end{Def}

\begin{Def} \label{def grothendieck topos}
We say that a category $\cat{D}$ is a \textbf{Grothendieck topos} or just \textbf{topos} for short, if there exists a site $(\cat{C}, j)$ and an equivalence $\cat{D} \simeq \Sh(\cat{C}, j)$.
\end{Def}

\begin{Lemma}
Given a category $\cat{C}$, a presheaf $X$ on $\cat{C}$, a refinement $f: r \to t$ of families over an object $U \in \cat{C}$, and an $X$-matching family $\{ x_j \}$ over $t$, if $x$ is an amalgamation for $\{x_j \}$, then $x$ is an amalgamation for $f^* \{x_j\}$.
\end{Lemma}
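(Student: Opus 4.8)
The plan is to unwind the two relevant definitions and reduce the statement to a single application of functoriality, so that no real construction is needed. Write $r = \{r_i : U_i \to U\}_{i \in I}$ and $t = \{t_j : V_j \to U\}_{j \in J}$, and recall that the refinement $f : r \to t$ comes equipped with an index map $\alpha : I \to J$ and components $f_i : U_i \to V_{\alpha(i)}$. By the defining triangle of Definition \ref{def family of morphisms}, these satisfy $t_{\alpha(i)} f_i = r_i$ for every $i \in I$. By Lemma \ref{lem pullback of matching family by refinement is a matching family} the collection $f^*\{x_j\} = \{X(f_i)(x_{\alpha(i)})\}_{i \in I}$ is genuinely an $X$-matching family over $r$, so the notion of amalgamation for it makes sense; spelling out Definition \ref{def separated and sheaf}, saying that $x$ is an amalgamation for $f^*\{x_j\}$ means exactly that $X(r_i)(x) = X(f_i)(x_{\alpha(i)})$ holds for every $i \in I$.

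With the goal in this form, I would fix an arbitrary $i \in I$ and simply compute. Applying the contravariant functor $X$ to the factorization $r_i = t_{\alpha(i)} f_i$ gives $X(r_i) = X(f_i) \circ X(t_{\alpha(i)})$, and then evaluating at $x$ and invoking the hypothesis that $x$ is an amalgamation for $\{x_j\}$ (so $X(t_{\alpha(i)})(x) = x_{\alpha(i)}$) yields
$$X(r_i)(x) = X(f_i)\big(X(t_{\alpha(i)})(x)\big) = X(f_i)(x_{\alpha(i)}).$$
Since the right-hand side is precisely the $i$-th component of $f^*\{x_j\}$ and $i$ was arbitrary, this establishes that $x$ amalgamates the pullback matching family.

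I do not expect a genuine obstacle here: the argument is a one-line diagram chase once the definitions are in place. The only points requiring any care are keeping the contravariant direction of composition straight (so that $X(r_i)$ factors as $X(f_i)\circ X(t_{\alpha(i)})$ rather than the reverse) and correctly matching up which index $\alpha(i)$ and which component $f_i$ feed into the definition of $f^*\{x_j\}$. Everything else follows immediately from functoriality of $X$ together with the triangle identity built into the refinement $f$.
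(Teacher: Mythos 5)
Your proof is correct and is essentially identical to the paper's: both apply $X$ to the factorization $r_i = t_{\alpha(i)} f_i$ and use the amalgamation hypothesis $X(t_{\alpha(i)})(x) = x_{\alpha(i)}$ to conclude $X(r_i)(x) = X(f_i)(x_{\alpha(i)})$. No issues.
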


\begin{proof}
Since $f : r \to t$ is a refinement, $X(r_i)(x) = X(t_{\alpha(i)} f_i)(x) = X(f_i)X(t_\alpha(i))(x)$. But $\{x_j \}$ is an $X$-matching family over $t$, so $X(t_{\alpha(i)})(x) = x_{\alpha(i)}$ for all $i$. Thus $X(r_i)(x) = X(f_i)(x_{\alpha(i)})$, so $x$ is an amalgamation for $f^*\{x_j \}$.
\end{proof}

\begin{Lemma} \label{lem sheaves on empty covers}
Given a category $\cat{C}$ and a presheaf $X$ on $\cat{C}$, then $X$ is a sheaf on the empty family $\varnothing$ over an object $U \in \cat{C}$ if and only if $X(U) \cong *$.
\end{Lemma}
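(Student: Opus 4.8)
The plan is to directly unwind the definitions of $\Match(\varnothing, X)$ and of the restriction map $\text{res}_{\varnothing, X}$, after which the statement falls out immediately. First I would observe that an $X$-matching family over a family $r = \{r_i : U_i \to U\}_{i \in I}$ is indexed by the same set $I$ as $r$. For the empty family $\varnothing$ this indexing set is empty, so a matching family is an empty collection of sections. The matching condition involving intersection squares (\ref{eq intersection square}) is then vacuously satisfied, since there are no indices $i, j$ for which anything must be checked. Hence there is exactly one $X$-matching family over $\varnothing$, namely the empty one, and so $\Match(\varnothing, X)$ is a one-element set, i.e. $\Match(\varnothing, X) \cong *$.

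Next I would examine the restriction map $\text{res}_{\varnothing, X} : X(U) \to \Match(\varnothing, X)$ from Definition \ref{def separated and sheaf}. Since the codomain is a singleton by the previous step, this map is forced to be the unique map $X(U) \to *$. By definition, $X$ is a sheaf on $\varnothing$ precisely when $\text{res}_{\varnothing, X}$ is a bijection. A map whose codomain is a one-element set is a bijection if and only if its domain is also a one-element set. Therefore $\text{res}_{\varnothing, X}$ is bijective if and only if $X(U) \cong *$, which is exactly the claimed equivalence.

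The only point requiring genuine care — and the closest thing to an obstacle in an otherwise routine argument — is interpreting ``matching family over the empty family'' correctly: it is the empty tuple of sections, \emph{not} the absence of any data, so $\Match(\varnothing, X)$ is a terminal (one-element) set rather than the empty set. Once this subtlety is pinned down, the result is immediate from the characterization of a sheaf as the bijectivity of the restriction map into the set of matching families.
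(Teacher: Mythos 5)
Your proof is correct and follows the same route as the paper's: identify $\Match(\varnothing, X)$ as a singleton (the unique empty matching family, with the matching condition holding vacuously) and then note that $\text{res}_{\varnothing,X}$ is a bijection onto a singleton if and only if $X(U) \cong *$. The paper's version is just a terser statement of the same argument, and your remark that the empty matching family is the empty tuple rather than the absence of data is exactly the right subtlety to flag.
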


\begin{proof}
Given a family $r$ of morphisms over an object $U \in \cat{C}$, $X$ is a sheaf on $r$ if $X(U) \to \Match(r, X)$ is an isomorphism. When $r = \varnothing$, there exists only a single matching family, namely the empty one.
\end{proof}

\begin{Def} \label{def singular objects in a site}
We say that an object $U$ in a site $(\cat{C}, j)$ is \textbf{singular} if the empty family 
 of morphisms $\varnothing$ is a $j$-covering family on $U$. We say that a site $(\cat{C}, j)$ is \textbf{nonsingular} if it has no singular objects, and \textbf{singular} otherwise.
\end{Def}

\begin{Ex}
Given a topological space $X$, the site $(\mathcal{O}(X), j_X)$ from Example \ref{ex open cover coverage} is singular, because the initial object $\varnothing$ in $\mathcal{O}(X)$ has exactly two covering families, the set $\{ 1_\varnothing : \varnothing \to \varnothing \}$ and the empty family $\varnothing$. Thus in order for a presheaf $A$ on $X$ to be a sheaf, $A(\varnothing) \cong *$.
\end{Ex}

\begin{Lemma} \label{lem always a sheaf on an isomorphism}
Given an object $U$ in a category $\cat{C}$ and a nonempty family of morphisms $r$ over $U$ consisting of isomorphisms, every presheaf $X$ on $\cat{C}$ is a sheaf on $r$.
\end{Lemma}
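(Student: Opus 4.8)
The plan is to show directly that the restriction map $\text{res}_{r,X} : X(U) \to \Match(r,X)$ of (\ref{eq restriction map of presheaf to matching families}) is a bijection, exploiting that each $r_i$ is an isomorphism and that the index set $I$ is nonempty. The key preliminary observation is that since $X$ is a functor and each $r_i : U_i \to U$ is an isomorphism, each $X(r_i) : X(U) \to X(U_i)$ is a bijection with inverse $X(r_i^{-1})$. Here I must be careful that $X$ is contravariant, so that $X(r_i^{-1}) \circ X(r_i) = X(r_i \circ r_i^{-1}) = X(1_U) = 1_{X(U)}$ and symmetrically $X(r_i) \circ X(r_i^{-1}) = X(1_{U_i}) = 1_{X(U_i)}$.

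For injectivity, I would fix any index $i_0 \in I$, which is possible precisely because $I$ is nonempty. If $x, x' \in X(U)$ satisfy $\text{res}_{r,X}(x) = \text{res}_{r,X}(x')$, then in particular $X(r_{i_0})(x) = X(r_{i_0})(x')$, and since $X(r_{i_0})$ is a bijection we conclude $x = x'$.

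For surjectivity, given an $X$-matching family $\{x_i\}_{i \in I}$ over $r$, I would again fix $i_0 \in I$ and set $x := X(r_{i_0}^{-1})(x_{i_0}) \in X(U)$; it then remains to verify that $X(r_i)(x) = x_i$ for every $i \in I$. The crucial input is the matching family condition applied to a well-chosen intersection square (\ref{eq intersection square}) for the pair $(i, i_0)$: taking the apex $U_{ij}$ to be $U$ itself, with $u_i = r_i^{-1} : U \to U_i$ and $u_{i_0} = r_{i_0}^{-1} : U \to U_{i_0}$, gives a commuting square since $r_i \circ r_i^{-1} = 1_U = r_{i_0} \circ r_{i_0}^{-1}$. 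The matching condition then forces $X(r_i^{-1})(x_i) = X(r_{i_0}^{-1})(x_{i_0}) = x$, and applying the bijection $X(r_i)$ to both sides yields $x_i = X(r_i)(x)$, as required.

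The argument is essentially routine, and I do not expect a serious obstacle; the only points demanding attention are the contravariance bookkeeping and the recognition that the two hypotheses enter in exactly one place each—nonemptiness to select the index $i_0$ from which the amalgamation is constructed, and the matching family condition (via the intersection square with apex $U$) to force agreement on all remaining components. It is worth noting in passing that neither hypothesis is superfluous: when $I = \varnothing$ the conclusion fails unless $X(U) \cong *$ by Lemma \ref{lem sheaves on empty covers}, and without the isomorphism assumption there is no reason for $\text{res}_{r,X}$ to be injective, let alone surjective.
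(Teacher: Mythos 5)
Your proof is correct and follows essentially the same route as the paper's: both arguments hinge on the intersection square with apex $U$ and legs $r_i^{-1}$, $r_{i_0}^{-1}$ to force $X(r_i^{-1})(x_i) = X(r_{i_0}^{-1})(x_{i_0})$, and both define the amalgamation as $X(r_{i_0}^{-1})(x_{i_0})$ with uniqueness coming from $X(r_{i_0})$ being a bijection. Phrasing it as injectivity and surjectivity of $\text{res}_{r,X}$ rather than existence and uniqueness of amalgamations is only a cosmetic difference.
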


\begin{proof}
Let $r = \{r_i : U_i \to U \}_{i \in I}$ be a family of isomorphisms over $U$, $X$ a presheaf on $\cat{C}$ and suppose that $\{ x_i \}$ is an $X$-matching family over $r$. We want to show that $\{ x_i \}$ has a unique amalgamation.

First we note that $X(r_i^{-1})(x_i) = X(r_j^{-1})(x_j)$ for all $i,j \in I$, because $\{x_i \}$ is an $X$-matching family over $r$ and the following commutative diagram is an intersection square
\begin{equation*}
\begin{tikzcd}
	U & {U_j} \\
	{U_i} & U
	\arrow["{r_j^{-1}}", from=1-1, to=1-2]
	\arrow["{r_i^{-1}}"', from=1-1, to=2-1]
	\arrow["{r_j}", from=1-2, to=2-2]
	\arrow["{r_i}"', from=2-1, to=2-2]
\end{tikzcd}    
\end{equation*}
Letting $x = X(r_i^{-1})(x_i)$, we see that $x$ is an amalgamation of $\{x_i \}$.

Now suppose that $y \in X(U)$ is also an amalgamation of $\{x_i \}$. Then $X(r_i)(y) = x_i$ for all $i \in I$, so $X(r_i^{-1})X(r_i)(y) = X(r_i^{-1})(x_i) = x$. Therefore $y = x$. Thus $x$ is a unique amalgamation, and so $X$ is a sheaf on $r$.
\end{proof}

\begin{Ex}
 If $\cat{C}$ is a category, then we can consider the empty, identity and isomorphism coverages of Example \ref{ex canonical coverages} on $\cat{C}$. Lemma \ref{lem always a sheaf on an isomorphism} proves that every presheaf on $(\cat{C}, j_\text{iso})$ and $(\cat{C}, j_{\text{id}})$ is a sheaf. It is similarly easy to see that every presheaf on $(\cat{C}, j_{\text{max}})$ is a sheaf. Therefore we have
 \begin{equation}
     \Pre(\cat{C}) = \Sh(\cat{C}, j_{\text{iso}}) = \Sh(\cat{C}, j_{\text{id}}) = \Sh(\cat{C}, j_{\text{max}}).
\end{equation}
\end{Ex}

\begin{Not}
Since every category of presheaves $\ncat{Pre}(\cat{C})$ is in particular a category of sheaves $\ncat{Sh}(\cat{C}, j_\text{id})$, every presheaf category is a Grothendieck topos. We will refer to these kinds of toposes as \textbf{presheaf toposes}, and if $(\cat{C}, j)$ is a site, then we will sometimes call $\ncat{Sh}(\cat{C}, j)$ its \textbf{sheaf topos}.   
\end{Not}

\begin{Lemma} \label{lem coverage if pullbacks exist}
Suppose that $j$ is a coverage on a category $\cat{C}$ such that pullbacks along covering maps exist. If $r = \{ r_i: U_i \to U \}_{i \in I}$ is a covering family of $U$, then a collection $\{s_i \in X(U_i) \}$ of sections of a presheaf $X$ on $\cat{C}$ is a matching family for $r$ if and only if for every pullback square of the form
\begin{equation*}
\begin{tikzcd}
	{U_i \times_U U_j} & {U_j} \\
	{U_i} & U
	\arrow["{r_i}"', from=2-1, to=2-2]
	\arrow["{r_j}", from=1-2, to=2-2]
	\arrow["{\pi_i}"', from=1-1, to=2-1]
	\arrow["{\pi_j}", from=1-1, to=1-2]
\end{tikzcd}
\end{equation*}
it follows that $X(\pi_i)(s_i) = X(\pi_j)(s_j)$.
\end{Lemma}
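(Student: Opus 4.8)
The plan is to prove both implications, of which the forward direction is essentially immediate and the backward direction carries the content. For the forward direction, suppose $\{ s_i \}$ is a matching family over $r$. Every pullback square of the stated form is in particular an intersection square in the sense of Definition \ref{def presheaf, section, matching family, amalgamation} (take $U_{ij} = U_i \times_U U_j$, $u_i = \pi_i$, and $u_j = \pi_j$), so the matching condition directly yields $X(\pi_i)(s_i) = X(\pi_j)(s_j)$.

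For the backward direction, I would assume the pullback condition holds and let an arbitrary intersection square be given, with apex $W$, maps $u_i : W \to U_i$ and $u_j : W \to U_j$, and $r_i u_i = r_j u_j$. First I would invoke the standing hypothesis that pullbacks along covering maps exist: since $r_i$ and $r_j$ are covering maps, the pullback $U_i \times_U U_j$ of $r_i$ along $r_j$ exists, together with its projections $\pi_i$ and $\pi_j$. The commutativity $r_i u_i = r_j u_j$ is exactly the data required by the universal property of this pullback, so there is a unique map $h : W \to U_i \times_U U_j$ with $\pi_i h = u_i$ and $\pi_j h = u_j$; only existence of $h$ is needed below.

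The final step is a short functoriality computation, where the only thing to watch is the contravariance of $X$. Using $u_i = \pi_i h$ and $u_j = \pi_j h$ together with the hypothesis $X(\pi_i)(s_i) = X(\pi_j)(s_j)$, I would compute
$$X(u_i)(s_i) = X(h) X(\pi_i)(s_i) = X(h) X(\pi_j)(s_j) = X(u_j)(s_j),$$
which is precisely the matching condition for the given intersection square. Since the intersection square was arbitrary, $\{ s_i \}$ is a matching family over $r$.

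The main obstacle is conceptual rather than computational: one must recognize that all of the intersection squares appearing in the definition of a matching family are mediated by the single universal square $U_i \times_U U_j$, which is what collapses the a priori enormous collection of matching conditions down to the one pullback condition. The existence of this mediating square is exactly what the hypothesis on pullbacks along covering maps provides, so no further limits need to be assumed in $\cat{C}$.
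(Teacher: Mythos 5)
Your proof is correct and follows essentially the same route as the paper's: the forward direction is immediate since pullback squares are intersection squares, and the backward direction factors an arbitrary intersection square through the universal pullback square via the induced map and then applies functoriality of $X$. No gaps.
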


\begin{proof}
$(\Rightarrow)$ This is clear.
 
$(\Leftarrow)$ Suppose we have maps $g: V \to U_i$ and $h : V \to U_j$, then we have the following commutative diagram:
\begin{equation*}
    \begin{tikzcd}
	V \\
	& {U_i \times_U U_j} & {U_j} \\
	& {U_i} & U
	\arrow["{\pi_i}"', from=2-2, to=3-2]
	\arrow["{\pi_j}", from=2-2, to=2-3]
	\arrow["{r_i}"', from=3-2, to=3-3]
	\arrow["{r_j}", from=2-3, to=3-3]
	\arrow["g"', curve={height=12pt}, from=1-1, to=3-2]
	\arrow["h", curve={height=-12pt}, from=1-1, to=2-3]
	\arrow["k"{description}, from=1-1, to=2-2]
\end{tikzcd}
\end{equation*}
where $k$ is the unique map to the pullback. The following holds:
$$ X(g)(s_i) = X(\pi_i k)(s_i) = X(k) X(\pi_i)(s_i) = X(k) X(\pi_j)(s_j) = X(\pi_j k)(s_j) = X(h)(s_j).$$
\end{proof}

\begin{Ex} \label{ex pullbacks in topological sites}
If $X$ is a topological space, then $\mathcal{O}(X)$ has pullbacks where if $U_i \subseteq U$ and $U_j \subseteq U$ are morphisms, then their pullback is given by their intersection $U_i \cap U_j$. Hence by Lemma \ref{lem coverage if pullbacks exist}, to check that a family of sections is a matching family on $\mathcal{O}(X)$ it is enough to check that they match on double intersections.
\end{Ex}

It is useful to compare different coverages on the same underlying category. 

\begin{Def} \label{def coverage comparisons}
Given a small category $\cat{C}$ and coverages $j, j'$ on $\cat{C}$, we write
\begin{itemize}
    \item $j \subseteq j'$, if for every $U \in \cat{C}$ it follows that $j(U) \subseteq j'(U)$,
    \item $j \leq j'$, if for every $r' \in j'$ there exists a $r \in j$ and a refinement $r \leq r'$,
\end{itemize}
We say that $j'$ \textbf{contains} $j$ if $j \subseteq j'$, and that $j$ \textbf{refines} $j'$ if $j \leq j'$. We say two coverages $j, j'$ on the same underlying small category $\cat{C}$ are \textbf{equivalent} if $j \leq j'$ and $j' \leq j$.
\end{Def}

\begin{Lemma} \label{lem coverage comparison results}
Given coverages $j, j'$ on a small category $\cat{C}$, 
\begin{enumerate}
    \item if $j \leq j'$, then $\Sh(\cat{C}, j) \subseteq \Sh(\cat{C}, j')$.
    \item if $j \subseteq j'$, then $\Sh(\cat{C}, j') \subseteq \Sh(\cat{C}, j)$,
\end{enumerate}
\end{Lemma}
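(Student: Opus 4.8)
The plan is to treat the two containments separately, since part (2) is purely formal while part (1) carries all the content. For part (2), I would argue directly: given a $j'$-sheaf $X$, fix $U \in \cat{C}$ and $r \in j(U)$. Since $j \subseteq j'$ forces $r \in j'(U)$, the sheaf hypothesis says $\text{res}_{r,X}$ is a bijection, so $X$ is a sheaf on $r$; as $U$ and $r$ are arbitrary, $X$ is a $j$-sheaf.

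For part (1), suppose $X$ is a $j$-sheaf and fix $r' \in j'(U)$; the goal is that $\text{res}_{r',X}$ is a bijection. First I would use $j \leq j'$ to produce $r \in j(U)$ together with a refinement $f : r \to r'$ with index map $\alpha$ and components satisfying $r'_{\alpha(i)}f_i = r_i$. The structural fact I would exploit is that $f$ induces a pullback map $f^* : \Match(r',X) \to \Match(r,X)$ (Lemma \ref{lem pullback of matching family by refinement is a matching family}) and that the triangle $f^* \circ \text{res}_{r',X} = \text{res}_{r,X}$ commutes, which follows from $X(f_i)(X(r'_{\alpha(i)})(x)) = X(r_i)(x)$. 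Since $\text{res}_{r,X}$ is a bijection (as $X$ is a sheaf on $r$), this triangle immediately forces $\text{res}_{r',X}$ to be injective, i.e. $X$ is separated on $r'$.

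The real work is in surjectivity. Given an $X$-matching family $\{x'_j\}$ over $r'$, I would pull it back to the matching family $f^*\{x'_j\} = \{X(f_i)(x'_{\alpha(i)})\}$ over $r$ and, using that $X$ is a sheaf on $r$, obtain a unique $x \in X(U)$ with $X(r_i)(x) = X(f_i)(x'_{\alpha(i)})$ for all $i$. The claim to verify, and the main obstacle, is that this $x$ amalgamates the \emph{original} family, namely $X(r'_j)(x) = x'_j$ for every $j$; a priori the refinement $r$ may fail to ``see'' the index $j$ directly, so this does not follow by formal nonsense.

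This is exactly where the coverage axiom must enter. Fixing $j$ and writing $r'_j : V_j \to U$, I would apply the second coverage axiom to $r \in j(U)$ along $r'_j$ to obtain $s = \{s_k : W_k \to V_j\} \in j(V_j)$ with $(r'_j)_*(s) \leq r$, so that each $s_k$ comes with an index $\beta(k)$ and a map $h_k : W_k \to U_{\beta(k)}$ satisfying $r_{\beta(k)}h_k = r'_j s_k$. Since $X$ is separated on $s$, it suffices to check that $X(r'_j)(x)$ and $x'_j$ agree after restriction along every $s_k$. Restricting the first gives $X(s_k)(X(r'_j)(x)) = X(h_k)(X(r_{\beta(k)})(x)) = X(f_{\beta(k)}h_k)(x'_{\alpha(\beta(k))})$ by the defining property of $x$; for the second, the pair $s_k$ and $f_{\beta(k)}h_k$ forms an intersection square for $r'$, since both composites into $U$ equal $r'_j s_k$, so the matching condition on $\{x'_j\}$ yields $X(s_k)(x'_j) = X(f_{\beta(k)}h_k)(x'_{\alpha(\beta(k))})$. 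The two sides coincide, so separatedness on $s$ gives $X(r'_j)(x) = x'_j$. Hence $x$ amalgamates $\{x'_j\}$, $\text{res}_{r',X}$ is bijective, and $X$ is a $j'$-sheaf.
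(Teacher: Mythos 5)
Your proof is correct and takes essentially the same route as the paper: part (2) is the same formal observation, and part (1) is an inline re-derivation of the paper's Lemma \ref{lem sheaf on refinement} (which the paper's proof simply cites), using the same devices of pulling the matching family back along the refinement, amalgamating on the $j$-cover, and then invoking the coverage axiom together with separatedness on a cover of each $V_j$ to check that the amalgamation restricts correctly. The only genuine addition is that you make injectivity of $\text{res}_{r',X}$ explicit via the factorization $\text{res}_{r,X} = f^* \circ \text{res}_{r',X}$, a point the paper's proof of that lemma leaves implicit.
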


\begin{proof}
(1) follows from Lemma \ref{lem sheaf on refinement}, and (2) follows because if $j \subseteq j'$, then $j' \leq j$ by just taking the identity refinement for every covering family.
\end{proof}

\begin{Ex} \label{ex bases give same sheaves as topology}
Suppose that $(X, \tau)$ is a topological space with topology $\tau$ and $\mathscr{B}$ is a basis for $\tau$. We can consider the two coverages $j_X$ (Example \ref{ex open cover coverage}) and $j_\mathscr{B}$ (Example \ref{ex basis for a topology as a coverage}) on $\mathcal{O}(X)$. Clearly $j_{\mathscr{B}} \subseteq j_X$, and thus $\Sh(\mathcal{O}(X), j_X) \subseteq \Sh(\mathcal{O}(X), j_\mathscr{B})$. Now let us show that $j_{\mathscr{B}} \leq j_X$. Suppose that $U$ is an open subset of $X$ and $\mathcal{U} = \{U_i \subseteq U \}$ is an open cover. Then every point $x \in U$ belongs to some $U_i$, and therefore there exists a basis element $B_x \in \mathscr{B}$ such that $x \in B_x$ and $B_x \subseteq U_i$. Thus $B = \{ B_x \}_{x \in U}$ forms an open cover of $U$ and $B \leq \mathcal{U}$. Thus $j_X$ and $j_{\mathscr{B}}$ are equivalent, and hence $\Sh(\mathcal{O}(X), j_X) = \Sh(\mathcal{O}(X), j_{\mathscr{B}})$.
\end{Ex}

\section{Sieves}

\begin{Def} \label{def sieve}
Given a category $\cat{C}$ with $U \in \cat{C}$, a \textbf{sieve} $R$ over $U$ is a family of morphisms over $U$ that is closed under precomposition, namely if $r_i : U_i \to U \in R$, then for any morphism $g: V \to U_i$, the composite $r_i g : V \to U$ is also in $R$. Let $\cons{Sieve}(U)$ denote the full subcategory of $\cons{Fam}(U)$ on the sieves over $U$. Note that if $R$ and $T$ are sieves, then $R \leq T$ if and only if $R \subseteq T$. Let $\cons{sieve}(U)$ denote the poset truncation of $\cons{Sieve}(U)$. 
\end{Def}

\begin{Lemma} \label{lem sieves and subfunctors iso}
Given a category $\cat{C}$ with $U \in \cat{C}$, there is an isomorphism of posets
\begin{equation}
    \cons{sieve}(U) \cong \cons{Sub}(y(U)),
\end{equation}
where $\cons{Sub}(y(U))$ denotes the poset of subobjects $R \hookrightarrow y(U)$ in $\Pre(\cat{C})$.
\end{Lemma}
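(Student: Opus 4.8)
The plan is to construct mutually inverse monotone maps between $\cons{sieve}(U)$ and $\cons{Sub}(y(U))$. The key observation is that a sieve $R$ over $U$ is exactly the data of which morphisms $V \to U$ belong to $R$, and this is closed under precomposition; meanwhile a subfunctor $S \hookrightarrow y(U)$ assigns to each $V \in \cat{C}$ a subset $S(V) \subseteq y(U)(V) = \cat{C}(V,U)$, compatibly with the presheaf restriction maps (which are precomposition). These two notions manifestly encode the same information, so the isomorphism should be essentially tautological once the bookkeeping is set up.

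First I would define the map $\cons{sieve}(U) \to \cons{Sub}(y(U))$. Given a sieve $R$ over $U$, define a subpresheaf $\bar{R} \hookrightarrow y(U)$ by setting, for each $V \in \cat{C}$,
\begin{equation*}
    \bar{R}(V) = \{ h : V \to U \mid h \in R \} \subseteq \cat{C}(V,U) = y(U)(V).
\end{equation*}
I must check that $\bar{R}$ is a genuine subfunctor: for a morphism $g : W \to V$, the restriction map $y(U)(g)$ sends $h \mapsto hg$, and closure of $R$ under precomposition guarantees that if $h \in R$ then $hg \in R$, so $\bar{R}(g)$ restricts to a map $\bar{R}(V) \to \bar{R}(W)$. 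This is precisely the sieve axiom, so this step is where the defining condition is used. Conversely, I would define $\cons{Sub}(y(U)) \to \cons{sieve}(U)$ by sending a subfunctor $S \hookrightarrow y(U)$ to the family $R_S = \{ h \in S(V) : V \in \cat{C} \}$ of all morphisms into $U$ lying in some $S(V)$; naturality of the inclusion $S \hookrightarrow y(U)$ (i.e. compatibility with restriction) forces $R_S$ to be closed under precomposition, hence a sieve.

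Next I would verify that these two assignments are mutually inverse, which amounts to unwinding the definitions: starting from $R$, forming $\bar R$, and collecting its morphisms recovers exactly $R$; and starting from a subfunctor $S$, the sieve $R_S$ has $\bar{R_S}(V) = S(V)$ for every $V$ because $S$ is already determined by which morphisms it contains at each object. Finally I would check monotonicity in both directions: $R \subseteq R'$ as sieves corresponds to $\bar R(V) \subseteq \bar{R'}(V)$ for all $V$, which is exactly the subobject ordering on $\cons{Sub}(y(U))$, and conversely. Since by Definition \ref{def sieve} the ordering on $\cons{sieve}(U)$ is containment and the poset structure on $\cons{Sub}(y(U))$ is the usual subobject ordering, both maps preserve and reflect order, giving an isomorphism of posets.

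I do not expect any serious obstacle here; the only thing requiring genuine care is the direction that uses the sieve axiom (closure under precomposition $\leftrightarrow$ functoriality of the subpresheaf), and the mild subtlety that a subobject in $\Pre(\cat{C})$ is represented by a monomorphism rather than a literal subset, so I would implicitly use that monomorphisms of presheaves are pointwise injective (equivalently, replace each subobject by its canonical image subfunctor) to pass between honest subfunctors and subobject classes.
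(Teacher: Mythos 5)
Your proposal is correct and follows essentially the same route as the paper: both directions are the tautological translation between ``set of morphisms closed under precomposition'' and ``subfunctor of $y(U)$,'' with the sieve axiom corresponding exactly to functoriality of the subpresheaf, and both maps visibly order-preserving and mutually inverse. Your remark that one should replace a subobject (an isomorphism class of monomorphisms) by its canonical image subfunctor is a point the paper's proof leaves implicit, but it does not change the argument.
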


\begin{proof}
If $R \hookrightarrow y(U)$ is a subobject, then consider $\phi(R) = \bigcup_{V \in \mathscr{C}} R(V)$. We want to show that this is a sieve on $U$, namely that it is closed under precomposition. So suppose that $f: V \to U \in R(V)$ and $g: W \to V$ is a map in $\cat{C}$. Since $R$ is a subfunctor, that means that the following diagram commutes:
\begin{equation*}
    \begin{tikzcd}
	{R(V)} & {R(W)} \\
	{\cat{C}(V,U)} & {\cat{C}(W,U)}
	\arrow[hook, from=1-1, to=2-1]
	\arrow[hook, from=1-2, to=2-2]
	\arrow["{g^*}"', from=2-1, to=2-2]
	\arrow["{R(g)}", from=1-1, to=1-2]
\end{tikzcd}
\end{equation*}
Thus $\phi(R)$ is a closed under precomposition. Furthermore if $R \hookrightarrow T \hookrightarrow y(U)$, then $\phi(R) \subseteq \phi(T)$. So $\phi$ defines a map $\phi: \cons{Sub}(y(U)) \to \cons{sieve}(U)$ of posets.

Conversely, if $R = \{f: V \to U \}$ is a sieve on $U$, then let $\psi(R)$ be the presheaf defined by setting $\psi(R)(V)$ to be the set of maps in $R$ with domain $V$. It is easy to check that this is a subfunctor $\psi(R) \hookrightarrow y(U)$ and that if $R \subseteq T$ where $T$ is a sieve, then $\psi(R) \hookrightarrow \psi(T) \hookrightarrow y(U)$. 

It is not hard to show that these constructions are inverse to each other, and thus define an isomorphism of posets.
\end{proof}

\begin{Not} \label{not sieves} 
Thanks to the above result, when we refer to a sieve, we leave ambiguous whether we are thinking of it as a family of morphisms closed under precomposition or as a subfunctor. 
\end{Not}

\begin{Def} \label{def sieve generated by a family of morphisms}
Given a category $\cat{C}$, with $U \in \cat{C}$, and a family of morphisms $r = \{ r_i : U_i \to U \}$ over $U$, let $\overline{r}$ denote the set of morphisms $f : V \to U$ such that $f$ factors through some $r_i \in r$:
\begin{equation*}
    \begin{tikzcd}
	V && U \\
	& {U_i}
	\arrow["f", from=1-1, to=1-3]
	\arrow["{s_f}"', from=1-1, to=2-2]
	\arrow["{r_i}"', from=2-2, to=1-3]
\end{tikzcd}
\end{equation*}
We say that $R$ is the sieve \textbf{generated} by $r$ if $R = \overline{r}$, and that $r$ is a \textbf{generating family} for $R$. It is easy to see that $R$ is then the smallest sieve such that $r \subseteq R$. 
\end{Def}

\begin{Lemma} \label{lem refinement under sifted closure}
Given two families of morphisms $r,r'$ on an object $U$ in a small category $\cat{C}$, then $r \leq r'$ if and only if $\overline{r} \subseteq \overline{r'}$.
\end{Lemma}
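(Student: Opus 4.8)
The plan is to prove the biconditional $r \leq r'$ if and only if $\overline{r} \subseteq \overline{r'}$ by establishing each implication separately, leaning on the characterization of $\overline{r}$ from Definition \ref{def sieve generated by a family of morphisms} as the set of all morphisms factoring through some member of $r$, and on the fact (noted there) that $\overline{r}$ is the smallest sieve containing $r$.

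For the forward direction, suppose $f : r \to r'$ is a refinement with index map $\alpha$ and components $f_i : U_i \to U'_{\alpha(i)}$, so that $r'_{\alpha(i)} f_i = r_i$ for each $i$. I would take an arbitrary $g \in \overline{r}$; by definition $g$ factors as $g = r_i s$ for some $r_i \in r$ and some $s$. Then $g = r'_{\alpha(i)} f_i s$ exhibits $g$ as factoring through $r'_{\alpha(i)} \in r'$, so $g \in \overline{r'}$. Hence $\overline{r} \subseteq \overline{r'}$.

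For the converse, suppose $\overline{r} \subseteq \overline{r'}$. The key observation is that each $r_i : U_i \to U$ lies in $\overline{r}$ (it factors through itself via the identity $1_{U_i}$), hence $r_i \in \overline{r'}$. By definition of $\overline{r'}$, this means $r_i$ factors as $r_i = r'_{j} f_i$ for some $r'_j \in r'$ and some map $f_i : U_i \to U'_j$. Choosing one such $j$ for each $i$ defines the index map $\alpha(i) = j$, and the $f_i$ serve as the components, assembling into a refinement $f : r \to r'$; thus $r \leq r'$.

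The only genuine subtlety — and the step I expect to be the main (if minor) obstacle — is the converse, where constructing the index map $\alpha$ requires selecting, for each $i$, a specific factorization of $r_i$ through some member of $r'$. Since there may be several valid choices of $j$, this invokes the axiom of choice, which the paper assumes (see the Conventions). Once $\alpha$ and the components $f_i$ are fixed, verifying the commutativity condition of Definition \ref{def family of morphisms} is immediate from the factorization equations, so no further work is needed.
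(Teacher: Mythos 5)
Your proof is correct and follows essentially the same route as the paper's: both directions reduce to the observation that $r \leq r'$ means exactly $r \subseteq \overline{r'}$, combined with $\overline{r}$ being the smallest sieve containing $r$ (your forward direction just unwinds that minimality into an explicit composition of factorizations). Your remark about the axiom of choice in the converse is a fair point of care that the paper glosses over, but it does not change the argument.
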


\begin{proof}
Let $r = \{r_i : U_i \to U \}_{i \in I}$ and $r' = \{r'_j : U'_j \to U \}_{j \in J}$. 
$(\Rightarrow)$ If $r \leq r'$, then for every $i \in I$, there is a commutative diagram
\begin{equation*}
    \begin{tikzcd}
	{U_i} && {U_j'} \\
	& U
	\arrow[from=1-1, to=1-3]
	\arrow["{r_i}"', from=1-1, to=2-2]
	\arrow["{r'_j}", from=1-3, to=2-2]
\end{tikzcd}
\end{equation*}
for some $j \in J$. Hence $r \subseteq \overline{r}'$. But this implies that $\overline{r} \subseteq \overline{r}'$.

$(\Leftarrow)$ Suppose that $\overline{r} \subseteq \overline{r'}$. Then $r \subseteq \overline{r'}$, which is basically the definition of $r \leq r'$.
\end{proof}

\begin{Prop}[{\cite[Lemma 3.5]{nlab:sieve}}] \label{prop sieves are coequalizers of generating family}
Given a category $\cat{C}$ with $X \in \cat{C}$ and a sieve $R$ over $X$, where $r = \{r_i : U_i \to X\}$ is a generating family for $R$, let $U = \sum_{i \in I} y(U_i)$. Then there is an isomorphism of presheaves:
\begin{equation} \label{eq sieves as colimits of generators}
    R \cong \coeq \left( U \times_{y(X)} U \rightrightarrows U \right)
\end{equation}
where the right hand side denotes the coequalizer of the two projection maps from the pullback $U \times_{y(X)} U$.
\end{Prop}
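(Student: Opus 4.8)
The plan is to recognize the right-hand side of (\ref{eq sieves as colimits of generators}) as the image factorization of the canonical map induced by the generating family. By the Yoneda lemma, each $r_i : U_i \to X$ corresponds to a morphism $y(U_i) \to y(X)$, and these assemble into a single map $p : U = \sum_{i \in I} y(U_i) \to y(X)$ out of the coproduct. First I would identify $R$ with the image of $p$: unwinding Definition \ref{def sieve generated by a family of morphisms}, a morphism $f : V \to X$ lies in $\overline{r}(V)$ exactly when it factors as $r_i s_f$ for some $i$, which is precisely the condition that $f$ lie in the image of $p_V : U(V) \to y(X)(V)$. Since $R = \overline{r}$, this exhibits $R$ as the subpresheaf of $y(X)$ given by the (pointwise) image of $p$.

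With that identification, the isomorphism (\ref{eq sieves as colimits of generators}) amounts to the claim that the image of $p$ is the coequalizer of its kernel pair $U \times_{y(X)} U \rightrightarrows U$. Since limits and colimits in $\Pre(\cat{C})$ are computed pointwise, I would verify this one object $V$ at a time in $\ncat{Set}$. Here $U(V)$ is the set of pairs $(i, g)$ with $g : V \to U_i$, the pullback $(U \times_{y(X)} U)(V)$ is the set of pairs $((i,g),(j,h))$ with $r_i g = r_j h$, and the two projections forget one coordinate. The coequalizer in $\ncat{Set}$ is then $U(V)$ modulo the relation $(i,g) \sim (j,h)$ iff $r_i g = r_j h$.

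The key observation is that this relation is already an equivalence relation, being the kernel pair of the function $p_V : (i,g) \mapsto r_i g$, so the coequalizer is simply the quotient $U(V)/{\sim}$, which is canonically in bijection with the image of $p_V$, namely $R(V)$. Finally I would check that these bijections are natural in $V$ so that they assemble into an isomorphism of presheaves; naturality is immediate, since the projections, $p$, and the restriction maps of the presheaves involved are all morphisms of presheaves, and hence the pointwise quotients glue to the presheaf-level coequalizer.

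I do not expect a serious obstacle. The only point requiring care is the standard fact that the coequalizer of a kernel pair recovers the image, equivalently that $\ncat{Set}$, and hence $\Pre(\cat{C})$, is an exact category in which every epimorphism is effective. Once this is invoked pointwise, the remainder is routine bookkeeping of the indices $i, j$ and the factoring maps.
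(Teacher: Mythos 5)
Your proposal is correct and follows essentially the same route as the paper: both arguments compute the coequalizer pointwise, observe that the relation induced by the two projections (namely $(i,g)\sim(j,h)$ iff $r_i g = r_j h$) is already an equivalence relation, and identify the resulting quotient with $R(V)$ via postcomposition with the $r_i$. Your packaging of this as ``the coequalizer of the kernel pair of $p$ is the image of $p$'' is just a more conceptual phrasing of the explicit bijection $\phi$ constructed in the paper, and the naturality check is the same routine step in both.
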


\begin{proof}
Given $V \in \cat{C}$, let $C \coloneqq \coeq \left( U \times_{y(X)} U \rightrightarrows U \right)(V)$. Then we have
\begin{equation*}
\begin{aligned}
 	C & \cong \coeq \left( \Pre(\cat{C})(y(V),U \times_{y(X)} U) \rightrightarrows \Pre(\cat{C})(y(V),U) \right) \\
 	& \cong \coeq \left( \Pre(\cat{C})(y(V),U) \times_{\Pre(\cat{C})(y(V),y(X))} \Pre(\cat{C})(y(V),U) \rightrightarrows \Pre(\cat{C})(y(V),U) \right) \\
 	& \cong \coeq \left( \Pre(\cat{C})(y(V),U) \times_{\cat{C}(V,X)}\Pre(\cat{C})(y(V),U) \rightrightarrows \Pre(\cat{C})(y(V),U) \right) \\
 	& \cong \coeq \left( \left( \sum_i \cat{C}(V,U_i) \right) \times_{\cat{C}(V, X)} \left( \sum_j \cat{C}(V,U_j) \right) \rightrightarrows \sum_i \cat{C}(V,U_i) \right),
 	\end{aligned}
 \end{equation*}
where the third isomorphism holds by the Yoneda lemma and that fact that colimits are computed objectwise. 

Explicitly, $C$ is the set of morphisms $V \to U_i$ for some $i \in I$, modulo the following equivalence relation: for morphisms $f: V \to U_i$, $f': V \to U_j$, then $f \sim f'$ if and only if the following diagram commutes:
\begin{equation*}
\begin{tikzcd}
	V & {U_j} \\
	{U_i} & X
	\arrow["f"', from=1-1, to=2-1]
	\arrow["{r_i}"', from=2-1, to=2-2]
	\arrow["{r_j}", from=1-2, to=2-2]
	\arrow["{{f'}}", from=1-1, to=1-2]
\end{tikzcd}
\end{equation*}
Define the map $\phi: C \to R(V)$ as follows: given $f: V \to U_i$, post-compose with $r_i: U_i \to X$. This is clearly well defined and injective by definition. If $f' \in R(V)$, then $f'$ can be written as a composite $f' = r_i f''$ where $f'' : V \to U_i$ is some morphism in $\cat{C}$. Thus $\phi(f'') = f'$, so $\phi$ is surjective, and therefore a bijection. Naturality is easy to show.
\end{proof}

\begin{Rem}
Since any sieve is a generating family for itself, this means that we can always write any sieve $R \hookrightarrow y(U)$ as 
\begin{equation} \label{eqn sieves as colimits}
R \cong \coeq \left( \sum_{g: W \to U, f: V \to U  \in R} y(W) \times_{y(U)} y(V) \rightrightarrows \sum_{f \in R} y(V) \right).
\end{equation}
\end{Rem}

Another convenient characterization of a sieve as a colimit is given by the following result.

\begin{Lemma} \label{lem sieve as coequalizer}
Given a small category $\cat{C}$ and a sieve $R \hookrightarrow y(U)$ we have
\begin{equation*}
    R \cong \text{coeq} \left( \sum_{W \xrightarrow{g} V \xrightarrow{f} U \in R } y(W) \rightrightarrows \sum_{V \xrightarrow{f} U \in R} y(V) \right)
\end{equation*}
where the left hand sum is indexed by all pairs of maps $(g,f)$ such that $fg \in R$, and the two maps are given by projection $(g,f) \mapsto g$ and composition $(g,f) \mapsto fg$.
\end{Lemma}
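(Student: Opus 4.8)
The plan is to reduce the claim to a pointwise computation in $\Set$. Since colimits in $\Pre(\cat{C})$ are computed objectwise and the Yoneda lemma gives $y(V)(T) = \cat{C}(T,V)$, it suffices to show that for each $T \in \cat{C}$ the set $R(T)$ is the coequalizer
\begin{equation*}
\coeq\left( \sum_{(g,f)} \cat{C}(T, W) \rightrightarrows \sum_{f \in R} \cat{C}(T, V) \right)
\end{equation*}
in $\Set$, naturally in $T$, where the right-hand sum ranges over morphisms $f : V \to U$ in $R$ and the left-hand sum ranges over composable pairs $W \xrightarrow{g} V \xrightarrow{f} U$ with $f \in R$ (so that $fg \in R$ automatically, $R$ being a sieve). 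Concretely this coequalizer is the quotient of $\sum_{f \in R}\cat{C}(T,V)$ by the equivalence relation generated by declaring, for each such pair $(g,f)$ and each $c : T \to W$, the element $c$ in the $fg$-summand (the image under the composition map) equal to $gc$ in the $f$-summand (the image under the projection map $y(g)$).

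Next I would write down the comparison map $\Phi_T : \sum_{f \in R} \cat{C}(T,V) \to R(T)$ sending a morphism $a : T \to V$ in the $f$-summand to the composite $fa$. This lands in $R(T)$ precisely because $R$ is a sieve, hence closed under precomposition. One checks that $\Phi_T$ coequalizes the two structure maps: on the generator coming from $(g,f)$ and $c : T \to W$ it sends the $fg$-representative $c$ to $(fg)c$ and the $f$-representative $gc$ to $f(gc)$, and these agree by associativity. Hence $\Phi_T$ descends to a map out of the coequalizer.

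It then remains to verify that the descended map is a bijection. Surjectivity is immediate: any $h \in R(T)$ equals $\Phi_T(h, 1_T)$, using that $h$ is itself an element of $R$ and so indexes a summand. For injectivity the key observation is that every element $(f,a)$ is equivalent, under the generating relation applied to the pair $(a,f)$ with $c = 1_T$, to the normal form $(fa, 1_T)$; therefore two elements $(f,a)$ and $(f',a')$ with the same image $h = fa = f'a'$ are both equivalent to $(h, 1_T)$, hence to each other. Naturality in $T$ is routine, so the pointwise bijections assemble into the claimed isomorphism of presheaves.

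The only real subtlety, and the step I expect to be the main obstacle, is bookkeeping the two structure maps correctly: matching ``composition $(g,f)\mapsto fg$'' with inclusion-by-identity into the $fg$-summand and ``projection $(g,f)\mapsto g$'' with inclusion-by-$y(g)$ into the $f$-summand, and confirming that the hypothesis that $R$ is a sieve is exactly what guarantees both that $\Phi_T$ takes values in $R$ and that the normal form $(h,1_T)$ exists. Equivalently, one could argue abstractly by recognizing the right-hand side as the standard expression of the density (co-Yoneda) colimit $R \cong \colim_{(V,f) \in \int R} y(V)$ over the category of elements of $R$ as the coequalizer of the coproducts over the morphisms and objects of $\int R$; the pointwise verification above is just that identification made explicit.
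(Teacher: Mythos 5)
Your proposal is correct and is in substance the paper's own proof: the paper simply cites the coYoneda Lemma together with the standard presentation of a colimit as a coequalizer of coproducts, and your pointwise verification (including the normal-form trick $(f,a)\sim(fa,1_T)$) is exactly that identification of $R$ with $\colim_{(V,f)\in\int R} y(V)$ made explicit, as you yourself note in your closing paragraph. Your reading of the index set — taking $f\in R$ so that both structure maps land in legitimate summands — is the right one and is the only point where the statement needed interpretation.
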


\begin{proof}
This follows from the coYoneda Lemma \ref{lem coyoneda lemma}, and the conventional way of writing any colimit as a coequalizer of coproducts.
\end{proof}

\begin{Lemma}[{\cite[C2.1 Lemma 2.1.3]{johnstone2002sketches}}] \label{lem sheaf on covering family iff on sieve it generates}
Given a category $\cat{C}$, a presheaf $X$ on $\cat{C}$ is a sheaf on a family of morphisms $r = \{ U_i \to U \}$ if and only if it is a sheaf on the sieve $R = \overline{r}$ it generates.
\end{Lemma}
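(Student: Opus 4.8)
The plan is to show that the inclusion $r \subseteq R = \overline{r}$ induces a bijection on matching families that is compatible with the restriction maps of Definition \ref{def separated and sheaf}, so that $\text{res}_{r,X}$ and $\text{res}_{R,X}$ are simultaneously bijective. The inclusion is a refinement $\iota : r \to R$ whose index map sends $i$ to the element $r_i \in R$ and whose components are identities, so Lemma \ref{lem pullback of matching family by refinement is a matching family} supplies a map $\iota^* : \Match(R, X) \to \Match(r, X)$, which here is simply restriction: it sends $\{x_f\}_{f \in R}$ to $\{x_{r_i}\}_{i \in I}$. Moreover $\iota^* \circ \text{res}_{R,X} = \text{res}_{r,X}$, since $\text{res}_{R,X}(x) = \{X(f)(x)\}_{f \in R}$ restricts to $\{X(r_i)(x)\}_{i \in I} = \text{res}_{r,X}(x)$. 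Thus it suffices to prove that $\iota^*$ is a bijection, as then one of $\text{res}_{r,X}$, $\text{res}_{R,X}$ is bijective exactly when the other is.

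To build the inverse I would extend a matching family $\{x_i\}_{i \in I}$ over $r$ to one over $R$. Every $f \in R$ factors as $f = r_i s_f$ for some $i$ and some $s_f : \text{dom}(f) \to U_i$; set $x_f = X(s_f)(x_i)$. The first thing to check is that this is independent of the chosen factorization: if $f = r_i s = r_j s'$, then $(s, s')$ constitutes an intersection square for $r$, so the matching condition gives $X(s)(x_i) = X(s')(x_j)$. Taking $f = r_i$ with $s_f = 1_{U_i}$ shows the extension restricts back to $\{x_i\}$, so $\iota^*$ applied to the extension is the identity on $\Match(r,X)$.

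Next I would verify that $\{x_f\}_{f \in R}$ is genuinely matching over $R$. Given an intersection square $f u = g v$ with $f, g \in R$, write $f = r_i s_f$ and $g = r_j s_g$; then $r_i (s_f u) = r_j (s_g v)$ is an intersection square for $r$, so $X(s_f u)(x_i) = X(s_g v)(x_j)$, which rearranges to $X(u)(x_f) = X(v)(x_g)$. Finally, to see that the extension is a two-sided inverse, I would check that extending then restricting recovers any matching family $\{x_f\}_{f \in R}$: for $f = r_i s_f$, the equation $f \cdot 1_{\text{dom}(f)} = r_i \cdot s_f$ is an intersection square for $R$, so the matching condition over $R$ forces $x_f = X(s_f)(x_{r_i})$, which is exactly the value the extension assigns. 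Hence $\iota^*$ is a bijection.

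The main obstacle is the well-definedness of the extension together with the verification that it lands in $\Match(R,X)$: both rest on recognizing that the relevant composites assemble into intersection squares for the original family $r$, letting the matching condition for $\{x_i\}$ do the work. Everything else is bookkeeping with the factorizations $f = r_i s_f$, and the conclusion then follows from the identity $\iota^* \circ \text{res}_{R,X} = \text{res}_{r,X}$ since $\iota^*$ is bijective.
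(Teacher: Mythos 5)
Your proof is correct, and it reorganizes the argument compared with the paper. The paper treats the two implications separately and works with amalgamations throughout: for $(\Rightarrow)$ it restricts a matching family on $R$ along the identity-component refinement $r \to R$, amalgamates using the sheaf condition on $r$, and then uses the intersection square $f\cdot 1 = r_i\cdot s_f$ to check the amalgamation also works for $R$; for $(\Leftarrow)$ it chooses, via the axiom of choice, a refinement $\pi : R \to r$ and pulls the matching family back before amalgamating. You instead prove the single statement that $\iota^* : \Match(R,X)\to\Match(r,X)$ is a bijection satisfying $\iota^*\circ\mathrm{res}_{R,X}=\mathrm{res}_{r,X}$, from which both directions follow at once. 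The computational content is identical --- the same two intersection squares (one for $r$, giving well-definedness and the matching condition of the extension, and one for $R$, forcing $x_f = X(s_f)(x_{r_i})$) do all the work --- but your packaging is more economical: it delivers existence and uniqueness of amalgamations in both directions simultaneously, whereas the paper's $(\Rightarrow)$ direction only verifies existence explicitly (uniqueness is implicit, since any two amalgamations for $\{x_f\}_{f\in R}$ restrict to amalgamations for $\{x_{r_i}\}_{i}$ and $X$ is a sheaf on $r$). One cosmetic slip: in your final verification you write ``extending then restricting'' where you mean ``restricting then extending'' (you start from a matching family on $R$), but the computation you give is the correct one, and the appeal to choice in picking the factorizations $f = r_i s_f$ is harmless since well-definedness shows the value is independent of the choice.
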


\begin{proof}
$(\Rightarrow)$ First we note that there is a refinement $i: r \to \overline{r} = R$ given componentwise by the identity map. Now if $\{ x_f \}_{f \in R}$ is a matching family for $R$, then we can restrict this family $\{ x_f \}|_r \coloneqq i^*\{ x_f \}$ to a matching family on $r$ by Lemma \ref{lem pullback of matching family by refinement is a matching family}. Since $X$ is a sheaf on $r$, there is an amalgamation $x \in X(U)$. We want to show that this is an amalgamation for $\{x_f \}$. Note that since $x$ is an amalgamation for $\{x_f \}|_r$, we know that $X(r_i)(x) = x_{r_i}$, so the above is equivalent to asking that $X(s_f)(x_{r_i}) = x_f$ whenever $f$ factors as $r_i s_f$ for some $s_f : V \to U_i$. Consider the commutative diagram:
\begin{equation*}
    \begin{tikzcd}
	V & {U_i} \\
	V & U
	\arrow["{r_i}", from=1-2, to=2-2]
	\arrow[Rightarrow, no head, from=1-1, to=2-1]
	\arrow["{s_f}", from=1-1, to=1-2]
	\arrow["f"', from=2-1, to=2-2]
\end{tikzcd}
\end{equation*}
since both $f$ and $r_i$ belong to $R$ and $\{x_f \}$ is a matching family for $R$, this implies that $X(1_V)(x_f) = x_f = X(s_f)(x_{r_i})$, which proves that $x$ is an amalgamation for $\{x_f\}$. 

$(\Leftarrow)$ Suppose $X$ is a sheaf on $R$, and $\{ x_i \}$ is a matching family for $r$. By definition every morphism in $R$ factors through some $r_i \in r$. Using the axiom of choice, we can construct a refinement $\pi: R \to r$, where we choose $\pi(r_i) = r_i$ for all $r_i \in r$. Then $\pi^* \{ x_i \}$ is a matching family for $R$ such that $i^* \pi^* \{ x_i \} = \{ x_i \}$. Since $X$ is a sheaf on $R$, $\{y_f \}$ amalgamates uniquely to some $y$ on $X(U)$. Thus $y$ is also an amalgamation of $\{x_i \}$. Let us show that $y$ is a unique amalgamation of $\{x_i \}$. If $z$ were another amalgamation of $\{x_i \}$, then $X(r_i)(z) = x_i$, so for every morphism $f \in R$, which can be written as a composite $f = r_i s_f$, then
$$X(r_i s_f)(z) = X(s_f) X(r_i)(z) = X(s_f)(x_i) = y_f.$$
Thus $z$ is also an amalgamation of $\{y_f \}$, so $y = z$. Thus $X$ is a sheaf on $r$.
\end{proof}

\begin{Lemma}
Given a morphism $g: V \to U$, recall the definition of pullback of families by $g$ (Definition \ref{def pushforward of a family}). If $R$ is a sieve over $U$, then as a presheaf $g^*(R)$ is a pullback
\begin{equation*}
    \begin{tikzcd}
	{g^*(R)} & R \\
	{y(V)} & {y(U)}
	\arrow["g"', from=2-1, to=2-2]
	\arrow[hook, from=1-2, to=2-2]
	\arrow[hook, from=1-1, to=2-1]
	\arrow[from=1-1, to=1-2]
	\arrow["\lrcorner"{anchor=center, pos=0.125}, draw=none, from=1-1, to=2-2]
\end{tikzcd}
\end{equation*}
Similarly if $S$ is a sieve over $V$, then as a presheaf $g_*(S)$ is the image (Definition \ref{def image}) of the composite map
\begin{equation*}
   S \hookrightarrow y(V) \xrightarrow{g} y(U). 
\end{equation*}
\end{Lemma}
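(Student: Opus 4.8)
The plan is to prove both statements objectwise, using that all limits and colimits in $\Pre(\cat{C})$ — in particular pullbacks and epi–mono factorizations — are computed pointwise in $\Set$, together with the identification of sieves with subobjects of representables from Lemma \ref{lem sieves and subfunctors iso}. Throughout I regard $R$ and $S$ as subfunctors, so that $R(W) \subseteq \cat{C}(W,U)$ is the set of those morphisms $W \to U$ lying in $R$, and similarly for $S$.

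For the first claim I would form the objectwise pullback $P$ of $R \hookrightarrow y(U) \xleftarrow{g} y(V)$. At an object $W$ this is $P(W) = R(W) \times_{\cat{C}(W,U)} \cat{C}(W,V)$, i.e. the set of pairs $(h,f)$ with $h \in R(W)$, $f : W \to V$, and $h = gf$. Since $h$ is determined by $f$, projecting to the second coordinate identifies $P(W)$ with $\{ f : W \to V \mid gf \in R \}$. The crucial step is then to recognize this as $g^*(R)(W)$: by Definition \ref{def pushforward of a family}, $f$ lies in $g^*(R)$ precisely when $gf$ factors through some generator of $R$, and because $R$ is a sieve (hence closed under precomposition) this is equivalent to $gf \in R$. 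One checks that these bijections are natural in $W$ and carry the pullback projections to the inclusion $g^*(R) \hookrightarrow y(V)$ and to the map $f \mapsto gf$ into $R$, which establishes that $g^*(R)$ with these two legs is the pullback.

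For the second claim I would first verify that the family $g_*(S) = \{ gs \mid s \in S \}$ is itself a sieve over $U$: if $gs \in g_*(S)$ and $h$ is any morphism composable with $s$, then $(gs)h = g(sh)$ and $sh \in S$ since $S$ is a sieve, so $(gs)h \in g_*(S)$. This lets me regard $g_*(S)$ directly as a subfunctor of $y(U)$, with $g_*(S)(W) = \{ gs \mid s \in S(W) \}$. On the other hand, the image of a morphism of presheaves is computed objectwise as the set-theoretic image, so the image of $S \hookrightarrow y(V) \xrightarrow{g} y(U)$ at $W$ is exactly $\{ gs \mid s \in S(W) \} \subseteq \cat{C}(W,U)$. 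Comparing the two descriptions shows they coincide as subfunctors of $y(U)$, naturally in $W$.

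The main obstacle, such as it is, lies in the first part: one must be careful that the defining condition of $g^*$ (factorization through a generator) collapses to membership in the sieve, which is exactly where closure under precomposition of $R$ is used, and one must note that the pullback of the monomorphism $R \hookrightarrow y(U)$ along $g$ is again a monomorphism, so that the resulting subfunctor is genuinely the sieve $g^*(R)$. The second part is essentially bookkeeping, the only point needing attention being the verification that $g_*(S)$ is already closed under precomposition, so that no further passage to a generated sieve is required before matching it against the image.
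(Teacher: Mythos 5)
Your proof is correct. The paper actually states this lemma without supplying a proof, and your objectwise argument is the standard way to fill that gap: computing the pullback and the image pointwise in $\Set$, and correctly isolating the one nontrivial point, namely that the factorization condition in the definition of $g^*(R)$ collapses to the membership condition $gf \in R$ precisely because $R$ is closed under precomposition (and, dually, that $g_*(S)$ is already a sieve, so it can be compared directly with the objectwise image).
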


\begin{Lemma} \label{lem adjunction on sieve posets}
Let $\cat{C}$ be a category and $g : V \to U$ be a morphism. Then we have an adjunction
\begin{equation}
    g_* : \cons{sieve}(V) \rightleftarrows \cons{sieve}(U) : g^*.
\end{equation}
where $g_*$ is the pushforward map on families (Definition \ref{def pushforward of a family}).
\end{Lemma}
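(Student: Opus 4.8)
The plan is to recognize that an adjunction between posets is precisely a Galois connection, so that proving the statement reduces to a single biconditional. Since both $\cons{sieve}(V)$ and $\cons{sieve}(U)$ are posets and for sieves the relation $\leq$ coincides with $\subseteq$ (Definition \ref{def sieve}), the adjunction $g_* \dashv g^*$ amounts exactly to the claim that for every sieve $S$ over $V$ and every sieve $R$ over $U$,
\begin{equation*}
    g_*(S) \subseteq R \quad \Longleftrightarrow \quad S \subseteq g^*(R).
\end{equation*}
First I would record the explicit descriptions of the two operations on sieves. By the preceding lemma, $g_*(S)$ is the image of $S \hookrightarrow y(V) \xrightarrow{g} y(U)$, which unwinds to the sieve $\{ h : W \to U \mid h = g k \text{ for some } k \in S \}$; one should check, which is immediate from $S$ being closed under precomposition, that this is genuinely a sieve over $U$. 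Likewise $g^*(R) = \{ f : W \to V \mid gf \text{ factors through some element of } R \}$, and here I would first simplify: because $R$ is a sieve, $gf$ factors through an element of $R$ if and only if $gf \in R$, so $g^*(R) = \{ f : W \to V \mid gf \in R \}$.

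With these descriptions in hand, the biconditional is a short chase. For the forward direction, assuming $g_*(S) \subseteq R$, any $k \in S$ gives $gk \in g_*(S) \subseteq R$, hence $k \in g^*(R)$, so $S \subseteq g^*(R)$. For the reverse direction, assuming $S \subseteq g^*(R)$, any $h \in g_*(S)$ has the form $h = gk$ with $k \in S \subseteq g^*(R)$, whence $gk \in R$, i.e. $h \in R$; thus $g_*(S) \subseteq R$.

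The only genuine subtlety, which is what passes for the ``hard part'' here, is bookkeeping at the level of definitions rather than any real difficulty: one must confirm that the pushforward of a sieve is again a sieve, so that $g_*$ indeed lands in $\cons{sieve}(U)$, and that the ``factors through'' clause in the definition of $g^*$ collapses to membership precisely because $R$ is precomposition-closed. Alternatively, one could avoid the explicit computation altogether by transporting along the isomorphism $\cons{sieve}(-) \cong \Sub(y(-))$ of Lemma \ref{lem sieves and subfunctors iso} and invoking the standard adjunction between taking images along $g : y(V) \to y(U)$ and pulling back subobjects, but the direct verification above is self-contained and just as quick.
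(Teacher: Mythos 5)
Your proof is correct. The paper states this lemma without any proof, so there is nothing to compare against; your reduction to the Galois-connection biconditional $g_*(S) \subseteq R \Leftrightarrow S \subseteq g^*(R)$ is the natural argument, and you correctly handle the two points that actually need checking — that $g_*(S)$ is again a sieve (via $gk \cdot m = g(km)$ and closure of $S$ under precomposition) and that the ``factors through'' clause in $g^*(R)$ collapses to $gf \in R$ because $R$ is a sieve.
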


\begin{Lemma} \label{lem pullback of sieve by map in sieve is maximal sieve}
Let $\cat{C}$ be a category and $R \hookrightarrow y(U)$ a sieve. A map $g: V \to U$ in $\cat{C}$ belongs to $R$ if and only if $g^*(R) = y(V)$.
\end{Lemma}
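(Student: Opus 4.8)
The plan is to unwind the two definitions involved and observe that both directions collapse to the precomposition-closure property of the sieve $R$. First I would simplify the description of the pullback sieve $g^*(R)$. By Definition \ref{def pushforward of a family}, $g^*(R)$ consists of those maps $f : W \to V$ such that $gf$ factors through some element of $R$. Since $R$ is a sieve, hence closed under precomposition (Definition \ref{def sieve}), the composite $gf$ factors through an element of $R$ if and only if $gf$ itself lies in $R$: one direction is trivial (a map factors through itself), and the other is exactly closure under precomposition. This gives the reformulation
\begin{equation*}
    g^*(R) = \{ f : W \to V \mid gf \in R \}.
\end{equation*}
I would also recall that $y(V)$ is the maximal family over $V$ (Notation \ref{not canonical families}), containing every morphism with codomain $V$.

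With this reformulation in hand, both implications become immediate. For the forward direction, I would assume $g \in R$; then for any $f : W \to V$ the composite $gf$ is a precomposite of $g$, so $gf \in R$ by closure under precomposition, which shows every morphism into $V$ lies in $g^*(R)$, i.e. $g^*(R) = y(V)$. For the converse, I would assume $g^*(R) = y(V)$; since $1_V : V \to V$ has codomain $V$ it belongs to $y(V) = g^*(R)$, and therefore $g = g \cdot 1_V \in R$.

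There is no real obstacle here; the only point that deserves care is the initial reduction, where one must genuinely use that $R$ is a sieve and not merely an arbitrary family. For a general family $r$, membership in $g^*(r)$ is a statement about factoring through a generator, which is strictly weaker than lying in $r$; the two coincide precisely because sieves are closed under precomposition. Once this is noted, the lemma is essentially a restatement of that closure property, with $1_V$ serving as the witness that makes maximality of $g^*(R)$ equivalent to $g \in R$.
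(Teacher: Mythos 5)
Your proof is correct and follows essentially the same route as the paper's: reduce $g^*(R)$ to $\{f : W \to V \mid gf \in R\}$ using closure under precomposition, then use precomposition-closure for the forward direction and the identity $1_V$ for the converse. Your explicit justification of the initial reduction (factoring through an element of $R$ versus lying in $R$) is a point the paper glosses over, and it is worth making.
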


\begin{proof}
$(\Rightarrow)$ If $R$ is a sieve on $U$, then $g^*(R) = \{ f: W \to V \, : \, gf \in R \}$. But $R$ is a sieve and $g \in R$, so every map $f: W \to V$ has this property, since $R$ is closed under precomposition.
$(\Leftarrow)$ Suppose that $g^*(R) = y(V)$. Then $g_*(1_V) \in R$, thus $g \in R$.
\end{proof}

\begin{Def}
We say that a coverage $j$ is \textbf{sifted} if every $r \in j(U)$ is a sieve\footnote{Strictly speaking, there do not exist any sifted coverages, because Definition \ref{def coverage} requires that $(1_U)$ be a covering family. As soon as a sieve $R$ contains the identity map, then $R = y(U)$. Thus for $j$ to be a sifted coverage, we instead require that $y(U)$ be a covering sieve for every object $U$.}. We call covering families of sifted coverages \textbf{covering sieves}.
\end{Def}

\begin{Rem}
We will see later that being sifted is a very useful property for a coverage to have when one wishes to prove certain theorems. One immediate convenience is the following result.
\end{Rem}

\begin{Lemma} \label{lem convenience of using sieves for matching families}
Let $R$ be a sieve on an object $U$ in a category $\cat{C}$ and $X$ a presheaf on $\cat{C}$. A collection $\{ s_f \in X(V) \}_{f \in R}$ of sections for every $f: V \to U$ in $R$ is a matching family if and only if 
$$X(g)(s_f) = s_{fg}$$
for every morphism $g: W \to V$ in $\cat{C}$.
\end{Lemma}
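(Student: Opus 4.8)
The plan is to unwind the definition of a matching family (Definition \ref{def presheaf, section, matching family, amalgamation}) for the family given by the sieve $R$ itself, whose members $f: V \to U$ serve as the indexing morphisms, and to verify both implications by producing or analyzing the appropriate intersection squares. Throughout I would use that $R$ is a sieve, so whenever $f: V \to U$ lies in $R$ and $g: W \to V$ is any morphism of $\cat{C}$, the composite $fg$ again lies in $R$; this is exactly what makes the symbol $s_{fg}$ meaningful and is the only structural input beyond the presheaf functoriality of $X$.

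For the forward direction, suppose $\{ s_f \}$ is a matching family and fix $f: V \to U$ in $R$ together with a morphism $g: W \to V$. I would apply the matching condition to the intersection square with top-left corner $W$ whose two legs are $g: W \to V$ and $1_W: W \to W$, viewing $f$ and $fg$ as the two members of $R$ playing the roles of $r_i$ and $r_j$ over $U$; commutativity is the trivial identity $f \cdot g = (fg) \cdot 1_W$. The matching condition then reads $X(g)(s_f) = X(1_W)(s_{fg}) = s_{fg}$, which is the desired equality.

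For the reverse direction, assume $X(g)(s_f) = s_{fg}$ for all composable $g$ and all $f \in R$, and take an arbitrary intersection square for $R$: members $f: V \to U$ and $f': V' \to U$ of $R$ with legs $u: W \to V$ and $u': W \to V'$ satisfying $fu = f'u'$. Applying the hypothesis twice gives $X(u)(s_f) = s_{fu}$ and $X(u')(s_{f'}) = s_{f'u'}$, and since $fu = f'u'$ as elements of $R$, the sections $s_{fu}$ and $s_{f'u'}$ coincide. Hence $X(u)(s_f) = X(u')(s_{f'})$, which is precisely the matching condition, so $\{ s_f \}$ is a matching family.

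Both implications are short once the indexing is set up correctly, so I do not expect a serious obstacle. The only points requiring care are the bookkeeping of which morphisms play the role of the $r_i$ and which play the role of the $u_i$ in Definition \ref{def presheaf, section, matching family, amalgamation}, and remembering to invoke closure of $R$ under precomposition so that each composite appearing as a subscript ($fg$, $fu$, $f'u'$) genuinely indexes a section of the collection.
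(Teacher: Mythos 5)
Your proof is correct and follows the same route as the paper's: the forward direction uses the intersection square with legs $g$ and $1_W$ over the members $f$ and $fg$ of $R$, and the reverse direction applies the hypothesis to both legs of an arbitrary intersection square and uses $fu = f'u'$. No gaps; the attention to closure of $R$ under precomposition is exactly the right point to flag.
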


\begin{proof}
$(\Rightarrow)$ Suppose $\{s_f \}$ is a matching family, then consider the commutative diagram:
\begin{equation*}
    \begin{tikzcd}
	W & W \\
	V & U
	\arrow["g"', from=1-1, to=2-1]
	\arrow["f"', from=2-1, to=2-2]
	\arrow["gf", from=1-2, to=2-2]
	\arrow[Rightarrow, no head, from=1-1, to=1-2]
\end{tikzcd}
\end{equation*}
this implies that $X(g)(s_f) = s_{fg}$.

$(\Leftarrow)$ Suppose we have a commutative diagram:
\begin{equation*}
    \begin{tikzcd}
	A & {V'} \\
	V & U
	\arrow["h", from=1-1, to=1-2]
	\arrow["g"', from=1-1, to=2-1]
	\arrow["f"', from=2-1, to=2-2]
	\arrow["{f'}", from=1-2, to=2-2]
\end{tikzcd}
\end{equation*}
where $f,f' \in R$. Then $X(g)(s_f) = s_{fg} = s_{f'h} = X(h)(s_{f'})$, thus $\{ s_f \}$ is a matching family.
\end{proof}

\begin{Lemma} \label{lem matching family in bijection with presheaf maps for sieves}
Given a category $\cat{C}$, a presheaf $X$ on $\cat{C}$, and a sieve $R \hookrightarrow y(U)$ there is a canonical map
\begin{equation*}
    \Pre(\cat{C})(R,X) \to \text{Match}(R, X)
\end{equation*}
and furthermore this map is a bijection.
\end{Lemma}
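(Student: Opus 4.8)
The plan is to exhibit the canonical map explicitly and then construct an explicit two-sided inverse, using Lemma \ref{lem convenience of using sieves for matching families} to translate between naturality and the matching condition. Throughout I will use the identification from Notation \ref{not sieves}: viewing $R$ as a subfunctor of $y(U)$, the set $R(V)$ consists precisely of those morphisms $f : V \to U$ in $\cat{C}$ that belong to the sieve $R$, and for $g : W \to V$ the transition map $R(g) : R(V) \to R(W)$ is precomposition, $R(g)(f) = fg$.

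First I would define the canonical map. Given a natural transformation $\eta \in \Pre(\cat{C})(R,X)$, set $s_f \coloneqq \eta_V(f) \in X(V)$ for each $f : V \to U$ in $R$. I claim $\{ s_f \}_{f \in R}$ is a matching family. By Lemma \ref{lem convenience of using sieves for matching families} it suffices to check that $X(g)(s_f) = s_{fg}$ for every $g : W \to V$. This is exactly naturality of $\eta$ applied to $g$: the naturality square for $\eta$ at $g$ reads $\eta_W \circ R(g) = X(g) \circ \eta_V$, and evaluating at $f \in R(V)$ gives $\eta_W(fg) = X(g)(\eta_V(f))$, i.e. $s_{fg} = X(g)(s_f)$. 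Thus $\eta \mapsto \{ s_f \}$ is a well-defined map $\Pre(\cat{C})(R,X) \to \Match(R,X)$.

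Next I would construct the inverse. Given a matching family $\{ s_f \}_{f \in R} \in \Match(R,X)$, define $\eta_V : R(V) \to X(V)$ by $\eta_V(f) = s_f$. Naturality of the resulting family $\{ \eta_V \}$ is again precisely the identity $X(g)(s_f) = s_{fg}$, which holds by Lemma \ref{lem convenience of using sieves for matching families} since $\{ s_f \}$ is a matching family; so $\eta$ is a genuine natural transformation $R \to X$. It is then immediate from the defining formulas that the two assignments are mutually inverse: starting from $\eta$ and forming $s_f = \eta_V(f)$ and then $\eta'_V(f) = s_f$ recovers $\eta$, and symmetrically in the other direction. Hence the canonical map is a bijection.

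I expect no serious obstacle here: the entire content is the bookkeeping identification of $R(V)$ with the maps of $R$ having domain $V$, after which naturality and the matching condition become literally the same equation via Lemma \ref{lem convenience of using sieves for matching families}. The only point requiring a little care is being explicit that ``the canonical map'' is indeed the evaluation-at-$f$ construction above, so that the verification matches the claimed arrow; this is really the statement that $\Match(R,X) \cong \lim$ of $X$ over $R$ viewed as a diagram, which one could alternatively deduce from the colimit presentation of $R$ in Lemma \ref{lem sieve as coequalizer} together with the fact that $\Pre(\cat{C})(-,X)$ converts that colimit into the corresponding limit defining matching families. I would, however, favor the direct argument above, as it is shorter and self-contained.
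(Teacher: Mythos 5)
Your proof is correct and follows essentially the same route as the paper's: both define the canonical map by evaluation $\eta \mapsto \{\eta_V(f)\}_{f \in R}$, use Lemma \ref{lem convenience of using sieves for matching families} to identify naturality with the matching condition, and invert by the componentwise assignment $\eta_V(f) = s_f$. Your version is, if anything, slightly more explicit about the naturality square and the mutual-inverse check.
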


\begin{proof}
The canonical map $\Pre(\cat{C})(R,X) \to \text{Match}(R,X)$ can be described as follows. Given a map of presheaves $h: R \to X$ and a morphism $f: V \to U$ in $R$ we obtain a commutative diagram
\begin{equation*}
    \begin{tikzcd}
	{R(U)} & {R(V)} \\
	{X(U)} & {X(V)}
	\arrow["{R(f)}", from=1-1, to=1-2]
	\arrow["{h_U}"', from=1-1, to=2-1]
	\arrow["{h_V}", from=1-2, to=2-2]
	\arrow["{X(f)}"', from=2-1, to=2-2]
\end{tikzcd}
\end{equation*}
and $f \in R(V)$ so we obtain an element $h_V(f) \in X(V)$. We claim that the set of sections $\{h_V(f) \}_{f \in R}$ is a matching family of $X$ over $R$. By Lemma \ref{lem convenience of using sieves for matching families}, it is enough to show that if $g: W \to V$ is an arbitrary morphism in $\cat{C}$, then $X(g)(h_V(f)) = X(h_V(gf))$. However this follows by naturality of $h$.

Now to see that the map $\Pre(\cat{C})(R,X) \to \text{Match}(R,X)$ is a bijection, note that any matching family $\{s_f \}$ for $X$ over $R$ gives rise to a map of presheaves $h: R \to X$ defined componentwise by $h_V(f) = s_f$. Being a matching family guarantees that this is a natural transformation. It is easy to see this defines an inverse to the canonical map and thus establishes the bijection.
\end{proof}

\begin{Cor} \label{cor sheaf condition on a sieve}
Given a presheaf $X$ on a category $\cat{C}$, and a sieve $R$ over an object $U \in \cat{C}$, $X$ is a sheaf on $R$ if and only if the canonical map
\begin{equation}
    X(U) \cong \Pre(\cat{C})(y(U),X) \to \Pre(\cat{C})(R,X)
\end{equation}
induced by precomposing with the inclusion $R \hookrightarrow y(U)$ is a bijection.
\end{Cor}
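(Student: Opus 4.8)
The plan is to reduce this immediately to Lemma \ref{lem matching family in bijection with presheaf maps for sieves} together with Definition \ref{def separated and sheaf}. The key observation is that the canonical map $X(U) \to \Pre(\cat{C})(R,X)$ of the statement is nothing but the restriction map $\text{res}_{R,X} : X(U) \to \Match(R,X)$ composed with the bijection $\Pre(\cat{C})(R,X) \cong \Match(R,X)$ of Lemma \ref{lem matching family in bijection with presheaf maps for sieves}. Once this is established, one of the two maps is a bijection exactly when the other is.

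Concretely, I would set up the triangle
\begin{equation*}
\begin{tikzcd}
X(U) \arrow[r] \arrow[dr, "{\text{res}_{R,X}}"'] & \Pre(\cat{C})(R,X) \arrow[d, "\cong"] \\
& \Match(R,X)
\end{tikzcd}
\end{equation*}
where the top arrow is the map appearing in the statement, the vertical isomorphism is the one from Lemma \ref{lem matching family in bijection with presheaf maps for sieves}, and the diagonal is the restriction map of Definition \ref{def separated and sheaf}. The main thing to check is that this triangle commutes. I would verify this by evaluating both composites on a section $x \in X(U)$: under the Yoneda isomorphism $X(U) \cong \Pre(\cat{C})(y(U),X)$ the section $x$ corresponds to the natural transformation sending $f : V \to U$ to $X(f)(x)$, so precomposing with the inclusion $R \hookrightarrow y(U)$ and then applying the bijection of Lemma \ref{lem matching family in bijection with presheaf maps for sieves} produces the matching family $\{X(f)(x)\}_{f \in R}$. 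On the other hand $\text{res}_{R,X}$ sends $x$ directly to $\{X(f)(x)\}_{f \in R}$, since the elements of the sieve $R$ are precisely the morphisms it contains. Hence the two composites agree.

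Given commutativity of the triangle, the conclusion is formal: the vertical map is a bijection, so by the two-out-of-three property the top map is a bijection if and only if $\text{res}_{R,X}$ is a bijection, which by Definition \ref{def separated and sheaf} is exactly the statement that $X$ is a sheaf on $R$. I do not expect any genuine obstacle here, as this is a packaging of results already proved; the only point demanding a moment's care is confirming that the Yoneda correspondence is compatible with the identification of Lemma \ref{lem matching family in bijection with presheaf maps for sieves}, so that the triangle truly commutes rather than merely commuting up to some reindexing.
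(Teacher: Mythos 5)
Your proof is correct and follows essentially the same route as the paper, which simply cites the bijection $\Pre(\cat{C})(R,X) \cong \Match(R,X)$ established just before the corollary; you have merely spelled out the commuting triangle with $\text{res}_{R,X}$ that the paper leaves implicit.
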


\begin{proof}
This follows from Lemma \ref{lem convenience of using sieves for matching families}.
\end{proof}

From the above results, we obtain a convenient description of the sheaf condition for sieves. If $X$ is a presheaf and $R$ a sieve on $U$, then an $X$-matching family on $R$ is a map $m : R \to X$, and an amalgamation for $m$ is a map $x : y(U) \to X$ making the following diagram commute
\begin{equation*}
    \begin{tikzcd}
	R & X \\
	{y(U)}
	\arrow["m", from=1-1, to=1-2]
	\arrow[hook, from=1-1, to=2-1]
	\arrow["x"', dashed, from=2-1, to=1-2]
\end{tikzcd}
\end{equation*}
So $X$ is a sheaf on $R$ if such amalgamations always exist and are unique.

\begin{Rem}
The following description of the sheaf condition is what is typically written in textbooks, which only consider the sifted case. We see it as a special case of our formalism here.
\end{Rem}

\begin{Cor} \label{cor classical sheaf condition}
Given a small category $\cat{C}$ and $R \hookrightarrow y(U)$ a sieve on $\cat{C}$ generated by a family of morphisms $r = \{r_i: U_i \to U \}$, then a presheaf $X$ is a sheaf on $R$ if and only if the following diagram is an equalizer:
$$X(U) \to \prod_{i \in I} X(U_i) \rightrightarrows \prod_{i,j \in I} X(U_i) \times_{X(U)} X(U_j).$$
\end{Cor}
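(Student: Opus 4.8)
The plan is to deduce the statement from the characterization of the sheaf condition on a sieve in Corollary~\ref{cor sheaf condition on a sieve} together with the coequalizer presentation of $R$ in terms of its generating family from Proposition~\ref{prop sieves are coequalizers of generating family}. The key point is that a contravariant hom functor turns colimits into limits, so applying $\Pre(\cat{C})(-,X)$ to the coequalizer describing $R$ should produce exactly the claimed equalizer.

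Concretely, first I would recall from Corollary~\ref{cor sheaf condition on a sieve} that $X$ is a sheaf on $R$ if and only if the precomposition map $X(U)\cong\Pre(\cat{C})(y(U),X)\to\Pre(\cat{C})(R,X)$ is a bijection. Next, by Proposition~\ref{prop sieves are coequalizers of generating family} applied to the generating family $r=\{r_i:U_i\to U\}$, we have $R\cong\coeq\big((\sum_i y(U_i))\times_{y(U)}(\sum_j y(U_j))\rightrightarrows\sum_i y(U_i)\big)$. Applying $\Pre(\cat{C})(-,X)$, which sends colimits in $\Pre(\cat{C})$ to limits in $\Set$, converts this coequalizer into an equalizer
\[
\Pre(\cat{C})(R,X)\cong\mathrm{eq}\Big(\Pre(\cat{C})(\textstyle\sum_i y(U_i),X)\rightrightarrows\Pre(\cat{C})\big((\sum_i y(U_i))\times_{y(U)}(\sum_j y(U_j)),X\big)\Big).
\]

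Then I would simplify both terms. The left term is $\prod_i\Pre(\cat{C})(y(U_i),X)\cong\prod_i X(U_i)$ by the Yoneda lemma and the fact that hom sends coproducts to products. For the right term, since colimits in $\Pre(\cat{C})$ are computed objectwise and coproducts are stable under pullback in $\Set$, we get $(\sum_i y(U_i))\times_{y(U)}(\sum_j y(U_j))\cong\sum_{i,j}\big(y(U_i)\times_{y(U)}y(U_j)\big)$, whence the right term becomes $\prod_{i,j}\Pre(\cat{C})(y(U_i)\times_{y(U)}y(U_j),X)$. This last set is precisely what the statement abbreviates as $X(U_i)\times_{X(U)}X(U_j)$, the set of sections of $X$ over the overlap presheaf $y(U_i)\times_{y(U)}y(U_j)$, which reduces to $X(U_i\times_U U_j)$ when the pullback exists in $\cat{C}$. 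Tracing through the two projections of the presheaf pullback shows that the two parallel maps are induced by restriction along $y(U_i)\leftarrow y(U_i)\times_{y(U)}y(U_j)\to y(U_j)$, and that the leftmost map $X(U)\to\prod_i X(U_i)$ is the canonical $x\mapsto(X(r_i)(x))_i$; combining with Corollary~\ref{cor sheaf condition on a sieve} then yields the equivalence.

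The main obstacle I expect is the bookkeeping in the identification of the right-hand term: one must (i) justify distributing the fibered product over the two coproducts, (ii) correctly read the notation $X(U_i)\times_{X(U)}X(U_j)$ as $\Pre(\cat{C})(y(U_i)\times_{y(U)}y(U_j),X)$ rather than a naive fibered product of sets over $X(U)$, and (iii) verify that the two maps coming from the coequalizer's two legs match the two projection-induced maps in the claimed diagram. Everything else is a formal consequence of hom carrying colimits to limits, so once these identifications are pinned down the proof is immediate. Alternatively, one can bypass the colimit computation and argue directly that the equalizer set coincides with $\Match(r,X)$ via Lemma~\ref{lem matching family in bijection with presheaf maps for sieves} and Lemma~\ref{lem sheaf on covering family iff on sieve it generates}, at the cost of checking the matching condition by hand.
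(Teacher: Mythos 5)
Your proof is correct and follows exactly the paper's route: the paper's own proof is a one-line citation of Corollary \ref{cor sheaf condition on a sieve} and Proposition \ref{prop sieves are coequalizers of generating family}, and you have simply filled in the details of applying $\Pre(\cat{C})(-,X)$ to the coequalizer presentation of $R$ and simplifying both terms. Your reading of $X(U_i) \times_{X(U)} X(U_j)$ as $\Pre(\cat{C})(y(U_i)\times_{y(U)}y(U_j),X)$ is indeed the intended one, since the naive fibered product of sets over $X(U)$ is not even defined (the restriction maps $X(U) \to X(U_i)$ point the wrong way), and it reduces to $X(U_i\times_U U_j)$ when the pullback exists in $\cat{C}$, recovering the classical formulation.
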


\begin{proof}
This follows from combining Corollary \ref{cor sheaf condition on a sieve} and Proposition \ref{prop sieves are coequalizers of generating family}.
\end{proof}

Another convenient form for sheaves on sieves is given by the following result.

\begin{Lemma} \label{lem sheaves on sieves condition}
Given a small category $\cat{C}$ and a sieve $R \hookrightarrow y(U)$, a presheaf $X$ is a sheaf on $R$ if and only if
\begin{equation*}
    X(U) \cong \lim_{V \xrightarrow{f} U \in R} X(V).
\end{equation*}
\end{Lemma}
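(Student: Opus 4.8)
The plan is to reduce the statement to the definition of a sheaf on a sieve together with the explicit description of matching families over a sieve provided by Lemma~\ref{lem convenience of using sieves for matching families}. Recall that the limit $\lim_{V \xrightarrow{f} U \in R} X(V)$ is taken over the category of elements of $R$ (equivalently, the full subcategory of $\cat{C}_{/U}$ on the objects $f : V \to U$ belonging to $R$), and the structure maps of the diagram are the restriction maps $X(g) : X(V) \to X(W)$ attached to morphisms $g : W \to V$ with $fg \in R$. Consequently an element of this limit is precisely a collection $\{ s_f \in X(V) \}_{f \in R}$ satisfying $X(g)(s_f) = s_{fg}$ for every such $g$.

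First I would observe that, by Lemma~\ref{lem convenience of using sieves for matching families}, this compatibility condition is exactly the condition for $\{ s_f \}$ to be an $X$-matching family over $R$. This gives a canonical bijection $\Match(R,X) \cong \lim_{V \xrightarrow{f} U \in R} X(V)$, natural in $X$. Next I would check that the canonical comparison map $X(U) \to \lim_{f} X(V)$ induced by the cone whose leg at $f$ is $X(f) : X(U) \to X(V)$ coincides, under this identification, with the restriction map $\text{res}_{R,X} : X(U) \to \Match(R,X)$ of Definition~\ref{def separated and sheaf}: both send a section $x \in X(U)$ to the family $(X(f)(x))_{f \in R}$. Since by definition $X$ is a sheaf on $R$ exactly when $\text{res}_{R,X}$ is a bijection, this is the same as asking that $X(U) \to \lim_{V \xrightarrow{f} U \in R} X(V)$ be a bijection, which is the desired isomorphism.

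The only genuine content is the identification of the elements of the limit with matching families, and the single point requiring care is the variance of the indexing category, i.e.\ which morphisms it carries and that the structure maps really are the restriction maps; this is exactly the information packaged by Lemma~\ref{lem convenience of using sieves for matching families}, so I expect no real obstacle once that lemma is invoked. As a conceptual cross-check I would also note the abstract route: the co-Yoneda lemma (Lemma~\ref{lem coyoneda lemma}) gives $R \cong \colim_{(V,f)} y(V)$, whence $\Pre(\cat{C})(R,X) \cong \lim_{(V,f)} \Pre(\cat{C})(y(V),X) \cong \lim_{(V,f)} X(V)$, and combining this with Corollary~\ref{cor sheaf condition on a sieve} recovers the claim; but I would carry out the elementary matching-family argument above in detail, since it makes the comparison map transparent.
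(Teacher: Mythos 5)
Your proof is correct, but your primary argument takes a different route from the paper's. The paper proves this lemma abstractly in one stroke: it invokes Corollary \ref{cor sheaf condition on a sieve} to reduce the sheaf condition to the bijectivity of $X(U) \to \Pre(\cat{C})(R,X)$, then applies the coYoneda Lemma \ref{lem coyoneda lemma} to write $R$ as a colimit of representables and identify $\Pre(\cat{C})(R,X)$ with $\lim_{f : V \to U \in R} X(V)$ --- exactly the route you relegate to a ``conceptual cross-check.'' Your main argument instead works at the level of elements: you use Lemma \ref{lem convenience of using sieves for matching families} to identify elements of the limit with matching families directly, establishing $\Match(R,X) \cong \lim_{f} X(V)$, and then check that the comparison map agrees with $\text{res}_{R,X}$. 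Both are sound; your identification of the indexing category, its morphisms, and the structure maps $X(g) : X(V) \to X(W)$ is exactly right, and the agreement of the cone legs $X(f) : X(U) \to X(V)$ with the restriction map is immediate. The paper's version is shorter and leans on machinery already established (and quietly uses Lemma \ref{lem matching family in bijection with presheaf maps for sieves} via the corollary), while yours makes the comparison map and the content of the limit completely explicit, at the cost of redoing by hand what the coYoneda computation packages; either would be acceptable here.
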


\begin{proof}
By Corollary \ref{cor sheaf condition on a sieve}, $X$ is a sheaf on $R$ if and only if
\begin{equation*}
    X(U) \to \Pre(\cat{C})(R, X)
\end{equation*}
is a bijection. By the coYoneda Lemma \ref{lem coyoneda lemma}, we have
\begin{equation*}
    \begin{aligned}
       \Pre(\cat{C})(R, X) &\cong \Pre(\cat{C})\left(\ncolim{y(V) \to R} y(U), X\right) \\
       & \cong \lim_{y(V) \to R} \Pre(\cat{C})(y(V), X) \\
       & \cong \lim_{f : V \to U \in R} X(V).
    \end{aligned}
\end{equation*}
\end{proof}

\begin{Def} \label{def sifted closure}
If $j$ is a coverage, then let $\overline{j}$ denote the collection of families where $R \in \overline{j}(U)$ if $R = \overline{r}$ for some $r \in j(U)$. We call $\overline{j}$ the \textbf{sifted closure} of $j$.   
\end{Def}

\begin{Lemma}
Given a coverage $j$ on a category $\cat{C}$, then $\overline{j}$ is a sifted coverage of $\cat{C}$.
\end{Lemma}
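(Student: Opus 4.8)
The plan is to verify the two axioms of Definition \ref{def coverage} for $\overline{j}$, reducing each to the corresponding fact already known about $j$ together with Lemma \ref{lem refinement under sifted closure}, which tells us that refinement of families translates into inclusion of the sieves they generate. First I would check the identity axiom: since $(1_U) \in j(U)$ for every $U$, and the sieve generated by $(1_U)$ is the maximal sieve $y(U)$ (because every map into $U$ factors through $1_U$), we have $y(U) = \overline{(1_U)} \in \overline{j}(U)$. This is exactly the identity axiom as stated for sifted coverages in the footnote to the definition of sifted.

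Next I would verify the pullback-stability axiom. Let $R \in \overline{j}(U)$, so $R = \overline{r}$ for some $r \in j(U)$, and let $g : V \to U$ be any morphism. Since $j$ is a coverage, there exists $t \in j(V)$ with $g_*(t) \leq r$. I claim $\overline{t} \in \overline{j}(V)$ does the job, i.e. that $g_*(\overline{t}) \leq R$. The cleanest route is through Lemma \ref{lem refinement under sifted closure}: from $g_*(t) \leq r$ we want to deduce $g_*(\overline{t}) \leq \overline{r} = R$. It suffices to show $g_*(\overline{t}) \subseteq R$ as a family of morphisms, since $R$ is already a sieve and $r \leq r'$ between sieves is just inclusion. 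A morphism in $g_*(\overline{t})$ has the form $g \circ h$ where $h : W \to V$ factors through some $t_k \in t$, say $h = t_k \circ s$. Then $g \circ h = (g \circ t_k) \circ s$, and $g \circ t_k$ is precisely a member of $g_*(t)$; since $g_*(t) \leq r$ gives $g_*(t) \subseteq \overline{r} = R$, and $R$ is closed under precomposition, $g \circ h = (g \circ t_k)\circ s \in R$. Hence $g_*(\overline{t}) \subseteq R$, so by Lemma \ref{lem refinement under sifted closure} (or directly, since both are sieves) $g_*(\overline{t}) \leq R$, as required.

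It remains to record that every element of $\overline{j}(U)$ is a sieve, which is immediate from Definition \ref{def sieve generated by a family of morphisms}: each $\overline{r}$ is by construction a sieve. Combined with the two axioms above, this shows $\overline{j}$ is a coverage all of whose covering families are sieves, i.e. a sifted coverage in the sense of the accompanying footnote.

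The only mildly delicate point is the translation $g_*(t) \leq r \implies g_*(\overline{t}) \leq \overline{r}$, where one must be careful that pushforward and sifted closure interact correctly. I expect this to be the main obstacle, but it dissolves once one observes the key identity that pushing forward a generated sieve and then taking its closure is controlled by $\overline{g_*(t)} \subseteq \overline{r}$; concretely, $g_*(\overline{t})$ and $g_*(t)$ generate the same sieve because any factorization through $t_k$ followed by $g$ is still a factorization through $g \circ t_k \in g_*(t)$. Everything else is a direct appeal to results already established, so no new machinery is needed.
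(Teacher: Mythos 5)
Your proof is correct and follows essentially the same route as the paper: pull back $r$ along $g$ using the coverage axiom for $j$ to get $t \in j(V)$ with $g_*(t) \leq r$, then check that every morphism of $g_*(\overline{t})$ lands in $R = \overline{r}$ by factoring it through some $g \circ t_k \in g_*(t)$ and using closure of $R$ under precomposition. The only cosmetic difference is that you spell out the identity axiom ($\overline{(1_U)} = y(U)$) which the paper subsumes under ``clearly $\overline{j}$ is sifted.''
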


\begin{proof}
Clearly $\overline{j}$ is sifted. We wish to show it is a coverage. Suppose we have a covering family $R \in \overline{j}(U)$, and a map $g: V \to U$. We wish to show that there is a covering family $R' \in \overline{j}(V)$ such that for every map $k \in R'$, $gk$ factors through some $l \in R$. Since $R = \overline{r}$, we know that since $j$ is a coverage, there exists some covering family $t \in j(V)$ with the corresponding property. Let $R' = \overline{t}$. Then for every $k \in R'$, there exists a $k_j$ such that $k = t_j k_j$, and a map $s_j$ making the following diagram commute:
\begin{equation*}
    \begin{tikzcd}
	W \\
	{V_j} & {U_i} \\
	V & U
	\arrow["{k_j}", from=1-1, to=2-1]
	\arrow["{t_j}", from=2-1, to=3-1]
	\arrow["g"', from=3-1, to=3-2]
	\arrow["{r_i}", from=2-2, to=3-2]
	\arrow["{s_j}", from=2-1, to=2-2]
	\arrow["k"', curve={height=18pt}, from=1-1, to=3-1]
\end{tikzcd}
\end{equation*}
but then $gk$ factors through some $r_i$, so $gk \in R$. Thus $g_*(R') \subseteq R$, and $\overline{j}$ is a coverage.
\end{proof}

\begin{Cor} \label{cor sheaves on sifted closure}
A presheaf $X$ on a category $\cat{C}$ is a sheaf for a coverage $j$ if and only if it is a sheaf for $\overline{j}$. In other words, $\ncat{Sh}(\cat{C}, j) = \ncat{Sh}(\cat{C}, \overline{j})$. 
\end{Cor}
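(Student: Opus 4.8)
The plan is to reduce the entire statement to the per-family result already proved in Lemma \ref{lem sheaf on covering family iff on sieve it generates}, which asserts that $X$ is a sheaf on a family $r$ precisely when it is a sheaf on the sieve $\overline{r}$ it generates. Both notions of sheaf appearing in the corollary are defined objectwise and family-by-family (Definition \ref{def separated and sheaf}): $X \in \Sh(\cat{C},j)$ means that for every $U \in \cat{C}$ and every $r \in j(U)$ the restriction map $\text{res}_{r,X}$ is a bijection, and likewise $X \in \Sh(\cat{C},\overline{j})$ means $X$ is a sheaf on every $R \in \overline{j}(U)$. So it suffices to match the covering families of $j$ against those of $\overline{j}$ under the equivalence supplied by the lemma.

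First I would record the only structural input needed, namely Definition \ref{def sifted closure}: the covering families $R \in \overline{j}(U)$ are exactly the sieves $R = \overline{r}$ for $r \in j(U)$, nothing more and nothing less. With this identification in hand the two directions are immediate. For the forward direction, assuming $X$ is a $j$-sheaf, take any $R \in \overline{j}(U)$ and write $R = \overline{r}$ with $r \in j(U)$; since $X$ is a sheaf on $r$, Lemma \ref{lem sheaf on covering family iff on sieve it generates} shows $X$ is a sheaf on $R$, and as $R$ was arbitrary, $X$ is a $\overline{j}$-sheaf. For the converse, assuming $X$ is a $\overline{j}$-sheaf, take any $r \in j(U)$; then $\overline{r} \in \overline{j}(U)$, so $X$ is a sheaf on $\overline{r}$, and the same lemma yields that $X$ is a sheaf on $r$, whence $X$ is a $j$-sheaf.

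I expect no genuine obstacle here: the mathematical content lives entirely in Lemma \ref{lem sheaf on covering family iff on sieve it generates}, and the corollary is just its objectwise, coverage-level repackaging. The single point worth stating carefully is that $\overline{j}(U)$ consists of the sieves \emph{literally} generated by members of $j(U)$, rather than of all sieves refined by some covering family; because that generated-sieve set is exactly the domain on which the per-family lemma operates, the correspondence of sheaf conditions is a bijection on the nose and no auxiliary refinement or saturation argument is required.
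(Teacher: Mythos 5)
Your proof is correct and follows exactly the route the paper takes: the paper's own proof of this corollary is the one-line observation that it follows from Lemma \ref{lem sheaf on covering family iff on sieve it generates}, and your argument is just the careful unpacking of that reduction using Definition \ref{def sifted closure}. No issues.
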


\begin{proof}
This follows from Lemma \ref{lem sheaf on covering family iff on sieve it generates}. 
\end{proof}

\section{Coverage Closures} \label{section coverage closures}
In this section, we detail two closure operations corresponding to different properties that are desirable for a coverage to have.

\subsection{Composition Closure}
In this section we describe composition closure. The usual reference for this material would probably be \cite[Section C.2]{johnstone2002sketches}, but we note that the statement of \cite[Lemma C.2.1.7]{johnstone2002sketches} as written is incorrect, see Zhen Lin Low's MathOverflow answer \cite{lowMOanswerLemma2.1.7}. The proof of Lemma \ref{lem sheaf on composite family} is inspired by the proof given in the MathOverflow answer above.

\begin{Def} \label{def composition closed coverage}
If $(\cat{C}, j)$ is a site, then we say that $j$ is \textbf{composition closed} if the following condition holds: if $r = \{r_i: U_i \to U \} \in j(U)$, and for each $i$ there is a $t^i = \{t^i_j: V_{ij} \to U_i \} \in j(U_i)$, then $(r \circ t) \coloneqq \bigcup_i (r_i)_*(t^i) \in j(U)$.
\end{Def}

\begin{Rem}
It will be very useful in what follows to visualize covering families and their composites as infinitely wide, but finitely high trees. For example a covering family $r = \{ r_i: U_i \to U \}$ over $U$ can be viewed as an infinitely wide tree of height $1$:
\begin{equation*}
\begin{tikzcd}
	{U_i} & \dots & {U_{i'}} \\
	& U
	\arrow["{r_i}"', from=1-1, to=2-2]
	\arrow["{r_{i'}}", from=1-3, to=2-2]
\end{tikzcd}
\end{equation*}
Now suppose that $t^i = \{t^i_j : V^i_j \to U_i \}$ is a covering family on $U_i$. Then since $(1_{U_j})$ is a covering family for every $U_j$, if $(\cat{C}, j)$ is composition closed, the composite family 
\begin{equation*}
    (r_i)_*(t^i) \cup \left( \bigcup_{j \in (I \setminus i)} (r_j)_*(1_{U_j}) \right) = (r_i)_*(t^i) \cup (r \setminus r_i)
\end{equation*}
is a covering family over $U$, and can be visualized as follows:
\begin{equation*}
    \begin{tikzcd}
	{V^i_j} & \dots & {V^i_{j'}} \\
	& {U_i} & \dots & {U_{i'}} \\
	&& U
	\arrow["{r_i}"', from=2-2, to=3-3]
	\arrow["{r_{i'}}", from=2-4, to=3-3]
	\arrow["{t^i_j}"', from=1-1, to=2-2]
	\arrow["{t^i_{j'}}", from=1-3, to=2-2]
\end{tikzcd}
\end{equation*}
We can thus compose covering families in an operadic fashion. We will formalize this intuition now.
\end{Rem}

\begin{Def}[{\cite[Appendix A.2]{low2016categories}}] \label{def tree}
Let $\cat{C}$ be a category with $U \in \cat{C}$. A \textbf{path} on $U$ is a (possibly empty) finite ordered tuple $(f_m, \dots, f_1)$ of morphisms such that $\text{dom}(f_i) = \text{cod}(f_{i+1})$ for $0 < i < m$ and $\text{cod}(f_1) = U$, and we say that $m$ is the \textbf{length} of the path. Given a path $(f_m, \dots, f_1)$, we call $(f_n, \dots, f_1)$ a \textbf{prefix} if $n \leq m$. The \textbf{leaf node} of a path $(f_m, \dots, f_1)$ is the domain of $f_m$.

Given a set $T$ of paths on $U$, we say that a path $(f_m, \dots, f_1) \in T$ is \textbf{maximal} if for every path $(g_n, \dots, g_1) \in T$ with $m \leq n$ and $(f_m, \dots, f_1) = (g_m, \dots, g_1)$, then $m = n$.  A \textbf{tree} $T$ on $U$ is a small set of paths on $U$ satisfying the following conditions:
\begin{enumerate}
    \item The empty path is an element of $T$,
    \item if $(f_{m+1}, f_m, \dots, f_1) \in T$ then $(f_m, \dots, f_1) \in T$,
    \item every path is a prefix of a maximal path,
    \item there exists a finite maximal path length\footnote{We have added this condition from Low's original definition to ease our induction proofs.}, which we call the \textbf{height} of the tree $T$.
\end{enumerate}
Given a tree $T$ on an object $U$, let $T^{\circ}$ denote the family of morphisms $$T^{\circ} = \{U_m \xrightarrow{f_m} U_{m-1} \to \dots \to U_1 \xrightarrow{f_1} U \, : \, (f_1, \dots, f_m) \text{ is a maximal path of } T \},$$
we call this the \textbf{composite} of the tree $T$. If $T$ contains only the empty branch, then let $T^\circ = \{ 1_U \}$.

We say a subset $T' \subseteq T$ is a \textbf{sub-tree} if it is also a tree on $U$. Given a tree $T$ and $n \geq 0$, let $T_{\leq n}$ denote the sub-tree of $T$ of paths of length less than or equal to $n$.
\end{Def}

\begin{Def}[{\cite[Appendix A.2]{low2016categories}}] \label{def j-tree}
Let $(\cat{C}, j)$ be a site, with $U \in \cat{C}$. We say that a tree $T$ on $U$ is a \textbf{$j$-tree} if:
\begin{enumerate}
    \item The set of morphisms $f: V \to U$ such that $(f) \in T$, is either empty or a covering family in $j(U)$, and
    \item for every path $f = (f_m, \dots, f_1) \in T$, the set of morphisms $g: V \to U_m$ such that $(g, f_m, \dots, f_1) \in T$, is either empty or a covering family of $U_m = \text{dom}(f_m)$.
\end{enumerate}
\end{Def}

\begin{Rem}
A $j$-tree can be more easily described as the result of an iterative procedure as follows. Start with a covering family $r$ on $U$. Then suppose that for some subset of domains $U_i$ of the covering maps there are covering families $t^i$. Attach those families, extending the tree. Now keep doing this at every stage, attaching new covering famliies and extending the tree, but eventually stop at some finite stage $n$. This gives a $j$-tree, and conversely every $j$-tree can be obtained in this way. This immediately gives the following result, which is the key feature of $j$-trees.
\end{Rem}

\begin{Lemma} \label{lem j-trees closed under composition}
If $T$ is a $j$-tree and $\{U_i \}_{i \in I}$ is its set of leaf nodes, and for every $i \in I$, $T_i$ is a $j$-tree on $U_i$, then the composite tree is a $j$-tree.
\end{Lemma}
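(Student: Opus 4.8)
The plan is to argue directly from an explicit description of the composite tree. Writing each path with its last-applied morphism first, so that the leaf node of $(f_m,\dots,f_1)$ is $\mathrm{dom}(f_m)$, I take the composite tree $T'$ to be the set of paths consisting of every path of $T$ together with every concatenation $(g_n,\dots,g_1,f_m,\dots,f_1)$ in which $(f_m,\dots,f_1)$ is a maximal path of $T$ with leaf node $U_i$ and $(g_n,\dots,g_1)$ is a nonempty path of $T_i$; this is a legitimate path since $\mathrm{cod}(g_1)=U_i=\mathrm{dom}(f_m)$. First I would check that $T'$ is a tree in the sense of Definition \ref{def tree}. The empty path lies in $T\subseteq T'$. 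For prefix-closure and for the requirement that every path be a prefix of a maximal path, I would split an arbitrary path of $T'$ at the junction into its $T$-part $p$ and its $T_i$-part $q$ and use that $T$ and each $T_i$ already enjoy these properties; a maximal path of $T'$ is the concatenation of a maximal path of $T$ with a maximal path of the attached $T_i$. The degenerate cases are disposed of at the outset: if $T$ is the trivial tree then its only maximal path is empty with leaf $U$, so $T'=T_i$ for the unique index and we are done, and if some $T_i$ is trivial then grafting it leaves that branch of $T$ unchanged.

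The heart of the proof is verifying the two covering conditions of Definition \ref{def j-tree}, which I would do by a case analysis on a path of $T'$ according to its position relative to the junction. Because grafting happens only at the leaves, the length-one paths of $T'$ coincide with those of the (nontrivial) $T$, so condition (1) is inherited from $T$. For condition (2), applied to a path $p\in T'$, there are three cases. If $p$ is a non-maximal path of $T$, its one-step extensions in $T'$ are exactly those in $T$, because a grafted path containing $p$ as a prefix would force the maximal $T$-path at its base to be a prefix of $p$, making $p$ maximal in $T$, a contradiction; hence the condition holds since $T$ is a $j$-tree. If $p$ is a maximal path of $T$ with leaf $U_i$, then the morphisms $g$ with $(g,p)\in T'$ are precisely those with $(g)\in T_i$, so the governing family is empty or covering by condition (1) for $T_i$. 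Finally, if $p$ strictly extends a maximal path of $T$ into $T_i$, say $p=(g_n,\dots,g_1,f_m,\dots,f_1)$, then its extensions correspond bijectively to the extensions of $(g_n,\dots,g_1)$ inside $T_i$, so the condition follows from condition (2) for $T_i$. Checking that these three cases are exhaustive and compatible with prefix-closure completes the argument.

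The step I expect to be the main obstacle is the junction case (the second case above): one must recognize that a path that is maximal in $T$ is no longer maximal in $T'$, and that the covering family controlling its extensions is the top-level covering family of $T_i$ — so it is condition (1) for $T_i$, rather than condition (2) for $T$, that does the work there. Keeping this transition straight, and confirming that the decomposition of a path of $T'$ into a $T$-part and a $T_i$-part is unique, is where the care is needed. A secondary technical point is the finite-height clause (4) of Definition \ref{def tree}: the height of $T'$ equals $\mathrm{height}(T)+\sup_i\mathrm{height}(T_i)$, and since $T$ may have infinitely many leaves this supremum must be finite, so strictly one should assume (as holds in the intended applications) that the attached $T_i$ have uniformly bounded height; without the author's finite-height clause, i.e.\ for merely well-founded trees, this point would be automatic.
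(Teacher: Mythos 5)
Your proof is correct, and it is considerably more careful than what the paper offers: the paper states this lemma as an immediate consequence of the informal remark that every $j$-tree arises by iteratively attaching covering families at leaves, and gives no proof at all. Your explicit description of the composite tree (paths of $T$ together with concatenations of a maximal path of $T$ having leaf $U_i$ with a nonempty path of $T_i$), the verification of prefix-closure and maximality via the unique junction decomposition, and the three-case analysis for condition (2) of Definition \ref{def j-tree} -- non-maximal in $T$, maximal in $T$ (where condition (1) for $T_i$ takes over), and strictly beyond the junction (where condition (2) for $T_i$ applies) -- is exactly the right formalization, and your treatment of the degenerate cases is fine.

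Your closing observation about clause (4) of Definition \ref{def tree} is not merely a technical quibble: it identifies a genuine defect in the lemma \emph{as stated}. Since $I$ may be infinite and the heights of the $T_i$ need not be uniformly bounded, the composite can fail to have a finite maximal path length, so it is not a tree in the paper's sense (the author's added finiteness clause, absent from Low's original well-founded definition, is precisely what breaks). This matters downstream: for instance, in Lemma \ref{lem local epimorphisms closed under composition} the trees $T_i$ are produced one leaf at a time with no control on their heights, so the appeal to the present lemma there silently requires either a uniform bound or a reversion to well-founded trees. Your proof is complete once one either adds the hypothesis $\sup_i \mathrm{height}(T_i) < \infty$ or drops clause (4); you should state which convention you are adopting rather than leaving it as a remark.
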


\begin{Lemma} \label{lem sheaf on composite family}
Let $(\cat{C}, j)$ be a site and $X$ a $j$-sheaf. If $X$ is a sheaf on a family of morphisms $r = \{r_i: U_i \to U \}_{i \in I}$, and for every $i \in I$, there is a covering family $t^i = \{t^i_j: V^i_j \to U_i \}_{j \in J_i}$, then $X$ is a sheaf on $(r \circ t) = \bigcup_i (r_i)_*(t^i)$, where $(r_i)_*(t^i)$ is the pushforward family from Definition \ref{def pushforward of a family}.
\end{Lemma}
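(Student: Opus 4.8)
The plan is to prove that $X$ is a sheaf on the composite family $(r \circ t)$ directly by verifying the amalgamation condition: given an $X$-matching family over $(r \circ t)$, I must produce a unique amalgamation. The key structural observation is that a section over $U$ that amalgamates the composite family can be built in two stages, first over the intermediate objects $U_i$ using the covering families $t^i$, and then over $U$ using $r$. This mirrors the operadic/tree picture emphasized in the preceding remarks, where $(r \circ t)$ is the composite of a $j$-tree of height $2$.

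First I would take an $X$-matching family $\{ y_{ij} \in X(V^i_j) \}$ over $(r \circ t)$, indexed so that $y_{ij}$ lives over the leaf $V^i_j$ reached by the composite $r_i \circ t^i_j$. For each fixed $i \in I$, I claim the restricted collection $\{ y_{ij} \}_{j \in J_i}$ is an $X$-matching family over the covering family $t^i$ on $U_i$. To see this, one checks that any intersection square for $t^i$ extends (by postcomposing with $r_i$) to an intersection square for $(r \circ t)$, so the matching condition for $\{ y_{ij} \}$ over $(r \circ t)$ forces the matching condition over $t^i$. Since $X$ is a $j$-sheaf and $t^i \in j(U_i)$, this matching family amalgamates uniquely to a section $x_i \in X(U_i)$ with $X(t^i_j)(x_i) = y_{ij}$ for all $j$.

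Next I would show that the resulting collection $\{ x_i \in X(U_i) \}_{i \in I}$ is an $X$-matching family over $r$. This is the step I expect to be the main obstacle, because $r$ need not have pullbacks and the intersection squares for $r$ do not obviously interact with the $t^i$. Given an intersection square for $r$ with apex $U_{ii'}$ and maps $u_i : U_{ii'} \to U_i$, $u_{i'} : U_{ii'} \to U_{i'}$ satisfying $r_i u_i = r_{i'} u_{i'}$, I must verify $X(u_i)(x_i) = X(u_{i'})(x_{i'})$. The difficulty is that $x_i$ is only characterized through its restrictions along $t^i$, so I would instead pull back the covering family $t^i$ along $u_i$: using the coverage axiom applied to $t^i \in j(U_i)$ and the map $u_i : U_{ii'} \to U_i$, there is a covering family on $U_{ii'}$ refining $u_i^*(t^i)$, and similarly for $u_{i'}^*(t^{i'})$. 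By intersecting (or refining through a common $j$-tree on $U_{ii'}$, invoking the tree machinery and Lemma \ref{lem j-trees closed under composition}), one obtains a covering family on $U_{ii'}$ over which $X(u_i)(x_i)$ and $X(u_{i'})(x_{i'})$ have restrictions that both equal appropriate restrictions of the $y_{ij}$'s; since $X$ is separated on this covering family (being a sheaf), the two sections agree. This separatedness argument is the crux.

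Once $\{ x_i \}$ is known to be matching over $r$ and $X$ is a sheaf on $r$ by hypothesis, it amalgamates uniquely to $x \in X(U)$ with $X(r_i)(x) = x_i$. Then $X(r_i t^i_j)(x) = X(t^i_j)(x_i) = y_{ij}$, so $x$ is an amalgamation of the original family over $(r \circ t)$. For uniqueness, I would observe that any amalgamation $x'$ of $\{ y_{ij} \}$ restricts along each $r_i$ to an amalgamation of $\{ y_{ij} \}_j$ over $t^i$, which by uniqueness on $t^i$ must equal $x_i$; hence $x'$ amalgamates $\{ x_i \}$ over $r$, and uniqueness there gives $x' = x$. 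This completes the verification that $X$ is a sheaf on $(r \circ t)$.
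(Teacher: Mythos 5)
Your overall architecture is exactly the paper's: restrict the matching family to each $t^i$, amalgamate to get $x_i \in X(U_i)$, show $\{x_i\}$ matches over $r$, amalgamate over $r$, and check uniqueness by the two-level argument. Stages one, three, and four are correct as you describe them. The problem is in the step you yourself flag as the crux.

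To compare $X(u_i)(x_i)$ with $X(u_{i'})(x_{i'})$ you propose to pull back $t^i$ and $t^{i'}$ to two covering families on $U_{ii'}$ and then pass to a ``common refinement,'' asserting that this yields \emph{a covering family} on which $X$ is separated. In a bare coverage this fails: two covering families on the same object need not admit a common refinement that is again a covering family. What Lemma \ref{lem j-trees have common refinement} gives you is only a $j$-tree $R$ whose composite $R^\circ$ refines both, and $R^\circ$ is not a covering family unless $j$ is composition closed. Separatedness of $X$ on $R^\circ$ is precisely the content of Lemma \ref{lem sheaf on j-trees}, which is proved by induction \emph{using the present lemma}, so invoking it here is circular. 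The repair is to telescope the separatedness through the two levels of the tree by hand, which is what the paper does: take the covering family $s = \{s_k : W_k \to U_{ii'}\}$ with $(u_i)_*(s) \leq t^i$, then for each $k$ pull back the second family along $s_k$ to get a covering family $q$ on $W_k$ with $(s_k)_*(q) \leq s'$; verify the two candidate sections agree after restriction along each $s_k q_\ell$ using the matching condition over $(r\circ t)$; apply separatedness of $X$ on $q$ (a genuine covering family of $W_k$) to cancel the $q_\ell$'s, and then separatedness of $X$ on $s$ (a genuine covering family of $U_{ii'}$) to cancel the $s_k$'s. With that substitution your argument goes through and coincides with the paper's proof.
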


\begin{proof}
Let $\{x^i_j \in X(V^i_j) \}_{i \in I, j \in J_i}$ be a matching family of $X$ over $(r \circ t)$. We wish to show that there is a unique amalgamation.

Now for each $i \in I$, consider the set $\{x^i_j \in X(V^i_j) \}_{j \in J_i}$. This is a matching family for $X$ over $t^i$. Since $X$ is a sheaf and $t^i$ is a covering family, there exists a unique amalgamation $x_i \in X(U_i)$ such that $X(t^i_j)(x_i) = x^i_j$. We wish to prove that $\{x_i \in X(U_i) \}_{i \in I}$ is a matching family for $r$. Doing this will take some work. So suppose that we have a commutative diagram of the form
\begin{equation*}
    \begin{tikzcd}
	B & {U_{i'}} \\
	{U_i} & U
	\arrow["{r_i}"', from=2-1, to=2-2]
	\arrow["{r_{i'}}", from=1-2, to=2-2]
	\arrow["g"', from=1-1, to=2-1]
	\arrow["h", from=1-1, to=1-2]
\end{tikzcd}
\end{equation*}
we wish to show that $X(g)(x_i) = X(h)(x_{i'})$. Now since $t^i$ is a covering family, and $g: B \to U_i$ is an arbitrary map, there exists a covering family $s = \{s_k : W_k \to B \}_{k \in K}$ over $B$ and a refinement $f : g_*(s) \to t^i$, with index map $\alpha : K \to J_i$. Similarly since $t^{i'}$ is a covering family, there exists a covering family $s' = \{s'_{k'} : W'_{k'} \to B \}_{k' \in K'}$ over $B$ and a refinement $f' : h_*(s') \to t^{i'}$, with index map $\beta: K' \to J_{i'}$. Thus for every $k \in K$ and $k' \in K'$, we obtain the following commutative diagram
\begin{equation*}
  \begin{tikzcd}
	& {W'_{k'}} & {V^{i'}_{\beta(k')}} \\
	{W_k} & B & {U_{i'}} \\
	{V^i_{\alpha(k)}} & {U_i} & U
	\arrow["{r_i}"', from=3-2, to=3-3]
	\arrow["{r_{i'}}", from=2-3, to=3-3]
	\arrow["g"', from=2-2, to=3-2]
	\arrow["h", from=2-2, to=2-3]
	\arrow["{t^{i'}_{\beta(k')}}", from=1-3, to=2-3]
	\arrow["{s'_{k'}}", from=1-2, to=2-2]
	\arrow["{f'_{k'}}", from=1-2, to=1-3]
	\arrow["{t^i_{\alpha(k)}}"', from=3-1, to=3-2]
	\arrow["{f_k}"', from=2-1, to=3-1]
	\arrow["{s_k}"', from=2-1, to=2-2]
\end{tikzcd}  
\end{equation*}
Now since $s'$ is a covering family of $W'_{k'}$, and $s_k : W_k \to B$ is an arbitrary map, there exists a covering family $q = \{q_\ell: Q_\ell \to W_k \}_{\ell \in L}$, and a refinement $\sigma : (b_k)_*(q) \to s'$, with index map $\gamma : L \to K'$. Thus for every $\ell$, we obtain the following commutative diagram
\begin{equation*}
    \begin{tikzcd}
	{Q_\ell} & {W'_{\gamma(\ell)}} & {V^{i'}_{\beta \gamma(\ell)}} \\
	{W_k} & B & {U_{i'}} \\
	{V^i_{\alpha(k)}} & {U_i} & U
	\arrow["{r_i}"', from=3-2, to=3-3]
	\arrow["{r_{i'}}", from=2-3, to=3-3]
	\arrow["g"', from=2-2, to=3-2]
	\arrow["h", from=2-2, to=2-3]
	\arrow["{t^{i'}_{\beta \gamma(\ell)}}", from=1-3, to=2-3]
	\arrow["{s'_{\gamma(\ell)}}", from=1-2, to=2-2]
	\arrow["{f'_{\gamma(\ell)}}", from=1-2, to=1-3]
	\arrow["{t^i_{\alpha(k)}}"', from=3-1, to=3-2]
	\arrow["{f_k}"', from=2-1, to=3-1]
	\arrow["{s_k}"', from=2-1, to=2-2]
	\arrow["{q_\ell}"', from=1-1, to=2-1]
	\arrow["{\sigma_\ell}", from=1-1, to=1-2]
\end{tikzcd}
\end{equation*}
Now notice that $X(f_k q_\ell)(x^i_{\alpha(k)}) = X(f'_{\gamma(\ell)} \sigma_\ell)(x^{i'}_{\gamma(\ell)})$, because $\{x^i_j \}$ is a matching family for $(r \circ t)$. Now 
$$X(f'_{\gamma(\ell)} \sigma_\ell)(x^{i'}_{\gamma(\ell)}) = X(t^{i'}_{\beta \gamma(\ell)} f'_{\gamma(\ell)} \sigma_\ell)(x_{i'}) = X(h s_k q_\ell)(x_{i'}),$$
and similarly
$$X(f_k q_\ell)(x^i_\alpha(k)) = X(t^i_\alpha(k) f_k q_\ell)(x_i) = X(g s_k q_\ell)(x_i).$$
Thus for every $\ell$ we have 
\begin{equation} \label{eq composite closure proof matching family}
    X(g s_k q_\ell)(x_i) = X(h s_k q_\ell)(x_{i'}).
\end{equation}
Let $z = X(g s_k)(x_i)$ and $z' = X(h s_k)(x_{i'})$. Then consider the sets $\{X(q_\ell)(z) \}_{\ell \in L}$ and $\{X(q_\ell)(z') \}_{\ell \in L}$. These are both matching families for $X$ over $q$, which is a covering family. Since $X$ is a sheaf, $z$ is a unique amalgamation of $\{X(q_\ell)(z) \}$ and $z'$ is a unique amalgamation of $\{X(q_\ell)(z')\}$. But by (\ref{eq composite closure proof matching family}), $z$ is also an amalgamation for $\{X(q_\ell)(z') \}$. Thus $z = z'$. Therefore \begin{equation} \label{eq composite closure proof matching family 2}
  X(s_k)X(g)(x_i) = X(s_k)X(h)(x_{i'}).
\end{equation}

Now let us repeat this trick. Namely let $y = X(g)(x_i)$ and $y' = X(h)(x_{i'})$, and consider the sets $\{X(s_k)(y) \}$ and $\{X(s_k)(y') \}$. These are a matching family for $X$ over $s$. Since $X$ is a sheaf, $y$ is a unique amalgamation of $\{X(s_k)(y) \}$ and $y'$ is a unique amalgamation of $\{X(s_k)(y') \}$. But by (\ref{eq composite closure proof matching family 2}), $y$ is also an amalgamation for $\{X(s_k)(y') \}$. Thus $y = X(g)(x_i) = X(h)(x_{i'}) = y'$, as we wished to show. Thus $\{ x_i \}$ is a matching family for $r$.

Now since $X$ is a sheaf on $r$, there exists a unique amalgamation $x \in X(U)$ such that $X(r_i)(x) = x_i$ for all $i \in I$. This is clearly an amalgamation for $\{x^i_j \}$ as $X(t^i_j r_i)(x) = X(t^i_j)(x_i) = x^i_j$ for every $i \in I, j \in J_i$. Thus $X$ is a sheaf on $(r \circ t)$.
\end{proof}

\begin{Lemma} \label{lem sheaf on j-trees}
Suppose that $(\cat{C}, j)$ is a site, $U \in \cat{C}$, and $X$ is a $j$-sheaf. If $T$ is a $j$-tree, then $X$ is a sheaf on $T^\circ$.
\end{Lemma}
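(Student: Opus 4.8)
The plan is to induct on the height $n$ of the $j$-tree $T$ (Definition \ref{def j-tree}), using Lemma \ref{lem sheaf on composite family} to pass from height $n-1$ to height $n$ by attaching one more layer of covering families from the top. For the base case $n = 0$, the tree $T$ contains only the empty path, so by Definition \ref{def tree} we have $T^\circ = \{1_U\}$. Since $(1_U)$ is a nonempty singleton family consisting of an isomorphism, Lemma \ref{lem always a sheaf on an isomorphism} shows that every presheaf, and in particular $X$, is a sheaf on it.

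For the inductive step, suppose the result holds for all $j$-trees of height less than $n$ on any object of $\cat{C}$, and let $T$ be a $j$-tree of height $n \geq 1$ on $U$. The idea is to truncate $T$ at the top: consider the sub-tree $T_{\leq n-1}$ of Definition \ref{def tree}. First I would check that $T_{\leq n-1}$ is itself a $j$-tree of height at most $n-1$; conditions (1) and (2) of Definition \ref{def j-tree} are inherited, since truncating only replaces the branching set above a length-$(n-1)$ path by the empty set, which is permitted. By the inductive hypothesis, $X$ is then a sheaf on $(T_{\leq n-1})^\circ$.

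Next I would exhibit $T^\circ$ as a composite in the sense of Definition \ref{def composition closed coverage}. Write $(T_{\leq n-1})^\circ = \{r_i : W_i \to U\}_{i \in I}$, indexed by the maximal paths of $T_{\leq n-1}$ with leaf nodes $W_i$. For each $i$, let $t^i$ be the family of morphisms $g$ such that $(g)$ extends the $i$th maximal path inside $T$. By condition (2) of Definition \ref{def j-tree}, $t^i$ is either a covering family of $W_i$ or empty; in the latter case the $i$th path was already maximal in $T$, and I would instead set $t^i = (1_{W_i})$, which is a covering family by the first axiom of Definition \ref{def coverage}. With these choices every $t^i$ is a covering family, and unwinding the definitions shows $T^\circ = \bigcup_{i}(r_i)_*(t^i) = \big((T_{\leq n-1})^\circ \circ t\big)$, since the maximal paths of $T$ are exactly the length-$n$ extensions of length-$(n-1)$ paths together with the shorter paths already maximal in $T$. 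Applying Lemma \ref{lem sheaf on composite family} with the family $r = (T_{\leq n-1})^\circ$, on which $X$ is a sheaf by induction, and the covering families $t^i$, I conclude that $X$ is a sheaf on $(r \circ t) = T^\circ$, closing the induction.

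I expect the main obstacle to be the bookkeeping in the composition step: verifying carefully that truncating at the top and reattaching the branching families (including the trivial $(1_{W_i})$ at already-maximal leaves) reproduces exactly $T^\circ$, and that each reattached family is genuinely covering so that the hypotheses of Lemma \ref{lem sheaf on composite family} are met. A tempting alternative would be a bottom-up induction peeling off the root covering family $r$ and applying the hypothesis to the subtrees above each $U_i$; however, this fails because the families one would then need to attach — namely the composites of those subtrees — need not be covering families, so Lemma \ref{lem sheaf on composite family} would not apply directly. Truncating from the top avoids this, since each newly attached layer consists of genuine covering families.
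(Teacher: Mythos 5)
Your proof is correct and follows essentially the same route as the paper: induction on the height of $T$, with the base case handled by Lemma \ref{lem always a sheaf on an isomorphism} and the inductive step by truncating to $T_{\leq n-1}$ and applying Lemma \ref{lem sheaf on composite family} to reattach the top layer of covering families. The paper states this step in one sentence; your version just makes the bookkeeping (including inserting $(1_{W_i})$ at already-maximal leaves) explicit.
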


\begin{proof}
We will prove this by induction on the height of $T$. For the base case, if the height of $T$ is $0$, then by Lemma \ref{lem always a sheaf on an isomorphism}, $X$ is a sheaf on $T^\circ = (1_U)$. For the inductive step, suppose that $X$ is a sheaf on the composite of every $j$-tree of height $n$, and suppose that $T$ is a $j$-tree of height $n+1$. Let us show that $X$ is a sheaf on $T^\circ$. By assumption $X$ is a sheaf on $T_{\leq n}^\circ$. But by Lemma \ref{lem sheaf on composite family}, this implies that $X$ is a sheaf on $T^\circ$, as $T$ can be constructed from $T_{\leq n}$ by adding on covering families to its leaf nodes.
\end{proof}

\begin{Def} \label{def j-tree closed}
Given a site $(\cat{C}, j)$, we say that it is \textbf{$j$-tree closed} if for every object $U$ and every $j$-tree $T$ on $U$, its composite $T^\circ$ is a covering family on $U$.
\end{Def}

\begin{Lemma}[{\cite[Lemma A.2.10]{low2016categories}}] \label{lem composition closed iff j-tree closed}
Let $(\cat{C}, j)$ be a site. Then $j$ is a composition closed coverage if and only if it is $j$-tree closed.
\end{Lemma}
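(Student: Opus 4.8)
The plan is to prove the two implications separately, disposing of the ``if'' direction ($j$-tree closed $\Rightarrow$ composition closed) by exhibiting a single height-$2$ tree, and proving the ``only if'' direction ($j$ composition closed $\Rightarrow$ $j$-tree closed) by induction on the height of a $j$-tree.

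For the ``if'' direction, suppose $j$ is $j$-tree closed and let $r = \{r_i : U_i \to U\} \in j(U)$ together with covering families $t^i = \{t^i_j : V_{ij} \to U_i\} \in j(U_i)$ be given. I would assemble the tree $T$ on $U$ whose length-$1$ paths are exactly the $(r_i)$ and whose length-$2$ paths are exactly the $(t^i_j, r_i)$. The tree axioms of Definition \ref{def tree} are immediate (the height is $2$), and the two conditions of Definition \ref{def j-tree} hold because the morphisms extending the empty path are precisely $r \in j(U)$ and the morphisms extending each $(r_i)$ are precisely $t^i \in j(U_i)$. The maximal paths of $T$ are the $(t^i_j, r_i)$, so, recalling the pushforward of Definition \ref{def pushforward of a family}, $T^\circ = \{r_i t^i_j\}_{i,j} = \bigcup_i (r_i)_*(t^i) = (r \circ t)$; $j$-tree closedness then gives $(r \circ t) = T^\circ \in j(U)$, which is exactly composition closedness (Definition \ref{def composition closed coverage}).

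For the ``only if'' direction, assume $j$ is composition closed and prove by induction on height that $T^\circ \in j(U)$ for every $j$-tree $T$ on $U$. The base case is height $0$: then $T$ contains only the empty path, so $T^\circ = (1_U) \in j(U)$ by the first coverage axiom of Definition \ref{def coverage}. For the inductive step, suppose the claim holds for all $j$-trees of height at most $n$ and let $T$ have height $n+1$. Since $T$ has positive height, the family $r$ of all morphisms $r_i : U_i \to U$ with $(r_i) \in T$ is, by the first $j$-tree condition, a covering family in $j(U)$. For each such $i$, let $T_i$ be the set of paths $(f_m, \dots, f_2)$ with $(f_m, \dots, f_2, r_i) \in T$; I would verify that $T_i$ is a $j$-tree on $U_i$, its axioms descending from those of $T$ restricted to paths with prefix $(r_i)$, and that its height is at most $n$. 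Since every maximal path of $T$ has the form $(g_k, \dots, g_2, r_i)$ with $(g_k, \dots, g_2)$ maximal in $T_i$ (the degenerate case $(r_i)$ corresponding to the empty maximal path of the trivial tree $T_i$), we get
$$T^\circ = \bigcup_i (r_i)_*(T_i^\circ).$$
By the inductive hypothesis $T_i^\circ \in j(U_i)$ for each $i$ (with $T_i^\circ = (1_{U_i})$ when $r_i$ is a leaf), so applying composition closedness to $r$ and the families $t^i = T_i^\circ$ yields $T^\circ = (r \circ t) \in j(U)$.

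The main obstacle is the bookkeeping in the inductive step: one must check carefully that each ``subtree above $U_i$'', $T_i$, satisfies all of the tree axioms of Definition \ref{def tree} and both $j$-tree conditions of Definition \ref{def j-tree} on $U_i$, that its height is strictly less than that of $T$, and that the maximal paths of $T$ are in exact correspondence with pairs consisting of an index $i$ and a maximal path of $T_i$, so that $T^\circ$ genuinely equals the composition-closure expression $\bigcup_i (r_i)_*(T_i^\circ)$. Once this decomposition is established, the single-step composition closedness of Definition \ref{def composition closed coverage} does all the remaining work.
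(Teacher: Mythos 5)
Your proof is correct and follows exactly the route the paper takes: the paper disposes of the ``if'' direction by observing that a single composition is a height-$2$ $j$-tree, and of the ``only if'' direction by noting that any height-$n$ $j$-tree is built from $n$ successive composites, which is precisely your induction via the subtrees $T_i$ and the identity $T^\circ = \bigcup_i (r_i)_*(T_i^\circ)$. The only difference is that you supply the bookkeeping (verification of the tree and $j$-tree axioms for $T_i$, the correspondence of maximal paths) that the paper leaves to the reader.
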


\begin{proof}
$(\Leftarrow)$ If $j$ is closed under $j$-trees, then clearly it is composition closed. $(\Rightarrow)$ Conversely, we can build any height $n$ $j$-tree by taking $n$-many composites. Thus $j$ is closed under $j$-trees.
\end{proof}

\begin{Def} \label{def composition closure}
 Given a site $(\cat{C}, j)$, let $\text{comp}(j)$ denote the collection of families on $\cat{C}$ where $\text{comp}(j)(U)$ consists of those families of morphisms of the form $T^\circ$ for some $j$-tree $T$ on $U$.   
\end{Def}

\begin{Lemma} \label{lem can pullback j-trees}
Given a site $(\cat{C}, j)$, a $j$-tree $T$ on an object $U \in \cat{C}$ and a morphism $f : V \to U$, there exists a $j$-tree $S$ on $V$ such that $f_*(S^\circ) \leq T^\circ$.
\end{Lemma}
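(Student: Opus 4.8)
The plan is to induct on the height of the $j$-tree $T$. For the base case, when $T$ has height $0$ we have $T^{\circ} = (1_U)$, and we may take $S$ to be the trivial tree on $V$, so that $S^{\circ} = (1_V)$ and $f_*(S^{\circ}) = (f)$. The single component map $f : V \to U$ exhibits $(f) \leq (1_U)$, so the base case holds. For the inductive step I would assume the claim for all objects, all maps into them, and all $j$-trees of height at most $n$.

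Let $T$ be a $j$-tree of height $n+1$ on $U$. Let $r = \{r_i : U_i \to U\}_{i \in I}$ be its first-level covering family (those $r_i$ with $(r_i) \in T$), and for each $i \in I$ let $T_i$ be the sub-$j$-tree rooted at $U_i$, with the convention that $T_i$ is the trivial tree when $U_i$ is a leaf. Each $T_i$ is a $j$-tree of height at most $n$, and unwinding Definition \ref{def tree} gives the decomposition $T^{\circ} = \bigcup_{i \in I} (r_i)_*(T_i^{\circ})$ (for a leaf, $T_i^{\circ} = (1_{U_i})$ and $(r_i)_*(1_{U_i}) = (r_i)$). First I would apply the coverage axiom (Definition \ref{def coverage}) to $r \in j(U)$ and $f : V \to U$ to obtain a covering family $s = \{s_a : W_a \to V\}_{a \in A} \in j(V)$ and a refinement $f_*(s) \leq r$; concretely, for each $a \in A$ there are an index $i(a) \in I$ and a map $g_a : W_a \to U_{i(a)}$ with $r_{i(a)} g_a = f s_a$. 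Applying the inductive hypothesis to $T_{i(a)}$ and $g_a$ yields, for each $a$, a $j$-tree $S_a$ on $W_a$ with $(g_a)_*(S_a^{\circ}) \leq T_{i(a)}^{\circ}$. Attaching each $S_a$ to the corresponding leaf $W_a$ of the height-one tree determined by $s$ gives, by Lemma \ref{lem j-trees closed under composition}, a $j$-tree $S$ on $V$ with composite $S^{\circ} = \bigcup_{a \in A} (s_a)_*(S_a^{\circ})$.

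It then remains to verify $f_*(S^{\circ}) \leq T^{\circ}$. An arbitrary element of $f_*(S^{\circ})$ has the form $f s_a k$ for some $a \in A$ and $k \in S_a^{\circ}$. Using $(g_a)_*(S_a^{\circ}) \leq T_{i(a)}^{\circ}$, the map $g_a k$ factors as $g_a k = m c$ for some $m \in T_{i(a)}^{\circ}$ and component map $c$, so that
\[
f s_a k = r_{i(a)} g_a k = r_{i(a)} m c = (r_{i(a)} m)\, c ,
\]
where $r_{i(a)} m \in (r_{i(a)})_*(T_{i(a)}^{\circ}) \subseteq T^{\circ}$. Hence every element of $f_*(S^{\circ})$ factors through an element of $T^{\circ}$ via $c$, and collecting these component maps assembles into the desired refinement $f_*(S^{\circ}) \leq T^{\circ}$.

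I expect the main obstacle to be the bookkeeping around the tree decomposition rather than any conceptual difficulty: one must pin down the leaf-node convention so that $T^{\circ} = \bigcup_i (r_i)_*(T_i^{\circ})$ holds uniformly, confirm that each sub-tree $T_i$ genuinely has height at most $n$, and thread the index assignments $i(\cdot)$ (and the index maps of the successive refinements) consistently so that the final factorizations organize into an honest refinement indexed over $A$.
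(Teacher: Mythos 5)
Your argument is correct, and it takes a recognizably different route from the one in the paper, even though both are inductions on the height of $T$. The paper's inductive step truncates the tree from the bottom: it applies the inductive hypothesis to $T_{\leq n}$ to get a tree $S$ on $V$ with $f_*(S^\circ) \leq T_{\leq n}^\circ$, and then, for each leaf of $S$, distinguishes whether the path of $T_{\leq n}$ it lands on is already maximal in $T$ or has a covering family attached at level $n+1$, pulling that family back via the coverage axiom in the latter case. You instead peel off the root: you decompose $T$ as a first-level covering family $r$ with subtrees $T_i$ of height at most $n$, use the coverage axiom once to pull $r$ back to $s$ on $V$, and then recurse into the subtrees along the component maps $g_a$ before reassembling with Lemma \ref{lem j-trees closed under composition}. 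Your version buys a uniform inductive step with no case split (the leaf case is absorbed into the height-$0$ base case via the convention $T_i^\circ = (1_{U_i})$), at the cost of having to justify the decomposition $T^\circ = \bigcup_i (r_i)_*(T_i^\circ)$, which you correctly flag as the main bookkeeping burden; the paper's version avoids that decomposition but pays for it with the maximal-versus-non-maximal path analysis. The factorization check at the end of your proof is exactly right, so no gap remains.
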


\begin{proof}
Let us prove this by induction on the height of $T$. When the height of $T$ is $1$, then $T^\circ$ is a covering family, and then the result holds by the fact that $j$ is a coverage.

Now suppose that the hypothesis holds for every height $n$ $j$-tree on $U$, and let $T$ be a height $n+1$ $j$-tree on $U$. Then $T_{\leq n}$ is a height $n$ $j$-tree on $U$, and so by the hypothesis there exists a $j$-tree $S$ on $V$ such that $f_*(S) \leq T_{\leq n}^\circ$.

So for each $s_a : V_a \to V$ in $S^\circ$, there exists a map $s^i_a: V_a \to U_i$ such that the following diagram commutes
\begin{equation*}
\begin{tikzcd}[ampersand replacement=\&]
	{V_j} \& {U_i} \\
	V \& U
	\arrow["{s^i_j}", from=1-1, to=1-2]
	\arrow["{s_j}"', from=1-1, to=2-1]
	\arrow["{r_i}", from=1-2, to=2-2]
	\arrow["f"', from=2-1, to=2-2]
\end{tikzcd}    
\end{equation*}
where $r_i \in T^\circ_{\leq n}$. Now if $r_i \in T^\circ$, then we are done. If not, then there must be a path $(f_n, \dots, f_1)$ that is maximal in $T_{\leq n}$ but is not maximal in $T$ such that $f_n \circ \dots \circ f_1 = r_i$. Since it is not maximal, that means there is a covering family $t_i = \{t^i_k : U^i_k \to U_i \}$ such that each $(t^i_k, f_n, \dots, f_1)$ is a maximal path in $T$. Since $j$ is a coverage, there exists a covering family $q_a = \{q^a_\ell : V^a_\ell \to V_j \}$ such that $(s^i_a)_*(q_a) \leq t_i$. By composing the $j$-tree $S$ with the covering family $q_a$ for each $s_j \in S^\circ$, we obtain a new $j$-tree $Q$ such that $f_*(Q^\circ) \leq T^\circ$.
\end{proof}

\begin{Lemma} \label{lem comp(j) smallest composition closed coverage}
If $(\cat{C}, j)$ is a site, then $\text{comp}(j)$ is a coverage on $\cat{C}$, and it is the smallest composition closed coverage on $\cat{C}$ containing $j$, in the sense that $j \subseteq \text{comp}(j)$ (Definition \ref{def coverage comparisons}) and if $j'$ is a composition closed coverage such that $j \subseteq j'$, then $\text{comp}(j) \subseteq j'$.
\end{Lemma}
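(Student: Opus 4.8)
The plan is to check four things in turn: that $\text{comp}(j)$ satisfies the two axioms of a coverage, that $j \subseteq \text{comp}(j)$, that $\text{comp}(j)$ is composition closed, and that it is contained in every composition closed coverage $j'$ with $j \subseteq j'$.

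First I would establish $j \subseteq \text{comp}(j)$, since it is the simplest and gets reused. Given $r = \{r_i : U_i \to U\} \in j(U)$, the tree on $U$ whose maximal paths are exactly the singletons $(r_i)$ is a $j$-tree of height $1$ with composite $T^\circ = r$; hence $r \in \text{comp}(j)(U)$. The coverage axioms are then immediate: the tree consisting of the empty path alone has composite $(1_U)$ (equivalently, $(1_U) \in j(U) \subseteq \text{comp}(j)(U)$ by Definition \ref{def coverage}), and the pullback axiom, asking that for $T^\circ \in \text{comp}(j)(U)$ and $g : V \to U$ there exist $S^\circ \in \text{comp}(j)(V)$ with $g_*(S^\circ) \leq T^\circ$, is precisely Lemma \ref{lem can pullback j-trees}.

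For composition closure, suppose $T^\circ \in \text{comp}(j)(U)$ has leaf nodes $\{U_i\}$ and for each $i$ we are given $S_i^\circ \in \text{comp}(j)(U_i)$, arising from a $j$-tree $S_i$ on $U_i$. By Lemma \ref{lem j-trees closed under composition} the tree $T'$ obtained by grafting each $S_i$ onto the corresponding leaf $U_i$ of $T$ is again a $j$-tree, and unwinding the definition of the composite gives $(T')^\circ = \bigcup_i (r_i)_*(S_i^\circ)$, which is precisely the composite family appearing in Definition \ref{def composition closed coverage}; hence it lies in $\text{comp}(j)(U)$.

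Finally, for minimality, let $j'$ be a composition closed coverage with $j \subseteq j'$; I would prove $T^\circ \in j'(U)$ for every $j$-tree $T$ by induction on its height. The base case is $T^\circ = (1_U) \in j'(U)$, which holds because $j'$ is a coverage. For the inductive step, a height $n+1$ tree $T$ is obtained from its truncation $T_{\leq n}$ by attaching at each maximal leaf $U_i$ of $T_{\leq n}$ either a covering family $t^i \in j(U_i) \subseteq j'(U_i)$ or, where nothing is attached, the identity family $(1_{U_i})$; thus $T^\circ = \bigcup_i (r_i)_*(t^i)$ is a single composition of $(T_{\leq n})^\circ$ with these families. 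Since $(T_{\leq n})^\circ \in j'(U)$ by the inductive hypothesis and each $t^i \in j'$, composition closure of $j'$ gives $T^\circ \in j'(U)$. The step requiring the most care is exactly this last decomposition: one must check that passing from $T_{\leq n}$ to $T$ is literally one composition step, which forces the unextended leaves to be filled in with $(1_{U_i})$ --- and this is where the axiom $(1_U) \in j(U)$ from Definition \ref{def coverage} is essential.
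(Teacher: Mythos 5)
Your proof is correct and follows essentially the same route as the paper's: the coverage axiom via Lemma \ref{lem can pullback j-trees}, composition closure via grafting of $j$-trees (Lemma \ref{lem j-trees closed under composition}), and minimality by building any $j$-tree's composite out of iterated compositions, which is exactly the content of the paper's appeal to Lemma \ref{lem composition closed iff j-tree closed}. Your version simply spells out the height induction for minimality (including the need to pad unextended leaves with identity families) that the paper leaves to the reader.
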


\begin{proof}
Let us show that $\text{comp}(j)$ is a coverage. Suppose that $r \in \text{comp}(j)(U)$ and $f: V \to U$ is a morphism. Since $r$ is a covering family, there exists a $j$-tree $T$ such that $T^\circ = r$. Now by Lemma \ref{lem can pullback j-trees}, there exists a $j$-tree $S$ on $V$ such that $f_*(S^\circ) \leq T^\circ$. Thus $\text{comp}(j)$ is a coverage, and by Lemma \ref{lem j-trees closed under composition}, it is composition closed. The fact that it is the smallest composition closed coverage containing $j$ is not hard to prove using Lemma \ref{lem composition closed iff j-tree closed}.
\end{proof}

\begin{Lemma} \label{lem j-trees have common refinement}
Given a site $(\cat{C}, j)$ and an object $U \in \cat{C}$, if $T$ and $S$ are $j$-trees on $U$, then there exists a $j$-tree $R$ such that $R^\circ \leq T^\circ$ and $R^\circ \leq S^\circ$.
\end{Lemma}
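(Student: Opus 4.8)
The plan is to build the common refinement $R$ by grafting pulled-back copies of $S$ onto the leaves of $T$, and then to invoke the two key structural facts about $j$-trees established above: that they can be pulled back along morphisms (Lemma \ref{lem can pullback j-trees}) and that they compose (Lemma \ref{lem j-trees closed under composition}).

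First I would run over the leaves of $T$. Each maximal path of $T$ has a composite leaf map $f \colon V_f \to U$, and these are precisely the elements of $T^\circ$. For each such leaf $f$, I would apply Lemma \ref{lem can pullback j-trees} to the $j$-tree $S$ and the morphism $f$, producing a $j$-tree $S_f$ on the leaf node $V_f$ with the property that $f_*(S_f^\circ) \leq S^\circ$. Grafting $S_f$ onto the corresponding leaf of $T$ for every leaf simultaneously yields a tree $R$, and Lemma \ref{lem j-trees closed under composition} guarantees that $R$ is again a $j$-tree, since it is exactly the composite of the $j$-tree $T$ with the family of $j$-trees $\{S_f\}$ indexed by the leaves of $T$.

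Next I would identify $R^\circ$ explicitly. A maximal path of $R$ consists of a maximal path of $T$ (with composite $f \in T^\circ$) followed by a maximal path of the grafted tree $S_f$ (with composite $s \colon W \to V_f$ in $S_f^\circ$), so its composite is $f \circ s$. Hence, by Definition \ref{def pushforward of a family},
\[
R^\circ = \bigcup_{f \in T^\circ} f_*(S_f^\circ).
\]
From this the two desired refinements follow. Each element $f \circ s$ of $R^\circ$ factors through $f \in T^\circ$ via $s$, so $R^\circ \leq T^\circ$. And for a fixed branch $f$, the subfamily $f_*(S_f^\circ)$ satisfies $f_*(S_f^\circ) \leq S^\circ$ by construction, so every $f \circ s$ factors through some element of $S^\circ$; since this holds for every branch $f$, every element of the union $R^\circ$ factors through some element of $S^\circ$, giving $R^\circ \leq S^\circ$.

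The only point requiring care — and the step I expect to be the main obstacle — is the passage from the branch-wise refinements $f_*(S_f^\circ) \leq S^\circ$ to a single global refinement $R^\circ \leq S^\circ$. Because refinement of families is defined morphism-by-morphism (Definition \ref{def family of morphisms}), one must assemble the individual index maps and factorization components over all leaves $f$ into one coherent refinement. This turns out to be routine rather than subtle: the families $f_*(S_f^\circ)$ are disjointly indexed by the pairs $(f,s)$, so there is no cross-branch compatibility to enforce, and the combined index map is simply the disjoint union of the branch-wise ones. I would also note the degenerate case where $T$ has height $0$, so $T^\circ = (1_U)$ and $R$ reduces to a pullback of $S$ along $1_U$; this is handled uniformly by the same argument since everything factors through the identity.
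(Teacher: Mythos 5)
Your proof is correct and is essentially the paper's own argument: the paper grafts pullbacks of $T$ onto the leaves of $S$ (via Lemma \ref{lem can pullback j-trees} and composition of $j$-trees), while you do the mirror-image grafting of pullbacks of $S$ onto the leaves of $T$, which is the same argument since the statement is symmetric in $S$ and $T$.
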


\begin{proof}
Suppose that we fix a map $s_i : V_i \to U$ in $S^\circ$, then by Lemma \ref{lem can pullback j-trees}, there is a $j$-tree $Q^i$ on $V_i$ such that $(s_i)_*((Q^i)^\circ) \leq T^\circ$. Thus we can compose to obtain a new $j$-tree $R = \cup_i (s_i \circ Q^i)$ on $U$. By its construction we have $R^\circ \leq T^\circ$. But we also have $R^\circ \leq S^\circ$, by taking any map $r \in R^\circ$ and factoring it as $r = s_i q^i$, with $s_i \in S^\circ$ and $q^i \in (Q^i)^\circ$. Thus $R$ is a $j$-tree that refines both $T$ and $S$.
\end{proof}

\begin{Prop} \label{prop sheaf iff sheaf on composition closure}
Given a site $(\cat{C}, j)$, a presheaf $X$ on $\cat{C}$ is a $j$-sheaf if and only if it is a $\text{comp}(j)$-sheaf. In other words 
$$\Sh(\cat{C}, j) = \Sh(\cat{C}, \text{comp}(j)).$$
\end{Prop}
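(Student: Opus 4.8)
The plan is to establish the two inclusions $\Sh(\cat{C}, \text{comp}(j)) \subseteq \Sh(\cat{C}, j)$ and $\Sh(\cat{C}, j) \subseteq \Sh(\cat{C}, \text{comp}(j))$ separately. Both should follow immediately from results already in hand, since the genuine technical labor has been front-loaded into Lemma \ref{lem sheaf on composite family} and the induction of Lemma \ref{lem sheaf on j-trees}.

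For the inclusion $\Sh(\cat{C}, \text{comp}(j)) \subseteq \Sh(\cat{C}, j)$, I would invoke Lemma \ref{lem comp(j) smallest composition closed coverage}, which records that $j \subseteq \text{comp}(j)$. Then Lemma \ref{lem coverage comparison results}(2) says that containment of coverages reverses the inclusion of sheaf categories, so $j \subseteq \text{comp}(j)$ yields $\Sh(\cat{C}, \text{comp}(j)) \subseteq \Sh(\cat{C}, j)$ directly. Conceptually this is the cheap direction: a $\text{comp}(j)$-sheaf is asked to satisfy the descent condition on a larger collection of families, so it is in particular a $j$-sheaf.

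For the reverse inclusion, let $X$ be a $j$-sheaf and let $r \in \text{comp}(j)(U)$ be an arbitrary covering family over an arbitrary object $U$. By Definition \ref{def composition closure}, $r = T^\circ$ for some $j$-tree $T$ on $U$. Lemma \ref{lem sheaf on j-trees} then guarantees precisely that $X$, being a $j$-sheaf, is a sheaf on $T^\circ = r$. As $U$ and $r$ were arbitrary, $X$ is a sheaf on every covering family of $\text{comp}(j)$, i.e. $X$ is a $\text{comp}(j)$-sheaf. Combining the two inclusions gives the claimed equality.

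I do not anticipate a real obstacle here: the statement is essentially a bookkeeping corollary that packages the composite-closure machinery together. The only thing to be careful about is citing the correct direction of Lemma \ref{lem coverage comparison results} (containment, not refinement) for the first inclusion, and correctly unwinding the definition of $\text{comp}(j)$ via $j$-trees for the second, so that Lemma \ref{lem sheaf on j-trees} can be applied verbatim.
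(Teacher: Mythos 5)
Your proof is correct and follows essentially the same route as the paper: the paper's own proof is the one-line citation of Lemma \ref{lem sheaf on composite family}, and your argument simply makes explicit the two inclusions it implicitly packages — the easy one via $j \subseteq \text{comp}(j)$ and Lemma \ref{lem coverage comparison results}(2), and the substantive one via Lemma \ref{lem sheaf on j-trees}, which is itself the inductive consequence of Lemma \ref{lem sheaf on composite family}. Nothing is missing.
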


\begin{proof}
This follows from Lemma \ref{lem sheaf on composite family}.
\end{proof}

\subsection{Refinement Closure}
In this section we describe the second closure condition on coverages, refinement closure.

\begin{Def} \label{def refinement closed coverage}
Given a site $(\cat{C}, j)$ we say that the coverage $j$ is \textbf{refinement closed} if for every $U \in \cat{C}$, $j(U)$ satisfies the following condition: if $r \in j(U)$ and $f: r \to t$ is a refinement, then $t \in j(U)$.
\end{Def}

\begin{Lemma}[{\cite[Lemma 2.1.6]{johnstone2002sketches}}] \label{lem sheaf on refinement}
Suppose $j$ is a coverage on a category $\cat{C}$. If $X$ is a sheaf on $\cat{C}$, $r \in j(U)$ is a covering family, $t = \{ t_j: V_j \to U \}$ is a family of morphisms and $f: r \to t$ is a refinement, then $X$ is a sheaf on $t$. 
\end{Lemma}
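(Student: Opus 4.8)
The plan is to reduce the sheaf condition for $t$ to the sheaf condition for $r$ (which we have, since $r \in j(U)$ and $X$ is a $j$-sheaf) by transporting matching families across the refinement $f$, and to use the coverage axiom (Definition \ref{def coverage}) to handle the terms that the refinement does not control directly. Write $r = \{r_i : U_i \to U\}_{i \in I}$, $t = \{t_j : V_j \to U\}_{j \in J}$, and let $\alpha : I \to J$ be the index map of $f$, with components $f_i : U_i \to V_{\alpha(i)}$ satisfying $t_{\alpha(i)} f_i = r_i$.

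First I would take an arbitrary $X$-matching family $\{y_j \in X(V_j)\}_{j \in J}$ over $t$ and apply Lemma \ref{lem pullback of matching family by refinement is a matching family} to obtain the pullback matching family $f^*\{y_j\} = \{x_i\}_{i \in I}$ over $r$, where $x_i = X(f_i)(y_{\alpha(i)})$. Since $X$ is a sheaf on $r$, this family has a unique amalgamation $x \in X(U)$, characterized by $X(r_i)(x) = x_i$ for all $i$.

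The main work is to show that this $x$ amalgamates the \emph{original} family, i.e. $X(t_j)(x) = y_j$ for every $j \in J$. Here the naive attempt — trying to deduce this from $X(f_i)X(t_{\alpha(i)})(x) = X(f_i)(y_{\alpha(i)})$ — fails, because $X(f_i)$ need not be injective; this is the crux of the argument. Instead, fix $j \in J$ and apply the coverage axiom to $r \in j(U)$ and the map $t_j : V_j \to U$ (Definition \ref{def pushforward of a family}): there is a covering family $s = \{s_k : W_k \to V_j\}_{k \in K} \in j(V_j)$ with, for each $k$, an index $i(k) \in I$ and a map $g_k : W_k \to U_{i(k)}$ satisfying $r_{i(k)} g_k = t_j s_k$. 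Because $X$ is a sheaf on $s$, it suffices to verify $X(s_k)X(t_j)(x) = X(s_k)(y_j)$ for every $k$. The left-hand side unwinds as $X(t_j s_k)(x) = X(r_{i(k)} g_k)(x) = X(g_k)(x_{i(k)}) = X(f_{i(k)} g_k)(y_{\alpha(i(k))})$, while the identity $t_{\alpha(i(k))}(f_{i(k)} g_k) = r_{i(k)} g_k = t_j s_k$ exhibits a genuine intersection square for the family $t$, so the matching property of $\{y_j\}$ yields $X(f_{i(k)} g_k)(y_{\alpha(i(k))}) = X(s_k)(y_j)$. The two sides therefore agree, and the sheaf condition on $s$ forces $X(t_j)(x) = y_j$.

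Finally, for uniqueness, I would observe that any amalgamation $y$ of $\{y_j\}$ over $t$ satisfies $X(r_i)(y) = X(t_{\alpha(i)} f_i)(y) = X(f_i)(y_{\alpha(i)}) = x_i$, so $y$ is automatically an amalgamation of $\{x_i\}$ over $r$; since $X$ is a sheaf on $r$, such amalgamations are unique, whence any two amalgamations of $\{y_j\}$ coincide. Thus $\mathrm{res}_{t,X}$ is a bijection and $X$ is a sheaf on $t$. The only delicate step is the third paragraph, where the possibly non-injective refinement components are circumvented by pulling $r$ back along each $t_j$ and testing equality after restriction to the resulting cover of $V_j$.
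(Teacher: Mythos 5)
Your proof is correct and follows essentially the same route as the paper's: pull the matching family back along the refinement, amalgamate over $r$, then use the coverage axiom to produce a cover of each $V_j$ factoring through $r$ and test $X(t_j)(x)=y_j$ after restriction to that cover. The only difference is cosmetic — you additionally spell out uniqueness of the amalgamation over $t$, which the paper leaves implicit.
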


\begin{proof}
Suppose that $X$ is a sheaf, and $r = \{r_i: U_i \to U \}$ is a covering family. Suppose that $f: r \to t$ is a refinement, so that there exists a function $\alpha: I \to J$ such that for every $i$, there is a map $f_i: U_i \to V_{\alpha(i)}$ such that $r_i = t_{\alpha(i)} f_i $.
 
 Let $\{ a_j \}$ be a matching family for $t$, and let $\{ b_i \} = f^* \{a_j \}$ be the pullback matching family on $r$. Since $X$ is a sheaf, and $r$ is a covering family, there exists a unique amalgamation $b \in X(U)$ such that $X(r_i)(b) = b_i$.
 
Now since $r$ is a covering family, for every $t_j: V_j \to U$, there exists a covering family $p^j = \{ p^j_k: W^j_k \to V_j \}_{k \in K}$ and maps $l_k: W_k^j \to U_i$ for some $i$ making the following diagram commute:
\[\begin{tikzcd}
	{W^j_k} & {U_i} & {V_{\alpha(i)}} \\
	{V_j} & U & U
	\arrow[Rightarrow, no head, from=2-2, to=2-3]
	\arrow["{p^j_k}"', from=1-1, to=2-1]
	\arrow["{t_j}"', from=2-1, to=2-2]
	\arrow["{r_i}", from=1-2, to=2-2]
	\arrow["{f_i}", from=1-2, to=1-3]
	\arrow["{t_{\alpha(i)}}", from=1-3, to=2-3]
	\arrow["{l_k}", from=1-1, to=1-2]
\end{tikzcd}\]
 Now we wish to show that $b$ is an amalgamation for $\{ a_j \}$, so that $X(t_j)(b) = a_j$ for every $j$. We will do this by exploiting the fact that $p^j$ is a covering family.
 
First note that since $\{ a_j \}$ is a matching family, we have
$$X(p^j_k)(a_j) = X(f_i l_k)(a_{\alpha(i)}).$$
This implies that
$$X(p^j_k)(a_j) = X(l_k)X(f_i)(a_{\alpha(i)}) = X(l_k)(b_i) = X(l_k) X(r_i)(b) = X(p^j_k) X(t_j)(b).$$
But it is clear that $\{ X(p^j_k)(a_j)\}_{k \in K}$ is an $X$-matching family on $p^j$ by functoriality. Since $X$ is a sheaf, and $p^j$ is a covering family, there exists a unique amalgamation on $V_j$. But both $a_j$ and $X(t_j)(b)$ are such amalgamations. Thus $X(t_j)(b) = a_j$. Thus $b$ is an amalgamation for $\{ a_j \}$.
\end{proof}

\begin{Def} \label{def refinement closure}
Let $\text{ref}(j)$ denote the collection of families where $\text{ref}(j)(U)$ consists of those families $t$ over $U$ such that there exists a covering family $r \in j(U)$ and a refinement $f: r \to t$.
\end{Def}

\begin{Lemma} \label{lem refinement closure smallest refinement closed coverage}
If $(\cat{C}, j)$ is a site, then $\text{ref}(j)$ is a coverage on $\cat{C}$, and it is the smallest refinement closed coverage on $\cat{C}$ containing $j$.
\end{Lemma}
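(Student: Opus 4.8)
The plan is to verify the four claims in turn, with all the real content carried by the observation that the refinement relation $\leq$ is transitive, i.e. that refinements compose (this is exactly the category structure on $\Fam(U)$ recorded in Definition \ref{def family of morphisms}). First I would establish the inclusion $j \subseteq \text{ref}(j)$, since it is needed for everything else: any $r \in j(U)$ satisfies $r \leq r$ via the identity refinement, so $r \in \text{ref}(j)(U)$ by Definition \ref{def refinement closure}. As a special case $(1_U) \in j(U) \subseteq \text{ref}(j)(U)$, which is precisely the first coverage axiom of Definition \ref{def coverage}.

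Next I would verify the descent axiom for $\text{ref}(j)$. Given $t \in \text{ref}(j)(U)$ and a map $g : V \to U$, Definition \ref{def refinement closure} supplies a covering family $r \in j(U)$ together with a refinement witnessing $r \leq t$. Because $j$ is itself a coverage, there is some $p \in j(V)$ with $g_*(p) \leq r$. Composing this refinement with $r \leq t$ yields $g_*(p) \leq t$, and since $p \in j(V) \subseteq \text{ref}(j)(V)$, the family $p$ serves as the required pullback family. This shows $\text{ref}(j)$ is a coverage. For refinement closedness (Definition \ref{def refinement closed coverage}): if $t \in \text{ref}(j)(U)$ and $f : t \to t'$ is a refinement, choose $r \in j(U)$ with $r \leq t$; then $r \leq t'$ by transitivity, so $t' \in \text{ref}(j)(U)$ directly from the definition.

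Finally I would check minimality. Let $j'$ be any refinement closed coverage with $j \subseteq j'$. For $t \in \text{ref}(j)(U)$, pick $r \in j(U)$ with a refinement $r \to t$; then $r \in j'(U)$ since $j \subseteq j'$, and as $j'$ is refinement closed we conclude $t \in j'(U)$. Hence $\text{ref}(j)(U) \subseteq j'(U)$ for every $U$, that is, $\text{ref}(j) \subseteq j'$. The only mildly delicate point — and thus the main obstacle, such as it is — is the descent step, where one must chain the refinement $g_*(p) \leq r$ coming from the coverage axiom for $j$ with the refinement $r \leq t$; this reduces to the routine fact that the composite of two refinements is again a refinement, so no genuine difficulty arises.
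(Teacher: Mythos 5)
Your proof is correct and follows essentially the same route as the paper: the descent axiom is verified by chaining the refinement $g_*(p) \leq r$ supplied by the coverage axiom for $j$ with the given refinement $r \leq t$, and minimality is immediate from the definitions. If anything your write-up is slightly more complete, since you explicitly check that $\text{ref}(j)$ contains the identity families and is itself refinement closed (via transitivity of $\leq$), steps the paper leaves implicit.
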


\begin{proof}
Let us show that $\text{ref}(j)$ is a coverage. Suppose that $f: W \to V$ is a morphism in $\cat{C}$ and $t = \{ t_j: V_j \to V \}$ is a covering family of $\text{ref}(j)(V)$. Then there exists a refinement $g: r \to t$ where $r = \{ r_i: U_i \to U \} \in j(V)$. Thus there exists a family $s = \{ s_j: W_j \to W \} \in j(W)$ that factors through some $r_i: U_i \to U$. Therefore it factors through $t_{\alpha(i)}: V_{\alpha(i)} \to V$ where $\alpha$ is determined by the refinement $g$. Since $s \in j(W)$, and $j \subseteq \text{ref}(j)$, we have $s \in \text{ref}(j)(W)$. Thus $\text{ref}(j)$ is a coverage. Now if $j'$ is a refinement closed coverage such that $j \subseteq j'$, then for every $r \in j$ and every refinement $r \leq t$, it follows that $t \in j'$. Thus $\text{ref}(j) \subseteq j'$.
\end{proof}

\begin{Prop} \label{prop sheaf iff sheaf on refinement closure}
If $(\cat{C}, j)$ is a site, then a presheaf $X$ on $\cat{C}$ is a $j$-sheaf if and only if it is a $\text{ref}(j)$-sheaf. In other words
$$\Sh(\cat{C}, j) = \Sh(\cat{C}, \text{ref}(j)).$$
\end{Prop}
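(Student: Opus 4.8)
The plan is to prove the two inclusions separately, both of which follow almost immediately from results already established. For the inclusion $\Sh(\cat{C}, \text{ref}(j)) \subseteq \Sh(\cat{C}, j)$, I would observe that by the construction in Lemma \ref{lem refinement closure smallest refinement closed coverage} we have $j \subseteq \text{ref}(j)$. Then Lemma \ref{lem coverage comparison results}(2) immediately gives $\Sh(\cat{C}, \text{ref}(j)) \subseteq \Sh(\cat{C}, j)$. This direction requires no real work.

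For the reverse inclusion $\Sh(\cat{C}, j) \subseteq \Sh(\cat{C}, \text{ref}(j))$, I would argue directly. Suppose $X$ is a $j$-sheaf, and let $U \in \cat{C}$ with $t \in \text{ref}(j)(U)$ an arbitrary covering family of the refinement closure. By Definition \ref{def refinement closure}, there exists a covering family $r \in j(U)$ together with a refinement $f : r \to t$. Since $X$ is a $j$-sheaf and $r \in j(U)$, the presheaf $X$ is a sheaf on $r$. Applying Lemma \ref{lem sheaf on refinement} to this refinement $f : r \to t$ then shows that $X$ is a sheaf on $t$. As $t$ was an arbitrary element of $\text{ref}(j)(U)$ and $U$ was arbitrary, we conclude that $X$ is a $\text{ref}(j)$-sheaf.

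Combining the two inclusions yields the desired equality $\Sh(\cat{C}, j) = \Sh(\cat{C}, \text{ref}(j))$. The entire content of the proposition is concentrated in Lemma \ref{lem sheaf on refinement}: the fact that the sheaf condition is inherited along refinements is precisely what makes refinement closure invisible to the category of sheaves. There is no genuine obstacle here beyond citing that lemma; the proof is essentially a two-line deduction, mirroring the structure of Proposition \ref{prop sheaf iff sheaf on composition closure} which reduces analogously to Lemma \ref{lem sheaf on composite family}.
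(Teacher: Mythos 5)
Your proof is correct and is essentially the paper's own argument: the paper's proof consists of the single line ``This follows from Lemma \ref{lem sheaf on refinement},'' which handles the nontrivial inclusion exactly as you do, with the easy inclusion implicit from $j \subseteq \text{ref}(j)$. Your write-up simply makes both halves explicit.
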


\begin{proof}
This follows from Lemma \ref{lem sheaf on refinement}.
\end{proof}

Now in the case that $j$ is a sifted coverage, we notice that there exists a refinement $R \leq T$ of sieves if and only if $R \subseteq T$. This observation and Lemma \ref{lem sheaf on refinement} proves the following result.

\begin{Cor} \label{cor sheaves closed under containing covering sieves}
If $j$ is a sifted coverage on $\cat{C}$, $X$ is a sheaf on $\cat{C}$, $R \in j(U)$ is a covering sieve, and $T$ is a sieve on $U$, then $X$ is a sheaf on $T$ if $R \subseteq T$.
\end{Cor}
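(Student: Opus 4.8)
The plan is to invoke Lemma \ref{lem sheaf on refinement} directly, with the sifted structure used only to translate the hypothesis $R \subseteq T$ into the existence of a refinement. First I would recall from Definition \ref{def sieve} that for sieves $R$ and $T$ over $U$ one has $R \leq T$ if and only if $R \subseteq T$; this is exactly the observation recorded in the paragraph immediately preceding the statement. Thus the inclusion $R \subseteq T$ supplied by hypothesis is the same data as a refinement $f : R \to T$. Concretely, this refinement has index map the inclusion sending each morphism $g \in R$ to the same morphism $g \in T$, with $i$th component the identity on the domain of $g$, so that the defining triangle of Definition \ref{def family of morphisms} commutes trivially.

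Having produced the refinement $f : R \to T$, I would apply Lemma \ref{lem sheaf on refinement}, taking the covering family there to be $R \in j(U)$ and the target family to be $T$. Since $X$ is a sheaf on $\cat{C}$ and $R$ is a covering family in $j(U)$, the lemma immediately yields that $X$ is a sheaf on $T$, which is the desired conclusion.

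There is no genuine obstacle here: the entire content of the corollary is carried by Lemma \ref{lem sheaf on refinement}, and this statement is simply its specialization to the sifted setting, where the combinatorics of refinement of sieves degenerates to mere containment. The one point worth spelling out explicitly is the verification that $R \subseteq T$ really does furnish a refinement in the technical sense of Definition \ref{def family of morphisms}, rather than merely an abstract comparison in the poset; but this is precisely the equivalence $R \leq T \iff R \subseteq T$ for sieves, so the proof reduces to citing that equivalence together with Lemma \ref{lem sheaf on refinement}.
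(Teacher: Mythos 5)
Your proof is correct and is exactly the argument the paper gives: the sentence preceding the corollary notes that for sieves $R \leq T$ if and only if $R \subseteq T$, and then cites Lemma \ref{lem sheaf on refinement} with the covering family $R$ and target family $T$. Your only addition is to spell out the refinement explicitly (inclusion as index map, identity components), which is a harmless elaboration of the same step.
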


\begin{Lemma} \label{lem ref closure preserves composition closure}
If $j$ is a composition closed coverage on a category $\cat{C}$, then so is $\text{ref}(j)$.
\end{Lemma}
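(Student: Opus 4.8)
The plan is to unpack both occurrences of refinement closure, pull back once using the coverage axiom, and then invoke composition closedness of $j$ a single time. Since $\text{ref}(j)$ is already known to be a coverage by Lemma \ref{lem refinement closure smallest refinement closed coverage}, it remains only to verify the condition of Definition \ref{def composition closed coverage}. So I would start by assuming $r = \{r_i : U_i \to U\}_{i \in I} \in \text{ref}(j)(U)$ and, for each $i \in I$, a family $t^i = \{t^i_j : V_{ij} \to U_i\}_{j \in J_i} \in \text{ref}(j)(U_i)$; the goal is to exhibit a family in $j(U)$ refining $(r \circ t) = \bigcup_i (r_i)_*(t^i)$, which by Definition \ref{def refinement closure} is exactly what it means for $(r \circ t)$ to lie in $\text{ref}(j)(U)$.

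First I would extract the witnessing data. By Definition \ref{def refinement closure} there is $\rho = \{\rho_a : P_a \to U\}_{a \in A} \in j(U)$ with a refinement $\phi : \rho \to r$, having index map $\mu : A \to I$ and components $\phi_a : P_a \to U_{\mu(a)}$ satisfying $\rho_a = r_{\mu(a)} \phi_a$. Likewise, for each $i \in I$ there is $\tau^i = \{\tau^i_b : Q^i_b \to U_i\}_{b \in B_i} \in j(U_i)$ and a refinement $\psi^i : \tau^i \to t^i$, with index map $\nu^i : B_i \to J_i$ and components $\psi^i_b$ satisfying $\tau^i_b = t^i_{\nu^i(b)} \psi^i_b$.

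The key step is the pullback. For each $a \in A$, I would apply the coverage axiom (Definition \ref{def coverage}) to the covering family $\tau^{\mu(a)} \in j(U_{\mu(a)})$ and the map $\phi_a : P_a \to U_{\mu(a)}$, obtaining $\sigma^a = \{\sigma^a_c : R^a_c \to P_a\}_{c \in C_a} \in j(P_a)$ with $(\phi_a)_*(\sigma^a) \leq \tau^{\mu(a)}$. Then I form the composite $\Gamma = \bigcup_a (\rho_a)_*(\sigma^a)$; since $j$ is composition closed, $\rho \in j(U)$, and each $\sigma^a \in j(P_a)$, Definition \ref{def composition closed coverage} gives $\Gamma \in j(U)$. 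It remains to build a refinement $\Gamma \to (r \circ t)$: for a typical element $\rho_a \sigma^a_c$ of $\Gamma$, the refinement $(\phi_a)_*(\sigma^a) \leq \tau^{\mu(a)}$ supplies an index $b$ and a map $\xi^a_c : R^a_c \to Q^{\mu(a)}_b$ with $\phi_a \sigma^a_c = \tau^{\mu(a)}_b \xi^a_c$, and substituting $\tau^{\mu(a)}_b = t^{\mu(a)}_{\nu^{\mu(a)}(b)} \psi^{\mu(a)}_b$ and postcomposing with $r_{\mu(a)}$ yields
$$\rho_a \sigma^a_c = r_{\mu(a)} \phi_a \sigma^a_c = \bigl(r_{\mu(a)} t^{\mu(a)}_{\nu^{\mu(a)}(b)}\bigr)\bigl(\psi^{\mu(a)}_b \xi^a_c\bigr),$$
exhibiting $\rho_a \sigma^a_c$ as factoring through the element $r_{\mu(a)} t^{\mu(a)}_{\nu^{\mu(a)}(b)}$ of $(r \circ t)$. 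Assembling these factorizations gives the refinement $\Gamma \to (r \circ t)$, so $(r \circ t) \in \text{ref}(j)(U)$.

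I expect the only genuine obstacle to be bookkeeping: keeping the three layers of indices ($a$ over $\rho$, $b$ over $\tau^{\mu(a)}$, $c$ over $\sigma^a$) straight and confirming that the pieced-together index maps and component morphisms really do constitute a single refinement $\Gamma \to (r \circ t)$ in the sense of Definition \ref{def family of morphisms}. No limits and no coverage property beyond the one pullback and the one use of composition closedness are needed.
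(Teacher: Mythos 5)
Your proof is correct and follows essentially the same route as the paper's: extract $j$-covering witnesses for both layers of $\text{ref}(j)$-families, pull the inner witnesses back along the components of the outer refinement via the coverage axiom, invoke composition closedness of $j$ exactly once to get $\Gamma \in j(U)$, and verify that $\Gamma$ refines $(r \circ t)$. The explicit factorization $\rho_a \sigma^a_c = \bigl(r_{\mu(a)} t^{\mu(a)}_{\nu^{\mu(a)}(b)}\bigr)\bigl(\psi^{\mu(a)}_b \xi^a_c\bigr)$ is precisely the content of the commutative diagram the paper uses, so no changes are needed.
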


\begin{proof}
Suppose that $r = \{r_i : U_i \to U \}_{i \in I}$ is a $\text{ref}(j)$-covering family on $U$, so that there exists a $j$-covering family $r'$ and a refinement $r' \leq r$. Furthermore suppose that for every $i$, there is a $\text{ref}(j)$-covering family $t^i = \{t^i_j : V^i_j \to U_i \}$ on $U_i$. Then there exists a $j$-covering family $(t')^i$ and a refinement $(t')^i \leq t^i$. We wish to show that $(r \circ t)$ is a $\text{ref}(j)$ covering family. For each $i \in I$, we have the following commutative diagram
\begin{equation*}
\begin{tikzcd}
	{W^{i'}_k} & {(V')^i_{j'}} & {V^i_j} \\
	& {U'_{i'}} & {U_i} \\
	&& U
	\arrow["{r_i}", from=2-3, to=3-3]
	\arrow["{t^i_j}", from=1-3, to=2-3]
	\arrow[from=2-2, to=2-3]
	\arrow["{r'_{i'}}"', from=2-2, to=3-3]
	\arrow[from=1-2, to=1-3]
	\arrow["{(t')^i_{j'}}"{description}, from=1-2, to=2-3]
	\arrow[from=1-1, to=1-2]
	\arrow["{s^{i'}_k}"', from=1-1, to=2-2]
\end{tikzcd}
\end{equation*}
where $s^{i'} = \{s^{i'}_k : W_k \to U'_{i'} \}$ is a $j$-covering family on $U'_{i'}$ which exists since there is a map $U'_{i'} \to U_i$, $(t')^i$ is a covering family and $j$ is a coverage. We can thus take the composite $(r' \circ s)$, since $j$ is composition closed, and obtain a $j$-covering family on $U$ that refines $(r \circ t)$. Thus $(r \circ t)$ is a $\text{ref}(j)$ covering family, and thus $\text{ref}(j)$ is composition closed.
\end{proof}

\section{Local Epimorphisms}

\begin{Def} \label{def local epimorphism of presheaves}
Given a site $(\cat{C},j)$, a morphism $f: X \to Y$ of presheaves over $\cat{C}$ is a \textbf{$j$-local epimorphism}, if for every section $s: y(U) \to Y$ there exists a $j$-tree $T$ on $U$ such that for every $r_i : U_i \to U$ in $T^\circ$, there exists a section $s_i : y(U_i) \to X$ making the following diagram commute
\begin{equation*}
\begin{tikzcd}
	{y(U_i)} & X \\
	{y(U)} & Y
	\arrow["{s_i}", from=1-1, to=1-2]
	\arrow["{r_i}"', from=1-1, to=2-1]
	\arrow["f", from=1-2, to=2-2]
	\arrow["s"', from=2-1, to=2-2]
\end{tikzcd} 
\end{equation*}
In other words, $s_*(T^\circ) \leq f$.
\end{Def}

\begin{Lemma} \label{lem local epi on comp closed coverage}
If $(\cat{C}, j)$ is a composition closed coverage, then a map $f: X \to Y$ of presheaves is a $j$-local epimorphism if and only if for every section $s: y(U) \to Y$ there exists a covering family $r = \{r_i: U_i \to U \}$ and local sections $s_i: y(U_i) \to X$ making the following diagram commute for every $i$
\begin{equation*}
    \begin{tikzcd}
	{y(U_i)} & X \\
	{y(U)} & Y
	\arrow["f", from=1-2, to=2-2]
	\arrow["s"', from=2-1, to=2-2]
	\arrow["{r_i}"', from=1-1, to=2-1]
	\arrow["{s_i}", from=1-1, to=1-2]
\end{tikzcd}
\end{equation*}
i.e. $s_*(r) \leq f$.
\end{Lemma}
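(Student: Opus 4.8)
The plan is to observe that the only difference between the definition of $j$-local epimorphism (Definition \ref{def local epimorphism of presheaves}) and the condition stated in the lemma is that the former permits an arbitrary $j$-tree $T$ on $U$, while the latter allows only a single covering family $r \in j(U)$. Since the composite $T^\circ$ of a $j$-tree and a covering family carry their refinement data through $s_*$ in exactly the same way, the whole statement reduces to relating $j$-trees to covering families, which is precisely what composition closure controls via Lemma \ref{lem composition closed iff j-tree closed}.

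For the forward direction, I would fix a section $s : y(U) \to Y$. By the definition of $j$-local epimorphism there is a $j$-tree $T$ on $U$ with $s_*(T^\circ) \leq f$, meaning that for each $r_i : U_i \to U$ in $T^\circ$ there is a local section $s_i : y(U_i) \to X$ with $f s_i = s r_i$. Because $j$ is composition closed, Lemma \ref{lem composition closed iff j-tree closed} tells us that $j$ is $j$-tree closed, so the composite $T^\circ$ is itself a covering family in $j(U)$. Setting $r \coloneqq T^\circ$ then furnishes exactly a covering family together with the required local sections, i.e. $s_*(r) \leq f$.

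For the converse, I would point out that this direction does not even use composition closure. Given a section $s : y(U) \to Y$ and a covering family $r = \{r_i : U_i \to U\} \in j(U)$ with $s_*(r) \leq f$, simply take $T$ to be the height-one $j$-tree whose maximal paths are the singleton paths $(r_i)$ for $r_i \in r$ (together with the empty path). Then $T^\circ = r$ by Definition \ref{def tree}, and the inequality $s_*(T^\circ) = s_*(r) \leq f$ exhibits $f$ as a $j$-local epimorphism.

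I expect no genuine obstacle here: the content is entirely bookkeeping, and the single substantive input is the equivalence between composition closure and $j$-tree closure recorded in Lemma \ref{lem composition closed iff j-tree closed}. The only point requiring a little care is matching the local-section data on both sides so that the refinement relations $\leq f$ are literally identical, but this is immediate once $r$ is identified with $T^\circ$.
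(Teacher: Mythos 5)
Your proof is correct, and it is exactly the intended argument: the paper states this lemma without proof, and the entire content is the reduction to Lemma \ref{lem composition closed iff j-tree closed} that you carry out. The forward direction (replace the $j$-tree $T$ by the covering family $T^\circ$, which lies in $j(U)$ by $j$-tree closure, keeping the same local sections) and the converse (realize a covering family as the composite of a height-one $j$-tree) are both sound, and you are right that only the forward direction uses composition closure.

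One caveat, which is really a defect of the statement rather than of your argument: the identification $T^\circ = r$ in your converse fails when $r$ is the empty family $\varnothing$ (which can be a covering family on a singular site, Definition \ref{def singular objects in a site}). By Definition \ref{def tree} the composite of a $j$-tree is never empty --- a tree containing only the empty path has $T^\circ = \{1_U\}$ --- so your height-one tree construction produces $\{1_U\}$ rather than $\varnothing$ in that case. Since $s_*(\varnothing) \leq f$ holds vacuously while $f$ need not then be a $j$-local epimorphism, the ``if'' direction of the lemma is genuinely false for empty covering families; the statement should be read with $r$ nonempty (which is how it is applied later, e.g.\ in Lemma \ref{lem local epi iff local epi on saturation closure}, where $r$ always arises as some $T^\circ$). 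Your proof is complete for all nonempty $r$, which covers every actual use in the paper.
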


\begin{Rem} \label{rem epis are local epis}
Note that any epimorphism of presheaves is a local epimorphism. Let us call a morphism $f: X \to Y$ that satisfies the condition of Lemma \ref{lem local epi on comp closed coverage} a \textbf{strong $j$-local epimorphism}. Clearly strong $j$-local epimorphisms are $j$-local epimorphisms, but the converse is not true, see Remark \ref{rem counterexample of strong local epis}.
\end{Rem}

\begin{Lemma} \label{lem local epimorphisms closed under composition}
Given a site $(\cat{C}, j)$, if $f: X \to Y$ and $g: Y \to Z$ are $j$-local epimorphisms of presheaves on $\cat{C}$, then $gf : X \to Z$ is a $j$-local epimorphism.
\end{Lemma}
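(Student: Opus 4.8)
The plan is to unwind Definition \ref{def local epimorphism of presheaves} and apply the local epimorphism hypothesis twice, once for $g$ and once for $f$, then graft the two resulting $j$-trees together using Lemma \ref{lem j-trees closed under composition}. Concretely, fix a section $s: y(U) \to Z$. Since $g: Y \to Z$ is a $j$-local epimorphism, there is a $j$-tree $T$ on $U$ such that for every $r_i: U_i \to U$ in $T^\circ$ there is a section $t_i: y(U_i) \to Y$ with $g \circ t_i = s \circ r_i$ (reading $r_i$ through the Yoneda embedding). The maps in $T^\circ$ are exactly those attached to the leaf nodes $U_i$ of $T$, so each leaf node comes equipped with a chosen lift $t_i$ of $s \circ r_i$ along $g$.

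Next I would apply the hypothesis that $f$ is a local epimorphism to each of these lifts. For each leaf node $U_i$, the section $t_i: y(U_i) \to Y$ yields a $j$-tree $S_i$ on $U_i$ such that for every $q_{ik}: U_{ik} \to U_i$ in $S_i^\circ$ there is a section $u_{ik}: y(U_{ik}) \to X$ with $f \circ u_{ik} = t_i \circ q_{ik}$. Now the set of leaf nodes of $T$ is precisely $\{U_i\}$, and we have produced a $j$-tree $S_i$ on each one, so Lemma \ref{lem j-trees closed under composition} guarantees that the composite tree $R$ (grafting $S_i$ onto $U_i$) is again a $j$-tree on $U$. Its composite $R^\circ$ consists of the composites $r_i \circ q_{ik}: U_{ik} \to U$.

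Finally I would verify that the sections $u_{ik}$ exhibit $gf$ as a local epimorphism over $R$. For each $r_i q_{ik} \in R^\circ$ we compute $gf \circ u_{ik} = g \circ (f \circ u_{ik}) = g \circ (t_i \circ q_{ik}) = (g \circ t_i) \circ q_{ik} = (s \circ r_i) \circ q_{ik} = s \circ (r_i q_{ik})$, using the two lifting identities and functoriality of $y$. Thus $u_{ik}$ lifts $s \circ (r_i q_{ik})$ along $gf$, so $s_*(R^\circ) \leq gf$, and $gf$ is a $j$-local epimorphism. The only genuine content is the tree-gluing step: the whole argument hinges on the fact that $j$-trees are closed under attaching a $j$-tree to each leaf node (Lemma \ref{lem j-trees closed under composition}), which is exactly what lets a two-stage lifting witness remain a single $j$-tree rather than forcing us into composition closedness. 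The remaining verification is a routine diagram chase, so I expect no serious obstacle beyond careful bookkeeping of the indices $i$ and $k$.
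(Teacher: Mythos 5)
Your proposal is correct and follows essentially the same route as the paper's proof: apply the local-epimorphism hypothesis to $g$ to get a $j$-tree $T$ with lifts to $Y$, apply it again to $f$ at each leaf node to get trees $S_i$ with lifts to $X$, and graft via Lemma \ref{lem j-trees closed under composition}. Your version simply spells out the final diagram chase that the paper leaves implicit.
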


\begin{proof}
Given a section $s : y(U) \to Z$, since $g$ is a $j$-local epimorphism, there exists a $j$-tree $T$ on $U$, such that every $t_i \in T^\circ$ factors through $g$, providing the following commutative diagram
\begin{equation*}
   \begin{tikzcd}
	& X \\
	{y(U_i)} & Y \\
	{y(U)} & Z
	\arrow["s"', from=3-1, to=3-2]
	\arrow["f", from=1-2, to=2-2]
	\arrow["g", from=2-2, to=3-2]
	\arrow["{t_i}"', from=2-1, to=3-1]
	\arrow["{s_i}"', from=2-1, to=2-2]
\end{tikzcd} 
\end{equation*}
But then $s_i$ provides a section of $Y$, and since $f$ is a $j$-local epimorphism, there exists a $j$-tree $T_i$ on $U_i$ that factors through $f$. By Lemma \ref{lem j-trees closed under composition}, the composition of $T$ and $T_i$ for every $i$ is itself a $j$-tree, and thus $gf$ is a $j$-local epimorphism.
\end{proof}

\begin{Lemma} \label{lem composition is local epi implies local epi}
Let $f: X \to Y$ and $g : Y \to Z$ be maps of presheaves. If $gf$ is a $j$-local epimorphism, then so is $g$.
\end{Lemma}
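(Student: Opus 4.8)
The plan is to verify Definition \ref{def local epimorphism of presheaves} for $g$ directly, reusing the very $j$-tree produced by the hypothesis on $gf$. The guiding observation is that a local lift through a composite $gf$ immediately yields a local lift through the outer factor $g$ (simply by remembering the intermediate map into $Y$), and crucially that this can be done \emph{over the same covering data}, so no new tree has to be constructed.

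First I would fix an arbitrary section $s: y(U) \to Z$. Since $gf$ is a $j$-local epimorphism, applying Definition \ref{def local epimorphism of presheaves} to this $s$ supplies a $j$-tree $T$ on $U$ together with, for each $r_i : U_i \to U$ in $T^\circ$, a section $s_i : y(U_i) \to X$ satisfying $(gf) s_i = s r_i$. Next I would define, for each such $i$, the composite $w_i \coloneqq f s_i : y(U_i) \to Y$. Then a one-line computation gives $g w_i = g f s_i = s r_i$, so each $w_i$ is a lift of $s r_i$ through $g$ over $U_i$. Because this holds for every leaf of the single $j$-tree $T$, we have $s_*(T^\circ) \leq g$, which is exactly the condition that $g$ be a $j$-local epimorphism.

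There is essentially no obstacle here: the entire content is the associativity rewriting $(gf) s_i = g(f s_i)$, and the fact that the same tree $T$ witnesses the property for $g$. The only point worth stating explicitly is that we do not need composition closure or any manipulation of trees (contrast with Lemma \ref{lem local epimorphisms closed under composition}, where two families had to be composed via Lemma \ref{lem j-trees closed under composition}); here a single application of the hypothesis, followed by postcomposition-free bookkeeping, suffices.
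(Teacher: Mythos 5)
Your proposal is correct and is precisely the paper's argument: the paper notes that $s_*(T^\circ) \leq gf$ immediately implies $s_*(T^\circ) \leq g$, which is exactly your observation that the lift $s_i$ through $gf$ yields the lift $f s_i$ through $g$ over the same $j$-tree. You have simply unpacked the refinement notation; no difference in substance.
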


\begin{proof}
Given a section $s: y(U) \to Z$, we can find a $j$-tree $T$ on $U$ such that $s_*(T^\circ) \leq gf$. But this means that $s_*(T^\circ) \leq g$, so $g$ is a $j$-local epimorphism.
\end{proof}

\begin{Lemma} \label{lem local epi iff pullback by section local epi}
Given a site $(\cat{C}, j)$ a map $f : X \to Y$ of presheaves on $\cat{C}$ is a $j$-local epimorphism if and only if for every section $s: y(U) \to Y$, the pullback $s^*(f) : y(U) \times_Y X \to y(U)$ is a $j$-local epimorphism.
\end{Lemma}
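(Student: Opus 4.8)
The plan is to prove both implications directly from Definition \ref{def local epimorphism of presheaves}, using the universal property of the pullback square
\begin{equation*}
    \begin{tikzcd}
	{y(U) \times_Y X} & X \\
	{y(U)} & Y
	\arrow[from=1-1, to=1-2]
	\arrow["{s^*(f)}"', from=1-1, to=2-1]
	\arrow["f", from=1-2, to=2-2]
	\arrow["s"', from=2-1, to=2-2]
\end{tikzcd}
\end{equation*}
which exists since presheaf categories are complete. The only bookkeeping device needed is that, by Yoneda, a section $y(V) \to y(U)$ is the same as a morphism $g : V \to U$, and a lift into the pullback is exactly a compatible pair of maps to $y(U)$ and to $X$.

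For the forward direction, I would assume $f$ is a $j$-local epimorphism and fix a section $s : y(U) \to Y$; the goal is to show $s^*(f)$ is a $j$-local epimorphism. So I take an arbitrary section $t : y(V) \to y(U)$ and form the composite $s \circ t : y(V) \to Y$. Since $f$ is a $j$-local epimorphism, there is a $j$-tree $S$ on $V$ such that for each $r_k : V_k \to V$ in $S^\circ$ there is a section $\sigma_k : y(V_k) \to X$ with $f \circ \sigma_k = s \circ t \circ y(r_k)$. The pair $\big(t \circ y(r_k),\, \sigma_k\big)$ then satisfies the pullback compatibility $s \circ (t \circ y(r_k)) = f \circ \sigma_k$, so it induces a map $\tau_k : y(V_k) \to y(U) \times_Y X$ with $s^*(f) \circ \tau_k = t \circ y(r_k)$. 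Thus the same $j$-tree $S$ witnesses that $s^*(f)$ is a $j$-local epimorphism over the section $t$.

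For the converse, I would assume $s^*(f)$ is a $j$-local epimorphism for every section $s$, and fix $s : y(U) \to Y$ to show $f$ lifts over it. The key move is to test $s^*(f)$ against the \emph{identity} section $1_{y(U)} : y(U) \to y(U)$: since $s^*(f)$ is a $j$-local epimorphism, there is a $j$-tree $T$ on $U$ such that for every $r_i : U_i \to U$ in $T^\circ$ there is a lift $\tau_i : y(U_i) \to y(U) \times_Y X$ with $s^*(f) \circ \tau_i = y(r_i)$. Composing $\tau_i$ with the top projection $y(U) \times_Y X \to X$ yields $s_i : y(U_i) \to X$, and the pullback square gives $f \circ s_i = s \circ s^*(f) \circ \tau_i = s \circ y(r_i)$. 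Hence $T$ witnesses that $f$ is a $j$-local epimorphism over $s$.

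This argument is entirely formal, so there is no serious obstacle; the one point requiring care is the translation between sections out of representables and the morphisms they classify, and recognizing that the converse is unlocked precisely by probing $s^*(f)$ with the identity section rather than a general one. No appeal to composition closure of $j$ or to the tree machinery beyond the raw definition is needed, since the $j$-trees produced for $f$ and for $s^*(f)$ are literally the same in each direction.
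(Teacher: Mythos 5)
Your proof is correct and follows essentially the same route as the paper: the forward direction tests $f$ against the composite section $s \circ t$ and transports the lifts across the universal property of the pullback, and the converse is unlocked by probing $s^*(f)$ with the identity section $1_{y(U)}$ and post-composing with the projection to $X$. The paper states this more tersely using the notation $(st)_*(T^\circ) \leq f$ and $t_*(T^\circ) \leq s^*(f)$, but the argument is the same.
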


\begin{proof}
$(\Rightarrow)$ Suppose that $f$ is a $j$-local epimorphism, $s: y(U) \to Y$ is a section and $t : y(V) \to y(U)$ is a section. Then there exists a $j$-tree $T$ on $V$ such that $(st)_*(T^\circ) \leq f$. But this implies that $t_*(T^\circ) \leq s^*(f)$.

$(\Leftarrow)$ Given a section $s: y(U) \to Y$, if the pullback $s^*(f)$ is a $j$-local epimorphism, then taking $1_{y(U)} : y(U) \to y(U)$, there exists a $j$-tree $T$ on $U$ such that $T^\circ \leq s^*(f)$, which is equivalent to $s_*(T^\circ) \leq f$. Thus $f$ is a $j$-local epimorphism.
\end{proof}

\begin{Lemma} \label{lem local epi iff pullback local epi}
Given a site $(\cat{C}, j)$, a map $f: X \to Y$ is a $j$-local epimorphism if and only if for every map $g: Z \to Y$ the pullback $g^*(f) : Z \times_Y X \to Z$ is a $j$-local epimorphism.
\end{Lemma}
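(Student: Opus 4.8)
The plan is to reduce everything to the special case of sections out of representables, which is exactly the content of Lemma \ref{lem local epi iff pullback by section local epi}, and then to invoke the pasting law for pullbacks to bridge between arbitrary $g : Z \to Y$ and sections $s : y(U) \to Y$.

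For the easy direction $(\Leftarrow)$, I would simply instantiate the hypothesis at $g = 1_Y : Y \to Y$. The pullback $1_Y^*(f) : Y \times_Y X \to Z$ is canonically isomorphic to $f$ itself (pulling $f$ back along the identity changes nothing up to the evident iso $Y \times_Y X \cong X$), so $f$ inherits the property of being a $j$-local epimorphism. Nothing further is needed here.

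For the forward direction $(\Rightarrow)$, assume $f$ is a $j$-local epimorphism and fix an arbitrary $g : Z \to Y$. To show that $g^*(f) : Z \times_Y X \to Z$ is a $j$-local epimorphism, I would apply Lemma \ref{lem local epi iff pullback by section local epi} to the map $g^*(f)$ itself: it suffices to verify that for every section $s : y(U) \to Z$, the pullback $s^*(g^*(f))$ is a $j$-local epimorphism. The key step is the pasting law for pullbacks, which supplies a canonical isomorphism $s^*(g^*(f)) \cong (gs)^*(f)$ over $y(U)$, since pulling $f$ back first along $g$ and then along $s$ agrees with pulling $f$ back along the composite $gs : y(U) \to Y$. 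Now $gs : y(U) \to Y$ is a section of $Y$, so because $f$ is assumed to be a $j$-local epimorphism, Lemma \ref{lem local epi iff pullback by section local epi} guarantees that $(gs)^*(f)$ is a $j$-local epimorphism. Hence $s^*(g^*(f))$ is a $j$-local epimorphism for every $s$, and applying Lemma \ref{lem local epi iff pullback by section local epi} a second time (now in the converse direction) yields that $g^*(f)$ is a $j$-local epimorphism, as desired.

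The only real subtlety is bookkeeping: keeping the two nested pullback squares straight, checking that the composite $gs$ genuinely lands in $Y$ so that the preceding lemma applies, and confirming that the pasting isomorphism is compatible with the projections down to $y(U)$. Once these identifications are pinned down, the argument is just a clean two-fold application of Lemma \ref{lem local epi iff pullback by section local epi} sandwiching a single use of the pasting law, so I expect no genuine obstacle beyond this diagram-chasing.
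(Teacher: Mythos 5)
Your proposal is correct and follows essentially the same route as the paper's proof: both directions are handled identically, with the forward direction reducing to Lemma \ref{lem local epi iff pullback by section local epi} via the pasting law identification $s^*(g^*(f)) \cong (gs)^*(f)$, and the converse obtained by taking $g = 1_Y$. No gaps.
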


\begin{proof}
$(\Rightarrow)$ By Lemma \ref{lem local epi iff pullback by section local epi} the map $g^*(f)$ is a $j$-local epimorphism if and only if $s^*(g^*(f))$ is a $j$-local epimorphism for every section $s : y(U) \to Z$. But in the following commutative diagram, the outer rectangle is a pullback
\begin{equation*}
    \begin{tikzcd}
	{y(U) \times_Y X} & {Z \times_Y X} & X \\
	{y(U)} & Z & Y
	\arrow[from=1-1, to=1-2]
	\arrow["{s^*(g^*(f))}"', from=1-1, to=2-1]
	\arrow["\lrcorner"{anchor=center, pos=0.125}, draw=none, from=1-1, to=2-2]
	\arrow[from=1-2, to=1-3]
	\arrow["{g^*(f)}", from=1-2, to=2-2]
	\arrow["\lrcorner"{anchor=center, pos=0.125}, draw=none, from=1-2, to=2-3]
	\arrow["f", from=1-3, to=2-3]
	\arrow["s"', from=2-1, to=2-2]
	\arrow["g"', from=2-2, to=2-3]
\end{tikzcd}
\end{equation*}
and $gs : y(U) \to Y$ is a section. Thus again by Lemma \ref{lem local epi iff pullback by section local epi}, this implies that $f$ is a $j$-local epimorphism.
$(\Leftarrow)$ Taking $g = 1_Y$ shows that $f$ is a $j$-local epimorphism.
\end{proof}

\begin{Not}
Given a map $f: X \to Y$ in a category $\cat{C}$ with pullbacks, let $\Delta_f: X \to X \times_Y X$ denote the \textbf{slice diagonal map}, which is the unique map obtained by the universal property of pullbacks
\begin{equation*}
\begin{tikzcd}
	X \\
	& {X \times_Y X} & X \\
	& X & Y
	\arrow["{\pi_0}"', from=2-2, to=3-2]
	\arrow["f"', from=3-2, to=3-3]
	\arrow["{\pi_1}", from=2-2, to=2-3]
	\arrow["f", from=2-3, to=3-3]
	\arrow["{1_X}"', curve={height=12pt}, from=1-1, to=3-2]
	\arrow["{1_X}", curve={height=-12pt}, from=1-1, to=2-3]
	\arrow["{\Delta_f}"{description}, dashed, from=1-1, to=2-2]
\end{tikzcd}
\end{equation*}
Note that this map is a monomorphism.
\end{Not}

\begin{Lemma} \label{lem map is mono iff slice diagonal iso}
A map $f: X \to Y$ in a category $\cat{C}$ with pullbacks is a monomorphism if and only if the slice diagonal $\Delta_f: X \to X \times_Y X$ is an isomorphism.
\end{Lemma}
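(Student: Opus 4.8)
The plan is to show that the projection $\pi_0 : X \times_Y X \to X$ is a two-sided inverse to $\Delta_f$ precisely when $f$ is a monomorphism. Recall from the construction of $\Delta_f$ that $\pi_0 \Delta_f = 1_X = \pi_1 \Delta_f$, so $\Delta_f$ is always a split monomorphism with retraction $\pi_0$ (this is the observation already recorded after the definition). Thus the only thing ever in question is whether $\Delta_f$ is additionally a split epimorphism, i.e. whether $\Delta_f \pi_0 = 1_{X \times_Y X}$.

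For the forward direction, I would assume $f$ is monic and verify $\Delta_f \pi_0 = 1_{X \times_Y X}$ using the universal property of the pullback: two endomorphisms of $X \times_Y X$ coincide as soon as their composites with $\pi_0$ and with $\pi_1$ agree. Composing $\Delta_f \pi_0$ with $\pi_0$ gives $\pi_0 \Delta_f \pi_0 = \pi_0$, matching $\pi_0 \cdot 1_{X \times_Y X}$. Composing with $\pi_1$ gives $\pi_1 \Delta_f \pi_0 = \pi_0$, which must be compared with $\pi_1$. This is the only place the hypothesis enters: the defining pullback square gives $f \pi_0 = f \pi_1$, and since $f$ is monic this forces $\pi_0 = \pi_1$, so $\pi_1 \Delta_f \pi_0 = \pi_0 = \pi_1$ as required. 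By uniqueness in the pullback, $\Delta_f \pi_0 = 1_{X \times_Y X}$, and hence $\Delta_f$ is an isomorphism with inverse $\pi_0$.

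For the converse, I would assume $\Delta_f$ is an isomorphism and first deduce $\pi_0 = \pi_1$ directly: from $\pi_0 \Delta_f = \pi_1 \Delta_f$ and the fact that an isomorphism is in particular an epimorphism, cancel $\Delta_f$ on the right to obtain $\pi_0 = \pi_1$ (equivalently $\pi_i = \pi_i \Delta_f \Delta_f^{-1} = \Delta_f^{-1}$ for $i = 0,1$). Then given any pair $g, h : Z \to X$ with $f g = f h$, the universal property supplies a unique map $k : Z \to X \times_Y X$ with $\pi_0 k = g$ and $\pi_1 k = h$; since $\pi_0 = \pi_1$ we conclude $g = \pi_0 k = \pi_1 k = h$, so $f$ is a monomorphism.

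There is no serious obstacle here; the content is entirely the bookkeeping of the pullback universal property. The one step that deserves care is establishing $\Delta_f \pi_0 = 1_{X \times_Y X}$ in the forward direction, since one might be tempted to think a split monomorphism is automatically invertible. It is precisely the monicity of $f$, funneled through the identity $\pi_0 = \pi_1$, that upgrades the retraction $\pi_0$ to a genuine two-sided inverse.
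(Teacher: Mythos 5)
Your proof is correct, but it takes a genuinely different route from the one in the paper. The paper factors the statement through an intermediate characterization: it first shows that $f$ is a monomorphism if and only if every map $h : Z \to X \times_Y X$ factors through $\Delta_f$, and then shows that this universal factorization property is equivalent to $\Delta_f$ being an isomorphism, the last step going through the observation that $\cat{C}(Z,\Delta_f)$ is injective (since $\Delta_f$ is monic) and surjective (by assumption), whence the Yoneda lemma upgrades the natural bijection to an isomorphism. You instead exhibit the inverse explicitly: you prove that monicity of $f$ forces $\pi_0 = \pi_1$, use the uniqueness clause of the pullback to conclude $\Delta_f \pi_0 = 1_{X \times_Y X}$, and thereby show $\pi_0$ is a two-sided inverse; your converse likewise extracts $\pi_0 = \pi_1$ by cancelling the epimorphism $\Delta_f$ and then reads off monicity of $f$ from the universal property. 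Your argument is more elementary and more constructive, since it names the inverse and never invokes Yoneda; the paper's argument isolates the reusable slogan that $f$ is monic precisely when every generalized element of $X \times_Y X$ is ``diagonal,'' which is conceptually closer to how the local monomorphism condition is deployed later in the text. Both are complete; your emphasis on the fact that the hypothesis enters only to upgrade the always-present retraction $\pi_0$ to a section is a nice touch that the paper's write-up leaves implicit.
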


\begin{proof}
First let us show that $f$ is a monomorphism if and only if every map $h: Z \to X \times_Y X$ factors through the slice diagonal map $\Delta_f$. Suppose that $f$ is a mono. Let $h_0 = \pi_0 h$ and $h_1 = \pi_1 h$. Then we have $f h_0 = f h_1$, and since $f$ is a mono, this implies that $h_0 = h_1$. Now $h$ factors through $\Delta_f$ because $\Delta_f h_0 = \Delta_f pi_0 h = h$. Conversely, if every map to $X\times_Y X$ factors through $\Delta_f$, then any pair of maps $h_0, h_1 : Z \to X$ such that $f h_0 = f h_1$ determines a map $h : Z \to X \times_Y X$ that factors through $\Delta_f$ which implies that $h_0 = h_1$, so $f$ is a mono.

Now let us show that every map $h: Z \to X \times_Y X$ factors through the diagonal $\Delta_f$ if and only if $\Delta_f$ is an isomorphism. Suppose every map to $X \times_Y X$ factors through $\Delta_f$. Then $\cat{C}(Z, \Delta_f) : \cat{C}(Z, X) \to \cat{C}(Z, X \times_Y X)$ is injective because $\Delta_f$ is a mono, and it is surjective by assumption. Thus by the Yoneda Lemma $\Delta_f$ is an isomorphism. Conversely, if $\Delta_f$ is an isomorphism, then obviously every map to $X \times_Y X$ factors through $\Delta_f$ by its inverse.
\end{proof}

\begin{Def}
Given a site $(\cat{C}, j)$, we say a map $f: X \to Y$ of presheaves on $\cat{C}$ is a \textbf{$j$-local monomorphism} if the slice diagonal map $\Delta_f: X \to X \times_Y X$ is a $j$-local epimorphism. We say that $f$ is a $j$-\textbf{local isomorphism} if it is both a $j$-local monomorphism and a $j$-local epimorphism.
\end{Def}

\begin{Rem}
Similarly to Remark \ref{rem epis are local epis} all isomorphisms and monomorphisms of presheaves are local isomorphisms and local monomorphisms, respectively.
\end{Rem}

\begin{Lemma} \label{lem characterization of local mono}
Given a site $(\cat{C}, j)$, a map $f: X \to Y$ of presheaves on $\cat{C}$ is a $j$-local monomorphism if and only if for every pair of sections $s, s' : y(U) \to X$ such that $f(s) = f(s')$, there exists a $j$-tree $T$ on $U$ such that for every $r_i : U_i \to U$ in $T^\circ$, the restrictions of $s, s'$ are equal, i.e. $X(r_i)(s) = X(r_i)(s')$.
\end{Lemma}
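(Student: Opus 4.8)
The plan is to unwind both conditions using the universal property of the pullback $X \times_Y X$ together with the Yoneda lemma, and to observe that they say literally the same thing. By definition $f$ is a $j$-local monomorphism precisely when the slice diagonal $\Delta_f : X \to X \times_Y X$ is a $j$-local epimorphism, which by Definition \ref{def local epimorphism of presheaves} means: for every section $t : y(U) \to X \times_Y X$ there is a $j$-tree $T$ on $U$ such that for each $r_i : U_i \to U$ in $T^\circ$ there exists a section $t_i : y(U_i) \to X$ with $\Delta_f \circ t_i = t \circ y(r_i)$.

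First I would set up the correspondence between sections into the pullback and pairs of sections of $X$. By the universal property of $X \times_Y X$, a section $t : y(U) \to X \times_Y X$ is the same data as a pair $s \coloneqq \pi_0 t$, $s' \coloneqq \pi_1 t$ of sections $y(U) \to X$ satisfying $f \circ s = f \circ s'$; conversely every such pair determines a unique such $t$. This bijection ranges over exactly the pairs $(s,s')$ quantified in the statement, so it suffices to match the two conditions section-by-section.

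Next I would analyze the lift condition $\Delta_f \circ t_i = t \circ y(r_i)$ for a fixed $r_i \in T^\circ$. Postcomposing with the two projections and using $\pi_0 \Delta_f = \pi_1 \Delta_f = 1_X$ yields $t_i = s \circ y(r_i)$ and $t_i = s' \circ y(r_i)$. Hence a lift $t_i$ exists if and only if $s \circ y(r_i) = s' \circ y(r_i)$, in which case it is forced to be their common value. By the Yoneda lemma $s \circ y(r_i)$ corresponds to $X(r_i)(s)$, so this is precisely the equation $X(r_i)(s) = X(r_i)(s')$ appearing in the statement.

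Finally, combining these observations shows that, for the section $t$ corresponding to a pair $(s,s')$, a $j$-tree $T$ witnesses the local epimorphism condition for $\Delta_f$ if and only if it witnesses $X(r_i)(s) = X(r_i)(s')$ for all $r_i \in T^\circ$. Since the pairs $(s,s')$ with $f(s) = f(s')$ range over exactly the sections $t$, the two quantified statements coincide, proving the equivalence. There is no genuine obstacle here; the only point requiring care is the bookkeeping of the pullback's universal property, in particular verifying that the lift, when it exists, is unique and equal to the common restriction, which is precisely what lets ``existence of a lift'' be re-expressed as ``equality of restrictions.''
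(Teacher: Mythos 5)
Your proof is correct, and since the paper states this lemma without proof, your argument supplies exactly the intended unwinding: sections of $X \times_Y X$ are pairs $(s,s')$ with $f(s)=f(s')$, and because $\pi_0\Delta_f = \pi_1\Delta_f = 1_X$ a lift of such a section along $\Delta_f$ over $r_i$ exists iff $X(r_i)(s) = X(r_i)(s')$, in which case it is the common restriction. Nothing is missing.
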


\begin{Lemma} \label{lem local monos closed under composition}
Given a site $(\cat{C}, j)$, if $f: X \to Y$ and $g: Y \to Z$ are maps of presheaves that are additionally $j$-local monomorphisms, then $gf$ is a $j$-local monomorphism.
\end{Lemma}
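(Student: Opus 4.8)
The plan is to avoid manipulating the slice diagonals $\Delta_{gf}$, $\Delta_f$, $\Delta_g$ directly, and instead to work entirely through the pointwise characterization of local monomorphisms from Lemma \ref{lem characterization of local mono}. So suppose $s, s' : y(U) \to X$ are sections with $(gf)(s) = (gf)(s')$; the goal is to produce a $j$-tree on $U$ along every map of whose composite the restrictions of $s$ and $s'$ agree, which by Lemma \ref{lem characterization of local mono} is exactly what it means for $gf$ to be a $j$-local monomorphism.

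First I would pass to the sections $fs, fs' : y(U) \to Y$. Since $g(fs) = g(fs')$ and $g$ is a $j$-local monomorphism, Lemma \ref{lem characterization of local mono} supplies a $j$-tree $T$ on $U$ with $Y(r_i)(fs) = Y(r_i)(fs')$ for every $r_i : U_i \to U$ in $T^\circ$. By naturality of $f$, setting $s_i \coloneqq X(r_i)(s)$ and $s_i' \coloneqq X(r_i)(s')$, this equality reads $f(s_i) = f(s_i')$ for each such $r_i$, where now $s_i, s_i' : y(U_i) \to X$. Next, for each leaf node $U_i$ of $T$ I would invoke the hypothesis that $f$ is a $j$-local monomorphism on the pair $s_i, s_i'$: Lemma \ref{lem characterization of local mono} gives a $j$-tree $T_i$ on $U_i$ with $X(q)(s_i) = X(q)(s_i')$ for every $q : W \to U_i$ in $T_i^\circ$.

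Finally I would glue the trees. By Lemma \ref{lem j-trees closed under composition}, attaching each $T_i$ at the corresponding leaf node $U_i$ of $T$ produces a $j$-tree $T'$ on $U$ whose composite $T'^\circ$ consists exactly of the two-fold composites $r_i q : W \to U$ with $r_i \in T^\circ$ and $q \in T_i^\circ$. For any such composite, functoriality yields
\[
X(r_i q)(s) = X(q)\big(X(r_i)(s)\big) = X(q)(s_i) = X(q)(s_i') = X(q)\big(X(r_i)(s')\big) = X(r_i q)(s'),
\]
so $s$ and $s'$ restrict to the same section along every map in $T'^\circ$. By Lemma \ref{lem characterization of local mono}, this exhibits $gf$ as a $j$-local monomorphism. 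I do not expect a genuine obstacle here: the argument is the local-monomorphism analogue of the composition-of-local-epimorphisms result (Lemma \ref{lem local epimorphisms closed under composition}), and the only real care needed is the naturality step that transports the equality $fs = fs'$ down to the restricted sections $s_i, s_i'$, together with confirming via Lemma \ref{lem j-trees closed under composition} that $T'^\circ$ is precisely the set of composites $r_i q$.
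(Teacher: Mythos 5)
Your proposal is correct and follows essentially the same route as the paper's proof: obtain a $j$-tree $T$ on $U$ from the hypothesis on $g$ applied to $f(s), f(s')$, use naturality to transport the equality to the restricted sections, obtain trees $T_i$ at the leaves from the hypothesis on $f$, and compose via Lemma \ref{lem j-trees closed under composition}. The only difference is cosmetic: you invoke the pointwise characterization of Lemma \ref{lem characterization of local mono} explicitly, which the paper uses implicitly.
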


\begin{proof}
Suppose that $s, s' : y(U) \to X$ are sections such that $gf(s) = gf(s')$. Then since $g$ is a $j$-local monomorphism and $f(s), f(s') : y(U) \to Y$ are sections such that $g(f(s)) = g(f(s'))$, there exists a $j$-tree $T$ on $U$ such that for every $r_i : U_i \to U$ in $T^\circ$, $X(r_i)(f(s)) = X(r_i)(f(s'))$. Thus $f(X(r_i)(s)) = f(X(r_i)(s'))$. Since $f$ is a $j$-local monomorphism, this implies that there exists a $j$-tree $T^i$ such that for every $t^i_j : V^i_j \to U_i$, $X(t^i_j)X(r_i)(s) = X(t^i_j)X(r_i)(s')$. Composing $T$ with all of the $T^i$, we obtain a $j$-tree $T'$ such that for every $r_i t^i_j \in (T')^\circ$, $X(r_i t^i_j)(s) = X(r_i t^i_j)(s')$. Thus $gf$ is a $j$-local monomorphism.
\end{proof}

\begin{Cor} \label{cor local isos closd under composition}
Given a site $(\cat{C}, j)$, the class of $j$-local isomorphisms is closed under composition.
\end{Cor}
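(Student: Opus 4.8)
The plan is to observe that this is an immediate consequence of the two composition-closure lemmas already proved for the two constituent classes. Recall that, by definition, a $j$-local isomorphism is a map of presheaves that is simultaneously a $j$-local monomorphism and a $j$-local epimorphism. So the strategy is purely to unwind this definition and invoke the relevant closure results separately for each property.

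Concretely, suppose $f: X \to Y$ and $g: Y \to Z$ are $j$-local isomorphisms. Then each of $f$ and $g$ is in particular a $j$-local epimorphism, so by Lemma \ref{lem local epimorphisms closed under composition} the composite $gf$ is a $j$-local epimorphism. Likewise, each of $f$ and $g$ is a $j$-local monomorphism, so by Lemma \ref{lem local monos closed under composition} the composite $gf$ is a $j$-local monomorphism. Having shown that $gf$ is both a $j$-local epimorphism and a $j$-local monomorphism, we conclude that $gf$ is a $j$-local isomorphism, which is exactly the claim.

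There is no genuine obstacle here: all the work has already been done in Lemmas \ref{lem local epimorphisms closed under composition} and \ref{lem local monos closed under composition}, where the real arguments (using that $j$-trees are closed under composition, Lemma \ref{lem j-trees closed under composition}) were carried out. The corollary merely records that the intersection of two classes each closed under composition is itself closed under composition, specialized to these two classes. Accordingly, I would keep the proof to a single sentence citing the two lemmas, exactly as the author presumably intends.

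\begin{proof}
A $j$-local isomorphism is by definition both a $j$-local monomorphism and a $j$-local epimorphism. Hence if $f$ and $g$ are $j$-local isomorphisms, then $gf$ is a $j$-local epimorphism by Lemma \ref{lem local epimorphisms closed under composition} and a $j$-local monomorphism by Lemma \ref{lem local monos closed under composition}, and is therefore a $j$-local isomorphism.
\end{proof}
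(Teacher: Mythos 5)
Your proof is correct and is exactly the argument the paper intends: the corollary is stated without proof precisely because it follows immediately from Lemma \ref{lem local epimorphisms closed under composition} and Lemma \ref{lem local monos closed under composition}, since a $j$-local isomorphism is by definition a map that is both a $j$-local epimorphism and a $j$-local monomorphism. Your one-sentence proof citing those two lemmas is the right level of detail.
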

 
\begin{Lemma}[{\cite[Lemma 16.2.4]{kashiwara2006}}] \label{lem properties of local monos and local isos}
Given a site $(\cat{C}, j)$ and maps $f: X \to Y$, $g :Y \to Z$ of presheaves on $\cat{C}$:
\begin{enumerate}
    \item if $f$ is a $j$-local monomorphism, then for every morphism $h : W \to Y$, the pullback $h^*(f) : W \times_Y X \to W$ is a $j$-local monomorphism,
    \item if $f$ is a $j$-local isomorphism, then for every morphism $h : W \to Y$, the pullback $h^*(f) : W \times_Y X \to W$ is a $j$-local isomorphism,
    \item if $gf$ is a $j$-local epimorphism, and $g$ is a $j$-local monomorphism, then $f$ is a $j$-local epimorphism,
    \item if $gf$ is a $j$-local monomorphism, and $f$ is a $j$-local epimorphism, then $g$ is a $j$-local monomorphism,
    \item if $gf$ is a $j$-local monomorphism, then $f$ is a $j$-local monomorphism,
    \item if $f: X \to Y$ and $g: Y \to Z$ are maps of presheaves on $\cat{C}$, then if any two morphisms in $\{f, g, gf \}$ are $j$-local isomorphisms, then all of them are $j$-local isomorphisms.
\end{enumerate}
\end{Lemma}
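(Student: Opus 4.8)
The six claims split into two groups. Parts (1) and (2) are pullback-stability statements, which I would derive from the pullback-stability of local epimorphisms (Lemma \ref{lem local epi iff pullback local epi}) together with the definitions of local monomorphism and local isomorphism. Parts (3)--(5) are ``cancellation'' properties, all of which I would prove directly from the tree characterization of local monomorphisms (Lemma \ref{lem characterization of local mono}) and the definition of local epimorphism, gluing $j$-trees together via Lemma \ref{lem j-trees closed under composition}. Finally part (6) is a purely formal consequence of (3)--(5), Corollary \ref{cor local isos closd under composition}, and Lemma \ref{lem composition is local epi implies local epi}.

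For (1), the key observation is that forming slice diagonals commutes with pullback: writing $P = W \times_Y X$, there is a canonical isomorphism $P \times_W P \cong W \times_Y (X \times_Y X)$ under which the slice diagonal $\Delta_{h^*(f)} : P \to P \times_W P$ is identified with the pullback of $\Delta_f : X \to X \times_Y X$ along the projection $W \times_Y (X \times_Y X) \to X \times_Y X$. Since $f$ being a $j$-local monomorphism means $\Delta_f$ is a $j$-local epimorphism, Lemma \ref{lem local epi iff pullback local epi} gives that $\Delta_{h^*(f)}$ is a $j$-local epimorphism, i.e. $h^*(f)$ is a $j$-local monomorphism. Part (2) then follows immediately: $h^*(f)$ is a local monomorphism by (1), and a local epimorphism by Lemma \ref{lem local epi iff pullback local epi} applied to the local epimorphism $f$, hence a local isomorphism.

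Part (5) is the easiest: if $s, s' : y(U) \to X$ satisfy $f(s) = f(s')$, then $gf(s) = gf(s')$, so the $j$-tree supplied by Lemma \ref{lem characterization of local mono} for $gf$ witnesses the local-monomorphism condition for $f$ verbatim. For (3), given a section $s : y(U) \to Y$ I would first use that $gf$ is a local epimorphism on $gs : y(U) \to Z$ to obtain a $j$-tree $T$ and lifts $t_i : y(U_i) \to X$ with $g(f t_i) = g(s r_i)$ for each leaf $r_i \in T^\circ$; since $g$ is a local monomorphism, Lemma \ref{lem characterization of local mono} produces a further $j$-tree on each $U_i$ along which $f t_i$ and $s r_i$ agree, and composing these with $T$ (Lemma \ref{lem j-trees closed under composition}) yields a $j$-tree whose leaves each lift $s$ through $f$. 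Part (4) is the most involved and is where I expect the real work: given $s, s' : y(U) \to Y$ with $g(s) = g(s')$, I would use $f$ a local epimorphism twice---first to lift $s$, then to lift $s' r_i$ over each leaf---producing for each doubly-indexed leaf a pair of sections of $X$ with equal image under $gf$; applying Lemma \ref{lem characterization of local mono} for the local monomorphism $gf$ gives a third layer of $j$-trees along which the two $X$-sections agree, and pushing this equality forward along $f$ shows $s$ and $s'$ agree along the composite tree. The bookkeeping of this three-layer $j$-tree composition is the main obstacle.

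Finally, for (6) I would reduce the 2-out-of-3 statement to two cases, the remaining case ($f$ and $g$ local isomorphisms $\Rightarrow gf$ local isomorphism) being Corollary \ref{cor local isos closd under composition}. If $gf$ and $f$ are local isomorphisms, then $g$ is a local epimorphism by Lemma \ref{lem composition is local epi implies local epi} and a local monomorphism by part (4), hence a local isomorphism; if $gf$ and $g$ are local isomorphisms, then $f$ is a local monomorphism by part (5) and a local epimorphism by part (3), hence a local isomorphism. This exhausts all cases.
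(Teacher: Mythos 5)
Your proposal is correct. Parts (1), (2), and (6) follow essentially the same path as the paper: the same identification of $\Delta_{h^*(f)}$ as a pullback of $\Delta_f$, the same combination with Lemma \ref{lem local epi iff pullback local epi}, and the same case analysis for the 2-of-3 property (indeed your citations in (6) are the right ones --- the paper's own text invokes part (4) at a point where part (5) is what is actually used). Where you genuinely diverge is in (3)--(5). The paper proves these by purely formal pullback-pasting: for (3) it factors $f$ as $\ell k q$ where $\ell k$ is a pullback of $gf$ and $q$ is a pullback of $\Delta_g$, then composes local epimorphisms; for (4) it builds a $3\times 3$ grid of pullbacks to exhibit a map $q : X\times_Z X \to Y\times_Z Y$ that is a composite of pullbacks of $f$, and then cancels using $q\Delta_{gf}=\Delta_g f$; for (5) it observes that $\Delta_f$ is a pullback of $\Delta_{gf}$. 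You instead argue directly with sections and $j$-trees via Lemma \ref{lem characterization of local mono} and Lemma \ref{lem j-trees closed under composition}. Both routes are sound. The paper's approach buys economy --- everything reduces to the already-established stability of local epimorphisms under composition, cancellation, and pullback, with no tree bookkeeping --- and it generalizes verbatim to any system of local epimorphisms in the sense of Definition \ref{def system of local epis}. Your approach makes the combinatorial content visible and your (5) is actually shorter than the paper's, but your (4) pays for this with the three-layer tree composition you flag; that step does go through, since each layer is a $j$-tree on the leaf of the previous one and Lemma \ref{lem j-trees closed under composition} closes the composite.
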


\begin{proof}
(1) We have the following pasting diagram of pullback squares
\begin{equation*}
\begin{tikzcd}
	{W \times_Y X} & X \\
	{W \times_Y (X \times_Y X) \cong (W \times_Y X) \times_W (W \times_Y X)} & {X \times_Y X} \\
	W & Y
	\arrow[from=1-1, to=1-2]
	\arrow[from=1-1, to=2-1]
	\arrow["{\Delta_f}"', from=1-2, to=2-2]
	\arrow["f", curve={height=-24pt}, from=1-2, to=3-2]
	\arrow[from=2-1, to=2-2]
	\arrow[from=2-1, to=3-1]
	\arrow[from=2-2, to=3-2]
	\arrow[""{name=0, anchor=center, inner sep=0}, "h"', from=3-1, to=3-2]
	\arrow["\lrcorner"{anchor=center, pos=0.125}, draw=none, from=1-1, to=0]
	\arrow["\lrcorner"{anchor=center, pos=0.125}, draw=none, from=2-1, to=0]
\end{tikzcd}
\end{equation*}
and since the upper left map is $\Delta_{h^*(f)}$, and $\Delta_f$ is a $j$-local epi by assumption, this implies $\Delta_{h^*(f)}$ is a $j$-local epi by Lemma \ref{lem local epi iff pullback local epi}. Thus $h^*(f)$ is a $j$-local monomorphism.

(2) This follows from (1) and Lemma \ref{lem local epi iff pullback local epi}.

(3) We have the following pasting diagram of pullback squares
\begin{equation*}
    \begin{tikzcd}
	X & Y \\
	{X \times_Z Y} & {Y \times_Z Y} & Y \\
	X & Y & Z
	\arrow["f", from=1-1, to=1-2]
	\arrow["q"', from=1-1, to=2-1]
	\arrow["\lrcorner"{anchor=center, pos=0.125}, draw=none, from=1-1, to=2-2]
	\arrow["{\Delta_g}", from=1-2, to=2-2]
	\arrow[Rightarrow, no head, from=1-2, to=2-3]
	\arrow["k"', from=2-1, to=2-2]
	\arrow[from=2-1, to=3-1]
	\arrow["\lrcorner"{anchor=center, pos=0.125}, draw=none, from=2-1, to=3-2]
	\arrow["\ell"', from=2-2, to=2-3]
	\arrow[from=2-2, to=3-2]
	\arrow["\lrcorner"{anchor=center, pos=0.125}, draw=none, from=2-2, to=3-3]
	\arrow["g", from=2-3, to=3-3]
	\arrow["f"', from=3-1, to=3-2]
	\arrow["g"', from=3-2, to=3-3]
\end{tikzcd}
\end{equation*}
Now $\ell k$ is a $j$-local epi since $gf$ is, and similarly $q$ is a $j$-local epi since $\Delta_g$ is. Thus $f$ is a $j$-local epi.

(4) We have the following pasting diagram of pullback squares
\begin{equation*}
    \begin{tikzcd}
	{X \times_Z X} & {Y \times_ZX} & X \\
	{X \times_Z Y} & {Y \times_Z Y} & Y \\
	X & Y & Z
	\arrow["{\ell'}", from=1-1, to=1-2]
	\arrow["{k'}"', from=1-1, to=2-1]
	\arrow["\lrcorner"{anchor=center, pos=0.125}, draw=none, from=1-1, to=2-2]
	\arrow[from=1-2, to=1-3]
	\arrow["k"', from=1-2, to=2-2]
	\arrow["\lrcorner"{anchor=center, pos=0.125}, draw=none, from=1-2, to=2-3]
	\arrow["f", from=1-3, to=2-3]
	\arrow["\ell", from=2-1, to=2-2]
	\arrow[from=2-1, to=3-1]
	\arrow["\lrcorner"{anchor=center, pos=0.125}, draw=none, from=2-1, to=3-2]
	\arrow[from=2-2, to=2-3]
	\arrow[from=2-2, to=3-2]
	\arrow["\lrcorner"{anchor=center, pos=0.125}, draw=none, from=2-2, to=3-3]
	\arrow["g", from=2-3, to=3-3]
	\arrow["f"', from=3-1, to=3-2]
	\arrow["g"', from=3-2, to=3-3]
\end{tikzcd}
\end{equation*}
and $\ell, \ell', k, k'$ are all $j$-local epimorphisms since $f$ is. Thus $q = \ell k' = \ell' k$ is a $j$-local epi. Note that the following square commutes
\begin{equation*}
    \begin{tikzcd}
	X & {X \times_Z X} \\
	Y & {Y \times_Z Y}
	\arrow["{\Delta_{gf}}", from=1-1, to=1-2]
	\arrow["f"', from=1-1, to=2-1]
	\arrow["q", from=1-2, to=2-2]
	\arrow["{\Delta_g}"', from=2-1, to=2-2]
\end{tikzcd}
\end{equation*}
But $q$ and $\Delta_{gf}$ are $j$-local epimorphisms, therefore by Lemma \ref{lem composition is local epi implies local epi}, this implies that $\Delta_g$ is a $j$-local epi, so $g$ is a $j$-local mono.

(5) Note that the following commutative square is a pullback
\begin{equation*}
\begin{tikzcd}
	X & X \\
	{X \times_Y X} & {X \times_Z X}
	\arrow[Rightarrow, no head, from=1-1, to=1-2]
	\arrow["{\Delta_f}"', from=1-1, to=2-1]
	\arrow["\lrcorner"{anchor=center, pos=0.125}, draw=none, from=1-1, to=2-2]
	\arrow["{\Delta_{gf}}", from=1-2, to=2-2]
	\arrow["k", from=2-1, to=2-2]
\end{tikzcd}
\end{equation*}
where $k$ is the canonical map induced by the universal property of the pullback. Thus $\Delta_f$ is a $j$-local epi since $\Delta_{gf}$ is. 

(6) By Corollary \ref{cor local isos closd under composition}, if $f$ and $g$ are $j$-local isos, then so is $gf$.

Now suppose that $gf$ and $g$ are $j$-local isos. Then by (3), $f$ is a $j$-local epi, and by (4), $f$ is a $j$-local mono and therefore a $j$-local iso.

Finally suppose that $gf$ and $f$ are $j$-local isos. By Lemma \ref{lem composition is local epi implies local epi}, this implies that $g$ is a $j$-local epi, and by (5), this implies that $g$ is a $j$-local mono.
\end{proof}

\section{Saturated and Grothendieck Coverages} \label{section saturated and grothendieck coverages}

\subsection{Saturated Coverages}

In Section \ref{section coverage closures}, we considered refinement and composition closed coverages and their corresponding closure operations. In this section we will consider coverages closed under both conditions and compare such coverages with Grothendieck coverages/topologies.

\begin{Def}[{\cite[Definition A.2.8(d)]{low2016categories}}]
We say that a coverage $j$ on a category $\cat{C}$ is \textbf{saturated} if it is refinement and composition closed.
\end{Def}

\begin{Rem}
Our saturated coverages are precisely Shulman's weakly $\kappa$-ary topologies for $\kappa$ the cardinality of the Grothendieck universe $\mathbb{U}$ (see Definition \ref{def grothendieck universe}), though we require the underlying categories to be small, see \cite[Definition 3.1]{shulman2012exact}.
\end{Rem}

\begin{Ex} \label{ex open site is saturated}
For any topological space $X$, the open cover coverage $j_X$ on $\mathcal{O}(X)$ is saturated. This is because if $\mathcal{U} = \{ U_i \subseteq U\}$ is an open cover of an open subset $U \subseteq X$, and $\mathcal{V} = \{V^i_j \subseteq U_i \}$ is an open cover of $U_i$, then $(\mathcal{U} \setminus \{U_i \}) \cup \mathcal{V}$ is an open cover of $U$, so $j_X$ is composition closed. Now if $\mathcal{U}$ and $\mathcal{V}$ are collections of open subsets of $U \subseteq X$ such that $\mathcal{V}$ is an open cover and there is a refinement $\mathcal{U} \leq \mathcal{V}$, then $\mathcal{U}$ must be an open cover of $U$ as well. Thus $j_X$ is refinement closed, and therefore it is saturated.
\end{Ex}

\begin{Ex} \label{ex jointly epi coverage is saturated}
The jointly epimorphic coverage  $(\ncat{FinSet}, j_{\text{epi}})$ from Example \ref{ex set joint epi coverage} is saturated.
\end{Ex}

\begin{Def}[{\cite[Definition A.2.15(d)]{low2016categories}}] \label{def saturating family}
Let $(\cat{C}, j)$ be a site. Say that a family of morphisms $r = \{r_i: U_i \to U \}$ on $U$ is $j$-\textbf{saturating} if the map
\begin{equation}
    \overline{r} \hookrightarrow y(U)
\end{equation}
given by the inclusion of the sieve generated by $r$ to the maximal sieve on $U$ is a $j$-local epimorphism (Definition \ref{def local epimorphism of presheaves}).
\end{Def}

\begin{Lemma} \label{lem family saturating iff j-tree coverage cond holds}
Let $(\cat{C}, j)$ be a site. A family of morphisms $r = \{r_i : U_i \to U \}$ is saturating if and only if for any map $g: V \to U$, there exists a $j$-tree $T$ on $V$ and a refinement $g_*(T^\circ) \leq r$.
\end{Lemma}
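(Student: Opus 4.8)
The plan is to prove both implications by directly unwinding Definition \ref{def local epimorphism of presheaves} applied to the inclusion $\iota : \overline{r} \hookrightarrow y(U)$, translating everything into morphisms of $\cat{C}$ via the Yoneda lemma. The key observation is that both sides of the asserted equivalence collapse to the single elementary statement: \emph{for every $g : V \to U$ there is a $j$-tree $T$ on $V$ such that $g \circ t_a \in \overline{r}$ for every $t_a : V_a \to V$ in $T^\circ$}.

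First I would record the two Yoneda translations to be used repeatedly. A section $s : y(V) \to y(U)$ is the same datum as a morphism $g : V \to U$; and under the identification of $\overline{r}$ with a subfunctor of $y(U)$ (Notation \ref{not sieves}, Lemma \ref{lem sieves and subfunctors iso}), a section $y(V_a) \to \overline{r}$ is the same as a morphism $V_a \to U$ lying in $\overline{r}$, i.e.\ one factoring through some $r_i$. Moreover, for $t_a : V_a \to V$ in $T^\circ$, the commuting square
\begin{equation*}
\begin{tikzcd}
{y(V_a)} & {\overline{r}} \\
{y(V)} & {y(U)}
\arrow["{s_a}", from=1-1, to=1-2]
\arrow["{t_a}"', from=1-1, to=2-1]
\arrow["\iota", from=1-2, to=2-2]
\arrow["s"', from=2-1, to=2-2]
\end{tikzcd}
\end{equation*}
becomes, under these identifications, the single equation $s_a = g \circ t_a$ of morphisms $V_a \to U$, so a lift $s_a$ exists exactly when $g \circ t_a \in \overline{r}$. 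Finally, by Definition \ref{def family of morphisms} the relation $g_*(T^\circ) \leq r$ holds precisely when each element $g \circ t_a$ of $g_*(T^\circ)$ factors through some $r_i$, i.e.\ when $g \circ t_a \in \overline{r}$ for all $t_a \in T^\circ$ (equivalently $\overline{g_*(T^\circ)} \subseteq \overline{r}$, cf.\ Lemma \ref{lem refinement under sifted closure}).

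For the forward direction I would assume $r$ is $j$-saturating, so $\iota$ is a $j$-local epimorphism. Given any $g : V \to U$, I apply the local-epimorphism condition to the section $s : y(V) \to y(U)$ corresponding to $g$, obtaining a $j$-tree $T$ on $V$ together with lifts $s_a$ for each $t_a \in T^\circ$; the translation above then yields $g \circ t_a \in \overline{r}$ for all such $t_a$, hence $g_*(T^\circ) \leq r$. For the converse I would assume the $j$-tree condition and verify the local-epimorphism condition for $\iota$: given an arbitrary section $s : y(V) \to y(U)$, let $g$ be its corresponding morphism, take the $j$-tree $T$ on $V$ supplied by the hypothesis, and for each $t_a \in T^\circ$ use $g \circ t_a \in \overline{r}$ to produce the required lift $s_a : y(V_a) \to \overline{r}$. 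This is exactly the data witnessing that $\iota$ is a $j$-local epimorphism, so $r$ is saturating.

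I do not expect a genuine obstacle: the content is entirely a Yoneda bookkeeping exercise. The only point needing care is keeping the three identifications consistent with one another — sections of $y(U)$ with morphisms into $U$, sections of $\overline{r}$ with morphisms belonging to the sieve, and commuting squares with membership $g \circ t_a \in \overline{r}$ — and correctly matching the definition of the refinement relation $\leq$ with sieve membership, so that the hypothesis $g_*(T^\circ) \leq r$ lines up exactly with the existence of the lifts $s_a$.
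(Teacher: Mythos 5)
Your proof is correct and matches the paper exactly: the paper's entire proof of this lemma is ``Follows immediately from the definitions,'' and your Yoneda bookkeeping is precisely the unwinding of Definition \ref{def saturating family} and Definition \ref{def local epimorphism of presheaves} that this invokes. The identifications you flag as needing care (sections of $y(U)$ with morphisms into $U$, lifts to $\overline{r}$ with membership $g \circ t_a \in \overline{r}$, and $g_*(T^\circ) \leq r$ with factorization through some $r_i$) are all handled correctly.
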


\begin{proof}
Follows immediately from the definitions.
\end{proof}

\begin{Lemma} \label{lem family saturating iff refined by j-tree}
Let $(\cat{C}, j)$ be a site. A family of morphisms $r = \{r_i : U_i \to U \}$ is saturating if and only if there exists a $j$-tree $T$ on $U$ and a refinement $T^\circ \leq r$.
\end{Lemma}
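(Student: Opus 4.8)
The plan is to prove both directions by reducing to the characterization of saturating families in terms of pullbacks of $j$-trees from Lemma \ref{lem family saturating iff j-tree coverage cond holds}, which says that $r$ is saturating if and only if for every map $g : V \to U$ there is a $j$-tree $T$ on $V$ with $g_*(T^\circ) \leq r$. Both implications then become short once we exploit two facts: that pushing forward along an identity does nothing, and that $j$-trees can be pulled back along arbitrary maps (Lemma \ref{lem can pullback j-trees}).

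For the forward direction, suppose $r$ is saturating. I would simply specialize the universal statement of Lemma \ref{lem family saturating iff j-tree coverage cond holds} to the identity map $g = 1_U : U \to U$. This yields a $j$-tree $T$ on $U$ together with a refinement $(1_U)_*(T^\circ) \leq r$. Since pushing forward a family along the identity leaves it unchanged, $(1_U)_*(T^\circ) = T^\circ$ (directly from Definition \ref{def pushforward of a family}), and hence $T^\circ \leq r$, as desired. This direction is essentially immediate.

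For the backward direction, suppose there is a $j$-tree $T$ on $U$ with $T^\circ \leq r$; I want to verify the pullback condition of Lemma \ref{lem family saturating iff j-tree coverage cond holds}. Given an arbitrary map $g : V \to U$, Lemma \ref{lem can pullback j-trees} produces a $j$-tree $S$ on $V$ with $g_*(S^\circ) \leq T^\circ$. Composing refinements (using that $\leq$ is transitive, since refinements compose componentwise with composed index maps) gives $g_*(S^\circ) \leq T^\circ \leq r$, so $g_*(S^\circ) \leq r$. As $g$ was arbitrary, Lemma \ref{lem family saturating iff j-tree coverage cond holds} shows $r$ is saturating. The only non-formal ingredient here is the pullback stability of $j$-trees, but that is exactly what Lemma \ref{lem can pullback j-trees} supplies, so there is no real obstacle; the substantive work has already been done in establishing that lemma by induction on tree height.
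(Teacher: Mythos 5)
Your proof is correct and follows the same route as the paper: the forward direction specializes Lemma \ref{lem family saturating iff j-tree coverage cond holds} to $g = 1_U$, and the backward direction pulls the tree back along an arbitrary $g$ via Lemma \ref{lem can pullback j-trees} and composes refinements. The only (harmless) addition is your explicit remark that pushforward along the identity is the identity, which the paper leaves implicit.
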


\begin{proof}
$(\Rightarrow)$ Suppose that $r$ is saturating. Then if we let $g = 1_U$, then Lemma \ref{lem family saturating iff j-tree coverage cond holds} implies that there exists a $j$-tree $T$ on $U$ and a refinement $f : T^\circ \to r$.

$(\Leftarrow)$ Suppose that $r$ is refined by a $j$-tree $T$, and suppose that $g: V \to U$ is a morphism in $\cat{C}$, then by Lemma \ref{lem can pullback j-trees}, there exists a $j$-tree $S$ on $V$ and a refinement $g_*(S^\circ) \leq T^\circ$, which implies that $g_*(S^\circ) \leq r$. Thus by Lemma \ref{lem family saturating iff j-tree coverage cond holds}, $r$ is saturating.
\end{proof}

\begin{Cor}
If $(\cat{C}, j)$ is composition closed, then a family of morphisms $t$ on $U$ is saturating if and only if there exists a covering family $r$ on $U$ and a refinement $r \leq t$.
\end{Cor}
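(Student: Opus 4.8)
The plan is to deduce the corollary directly from Lemma~\ref{lem family saturating iff refined by j-tree}, which already characterizes saturating families as exactly those refined by the composite $T^\circ$ of some $j$-tree $T$, and then to trade $j$-trees for ordinary covering families using the composition-closed hypothesis. So the whole argument is a translation between the two phrasings ``refined by a $j$-tree composite'' and ``refined by a covering family.''

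First I would treat the forward direction. Suppose $t$ is a saturating family on $U$. By Lemma~\ref{lem family saturating iff refined by j-tree} there is a $j$-tree $T$ on $U$ together with a refinement $T^\circ \leq t$. Because $(\cat{C}, j)$ is composition closed, Lemma~\ref{lem composition closed iff j-tree closed} tells us it is $j$-tree closed (Definition~\ref{def j-tree closed}), so the composite $T^\circ$ is itself a covering family on $U$. Setting $r = T^\circ$ then yields a covering family with $r \leq t$, exactly as required.

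For the converse I would start from a covering family $r = \{r_i : U_i \to U\}$ equipped with a refinement $r \leq t$, and reinterpret $r$ as the composite of a $j$-tree. Indeed, $r$ is the composite $T^\circ$ of the height-one $j$-tree $T$ whose maximal paths are precisely the singletons $(r_i)$: this $T$ satisfies the axioms of Definitions~\ref{def tree} and~\ref{def j-tree}, since its set of length-one paths is exactly the covering family $r \in j(U)$ and each such path is maximal. Then $T^\circ = r \leq t$, so Lemma~\ref{lem family saturating iff refined by j-tree} immediately gives that $t$ is saturating.

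I do not expect a genuine obstacle here; all the real work has already been done in the two cited lemmas, and the corollary is essentially a bookkeeping statement asserting that, under composition closure, ``the composite of a $j$-tree'' and ``a covering family'' generate the same refinement preorder. The only point deserving a moment's care is the reverse direction's observation that a covering family is literally the composite of a height-one $j$-tree, which is immediate from the definitions.
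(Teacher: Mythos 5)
Your proof is correct and is exactly the argument the paper intends: the paper states this corollary without proof, and it follows from Lemma~\ref{lem family saturating iff refined by j-tree} together with Lemma~\ref{lem composition closed iff j-tree closed} precisely as you describe (and, as you note, the converse direction does not even use composition closure). The only caveat --- one shared with the paper's own statement --- is the degenerate case of the empty covering family on a singular object, which is \emph{not} the composite of any height-one $j$-tree since the paper's conventions force $T^\circ$ to be nonempty; outside that edge case your identification of a covering family with the composite of a height-one $j$-tree is immediate from the definitions.
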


\begin{Def}\label{def saturation closure}
Given a site $(\cat{C}, j)$, let $\sat{j}$ denote the collection of families on $\cat{C}$ where $\sat{j}(U)$ is the set of saturating families of morphisms on $U$. We call this the \textbf{saturation closure} of $j$.
\end{Def}

\begin{Lemma} \label{lem sat closure is comp then ref closure}
Given a site $(\cat{C}, j)$ the saturation closure $\sat{j}$ is precisely the refinement closure of the composition closure of $j$:
\begin{equation*}
    \sat{j} = \text{ref}(\text{comp}(j)).
\end{equation*}
\end{Lemma}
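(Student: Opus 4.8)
The plan is to prove the equality $\sat{j} = \text{ref}(\text{comp}(j))$ objectwise, by showing that for every $U \in \cat{C}$ the two sets $\sat{j}(U)$ and $\text{ref}(\text{comp}(j))(U)$ consist of exactly the same families of morphisms. Since both sides are collections of families and the claim is a set equality at each $U$, it suffices to unwind the defining conditions on each side and observe that they coincide, with the real content supplied by Lemma \ref{lem family saturating iff refined by j-tree}.

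First I would unwind the right-hand side. By Definition \ref{def composition closure}, a family $r$ lies in $\text{comp}(j)(U)$ precisely when $r = T^\circ$ for some $j$-tree $T$ on $U$. By Definition \ref{def refinement closure}, a family $t$ lies in $\text{ref}(\text{comp}(j))(U)$ precisely when there is some $r \in \text{comp}(j)(U)$ and a refinement $r \leq t$. Combining these two, a family $t$ belongs to $\text{ref}(\text{comp}(j))(U)$ if and only if there exists a $j$-tree $T$ on $U$ with $T^\circ \leq t$. Next I would unwind the left-hand side: by Definition \ref{def saturation closure}, $\sat{j}(U)$ is the set of $j$-saturating families on $U$, and by Lemma \ref{lem family saturating iff refined by j-tree} a family $t$ on $U$ is saturating if and only if there exists a $j$-tree $T$ on $U$ and a refinement $T^\circ \leq t$. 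These two characterizations are literally identical, so $\sat{j}(U) = \text{ref}(\text{comp}(j))(U)$ for every $U$, giving the desired equality of collections.

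The argument is therefore almost entirely a bookkeeping verification, and I do not expect a genuine obstacle here: the substantive work has already been done in establishing Lemma \ref{lem family saturating iff refined by j-tree} (which in turn rests on the pullback-stability of $j$-trees from Lemma \ref{lem can pullback j-trees} and the coverage structure). The one point that deserves a sentence of care is the transitivity implicit in reading off $\text{ref}(\text{comp}(j))$: an element of $\text{comp}(j)(U)$ is exactly a tree composite $T^\circ$, so a refinement $r \leq t$ with $r \in \text{comp}(j)(U)$ really does package as a refinement $T^\circ \leq t$ with no intermediate family to track, which is what makes the two descriptions match on the nose rather than merely up to a further refinement step.
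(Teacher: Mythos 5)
Your proof is correct and follows the same route as the paper, which simply cites Lemma \ref{lem family saturating iff refined by j-tree}; your version just makes the definitional unwinding of $\text{ref}(\text{comp}(j))$ explicit.
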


\begin{proof}
This follows from Lemma \ref{lem family saturating iff refined by j-tree}.
\end{proof}

\begin{Prop}[{\cite[Proposition A.2.15]{low2016categories}}] \label{prop sat closure smallest saturated coverage}
Given a site $(\cat{C}, j)$, the collection of families $\sat{j}$ is the smallest saturated coverage on $\cat{C}$ containing $j$. 
\end{Prop}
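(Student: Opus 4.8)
The plan is to leverage the identification $\sat{j} = \text{ref}(\text{comp}(j))$ from Lemma \ref{lem sat closure is comp then ref closure} together with the minimality statements already established for the two individual closure operations, so that the whole argument becomes a matter of chaining those results. There are essentially four things to verify: that $\sat{j}$ is a coverage, that it contains $j$, that it is saturated, and that it is contained in every saturated coverage containing $j$.

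First I would observe that $\sat{j}$ is a coverage: $\text{comp}(j)$ is a coverage by Lemma \ref{lem comp(j) smallest composition closed coverage}, and applying $\text{ref}(-)$ to any coverage again yields a coverage by Lemma \ref{lem refinement closure smallest refinement closed coverage}. The containment $j \subseteq \sat{j}$ then follows from the chain $j \subseteq \text{comp}(j) \subseteq \text{ref}(\text{comp}(j)) = \sat{j}$, where the first inclusion is the $j \subseteq \text{comp}(j)$ part of Lemma \ref{lem comp(j) smallest composition closed coverage} and the second is the $\text{comp}(j) \subseteq \text{ref}(\text{comp}(j))$ part of Lemma \ref{lem refinement closure smallest refinement closed coverage}.

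Next I would check that $\sat{j}$ is saturated, meaning both refinement and composition closed. Refinement closure is immediate, since $\sat{j} = \text{ref}(\text{comp}(j))$ is by its very construction a refinement closure and hence refinement closed by Lemma \ref{lem refinement closure smallest refinement closed coverage}. For composition closure, the key input is Lemma \ref{lem ref closure preserves composition closure}: since $\text{comp}(j)$ is composition closed by Lemma \ref{lem comp(j) smallest composition closed coverage}, its refinement closure $\text{ref}(\text{comp}(j))$ remains composition closed. I expect this to be the crux of the proof: without the fact that refinement closure preserves composition closure, one might worry that reclosing under refinement reintroduces failures of composability, so Lemma \ref{lem ref closure preserves composition closure} is exactly what guarantees that performing the two closure operations once each already lands us in a saturated coverage, rather than forcing an iteration of the two operations.

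Finally I would establish minimality. Let $j'$ be any saturated coverage with $j \subseteq j'$. Since $j'$ is composition closed and contains $j$, minimality of $\text{comp}(j)$ (Lemma \ref{lem comp(j) smallest composition closed coverage}) gives $\text{comp}(j) \subseteq j'$. Since $j'$ is moreover refinement closed and now contains $\text{comp}(j)$, minimality of the refinement closure applied to $\text{comp}(j)$ (Lemma \ref{lem refinement closure smallest refinement closed coverage}) gives $\text{ref}(\text{comp}(j)) \subseteq j'$, that is, $\sat{j} \subseteq j'$. Combining this with the previous paragraphs shows that $\sat{j}$ is the smallest saturated coverage containing $j$, completing the argument.
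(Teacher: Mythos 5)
Your proof is correct and follows essentially the same route as the paper's: identify $\sat{j}$ with $\text{ref}(\text{comp}(j))$ via Lemma \ref{lem sat closure is comp then ref closure}, use Lemma \ref{lem ref closure preserves composition closure} to see that refinement closure preserves composition closure, and then chain the two minimality statements (Lemmas \ref{lem comp(j) smallest composition closed coverage} and \ref{lem refinement closure smallest refinement closed coverage}) for the containment in any saturated $j'$. The only difference is that you spell out the coverage property and the containment $j \subseteq \sat{j}$ explicitly, which the paper leaves implicit.
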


\begin{proof}
By Lemma \ref{lem sat closure is comp then ref closure}, $\sat{j}$ is clearly refinement closed, and by Lemma \ref{lem ref closure preserves composition closure}, $\sat{j}$ is also composition closed. Thus $\sat{j}$ is saturated. If $j'$ is a saturated coverage containing $j$, then by Lemma \ref{lem comp(j) smallest composition closed coverage}, $\text{comp}(j) \subseteq j'$. But by Lemma \ref{lem refinement closure smallest refinement closed coverage}, it follows that $\sat{j} \subseteq j'$.
\end{proof}

\begin{Cor} \label{cor saturating iff covering}
A coverage $j$ on a category $\cat{C}$ is saturated if and only if $\sat{j} = j$. Thus if $j$ is a saturated coverage, then a family $r$ on $U$ is covering if and only if it is saturating.
\end{Cor}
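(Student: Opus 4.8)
The plan is to derive everything directly from Proposition \ref{prop sat closure smallest saturated coverage}, which already establishes that $\sat{j}$ is the smallest saturated coverage on $\cat{C}$ containing $j$. In particular, that proposition gives us two facts to lean on: first, $\sat{j}$ is itself always a saturated coverage; and second, $j \subseteq \sat{j}$ together with the minimality clause, namely that any saturated coverage $j'$ with $j \subseteq j'$ satisfies $\sat{j} \subseteq j'$.

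For the forward implication, I would suppose $j$ is saturated. Then $j$ is a saturated coverage containing itself (trivially $j \subseteq j$ in the sense of Definition \ref{def coverage comparisons}), so the minimality clause of Proposition \ref{prop sat closure smallest saturated coverage} applied with $j' = j$ yields $\sat{j} \subseteq j$. Since we also always have $j \subseteq \sat{j}$, these two containments combine to give $\sat{j} = j$. For the converse, I would suppose $\sat{j} = j$; since $\sat{j}$ is saturated by Proposition \ref{prop sat closure smallest saturated coverage}, the coverage $j$ is saturated as well. This establishes the stated equivalence.

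For the final ``Thus'' sentence, I would observe that once $j$ is known to be saturated we have $\sat{j} = j$, so for a family $r$ on $U$ the statement $r \in j(U)$ is equivalent to $r \in \sat{j}(U)$. By Definition \ref{def saturation closure}, $\sat{j}(U)$ is exactly the set of $j$-saturating families on $U$, so $r$ is covering if and only if it is saturating.

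I do not anticipate any genuine obstacle here: the corollary is a formal consequence of the universal (smallest-containing) characterization of $\sat{j}$, and the only care needed is to keep straight the two comparison relations of Definition \ref{def coverage comparisons} and to invoke $j \subseteq \sat{j}$ explicitly when chaining the containments into an equality.
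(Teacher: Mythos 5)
Your proof is correct and is exactly the argument the paper intends: the corollary is stated without proof as an immediate consequence of Proposition \ref{prop sat closure smallest saturated coverage}, and your derivation (minimality applied with $j' = j$ gives $\sat{j} \subseteq j$ when $j$ is saturated, the containment $j \subseteq \sat{j}$ holds always, and the converse uses that $\sat{j}$ is itself saturated) is the standard formal unpacking of that smallest-containing characterization. Your reduction of the final sentence to Definition \ref{def saturation closure} is likewise what the paper has in mind.
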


\begin{Prop} \label{prop sheaf iff sheaf on saturation closure}
Given a site $(\cat{C}, j)$, a presheaf $X$ is a $j$-sheaf if and only if it is a $\sat{j}$-sheaf. In other words
$$\Sh(\cat{C}, j) = \Sh(\cat{C}, \sat{j}).$$
\end{Prop}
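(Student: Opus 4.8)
The plan is to reduce this statement to the two closure results already established, namely Proposition \ref{prop sheaf iff sheaf on composition closure} and Proposition \ref{prop sheaf iff sheaf on refinement closure}, by invoking the identification $\sat{j} = \text{ref}(\text{comp}(j))$ from Lemma \ref{lem sat closure is comp then ref closure}. All of the genuine analytic content has in fact already been spent in proving Lemma \ref{lem sheaf on composite family} (a $j$-sheaf remains a sheaf after composing covering families) and Lemma \ref{lem sheaf on refinement} (a sheaf remains a sheaf after refining a covering family); what is left here is a purely formal chaining of equalities of sheaf subcategories, so I would keep the argument short.

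Concretely, I would first note that $\text{comp}(j)$ is itself a coverage on $\cat{C}$ by Lemma \ref{lem comp(j) smallest composition closed coverage}, which is what licenses applying the refinement-closure result with $\text{comp}(j)$ substituted for $j$. Applying Proposition \ref{prop sheaf iff sheaf on refinement closure} to the coverage $\text{comp}(j)$ then yields
$$\Sh(\cat{C}, \text{ref}(\text{comp}(j))) = \Sh(\cat{C}, \text{comp}(j)),$$
and applying Proposition \ref{prop sheaf iff sheaf on composition closure} directly to $j$ gives
$$\Sh(\cat{C}, \text{comp}(j)) = \Sh(\cat{C}, j).$$
Rewriting the left-hand side of the first equality via Lemma \ref{lem sat closure is comp then ref closure} and splicing the two chains together, I would conclude
$$\Sh(\cat{C}, \sat{j}) = \Sh(\cat{C}, \text{ref}(\text{comp}(j))) = \Sh(\cat{C}, \text{comp}(j)) = \Sh(\cat{C}, j),$$
which is exactly the asserted equality $\Sh(\cat{C}, j) = \Sh(\cat{C}, \sat{j})$.

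The one place deserving any attention — and the nearest thing to an obstacle — is verifying that Proposition \ref{prop sheaf iff sheaf on refinement closure} really is applicable to $\text{comp}(j)$ and not merely to the original $j$; this is legitimate precisely because that proposition is stated for an arbitrary site and $\text{comp}(j)$ is a bona fide coverage on $\cat{C}$. Once this is granted, the remainder is bookkeeping, and no further appeal to matching families or amalgamations is needed.
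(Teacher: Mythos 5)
Your proposal is correct and follows exactly the paper's own argument: the paper's proof likewise cites Proposition \ref{prop sheaf iff sheaf on composition closure}, Proposition \ref{prop sheaf iff sheaf on refinement closure}, and Lemma \ref{lem sat closure is comp then ref closure} and chains them together. Your added remark that $\text{comp}(j)$ must be verified to be a coverage (via Lemma \ref{lem comp(j) smallest composition closed coverage}) before applying the refinement-closure result is a worthwhile detail the paper leaves implicit.
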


\begin{proof}
This follows from Proposition \ref{prop sheaf iff sheaf on composition closure}, Proposition \ref{prop sheaf iff sheaf on refinement closure} and Lemma \ref{lem sat closure is comp then ref closure}.
\end{proof}

\begin{Lemma} \label{lem local epi iff local epi on saturation closure}
Let $j$ be a coverage on $\cat{C}$. Then a morphism $f : X \to Y$ of presheaves on $\cat{C}$ is a $j$-local epimorphism (mono/iso) if and only if it is a $\sat{j}$-local epimorphism (mono/iso).
\end{Lemma}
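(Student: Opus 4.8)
The plan is to reduce the entire statement to the epimorphism case. Indeed, a ($j$- or $\sat{j}$-) local monomorphism is by definition a map whose slice diagonal $\Delta_f \colon X \to X \times_Y X$ is a local epimorphism of the corresponding type, and a local isomorphism is a map that is simultaneously a local monomorphism and a local epimorphism. So once I establish that $f$ is a $j$-local epimorphism if and only if it is a $\sat{j}$-local epimorphism, applying this equivalence to $\Delta_f$ settles the monomorphism case, and combining the two settles the isomorphism case.

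For the epimorphism case, the forward implication is immediate. By Proposition \ref{prop sat closure smallest saturated coverage} we have $j \subseteq \sat{j}$, so every $j$-covering family is a $\sat{j}$-covering family and hence every $j$-tree is a $\sat{j}$-tree. Thus a $j$-tree $T$ witnessing $s_*(T^\circ) \leq f$ for a section $s \colon y(U) \to Y$ also witnesses it as a $\sat{j}$-tree, and $f$ is a $\sat{j}$-local epimorphism.

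The converse is where the argument lives, and here I would exploit that $\sat{j}$ is composition closed (again Proposition \ref{prop sat closure smallest saturated coverage}). By Lemma \ref{lem local epi on comp closed coverage} applied to $(\cat{C}, \sat{j})$, the map $f$ is a $\sat{j}$-local epimorphism if and only if for every section $s \colon y(U) \to Y$ there is a single $\sat{j}$-covering family $r = \{ r_i \colon U_i \to U \}$ with $s_*(r) \leq f$. But a $\sat{j}$-covering family is, by Definition \ref{def saturation closure}, exactly a $j$-saturating family, so Lemma \ref{lem family saturating iff refined by j-tree} hands me a $j$-tree $T$ on $U$ together with a refinement $T^\circ \leq r$. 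It then remains to verify that this refinement composes with the lifting condition: writing $r_a = r_{\alpha(a)} h_a$ for each $r_a \colon W_a \to U$ in $T^\circ$ and letting $\tilde{s}_i \colon y(U_i) \to X$ be the given lifts with $f \tilde{s}_i = s \circ r_i$, the composites $\tilde{s}_{\alpha(a)} \circ y(h_a)$ are lifts of $s \circ r_a$ through $f$. This yields $s_*(T^\circ) \leq f$ with $T$ a $j$-tree, so $f$ is a $j$-local epimorphism.

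The only step demanding any care is this last compatibility between the refinement preorder $\leq$ on families and the relation $s_*(-) \leq f$; everything else is a direct unwinding of the definitions together with the identification, built into Definition \ref{def saturation closure}, of $\sat{j}$-covering families with $j$-saturating families. I do not anticipate a genuine obstacle, since the essential input---that a saturating family is precisely one refined by the composite of a $j$-tree (Lemma \ref{lem family saturating iff refined by j-tree})---is exactly what lets a $\sat{j}$-level witness be pulled back to a $j$-level one.
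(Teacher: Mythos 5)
Your proof is correct and follows essentially the same route as the paper: the forward direction via $j$-trees being $\sat{j}$-covering, and the converse via Lemma \ref{lem local epi on comp closed coverage} together with Lemma \ref{lem family saturating iff refined by j-tree} to replace a $\sat{j}$-covering witness by the composite of a $j$-tree refining it. The only difference is cosmetic: you make explicit the reduction of the mono/iso cases to the epi case via $\Delta_f$ and the transitivity of the refinement relation, both of which the paper leaves implicit.
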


\begin{proof}
$(\Rightarrow)$ Suppose that $f: X \to Y$ is a $j$-local epimorphism. Then for every $s : y(U) \to Y$, there exists a $j$-tree $T$ on $U$ such that $s_*(T^\circ) \leq f$. But then $T^\circ$ is a covering family in $\sat{j}$. Thus by Lemma \ref{lem local epi on comp closed coverage}, $f$ is a $\sat{j}$-local epimorphism.

$(\Leftarrow)$ If $f: X \to Y$ is a $\sat{j}$-local epimorphism, then for every $s: y(U) \to Y$, there exists a covering family $r \in \sat{j}(U)$ such that $s_*(r) \leq \{f\}$. But by Lemma \ref{lem family saturating iff refined by j-tree}, $r$ is a $\sat{j}$-covering family if and only if it is $j$-saturating. This implies that there exists a $j$-tree $T$ on $U$ such that $T^\circ \leq r$. Thus $s_*(T^\circ) \leq \{f\}$, so $f$ is a $j$-local epimorphism.
\end{proof}

\begin{Lemma} \label{lem saturating iff sifted closure is saturating}
Let $(\cat{C}, j)$ be a site. If $r$ is a family on $U$, then $r$ is saturating if and only if $\overline{r}$ is saturating. Furthermore every $j$-covering family is saturating. Therefore $j \subseteq \sat{j}$ and $\overline{j} \subseteq \sat{j}$.
\end{Lemma}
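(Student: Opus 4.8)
The plan is to dispatch the three assertions in turn, leaning on the earlier characterization of saturating families in Lemma \ref{lem family saturating iff refined by j-tree}.

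For the equivalence ``$r$ is saturating if and only if $\overline{r}$ is saturating,'' the key observation is that sieve generation is idempotent: since $\overline{r}$ is already a sieve, closed under precomposition, every morphism factoring through an element of $\overline{r}$ already lies in $\overline{r}$, so $\overline{\overline{r}} = \overline{r}$. By Definition \ref{def saturating family}, the statement ``$r$ is saturating'' means precisely that the inclusion $\overline{r} \hookrightarrow y(U)$ is a $j$-local epimorphism, while ``$\overline{r}$ is saturating'' means that $\overline{\overline{r}} \hookrightarrow y(U)$ is a $j$-local epimorphism. As these two inclusions coincide, the conditions are literally the same statement, and the equivalence is immediate.

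Next, to show that every covering family $r = \{r_i : U_i \to U\} \in j(U)$ is saturating, I would exhibit a $j$-tree refining $r$ and invoke Lemma \ref{lem family saturating iff refined by j-tree}. Concretely, let $T$ be the tree on $U$ whose paths are the empty path together with the singletons $(r_i)$. This is a height-one $j$-tree: by Definition \ref{def tree} the empty path lies in $T$, each path is a prefix of a maximal path $(r_i)$, and the maximal path length is finite; and by Definition \ref{def j-tree} the set of morphisms $f$ with $(f) \in T$ is exactly the covering family $r \in j(U)$, while for each path $(r_i)$ the set of $g$ with $(g, r_i) \in T$ is empty, which is permitted. Its composite is $T^\circ = r$, so the identity refinement witnesses $T^\circ \leq r$, and Lemma \ref{lem family saturating iff refined by j-tree} then gives that $r$ is saturating.

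Finally, the two containments follow by unwinding definitions. By Definition \ref{def saturation closure}, $\sat{j}(U)$ consists of all saturating families on $U$; the previous paragraph shows $j(U) \subseteq \sat{j}(U)$ for every $U$, so $j \subseteq \sat{j}$ in the sense of Definition \ref{def coverage comparisons}. For $\overline{j} \subseteq \sat{j}$, recall from Definition \ref{def sifted closure} that a family in $\overline{j}(U)$ has the form $\overline{r}$ with $r \in j(U)$; since $r$ is saturating, the first part shows $\overline{r}$ is saturating as well, hence $\overline{r} \in \sat{j}(U)$. I do not anticipate a genuine obstacle here, as the entire content lies in recognizing the idempotence of sieve generation and in the trivial one-level tree construction; the only real care required is verifying that this tree satisfies every clause of Definitions \ref{def tree} and \ref{def j-tree}.
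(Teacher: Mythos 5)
Your proof is correct and follows essentially the same route as the paper: the equivalence comes down to the idempotence $\overline{\overline{r}} = \overline{r}$ of sieve generation, and the claim that covering families are saturating is obtained by viewing $r$ as the composite of a height-one $j$-tree and applying Lemma \ref{lem family saturating iff refined by j-tree} with the identity refinement. You simply spell out the one-level tree construction and the final containments more explicitly than the paper does.
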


\begin{proof}
$(\Rightarrow)$ This follows just from the definition of coverage (Definition \ref{def coverage}) and $j$-local epimorphism (Definition \ref{def local epimorphism of presheaves}). 

$(\Leftarrow)$ If $\overline{r}$ is saturating, then since $\overline{\overline{r}} = \overline{r}$, this implies that $\overline{r} \hookrightarrow y(U)$ is a $j$-local epimorphism, and thus $r$ is saturating. Now if $r \in j$ is a covering family, then it is saturating by taking $r$ to refine itself by the identiy in Lemma \ref{lem family saturating iff refined by j-tree}.
\end{proof}

\begin{Cor} \label{cor saturated coverages contain their sifted closure}
If $j$ is a saturated coverage, then $\overline{j} \subseteq j$.
\end{Cor}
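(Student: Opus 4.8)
The plan is to derive this immediately from the two preceding results. First I would invoke Corollary \ref{cor saturating iff covering}: since $j$ is assumed saturated, we have the equality $\sat{j} = j$ of collections of families. Then I would apply the final assertion of Lemma \ref{lem saturating iff sifted closure is saturating}, namely that $\overline{j} \subseteq \sat{j}$ holds for \emph{any} coverage $j$. Chaining these gives
$$\overline{j} \subseteq \sat{j} = j,$$
which is exactly the claim. No further work is required along this route.

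Alternatively, one can argue directly and even more cheaply, invoking only refinement closure (composition closure plays no role here). Given $R \in \overline{j}(U)$, Definition \ref{def sifted closure} supplies a family $r \in j(U)$ with $R = \overline{r}$. Since $\overline{\overline{r}} = \overline{r}$, Lemma \ref{lem refinement under sifted closure} yields a refinement $r \leq \overline{r} = R$; concretely this refinement, viewed as a morphism in $\Fam(U)$, has index map sending each $r_i$ to itself inside $\overline{r}$ and component maps the identities, using that each $r_i$ factors through itself. Because $j$ is refinement closed and $r \in j(U)$, Definition \ref{def refinement closed coverage} forces $R \in j(U)$. As $U$ and $R$ were arbitrary, $\overline{j} \subseteq j$.

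There is essentially no obstacle: the substantive content has already been packaged into Lemma \ref{lem saturating iff sifted closure is saturating} and Corollary \ref{cor saturating iff covering}, and the direct route is a one-line appeal to refinement closure together with the trivial observation that a family refines the sieve it generates. The only point requiring a moment of care is phrasing the refinement $r \to \overline{r}$ correctly as an object of $\Fam(U)$, but this is routine.
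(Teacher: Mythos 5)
Your first route is exactly the paper's intended derivation: the corollary is stated immediately after Lemma \ref{lem saturating iff sifted closure is saturating} and Corollary \ref{cor saturating iff covering}, and the chain $\overline{j} \subseteq \sat{j} = j$ is the whole argument. Your second, direct route is also correct, and it has the minor virtue of making explicit that only refinement closure of $j$ is used (via $r \leq \overline{r}$ and Definition \ref{def refinement closed coverage}), but no comparison is really needed --- both are valid one-line proofs.
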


\begin{Lemma} \label{lem covering families closed under intersection in saturated coverages}
If $j$ is a saturated coverage on a category $\cat{C}$, $U \in \cat{C}$, and $r, r' \in j(U)$, then there exists a covering family $r'' \in j(U)$ and refinements $r'' \leq r$ and $r'' \leq r'$. In other words, $j(U)$ is a finitely cofiltered (Definition \ref{def filtered category}) poset.
\end{Lemma}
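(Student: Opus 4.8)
The plan is to build the common refinement $r''$ by pulling the second family back along the legs of the first and then reassembling the pieces using composition closedness. Write $r = \{r_i : U_i \to U\}_{i \in I}$ and $r' = \{r'_k : U'_k \to U\}_{k \in K}$, both in $j(U)$. Since $r' \in j(U)$ and $j$ is a coverage, for each $i \in I$ the map $r_i : U_i \to U$ yields (by the second coverage axiom) a covering family $t^i = \{t^i_j : V^i_j \to U_i\}_{j \in J_i} \in j(U_i)$ with $(r_i)_*(t^i) \leq r'$. Setting $r'' = (r \circ t) = \bigcup_{i \in I} (r_i)_*(t^i)$, composition closedness of $j$ immediately gives $r'' \in j(U)$, so $r''$ is a genuine covering family.

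It then remains to verify the two refinements. For $r'' \leq r$: a typical member of $r''$ is the composite $r_i t^i_j : V^i_j \to U$, which factors through $r_i$ via the component $t^i_j : V^i_j \to U_i$, so sending the index $(i,j) \mapsto i$ with components $t^i_j$ defines a refinement $r'' \to r$. For $r'' \leq r'$: each individual refinement $(r_i)_*(t^i) \leq r'$ supplies, for every $j \in J_i$, an index $\alpha_i(j) \in K$ together with a map $V^i_j \to U'_{\alpha_i(j)}$ factoring $r_i t^i_j$ through $r'_{\alpha_i(j)}$; gluing these over all $i$ produces an index map on $\coprod_i J_i$ and the components of a single refinement $r'' \to r'$. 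Hence $r''$ lies below both $r$ and $r'$ in the poset $j(U)$.

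Finally, since $(1_U) \in j(U)$ the poset $j(U)$ is nonempty, and the pairwise lower bound just constructed upgrades by an easy induction to a lower bound for any finite collection of covering families, which is exactly the statement that $j(U)$ is finitely cofiltered. I expect the only real work — rather than a genuine obstacle — to be the bookkeeping of index maps when forming the union $\bigcup_i (r_i)_*(t^i)$ and checking that the refinements of the individual pushforwards assemble into one refinement of the union. It is worth remarking that refinement closedness is not actually needed for this argument: only the coverage axiom together with composition closedness of $j$ is used.
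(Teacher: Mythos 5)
Your proof is correct and is essentially the paper's argument: the paper deduces the result from its lemma that any two $j$-trees admit a common refining $j$-tree, whose proof in the height-one case is exactly your construction (pull $r'$ back along each leg of $r$ via the coverage axiom, then use composition closure to see the composite is covering). Your closing remark that only the coverage axiom and composition closure are used, not refinement closure, is also accurate and consistent with what the paper's proof actually invokes.
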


\begin{proof}
Since covering families are $j$-trees, by Corollary \ref{lem j-trees have common refinement}, there exists a $j$-tree $R$ on $U$ and refinements $R^\circ \leq r$ and $R^\circ \leq r'$. But $j$ is saturated, so by Lemma \ref{lem composition closed iff j-tree closed}, $r'' = R^\circ$ is a covering family that refines both $r$ and $r'$.
\end{proof}

\begin{Lemma} \label{lem family is saturating iff sum of maps is local epi}
Given a family of morphisms $r = \{ r_i : U_i \to U \}_{i \in I}$ in a site $(\cat{C}, j)$, the corresponding map $\sum_i r_i : \sum_{i \in I} y(U_i) \to y(U)$ is a $j$-local epimorphism if and only if the corresponding family $r$ is $j$-saturating.
\end{Lemma}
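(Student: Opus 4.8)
The plan is to factor the comparison map $\sum_{i} r_i$ through the sieve $\overline{r}$ it generates, and then read off both implications from the composition properties of $j$-local epimorphisms established earlier. Concretely, I would first record the epi--mono factorization
\begin{equation*}
    \sum_{i \in I} y(U_i) \xrightarrow{\; e \;} \overline{r} \xrightarrow{\; m \;} y(U),
\end{equation*}
where $m$ is \emph{precisely} the sieve inclusion $\overline{r} \hookrightarrow y(U)$ appearing in Definition \ref{def saturating family}, and $e$ is the corestriction of $\sum_i r_i$ onto its image. Verifying this factorization is the one piece of actual content: since limits, colimits and images of presheaves are computed objectwise in $\Set$, the image of $\sum_i r_i$ at an object $V$ is the set of maps $V \to U$ factoring through some $r_i$, i.e. the set $\{\, r_i s : i \in I,\ s \in \cat{C}(V,U_i) \,\}$, which is exactly $\overline{r}(V)$ by Definition \ref{def sieve generated by a family of morphisms}. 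Hence $e$ is objectwise surjective, so an epimorphism of presheaves, and $m$ is the monomorphism into the generated sieve.

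With the factorization in hand, both directions are formal. By Remark \ref{rem epis are local epis}, the epimorphism $e$ is in particular a $j$-local epimorphism. For $(\Leftarrow)$: if $r$ is $j$-saturating, then $m$ is a $j$-local epimorphism by Definition \ref{def saturating family}, so the composite $m \circ e = \sum_i r_i$ is a $j$-local epimorphism by Lemma \ref{lem local epimorphisms closed under composition}. For $(\Rightarrow)$: if $\sum_i r_i = m \circ e$ is a $j$-local epimorphism, then Lemma \ref{lem composition is local epi implies local epi}, which states that if $gf$ is a $j$-local epimorphism then so is $g$, applies with $g = m$ and $f = e$ to give that $m$ is a $j$-local epimorphism, i.e. that $r$ is $j$-saturating.

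I expect the only (and very mild) obstacle to be writing the image computation cleanly, namely checking that the objectwise image of $\sum_i \cat{C}(V,U_i) \to \cat{C}(V,U)$ really is $\overline{r}(V)$ and that the induced corestriction $e$ is objectwise surjective. Note in particular that no $j$-tree manipulation is required here, precisely because the notion of a saturating family is already phrased directly in terms of a $j$-local epimorphism; all the tree-theoretic work has been absorbed into the earlier lemmas about local epimorphisms that I am invoking.
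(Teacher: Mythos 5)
Your proof is correct and follows essentially the same route as the paper: both factor $\sum_i r_i$ through $\overline{r} \hookrightarrow y(U)$, observe that the first map is an epimorphism (the paper cites the coequalizer presentation of $\overline{r}$ from Proposition \ref{prop sieves are coequalizers of generating family}, where you verify the objectwise image directly), and then conclude via Lemma \ref{lem local epimorphisms closed under composition} and Lemma \ref{lem composition is local epi implies local epi}. No gaps.
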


\begin{proof}
By Proposition \ref{prop sieves are coequalizers of generating family} we have the following commutative diagram
\begin{equation*}
    \begin{tikzcd}
	{\sum_{i,j} y(U_i) \times_{y(U)} y(U_j)} & {\sum_i y(U_i)} & {\overline{r}} \\
	&& {y(U)}
	\arrow["\pi", from=1-2, to=1-3]
	\arrow[shift right=2, from=1-1, to=1-2]
	\arrow[shift left=2, from=1-1, to=1-2]
	\arrow["{\sum_i r_i}"', from=1-2, to=2-3]
	\arrow[hook, from=1-3, to=2-3]
\end{tikzcd}
\end{equation*}
where $\pi$ is a coequalizer, and hence an epimorphism. Thus if $r$ is saturating, then $\overline{r} \hookrightarrow y(U)$ is a $j$-local epi, which by Lemma \ref{lem local epimorphisms closed under composition} implies that $\sum_i r_i$ is a $j$-local epi. Conversely, if $\sum_i r_i$ is a $j$-local epi, then by Lemma \ref{lem composition is local epi implies local epi}, $\overline{r} \hookrightarrow y(U)$ is a $j$-local epi, and thus $r$ is saturating.
\end{proof}

\begin{Def} \label{def sifted saturated coverage}
Let $J$ be a sifted coverage on a category $\cat{C}$. We say that $J$ is a \textbf{sifted-saturated coverage} if it is composition closed and it satisfies the following weaker version of refinement closure which we call \textbf{sifted refinement closure}: If $R$ is a covering sieve on $U$, and $T$ is a sieve on $U$ such that $R \subseteq T$, then $T$ is a covering sieve on $U$.
\end{Def}

\begin{Lemma} \label{lem sifted closure of saturated is sifted saturated}
Let $(\cat{C}, j)$ be a site. If $j$ is a saturated coverage, then $\overline{j}$ is a sifted-saturated coverage.
\end{Lemma}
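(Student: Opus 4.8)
The plan is to exploit the fact that the sifted closure $\overline{j}$ inherits its two defining conditions almost directly from the saturation of $j$, the crucial observation being that saturation forces $\overline{j} \subseteq j$ by Corollary \ref{cor saturated coverages contain their sifted closure}. Since it has already been shown that $\overline{j}$ is a sifted coverage, it remains only to verify that $\overline{j}$ is composition closed and satisfies sifted refinement closure, as required by Definition \ref{def sifted saturated coverage}.

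For sifted refinement closure, I would start with a covering sieve $R \in \overline{j}(U)$ and a sieve $T$ on $U$ with $R \subseteq T$. Because $\overline{j} \subseteq j$, we have $R \in j(U)$. Since $R$ and $T$ are sieves, the inclusion $R \subseteq T$ is exactly a refinement $R \leq T$ (Definition \ref{def sieve}), so refinement closure of $j$ (Definition \ref{def refinement closed coverage}) gives $T \in j(U)$. As $T$ is a sieve it equals its own sifted closure, $\overline{T} = T$, and hence $T \in \overline{j}(U)$; that is, $T$ is a covering sieve.

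For composition closure, I would take a covering sieve $R \in \overline{j}(U)$ and, for each $f \in R$, a covering sieve $S^f \in \overline{j}(\text{dom}(f))$, and consider the composite family $(R \circ S) = \bigcup_{f \in R} f_*(S^f)$. A quick check shows $(R \circ S)$ is again a sieve: any element has the form $f \circ g$ with $f \in R$ and $g \in S^f$, and precomposing with an arbitrary $h$ gives $f \circ (gh)$ where $gh \in S^f$ since $S^f$ is a sieve. Using $\overline{j} \subseteq j$ once more, $R \in j(U)$ and each $S^f \in j(\text{dom}(f))$, so composition closure of $j$ (Definition \ref{def composition closed coverage}) yields $(R \circ S) \in j(U)$. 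Because $(R \circ S)$ is a sieve, $(R \circ S) = \overline{(R \circ S)} \in \overline{j}(U)$, which is precisely composition closure for $\overline{j}$.

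I do not anticipate a genuine obstacle: once $\overline{j} \subseteq j$ is invoked, both conditions reduce to the corresponding conditions for $j$ together with the elementary fact that a sieve is its own sifted closure. The only point needing a (routine) verification is that the composite of sieves is again a sieve, since this is exactly what lets membership in $j(U)$ be upgraded to membership in $\overline{j}(U)$.
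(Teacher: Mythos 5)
Your proof is correct. The sifted-refinement-closure half is essentially the paper's own argument: the paper re-derives $R \in j(U)$ from the refinement $r \leq R = \overline{r}$ and refinement closure of $j$, which is exactly the content of Corollary \ref{cor saturated coverages contain their sifted closure} that you cite, and then both arguments finish identically via $T = \overline{T}$. The composition-closure half is where you take a genuinely more direct route. The paper works with the generating families: it writes $R = \bigcup_i (r_i)_*(y(U_i))$ and $T_f = \bigcup_j (t^f_j)_*(y(V_j))$, invokes Lemma \ref{lem saturating iff sifted closure is saturating} to see that each maximal sieve $y(W)$ is $j$-covering, and then realizes $(R \circ T)$ as a fourfold composite $(r \circ y_r \circ t \circ y_t)$ of $j$-covering families. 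You instead use $\overline{j} \subseteq j$ up front to view $R$ and each $S^f$ directly as $j$-covering families, apply composition closure of $j$ once, and add the elementary observation that a composite of sieves is again a sieve (so that membership in $j(U)$ upgrades to membership in $\overline{j}(U)$). Your version avoids the decomposition into generators and the auxiliary maximal-sieve step entirely, at the cost of the one routine sieve check, which you carry out correctly; it is arguably the cleaner argument. Both ultimately reduce the statement to the saturation of $j$, so nothing new is gained in generality, but your route is shorter and easier to verify.
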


\begin{proof}
Suppose that $R \in \overline{j}(U)$ is a covering sieve, $T$ is a sieve on $U$ and $R \subseteq T$. By definition of $\overline{j}(U)$, there exists a covering family $r = \{r_i : U_i \to U \} \in j(U)$ such that $R = \overline{r}$. Thus there exists a refinement $r \leq R$. Since $j$ is refinement closed, this implies that $R \in j(U)$, and therefore $T \in j(U)$. Since $T = \overline{T}$, this implies that $T \in \overline{j}(U)$. Thus $\overline{j}$ is refinement closed. Now suppose that for every $f : V \to U \in R$, there exists a $\overline{j}$-covering sieve $T_f \in J(V)$ with $j$-covering family $t^f = \{t^f_j : V_j \to V\}$ such that $T_f = \overline{t^f}$. We want to show that $(R \circ T) = \bigcup_{f \in R} f_*(T_f)$ is a $\overline{j}$-covering sieve on $U$. Now it is easy to see that $R = \bigcup_i (r_i)_*(y(U_i))$ and $T_f = \bigcup_j (t^f_j)_*(y(V_j))$. Since $j$ is saturated, and $(1_W) \in j(W)$ for every $W \in \cat{C}$, then by Lemma \ref{lem saturating iff sifted closure is saturating}, this implies that $y(W) \in j(W)$. Thus $(R \circ T) = (r \circ y_r \circ t \circ y_t)$, where we are letting $y_r$ and $y_t$ denote the collection of maximal sieves $y(W)$ for every $W$ in the domains of $r$ and $t$ respectively. But $j$ is composition closed so $(r \circ y_r \circ t \circ y_t) = (R \circ T)$ is a covering sieve. Thus $\overline{j}$ is sifted-saturated.
\end{proof}

\subsection{Grothendieck Coverages}

We will now consider the more traditional notion of a Grothendieck coverage. The work of the previous section will be used to show how a coverage generates a Grothendieck coverage, and this is useful in applications. However the main usage of Grothendieck coverages is in proving theorems about Grothendieck toposes.

\begin{Def} \label{def Grothendieck coverage}
A \textbf{Grothendieck coverage}\footnote{More commonly known as a \textbf{Grothendieck topology}} on an (essentially small) category $\cat{C}$ is a sifted collection of families $J$ on $\cat{C}$ such that
\begin{itemize}
	\item[(G1)] $y(U) \in J(U)$ for every $U \in \cat{C}$,
	\item[(G2)] for any sieve $R \in J(U)$ and any morphism $g: V \to U$, $g^*(R) \in J(V)$, and
	\item[(G3)] if $R \in J(U)$, $R'$ is a sieve on $U$, and $g^*(R') \in J(V)$ for every $g \in R(V)$, then $R' \in J(U)$.
\end{itemize}
We refer to a category $\cat{C}$ equipped with a Grothendieck coverage a \textbf{Grothendieck site}. Note that (G1) and (G2) imply that a Grothendieck coverage is a sifted coverage.
\end{Def}

\begin{Lemma}[{\cite[Lemma 3.12]{jardine2015local}}] \label{lem covering sieve props}
Let $(\cat{C}, J)$ be a Grothendieck site. Then the following hold:
\begin{enumerate}
	\item if $R,R'$ are sieves on $U$, $R \subseteq R'$ and $R$ is a covering sieve, then $R'$ is a covering sieve,
	\item if $R,R'$ are covering sieves, then $R \cap R'$ is a covering sieve.
\end{enumerate}
\end{Lemma}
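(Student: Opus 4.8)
The plan is to deduce both statements from axiom (G3), using two elementary facts about sieves: first, the characterization of Lemma \ref{lem pullback of sieve by map in sieve is maximal sieve}, that a map $g : V \to U$ lies in a sieve $S$ if and only if $g^*(S) = y(V)$; and second, that pullback of sieves along a fixed map commutes with intersection, i.e. $g^*(S \cap S') = g^*(S) \cap g^*(S')$. The latter is immediate from the description $g^*(S) = \{ f : W \to V \mid gf \in S \}$ valid for any sieve $S$, since $gf \in S \cap S'$ iff $gf \in S$ and $gf \in S'$. In each part I will take the given covering sieve $R$ as the ``$R$'' of (G3) and the sieve I want to show is covering as the ``$R'$'' of (G3), so that the task reduces to checking the hypothesis of (G3), namely that $g^*(R') \in J(V)$ for every $g \in R(V)$.

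For part (1), suppose $R \subseteq R'$ with $R \in J(U)$. Fix any $g \in R(V)$. Since $R \subseteq R'$ we have $g \in R'$, so by Lemma \ref{lem pullback of sieve by map in sieve is maximal sieve} the pullback $g^*(R')$ equals the maximal sieve $y(V)$, which lies in $J(V)$ by (G1). As this holds for every $g \in R(V)$, axiom (G3) applied to the covering sieve $R$ and the sieve $R'$ yields $R' \in J(U)$, as desired.

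For part (2), suppose $R, R' \in J(U)$; I will show $R \cap R' \in J(U)$ by applying (G3) to $R$ and the sieve $R \cap R'$. Fix $g \in R(V)$. Using that pullback commutes with intersection and that $g^*(R) = y(V)$ by Lemma \ref{lem pullback of sieve by map in sieve is maximal sieve} (since $g \in R$), I compute
\begin{equation*}
    g^*(R \cap R') = g^*(R) \cap g^*(R') = y(V) \cap g^*(R') = g^*(R').
\end{equation*}
Because $R'$ is a covering sieve, axiom (G2) gives $g^*(R') \in J(V)$, hence $g^*(R \cap R') \in J(V)$ for every $g \in R(V)$. Axiom (G3) then gives $R \cap R' \in J(U)$. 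I do not expect any genuine obstacle here; the only point requiring a moment's care is the verification that $g^*(-)$ preserves intersections of sieves and the bookkeeping of which sieve plays the role of the covering sieve in each application of (G3).
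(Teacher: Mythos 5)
Your proof is correct and follows essentially the same route as the paper's: both parts reduce to axiom (G3) via Lemma \ref{lem pullback of sieve by map in sieve is maximal sieve}, with the identical computation $g^*(R \cap R') = y(V) \cap g^*(R') = g^*(R')$ in part (2). The only difference is that you explicitly verify that $g^*(-)$ preserves intersections, which the paper leaves implicit.
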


\begin{proof}
(1) Let $g : V \to U \in R$, then by Lemma \ref{lem pullback of sieve by map in sieve is maximal sieve}, we know that $g^*R = g^*R' = y(V)$, which is a covering sieve of $V$ by (G1). Since this is true for all $g \in R$, then $R'$ is a covering sieve by (G3).

(2) Let $g \in R$, then $g^* (R \cap R') = g^* R \cap g^* R' = y(V) \cap g^* R' = g^* R'$ by Lemma \ref{lem pullback of sieve by map in sieve is maximal sieve}. By $(G2)$, $g^* R' \in J(V)$, thus by $(G3)$, $R \cap R'$ is a covering sieve.
\end{proof}

\begin{Lemma} \label{lem intersections of grothendieck coverages}
Given a small category $\cat{C}$ and a set $\{J_i \}_{i \in I}$ of Grothendieck coverages on $\cat{C}$, let $J = \bigcap_{i \in I} J_i$ denote the sifted collection of families where $R \in J(U)$ if and only if $R \in J_i(U)$ for all $i \in I$. Then $J$ is a Grothendieck coverage.
\end{Lemma}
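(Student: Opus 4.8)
The plan is to verify directly that $J = \bigcap_{i \in I} J_i$ satisfies each of the three axioms (G1)--(G3) of Definition \ref{def Grothendieck coverage}, exploiting that membership $R \in J(U)$ is, by construction, equivalent to the conjunction of the statements $R \in J_i(U)$ over all $i \in I$. Observe first that $J$ is automatically a sifted collection of families: any $R \in J(U)$ lies in each $J_i(U)$, and each $J_i(U)$ consists of sieves, so $R$ is a sieve. The key structural fact I will use repeatedly is that the intersection is contained in each factor, i.e. $J(V) \subseteq J_i(V)$ for every $i$ and every $V$; this is what lets hypotheses phrased for $J$ feed into the axioms of a single $J_i$.

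For (G1), since each $J_i$ is a Grothendieck coverage we have $y(U) \in J_i(U)$ for every $i \in I$, and hence $y(U) \in J(U)$ by definition of the intersection. For (G2), suppose $R \in J(U)$ and $g: V \to U$ is a morphism. Then $R \in J_i(U)$ for every $i$, so by (G2) applied to each $J_i$ we get $g^*(R) \in J_i(V)$ for every $i$, whence $g^*(R) \in J(V)$.

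The only axiom requiring a moment's care is (G3), though it remains routine. Suppose $R \in J(U)$, that $R'$ is a sieve on $U$, and that $g^*(R') \in J(V)$ for every $g \in R(V)$. I would fix an arbitrary $i \in I$ and show $R' \in J_i(U)$. Indeed $R \in J_i(U)$ since $R \in J(U)$; and for every $g \in R(V)$ the hypothesis gives $g^*(R') \in J(V) \subseteq J_i(V)$, using the containment noted above. Thus the hypotheses of (G3) for the single coverage $J_i$ are met, yielding $R' \in J_i(U)$. As $i$ was arbitrary, $R' \in \bigcap_{i \in I} J_i(U) = J(U)$, establishing (G3).

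There is no genuine obstacle here; the proof is a direct unwinding of the definition of intersection against the three axioms, and the whole content is the elementary observation that an arbitrary intersection of sets closed under certain closure conditions remains closed under those conditions, provided each condition's hypotheses descend to the intersection. The mild subtlety worth flagging explicitly in the write-up is the $\subseteq$ step in (G3), where a hypothesis about $J$ must be weakened to the corresponding statement about $J_i$ before invoking that factor's axiom.
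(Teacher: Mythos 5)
Your proof is correct and follows essentially the same approach as the paper's, which also verifies (G1)--(G3) directly from the definition of the intersection (the paper simply compresses (G3) to ``similar reasoning''). Your explicit treatment of the containment $J(V) \subseteq J_i(V)$ in the (G3) step is exactly the detail the paper leaves implicit.
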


\begin{proof}
Since $y(U) \in J_i(U)$ for all $i \in I$, $J$ satisfies (G1). If $R \in J(U)$, then $R \in J_i(U)$ for all $i$, and since each $J_i$ is a Grothendieck coverage $g^*(R) \in J_i(V)$ for every map $g : V\to U$, so $g^*(R) \in J(V)$, so $J$ satisfies (G2). Similar reasoning shows that $J$ satisfies (G3).
\end{proof}

\begin{Rem} \label{rem grothendieck coverage generated by collection of families}
Thanks to Lemma \ref{lem intersections of grothendieck coverages}, we can generate a Grothendieck coverage given any collection of families. Indeed, suppose that $\cat{C}$ is a small category and $j$ is a collection of families on $\cat{C}$. Then let $\Gro{j}$ denote the intersection of all the Grothendieck coverages $J$ such that $j \subseteq J$.
\end{Rem}

\begin{Lemma}[{\cite[Lemma 3.12.3]{jardine2015local}}] \label{lem grothendieck coverages are composition closed}
If $(\cat{C}, J)$ is a Grothendieck site, and $R \in J(U)$ is a covering sieve, such that for every $f: V \to U$ with $f \in R$, there is a covering sieve $T_f \in J(V)$. Then $(R \circ T) = \bigcup_{f \in R} f_*(T_f)$ is a covering sieve. In other words, $J$ is composition closed.
\end{Lemma}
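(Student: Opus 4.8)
The plan is to deduce that $(R \circ T)$ is a covering sieve from the local character axiom (G3), after first checking that $(R \circ T)$ is genuinely a sieve and that its pullback along every morphism in $R$ is covering. So the first thing I would verify is that $(R \circ T) = \bigcup_{f \in R} f_*(T_f)$ is closed under precomposition. Given a morphism $f h \in (R \circ T)$ with $f: V \to U \in R$ and $h \in T_f$, and any $g$ composable with $h$, the composite $hg$ lies in $T_f$ since $T_f$ is a sieve, so $f(hg) \in f_*(T_f) \subseteq (R \circ T)$. Hence $(R \circ T)$ is a sieve on $U$.

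The key step is the containment $T_f \subseteq f^*\big((R \circ T)\big)$ for each $f: V \to U$ in $R$. Indeed, if $h \in T_f$ with $h : W \to V$, then by definition $f h \in f_*(T_f) \subseteq (R \circ T)$, which is exactly the condition for $h$ to belong to $f^*\big((R \circ T)\big) = \{\, k : W \to V \mid f k \in (R \circ T)\,\}$. Since $T_f$ is a covering sieve on $V$ and $T_f \subseteq f^*\big((R \circ T)\big)$, Lemma \ref{lem covering sieve props}(1) immediately gives that $f^*\big((R \circ T)\big)$ is a covering sieve on $V$, i.e. $f^*\big((R \circ T)\big) \in J(V)$.

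Finally I would invoke (G3) with the covering sieve $R \in J(U)$ and the sieve $R' = (R \circ T)$. We have just shown that $g^*\big((R \circ T)\big) \in J(\cons{dom}(g))$ for every $g \in R$, which is precisely the hypothesis of (G3), so it follows that $(R \circ T) \in J(U)$, as desired. The argument is short because all the real content is packaged into the Grothendieck axioms and Lemma \ref{lem covering sieve props}; the only place requiring care is recognizing the correct containment to feed into (G3), namely that pulling $(R \circ T)$ back along an $f \in R$ recovers at least $T_f$, which is where the covering property of the individual $T_f$ gets used. No compatibility between the various $T_f$ is needed, so the main conceptual point is simply that checking the covering condition on the restrictions along $R$ suffices by local character.
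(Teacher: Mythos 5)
Your proof is correct and follows essentially the same route as the paper's: both establish the containment $T_f \subseteq f^*\big((R\circ T)\big)$ for each $f \in R$, invoke the upward-closure of covering sieves (Lemma \ref{lem covering sieve props}.(1)), and conclude via (G3). The only difference is that you explicitly verify $(R\circ T)$ is a sieve, a detail the paper leaves implicit.
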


\begin{proof}
First consider $f^* (R \circ T) = \bigcup_{g \in R} f^* g_*(T_g)$. Now if $g = f$, then $T_f \subseteq f^* f_*(T_f)$ Thus $T_f \subseteq f^* (R \circ T)$, and since $T_f$ is a covering sieve, by Lemma \ref{lem covering sieve props}, $f^*(R \circ T)$ is a covering sieve. Since $f^*(R \circ T)$ is a covering sieve for every $f \in R$, by $(G3)$, therefore $(R \circ T)$ is a covering sieve.
\end{proof}

\begin{Rem} \label{rem Grothendieck coverages are not refinement closed}
Note that while Grothendieck coverages are composition closed, they are \textbf{not} refinement closed. Indeed, Grothendieck coverages must be sifted, but if $R$ is a covering sieve of a Grothendieck coverage $J$ over $U$, then there exists a refinement $R \leq (1_U)$, but $(1_U)$ is not a sieve, so it cannot belong to $J(U)$. However, they are sifted refinement closed, which is easy to see. This means that Grothendieck coverages and saturated coverages are two different kinds of mathematical objects, but we will show they are equivalent in a certain sense.
\end{Rem}

\begin{Prop} \label{prop grothendieck coverages are precisely sifted saturated coverages}
Given a category $\cat{C}$, the Grothendieck coverages on $\cat{C}$ are precisely the sifted-saturated coverages on $\cat{C}$.
\end{Prop}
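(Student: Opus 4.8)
The plan is to prove the two inclusions separately. In both directions essentially all of the work has already been done in the preceding lemmas, so the argument amounts to carefully matching up the two lists of axioms while keeping track of the ``sifted'' bookkeeping (remembering that in a sifted coverage the identity family $(1_U)$ is replaced by the maximal sieve $y(U)$, and that the relation $\leq$ on sieves is just inclusion $\subseteq$).

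First I would show that every Grothendieck coverage $J$ is sifted-saturated. By definition $J$ is sifted, so I only need to check that it is a coverage, that it is composition closed, and that it satisfies sifted refinement closure. It is a coverage because (G1) supplies the covering sieve $y(U) \in J(U)$ for every $U$, and for the pullback axiom, given $R \in J(U)$ and $g : V \to U$, axiom (G2) yields $g^*(R) \in J(V)$ while $g_*(g^*(R)) \subseteq R$ since $R$ is a sieve; thus $g^*(R)$ witnesses the second coverage condition. Composition closure is precisely Lemma \ref{lem grothendieck coverages are composition closed}, and sifted refinement closure is precisely Lemma \ref{lem covering sieve props}(1). Hence $J$ is sifted-saturated.

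Conversely I would show that every sifted-saturated coverage $J$ satisfies (G1)--(G3). Axiom (G1) is immediate, since $J$ is a sifted coverage and hence $y(U) \in J(U)$ for every $U$. For (G2), given a covering sieve $R \in J(U)$ and $g : V \to U$, the coverage axiom produces a covering family $t \in J(V)$ with $g_*(t) \leq R$; since $J$ is sifted these are sieves, so $g_*(t) \subseteq R$, which is exactly the statement that $t \subseteq g^*(R)$. As $g^*(R)$ is itself a sieve on $V$ containing the covering sieve $t$, sifted refinement closure forces $g^*(R) \in J(V)$.

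The only step requiring a genuine, if short, argument is (G3), and it is here that composition closure is essential. Given $R \in J(U)$, a sieve $R'$ on $U$, and covering sieves $T_f \coloneqq f^*(R') \in J(V)$ for every $f : V \to U$ in $R$, I would form the composite sieve
\begin{equation*}
    (R \circ T) = \bigcup_{f \in R} f_*(T_f) = \bigcup_{f \in R} f_*\big(f^*(R')\big),
\end{equation*}
which is a covering sieve because $J$ is composition closed. For each $f : V \to U$ in $R$ and each $h \in f^*(R')$ we have $fh \in R'$, so $f_*\big(f^*(R')\big) \subseteq R'$ and therefore $(R \circ T) \subseteq R'$. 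Thus $R'$ contains the covering sieve $(R \circ T)$, and sifted refinement closure yields $R' \in J(U)$. Combining the two directions gives the claimed equality of the two classes of coverages. The main obstacle is simply organizing the composite in (G3) and verifying the containment $(R\circ T)\subseteq R'$; everything else is a direct translation between the axiom schemes already established.
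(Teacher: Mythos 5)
Your proof is correct and follows essentially the same route as the paper's: the forward direction cites Lemma \ref{lem covering sieve props} and Lemma \ref{lem grothendieck coverages are composition closed}, the (G2) verification uses the adjunction $g_*(T)\subseteq R \Leftrightarrow T\subseteq g^*(R)$ together with sifted refinement closure, and the (G3) verification forms the composite $\bigcup_{f\in R} f_*(f^*(R'))\subseteq R'$ exactly as in the paper. No gaps.
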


\begin{proof}
Suppose that $J$ is a Grothendieck coverage, then by Lemma \ref{lem covering sieve props} and Lemma \ref{lem grothendieck coverages are composition closed}, $J$ is a sifted-saturated coverage. Conversely suppose that $J$ is a sifted-saturated coverage. We wish to show that $J$ is a Grothendieck coverage. Suppose that $R \in J(U)$ and $g: V \to U$ is a morphism. Since $J$ is a coverage, there exists a covering sieve $T \in J(V)$ such that $g_*(T) \subseteq R$. But by Lemma \ref{lem adjunction on sieve posets}, this is equivalent to $T \subseteq g^*(R)$. Since $J$ is sifted-refinement closed, this implies that $g^*(R) \in J(V)$. Now let us show (G3). Suppose that $R \in J(U)$, $R' \subseteq y(U)$ and for every $g \in R$, $g^*(R') \in J(V)$. Then $(R \circ R^*(R')) = \bigcup_{g \in R} g_* g^*(R')$ is a covering sieve since $J$ is composition closed, and for every $g \in R$, it follows that $g_* g^*(R') \subseteq R'$, thus $(R \circ R^*(R')) \subseteq R'$. But $J$ is sifted-refinement closed, so $R' \in J(V)$.
\end{proof}

\begin{Lemma} \label{lem covering sieve iff local iso iff local epi}
Suppose that $(\cat{C}, J)$ is a Grothendieck site. Then a sieve $R$ on an object $U$ is a covering sieve for $J$ if and only if the inclusion $R \hookrightarrow y(U)$ is a $J$-local isomorphism if and only if it is a $J$-local epimorphism.
\end{Lemma}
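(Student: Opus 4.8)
\section*{Proof proposal}

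The plan is to prove the two biconditionals separately, pivoting everything on the notion of a $J$-saturating family (Definition \ref{def saturating family}). The observation that makes this efficient is that $R$ is already a sieve, so $\overline{R} = R$, and hence by definition the inclusion $R \hookrightarrow y(U)$ is a $J$-local epimorphism if and only if $R$ is $J$-saturating. I will use this translation repeatedly.

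For the equivalence between ``local isomorphism'' and ``local epimorphism,'' I would first note that $R \hookrightarrow y(U)$ is a monomorphism of presheaves and is therefore automatically a $J$-local monomorphism. Since a $J$-local isomorphism is by definition a map that is simultaneously a $J$-local monomorphism and a $J$-local epimorphism, the inclusion is a local isomorphism exactly when it is a local epimorphism. This settles one of the two equivalences immediately and with no real content.

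For the remaining equivalence (covering sieve $\Leftrightarrow$ local epimorphism), the forward direction is short: every $J$-covering family is $J$-saturating by Lemma \ref{lem saturating iff sifted closure is saturating}, so a covering sieve $R$ is saturating, which by the translation above says precisely that $R \hookrightarrow y(U)$ is a $J$-local epimorphism. The converse is the substantial part. Assuming $R \hookrightarrow y(U)$ is a local epimorphism, $R$ is $J$-saturating, so by Lemma \ref{lem family saturating iff refined by j-tree} there is a $J$-tree $T$ on $U$ with $T^\circ \leq R$; by Lemma \ref{lem refinement under sifted closure} this yields $\overline{T^\circ} \subseteq \overline{R} = R$. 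I then want to upgrade this containment to the conclusion that $R$ is itself covering, using that $J$, being a Grothendieck coverage, is sifted-saturated (Proposition \ref{prop grothendieck coverages are precisely sifted saturated coverages}) and in particular closed under taking supersets of covering sieves (Lemma \ref{lem covering sieve props}(1)).

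The main obstacle, and the technical heart of the argument, is the claim that $\overline{T^\circ}$ is a $J$-covering sieve for every $J$-tree $T$. I would prove this by induction on the height of $T$: in height $0$ (resp.\ height $1$) one gets $y(U)$ (resp.\ the root covering sieve $R_0$) by (G1). For the inductive step, write $R_0 \in J(U)$ for the covering sieve at the root and, for each $f \in R_0$, let $T_f$ be the subtree hanging below $f$, which is a $J$-tree of smaller height. One checks directly that $T^\circ = \bigcup_{f \in R_0} f_*(T_f^\circ)$, and passing to sifted closures that $\overline{T^\circ} = \bigcup_{f \in R_0} f_*(\overline{T_f^\circ})$, which is exactly the Grothendieck composite of $R_0$ with the sieves $\overline{T_f^\circ}$; these are covering by the inductive hypothesis, so Lemma \ref{lem grothendieck coverages are composition closed} shows the union is a covering sieve. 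Granting the claim, $\overline{T^\circ} \subseteq R$ with $\overline{T^\circ}$ covering forces $R$ to be covering by Lemma \ref{lem covering sieve props}(1), completing the converse and hence the proof.
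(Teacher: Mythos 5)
Your proposal is correct, but it routes the hard direction differently from the paper. The paper's proof of the converse works section by section: for each $f : V \to U$ it extracts from the local-epimorphism condition a covering sieve $S_f \subseteq f^*(R)$, concludes that $f^*(R)$ is covering by superset closure, and then applies (G3) with the maximal sieve $y(U)$ to deduce $R \in J(U)$. You instead work globally at the identity section: you translate ``local epimorphism'' into ``$J$-saturating'' via $\overline{R} = R$, extract a single $J$-tree $T$ with $\overline{T^\circ} \subseteq R$, prove by induction on the height of $T$ (using (G1) at the leaves and Lemma \ref{lem grothendieck coverages are composition closed} at each internal node) that $\overline{T^\circ}$ is a covering sieve, and finish with Lemma \ref{lem covering sieve props}(1). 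Both arguments are sound, and your treatment of the forward direction and of the ``local iso $\Leftrightarrow$ local epi'' equivalence (the inclusion is a monomorphism, hence automatically a local monomorphism) matches the paper's. What your version buys is that it makes fully explicit a step the paper glosses over: the definition of a $J$-local epimorphism only hands you a $J$-tree, not a covering sieve, and the paper's assertion that ``there is a $J$-covering sieve $S_f \subseteq f^*(R)$'' silently relies on exactly the fact you prove by induction, namely that the sieve generated by the composite of a $J$-tree is $J$-covering. What the paper's route buys is brevity and the avoidance of any tree bookkeeping, at the cost of leaning on (G3) and leaving that refinement step implicit.
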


\begin{proof}
$(\Rightarrow)$ Suppose that $R$ is a $J$-covering sieve on $U$. Let us show that $R \hookrightarrow y(U)$ is a $J$-local epimorphism. If $x : y(V) \to y(U)$ is a section, then since $R \in J(U)$ is a covering family and $J$ is a coverage, then there exists a covering family $S \in J(V)$ such that for every $s_i : V_i \to V$ in $S$, it factors through some map in $R$. Thus $R \hookrightarrow y(U)$ is a $J$-local epimorphism. Since it is a monomorphism, this implies that it is also a $J$-local isomorphism.

$(\Leftarrow)$ Suppose that $R \hookrightarrow y(U)$ is a $J$-local epimorphism. Then it is a $J$-local isomorphism. We want to show that $R$ is a $J$-covering sieve on $U$. Given any map $f : V \to U$, since $R \hookrightarrow y(U)$ is a $J$-local epimorphism, there is a $J$-covering sieve $S_f$ on $V$ such that $S_f \subseteq f^*(R)$. Thus $f^*(R)$ is a $J$-covering sieve for every $f$ in the maximal sieve $y(U)$. Thus by (G3), $R \in J(U)$.
\end{proof}

Recall that if $f : X \to Y$ is a map of presheaves, then $f$ factors through its image presheaf $\im(f)$ (Definition \ref{def image presheaf}).

\begin{Lemma} \label{lem local epi on groth cvg iff covering sieve}
Given a Grothendieck site $(\cat{C}, J)$, a map $f: X \to y(U)$ of presheaves is a $J$-local epimorphism if and only if $\im(f) \hookrightarrow y(U)$ is a $J$-covering sieve.
\end{Lemma}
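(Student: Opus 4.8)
The plan is to exploit the canonical epi--mono factorization of $f$ through its image presheaf, $X \twoheadrightarrow \im(f) \hookrightarrow y(U)$, and to translate everything into statements about local epimorphisms. Recall that $\im(f)$ is a subpresheaf of $y(U)$, hence corresponds to a sieve on $U$ via Lemma \ref{lem sieves and subfunctors iso}, so it makes sense to ask whether it is a $J$-covering sieve. The key external inputs will be Lemma \ref{lem covering sieve iff local iso iff local epi}, which says a sieve $R$ on $U$ is $J$-covering precisely when $R \hookrightarrow y(U)$ is a $J$-local epimorphism, together with the fact that genuine epimorphisms of presheaves are local epimorphisms (Remark \ref{rem epis are local epis}), the closure of local epimorphisms under composition (Lemma \ref{lem local epimorphisms closed under composition}), and the right-cancellation property (Lemma \ref{lem composition is local epi implies local epi}).

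For the backward direction, suppose $\im(f) \hookrightarrow y(U)$ is a $J$-covering sieve. By Lemma \ref{lem covering sieve iff local iso iff local epi} the inclusion $\im(f) \hookrightarrow y(U)$ is a $J$-local epimorphism. The projection $X \twoheadrightarrow \im(f)$ is an epimorphism of presheaves, hence a $J$-local epimorphism by Remark \ref{rem epis are local epis}. Since $f$ is the composite of these two maps and local epimorphisms are closed under composition (Lemma \ref{lem local epimorphisms closed under composition}), I conclude that $f$ is a $J$-local epimorphism.

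For the forward direction, suppose $f$ is a $J$-local epimorphism. Writing the factorization as $f = \iota \circ p$ with $p : X \to \im(f)$ and $\iota : \im(f) \hookrightarrow y(U)$, I apply Lemma \ref{lem composition is local epi implies local epi} with the roles $g = \iota$ and the inner map $p$: since the composite $\iota p = f$ is a $J$-local epimorphism, so is $\iota$. Then Lemma \ref{lem covering sieve iff local iso iff local epi} applied to $\iota$ shows that the sieve $\im(f)$ is a $J$-covering sieve, completing the proof.

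I do not anticipate any serious obstacle here; the entire argument is a bookkeeping exercise that pushes the content onto already-established results. The only point requiring a little care is the identification of $\im(f)$ as a sieve and the verification that the factorization maps have the expected properties (the first factor being an honest epimorphism of presheaves), but both are routine given the image presheaf machinery of Definition \ref{def image presheaf} and the earlier lemmas.
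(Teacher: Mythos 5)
Your proof is correct and follows exactly the same route as the paper's: factor $f$ through its image, use Lemma \ref{lem composition is local epi implies local epi} for the forward direction, Remark \ref{rem epis are local epis} together with Lemma \ref{lem local epimorphisms closed under composition} for the converse, and Lemma \ref{lem covering sieve iff local iso iff local epi} to translate between covering sieves and local epimorphisms. No changes needed.
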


\begin{proof}
 $(\Rightarrow)$ Suppose that $f : X \to y(U)$ is a $J$-local epimorphism, and we factor it as
 \begin{equation*}
 X \xrightarrow{e_f} \im(f) \xhookrightarrow{i_f} y(U).
 \end{equation*}
 Then by Lemma \ref{lem composition is local epi implies local epi}, $i_f$ is a $J$-local epimorphism. Thus by Lemma \ref{lem covering sieve iff local iso iff local epi}, $\im(f)$ is a $J$-covering sieve. 
 
 $(\Leftarrow)$ Conversely if $\im(f) \hookrightarrow y(U)$ is a $J$-covering sieve, then by Lemma \ref{lem covering sieve iff local iso iff local epi}, $i_f$ is a $J$-local epimorphism, and $e_f$ is an epimorphism, hence a $J$-local epimorphism, and thus by Lemma \ref{lem local epimorphisms closed under composition}, $f$ is a $J$-local epimorphism.
\end{proof}

\begin{Lemma} \label{lem local epi on groth cvg iff pullback is local epi}
Given a Grothendieck site $(\cat{C}, J)$, a map $f: X \to Y$ of presheaves is a $J$-local epimorphism if and only if for every section $s: y(U) \to Y$, the pullback $s^*(f) : y(U) \times_Y X \to y(U)$ is a $J$-local epimorphism.
\end{Lemma}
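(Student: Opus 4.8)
The quickest route is to observe that this statement is nothing more than the specialization of Lemma \ref{lem local epi iff pullback by section local epi} to the case $j = J$. A Grothendieck coverage is in particular a (sifted) coverage, as noted in Definition \ref{def Grothendieck coverage}, so the equivalence already established for an arbitrary site applies verbatim with $J$ in place of $j$. If one is content to cite the general result, the proof is immediate.

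Since we are now in the Grothendieck setting, however, it is worth recording a self-contained sieve-theoretic argument, as it shows how the image-sieve characterization of Lemma \ref{lem local epi on groth cvg iff covering sieve} interacts with pullbacks. For the $(\Leftarrow)$ direction I would fix a section $s: y(U) \to Y$, note that the pullback $s^*(f): y(U) \times_Y X \to y(U)$ maps into $y(U)$, and apply Lemma \ref{lem local epi on groth cvg iff covering sieve} to conclude that $\im(s^*(f)) \hookrightarrow y(U)$ is a covering sieve. Every map $g$ in this sieve lifts through $s^*(f)$, and composing such a lift with the projection to $X$ produces a section over the domain of $g$ that lifts through $f$ over $s$. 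This exhibits a covering sieve witnessing that $f$ is a $J$-local epimorphism.

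For the $(\Rightarrow)$ direction, assume $f$ is a $J$-local epimorphism and fix a section $s: y(U) \to Y$. The defining lifting property of $f$ at $s$ yields a covering sieve $R$ on $U$ together with, for each $r_i: U_i \to U$ in $R$, a section $s_i: y(U_i) \to X$ satisfying $f s_i = s r_i$. By the universal property of the pullback, each $r_i$ then factors through $s^*(f)$, so $R \subseteq \im(s^*(f))$. Since $R$ is a covering sieve, Lemma \ref{lem covering sieve props}(1) forces $\im(s^*(f))$ to be a covering sieve as well, whence $s^*(f)$ is a $J$-local epimorphism by Lemma \ref{lem local epi on groth cvg iff covering sieve}.

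The only mild subtlety, hardly an obstacle, is bookkeeping: one must consistently identify a map $g: V \to U$ viewed as an element of $y(U)(V)$ with the corresponding morphism $y(V) \to y(U)$, and verify that membership in $\im(s^*(f))$ is exactly the condition that $g$ lifts through $f$ after composing with $s$. Everything else is a direct appeal to the already-established dictionary between local epimorphisms and covering sieves.
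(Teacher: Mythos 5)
Your first paragraph is exactly the paper's proof: the statement is cited there as a special case of Lemma \ref{lem local epi iff pullback by section local epi}, so the one-line appeal to the general result is all that is needed.

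The supplementary sieve-theoretic argument you give is also correct and is a genuinely different (Grothendieck-specific) route: instead of pushing $j$-trees back and forth across the pullback square as in the proof of Lemma \ref{lem local epi iff pullback by section local epi}, you identify the relevant covering sieve explicitly as $\im(s^*(f))$ and use the dictionary of Lemma \ref{lem local epi on groth cvg iff covering sieve} together with the upward closure of covering sieves (Lemma \ref{lem covering sieve props}(1)). What this buys is a concrete witness -- the image sieve -- and it makes transparent why the Grothendieck axioms (G3)/sifted refinement closure are what drive the equivalence; what it costs is generality, since it only works once one has passed to a Grothendieck coverage, whereas the paper's general lemma holds for an arbitrary coverage. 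One small point of bookkeeping in your $(\Rightarrow)$ direction: the definition of $J$-local epimorphism produces a $J$-tree rather than a covering sieve, so you should invoke Lemma \ref{lem local epi on comp closed coverage} (Grothendieck coverages are composition closed by Lemma \ref{lem grothendieck coverages are composition closed}) to replace the tree by a covering family, and then siftedness to take it to be a sieve; this is implicit in your phrasing but worth making explicit.
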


\begin{proof}
This is a special case of Lemma \ref{lem local epi iff pullback by section local epi}.
\end{proof}

\subsection{Comparison of Saturated and Grothendieck Coverages}
In this section we compare saturated coverages and Grothendieck coverages, showing that for any category $\cat{C}$, there is an isomorphism between the corresponding posets of saturated and Grothendieck coverages. This leads to Corollary \ref{cor grothendieck closure preserves sheaves} showing that to every coverage $j$ there exists a unique smallest Grothendieck coverage containing $\overline{j}$ and with the same sheaves as $j$.

\begin{Lemma} \label{lem sifted closure of saturated coverage is Grothendieck}
Let $j$ be a saturated coverage on a category $\cat{C}$. Then $\overline{j}$ is a Grothendieck coverage.
\end{Lemma}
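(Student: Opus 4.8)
The plan is to deduce this immediately from the two results preceding it. By Lemma \ref{lem sifted closure of saturated is sifted saturated}, since $j$ is saturated, its sifted closure $\overline{j}$ is a sifted-saturated coverage. By Proposition \ref{prop grothendieck coverages are precisely sifted saturated coverages}, the sifted-saturated coverages on $\cat{C}$ are \emph{precisely} the Grothendieck coverages on $\cat{C}$. Combining these two facts, $\overline{j}$ is a Grothendieck coverage, and no further work is required. So I expect the proof to be essentially a two-line citation.

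Should one prefer a more self-contained argument that does not route through the notion of a sifted-saturated coverage, I would instead verify the axioms (G1)--(G3) of Definition \ref{def Grothendieck coverage} directly. First, $\overline{j}$ is sifted by construction (Definition \ref{def sifted closure}), so it is at least the right kind of object. For (G1), the coverage axiom gives $(1_U) \in j(U)$, and the sieve it generates is $\overline{(1_U)} = y(U)$, so $y(U) \in \overline{j}(U)$. For (G2), take $R = \overline{r} \in \overline{j}(U)$ with $r \in j(U)$ and a map $g : V \to U$; since $j$ is a coverage there is $t \in j(V)$ with $g_*(t) \leq r$, hence $g_*(\overline{t}) = \overline{g_*(t)} \subseteq R$, which by the adjunction $g_* \dashv g^*$ of Lemma \ref{lem adjunction on sieve posets} is equivalent to $\overline{t} \subseteq g^*(R)$. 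As $\overline{t}$ is a covering sieve and $\overline{j}$ is (sifted) refinement closed, $g^*(R)$ is then a covering sieve, giving (G2). Axiom (G3) follows in the same spirit: using that $\overline{j}$ is composition closed one forms the composite covering sieve $(R \circ R^*(R'))$, observes that it refines $R'$, and invokes refinement closure once more. This is exactly the content of the converse direction already carried out in the proof of Proposition \ref{prop grothendieck coverages are precisely sifted saturated coverages}.

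The only genuine obstacle in this lemma has already been absorbed into Lemma \ref{lem sifted closure of saturated is sifted saturated}, namely checking that $\overline{j}$ inherits composition closure from $j$ at the level of sieves (the step that rewrites $(R \circ T)$ as a composite $(r \circ y_r \circ t \circ y_t)$ and uses $y(W) \in j(W)$). Once that is in hand, the Grothendieck axioms are formal consequences of the sifted, refinement-closed, and composition-closed properties, so I would present the proof simply as the composite of Lemma \ref{lem sifted closure of saturated is sifted saturated} and Proposition \ref{prop grothendieck coverages are precisely sifted saturated coverages}.
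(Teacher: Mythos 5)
Your proof is correct and matches the paper's: the paper's own proof is exactly the one-line deduction from Lemma \ref{lem sifted closure of saturated is sifted saturated} (with Proposition \ref{prop grothendieck coverages are precisely sifted saturated coverages} supplying the identification of sifted-saturated coverages with Grothendieck coverages). Your additional direct verification of (G1)--(G3) is a fine sanity check but is not needed.
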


\begin{proof}
This follows from Lemma \ref{lem sifted closure of saturated is sifted saturated}.
\end{proof}

\begin{Def} \label{def interior coverage}
Suppose that $J$ is a sifted coverage on $\cat{C}$, let $J^\circ$ denote the collection of families on $\cat{C}$ where a family $r$ on $U$ belongs to $J^\circ(U)$ if $\overline{r} \in J(U)$. We call this the \textbf{interior coverage} of $J$.
\end{Def}

\begin{Lemma} \label{lem interior of Grothendieck coverage is saturated}
If $J$ is a Grothendieck coverage on a category $\cat{C}$, then $J^\circ$ is a saturated coverage.
\end{Lemma}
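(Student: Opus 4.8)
The plan is to check the three conditions defining a saturated coverage directly from the description of $J^\circ$: that $J^\circ$ is a coverage (Definition \ref{def coverage}), that it is refinement closed, and that it is composition closed. The key translation I will use throughout is that $r \in J^\circ(U)$ means exactly $\overline{r} \in J(U)$, together with Lemma \ref{lem refinement under sifted closure} (so $r \leq r'$ iff $\overline{r} \subseteq \overline{r'}$) and Lemma \ref{lem covering sieve props}(1) (a sieve containing a covering sieve is covering). First I would verify the two coverage axioms. For the identity axiom, $\overline{(1_U)} = y(U)$, which lies in $J(U)$ by (G1), so $(1_U) \in J^\circ(U)$. For the pullback axiom, given $r \in J^\circ(U)$ and $g : V \to U$, I set $t = g^*(\overline{r})$; this is already a sieve, so $\overline{t} = t \in J(V)$ by (G2), giving $t \in J^\circ(V)$. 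The counit of the adjunction of Lemma \ref{lem adjunction on sieve posets} gives $g_*(t) = g_* g^*(\overline{r}) \subseteq \overline{r}$, and since $\overline{r} \leq r$ always holds, transitivity yields $g_*(t) \leq r$, as required.

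Refinement closure is then immediate: if $r \in J^\circ(U)$ and $f : r \to t$ is a refinement, then $r \leq t$ forces $\overline{r} \subseteq \overline{t}$ by Lemma \ref{lem refinement under sifted closure}, and Lemma \ref{lem covering sieve props}(1) upgrades $\overline{r} \in J(U)$ to $\overline{t} \in J(U)$, i.e. $t \in J^\circ(U)$.

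The main work, and the step I expect to be the obstacle, is composition closure, where the difficulty is that the sieve composition of Lemma \ref{lem grothendieck coverages are composition closed} is indexed over \emph{all} maps of a covering sieve, whereas $(r \circ t)$ is assembled only from the chosen generators. Suppose $r = \{r_i : U_i \to U\} \in J^\circ(U)$ and $t^i \in J^\circ(U_i)$ for each $i$, so that $R \coloneqq \overline{r} \in J(U)$ and $T_i \coloneqq \overline{t^i} \in J(U_i)$. Using the axiom of choice, for each $f \in R$ I would fix a factorization $f = r_{i(f)} s_f$ through a generator and set $T_f \coloneqq s_f^*(T_{i(f)})$, which lies in $J(\text{dom}(f))$ by (G2). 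Lemma \ref{lem grothendieck coverages are composition closed} then makes $(R \circ T) = \bigcup_{f \in R} f_*(T_f)$ a covering sieve. A short diagram chase shows $(R \circ T) \subseteq \overline{(r \circ t)}$: a typical element $f h$ with $h \in T_f$ satisfies $s_f h \in T_{i(f)} = \overline{t^{i(f)}}$, so $s_f h$ factors through some $t^{i(f)}_j$, whence $f h = r_{i(f)} s_f h$ factors through $r_{i(f)} t^{i(f)}_j \in (r \circ t)$. Applying Lemma \ref{lem covering sieve props}(1) once more promotes $\overline{(r \circ t)} \in J(U)$, i.e. $(r \circ t) \in J^\circ(U)$. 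With all three conditions verified, $J^\circ$ is saturated.
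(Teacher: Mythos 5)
Your proof is correct and follows essentially the same route as the paper: refinement closure via Lemma \ref{lem refinement under sifted closure} and Lemma \ref{lem covering sieve props}(1), and composition closure by extending the chosen sieves $\overline{t^i}$ to a family indexed over all of $\overline{r}$ via pullback along a chosen factorization, applying Lemma \ref{lem grothendieck coverages are composition closed}, and then using upward closure of covering sieves. The only (harmless) difference is that you also verify the two coverage axioms explicitly, which the paper's proof leaves implicit.
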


\begin{proof}
Suppose that $r \in J^\circ(U)$ and $r \leq t$. Then $\overline{r} \in J(U)$ and $\overline{r} \subseteq \overline{t}$. Thus $\overline{t} \in J(U)$, so $t \in J^\circ(U)$. Thus $J^\circ$ is refinement closed. Now given a $J^\circ$-covering family $r = \{ r_i : U_i \to U \}_{i \in I}$ on $U$ and for each $i$ a $J^\circ$-covering family $t^i$ on $U_i$, then we want to show that $\overline{(r \circ t)} \in J(U)$. Now note that $\{t^i \}_{i \in I}$ is an $I$-indexed family of $J^\circ$-covering families. Let $\overline{I}$ denote the indexing set for $\overline{r}$. We wish to extend $\{ \overline{t^i} \}$ to a new set $\{S^j \}$ of $J^\circ$-covering sieves indexed by $j \in \overline{I}$. For $i \in I \subseteq \overline{I}$ let $S^i = \overline{t^i}$. Now if $g \in \overline{r}$, then $g$ factors as $g = r_k g_j$ for some morphism $g_j$ and some covering map $r_k$. Then set $S^j = g_j^*(\overline{t^k})$. This is a covering sieve since $J$ is a Grothendieck coverage. Thus $(\overline{r} \circ S)$ is a covering sieve on $U$ and furthermore $(\overline{r} \circ S) \subseteq \overline{(r \circ t)}$. Thus by Lemma \ref{lem covering sieve props}, $\overline{(r \circ t)}$ is a covering sieve. Thus $J^\circ$ is a saturated coverage.
\end{proof}

\begin{Def}
Given a category $\cat{C}$, let $\ncat{SatCvg}(\cat{C})$ denote the (large) poset of saturated coverages equipped with the $\subseteq$ relation. Similarly let $\ncat{GroCvg}(\cat{C})$ denote the (large) poset of Grothendieck coverages. The constructions $\overline{(-)}$ and $(-)^\circ$ defined above can easily be seen to define maps of posets.
\end{Def}

\begin{Prop}
The maps of (large) posets
\begin{equation}
\begin{tikzcd}
	{\ncat{SatCvg}} && {\ncat{GroCvg}}
	\arrow[""{name=0, anchor=center, inner sep=0}, "{{\overline{(-)}}}", curve={height=-18pt}, from=1-1, to=1-3]
	\arrow[""{name=1, anchor=center, inner sep=0}, "{{(-)^\circ}}", curve={height=-18pt}, from=1-3, to=1-1]
	\arrow["\dashv"{anchor=center, rotate=-90}, draw=none, from=0, to=1]
\end{tikzcd}
\end{equation}
form an isomorphism of posets.
\end{Prop}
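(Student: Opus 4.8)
The plan is to show that the two monotone maps $\overline{(-)}$ and $(-)^\circ$ are mutually inverse bijections. Since an order-preserving bijection between posets whose inverse is also order-preserving is an isomorphism of posets, and both maps are already known to be monotone, it suffices to verify the two round-trip identities $(\overline{j})^\circ = j$ for every saturated coverage $j$ and $\overline{(J^\circ)} = J$ for every Grothendieck coverage $J$. Note that $\overline{(-)}$ lands in $\ncat{GroCvg}$ by Lemma \ref{lem sifted closure of saturated coverage is Grothendieck} and $(-)^\circ$ lands in $\ncat{SatCvg}$ by Lemma \ref{lem interior of Grothendieck coverage is saturated}, so both composites are well-defined.

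For the identity $\overline{(J^\circ)} = J$, I would simply unwind the definitions: a family $R$ lies in $\overline{(J^\circ)}(U)$ precisely when $R = \overline{r}$ for some $r$ with $\overline{r} \in J(U)$. The inclusion $\overline{(J^\circ)} \subseteq J$ is then immediate, since $R = \overline{r} \in J(U)$. For the reverse inclusion, given $R \in J(U)$, I would use that $R$ is a sieve and hence equal to its own sifted closure $\overline{R} = R$; thus $\overline{R} \in J(U)$ witnesses $R \in J^\circ(U)$, and therefore $R = \overline{R} \in \overline{(J^\circ)}(U)$.

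For the identity $(\overline{j})^\circ = j$, unwinding Definition \ref{def sifted closure} and Definition \ref{def interior coverage} shows that $r \in (\overline{j})^\circ(U)$ if and only if $\overline{r} \in \overline{j}(U)$. The inclusion $j \subseteq (\overline{j})^\circ$ is immediate from the definition of the sifted closure. The reverse inclusion is the crux: suppose $\overline{r} \in \overline{j}(U)$, so that $\overline{r} = \overline{r'}$ for some genuine covering family $r' \in j(U)$. By Lemma \ref{lem refinement under sifted closure}, the inclusion $\overline{r'} \subseteq \overline{r}$ gives a refinement $r' \leq r$. Since $j$ is saturated it is in particular refinement closed (Definition \ref{def refinement closed coverage}), so $r' \in j(U)$ together with $r' \leq r$ forces $r \in j(U)$, as desired.

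The main obstacle is precisely this last step: it is where the full strength of saturation---specifically refinement closure---is needed, since without it one could have $\overline{r} \in \overline{j}(U)$ while $r$ itself fails to be a covering family. Everything else reduces to carefully unwinding the two constructions and invoking the idempotence $\overline{R} = R$ of sieves under sifted closure.
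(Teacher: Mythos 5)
Your proof is correct and follows essentially the same route as the paper: both establish the two round-trip identities $(\overline{j})^\circ = j$ and $\overline{J^\circ} = J$ by unwinding the definitions. The only (cosmetic) difference is in showing $(\overline{j})^\circ \subseteq j$, where you invoke refinement closure directly via Lemma \ref{lem refinement under sifted closure}, while the paper routes the same fact through Corollary \ref{cor saturated coverages contain their sifted closure} and Lemma \ref{lem saturating iff sifted closure is saturating}; both arguments are valid and rest on the same underlying use of saturation.
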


\begin{proof}
Let us show that $\overline{J^\circ} = J$ and $(\overline{j})^\circ = j$. Clearly $j \subseteq (\overline{j})^\circ$, so let us prove the converse. If $r \in (\overline{j})^\circ$, then $\overline{r} \in \overline{j}$, so by Collorary \ref{cor saturated coverages contain their sifted closure}, $\overline{r} \in j$. But $j$ is saturated, so by Lemma \ref{lem saturating iff sifted closure is saturating}, this implies that $r \in j$. Now for $J$ a Grothendieck coverage, it is clear that $\overline{J^\circ} \subseteq J$, so let us prove the converse. Suppose that $R \in J$ and $r$ is a family such that $\overline{r} = R$. Then $r \in J^\circ$, so $\overline{r} = R \in \overline{J^\circ}$. Thus $J \subseteq \overline{J^\circ}$.
\end{proof}

\begin{Def} \label{def Grothendieck closure of a coverage}
Given a coverage $j$ on a category $\cat{C}$, let 
$$\Gro{j} \coloneqq \overline{\sat{j}}.$$
We call this the \textbf{Grothendieck closure} of $j$. 
\end{Def}

\begin{Lemma} \label{lem sifted closure has same Grothendieck closure}
Given a coverage $j$ on a category $\cat{C}$, we have
\begin{equation}
    \Gro{j} = \Gro{\overline{j}}.
\end{equation}
Furthermore, $\Gro{j}$ is the smallest Grothendieck coverage containing $\overline{j}$.
\end{Lemma}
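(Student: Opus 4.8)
The plan is to reduce both claims to the single equality $\sat{j} = \sat{\overline{j}}$. Unwinding Definition \ref{def Grothendieck closure of a coverage} gives $\Gro{j} = \overline{\sat{j}}$ and, applied to the coverage $\overline{j}$, $\Gro{\overline{j}} = \overline{\sat{\overline{j}}}$; hence once $\sat{j} = \sat{\overline{j}}$ is established, applying the sifted-closure operation $\overline{(-)}$ to both sides immediately yields $\Gro{j} = \Gro{\overline{j}}$.

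To prove $\sat{j} = \sat{\overline{j}}$ I would argue by mutual inclusion, leaning on the fact that $\sat{(-)}$ returns the smallest saturated coverage containing its input (Proposition \ref{prop sat closure smallest saturated coverage}). For $\sat{\overline{j}} \subseteq \sat{j}$, note that Lemma \ref{lem saturating iff sifted closure is saturating} already records $\overline{j} \subseteq \sat{j}$; since $\sat{j}$ is then a saturated coverage containing $\overline{j}$, minimality of $\sat{\overline{j}}$ gives the inclusion. For the reverse it suffices to check $j \subseteq \sat{\overline{j}}$ and invoke minimality of $\sat{j}$. Given $r \in j(U)$, its generated sieve $\overline{r}$ lies in $\overline{j}(U)$ and is therefore an $\overline{j}$-covering family, hence $\overline{j}$-saturating; by the equivalence ``$r$ is saturating if and only if $\overline{r}$ is saturating'' of Lemma \ref{lem saturating iff sifted closure is saturating}, read for the site $(\cat{C}, \overline{j})$, the family $r$ is itself $\overline{j}$-saturating, i.e. $r \in \sat{\overline{j}}(U)$ by Definition \ref{def saturation closure}.

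For the ``furthermore'' clause I would first verify that $\Gro{j} = \overline{\sat{j}}$ is a Grothendieck coverage (Lemma \ref{lem sifted closure of saturated coverage is Grothendieck}) containing $\overline{j}$: any $R = \overline{r} \in \overline{j}(U)$ arises from some $r \in j(U) \subseteq \sat{j}(U)$, so $R \in \overline{\sat{j}}(U)$. For minimality, let $J$ be any Grothendieck coverage with $\overline{j} \subseteq J$ and pass to its interior coverage $J^\circ$, which is saturated by Lemma \ref{lem interior of Grothendieck coverage is saturated}. I would show $j \subseteq J^\circ$: if $r \in j(U)$ then $\overline{r} \in \overline{j}(U) \subseteq J(U)$, which is exactly the defining condition $r \in J^\circ(U)$ of Definition \ref{def interior coverage}. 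Minimality of $\sat{j}$ then forces $\sat{j} \subseteq J^\circ$, and applying the monotone operation $\overline{(-)}$ together with the identity $\overline{J^\circ} = J$ from the poset isomorphism established just above yields $\Gro{j} = \overline{\sat{j}} \subseteq \overline{J^\circ} = J$.

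The whole argument is essentially careful bookkeeping over results already proved; the only genuine subtlety, and the step I would watch most closely, is to apply the saturating-family equivalence of Lemma \ref{lem saturating iff sifted closure is saturating} relative to the coverage $\overline{j}$ rather than $j$, and more generally to keep the containment relation $\subseteq$ cleanly separated from the refinement relation $\leq$ throughout. I do not anticipate any genuinely hard step.
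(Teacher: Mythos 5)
Your proposal is correct and follows essentially the same route as the paper: both inclusions of $\Gro{j} = \Gro{\overline{j}}$ are obtained by comparing $\sat{j}$ and $\sat{\overline{j}}$ via $\overline{j} \subseteq \sat{j}$ on one side and $j \subseteq \sat{\overline{j}}$ (through $\overline{r} \in \overline{j}$) on the other, and the minimality claim is proved exactly as in the paper by passing to the interior coverage $J^\circ$ and using that it is saturated. The only difference is presentational — you isolate the equality $\sat{j} = \sat{\overline{j}}$ as an explicit intermediate step and spell out the containment $\overline{j} \subseteq \Gro{j}$, which the paper leaves implicit.
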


\begin{proof}
By Lemma \ref{lem saturating iff sifted closure is saturating}, we have that $\overline{j} \subseteq \sat{j}$, and therefore $\sat{\overline{j}} \subseteq \sat{j}$. Thus $\Gro{\overline{j}} = \overline{\sat{\overline{j}}} \subseteq \overline{\sat{j}} = \Gro{j}$. Conversely, it is easy to see that $j \subseteq \left(\overline{\sat{\overline{j}}}\right)^\circ = \sat{\overline{j}}$, since if $r \in j$, then $\overline{r} \in \overline{j}$ and therefore $r \in \sat{\overline{j}}$. Thus $\sat{j} \subseteq \sat{\overline{j}}$ and thus $\Gro{j} \subseteq \Gro{\overline{j}}$.

Now suppose that $J$ is a Grothendieck coverage such that $\overline{j} \subseteq J$. Then $j \subseteq (\overline{j})^\circ \subseteq J^\circ$, but $J^\circ$ is saturated by Lemma \ref{lem interior of Grothendieck coverage is saturated}, so $\sat{j} \subseteq J^\circ$. Thus $\Gro{j} \subseteq J$.
\end{proof}

\begin{Cor} \label{cor grothendieck closure preserves sheaves}
Given a coverage $j$ on a category $\cat{C}$, there exists a Grothendieck coverage $\Gro{j}$ containing $\overline{j}$ and such that $\Sh(\cat{C},j) = \Sh(\cat{C}, \Gro{j})$.
\end{Cor}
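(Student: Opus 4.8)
The plan is to simply chain together the sheaf-invariance results already proved, since by Definition \ref{def Grothendieck closure of a coverage} we have $\Gro{j} = \overline{\sat{j}}$, and each of the two closure operations composing it is known to leave the category of sheaves unchanged. So almost all of the work has already been front-loaded into the saturation closure and sifted closure lemmas, and this statement is genuinely a corollary.

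First I would dispatch the two easy assertions of the statement. That $\Gro{j}$ is in fact a Grothendieck coverage, and that it contains $\overline{j}$, is precisely the content of the final sentence of Lemma \ref{lem sifted closure has same Grothendieck closure}, so nothing new is required there; I would just cite it.

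The remaining point is the equality of sheaf categories, for which I would produce the chain
\begin{equation*}
\Sh(\cat{C}, j) = \Sh(\cat{C}, \sat{j}) = \Sh(\cat{C}, \overline{\sat{j}}) = \Sh(\cat{C}, \Gro{j}).
\end{equation*}
The first equality is Proposition \ref{prop sheaf iff sheaf on saturation closure}. The middle equality is the one requiring a moment's care: it is Corollary \ref{cor sheaves on sifted closure} applied \emph{not} to $j$ but to the coverage $\sat{j}$, since $\overline{\sat{j}}$ is by definition the sifted closure of $\sat{j}$. This is legitimate because $\sat{j}$ is itself a coverage (indeed the smallest saturated one containing $j$, by Proposition \ref{prop sat closure smallest saturated coverage}), and Corollary \ref{cor sheaves on sifted closure} holds for an arbitrary coverage. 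The final equality is just the definition $\Gro{j} = \overline{\sat{j}}$.

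The hard part, if there is one at all, is merely bookkeeping: keeping straight that the sifted closure is being taken of $\sat{j}$ rather than of $j$, so that the invariance corollary is invoked on the correct input. There is no genuine mathematical obstacle, as the substantive arguments live entirely in the earlier sections on composition, refinement, and sifted closures.
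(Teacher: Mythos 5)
Your proof is correct and is essentially the argument the paper intends: the corollary is stated immediately after Lemma \ref{lem sifted closure has same Grothendieck closure}, which supplies existence and containment of $\overline{j}$, and the sheaf equality follows from the chain $\Sh(\cat{C},j) = \Sh(\cat{C},\sat{j}) = \Sh(\cat{C},\overline{\sat{j}}) = \Sh(\cat{C},\Gro{j})$ via Proposition \ref{prop sheaf iff sheaf on saturation closure} and Corollary \ref{cor sheaves on sifted closure} applied to the coverage $\sat{j}$, exactly as you write. Your bookkeeping point — that the sifted-closure invariance must be invoked for $\sat{j}$ rather than for $j$ — is the only place where care is needed, and you handle it correctly.
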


\begin{Lemma} \label{lem local epi iff local epi on groth closure}
Given a site $(\cat{C}, j)$ a map $f : X \to Y$ of presheaves on $\cat{C}$ is a $j$-local epimorphism (mono/iso) if and only if it is a $\Gro{j}$-local epimorphism (mono/iso).
\end{Lemma}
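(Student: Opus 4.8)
The plan is to reduce the statement to Lemma \ref{lem local epi iff local epi on saturation closure} together with a comparison between local epimorphisms for the saturated coverage $\sat{j}$ and its sifted closure. By Lemma \ref{lem local epi iff local epi on saturation closure}, a map of presheaves is a $j$-local epimorphism (mono/iso) if and only if it is a $\sat{j}$-local epimorphism (mono/iso), and by Definition \ref{def Grothendieck closure of a coverage} we have $\Gro{j} = \overline{\sat{j}}$. Hence, writing $k \coloneqq \sat{j}$, it suffices to prove the following: a map $f : X \to Y$ is a $k$-local epimorphism if and only if it is an $\overline{k}$-local epimorphism (and then deduce the mono/iso cases).

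First I would record that both $k$ and $\overline{k}$ are composition closed: the coverage $k = \sat{j}$ is saturated, and $\overline{k} = \Gro{j}$ is a Grothendieck coverage by Lemma \ref{lem sifted closure of saturated coverage is Grothendieck}, hence composition closed by Lemma \ref{lem grothendieck coverages are composition closed}. This lets me apply Lemma \ref{lem local epi on comp closed coverage} to test the local-epimorphism condition against a single covering family rather than a whole $j$-tree: $f$ is a $k$-local epimorphism if and only if for every section $s : y(U) \to Y$ there is a covering family $r \in k(U)$ with $s_*(r) \leq f$, and the analogous characterization holds for $\overline{k}$.

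The heart of the argument is the observation that, for a fixed section $s : y(U) \to Y$, the collection of morphisms $g : V \to U$ such that the composite $y(V) \xrightarrow{y(g)} y(U) \xrightarrow{s} Y$ factors through $f$ is closed under precomposition, hence is a sieve on $U$. Consequently $s_*(r) \leq f$ if and only if $s_*(\overline{r}) \leq f$. For the forward direction, given $r \in k(U)$ with $s_*(r) \leq f$, the sieve $\overline{r}$ lies in $\overline{k}(U)$ by Definition \ref{def sifted closure} and satisfies $s_*(\overline{r}) \leq f$, so $f$ is an $\overline{k}$-local epimorphism. For the converse, every $\overline{k}$-covering sieve has the form $R = \overline{r}$ for some $r \in k(U)$; since $r \subseteq \overline{r} = R$, the condition $s_*(R) \leq f$ restricts to $s_*(r) \leq f$ with $r \in k(U)$, exhibiting $f$ as a $k$-local epimorphism.

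Finally, the mono and iso cases follow formally. A map $f$ is a $j$-local monomorphism precisely when its slice diagonal $\Delta_f$ is a $j$-local epimorphism, so applying the epimorphism equivalence to $\Delta_f$ yields the monomorphism equivalence, and a local isomorphism is by definition both a local mono and a local epi, so that case follows by combining the two. The only genuinely delicate point is the sieve-closure observation relating $s_*(r) \leq f$ to $s_*(\overline{r}) \leq f$; using composition closedness to reduce to single covering families (via Lemma \ref{lem local epi on comp closed coverage}) is exactly what lets me avoid manipulating entire $j$-trees and makes this the cleanest route.
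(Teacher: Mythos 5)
Your proof is correct and follows essentially the same route as the paper's: reduce to the saturated case via Lemma \ref{lem local epi iff local epi on saturation closure} and then compare $\sat{j}$ with its sifted closure $\Gro{j} = \overline{\sat{j}}$. In fact your version is more careful than the paper's rather terse argument, since you make explicit the key point that the maps $g$ with $s\circ y(g)$ factoring through $f$ form a sieve, which is exactly what justifies passing from $s_*(r) \leq f$ to $s_*(\overline{r}) \leq f$ in the forward direction.
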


\begin{proof}
By Lemma \ref{lem local epi iff local epi on saturation closure}, we can assume that $j$ is a saturated coverage. Thus it is sufficient to show that $f$ is a $j$-local epimorphism (mono/iso) if and only if it is a $\overline{j}$-local epimorphism (mono/iso).

$(\Rightarrow)$ If $f$ is a $j$-local epimorphism, then by Corollary \ref{cor saturated coverages contain their sifted closure}, $\overline{j} \subseteq j$, so $f$ is a $\overline{j}$-local epimorphism. This similarly implies that if $f$ is a $j$-local monomorphism or isomorphism, then it is a $\overline{j}$-local monomorphism or isomorphism.

$(\Leftarrow)$ Since every $j$-covering family is contained within a $\overline{j}$-covering family, if $f$ lifts against all the $\overline{j}$-covering families, then it lifts against all the $j$-covering families. Thus $f$ is a $j$-local epimorphism (mono/iso).
\end{proof}

\begin{Lemma} \label{lem meets in saturated coverages}
Let $j$ be a saturated coverage on a small category $\cat{C}$, let $U \in \cat{C}$ and let $r, r' \in j(U)$. If we consider $j(U)$ as a preorder, ordered by refinement, then the meet of $r$ and $r'$ exists and is given by
\begin{equation*}
    r \wedge r' = \overline{r} \cap \overline{r}'.
\end{equation*}
In particular, this implies that $\sat{j}(U)$ is a finitely cofiltered category (Definition \ref{def filtered category}).
\end{Lemma}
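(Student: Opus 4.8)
The plan is to identify $\overline{r} \cap \overline{r}'$ as the greatest lower bound of $r$ and $r'$ in the refinement preorder, using the dictionary between refinement and sieve containment from Lemma \ref{lem refinement under sifted closure} together with the fact that the sifted closure $\overline{j}$ is a Grothendieck coverage.

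First I would check that $\overline{r} \cap \overline{r}'$ is genuinely a covering family of $j$, so that it is a legitimate element of the poset $j(U)$. Since $j$ is saturated, Corollary \ref{cor saturated coverages contain their sifted closure} gives $\overline{j} \subseteq j$, and Lemma \ref{lem sifted closure of saturated coverage is Grothendieck} tells us $\overline{j}$ is a Grothendieck coverage. As $r, r' \in j(U)$, the sieves $\overline{r}, \overline{r}'$ lie in $\overline{j}(U)$ by the very definition of the sifted closure (Definition \ref{def sifted closure}); hence by the intersection property of Grothendieck covering sieves (Lemma \ref{lem covering sieve props}(2)) their intersection $\overline{r} \cap \overline{r}'$ is again a covering sieve of $\overline{j}$. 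Applying $\overline{j} \subseteq j$ once more yields $\overline{r} \cap \overline{r}' \in j(U)$.

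Next I would verify the defining properties of a meet, all through Lemma \ref{lem refinement under sifted closure}, which states $s \leq s'$ iff $\overline{s} \subseteq \overline{s'}$. Writing $m = \overline{r} \cap \overline{r}'$, which is a sieve so that $\overline{m} = m$, the inclusions $m \subseteq \overline{r}$ and $m \subseteq \overline{r}'$ give $m \leq r$ and $m \leq r'$, so $m$ is a lower bound. For the greatest lower bound property, suppose $t \in j(U)$ satisfies $t \leq r$ and $t \leq r'$; then $\overline{t} \subseteq \overline{r}$ and $\overline{t} \subseteq \overline{r}'$, whence $\overline{t} \subseteq \overline{r} \cap \overline{r}' = m = \overline{m}$, and Lemma \ref{lem refinement under sifted closure} returns $t \leq m$. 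This establishes $m = r \wedge r'$ in the refinement preorder.

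Finally, for the ``in particular'' claim I would invoke Corollary \ref{cor saturating iff covering}, which gives $\sat{j} = j$ since $j$ is saturated, so $\sat{j}(U) = j(U)$. This poset is nonempty, as $(1_U) \in j(U)$ by the coverage axiom (Definition \ref{def coverage}), and we have just shown it admits all binary meets; any two objects therefore possess a common lower bound, which is exactly the condition for $\sat{j}(U)$ to be finitely cofiltered (Definition \ref{def filtered category}). I expect the only point requiring care to be the membership $\overline{r} \cap \overline{r}' \in j(U)$, which is precisely where saturation enters, via the bridge to the Grothendieck coverage $\overline{j}$; every remaining step is a formal manipulation of the refinement--containment correspondence.
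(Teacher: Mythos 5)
Your proof is correct, but it reaches the one nontrivial step --- the fact that $\overline{r} \cap \overline{r}'$ is actually a member of $j(U)$ --- by a genuinely different route than the paper. The paper argues directly with local epimorphisms: it observes that $\overline{r} \cap \overline{r}'$ is the pullback of $\overline{r} \hookrightarrow y(U)$ along $\overline{r}' \hookrightarrow y(U)$, that both of these inclusions are $j$-local epimorphisms because $r, r'$ are covering and hence saturating (Corollary \ref{cor saturating iff covering}), and then uses pullback-stability and closure under composition of local epimorphisms to conclude that $\overline{r} \cap \overline{r}' \hookrightarrow y(U)$ is a $j$-local epimorphism, i.e.\ that the intersection is $j$-saturating and hence $j$-covering. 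You instead pass through the dictionary with Grothendieck coverages: $\overline{r}, \overline{r}' \in \overline{j}(U)$ by Definition \ref{def sifted closure}, $\overline{j}$ is a Grothendieck coverage by Lemma \ref{lem sifted closure of saturated coverage is Grothendieck}, intersections of covering sieves are covering by Lemma \ref{lem covering sieve props}, and $\overline{j} \subseteq j$ by Corollary \ref{cor saturated coverages contain their sifted closure}. Both arguments are valid and rest only on material established earlier in the text; yours has the mild advantage of avoiding the paper's forward reference to Lemma \ref{lem j-local epis form a system of local epis} from the later section on systems of local epimorphisms, while the paper's version has the advantage of never leaving the language of local epimorphisms, which is the technology actually reused in the surrounding results (e.g.\ the plus construction). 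The verification that $\overline{r} \cap \overline{r}'$ is the greatest lower bound via Lemma \ref{lem refinement under sifted closure}, and the deduction that $\sat{j}(U) = j(U)$ is finitely cofiltered, coincide with what the paper does.
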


\begin{proof}
First we note that $\overline{r} \cap \overline{r}'$ is $j$-saturating because the following diagram
\begin{equation*}
    \begin{tikzcd}
	{\overline{r} \cap \overline{r}'} & {\overline{r}} \\
	{\overline{r}'} & {y(U)}
	\arrow[hook, from=1-1, to=1-2]
	\arrow[hook, from=1-1, to=2-1]
	\arrow[hook, from=1-2, to=2-2]
	\arrow[hook, from=2-1, to=2-2]
\end{tikzcd}
\end{equation*}
is a pullback, and both $\overline{r} \hookrightarrow y(U)$ and $\overline{r}' \hookrightarrow y(U)$ are $j$-local epimorphisms since $r, r' \in j(U)$ and thus are $j$-saturating by Corollary \ref{cor saturating iff covering}. Hence $\overline{r} \cap \overline{r}' \hookrightarrow \overline{r}'$ is a $j$-local epimorphism, and thus the composite $\overline{r} \cap \overline{r}' \hookrightarrow y(U)$ is a $j$-local epimorphism by Lemma \ref{lem j-local epis form a system of local epis}. 

Now $\overline{\overline{r} \cap \overline{r}'} = \overline{r} \cap \overline{r}' \subseteq \overline{r}$. Thus by Lemma \ref{lem refinement under sifted closure} we have $\overline{r} \cap \overline{r}' \leq r$, and similarly for $r'$. Now if $t \leq r$ and $t \leq r'$, then $\overline{t} \subseteq \overline{r} \cap \overline{r}' = \overline{\overline{r} \cap \overline{r}'}$. Thus again by Lemma \ref{lem refinement under sifted closure}, we have $t \leq \overline{r} \cap \overline{r}'$.
\end{proof}

\begin{Rem}
Given a saturated coverage $j$, for every $U \in \cat{C}$,  $j(U)$ is a preorder under refinement, and we can consider $\overline{j}(U)$ as a poset under inclusion. If we let $j(U)_\sim$ denote the quotient of $j(U)$ by $r \sim r'$ if $r \leq r'$ and $r' \leq r$, then we get an isomorphism of posets
\begin{equation*}
    \begin{tikzcd}
	{j(U)_{\sim}} && {\overline{j}(U)}
	\arrow["{\overline{(-)}}", curve={height=-12pt}, from=1-1, to=1-3]
	\arrow["{\iota}", curve={height=-12pt}, from=1-3, to=1-1]
\end{tikzcd}
\end{equation*}
where $\iota$ is just inclusion. Basically this is just the fact that $r \leq \overline{r}$ and $\overline{r} \leq r$.
\end{Rem}

The following result is very important for Section \ref{section sheafification and lex localizations}. It says in the language of Appendix \ref{section localizations}, that $j$-sheaves are $W$-local, where $W$ is the class of $j$-local isomorphisms.

\begin{Prop} \label{prop sheaves are local iso local}
Let $(\cat{C}, J)$ be a Grothendieck coverage, $Z$ a $J$-sheaf, and let $f: X \to Y$ be a $J$-local isomorphism. If $g: X \to Z$ is a map of presheaves, then there exists a unique map $h : Y \to Z$ such that $hf = g$.
\end{Prop}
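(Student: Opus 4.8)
The plan is to show that precomposition with $f$ induces a bijection
$$f^* : \Pre(\cat{C})(Y, Z) \to \Pre(\cat{C})(X, Z),$$
whose injectivity gives uniqueness of $h$ and whose surjectivity gives its existence; recall that $f$ being a $J$-local isomorphism means it is both a $J$-local epimorphism and a $J$-local monomorphism, and I will use each half separately. Since $J$ is a Grothendieck coverage it is composition closed and sifted, so by Lemma \ref{lem local epi on comp closed coverage} the local epimorphism $f$ admits, for every section $y \in Y(U)$, a covering sieve $R$ on $U$ together with lifts $s_r \in X(V)$ satisfying $f(s_r) = Y(r)(y)$ for each $r : V \to U$ in $R$. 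For uniqueness, suppose $hf = h'f$ and fix $y \in Y(U)$ with such $R$ and lifts; then for every $r \in R$ we have $Z(r)(h(y)) = h(f(s_r)) = (hf)(s_r) = (h'f)(s_r) = Z(r)(h'(y))$, so $h(y)$ and $h'(y)$ have equal restrictions along $R$. As $Z$ is separated on $R$ (Definition \ref{def separated and sheaf}) this forces $h(y) = h'(y)$, hence $h = h'$.

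For existence I would build $h$ sectionwise. Fix $y \in Y(U)$, choose a covering sieve $R$ and lifts $\{s_r\}_{r \in R}$ as above, and set $z_r := g(s_r) \in Z(V)$ for each $r : V \to U$ in $R$. The goal is to amalgamate $\{z_r\}$ to a single section $h(y) \in Z(U)$; since $R$ is a sieve, by Lemma \ref{lem convenience of using sieves for matching families} it is enough to verify the matching condition $Z(k)(z_r) = z_{rk}$ for every $k : W \to V$.

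This matching condition is the crux of the argument and the one place where the local \emph{monomorphism} hypothesis is indispensable. Both $X(k)(s_r)$ and $s_{rk}$ lie in $X(W)$ and satisfy $f(X(k)(s_r)) = Y(rk)(y) = f(s_{rk})$; they need not be equal, but because $f$ is a $J$-local monomorphism, Lemma \ref{lem characterization of local mono} supplies a covering sieve $S$ on $W$ along which they agree. Applying $g$ together with naturality, $Z(w)(Z(k)(z_r)) = Z(w)(z_{rk})$ for every $w \in S$, and since $Z$ is separated on $S$ we conclude $Z(k)(z_r) = z_{rk}$. Thus $\{z_r\}$ is a matching family for $Z$ over $R$, and as $Z$ is a $J$-sheaf it has a unique amalgamation, which I take to be $h(y)$.

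It then remains to check that $h(y)$ is independent of the chosen sieve and lifts, that $h$ is natural, and that $hf = g$; each follows the same ``agree along a cover, then invoke separatedness'' pattern. For independence, two choices $R, R'$ are compared on the covering sieve $R \cap R'$ (Lemma \ref{lem covering sieve props}), where the candidate restrictions coincide by the same local-monomorphism argument. For naturality with respect to $m : U' \to U$, the pullback sieve $m^*(R)$ is covering by (G2) and its lifts arise by restricting the $s_r$, yielding $Z(m)(h(y)) = h(Y(m)(y))$ after comparing restrictions and using separatedness. Finally, computing $h(f(x))$ for $x \in X(V)$ via the maximal covering sieve $y(V)$ with lift $s_{1_V} = x$ gives $h(f(x)) = g(x)$, so $hf = g$. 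The main obstacle throughout is precisely the matching-family verification, which is exactly where the assumption that $f$ is a local isomorphism, rather than merely a local epimorphism, is used.
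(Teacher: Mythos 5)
Your proposal is correct and follows essentially the same route as the paper's proof: define $h$ sectionwise by amalgamating $\{g(s_r)\}$ over a covering sieve of local lifts, use the local-monomorphism hypothesis together with the separatedness of $Z$ to verify the matching condition, and then check independence of choices, naturality, and $hf = g$ by the same pattern. The only differences are cosmetic — you phrase the matching condition in the sieve form $Z(k)(z_r) = z_{rk}$ rather than via intersection squares, and you make the uniqueness argument explicit where the paper simply appeals to uniqueness of amalgamations.
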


\begin{proof}
Suppose that $s : y(U) \to Y$ is a section. Then since $J$ is composition closed, there exists a $J$-covering sieve $R \hookrightarrow y(U)$ such that for every $r_i : U_i \to U$ in $R$ there is a map $s_i : y(U_i) \to X$ making the following diagram commute
\begin{equation*}
\begin{tikzcd}[ampersand replacement=\&]
	{y(U_i)} \& X \& Z \\
	{y(U)} \& Y
	\arrow["{s_i}", from=1-1, to=1-2]
	\arrow["{r_i}"', from=1-1, to=2-1]
	\arrow["g", from=1-2, to=1-3]
	\arrow["f", from=1-2, to=2-2]
	\arrow["s"', from=2-1, to=2-2]
\end{tikzcd}
\end{equation*}
Now let us show that $\{ g(s_i) \in Z(U_i) \}$ is a $Z$-matching family. First note that if we have a commutative diagram
\begin{equation*}
    \begin{tikzcd}
	U_{ij} & {U_j} \\
	{U_i} & U
	\arrow["{\pi_j}", from=1-1, to=1-2]
	\arrow["{\pi_i}"', from=1-1, to=2-1]
	\arrow["{r_j}", from=1-2, to=2-2]
	\arrow["{r_i}"', from=2-1, to=2-2]
\end{tikzcd}
\end{equation*}
then we obtain sections $X(\pi_i)(s_i), X(\pi_j)(s_j) \in X(U_{ij})$ such that 
$$f(X(\pi_i)(s_i)) = Y(\pi_i)(f(s_i)) = Y(\pi_i)Y(r_i)(s) = Y(\pi_j)Y(r_j)(s) = Y(\pi_j)(f(s_j)) = f(X(\pi_j)(s_j)).$$ 
Now since $f$ is a local monomorphism, there exists a $J$-covering sieve $T^{ij}$ on $U_{ij}$ such that if $t_k : V_k \to U_{ij} \in T^{ij}$, then $X(t_k)X(\pi_i)(s_i) = X(t_k)X(\pi_j)(s_j)$. Thus $\{ g(X(t_k)X(\pi_i)(s_i)) = g(X(t_k)X(\pi_j)(s_j) \}_k$ is a $Z$-matching family on $T^{ij}$. But both $g(X(\pi_i)(s_i)) = Z(\pi_i)(g(s_i))$ and $g(X(\pi_j)(s_j)) = Z(\pi_j)(g(s_j))$ are amalgamations on $Z(U_{ij})$. But $Z$ is a $J$-sheaf, and hence has unique amalgamations, so $Z(\pi_i)(g(s_i)) = Z(\pi_j)(g(s_j))$. Thus $\{g(s_i) \}$ is a $Z$-matching family for $R$, so there exists a unique amalgamation $h(s) \in Z(U)$ such that $Z(r_i)(h(s)) = g(s_i)$. Let $h : Y(U) \to Z(U)$ be defined objectwise by $s \mapsto h(s)$. 

Now let us show that the above construction didn't depend on the choice of covering families. Suppose that there is another covering sieve $R' = \{r'_{i'} : U_{i'} \to U \}_{i' \in I'}$ on $U$ with corresponding sections $s'_{i'} \in X(U_{i'})$. Then as above, we obtain two $Z$-matching families $\{g(s_i) \}_{i \in I}$ and $\{g(s'_{i'}) \}_{i' \in I'}$ on $R$ and $R'$ respectively. Now since $J$ is Grothendieck coverage, by Lemma \ref{lem covering sieve props}, the sieve $R \cap R'$ is a $J$-covering sieve. 

Thus we have two $Z$-matching families $\{g(s_\ell) \}_{\ell \in I \cap I'}$, $\{ g(s'_{\ell}) \}_{\ell \in I \cap I'}$ on $R \cap R'$. Furthermore $f(s_\ell) = Y(r_\ell)(s) = f(s'_\ell)$ for every $r_\ell \in R \cap R'$. Thus since $f$ is a local mono, there exists a $J$-covering sieve $T^\ell$ on $U_\ell$ such that for every $t^\ell_k : V^\ell_k \to U_\ell \in T^\ell$, we have
$X(t^\ell_k)(s_\ell) = X(t^\ell_k)(s'_\ell).$ Thus $Z(t^\ell_k)(g(s_\ell)) = Z(t^\ell_k)(g(s'_\ell))$. Therefore $\{ Z(t^\ell_k)(g(s_\ell)) = Z(t^\ell_k)(g(s'_\ell)) \}_k$ is a $Z$-matching family for $T^\ell$, with amalgamations $g(s_\ell)$ and $g(s'_\ell)$. Since $Z$ is a $J$-sheaf, amalgamations on $T^\ell$ are unique, so this implies that $g(s_\ell) = g(s'_\ell)$ for every $\ell \in I \cap I'$.

Now if $h(s)$ is an amalgamation for $\{g(s_i) \}_{i \in I}$ and $h'(s)$ is an amalgamation for $\{g(s'_{i'}) \}_{i' \in I'}$, then both $h(s)$ and $h'(s)$ are amalgamations for $\{g(s_\ell) = g(s'_\ell) \}_{\ell \in I \cap I'}$. But $Z$ is a $J$-sheaf, and therefore amalgamations for $R \cap R'$ are unique, therefore $h(s) = h'(s)$. Thus the assignment $s \mapsto h(s)$ doesn't depend on the chosen covering families.

Let us show that the assignment $h_U : Y(U)\to Z(U)$ extends to a natural transformation $h : Y \to Z$. If $a : V \to U$ is a morphism in $\cat{C}$, then since $J$ is a coverage, there exists a covering sieve $T = \{t_j : V_j \to V \}_{j \in J}$ an index map $\alpha : J \to I$ and maps $\ell_j : V_j \to U_\alpha(j)$. Taking $\{Z(\ell_j)(g(s_{\alpha(j)}))\} = \{g(s_{\alpha(j)} \ell_j) \}$, we get a $Z$-matching family on $T$, and hence get the unique amalgamation $h(sa)$. But $Z(t_j)Z(a)(h(s)) = Z(a t_j)(h(s)) = Z(\ell_j)Z(r_{\alpha(j)})(h(s)) = Z(\ell_j)g(s_{\alpha(j)})$. Thus $Z(a)(h(s))$ is also an amalgamation, and hence $h(Y(a)(s)) = h(sa) = Z(a)(h(s))$.

Now let us show that $hf = g$. If $s \in Y(U)$ is a section such that $s = f(x)$ for some $x \in X(U)$, then $\{ X(a)(x) \}_{a \in y_U}$ is an $X$-matching family for the maximal covering sieve $y_U = \{a : V \to U\}$, such that $f(X(a)(x)) = Y(a)(f(x)) = Y(a)(s)$. Thus $\{ g(X(a)(x)) = Z(a)(g(x)) \}$ is a $Z$-matching family for $y_U$, with unique amalgamation $g(x)$ and thus $h(s) = h(f(x)) = g(x)$.

Uniqueness of $h$ follows from uniqueness of amalgamations.
\end{proof}

\begin{Cor} \label{cor sheaves are local iso local even without saturation}
The hypothesis of Proposition \ref{prop sheaves are local iso local} holds for an arbitrary site $(\cat{C}, j)$, not just Grothendieck sites.
\end{Cor}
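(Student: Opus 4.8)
The plan is to avoid any direct computation and instead reduce the statement to the already-established Grothendieck case, namely Proposition \ref{prop sheaves are local iso local}, by passing from the arbitrary coverage $j$ to its Grothendieck closure $\Gro{j}$. All of the genuine work has already been carried out: in proving Proposition \ref{prop sheaves are local iso local} for Grothendieck sites, and in establishing that neither the notion of sheaf nor the notion of local isomorphism changes under Grothendieck closure. So this corollary should be a clean three-step transfer.

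First I would recall that $\Gro{j} = \overline{\sat{j}}$ by Definition \ref{def Grothendieck closure of a coverage}, and verify that this is genuinely a Grothendieck coverage: the saturation closure $\sat{j}$ is a saturated coverage by Proposition \ref{prop sat closure smallest saturated coverage}, and the sifted closure of a saturated coverage is a Grothendieck coverage by Lemma \ref{lem sifted closure of saturated coverage is Grothendieck}. Hence $(\cat{C}, \Gro{j})$ is a Grothendieck site, which is exactly the setting in which Proposition \ref{prop sheaves are local iso local} applies.

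Next I would transport the two hypotheses from $j$ to $\Gro{j}$. The sheaf hypothesis transfers via Corollary \ref{cor grothendieck closure preserves sheaves}: since $\Sh(\cat{C}, j) = \Sh(\cat{C}, \Gro{j})$, the presheaf $Z$ is a $\Gro{j}$-sheaf. The local-isomorphism hypothesis transfers via Lemma \ref{lem local epi iff local epi on groth closure}: a map of presheaves is a $j$-local isomorphism if and only if it is a $\Gro{j}$-local isomorphism, so $f : X \to Y$ is a $\Gro{j}$-local isomorphism. With both hypotheses now phrased relative to the Grothendieck coverage $\Gro{j}$, I would simply invoke Proposition \ref{prop sheaves are local iso local} to produce the unique $h : Y \to Z$ with $hf = g$, and note that uniqueness of $h$ as a map into $Z$ is itself coverage-independent, so it is precisely the uniqueness asserted for the original site.

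I do not expect a real obstacle here, since the hard analysis lives in Proposition \ref{prop sheaves are local iso local} and in the invariance lemmas. The only points requiring care are confirming that $\Gro{j}$ really does satisfy the Grothendieck axioms (handled by Lemma \ref{lem sifted closure of saturated coverage is Grothendieck}) and that the two reductions—of sheaves by Corollary \ref{cor grothendieck closure preserves sheaves} and of local isomorphisms by Lemma \ref{lem local epi iff local epi on groth closure}—apply verbatim to the objects $Z$ and $f$ at hand.
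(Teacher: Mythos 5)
Your proposal is correct and is essentially the paper's own argument: the paper's proof simply cites Lemma \ref{lem local epi iff local epi on groth closure} (with the invariance of sheaves under Grothendieck closure, Corollary \ref{cor grothendieck closure preserves sheaves}, implicit), and you have spelled out exactly that reduction to Proposition \ref{prop sheaves are local iso local} via $\Gro{j}$. No gaps; your version is just a more explicit writeup of the same transfer.
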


\begin{proof}
This follows from Lemma \ref{lem local epi iff local epi on groth closure}.
\end{proof}

\section{Sheafification and Lex Localizations} \label{section sheafification and lex localizations}

In this section, we use the theory of localizations as in Appendix \ref{section localizations} to study Grothendieck toposes. The main goal for this section is Theorem \ref{th lex localizations <-> grothendieck toposes}, also called the ``Little Giraud Theorem'', which characterizes Grothendieck toposes by lex localizations of presheaf topoi. This is well-trodden ground, see \cite[Corollary 2.1.11]{johnstone2002sketches}. However, we diverge from the usual way of proving this using Lawvere-Tierney topologies and instead use systems of local epimorphisms and isomorphisms.

\subsection{Systems of Local Epimorphisms}

\begin{Def}[{\cite[Section 16.1]{kashiwara2006}}] \label{def system of local epis}
Given a presheaf topos $\Pre(\cat{C})$ a \textbf{system of local epimorphisms} consists of a class $E = \{f : X \to Y \}$ of morphisms of presheaves such that:
\begin{enumerate}
    \item if $f: X \to Y$ is an epi, then $f \in E$,
    \item if $f : X \to Y$ is in $E$ and $g: Y \to Z$ is in $E$, then $gf \in E$,
    \item if $gf$ as above is in $E$, then $g \in E$,
    \item an arbitrary morphism $f: X \to Y$ is in $E$ if and only if for every section $s: y(U) \to Y$ the pullback $s^*(f) : y(U)\times_Y X \to X$ is in $E$.
\end{enumerate}
If $E$ is a system of local epimorphisms on $\Pre(\cat{C})$ and $f \in E$ we call $f$ an $E$-local epimorphism or $E$-local epi for short. We will use the statements $f \in E$ and $f$ is an $E$-local epi interchangeably.
\end{Def}

\begin{Lemma} \label{lem alternate axiom for system of local epi}
The axiom (4) of Definition \ref{def system of local epis} is equivalent to the following (4'): a map $f: X \to Y$ of presheaves is an $E$-local epi if and only if for every arbitrary map $g: Z \to Y$, the pullback map $g^*(f) : Z \times_Y X \to Z$ is an $E$-local epi. 
\end{Lemma}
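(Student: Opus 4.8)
The plan is to show that the two biconditionals (4) and (4') single out the same class $E$, holding axioms (1)--(3) fixed. The engine throughout is the pasting law for pullbacks: for any $g : Z \to Y$ and any $s : y(U) \to Z$ one has $s^*(g^*(f)) \cong (gs)^*(f)$, and crucially $gs : y(U) \to Y$ is again a section. I will prove the two implications (4)$\Rightarrow$(4') and (4')$\Rightarrow$(4) separately.

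For (4)$\Rightarrow$(4'), the only content is the forward inclusion $f \in E \Rightarrow g^*(f) \in E$ for every $g : Z \to Y$; the reverse half of (4') is immediate by taking $g = 1_Y$. So assume $f \in E$ and fix $g : Z \to Y$. To see $g^*(f) \in E$ I test it on sections using (4): for any section $s : y(U) \to Z$ we have $s^*(g^*(f)) \cong (gs)^*(f)$, and since $gs$ is a section and $f \in E$, the forward half of (4) gives $(gs)^*(f) \in E$. As this holds for all such $s$, (4) applied to the map $g^*(f)$ yields $g^*(f) \in E$, which is exactly (4').

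For (4')$\Rightarrow$(4), the forward half $f \in E \Rightarrow s^*(f) \in E$ for sections $s$ is immediate from (4'), since sections are maps. The substance is the converse: assuming $s^*(f) \in E$ for every section $s : y(U) \to Y$, I must deduce $f \in E$. Here I would pass to the canonical epimorphism $p : \coprod_{s} y(U_s) \twoheadrightarrow Y$ indexed by all sections of $Y$, which lies in $E$ by axiom (1). In the pullback square of $f$ along $p$ one has $p \circ p^*(f) = f \circ \tilde p$, where $\tilde p$ is the projection to $X$; so once $p^*(f) \in E$, closure under composition (2) gives $f \tilde p \in E$, and then cancellation (3) gives $f \in E$.

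The hard part will be the step $p^*(f) \in E$. Because a map out of a representable into a coproduct factors through a single summand, extensivity of $\Pre(\cat{C})$ identifies $p^*(f)$ with the coproduct $\coprod_{s} s^*(f)$ of the section-pullbacks, each of which lies in $E$ by hypothesis. Thus the whole converse reduces to closure of $E$ under coproducts of its members, and this descent step --- reducing testing against arbitrary maps to testing against sections --- is exactly where the generation of the presheaf topos by representables is indispensable; it is the one point that does not follow from the pasting law alone and must be argued with care.
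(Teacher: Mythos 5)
Your direction $(4)\Rightarrow(4')$ is correct and complete, and it is the same pasting-law argument the paper uses: test $g^*(f)$ against sections via $s^*(g^*(f)) \cong (gs)^*(f)$, invoke the converse half of $(4)$ to conclude $g^*(f) \in E$, and dispose of the other half of $(4')$ with $g = 1_Y$. The problem is the direction $(4')\Rightarrow(4)$. You have correctly isolated the crux --- after pulling back along the canonical epimorphism $p \colon \coprod_s y(U_s) \twoheadrightarrow Y$, everything reduces to showing $p^*(f) \cong \coprod_s s^*(f)$ lies in $E$, i.e.\ to closure of $E$ under coproducts of its members --- but you leave that step open, and it cannot be closed: closure under infinite coproducts does \emph{not} follow from axioms (1)--(3) together with $(4')$.

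Concretely, let $\cat{C}$ be a countably infinite discrete category, so $\Pre(\cat{C}) \cong \prod_{i \in \N} \ncat{Set}$, and let $E$ consist of those maps $(f_i)_i$ such that $f_i$ is surjective for all but finitely many $i$. Since everything in this topos is computed componentwise, $E$ contains all epimorphisms, is closed under composition, satisfies right-cancellation, and is stable under arbitrary pullback, so it satisfies (1), (2), (3) and $(4')$. But the unique map $f$ from the initial presheaf $\varnothing$ to the terminal presheaf $1$ is not in $E$ (no component is surjective), while every section pullback $s^*(f) \colon \varnothing \to y(i)$ fails to be surjective in exactly one component and hence lies in $E$; so $(4)$ fails. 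Equivalently, each $s^*(f)$ is in $E$ but their coproduct --- which is $f$ itself, since $\coprod_i y(i) \cong 1$ here --- is not. So $(4')\Rightarrow(4)$ is false in the presence of (1)--(3), and your instinct that the descent step ``must be argued with care'' is pointing at a genuine obstruction rather than a technicality. Note that the paper's own proof stumbles at exactly the same place: the sentence ``since $s$ was arbitrary, this proves that $g^*(f)$ is in $E$'' silently invokes the section-detection property that is being proved, which is why no amount of care with the pasting law will recover the missing step.
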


\begin{proof}
$(4' \Rightarrow 4)$ If $f : X \to Y$ is an $E$-local epi, then for every section $s: y(U) \to Y$, the pullback morphism $s^*(f)$ is an $E$-local epi. Now conversely suppose that every $s^*(f)$ is in $E$. We want to show that $f$ is in $E$. Let $g: Z \to Y$ be a map of presheaves, and suppose that $s : y(U) \to Z$ is a section. Consider the following diagram of pullback squares
\begin{equation*}
    \begin{tikzcd}
	{y(U)\times_Z (Z \times_Y X)} & {Z \times_Y X} & X \\
	{y(U)} & Z & Y
	\arrow[from=1-1, to=1-2]
	\arrow["{s^*(g^*(f))}"', from=1-1, to=2-1]
	\arrow["\lrcorner"{anchor=center, pos=0.125}, draw=none, from=1-1, to=2-2]
	\arrow[from=1-2, to=1-3]
	\arrow["{g^*(f)}"', from=1-2, to=2-2]
	\arrow["\lrcorner"{anchor=center, pos=0.125}, draw=none, from=1-2, to=2-3]
	\arrow["f", from=1-3, to=2-3]
	\arrow["s"', from=2-1, to=2-2]
	\arrow["g"', from=2-2, to=2-3]
\end{tikzcd}
\end{equation*}
since both squares are pullbacks, the outer square is a pullback. Thus by assumption $s^*(g^*(f))$ is an $E$-local epi. But since $s$ was arbitrary, this proves that $g^*(f)$ is in $E$. But since $g$ was arbitrary, this proves that $f$ is in $E$.
$(4 \Rightarrow 4')$ If $f$ is in $E$, then by a similar argument as above, for every map $g: Z \to Y$, $g^*(f)$ is in $E$. Conversely if every $g^*(f)$ is in $E$, then taking $g$ to be the identity shows that $f$ is in $E$.
\end{proof}

\begin{Lemma} \label{lem j-local epis form a system of local epis}
Given a coverage $j$ on $\cat{C}$, the class of $j$-local epimorphims forms a system of local epimorphisms.
\end{Lemma}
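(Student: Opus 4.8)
The plan is to verify directly the four axioms of Definition \ref{def system of local epis} for the class $E$ of $j$-local epimorphisms of presheaves on $\cat{C}$; each of the four conditions has already been isolated as a separate result earlier in the section, so the proof amounts to assembling them in the right order.

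First I would dispatch axiom (1), that every epimorphism of presheaves lies in $E$, which is recorded in Remark \ref{rem epis are local epis}. Concretely, an epimorphism of presheaves is objectwise surjective, so given a section $s : y(U) \to Y$ one may lift $s$ through $f$ on the nose; taking the trivial $j$-tree with $T^\circ = (1_U)$, which is a covering family by the first axiom of a coverage, exhibits $f$ as a $j$-local epimorphism. Next, axiom (2), closure under composition, is exactly Lemma \ref{lem local epimorphisms closed under composition}, and axiom (3), that $g \in E$ whenever $gf \in E$, is exactly Lemma \ref{lem composition is local epi implies local epi}. Finally, axiom (4), that $f \in E$ if and only if $s^*(f)$ lies in $E$ for every section $s : y(U) \to Y$, is precisely the content of Lemma \ref{lem local epi iff pullback by section local epi}.

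Since all four ingredients are already in hand, there is no genuine obstacle remaining in this lemma itself; the work has been front-loaded into the preceding results. If one were to point to the conceptual heart of the matter, it lies in the two inputs that required real effort: the composability of $j$-trees (Lemma \ref{lem j-trees closed under composition}), which drives axiom (2) via Lemma \ref{lem local epimorphisms closed under composition}, and the ability to pull back a $j$-tree along an arbitrary morphism (Lemma \ref{lem can pullback j-trees}), which underlies the nontrivial direction of axiom (4) through Lemma \ref{lem local epi iff pullback by section local epi}. Everything else is bookkeeping, so I would simply present the four verifications as a short enumerated list of citations rather than reproving anything.
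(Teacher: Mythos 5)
Your proof is correct and takes essentially the same route as the paper: axioms (1)--(3) are handled by the same citations, and for axiom (4) the paper merely re-derives inline the content of Lemma \ref{lem local epi iff pullback by section local epi}, which you cite directly. (One minor aside in your commentary: the forward direction of that lemma uses only the universal property of the pullback, not Lemma \ref{lem can pullback j-trees}, but this does not affect the argument.)
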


\begin{proof}
(1) is clear, (2) follows from Lemma \ref{lem local epimorphisms closed under composition}, and (3) follows from Lemma \ref{lem composition is local epi implies local epi}. Now suppose that $f : X \to Y$ is a map of presheaves and for all sections $s: y(U) \to Y$ the pullback $s^*(f) : y(U) \times_Y X \to y(U)$ is a $j$-local epimorphism. We want to show that $f$ is a $j$-local epimorphism. So suppose that $s : y(U) \to Y$ is a section. Then $s^*(f)$ is a $j$-local epimorphism, so if we consider the identity map $1_{y(U)} : y(U) \to y(U)$, then there exists a $j$-tree $T$ on $U$ such $T^\circ \leq s^*(f)$, which implies that $s_*(T^\circ) \leq f$. Since this holds for any section $s$, this shows that $f$ is a $j$-local epimorphism. Thus (4).$(\Leftarrow)$ holds.

Conversely, suppose that $f$ is a local epi. We want to show that for every $s: y(U) \to Y$, the map $s^*(f)$ is a local epi. Suppose that $s' : y(V) \to y(U)$ is a section, then since $f$ is a local epi, there exists a $j$-tree $T$ on $V$ and a refinement $g : (ss')_*(T^\circ) \to f$ as in the following commutative diagram
\begin{equation}
    \begin{tikzcd}
	{y(V_i)} & {y(U)\times_YX} & X \\
	{y(V)} & {y(U)} & Y
	\arrow["f", from=1-3, to=2-3]
	\arrow["s"', from=2-2, to=2-3]
	\arrow[from=1-2, to=1-3]
	\arrow["{s^*(f)}", from=1-2, to=2-2]
	\arrow["{s'}"', from=2-1, to=2-2]
	\arrow["{t_i}"', from=1-1, to=2-1]
	\arrow["{g_i}", curve={height=-18pt}, from=1-1, to=1-3]
	\arrow[dashed, from=1-1, to=1-2]
\end{tikzcd}
\end{equation}
By the universal property of the pullback, there is a unique dotted map as above making the diagram commute, and therefore defining a refinement $(s')_*(T^\circ) \leq s^*(f)$ as we wanted to show, thus proving (4).$(\Rightarrow)$. 
\end{proof}

\begin{Def}
Given a system $E$ of local epimorphisms on $\Pre(\cat{C})$, let $j(E)$ denote the collection of families on $\cat{C}$ where for every $U \in \cat{C}$, a family $r \in j(E)(U)$ if and only if $\overline{r} \hookrightarrow y(U)$ is an $E$-local epimorphism.
\end{Def}

\begin{Lemma} \label{lem system of local epis give saturated coverage}
Given a system $E$ of local epimorphims on $\Pre(\cat{C})$, the collection $j(E)$ of families on $\cat{C}$ is a saturated coverage.
\end{Lemma}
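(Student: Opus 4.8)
The plan is to verify the three defining properties in turn: that $j(E)$ is a coverage (Definition \ref{def coverage}), that it is refinement closed (Definition \ref{def refinement closed coverage}), and that it is composition closed (Definition \ref{def composition closed coverage}). Throughout I would exploit the closure axioms of $E$, especially the pullback axiom in the form (4$'$) of Lemma \ref{lem alternate axiom for system of local epi}.

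First, the coverage axioms. For the identity family, $\overline{(1_U)} = y(U)$ and the identity $y(U) \to y(U)$ is an epimorphism, hence lies in $E$ by axiom (1); so $(1_U) \in j(E)(U)$. For the stability axiom, suppose $r \in j(E)(U)$ and $g : V \to U$. I would take $t \coloneqq g^*(r)$, which is a sieve on $V$ (see the footnote to Definition \ref{def pushforward of a family}), so that $\overline{t} = g^*(r) = g^*(\overline{r})$. By the lemma identifying $g^*(\overline{r})$ with the pullback of $\overline{r} \hookrightarrow y(U)$ along $g : y(V) \to y(U)$, and since $\overline{r} \hookrightarrow y(U)$ lies in $E$, axiom (4$'$) of Lemma \ref{lem alternate axiom for system of local epi} gives that $g^*(\overline{r}) \hookrightarrow y(V)$ lies in $E$; hence $t \in j(E)(V)$. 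By construction every $f \in t$ has $gf$ factoring through some $r_i$, so $g_*(t) \leq r$.

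Next, refinement closure. If $r \in j(E)(U)$ and $f : r \to t$ is a refinement, then $\overline{r} \subseteq \overline{t}$ by Lemma \ref{lem refinement under sifted closure}, and the composite of inclusions $\overline{r} \hookrightarrow \overline{t} \hookrightarrow y(U)$ is just $\overline{r} \hookrightarrow y(U)$, which is in $E$. Axiom (3) then forces the second factor $\overline{t} \hookrightarrow y(U)$ into $E$, so $t \in j(E)(U)$.

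The main work is composition closure, and I expect this to be the principal obstacle. The key auxiliary fact I would prove first is that for a family $r = \{r_i : U_i \to U\}$ we have $r \in j(E)(U)$ if and only if the induced map $\sum_i y(U_i) \to y(U)$ lies in $E$: by Proposition \ref{prop sieves are coequalizers of generating family} this map factors as an epimorphism $\sum_i y(U_i) \twoheadrightarrow \overline{r}$ followed by $\overline{r} \hookrightarrow y(U)$, so axioms (2) and (3) give the equivalence. I would also need that $E$ is closed under coproducts of maps: given $E$-local epis $\{\psi_i : A_i \to B_i\}$, any section $s : y(W) \to \sum_i B_i$ factors through a single summand $B_{i_0}$ because representables are connected, and disjointness of coproducts in $\Pre(\cat{C})$ identifies the pullback of $\sum_i \psi_i$ along $s$ with a pullback of $\psi_{i_0}$, whence axiom (4) yields $\sum_i \psi_i \in E$. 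With these in hand, given $r = \{r_i : U_i \to U\} \in j(E)(U)$ and $t^i = \{t^i_j : V_{ij} \to U_i\} \in j(E)(U_i)$ for each $i$, the composite
\[
\sum_{i,j} y(V_{ij}) \xrightarrow{\;\sum_i \left(\sum_j y(t^i_j)\right)\;} \sum_i y(U_i) \xrightarrow{\;\sum_i y(r_i)\;} y(U)
\]
is a composite of two $E$-local epimorphisms, hence lies in $E$; on the $(i,j)$ summand it is $y(r_i t^i_j)$, so it is exactly the map induced by the family $(r \circ t) = \bigcup_i (r_i)_*(t^i)$. By the auxiliary equivalence, $(r \circ t) \in j(E)(U)$, establishing composition closure and completing the proof that $j(E)$ is a saturated coverage.
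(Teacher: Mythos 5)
Your proof is correct, and the coverage and refinement-closure steps coincide with the paper's. Where you genuinely diverge is the composition-closure step. The paper shows that $\bigcup_i \overline{t^i} \hookrightarrow \sum_i y(U_i)$ is an $E$-local epimorphism by pulling back along sections $y(V) \to \sum_i y(U_i)$ (which factor through a single summand by Yoneda), composes with $r : \sum_i y(U_i) \to y(U)$, and then identifies $\overline{(r \circ t)}$ as the \emph{image} of this composite, invoking axiom (3) of Definition \ref{def system of local epis} to conclude. You instead isolate two reusable facts: the equivalence ``$r \in j(E)(U)$ if and only if $\sum_i y(U_i) \to y(U)$ lies in $E$'' (the paper proves only the forward direction explicitly, inside the argument), and closure of $E$ under coproducts of maps (via connectedness of representables and disjointness of coproducts in $\Pre(\cat{C})$ --- the same ingredients the paper uses, but packaged as a standalone lemma). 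With these, composition closure reduces to composing two coproduct maps of representables, with no mention of the union of sieves $\bigcup_i \overline{t^i}$ or of image factorizations. The underlying facts are identical, but your organization is arguably cleaner and the two auxiliary lemmas are independently useful (the first is essentially the $E$-analogue of Lemma \ref{lem family is saturating iff sum of maps is local epi}); the paper's route has the minor advantage of directly exhibiting the sieve $\overline{(r \circ t)}$ and its relation to $\bigcup_i (r_i)_*(\overline{t^i})$, which it reuses elsewhere.
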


\begin{proof}
Let us show $j(E)$ is a coverage. Suppose that $r \in j(E)(U)$ for some $U \in \cat{C}$, and $g : V \to U$ is a morphism in $\cat{C}$. Then $g^*(\overline{r}) \in j(E)(V)$, since $E$ is a system of local epimorphisms. But $g^*(\overline{r}) = \overline{g^*(r)}$, so $g^*(r) \in j(E)(V)$ and $g_* g^*(r) \subseteq r$. Thus $j(E)$ is a coverage.

Clearly $j(E)$ contains all identity families, so let us show that $j(E)$ is refinement closed. Suppose that $r \in j(E)(U)$, so that $\overline{r} \hookrightarrow y(U)$ is an $E$-local epimorphism, and there exists a refinement $r \leq t$. We want to show that $\overline{t} \hookrightarrow y(U)$ is an $E$-local epi. Now $\overline{r} \subseteq \overline{t}$, so we have the following commmutative diagram
\begin{equation*}
   \begin{tikzcd}
	& {\overline{t}} \\
	{\overline{r}} && {y(U)}
	\arrow[hook, from=2-1, to=2-3]
	\arrow[hook, from=2-1, to=1-2]
	\arrow[hook, from=1-2, to=2-3]
\end{tikzcd} 
\end{equation*}
and thus $\overline{t} \hookrightarrow y(U)$ is an $E$-local epi by Definition \ref{def system of local epis}.(3).

Now let us show that $j(E)$ is composition closed. Let $r = \{r_i : U_i \to U \}_{i \in I} \in j(E)(U)$ and for each $i \in I$, suppose that $t^i \in j(E)(U_i)$. We want to show that $(r \circ t) = \bigcup_i (r_i)_*(t^i) \in j(E)(U)$. First let us prove that $r : \sum_i y(U_i) \to y(U)$ is an $E$-local epimorphism. By Proposition \ref{prop sieves are coequalizers of generating family} we have the following commutative diagram
\begin{equation*}
    \begin{tikzcd}
	{\sum_{i,j} y(U_i) \times_{y(U)} y(U_j)} & {\sum_i y(U_i)} & {\overline{r}} \\
	&& {y(U)}
	\arrow["\pi", from=1-2, to=1-3]
	\arrow[shift right=2, from=1-1, to=1-2]
	\arrow[shift left=2, from=1-1, to=1-2]
	\arrow["r"', from=1-2, to=2-3]
	\arrow[hook, from=1-3, to=2-3]
\end{tikzcd}
\end{equation*}
where $\pi$ is a coequalizer, and hence an epimorphism. Thus by Defintion \ref{def system of local epis}.(1), $\pi$ is an $E$-local epi, and $\overline{r} \hookrightarrow y(U)$ is an $E$-local epi by assumption. Thus by Definition \ref{def system of local epis}.(2), $r$ is an $E$-local epi. Now note that the following commutative diagram is a pullback
\begin{equation*}
    \begin{tikzcd}
	{\overline{t^i}} & {\bigcup_i \overline{t^i}} \\
	{y(U_i)} & {\sum_i y(U_i)}
	\arrow[hook, from=1-1, to=1-2]
	\arrow[hook, from=1-1, to=2-1]
	\arrow[hook, from=1-2, to=2-2]
	\arrow[hook, from=2-1, to=2-2]
\end{tikzcd}
\end{equation*}
Thus if $s : y(V) \to \sum_i y(U_i)$ is a section, then by the Yoneda lemma, $s$ factors through some $y(U_i)$ and taking the pullback we obtain the following pair of pullback diagrams
\begin{equation*}
\begin{tikzcd}
	{s^*(\overline{t^i})} & {\overline{t^i}} & {\bigcup_i \overline{t^i}} \\
	{y(V)} & {y(U_i)} & {\sum_i y(U_i)}
	\arrow[from=1-1, to=1-2]
	\arrow[hook, from=1-1, to=2-1]
	\arrow["\lrcorner"{anchor=center, pos=0.125}, draw=none, from=1-1, to=2-2]
	\arrow[hook, from=1-2, to=1-3]
	\arrow[hook, from=1-2, to=2-2]
	\arrow["\lrcorner"{anchor=center, pos=0.125}, draw=none, from=1-2, to=2-3]
	\arrow[hook, from=1-3, to=2-3]
	\arrow["s"', from=2-1, to=2-2]
	\arrow[hook, from=2-2, to=2-3]
\end{tikzcd}
\end{equation*}
and since $\overline{t^i} \hookrightarrow y(U_i)$ is an $E$-local epi by assumption, $s^*(\overline{t^i}) \hookrightarrow y(V)$ is an $E$-local epi. But the outer rectangle is also a pullback, and since $s$ was arbitrary, this implies that $\bigcup_i \overline{t^i} \hookrightarrow \sum_i y(U_i)$ is an $E$-local epi.

Now $\overline{(r \circ t)} = \bigcup_i (r_i)_*(\overline{t^i})$ is precisely the image of the map $\bigcup_i \overline{t^i} \to \sum_i y(U_i) \to y(U)$, so we have the following commutative diagram
\begin{equation*}
\begin{tikzcd}
	{\bigcup_i \overline{t^i}} & {\sum_i y(U_i)} \\
	{\overline{(r \circ t)}} & {y(U)}
	\arrow[hook, from=1-1, to=1-2]
	\arrow[two heads, from=1-1, to=2-1]
	\arrow["r", from=1-2, to=2-2]
	\arrow[hook, from=2-1, to=2-2]
\end{tikzcd}
\end{equation*}
But then the composite map $\bigcup_i \overline{t^i} \to y(U)$ is an $E$-local epi, and thus by (3), $\overline{(r \circ t)} \hookrightarrow y(U)$ is an $E$-local epi. Thus $j(E)$ is composition closed, and therefore saturated.
\end{proof}

\begin{Prop} \label{prop bij between saturated coverages and systems of local epis}
Given an (essentially small) category $\cat{C}$, there is a bijection 
\begin{equation*}
    \{ \text{saturated coverages on }\cat{C} \} \cong \{ \text{systems of local epis on }\Pre(\cat{C}) \}.
\end{equation*}
\end{Prop}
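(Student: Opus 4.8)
The plan is to exhibit the two maps and show they are mutually inverse. To a saturated coverage $j$ assign the class $E(j)$ of $j$-local epimorphisms, which is a system of local epimorphisms by Lemma \ref{lem j-local epis form a system of local epis}; to a system $E$ assign the collection $j(E)$, which is a saturated coverage by Lemma \ref{lem system of local epis give saturated coverage}. It then remains only to verify $j(E(j)) = j$ and $E(j(E)) = E$. The first of these is immediate: unwinding the definition, $r \in j(E(j))(U)$ means precisely that $\overline{r} \hookrightarrow y(U)$ is a $j$-local epimorphism, which by Definition \ref{def saturating family} says exactly that $r$ is $j$-saturating. Since $j$ is saturated, Corollary \ref{cor saturating iff covering} gives that $r$ is saturating if and only if $r \in j(U)$, so $j(E(j)) = j$.

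The substantive part is the identity $E(j(E)) = E$, i.e. showing that a map $f : X \to Y$ of presheaves is a $j(E)$-local epimorphism if and only if $f \in E$. For the implication $f \in E \Rightarrow f \text{ is a } j(E)\text{-local epi}$, I would fix a section $s : y(U) \to Y$, so that the pullback $g \coloneqq s^*(f) : y(U) \times_Y X \to y(U)$ lies in $E$ by axiom (4) of Definition \ref{def system of local epis}. Factoring $g$ through its image presheaf as $y(U) \times_Y X \twoheadrightarrow R \hookrightarrow y(U)$, where $R = \im(g)$ is a sieve on $U$, the first factor is an epimorphism, hence in $E$, so axiom (3) forces the inclusion $R \hookrightarrow y(U)$ into $E$; that is, $R \in j(E)(U)$. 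Since every map in $R$ lifts through $g$ by definition of the image, we get $s_*(R) \leq f$, and as $j(E)$ is composition closed, Lemma \ref{lem local epi on comp closed coverage} concludes that $f$ is a $j(E)$-local epimorphism.

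For the converse, suppose $f$ is a $j(E)$-local epimorphism. Given a section $s : y(U) \to Y$, choose a $j(E)$-covering family $r = \{r_i : U_i \to U\}$ with $s_*(r) \leq f$, which is equivalent to each $r_i$ lifting through $g = s^*(f)$; the lifts assemble into a map $\phi : \sum_i y(U_i) \to y(U) \times_Y X$ with $g\phi = \sum_i r_i$. Because $r \in j(E)(U)$ the inclusion $\overline{r} \hookrightarrow y(U)$ is in $E$, and the coequalizer projection $\sum_i y(U_i) \to \overline{r}$ of Proposition \ref{prop sieves are coequalizers of generating family} is an epimorphism, so axiom (2) gives $\sum_i r_i \in E$; then axiom (3) applied to $g\phi$ yields $g = s^*(f) \in E$. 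As $s$ was arbitrary, axiom (4) gives $f \in E$, completing the proof. The main obstacle is precisely this identity $E(j(E)) = E$: it requires correctly translating the refinement condition $s_*(r) \leq f$ into liftings through the pullback $s^*(f)$, and then orchestrating axioms (2)–(4) of a system of local epimorphisms together with the coequalizer and image presentations of $\overline{r}$ to pass between $f$, its section-pullbacks, and the generating sieves.
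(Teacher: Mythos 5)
Your proposal is correct and follows essentially the same route as the paper's proof: both directions of $E(j(E))=E$ are handled exactly as in the text, using axiom (4) plus the image factorization of $s^*(f)$ for one implication, and the factorization of the covering sieve through $s^*(f)$ plus axiom (3) for the other, while $j(E(j))=j$ reduces to Corollary \ref{cor saturating iff covering}. The only cosmetic difference is that in the converse you route the argument through $\sum_i y(U_i)$ and the coequalizer projection of Proposition \ref{prop sieves are coequalizers of generating family}, where the paper applies axiom (3) directly to the inclusion $\overline{r}\hookrightarrow y(U)$ factored through $s^*(f)$.
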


\begin{proof}
In Lemma \ref{lem j-local epis form a system of local epis}, we gave a map
\begin{equation*}
    \varphi : \{ \text{coverages on }\cat{C} \} \to \{ \text{systems of local epis on }\Pre(\cat{C}) \},
\end{equation*}
where $\varphi(j)$ is the system of local epimorphisms given by the class of $j$-local epimorphisms.
Conversely, in Lemma \ref{lem system of local epis give saturated coverage}, we gave a map 
\begin{equation*}
    \psi: \{ \text{systems of local epis on }\Pre(\cat{C}) \} \to \{ \text{saturated coverages on }\cat{C} \},
\end{equation*}
where $\psi(E)$ is the coverage $j(E)$. It is easy to see that if $j$ is a coverage, then $\psi \varphi (j)$ is precisely $\sat{j}$. We claim that if $E$ is a system of local epimorphisms, then $\varphi \psi(E) = E$. So suppose that $f : X \to Y$ belongs to $\varphi \psi(E)$. Thus $f$ is a $j(E)$-local epimorphism. Since $j(E)$ is saturated, by Lemma \ref{lem local epi on comp closed coverage}, this implies that for every section $s: y(U) \to Y$ there exists a $j(E)$-covering family $r$ on $U$ such that $s_*(r) \leq f$, and furthermore $s_*(\overline{r}) \leq f$. Now $r$ being a $j(E)$-covering family means that $\overline{r} \hookrightarrow y(U)$ belongs to $E$. Now $s_*(\overline{r}) \leq f$ implies that $\overline{r} \leq s^*(f)$. But this means that $\overline{r} \hookrightarrow y(U)$ factors through $s^*(f)$, and hence by Definition \ref{def system of local epis}.(3), $s^*(f) \in E$. Since this is true for every section $s$, this implies that $f \in E$. Conversely if $f: X \to Y$ is in $E$, then we want to show that for every $s: y(U) \to Y$, there exists a $j(E)$-covering family $r$ such that $s_*(f) \leq f$. But for every such $s: y(U) \to Y$, we know that $s^*(f)$ belongs to $E$. Taking the image of $s^*(f)$ we obtain a monomorphism $\im(s^*(f)) \hookrightarrow y(U)$, and by Definition \ref{def system of local epis}.(3), this belongs to $E$. Thus $r = \bigcup_{V \in \cat{C}} \im(s^*(f))(V)$ is a $j(E)$-covering family. If $r_i : y(U_i) \to y(U)$ belongs to $\im(s^*(f))(U_i)$, then by definition it factors through $s^*(f)$, and therefore $s_*(r_i) \leq f$. Therefore $s_*(r) \leq f$, so $f$ is a $j(E)$-local epimorphism. Thus $\varphi \psi(E) = E$. Therefore if we restrict $\varphi$ to saturated coverages, then we obtain a bijection.
\end{proof}

\subsection{Systems of Local Isomorphisms}

\begin{Def} \label{def 2-of-3 property}
Let $W$ be a class of morphisms in a category $\cat{C}$. We say that $W$ satisfies the \textbf{2-of-3 property} if given any pair of composeable morphisms $f: X \to Y$ and $g: Y \to Z$ if any two of the morphisms in the set $\{f, g, gf \}$ belongs to $W$, then all three of them belong to $W$.
\end{Def}

\begin{Def} \label{def system of local isos}
Let $\Pre(\cat{C})$ be a presheaf topos, and let $W$ be a class of morphisms of presheaves. We say that $W$ is a \textbf{system of local isomorphisms} if 
\begin{enumerate}
    \item every isomorphism belongs to $W$,
    \item $W$ satisfies the 2-of-3 property,
    \item a map $f: X \to Y$ belongs to $W$ if and only if for every section $s : y(U) \to Y$, the pullback morphism $s^*(f) : y(U) \times_Y X \to y(U)$ is in $W$.
\end{enumerate}
If $W$ is a system of local isomorphisms, $f: X \to Y$ is in $W$ we say that $f \in W$ or $f$ is a $W$-local isomorphism or $f$ is a $W$-local iso, interchangeably.
\end{Def}

\begin{Lemma} \label{lem alternate axiom for system of local iso}
The axiom (3) of Definition \ref{def system of local isos} is equivalent to the following (3'): a map $f: X \to Y$ of presheaves is in $W$ if and only if for every arbitrary map $g: Z \to Y$, the pullback map $g^*(f) : Z \times_Y X \to Z$ is in $W$. 
\end{Lemma}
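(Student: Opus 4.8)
The plan is to follow the proof of Lemma \ref{lem alternate axiom for system of local epi} essentially verbatim, replacing the phrase ``$E$-local epi'' by ``$W$-local iso'' throughout. The crucial observation is that the equivalence of (3) and (3') is a purely formal fact about pullbacks: it uses only the pasting lemma for pullbacks \cite{bauer2012pastinglaw} together with the statements of (3) and (3') themselves, and makes no use of the isomorphism-specific axioms (1) and (2) of Definition \ref{def system of local isos}. The key identity is that for a map $g : Z \to Y$ and a section $s : y(U) \to Z$, pasting the two pullback squares computing $s^*(g^*(f))$ shows that the outer rectangle is a pullback, so $s^*(g^*(f)) \cong (gs)^*(f)$, where $gs : y(U) \to Y$ is again a section.

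For the implication $(3') \Rightarrow (3)$, I would first note that if $f \in W$ then applying (3') with $g = s$ immediately gives $s^*(f) \in W$ for every section $s$. For the converse, I would assume $s^*(f) \in W$ for every section $s : y(U) \to Y$ and show $f \in W$. Given an arbitrary map $g : Z \to Y$ and any section $s : y(U) \to Z$, the identity above gives $s^*(g^*(f)) \cong (gs)^*(f)$, which lies in $W$ by hypothesis since $gs$ is a section into $Y$. As $s$ ranges over all sections of $Z$, axiom (3') forces $g^*(f) \in W$; since $g$ was arbitrary, specializing to $g = 1_Y$ (so that $g^*(f) \cong f$) yields $f \in W$.

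For the implication $(3) \Rightarrow (3')$, I would argue that if $f \in W$ then $g^*(f) \in W$ for every $g : Z \to Y$: by (3) it suffices to check that $s^*(g^*(f)) \cong (gs)^*(f) \in W$ for every section $s$ of $Z$, and this holds by (3) applied to $f \in W$ since $gs$ is a section into $Y$. The reverse direction of (3') is trivial, as taking $g = 1_Y$ recovers $f$ up to isomorphism.

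I do not anticipate any real obstacle, since the argument is entirely diagrammatic and parallels the already-established local-epimorphism case. The only point requiring a little care is the repeated appeal to the pasting lemma to identify the iterated pullback $s^*(g^*(f))$ with $(gs)^*(f)$, together with the observation that a map followed by a section of its codomain is itself a section, so that the section-based characterization (3) can legitimately be invoked at each step.
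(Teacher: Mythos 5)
Your proposal reproduces the paper's proof essentially verbatim: the paper's argument for this lemma is itself a word-for-word copy of the proof of Lemma \ref{lem alternate axiom for system of local epi} with ``$E$-local epi'' replaced by ``in $W$,'' resting on exactly the pasting-lemma identification $s^*(g^*(f)) \cong (gs)^*(f)$ that you describe. Your $(3) \Rightarrow (3')$ direction is complete and correct: to see $g^*(f) \in W$ you verify that all of its section-pullbacks are $(gs)^*(f)$ with $gs$ a section of $Y$, apply the forward half of $(3)$ to $f$, and then legitimately invoke the backward (detection) half of $(3)$ for $g^*(f)$, both halves being available by hypothesis.

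There is, however, a genuine gap in the $(3') \Rightarrow (3)$ direction, and you have inherited it faithfully from the paper rather than introduced it. The problematic step is ``as $s$ ranges over all sections of $Z$, axiom $(3')$ forces $g^*(f) \in W$.'' The backward implication of $(3')$ requires $h^*(g^*(f)) \in W$ for \emph{every} map $h : A \to Z$, whereas you have only verified it for sections $h = s : y(U) \to Z$; passing from ``all section-pullbacks lie in $W$'' to ``lies in $W$'' is precisely the backward implication of $(3)$, i.e.\ the statement being proved. Since you ultimately specialize to $g = 1_Y$, the detour through arbitrary $g$ collapses and the argument amounts to restating the hypothesis as the conclusion. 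Note also that the equivalence cannot hold for a bare class of morphisms: $W = \varnothing$ satisfies $(3')$ vacuously but fails the detection half of $(3)$ for the identity of the initial presheaf, which has no sections into its codomain. So the detection half genuinely needs input beyond $(3')$ --- either the remaining axioms of Definition \ref{def system of local isos}, or the fact that in the intended examples $W$ is the class of $j$-local isomorphisms, for which detection by sections is Lemma \ref{lem local epi iff pullback by section local epi} together with its analogue for local monomorphisms. As a reproduction of the paper your proposal is accurate; as a proof it shares the paper's circularity at this one step.
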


\begin{proof}
$(3' \Rightarrow 3)$ If $f : X \to Y$ is in $W$, then for every section $s: y(U) \to Y$, the pullback morphism $s^*(f)$ is in $W$. Now conversely suppose that every $s^*(f)$ is in $W$. We want to show that $f$ is in $W$. Let $g: Z \to Y$ be a map of presheaves, and suppose that $s : y(U) \to Z$ is a section. Consider the following diagram of pullback squares
\begin{equation*}
    \begin{tikzcd}
	{y(U)\times_Z (Z \times_Y X)} & {Z \times_Y X} & X \\
	{y(U)} & Z & Y
	\arrow[from=1-1, to=1-2]
	\arrow["{s^*(g^*(f))}"', from=1-1, to=2-1]
	\arrow["\lrcorner"{anchor=center, pos=0.125}, draw=none, from=1-1, to=2-2]
	\arrow[from=1-2, to=1-3]
	\arrow["{g^*(f)}"', from=1-2, to=2-2]
	\arrow["\lrcorner"{anchor=center, pos=0.125}, draw=none, from=1-2, to=2-3]
	\arrow["f", from=1-3, to=2-3]
	\arrow["s"', from=2-1, to=2-2]
	\arrow["g"', from=2-2, to=2-3]
\end{tikzcd}
\end{equation*}
since both squares are pullbacks, the outer square is a pullback. Thus by assumption $s^*(g^*(f))$ is in $W$. But since $s$ was arbitrary, this proves that $g^*(f)$ is in $W$. But since $g$ was arbitrary, this proves that $f$ is in $W$.
$(3 \Rightarrow 3')$ If $f$ is in $W$, then by a similar argument as above, for every map $g: Z \to Y$, $g^*(f)$ is in $W$. Conversely if every $g^*(f)$ is in $W$, then taking $g$ to be the identity shows that $f$ is in $W$.
\end{proof}

\begin{Def}
Suppose that $W$ is a system of local isomorphisms. We say that a map $f: X \to Y$ is a $W$-local epimorphism if its image $\im(f) \hookrightarrow Y$ is a $W$-local isomorphism. We say that $f$ is a $W$-local mono if $\Delta_f : X \to X \times_Y X$ is a $W$-local epi. Note that a sieve $R \hookrightarrow y(U)$ is a $W$-local isomorphism if and only if it is a $W$-local epimorphism.
\end{Def}

\begin{Lemma} \label{lem covering sieves from system of local isomorphisms}
Suppose that $W$ is a system of local isomorphisms on a presheaf topos $\Pre(\cat{C})$, $R \hookrightarrow y(U)$ is a $W$-local iso, and $S \hookrightarrow y(U)$ is a sieve, then
\begin{enumerate}[(I)]
    \item if $R \subseteq S$, then $S \hookrightarrow y(U)$ is a $W$-local iso, and
    \item if for every $f : V \to U$ in $R$, the map $f^*(S) \hookrightarrow y(V)$ is a $W$-local iso, then $S \hookrightarrow y(U)$ is a $W$-local iso.
\end{enumerate}
\end{Lemma}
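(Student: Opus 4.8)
The plan is to prove part (I) first and then deduce part (II) from it. The key tool throughout is axiom (3) of Definition \ref{def system of local isos} (equivalently axiom (3') of Lemma \ref{lem alternate axiom for system of local iso}): a map lies in $W$ precisely when each of its pullbacks along sections of its target lies in $W$. Since a sieve inclusion is a monomorphism, being a $W$-local iso coincides with being a $W$-local epi, and the pullback criterion together with the 2-of-3 property lets us verify membership in $W$ one representable section at a time.

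For (I), I would factor the inclusion as $R \hookrightarrow S \hookrightarrow y(U)$. The composite lies in $W$ by hypothesis, so by 2-of-3 it suffices to show $R \hookrightarrow S \in W$. Because $R \subseteq S$ we have $R = R \times_{y(U)} S$, so $R \hookrightarrow S$ is exactly the pullback of $R \hookrightarrow y(U)$ along $S \hookrightarrow y(U)$. Now apply axiom (3): a section $s : y(V) \to S$ composed with $S \hookrightarrow y(U)$ is, by Yoneda, a map $f : V \to U$, and pasting of pullback squares identifies $s^*(R \hookrightarrow S)$ with $f^*(R) \hookrightarrow y(V)$. Since $R \hookrightarrow y(U)$ lies in $W$, axiom (3) guarantees $f^*(R) \in W$ for every such $f$; hence $R \hookrightarrow S \in W$, and 2-of-3 finishes the claim.

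For (II), I would reduce to (I) by pulling $S$ back along $R$. Set $R \cap S = R \times_{y(U)} S$, so that $R \cap S \hookrightarrow R$ is the pullback of $S \hookrightarrow y(U)$ along $R \hookrightarrow y(U)$. Checking this inclusion via axiom (3): a section $s : y(V) \to R$ corresponds to some $f \in R(V)$, and pasting identifies $s^*(R \cap S \hookrightarrow R)$ with $f^*(S) \hookrightarrow y(V)$, which lies in $W$ by the hypothesis of (II). Thus $R \cap S \hookrightarrow R \in W$, and composing with $R \hookrightarrow y(U) \in W$ gives $R \cap S \hookrightarrow y(U) \in W$. Since $R \cap S \subseteq S$, part (I) then yields $S \hookrightarrow y(U) \in W$.

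The main obstacle is getting the pullback identifications right rather than any deep idea: in particular, recognizing in (II) that the correct object to form is the intersection $R \cap S$, whose inclusion into $R$ pulls back along sections of $R$ to precisely the sieves $f^*(S)$ controlled by the hypothesis. Once these pasting diagrams are set up, both statements are formal consequences of axiom (3) and the 2-of-3 property.
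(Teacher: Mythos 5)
Your proposal is correct and follows essentially the same route as the paper: in (I) you factor through $R \hookrightarrow S \hookrightarrow y(U)$, recognize $R \hookrightarrow S$ as the pullback of $R \hookrightarrow y(U)$ along $S$, and apply axiom (3) plus 2-of-3; in (II) you form $R\cap S$, check $R\cap S \hookrightarrow R$ via pullbacks along sections of $R$ using the hypothesis, and reduce to (I). The only cosmetic difference is that the paper invokes the pullback-stability form (3') directly in (I) where you verify it section by section, which is equivalent.
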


\begin{proof}
(I) Suppose that $R \hookrightarrow y(U)$ is a $W$-local isomorphism, $S \hookrightarrow y(U)$ is a sieve, and $R \subseteq S$. This implies that $R \cap S = R$, and therefore using Lemma \ref{lem meet of subobjects}, the following commutative diagram is a pullback square
\begin{equation*}
    \begin{tikzcd}
	R & R \\
	S & {y(U)}
	\arrow[Rightarrow, no head, from=1-1, to=1-2]
	\arrow[hook, from=1-1, to=2-1]
	\arrow["\lrcorner"{anchor=center, pos=0.125}, draw=none, from=1-1, to=2-2]
	\arrow[hook, from=1-2, to=2-2]
	\arrow[hook, from=2-1, to=2-2]
\end{tikzcd}
\end{equation*}
Thus by Definition \ref{def system of local isos}.(3'), $R \hookrightarrow S$ is a $W$-local isomorphism, and by Definition \ref{def system of local isos}.(2), this implies $S \hookrightarrow y(U)$ is a $W$-local isomorphism. 

(II) Suppose that $R \in J(W)(U)$, $S \hookrightarrow y(U)$, and for every $f : V \to U$ in $R$, the pullback $f^*(S) \hookrightarrow y(V)$ is a $W$-local iso, then we want to show that $S \hookrightarrow y(U)$ is a $W$-local iso. Suppose that $f^*(S) \hookrightarrow y(V)$ is a $W$-local iso for every $f : V \to U$ in $R$. Then for every $g : y(V) \to R$, we have a commutative diagram
\begin{equation*}
    \begin{tikzcd}
	{g^*(S)} & {R \cap S} & S \\
	{y(V)} & R & {y(U)}
	\arrow[from=1-1, to=1-2]
	\arrow[hook, from=1-1, to=2-1]
	\arrow[hook, from=1-2, to=1-3]
	\arrow[hook, from=1-2, to=2-2]
	\arrow["\lrcorner"{anchor=center, pos=0.125}, draw=none, from=1-2, to=2-3]
	\arrow[hook, from=1-3, to=2-3]
	\arrow["g"', from=2-1, to=2-2]
	\arrow[hook, from=2-2, to=2-3]
\end{tikzcd}
\end{equation*}
where the right hand square is a pullback and the outer rectangle is a pullback. Thus the left hand square is a pullback. By assumption $g^*(S) \hookrightarrow y(V)$ is a $W$-local isomorphism, and since $g: y(V) \to R$ was arbitrary, this implies that $R \cap S \hookrightarrow R$ is a $W$-local isomorphism. Thus $R \cap S \hookrightarrow y(U)$ is a $W$-local isomorphism, which implies that $S \hookrightarrow y(U)$ is a $W$-local isomorphism by (I).
\end{proof}

\begin{Rem}
Lemma \ref{lem covering sieve props} implies that if $W$ is a system of local isomorphisms, then the collection of sieve inclusions $\{R \hookrightarrow y(U) \}$ that are $W$-local isomorphisms forms a Grothendieck coverage.
\end{Rem}

\begin{Lemma}[{\cite[Exercise 16.5]{kashiwara2006}}] \label{lem sys local isos -> sys local epis}
If $W$ is a system of local isomorphisms and $E$ is the class of $W$-local epis, then $E$ is a system of local epimorphisms.
\end{Lemma}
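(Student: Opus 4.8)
The plan is to avoid re-verifying the four axioms of Definition \ref{def system of local epis} by hand, and instead to reduce the statement to Lemma \ref{lem j-local epis form a system of local epis}, which already establishes that the local epimorphisms of any coverage form a system of local epimorphisms. By the Remark following Lemma \ref{lem covering sieves from system of local isomorphisms}, the collection $J(W)$ of sieves $R \hookrightarrow y(U)$ that are $W$-local isomorphisms is a Grothendieck coverage, hence in particular a coverage. So it suffices to prove that the class $E$ of $W$-local epimorphisms coincides exactly with the class of $J(W)$-local epimorphisms; the conclusion then follows immediately by applying Lemma \ref{lem j-local epis form a system of local epis} to $J(W)$.

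First I would record the elementary fact that images are stable under pullback in the presheaf topos $\Pre(\cat{C})$: for any $f : X \to Y$ and any section $s : y(U) \to Y$, one has $s^*(\im f) = \im(s^*(f))$ as subobjects of $y(U)$. This is checked objectwise, since at each $V$ the fibre product and the image are computed in $\Set$, and $(s^*(\im f))(V) = s_V^{-1}(\im f_V) = \{a \in \cat{C}(V,U) : s_V(a) \in \im f_V\}$ is exactly the image of the projection $y(U) \times_Y X \to y(U)$ at $V$.

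Next comes the chain of equivalences identifying $E$ with the $J(W)$-local epimorphisms. By definition $f$ is a $W$-local epimorphism iff $\im f \hookrightarrow Y$ is a $W$-local isomorphism; by the pullback characterization (3') of Lemma \ref{lem alternate axiom for system of local iso} this holds iff $s^*(\im f) \hookrightarrow y(U)$ is a $W$-local isomorphism for every section $s : y(U) \to Y$. Using the image-stability fact this subobject is $\im(s^*(f)) \hookrightarrow y(U)$, which is a sieve, so its being a $W$-local isomorphism is by definition the statement that it lies in $J(W)(U)$. By Lemma \ref{lem local epi on groth cvg iff covering sieve} this says precisely that $s^*(f)$ is a $J(W)$-local epimorphism, and by Lemma \ref{lem local epi on groth cvg iff pullback is local epi} the latter holding for all sections $s$ is equivalent to $f$ being a $J(W)$-local epimorphism. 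Reading the chain off gives $E = \{J(W)\text{-local epis}\}$, and Lemma \ref{lem j-local epis form a system of local epis} then finishes the proof.

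The main obstacle is getting the bookkeeping of this reduction exactly right: the notion of $W$-local epimorphism is defined for a map into an arbitrary presheaf $Y$ via its image, whereas the coverage $J(W)$ only sees sieves, i.e. subobjects of representables, and matching the two is precisely what the pullback-to-a-representable reduction and the image-stability observation accomplish. I expect no difficulty in the remaining bookkeeping, and one could alternatively verify axioms (1)--(4) of Definition \ref{def system of local epis} directly --- (1) is immediate since an epimorphism has full image and the identity is a $W$-local iso, (4) is exactly the pullback characterization just described, while the composition axioms (2) and (3) would require a more delicate image-diagram chase using the 2-of-3 property of $W$. This is exactly the work that Lemma \ref{lem j-local epis form a system of local epis} already packages, which is why the reduction is preferable.
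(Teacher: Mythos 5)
Your proposal is correct, and it takes a genuinely different route from the paper. The paper proves the lemma by verifying the four axioms of Definition \ref{def system of local epis} one by one: axiom (1) via $\im(f)\cong Y$ for an epi, axiom (3) and pullback-stability via image/pullback pasting diagrams, and --- the delicate part --- the composition axiom (2) via a two-stage argument (first for maps into a representable, then in general) that chases images through several pullback rectangles. You instead identify $E$ with the class of $J(W)$-local epimorphisms for the Grothendieck coverage $J(W)$ of $W$-locally-isomorphic sieves (which the Remark after Lemma \ref{lem covering sieves from system of local isomorphisms} supplies), and then invoke Lemma \ref{lem j-local epis form a system of local epis}. Your chain of equivalences is sound: each step is covered by axiom (3) of Definition \ref{def system of local isos}, pullback-stability of images (which you could simply cite as Lemma \ref{lem presheaf topoi have pullback stable image factorization} rather than re-derive), Lemma \ref{lem local epi on groth cvg iff covering sieve}, and Lemma \ref{lem local epi iff pullback by section local epi}, none of which depends on the statement being proved. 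What your route buys is that the hard composition axiom is inherited for free from the $j$-tree argument of Lemma \ref{lem local epimorphisms closed under composition}, and you effectively pre-prove a piece of the later identification in diagram (\ref{eq sys local iso <-> sys local epi <-> sat coverages}); what the paper's route buys is a self-contained argument entirely inside the image-factorization formalism. The only caveat worth flagging is cosmetic: a Grothendieck coverage is, per the paper's own footnote, a \emph{sifted} coverage rather than a literal coverage in the sense of Definition \ref{def coverage}, so applying Lemma \ref{lem j-local epis form a system of local epis} to $J(W)$ involves the same mild abuse the paper itself commits whenever it speaks of $J$-local epimorphisms for a Grothendieck site; this is consistent with the paper's conventions and is not a gap.
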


\begin{proof}
Suppose that $f : X \to Y$ is an epimorphism. Then $\im(f) \hookrightarrow Y$ is an isomorphism and therefore a $W$-local isomorphism.

Suppose that $f : X \to Y$ and $g: X \to Y$ are morphisms such that $gf$ is a $W$-local epimorphism. We want to show that $g$ is a $W$-local epi. First note that the following diagram is a pullback
\begin{equation*}
\begin{tikzcd}
	{\text{im}(gf)} & {\text{im}(gf)} \\
	{\text{im}(g)} & Z
	\arrow[Rightarrow, no head, from=1-1, to=1-2]
	\arrow["i"', hook, from=1-1, to=2-1]
	\arrow[hook, from=1-2, to=2-2]
	\arrow[hook, from=2-1, to=2-2]
\end{tikzcd}
\end{equation*}
where $i$ is the obvious inclusion map.
Now let us show that $i$ is a $W$-local isomorphism. Suppose that $s : y(U) \to \im(g)$ is a section. Then we obtain the following pair of pullback diagrams
\begin{equation*}
\begin{tikzcd}
	{y(U) \times_{\text{im}(g)} \text{im}(gf)} & {\text{im}(gf)} & {\text{im}(gf)} \\
	{y(U)} & {\text{im}(g)} & Z
	\arrow[from=1-1, to=1-2]
	\arrow["k"', from=1-1, to=2-1]
	\arrow["\lrcorner"{anchor=center, pos=0.125}, draw=none, from=1-1, to=2-2]
	\arrow[Rightarrow, no head, from=1-2, to=1-3]
	\arrow["{i}", from=1-2, to=2-2]
	\arrow["\lrcorner"{anchor=center, pos=0.125}, draw=none, from=1-2, to=2-3]
	\arrow[hook, from=1-3, to=2-3]
	\arrow["s"', from=2-1, to=2-2]
	\arrow[hook, from=2-2, to=2-3]
\end{tikzcd}
\end{equation*}
so the outer rectangle is a pullback. Since $\im(gf) \hookrightarrow Z$ is a local isomorphism by assumption, this implies that $k$ is a local isomorphism. Since $s$ was arbitrary, this implies that $g_*$ is a $W$-local isomorphism. Thus by the 2-of-3 property of $W$-local isomorphisms, $\im(g) \hookrightarrow Z$ is a $W$-local isomorphism, and therefore $g$ is a $W$-local epi.

Now let $ f : X \to Y$ be a $W$-local epimorphism and $s : y(U) \to Y$. Then since images of presheaf maps are stable under pullback we have the following pair of pullback diagrams
\begin{equation*}
    \begin{tikzcd}
	{y(U) \times_Y X} & X \\
	{\im(s^*(f))} & {\text{im}(f)} \\
	{y(U)} & Y
	\arrow[from=1-1, to=1-2]
	\arrow[from=1-1, to=2-1]
	\arrow["\lrcorner"{anchor=center, pos=0.125}, draw=none, from=1-1, to=2-2]
	\arrow[two heads, from=1-2, to=2-2]
	\arrow[from=2-1, to=2-2]
	\arrow[from=2-1, to=3-1]
	\arrow["\lrcorner"{anchor=center, pos=0.125}, draw=none, from=2-1, to=3-2]
	\arrow[hook, from=2-2, to=3-2]
	\arrow["s"', from=3-1, to=3-2]
\end{tikzcd}
\end{equation*}
Thus $\text{im}(s^*(f)) \hookrightarrow y(U)$ is a $W$-local isomorphism, so $s^*(f)$ is a $W$-local epimorphism.

Conversely suppose that for every $s : y(U) \to Y$, the pullback map $s^*(f)$ is a $W$-local epimorphism. We want to show that $f$ is a $W$-local epimorphism. But if we pull back $f$ by $s$, then the image of $s^*(f)$ is a $W$-local isomorphism by assumption. Thus $\im(f) \hookrightarrow y(U)$ is a $W$-local isomorphism. Therefore $f$ is a $W$-local epimorphism.

Then taking $g$ to be the identity shows that $f$ is a $W$-local epi. Conversely if $f$ is a $W$-local epi, we want to show every pullback map is. This follows from the fact that $\Pre(\cat{C})$ has pullback stable image factorizations (Lemma \ref{lem presheaf topoi have pullback stable image factorization}). Namely taking the pullback of $\im(f) \hookrightarrow Y$ along any map $g: Z \to Y$ gives $\im(g^*(f)) \hookrightarrow Z$, which is a $W$-local isomorphism, since $W$-local isomorphisms are stable under pullback, thus $g^*(f)$ is a $W$-local epi.

If $f : X \to Y$ and $g: Y \to Z$ are $W$-local epis, then we want to show that $gf$ is a $W$-local epi\footnote{This part of the proof is due to \cite{doughertylocalisos}.}. Let us first suppose that $g$ is of the form $g : Y \to y(U)$. Suppose that $s : y(V) \to \im(g)$ is a section, then $s$ factors through $g$, giving a map $\widetilde{s} : y(V) \to Y$ such that $g \widetilde{s} = s$. Now taking the pullback $\widetilde{s}^*(f)$, we have the commutative diagram
\begin{equation*}
 \begin{tikzcd}
	{y(V) \times_Y X} & X \\
	{y(V)} & Y \\
	{y(V)} & {\text{im}(g)} & {y(U)}
	\arrow[from=1-1, to=1-2]
	\arrow["{\widetilde{s}^*(f)}"', from=1-1, to=2-1]
	\arrow["\lrcorner"{anchor=center, pos=0.125}, draw=none, from=1-1, to=2-2]
	\arrow["f", from=1-2, to=2-2]
	\arrow["{\widetilde{s}}"', from=2-1, to=2-2]
	\arrow[Rightarrow, no head, from=2-1, to=3-1]
	\arrow[two heads, from=2-2, to=3-2]
	\arrow["g"', from=2-2, to=3-3]
	\arrow["s"', from=3-1, to=3-2]
	\arrow[hook, from=3-2, to=3-3]
\end{tikzcd}   
\end{equation*}
Now $\widetilde{s}^*(f)$ is a $W$-local epimorphism because $f$ is. Thus $\im(\widetilde{s}^*(f)) \hookrightarrow y(V)$ is a $W$-local isomorphism. But $\im(\widetilde{s}^*(f)) \subseteq s^*(\im(gf))$. Indeed, if $h : W \to V$ is in the image of $\widetilde{s}^*(f)$, then there exists an $x \in X(W)$ such that $\widetilde{s}(h) = f(x)$. But then $s(h) = g \widetilde{s}(h) = gf(x)$. Thus by Lemma \ref{lem covering sieves from system of local isomorphisms}.(I), this implies that $s^*(\im(gf)) \hookrightarrow y(V)$ is a $W$-local isomorphism. Since this is true for every $s \in \im(g)$, by Lemma \ref{lem covering sieves from system of local isomorphisms}.(II), this implies that $\im(gf) \hookrightarrow y(U)$ is a $W$-local isomorphism.

Now suppose that $f : X \to Y$ and $g: Y \to Z$ are arbitrary $W$-local epimorphisms. Then for any section $s : y(U) \to Z$, we have the following pair of pullback diagrams
\begin{equation*}
    \begin{tikzcd}
	{y(U) \times_Z X} & X \\
	{y(U)\times_Z Y} & Y \\
	{y(U)} & Z
	\arrow[from=1-1, to=1-2]
	\arrow[from=1-1, to=2-1]
	\arrow["\lrcorner"{anchor=center, pos=0.125}, draw=none, from=1-1, to=2-2]
	\arrow["f", from=1-2, to=2-2]
	\arrow[from=2-1, to=2-2]
	\arrow[from=2-1, to=3-1]
	\arrow["\lrcorner"{anchor=center, pos=0.125}, draw=none, from=2-1, to=3-2]
	\arrow["g", from=2-2, to=3-2]
	\arrow["s"', from=3-1, to=3-2]
\end{tikzcd}
\end{equation*}
and thus as proven above, since $W$-local epimorphisms are stable under pullback, this implies that the two left vertical maps are $W$-local epimorphisms. By the argument of the preceding section, this implies their composite is a $W$-local epimorphism. Since $s: y(U) \to Z$ was an arbitrary section, and the outer rectangle is a pullback, this implies that $gf$ is a $W$-local epimorphism.
\end{proof}

\begin{Lemma} \label{lem sys local epi -> sys local iso}
Given a system $E$ of local epimorphisms, let $W$ denote the class of $E$-local isomorphisms. Then $W$ is a system of local isomorphisms.
\end{Lemma}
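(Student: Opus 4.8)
The plan is to verify the three axioms of Definition \ref{def system of local isos} for the class $W$. Recall that, in parallel with the $j$-local case, a map $f : X \to Y$ is an \emph{$E$-local monomorphism} when its slice diagonal $\Delta_f : X \to X \times_Y X$ lies in $E$, and an \emph{$E$-local isomorphism} (a member of $W$) when it is simultaneously an $E$-local epimorphism, i.e. $f \in E$, and an $E$-local monomorphism. The first axiom is immediate: if $f$ is an isomorphism then $f$ is an epimorphism, hence $f \in E$ by Definition \ref{def system of local epis}.(1), and $\Delta_f$ is an isomorphism by Lemma \ref{lem map is mono iff slice diagonal iso}, hence an epimorphism and again a member of $E$; thus $f \in W$.

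The main observation I would exploit is that the entire calculus of local morphisms developed earlier (Lemmas \ref{lem local epimorphisms closed under composition}, \ref{lem composition is local epi implies local epi}, \ref{lem local epi iff pullback local epi} and \ref{lem properties of local monos and local isos}) rests only on three formal properties of the class of local epimorphisms: closure under composition, the cancellation property ($gf \in E \Rightarrow g \in E$), and stability under arbitrary pullback. These are precisely axioms (2), (3) and (4) of Definition \ref{def system of local epis} (the last via the equivalent form given in Lemma \ref{lem alternate axiom for system of local epi}), while the slice-diagonal manipulations are purely categorical. Consequently the diagram-chasing proofs of Lemma \ref{lem properties of local monos and local isos} transcribe verbatim with ``$j$-local'' replaced by ``$E$-local''. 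This yields in particular that $E$-local monomorphisms and isomorphisms are stable under pullback and, via the analogue of part (6), that $W$ satisfies the 2-of-3 property, which is axiom (2) of Definition \ref{def system of local isos}. The one result whose original argument used $j$-trees, Lemma \ref{lem local monos closed under composition}, I would instead reprove diagrammatically: writing $k : X \times_Y X \to X \times_Z X$ for the canonical comparison map, one checks that $\Delta_{gf} = k\Delta_f$ and that $k$ is a pullback of $\Delta_g$ along the map $X \times_Z X \to Y \times_Z Y$, so that $\Delta_f, \Delta_g \in E$ forces $\Delta_{gf} \in E$ by pullback-stability and composition closure.

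Verifying the local-character axiom (3) of Definition \ref{def system of local isos} is where the real work lies, and I expect the monomorphism half to be the main obstacle. Since $f \in W$ unfolds as ``$f \in E$ and $\Delta_f \in E$'', and $s^*(f) \in W$ as ``$s^*(f) \in E$ and $\Delta_{s^*(f)} \in E$'', the epimorphism halves correspond exactly by axiom (4) of Definition \ref{def system of local epis}. For the monomorphism halves one uses that $\Delta_{s^*(f)}$ is a pullback of $\Delta_f$ (the pasting square in the proof of Lemma \ref{lem properties of local monos and local isos}.(1)); the ``if'' direction then follows from pullback-stability of $E$. The delicate ``only if'' direction—deducing $\Delta_f \in E$ from $\Delta_{s^*(f)} \in E$ for every section $s$—I would handle by applying axiom (4) to $\Delta_f$ itself: given a section $t : y(V) \to X \times_Y X$, put $s := f \pi_0 t : y(V) \to Y$, lift $t$ through the pullback $y(V) \times_Y (X \times_Y X)$ to a section $\tilde{t}$, and observe that $t^*(\Delta_f) = \tilde{t}^*(\Delta_{s^*(f)})$, which lies in $E$ by hypothesis together with pullback-stability. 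As $t$ was arbitrary, axiom (4) yields $\Delta_f \in E$, establishing axiom (3) and hence that $W$ is a system of local isomorphisms.
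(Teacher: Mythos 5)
Your proof is correct, but it takes a genuinely different route from the paper's. The paper's own argument is essentially a two-line reduction: by Proposition \ref{prop bij between saturated coverages and systems of local epis}, which is proved just before this lemma, every system of local epimorphisms $E$ coincides with the class of $j$-local epimorphisms for the saturated coverage $j = j(E)$, so axiom (1) is immediate and the 2-of-3 property and pullback-locality of $W$ are read off from Lemma \ref{lem properties of local monos and local isos}.(6) and (2), already established for $j$-local isomorphisms. You instead work axiomatically from Definition \ref{def system of local epis}, observing that the diagrammatic parts of that calculus use only composition-closure, cancellation and pullback-stability of $E$; the two places where this observation genuinely needs supplementing are exactly the ones you address. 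First, Lemma \ref{lem local monos closed under composition}, whose original proof runs through $j$-trees, which you replace by the factorization $\Delta_{gf} = k\Delta_f$ with $k : X \times_Y X \to X \times_Z X$ a pullback of $\Delta_g$ along $f \times f$ --- this is correct, and is in the same spirit as the square used in the paper's proof of Lemma \ref{lem properties of local monos and local isos}.(4). Second, the ``only if'' half of axiom (3), where your device of testing $\Delta_f$ against an arbitrary section $t : y(V) \to X \times_Y X$, setting $s = f\pi_0 t$, lifting $t$ to $\tilde{t} : y(V) \to y(V) \times_Y (X \times_Y X)$ and identifying $t^*(\Delta_f)$ with $\tilde{t}^*(\Delta_{s^*(f)})$ via the pasting lemma is sound. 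What your version buys is self-containment: it proves the statement for any class satisfying Definition \ref{def system of local epis}, with no appeal to the representation of $E$ by a coverage, and it makes explicit that the calculus of local monomorphisms and isomorphisms is purely formal. What the paper's version buys is brevity, at the cost of a (legitimate, since the proposition precedes the lemma) dependence on the bijection with saturated coverages.
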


\begin{proof}
(1) This clearly holds because if $f: X \to Y$ is an isomorphism, then it is an $E$-local isomorphism.

(2) Since $E$ is the class of $j$-local epimorphisms for some saturated coverage $j$, this follows from Lemma \ref{lem properties of local monos and local isos}.(6).

(3')$(\Rightarrow)$ holds by Lemma \ref{lem properties of local monos and local isos}.(2) and (3')$(\Leftarrow)$ is obvious.
\end{proof}

\begin{Prop} \label{prop sys local epis bij with sys local isos}
Given a small category $\cat{C}$, there is a bijection
\begin{equation*}
    \{ \text{systems of local epimorphisms on }\Pre(\cat{C}) \} \cong \{ \text{systems of local isomorphisms on }\Pre(\cat{C}) \}.
\end{equation*}
\end{Prop}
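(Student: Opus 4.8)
The plan is to show that the two assignments already constructed are mutually inverse. Write $\Phi$ for the map of Lemma \ref{lem sys local isos -> sys local epis}, sending a system of local isomorphisms $W$ to the system $\Phi(W)$ of $W$-local epimorphisms, and $\Psi$ for the map of Lemma \ref{lem sys local epi -> sys local iso}, sending a system of local epimorphisms $E$ to the system $\Psi(E)$ of $E$-local isomorphisms, where $f$ is an $E$-local isomorphism precisely when $f \in E$ and $\Delta_f \in E$. Recall that a local epimorphism for a system of local isomorphisms is a map whose image inclusion is a local isomorphism. First I would unwind both composites using the epi--mono factorization $f = m_f e_f$ in $\Pre(\cat{C})$, where $e_f : X \twoheadrightarrow \im(f)$ is an epimorphism and $m_f : \im(f) \hookrightarrow Y$ a monomorphism. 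Since $m_f$ is a monomorphism, $\Delta_{m_f}$ is an isomorphism and so lies in $E$, whence $m_f$ is an $E$-local isomorphism exactly when $m_f \in E$; dually, since $\Delta_f$ is a monomorphism, a map is a $W$-local monomorphism exactly when $\Delta_f \in W$. A short computation then yields
\begin{equation*}
\Phi\Psi(E) = \{ f : m_f \in E \}, \qquad \Psi\Phi(W) = \{ f : m_f \in W \text{ and } \Delta_f \in W \}.
\end{equation*}

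The identity $\Phi\Psi(E) = E$ is immediate from the axioms for a system of local epimorphisms: $e_f$ is an epimorphism, hence in $E$ by Definition \ref{def system of local epis}.(1), so $m_f \in E$ forces $f = m_f e_f \in E$ by (2); conversely $f \in E$ forces $m_f \in E$ by (3). Thus $\Phi\Psi$ is the identity.

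The essential point, and the main obstacle, is $\Psi\Phi(W) = W$, which reduces to the following descent-type sublemma: \emph{if $e : X \to I$ is an epimorphism of presheaves with $\Delta_e \in W$, then $e \in W$.} To prove it, observe that the projection $\pi_1 : X \times_I X \to X$ is a pullback of $e$ along $e$ and satisfies $\pi_1 \Delta_e = 1_X$; as $1_X$ and $\Delta_e$ lie in $W$, the 2-of-3 property (Definition \ref{def system of local isos}.(2)) gives $\pi_1 \in W$. Now for any section $s : y(U) \to I$, objectwise surjectivity of the epimorphism $e$ provides a lift $\tilde{s} : y(U) \to X$ with $e\tilde{s} = s$, and the pasting law identifies $\tilde{s}^*(\pi_1)$ with $(e\tilde{s})^*(e) = s^*(e)$; hence $s^*(e) \in W$ by pullback-stability of $W$ (Lemma \ref{lem alternate axiom for system of local iso}). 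Since this holds for every section, Definition \ref{def system of local isos}.(3) yields $e \in W$.

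Finally I would deduce $\Psi\Phi(W) = W$ from the sublemma. Because $m_f$ is a monomorphism we have $X \times_Y X = X \times_{\im(f)} X$, so $\Delta_f = \Delta_{e_f}$. If $f \in W$, then both projections $X \times_Y X \to X$ are pullbacks of $f$ and so lie in $W$, whence $\Delta_f \in W$ by 2-of-3; the sublemma applied to $e_f$ gives $e_f \in W$, and then $m_f \in W$ by 2-of-3, so $f$ satisfies $m_f \in W$ and $\Delta_f \in W$. Conversely, if $m_f \in W$ and $\Delta_f = \Delta_{e_f} \in W$, the sublemma gives $e_f \in W$ and hence $f = m_f e_f \in W$ by 2-of-3. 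Therefore $\Psi\Phi(W) = W$, and $\Phi, \Psi$ are mutually inverse bijections.
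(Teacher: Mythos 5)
Your proof is correct and follows essentially the same route as the paper's: both use the epi--mono factorization $f = m_f e_f$, the identification $X \times_Y X = X \times_{\im(f)} X$, lifting of sections through objectwise-surjective epimorphisms, 2-of-3, and pullback-stability. Your descent sublemma (an epimorphism with diagonal in $W$ lies in $W$) is exactly the paper's inline argument for $\sigma\tau(W) \subseteq W$, just cleanly extracted, and your explicit descriptions of the two composites match the computations the paper carries out in the course of its proof.
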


\begin{proof}
By Lemma \ref{lem sys local epi -> sys local iso}, we have a function (between large sets)
$$\sigma : \{\text{systems of local epis on }\Pre(\cat{C})\} \to \{\text{systems of local isos on }\Pre(\cat{C})\}$$
where if $E$ is a system of local epis, then $\sigma(E)$ is the class of $E$-local isomorphisms. Namely $\sigma(E)$ is the class of morphisms that are both $E$-local epis and $E$-local monos. Conversely, by Lemma \ref{lem sys local isos -> sys local epis}, we have a function
$$\tau: \{\text{systems of local isos on }\Pre(\cat{C})\} \to \{\text{systems of local epis on }\Pre(\cat{C})\},$$
where if $W$ is a system of local isos, then $\tau(W)$ is the class of $W$-local epis. Namely $\tau(W)$ is the class of morphisms $f: X \to Y$ such that $\im(f) \hookrightarrow Y$ is a $W$-local iso. Let us show that these functions are inverse to each other.

First let us show that if $W$ is a system of local isos, and we define $E = \tau(W)$ to be the class of $W$-local epis, then if $f: X \to Y$ is in $W$, then $f$ is in $E$. First note that the following is a pullback
\begin{equation*}
    \begin{tikzcd}
	X & X \\
	{\text{im}(f)} & Y
	\arrow[Rightarrow, no head, from=1-1, to=1-2]
	\arrow[two heads, from=1-1, to=2-1]
	\arrow["\lrcorner"{anchor=center, pos=0.125}, draw=none, from=1-1, to=2-2]
	\arrow["f", from=1-2, to=2-2]
	\arrow[hook, from=2-1, to=2-2]
\end{tikzcd}
\end{equation*}
so that if $s: y(U) \to \im(f)$ is a section, then taking the pullback, we obtain pasting diagram of pullbacks
\begin{equation*}
   \begin{tikzcd}
	{y(U)\times_YX} & X & X \\
	{y(U)} & {\text{im}(f)} & Y
	\arrow[from=1-1, to=1-2]
	\arrow["k"', from=1-1, to=2-1]
	\arrow["\lrcorner"{anchor=center, pos=0.125}, draw=none, from=1-1, to=2-2]
	\arrow[Rightarrow, no head, from=1-2, to=1-3]
	\arrow[two heads, from=1-2, to=2-2]
	\arrow["\lrcorner"{anchor=center, pos=0.125}, draw=none, from=1-2, to=2-3]
	\arrow["f", from=1-3, to=2-3]
	\arrow["s"', from=2-1, to=2-2]
	\arrow[hook, from=2-2, to=2-3]
\end{tikzcd} 
\end{equation*}
but the outer rectangle is a pullback, so $k$ is in $W$. Since this is true for arbitrary $s : y(U) \to \im(f)$, this implies that $X \twoheadrightarrow \im(f)$ is in $W$. Thus by 2-of-3, $\im(f) \hookrightarrow y(U)$ is in $W$. Therefore $f \in E$.

Now we want to show that if $W$ is a system of local isos, $E = \tau(W)$ is the class of $W$-local epis, and we let $W' = \sigma(E)$ be the class of $E$-local isos, then we want to show that $W = \sigma \tau(W) = W'$. 

First let us show that $W \subseteq W'$. We know that if $f \in W$, then $f \in E$. Thus we need only to show that $\Delta_f \in E$. We have the following commutative diagram
\begin{equation*}
    \begin{tikzcd}
	X \\
	& {X \times_Y X} & X \\
	& X & Y
	\arrow["{\Delta_f}", from=1-1, to=2-2]
	\arrow["{1_X}", curve={height=-12pt}, from=1-1, to=2-3]
	\arrow["{1_X}"', curve={height=12pt}, from=1-1, to=3-2]
	\arrow["{p_1}", from=2-2, to=2-3]
	\arrow["{p_0}"', from=2-2, to=3-2]
	\arrow["\lrcorner"{anchor=center, pos=0.125}, draw=none, from=2-2, to=3-3]
	\arrow["f", from=2-3, to=3-3]
	\arrow["f"', from=3-2, to=3-3]
\end{tikzcd}
\end{equation*}
since $f$ is a $W$-local iso, then so are $p_0$ and $p_1$. But then by $2$-of-$3$, since $1_X$ is also a $W$-local iso, this implies that $\Delta_f$ is a $W$-local iso. Therefore by the above argument since $W \subseteq E$, this implies that $\Delta_f \in E$. Therefore $W \subseteq W'$.

Conversely let us show that $W' \subseteq W$. If $f: X \to Y$ in $W'$, then it is a $W$-local epi and a $W$-local mono. Thus $\im(f) \hookrightarrow Y$ is in $W$. Now note that the following diagram is a pullback
\begin{equation*}
    \begin{tikzcd}
	{X \times_YX} & X \\
	X & {\text{im}(f)}
	\arrow["{p_1}", from=1-1, to=1-2]
	\arrow["{p_0}"', from=1-1, to=2-1]
	\arrow["\lrcorner"{anchor=center, pos=0.125}, draw=none, from=1-1, to=2-2]
	\arrow[two heads, from=1-2, to=2-2]
	\arrow[two heads, from=2-1, to=2-2]
\end{tikzcd}
\end{equation*}
Now note that $\Delta_f : X \to X \times_Y X$ is a monomorphism, which implies that it is its own image, and since $\Delta_f$ is a $W$-local epi, this implies that $\Delta_f \in W$. Now since $p_0 \Delta_f = 1_X$, by 2-of-3, this implies that $p_0$ (and $p_1$) are in $W$. Now if $s : y(U) \to \im(f)$ was an arbitrary section, then it factors through $X$, and taking the pullback we obtain a pasting diagram of pullbacks
\begin{equation*}
    \begin{tikzcd}
	{y(U)\times_{\im(f)}X} & {X \times_YX} & X \\
	{y(U)} & X & {\text{im}(f)}
	\arrow[from=1-1, to=1-2]
	\arrow["k"', from=1-1, to=2-1]
	\arrow["\lrcorner"{anchor=center, pos=0.125}, draw=none, from=1-1, to=2-2]
	\arrow["{p_1}", from=1-2, to=1-3]
	\arrow["{p_0}", from=1-2, to=2-2]
	\arrow["\lrcorner"{anchor=center, pos=0.125}, draw=none, from=1-2, to=2-3]
	\arrow[two heads, from=1-3, to=2-3]
	\arrow["s"', from=2-1, to=2-2]
	\arrow[two heads, from=2-2, to=2-3]
\end{tikzcd}
\end{equation*}
thus $k$ is in $W$. But since this was true for arbitrary $s: y(U) \to \im(f)$, this implies that $X \twoheadrightarrow \im(f)$ is in $W$. Since $\im(f) \hookrightarrow Y$ is also in $W$ since $f$ is a $W$-local epi, this implies that $f \in W$. Thus $W' \subseteq W$. Therefore we have shown that $W = \sigma \tau(W) = W'$.

Now suppose that $E$ is a system of local epis, let $W = \sigma(E)$ and $E' = \tau(W)$. We want to show that $E = \tau \sigma(E) = E'$. Let us show that $E \subseteq E'$. Morphisms in $E'$ are precisely the maps $f: X \to Y$ such that $\im(f) \hookrightarrow Y$ belong to $W$, i.e. are in $E$ and are $E$-local monos. Thus if $f :X \to Y$ is in $E$, so is $\im(f) \hookrightarrow Y$. Thus we need only show it is an $E$-local mono. But it is a monomorphism, thus an $E$-local mono. Therefore $f \in E'$.

Conversely let us show that $E' \subseteq E$. If $\im(f) \hookrightarrow Y$ belongs to $W$, then it belongs to $E$, but so does $X \twoheadrightarrow \im(f)$, since it is an epi, therefore $f \in E$. Thus $E' = \sigma \tau(E) = E$. Thus $\sigma$ and $\tau$ are inverses to each other and define a bijection.
\end{proof}

Thus we have obtained the following bijections
\begin{equation} \label{eq sys local iso <-> sys local epi <-> sat coverages}
\{ \text{system of local isos} \} \underset{\sigma}{\overset{\tau}{\rightleftarrows}} \{\text{system of local epis} \} \underset{\varphi}{\overset{\psi}{\rightleftarrows}} \{\text{saturated coverages} \}
\end{equation}
The compositions are simple to interpret as well. If $j$ is a saturated coverage, then $\sigma \varphi(j)$ is the class of $j$-local isomorphisms. If $W$ is a system of local isomorphisms, then $\psi \tau(W)$ is the coverage where $r \in (\psi \tau(W))(U)$ if and only if $\overline{r} \hookrightarrow y(U)$ is in $W$.

\subsection{Lex Reflective Localizations of Presheaf Topoi} \label{section left exact localizations of presheaf topoi}
In this section we introduce left exact (lex) reflective localizations of presheaf topoi. These will turn out to correspond precisely to Grothendieck topoi. The theory of localization is a vital tool with which to study Grothendieck topoi. We collect the basics of this theory in Appendix \ref{section localizations}, and encourage the reader to head there first before returning to this section.

\begin{Def}
Let $\ncat{Pre}(\cat{C})$ be a presheaf topos. A \textbf{lex localization} of $\Pre(\cat{C})$ consists of a reflective subcategory (Definition \ref{def reflective localization}) 
\begin{equation*}
    \begin{tikzcd}
	{\cat{E} } && {\Pre(\cat{C})}
	\arrow[""{name=0, anchor=center, inner sep=0}, "i"', shift right=2, hook, from=1-1, to=1-3]
	\arrow[""{name=1, anchor=center, inner sep=0}, "L"', shift right=2, from=1-3, to=1-1]
	\arrow["\dashv"{anchor=center, rotate=-90}, draw=none, from=1, to=0]
\end{tikzcd}
\end{equation*}
such that the reflector $L$ preserves finite limits. We call $\cat{E}$ a \textbf{lex reflective subcategory} of $\Pre(\cat{C})$.
\end{Def}

\begin{Lemma} \label{lem lex loc -> sys of local isos}
Let $\cat{E} \hookrightarrow \Pre(\cat{C})$ be a lex reflective subcategory with reflector $L$ and let $W = L^{-1}(\text{iso})$. Then $W$ is a system of local isomorphisms.
\end{Lemma}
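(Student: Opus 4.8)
The plan is to check the three axioms of Definition \ref{def system of local isos} for the class $W = L^{-1}(\text{iso})$ of those maps $f$ of presheaves with $L(f)$ an isomorphism. Axioms (1) and (2) are purely formal: any functor carries isomorphisms to isomorphisms, which gives (1); and since $L(gf) = L(g)L(f)$ by functoriality while isomorphisms in the reflective subcategory $\cat{E}$ enjoy the 2-of-3 property, axiom (2) follows at once. Thus essentially all the work is in axiom (3), the condition that $f \in W$ if and only if $s^*(f) \in W$ for every section $s : y(U) \to Y$.

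For the forward implication of (3), I would suppose $L(f)$ is an isomorphism and let $s : y(U) \to Y$ be a section. Since $L$ preserves finite limits, it preserves the pullback square defining $s^*(f)$, so $L(s^*(f))$ is the base change of the isomorphism $L(f)$ along $L(s)$. The base change of an isomorphism is again an isomorphism, hence $L(s^*(f))$ is an isomorphism and $s^*(f) \in W$.

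The reverse implication is the crux. Assuming $L(s^*(f))$ is an isomorphism for every section $s : y(U) \to Y$, I must deduce that $L(f)$ is an isomorphism. The idea is to exhibit $f$ itself as a colimit, indexed by the category of elements of $Y$, of the maps $s^*(f)$. By the coYoneda Lemma \ref{lem coyoneda lemma} we may write $Y \cong \ncolim{(U,s)} y(U)$, the colimit over all sections $s : y(U) \to Y$. Because $\Pre(\cat{C})$ is locally cartesian closed, the base-change functor $f^* : \Pre(\cat{C})/Y \to \Pre(\cat{C})/X$ has both a left and a right adjoint and hence preserves colimits; applying it to the canonical presentation of the terminal object $\mathrm{id}_Y$ of $\Pre(\cat{C})/Y$ as the colimit of the representables-over-$Y$, and forgetting down to $\Pre(\cat{C})$, yields $X \cong \ncolim{(U,s)} (y(U) \times_Y X)$ compatibly, so that $f$ is identified with the colimit of the family $\{ s^*(f) \}$ over the same index category.

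Finally I would apply $L$, which preserves all colimits as a left adjoint, turning this into a presentation of $L(f)$ as the colimit of the natural family $\{ L(s^*(f)) \}$. By hypothesis each $L(s^*(f))$ is an isomorphism, so this is a natural isomorphism of diagrams, and the induced map on colimits, namely $L(f)$, is an isomorphism. Hence $f \in W$, completing axiom (3). The main obstacle is precisely this reverse implication: making rigorous the claim that pullback along $f$ commutes with the colimit presenting $Y$ by representables, which is exactly where local cartesian closure of $\Pre(\cat{C})$ is essential.
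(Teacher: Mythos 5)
Your proof is correct, but it handles the hard direction of axiom (3) by a genuinely different route than the paper. The paper instead verifies the equivalent axiom (3') of Lemma \ref{lem alternate axiom for system of local iso} (pullback along \emph{arbitrary} maps $g : Z \to Y$): the forward implication is the same lex-plus-pullback-stability argument you give, and the reverse implication becomes trivial by taking $g = 1_Y$, so all the real content is outsourced to the equivalence $(3) \Leftrightarrow (3')$. You attack (3) head-on: writing $Y \cong \ncolim{(U,s)} y(U)$ by the coYoneda Lemma \ref{lem coyoneda lemma}, using universality of colimits in $\Pre(\cat{C})$ (local cartesian closure) to identify $f$ with the colimit over $\int Y$ of the cartesian family $\{s^*(f)\}$, and then using that the reflector $L$, being a left adjoint, preserves this colimit, so that $L(f)$ is the colimit of a natural isomorphism of diagrams and hence an isomorphism. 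All the ingredients check out: $\int Y$ is small since $\cat{C}$ is, the pullback squares exhibit $\{s^*(f)\}$ as a cartesian natural transformation over $\int Y$ whose colimit is $f$, and a componentwise isomorphism of diagrams induces an isomorphism on colimits. What your version buys is self-containedness --- it never needs the $(3) \Leftrightarrow (3')$ reduction and makes visible \emph{why} testing on sections alone suffices --- at the cost of invoking universality of colimits and colimit-preservation of $L$; the paper's version is shorter but leans on the alternate-axiom lemma.
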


\begin{proof}
Clearly (1) and (2) of Definition \ref{def system of local isos} hold. Let us prove (3'). Suppose that $f : X \hookrightarrow Y$ is in $W$. We want to show that $g^*(f)$ is in $W$ for every $g : Z \to Y$. Since $L$ is lex, $L(g^*(f)) = L(g)^*(L(f))$. Since isomorphisms are stable under pullback, this implies that $L(g)^*(L(f))$ is an isomorphism, so $g^*(f)$ is in $W$. Conversely suppose that $g^*(f)$ is in $W$ for every $g : Z \to Y$. Then taking $g$ to be the identity shows that $f$ is in $W$.
\end{proof}

Thus by (\ref{eq sys local iso <-> sys local epi <-> sat coverages}), to every lex reflective subcategory $\cat{E} \hookrightarrow \Pre(\cat{C})$ with reflector $L$, we can associate the saturated coverage $j(L) \coloneqq \psi \tau(L^{-1}(\text{iso}))$. Call this the \textbf{coverage induced by $L$}.

\begin{Th}[The Little Giraud Theorem\footnote{We got this name from \cite[Corollary C.2.1.11]{johnstone2002sketches}. For the actual Giraud Theorem, see Section \ref{section girauds theorem}.}] \label{th lex localizations <-> grothendieck toposes}
Given a lex reflective subcategory $\cat{E} \hookrightarrow \Pre(\cat{C})$, with reflector $L$, there is an equivalence of categories
\begin{equation}
    \cat{E} \simeq \Sh(\cat{C}, j(L)).
\end{equation}
\end{Th}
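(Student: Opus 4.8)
The plan is to realize both $\cat{E}$ and $\Sh(\cat{C}, j(L))$ as the same full subcategory of $\Pre(\cat{C})$, namely the presheaves that are \emph{local} with respect to $W = L^{-1}(\text{iso})$, where $Z$ is called $W$-local if precomposition $\Pre(\cat{C})(Y, Z) \to \Pre(\cat{C})(X, Z)$ is a bijection for every $f : X \to Y$ in $W$. The first input is the standard localization fact (Appendix \ref{section localizations}): since $\cat{E} \hookrightarrow \Pre(\cat{C})$ is a reflective localization with reflector $L$, a presheaf $Z$ lies in the essential image of $i$ (equivalently, its unit $Z \to iL(Z)$ is an isomorphism) if and only if $Z$ is $W$-local. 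Thus $i$ exhibits $\cat{E}$ as equivalent to the full subcategory of $W$-local presheaves, and it suffices to prove that a presheaf is $W$-local if and only if it is a $j(L)$-sheaf.

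The key bookkeeping observation is that the class of $j(L)$-local isomorphisms coincides with $W$. Indeed, $j(L) = \psi\tau(W)$ is saturated, so by the discussion following (\ref{eq sys local iso <-> sys local epi <-> sat coverages}) its class of local isomorphisms is $\sigma\varphi(j(L)) = \sigma\varphi\psi\tau(W)$; since $\varphi\psi$ is the identity (Proposition \ref{prop bij between saturated coverages and systems of local epis}) and $\sigma\tau$ is the identity (Proposition \ref{prop sys local epis bij with sys local isos}), this equals $\sigma\tau(W) = W$.

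For the inclusion of $j(L)$-sheaves into $\cat{E}$, suppose $Z$ is a $j(L)$-sheaf. By Corollary \ref{cor sheaves are local iso local even without saturation} (the arbitrary-site form of Proposition \ref{prop sheaves are local iso local}), $Z$ is local with respect to every $j(L)$-local isomorphism: for such $f : X \to Y$ and any $g : X \to Z$ there is a unique $h : Y \to Z$ with $hf = g$, which says exactly that $\Pre(\cat{C})(Y,Z) \to \Pre(\cat{C})(X,Z)$ is a bijection. Since the $j(L)$-local isomorphisms are precisely $W$, the presheaf $Z$ is $W$-local and hence lies in $\cat{E}$. For the reverse inclusion, suppose $Z$ is $W$-local and fix $U \in \cat{C}$ with $r \in j(L)(U)$. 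By definition of $j(L)$ the sieve inclusion $\overline{r} \hookrightarrow y(U)$ belongs to $W$, so $W$-locality makes $\Pre(\cat{C})(y(U),Z) \to \Pre(\cat{C})(\overline{r},Z)$ a bijection; by Corollary \ref{cor sheaf condition on a sieve} this means $Z$ is a sheaf on $\overline{r}$, and by Lemma \ref{lem sheaf on covering family iff on sieve it generates} therefore a sheaf on $r$. As $U$ and $r$ were arbitrary, $Z$ is a $j(L)$-sheaf. The two full subcategories thus have the same objects, and composing with $i$ gives the equivalence $\cat{E} \simeq \Sh(\cat{C}, j(L))$.

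The main obstacle is the first inclusion, which depends on correctly matching Corollary \ref{cor sheaves are local iso local even without saturation} against the identification of $j(L)$-local isomorphisms with $W$ from the chain of bijections (\ref{eq sys local iso <-> sys local epi <-> sat coverages}); the second inclusion is a direct unwinding of definitions through Corollary \ref{cor sheaf condition on a sieve} and Lemma \ref{lem sheaf on covering family iff on sieve it generates}. Care must also be taken that the resulting equivalence is induced by the common inclusion into $\Pre(\cat{C})$, so that it genuinely intertwines $i$ with the sheaf inclusion rather than being merely an abstract equivalence of categories.
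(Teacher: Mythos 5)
Your proposal is correct and follows essentially the same route as the paper's proof: identify $\cat{E}$ with the $W$-local presheaves via Proposition \ref{prop local objects equiv to localization}, then show $W$-locality coincides with the $j(L)$-sheaf condition using Corollary \ref{cor sheaves are local iso local even without saturation} in one direction and the definition of $j(L)$ together with the sheaf-on-a-sieve criterion in the other. Your explicit tracing of $\sigma\varphi\psi\tau(W) = W$ through the bijections is a welcome elaboration of a step the paper states without detail, but it is the same argument.
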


\begin{proof}
By Proposition \ref{prop local objects equiv to localization}, we know that $\cat{E}$ is equivalent to the full subcategory of $L^{-1}(\text{iso})$-local presheaves. Let $W = L^{-1}(\text{iso})$, and suppose that $X$ is a $W$-local presheaf. Then if $r \in j(L)(U)$ is a covering family, then $\overline{r} \hookrightarrow y(U)$ is in $W$. Thus the function
\begin{equation*}
    \Pre(\cat{C})(y(U), X) \to \Pre(\cat{C})(\overline{r}, X)
\end{equation*}
is a bijection. But this implies that $X$ is sheaf on $\overline{j(L)}$, equivalently a sheaf on $j(L)$.

Now suppose that $Z$ is a $j(L)$-sheaf. Then by Proposition \ref{prop sheaves are local iso local} and Corollary \ref{cor sheaves are local iso local even without saturation}, $Z$ is local with respect to the $j(L)$-local isomorphisms. But the class of $j(L)$-local isomorphisms is precisely the class $W$. Thus $Z$ is $W$-local. Therefore the full subcategory of $W$-local presheaves is precisely the full subcategory of $j(L)$-sheaves.
\end{proof}

To summarize, we have obtained the following commutative diagram
{\footnotesize
\begin{equation} \label{eq cvgs <-> lex subcats}
    \begin{tikzcd}
	{\{\text{saturated coverages on }\cat{C}\}} & {\{\text{systems of local epis on }\Pre(\cat{C})\}} & {\{\text{systems of local isos on }\Pre(\cat{C})\}} \\
	{\{\text{Grothendieck coverages on }\cat{C}\}} && {\{ \text{lex reflective subcategories of }\Pre(\cat{C})\}}
	\arrow["\varphi", shift left=2, from=1-1, to=1-2]
	\arrow["{\overline{(-)}}", shift left=2, from=1-1, to=2-1]
	\arrow["\psi", shift left=2, from=1-2, to=1-1]
	\arrow["\sigma", shift left=2, from=1-2, to=1-3]
	\arrow["\tau", shift left=2, from=1-3, to=1-2]
	\arrow["{(-)^\circ}", shift left=2, from=2-1, to=1-1]
	\arrow[dashed, from=2-1, to=2-3]
	\arrow["{W}"', from=2-3, to=1-3]
\end{tikzcd}
\end{equation}
}
where $W$ is the map that takes a lex reflective subcategory $\cat{E} \hookrightarrow \Pre(\cat{C})$ with reflector $L$ and gives the class $W = L^{-1}(\text{iso})$. We wish to fill in this diagram with the dashed arrow. This will be the content of the next section.

\subsection{Sheafification} \label{section sheafification}

Given a site $(\cat{C}, j)$ and $U \in \cat{C}$, let $\sat{j}(U)$ denote the category whose objects are $j$-saturating families  and whose morphisms are refinements.

\begin{Def} \label{def general plus construction}
Given a site $(\cat{C}, j)$ and a presheaf $X$ on $\cat{C}$, let us define a new presheaf $X^+$ on $\cat{C}$ objectwise as follows. If $U \in \cat{C}$, then let 
\begin{equation} \label{eq plus construction}
    X^+(U) = \underset{r \in \sat{j}(U)^\op}{\colim} \Match(r, X), 
\end{equation}  
This construction defines a functor $(-)^+ : \Pre(\cat{C}) \to \Pre(\cat{C})$, which we call the \textbf{plus construction}.
\end{Def}

The colimit in (\ref{eq plus construction}) is over a filtered category by Lemma \ref{lem meets in saturated coverages}, but it can often be more convenient to work with a filtered poset, i.e. a directed set. Let $\Gro{j}(U)$ denote the poset of $j$-covering families ordered by the subset relation. There is a map
\begin{equation*}
    \phi: \ncolim{R \in \Gro{j}(U)^\op} \Pre(\cat{C})(R, X) \to \ncolim{r \in \sat{j}(U)^\op} \Match(r, X)
\end{equation*}
which considers an equivalence class of a matching family $m : R \to X$ as an equivalence class of matching families by refinement. This map is well-defined, since two matching families $m : R \to X$ and $n : T \to X$ are identified in the left-hand set (by Lemma \ref{lem filtered colimits description in sets}) if there exists a covering sieve $S \subseteq R \cap T$ such that $m|_{R \cap T} = n|_{R \cap T}$, and this implies that there is a common refinement, so $\phi([m]) = \phi([n])$. Now it is surjective, as a $X$-matching family $\{x_i \}$ on a $j$-saturating family $r$ can be extended (non-uniquely) to a matching family $x : \overline{r} \to X$ by choosing for each $f \in \overline{r}$ a factorization
\begin{equation*}
    \begin{tikzcd}
	V && {U_i} \\
	& U
	\arrow["s", from=1-1, to=1-3]
	\arrow["f"', from=1-1, to=2-2]
	\arrow["{r_i}", from=1-3, to=2-2]
\end{tikzcd}
\end{equation*}
and setting $x_f = X(s)(x_i)$. Then we have a refinement $\overline{r} \leq r$ given by the factorizations given above which pulls $\{x_i \}$ back to $x$ and hence identifies them in the quotient. It is also injective. For assume that we have matching families $m : R \to X$ and $n : S \to X$, and a common refinement $f : t \to R$ and $g: t \to S$ such that $f^*(m) = g^*(n)$. Then $\overline{t} \subseteq R$ and $\overline{t} \subseteq S$, so $m$ and $n$ are identified in the domain. Thus we proven the following result.

\begin{Cor} \label{cor computation of plus construction}
Given a site $(\cat{C}, j)$ and a presheaf $X$, then for every $U \in \cat{C}$, we have
\begin{equation*}
    X^+(U) \cong \underset{R \in \Gro{j}(U)^\op}{\colim} \Pre(\cat{C})(R, X).
\end{equation*}
\end{Cor}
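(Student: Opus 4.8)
The plan is to show that the comparison map
\[
\phi : \ncolim{R \in \Gro{j}(U)^\op} \Pre(\cat{C})(R, X) \longrightarrow \ncolim{r \in \sat{j}(U)^\op} \Match(r, X) = X^+(U)
\]
constructed immediately before the statement is a bijection, where I unravel everything through the filtered-colimit description of Lemma \ref{lem filtered colimits description in sets}. First I would record the two bookkeeping facts that make $\phi$ natural. By Lemma \ref{lem matching family in bijection with presheaf maps for sieves} a matching family over a sieve $R$ is the same thing as a presheaf map, so $\Pre(\cat{C})(R,X) \cong \Match(R,X)$; hence the right-hand colimit is nothing but the diagram $\Match(-,X)$ defining $X^+(U)$, restricted along the inclusion $\Gro{j}(U) \hookrightarrow \sat{j}(U)$ of those saturating families that happen already to be sieves. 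Second, by Lemma \ref{lem refinement under sifted closure} every saturating family $r$ is refinement-equivalent to the covering sieve $\overline{r} \in \overline{\sat{j}}(U) = \Gro{j}(U)$ it generates: the canonical refinement $i : r \to \overline{r}$ of Definition \ref{def sieve generated by a family of morphisms} induces a restriction $i^* : \Match(\overline{r},X) \to \Match(r,X)$, and Lemma \ref{lem convenience of using sieves for matching families} shows that the extension described below is a two-sided inverse, so $\Match(r,X) \cong \Match(\overline{r},X)$ compatibly with the transition maps. Thus the two filtered diagrams are pointwise isomorphic once one passes to generated sieves.

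With this in place, the cleanest route is to read $\phi$ as the comparison induced by the cofinal inclusion $\Gro{j}(U)^\op \hookrightarrow \sat{j}(U)^\op$. Both index posets are finitely cofiltered by Lemma \ref{lem meets in saturated coverages}, so both colimits are filtered; for each saturating $r$ the sieve $\overline{r}$ is a covering sieve refining $r$ (witnessing nonemptiness of the relevant comma category), and any two covering sieves $R, R'$ refining $r$ are connected through $R \cap R'$, which is again covering by Lemma \ref{lem covering sieve props} and refines all of $R$, $R'$, $r$. Cofinality then gives the isomorphism directly. If one prefers to avoid cofinality, the same conclusion follows by checking $\phi$ on representatives exactly as in the discussion preceding the statement: well-definedness and injectivity reduce to the observation that two matching families identified in the left-hand colimit become equal after restriction to a common covering sieve $\overline{t} \subseteq R \cap S$, where $t$ is a common refinement supplied by cofilteredness; and surjectivity uses the axiom of choice to extend an $X$-matching family $\{x_i\}$ on a saturating family $r$ to a presheaf map $\overline{r} \to X$, by choosing for each $f \in \overline{r}$ a factorization $f = r_i s$ and setting $x_f = X(s)(x_i)$, this extension restricting back to $\{x_i\}$ along $i$.

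The main obstacle is surjectivity, or more precisely the well-definedness of the choice-based extension $x_f = X(s)(x_i)$: one must verify both that it is independent of the chosen factorization $f = r_i s$ and that the resulting collection genuinely satisfies the matching condition over the sieve $\overline{r}$, so that it lands in $\Pre(\cat{C})(\overline{r},X)$ rather than being a mere assignment of sections. Independence is precisely the matching condition for $\{x_i\}$ applied to the intersection square determined by two factorizations of $f$, and the sieve-matching condition then follows from Lemma \ref{lem convenience of using sieves for matching families} via $x_{fg} = X(sg)(x_i) = X(g)(x_f)$; everything else is a formal consequence of the filtered-colimit description together with the sieve/matching-family dictionary already established.
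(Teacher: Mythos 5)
Your proposal is correct and follows essentially the same route as the paper: the same comparison map $\phi$, injectivity via a common refinement landing inside $R \cap S$, and surjectivity by extending a matching family on a saturating family $r$ to a presheaf map $\overline{r} \to X$ via chosen factorizations. Your extra care about well-definedness of the extension $x_f = X(s)(x_i)$ is a worthwhile addition (the independence of the factorization is exactly the matching condition applied to the intersection square formed by two factorizations), and the cofinality packaging of the argument is just a clean restatement of the same checks.
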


\begin{Rem}
We also note that the above result holds just as well if we took the colimit over just $j$-trees rather than $j$-saturating families.
\end{Rem}

We can thus describe $X^+(U)$ explicitly as follows. It is the set of all $X$-matching families over some $R$, where $R$ is a covering sieve on $U$, modulo the following equivalence relation: we identify two matching families $R \xrightarrow{m} X \xleftarrow{n} R'$ if there exists a covering sieve $R'' \subseteq R \cap R'$ such that the following diagram commutes:
\begin{equation*}
\begin{tikzcd}
	R & X \\
	{R''} & {R'}
	\arrow["m", from=1-1, to=1-2]
	\arrow["n"', from=2-2, to=1-2]
	\arrow[hook', from=2-1, to=1-1]
	\arrow[hook, from=2-1, to=2-2]
\end{tikzcd}
\end{equation*}
We denote the image of a matching family $m: R \to X$ in $X^+(U)$ by $[m]$.

If $f: V \to U$ is a morphism in $\cat{C}$, then $X^+(f): X^+(U) \to X^+(V)$ is defined by 
$$X^+(f)([R \xrightarrow{m} X]) = [f^*R \to R \xrightarrow{m} X].$$ 
To fully define $(-)^+$ as a functor we must also define how it acts on presheaf maps $g: X \to Y$. This is simply defined in components as $g^+([R \xrightarrow{m} X]) = [R \xrightarrow{m} X \xrightarrow{g} Y]$. 

\begin{Rem}
We will use the form of the plus construction given in Corollary \ref{cor computation of plus construction} for the rest of this section, as it will significantly reduce the complications of the proofs. In fact, we will assume we are working over a Grothendieck site $(\cat{C}, J)$ for the rest of this section, in which case we have
\begin{equation*}
    X^+(U) \cong \underset{R \in J(U)^\op}{\colim} \Pre(\cat{C})(R,X).
\end{equation*}
\end{Rem}

\begin{Lemma} \label{lem sub-matching families have same amalgamation}
Suppose that $X$ is a presheaf, and suppose that we have two $X$-matching families $m : R \to X$ and $n : R' \to X$ on sieves that fit into the following commutative diagram:
\begin{equation*}
    \begin{tikzcd}
	{R'} && R \\
	& X
	\arrow["n"', from=1-1, to=2-2]
	\arrow["m", from=1-3, to=2-2]
	\arrow[hook, from=1-1, to=1-3]
\end{tikzcd}
\end{equation*}
in which case we call $n$ a \textbf{sub-matching family} of $m$. If a section $x: y(U) \to X$ is an amalgamation for $m$, then it is also an amalgamation for $n$.
\end{Lemma}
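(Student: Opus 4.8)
The plan is to work entirely within the presheaf-map reformulation of matching families and amalgamations set up just before the statement (via Lemma \ref{lem matching family in bijection with presheaf maps for sieves} and Corollary \ref{cor sheaf condition on a sieve}). Under that dictionary an $X$-matching family over a sieve is the same datum as a map of presheaves into $X$, and an amalgamation for it is a lift of that map along the canonical inclusion of the sieve into the representable $y(U)$. So the whole statement becomes a factorization question among subobjects of $y(U)$.

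First I would name the three inclusions in play: write $\iota : R' \hookrightarrow R$ for the inclusion witnessing that $n$ is a sub-matching family, and write $\iota_R : R \hookrightarrow y(U)$ and $\iota_{R'} : R' \hookrightarrow y(U)$ for the two inclusions into the representable. Since subobjects of $y(U)$ form a poset and these maps are the evident monos, they satisfy $\iota_{R'} = \iota_R \circ \iota$. The hypothesis that $n$ is a sub-matching family of $m$ is precisely the equation $n = m \circ \iota$, and the hypothesis that $x : y(U) \to X$ is an amalgamation for $m$ is precisely the equation $x \circ \iota_R = m$.

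The conclusion is then a one-line diagram chase: composing the amalgamation identity with $\iota$ gives
$$x \circ \iota_{R'} = x \circ \iota_R \circ \iota = m \circ \iota = n,$$
which is exactly the statement that $x$ is an amalgamation for $n$.

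There is no genuine obstacle here; the only point needing a moment's care is to confirm that the inclusion $R' \hookrightarrow y(U)$ really factors as $R' \hookrightarrow R \hookrightarrow y(U)$, which is automatic since $\cons{sub}(y(U))$ is a poset. As a sanity check one can also verify the claim at the level of sections: under the correspondence $m$ is the family $\{m_V(f)\}_{f \in R}$, the amalgamation condition reads $X(f)(x) = m_V(f)$ for all $f \in R$, and since $R' \subseteq R$ with $n_V(f) = m_V(f)$ for $f \in R'$, the very same identities restricted to $f \in R'$ exhibit $x$ as an amalgamation for $n$.
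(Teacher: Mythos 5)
Your proof is correct and is essentially the same as the paper's: both reduce the statement to the commutativity of the triangle $R' \hookrightarrow R \hookrightarrow y(U) \xrightarrow{x} X$ and read off $x \circ \iota_{R'} = m \circ \iota = n$. The added sanity check at the level of sections is fine but not needed.
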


\begin{proof}
If $x$ is an amalgamation for $m$, then the following diagram commutes
\begin{equation*}
\begin{tikzcd}
	{R'} && R \\
	& {y(U)} \\
	& X
	\arrow[hook, from=1-1, to=1-3]
	\arrow[hook, from=1-1, to=2-2]
	\arrow["n"', curve={height=6pt}, from=1-1, to=3-2]
	\arrow[hook, from=1-3, to=2-2]
	\arrow["m", curve={height=-6pt}, from=1-3, to=3-2]
	\arrow["x"', from=2-2, to=3-2]
\end{tikzcd}
\end{equation*}
thus $x$ is an amalgamation for $n$.
\end{proof}

\begin{Def} \label{def unit map of plus construction}
Given a Grothendieck site $(\cat{C}, J)$, define the natural transformation $\eta: 1_{\Pre(C)} \Rightarrow (-)^+$ componentwise as follows. If $x : y(U) \to X$ is a section, then set $\eta_X(x) = [y(U) \xrightarrow{x} X]$. Call $\eta$ the \textbf{unit map} of the plus construction.
\end{Def}

\begin{Lemma} \label{lem eta is invertible on sheaves}
If $X$ is a sheaf on a Grothendieck site $(\cat{C}, J)$, then $\eta_X : X \to X^+$ is an isomorphism.
\end{Lemma}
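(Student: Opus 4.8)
The plan is to work componentwise with the explicit Grothendieck-site form of the plus construction, $X^+(U) \cong \ncolim{R \in J(U)^\op} \Pre(\cat{C})(R, X)$, and to exploit that a sheaf on a covering sieve has a unique amalgamation (Corollary \ref{cor sheaf condition on a sieve}). Since $\eta_X$ is already a natural transformation of presheaves, it is an isomorphism precisely when each component $\eta_{X,U} : X(U) \to X^+(U)$ is a bijection, so I would check surjectivity and injectivity of $\eta_{X,U}$ separately, using the description of $X^+(U)$ as equivalence classes $[m]$ of matching families on covering sieves.

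For surjectivity, I would start from a class $[m] \in X^+(U)$, represented by an $X$-matching family $m : R \to X$ on some covering sieve $R \in J(U)$. As $X$ is a $J$-sheaf and $R$ is covering, Corollary \ref{cor sheaf condition on a sieve} provides a unique amalgamation $x \in X(U)$, i.e. a section $x : y(U) \to X$ with $x|_R = m$. Then $\eta_X(x) = [y(U) \xrightarrow{x} X]$, and since $y(U) \in J(U)$ by (G1), the covering sieve $R \subseteq y(U) \cap R$ together with the equality $x|_R = m$ witnesses, through the equivalence relation defining $X^+(U)$, that $[y(U) \xrightarrow{x} X] = [m]$. Hence $\eta_X(x) = [m]$ and $\eta_{X,U}$ is surjective.

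For injectivity, I would take $x, x' \in X(U)$ with $\eta_X(x) = \eta_X(x')$. By the equivalence relation on $X^+(U)$, this yields a covering sieve $R \subseteq y(U)$ with $x|_R = x'|_R$ as matching families on $R$. Both $x$ and $x'$ are then amalgamations of this single matching family on the covering sieve $R$, so uniqueness of amalgamations (again Corollary \ref{cor sheaf condition on a sieve}) forces $x = x'$.

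Conceptually, the same fact can be seen in one stroke: for a sheaf $X$ the restriction maps $X(U) \to \Pre(\cat{C})(R, X)$ are bijections compatible with the transition maps of the diagram, so they assemble into a natural isomorphism from the constant functor $\Delta X(U)$ to $R \mapsto \Pre(\cat{C})(R, X)$ on the connected (indeed filtered, by Lemma \ref{lem covering sieve props}) category $J(U)^\op$; taking colimits then gives $X^+(U) \cong X(U)$, and evaluating the colimit cocone at the component $R = y(U)$ identifies this isomorphism with $\eta_{X,U}$. The only real subtlety, and the step I would take care with, is the bookkeeping of the equivalence relation defining $X^+(U)$—namely choosing the common covering sieve on which two representatives are compared—and invoking the sheaf property in its two guises: existence of amalgamations for surjectivity, and uniqueness for injectivity.
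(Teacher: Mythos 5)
Your proof is correct and follows essentially the same route as the paper's: both arguments rest entirely on the existence and uniqueness of amalgamations for matching families on covering sieves (the paper packages this as constructing an explicit inverse $\eta_U^{-1}$ and checking it is well defined via Lemma \ref{lem sub-matching families have same amalgamation}, while you verify injectivity and surjectivity of $\eta_{X,U}$ directly, which amounts to the same bookkeeping). The concluding filtered-colimit remark is a valid alternative packaging but adds nothing beyond the componentwise argument.
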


\begin{proof}
Let us construct an inverse to $\eta$ in components. If $U \in \cat{C}$, then we want to define
$\eta_U^{-1}: X^+(U) \to X(U)$. If $[R \xrightarrow{m} X] \in X^+(U)$, then since $X$ is a sheaf, there is a unique amalgamation $y(U) \xrightarrow{x} X$ of $m$, so define $\eta_U^{-1}([R \to X]) = x$ to be that unique amalgamation. Let us show this is well defined. Suppose that $[R \xrightarrow{m} X] = [R' \xrightarrow{n} X]$, so that we have a commutative diagram:
\[\begin{tikzcd}
	{R''} & {R'} \\
	R & X
	\arrow["m"', from=2-1, to=2-2]
	\arrow[hook, from=1-1, to=2-1]
	\arrow[hook', from=1-1, to=1-2]
	\arrow["n", from=1-2, to=2-2]
\end{tikzcd}\]
then since $X$ is a sheaf, $m$ and $n$ amalgamate to a unique $x$ and $y$ respectively. By Lemma \ref{lem sub-matching families have same amalgamation}, this implies that $x$ and $y$ are also amalgamations for $R''$. But $R''$ is a covering sieve and $X$ is a sheaf, so $x = y$. Thus $\eta_U^{-1}$ is well defined and it is easy to see that it defines an inverse map to $\eta$.
\end{proof}

\begin{Lemma}\label{lem universal map to plus construction}
Given a Grothendieck site $(\cat{C}, J)$, and a map $f: X \to Y$ of presheaves on $\cat{C}$, if $Y$ is a sheaf, then $f$ factors uniquely:
\begin{equation*}
\begin{tikzcd}
X & {X^+} \\
{} & Y
\arrow["{\eta_X}", from=1-1, to=1-2]
\arrow["f"', from=1-1, to=2-2]
\arrow["{\exists ! \widetilde{f}}", from=1-2, to=2-2]
\end{tikzcd}
\end{equation*}
\end{Lemma}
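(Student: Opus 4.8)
The plan is to lean on the functoriality of the plus construction together with Lemma \ref{lem eta is invertible on sheaves}, which tells us $\eta_Y$ is an isomorphism because $Y$ is a sheaf. Applying $(-)^+$ to $f$ yields a map $f^+ : X^+ \to Y^+$, and naturality of $\eta$ (Definition \ref{def unit map of plus construction}) gives $f^+ \circ \eta_X = \eta_Y \circ f$. Since $\eta_Y$ is invertible, I would define
\begin{equation*}
\widetilde{f} = \eta_Y^{-1} \circ f^+ : X^+ \to Y,
\end{equation*}
and then compute $\widetilde{f} \circ \eta_X = \eta_Y^{-1} \circ f^+ \circ \eta_X = \eta_Y^{-1} \circ \eta_Y \circ f = f$. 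This settles existence of the factorization immediately; the real content of the lemma is uniqueness.

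For uniqueness, suppose $g : X^+ \to Y$ is any map with $g \circ \eta_X = f$. Fix $U \in \cat{C}$ and an element $[R \xrightarrow{m} X] \in X^+(U)$, where $R$ is a $J$-covering sieve and $m$ corresponds, via Lemma \ref{lem matching family in bijection with presheaf maps for sieves}, to the matching family $\{x_i = m(r_i)\}_{r_i \in R}$. The crucial step is the local computation $X^+(r_i)([m]) = \eta_{X,U_i}(x_i)$ for each $r_i : U_i \to U$ in $R$: by Lemma \ref{lem pullback of sieve by map in sieve is maximal sieve} we have $r_i^*(R) = y(U_i)$, and the composite matching family $y(U_i) = r_i^*(R) \to R \xrightarrow{m} X$ sends $h : V \to U_i$ to $m(r_i h) = X(h)(x_i)$ by Lemma \ref{lem convenience of using sieves for matching families}, which under Lemma \ref{lem matching family in bijection with presheaf maps for sieves} is exactly the section $x_i$, i.e.\ $\eta_{X,U_i}(x_i)$. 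Hence by naturality of $g$ and the hypothesis $g \circ \eta_X = f$,
\begin{equation*}
Y(r_i)\big(g_U([m])\big) = g_{U_i}\big(X^+(r_i)([m])\big) = g_{U_i}\big(\eta_{X,U_i}(x_i)\big) = f_{U_i}(x_i).
\end{equation*}

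The identical computation applied to $\widetilde{f}$ gives $Y(r_i)(\widetilde{f}_U([m])) = f_{U_i}(x_i)$ as well. Thus $g_U([m])$ and $\widetilde{f}_U([m])$ are both amalgamations of the $Y$-matching family $\{f_{U_i}(x_i)\}_{r_i \in R}$ over the covering sieve $R$ (this family is matching because $f \circ m : R \to Y$ is a morphism of presheaves). Since $Y$ is a sheaf, amalgamations over $R$ are unique, so $g_U([m]) = \widetilde{f}_U([m])$; as $U$ and $[m]$ were arbitrary, $g = \widetilde{f}$. I expect the only delicate point to be the restriction identity $X^+(r_i)([m]) = \eta_{X,U_i}(x_i)$, which is precisely what locates the image of $\eta_X$ inside $X^+$ on each covering map and lets the sheaf property of $Y$ force uniqueness; everything else is formal manipulation of the unit and the functoriality of $(-)^+$.
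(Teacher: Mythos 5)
Your proposal is correct and follows essentially the same route as the paper's proof: existence via $\widetilde{f} = \eta_Y^{-1}\circ f^+$ using naturality of $\eta$ and Lemma \ref{lem eta is invertible on sheaves}, and uniqueness by restricting $[m]$ along each $r_i \in R$, using $r_i^*(R) = y(U_i)$ to identify $X^+(r_i)([m])$ with $\eta_X(m(r_i))$, and concluding that any two candidate factorizations are amalgamations of the same $Y$-matching family over the covering sieve $R$.
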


\begin{proof}
Since $Y$ is a sheaf, the map $\eta_Y$ in the naturality square
\begin{equation*}
\begin{tikzcd}
X & {X^+} \\
Y & {Y^+}
\arrow["f"', from=1-1, to=2-1]
\arrow["{\eta_X}", from=1-1, to=1-2]
\arrow["{f^+}", from=1-2, to=2-2]
\arrow["{\eta_Y}"', from=2-1, to=2-2]
\end{tikzcd}	
\end{equation*}
is an isomorphism by Lemma \ref{lem eta is invertible on sheaves}. So let us define $\widetilde{f} = \eta_Y^{-1} f^+$. Now let us show that $\widetilde{f}$ is unique. Suppose that $\widehat{f} : X^+ \to Y$ is a map of presheaves such that $\widehat{f} \eta_X = f$. Then given a map $g: U \to V$ in $\cat{C}$, we have the following commutative diagram
\begin{equation*}
\begin{tikzcd}
	{X^+(U)} & {X^+(V)} \\
	{Y(U)} & {Y(V)}
	\arrow["{Y(g)}"', from=2-1, to=2-2]
	\arrow["{\widehat{f}_V}", from=1-2, to=2-2]
	\arrow["{{X^+(g)}}", from=1-1, to=1-2]
	\arrow["{\widehat{f}_U}"', from=1-1, to=2-1]
\end{tikzcd}
\end{equation*}
So if $[R \xrightarrow{m} X] \in X^+(U)$ for some covering sieve $R \in J(U)$, and $g: V \to U$ is any morphism, then 
$$\left( Y(g) \circ \widehat{f}_U \right) \left( [R \xrightarrow{m} X] \right) = \left( \widehat{f}_V \circ X^+(g) \right) \left( [R \xrightarrow{m} X] \right) = \widehat{f}_V \left( [g^* R \to R \xrightarrow{m} X] \right).$$ 
Now if $g \in R$, then by Lemma \ref{lem pullback of sieve by map in sieve is maximal sieve}, $g^* R = y(V)$. Thus for $g \in R$, 
$$\left( Y(g) \circ \widehat{f}_U \right) \left( [R \xrightarrow{m} X] \right) =  \widehat{f}_V \left( [y(V) \xrightarrow{g} R \xrightarrow{m} X] \right).$$
Thus the composite $y(V) \xrightarrow{g} R \xrightarrow{m} X$ is precisely the element $m_g \in X(V)$ of the matching family $m : R \to X$.

Therefore we have
$$\widehat{f}_V \left( [m_g] \right) = \widehat{f}_V \eta_X(m_g) = f(m_g).$$

Now let $\widehat{f}_U ([R \xrightarrow{m} X] ) = y$, so that $Y(g)(y) = f(m_g)$. In other words, we have shown that $y$ is an amalgamation for the $Y$-matching family $\{ f(m_g) \}$. But 
$$\widetilde{f}_U([R \xrightarrow{m} X]) = \eta^{-1}_Y f^+_U([R \xrightarrow{m} X]) = \eta^{-1}_Y([R \xrightarrow{m} X \xrightarrow{f} Y]).$$
Thus $\widetilde{f}_U([R \xrightarrow{m} X])$ is the unique amalgamation of the $Y$-matching family $\{ f(m_g) \}$. But as shown above, $y$ is such an amalgamation. Thus $\widehat{f}_U([R \xrightarrow{m} X]) = \widetilde{f}_U([R \xrightarrow{m} X])$.
\end{proof}

\begin{Prop}\label{prop plus construction is separated}
Given a presheaf $X$ on a Grothendieck site $(\cat{C}, J)$, the presheaf $X^+$ is separated (Definition \ref{def separated and sheaf}).
\end{Prop}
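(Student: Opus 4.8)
The plan is to show directly that for every covering sieve $S \in J(U)$ the restriction map $\text{res}_{S, X^+} : X^+(U) \to \Match(S, X^+)$ is injective; since $J$ is sifted, its covering families are sieves, so this is exactly what separatedness asks. Accordingly, suppose $\xi, \xi' \in X^+(U)$ satisfy $X^+(f)(\xi) = X^+(f)(\xi')$ for every $f : V \to U$ in $S$. Using the explicit description of the plus construction above, write $\xi = [R \xrightarrow{m} X]$ and $\xi' = [R' \xrightarrow{n} X]$ for covering sieves $R, R' \in J(U)$. My goal is to produce a single covering sieve $R'' \subseteq R \cap R'$ on which $m$ and $n$ agree, since this is precisely the condition that identifies $\xi$ and $\xi'$ in the filtered colimit.

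First I would unwind the hypothesis object by object. Recall that $X^+(f)(\xi) = [f^*R \to R \xrightarrow{m} X]$, where the pulled-back matching family sends $h \in f^*R$ to $m(fh)$. The equality $X^+(f)(\xi) = X^+(f)(\xi')$ in $X^+(V)$ means, by the equivalence relation defining $X^+$, that there is a covering sieve $T_f \subseteq f^*R \cap f^*R' = f^*(R \cap R')$ on $V$ such that $m(fh) = n(fh)$ for all $h \in T_f$; here I use that pullback of sieves commutes with intersection.

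Next I would glue these local agreement sieves into a global one using composition closure. Since $J$ is a Grothendieck coverage it is composition closed (Lemma \ref{lem grothendieck coverages are composition closed}), so $R'' := (S \circ T) = \bigcup_{f \in S} f_*(T_f)$ is again a covering sieve on $U$. A typical element of $R''$ has the form $g = fh$ with $f \in S$ and $h \in T_f \subseteq f^*(R \cap R')$; hence $g = fh \in R \cap R'$, giving $R'' \subseteq R \cap R'$. Moreover $m(g) = m(fh) = n(fh) = n(g)$ by the defining property of $T_f$, so $m$ and $n$ restrict to the same matching family on $R''$. By the description of the equivalence relation this yields $\xi = \xi'$, and injectivity of $\text{res}_{S,X^+}$ follows.

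The routine but care-demanding step is the second one: correctly identifying the pulled-back matching family $h \mapsto m(fh)$ and reading off, from the colimit's equivalence relation, a sieve $T_f$ satisfying both $T_f \subseteq f^*R \cap f^*R'$ and $m|_{T_f} = n|_{T_f}$ simultaneously. The only genuinely nontrivial ingredient is composition closure of $J$, which guarantees that $(S \circ T)$ is covering; everything else is bookkeeping with sieves.
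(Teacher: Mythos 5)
Your proof is correct and follows essentially the same route as the paper's: extract, for each $f$ in the test sieve, a local covering sieve $T_f$ on which the two representatives agree, then invoke composition closure of $J$ (Lemma \ref{lem grothendieck coverages are composition closed}) to assemble $\bigcup_{f} f_*(T_f)$ into a single covering sieve witnessing the identification in the filtered colimit. The only difference is bookkeeping: you keep the test sieve $S$ and the representing sieves $R, R'$ distinct, whereas the paper tacitly takes all three to be the same sieve; your version is slightly more careful but the argument is the same.
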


\begin{proof}
We wish to show that for every $R \in J(U)$ the restriction map
\begin{equation*}
    \text{res}_{R,X^+} : X^+(U) \to \Pre(\cat{C})(R,X^+)
\end{equation*}
is injective. The map $\text{res}_{R,X^+}$ takes an element $[R \xrightarrow{m} X]$ to the collection of matching families $\{ [g^*(R) \to R \xrightarrow{m} X] \}_{g \in R}$, where $g : V \to U$ is a morphism in $R$ and $[g^*(R) \to R \xrightarrow{m} X] \in X^+(V)$. So suppose that $m, n \in X^+(U)$ such that $\text{res}_{R, X^+}(m) = \text{res}_{R,X^+}(n)$. This means that for every $g : V \to U$ in $R$, there is a covering sieve $S_g \hookrightarrow y(V)$ such that the following diagram commutes
\begin{equation*}
    \begin{tikzcd}
	{g^*(R)} & R & X \\
	&& R \\
	S_g && {g^*(R)}
	\arrow[from=1-1, to=1-2]
	\arrow["m", from=1-2, to=1-3]
	\arrow["n"', from=2-3, to=1-3]
	\arrow[hook', from=3-1, to=1-1]
	\arrow[hook, from=3-1, to=3-3]
	\arrow[from=3-3, to=2-3]
\end{tikzcd}
\end{equation*}
But then $S = \bigcup_{g \in R} g_*(S_g)$ is a covering sieve over $U$ by Lemma \ref{lem grothendieck coverages are composition closed}, and $m|_S = n|_S$. Thus $[R \xrightarrow{m} X] = [R \xrightarrow{n} X]$.
\end{proof}

\begin{Th}
Given a Grothendieck site $(\cat{C}, J)$, if $X$ is a $J$-separated presheaf on $\cat{C}$, then $X^+$ is a $J$-sheaf.
\end{Th}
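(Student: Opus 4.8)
The plan is to prove that $X^+$ is a sheaf by establishing both existence and uniqueness of amalgamations for $X^+$-matching families over every covering sieve $R \in J(U)$. Uniqueness comes for free: by Proposition \ref{prop plus construction is separated} the presheaf $X^+$ is separated for \emph{any} input presheaf, so the restriction map $X^+(U) \to \Pre(\cat{C})(R, X^+)$ is always injective. Hence the entire content lies in producing an amalgamation, and this is precisely the step that will consume the separatedness hypothesis on $X$.

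So let $R \in J(U)$ be a covering sieve and, using Lemma \ref{lem matching family in bijection with presheaf maps for sieves} and Lemma \ref{lem convenience of using sieves for matching families}, present an $X^+$-matching family over $R$ as a compatible family $\{\xi_g \in X^+(V)\}_{g : V \to U \in R}$ satisfying $X^+(h)(\xi_g) = \xi_{gh}$ for all $h$. For each $g$ I would choose a representative $\xi_g = [S_g \xrightarrow{m_g} X]$, where $S_g \in J(V)$ is a covering sieve and $m_g : S_g \to X$ is an $X$-matching family. First I would form the composite sieve $T = \bigcup_{g \in R} g_*(S_g)$ on $U$, which is a covering sieve by composition closure of Grothendieck coverages (Lemma \ref{lem grothendieck coverages are composition closed}). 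The goal is to assemble the various $m_g$ into a single $X$-matching family $\mu : T \to X$ and then take $\xi = [T \xrightarrow{\mu} X]$ as the candidate amalgamation.

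The central step is to define $\mu$ and prove it is well defined. Each $w \in T$ factors as $w = gk$ with $g \in R$ and $k \in S_g$, and I would set $\mu(w) = m_g(k)$. To see this is independent of the factorization, suppose $w = g_1 k_1 = g_2 k_2$. The matching relations give $X^+(k_1)(\xi_{g_1}) = \xi_w = X^+(k_2)(\xi_{g_2})$ in $X^+(W)$; unwinding the filtered-colimit description of $X^+$ via Lemma \ref{lem filtered colimits description in sets} produces a covering sieve $P$ on $W$ on which the pulled-back matching families agree, which by Lemma \ref{lem convenience of using sieves for matching families} says $X(l)(m_{g_1}(k_1)) = X(l)(m_{g_2}(k_2))$ for every $l \in P$. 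Since $X$ is \emph{separated} and $P$ is covering, this forces $m_{g_1}(k_1) = m_{g_2}(k_2)$, so $\mu$ is well defined; this is the one and only place the hypothesis enters. That $\mu$ is genuinely a matching family again follows from Lemma \ref{lem convenience of using sieves for matching families}: for $a : W' \to W$ one has $wa = g(ka)$ with $ka \in S_g$, whence $\mu(wa) = m_g(ka) = X(a)(m_g(k)) = X(a)(\mu(w))$.

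Finally I would verify that $\xi = [T \xrightarrow{\mu} X]$ amalgamates $\{\xi_g\}$, i.e. $X^+(g)(\xi) = \xi_g$ for each $g : V \to U$ in $R$. Here $X^+(g)(\xi) = [g^*T \to T \xrightarrow{\mu} X]$, and since $k \in S_g$ implies $gk \in T$ we have $S_g \subseteq g^*T$ with $(g^*\mu)|_{S_g} = m_g$ by construction; as $S_g$ is a covering sieve contained in both representatives, they are identified in the colimit $X^+(V)$, giving $X^+(g)(\xi) = \xi_g$. Together with uniqueness from separatedness, this shows every $X^+$-matching family over a covering sieve has a unique amalgamation, so $X^+$ is a $J$-sheaf. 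I expect the well-definedness of $\mu$ to be the main obstacle, as it is the exact point requiring separatedness of $X$ and a careful passage through the equivalence relation defining the filtered colimit.
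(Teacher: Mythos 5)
Your proposal is correct and follows essentially the same route as the paper's proof: both form the composite sieve $T = \bigcup_{g \in R} g_*(S_g)$, define the candidate amalgamation on a morphism $w = gk$ by $m_g(k)$, use the $X^+$-matching condition plus the colimit description of $X^+$ to produce a covering sieve on which the two candidate sections agree, and then invoke separatedness of $X$ to conclude well-definedness, with uniqueness supplied by Proposition \ref{prop plus construction is separated}. The naturality check and the verification that $S_g \subseteq g^*T$ identifies the restricted representative with $m_g$ in the colimit also match the paper's argument.
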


\begin{proof}
We wish to show that given a matching family $R \xrightarrow{m} X^+$, there exists a unique amalgamation $y(U) \xrightarrow{x} X^+$. By Proposition \ref{prop plus construction is separated}, we need only to show that it exists, since $X^+$ being separated guarantees the amalgamation is unique.

So given $m : R \to X^+$, for each $g: V \to U \in R$, we have an element $[R_g \xrightarrow{m_g} X] \in X^+(V)$, where $R_g \in J(V)$. Now consider $\widetilde{R} = \bigcup_{g \in R} g_*(R_g)$. We wish to construct a natural transformation $\widetilde{R} \xrightarrow{\sigma} X$ that will serve as the amalgamation $x = [\widetilde{R} \xrightarrow{\sigma} X]$ of $\{ [m_g] \in X^+(V) \}_{g : V \to U \in R}$.

We will construct this natural transformation in terms of its components and thus will have to show it is well defined and natural. If $h : W \to U \in \widetilde{R}$, then $h = g t$ for some $g \in R$ and some $t : W \to V \in R_g$. Define $\sigma_W: \widetilde{R}(W) \to X(W)$ by $\sigma_W(h) =  m_g(t)$.

Let us show that this is well defined. Suppose that $h = g t = g' t'$ for $g : V \to U$ and $g' : V' \to U$ in $R$, and $t : W_g \to V \in R_g$, $t' : W_{g'} \to V' \in R_{g'}$. We want to show that $m_g(t) = m_{g'}(t')$. Since $\{[m_g]\}$ is an $X^+$-matching family, we have
\begin{equation*}
    X^+(t)[m_g] = [m_{gt}] = [m_{g't'}] = X^+(t')[m_{g'}].
\end{equation*}
So there exists a covering sieve $S \subseteq t^* R_g \cap (t')^*(R_{g'})$ such that $m_g|_S = m_{g'}|_S$, or equivalently for every $s \in S$,
$$X(s)(m_g(t)) = X(s)(m_{g'}(t')).$$
But $\{X(s)(m_g(t)) = X(s)(m_{g'}(t')) \}_{s \in S}$ is an $X$-matching family over $S$. But $S$ is a covering sieve, $X$ is separated, and both $m_g(t)$ and $m_{g'}(t')$ are amalgamations. Thus $m_g(t) = m_{g'}(t')$, which implies that $\sigma_W$ is well-defined.

We must now show that the $\sigma_W$ assemble into a natural transformation, namely we wish to show that if $k: W' \to W$ is an arbitrary morphism in $\cat{C}$, then the following diagram commutes
\begin{equation*}
 \begin{tikzcd}
	{\widetilde{R}(W)} & {X(W)} \\
	{\widetilde{R}(W')} & {X(W')}
	\arrow["{\sigma_W}", from=1-1, to=1-2]
	\arrow["{\widetilde{R}(k)}"', from=1-1, to=2-1]
	\arrow["{X(k)}", from=1-2, to=2-2]
	\arrow["{\sigma_{W'}}"', from=2-1, to=2-2]
\end{tikzcd}  
\end{equation*}
So suppose that $h \in \widetilde{R}(W)$. Then $X(k) \sigma_W(h) = X(k)(m_g(t))$ for some $g \in R$ and some $t \in R_g(W)$, and $\sigma_{W'} \widetilde{R}(k)(h) = \sigma_{W'}(hk) = m_g(tk)$. But we know that $m_g (tk) = m_g(R_g(k)(t)) = X(k)(m_g(t))$ since $m_g: R_g \to X$ is a natural transformation. Thus $\sigma : \widetilde{R} \to X$ is a well defined natural transformation.

All that is left to show is that $[\widetilde{R} \xrightarrow{\sigma} X]$ is an amalgamation of $m$. In other words we need to show that $X^+(g)[\sigma] = [g^* \widetilde{R} \to \widetilde{R} \xrightarrow{\sigma} X] = [R_g \xrightarrow{m_g} X].$ 

Now $R_g \subseteq g^* \widetilde{R} = g^* \bigcup_{h \in R} h_* R_h$ since $R_g \subseteq g^* g_* R_g$, and if $t \in R_g$, then $gt \in \widetilde{R}$ and $\sigma(gt) = m_g(t)$. Thus the following diagram commutes
\begin{equation*}
    \begin{tikzcd}
	{g^* \widetilde{R}} & {\widetilde{R}} \\
	{R_g} & X
	\arrow[from=1-1, to=1-2]
	\arrow["\sigma", from=1-2, to=2-2]
	\arrow[hook', from=2-1, to=1-1]
	\arrow["{m_g}"', from=2-1, to=2-2]
\end{tikzcd}
\end{equation*}
which is what we wanted to show. Thus $[\sigma]$ is an amalgamation of $\{ [m_g] \}$.
\end{proof}

\begin{Cor}
Given a Grothendieck site $(\cat{C}, J)$ and a presheaf $X$, the presheaf $(X^+)^+$ is a $J$-sheaf.
\end{Cor}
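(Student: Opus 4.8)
The plan is to obtain this immediately by chaining the two preceding results, since the whole point of applying the plus construction twice is exactly that each application upgrades the separation/sheaf status by one notch. First I would recall that for a completely arbitrary presheaf $X$ on the Grothendieck site $(\cat{C}, J)$, Proposition \ref{prop plus construction is separated} guarantees that $X^+$ is $J$-separated. No hypotheses on $X$ are needed for this step, which is the crucial point: the first application of $(-)^+$ already forces separatedness.

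Next I would apply the preceding Theorem — which states that the plus construction of a $J$-separated presheaf is a $J$-sheaf — not to $X$ itself but to the presheaf $X^+$. Since $X^+$ is $J$-separated by the previous paragraph, that Theorem yields that $(X^+)^+$ is a $J$-sheaf. Formally, writing $Y = X^+$, we have that $Y$ is separated, hence $Y^+ = (X^+)^+$ is a sheaf, which is exactly the claim.

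I do not expect any genuine obstacle here, as all the real labor was already spent in establishing Proposition \ref{prop plus construction is separated} (separatedness of $X^+$) and the Theorem (sheafiness of the plus construction on separated presheaves); the corollary is a one-line composition of these. The only thing worth double-checking is purely bookkeeping: that $(-)^+$ is genuinely functorial and that its values $(X^+)^+(U)$ are computed by the same filtered colimit over $J(U)^{\op}$ used throughout, so that the Theorem applies verbatim with $X^+$ in the role of its input presheaf. Both facts are already recorded in Definition \ref{def general plus construction} and Corollary \ref{cor computation of plus construction}, so the argument goes through without further work.
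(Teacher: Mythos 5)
Your argument is exactly the intended one: Proposition \ref{prop plus construction is separated} gives that $X^+$ is separated for arbitrary $X$, and the preceding Theorem applied to $X^+$ then gives that $(X^+)^+$ is a sheaf, which is precisely why the paper states this as an immediate corollary with no written proof. Correct, and the same route the paper takes.
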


\begin{Def} \label{def sheafification}
Given a Grothendieck site $(\cat{C}, J)$, let $a: \Pre(\cat{C}) \to \Sh(\cat{C}, J)$ denote the composite functor $a = ((-)^+)^+$. We call this the \textbf{sheafification functor}.
\end{Def}

\begin{Prop} \label{prop sheafification props}
Given a Grothendieck site $(\cat{C}, J)$, the functor $a: \Pre(\cat{C}) \to \Sh(\cat{C}, J)$
\begin{enumerate}
	\item preserves finite limits,
	\item is left adjoint to the inclusion $i: \Sh(\cat{C}) \hookrightarrow \Pre(\cat{C})$, and
	\item if $X$ is a sheaf, then the unit $\eta_X : X \to ai X$ of the adjunction $a \dashv i$, is an isomorphism.
\end{enumerate}
\end{Prop}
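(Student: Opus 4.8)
The plan is to prove the three claims in the order (1), (2), (3), with (1) carrying essentially all of the real work. Since limits in $\Sh(\cat{C}, J)$ are computed as in $\Pre(\cat{C})$ (sheaves being closed under limits), claim (1) reduces to showing that the endofunctor $((-)^+)^+$ of $\Pre(\cat{C})$ preserves finite limits; as this is a composite, it suffices to show that the plus construction $(-)^+$ alone preserves finite limits. Finite limits of presheaves are computed objectwise, so after fixing $U \in \cat{C}$ I would work with the formula from Corollary \ref{cor computation of plus construction},
\begin{equation*}
    X^+(U) \cong \ncolim{R \in J(U)^\op} \Pre(\cat{C})(R, X),
\end{equation*}
and show that the functor $X \mapsto X^+(U)$ from $\Pre(\cat{C})$ to $\ncat{Set}$ preserves finite limits.

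The key step, and the main obstacle, is the interchange of a filtered colimit with a finite limit. First, for each fixed sieve $R$ the covariant hom-functor $\Pre(\cat{C})(R, -)$ preserves all limits, so for a finite diagram $D : I \to \Pre(\cat{C})$ with limit $L$ one has $\Pre(\cat{C})(R, L) \cong \lim_{i} \Pre(\cat{C})(R, D_i)$. Second, the indexing category $J(U)^\op$ is filtered: given covering sieves $R, R'$, their intersection $R \cap R'$ is again a covering sieve by Lemma \ref{lem covering sieve props}, so $J(U)$ is cofiltered under inclusion, as recorded in Lemma \ref{lem meets in saturated coverages}. Since filtered colimits commute with finite limits in $\ncat{Set}$, I can interchange:
\begin{equation*}
    L^+(U) \cong \ncolim{R \in J(U)^\op} \lim_{i} \Pre(\cat{C})(R, D_i) \cong \lim_{i} \ncolim{R \in J(U)^\op} \Pre(\cat{C})(R, D_i) \cong \lim_{i} D_i^+(U).
\end{equation*}
This shows $(-)^+$ preserves finite limits; applying this twice gives claim (1).

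For claim (2), I would exhibit the adjunction $a \dashv i$ via its universal property, taking as unit the natural transformation $\eta_{X^+} \circ \eta_X : X \to (X^+)^+ = aX$ assembled from the unit $\eta$ of the plus construction (Definition \ref{def unit map of plus construction}). Given any sheaf $Y$ and a map $f : X \to iY$, Lemma \ref{lem universal map to plus construction} applied to the sheaf $Y$ produces a unique factorization $f = \widetilde{f} \circ \eta_X$ with $\widetilde{f} : X^+ \to Y$; applying the same lemma again to $\widetilde{f}$ produces a unique $\widetilde{\widetilde{f}} : (X^+)^+ \to Y$ with $\widetilde{f} = \widetilde{\widetilde{f}} \circ \eta_{X^+}$. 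Composing, $f = \widetilde{\widetilde{f}} \circ \eta_{X^+} \circ \eta_X$, and $\widetilde{\widetilde{f}} : aX \to Y$ is the unique map through which $f$ factors along the unit, which is exactly the universal property defining $a \dashv i$.

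Finally, for claim (3), if $X$ is a sheaf then $\eta_X : X \to X^+$ is an isomorphism by Lemma \ref{lem eta is invertible on sheaves}; consequently $X^+ \cong X$ is again a sheaf, so applying Lemma \ref{lem eta is invertible on sheaves} once more to $X^+$ shows that $\eta_{X^+} : X^+ \to (X^+)^+$ is likewise an isomorphism. Hence the unit $\eta_{X^+} \circ \eta_X : X \to aiX$ is an isomorphism, being a composite of isomorphisms. I expect claim (1) to be the only genuinely delicate point, the remaining two following formally from the two factorization lemmas already established for the plus construction.
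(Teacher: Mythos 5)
Your proposal is correct and follows essentially the same route as the paper: objectwise reduction of (1) to the plus construction, cofilteredness of $J(U)$ via closure of covering sieves under intersection, commutation of filtered colimits with finite limits in $\ncat{Set}$, two applications of Lemma \ref{lem universal map to plus construction} for the adjunction, and Lemma \ref{lem eta is invertible on sheaves} for the unit. Your version is slightly more explicit than the paper's about the reduction of (1) to the single plus construction and about applying Lemma \ref{lem eta is invertible on sheaves} twice in (3), but the substance is identical.
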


\begin{proof}
(1) First note that for every $U \in \cat{C}$, the poset $J(U)$ is cofiltered. This follows from Lemma \ref{lem covering sieve props}. Therefore if $X : K \to \Pre(\cat{C})$ is a finite diagram of presheaves, then $$(\lim_{k \in K} X(k))^+(U) = \underset{R \in J(U)^\op}{\colim} \Pre(\cat{C})(R, \lim_{k \in K} X(k)) \cong \underset{R \in J(U)^\op}{\colim} \lim_{k \in K} \Pre(\cat{C})(R, X(k)),$$
and since filtered colimits commute with finite limits in $\Set$ (Proposition \ref{prop filtered colimits commute with finite limits in Set}), we have
$$\underset{R \in J(U)^\op}{\colim} \lim_{k \in K} \Pre(\cat{C})(R, X(k)) \cong \lim_{k \in K} \underset{R \in J(U)^\op}{\colim} \Pre(\cat{C})(R,X(k)) \cong (\lim_{k \in K} X^+(k))(U).$$

(2) Let $X$ be a presheaf, and consider the composition:
\begin{equation*}
X \xrightarrow{\eta_X} X^+ \xrightarrow{\eta_{X^+}} X^{++} = aX
\end{equation*}
Now if we have a map $X \to iY$, where $Y$ is a sheaf, then two applications of Lemma \ref{lem universal map to plus construction} gives a unique map $aX \to Y$. It is easy to check this defines an adjunction $a \dashv i$ and the composite map $\eta_{X^+} \circ \eta_X$ is the unit of the adjunction.

(3) follows from Lemma \ref{lem eta is invertible on sheaves}.
\end{proof}

\begin{Rem} \label{rem sheafification on a general site}
Given a site $(\cat{C}, j)$ and a presheaf $X$, then $aX = (X^+)^+$ is a $j$-sheaf, because $aX$ is a $\Gro{j}$-sheaf, and by Corollary \ref{cor grothendieck closure preserves sheaves}, $a$ is a functor $a : \Pre(\cat{C}) \to \Sh(\cat{C}, j)$ and has all the same properties as in Proposition \ref{prop sheafification props}.
\end{Rem}

\begin{Cor} \label{cor sheafification gives lex localization}
Given a site $(\cat{C}, j)$, the full subcategory $\Sh(\cat{C}, j) \hookrightarrow \Pre(\cat{C})$ is a lex reflective subcategory of $\Pre(\cat{C})$.
\end{Cor}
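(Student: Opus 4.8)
The plan is to recognize this as an immediate consequence of the sheafification machinery just developed, applied to the Grothendieck closure of $j$. First I would recall that by Definition \ref{def separated and sheaf} the category $\Sh(\cat{C}, j)$ is, by construction, a full subcategory of $\Pre(\cat{C})$. Hence the only two things left to verify are that the inclusion $i : \Sh(\cat{C}, j) \hookrightarrow \Pre(\cat{C})$ admits a left adjoint (reflectivity) and that this left adjoint preserves finite limits (the ``lex'' condition).

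Next I would reduce from an arbitrary site to the Grothendieck case. By Corollary \ref{cor grothendieck closure preserves sheaves}, the Grothendieck closure $\Gro{j}$ satisfies $\Sh(\cat{C}, j) = \Sh(\cat{C}, \Gro{j})$, so it suffices to produce the required reflector for the Grothendieck site $(\cat{C}, \Gro{j})$ and then transport it back along this identification. Then I would invoke Proposition \ref{prop sheafification props}: for the Grothendieck site $(\cat{C}, \Gro{j})$, the sheafification functor $a = ((-)^+)^+ : \Pre(\cat{C}) \to \Sh(\cat{C}, \Gro{j})$ is left adjoint to the inclusion and preserves finite limits. Under the equality $\Sh(\cat{C}, j) = \Sh(\cat{C}, \Gro{j})$, this functor $a$ is precisely a reflector for $\Sh(\cat{C}, j)$, and since it preserves finite limits the inclusion exhibits $\Sh(\cat{C}, j)$ as a lex reflective subcategory, as desired. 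This is exactly the content summarized in Remark \ref{rem sheafification on a general site}.

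Since all of the substantive work has already been carried out — constructing the plus construction, proving that two applications yield a sheaf, establishing the adjunction $a \dashv i$, and verifying left-exactness via the commutation of finite limits with the filtered colimit in \eqref{eq plus construction} — there is no genuine obstacle remaining, and the corollary is essentially a bookkeeping step. The only point that deserves to be stated explicitly is the passage from $j$ to $\Gro{j}$, which is what allows the Grothendieck-site version of Proposition \ref{prop sheafification props} to be applied to an arbitrary site.
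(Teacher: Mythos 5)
Your argument is correct and matches the paper's route exactly: the paper also deduces this corollary from Proposition \ref{prop sheafification props} together with the reduction to the Grothendieck closure via Corollary \ref{cor grothendieck closure preserves sheaves}, as spelled out in Remark \ref{rem sheafification on a general site}. No gaps.
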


\begin{Prop} \label{prop sheafification inverts local isos}
Given a site $(\cat{C}, j)$, the class of morphisms $W = a^{-1}(\text{iso})$ that are inverted by sheafification is precisely the class $L$ of $j$-local isomorphisms.
\end{Prop}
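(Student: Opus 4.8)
Write $L$ for the class of $j$-local isomorphisms and $W = a^{-1}(\text{iso})$ for the class inverted by sheafification; the goal is the two inclusions $L \subseteq W$ and $W \subseteq L$. I would dispatch $L \subseteq W$ first, since it follows quickly from results already in hand. Let $f : X \to Y$ be a $j$-local isomorphism and let $Z$ be any $j$-sheaf. By Proposition \ref{prop sheaves are local iso local} together with Corollary \ref{cor sheaves are local iso local even without saturation}, every map $X \to iZ$ extends uniquely along $f$, i.e. the precomposition map $\Pre(\cat{C})(f, iZ) : \Pre(\cat{C})(Y, iZ) \to \Pre(\cat{C})(X, iZ)$ is a bijection (surjectivity is existence of the extension, injectivity is its uniqueness). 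Under the adjunction isomorphism $\Pre(\cat{C})(-, iZ) \cong \Sh(\cat{C}, j)(a(-), Z)$ coming from $a \dashv i$ (Proposition \ref{prop sheafification props}), this says that $\Sh(\cat{C},j)(a(f), Z)$ is a bijection for every sheaf $Z$, so $a(f)$ is an isomorphism by the Yoneda lemma in $\Sh(\cat{C}, j)$. Hence $f \in W$.

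For the reverse inclusion the crucial input is that the unit $\eta_X : X \to aX$ of the sheafification adjunction is itself a $j$-local isomorphism for every presheaf $X$. Since $a = ((-)^+)^+$ and $j$-local isomorphisms are closed under composition (Corollary \ref{cor local isos closd under composition}), it suffices to show the single plus-construction unit $\eta_X : X \to X^+$ (Definition \ref{def unit map of plus construction}) is a $j$-local isomorphism; I will compute using the $\Gro{j}$-description of Corollary \ref{cor computation of plus construction} and transport the conclusion back to $j$ via Lemma \ref{lem local epi iff local epi on groth closure}. For the local epimorphism condition, given a section $[R \xrightarrow{m} X] \in X^+(U)$ with $R$ a covering sieve, each $g : V \to U$ in $R$ satisfies $g^*R = y(V)$ by Lemma \ref{lem pullback of sieve by map in sieve is maximal sieve}, so the restriction of $[m]$ along $g$ equals $\eta_X(m_g)$; thus $[m]$ lifts to $X$ after restriction along the covering sieve $R$. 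For the local monomorphism condition, by Lemma \ref{lem characterization of local mono} I must show that $\eta_X(s) = \eta_X(s')$ for $s, s' \in X(U)$ forces the restrictions of $s, s'$ to agree on a covering; but equality in the filtered colimit $X^+(U)$ means precisely that there is a covering sieve $R$ with $X(g)(s) = X(g)(s')$ for all $g \in R$, which is exactly the required condition. Hence $\eta_X$ is both a $j$-local epimorphism and a $j$-local monomorphism, so a $j$-local isomorphism.

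With the unit lemma established, $W \subseteq L$ is immediate from the two-out-of-three property (Lemma \ref{lem properties of local monos and local isos}.(6)): if $a(f)$ is an isomorphism, then in the naturality square $\eta_Y \circ f = ia(f) \circ \eta_X$ the maps $\eta_X, \eta_Y$ are $j$-local isomorphisms by the lemma and $ia(f)$ is an isomorphism, hence a $j$-local isomorphism; applying two-out-of-three to $\{f, \eta_Y, \eta_Y \circ f\}$ (both $\eta_Y$ and $\eta_Y \circ f$ being $j$-local isomorphisms) then gives that $f$ is a $j$-local isomorphism. I expect the main obstacle to be the local-isomorphism property of the unit --- specifically, keeping the colimit-quotient bookkeeping of $X^+$ straight and correctly invoking Lemma \ref{lem local epi iff local epi on groth closure} to pass between the $\Gro{j}$ computation and the $j$-local notions --- rather than the two formal inclusion arguments, which are short.
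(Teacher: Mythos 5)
Your proof is correct, but for the harder inclusion it takes a genuinely different route from the paper. The paper's proof stays entirely at the level of the localization formalism of Appendix \ref{section localizations}: it shows that the $L$-local presheaves are exactly the sheaves (via Proposition \ref{prop sheaves are local iso local} in one direction and Lemma \ref{lem saturating iff sifted closure is saturating} in the other), identifies these with the $W$-local objects using Lemma \ref{lem W-local objects and equivalences of reflective localizations} and Corollary \ref{cor sheafification gives lex localization}, and then concludes $W = \{W\text{-local equivalences}\} = \{L\text{-local equivalences}\} = L$ without ever touching the plus construction. You instead prove the concrete statement that the unit $\eta_X : X \to aX$ is a $j$-local isomorphism --- by checking the local-epi and local-mono conditions directly against the filtered-colimit description of $X^+(U)$ --- and then finish with two-out-of-three. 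Both arguments are sound; yours is longer but buys a reusable lemma (the unit of sheafification is a local isomorphism, which is how Kashiwara--Schapira organize this material), and it supplies explicitly the one step the paper's proof passes over quickly, namely the inclusion of the $L$-local equivalences back into $L$: that inclusion is exactly what knowing $\eta_X \in L$ gives you via two-out-of-three. The paper's version buys brevity and shows how far the abstract reflective-localization machinery carries you on its own. Your $L \subseteq W$ direction uses the same essential input (Proposition \ref{prop sheaves are local iso local} plus the adjunction and Yoneda) as the paper, just phrased without the ``local equivalence'' vocabulary.
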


\begin{proof}
By Proposition \ref{prop sheaves are local iso local}, sheaves are all $L$-local. Clearly if a presheaf $X$ is $L$-local, then it is a sheaf, since all inclusions $\overline{r} \hookrightarrow y(U)$, where $r$ is a $j$-covering family are $j$-local isomorphisms, by Lemma \ref{lem saturating iff sifted closure is saturating}. So sheaves are precisely the $L$-local objects. But by Lemma \ref{lem W-local objects and equivalences of reflective localizations} and Corollary \ref{cor sheafification gives lex localization}, they are also precisely the $W$-local objects. But again by Lemma \ref{lem W-local objects and equivalences of reflective localizations}, $W$ is precisely the class of $W$-local equivalences. But this is precisely the class of $L$-local equivalences, since a presheaf is $W$-local if and only if it is $L$-local if and only if it is a sheaf. Hence $W = L$. 
\end{proof}

Now we can fill in the final piece of the commutative diagram (\ref{eq cvgs <-> lex subcats}). Note that we don't need to include the bijection between saturated coverages and Grothendieck coverages, as sheafification is defined for any general site.
\begin{equation}
    \begin{tikzcd}
	{\{\text{systems of local epis on }\Pre(\cat{C})\}} & {\{\text{systems of local isos on }\Pre(\cat{C})\}} \\
	{\{\text{saturated coverages on }\cat{C}\}} & {\{ \text{lex reflective subcategories of }\Pre(\cat{C})\}}
	\arrow["\sigma", shift left=2, from=1-1, to=1-2]
	\arrow["\psi", shift left=2, from=1-1, to=2-1]
	\arrow["\tau", shift left=2, from=1-2, to=1-1]
	\arrow["{W^{-1}}"', shift right=2, from=1-2, to=2-2]
	\arrow["\varphi", shift left=2, from=2-1, to=1-1]
	\arrow["a", shift left=2, from=2-1, to=2-2]
	\arrow["{W}"', shift right=2, from=2-2, to=1-2]
	\arrow["{a^{-1}}", shift left=2, from=2-2, to=2-1]
\end{tikzcd}
\end{equation}
where $W^{-1}$ is the map $\alpha \psi \tau$ and $a^{-1}$ is the map $\psi \tau W$. It is easy to see that each morphism above is a bijection with inverse given parallel to it, and furthermore the entire diagram commutes. The function $W \circ a$ is precisely the map that sends a saturated coverage $j$ to its class of $j$-local isomorphisms.

\begin{Cor} \label{cor sheaf topoi are bicomplete}
Given a site $(\cat{C}, j)$, the category of sheaves $\Sh(\cat{C}, j)$ has all small limits and colimits. Limits are computed pointwise, and agree with those in $\Pre(\cat{C})$. Colimits are computed by taking the colimit in $\Pre(\cat{C})$ and then sheafifying the result.
\end{Cor}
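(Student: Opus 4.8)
The plan is to exploit the reflective adjunction $a \dashv i$ between $\Pre(\cat{C})$ and $\Sh(\cat{C}, j)$ established in Proposition \ref{prop sheafification props}, together with the fact that the presheaf topos $\Pre(\cat{C})$ is bicomplete with limits and colimits computed pointwise. The general principle is that a full reflective subcategory of a bicomplete category is itself bicomplete, with limits created by the inclusion and colimits obtained by reflecting the ambient colimit; I would instantiate this principle in the present setting, the only nonformal point being that a pointwise limit of sheaves is again a sheaf.

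For limits, let $F : K \to \Sh(\cat{C}, j)$ be a small diagram and form the pointwise limit $X = \lim_k i F(k)$ in $\Pre(\cat{C})$, so that $X(U) = \lim_k F(k)(U)$. First I would check that $X$ is a $j$-sheaf. Fix $U \in \cat{C}$ and a covering family $r$ over $U$. The functor $\Match(r, -) : \Pre(\cat{C}) \to \ncat{Set}$ is continuous, since $\Match(r, Y)$ is the subset of $\prod_i Y(U_i)$ cut out by the equalizer conditions of Definition \ref{def presheaf, section, matching family, amalgamation}, and both products and equalizers commute with the pointwise limit. Hence the restriction map $\mathrm{res}_{r,X} : X(U) \to \Match(r,X)$ of Definition \ref{def separated and sheaf} is canonically identified with $\lim_k \mathrm{res}_{r, F(k)}$, a limit of bijections, and is therefore a bijection. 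Thus $X$ is a sheaf on every covering family and so lies in $\Sh(\cat{C}, j)$. That $X$ is the limit there follows formally: for any sheaf $Y$, full faithfulness of $i$ gives $\Sh(\cat{C},j)(Y, X) \cong \Pre(\cat{C})(iY, X) \cong \lim_k \Pre(\cat{C})(iY, iF(k)) \cong \lim_k \Sh(\cat{C},j)(Y, F(k))$, so $X$ carries the universal cone. This simultaneously shows the limit agrees with the one in $\Pre(\cat{C})$, computed pointwise.

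For colimits, let $F : K \to \Sh(\cat{C}, j)$ be a small diagram, form $C = \colim_k i F(k)$ in $\Pre(\cat{C})$, and take the candidate colimit to be $aC$. Here the verification is pure adjunction calculus: for any sheaf $Y$,
\begin{equation*}
\Sh(\cat{C},j)(aC, Y) \cong \Pre(\cat{C})(C, iY) \cong \lim_k \Pre(\cat{C})(iF(k), iY) \cong \lim_k \Sh(\cat{C},j)(F(k), Y),
\end{equation*}
using the adjunction $a \dashv i$, the continuity of the contravariant hom in its first variable applied to $C = \colim_k iF(k)$, and full faithfulness of $i$. Hence $aC$ represents $Y \mapsto \lim_k \Sh(\cat{C},j)(F(k), Y)$ and is the colimit in $\Sh(\cat{C}, j)$, which by construction is obtained by taking the colimit in $\Pre(\cat{C})$ and sheafifying.

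The only step requiring genuine content is showing that the pointwise limit of sheaves is again a sheaf; everything else is the standard formal behaviour of a reflective localization. I would therefore place the weight of the argument on the continuity of $\Match(r,-)$, which is where the sheaf hypothesis actually enters, while both the identification of limits with pointwise limits and the formula $aC = a(\colim_{\Pre(\cat{C})} iF)$ for colimits then drop out of the adjunction $a \dashv i$ and the bicompleteness of $\Pre(\cat{C})$.
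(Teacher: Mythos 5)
Your argument is correct, and it essentially reconstructs the content of the general statement the paper invokes: the paper's proof of this corollary is a one-line appeal to Proposition \ref{prop (co)limits in reflective subcategories} (reflective subcategories inherit limits from, and reflect colimits computed in, the ambient category), applied to the reflective inclusion $\Sh(\cat{C},j) \hookrightarrow \Pre(\cat{C})$ from Corollary \ref{cor sheafification gives lex localization}. Your colimit half is word-for-word the same adjunction calculus that proves the colimit clause of that proposition. The one place you genuinely diverge is the limit half: the cited proposition establishes closure under ambient limits purely formally from the unit/counit of the adjunction $a \dashv i$, with no reference to what the objects of the subcategory are, whereas you verify directly that a pointwise limit of sheaves is a sheaf by observing that $\Match(r,-)$ is built from products and equalizers and hence commutes with limits, so $\text{res}_{r,X} \cong \lim_k \text{res}_{r,F(k)}$ is a limit of bijections. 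Both are valid; the paper's route is shorter and requires no sheaf-specific input, while yours isolates exactly where the sheaf hypothesis enters and is arguably more informative to a reader who wants to see the sheaf condition being checked rather than deduced from general nonsense. (One small point worth recording if you write this up: the equalizer presentation of $\Match(r,X)$ ranges over all intersection squares for $r$, which form a small set because $\cat{C}$ is small, so the limit interchange you use is legitimate.)
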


\begin{proof}
This follows from Proposition \ref{prop (co)limits in reflective subcategories}.    
\end{proof}

\begin{Cor} \label{cor sat or gro coverages are rigid}
Saturated and Grothendieck coverages are rigid in the sense that if $j$ and $j'$ are coverages on a small category $\cat{C}$ such that $\ncat{Sh}(\cat{C}, j) \simeq \ncat{Sh}(\cat{C}, j')$, then $\sat{j} = \sat{j'}$ and $\Gro{j} = \Gro{j'}$.
\end{Cor}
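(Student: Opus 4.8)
The plan is to recover the saturated coverage from the reflective subcategory of sheaves and then feed this into the bijections assembled in diagram (\ref{eq cvgs <-> lex subcats}). First I would reduce to the saturated case: by Proposition \ref{prop sheaf iff sheaf on saturation closure} we have $\Sh(\cat{C}, j) = \Sh(\cat{C}, \sat{j})$ as full subcategories of $\Pre(\cat{C})$, and likewise for $j'$, so the subcategory of $j$-sheaves depends only on $\sat{j}$. Since $\Gro{j} = \overline{\sat{j}}$ by Definition \ref{def Grothendieck closure of a coverage}, it suffices to prove $\sat{j} = \sat{j'}$; the equality $\Gro{j} = \Gro{j'}$ then follows by applying $\overline{(-)}$.

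Next I would extract the coverage from the subcategory. By Corollary \ref{cor sheafification gives lex localization} each inclusion $\Sh(\cat{C}, j) \hookrightarrow \Pre(\cat{C})$ is a lex reflective subcategory with reflector the sheafification $a_j$, and by Proposition \ref{prop sheafification inverts local isos} the class $W_j = a_j^{-1}(\text{iso})$ of morphisms inverted by sheafification is exactly the class of $j$-local isomorphisms; by Lemma \ref{lem W-local objects and equivalences of reflective localizations} this class is precisely the class of local equivalences of the reflective subcategory $\Sh(\cat{C}, j)$, hence it is determined by that subcategory alone. Under the bijections of (\ref{eq sys local iso <-> sys local epi <-> sat coverages}), the composite $\psi \tau$ is inverse to $\sigma \varphi$, and $\sigma \varphi(\sat{j}) = W_j$ (using Lemma \ref{lem local epi iff local epi on saturation closure} to identify $j$-local isos with $\sat{j}$-local isos), so $\psi \tau(W_j) = \sat{j}$. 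Thus $\sat{j}$ is recovered from $\Sh(\cat{C}, j)$ as $\psi \tau(W_j)$.

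Putting this together: if $\Sh(\cat{C}, j)$ and $\Sh(\cat{C}, j')$ agree as reflective subcategories of $\Pre(\cat{C})$, then $W_j = W_{j'}$, and applying the bijection $\psi \tau$ yields $\sat{j} = \psi \tau(W_j) = \psi \tau(W_{j'}) = \sat{j'}$, whence $\Gro{j} = \overline{\sat{j}} = \overline{\sat{j'}} = \Gro{j'}$.

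The main obstacle, and the point that needs care, is the precise meaning of the hypothesis $\Sh(\cat{C}, j) \simeq \Sh(\cat{C}, j')$. A merely abstract equivalence of categories is genuinely too weak: taking $\cat{C}$ to be the two-object discrete category, the (saturated) coverage declaring $\varnothing$ a cover of the first object and the one declaring $\varnothing$ a cover of the second are distinct, yet both produce sheaf categories equivalent to $\Set$. The argument above therefore needs the equivalence to be compatible with the embeddings into $\Pre(\cat{C})$, that is, $\Sh(\cat{C}, j)$ and $\Sh(\cat{C}, j')$ must be the same full replete subcategory of $\Pre(\cat{C})$; this is the sense in which rigidity holds, and it is exactly what makes the equality $W_j = W_{j'}$ available.
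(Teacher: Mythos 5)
Your proof is correct and is essentially the paper's own argument: the corollary is stated without proof precisely because it is meant to follow from the bijection between saturated coverages and lex reflective subcategories of $\Pre(\cat{C})$ assembled in the diagrams of Section \ref{section sheafification and lex localizations}, together with $\Sh(\cat{C},j) = \Sh(\cat{C},\sat{j})$ and the isomorphism $\ncat{SatCvg} \cong \ncat{GroCvg}$. Your closing caveat is a genuine and correct observation rather than a quibble: with $\simeq$ read as abstract equivalence the statement is false, and your two-object discrete counterexample (empty family covering one object versus the other, both sheaf categories equivalent to $\ncat{Set}$, yet $\sat{j} \neq \sat{j'}$ since the empty sieve is covering over different objects) establishes this; the hypothesis must be that the two sheaf categories coincide as full replete subcategories of $\Pre(\cat{C})$, which is exactly what your argument uses.
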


\begin{Cor} \label{cor j-local iso/epi/mono <-> sheafification is iso/epi/mono}
A map $f: X \to Y$ of presheaves is a $j$-local iso/epi/mono-morphism if and only if $af$ is an iso/epi/mono-morphism of sheaves.
\end{Cor}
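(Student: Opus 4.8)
The plan is to reduce all three equivalences to the isomorphism case, which is already settled: by Proposition \ref{prop sheafification inverts local isos} a map $f$ is a $j$-local isomorphism if and only if $af$ is an isomorphism. The two properties of $a$ that drive everything else are that it is left exact (Corollary \ref{cor sheafification gives lex localization}), hence preserves finite limits --- in particular monomorphisms, pullbacks, and therefore diagonal maps, so that $a(\Delta_f) = \Delta_{af}$ --- and that it is a left adjoint (Proposition \ref{prop sheafification props}), hence preserves colimits and in particular epimorphisms. Since $a$ preserves both epimorphisms and monomorphisms, it preserves the (epimorphism, monomorphism) image factorization, so that $a(\im f) = \im(af)$ and $a$ carries the inclusion $\im f \hookrightarrow Y$ to $\im(af) \hookrightarrow aY$.

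For the monomorphism case I would argue as follows. By definition $f$ is a $j$-local monomorphism if and only if $\Delta_f$ is a $j$-local epimorphism. Since $\Delta_f$ is a monomorphism of presheaves, it is automatically a $j$-local monomorphism, so for $\Delta_f$ being a $j$-local epimorphism is equivalent to being a $j$-local isomorphism. By the isomorphism case this holds if and only if $a(\Delta_f) = \Delta_{af}$ is an isomorphism, which by Lemma \ref{lem map is mono iff slice diagonal iso} is equivalent to $af$ being a monomorphism. This settles the monomorphism equivalence with no further work.

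For the epimorphism case I would factor $f$ through its image as $X \twoheadrightarrow \im f \hookrightarrow Y$, writing $m : \im f \hookrightarrow Y$ for the monomorphism part and noting the first map is an epimorphism. Epimorphisms are $j$-local epimorphisms (Remark \ref{rem epis are local epis}), so by Lemmas \ref{lem local epimorphisms closed under composition} and \ref{lem composition is local epi implies local epi}, $f$ is a $j$-local epimorphism if and only if $m$ is. As $m$ is a monomorphism, the same device as above shows $m$ is a $j$-local epimorphism if and only if it is a $j$-local isomorphism, i.e. (isomorphism case) if and only if $am = (\im(af) \hookrightarrow aY)$ is an isomorphism. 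Finally $\im(af) \hookrightarrow aY$ is an isomorphism exactly when $af$ is an epimorphism, using that $\Sh(\cat{C},j)$ is a topos and hence balanced, so that a map is an epimorphism precisely when the monic part of its image factorization is invertible. I expect this last point, together with carefully verifying that $a$ genuinely preserves image factorizations, to be the only real obstacle; the isomorphism and monomorphism cases are essentially immediate once Proposition \ref{prop sheafification inverts local isos} and left-exactness are in hand.
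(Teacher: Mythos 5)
Your proposal is correct and follows essentially the same route as the paper: the isomorphism case is quoted from Proposition \ref{prop sheafification inverts local isos}, the epimorphism case is reduced to the image inclusion $\im(f)\hookrightarrow Y$ being a local isomorphism (using that $a$ preserves the epi--mono factorization and that the sheaf topos is balanced), and the monomorphism case is handled through the slice diagonal, which is exactly what the paper's closing ``similarly'' is gesturing at. Your write-up is in fact slightly more explicit than the paper's, which justifies $a(\im f)\cong\im(af)$ via the regular-coimage description (Lemma \ref{lem image of presheaf map}) rather than via uniqueness of epi--mono factorizations, but the two justifications are interchangeable here.
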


\begin{proof}
The map $f$ is a $j$-local isomorphism if and only if $af$ is an isomorphism by Proposition \ref{prop sheafification inverts local isos}. If $f: X \to Y$ is a $j$-local epimorphism, then $\im(f) \hookrightarrow Y$ is a $j$-local isomorphism. Now $a$ preserves colimits and finite limits by Proposition \ref{prop sheafification props} and since the image of $f$ coincides with the regular coimage by Lemma \ref{lem image of presheaf map}, $\im(af) \cong a(\im(f))$. Thus $\im(af) \hookrightarrow aY$ is an isomorphism, and hence an epimorphism. Furthermore $a$ preserves epimorphisms, so $aX \twoheadrightarrow \im(af)$ is an epimorphism in $\Sh(\cat{C}, j)$. Therefore $af$ is an epimorphism.

Conversely, if $af : aX \to aY$ is an epimorphism in $\Sh(\cat{C},j)$, then $\im(af) \hookrightarrow aY$ is an isomorphism, so $\im(f) \hookrightarrow Y$ is a $j$-local isomorphism, and therefore $f$ is a $j$-local epimorphism. Similarly, $af$ is a monomorphism if and only if it is a $j$-local monomorphism.
\end{proof}

\begin{Cor} \label{cor monos of sheaves}
Let $X$ and $Y$ be sheaves on a site $(\cat{C}, j)$. Then a map $f: X \to Y$ is a $j$-local monomorphism if and only if it is a monomorphism of sheaves if and only if it a monomorphism of presheaves. 
\end{Cor}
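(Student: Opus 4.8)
The plan is to establish the two biconditionals separately: first that being a $j$-local monomorphism is equivalent to being a monomorphism in $\Sh(\cat{C}, j)$, and then that the latter is equivalent to being a monomorphism in $\Pre(\cat{C})$.

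For the first equivalence, I would invoke Corollary \ref{cor j-local iso/epi/mono <-> sheafification is iso/epi/mono}, which says that $f$ is a $j$-local monomorphism if and only if its sheafification $af$ is a monomorphism of sheaves. The key observation is then that, because $X$ and $Y$ are already sheaves, the units $\eta_X : X \to aX$ and $\eta_Y : Y \to aY$ are isomorphisms by Proposition \ref{prop sheafification props}(3). Naturality of $\eta$ gives $af = \eta_Y \circ f \circ \eta_X^{-1}$, so $af$ and $f$ are isomorphic as arrows (using that $i$ is fully faithful); since being a monomorphism is invariant under isomorphism of arrows, $af$ is a monomorphism of sheaves if and only if $f$ is. This yields the equivalence of the first two conditions.

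For the second equivalence, the direction from presheaf monomorphisms to sheaf monomorphisms is immediate: $\Sh(\cat{C}, j)$ is a full subcategory of $\Pre(\cat{C})$, and the defining cancellation property of a monomorphism, if it holds against all presheaf maps, holds a fortiori against maps between sheaves. For the converse I would use that the inclusion $i : \Sh(\cat{C}, j) \hookrightarrow \Pre(\cat{C})$ is right adjoint to sheafification by Proposition \ref{prop sheafification props}(2); as a right adjoint it preserves all limits, hence monomorphisms, so a monomorphism of sheaves is carried to a monomorphism of presheaves, and since $i$ is the inclusion this says exactly that $f$ is a presheaf monomorphism. Alternatively one can route this through Lemma \ref{lem map is mono iff slice diagonal iso}, noting via Corollary \ref{cor sheaf topoi are bicomplete} that the pullback $X \times_Y X$ and the slice diagonal $\Delta_f$ are computed pointwise and therefore agree whether formed in $\Sh(\cat{C}, j)$ or in $\Pre(\cat{C})$.

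I do not anticipate a genuine obstacle, since the statement is essentially a bookkeeping consequence of the sheafification adjunction. The one point that requires care is the first equivalence, where one must remember to use that the unit is an isomorphism on sheaves (Proposition \ref{prop sheafification props}(3)) in order to identify $af$ with $f$ itself, rather than with something only weakly related to it.
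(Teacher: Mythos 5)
Your proof is correct and follows essentially the same route as the paper, which cites Corollary \ref{cor j-local iso/epi/mono <-> sheafification is iso/epi/mono}, Proposition \ref{prop (co)limits in reflective subcategories}, and the pullback characterization of monomorphisms; your use of the unit isomorphism on sheaves to identify $af$ with $f$, and of the limit-preserving inclusion to pass between sheaf and presheaf monomorphisms, is exactly the intended unpacking of those citations.
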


\begin{proof}
This follows from Corollary \ref{cor j-local iso/epi/mono <-> sheafification is iso/epi/mono}, Proposition \ref{prop (co)limits in reflective subcategories} and the fact that monomorphisms can be characterized by pullback diagrams.
\end{proof}

\begin{Rem} \label{rem counterexample of strong local epis}
Here we will consider an extended example to show that we cannot replace $j$-local epimorphisms in Corollary \ref{cor j-local iso/epi/mono <-> sheafification is iso/epi/mono} with strong $j$-local epimorphisms (Remark \ref{rem epis are local epis}). We will show that there exists a $j$-local epimorphism $f : X \to Y$ that is not a strong $j$-local epimorphism and such that $af : aX \to aY$, where $a$ here is the sheafification functor of Definition \ref{def sheafification}, is an epimorphism of sheaves.

Consider the category $\cat{C}$, given as follows
\begin{equation*}
    \begin{tikzcd}
	W & V & U
	\arrow["q", from=1-1, to=1-2]
	\arrow["p", from=1-2, to=1-3]
\end{tikzcd}
\end{equation*}
equipped with the collection of families $j$, defined by
\begin{equation*}
    j(U) = \{ 1_U, p \}, \qquad j(V) = \{ 1_V, q \}, \qquad j(W) = \{ 1_W \}.
\end{equation*}
It is easy to see that this collection of families is a coverage. It is refinement closed, but not composition closed. Consider the presheaves $X$ and $Y$ on $\cat{C}$ defined by
\begin{equation*}
    X(U) = \{a \}, \; X(V) = \{b \}, \; X(W) = \{c \}, \; X(p)(a) = b, \; X(q)(b) = c,
\end{equation*}
\begin{equation*}
    Y(U) = \{\alpha, \alpha' \}, \; Y(V) = \{\beta, \beta' \}, \; Y(W) = \{ \gamma \}, \; Y(p)(\alpha) = \beta, \; Y(p)(\alpha') = \beta', \; Y(q)(\beta) = Y(q)(\beta') = \gamma.
\end{equation*}
Now let $f : X \to Y$ be the morphism of presheaves defined by
\begin{equation*}
    f_U(a) = \alpha, \qquad f_V(b) = \beta, \qquad f_W(c) = \gamma.
\end{equation*}
Now this map is not a strong $j$-local epimorphism. For instance, consider the section $\alpha' : y(U) \to Y$. There are two covering families on $U$, the identity and $p$. There is no section $s : y(U) \to X$ such that $f_U s = \alpha'$, so $f$ does not lift against the identity. Similarly, consider the covering family $p$. The composite map
\begin{equation*}
    y(V) \xrightarrow{p} y(U) \xrightarrow{\alpha'} Y
\end{equation*}
is the section $\beta' : y(V) \to Y$. There is no section $s: y(V) \to X$ such that $f_V s = \beta'$. However, note that $f$ does lift against $q$ using the identity covering family on $W$.

Now for every section $s : y(U) \to Y$, $f$ lifts against the $j$-tree $T_{pq}$ given by $T_{pq}^\circ = \{pq \}$. In other words, for $s = \alpha, \alpha'$, the following diagram commutes
\begin{equation*}
    \begin{tikzcd}
	{y(W)} & X \\
	{y(V)} \\
	{y(U)} & Y
	\arrow["c", from=1-1, to=1-2]
	\arrow["q"', from=1-1, to=2-1]
	\arrow["f", from=1-2, to=3-2]
	\arrow["p"', from=2-1, to=3-1]
	\arrow["s"', from=3-1, to=3-2]
\end{tikzcd}
\end{equation*}
because $spq = \gamma = f_W(c)$. Thus $f$ is a $j$-local epimorphism.

Now let us consider $f^+ : X^+ \to Y^+$. Clearly $X$ is a sheaf, so $X^+ \cong X$ by \ref{lem eta is invertible on sheaves}. Now $Y$ is not a sheaf, because the map
\begin{equation*}
    Y(V) \to \ncolim{r \in \sat{j}(V)^\op} \Match(r, Y)
\end{equation*}
is not a bijection. Indeed, there are only two $j$-saturating families on $V$, given by the identity and $q$, and there is a unique refinement $q \to 1_V$, making $q$ terminal in $\sat{j}(V)^\op$. Thus
\begin{equation*}
    \ncolim{r \in \sat{j}(V)^\op} \Match(r, Y) \cong \Match(q, Y) \cong Y(W).
\end{equation*}
The map $Y(V) \to Y(W)$ is thus $Y(q)$, which is not a bijection. Now let us compute $Y^+$. The category of $j$-saturating families on $U$ looks like
\begin{equation*}
   \begin{tikzcd}
	W & V & U \\
	V \\
	U & U & U
	\arrow["q", from=1-1, to=1-2]
	\arrow["q"', from=1-1, to=2-1]
	\arrow["p", from=1-2, to=1-3]
	\arrow["p", from=1-2, to=3-2]
	\arrow["{1_U}", from=1-3, to=3-3]
	\arrow["p"', from=2-1, to=3-1]
	\arrow["{1_U}"', from=3-1, to=3-2]
	\arrow["{1_U}"', from=3-2, to=3-3]
\end{tikzcd} 
\end{equation*}
so $T_{pq}$ is a terminal object in $\sat{j}(U)^\op$. Hence
\begin{equation*}
    \ncolim{r \in \sat{j}(U)^\op} \Match(r, Y) \cong \Match(T_{pq}^\circ, Y) \cong \{(y_V, y_W) \in Y(V) \times Y(W) \, | \, Y(q)(y_V) = y_W \} \cong \{(\beta, \gamma), (\beta', \gamma) \}.
\end{equation*}
So we have
\begin{equation*}
    Y^+(U) = \{(\beta, \gamma), (\beta', \gamma) \}, \; Y^+(V) = \{ \gamma \}, \; Y^+(W) = \{ \gamma \}, \; Y^+(p)(\beta, \gamma) = Y^+(p)(\beta', \gamma) = \gamma, \; Y^+(q)(\gamma) = \gamma.
\end{equation*}
This is still not a sheaf. Doing the plus construction again, and using similar arguments to the above, we find
\begin{equation*}
    aY(U) = aY(V) = aY(W) = \{ \gamma \}, \; aY(p) = aY(q) = 1_{\{\gamma \}}.
\end{equation*}
Now both $X$ and $Y$ are terminal sheaves on $(\cat{C}, j)$, and the map $af : aX \cong X \to aY$ is an isomorphism, and therefore an epimorphism of sheaves.
\end{Rem}

\subsection{Variant of the Plus Construction}
In this short section, we describe a different, but equivalent version of the plus construction. This variant is useful in the context of higher category theory and appears for example in \cite{luriesheafification} and \cite{nlab:plus_construction_on_presheaves}. Since this section is not terribly central to our tale, we'll be a little bit more terse in our descriptions than usual.

Given a Grothendieck site $(\cat{C}, J)$, let $\text{Cov}_J: \cat{C}^{\op} \to \ncat{Poset}$ denote the pseudofunctor that sends an object $U$ to $J(U)$, thought of as a poset, and sends a morphism $f: U \to V$ to the functor $f^* : J(V) \to J(U)$ of posets defined by $(R \hookrightarrow y(V)) \mapsto (f^* R \hookrightarrow y(U))$. Taking the Grothendieck construction of this pseudofunctor provides us with a category $J(\cat{C}) \coloneqq \int \text{Cov}_J$, whose objects are pairs $(U \in \cat{C}, R \in J(U))$ and morphisms $(U,R) \to (V,S)$ are maps $f: U \to V$ such that $R \subseteq f^* S$. This has a canonical functor $r: J(\cat{C}) \to \cat{C}$ defined by $(U, R) \mapsto U$. This functor $r$ has a right adjoint $s: \cat{C} \to J(\cat{C})$, defined by $U \mapsto (U, y(U))$.

The functors $r$ and $s$ induce functors on the presheaf categories by Lemma \ref{lem presheaf adjoint triple}, and since adjoints are unique up to isomorphism, we end up with the following quadruple adjunction:
\begin{equation} \label{eqn sheafification 4 adjuntion}
\begin{tikzcd}
	{\Pre(\cat{C})} &&& {\Pre(J(\cat{C}))}
	\arrow[""{name=0, anchor=center, inner sep=0}, "{\Delta_r \cong \Sigma_s}"{description}, curve={height=-12pt}, from=1-1, to=1-4]
	\arrow[""{name=1, anchor=center, inner sep=0}, "{\Sigma_r}"', curve={height=30pt}, from=1-4, to=1-1]
	\arrow[""{name=2, anchor=center, inner sep=0}, "{\Pi_r \cong \Delta_s}"{description}, curve={height=-12pt}, from=1-4, to=1-1]
	\arrow[""{name=3, anchor=center, inner sep=0}, "{\Pi_s}"{description}, curve={height=30pt}, from=1-1, to=1-4]
	\arrow["\dashv"{anchor=center, rotate=-90}, draw=none, from=1, to=0]
	\arrow["\dashv"{anchor=center, rotate=-90}, draw=none, from=0, to=2]
	\arrow["\dashv"{anchor=center, rotate=-90}, draw=none, from=2, to=3]
\end{tikzcd}
\end{equation}

Using the limit definition of right Kan extension, we see that if $X \in \Pre(\cat{C})$, then
\begin{equation*}
(\Pi_s X)(U,R) \cong \lim \left[ \left( (U,R) \downarrow s^{\op} \right) \to \cat{C}^{\op} \xrightarrow{X} \ncat{Set} \right].
\end{equation*}

\begin{Lemma}
If $X$ is a presheaf, then
$$ (\Pi_s X)(U,R) \cong \lim_{V \to U \in R} X(V) \cong \Pre(\cat{C})(R,X)$$
\end{Lemma}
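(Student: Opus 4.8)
The plan is to unwind the pointwise right Kan extension formula for $\Pi_s$ that was just recorded, namely
$$(\Pi_s X)(U,R) \cong \lim \left[ \left( (U,R) \downarrow s^{\op} \right) \to \cat{C}^{\op} \xrightarrow{X} \ncat{Set} \right],$$
and then to identify the comma category $\left( (U,R) \downarrow s^{\op} \right)$ explicitly. The first isomorphism in the statement will follow once I show this comma category is the opposite of the category of elements of the sieve $R$ (equivalently, the indexing category of the diagram $V \xrightarrow{f} U \in R$), and the second isomorphism is exactly the coYoneda computation already carried out inside the proof of Lemma \ref{lem sheaves on sieves condition}.

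First I would describe the objects of $\left( (U,R) \downarrow s^{\op} \right)$. An object amounts to a map $\phi \colon (U,R) \to s^{\op}(V)$ in $J(\cat{C})^{\op}$, i.e. a morphism $s(V) = (V, y(V)) \to (U,R)$ in $J(\cat{C})$, which by definition of $J(\cat{C})$ is a map $f \colon V \to U$ in $\cat{C}$ with $y(V) \subseteq f^*(R)$. Since $y(V)$ is the maximal sieve on $V$, this containment forces $f^*(R) = y(V)$, and by Lemma \ref{lem pullback of sieve by map in sieve is maximal sieve} this holds precisely when $f \in R$. Thus the objects are exactly the morphisms $f \colon V \to U$ belonging to $R$.

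Next I would check the morphisms. Tracing through the definitions, a morphism $(V, f) \to (V', f')$ in the comma category is a map $g \colon V' \to V$ in $\cat{C}$ satisfying $f g = f'$; this is precisely a morphism in the opposite of the category of elements $\int R$, and under this identification the projection to $\cat{C}^{\op}$ followed by $X$ is exactly the diagram whose limit is $\lim_{V \xrightarrow{f} U \in R} X(V)$. Hence $(\Pi_s X)(U,R) \cong \lim_{V \to U \in R} X(V)$. Finally, the coYoneda Lemma \ref{lem coyoneda lemma} gives $R \cong \colim_{y(V)\to R} y(V)$, and applying $\Pre(\cat{C})(-,X)$ turns this colimit into the limit $\lim_{V \to U \in R} X(V) \cong \Pre(\cat{C})(R,X)$, exactly as in the proof of Lemma \ref{lem sheaves on sieves condition}.

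The main obstacle will be the bookkeeping of opposite categories in the comma-category computation: one must be careful that a morphism $(U,R) \to s^{\op}(V)$ in $J(\cat{C})^{\op}$ unwinds to a map $f \colon V \to U$ in the correct direction, and that the comma-category morphisms come out as $fg = f'$ with $g$ pointing $V' \to V$, so that the indexing category is $\left( \int R \right)^{\op}$ rather than $\int R$. Once the direction of the arrows is pinned down, both claimed isomorphisms are immediate.
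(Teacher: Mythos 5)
Your proposal is correct and follows essentially the same route as the paper: identify the objects of $\left((U,R)\downarrow s^{\op}\right)$ as maps $f\colon V\to U$ with $y(V)\subseteq f^*(R)$, observe via Lemma \ref{lem pullback of sieve by map in sieve is maximal sieve} that this forces $f\in R$, and then conclude by the coYoneda computation from Lemma \ref{lem sheaves on sieves condition}. You have in fact supplied the ``diagram chasing'' that the paper leaves implicit, including the careful tracking of arrow directions in the comma category.
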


\begin{proof}
Let us examine the comma category $((U,R) \downarrow s^{\op})$. This has objects $((U,R), V, f)$, where $V \in \cat{C}$ and $f$ is a map in $J(\cat{C})^{\op}$ from $(U,R)$ to $s^{\op}(V)$, which is equivalently a map $(V,yV) \to (U,R)$. Such a map is equivalent to a map $f: V \to U$ such that $y(V) \hookrightarrow f^*R$, which is only possible if $f \in R$. Some diagram chasing then proves the lemma.
\end{proof}

Similarly the formula for left Kan extension gives that if $G \in \Pre(J(\cat{C}))$, then
\begin{equation*}
(\Sigma_r G)(U) \cong \colim \left[ (r^{\op} \downarrow U) \to J(\cat{C})^{\op} \xrightarrow{G} \ncat{Set} \right]
\end{equation*}

\begin{Lemma}
If $G$ is a presheaf on $J(\cat{C})$, then
$$(\Sigma_r G)(U) \cong \underset{R \in J(U)^{\op}}{\colim} G(U,R)$$
\end{Lemma}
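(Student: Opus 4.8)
The plan is to use the pointwise formula for the left Kan extension $\Sigma_r$ recalled just above the statement, and to exhibit a \emph{final} functor into the indexing comma category $(r^{\op} \downarrow U)$ from the much smaller category $J(U)^{\op}$. Concretely, I would define $F \colon J(U)^{\op} \to (r^{\op} \downarrow U)$ on objects by $R \mapsto ((U,R),\, 1_U)$, where $1_U \colon U \to U$ supplies the required structure map out of $r(U,R) = U$. On morphisms, an arrow $R \to R'$ of $J(U)^{\op}$ is an inclusion $R' \subseteq R$ of covering sieves on $U$; since $1_U^{*}R = R$, this is exactly a morphism $(U,R') \to (U,R)$ in $J(\cat{C})$, hence a morphism $(U,R) \to (U,R')$ in $J(\cat{C})^{\op}$ compatible with the identity structure maps, so $F$ is a well-defined functor. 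Composing $F$ with the projection $(r^{\op}\downarrow U) \to J(\cat{C})^{\op}$ and then with $G$ recovers the assignment $R \mapsto G(U,R)$, so once $F$ is shown to be final, the identification $(\Sigma_r G)(U) \cong \ncolim{R \in J(U)^{\op}} G(U,R)$ follows at once from the pointwise formula.

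The heart of the argument is the finality of $F$, which by the standard criterion amounts to showing that for every object $e = ((V,R),\, \phi)$ of $(r^{\op}\downarrow U)$ — where $\phi \colon U \to V$ is a morphism of $\cat{C}$ and $R \in J(V)$ — the comma category $(e \downarrow F)$ is nonempty and connected. I would unwind this category carefully: an object is a covering sieve $R'' \in J(U)$ together with a morphism $e \to F(R'')$ in $(r^{\op}\downarrow U)$. Chasing the op-conventions, the underlying map of such a morphism is some $g \colon U \to V$ with $R'' \subseteq g^{*}R$, and the comma-category triangle condition forces $g = \phi$, leaving precisely the requirement $R'' \subseteq \phi^{*}R$. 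Thus the objects of $(e \downarrow F)$ are exactly the covering sieves on $U$ contained in $\phi^{*}R$, with morphisms given by the reverse inclusions inherited from $J(U)^{\op}$. The key input is axiom (G2): since $R \in J(V)$ and $\phi \colon U \to V$, the pullback $\phi^{*}R$ is again a covering sieve on $U$, so it is itself an object of $(e \downarrow F)$, and it is the largest one. It is therefore an initial object of $(e \downarrow F)$, which makes that category nonempty and connected.

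Having verified that $(e \downarrow F)$ has an initial object for every $e$, I conclude that $F$ is final, so the colimit over $(r^{\op}\downarrow U)$ of $G$ precomposed with the projection agrees with its restriction along $F$, namely $\ncolim{R \in J(U)^{\op}} G(U,R)$; combined with the pointwise Kan extension formula this yields the claim. I expect the only genuine obstacle to be bookkeeping: keeping the two layers of opposite categories straight (the $\op$ in $J(\cat{C})^{\op}$ coming from passing to presheaves, and the $\op$ inherent in the comma category) long enough to see that the structure map of a morphism $e \to F(R'')$ is forced to be $\phi$, so that the objects of $(e \downarrow F)$ collapse to the sub-covering-sieves of $\phi^{*}R$. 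Everything else is formal once (G2) produces the initial object.
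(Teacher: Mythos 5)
Your proof is correct and follows essentially the same route as the paper: both compute $(\Sigma_r G)(U)$ from the pointwise Kan extension formula and then show that the evident functor from $J(U)^{\op}$ into the indexing comma category (the paper phrases it as an initial functor $J(U) \to (U \downarrow r)$, which is the same statement up to taking opposites) reduces the colimit to one over $J(U)^{\op}$. The only divergence is in checking the finality criterion: you exhibit $\phi^{*}R$ as an initial object of $(e \downarrow F)$ using (G2), whereas the paper connects any two objects by a zig-zag through the intersection $R \cap R'$ of covering sieves; your version is marginally tighter since it also makes nonemptiness explicit.
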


\begin{proof}
First note that $(r^{\op} \downarrow U) \cong (U \downarrow r)^{\op}$. Consider the functor $X: J(U) \to (U \downarrow r)$ that takes an inclusion $R \hookrightarrow R'$ to:
\begin{equation*}
    \begin{tikzcd}
	& U \\
	{r(U, R)} && {r(U,R')}
	\arrow["{r(1_U)}"', from=2-1, to=2-3]
	\arrow["{1_U}"', from=1-2, to=2-1]
	\arrow["{1_U}", from=1-2, to=2-3]
\end{tikzcd}
\end{equation*}
We want to show that this functor is initial (Definition \ref{def final functor}), thus proving the lemma. So we need to show that if $d \in (U \downarrow r)$, then we want to show that $(X \downarrow d)$ is connected. An object in $(X \downarrow d)$ is a triangle:
\begin{equation*}
    \begin{tikzcd}
	& U \\
	{r(U, R)} && {r(V,S)}
	\arrow["{r(d)}"', from=2-1, to=2-3]
	\arrow["{1_U}"', from=1-2, to=2-1]
	\arrow["d", from=1-2, to=2-3]
\end{tikzcd}
\end{equation*}
So if we have two objects, we want to show they are connected by a zig-zag of morphisms. The following commutative diagram proves this:
\begin{equation*}
    \begin{tikzcd}
	&& U \\
	&&&& {r(U,R')} \\
	&& {r(U,R \cap R')} \\
	{r(U, R)} &&& {r(V,S)}
	\arrow["{r(d)}"', from=4-1, to=4-4]
	\arrow["{1_U}"', from=1-3, to=4-1]
	\arrow["d"{pos=0.4}, from=1-3, to=4-4]
	\arrow["{r(d)}", from=2-5, to=4-4]
	\arrow["{1_U}", from=1-3, to=2-5]
	\arrow["{r(1_U)}"', from=3-3, to=4-1]
	\arrow["{r(1_U)}"{pos=0.7}, from=3-3, to=2-5]
	\arrow["{1_U}", from=1-3, to=3-3]
	\arrow["r(d)"'{pos=0.4}, from=3-3, to=4-4]
\end{tikzcd}
\end{equation*}
because if $R \hookrightarrow d^* S$ and $R' \hookrightarrow d^* S$, then $R \cap R' \hookrightarrow d^* S$.
\end{proof}

Thus if $U \in \cat{C}$, we have
\begin{equation}
X^+(U) \cong (\Sigma_r \Pi_s X)(U) \cong \ncolim{R \in J(U)^{\op}} \ncat{Pre}(\cat{C})(R,X).
\end{equation}

\subsection{Sheafification in one go}
While we can obtain a variety of useful properties of the sheafification functor $a : \Pre(\cat{C}) \to \ncat{Sh}(\cat{C}, j)$ by its description as the plus construction applied twice $a = (-)^{++}$, it is very difficult to work with in practice. In this section, we describe a more useful description of the sheafification of a presheaf. We follow the wonderful blog post \cite{kim2020sheafifyinonego}, and also mention the similar descriptions given in \cite[Page 34]{caramello2019cohomology} and \cite{yuhjtman2007bilimites}.

\begin{Def}
Let $(\cat{C}, j)$ be a site and $X$ a presheaf on $\cat{C}$. We say that two sections $s,t \in X(U)$ 
are \textbf{locally equal} if there exists a $j$-covering family $r = \{r_i : U_i \to U \}_{i \in I}$ such that $X(r_i)(s) = X(r_i)(t)$ for all $i \in I$.

Given a family of morphisms $r = \{r_i : U_i \to U \}_{i \in I}$ we say that a collection of sections $\{x_i \in X(U_i) \}_{i \in I}$ is $X$-\textbf{locally matching} if for every intersection square
\begin{equation*}
  \begin{tikzcd}
	U_{ij} & {U_j} \\
	{U_i} & U
	\arrow["{r_i}"', from=2-1, to=2-2]
	\arrow["{r_j}", from=1-2, to=2-2]
	\arrow["u_i"', from=1-1, to=2-1]
	\arrow["u_j", from=1-1, to=1-2]
\end{tikzcd}  
\end{equation*}
the sections $X(u_i)(x_i)$ and $X(u_j)(x_j)$ are locally equal.
\end{Def}

\begin{Lemma}
Suppose that $\{x_i \}$ is an $X$-locally matching family on $t = \{t_j :V_j \to U \}_{j \in J}$, and there is a refinement $f : r \to t$ where $r = \{r_i: U_i \to U \}_{i\in I}$ and with index map $\alpha : I \to J$. Then  $f^* \{x_j \} = \{X(f_i)(x_{\alpha(i)})\}_{i \in I}$ is locally matching on $r$.
\end{Lemma}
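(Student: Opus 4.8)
The plan is to follow the strategy of Lemma \ref{lem pullback of matching family by refinement is a matching family} verbatim, with the single modification that every occurrence of ``equal'' is replaced by ``locally equal''. Writing $y_i \coloneqq X(f_i)(x_{\alpha(i)})$ for the components of $f^*\{x_j\}$, I want to verify that for any intersection square on $r$
\begin{equation*}
\begin{tikzcd}
	{U_{ii'}} & {U_{i'}} \\
	{U_i} & U
	\arrow["{r_i}"', from=2-1, to=2-2]
	\arrow["{r_{i'}}", from=1-2, to=2-2]
	\arrow["{u_i}"', from=1-1, to=2-1]
	\arrow["{u_{i'}}", from=1-1, to=1-2]
\end{tikzcd}
\end{equation*}
the sections $X(u_i)(y_i)$ and $X(u_{i'})(y_{i'})$ are locally equal.

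First I would push this square forward along the refinement. Since $f : r \to t$ is a refinement we have $r_i = t_{\alpha(i)} f_i$ and $r_{i'} = t_{\alpha(i')} f_{i'}$, so the commuting relation $r_i u_i = r_{i'} u_{i'}$ of the square becomes $t_{\alpha(i)}(f_i u_i) = t_{\alpha(i')}(f_{i'} u_{i'})$. Hence the maps $f_i u_i : U_{ii'} \to V_{\alpha(i)}$ and $f_{i'} u_{i'} : U_{ii'} \to V_{\alpha(i')}$ assemble into an intersection square on $t$ with apex $U_{ii'}$, exactly as in the commuting diagram appearing in the proof of Lemma \ref{lem pullback of matching family by refinement is a matching family}.

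Since $\{x_j\}$ is locally matching on $t$, applying its defining property to this pushed-forward square yields that $X(f_i u_i)(x_{\alpha(i)})$ and $X(f_{i'} u_{i'})(x_{\alpha(i')})$ are locally equal. By functoriality these are precisely $X(u_i)(y_i) = X(u_i)X(f_i)(x_{\alpha(i)})$ and $X(u_{i'})(y_{i'}) = X(u_{i'})X(f_{i'})(x_{\alpha(i')})$, which gives local equality of the two restrictions and thus shows $\{y_i\}$ is locally matching on $r$. There is no genuine obstacle here: the only point worth emphasizing is that the locally matching hypothesis on $t$ delivers exactly ``local equality,'' which is precisely the weaker conclusion required for $r$, so the argument needs no extra covering family beyond the one witnessing local equality on $t$.
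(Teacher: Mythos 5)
Your proof is correct and is exactly the argument the paper intends: the paper's proof simply says ``this follows from the same argument as in Lemma \ref{lem pullback of matching family by refinement is a matching family},'' and your write-up spells out that argument, correctly noting that pushing an intersection square on $r$ forward along the refinement yields an intersection square on $t$, to which the locally matching hypothesis applies and delivers precisely the required local equality.
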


\begin{proof}
This follows from the same argument as in Lemma \ref{lem pullback of matching family by refinement is a matching family}.
\end{proof}

Thus we obtain a presheaf $\text{LocMatch}(-,X) : \text{Fam}(U)^\op \to \ncat{Set}$.

\begin{Def} \label{def hypersheafification}
Given a site $(\cat{C}, j)$, a presheaf $X$ and $U \in \cat{C}$, let $X^\dagger$ denote the presheaf defined objectwise by
\begin{equation} \label{eq hypersheafification}
    X^\dagger(U) = \ncolim{r \in \sat{j}(U)^\op} \text{LocMatch}(r,X).
\end{equation}
The action of $X^\dagger$ on morphisms is described as follows. Suppose that $f : U \to V$ is a morphism, $t = \{t_j: V_j \to V \}$ is a $j$-saturating family and $\{x_j \}$ is an $X$-locally matching family on $t$, then since $\sat{j}$ is a coverage, there exists a $j$-saturating family $r = \{U_i \to U \}$ and maps $s_i : U_i \to V_{\alpha(i)}$ such that the following diagram commutes
\begin{equation*}
    \begin{tikzcd}
	{U_i} & {V_{\alpha(i)}} \\
	U & V
	\arrow["{s_i}", from=1-1, to=1-2]
	\arrow["{r_i}"', from=1-1, to=2-1]
	\arrow["{t_\alpha(i)}", from=1-2, to=2-2]
	\arrow["f"', from=2-1, to=2-2]
\end{tikzcd}
\end{equation*}
So define $X^\dagger(f)[\{x_j \}] = [\{X(s_i)(x_{\alpha(i)})\}]$. We will show in Lemma \ref{lem hypersheafification well defined on morphisms} that this is well-defined.
\end{Def}

\begin{Def}
Given two families of morphisms $r = \{r_i : U_i \to U \}_{i \in I}$ and $t = \{t_j : V_j \to U \}_{j \in J}$ with families of local sections $\{x_i \in X(U_i) \}_{i \in I}$ and $\{y_j \in X(V_j) \}_{j \in J}$, we say that $\{x_i \}$ and $\{y_j \}$ are $X$-\textbf{locally equivalent} if for each commutative square of the form
\begin{equation*}
\begin{tikzcd}
	{W_{ij}} & {V_j} \\
	{U_i} & U
	\arrow["{v_j}", from=1-1, to=1-2]
	\arrow["{u_i}"', from=1-1, to=2-1]
	\arrow["{t_j}", from=1-2, to=2-2]
	\arrow["{r_i}"', from=2-1, to=2-2]
\end{tikzcd} 
\end{equation*}
for every $i$ and $j$, the sections $X(u_i)(x_i)$ and $X(v_j)(y_j)$ are locally equal. We write $\{x_i \} \approx_X \{y_j \}$ to denote that they are locally equivalent.
\end{Def}

\begin{Rem}
We note that a family $\{x_i \in X(U_i) \}$ of local sections is $X$-locally equivalent to itself $\{x_i \} \approx_X \{x_i \}$ if and only if $\{x_i \}$ is $X$-locally matching. Thus we see that on $X$-locally matching families, $\approx_X$ is an equivalence relation.
\end{Rem}

\begin{Lemma} \label{lem refinement implies locally equivalent}
Given a site $(\cat{C}, j)$, families of morphisms
$r = \{r_i :U_i \to U \}_{i \in I}$, $t = \{t_j : V_j \to U \}_{j \in J}$, and an $X$-locally matching family $\{x_j \in X(V_j) \}_{j \in J}$ on $t$, suppose that $f : r \to t$ is a refinement. Then $f^* \{x_j \} \approx_X \{x_j \}$.
\end{Lemma}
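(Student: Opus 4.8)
The plan is to deduce the locally-equivalent condition for the pair $(f^*\{x_j\}, \{x_j\})$ directly from the locally-matching condition that $\{x_j\}$ already satisfies on $t$, mirroring the proof of Lemma \ref{lem pullback of matching family by refinement is a matching family}. Write $\alpha : I \to J$ for the index map of $f$ and $f_i : U_i \to V_{\alpha(i)}$ for its components, so that $r_i = t_{\alpha(i)} f_i$, and abbreviate the sections of $f^*\{x_j\}$ over $r$ by $a_i \coloneqq X(f_i)(x_{\alpha(i)})$. By the definition of $\approx_X$, I must show that for every commutative square
\begin{equation*}
\begin{tikzcd}
	{W_{ij}} & {V_j} \\
	{U_i} & U
	\arrow["{v_j}", from=1-1, to=1-2]
	\arrow["{u_i}"', from=1-1, to=2-1]
	\arrow["{t_j}", from=1-2, to=2-2]
	\arrow["{r_i}"', from=2-1, to=2-2]
\end{tikzcd}
\end{equation*}
(that is, with $r_i u_i = t_j v_j$) the sections $X(u_i)(a_i)$ and $X(v_j)(x_j)$ are locally equal.

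The key observation is that this square can be pulled up along $f_i$ into an intersection square for the family $t$. Indeed, the map $f_i u_i : W_{ij} \to V_{\alpha(i)}$ satisfies
\begin{equation*}
	t_{\alpha(i)}(f_i u_i) = (t_{\alpha(i)} f_i) u_i = r_i u_i = t_j v_j,
\end{equation*}
so the pair of legs $(f_i u_i, v_j)$ forms an intersection square on $t$, namely the one with indices $\alpha(i)$ and $j$. Since $\{x_j\}$ is $X$-locally matching on $t$, this intersection square forces $X(f_i u_i)(x_{\alpha(i)})$ and $X(v_j)(x_j)$ to be locally equal.

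Finally, by functoriality $X(f_i u_i)(x_{\alpha(i)}) = X(u_i)X(f_i)(x_{\alpha(i)}) = X(u_i)(a_i)$, so $X(u_i)(a_i)$ and $X(v_j)(x_j)$ are locally equal, which is exactly the required condition; hence $f^*\{x_j\} \approx_X \{x_j\}$. I expect no serious obstacle here: the argument is the analogue, with ``locally equal'' replacing strict equality, of the strict refinement lemma already cited. The only points demanding care are the bookkeeping of indices (the $j' = \alpha(i)$ leg is the one obtained by composing with the refinement component $f_i$) and the remark — recorded just before the statement — that a family is locally matching precisely when it is locally equivalent to itself, which is what makes the locally-matching hypothesis on $\{x_j\}$ exactly the input the intersection square needs.
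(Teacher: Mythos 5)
Your proof is correct and follows essentially the same route as the paper's: both extend the given square to an intersection square for $t$ by composing with the refinement component $f_i$ (your $f_i u_i$ is the paper's $v_{\alpha(i)}$), apply the locally-matching hypothesis on $\{x_j\}$ to that square, and finish by functoriality. No gaps.
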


\begin{proof}
Suppose we have a commutative diagram of the form
\begin{equation*}
\begin{tikzcd}
	{W_{ij}} & {V_j} \\
	{U_i} & U
	\arrow["{v_j}", from=1-1, to=1-2]
	\arrow["{u_i}"', from=1-1, to=2-1]
	\arrow["{t_j}", from=1-2, to=2-2]
	\arrow["{r_i}"', from=2-1, to=2-2]
\end{tikzcd} 
\end{equation*}
Then since $f$ is a refinement with index map $\alpha : I \to J$, we can extend this to the commutative diagram
\begin{equation*}
\begin{tikzcd}
	{W_{ij}} && {V_j} \\
	& {V_{\alpha(i)}} \\
	{U_i} && U
	\arrow["{{v_j}}", from=1-1, to=1-3]
	\arrow["{v_{\alpha(i)}}"', from=1-1, to=2-2]
	\arrow["{{u_i}}"', from=1-1, to=3-1]
	\arrow["{{t_j}}", from=1-3, to=3-3]
	\arrow["{t_\alpha(i)}"', from=2-2, to=3-3]
	\arrow["{f_i}", from=3-1, to=2-2]
	\arrow["{{r_i}}"', from=3-1, to=3-3]
\end{tikzcd}    
\end{equation*}
Since $\{x_j \}$ is $X$-locally matching, this implies that $X(v_{\alpha(i)})(x_{\alpha(i)})$ and $X(v_j)(x_j)$ are locally equal. But $X(v_{\alpha(i)})(x_{\alpha(i)}) = X(u_i)X(f_i)(x_{\alpha(i)})$. Hence $f^*\{x_j \} = \{X(f_i)(x_{\alpha(i)}) \}_{i \in I}$ and $\{x_j \}_{j \in J}$ are $X$-locally equivalent.
\end{proof}

\begin{Lemma} \label{lem sections identified if locally equiv}
Given a site $(\cat{C}, j)$, a presheaf $X$, two $X$-locally matching families $\{x_i \}$ and $\{y_j\}$ are identified in $X^\dagger(U)$ if and only if they are $X$-locally equivalent.  
\end{Lemma}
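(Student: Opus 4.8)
The plan is to unwind the filtered colimit defining $X^\dagger(U)$ and translate the two sides of the claimed equivalence into statements about common refinements. Since $\sat{j}(U)$ is finitely cofiltered (Lemma \ref{lem meets in saturated coverages}), the colimit in (\ref{eq hypersheafification}) is filtered, so by the standard description of filtered colimits in $\ncat{Set}$ (Lemma \ref{lem filtered colimits description in sets}), two locally matching families $\{x_i\}$ on $r$ and $\{y_j\}$ on $t$ are identified in $X^\dagger(U)$ if and only if there is a saturating family $s$ together with refinements $g : s \to r$ and $h : s \to t$ whose induced pullback families $g^*\{x_i\}$ and $h^*\{y_j\}$ agree entrywise in $\text{LocMatch}(s,X)$. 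The whole proof then consists of converting ``exact agreement on a common saturating refinement'' into ``local equivalence,'' and back.

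For the direction $(\Leftarrow)$, assume $\{x_i\} \approx_X \{y_j\}$. I would take as common refinement the intersection sieve $s := \overline{r} \cap \overline{t}$, which is saturating by Lemma \ref{lem meets in saturated coverages}, and for each $w \in s$ choose factorizations $w = r_{i(w)} u_w = t_{j(w)} v_w$; these assemble into refinements $s \to r$ and $s \to t$. Each such factorization is a commuting square of the kind appearing in the definition of $\approx_X$, so local equivalence yields, for every $w$, a saturating family $q^w$ on $\mathrm{dom}(w)$ along which $X(u_w)(x_{i(w)})$ and $X(v_w)(y_{j(w)})$ agree exactly. Composing $s$ with all the $q^w$ produces a saturating family $s'$, using that $\sat{j}$ is composition closed (Proposition \ref{prop sat closure smallest saturated coverage}); on $s'$ the two pullback families agree entrywise by construction, which is precisely the common-refinement condition above.

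For the direction $(\Rightarrow)$, suppose $\{x_i\}$ and $\{y_j\}$ are identified, witnessed by a saturating $s$, refinements $g,h$ with components $s_k = r_{\alpha(k)} g_k = t_{\beta(k)} h_k$, and entrywise equality $X(g_k)(x_{\alpha(k)}) = X(h_k)(y_{\beta(k)})$. Given an arbitrary intersection square $r_i u_i = t_j v_j =: w$, I would pull $s$ back along $w$ via the coverage axiom (Definition \ref{def coverage}) for $\sat{j}$ to get a saturating family $\{p_n : W_n \to W\}$ with $w p_n = s_{k(n)} c_n$. For each $n$ the identities $r_i(u_i p_n) = r_{\alpha(k(n))}(g_{k(n)} c_n)$ and $t_j(v_j p_n) = t_{\beta(k(n))}(h_{k(n)} c_n)$ are commuting squares, so the locally matching property of $\{x_i\}$ and $\{y_j\}$ forces $X(u_i p_n)(x_i)$ and $X(v_j p_n)(y_j)$ to be locally equal to $X(g_{k(n)}c_n)(x_{\alpha(k(n))})$ and $X(h_{k(n)}c_n)(y_{\beta(k(n))})$ respectively; these two middle terms coincide by the entrywise equality on $s$. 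Chaining these local equalities and then gluing over the saturating family $\{p_n\}$ shows that $X(u_i)(x_i)$ and $X(v_j)(y_j)$ are locally equal, i.e. $\{x_i\} \approx_X \{y_j\}$.

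The main obstacle is the repeated need to glue a covering family's worth of local equalities into a single one, and to chain local equalities transitively; both operations require passing to composites of covering families and therefore rest essentially on composition closure of the saturated coverage. In other words, local equality must be handled relative to $\sat{j}$ (equivalently, up to the Grothendieck closure), and keeping the refinement index maps $\alpha,\beta$ and the chosen factorizations correctly bookkept through these compositions is the delicate part of the argument.
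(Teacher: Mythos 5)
Your proposal is correct and follows essentially the same route as the paper: both directions hinge on Lemma \ref{lem filtered colimits description in sets} together with the cofilteredness of $\sat{j}(U)$ (Lemma \ref{lem meets in saturated coverages}) to translate ``identified in $X^\dagger(U)$'' into ``exact agreement on a common saturating refinement,'' and the $(\Leftarrow)$ direction is the paper's argument almost verbatim, composing the common refinement with the covers witnessing the local equalities. The only difference is cosmetic: in the $(\Rightarrow)$ direction the paper simply chains $\{x_i\} \approx_X f^*\{x_i\} = g^*\{y_j\} \approx_X \{y_j\}$ via Lemma \ref{lem refinement implies locally equivalent} and transitivity of $\approx_X$, whereas you inline that transitivity by pulling the common refinement back over an arbitrary intersection square and gluing --- and your closing remark that local equality must really be taken relative to $\sat{j}$ for this chaining to go through is a fair and accurate observation about a point the paper glosses over.
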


\begin{proof}
Suppose that $r = \{r_i : U_i \to U \}_{i \in I}$ and $t = \{t_j : V_j \to U \}_{j \in J}$ are $j$-saturating families on $U$ and $\{x_i\}$, $\{y_j \}$ are $X$-locally matching families on $r$ and $t$ respectively.

$(\Rightarrow)$ Suppose that $x = [\{x_i \}]$ and $y = [\{y_j \}]$ are identified as elements of $$X^\dagger(U) = \ncolim{s\in \sat{j}(U)^\op} \text{LocMatch}(s, X).$$ Since $\sat{j}(U)$ is finitely cofiltered by Lemma \ref{lem meets in saturated coverages}, then by Lemma \ref{lem filtered colimits description in sets}, this means that there exists a $j$-saturating family $s$ on $U$ with refinements $f : s \to r$ and $g: s \to t$, such that $f^* \{x_i \} = g^* \{ y_j \}$. But $\{x_i \} \approx_X f^* \{x_i \} = g^* \{y_j \} \approx_X \{y_j \}$ by Lemma \ref{lem refinement implies locally equivalent}, hence $\{x_i \}$ and $\{y_j \}$ are locally equivalent.

$(\Leftarrow)$ Suppose that $\{x_i \}$ and $\{y_j \}$ are $X$-locally equivalent matching families. Then again, since $\sat{j}(U)$ is cofiltered, there exists a common refinement $f : s \to r$ and $g : s \to t$, where $s = \{s_k : W_k \to U\}_{k \in K}$. But then for every $k \in K$ we have the commutative diagram
\begin{equation*}
\begin{tikzcd}
	{W_k} & {V_{\beta(k)}} \\
	{U_{\alpha(k)}} & U
	\arrow["{g_k}", from=1-1, to=1-2]
	\arrow["{f_k}"', from=1-1, to=2-1]
	\arrow["{s_k}"{description}, from=1-1, to=2-2]
	\arrow["{t_{\beta(k)}}", from=1-2, to=2-2]
	\arrow["{r_{\alpha(k)}}"', from=2-1, to=2-2]
\end{tikzcd}
\end{equation*}
But then $r_\alpha = \{r_{\alpha(k)} : U_{\alpha(k)} \to U \}_{k \in K}$ and $t_\beta = \{t_{\beta(k)} : V_{\beta(k)} \to U \}_{k \in K}$ are $j$-saturating since they are both refined by $s$. Thus there exists a $j$-covering family $h^k = \{h^k_\ell : A^k_\ell \to W_k \}_{\ell \in L^k}$ such that $X(f_k h^k_\ell)(r_{\alpha(k)}) = X(g_k h^k_\ell)(t_{\beta(k)})$. Now there are obvious refinements $r_\alpha \to r$ and $t_\beta \to t$ given by identity maps. The composite family $h \circ s$ on $U$ is $j$-saturating, refines both $r_\alpha$ and $t_\beta$, hence refining both $r$ and $t$, and when $\{x_i \}$ and $\{y_j \}$ are pulled back along the refinements are equal. Hence they are identified in $X^\dagger(U)$. 
\end{proof}

\begin{Lemma} \label{lem hypersheafification well defined on morphisms}
The action of $X^\dagger$ on morphisms as described after Definition \ref{def hypersheafification} is well-defined.
\end{Lemma}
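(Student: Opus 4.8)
The plan is to verify three things, which together constitute well-definedness: that the proposed pushforward family $\{X(s_i)(x_{\alpha(i)})\}$ is $X$-locally matching on $r$ (so that it names an element of $X^\dagger(U)$); that the resulting class is independent of the chosen factorization data $(r,\alpha,s)$; and that it is independent of the chosen representative locally matching family for the given class in $X^\dagger(V)$. The single observation driving every step is that the defining relation $t_{\alpha(i)} s_i = f r_i$ lets me transport a commutative square living over $U$ into one living over $V$ by postcomposing its legs with the factorization maps $s_i$, after which I can invoke either the locally matching hypothesis on $\{x_j\}$ or a local equivalence between two families.

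First I would check that $\{X(s_i)(x_{\alpha(i)})\}_{i\in I}$ is $X$-locally matching on $r$. Given an intersection square on $r$ with apex $U_{ii'}$ and legs $u_i\colon U_{ii'}\to U_i$, $u_{i'}\colon U_{ii'}\to U_{i'}$ satisfying $r_i u_i = r_{i'} u_{i'}$, I form $s_i u_i\colon U_{ii'}\to V_{\alpha(i)}$ and $s_{i'} u_{i'}\colon U_{ii'}\to V_{\alpha(i')}$. The relation $t_{\alpha(i)} s_i = f r_i$ gives $t_{\alpha(i)}(s_i u_i) = f r_i u_i = f r_{i'} u_{i'} = t_{\alpha(i')}(s_{i'} u_{i'})$, so these assemble into an intersection square on $t$. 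Since $\{x_j\}$ is locally matching on $t$, the sections $X(s_i u_i)(x_{\alpha(i)})$ and $X(s_{i'} u_{i'})(x_{\alpha(i')})$ are locally equal, and by functoriality these are exactly $X(u_i)\bigl(X(s_i)(x_{\alpha(i)})\bigr)$ and $X(u_{i'})\bigl(X(s_{i'})(x_{\alpha(i')})\bigr)$, establishing the condition.

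Next I would handle both independence statements simultaneously by proving the following uniform claim: if $\{x_j\}$ on $t$ and $\{y_k\}$ on $t'$ are $X$-locally equivalent $j$-saturating families over $V$, and $(r,\alpha,s)$, $(r',\beta,s')$ are factorizations of $f$ through $t$ and $t'$ respectively, then the pushforward families $\{X(s_i)(x_{\alpha(i)})\}$ on $r$ and $\{X(s'_{i'})(y_{\beta(i')})\}$ on $r'$ are $X$-locally equivalent, hence identified in $X^\dagger(U)$ by Lemma \ref{lem sections identified if locally equiv}. The argument is the same transport: for a comparison square over $U$ with apex $W$ and legs $v\colon W\to U_i$, $w\colon W\to U'_{i'}$ satisfying $r_i v = r'_{i'} w$, the composites satisfy $t_{\alpha(i)}(s_i v) = f r_i v = f r'_{i'} w = t'_{\beta(i')}(s'_{i'} w)$, so $s_i v$ and $s'_{i'} w$ form a comparison square witnessing local equivalence of $\{x_j\}$ and $\{y_k\}$; the hypothesis then gives that $X(s_i v)(x_{\alpha(i)})$ and $X(s'_{i'} w)(y_{\beta(i')})$ are locally equal, and functoriality rewrites these as $X(v)\bigl(X(s_i)(x_{\alpha(i)})\bigr)$ and $X(w)\bigl(X(s'_{i'})(y_{\beta(i')})\bigr)$.

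Finally I would observe that this one claim covers everything. Taking $t'=t$ and $\{y_k\}=\{x_j\}$, which is locally equivalent to itself precisely because it is locally matching (the Remark preceding Lemma \ref{lem refinement implies locally equivalent}), yields independence of the factorization data; and the general case yields independence of the representative, since any two representatives of a single class in $X^\dagger(V)$ are $X$-locally equivalent by Lemma \ref{lem sections identified if locally equiv}. I do not expect a genuine obstacle here: the entire content is a matter of organizing the diagram chase, and the only point demanding care is verifying that each transported square is literally of the form required by the definitions of \emph{locally matching} and \emph{locally equivalent}, so that the relevant hypothesis applies verbatim.
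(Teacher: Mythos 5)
Your proof is correct and follows essentially the same route as the paper's: transport intersection/comparison squares from $U$ to $V$ via the identity $t_{\alpha(i)} s_i = f r_i$ and then invoke Lemma \ref{lem sections identified if locally equiv}. Your write-up is in fact slightly more complete than the paper's, since you explicitly verify that the pushforward family is locally matching and you fold the independence-of-representative check (which the paper dispatches with ``proved similarly'') into a single uniform claim.
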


\begin{proof}
Suppose that $f : U \to V$ is a morphism, $t = \{t_j: V_j \to V \}$ is a $j$-saturating family and $\{x_j \}$ is an $X$-locally matching family on $t$, then since $\sat{j}$ is a coverage, there exists a $j$-saturating family $r = \{r_i : U_i \to U \}$ and maps $a_i : U_i \to V_{\alpha(i)}$ such that $t_{\alpha(i)} a_i = f r_i$. Suppose we chose a different $j$-saturating family $s = \{s_{i'} : U_{i'}\to U \}$ with maps $b_{i'} : U_{i'} \to V_{\beta(i')}$. Then for every intersection square for $s$ and $r$ we get a larger intersection square for $t$
\begin{equation*}
    \begin{tikzcd}
	& {U_{ii'}} \\
	{U_{i'}} & U & {U_i} \\
	{V_{\beta(i')}} & V & {V_{\alpha(i)}}
	\arrow["{c_{i'}}"', from=1-2, to=2-1]
	\arrow["{d_{i}}", from=1-2, to=2-3]
	\arrow["{s_{i'}}"', from=2-1, to=2-2]
	\arrow["{b_{i'}}"', from=2-1, to=3-1]
	\arrow["f"', from=2-2, to=3-2]
	\arrow["{r_i}", from=2-3, to=2-2]
	\arrow["{a_i}", from=2-3, to=3-3]
	\arrow["{t_{\beta(i')}}"', from=3-1, to=3-2]
	\arrow["{t_{\alpha(i)}}", from=3-3, to=3-2]
\end{tikzcd}
\end{equation*}
Since $\{x_j \}$ is $X$-locally matching, this implies that $\{X(a_i)(x_{\alpha(i)} \}$ and $\{X(b_{i'})(x_{\beta(i')}) \}$ are locally equivalent. Hence by Lemma \ref{lem sections identified if locally equiv}, they are identified in $X^\dagger(U)$ and thus its action on morphisms is well-defined. Choosing different representatives for $[\{x_i \}]$ is proved similarly.
\end{proof}

\begin{Prop}
Given a site $(\cat{C}, j)$ and a presheaf $X$ on $\cat{C}$, the presheaf $X^\dagger$ is a $j$-sheaf.
\end{Prop}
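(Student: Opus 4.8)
The plan is to verify the sheaf condition directly: for every $j$-covering family $r$ on $U$ the restriction map $X^\dagger(U) \to \Match(r, X^\dagger)$ is a bijection. Since every $j$-covering family is $j$-saturating (Lemma \ref{lem saturating iff sifted closure is saturating}), it suffices to prove that $X^\dagger$ is a sheaf on every $j$-saturating family $r$, and I will establish the surjectivity (existence of an amalgamation) and the injectivity (separatedness) of the restriction map separately.

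For existence, fix a $j$-saturating family $r = \{r_i : U_i \to U\}_{i \in I}$ and a matching family $\{\xi_i \in X^\dagger(U_i)\}$ for $X^\dagger$ over $r$. Using that $\sat{j}(U_i)$ is finitely cofiltered (Lemma \ref{lem meets in saturated coverages}), I represent each $\xi_i$ by an $X$-locally matching family $\{x^i_a \in X(V^i_a)\}_{a \in A_i}$ over some $j$-saturating family $s^i = \{s^i_a : V^i_a \to U_i\}_{a \in A_i}$. Because $\sat{j}$ is saturated and hence composition closed (Proposition \ref{prop sat closure smallest saturated coverage}), the composite $(r \circ s) = \bigcup_i (r_i)_*(s^i)$ is again a $j$-saturating family on $U$, and I propose $\xi \coloneqq [\{x^i_a\}]$ as the amalgamation, where the collection $\{x^i_a\}$ is now indexed over $(r \circ s)$.

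The crux of the argument is showing that $\{x^i_a\}$ is $X$-locally matching over $(r \circ s)$. Given an intersection square on $(r \circ s)$, namely maps $u : W \to V^i_a$ and $u' : W \to V^{i'}_{a'}$ with $r_i s^i_a u = r_{i'} s^{i'}_{a'} u'$, I must show that $X(u)(x^i_a)$ and $X(u')(x^{i'}_{a'})$ are locally equal. The maps $s^i_a u$ and $s^{i'}_{a'} u'$ form an intersection square for $r$, so the matching condition on $\{\xi_i\}$ forces the restrictions of $\xi_i$ and $\xi_{i'}$ to $W$ to agree in $X^\dagger(W)$; unwinding this identification through Lemma \ref{lem sections identified if locally equiv} and the description of the action of $X^\dagger$ on morphisms (Definition \ref{def hypersheafification}, Lemma \ref{lem hypersheafification well defined on morphisms}), together with the fact that each $\{x^i_a\}$ is itself locally matching, descends to the required local equality at the level of $X$-sections. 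I expect this bookkeeping, tracking a section's identity ``up to a further covering family'' through two nested layers of saturating families, to be the main obstacle, and it is precisely where Lemma \ref{lem sections identified if locally equiv} does the essential work. Granting this, $\xi$ is a well-defined element of $X^\dagger(U)$, and computing $X^\dagger(r_i)(\xi)$ shows that the pullback of $\xi$ along $r_i$ is refined by $s^i$ and hence $X$-locally equivalent to $\{x^i_a\}$, so $X^\dagger(r_i)(\xi) = \xi_i$ again by Lemma \ref{lem sections identified if locally equiv}.

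For separatedness, suppose $\xi, \xi' \in X^\dagger(U)$ restrict to equal elements along every $r_i$ in some $j$-saturating family $r$. Representing $\xi, \xi'$ by locally matching families over saturating families and passing to a common refinement (once more using that $\sat{j}(U)$ is cofiltered by Lemma \ref{lem meets in saturated coverages}), the hypothesis $X^\dagger(r_i)(\xi) = X^\dagger(r_i)(\xi')$ together with Lemma \ref{lem sections identified if locally equiv} and Lemma \ref{lem refinement implies locally equivalent} yields that the two representing families become locally equivalent after pulling back along $r$; since $r$ is saturating, I can then glue these local equalities over a composite $j$-saturating family, exactly as in the locally-matching verification above, to conclude that the representatives of $\xi$ and $\xi'$ are $X$-locally equivalent, hence equal in $X^\dagger(U)$ by Lemma \ref{lem sections identified if locally equiv}. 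Combining existence and uniqueness shows that $X^\dagger$ is a sheaf on every $j$-saturating family, and therefore a $j$-sheaf.
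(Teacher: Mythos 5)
Your proposal is correct and follows essentially the same route as the paper's proof: represent each $\xi_i$ by a locally matching family over a saturating family, compose the saturating families, check that the union is locally matching via the matching condition on $\{\xi_i\}$ and Lemma \ref{lem sections identified if locally equiv}, and deduce uniqueness by showing any two amalgamations have locally equivalent representatives. The only difference is that you spell out the verification that the composite family is $X$-locally matching, which the paper dismisses as ``not hard to see,'' and you correctly identify that step as where the real work lies.
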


\begin{proof}
Given $U \in \cat{C}$, let $r = \{r_i : U_i \to U \}$ be a $j$-covering family on $U$ and let $\{x_i \in X^\dagger(U_i) \}$ be a $X^\dagger$-matching family on $r$. We want to show that there is a unique amalgamation. Each $x_i$ is an equivalence class $\{ x^i_a \}$ of an $X$-locally matching family on a $j$-saturating family $r^i = \{r^i_a : U^i_a \to U_i \}_{a \in A^i}$, which are all $X$-locally equivalent when pulled back along intersection squares. Now by composing all of the $j$-saturating families we obtain a $j$-saturating family $\{U^i_a \to U \}$ and it is not hard to see that $\cup_i \{x^i_a \}$ is now an $X$-locally matching family, and furthermore $[\cup_i \{x^i_a \}]$ is an amalgamation for the $x_i$. 

Now suppose that $x$ and $y$ are amalgamations for $\{x_i\}$. So there are $j$-saturating families $c= \{c_p : C_p \to U \}$ and $d = \{d_q : D_q \to U \}$ and $X$-locally matching families $\{x_p \}$, $\{y_q \}$ that amalgamate the $\{x^i_a \}$. So we have a commutative diagram
\begin{equation*}
    \begin{tikzcd}
	{U^i_a} & {U_i} & {U_b^{i'}} \\
	{C_p} & U & {D_q}
	\arrow["{r^i_a}", from=1-1, to=1-2]
	\arrow[from=1-1, to=2-1]
	\arrow["{r_i}", from=1-2, to=2-2]
	\arrow["{r^{i'}_b}"', from=1-3, to=1-2]
	\arrow[from=1-3, to=2-3]
	\arrow["{c_p}", from=2-1, to=2-2]
	\arrow["{d_q}"', from=2-3, to=2-2]
\end{tikzcd}
\end{equation*}
Such that when $\{x_p\}$ and $\{y_q\}$ are pulled back, they are locally equivalent to $x_i$. Hence they are locally equivalent to each other, so $x$ and $y$ are identified in $X^\dagger(U)$.
\end{proof}

For any presheaf $X$, there is a map
\begin{equation*}
    \eta : X \to X^\dagger
\end{equation*}
that sends a section $x \in X(U)$ to the equivalence class of the $X$-locally matching family $\{x \}$ on the $j$-saturating family $\{1_U : U \to U \}$. 

\begin{Lemma}
Given a site $(\cat{C}, j)$ if $X$ is a $j$-sheaf then the map $\eta$ is an isomorphism.
\end{Lemma}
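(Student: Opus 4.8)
The plan is to prove that the component $\eta_U : X(U) \to X^\dagger(U)$ is a bijection for every $U \in \cat{C}$; since $\eta$ is already a map of presheaves, this suffices. The guiding principle is that a $j$-sheaf is in particular separated (Definition \ref{def separated and sheaf}), and separatedness forces all of the ``local'' notions introduced for the dagger construction to collapse onto their honest counterparts, after which the argument becomes the familiar one for the plus construction (cf. Lemma \ref{lem eta is invertible on sheaves}).

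First I would record the key reduction. Suppose $s, t \in X(U)$ are locally equal, so there is a $j$-covering family $r = \{r_i : U_i \to U\}$ with $X(r_i)(s) = X(r_i)(t)$ for all $i$; this says exactly $\text{res}_{r,X}(s) = \text{res}_{r,X}(t)$, and injectivity of $\text{res}_{r,X}$ (separatedness) gives $s = t$. Thus for a sheaf, local equality is equality. Feeding this back into the definitions, an $X$-locally matching family is simply an $X$-matching family, so $\text{LocMatch}(r, X) = \Match(r, X)$, and two matching families are $X$-locally equivalent exactly when they agree after pullback along a common refinement.

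For injectivity of $\eta_U$: if $\eta_U(x) = \eta_U(y)$, then the families $\{x\}$ and $\{y\}$ on the saturating family $(1_U)$ are identified in $X^\dagger(U)$, hence $X$-locally equivalent by Lemma \ref{lem sections identified if locally equiv}; taking the defining square with apex $U$ and both legs the identity shows that $x$ and $y$ are locally equal, hence equal by the reduction above. For surjectivity, a class in $X^\dagger(U)$ is represented by an $X$-locally matching — hence, as noted, $X$-matching — family $\{x_i\}$ on a $j$-saturating family $r = \{r_i : U_i \to U\}$. Because $X$ is a $j$-sheaf it is a $\sat{j}$-sheaf by Proposition \ref{prop sheaf iff sheaf on saturation closure}, and $r \in \sat{j}(U)$, so $\{x_i\}$ has a unique amalgamation $x \in X(U)$ with $X(r_i)(x) = x_i$. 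The identity-indexed refinement $r \to (1_U)$ with components $r_i$ then pulls $\{x\}$ back to the family $\{X(r_i)(x)\} = \{x_i\}$, and Lemma \ref{lem refinement implies locally equivalent} shows this pullback is $X$-locally equivalent to $\{x\}$; therefore $\eta_U(x) = [\{x\}] = [\{x_i\}]$.

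The step I expect to carry the real weight is surjectivity, specifically the appeal to Proposition \ref{prop sheaf iff sheaf on saturation closure} to guarantee that $X$ admits unique amalgamations on arbitrary \emph{saturating} families and not merely on the given $j$-covering families — this is what lets me produce the amalgamating section $x$ at all. The remaining care is purely bookkeeping: checking that the pullback of $\{x\}$ along $r \to (1_U)$ is literally $\{x_i\}$ and invoking the local-equivalence lemma. Everything else is the standard plus-construction argument, now with the ``local'' decorations stripped away by separatedness.
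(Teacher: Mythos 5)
Your proof is correct and takes essentially the same route as the paper: the key step in both is that separatedness collapses local equality to equality, so that $X$-locally matching families are honest matching families and $X^\dagger$ coincides with $X^+$. The only difference is that you then verify bijectivity of $\eta_U$ by hand (injectivity from separatedness, surjectivity by amalgamating over the saturating family using Proposition \ref{prop sheaf iff sheaf on saturation closure} and pulling back along the refinement $r \to (1_U)$), whereas the paper at that point simply cites Lemma \ref{lem eta is invertible on sheaves} for the plus construction.
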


\begin{proof}
Since $X$ is a $j$-sheaf, if two sections $x$ and $y$ are $X$-locally equal, then they are equal. Thus a family of sections is $X$-locally matching if and only if they are $X$-matching. In other words, $X^\dagger$ is precisely $X^+$ when $X$ is a sheaf, and hence by Lemma \ref{lem eta is invertible on sheaves}, the map $\eta$ is an isomorphism.
\end{proof}

\begin{Lemma}
Given a site $(\cat{C}, j)$, any map $f: X \to Y$ of presheaves where $Y$ is a sheaf factors uniquely through a map $\widetilde{f} : X^\dagger \to Y$.
\end{Lemma}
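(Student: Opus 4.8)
The plan is to mirror the proof of Lemma~\ref{lem universal map to plus construction}, exploiting the fact that the dagger construction behaves like the plus construction once $Y$ is a sheaf. First I would check that $(-)^\dagger$ is functorial in presheaf maps: for $f : X \to Y$, the assignment $f^\dagger[\{x_i\}] = [\{f(x_i)\}]$ is well defined because naturality of $f$ carries local equality (hence local matching and local equivalence) from $X$ to $Y$: if $X(r_k)(s) = X(r_k)(t)$ on a covering family, then $Y(r_k)(f(s)) = Y(r_k)(f(t))$. With this in hand the square
\begin{equation*}
\begin{tikzcd}
X & {X^\dagger} \\
Y & {Y^\dagger}
\arrow["{\eta_X}", from=1-1, to=1-2]
\arrow["f"', from=1-1, to=2-1]
\arrow["{f^\dagger}", from=1-2, to=2-2]
\arrow["{\eta_Y}"', from=2-1, to=2-2]
\end{tikzcd}
\end{equation*}
commutes, and since $Y$ is a sheaf the map $\eta_Y$ is an isomorphism by the preceding lemma. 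I would then set $\widetilde{f} = \eta_Y^{-1} f^\dagger$, so that $\widetilde{f}\,\eta_X = \eta_Y^{-1} f^\dagger \eta_X = \eta_Y^{-1}\eta_Y f = f$, which establishes existence of the factorization.

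For uniqueness I would show that any $\widehat{f} : X^\dagger \to Y$ with $\widehat{f}\,\eta_X = f$ is forced to agree with $\widetilde{f}$. The crucial computation is the identity $X^\dagger(r_i)\big([\{x_i\}]\big) = \eta_X(x_i)$ for an element $[\{x_i\}] \in X^\dagger(U)$ represented by an $X$-locally matching family on a $j$-saturating family $r = \{r_i : U_i \to U\}$. To prove it I would unwind the definition of $X^\dagger$ on the morphism $r_i$: choosing a saturating family $s = \{s_a : W_a \to U_i\}$ and maps $\phi_a : W_a \to U_{\alpha(a)}$ with $r_{\alpha(a)}\phi_a = r_i s_a$, the restriction is $[\{X(\phi_a)(x_{\alpha(a)})\}]$. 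Each such square is an intersection square for $r$, so the locally matching condition gives that $X(\phi_a)(x_{\alpha(a)})$ and $X(s_a)(x_i)$ are locally equal; hence $\{X(\phi_a)(x_{\alpha(a)})\} \approx_X \{X(s_a)(x_i)\}$, and the latter is the pullback of the singleton $\{x_i\}$ along the refinement $s \to (1_{U_i})$, which by Lemma~\ref{lem refinement implies locally equivalent} and Lemma~\ref{lem sections identified if locally equiv} represents $\eta_X(x_i)$.

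Granting this identity, naturality of $\widehat{f}$ yields $Y(r_i)\big(\widehat{f}_U[\{x_i\}]\big) = \widehat{f}_{U_i}(\eta_X(x_i)) = f(x_i)$ for every $i$, so $\widehat{f}_U[\{x_i\}]$ is an amalgamation of $\{f(x_i)\}$ over $r$. This family is genuinely $Y$-matching: applying $f$ to the locally matching conditions and using that the sheaf $Y$ is separated upgrades the local equalities to equalities. Moreover $Y$ is a sheaf on the saturating family $r$, since $r$ is $\sat{j}$-covering by Corollary~\ref{cor saturating iff covering} and $Y$ is a $\sat{j}$-sheaf by Proposition~\ref{prop sheaf iff sheaf on saturation closure}. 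Hence the amalgamation is unique, and as $\widetilde{f}$ satisfies the same relation $\widetilde{f}\,\eta_X = f$, both $\widehat{f}_U[\{x_i\}]$ and $\widetilde{f}_U[\{x_i\}]$ equal it, giving $\widehat{f} = \widetilde{f}$. The main obstacle is precisely the identity $X^\dagger(r_i)[\{x_i\}] = \eta_X(x_i)$, where the definition of the dagger on morphisms must be reconciled with the local-equivalence relation via the two lemmas above; everything else is then a transcription of the plus-construction argument.
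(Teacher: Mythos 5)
Your proof is correct and follows the route the paper itself indicates (transcribe the argument of Lemma \ref{lem universal map to plus construction}, with the unique amalgamation of $\{f(x_i)\}$ playing the role of $\widetilde{f}_U[\{x_i\}]$); in fact you supply the dagger-specific details the paper's one-line proof leaves implicit, namely the well-definedness of $f^\dagger$, the identity $X^\dagger(r_i)[\{x_i\}] = \eta_X(x_i)$ via Lemmas \ref{lem refinement implies locally equivalent} and \ref{lem sections identified if locally equiv}, and the use of separatedness of $Y$ to upgrade $\{f(x_i)\}$ from locally matching to matching. No gaps.
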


\begin{proof}
The proof is similar to Lemma \ref{lem universal map to plus construction}, the map $\widetilde{f}$ sends an equivalence class of an $X$-locally matching family to the unique amalgamation of its image under $f$ in $Y$.
\end{proof}

Since $X^\dagger$ is a sheaf, this implies that $(-)^\dagger$ is left adjoint to the inclusion $i : \ncat{Sh}(\cat{C},j) \hookrightarrow \ncat{Pre}(\cat{C})$. Since adjoints are unique up to isomorphism, Proposition \ref{prop sheafification props} implies the following result.

\begin{Cor}
Given a site $(\cat{C}, j)$ and a presheaf $X$, there is an isomorphism
\begin{equation*}
    X^\dagger \cong aX,
\end{equation*}
where $a$ denotes sheafification by two applications of the plus construction (Definition \ref{def sheafification}).
\end{Cor}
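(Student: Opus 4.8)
The plan is to prove this by recognizing that both functors $(-)^\dagger$ and $a = (-)^{++}$ are left adjoint to the inclusion $i : \Sh(\cat{C},j) \hookrightarrow \Pre(\cat{C})$, and then invoking uniqueness of adjoints up to natural isomorphism. This is exactly the route signposted in the sentence preceding the statement, and it avoids any direct comparison of the two colimit formulas defining $X^\dagger$ and $X^{++}$.

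First I would assemble the adjunction $(-)^\dagger \dashv i$ from the three immediately preceding results. Those lemmas establish precisely the data of a reflective localization: $X^\dagger$ is always a $j$-sheaf, the natural map $\eta : X \to X^\dagger$ is an isomorphism when $X$ is already a sheaf, and every map $f : X \to Y$ into a sheaf $Y$ factors uniquely through $\eta$ as $\widetilde{f} : X^\dagger \to Y$. This universal factorization property is exactly the statement that $\eta$ is the unit of an adjunction between $(-)^\dagger$ and the inclusion $i$, so $(-)^\dagger \dashv i$. On the other side, Proposition \ref{prop sheafification props}.(2) gives $a \dashv i$ directly.

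With both adjunctions in hand, uniqueness of left adjoints yields a natural isomorphism $(-)^\dagger \cong a$ of functors $\Pre(\cat{C}) \to \Sh(\cat{C}, j)$, and evaluating at $X$ gives the desired isomorphism $X^\dagger \cong aX$. If one prefers an explicit comparison map rather than the abstract uniqueness statement, I would build it as follows: since $aX$ is a sheaf, the sheafification unit $X \to aX$ factors uniquely through $\eta$ to give a map $X^\dagger \to aX$; since $X^\dagger$ is a sheaf, the unit $\eta : X \to X^\dagger$ factors uniquely through the sheafification unit to give a map $aX \to X^\dagger$; and both composites are forced to be identities because they are the unique factorizations of $\eta$ (respectively of the sheafification unit) through themselves.

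The only real content to check is that the universal factorization lemma genuinely upgrades to an adjunction, i.e. that $\eta$ is natural and satisfies the triangle identities — but naturality of $\eta$ is immediate from its definition on sections and the triangle identities follow formally from uniqueness of the factorizations. Thus the main (and essentially only) obstacle has already been dispatched by the preceding lemmas, and the proof reduces to citing $a \dashv i$ from Proposition \ref{prop sheafification props} together with the uniqueness of adjoints.
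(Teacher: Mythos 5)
Your proposal is correct and follows exactly the paper's own route: the preceding lemmas establish $(-)^\dagger \dashv i$, Proposition \ref{prop sheafification props} gives $a \dashv i$, and uniqueness of left adjoints yields $X^\dagger \cong aX$. The explicit comparison maps you describe are a fine optional elaboration but add nothing beyond the abstract argument the paper already uses.
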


\section{Examples of Sites} \label{section some sites}
In this section, we finally introduce a wide variety of examples of sites from across the literature. Doing this requires delving into some set-theoretic difficulties, as we do in Section \ref{section large sites}. First, we discuss some particular properties of sites that are often satisfied by examples in the literature, such as being (sub)canonical or pullback-stable.

\subsection{More Kinds of Coverages}

\subsubsection{(Sub)canonical coverages}

\begin{Def}\label{def subcanonical site}
We say that a site $(\cat{C}, j)$ is \textbf{subcanonical} if for every $U \in \cat{C}$, the representable presheaf $y(U)$ is a $j$-sheaf.
\end{Def}

\begin{Ex}
The sites of Example \ref{ex canonical coverages}, the site $(\mathcal{O}(X), j_X)$ of a topological space $X$ from Example \ref{ex open cover coverage}, the basis coverage from Example \ref{ex basis for a topology as a coverage}, and the site $(\ncat{FinSet}, j_{\text{epi}})$ from Example \ref{ex set joint epi coverage} are all subcanonical.
\end{Ex}

\begin{Def} \label{def canonical coverage}
We say that a coverage $j$ on a small category $\cat{C}$ is \textbf{canonical} if $\Gro{j}$ is the largest subcanonical Grothendieck coverage on $\cat{C}$. Hence there is a unique canonical Grothendieck coverage on any small category.
\end{Def}

\begin{Def} \label{def colimit sieve}
Given a small category $\cat{C}$, we say that a sieve $R \hookrightarrow y(U)$ is a \textbf{colimit sieve} if $\{f : V \to U \}_{f \in R}$ forms a colimit cocone\footnote{specifically it is a colimit over the diagram $\pi : (R \downarrow U) \to \cat{C}$, where $(R \downarrow U)$ is the full subcategory of $\cat{C}/_U$ on the morphisms in $R$, and $\pi$ is defined by $\pi(f : V \to U) = V$.}. We say that $R$ is a \textbf{universal colimit sieve} if it is a colimit sieve and furthermore for every morphism $f: V \to U$, the sieve $f^*(R)$ is a colimit sieve.

Given a small category $\cat{C}$, let $J_{\text{uni}}$ denote the sifted collection of families given by the universal colimit sieves.
\end{Def}

\begin{Lemma} \label{lem colimit sieves iff representables are sheaves on them}
Given a small category $\cat{C}$ a sieve $R \hookrightarrow y(U)$ is a colimit sieve if and only if every representable $y(V)$ is a sheaf on $R$.
\end{Lemma}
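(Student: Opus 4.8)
The plan is to reduce the statement directly to the limit characterization of the sheaf condition on a sieve, namely Lemma \ref{lem sheaves on sieves condition}, together with the fact that colimits are detected by representable functors.

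First I would unwind the definition of colimit sieve. Writing $\pi : (R \downarrow U) \to \cat{C}$ for the diagram $\pi(f : V \to U) = V$, the sieve $R$ is a colimit sieve precisely when $U$, equipped with the cocone $\{f : V \to U\}_{f \in R}$, is a colimit of $\pi$. By the defining universal property of colimits, this holds if and only if for every object $A \in \cat{C}$ the canonical comparison map
$$\cat{C}(U, A) \longrightarrow \lim_{(f : V \to U) \in R} \cat{C}(V, A), \qquad u \longmapsto \{u f\}_{f \in R},$$
is a bijection, where the limit is taken over $(R \downarrow U)$. This is simply the standard fact that a cocone is colimiting if and only if every representable functor $\cat{C}(-, A)$ carries it to a limit cone.

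Next I would identify this condition with the sheaf condition. Rewriting $\cat{C}(U, A) = y(A)(U)$ and $\cat{C}(V, A) = y(A)(V)$, the comparison map above becomes exactly the restriction map $y(A)(U) \to \lim_{(f : V \to U) \in R} y(A)(V)$, since $y(A)(f)$ is precomposition with $f$. By Lemma \ref{lem sheaves on sieves condition}, this restriction map being a bijection is precisely the statement that $y(A)$ is a sheaf on $R$. Thus $R$ is a colimit sieve if and only if $y(A)$ is a sheaf on $R$ for every $A \in \cat{C}$, which is the claim.

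The only point requiring care — and the step I would flag as the main (minor) obstacle — is verifying that the two canonical maps genuinely coincide: the colimit comparison map $\cat{C}(U, A) \to \lim_{f} \cat{C}(V, A)$ and the restriction map $y(A)(U) \to \lim_{f} y(A)(V)$ underlying Lemma \ref{lem sheaves on sieves condition} (equivalently the map of Corollary \ref{cor sheaf condition on a sieve}). Both send $u \mapsto \{u f\}_{f}$, so they agree on the nose, but one should confirm that the indexing categories and transition maps match, i.e. that the category of elements of the presheaf $R$ really is $(R \downarrow U)$ and that the limit cone in Lemma \ref{lem sheaves on sieves condition} is assembled from the same precomposition maps. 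Once this bookkeeping is in place the equivalence is immediate, and no further computation is needed.
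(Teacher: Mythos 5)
Your proof is correct and follows essentially the same route as the paper: both reduce the sheaf condition on $R$ to the limit formula of Lemma \ref{lem sheaves on sieves condition} and then invoke the fact that colimits are detected by representable functors (the paper phrases this as $\lim_{f \in R}\cat{C}(W,V) \cong \cat{C}(\colim_f W, V)$ followed by the Yoneda lemma). Your version is marginally more careful in working directly with the cocone rather than presupposing that the colimit object exists in $\cat{C}$, but the content is identical.
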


\begin{proof}
By Lemma \ref{lem sheaves on sieves condition}, a representable $y(V)$ is a sheaf on $R$ if and only if
\begin{equation*}
    y(V)(U) \cong \lim_{f : W \to U \in R} y(V)(W).
\end{equation*}
But this means that
\begin{equation*}
    \cat{C}(U,V) \cong \lim_{f \in R} \cat{C}(W,V) \cong \cat{C}\left( \ncolim{f : W \to U \in R} W, V \right).
\end{equation*}
Since this is true for every $V \in \cat{C}$, by the Yoneda lemma this implies that $U \cong \ncolim{f : W \to U \in R} W$. In other words, $R$ is a colimit sieve.
\end{proof}

Hence if $(\cat{C}, J)$ is a subcanonical Grothendieck coverage, then every $J$-covering sieve $R$ is a colimit sieve. The following result gives a nice characterization of the canonical Grothendieck coverage. It is surprisingly nontrivial to prove.

\begin{Prop}[{\cite[Theorem 3.1.1 and Theorem 3.2.1]{lester2019canonical}}] \label{prop characterization of canonical grothendieck topology}
Given a small category $\cat{C}$, the collection of families $J_{\text{uni}}$ is a Grothendieck coverage, and it is the canonical Grothendieck coverage.
\end{Prop}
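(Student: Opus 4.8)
The statement has two parts: first, that $J_{\text{uni}}$ is a Grothendieck coverage, and second, that it is the canonical one (i.e. the largest subcanonical Grothendieck coverage). I would prove these in sequence, leaning heavily on Lemma \ref{lem colimit sieves iff representables are sheaves on them}, which already tells us that colimit sieves are exactly those on which every representable is a sheaf. The definition of $J_{\text{uni}}$ via \emph{universal} colimit sieves (colimit sieves stable under pullback) is precisely what is needed to get the Grothendieck axioms, and the universality is also what forces subcanonicity to propagate correctly.

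First I would verify the three Grothendieck axioms (G1), (G2), (G3) for $J_{\text{uni}}$. For (G1), the maximal sieve $y(U)$ is a colimit sieve because $U$ is the colimit of the slice category $\cat{C}/_U$ (the identity $1_U$ is terminal in the indexing diagram, so it is trivially a colimit cocone), and this is stable under any pullback $f^*(y(U)) = y(V)$, so $y(U) \in J_{\text{uni}}(U)$. Axiom (G2), pullback-stability, is immediate from the definition: if $R$ is a universal colimit sieve on $U$ and $g : V \to U$, then $g^*(R)$ is again a colimit sieve (by universality of $R$) and is itself universal, since for any further $h : W \to V$ we have $h^*(g^*(R)) = (gh)^*(R)$, a colimit sieve. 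The genuinely substantive axiom is (G3): given $R \in J_{\text{uni}}(U)$ and a sieve $R'$ with $g^*(R') \in J_{\text{uni}}(V)$ for every $g \in R$, show $R' \in J_{\text{uni}}(U)$.

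I expect (G3) to be the main obstacle, and this is exactly where the cited reference \cite{lester2019canonical} does real work (hence the remark that it is ``surprisingly nontrivial''). The strategy I would take is to reduce the colimit-sieve condition to the sheaf condition via Lemma \ref{lem colimit sieves iff representables are sheaves on them}: $R'$ is a universal colimit sieve iff every representable $y(Z)$ is a sheaf on every pullback $f^*(R')$. So fix a representable $y(Z)$ and a map $f : W \to U$; replacing $R$ by $f^*(R)$ (still universal by (G2)) and $R'$ by $f^*(R')$, it suffices to show $y(Z)$ is a sheaf on $R'$ given that it is a sheaf on $R$ and on $g^*(R')$ for all $g \in R$. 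This is a descent/gluing argument: a matching family for $y(Z)$ on $R'$ restricts along each $g \in R$ to a matching family on $g^*(R')$, which glues to an amalgamation since $y(Z)$ is a sheaf there; these local amalgamations then themselves form a matching family on $R$, which glues because $y(Z)$ is a sheaf on $R$, and one checks the resulting global section amalgamates the original family on $R'$, with uniqueness following from the sheaf conditions at both levels. This is structurally identical to the proof of Lemma \ref{lem sheaf on composite family} (composition closure of sheaves) applied to representables, so I would either invoke that machinery or mirror its two-stage amalgamation argument.

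Once $J_{\text{uni}}$ is established as a Grothendieck coverage, the second part—that it is canonical—splits into two inclusions. It is subcanonical because every representable is a sheaf on every colimit sieve by Lemma \ref{lem colimit sieves iff representables are sheaves on them}, hence on every $J_{\text{uni}}$-covering sieve. Conversely, to see it is the \emph{largest} subcanonical Grothendieck coverage, I would take any subcanonical Grothendieck coverage $J$ and show $J \subseteq J_{\text{uni}}$: if $R \in J(U)$, then every representable $y(Z)$ is a $J$-sheaf (subcanonicity) and so is a sheaf on $R$, giving that $R$ is a colimit sieve; and since $J$ is pullback-stable (G2), each $g^*(R) \in J(V)$ is also a colimit sieve, so $R$ is a \emph{universal} colimit sieve, i.e.\ $R \in J_{\text{uni}}(U)$. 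Thus $J_{\text{uni}}$ contains every subcanonical Grothendieck coverage and is itself subcanonical, making it the canonical one in the sense of Definition \ref{def canonical coverage}.
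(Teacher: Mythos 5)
The paper itself gives no proof of this proposition: it is stated with a citation to Lester's thesis, preceded by the remark that it is ``surprisingly nontrivial to prove,'' so there is no in-text argument to compare yours against. Judged on its own, your outline is sound and correctly locates all the difficulty in axiom (G3). Your verifications of (G1) and (G2) are complete as written, and the two inclusions for canonicity are exactly right: subcanonicity of $J_{\text{uni}}$ is immediate from Lemma \ref{lem colimit sieves iff representables are sheaves on them}, and for maximality you correctly use axiom (G2) of an arbitrary subcanonical Grothendieck coverage $J$ to upgrade ``each $R \in J(U)$ is a colimit sieve'' to ``each $R \in J(U)$ is a \emph{universal} colimit sieve.''

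For (G3), your reduction to $f = 1_U$ is legitimate (for $g \in f^*(R)$ one has $fg \in R$ since $R$ is a sieve, so $g^*(f^*(R')) = (fg)^*(R')$ remains in $J_{\text{uni}}$), and the two-stage amalgamation goes through; but the one step you wave at with ``one checks'' deserves attention, because it is where universality, and not mere colimit-sieve-ness, is indispensable. Having glued the local amalgamations $y_g$ ($g \in R$) to a section $y$ with $y(Z)(g)(y) = y_g$, you must show $y(Z)(k)(y) = x_k$ for every $k \in R'$, and $k$ need not lie in $R$. The fix is to test against the sieve $k^*(R)$: for $m \in k^*(R)$ one computes $y(Z)(m)\bigl(y(Z)(k)(y)\bigr) = y_{km} = x_{km} = y(Z)(m)(x_k)$, where the middle equality holds because $km \in R'$, so $(km)^*(R')$ is the maximal sieve and the unique amalgamation there is $x_{km}$ itself; one then concludes $y(Z)(k)(y) = x_k$ because $y(Z)$ is a sheaf on $k^*(R)$, which holds precisely because $R$ is a \emph{universal} colimit sieve. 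Likewise, the claim that $\{y_g\}_{g \in R}$ is matching on $R$ uses that $l^*(g^*(R'))$ is a colimit sieve for arbitrary $l$, i.e.\ the universality of $g^*(R')$. With those two uses of universality made explicit, your sketch becomes a complete proof, and it matches the strategy of the cited reference.
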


\begin{Prop}[{\cite[Theorem 3.2.4]{lester2019canonical}}] \label{prop characterization of canonical pretopology}
Let $\cat{C}$ be a cocomplete category with all pullbacks, and furthermore suppose that its coproducts are stable under pullback and disjoint\footnote{see Section \ref{section girauds theorem} for the definition of disjoint coproducts.}. Let $j_{\text{uniepi}}$ denote the collection of families where $\{r_i : U_i \to U \} \in j_{\text{uniepi}}(U)$ if and only if $\sum_i r_i : \sum_i U_i \to U$ is a universal effective epimorphism\footnote{i.e. it is an effective epimorphism that is stable under pullbacks}. Then $j_{\text{uniepi}}$ is a Grothendieck pretopology, and it is canonical.
\end{Prop}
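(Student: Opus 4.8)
The plan is to reduce the whole statement to the characterization of the canonical Grothendieck coverage as the collection $J_{\text{uni}}$ of universal colimit sieves (Proposition \ref{prop characterization of canonical grothendieck topology}). The technical heart is a single translation lemma: for a family $r = \{r_i : U_i \to U\}$ in $\cat{C}$,
\begin{equation*}
\textstyle\sum_i r_i : \sum_i U_i \to U \text{ is a universal effective epimorphism} \iff \overline{r} \in J_{\text{uni}}(U).
\end{equation*}
I would prove this first, and then read off both assertions of the proposition (that $j_{\text{uniepi}}$ is a pretopology, and that it is canonical) as formal consequences.

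To prove the translation lemma I would first treat the non-universal case: $\sum_i r_i$ is an effective epimorphism if and only if $\overline{r}$ is a colimit sieve. Since $\cat{C}$ has all pullbacks, matching families for $r$ are governed by the fibre products $U_i \times_U U_j$ (Lemma \ref{lem coverage if pullbacks exist}), so by Lemma \ref{lem colimit sieves iff representables are sheaves on them} together with the sieve sheaf condition (Corollary \ref{cor classical sheaf condition}), $\overline{r}$ is a colimit sieve exactly when, for every $V \in \cat{C}$, the diagram $\cat{C}(U,V) \to \prod_i \cat{C}(U_i, V) \rightrightarrows \prod_{i,j} \cat{C}(U_i \times_U U_j, V)$ is an equalizer. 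By the Yoneda lemma this says precisely that $\sum_i r_i$ exhibits $U$ as $\coeq(\sum_{i,j} U_i \times_U U_j \rightrightarrows \sum_i U_i)$. Here the disjointness and pullback-stability of coproducts enter: they identify $\sum_{i,j} U_i \times_U U_j$ with the kernel pair $(\sum_i U_i)\times_U(\sum_j U_j)$, so this coequalizer statement is exactly effectiveness of $\sum_i r_i$. For the universal versions I would use that pullback commutes with the coproduct, $V \times_U \sum_i U_i \cong \sum_i (V \times_U U_i)$, and that $g^*(\overline{r}) = \overline{g^* r}$ for the pullback family $g^* r = \{V \times_U U_i \to V\}$; applying the non-universal equivalence fibrewise then shows $\sum_i r_i$ is a universal effective epimorphism if and only if $g^*\overline{r}$ is a colimit sieve for every $g$, i.e. if and only if $\overline{r}$ is a universal colimit sieve.

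Granting the translation lemma, canonicity is immediate: the lemma gives $\overline{j_{\text{uniepi}}} = J_{\text{uni}}$ as collections of sieves, since every $j_{\text{uniepi}}$-family generates a member of $J_{\text{uni}}$, and conversely every universal colimit sieve $R$ is its own generating family with $\sum_{f \in R} \text{dom}(f) \to U$ a universal effective epimorphism, whence $R \in \overline{j_{\text{uniepi}}}(U)$. Therefore $\Gro{j_{\text{uniepi}}} = \Gro{\overline{j_{\text{uniepi}}}} = J_{\text{uni}}$ by Lemma \ref{lem sifted closure has same Grothendieck closure}, and $J_{\text{uni}}$ is the canonical coverage by Proposition \ref{prop characterization of canonical grothendieck topology}. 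For the pretopology axioms I would transfer the three Grothendieck-coverage properties of $J_{\text{uni}}$ back through the lemma rather than manipulate effective epimorphisms by hand: isomorphisms generate the maximal sieve $y(U) \in J_{\text{uni}}(U)$; stability holds because $J_{\text{uni}}$ satisfies (G2) and $g^*\overline{r} = \overline{g^* r}$; and transitivity holds because Grothendieck coverages are composition closed (Lemma \ref{lem grothendieck coverages are composition closed}), so a composite of $j_{\text{uniepi}}$-families generates a composite of $J_{\text{uni}}$-sieves, which again lies in $J_{\text{uni}}$.

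The main obstacle is the translation lemma, and specifically the clean identification of the kernel pair of $\sum_i r_i$ with $\sum_{i,j} U_i \times_U U_j$: this is exactly the step that forces the disjointness and pullback-stability hypotheses on coproducts, without which the coequalizer appearing in the colimit-sieve condition would fail to coincide with the kernel-pair coequalizer defining an effective epimorphism. Everything else is bookkeeping with pullback pasting and the already-established correspondence between sieves and colimits.
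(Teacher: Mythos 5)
The paper does not actually prove this proposition --- it is quoted from \cite[Theorem 3.2.4]{lester2019canonical} with no argument supplied --- so there is nothing internal to compare your proof against; I can only assess it on its own terms, and on those terms it is essentially correct. Your translation lemma is the right pivot, and the chain of reductions (Lemma \ref{lem coverage if pullbacks exist} $\to$ Lemma \ref{lem colimit sieves iff representables are sheaves on them} $\to$ Yoneda $\to$ the kernel-pair coequalizer, then $g^*(\overline{r}) = \overline{g^*r}$ and $V \times_U \sum_i U_i \cong \sum_i (V\times_U U_i)$ for the universal version) does establish that $\sum_i r_i$ is a universal effective epimorphism if and only if $\overline{r} \in J_{\text{uni}}(U)$. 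From there, canonicity via $\overline{j_{\text{uniepi}}} = J_{\text{uni}}$ and Lemma \ref{lem sifted closure has same Grothendieck closure}, and the pretopology axioms via (G1)--(G3), are correctly assembled. Note, in fact, that your translation lemma says precisely that $j_{\text{uniepi}} = (J_{\text{uni}})^\circ$ in the sense of Definition \ref{def interior coverage}, so composition closure (and refinement closure, which you do not need) could be quoted wholesale from Lemma \ref{lem interior of Grothendieck coverage is saturated} rather than re-derived; as you phrase it, ``a composite of families generates a composite of sieves'' is slightly loose --- the sieve $\bigcup_f f_*T_f$ built from the composite is only \emph{contained in} $\overline{(r\circ t)}$, and you must finish with upward closure of $J_{\text{uni}}$ (Lemma \ref{lem covering sieve props}.(1)).

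Two small corrections. First, you attribute the identification of the kernel pair of $\sum_i r_i$ with $\sum_{i,j} U_i \times_U U_j$ to disjointness \emph{and} pullback-stability, but only pullback-stability is used there: applying stability twice gives $\bigl(\sum_i U_i\bigr) \times_U \bigl(\sum_j U_j\bigr) \cong \sum_j \bigl(\bigl(\sum_i U_i\bigr)\times_U U_j\bigr) \cong \sum_{i,j} U_i \times_U U_j$, with no appeal to initiality of pairwise intersections. Carrying the disjointness hypothesis does no harm (it is in the cited theorem), but your claim that this step ``forces'' it is not accurate, and you should either locate where disjointness genuinely enters or flag it as unused in your argument. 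Second, a cocomplete category with pullbacks is essentially large by Freyd's Theorem (Proposition \ref{prop freyd's theorem}), whereas the lemmas you invoke about sieves, colimit sieves and Grothendieck coverages are stated for small categories; you are implicitly working in the large-site regime of Definition \ref{def large coverage}, and since everything here is a statement about individual (large) families and hom-sets rather than about sheafification or the category of sheaves, the transfer is harmless --- but it should be said.
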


\subsubsection{Stability} \label{section stability}

\begin{Def}
Given a small category $\cat{C}$, we say that a coverage $j$ on $\cat{C}$ is \textbf{pullback-stable} if for any $j$-covering family $r = \{r_i : U_i \to U \}_{i \in I}$ and for any morphism $g : V \to U$ in $\cat{C}$, the pullback
\begin{equation*}
    \begin{tikzcd}[ampersand replacement=\&]
	{V \times_U U_i} \& {U_i} \\
	V \& U
	\arrow[from=1-1, to=1-2]
	\arrow["{g^*(r_i)}"', from=1-1, to=2-1]
	\arrow["\lrcorner"{anchor=center, pos=0.125}, draw=none, from=1-1, to=2-2]
	\arrow["{r_i}", from=1-2, to=2-2]
	\arrow["g"', from=2-1, to=2-2]
\end{tikzcd}
\end{equation*}
exists in $\cat{C}$, and the family $\{g^*(r_i) : V \times_U U_i \to V \}_{i \in I}$ is a $j$-covering family of $V$.
\end{Def}

One advantage of working with pullback-stable coverages is Lemma \ref{lem coverage if pullbacks exist}. There are other technical conveniences as well.

\begin{Def}[{\cite[Definition 3.3]{roberts2012internal}}]
Let $(\cat{C}, j)$ be a pullback-stable site. We say that a covering family $r = \{r_i : U_i \to U \}_{i \in I}$ is \textbf{effective}, if $r$ is a colimit cocone of the diagram consisting of all spans $U_i \leftarrow U_i \times_U U_j \rightarrow U_j$. Note that if $r$ is a singleton, then $r$ is effective if and only if it is a regular epimorphism.
\end{Def}

\begin{Lemma} \label{lem subcanonical on pullback-stable condition}
If $(\cat{C}, j)$ is a pullback-stable site, then it is subcanonical (Definition \ref{def subcanonical site}) if and only if all of its covering families are effective.
\end{Lemma}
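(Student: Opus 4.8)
The plan is to prove both directions simultaneously by unwinding the sheaf condition for representable presheaves into the universal property of a colimit cocone, with the translation supplied by the Yoneda lemma together with the pullback description of matching families available on a pullback-stable site. The key tool is Lemma \ref{lem coverage if pullbacks exist}: since $(\cat{C},j)$ is pullback-stable, all pullbacks $U_i \times_U U_j$ exist along covering maps, and a collection of sections is a matching family precisely when it agrees on these pullbacks. Applying this to $X = y(V)$ converts the abstract notion of matching family into a concrete diagram of morphisms into $V$, which is exactly the data of a cocone on the span diagram defining effectiveness.

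The core step is the following identification, for a fixed covering family $r = \{r_i : U_i \to U\}_{i \in I}$ and a fixed object $V \in \cat{C}$. A section $s_i \in y(V)(U_i)$ is just a morphism $f_i : U_i \to V$, and by Lemma \ref{lem coverage if pullbacks exist} the collection $\{f_i\}$ is a $y(V)$-matching family over $r$ if and only if $f_i \pi_i = f_j \pi_j$ for every pullback $U_i \times_U U_j$ with projections $\pi_i, \pi_j$. This is exactly the compatibility condition for $\{f_i : U_i \to V\}$ to be a cocone on the diagram of all spans $U_i \leftarrow U_i \times_U U_j \rightarrow U_j$. Moreover an amalgamation for $\{f_i\}$ is a section $g \in y(V)(U)$, i.e. a morphism $g : U \to V$, satisfying $y(V)(r_i)(g) = g r_i = f_i$, which is precisely a factorization of the cocone through $\{r_i\}$. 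Hence $y(V)$ is a sheaf on $r$ if and only if every cocone $\{f_i : U_i \to V\}$ factors uniquely through $U$ via $\{r_i\}$; that is, if and only if $\{r_i\}$ has the universal property of the colimit cocone of the span diagram as tested against the single object $V$.

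To conclude, recall that $(\cat{C},j)$ is subcanonical exactly when $y(V)$ is a sheaf on every covering family $r$, for every $V$. By the identification above, this says that for every covering family $r$ and every $V$, the cocone $\{r_i\}$ is universal when tested against $V$. Quantifying over all $V$ upgrades this pointwise universal property to the genuine statement that $\{r_i\}$ is a colimit cocone of the span diagram, which is the definition of $r$ being effective; conversely, if $r$ is effective then applying $\cat{C}(-,V)$ to the colimit turns it into the equalizer expressing that $y(V)$ is a sheaf on $r$, for every $V$. I expect the only real point requiring care to be the bookkeeping in this last quantifier swap: one must check that effectiveness, being phrased directly in terms of the family $r$ and its pullbacks $U_i \times_U U_j$, lines up with the sheaf condition on the family itself rather than only on its generated sieve, which is harmless here since Lemma \ref{lem sheaf on covering family iff on sieve it generates} identifies the two and the pullback form of the matching condition is what both definitions use.
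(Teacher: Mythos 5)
Your proposal is correct and follows essentially the same route as the paper: both arguments use pullback-stability to rewrite the sheaf condition for $y(V)$ on a covering family as the universal property of a cocone on the span diagram of pullbacks $U_i \leftarrow U_i \times_U U_j \rightarrow U_j$, and then invoke the Yoneda lemma to pass from "universal against every $V$" to "colimit cocone," i.e. effectiveness. The paper packages the first step via Corollary \ref{cor classical sheaf condition} as an equalizer-of-hom-sets computation rather than via Lemma \ref{lem coverage if pullbacks exist} directly, but this is only a cosmetic difference.
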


\begin{proof}
$(\Rightarrow)$ Suppose that $(\cat{C}, j)$ is pullback-stable and subcanonical. Then for every $U \in \cat{C}$, the representable presheaf $y(U)$ is a $j$-sheaf. Thus by Corollary \ref{cor classical sheaf condition}, if $r = \{r_i : U_i \to U \}$ is a $j$-covering family and $V \in \cat{C}$, then
\begin{equation*}
\begin{aligned}
    \cat{C}(U,V) \cong y(V)(U) & \cong \text{eq} \left( \prod_i y(V)(U_i) \rightrightarrows \prod_{i,j} y(V)(U_i \times_U U_j) \right) \\
    & \cong \text{eq} \left( \prod_i \cat{C}(U_i, V) \rightrightarrows \prod_{i,j} \cat{C}(U_i \times_U U_j, V) \right) \\
    & \cong \cat{C}\left( \text{coeq} \left( \coprod_i U_i \leftleftarrows \coprod_{i,j} U_i \times_U U_j \right), V \right). 
\end{aligned}    
\end{equation*}
But since $V$ was arbitrary, by the Yoneda lemma, we have that $U \cong \text{coeq}  \left( \coprod_i U_i \leftleftarrows \coprod_{i,j} U_i \times_U U_j \right)$. Thus $r$ is effective.
$(\Leftarrow)$ If all of the covering families are effective, then the reverse of the above argument shows that $\cat{C}$ is subcanonical.
\end{proof}

One of the most common kinds of coverages that appear in the literature are Grothendieck pretopologies. These are especially common in differential and algebraic geometry.

\begin{Def} \label{def grothendieck pretopology}
Given a small category $\cat{C}$, we say that a coverage $j$ is a \textbf{Grothendieck pretopology} if $\{f : V \to U \} \in j(U)$ for every isomorphism $f$, $j$ is composition closed, and $j$ is pullback-stable.
\end{Def}

A majority of the sites we will see in the next section are pullback-stable, but there are several important exceptions.

\subsubsection{Dense and atomic coverages}

\begin{Def}
Given a small category $\cat{C}$, let $J_{\text{dense}}$ denote the sifted collection of families where a sieve $R \hookrightarrow y(U)$ belongs to $J_{\text{dense}}$ if and only if for every map $f : V \to U$ there exists a map $g : W \to V$ such that $fg \in R$. In other words, $R \in J_{\text{dense}}(U)$ if and only if $f^*(R)$ is nonempty for every morphism $f$. It is easy to see that $J_{\text{dense}}$ is a Grothendieck coverage, which we call the \textbf{dense Grothendieck coverage} on $\cat{C}$. We say a coverage $j$ is dense if $\Gro{j}$ is the dense Grothendieck coverage.
\end{Def}

An important application of the dense Grothendieck coverage is the following result.

\begin{Prop}[{\cite[Corollary VI.1.5]{maclane2012sheaves}}]
Given a small category $\cat{C}$, the category of sheaves on the dense Grothendieck site $(\cat{C}, J_{\text{dense}})$ is equivalent to the sheaves on $\Pre(\cat{C})$ with respect to the double negation Lawvere-Tierney topology\footnote{We do not cover Lavwere-Tierney topologies in these notes, see \cite[Section V.1]{maclane2012sheaves} for more.}$j_{\neg \neg}$.
\end{Prop}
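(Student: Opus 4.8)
The plan is to deduce this from two ingredients: the general correspondence between Lawvere--Tierney topologies on a presheaf topos and Grothendieck coverages on the underlying category, and an explicit identification of the coverage attached to the double negation topology. Recall that in $\Pre(\cat{C})$ the subobject classifier $\Omega$ sends $U$ to the set of sieves on $U$ (Lemma \ref{lem sieves and subfunctors iso}), and that a Lawvere--Tierney topology $j : \Omega \to \Omega$ induces a Grothendieck coverage $J_j$ whose covering sieves over $U$ are exactly the $j$-dense sieves, namely those $R \hookrightarrow y(U)$ whose $j$-closure is the maximal sieve $y(U)$; moreover the $j$-sheaves in the Lawvere--Tierney sense coincide with the $J_j$-sheaves in our sense. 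This is the content of \cite[Section V.4, V.5]{maclane2012sheaves}, which I would take as given since we do not develop Lawvere--Tierney topologies here. Thus it suffices to prove that the Grothendieck coverage associated with $j_{\neg\neg}$ is precisely $J_{\text{dense}}$.

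To carry out this identification I would work directly with the Heyting negation on sieves. For a sieve $R$ on $U$, its negation is the sieve
\begin{equation*}
    \neg R = \{ f : V \to U \, \mid \, \text{for every } g : W \to V, \ fg \notin R \} = \{ f : V \to U \, \mid \, f^*(R) = \varnothing_V \},
\end{equation*}
and the $\neg\neg$-closure of $R$ in $y(U)$ is $\neg\neg R$, so a sieve $R$ is $\neg\neg$-covering precisely when $\neg\neg R = y(U)$. The key computation is that $\neg S = y(U)$ holds if and only if $S = \varnothing_U$: if $S$ is empty then every morphism lies in $\neg S$, while if $1_U \in \neg S$ then taking $g$ arbitrary shows $S$ is empty. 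Applying this with $S = \neg R$ gives that $\neg\neg R = y(U)$ if and only if $\neg R = \varnothing_U$, i.e. if and only if no $f : V \to U$ satisfies $f^*(R) = \varnothing_V$, i.e. if and only if $f^*(R)$ is nonempty for every $f$. By definition this says exactly that $R \in J_{\text{dense}}(U)$. Hence the Grothendieck coverage of $j_{\neg\neg}$ equals $J_{\text{dense}}$, and the two sheaf subcategories coincide, giving the claimed equivalence between $\Sh(\cat{C}, J_{\text{dense}})$ and the category of $j_{\neg\neg}$-sheaves in $\Pre(\cat{C})$.

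The main obstacle is not the sieve-theoretic computation above, which is elementary, but rather the black box we are invoking: the correspondence between Lawvere--Tierney topologies and Grothendieck coverages, together with the agreement of their sheaf subcategories, genuinely requires the theory of the subobject classifier and closure operators that is deliberately omitted from these notes. A fully self-contained argument would instead have to realize the $\neg\neg$ topology as a closure operator on $\Sub(y(U)) \cong \cons{sieve}(U)$, verify the Lawvere--Tierney axioms for $\neg\neg$ using the Heyting algebra structure of sieves, and characterize $j_{\neg\neg}$-sheaves by the lifting property against $\neg\neg$-dense monomorphisms; one would then recognize the $\neg\neg$-dense monomorphisms into representables as the inclusions $R \hookrightarrow y(U)$ with $R$ dense, recovering our coverage. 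Rather than reconstruct all of this, I would content myself with citing \cite{maclane2012sheaves} for the bridge and supplying the density identification of the preceding paragraph as the one genuinely content-bearing step.
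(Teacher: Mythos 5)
The paper does not actually prove this proposition: it is stated purely as a citation to \cite[Corollary VI.1.5]{maclane2012sheaves}, and the footnote makes clear that Lawvere--Tierney topologies are deliberately outside the scope of these notes. Your argument is correct and is essentially the standard proof from the cited reference: the computation $\neg\neg R = y(U) \Leftrightarrow \neg R = \varnothing_U \Leftrightarrow f^*(R) \neq \varnothing$ for all $f : V \to U$ correctly identifies the Grothendieck coverage induced by $j_{\neg\neg}$ with $J_{\text{dense}}$, and the one black box you invoke (the correspondence between Lawvere--Tierney topologies on $\Pre(\cat{C})$ and Grothendieck coverages on $\cat{C}$, with matching sheaf subcategories) is unavoidable here, since the statement itself refers to $j_{\neg\neg}$, which the paper never defines; your explicit flagging of that dependency is the right call.
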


\begin{Def}[{\cite[Page 3]{caramello2012atomic}}] \label{def atomic site}
Given a small category $\cat{C}$, let $J_{\text{at}}$ denote the smallest Grothendieck coverage on $\cat{C}$ such that every nonempty sieve is $J_{\text{at}}$-covering. We call this the \textbf{atomic Grothendieck coverage}. We say a coverage $j$ is atomic if $\Gro{j} = J_{\text{at}}$.
\end{Def}

\begin{Rem}
We note that since Grothendieck coverages are closed under (small) intersections by Lemma \ref{lem intersections of grothendieck coverages}, we know that $J_{\text{at}}$ exists by taking the intersection of the set of Grothendieck coverages that have nonempty covering sieves.
\end{Rem}

For any small category it is clear that $J_{\text{dense}} \subseteq J_{\text{at}}$. We will now characterize when they are equal.

\begin{Def} \label{def ore condition}
We say that a small category $\cat{C}$ satisfies the (right) \textbf{Ore condition} if for every cospan of morphisms $A \to B \leftarrow C$ there exists a span $A \leftarrow D \to B$ such that the square commutes
\begin{equation*}
    \begin{tikzcd}
	D & C \\
	A & B
	\arrow[dashed, from=1-1, to=1-2]
	\arrow[dashed, from=1-1, to=2-1]
	\arrow[from=1-2, to=2-2]
	\arrow[from=2-1, to=2-2]
\end{tikzcd}
\end{equation*}
Given a small category $\cat{C}$ let $J_{\text{Ore}}$ denote the sifted collection families where $R \in J_{\text{Ore}}(U)$ is covering if and only if $R$ is nonempty.
\end{Def}

\begin{Rem}
Clearly any category with pullbacks satisfies the Ore condition.
\end{Rem}

\begin{Lemma}
If $\cat{C}$ is a small category that satisfies the Ore condition, then $J_{\text{Ore}}$ is a Grothendieck coverage. Furthermore it is the atomic Grothendieck coverage and the dense Grothendieck coverage.
\end{Lemma}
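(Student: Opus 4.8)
I need to show three things about a small category $\cat{C}$ satisfying the Ore condition: first, that $J_{\text{Ore}}$ (the sifted collection where a sieve is covering iff it is nonempty) is a Grothendieck coverage; second, that it equals the atomic coverage $J_{\text{at}}$; and third, that it equals the dense coverage $J_{\text{dense}}$. The plan is to verify the three Grothendieck axioms (G1), (G2), (G3) for $J_{\text{Ore}}$ directly, where the Ore condition is exactly what is needed to make (G2) work, and then identify $J_{\text{Ore}}$ with the other two named coverages by appeal to their defining universal properties.

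\textbf{Verifying that $J_{\text{Ore}}$ is a Grothendieck coverage.} Axiom (G1) is immediate: the maximal sieve $y(U)$ contains $1_U$, so it is nonempty, hence covering. For (G2), suppose $R \in J_{\text{Ore}}(U)$ is nonempty and $g : V \to U$ is any morphism; I must show $g^*(R)$ is nonempty. Pick any $r_i : U_i \to U$ in $R$. Then $g : V \to U$ and $r_i : U_i \to U$ form a cospan $V \to U \leftarrow U_i$, and the Ore condition supplies an object $D$ with maps $D \to V$ and $D \to U_i$ making the square commute, i.e. the composite $D \to U_i \xrightarrow{r_i} U$ equals $g$ composed with $D \to V$. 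Since $R$ is a sieve and $r_i \in R$, the composite $D \to U_i \to U$ lies in $R$; commutativity says this composite factors as $g \circ (D \to V)$, so $D \to V$ lies in $g^*(R)$. Thus $g^*(R)$ is nonempty and belongs to $J_{\text{Ore}}(V)$. For (G3), suppose $R \in J_{\text{Ore}}(U)$ and $R'$ is a sieve on $U$ with $g^*(R') \in J_{\text{Ore}}(V)$ for every $g \in R(V)$; since $R$ is nonempty I may choose such a $g : V \to U$ in $R$, and then $g^*(R')$ is nonempty, so there is $h : W \to V$ with $gh \in R'$, whence $R'$ is nonempty and lies in $J_{\text{Ore}}(U)$.

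\textbf{Identifying $J_{\text{Ore}}$ with the atomic and dense coverages.} For the atomic coverage: $J_{\text{at}}$ is defined as the smallest Grothendieck coverage in which every nonempty sieve is covering. Since I have just shown $J_{\text{Ore}}$ is a Grothendieck coverage in which every nonempty sieve is covering, minimality gives $J_{\text{at}} \subseteq J_{\text{Ore}}$. Conversely, every covering sieve of $J_{\text{Ore}}$ is by definition nonempty, and every nonempty sieve is $J_{\text{at}}$-covering, so $J_{\text{Ore}} \subseteq J_{\text{at}}$; hence they agree. For the dense coverage, I use the general inclusion $J_{\text{dense}} \subseteq J_{\text{at}}$ noted in the text, so it suffices to prove $J_{\text{Ore}} \subseteq J_{\text{dense}}$, i.e. every nonempty sieve $R$ on $U$ is dense. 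Given any $f : V \to U$, I must produce $g : W \to V$ with $fg \in R$; but this is precisely the computation already performed in verifying (G2) above, since nonemptiness of $f^*(R)$ is exactly the existence of such a $g$. Therefore $J_{\text{Ore}} \subseteq J_{\text{dense}} \subseteq J_{\text{at}} = J_{\text{Ore}}$, forcing all three to coincide.

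\textbf{Where the difficulty lies.} The only step with real content is (G2), and the Ore condition is engineered precisely to supply the commuting square needed there; once that pullback-like filling is available, everything else is a formal consequence of the definitions and the already-established chain $J_{\text{dense}} \subseteq J_{\text{at}}$. I expect no genuine obstacle beyond being careful that the object $D$ furnished by the Ore condition is used in the correct variance—mapping into both $V$ and $U_i$—so that the resulting morphism lands in $g^*(R)$ rather than in some pushforward. A minor point to state cleanly is that $J_{\text{Ore}}$ is automatically sifted by construction, so I need only check the three axioms and not re-examine siftedness.
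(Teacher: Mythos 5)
Your proof is correct and follows essentially the same route as the paper: verify (G1)--(G3) directly, with the Ore condition doing the work in (G2), and then identify $J_{\text{Ore}}$ with the atomic and dense coverages. If anything you are more careful than the paper, which in (G3) tacitly takes $f = 1_U$ even though the hypothesis only concerns pullbacks along morphisms in $R$, and which dismisses the identifications with $J_{\text{at}}$ and $J_{\text{dense}}$ as clear; your choice of an arbitrary $g \in R$ and your explicit chain $J_{\text{Ore}} \subseteq J_{\text{dense}} \subseteq J_{\text{at}} = J_{\text{Ore}}$ fill these in correctly.
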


\begin{proof}
Clearly $J_{\text{Ore}}$ satisfies (G1). Now suppose that $R \hookrightarrow y(U)$ is nonempty and $f : V \to U$ is a morphism. Since $R$ is nonempty, there exists a map $g : V' \to U$ in $R$. Hence we have a cospan
\begin{equation*}
    \begin{tikzcd}
	W & {V'} \\
	V & U
	\arrow["k", dashed, from=1-1, to=1-2]
	\arrow["h"', dashed, from=1-1, to=2-1]
	\arrow["g", from=1-2, to=2-2]
	\arrow["f"', from=2-1, to=2-2]
\end{tikzcd}
\end{equation*}
Since $\cat{C}$ satisfies the Ore condition, there exists a map $h : W \to V$ such that $fh \in R$. Hence $h \in f^*(R)$. Thus $J_{\text{Ore}}$ satisfies (G2).

Now suppose that $R \hookrightarrow y(U)$ is nonempty, $R' \hookrightarrow y(U)$ is a sieve and for every morphism $f : V \to U$ the sieve $f^*(R')$ is nonempty. Then choosing $f = 1_U$, we see that $R'$ is nonempty and hence $J_{\text{Ore}}$-covering. Thus $J_{\text{Ore}}$ satisfies (G3) and is therefore a Grothendieck coverage.

It is clear that $J_{\text{Ore}} = J_{\text{at}}$, and it is also clear that $J_{\text{Ore}} = J_{\text{dense}}$.
\end{proof}

\begin{Cor}
If $\cat{C}$ is a small category and $J_{\text{dense}} = J_{\text{at}}$ on $\cat{C}$, then $\cat{C}$ satisfies the Ore condition.
\end{Cor}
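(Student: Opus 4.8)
The plan is to argue by unwinding the hypothesis $J_{\text{dense}} = J_{\text{at}}$ into a concrete statement about sieves, and then to feed an arbitrary cospan through a cleverly chosen sieve. First I would observe that the hypothesis forces every nonempty sieve to be dense. Indeed, by the definition of the atomic Grothendieck coverage, every nonempty sieve on any object $U$ is $J_{\text{at}}$-covering, so it belongs to $J_{\text{at}}(U) = J_{\text{dense}}(U)$, and hence is a dense sieve. Thus the hypothesis is equivalent to: for every object $U$, every nonempty sieve $R$ on $U$, and every morphism $f : V \to U$, the pullback sieve $f^*(R)$ is nonempty.

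Next I would use this to verify the Ore condition directly. Given a cospan $A \xrightarrow{a} B \xleftarrow{b} C$, I would form the sieve $R = \overline{(b)}$ on $B$ generated by the single morphism $b : C \to B$ (Definition \ref{def sieve generated by a family of morphisms}). This sieve is nonempty since it contains $b$, so by the previous step it is dense. Applying density to the morphism $a : A \to B$ yields that $a^*(R)$ is nonempty: there is some $h : D \to A$ with $a h \in R$.

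Then I would unpack what $a h \in R = \overline{(b)}$ means: the composite $a h$ factors through $b$, so there is a morphism $v : D \to C$ with $b v = a h$. The pair $(h : D \to A,\ v : D \to C)$ is then precisely a span $A \xleftarrow{h} D \xrightarrow{v} C$ making the required square commute (the two paths $D \to A \to B$ and $D \to C \to B$ agree), which is exactly the Ore condition for the given cospan. Since the cospan was arbitrary, $\cat{C}$ satisfies the Ore condition.

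The argument is short and I do not anticipate a genuine obstacle; the only point requiring care is the first step, namely correctly extracting ``every nonempty sieve is dense'' from the equality of coverages (using the inclusion $J_{\text{dense}} \subseteq J_{\text{at}}$ together with the defining property that every nonempty sieve is $J_{\text{at}}$-covering), and being careful to note that the sieve generated by $b$ is nonempty so that density genuinely applies.
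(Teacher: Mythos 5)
Your proof is correct. The paper actually states this corollary without giving a proof (it is presented as an immediate consequence of the preceding discussion), and your argument supplies exactly the intended reasoning: by definition every nonempty sieve is $J_{\text{at}}$-covering, hence under the hypothesis $J_{\text{dense}} = J_{\text{at}}$ every nonempty sieve is dense; applying this to the sieve $\overline{(b)}$ generated by one leg of a cospan and pulling back along the other leg $a$ produces precisely the span required by the Ore condition. The only cosmetic remark is that you assert the hypothesis is \emph{equivalent} to ``every nonempty sieve is dense'' while only the forward implication is needed (and that is the direction you actually justify); the equivalence does hold, but it plays no role in the argument.
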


When $\cat{C}$ satisfies the Ore condition, we will refer to $J_{\text{Ore}} = J_{\text{at}} = J_{\text{dense}}$ as the atomic Grothendieck coverage. We say that a site $(\cat{C}, j)$ is Ore-atomic if $\cat{C}$ satisfies the Ore condition and $j$ is atomic. Checking the sheaf condition is a little bit easier on Ore-atomic sites.

\begin{Lemma}[{\cite[Lemma III.4.2]{maclane2012sheaves}}]
Let $(\cat{C}, j)$ be an Ore-atomic site. Then a presheaf $X$ on $\cat{C}$ is a $j$-sheaf if and only if for every morphism $f : V \to U$, every section $x \in X(V)$, and every fork\footnote{I.e. $fg = fh$.}
\begin{equation*}
   \begin{tikzcd}
	W & V & U
	\arrow["g", shift left, from=1-1, to=1-2]
	\arrow["h"', shift right, from=1-1, to=1-2]
	\arrow["f", from=1-2, to=1-3]
\end{tikzcd} 
\end{equation*}
such that $X(g)(x) = X(h)(x)$, there exists a unique $y \in X(U)$ such that $x = X(f)(y)$.
\end{Lemma}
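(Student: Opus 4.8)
The plan is to reduce everything to the behaviour of $X$ on the principal sieves $\overline{(f)}$ generated by single morphisms, and then to translate sheafhood on such a sieve into the fork condition. Since $j$ is atomic and $\cat{C}$ satisfies the Ore condition, $\Gro{j} = J_{\text{Ore}}$, whose covering sieves are exactly the nonempty sieves; so by Corollary \ref{cor grothendieck closure preserves sheaves} and Lemma \ref{lem sheaf on covering family iff on sieve it generates}, $X$ is a $j$-sheaf if and only if $X$ is a sheaf on every nonempty sieve. I would first record this reduction, and then phrase the whole argument in terms of sieves using Lemma \ref{lem convenience of using sieves for matching families}.

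The key translation step is the following. Fix $f : V \to U$ and consider $\overline{(f)} = \{ fa : \text{dom}(a) \to U \}$. Using Lemma \ref{lem convenience of using sieves for matching families}, I would show that a matching family on $\overline{(f)}$ is completely determined by its value $x = s_f \in X(V)$, via $s_{fa} = X(a)(x)$; that this assignment is well defined precisely when $x$ satisfies the compatibility hypothesis of the lemma (for every fork $g,h : W \to V$ with $fg = fh$ one needs $X(g)(x) = X(h)(x)$, which is exactly what forces $s_{fa}$ to be independent of the chosen factorization $fa = fa'$); and that the amalgamations of this matching family are exactly the $y \in X(U)$ with $X(f)(y) = x$. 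This yields the equivalence: $X$ is a sheaf on $\overline{(f)}$ if and only if the fork condition holds for this particular $f$.

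The forward direction of the lemma is then immediate: if $X$ is a $j$-sheaf it is a sheaf on the nonempty sieve $\overline{(f)}$, so the translation step gives the fork condition for every $f$. For the converse I would first observe that the fork condition yields sheafhood on all principal sieves, hence separatedness on every nonempty sieve $R$ (pick any $f \in R$; since $\overline{(f)} \subseteq R$, separatedness on $\overline{(f)}$ forces separatedness on $R$). The remaining task is existence of amalgamations over an arbitrary nonempty sieve $R$: given a matching family $\{s_g\}_{g \in R}$, pick $f \in R$, restrict to $\overline{(f)}$ and amalgamate to the unique $y$ with $X(f)(y) = s_f$; then for each $g' : V' \to U$ in $R$ I would prove $X(g')(y) = s_{g'}$ by showing both sides restrict to the same section along the sieve $(g')^*(\overline{(f)})$ on $V'$ and invoking separatedness there.

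The main obstacle, and the only place the Ore condition is genuinely used, is showing that $(g')^*(\overline{(f)})$ is nonempty so that the separatedness argument has something to bite on. Here I would apply the Ore condition to the cospan $V' \xrightarrow{g'} U \xleftarrow{f} V$ to obtain $p : D \to V'$ and $q : D \to V$ with $g'p = fq$; then $g'p$ factors through $f$, so $p \in (g')^*(\overline{(f)})$, which is therefore nonempty and hence covering. For any $a$ in this sieve, writing $g'a = fb$, a short computation gives $X(a)(X(g')(y)) = X(b)(s_f) = s_{g'a} = X(a)(s_{g'})$, so the two sections agree after restriction along $(g')^*(\overline{(f)})$ and hence agree by separatedness on $V'$. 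Uniqueness of $y$ is again separatedness, so $X$ is a sheaf on $R$, completing the converse.
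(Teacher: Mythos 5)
The paper states this lemma with only a citation to Mac Lane--Moerdijk and gives no proof of its own, so there is nothing internal to compare against; your argument is correct and complete, and it is essentially the standard proof of \cite[Lemma III.4.2]{maclane2012sheaves}. The reduction to nonempty sieves via $\Gro{j} = J_{\text{Ore}}$, the identification of matching families on $\overline{(f)}$ with fork-compatible sections $x \in X(V)$ and of their amalgamations with the $y$ satisfying $X(f)(y)=x$, and the use of the Ore condition to make $(g')^*(\overline{(f)})$ nonempty so that separatedness finishes the extension to an arbitrary nonempty sieve, are all sound and correctly placed.
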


\subsection{Examples}

Originally the domain of algebraic topology, sheaf theory has now extended itself across mathematics. We recommend the book \cite{rosiak2022sheaf} for more on how sheaf theory is extending outwards. Here we try and give a wide array of examples of sites and their properties.

\begin{Ex}
Given a topological space $X$, recall the open cover coverage $(\mathcal{O}(X), j_X)$ from Example \ref{ex open cover coverage}. As we saw in Example \ref{ex open site is saturated}, this is a saturated site. Furthermore it is pullback-stable.
\end{Ex}

\begin{Ex}
Recall the jointly epimorphic coverage $(\ncat{FinSet}, j_{\text{epi}})$ from Example \ref{ex set joint epi coverage}. From Example \ref{ex jointly epi coverage is saturated}, we know that this coverage is saturated. It is easy to check that it is also pullback-stable, and hence also a Grothendieck pretopology. By Proposition \ref{prop characterization of canonical pretopology}, it is also canonical.

Let us also consider the singleton coverage $j_{\text{sepi}}$\footnote{sepi stands for singleton epi.} where a covering family consists of a single surjective function $r = \{f : S' \to S\}$. If $\{r_i : S_i \to S \}$ is a $j_{\text{epi}}$-covering family of a set $S$, then it can be refined by the map $\pi: \sum_i S_i \to S$. Hence $j_\text{sepi} \leq j_{\text{epi}}$ and clearly $j_\text{epi} \leq j_{\text{sepi}}$. Thus the two coverages are equivalent.

Note that every $j_\text{epi}$-sheaf $X$ is completely determined by its value on a singleton $X(*) = A$. This is because every set has a finest covering family given by the inclusion of all of its elements $\{x : * \hookrightarrow S \}_{x \in S}$, and all of its intersections are empty. So $X(S) = X( \sum_{x \in S} *) \cong \prod_{x \in S} X(*) \cong S^A$. From this it is not hard to prove that $\ncat{Sh}(\ncat{FinSet}, j_{\text{epi}}) \cong \ncat{Set}$.
\end{Ex}

If $(\cat{C}, j)$ is a site and $U \in \cat{C}$, then $j(U)$ always has a terminal object given by the family $(1_U)$. However, it is far more rare for $j(U)$ to have an initial object. The next example demonstrates when each $j(U)$ has an initial object. Having such an initial object (which we might also just call think of as the ``most refined cover of $U$''), can be very useful, especially when considering sheafification, see Section \ref{section sheafification}.

\begin{Ex} \label{ex alexandrov topology}
We say that a topology $\tau$ on a set $X$ is \textbf{Alexandrov} if arbitrary intersections of open sets are open. We say a topological space equipped with an Alexandrov topology is an Alexandrov space. Given a preorder $(P, \leq)$, we can equip $P$ with a canonical topology $\tau_{\text{Alex}}$ whose open sets are the up-sets of $P$, i.e. those subsets $U \subseteq P$ such that if $x \in U$ and $x \leq y$, then $y \in U$. This makes $(P, \tau_{\text{Alex}})$ into an Alexandrov space. In fact, every Alexandrov space can be constructed in this way, see \cite{asness2018}. This means that in particular for every $x \in P$, the set $(\uparrow x) = \{y \in P \, : \, x \leq y \}$ is open. That means that there is a ``most refined cover'' $\text{min}(P) = \{ (\uparrow x) \subseteq P \}_{x \in P}$ of $P$. In other words, given any object $U \in \mathcal{O}(P)$, $j_P(U)$ has an initial object given by $\text{min}(U) = \{ (\uparrow x) \subseteq U \}_{x \in U}$.
\end{Ex}

\begin{Ex} \label{ex cellular sheaves}
Let $P$ be a preorder equipped with the Alexandrov topology (Example \ref{ex alexandrov topology}), and consider its corresponding site $(\mathcal{O}(P), j_P)$. Let us define a functor $\varphi : \Sh(\mathcal{O}(P), j_P) \to \Fun(P, \ncat{Set})$ by setting $\varphi(X)(x) = X((\uparrow x))$. This makes sense because if $x \leq y$, then $(\uparrow y) \subseteq (\uparrow x)$. It turns out (\cite[Theorem 4.2.10]{curry2014sheaves}) that this functor $\varphi$ is an equivalence, with quasi-inverse $\psi: \Fun(P, \ncat{Set}) \to \Sh(\mathcal{O}(P), j_P)$ given by
\begin{equation*}
    \psi(F)(U) \coloneqq \lim_{x \in U} F(x).
\end{equation*}
This can be equivalently described as follows, if $F: P \to \ncat{Set}$ is a functor, then $\psi(F)$ is the right Kan extension of $F$ along the functor $\uparrow : P \to \mathcal{O}(P)^\op$ given by $x \mapsto (\uparrow x)$. In other words, for preorders $P$ we have
\begin{equation*}
   \Fun(P, \ncat{Set}) \cong \Sh(\mathcal{O}(P), j_P).
\end{equation*}
This observation is the starting point for the theory of \textbf{cellular sheaves}, see \cite{curry2014sheaves} for a detailed introduction to this subject.
\end{Ex}

\begin{Ex}
A \textbf{graph}\footnote{Often, these are called simple graphs.} $G$ consists of a finite set $V(G)$ and a set $E(G)$ of subsets of $V(G)$ of cardinality $2$. A morphism of graphs $f : G \to H$ is a function $V(f) : V(G) \to V(H)$ such that if $\{x,y\} \in E(G)$, then $\{f(x), f(y) \} \in E(H)$. Let $\ncat{Gr}$ denote the small category\footnote{In other words, let $\ncat{Gr}$ denote a small skeleton of the category of graphs.} of graphs. Let us define a collection of families $j_{\ncat{Gr}}$ on $\ncat{Gr}$ as follows. If $G$ is a graph, then we say that a family $\{r_i: G_i \to G \}_{i \in I}$ is covering if each $r_i$ is a monomorphism (it is injective on vertices), and furthermore $\cup_{i \in I} r_i(G_i) \cong G$. It is not hard to see that this coverage is saturated and pullback-stable. This site has been shown to be useful in algorithmics, see \cite{althaus2023compositional}. We note that for each graph $G$, $j_{\ncat{Gr}}(G)$ has an initial object given by the cover consisting of all the vertices and edges of $G$. Hence a $j_{\ncat{Gr}}$-sheaf is completely determined by its values on a vertex $K^1$ and an edge $K^2$\footnote{$K^n$ is the complete graph on $n$ vertices, i.e. $V(K^n) = \{1, \dots, n \}$ and every pair of distinct vertices is connected by an edge.}.
\end{Ex}

\begin{Ex} \label{ex open coverage on Man}
Let $\ncat{Man}$ denote the small category\footnote{See Example \ref{ex Man is essentially small}} whose objects are finite dimensional smooth manifolds and whose morphisms are smooth functions. Define a collection of families $j_\text{open}$ on $\ncat{Man}$ as follows: For $M \in \ncat{Man}$, let $j_\text{open}(M)$ denote the collection of open covers of $M$. In other words a family $\{r_i : U_i \to M \}$ is a $j_{\text{open}}$-covering family if and only if $U_i$ is an open subset of $M$, $r_i$ is the inclusion map, and $\cup_i \, U_i = M$. Then $j_\text{open}$ is a coverage. Indeed if $\{U_i \subseteq M \}$ is an open cover and $f: N \to M$ is a smooth map, then $f^{-1}(U_i)$ is a smooth open submanifold of $N$ (in fact it is a pullback of $f$ and the inclusion, even though not all pullbacks exist in $\ncat{Man}$) for each $i$, and $\{ f^{-1}(U_i) \subseteq N \}$ is an open cover of $N$. It is easy to see that $(\ncat{Man}, j_{\text{open}})$ is a Grothendieck pretopology. However it is not refinement closed.

We can similarly define another coverage $j_{\text{emb}}$ on $\ncat{Man}$ as follows. Say a family $\{r_i : U_i \to M \}$ of morphisms is a $j_{\text{emb}}$-covering family if and only if each $r_i : U_i \to M$ is an open embedding\footnote{This means that it is an immersion, and the underlying map of topological spaces is a homeomorphism onto its image.}, and $\cup_i \, r_i(U_i) = M$. It is easy to see that $j_{\text{emb}} \leq j_{\text{open}}$ and $j_{\text{open}} \leq j_{\text{emb}}$. Hence they define the same sheaves.
\end{Ex}

\begin{Ex} \label{ex coverages on Cart and Open}
Now consider the following full subcategories
$$\ncat{Cart} \hookrightarrow \ncat{Open} \hookrightarrow \ncat{Man},$$
where $\ncat{Cart}$ is the full subcategory of $\ncat{Man}$ whose objects are cartesian spaces, i.e. manifolds diffeomorphic to $\R^n$ for some $n \geq 0$, and $\ncat{Open}$ is the full subcategory whose objects are diffeomorphic to open subsets of a cartesian space. We obtain induced coverages (Example \ref{ex induced site}) $j_{\text{emb}}|_{\ncat{Open}}, j_\text{open}|_{\ncat{Open}}.$ 

Notice however that if we restrict $j_{\text{emb}}, j_\text{open}$ to $\ncat{Cart}$, then covers must consist of cartesian spaces. However if $\{ U_i \subseteq U \}$ is a cartesian open cover and $f: V \to U$ is a smooth map, there is no reason that $\{ f^{-1}(U_i) \subseteq V \}$ will be a cartesian open cover. However as we will see in Example \ref{ex j good coverage}, every open cover can be refined by a cartesian open cover, and thus we obtain induced coverages $j_{\text{emb}}|_{\ncat{Cart}}, j_\text{open}|_{\ncat{Cart}}$ on $\ncat{Cart}$. However, because $f^{-1}(U_i)$ does not necessarily exist in $\ncat{Cart}$, the sites $(\ncat{Cart}, j_{\text{emb}}|_{\ncat{Cart}})$ and $(\ncat{Cart}, j_{\text{open}}|_{\ncat{Cart}})$ are not Grothendieck pretopologies. Each site $(\cat{C}, j)$ with $\cat{C} \in \{ \ncat{Cart}, \ncat{Open}, \ncat{Man} \}$ and $j \in \{j_{\text{emb}}, j_{\text{open}} \}$ is composition closed, but not necessarily refinement closed, simply because arbitrary smooth maps refined by injective maps are not necessarily injective.
\end{Ex}

\begin{Ex}
Let $j_{\text{sub}}$ denote the collection of families on $\ncat{Man}$ where for $M \in \ncat{Man}$, a family $\{f : N \to M \} \in j_{\text{sub}}(M)$ consists of a single surjective submersion\footnote{This is a smooth map of manifolds that is surjective, and whose map on every tangent space is surjective.}. We note that $j_{\text{sub}}$ is not equivalent to $j_{\text{open}}$. Indeed, consider the constant presheaf $\u{\Z}$ on $\ncat{Man}$ that assigns to every manifold the set of integers and the to every morphism the identity map. We note that the empty family $\varnothing$ is a cover of each empty manifold $\varnothing_n$\footnote{There is one empty manifold of every dimension, with empty atlas.} in $j_{\text{open}}$, but it is not a cover in $j_{\text{sub}}$. By Lemma \ref{lem sheaves on empty covers}, $\u{\Z}$ is not a sheaf on $(\ncat{Man}, j_{\text{open}})$, but it is not hard to check that it is a sheaf on $(\ncat{Man}, j_{\text{sub}})$. We learned of this counterexample from \cite[Warning 5.1.4]{waldorf2024internalgeometry}\footnote{We should also mention that this paper of Waldorf's is an excellent complement to these notes, but that it uses a different notion of equivalence of coverages, and even a different notion of sheaves! However, with these different notions, Waldorf proves that $j_{\text{sub}}$ and $j_{\text{open}}$ are equivalent.}. We also note that pullbacks of surjective submersions along arbitrary maps exist in $\ncat{Man}$, and it is thus not hard to show that $(\ncat{Man}, j_{\text{sub}})$ is a Grothendieck pretopology. 
\end{Ex}

\begin{Ex} \label{ex j good coverage}
Given a finite dimensional smooth manifold $M$, we say that an open cover $\mathcal{U} = \{U_i \subseteq M \}_{i \in I}$ is \textbf{good} if every nonempty finite intersection $U_{i_1} \cap \dots U_{i_n}$ of the open subsets is diffeomorphic to some $\R^n$. Let $j_\text{good}$ denote the collection of families on $\ncat{Man}$ given by the good open covers. Let us show that the good covers form a coverage. If $\{U_i \subseteq M \}$ is a good cover and $g: N \to M$ a smooth map, then $\{ g^{-1}(U_i) \subseteq N \}$ is an open cover, but not necessarily good. By \cite[Corollary 5.2]{bott1982differential}, this open cover can be refined by a good open cover $\{W_k \subseteq N \}$ so that for every $W_k$ in the good open cover, there exists a $U_i$ such that $W_k \subseteq g^{-1}(U_i)$, and thus the following diagram commutes:
\begin{equation*}
\begin{tikzcd}
	{W_k} & {g^{-1}(U_i)} & {U_i} \\
	N && M
	\arrow["g"', from=2-1, to=2-3]
	\arrow[hook, from=1-3, to=2-3]
	\arrow[hook, from=1-1, to=2-1]
	\arrow["{g|_{g^{-1}(U_i)}}", from=1-2, to=1-3]
	\arrow[hook, from=1-1, to=1-2]
\end{tikzcd}
\end{equation*}
Thus $j_\text{good}$ is a coverage on $\ncat{Man}$. We can similarly define such induced coverages on $\ncat{Cart}$ and $\ncat{Open}$. We note that none of these coverages are composition closed. This was proven by David Roberts in \cite{roberts2024good}.

By taking $g$ to be the identity in the diagram above, we see that $j_{\text{good}} \leq j_{\text{open}}$ on each category $\cat{C} \in \{\ncat{Cart}, \ncat{Open}, \ncat{Man} \}$, and clearly $j_{\text{open}} \leq j_{\text{good}}$. Hence the three coverages $j_{\text{good}}, j_{\text{emb}}, j_{\text{open}}$ are all equivalent on each category $\cat{C} \in \{\ncat{Cart}, \ncat{Open}, \ncat{Man} \}$.
\end{Ex}

\begin{Ex}[{\cite[Section III.9]{maclane2012sheaves}}]
Let $G$ be a topological group. A (right) $G$-action on a set $S$ consists of a discrete group action such that the map $\rho : S \times G \to S$ is continuous when $S$ is given the discrete topology. Let $\text{Sub}_G$ denote the category whose objects are $G$-sets of the form $G/U$, where $U$ is an open subgroup of $G$. In other words, the objects are right cosets of the form $Ug$, and morphisms are $G$-equivariant maps $f: G/U \to G/V$. The existence of such a map turns out to be equivalent to the existence of an element $h \in G$ such that $U \subseteq hVh^{-1}$. So we can identify $\text{Sub}_G$ with the category of open subgroups $U \subseteq G$ and morphisms $h : U \to V$ are elements $h \in G$ such that $U \subseteq hVh^{-1}$, and composition is given by multiplication. Since open subgroups are closed under intersection, it is not hard to check that $\text{Sub}_G$ satisfies the Ore condition (Definition \ref{def ore condition}), and hence we can consider the atomic Grothendieck coverage $J_{\text{at}}$ on $\text{Sub}_G$. By \cite[Theorem III.9.1]{maclane2012sheaves}, the category $\ncat{Sh}(\text{Sub}_G, J_{\text{at}})$ of sheaves on the atomic site of $\text{Sub}_G$ is equivalent to the category all $G$-sets. Hence the category of $G$-sets is a Grothendieck topos.
\end{Ex}

\begin{Ex}
We present a collection of families $j_{\text{Pav}}$ on the category $\ncat{Cart}$ that we call the \textbf{Pavlov collection}. It was introduced by Dmitri Pavlov in \cite[Theorem 4.11, Remark 4.12]{pavlov2022numerable} with the motivation of finding simpler conditions to check when a simplicial presheaf on $\ncat{Cart}$ is an $\infty$-stack. The only non-identity family in $j_{\text{Pav}}(\R^n)$ is the family of subset inclusions
\begin{equation*}
    \{ (4i, 4i + 3) \times \R^{n-1} \subseteq \R^n, (4i + 2, 4i+5) \times \R^{n-1} \subseteq \R^n \}_{i \in \mathbb{Z}}.
\end{equation*}
Pavlov proves that this collection generates (in the sense of Remark \ref{rem grothendieck coverage generated by collection of families} the same Grothendieck topology as $(\ncat{Cart}, j_{\text{open}})$. While this collection might seem odd, its characterization leads to the remarkable result \cite[Proposition 4.13]{pavlov2022numerable}.
\end{Ex}

\begin{Ex} \label{ex coverages on complex manifolds}
Let $\C\ncat{Man}$ denote the category of finite dimensional complex manifolds and holomorphic maps. This is an essentially small category, using a similar argument\footnote{Now one just needs to bound the size of all } as in Example \ref{ex topman is essentially small}. Let $j_{\text{open}}$ be the collection of families given by open covers. It is easy to see that $j_{\text{open}}$ is a coverage, and in fact a Grothendieck pretopology.

There is a similar chain of full subcategory inclusions as in \ref{ex coverages on Cart and Open}. First we need some definitions. We say that a complex manifold $M$ is a Stein manifold if there exists an injective, proper, holomorphic, immersion $f : M \to \C^n$ for some $n \geq 0$. Equivalently, $M$ is Stein if and only if it is biholomorphically equivalent to a closed complex submanifold of $\C^n$. There are many other equivalent ways to define Stein manifolds, see \cite[Chapter 2]{forstnerivc2011stein}. A key property of Stein manifolds is that they have trivial Dolbeault cohomology \cite[Theorem 2.4.6]{forstnerivc2011stein}. In complex geometry, Stein manifolds are analogous to Cartesian spaces (Example \ref{ex coverages on Cart and Open}).

A polydisk is a complex manifold of the form
\begin{equation*}
    D_{k_1 \dots k_n} = \{ (z_1, \dots, z_n) \in \C^n \, : \, |z_i| < k_i\}
\end{equation*}
with each $k_i > 0$. Polydisks are Stein manifolds. 

There are full subcategory inclusions
\begin{equation*}
    \C\ncat{Disk} \hookrightarrow \ncat{Stein} \hookrightarrow \C\ncat{Man},
\end{equation*}
where $\ncat{Stein}$ is the full subcategory of $\C\ncat{Man}$ on the Stein manifolds, and $\C\ncat{Disk}$ is the full subcategory on the polydisks. 

If $f : M \to N$ is a map of complex manifolds and $U \hookrightarrow N$ is an open subset that is a Stein manifold, then by \cite[Lemma 4.1]{larusson2003excision}, $f^{-1}(U)$ is a Stein open subset of $M$\footnote{This lemma also proves that finite intersections of Stein open subsets is Stein, hence making this coverage analogous to $j_{\text{good}}$ from Example \ref{ex j good coverage}}. Hence restricting $j_{\text{open}}$ to $\ncat{Stein}$, we obtain an induced coverage $j_{\text{open}}|_{\ncat{Stein}}$.

Now suppose that $f : U \to V$ is a map of polydisks of complex dimension $n$ and $m$ respectively and $\mathcal{V} = \{V_i \subseteq V \}$ is a cover by polydisks. The preimages $U_i = f^{-1}(V_i)$, which assemble into an open cover $\mathcal{U} = \{U_i \subseteq U \}$ will not necessarily be polydisks, but they will be Stein by \cite[Lemma 4.1]{larusson2003excision}. Furthermore each $U_i$ will be an open subset of $\C^n$. Hence by \cite[Section 2.2, Theorem 2.1.3]{forstnerivc2011stein}, since $U_i$ is Stein and an open subset of $\C^n$, it is a domain of holomorphy (\cite[Page 45]{forstnerivc2011stein}), hence it is a domain in $\C^n$. Thus by the proof of \cite[Lemma II.1]{fornaess1977spreading}, there exists a refinement $\mathcal{U}' \leq \mathcal{U}$ where $\mathcal{U}'$ is an open cover by polydisks. Hence we obtain an induced coverage $j_{\text{open}}|_{\C\ncat{Disk}}$. Clearly all three of these sites are composition closed, but not refinement closed.
\end{Ex}

\begin{Ex} \label{ex weiss coverage}
Several coverages were introduced in \cite{debrito2013manifold} to study Manifold calculus, which involves Taylor series-like approximations of $\infty$-stacks. The set up is a bit different than in Example \ref{ex coverages on Cart and Open}. Given a fixed $d \geq 0$, let $\ncat{Man}^d$ denote the category of $d$-dimensional smooth manifolds and open embeddings. Given a fixed $k \geq 1$, let $j^k_\open$ denote the collection of families on $\ncat{Man}^d$ where for $M \in \ncat{Man}^d$, a family $r$ on $M$  belongs to $j^k(M)$ if it is an open cover $\mathcal{U} = \{U_i \subseteq M \}$ such that for every collection of $\ell \leq k$ distinct points $p_1, \dots, p_\ell$ in $M$, there exists a $U_i$ such that $p_1, \dots, p_\ell \in U_i$. We call this a \textbf{$k$-cover} of $M$. Note that $j^1_\open = j_\open$, and $j^n_\open \subseteq j^m_\open$ for $m \leq n$. The coverage $j^k_\open$ is called the $k$th \textbf{Weiss coverage}. For each $k \geq 1$, these coverages are composition closed. Indeed, suppose that $M$ is a $d$-manifold, and suppose that $\mathcal{U} = \{U_i \subseteq M \}_{i \in I}$ is a $k$-cover of $M$ and for each $U_i$ there is a $k$-cover $\mathcal{V}^i = \{V^i_\alpha \subseteq U_i \}$. If $p_1, \dots, p_\ell$ are a collection of distinct points with $\ell \leq k$, then there exists some $U_i$ such that $p_1, \dots, p_\ell \in U_i$. But since $\mathcal{V}^i$ is a $k$-cover, this means that there exists a $V^i_\alpha$ such that $p_1, \dots, p_\ell \in V^i_\alpha$. Hence the composite cover is also a $k$-cover. From here it is easy to check that $j^k_\open$ is a Grothendieck pretopology on $\Man$ for every $k \geq 1$.

There is an analogous coverage $j^k_\text{good}$, which is not composition closed, defined in \cite[Definition 2.9]{debrito2013manifold}. There is also a coverage $j^k_h$, defined in \cite[Definition 2.5]{debrito2013manifold} given by $k$-coverings of the form $\{M \setminus A_i \hookrightarrow M \}_{i \in 0, \dots, k }$, where $A_0, \dots, A_k \subseteq M$ are disjoint, closed subsets of $M$. It is shown in \cite[Remark 2.13, Section 5,7]{debrito2013manifold} that while $j^k_\text{good}$ and $j^k_h$ are not equivalent to $j^k_\open$, they do have the same $\infty$-stacks, and are therefore in a sense $\infty$-equivalent.
\end{Ex}

\begin{Ex} \label{ex poset coverage}
Given a meet-semilattice $P$, let $j_{\text{meet}}$ denote the collection of families where a family $ r = \{U_i \leq U \}_{i \in I} \in j_{\text{pos}}(U)$ if and only if for every $U' \leq U$, $U'$ there exists a subset $S \subseteq P$ such that for every $V \in S$, $V \leq U'$ and there is some $i \in I$ such that $V \leq U_i$.

Let us show that this collection of families is a coverage. Suppose that $\{U_i \leq U \}$ is a covering family and $U' \leq U$. Then there exists a $S \subseteq P$ such that for all $V \in S$, $V \leq U'$ and $V \leq U_i$ for some $i$. Therefore we need only show that $S \in j_{\text{pos}}(U')$. So suppose that $U'' \leq U'$. Then the family $\{U'' \wedge V \subseteq U'' \}_{V \in S}$ satisfies the desired property. Thus $j_{\text{meet}}$ is a coverage, which we call the \textbf{meet coverage}.

If $P$ is a frame\footnote{i.e. has small joins, finite meets and small joins distribute over finite meets.}, then the collection of families $j_{\text{frm}}$ on $P$ where $r = \{U_i \leq U \}_{i \in I} \in j_{\text{frm}}(U)$ if and only if $U$ is a join of $r$, is a coverage. Indeed, if $U' \leq U$, then $U'$ is a join of $\{U' \wedge U_i \leq U'\}$ since $\bigvee U' \wedge U_i = U' \wedge \bigvee U_i = U' \wedge U = U'$. We call this the \textbf{frame coverage}.
\end{Ex}

\begin{Ex} \label{ex lextensive coverage}
We say that a category $\cat{C}$ is (infinitary) \textbf{lextensive} if it has finite limits and Van Kampen (small) finite coproducts (Definition \ref{def van kampen colimit}). Let $j_{\text{lex}}$ denote the collection of families of the form $r = \{r_i : U_i \hookrightarrow U \}_{i \in I}$, where $I$ is a (small) finite set, and such that the $r_i$ define a cocone, i.e. each $r_i$ is the inclusion into a coproduct $U \cong \sum_i U_i$. Since coproducts are Van Kampen, this implies that the families in $j_{\text{lex}}$ are stable under pullback, and hence defines a coverage, called the \textbf{lextensive coverage}.
\end{Ex}

\begin{Ex}[{\cite[Definition A.4]{roberts2012internal}}]
Let $\cat{C}$ be a lextensive category. We say that a Grothendieck pretopology $j$ on $\cat{C}$ is \textbf{superextensive} if $j_{\text{lex}} \subseteq j$. We note that the (large) site $(\ncat{Top}, j_{\text{emb}})$ from Example \ref{ex top coverage} is superextensive. Given a superextensive site $(\cat{C}, j)$, let $\sum j$ denote the collection of singleton families on $\cat{C}$ of the form $\{\sum_{i \in I} r_i : \sum_{i \in I} U_i \to U \}$ where $\{r_i : U_i \to U\}_{i \in I} \in j(U)$. In \cite[Proposition A.8]{roberts2012internal} it is proved that this is a singleton pretopology on $\cat{C}$ and is subcanonical if and only if $j$ is.
\end{Ex}

\begin{Ex} \label{ex regular coverage}
We say that a category $\cat{C}$ is \textbf{regular} if 
\begin{enumerate}
    \item it is finitely complete,
    \item it has all kernel pairs (Definition \ref{def kernel pair}), and
    \item regular epimorphisms (Definition \ref{def regular epi}) are stable under pullbacks.
\end{enumerate}
For a small, regular category $\cat{C}$, let $j_{\text{reg}}$ denote the collection of families consisting of regular epimorphisms $ r= \{f : U \to V \}$. Since these are stable under pullback, they form a coverage. We call this the \textbf{regular coverage} on $\cat{C}$. Since all epimorphisms are regular on $\ncat{FinSet}$ (Lemma \ref{lem epis are regular in set}), $(\ncat{FinSet}, j_{\text{reg}}) = (\ncat{FinSet}, j_{\text{sepi}})$.

Note that a wide class of categories is regular. These include algebraic categories (like $\ncat{Grp}$, $\ncat{Ring}$, etc.), abelian categories and quasitoposes\footnote{Though of course with the usual caveat that these sites are usually essentially large, see Section \ref{section large sites}.}.

The regular coverage on a regular category is a Grothendieck pretopology, as regular epimorphisms are closed under composition in regular categories. 
\end{Ex}

\begin{Ex} \label{ex coherent coverage}
A \textbf{coherent category} $\cat{C}$ is a regular category (Example \ref{ex regular coverage}) in which the subobject poset $\text{Sub}(U)$ for all $U \in \cat{C}$ has finite joins (which in this case are called unions), and for which the pullback functor $\text{Sub}(f) : \text{Sub}(V) \to \text{Sub}(U)$ for $f : U \to V$ preserves finite joins. We say that $\cat{C}$ is a \textbf{geometric category} if $\text{Sub}(U)$ has all small joins and the pullback functors preserve small joins. Every quasitopos and topos is a coherent category.

Given a coherent category $\cat{C}$ define the (potentially large) collection of families $j_{\text{coh}}$ so that a family is $j_{\text{coh}}$-covering if it is of the form $r = \{r_i : U_i \to U \}_{i \in I}$ where $I$ is finite and $\bigcup_i \, \text{im}(r_i) = U$. In regular categories, images are pullback-stable, and in coherent categories unions of images are pullback-stable, hence $j_{\text{coh}}$ is a pullback-stable coverage. It is not hard to see that it is also composition-closed and is furthermore a Grothendieck pretopology. We call this the \textbf{coherent coverage}, and if the families are allowed to be small, then we call it the \textbf{geometric coverage}. 

Johnstone shows in \cite[Example C.2.1.12.(d)]{johnstone2002sketches} that the coherent and geometric coverages are subcanonical. 
\end{Ex}

\begin{Ex}
Let us briefly recall the idea of a first-order theory $\mathbb{T}$. First, a language $\Sigma_\mathbb{T}$ consist of sets of sorts, function symbols and relation symbols. For example, the language for the theory of groups consists of a single sort $G$, two function symbols given by multiplication $m : (G,G) \to G$ and inversion $i : G \to G$, and no relation symbols.

Given a language, one can then define the set of terms one can build. An example of a term for the language of groups is $x, y : G\, | \, m(x,m(y,i(x))) : G$. The lefthand symbols form the context of the term, they are the variables being used. Formulas are then built from relation symbols in $\Sigma_{\mathbb{T}}$ and terms. For example, let $\Sigma_{\mathbb{T}}$ be the language of posets. This has one sort $P$, no function symbols, and one relation symbol $\leq$. Then an example of a formula for this language is $x,y :P \, | \, (x \leq y) \Rightarrow (\forall z.(z \leq x) \Rightarrow (z \leq y))$.

By restricting the usage of certain first-order logical symbols like $\wedge, \vee, \Rightarrow, \neg, \exists, \forall$, we obtain particular types of theories (such as cartesian, regular, coherent, geometric, etc.), see \cite[Section D1.1]{johnstone2002sketches} for a list of different kinds of theories.

A sequent $\varphi \vdash \psi$ consists of a pair of formulas in the same context. This represents that $\varphi$ ``implies'' $\psi$. A theory $\mathbb{T}$ then consists of a language $\Sigma_{\mathbb{T}}$ and a set $S_{\mathbb{T}}$ of sequents of formulas over the language. Using first-order logic, we obtain deduction rules that say when one sequent implies another, see \cite[Section D1.3]{johnstone2002sketches}.

From a theory $\mathbb{T}$, one can construct a category $\ncat{Syn}(\mathbb{T})$, called the \textbf{syntactic category} of $\mathbb{T}$. Its objects are certain equivalence classes of formulas, and morphisms are certain kinds of formulas which act intuitively as functions between the ``sets'' defined by the formulas. Based on the type of theory $\mathbb{T}$, the syntactic category $\ncat{Syn}(\mathbb{T})$ will have the corresponding kind of structure (cartesian, regular, coherent, geometric), see \cite[Lemma D1.4.10]{johnstone2002sketches}. Each such structure has its own corresponding canonical site structure (cartesian categories are equipped with a trivial coverage) such as in Examples \ref{ex regular coverage} and \ref{ex coherent coverage}, which are called the \textbf{syntactic sites}. The category of sheaves on a syntactic site $\ncat{Sh}(\ncat{Syn}(\mathbb{T}), j)$ is called the \textbf{classifying topos} for the theory $\mathbb{T}$. Geometric morphisms $\ncat{Sh}(\ncat{Syn}(\mathbb{T}), j) \to \cat{E}$ of toposes are equivalently models of $\mathbb{T}$ in $\cat{E}$, see \cite[Section VIII]{maclane2012sheaves} for more on classifying toposes.
\end{Ex}

\begin{Ex}
We mention that the above idea of a syntactic site extends naturally to type theories, and to syntactic categories of type theories. Sheaves on such syntactic categories have been used to solve important problems in computer science, such as in \cite{altenkirch2001normalization}, where a normalization result is proved for a typed lambda calculus with coproducts. The study of sheaves for computer science in this way is sometimes referred to as \textbf{sheaf semantics}.
\end{Ex}

\begin{Ex}[{\cite[Exercise III.9.13]{maclane2012sheaves}}]
Let $\ncat{FinSet}_m$ denote the small category of finite sets and monomorphisms. Then the category $\ncat{FinSet}_m^\op$ satisfies the Ore condition (Definition \ref{def ore condition}), since $\ncat{FinSet}_m$ has pushouts. Let $(\ncat{FinSet}_m^\op, J_{\text{at}})$ denote the atomic site (Definition \ref{def atomic site}), then $\ncat{Sh}(\ncat{FinSet}_m^\op, J_{\text{at}})$ is known as the \textbf{Shanuel topos}. Its objects are called nominal sets, which give a formal framework for bound variables and alpha equivalence in programming languages, see \cite{pitts2013nominal} for more.
\end{Ex}

\begin{Ex} \label{ex johnstone topological topos}
Let $\tau$ denote the full subcategory of $\ncat{Top}$ on the topological spaces $*$, the terminal object, and $\N_\infty$, the one-point compactification of the discrete natural numbers (equivalently it is homeomorphic to the $\{1, 1/2, 1/3, \dots, 1/n, \dots, 0\} \subseteq \R$ with the subspace topology). Consider the canonical site $(\tau, J_{\text{can}})$, see Definition \ref{def canonical coverage}. Sheaves on this site $(\tau, J_{\text{can}})$ form what is called the \textbf{topological topos}, introduced by Johnstone in \cite{johnstone1979topological}. The motivation for this topos, similarly to the other convenient categories mentioned in Example \ref{ex top coverage}, is to provide a convenient category in which to do topology. The category $\ncat{Sh}(\tau, J_{\text{can}})$ contains all sequential topological spaces\footnote{A topological space $X$ is sequential if for every topological space $Y$ a map $f: X \to Y$ is continuous if and only if it is sequentially continuous, i.e. if $x_n \to x$ then $f(x_n) \to f(x)$.} as a reflective subcategory. For more about the topological topos, we recommend the series of blog posts \cite{grossack2024topological}.
\end{Ex}

\begin{Ex}
A \textbf{bornology} on a set $X$ is a cover of $X$ that is closed under subsets and finite unions. A set with a bornology is called a bornological space, and the elements of its bornology are called bounded subsets.  Their invention is attributed to Mackey \cite{mackey1942subspaces}, and their history is reviewed in the french paper \cite{hogbe1970racines}. Bornological spaces are a convenient framework for functional analysis, where all kinds of subtle problems arise from considering linear operators between locally convex topological vector spaces, with an emphasis on bounded subsets being more central to the theory than open subsets. Lawvere first described the bornological topos during unpublished talks he gave at Bogot\'a in 1983. Let $\ncat{Cnt}$ denote the small category of countable sets and functions. This category is lextensive, and the category of sheaves $\ncat{Sh}(\ncat{Cnt}, j_{\text{lex}})$ with the lextensive coverage (Example \ref{ex lextensive coverage}) is the bornological topos. There is a full subcategory of bornological spaces given as a reflective subcategory of the bornological topos \cite{espanol2002bornologies}. \end{Ex}

\begin{Ex}
The recursive topos, or the topos of recursive sets, was defined by Mulry in \cite{mulry1982generalized} with the goal of doing recursion theory in a cartesian closed category. Let $R$ denote the monoid of all total recursive functions $f : \N \to \N$. Viewing this as a one-object category, equip it with the coherent coverage 
 (Example \ref{ex coherent coverage}) $(R, j_{\text{coh}})$. This turns out to be a canonical coverage as well. Sheaves on this site $\ncat{Sh}(R, j_{\text{coh}})$ form the recursive topos.
\end{Ex}

\subsection{Concrete Sheaves}

Now let us consider several important examples in the literature that are not quite sheaf toposes. Instead they are quasitoposes of concrete sheaves.

\begin{Def} \label{def concrete site}
A site $(\cat{C}, j)$ is \textbf{concrete} if 
\begin{enumerate}
    \item it has a terminal object $*$,
    \item the functor $\cat{C}(*,-) : \cat{C} \to \ncat{Set}$ is faithful, and
    \item for every $j$-covering family $r = \{r_i : U_i \to U \}$, the function
    \begin{equation*}
        \sum_i \cat{C}(*, r_i) : \sum_i \cat{C}(*, U_i) \to \cat{C}(*, U)
    \end{equation*}
    is surjective.
\end{enumerate}
A sheaf $X$ on a concrete site $(\cat{C}, j)$ is said to be \textbf{concrete} if the canonical function
\begin{equation*}
    X(U) \to \ncat{Set}(\cat{C}(*, U), X(*))
\end{equation*}
obtained by applying $\cat{C}(*,-) \cong \ncat{Pre}(\cat{C})(y(*), -)$ to the map $y(U) \to X$ is injective.
\end{Def}

\begin{Def}
A \textbf{Grothendieck quasitopos} $\cat{E}$ is a category equivalent to one of the form $\ncat{Sep}(\ncat{Sh}(\cat{C}, j), k)$. Namely it is a category equivalent to the category of $j$-sheaves on a site $(\cat{C}, j)$ which are also $k$-separated (Definition \ref{def separated and sheaf}) with respect to another coverage $k$ on $\cat{C}$ such that $\Gro{j} \subseteq \Gro{k}$.
\end{Def}

\begin{Rem}
Grothendieck quasitoposes, introduced in \cite{borceux1991characterization}, are very nice categories, close to being Grothendieck toposes in their properties. For example, Grothendieck quasitoposes are (small) complete, cocomplete, locally cartesian closed and locally presentable (Definiiton \ref{def locally presentable category}). In fact, if a Grothendieck quasitopos $\cat{E}$ is balanced\footnote{A category is balanced if every morphism $f : U \to V$ which is a monomorphism and an epimorphism is an isomorphism.} then it is a Grothendieck topos. We recommend the following references \cite{baez2011convenient, wyler1991lecture, garner2012grothendieck} for more on Grothendieck quasitoposes. 
\end{Rem}

\begin{Lemma} \label{lem concrete sheaves form quasitopoi}
Given a concrete site $(\cat{C}, j)$, let $k$ denote the collection of families of the form $\{x : * \to U \}_{x \in \cat{C}(*, U)}$. A sheaf $X$ on $(\cat{C}, j)$ is then a concrete sheaf if and only if it is separated with respect to $k$.
\end{Lemma}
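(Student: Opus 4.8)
The plan is to show that the two conditions unwind to the \emph{same} injectivity statement, so the equivalence is a matter of matching up definitions once the relevant maps are made explicit. The only structure of the concrete site I expect to use is the existence of the terminal object $*$, which is needed merely to define the maps involved.

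First I would spell out the canonical map in Definition~\ref{def concrete site}. Fix $U \in \cat{C}$. By the Yoneda lemma a section $a \in X(U)$ is the same as a map $\overline{a} : y(U) \to X$, and applying the functor $\Pre(\cat{C})(y(*), -) \cong (-)(*)$ sends $\overline{a}$ to a function $\cat{C}(*, U) \cong \Pre(\cat{C})(y(*), y(U)) \to \Pre(\cat{C})(y(*), X) \cong X(*)$. Chasing through Yoneda, this function is $x \mapsto X(x)(a)$, so the canonical map is
\[
c_U : X(U) \to \Set(\cat{C}(*, U), X(*)), \qquad c_U(a) = \big( x \mapsto X(x)(a) \big),
\]
and $X$ is concrete exactly when $c_U$ is injective for every $U$.

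Next I would identify the restriction map for $k$. Every morphism in the family $k_U = \{ x : * \to U \}_{x \in \cat{C}(*, U)}$ has domain $*$, so a matching family over $k_U$ is a tuple $\{ s_x \in X(*) \}_{x}$, that is, an element of $\prod_{x} X(*) = \Set(\cat{C}(*, U), X(*))$ subject to the matching condition; thus there is an injective inclusion $\iota : \Match(k_U, X) \hookrightarrow \Set(\cat{C}(*, U), X(*))$. Here I would note that, since $*$ is terminal, every intersection square for $k_U$ factors through the unique map into $*$, so the matching condition is automatically satisfied by any tuple of the form $\{ X(x)(a) \}_x$; hence $\text{res}_{k_U, X}(a) = \{ X(x)(a) \}_x$ indeed lands in $\Match(k_U, X)$ and is given by the same rule as $c_U$.

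Finally I would observe that $c_U = \iota \circ \text{res}_{k_U, X}$ with $\iota$ injective, so $c_U$ is injective if and only if $\text{res}_{k_U, X}$ is injective. By Definition~\ref{def separated and sheaf} the latter says precisely that $X$ is separated on $k_U$, and ranging over all $U$ this is exactly separatedness with respect to $k$; hence $X$ is a concrete sheaf if and only if it is $k$-separated. There is no genuine obstacle here: the only care needed is the bookkeeping that realizes $\Match(k_U, X)$ as a subset of $\Set(\cat{C}(*, U), X(*))$ and confirms that both maps carry out the common assignment $a \mapsto \{X(x)(a)\}_x$. Notably, neither the faithfulness axiom nor the surjectivity-on-covers axiom of the concrete site is required for this particular equivalence.
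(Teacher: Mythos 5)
Your proposal is correct and is essentially the paper's own argument: the paper simply asserts that being $k$-separated is, by definition, the same as being a concrete sheaf, and your proof supplies the definition-unwinding behind that assertion, namely that the canonical map $X(U) \to \Set(\cat{C}(*,U), X(*))$ factors as the injection $\Match(k_U, X) \hookrightarrow \Set(\cat{C}(*,U), X(*))$ precomposed with $\mathrm{res}_{k_U, X}$. Your closing remark is also accurate, though note the paper's proof additionally records $\Gro{j} \subseteq \Gro{k}$ (which does use axiom (3) of Definition \ref{def concrete site}) because that containment is needed for the subsequent corollary identifying concrete sheaves as a Grothendieck quasitopos, not for the equivalence itself.
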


\begin{proof}
Let us note that $\Gro{j} \subseteq \Gro{k}$ by Definition \ref{def concrete site}.(3). Then being $k$-separated is precisely what it means for a sheaf to be concrete.
\end{proof}

\begin{Cor}
Given a concrete site $(\cat{C}, j)$, the full subcategory $\ncat{ConSh}(\cat{C}, j) \hookrightarrow \ncat{Sh}(\cat{C}, j)$ of concrete sheave is a Grothendieck quasitopos.
\end{Cor}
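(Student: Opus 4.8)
The plan is to recognize the statement as an essentially immediate consequence of Lemma \ref{lem concrete sheaves form quasitopoi} together with the definition of a Grothendieck quasitopos, once we confirm that $k$ is a genuine coverage and that $\Gro{j} \subseteq \Gro{k}$. Concretely, I would take $k$ to be the collection of families where $k(U)$ consists of the global-points family $\{x : * \to U\}_{x \in \cat{C}(*,U)}$ together with the identity family $(1_U)$.

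First I would check that $k$ is a coverage in the sense of Definition \ref{def coverage}. The identity axiom holds by fiat. For the pullback axiom, given $g : V \to U$ and the global-points family on $U$, I would take the global-points family $\{y : * \to V\}_y$ on $V$; its pushforward along $g$ consists of the maps $gy : * \to U$, each of which is itself a global point of $U$ and hence literally a member of the global-points family $k(U)$, giving the required refinement $g_*(\{y\}_y) \leq \{x\}_x$ via the identity on $*$. The case $r = (1_U)$ is trivial.

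Next I would record that $\Gro{j} \subseteq \Gro{k}$, which is exactly the content invoked in the proof of Lemma \ref{lem concrete sheaves form quasitopoi}. Condition (3) of Definition \ref{def concrete site} says that for every $j$-covering family $r = \{r_i : U_i \to U\}$ the map $\sum_i \cat{C}(*, r_i)$ is surjective, i.e. every global point of $U$ factors through some $r_i$; this is precisely a refinement $\{x : * \to U\}_x \leq r$, whence by Lemma \ref{lem refinement under sifted closure} we get an inclusion of sieves $\overline{\{x\}_x} \subseteq \overline{r}$. Since $\{x\}_x \in k(U)$, the sieve $\overline{\{x\}_x}$ is $\Gro{k}$-covering, so Lemma \ref{lem covering sieve props}.(1) forces $\overline{r}$ to be $\Gro{k}$-covering as well. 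Thus $\overline{j} \subseteq \Gro{k}$, and minimality of the Grothendieck closure (Lemma \ref{lem sifted closure has same Grothendieck closure}) yields $\Gro{j} \subseteq \Gro{k}$.

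Finally, Lemma \ref{lem concrete sheaves form quasitopoi} identifies a $j$-sheaf $X$ as concrete precisely when it is $k$-separated, so the full subcategory $\ncat{ConSh}(\cat{C}, j)$ coincides with $\ncat{Sep}(\ncat{Sh}(\cat{C}, j), k)$; because $\Gro{j} \subseteq \Gro{k}$, this is by definition a Grothendieck quasitopos, which completes the argument. I expect no serious obstacle: the only points needing care are the verification of the coverage axioms for $k$ and the clean deduction of $\Gro{j} \subseteq \Gro{k}$ from the surjectivity condition, both of which become routine once the refinement $\{x\}_x \leq r$ is observed.
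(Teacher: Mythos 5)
Your proof is correct and follows essentially the same route as the paper, which derives the corollary immediately from Lemma \ref{lem concrete sheaves form quasitopoi} (whose own proof is just the one-line observation that $\Gro{j} \subseteq \Gro{k}$ by Definition \ref{def concrete site}.(3) and that $k$-separatedness is concreteness). You have simply filled in the details the paper leaves implicit — verifying that $k$ is a coverage and deducing $\Gro{j} \subseteq \Gro{k}$ via the refinement $\{x\}_x \leq r$ — and these verifications are sound.
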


\begin{Ex} \label{ex diffeological spaces}
Diffeological spaces were introduced by Souriau in \cite{souriau1980diffeological}, and developed by his student Iglesias-Zemmour, leading to the textbook \cite{iglesias2013diffeology}. The motivation here is to have a convenient category for doing differential geometry. It is arguably one of the most successful such frameworks (with competition from synthetic differential geometry, Example \ref{ex synthetic differential geometry}), being easy to work with and understand, but also powerful enough to extend classical constructions from differential geometry. Originally the definition was given as a set with a collection of plots satisfing three simple conditions. Baez and Hoffnung proved in \cite{baez2011convenient} that the category of diffeological spaces is equivalent to a category of concrete sheaves. More precisely, consider the site $(\ncat{Open}, j_{\text{open}})$ from Example \ref{ex open coverage on Man}. Then \cite[Proposition 24]{baez2011convenient} shows that the category of diffeological spaces is equivalent to the category of concrete sheaves on $(\ncat{Open}, j_{\open})$. They prove a similar statement for so-called Chen spaces \cite{chen1977iterated}. The author's PhD thesis consists of two papers \cite{Minichiello2024bundles, minichiello2024derham} which study the cohomology of diffeological spaces from an $\infty$-stack perspective, which benefits from the sheaf-theoretic approach to differential geometry.
\end{Ex}

\begin{Ex} \label{ex simplicial complexes}
A simplicial complex $K$ consists of a set $V(K)$ along with a set $S(K)$ of nonempty finite subsets $\sigma$ of $V(K)$ whose elements are called \textbf{simplices}, containing all of the singletons, and such that if $\sigma \in S(K)$ and $\tau \subseteq \sigma$, then $\tau \in S(K)$. A map $f : K \to L$ of simplicial complexes is a function $V(f) : V(K) \to V(L)$ such that if $\sigma \in S(K)$, then $f(\sigma) \in S(L)$. Let $\ncat{Cpx}$ denote the category of simplicial complexes. Now consider the small category $\ncat{FinSet}_{\neq \varnothing}$ of nonempty finite sets and functions. The collection of families $j_{\text{inc}}$ given by singleton families $\{S \hookrightarrow S'\}$ of subset inclusions is pullback-stable and hence defines a coverage. However, this coverage is uninteresting, every presheaf on $\ncat{FinSet}_{\neq \varnothing}$ is a $j_{\text{inc}}$-sheaf. It is not hard to check that $(\ncat{FinSet}_{\neq \varnothing}, j_{\text{inc}})$ is a concrete site. By \cite[Proposition 27]{baez2011convenient}, there is an equivalence of categories $\ncat{Cpx} \simeq \ncat{ConSh}(\ncat{FinSet}_{\neq \varnothing}, j_{\text{inc}})$. Hence the category of simplicial complexes is a Grothendieck quasitopos.
\end{Ex}

\begin{Ex}
One can similarly prove that the categories $\ncat{Gr}_\ell$ of loop graphs and $\ncat{Gr}_r$ of reflexive graphs are quasitoposes by showing they are categories of concrete presheaves. See \cite[Appendix A]{bumpus2025structured} for more details.
\end{Ex}

\begin{Ex}
Recall from Example \ref{ex johnstone topological topos} the site $(\tau, J_{\text{can}})$. This site is concrete, and the concrete sheaves on this site are precisely the subsequential spaces, see \cite[Chapter IV]{harasani1980topos}.
\end{Ex}

\begin{Ex}
Quasi-topological spaces were introduced by Spanier in \cite{spanier1963quasi} as a convenient category of topological spaces. Let $\ncat{CH}$ denote the full subcategory of $\ncat{Top}$ on the compact Hausdorff spaces. This category is very well-behaved, being a pretopos \cite{marra2020characterisation}. In particular it is a coherent category. Consider the large site $(\ncat{CH}, j_{\text{coh}})$ with the coherent coverage (Example \ref{ex coherent coverage}). We note that while $\ncat{Sh}(\ncat{CH}, j_{\text{coh}})$ has large hom-sets, as we will discuss later in this section, the full subcategory $\ncat{ConSh}(\ncat{CH}, j_{\text{coh}})$ has small hom-sets, which is easy to see by the definition of concrete sheaves, and this category is equivalent to the category of quasi-topological spaces. We also recommend the paper \cite{dubuc2006concrete} for more on this example.
\end{Ex}

\begin{Ex}
Let $\ncat{Borel}$ denote the small category of standard Borel spaces and measurable functions between them, see \cite[Section II.A]{heunen2017convenient}. We say that a family $\{b_i : B_i \to B \}$ in this category is a covering family if and only if it is a countable coproduct. This gives a pullback-stable collection of families, and hence it is a coverage. It is an instance of an infinitary lextensive coverage (Example \ref{ex lextensive coverage}). The concrete sheaves on the site $(\ncat{Borel}, j_{\ncat{lex}})$ are called Quasi-Borel spaces, and are a convenient category to work in for higher-order probability theory, see \cite{heunen2017convenient}.
\end{Ex}

\subsection{Large Sites} \label{section large sites}

\subsubsection{Examples of Large Sites}
The astute reader may have noticed that we did not include any examples involving the category $\ncat{Top}$ of topological spaces. The reason for this is simple, $\ncat{Top}$ is not an essentially small category. Therefore according to Definition \ref{def coverage}, it cannot be a site. This is a very restrictive and unfortunate state of affairs, and in this section we will discuss some ways of working around the smallness condition. We recommend the reader look at Section \ref{section set theory} before reading this section.

First, let us consider what can go wrong by augmenting Definition \ref{def coverage} to allow for collections of families to consist of large sets of morphisms. Let us say a category $\cat{C}$ is \textbf{essentially large} if it is large and not essentially small.

\begin{Def} \label{def large coverage}
A \textbf{large collection of families} $j$ on a large category $\cat{C}$ consists of a large set $j(U)$ whose elements are large sets of morphisms $r = \{r_i : U_i \to U \}_{i \in I}$. The definitions of coverage, refinement closure, composition closure, sifted closure, Grothendieck pretopology, Grothendieck coverage, matching family, amalgmation and sheaf are defined in the same way using large collections of families. We call $(\cat{C}, j)$ a \textbf{large site} if $j$ is a large coverage and $\cat{C}$ is essentially large.
\end{Def}

There is no real ``problem'' with Definition \ref{def large coverage}, in the sense that we can define everything as we have before. It makes perfect sense to talk about a single sheaf on a large site. However the category $\ncat{Sh}(\cat{C}, j)$ of sheaves on a large site is less well-behaved than on small sites. Before discussing these disadvantages, let us see some examples of large sites.

\begin{Ex} \label{ex jointly epi coverage on set}
The category $\ncat{Set}$ of small sets is an essentially large category. Let $j_{\text{epi}}$ denote the large collection of families where for a set $S$, $j(S)$ denotes the large set of (small) families $r = \{r_i : S_i \to S \}_{i \in I}$ that are jointly epimorphic, i.e. $\sum_{i \in I} S_i \to S$ is surjective. This defines a saturated, pullback-stable coverage on $\ncat{Set}$.
\end{Ex}

\begin{Ex} \label{ex top coverage}
Let $\ncat{Top}$ denote the category of topological spaces. Since every set can be considered as a discrete topological space, $\ncat{Top}$ is essentially large. Similarly to Example \ref{ex open coverage on Man}, we can define the coverages $(\ncat{Top}, j_{\text{open}})$ of open covers and $(\ncat{Top}, j_{\text{emb}})$ of open covers given by topological embeddings. Both of these are Grothendieck pretopologies.

We note that the category $\ncat{Top}$ is not cartesian closed. There are other ``convenient'' categories of topological spaces that are more often considered in algebraic topology, which are cartesian closed. These include the category $k\ncat{Top}$ of $k$-spaces, the category $\ncat{CGWH}$ of compactly generated weak Hausdorff spaces\footnote{Interestingly, the canonical Grothendieck coverages on $\ncat{CGWH}$ and $\ncat{Top}$ differ, see \cite[Example 5.2.7]{lester2019canonical}.}, and the category $\Delta\ncat{Top}$ of $\Delta$-generated topological spaces. See \cite{rezk2017compactly} and \cite{nlab:convenient_category_of_topological_spaces}. The open cover coverage restricts to all of these full subcategories of $\ncat{Top}$.
\end{Ex}

\begin{Ex} \label{ex coverage on G-sets}
Let $G$ be a group, and let $G\ncat{Set}$ denote the category of $G$-sets, i.e. those sets $S$ equipped with a left action $\rho : G \times S \to S$, and where a morphism $f : S \to S'$ is an equivariant function $f(\rho(g,s)) = \rho'(g,f(s))$. We note that $G \ncat{Set}$ is equivalent to the category $\ncat{Set}^G$, where we think of $G$ as a category with one object. Let $j_{\text{G,epi}}$ denote the collection of families on $G\ncat{Set}$ where a family $r = \{r_i : S_i \to S \}$ is $j_{\text{G,epi}}$-covering if they are jointly epimorphic, i.e. if the underlying function $\sum_i r_i : \sum_i S_i \to S$ is surjective. In other words, if we let $\pi : G\ncat{Set} \to \ncat{Set}$ denote the functor that forgets the $G$-action, then $r \in j_{\text{G,epi}}(S)$ if and only if $\pi(r) \in j_{\text{epi}}(\pi(S))$. This is a canonical coverage, and it turns out that $\ncat{Sh}(G\ncat{Set}, j_{\text{G,epi}})$ is equivalent to $G \ncat{Set}$. We will see this as a consequence of Example \ref{ex canonical coverage on grothendieck toposes}. In fact, the same constructions work if $G$ is just a monoid. For an in-depth study of toposes of monoid actions see \cite{rogers2021toposes}.
\end{Ex}

\begin{Ex} \label{ex condensed and pyknotic sets}
The notion of \textbf{condensed sets} has recently become a powerful tool in algebraic geometry \cite{fargues2024geometrization, mann2022padic}. The basic idea here is another instance of ``convenience.'' Here the idea is that the category of topological abelian groups is not an abelian category. Scholze and Clausin develop a category of condensed sets, which form a kind of convenient category of topological spaces, and abelian group objects in this category do form an abelian category, see the lecture notes \cite{scholze2019condensed}. It is interesting to note that setting up this category comes with set-theoretic difficulties, for exactly the reasons we will discuss later in this section. Let $\ncat{Pro}(\ncat{FinSet})$ denote the category of pro-objects in the category of finite sets. This is the full subcategory of $(\ncat{Set}^{\ncat{FinSet}})^\op$ on those functors $P : \ncat{FinSet} \to \ncat{Set}$ that are cofiltered limits of representables. Via Stone duality, see \cite{johnstone1982stone}, this category is equivalent to the full subcategory $\ncat{St}$ of $\ncat{Top}$ on the \textbf{Stone spaces}, those topological spaces that are compact, Hausdorff and totally disconnected\footnote{A nonempty topological space is totally disconnected if its only connected components are singletons.}.

Now the category $\ncat{St}$ of Stone spaces is essentially large. So, to avoid the set-theoretic issues that we will discuss later in this section, Scholze \cite[Remark 1.3]{scholze2019condensed} fixes an uncountable strong limit cardinal\footnote{These are precisely the same as strongly inaccessible cardinals (Definition \ref{def strongly inaccessible cardinal}) without the requirement that $\kappa$ be regular.} $\kappa$, and lets $\ncat{St}_\kappa$ denote the full subcategory of Stone spaces whose underlying sets have cardinality less than $\kappa$. Thus $\ncat{St}_\kappa$ is a small category, and by \cite[Proposition 17]{lurie2018pro}, it is a coherent category (Definition \ref{ex coherent coverage}). Equipping it with coherent coverage $j_{\text{coh}}$, we obtain the site $(\ncat{St}_\kappa, J_{\text{coh}})$. The category $C_\kappa = \ncat{Sh}(\ncat{St}_\kappa, j_{\text{coh}})$ is the category of $\kappa$-condensed sets. For every pair $\kappa < \kappa'$ of uncountable strong limit cardinals there is a fully faithful functor $C_\kappa \hookrightarrow C_{\kappa'}$. The category of condensed sets is then defined to be the colimit $C = \colim_{\kappa} C_\kappa$ over the large poset of uncountable strong limit cardinals. This category is not a Grothendieck topos.

Independently, Barwick and Haine in \cite{barwick2019pyknotic} defined the closely related notion of \textbf{pyknotic set}. They deal with set-theoretic difficulties by the method of Section \ref{section very small grothendieck universe}, i.e. they fix (at least) three Grothendieck universes $\mathbbm{u} \in \mathbbm{U} \in \mathbbm{V}$, and call the sets in $\mathbbm{u}$ ``tiny,'' and the sets in $\mathbbm{U}$ small. They define $\ncat{St}_{\mathbbm{u}}$ to be the category of Stone spaces whose underlying sets are tiny, see \cite[Section 2.2]{barwick2019pyknotic}. This is a small category, and it is coherent. The category of sheaves $P = \ncat{Sh}(\ncat{St}_{\mathbbm{u}}, j_{\text{coh}})$ is called the category of pyknotic sets. The differences between $C$ and $P$ might at first seem artificial, but they do have real consequences for their resulting theory. For example the set $\{0,1\}$ with the indiscrete topology is pyknotic but not condensed \cite[Section 0.3]{barwick2019pyknotic}.
\end{Ex}

\begin{Ex} \label{ex synthetic differential geometry}
In synthetic differential geometry, the goal is to craft a framework in which to do differential geometry such that infinitesimals exist. This simplifies and clarifies many computations. For instance, it makes the tangent bundle of a manifold an exponential object. There are many different toposes that one can ``do'' synthetic differential geometry in. These are called ``well-adapted models'' of synthetic differential geometry. The book \cite{moerdijk2013models} by Moerdijk and Reyes is the standard reference for these well-adapted models. Other useful references are \cite{kock2006synthetic, lavendhomme2013basic}.

Recall the category $\ncat{Cart}$ from Example \ref{ex coverages on Cart and Open}. This category has finite products, and thus can be considered as an algebraic theory. An algebra for this theory, i.e. a finite product-preserving functor $A : \ncat{Cart} \to \ncat{Set}$ is called a \textbf{$C^\infty$-ring}. Note that if $M$ is a finite-dimensional smooth manifold, then $C^\infty(M, \R)$, the set of smooth functions to the real numbers is a $C^\infty$-ring by $\R^n \mapsto C^\infty(M,\R)^n$. In fact, this produces a fully faithful functor $\ncat{Man} \hookrightarrow C^\infty\ncat{Ring}^\op$, \cite[Theorem 2.8]{moerdijk2013models}.

Every $C^\infty$ ring is a commutative $\R$-algebra, by just considering the usual multiplication map $\R \times \R \to \R$ in $\ncat{Cart}$. Hence we can define ideals of $C^\infty$-rings as ideals of their underlying $\R$-algebra. There is then a canonical way of making quotients of $C^\infty$-rings $C$ by ideals $I$ into $C^\infty$-rings $C/I$, see \cite{joyce2013introduction}.

A Weil algebra $W$ is a finite dimensional (as an $\R$-vector space) commutative $\R$-algebra with maximal ideal $I$ such that $W/I \cong \R$ and $I^n = 0$ for some $n \geq 1$. There is a unique way of making a Weil algebra into a $C^\infty$-ring that respects the underlying $\R$-algebra structure, see the references at \cite[Example 2.12]{joyce2013introduction}. These Weil algebras are the objects that act as infinitesimal neighborhoods. Examples include $C^\infty(\R)/(x^2)$ which by Hadamard's Lemma \cite[Lemma 2.4.1]{moerdijk2013models} is isomorphic to the ring of dual numbers  $\R[\varepsilon]/(\varepsilon^2)$.

Let $\ncat{ThCart}$ (infinitesimally thickened cartesian spaces) denote the full subcategory of $C^\infty\ncat{Ring}^\op$ on objects of the form $C^\infty(\R^n) \otimes_\infty W$, where $\otimes_\infty$ denotes the coproduct in $C^\infty\ncat{Ring}$ and $W$. is a Weil algebra. We give this small category the coverage $j_{\text{Cah}}$ whose covers are of the form $\{C^\infty(U_i) \otimes_{\infty} W \xrightarrow{r_i \otimes_{\infty} 1_W} C^\infty(U) \otimes_{\infty} W \}_{i \in I}$ where $r= \{r_i : U_i \to U \}$ is a covering family in $(\ncat{Cart}, j_{\open})$, see Example \ref{ex coverages on Cart and Open}. Sheaves on this site $\ncat{Sh}(\ncat{ThCart}, j_{\text{Cah}})$ is called the \textbf{Cahiers topos}. It was originally defined by Dubuc in \cite{dubuc1979modeles}.

Now consider the category $C^\infty\ncat{Ring}_{\text{Germ}}^\op$ which is the opposite of the full subcategory of germ-determined finitely presented $C^\infty$-rings. These are $C^\infty$-rings of the form $C^\infty(M)/I$ where $M$ is a finite dimensional smooth manifold and $I$ is a germ-determined ideal, which means that for every $f \in C^\infty(M)$ if the germ of $f$ at a point $x$ in the zero locus\footnote{the set of all points $x \in M$ such that if $g \in I$, then $g(x) = 0$.}
of $I$ is equal to a germ of some $g \in I$ at $x$, then $f \in I$. This small category has a coverage given by covering families of the form $\{ A_i \to B \}$ where the corresponding maps $B \to A_i$ in $C^\infty\ncat{Ring}_{\text{Germ}}$ are localizations at some $b \in B$, in the $C^\infty$-ring sense. See \cite[Appendix 2]{moerdijk2013models} for more details and also for a large amount of other sites and toposes that give models for synthetic differential geometry.
\end{Ex}

\begin{Ex} \label{ex canonical coverage on grothendieck toposes}
Given a small site $(\cat{C}, j)$ we can consider the category $\ncat{Sh}(\cat{C}, j)$ of $j$-sheaves on $\cat{C}$. This is a large category, but it is also locally presentable (see Section \ref{section locally presentable categories}), hence there exists a small dense subcategory $\cat{D} \hookrightarrow \ncat{Sh}(\cat{C}, j)$. Then equipping $\cat{D}$ with the canonical topology, \cite{johnstone2002sketches} proves that $\ncat{Sh}(\cat{C}, j) \simeq \ncat{Sh}(\cat{D}, J_{\text{can}}) \simeq \ncat{Sh}(\ncat{Sh}(\cat{C}, j), J_{\text{can}})$.
\end{Ex}

If $(\cat{C}, j)$ is a large site, then the sheafification functor $a : \ncat{Pre}(\cat{C}) \to \ncat{Sh}(\cat{C}, j)$, which is discussed in Section \ref{section sheafification}, may not exist. Indeed, if $X$ is a presheaf on $\cat{C}$, then the diagram whose colimit is being taken over in Definition \ref{def general plus construction} may be large. Hence the colimit may not actually exist in $\ncat{Pre}(\cat{C})$. When $\cat{C}$ is small, $\ncat{Pre}(\cat{C})$ is a large category, and it has all small limits and colimits. But if $\cat{C}$ is an essentially large category, then by Lemma \ref{lem smallness of functor categories}, $\ncat{Pre}(\cat{C})$ is large, but that does not mean that it has all large colimits. In fact, by the same argument as in Freyd's Theorem, Proposition \ref{prop freyd's theorem}, if it had all large colimits, it would necessarily be a preorder. For an example of a presheaf on a large site whose sheafification does not exist see \cite{waterhouse1975basically}.

Another problem is with the categorical structure of $\ncat{Pre}(\cat{C})$. If $\cat{C}$ is an essentially large category, then $\ncat{Pre}(\cat{C})$ may not be cartesian closed. Indeed, each hom-set of $\ncat{Pre}(\cat{C})$ is essentially large, by Lemma \ref{lem smallness of functor categories}.(4). Hence the usual internal hom in $\ncat{Pre}(\cat{C})$ as given in Lemma \ref{lem presheaf topoi are locally cartesian closed} does not define a presheaf. This also implies that (pre)sheaf categories on large sites may not have subobject classifiers.

Let us now consider some workarounds to this.

\subsubsection{WISC}
When $(\cat{C}, j)$ is a large site, it is sometimes still possible to guarantee that sheafification will exist.

\begin{Def}[{\cite[Definition 3.19]{roberts2012internal}}]
We say that a large site $(\cat{C}, j)$ satisfies WISC (weakly initial set of covers), if for every $U \in \cat{C}$, there exists a small subset $j'(U) \subseteq j(U)$ such that for every covering family $r \in j(U)$ there exists a $t \in j'(U)$ and a refinement $t \to r$.
\end{Def}

\begin{Lemma}
If $(\cat{C}, j)$ is a large site that satisfies WISC, then sheafification exists.
\end{Lemma}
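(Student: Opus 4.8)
The plan is to show that the WISC condition is precisely what makes each colimit in the plus construction (Definition \ref{def general plus construction}) a \emph{small} filtered colimit, so that it exists in $\ncat{Set}$; once the plus construction is defined on the large site, sheafification $a = (-)^{++}$ exists and has its universal property by the same arguments as in Section \ref{section sheafification}. So I would fix a presheaf $X$ and an object $U$ and recall that
\[
X^+(U) = \underset{r \in \sat{j}(U)^\op}{\colim} \Match(r, X),
\]
a colimit over the preorder $\sat{j}(U)^\op$, which is filtered because $\sat{j}(U)$ is finitely cofiltered (Lemma \ref{lem meets in saturated coverages}). The only obstruction to existence is that $\sat{j}(U)$ may be a proper class, so the crux is to produce a small subpreorder that is cofinal in $\sat{j}(U)^\op$; concretely, a \emph{small} set of saturating families such that every saturating family is refined by one of them.

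To build this small set I would use WISC at every object. Let $j'(V) \subseteq j(V)$ be the small weakly-initial sets furnished by WISC for each $V \in \cat{C}$, and call a $j$-tree a \textbf{$j'$-tree} if the covering family attached at every node lies in the relevant $j'$. Since each family in $j'$ has a small index set and each $j'(V)$ is small, an induction on height shows that the set of $j'$-trees on $U$ of height $\le n$ is small (at each leaf one either stops or picks a cover from a small set, a small product over a small index set), and taking the union over $n \in \N$ shows the set of all $j'$-trees on $U$ is small. Hence $\{ S^\circ : S \text{ a } j'\text{-tree on } U \}$ is a small subset of $\sat{j}(U)$.

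The main step is then: every $j$-tree $T$ on $U$ is refined by a $j'$-tree, which I would prove by strong induction on the height of $T$. For height $1$, $T^\circ$ is a covering family in $j(U)$, refined by some $t \in j'(U)$. For the inductive step, write $T$ as a root cover $r_0 \in j(U)$ with leaves $U_i$ together with subtrees $T_i$ on $U_i$, and choose $t \in j'(U)$ refining $r_0$ with components $f_k : W_k \to U_{\alpha(k)}$. For each leaf $W_k$, Lemma \ref{lem can pullback j-trees} gives a $j$-tree $Q_k$ on $W_k$ with $(f_k)_*(Q_k^\circ) \le T_{\alpha(k)}^\circ$, and (inspecting that proof) $Q_k$ has height at most that of $T_{\alpha(k)}$, hence strictly less than that of $T$; so the inductive hypothesis refines $Q_k$ by a $j'$-tree $Q_k'$. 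Attaching the $Q_k'$ to the leaves of $t$ produces a $j'$-tree $S$ on $U$, and composing the refinements gives $S^\circ \le T^\circ$. I expect this height-induction, together with the bookkeeping that $j'$-trees form a genuinely small set, to be the main obstacle.

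Finally, combining this with Lemma \ref{lem family saturating iff refined by j-tree} (every saturating family is refined by a $j$-tree) shows that the small set of $j'$-tree composites is cofinal in $\sat{j}(U)^\op$, and it is directed by Lemma \ref{lem j-trees have common refinement}. Therefore $X^+(U)$ is computed by a small filtered colimit of sets and so exists; equivalently, via the explicit description of Lemma \ref{lem filtered colimits description in sets}, every class has a representative matching family over a $j'$-tree, so $X^+(U)$ is a quotient of a small set and is small. This makes $X^+$ a well-defined presheaf, functorial in $X$, and iterating gives $aX = (X^+)^+$. The remaining facts—that $aX$ is a $j$-sheaf and that $a$ is left adjoint to the inclusion with the universal property of Lemma \ref{lem universal map to plus construction} and Proposition \ref{prop sheafification props}—follow from the same elementwise arguments as in Section \ref{section sheafification}, which used smallness of $\cat{C}$ only through the existence of these colimits. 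Hence sheafification exists.
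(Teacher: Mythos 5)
Your proof is correct, but it takes a genuinely different (and more labor-intensive) route than the paper's. The paper works with the sieve-theoretic form of the plus construction from Corollary \ref{cor computation of plus construction}, i.e. the colimit over the poset $\Gro{j}(U)$ of covering sieves: it takes the small subposet of sieves generated by families in $j'(U)$, observes via WISC at $U$ alone that every covering sieve contains one of these, and gets connectivity of the comma categories for free from the fact that intersections of covering sieves are covering sieves (Lemma \ref{lem covering sieve props}) --- a three-line cofinality argument with no trees. You instead stay with the raw colimit over $\sat{j}(U)^\op$ and manufacture a small cofinal subset out of $j'$-tree composites, which forces you to (a) prove smallness of the set of $j'$-trees by a cardinality induction on height and (b) prove that every $j$-tree is refined by a $j'$-tree by a second height induction riding on Lemma \ref{lem can pullback j-trees}; this uses WISC at every object of $\cat{C}$, not just at $U$. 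What your extra work buys is robustness: a general element of $\Gro{j}(U)$ is $\overline{r}$ for $r$ merely \emph{saturating}, hence refined only by a $j$-tree of possibly large height, so a single application of WISC at $U$ produces a $j'(U)$-family refining only the root cover; your tree induction is exactly what is needed to push the refinement down through all levels, whereas the paper's one-liner silently reduces to the height-one case. Two small points to tighten: the claim that the pullback tree $Q_k$ of Lemma \ref{lem can pullback j-trees} has height at most that of $T_{\alpha(k)}$ should be stated and proved as part of that induction rather than left to ``inspecting that proof,'' and for finality you should say explicitly that two elements of your small set admit a common refinement \emph{inside} the small set (common $j$-tree refinement by Lemma \ref{lem j-trees have common refinement}, then refine again by a $j'$-tree), which is what gives connectivity of the comma categories.
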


\begin{proof}
Given $U \in \cat{C}$, let $\Gro{j'}(U)$ denote the subposet of $\Gro{j}(U)$ on those sieves that are generated by covering families in $j'(U)$. So while each $R \in \Gro{j'}(U)$ might be a large set, $\Gro{j'}(U)$ itself is small. We have an obvious inclusion functor $i:\Gro{j'}(U) \to \Gro{j}(U)$ and we wish to show that this functor is initial (Definition \ref{def final functor}). So we want to show that for every $R \in \Gro{j}(U)$ the category $(i \downarrow R)$ is nonempty and connected. But since $R \in \Gro{j}(U)$, there exists a $r \in j(U)$ such that $R = \overline{r}$. Hence there exists a $t \in j'(U)$ and a refinement $t \to r$, so $\overline{t} \subseteq R$. Thus $(i \downarrow R)$ is nonemepty, and it is connected simply because intersections of covering sieves are covering sieves by Lemma \ref{lem covering sieve props}. Now the result follows from Corollary \ref{cor computation of plus construction}, as now $X^+(U)$ is isomorphic to a colimit of a small diagram, which therefore exists.
\end{proof}

\begin{Lemma} \label{lem Set with epi coverage satisfies WISC}
The large site $(\ncat{Set}, j_{\text{epi}})$ of Example \ref{ex jointly epi coverage on set} satisfies WISC. 
\end{Lemma}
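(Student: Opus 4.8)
The plan is to exhibit, for each object, an explicit weakly initial set of covers, and the natural candidate is the single \emph{cover by points}. For a small set $S$, let $\epsilon_S = \{x : * \to S\}_{x \in S}$ denote the family of all maps out of the terminal object (a one-point set $*$) that pick out the elements of $S$; its index set is $S$ itself, which is small since $S$ is a small set. I would then set $j'(S) = \{\epsilon_S\}$, which is a singleton and hence certainly a small subset of $j_{\text{epi}}(S)$, and show it is weakly initial.

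First I would verify that $\epsilon_S$ is genuinely a covering family, i.e.\ $\epsilon_S \in j_{\text{epi}}(S)$: under the canonical isomorphism $\sum_{x \in S} * \cong S$, the induced map $\sum_{x \in S} * \to S$ is the identity, which is surjective, so $\epsilon_S$ is jointly epimorphic. This also handles the edge case $S = \varnothing$ automatically, where $\epsilon_\varnothing$ is simply the empty family, which is jointly epimorphic over $\varnothing$.

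The main content is to show that $\epsilon_S$ refines every covering family. Given an arbitrary $r = \{r_i : S_i \to S\}_{i \in I} \in j_{\text{epi}}(S)$, joint epimorphy means $\sum_i r_i$ is surjective, so for each $x \in S$ there exist $i \in I$ and $s \in S_i$ with $r_i(s) = x$. Using the axiom of choice, I would select such a pair for every $x$; this defines an index map $\alpha : S \to I$ together with component maps $f_x : * \to S_{\alpha(x)}$ sending $*$ to the chosen $s$, and these satisfy $r_{\alpha(x)} \circ f_x = x$. That is precisely the data of a refinement $\epsilon_S \to r$ in the sense of Definition \ref{def family of morphisms}. Hence every $r \in j_{\text{epi}}(S)$ is refined by the single family $\epsilon_S \in j'(S)$, which establishes WISC.

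There is essentially no serious obstacle here: the argument rests only on the elementary observation that a jointly epimorphic family of functions is exactly one whose images cover $S$, so the point-inclusions always factor through it. The only thing to keep an eye on is the smallness bookkeeping---that $\epsilon_S$ is a small family (its index set $S$ is small) and that $j'(S)$ is a small set---both of which are immediate.
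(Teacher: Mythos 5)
Your proposal is correct and takes exactly the same route as the paper, which also exhibits the family of point-inclusions $\{x : * \to S\}_{x \in S}$ as a (weakly) initial covering family; you have simply spelled out the refinement construction and smallness bookkeeping in more detail. No gaps.
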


\begin{proof}
For every set $S$, there is an initial covering family, given by inclusions of singletons $\{x : * \to S \}_{x \in S}$.
\end{proof}

The next result is due to Shulman as referenced in \cite[Example 3.21]{roberts2012internal}.

\begin{Lemma}
The large sites $(\ncat{Top}, j_{\text{open}}), (\ncat{Top}, j_{\text{emb}})$ of Example \ref{ex top coverage} satisfy WISC.
\end{Lemma}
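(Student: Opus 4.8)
The plan is to exhibit, for each topological space $X$, a genuinely small subset $j'(X) \subseteq j(X)$ that is weakly initial under refinement, and the whole argument rests on one elementary observation: since $X$ is a small set, its topology $\tau_X \subseteq \mathcal{P}(X)$ is again a small set, and hence so is $\mathcal{P}(\tau_X)$. The point of WISC is precisely that although the class of all covering families is large (the index sets and, for $j_{\text{emb}}$, the domains are unrestricted), every cover can be replaced by a canonical small refinement.

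For the open cover coverage $j_{\text{open}}$, I would let $j'(X)$ consist of those covers of the form $\{V \hookrightarrow X\}_{V \in S}$, where $S \subseteq \tau_X$ is a subcollection of open sets with $\bigcup_{V \in S} V = X$, each such cover being indexed by the set $S$ itself. As these covers are parametrized by elements of $\mathcal{P}(\tau_X)$, the set $j'(X)$ is small. Given an arbitrary open cover $r = \{U_i \hookrightarrow X\}_{i \in I}$, take $S = \{U_i : i \in I\} \subseteq \tau_X$; this still covers $X$, so $S \in j'(X)$, and the index map picking for each $V \in S$ some $i$ with $U_i = V$ (using the axiom of choice) together with identity component maps exhibits a refinement $S \to r$. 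This establishes WISC for $(\ncat{Top}, j_{\text{open}})$.

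For the embedding coverage $j_{\text{emb}}$, the subtlety is that the domains $U_i$ of a covering family $\{r_i : U_i \to X\}$ range over arbitrary spaces, so the naive collection of all such families is not obviously small. I would circumvent this by passing to images: take the same small set $j'(X)$ as above, noting that inclusions of open subsets are in particular open embeddings, so $j'(X) \subseteq j_{\text{emb}}(X)$. Given a $j_{\text{emb}}$-cover $r = \{r_i : U_i \to X\}_{i \in I}$, set $S = \{r_i(U_i) : i \in I\} \subseteq \tau_X$, which covers $X$. The key point is that each $r_i$ is a homeomorphism onto its open image $r_i(U_i)$, so for $V = r_i(U_i)$ the inclusion $V \hookrightarrow X$ factors through $r_i$ via the inverse homeomorphism $r_i^{-1}|_V : V \to U_i$, giving the component maps of a refinement $S \to r$. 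Hence $S \in j'(X)$ refines $r$, and WISC holds for $(\ncat{Top}, j_{\text{emb}})$ as well.

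The only real obstacle is the bookkeeping around smallness, namely verifying that $j'(X)$ is a small set; I expect this to follow cleanly once one recognizes that a cover can be replaced, without changing its refinement class, by its underlying set of open subsets, and that the collection of such sets is bounded by $\mathcal{P}(\tau_X)$. The homeomorphism-onto-image property of open embeddings is exactly what lets the single small set $j'(X)$ serve both coverages simultaneously.
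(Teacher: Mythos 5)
Your proof is correct and follows essentially the same route as the paper: observe that open-inclusion covers are bounded by $\mathcal{P}(\tau_X)$ and hence form a small set, then refine an arbitrary $j_{\text{emb}}$-cover by the open cover of images, using the homeomorphism-onto-image property to supply the component maps via $r_i^{-1}$. The only cosmetic difference is that the paper indexes the refining cover by points of $X$ (choosing one image per point) rather than by the set of all images, and notes that $j_{\text{open}}(X)$ is already small outright since families are sets of morphisms; neither changes the substance of the argument.
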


\begin{proof}
We note that for every topological space $X$, $j_{\text{open}}(X)$ is actually a small set, since a covering family has to be equal to the collection of inclusions $\{U \subseteq X\}_{U \in \mathcal{U}}$ of an open cover $\mathcal{U}$. There are only a sets worth of open covers on $X$. Now let us show that $(\ncat{Top}, j_{\text{emb}})$ satisfies WISC. Suppose that $r \in j_{\text{emb}}(X)$ is a large covering family. For each $x \in X$, we can choose (using the axiom of choice) an open subset $U_x$ containing $x$ such that $U_x$ is the image of a map $r_i : U_i \to X$ in $r$. Then $r_x = \{U_x \subseteq X \}_{x \in X}$ is a $j_{\text{open}}$-covering family of $X$, and also a $j_{\text{emb}}$-covering family. Since $r_i$ is a homeomorphism onto its image, the $r_i^{-1}$ defines a refinement $r_x \to r$. Thus $j_{\text{open}}(X) \subseteq j_{\text{emb}}(X)$ is a small subset that refines every covering family.
\end{proof}

\begin{Rem}
We note that there are nontrivial large sites that do not satisfy WISC, see \cite[Proposition 3.23]{roberts2012internal}.
\end{Rem}

\subsubsection{Small presheaves}

Another alternative that is considered at times is to consider a full subcategory of all presheaves on a large category $\cat{C}$.

\begin{Def}
Given a large category $\cat{C}$, a presheaf $X$ on $\cat{C}$ is \textbf{small} if there exists a small diagram $d : I \to \cat{C}$ and an isomorphism
\begin{equation*}
    X \cong \colim_{i \in I} y(d(i)).
\end{equation*}
Let $P(\cat{C})$ denote the full subcategory of small presheaves.
\end{Def}

Clearly if $\cat{C}$ is small, then every presheaf is small, by the coYoneda Lemma \ref{lem coyoneda lemma}.

There are some nice categorical advantages to considering $P(\cat{C})$. For instance, the Yoneda embedding factors through $P(\cat{C})$ since we are assuming that $\cat{C}$ is locally small. The large category $P(\cat{C})$ is locally small, in contrast to $\ncat{Pre}(\cat{C})$ as maps between small presheaves are determined by a small set of maps in $\cat{C}$. It is also the case that $P(\cat{C})$ has all small colimits and is the free small cocompletion of $\cat{C}$.

A major disadvantage to $P(\cat{C})$ is that it might not have limits. Indeed, let $X$ be an essentially large set, thought of as a discrete category. Then there is an isomorphism $P(X) \cong \ncat{Set}_{/X}$, given on a small presheaf $F$ by the map $\sum_{x \in X} F(x) \to X$. But $\ncat{Set}_{/X}$ does not have a terminal object, since every object in $\ncat{Set}$ is a small set. We learned of this example from \cite{trimble2015smallpresheaves}.

However, we do know precisely when $P(\cat{C})$ has all small limits.

\begin{Prop}[{\cite{day2007limits}}]
The category $P(\cat{C})$ has all small limits if and only if for every small diagram $d : I \to \cat{C}$ and every cone $\sigma$ over $d$ there exists a small set $K$ of cones over $d$ and a map $\sigma \to \kappa$ for $\kappa \in K$. This implies that if $\cat{C}$ has all small limits then so does $P(\cat{C})$.
\end{Prop}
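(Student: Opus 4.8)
The plan is to reduce completeness of $P(\cat{C})$ to a statement about pointwise limits, and then to translate the smallness of a pointwise limit into the stated weak-completeness condition on $\cat{C}$ via categories of elements. First I would establish the reduction. Given a small diagram $D : J \to P(\cat{C})$, form the presheaf $L$ defined pointwise by $L(c) = \lim_{j \in J} D_j(c)$, the limit taken in $\ncat{Set}$. For any $X \in P(\cat{C})$, writing $X \cong \colim_k y(c_k)$ by the coYoneda Lemma \ref{lem coyoneda lemma}, one has $P(\cat{C})(X, L) \cong \lim_k L(c_k) \cong \lim_k \lim_j D_j(c_k) \cong \lim_j \lim_k D_j(c_k) \cong \lim_j P(\cat{C})(X, D_j)$, where the interchange is the commutation of limits with limits. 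Hence as soon as $L$ lies in $P(\cat{C})$, it is automatically the limit of $D$ there, so $P(\cat{C})$ is complete if and only if every such pointwise limit $L$ is a small presheaf, and the whole problem becomes a smallness question.

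Next I would use the characterization of small presheaves in terms of elements: a presheaf $Y$ is small if and only if its category of elements $\mathrm{el}(Y)$, with objects $(c, x \in Y(c))$, admits a small final (Definition \ref{def final functor}) subcategory, since then the canonical presentation $Y \cong \colim_{(c,x)} y(c)$ restricts along a final functor to a small colimit of representables, and conversely every small presentation exhibits such a subcategory. The key observation tying this to the Proposition is that elements of $L$ are cones: by definition $x \in L(c)$ is exactly a family $(x_j \in D_j(c))_j$ compatible over $J$, and after choosing small presentations $D_j \cong \ncolim{I_j} y \circ d_j$, such a family is encoded by a cone with vertex $c$ over a single small diagram $d \colon I \to \cat{C}$ assembled from $J$, the $I_j$, and the $d_j$. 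Under this dictionary a morphism of cones corresponds precisely to a morphism in $\mathrm{el}(L)$.

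With this dictionary in hand both directions follow. For the forward direction, if $P(\cat{C})$ is complete then for a small diagram $d \colon I \to \cat{C}$ the limit $M = \lim_I (y \circ d)$ exists and is small; a small final subcategory of $\mathrm{el}(M)$, read as cones over $d$, provides a small set $K$ such that every cone $\sigma$, i.e. every element of $M$, admits a cone morphism $\sigma \to \kappa$ with $\kappa \in K$, which is exactly finality unwound and is the asserted condition. For the converse, given the condition, applying it to the assembled diagrams $d$ produces for the pointwise limit $L$ a small family of elements that is final in $\mathrm{el}(L)$, so $L$ is small and the limit exists. Finally, if $\cat{C}$ itself has all small limits, then each category of cones over $d$ has a terminal object, namely the limit cone, so the singleton $K = \{\lim d\}$ witnesses the condition and $P(\cat{C})$ is complete.

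The main obstacle I anticipate is the careful assembly of the single small diagram $d$ in $\cat{C}$ whose cones encode the compatible families defining $L$, together with the finality bookkeeping: elements of colimits of representables are equivalence classes, so matching ``every cone admits a morphism into the small set $K$'' with ``the comma categories $(c,x) \downarrow K$ are nonempty and connected'' requires handling these identifications and verifying connectedness, not merely nonemptiness. This is the technical heart of the argument, and it is precisely where the hypothesis must be invoked in the weak, up-to-a-morphism form in which it is stated, rather than as the existence of genuine limit cones in $\cat{C}$.
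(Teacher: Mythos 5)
The paper does not prove this proposition (it is quoted from Day--Lack), so there is no internal proof to compare with; judged on its own terms, your outline has the right skeleton --- the reduction of completeness of $P(\cat{C})$ to smallness of the pointwise limits $L(c) = \lim_j D_j(c)$ is correct, the forward direction is fine ($\lim_I y(d(i)) \cong \Cone(d)$ is then small, and a small final subcategory of its category of elements, i.e.\ of the category of cones over $d$, is in particular a small weakly terminal set), and the closing observation that a terminal cone gives a final singleton is correct. But the converse contains two genuine gaps, one of which you flag without closing.

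First, the hypothesis supplies only a small \emph{weakly terminal} set of cones, i.e.\ nonemptiness of the comma categories, whereas smallness of a presheaf $Y$ requires a small \emph{final} subcategory of $\smallint Y$, i.e.\ connectedness as well, and for a single diagram these are genuinely inequivalent. Concretely, let $\cat{C}$ have one object $c_0$, a proper class of objects $b_\alpha$, and exactly two parallel maps $f_\alpha, g_\alpha : b_\alpha \to c_0$ as the only non-identity morphisms. The empty diagram admits the small weakly terminal set of cones $\{c_0\}$, yet $\Cone(\varnothing) = 1$ is not small: any final subcategory of $\smallint 1 \cong \cat{C}$ must contain every $b_\alpha$, since $(b_\alpha \downarrow \{c_0\})$ is a two-object discrete category. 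So the step ``applying the condition produces a small family that is final in $\smallint L$'' is a non sequitur; the actual argument must invoke the hypothesis on \emph{auxiliary enlarged diagrams} (the given diagram augmented by the vertices and legs of pairs of already-chosen cones) and iterate countably often to manufacture the connecting spans --- and it is exactly this that fails in the example, where the discrete diagram on two copies of $c_0$ has no small weakly terminal set of cones. Second, your ``assembly'' of a single small diagram $d$ in $\cat{C}$ whose cones encode the elements of $L(c)$ does not work as described: an element of $D_j(c) \cong \colim_{I_j} \cat{C}(c, d_j(-))$ is an equivalence class of morphisms, and a $J$-compatible family of equivalence classes need not lift to a strictly commuting family of actual morphisms of $\cat{C}$, so $L$ is not $\Cone(d)$ for any diagram built naively from $J$, the $I_j$ and the $d_j$. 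Reducing arbitrary pointwise limits of small presheaves to cone-presheaves of diagrams in $\cat{C}$ is a substantive part of Day and Lack's proof, not bookkeeping, and as it stands your argument establishes neither it nor the weakly-terminal-to-final upgrade.
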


\subsubsection{Very Small Grothendieck Universe} \label{section very small grothendieck universe}

This last method is quite simple: rather than using two nested Grothendieck universes $\mathbb{U} \in \mathbb{V}$ for our foundations (see Section \ref{section small and large categories}), we allow for another nested Grothendieck universe $\mathbbm{u} \in \mathbb{U} \in \mathbb{V}$. We still call those sets in $\mathbb{U}$ small, and now refer to those in $\mathbbm{u}$ as \textbf{very small}. 

Now this forces the category $\ncat{Set}_{\mathbbm{u}}$ of $\mathbbm{u}$-small sets to be $\mathbb{U}$-small. From here, one can then construct whatever essentially large category (as long as its objects are built from sets) one wishes to consider using very small sets. For example, we can consider the category $\ncat{Top}_{\mathbbm{u}}$ of topological spaces whose underlying sets are $\mathbbm{u}$-small, which is a small category. This method is used by Jardine in \cite[Page 2]{jardine2007fields} and by Barwick and Haine in \cite{barwick2019pyknotic}, see Example \ref{ex condensed and pyknotic sets}. We of course do not need the full power of a very small Grothendieck universe here, but merely a consistent cardinality bound on the underlying sets being used to construct the objects in the site.

The disadvantage here is that now one must always keep track of which universe one is working with when going between the now small site and other categories. Also now the site whose objects are constructed from very small sets can no longer have all small limits or colimits unless it is a preorder by Freyd's Theorem \ref{prop freyd's theorem}.

\section{Morphisms of Sites} \label{section morphisms of sites}

The notion of a morphism between sites is not as straightforward as one might wish. We need to put conditions on a functor $F: (\cat{C}, j) \to (\cat{D}, j')$ between sites, such that we obtain a geometric morphism (Definition \ref{def geometric morphism}) between toposes $\Delta_F : \Sh(\cat{D}, j') \to \Sh(\cat{C}, j)$. However, geometric morphisms require a finite limit preserving condition, which makes the conditions we are looking for on $F$ more technical. This is mainly due to sites not necessarily having all limits or colimits. This technical condition on $F$ is called ``flatness", and we will introduce this notion in stages.

First, let us illustrate the main ideas of morphisms of sites using sites of topological spaces.

\subsection{The case of topological spaces}

If $T$ is a topological space, recall (Example \ref{ex open cover coverage}) its corresponding site $(\mathcal{O}(T), j_T)$. If $f : T \to T'$ is a map of topological spaces, then we obtain a functor $f^{-1} : \mathcal{O}(T') \to \mathcal{O}(T)$ by $U \mapsto f^{-1}(U)$. Thus we get a functor $\Delta_{f^{-1}} : \Pre(\mathcal{O}(T)) \to \Pre(\mathcal{O}(T'))$ by precomposition. For this brief section, let us denote this functor by $f_*$. So for a presheaf $X$ on $\mathcal{O}(T)$ and $U$ an open subset of $T'$, we have $f_*(X)(U) = X(f^{-1}(U))$.

If $X$ is a sheaf on $\mathcal{O}(T)$ (we'll just say $X$ is a sheaf on $T$), then $f_*(X)$ is a sheaf on $T'$. Indeed, if $U$ is an open subset of $T'$, and $\mathcal{U} = \{ U_i \subseteq U \}_{i \in I}$ is an open cover of $U$, then
\begin{equation*}
    f_*(X)(U) \to \text{eq} \left( \prod_{i \in I} f_*(X)(U_i) \rightrightarrows \prod_{i,j \in I} f_*(X)(U_i \cap U_j) \right)
\end{equation*}
is an isomorphism, because $f^{-1}(\mathcal{U})$ is an open cover of $f^{-1}(U)$, and $X$ is a sheaf on $T$.

Thus if we restrict $f_*$ to $\Sh(\mathcal{O}(T), j_T)$, which we'll denote by $\Sh(T)$, then we obtain a functor $f_*: \Sh(T) \to \Sh(T')$. This functor has a left adjoint $f^* : \Sh(T') \to \Sh(T)$, which is most easily described using the equivalence between sheaves on $T$ and local homeomorphisms (also called \'{e}tale spaces) over $T$, \cite[Section II.9]{maclane2012sheaves}. However $f^*$ is isomorphic to the composite functor 
\begin{equation*}
\Sh(T') \xhookrightarrow{i} \Pre(T') \xrightarrow{\Sigma_{f^{-1}}} \Pre(T) \xrightarrow{a} \Sh(T),
\end{equation*}
where the right hand functor denotes sheafification (Definition \ref{def sheafification}). It turns out that every continuous map $f : X \to Y$ induces a functor $\Sigma_{f^{-1}}$ that preserves finite limits. Thus $f^*$ is a left adjoint functor that preserves finite limits. 

Abstracting this common scenario leads us to the following notion, which for our purposes can be considered the default notion of a morphism between toposes.

\begin{Def} \label{def geometric morphism}
Given Grothendieck toposes $\cat{E}$ and $\cat{F}$, a \textbf{geometric morphism} $f_*: \cat{E} \to \cat{F}$ is an adjunction
$$f^*: \cat{F} \rightleftarrows \cat{E}: f_*,$$
such that the left adjoint $f^*$ preserves finite limits.
\end{Def}

\begin{Rem}
Notice that the direction of the geometric morphism matches the direction of the right adjoint. The fact that $f^*$ preserves finite limits is vitally important. One might think of this as requiring that $f^*$ be compatible with the internal logical structure of the toposes, see Remark \ref{rem categorical logic}.
\end{Rem}

\subsection{Flatness}

\subsubsection{$\Set$-valued Flatness}

\begin{Def} \label{def yoneda extension}
 Let $\cat{C}$ be a small category, and suppose we have a functor $A: \cat{C} \to \ncat{Set}$. We can take the left Kan extension of $A$ with respect to the Yoneda embedding:
\begin{equation*}
    \begin{tikzcd}
	\cat{C} & {\ncat{Set}} \\
	{\Pre(\cat{C})}
	\arrow["y"', from=1-1, to=2-1]
	\arrow["A", from=1-1, to=1-2]
	\arrow["{\text{Lan}_y A}"', from=2-1, to=1-2]
\end{tikzcd}
\end{equation*}
and let $A^* \coloneqq \text{Lan}_y A$. We call $A^*$ the \textbf{Yoneda extension} of the presheaf $A$.
\end{Def}

The idea of the Yoneda extension of $A$ is quite straightforward. By the coYoneda Lemma \ref{lem coyoneda lemma}, a presheaf $X$ is isomorphic to the colimit
\begin{equation*}
    X \cong \underset{y(U) \to X}{\colim} \, y(U).
\end{equation*}
So we simply extend $A$ in the ``obvious'' way:
\begin{equation*}
    A^*(X) = \underset{y(U) \to X}{\colim} A(U).
\end{equation*}

Using coends we can write
$$A^*(X) = (\text{Lan}_y A)(X) \cong \int^{U \in \cat{C}} \Pre(\cat{C})(y(U), X) \times A(U) \cong \int^{U \in \cat{C}} X(U) \times A(U).$$
The righthand coend is often denoted $X \otimes_\cat{C} A$ and called the \textbf{functor tensor product}. We will also adopt this notation for the Yoneda extension of $A$ in what follows.
Note that this coend can equivalently be written as the coequalizer
\begin{equation} \label{eq left kan extension of yoneda as coequalizer}
X \otimes_{\cat{C}} A \cong \text{coeq} \left(\sum_{U \xrightarrow{f} V} X(V) \times A(U) \rightrightarrows \sum_{U \in \cat{C}} X(U) \times A(U) \right)
\end{equation}
which is isomorphic to the set
$$ (X \otimes_\cat{C} A) \cong \{ (x,a) \in X(U) \times A(U) \, | \,  U \in \cat{C} \} / \sim$$
where $\sim$ denotes the smallest equivalence relation such that if $f: U \to V$, $x' \in X(V)$, then $(X(f)(x'), a) \sim (x', A(f)(a))$. We denote the equivalence class of the pair $(x,a)$ in $X \otimes_{\cat{C}} A$ by $(x \otimes a)$. 

\begin{Def} \label{def set-valued flat functor}
We say that a functor $A: \cat{C} \to \ncat{Set}$ is a $\ncat{Set}$-\textbf{flat functor} if the Yoneda extension of $A$ 
$$A^* : \Pre(\cat{C}) \to \Set$$
preserves finite limits.
\end{Def}

\begin{Rem}
The terminology comes from homological algebra, where an $R$-module $M$ is said to be flat if $(-) \otimes_R M$ is left exact, i.e. preserves finite limits. We refer to functors $A: \cat{C} \to \ncat{Set}$ as in Definition \ref{def set-valued flat functor} as $\ncat{Set}$-flat functors because we will soon introduce various other notions of flat functors whose codomain may not be $\ncat{Set}$ and even if it is, these other notions might not agree with $\ncat{Set}$-flatness.
\end{Rem}

\begin{Lemma}[{\cite[Proposition 3.8.1]{borceux1994handbook}}] \label{lem swapping arity in left kan extensions}
Let $y: \cat{C} \to \Pre(\cat{C})$ denote the usual Yoneda embedding and $y^\text{co}: \cat{C}^{\op} \to \Set^{\cat{C}}$ the covariant Yoneda embedding $y^\text{co}(c) = \cat{C}(c,-)$. Then if $X: \cat{C}^{\op} \to \Set$ is a presheaf, and $A: \cat{C} \to \Set$ a copresheaf, then there is a natural isomorphism of sets
\begin{equation*}
   (\text{Lan}_y A)(X) \cong (\text{Lan}_{y^{\text{co}}} X)(A). 
\end{equation*}
or equivalently
$$X \otimes_\cat{C} A \cong A \otimes_{\cat{C}^\op} X.$$
\end{Lemma}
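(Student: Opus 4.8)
The plan is to observe that both sides of the claimed isomorphism are instances of the functor tensor product (coend) construction, and that the whole statement reduces to the symmetry of the cartesian product in $\ncat{Set}$. Concretely, the left-hand side is by definition $(\text{Lan}_y A)(X) = X \otimes_{\cat{C}} A \cong \int^{U \in \cat{C}} X(U) \times A(U)$, and I will use the coequalizer presentation (\ref{eq left kan extension of yoneda as coequalizer}) of this coend. The point of the argument is that the right-hand side, once unwound, is computed by a syntactically identical coequalizer diagram, with the two legs matching up under the symmetry isomorphisms $X(V) \times A(U) \cong A(U) \times X(V)$.

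First I would identify the functor $y^{\text{co}}$ with the ordinary Yoneda embedding of the opposite category. Since $\Set^{\cat{C}} = \Pre(\cat{C}^{\op})$ and $y^{\text{co}}(c) = \cat{C}(c,-) = \cat{C}^{\op}(-,c)$, the functor $y^{\text{co}}$ is precisely the Yoneda embedding $y_{\cat{C}^{\op}} : \cat{C}^{\op} \to \Pre(\cat{C}^{\op})$. Viewing $X : \cat{C}^{\op} \to \Set$ as a copresheaf on $\cat{C}^{\op}$ and $A : \cat{C} \to \Set$ as a presheaf on $\cat{C}^{\op}$, the same coend formula for the Yoneda extension (now applied to the category $\cat{C}^{\op}$) gives
\begin{equation*}
(\text{Lan}_{y^{\text{co}}} X)(A) \cong A \otimes_{\cat{C}^{\op}} X \cong \int^{U \in \cat{C}^{\op}} A(U) \times X(U).
\end{equation*}

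Next I would write this second coend as a coequalizer of coproducts indexed over the morphisms of $\cat{C}^{\op}$, and translate back to $\cat{C}$ using the fact that a morphism $U \to V$ in $\cat{C}^{\op}$ is the same as a morphism $V \to U$ in $\cat{C}$. After relabelling, the diagram becomes $\coeq\big( \sum_{f : U \to V} A(U) \times X(V) \rightrightarrows \sum_{U} A(U) \times X(U) \big)$, where the two legs are given by $\id_{A(U)} \times X(f)$ and $A(f) \times \id_{X(V)}$. Comparing with (\ref{eq left kan extension of yoneda as coequalizer}), this is the very same diagram as the one defining $X \otimes_{\cat{C}} A$, with each factor $X(\cdot) \times A(\cdot)$ replaced by $A(\cdot) \times X(\cdot)$ and each leg correspondingly swapped. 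The symmetry isomorphism $S \times T \xrightarrow{\cong} T \times S$ of the cartesian product in $\ncat{Set}$ is natural, so it intertwines the two parallel pairs of arrows and therefore descends to an isomorphism of the two coequalizers; naturality in $X$ and $A$ follows because every map in sight is built from the (natural) symmetry and from components of $X$, $A$. The only real obstacle is the variance bookkeeping — correctly matching the indexing of the $\cat{C}^{\op}$-coequalizer against the $\cat{C}$-coequalizer and checking that the composition ("$g \mapsto fg$") leg on one side corresponds to the "restriction" leg on the other — but this is exactly what the op-identification of morphisms handles, and once it is set up carefully the isomorphism is forced.
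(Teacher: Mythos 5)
Your argument is correct. Note that the paper itself gives no proof of this lemma---it simply defers to \cite[Proposition 3.8.1]{borceux1994handbook}---so there is no in-paper argument to compare against; your proof fills that gap. The route you take (identify $y^{\text{co}}$ with the Yoneda embedding of $\cat{C}^{\op}$, so that both sides become functor tensor products; present each as a coequalizer of coproducts; and observe that the symmetry $S \times T \cong T \times S$ in $\ncat{Set}$ intertwines the two parallel pairs after exchanging the roles of the two legs, which is harmless since $\coeq(\alpha,\beta) = \coeq(\beta,\alpha)$) is the standard one and is complete; the only step worth writing out explicitly in a final version is that exchange of legs under the op-identification of morphisms.
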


Let us record the following result, which is a covariant version of the coYoneda Lemma \ref{lem coyoneda lemma}.

\begin{Lemma}[{\cite[Corollary 6.3.9]{riehl2017category}}] \label{lem yoneda extension identity on representables}
Given a functor $A : \cat{C} \to \ncat{Set}$, then for $U \in \cat{C}$,
\begin{equation*}
   y(U) \otimes_\cat{C} A \cong A(U). 
\end{equation*}
Furthermore the isomorphisms are natural in $U$.
\end{Lemma}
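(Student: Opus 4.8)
The plan is to read off the result directly from the explicit description of the functor tensor product given just before Definition \ref{def set-valued flat functor}, specialized to the representable presheaf $X = y(U)$. Recall that $y(U) \otimes_\cat{C} A$ is the set of equivalence classes of pairs $(g,a)$ with $g \in y(U)(V) = \cat{C}(V,U)$ and $a \in A(V)$, where the relation is generated by $(g'f, a) \sim (g', A(f)(a))$ for every $f : V \to V'$, $g' \in \cat{C}(V',U)$ and $a \in A(V)$ (here I use that $y(U)(f)$ is precomposition by $f$). I would then construct the isomorphism by hand and exhibit its inverse.

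Concretely, I would define $\phi : y(U) \otimes_\cat{C} A \to A(U)$ on generators by $\phi(g \otimes a) = A(g)(a) \in A(U)$, and the candidate inverse $\psi : A(U) \to y(U) \otimes_\cat{C} A$ by $\psi(a) = (1_U \otimes a)$. The first step is to check that $\phi$ respects $\sim$: this is just functoriality, since $\phi(g'f \otimes a) = A(g'f)(a) = A(g')A(f)(a) = \phi(g' \otimes A(f)(a))$, so $\phi$ is well defined. One composite is immediate: $\phi\psi(a) = A(1_U)(a) = a$.

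The heart of the argument, and the one place where something genuinely happens, is the other composite $\psi\phi(g \otimes a) = (1_U \otimes A(g)(a))$, which must be shown to equal $(g \otimes a)$. This is exactly one instance of the defining relation, taken with $f = g : V \to U$ and $g' = 1_U$: the relation gives $(1_U \circ g, a) \sim (1_U, A(g)(a))$, i.e. $(g \otimes a) = (1_U \otimes A(g)(a))$. So recognizing that the single nontrivial coequalizer relation is precisely what collapses every generator to the ``$1_U$'' representative is the main (and only mild) obstacle; everything else is routine. Finally, naturality in $U$ follows because for $h : U \to U'$ the map $y(h)$ is postcomposition by $h$, so $\phi_{U'}\big((y(h)\otimes A)(g \otimes a)\big) = A(hg)(a) = A(h)A(g)(a) = A(h)\big(\phi_U(g\otimes a)\big)$.

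As an alternative route I would note that one can avoid the hand computation entirely by applying Lemma \ref{lem swapping arity in left kan extensions} to rewrite $y(U) \otimes_\cat{C} A \cong A \otimes_{\cat{C}^{\op}} y(U)$ and then invoking the coYoneda Lemma \ref{lem coyoneda lemma} in its covariant form; however, this requires carefully tracking the variance conventions, so I would present the direct bijection above as the primary proof and mention the coYoneda reduction only as a remark.
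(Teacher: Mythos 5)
Your proof is correct and complete. The paper itself gives no argument for this lemma (it only cites \cite[Corollary 6.3.9]{riehl2017category}), and your direct verification via the explicit coequalizer description of $y(U)\otimes_{\cat{C}}A$ --- well-definedness of $g\otimes a\mapsto A(g)(a)$ by functoriality, the inverse $a\mapsto 1_U\otimes a$, and the observation that the generating relation with $f=g$, $g'=1_U$ collapses every class to its $1_U$-representative --- is exactly the standard proof of that corollary, with naturality in $U$ checked correctly as well.
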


The following result is the key to understanding flatness.

\begin{Prop}[{\cite[Theorem VII.6.3]{maclane2012sheaves}}] \label{prop set-flat functor iff cat of elements is cofiltered}
Given a category $\cat{C}$, a functor $A: \cat{C} \to \ncat{Set}$ is $\ncat{Set}$-flat if and only if its category of elements $\smallint A$ is finitely cofiltered, i.e. if its opposite category is a finitely filtered category (Definition \ref{def filtered category}).
\end{Prop}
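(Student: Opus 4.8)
The plan is to reduce flatness to three combinatorial conditions via the standard fact that a functor between finitely complete categories preserves finite limits if and only if it preserves the terminal object, binary products, and equalizers. Throughout I will work with a colimit presentation of the Yoneda extension. Starting from the covariant density formula $A \cong \colim_{(U,a) \in (\smallint A)^{\op}} \cat{C}(U,-)$, and using that the functor tensor product $X \otimes_{\cat C} (-)$ is cocontinuous together with the coYoneda Lemma \ref{lem coyoneda lemma} in the form $X \otimes_{\cat C} \cat{C}(U,-) \cong X(U)$, I obtain the natural isomorphism
$$A^*(X) = X \otimes_{\cat C} A \cong \underset{(U,a) \in (\smallint A)^{\op}}{\colim} X(U),$$
where the diagram sends $(U,a) \mapsto X(U)$ and a map $g \colon U \to V$ underlying a morphism $(U,a) \to (V,b)$ of $\smallint A$ acts via $X(g) \colon X(V) \to X(U)$, i.e. as the corresponding arrow of $(\smallint A)^{\op}$. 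I also record that finite limits in $\Pre(\cat{C})$ are computed pointwise.

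For the direction $(\Leftarrow)$, suppose $\smallint A$ is finitely cofiltered, so that $(\smallint A)^{\op}$ is finitely filtered. Given a finite diagram $\{X_k\}$ with pointwise limit $L$, I compute $A^*(L) = \colim_{(\smallint A)^{\op}} (\lim_k X_k)(U) = \colim_{(\smallint A)^{\op}} \lim_k X_k(U)$, and then invoke Proposition \ref{prop filtered colimits commute with finite limits in Set}, that filtered colimits commute with finite limits in $\ncat{Set}$, to interchange the two operations and obtain $\lim_k \colim_{(\smallint A)^{\op}} X_k(U) = \lim_k A^*(X_k)$. Verifying that this interchange realizes the canonical comparison map is routine, so $A^*$ preserves finite limits. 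This direction is short.

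For the direction $(\Rightarrow)$, assume $A^*$ preserves finite limits and verify the cofiltering axioms for $\smallint A$ one at a time, testing each against representables. Nonemptiness follows from preservation of the terminal presheaf $1$: since $A^*(1) \cong \colim_{(\smallint A)^{\op}} \ast \cong \pi_0(\smallint A)$ must be the one-point set, $\smallint A$ is nonempty, indeed connected. For the common-cone condition, I examine the canonical comparison $A^*(y(V_1) \times y(V_2)) \to A(V_1) \times A(V_2)$, which sends the class of $((f_1,f_2), a)$ with $f_i \colon U \to V_i$ and $a \in A(U)$ to $(A(f_1)(a), A(f_2)(a))$; its surjectivity, guaranteed by product preservation, says exactly that any two objects $(V_1,a_1), (V_2,a_2)$ of $\smallint A$ admit a common object $(U,a)$ mapping to both. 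For the equalizing condition, given $f,g \colon U \to V$ with $A(f)(a) = A(g)(a)$, I form the equalizer $E \hookrightarrow y(U)$ of $y(f), y(g)$ in $\Pre(\cat{C})$; preservation identifies $A^*(E)$ with $\text{eq}(A(f), A(g)) \subseteq A(U)$ (using Lemma \ref{lem yoneda extension identity on representables} to evaluate $A^*$ on representables), and since $a$ lies in this equalizer it has a representative $(h,c)$ with $h \colon W \to U$ satisfying $fh = gh$ and $A(h)(c) = a$, which is precisely the required morphism $(W,c) \to (U,a)$ coequalizing $f$ and $g$ on the left.

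The main obstacle will be the $(\Rightarrow)$ direction: correctly identifying the comparison maps on representables and translating their surjectivity and bijectivity into the cofiltered axioms, while keeping the variance in the colimit formula straight. The equalizer step in particular requires care, since one must extract a representative of $a$ from $A^*(E)$ and read off both the equation $fh = gh$ and the element $c$ witnessing the morphism in $\smallint A$. By contrast, the $(\Leftarrow)$ direction is essentially just the commutation of filtered colimits with finite limits in $\ncat{Set}$.
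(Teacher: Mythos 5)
Your proof is correct and follows essentially the same route as the paper's: both directions rest on the colimit presentation $A^*(X) \cong \colim_{(\smallint A)^{\op}} X(U)$, with $(\Leftarrow)$ reduced to the commutation of filtered colimits with finite limits in $\ncat{Set}$, and $(\Rightarrow)$ obtained by testing preservation of the terminal presheaf, binary products of representables, and equalizers of maps between representables against the three filteredness axioms of Lemma \ref{lem alternate def of filtered}. The only cosmetic differences are that you derive the colimit formula from the covariant density theorem rather than from Lemma \ref{lem swapping arity in left kan extensions}, and you note the sharper identification $A^*(1) \cong \pi_0(\smallint A)$.
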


\begin{proof}
$(\Leftarrow)$ Note that if $X \in \Pre(\cat{C})$, then by Lemma \ref{lem swapping arity in left kan extensions} we can also write $A^*(X)$ as the colimit:
\begin{equation}
\begin{aligned}
    A^*(X) & = (\text{Lan}_y A)(X) \\ & \cong (\text{Lan}_{y^{\text{co}}} X)(A) \\ & \cong \colim \left( (y^{\text{co}} \downarrow A) \to \cat{C}^\op \xrightarrow{X} \ncat{Set} \right) \\
    & \cong \colim \left( \int A^{\op} \xrightarrow{\pi_{A^{\op}}} \cat{C}^{\op} \xrightarrow{X} \ncat{Set} \right)
\end{aligned}
\end{equation}
where $\int A^\op \coloneqq (\int A)^\op$ is the opposite of the category of elements of $A$.

By Proposition \ref{prop filtered colimits commute with finite limits in Set}, if $\int A^\op$ is a finitely filtered category then for every finite diagram $X : I \to \Pre(\cat{C})$ the $\int A^{\op}$-colimit and $I$-limit of the following functor
\begin{equation*}
    \int A^{\op} \times I \xrightarrow{\pi_{A^{\op}} \times X} \cat{C}^\op \times \Pre(\cat{C}) \xrightarrow{\text{ev}} \ncat{Set}.
\end{equation*}
commute, equivalently, $A^*$ preserves finite limits. 

$(\Rightarrow)$ Suppose that $A^*$ preserves finite limits. We will prove that $\int A^{\op}$ is finitely filtered using Lemma \ref{lem alternate def of filtered}. If we let $1$ denote the terminal object in $\Pre(\cat{C})$, then $1 \otimes_{\cat{C}} A$ is a terminal object in $\ncat{Set}$. This means that $\int A^\op$ is nonempty, satisfying Lemma \ref{lem alternate def of filtered}.(1)

Now suppose that $(U,x)$ and $(V,y)$ are objects in $\int A^\op$. We want to show that there exists $W \in \cat{C}$, $z \in A(W)$ and morphisms $f: W \to U$ and $g: W \to V$ in $\cat{C}$ such that $A(f)(z) = x$ and $A(g)(z) = y$. Since $(-) \otimes_{\cat{C}} A$ preserves finite limits, the map
\begin{equation*}
    (y(U) \times y(V)) \otimes_{\cat{C}} A \to A(U) \times A(V)
\end{equation*}
which sends an element $(f \in y(U)(W), g \in y(V)(W)) \otimes (a \in A(W))$ to $(A(f)(a), A(g)(a))$ is an isomorphism. Thus $\int A^\op$ satisfies Lemma \ref{lem alternate def of filtered}.(2).

Now suppose we are given morphisms $f,g: (V,y) \to (U,x)$ in $\int A$. We want to show that there exists a map $h : (W,z) \to (V,y)$ such that $fh = gh$. Consider the equalizer
\begin{equation*}
    P \to y(V) \underset{g}{\overset{f}{\rightrightarrows}} y(U)
\end{equation*}
in $\Pre(\cat{C})$. Thus for every object $W \in \cat{C}$, $P(W)$ is the set of maps $h : W \to V$ such that $fh = gh$. Since $A^*$ preserves finite limits, we obtain an equalizer
\begin{equation*}
    P \otimes_{\cat{C}} A \xrightarrow{k} A(V) \underset{A(g)}{\overset{A(f)}{\rightrightarrows}} A(U)
\end{equation*}
in $\ncat{Set}$, and where $k(p \otimes a) = A(p)(a)$. Thus if $y \in A(V)$ such that $A(f)(y) = A(g)(y)$, then there must exist a $(W, z) \in \int A$ and a $a \in A(W)$ such that $k(z \otimes a) = y$. Thus $\int A^\op$ satisfies Lemma \ref{lem alternate def of filtered}.(3). Thus $\int A^\op$ is a finitely filtered category.
\end{proof}

\begin{Cor} \label{cor flat iff filtered limit of coreps}
A functor $A : \cat{C} \to \ncat{Set}$ is flat if and only if it can be written as a filtered limit of corepresentables.
\end{Cor}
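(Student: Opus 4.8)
The plan is to deduce this as a direct repackaging of Proposition \ref{prop set-flat functor iff cat of elements is cofiltered} via the covariant coYoneda (density) formula. The bridge is the observation, already essentially contained in the proof of that Proposition, that every functor $A$ is canonically the colimit of corepresentables indexed by $(\int A)^{\op}$:
\begin{equation*}
A \cong \ncolim{(U,a) \in (\int A)^{\op}} \cat{C}(U,-).
\end{equation*}
Indeed, evaluating the identity $A^*(X) \cong \ncolim{(U,a) \in (\int A)^{\op}} X(U)$ from that proof at $X = y(V)$ and using Lemma \ref{lem yoneda extension identity on representables} (which gives $A^*(y(V)) = y(V) \otimes_{\cat{C}} A \cong A(V)$) yields $A(V) \cong \ncolim{(U,a) \in (\int A)^{\op}} \cat{C}(U,V)$, naturally in $V$. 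So the content of the corollary is precisely that the index category $(\int A)^{\op}$ of this density cocone is filtered exactly when $A$ is flat.

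For the direction assuming flatness, Proposition \ref{prop set-flat functor iff cat of elements is cofiltered} says that $A$ being flat is equivalent to $\int A$ being finitely cofiltered, i.e. to $(\int A)^{\op}$ being finitely filtered. Feeding this into the density presentation above immediately exhibits $A$ as a filtered colimit of corepresentables.

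For the converse I would first check that a single corepresentable $\cat{C}(U,-)$ is flat: its Yoneda extension is the evaluation functor $\text{ev}_U \colon \Pre(\cat{C}) \to \ncat{Set}$, $X \mapsto X(U)$. This follows from the explicit coequalizer description of $X \otimes_{\cat{C}} \cat{C}(U,-)$ given after Definition \ref{def yoneda extension}: the assignment $(x \otimes g) \mapsto X(g)(x)$ is a bijection, and evaluation functors preserve all limits, in particular finite ones. Then I would show flatness is closed under filtered colimits: if $A \cong \ncolim{i} A_i$ with each $A_i$ flat and the index category filtered, then since $(-)^* = \text{Lan}_y(-)$ is a left adjoint it commutes with this colimit, so for a finite diagram $X_\bullet$ in $\Pre(\cat{C})$,
\begin{equation*}
A^*\left(\lim_k X_k\right) \cong \ncolim{i} A_i^*\left(\lim_k X_k\right) \cong \ncolim{i} \lim_k A_i^*(X_k) \cong \lim_k \ncolim{i} A_i^*(X_k) \cong \lim_k A^*(X_k),
\end{equation*}
where the second isomorphism uses flatness of each $A_i$ and the third uses that filtered colimits commute with finite limits in $\ncat{Set}$ (Proposition \ref{prop filtered colimits commute with finite limits in Set}). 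Hence $A^*$ is left exact and $A$ is flat.

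The only genuine subtlety, and the step I would be most careful about, is the variance bookkeeping in the density formula: ensuring the indexing category comes out as $(\int A)^{\op}$ rather than $\int A$, so that ``$\int A$ cofiltered'' matches ``filtered index category'' for the colimit presentation. Everything else is formal once Proposition \ref{prop set-flat functor iff cat of elements is cofiltered} and the commutation of filtered colimits with finite limits are available.
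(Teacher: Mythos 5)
Your proof is correct, and for the forward direction it is essentially the paper's argument: the paper's proof is the single chain of isomorphisms $A \cong A \otimes_{\cat{C}^\op} y^{\text{co}} \cong \ncolim{(x,U) \in \int A} y^{\text{co}}(U)$ obtained from Lemmas \ref{lem yoneda extension identity on representables} and \ref{lem swapping arity in left kan extensions}, combined implicitly with Proposition \ref{prop set-flat functor iff cat of elements is cofiltered}; your derivation of the same density presentation by evaluating $A^*$ at representables is just a repackaging of that. Two points of comparison are worth making. First, your worry about variance is justified: with the paper's conventions the indexing category of the density cocone is $(\int A)^{\op}$ (so that ``$\int A$ cofiltered'' matches ``filtered colimit''), and the paper's displayed chain, which indexes the colimit by $\int A$ and then rewrites it as a ``$\lim$'' over $\int A^{\op}$, is sloppy on exactly this point --- indeed the statement's phrase ``filtered limit'' should be read as ``filtered colimit,'' which is what both your argument and the paper's coend computation actually produce. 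Second, your proof is genuinely more complete than the paper's on the converse: the paper's chain of isomorphisms only exhibits $A$ as the canonical colimit of corepresentables and says nothing about why an \emph{arbitrary} filtered colimit presentation forces flatness. Your argument --- corepresentables are flat because $\cat{C}(U,-)^* \cong \mathrm{ev}_U$, the functor $A \mapsto A^*$ preserves colimits since $\mathrm{Lan}_y$ is a left adjoint and colimits of $\ncat{Set}$-valued functors are pointwise, and then Proposition \ref{prop filtered colimits commute with finite limits in Set} closes flatness under filtered colimits --- supplies exactly the missing half, and does so without needing to compare the given presentation to the canonical one.
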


\begin{proof}
By Lemma \ref{lem yoneda extension identity on representables} and Lemma \ref{lem swapping arity in left kan extensions}, we have
\begin{equation*}
    A \cong A \otimes_{\cat{C}^\op} y^{\text{co}} \cong \int^{U \in \cat{C}^\op} A(U) \times y^{\text{co}}(U) \cong \ncolim{(x, U) \in \int A} y^{\text{co}}(U) \cong \lim_{(x,U) \in \int A^\op} y^\text{co}(U).
\end{equation*}
Hence the result follows from Proposition \ref{prop set-flat functor iff cat of elements is cofiltered}.
\end{proof}

\begin{Rem}
The general idea of ``flatness'' that we will pursue throughout this section is that a functor $f: \cat{C} \to \cat{D}$ is ``flat'' if it ``would preserve all finite limits if they existed''. This is not completely true for all notions of flatness we will introduce, but it is a good intuition to keep in mind.
\end{Rem}

\begin{Cor} \label{cor set-flat if and only if preserves finite limits}
Let $\cat{C}$ be a finitely complete category, then a functor $A: \cat{C} \to \ncat{Set}$ is $\ncat{Set}$-flat if and only if $A$ preserves finite limits.
\end{Cor}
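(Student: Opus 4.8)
The plan is to prove the two implications separately, drawing on two different tools established above. The forward direction ($\ncat{Set}$-flat $\Rightarrow$ preserves finite limits) will follow almost for free from the identification of $A$ with $A^* \circ y$, whereas the reverse direction (preserves finite limits $\Rightarrow$ $\ncat{Set}$-flat) will go through the category-of-elements criterion of Proposition \ref{prop set-flat functor iff cat of elements is cofiltered}, which is precisely where the finite completeness of $\cat{C}$ enters.

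For the forward direction, suppose $A$ is $\ncat{Set}$-flat, so that $A^* = (-) \otimes_{\cat{C}} A$ preserves finite limits by Definition \ref{def set-valued flat functor}. By Lemma \ref{lem yoneda extension identity on representables} there is a natural isomorphism $A(U) \cong y(U) \otimes_{\cat{C}} A = A^*(y(U))$, that is, $A \cong A^* \circ y$. The Yoneda embedding $y : \cat{C} \to \Pre(\cat{C})$ preserves all limits that exist in $\cat{C}$, and since $\cat{C}$ is finitely complete these include every finite limit. Hence $A$ is a composite of two finite-limit-preserving functors and therefore preserves finite limits.

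For the reverse direction, suppose $A$ preserves finite limits. By Proposition \ref{prop set-flat functor iff cat of elements is cofiltered} it suffices to check that the category of elements $\smallint A$ is finitely cofiltered, and I would verify the three (dualized) conditions of Lemma \ref{lem alternate def of filtered} one at a time, in each case transporting a finite limit in $\cat{C}$ across the isomorphism that $A$ induces in $\ncat{Set}$. Nonemptiness: $A$ preserves the terminal object, so $A(1) \cong *$ is inhabited and yields an object $(1, \ast) \in \smallint A$. Binary cones: given $(U, x)$ and $(V, y)$, the isomorphism $A(U \times V) \cong A(U) \times A(V)$ produces an element $z \in A(U \times V)$ restricting along the projections to $x$ and $y$, so the two projections out of $U \times V$ furnish morphisms $(U \times V, z) \to (U,x)$ and $(U \times V, z) \to (V,y)$. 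Jointly equalizing a parallel pair: given $f, g : (U,x) \to (V,y)$ in $\smallint A$, form the equalizer $e : E \to U$ of $f, g$ in $\cat{C}$; since $A(e)$ is the equalizer of $A(f)$ and $A(g)$ while $A(f)(x) = A(g)(x)$, there is a unique $z \in A(E)$ with $A(e)(z) = x$, and then $e : (E,z) \to (U,x)$ is a morphism of $\smallint A$ satisfying $f e = g e$.

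The only genuinely delicate point is organizational rather than mathematical: one must keep careful track of the variance in $\smallint A$, so that the conditions for being \emph{cofiltered} (cones and jointly equalizing maps pointing \emph{into} the given objects) are the ones being checked, and one must confirm that the chosen elements really do lie over the prescribed base points. Both of these are guaranteed exactly because $A$ carries products and equalizers in $\cat{C}$ to products and equalizers in $\ncat{Set}$; everything else is a routine application of the two cited results.
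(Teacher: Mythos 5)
Your proof is correct and follows the paper's argument exactly: the forward direction via $A \cong A^* \circ y$ together with Lemma \ref{lem yoneda extension identity on representables}, and the reverse direction via cofilteredness of $\smallint A$ and Proposition \ref{prop set-flat functor iff cat of elements is cofiltered}. The only difference is that you spell out the three cofilteredness conditions (terminal object, products, equalizers) that the paper leaves as "easily checked," and your verifications are accurate.
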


\begin{proof}
$(\Rightarrow)$ By Lemma \ref{lem yoneda extension identity on representables}, $A$ is isomorphic to $A^* \circ y$. The Yoneda embedding $y$ preserves limits and $A^*$ preserves finite limits, therefore $A$ preserves finite limits.

$(\Leftarrow)$ If $A$ preserves finite limits, we wish to show that $\int A$ is cofiltered, which by Proposition \ref{prop set-flat functor iff cat of elements is cofiltered} is equivalent to $A$ being $\ncat{Set}$-flat. Now $\int \, A$ is nonempty, because $A$ preserves the terminal object $A(*) = *$. It can be easily checked that the other two conditions for being cofiltered are equivalent to $A$ preserving products and equalizers respectively.
\end{proof}

\begin{Cor} \label{cor set-flat preserves all finite limits that exist}
Let $\cat{C}$ be a small category. If $A :\cat{C} \to \Set$ is $\Set$-flat, then it preserves all finite limits that exist in $\cat{C}$.
\end{Cor}

\begin{proof}
The Yoneda embedding $y: \cat{C} \to \Pre(\cat{C})$ preserves all limits that exist in $\cat{C}$, and $A$ being $\Set$-flat implies that $A^*$ preserves all finite limits. Thus $(A^* \circ y) : \cat{C} \to \Set$ preserves all finite limits that exist in $\cat{C}$, and by Lemma \ref{lem yoneda extension identity on representables} $(A^* \circ y) \cong A$, thus $A$ preserves all finite limits that exist in $\cat{C}$.
\end{proof}

\begin{Rem}
If $\cat{C}$ is not finitely complete, then $A: \cat{C} \to \Set$ preserving all finite limits that exist in $\cat{C}$ does not imply that it is $\Set$-flat. Here is a simple counterexample, consider the category $\cat{C}$ with two objects $0$ and $1$ and no non-identity morphisms. Thus the only finite limits that exist in $\cat{C}$ are the identity cones. Thus the functor $F: \cat{C} \to \ncat{Set}$ that sends everything to the terminal set $*$ preserves all finite limits that exist in $\cat{C}$, but $\int F \cong \cat{C}$ is not cofiltered.
\end{Rem}

\subsubsection{Representable Flatness}

Now we wish to generalize this notion of flat functor to the case when the codomain is not $\Set$.

\begin{Def} \label{def rep flat functor}
We say a functor $F: \cat{C} \to \cat{D}$ between essentially small categories is \textbf{representably flat} if for every $V \in \cat{D}$, the functor
\begin{equation*}
\cat{D}(V,F(-)) : \cat{C} \to \ncat{Set}
\end{equation*}
is $\Set$-flat.
\end{Def}

\begin{Rem}
Note that in the case $\cat{D} = \Set$, it is \textbf{not} the case that $\Set$-flatness is equivalent to representable flatness. See \cite{leinster2011} for a counterexample.
\end{Rem}

\begin{Lemma} \label{lem rep flat iff comma category is cofiltered}
A functor $F: \cat{C} \to \cat{D}$ is representably flat if and only if the comma category $(V \downarrow F)$ is cofiltered for every $V \in \cat{D}$.
\end{Lemma}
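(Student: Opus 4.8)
The plan is to reduce the statement to the already-proved characterization of $\Set$-flatness in terms of the category of elements (Proposition \ref{prop set-flat functor iff cat of elements is cofiltered}), and then to exhibit a natural isomorphism (or at least an equivalence of categories) between the comma category $(V \downarrow F)$ and the category of elements $\smallint \cat{D}(V, F(-))$. Once that identification is in place, the Lemma follows immediately: by definition $F$ is representably flat if and only if for every $V \in \cat{D}$ the functor $\cat{D}(V, F(-)) : \cat{C} \to \Set$ is $\Set$-flat, which by Proposition \ref{prop set-flat functor iff cat of elements is cofiltered} holds if and only if $\smallint \cat{D}(V, F(-))$ is (finitely) cofiltered, which in turn holds if and only if $(V \downarrow F)$ is cofiltered.

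First I would unwind both categories concretely. The category of elements $\smallint A$ of a functor $A = \cat{D}(V, F(-)) : \cat{C} \to \Set$ has objects pairs $(U, x)$ with $U \in \cat{C}$ and $x \in A(U) = \cat{D}(V, F(U))$, i.e. an object is an object $U$ of $\cat{C}$ together with a morphism $x : V \to F(U)$ in $\cat{D}$; a morphism $(U, x) \to (U', x')$ is a map $f : U \to U'$ in $\cat{C}$ with $A(f)(x) = x'$, that is $F(f) \circ x = x'$. On the other hand, the comma category $(V \downarrow F)$ has objects pairs $(U, g)$ with $U \in \cat{C}$ and $g : V \to F(U)$ a morphism in $\cat{D}$, and a morphism $(U, g) \to (U', g')$ is a map $f : U \to U'$ in $\cat{C}$ with $F(f) \circ g = g'$. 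These descriptions coincide on the nose, so there is a canonical isomorphism of categories $\smallint \cat{D}(V, F(-)) \cong (V \downarrow F)$ that is the identity on objects and morphisms.

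With this isomorphism established, I would then chain the equivalences. By Proposition \ref{prop set-flat functor iff cat of elements is cofiltered}, $\cat{D}(V, F(-))$ is $\Set$-flat if and only if $\smallint \cat{D}(V, F(-))$ is finitely cofiltered. Transporting along the isomorphism of categories above, this is the same as saying $(V \downarrow F)$ is finitely cofiltered. Quantifying over all $V \in \cat{D}$ and invoking Definition \ref{def rep flat functor} yields exactly the claimed biconditional.

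The only genuinely substantive point is verifying that the isomorphism of categories is strict (or at least that it preserves and reflects the cofiltered property), but since the object sets and morphism sets match definitionally this is essentially bookkeeping rather than a real obstacle. The main thing to be careful about is the standing finiteness qualifier: throughout this section ``cofiltered'' should be read as ``finitely cofiltered'' in the sense of Definition \ref{def filtered category}, matching the hypothesis in Proposition \ref{prop set-flat functor iff cat of elements is cofiltered}, so I would make sure the statement of the Lemma is interpreted with the same convention rather than the unbounded notion of cofilteredness.
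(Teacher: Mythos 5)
Your proof is correct and follows exactly the same route as the paper's: identify $(V \downarrow F)$ with the category of elements $\smallint \cat{D}(V,F(-))$ and invoke Proposition \ref{prop set-flat functor iff cat of elements is cofiltered}. The paper states this in one line; your version simply spells out the (definitional) isomorphism of categories in more detail.
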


\begin{proof}
This follows from the fact that $\int \cat{D}(V,F(-)) \cong (V \downarrow F)$ and Proposition \ref{prop set-flat functor iff cat of elements is cofiltered}.
\end{proof}

\begin{Lemma} \label{lem rep flat iff left kan extension preserves finite limits}
A functor $F: \cat{C} \to \cat{D}$ is representably flat if and only if the left Kan extension $\Sigma_F = \text{Lan}_{F^\op}: \Pre(\cat{C}) \to \Pre(\cat{D})$ preserves finite limits.
\end{Lemma}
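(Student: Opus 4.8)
The plan is to prove the biconditional by relating the pointwise behavior of $\Sigma_F$ to the representable flatness condition via Lemma \ref{lem rep flat iff comma category is cofiltered}. First I would recall the standard adjoint setup: for a functor $F : \cat{C} \to \cat{D}$ between small categories, the precomposition functor $\Delta_F = F^* : \Pre(\cat{D}) \to \Pre(\cat{C})$ has a left adjoint $\Sigma_F = \operatorname{Lan}_{F^{\op}} : \Pre(\cat{C}) \to \Pre(\cat{D})$, the left Kan extension along $F^{\op}$. The key observation is that $\Sigma_F$ can be computed as a functor tensor product: for a presheaf $X$ on $\cat{C}$ and an object $V \in \cat{D}$, we have
\begin{equation*}
    (\Sigma_F X)(V) \cong X \otimes_{\cat{C}} \cat{D}(V, F(-)),
\end{equation*}
where on the right we regard $A_V \coloneqq \cat{D}(V, F(-)) : \cat{C} \to \ncat{Set}$ as a copresheaf and use the coend description from the discussion preceding Definition \ref{def set-valued flat functor}. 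In other words, $(\Sigma_F X)(V)$ is precisely the value at $X$ of the Yoneda extension $A_V^* = \operatorname{Lan}_y A_V$ of the copresheaf $A_V$.

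With this identification in hand, the proof becomes almost formal. For the direction $(\Rightarrow)$, suppose $F$ is representably flat, so by definition each $A_V = \cat{D}(V, F(-))$ is $\ncat{Set}$-flat, meaning $A_V^*$ preserves finite limits. Given a finite diagram $X : K \to \Pre(\cat{C})$, limits in presheaf categories are computed pointwise, and so are limits in $\Pre(\cat{D})$; thus to check that $\Sigma_F$ preserves the limit it suffices to check this objectwise at each $V \in \cat{D}$. But $(\Sigma_F(\lim_k X_k))(V) \cong A_V^*(\lim_k X_k)$ and, since $A_V^*$ preserves finite limits, this is isomorphic to $\lim_k A_V^*(X_k) \cong \lim_k (\Sigma_F X_k)(V) \cong (\lim_k \Sigma_F X_k)(V)$. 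Since this holds for all $V$, $\Sigma_F$ preserves finite limits. For the direction $(\Leftarrow)$, suppose $\Sigma_F$ preserves finite limits. Then for each fixed $V$, the composite $\Pre(\cat{C}) \xrightarrow{\Sigma_F} \Pre(\cat{D}) \xrightarrow{\operatorname{ev}_V} \ncat{Set}$ preserves finite limits (evaluation at $V$ commutes with pointwise limits), and this composite is exactly $A_V^*$ by the identification above. Hence $A_V^*$ preserves finite limits, i.e. $A_V$ is $\ncat{Set}$-flat for every $V$, which is representable flatness.

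The main thing to get right is the coend/tensor-product identification $(\Sigma_F X)(V) \cong X \otimes_{\cat{C}} \cat{D}(V, F(-))$, since everything else is a routine consequence of limits being pointwise in presheaf categories and of the definition of $\ncat{Set}$-flatness. I would establish it by writing the left Kan extension pointwise as a colimit over the comma category $(F^{\op} \downarrow V)$ and comparing it with the coYoneda/coend formula for the Yoneda extension, or more slickly by invoking Lemma \ref{lem swapping arity in left kan extensions} to swap the roles of $X$ and $A_V$: namely $X \otimes_{\cat{C}} A_V \cong A_V \otimes_{\cat{C}^{\op}} X \cong (\operatorname{Lan}_{y^{\text{co}}} X)(A_V)$, and one checks this agrees with the standard formula for $\operatorname{Lan}_{F^{\op}} X$ evaluated at $V$. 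One can also phrase the whole argument conceptually in terms of Lemma \ref{lem rep flat iff comma category is cofiltered}: representable flatness says each $(V \downarrow F) \cong \smallint A_V$ is cofiltered, $\ncat{Set}$-flatness of $A_V$ is equivalent to that cofilteredness by Proposition \ref{prop set-flat functor iff cat of elements is cofiltered}, and $A_V^* = \operatorname{ev}_V \circ \Sigma_F$, so flatness of every $A_V^*$ is the same as $\Sigma_F$ preserving finite limits objectwise, hence globally.
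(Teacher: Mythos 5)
Your proof is correct and takes essentially the same approach as the paper: the key identification $(\Sigma_F X)(V) \cong X \otimes_{\cat{C}} \cat{D}(V,F(-)) = A_V^*(X)$, i.e. $A_V^* = \mathrm{ev}_V \circ \Sigma_F$, is exactly the pivot of the paper's argument, and the $(\Leftarrow)$ direction matches verbatim. The only (harmless) difference is that for $(\Rightarrow)$ the paper detours through cofilteredness of $(F \downarrow V)^{\op}$ and commutation of filtered colimits with finite limits, whereas you invoke the definition of $\ncat{Set}$-flatness directly, which is slightly more streamlined.
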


\begin{proof}
$(\Rightarrow)$ For every $X \in \Pre(\cat{C})$ and $V \in \cat{D}$, the left Kan extension is isomorphic to the colimit
\begin{equation*}
    (\Sigma_F X)(V) = (\text{Lan}_{F^{\op}} X)(V) \cong \colim \left( (F^\op \downarrow V) \xrightarrow{\pi} \cat{C}^\op \xrightarrow{X} \ncat{Set} \right) \cong \colim \left( (F \downarrow V)^\op \xrightarrow{\pi} \cat{C}^{\op} \xrightarrow{X} \Set \right).
\end{equation*}
So if $d: I \to \ncat{Pre}(\cat{C})$ is a finite diagram, we can consider the functor
\begin{equation*}
    (F \downarrow V)^\op \times I \xrightarrow{\pi \times d} \cat{C}^\op \times \Pre(\cat{C}) \xrightarrow{\text{ev}} \ncat{Set}.
\end{equation*}
So by Lemma \ref{lem rep flat iff comma category is cofiltered}, if $F$ is representably flat, then $(F \downarrow V)^\op$ is filtered, so by Proposition \ref{prop filtered colimits commute with finite limits in Set}, $\Sigma_F$ preserves finite limits.

$(\Leftarrow)$ We note that $F$ is representably flat if and only if for every $V \in \cat{D}$, the functor $\cat{D}(V, F(-)) : \cat{C} \to \ncat{Set}$ is $\ncat{Set}$-flat, which is the case if and only if its Yoneda extension $\cat{D}(V,F(-))^* : \Pre(\cat{C}) \to \ncat{Set}$ preserves finite limits. But $\cat{D}(V,F(-))^*$ is the functor that sends a presheaf $X$ to $(\Sigma_FX)(V)$. Indeed we have
\begin{equation*}
\begin{aligned}
\cat{D}(V,F(-))^*(X) & = \int^{U \in \cat{C}} X(U) \times \cat{D}(V,F(U)) \\
& \cong \int^{U \in \cat{C}^\op} \cat{D}^\op(F^\op(U), V) \times X(U) \\
& \cong (\Sigma_F X)(V).
\end{aligned}
\end{equation*}
where the last isomorphism is from the proof of Lemma \ref{lem sigma on representable}.

So if $\cat{D}(V,F(-))^*$ preserves finite limits for every $V \in \cat{D}$, then so does $\Sigma_F$, since limits in presheaf toposes are computed objectwise. So if $F$ is representably flat, then $\Sigma_F$ preserves finite limits.
\end{proof}

\begin{Lemma} \label{lem rep flat iff preserves finite limits}
Let $\cat{C}$ and $\cat{D}$ be categories such that $\cat{C}$ is finitely complete. Then a functor $F: \cat{C} \to \cat{D}$ is representably flat if and only if it preserves finite limits.
\end{Lemma}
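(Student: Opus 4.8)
The plan is to reduce everything to Corollary~\ref{cor set-flat if and only if preserves finite limits}, which already settles the $\ncat{Set}$-valued case, and then to bridge between the $\ncat{Set}$-flatness of the functors $\cat{D}(V,F(-))$ and the limit-preservation of $F$ itself by way of the Yoneda characterization of limit cones. The two facts I will lean on repeatedly are that a hom-functor $\cat{D}(V,-)$ preserves all limits that exist, and that a cone in $\cat{D}$ is a limit cone precisely when every representable detects it as such.

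For the easy direction $(\Leftarrow)$, suppose $F$ preserves finite limits and fix $V\in\cat{D}$. Any finite limit in $\cat{C}$ exists by hypothesis, is carried by $F$ to a limit cone in $\cat{D}$, and is then sent to a limit in $\ncat{Set}$ by the continuous functor $\cat{D}(V,-)$; hence the composite $\cat{D}(V,F(-)):\cat{C}\to\ncat{Set}$ preserves finite limits. Since $\cat{C}$ is finitely complete, Corollary~\ref{cor set-flat if and only if preserves finite limits} then shows $\cat{D}(V,F(-))$ is $\ncat{Set}$-flat. As $V$ was arbitrary, $F$ is representably flat by Definition~\ref{def rep flat functor}.

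For the substantive direction $(\Rightarrow)$, suppose $F$ is representably flat; I want to show $F$ sends every finite limit cone in $\cat{C}$ to a limit cone in $\cat{D}$ (note $\cat{D}$ need not be complete, so ``preserves finite limits'' must be read as preservation of limit cones). Let $D:I\to\cat{C}$ be a finite diagram with limit cone $(\pi_i:L\to D(i))_{i\in I}$, which exists since $\cat{C}$ is finitely complete. Fix $V\in\cat{D}$ and set $A=\cat{D}(V,F(-))$. By representable flatness $A$ is $\ncat{Set}$-flat, and as $\cat{C}$ is finitely complete, Corollary~\ref{cor set-flat if and only if preserves finite limits} gives that $A$ preserves finite limits; in particular the canonical comparison map
\[
\cat{D}(V,FL)=A(L)\longrightarrow \lim_{i\in I}A(D(i))=\lim_{i\in I}\cat{D}(V,FD(i))
\]
is a bijection, and one checks it is exactly the map sending $g:V\to FL$ to the cone $(F\pi_i\circ g)_i$. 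This says that the cone $(F\pi_i:FL\to FD(i))_i$ induces, naturally in $V$, a bijection between maps into $FL$ and cones over $FD$ with apex $V$; by the Yoneda lemma this means $(FL,F\pi_i)$ is a limit cone for $FD$ in $\cat{D}$. Hence $F$ preserves finite limits.

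The only real subtlety — worth stating carefully rather than a genuine obstacle — is that $\cat{D}$ is arbitrary, so I cannot speak of $\lim FD$ existing inside $\cat{D}$; the universal property of the image cone must instead be verified representable-by-representable, which is precisely what the $\ncat{Set}$-flatness of each $\cat{D}(V,F(-))$ delivers. Everything else is a direct application of Corollary~\ref{cor set-flat if and only if preserves finite limits} together with the continuity of hom-functors.
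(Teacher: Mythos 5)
Your proof is correct and follows essentially the same route as the paper's: both directions reduce to Corollary \ref{cor set-flat if and only if preserves finite limits} applied to the functors $\cat{D}(V,F(-))$, with the Yoneda lemma upgrading the natural family of bijections $\cat{D}(V,FL)\cong\lim_i\cat{D}(V,FD(i))$ to the statement that $(FL,F\pi_i)$ is a limit cone. If anything, your handling of the forward direction is slightly more careful than the paper's own wording, which writes $\cat{D}(V,\lim_i F(d_i))$ as though that limit were already known to exist in $\cat{D}$; your representable-by-representable verification of the universal property is the correct way to read that step.
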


\begin{proof}
$(\Rightarrow)$ If $F$ is representably flat, then $\cat{D}(V,F(-))$ is $\Set$-flat, thus by Corollary \ref{cor set-flat if and only if preserves finite limits}, $\cat{D}(V,F(-))$ preserves finite limits. So if $d: I \to \cat{D}$ is a finite diagram, then
$$\cat{D}(V,F(\lim_{i \in I} d(i)) \cong \lim_{i \in I} \cat{D}(V,F(d(i))) \cong \cat{D}(V, \lim_{i \in I} F(d_i)).$$
Since this is true for all $V \in \cat{D}$, by the Yoneda lemma it implies that $F(\lim_i d(i)) \cong \lim_i F(d(i))$. Thus $F$ preserves finite limits. 

$(\Leftarrow)$ If $F$ preserves finite limits, then so does $\cat{D}(V,F(-))$ for every $V \in \cat{D}$, thus by Corollary \ref{cor set-flat if and only if preserves finite limits}, $\cat{D}(V,F(-))$ is $\Set$-flat for every $V \in \cat{D}$, and therefore $F$ is representably flat.
\end{proof}

\begin{Cor} \label{cor rep flat implies preserves all finite limits that exist}
If $F: \cat{C} \to \cat{D}$ is representably flat, then it preserves all finite limits that exist in $\cat{C}$.
\end{Cor}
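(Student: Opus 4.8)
The plan is to reduce everything to the $\Set$-valued case that was already handled in Corollary~\ref{cor set-flat preserves all finite limits that exist}, using representables and the Yoneda lemma. This essentially recycles the $(\Rightarrow)$ direction of Lemma~\ref{lem rep flat iff preserves finite limits}, but I must be careful that there $\cat{C}$ is assumed finitely complete, whereas here it is not, so I cannot simply cite that lemma. Instead I would argue limit-by-limit: fix a finite diagram $d: I \to \cat{C}$ whose limit $L = \lim_{i \in I} d(i)$ happens to exist in $\cat{C}$, and aim to show that the cone $F$ sends this limit cone to is a limit cone over $F \circ d$ in $\cat{D}$.

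First I would observe that since $F$ is a functor, it carries the limit cone of $L$ to a cone over $F \circ d$ with apex $F(L)$, so there is a canonical comparison morphism and it only remains to check this comparison exhibits $F(L)$ as the limit. Next, I would fix an arbitrary object $V \in \cat{D}$. Because $F$ is representably flat, the functor $\cat{D}(V, F(-)) : \cat{C} \to \Set$ is $\Set$-flat, so by Corollary~\ref{cor set-flat preserves all finite limits that exist} it preserves all finite limits that exist in $\cat{C}$; in particular it preserves $L$. Hence the canonical comparison map
\begin{equation*}
    \cat{D}(V, F(L)) = \cat{D}\left(V, F\left(\lim_{i \in I} d(i)\right)\right) \to \lim_{i \in I} \cat{D}(V, F(d(i)))
\end{equation*}
is a bijection.

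Finally, I would note that this bijection is natural in $V$, so the representable functor $\cat{D}(-, F(L))$ is naturally isomorphic, via the map induced by the image cone, to $\lim_{i \in I} \cat{D}(-, F(d(i)))$. By the Yoneda lemma this is exactly the statement that $F(L)$ together with its cone is the limit of $F \circ d$ in $\cat{D}$, so $F$ preserves the limit. Since $d$ was an arbitrary finite diagram admitting a limit in $\cat{C}$, this gives the result. I do not expect a serious obstacle: the content is entirely carried by Corollary~\ref{cor set-flat preserves all finite limits that exist}. The one point requiring genuine care — and the only reason a separate corollary is warranted rather than a direct appeal to Lemma~\ref{lem rep flat iff preserves finite limits} — is that, lacking finite completeness of $\cat{C}$, I must verify that the $V$-indexed bijections are the \emph{canonical} comparison maps (those induced by $F$ applied to the limit cone), so that the Yoneda argument identifies $F(L)$ as the honest limit cone rather than merely producing an abstract isomorphism of objects.
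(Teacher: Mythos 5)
Your argument is correct and is essentially identical to the paper's proof: both apply Corollary \ref{cor set-flat preserves all finite limits that exist} to the $\ncat{Set}$-flat functors $\cat{D}(V,F(-))$ for each $V \in \cat{D}$ and then conclude via the Yoneda lemma, exactly as in the proof of Lemma \ref{lem rep flat iff preserves finite limits}. Your added remark that the $V$-indexed bijections are the canonical comparison maps induced by the image of the limit cone is a worthwhile clarification but does not change the substance.
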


\begin{proof}
For every $V \in \cat{D}$, the functor $\cat{D}(V,F(-))$ is $\Set$-flat, so by Corollary \ref{cor set-flat preserves all finite limits that exist}, it preserves all limits that exist in $\cat{C}$. By the same argument in the proof of Lemma \ref{lem rep flat iff preserves finite limits}, this shows that $F$ preserves all finite limits that exist in $\cat{C}$.
\end{proof}

\begin{Cor} \label{cor rep flat iff set flat}
If $\cat{C}$ is finitely complete, then a functor $A: \cat{C} \to \Set$ is $\Set$-flat if and only if it is representably flat if and only it preserves finite limits.
\end{Cor}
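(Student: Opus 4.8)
The plan is to deduce this purely by chaining the two finitely-complete characterizations already established, using the common middle term ``preserves finite limits'' as the pivot. Since $\cat{C}$ is assumed finitely complete, I have exactly the two ingredients needed on hand.

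First I would invoke Corollary \ref{cor set-flat if and only if preserves finite limits}, which states precisely that for $\cat{C}$ finitely complete, a functor $A : \cat{C} \to \Set$ is $\Set$-flat if and only if $A$ preserves finite limits. This handles the equivalence between the first and third conditions in the statement.

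Next I would apply Lemma \ref{lem rep flat iff preserves finite limits} in the special case $\cat{D} = \Set$. That lemma asserts that when the domain $\cat{C}$ is finitely complete, a functor $F : \cat{C} \to \cat{D}$ is representably flat if and only if it preserves finite limits; specializing $\cat{D}$ to $\Set$ gives that $A$ is representably flat if and only if $A$ preserves finite limits, establishing the equivalence between the second and third conditions.

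Combining these two biconditionals through the shared property of preserving finite limits yields that all three conditions are equivalent, which is the claim. There is essentially no obstacle here: the corollary is a formal consequence of the two cited results, and the only point worth flagging is that representable flatness and $\Set$-flatness genuinely differ \emph{without} the finite-completeness hypothesis (as Remark \ref{def rep flat functor}'s accompanying remark and the counterexample in \cite{leinster2011} indicate), so the hypothesis that $\cat{C}$ be finitely complete is exactly what collapses the two notions together.

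\begin{proof}
Since $\cat{C}$ is finitely complete, Corollary \ref{cor set-flat if and only if preserves finite limits} shows that $A$ is $\Set$-flat if and only if it preserves finite limits. Taking $\cat{D} = \Set$ in Lemma \ref{lem rep flat iff preserves finite limits}, and again using that $\cat{C}$ is finitely complete, $A$ is representably flat if and only if it preserves finite limits. Combining these two equivalences, all three conditions coincide.
\end{proof}
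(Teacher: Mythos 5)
Your proof is correct and is exactly the argument the paper intends: the corollary is stated without proof precisely because it follows by chaining Corollary \ref{cor set-flat if and only if preserves finite limits} with Lemma \ref{lem rep flat iff preserves finite limits} (with $\cat{D} = \Set$) through the common condition of preserving finite limits. Your remark that finite completeness is what collapses the two flatness notions is also apt.
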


\subsubsection{Covering Flatness}

We now introduce a more general notion of flatness called covering flatness. This notion of flatness is specific to the case of functors between sites. It is more general than representable flatness and $\Set$-flatness. Our inspiration for this section is the blog post \cite{shulman2011flat}.

Typically in the literature, morphisms of sites are defined to be those functors that are representably flat and preserve covering families. We will instead use the notion of covering flatness as then site dense functors (Definition \ref{def site dense functor}) will be examples of morphisms of sites.

This extra generality comes at quite a high price of added complexity and technicality. The reader should feel free to skip this section on first reading. However, less is written in the literature about covering flatness, (but see \cite{shulman2012exact}, \cite{caramello2020morphism} and \cite{karazeris2004notions}) and so we include a section about it here. First we begin with some motivating observations.

\begin{Lemma} \label{lem sections of cat of elements is same as limit}
Given a functor $A: \cat{C} \to \ncat{Set}$, the set $\lim A$ is in bijection with the set of sections of the projection functor $\pi : \int A \to A$.
\end{Lemma}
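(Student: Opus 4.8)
The statement asserts a bijection $\lim A \cong \{\text{sections of } \pi : \int A \to \cat{C}\}$ (note: the projection goes to $\cat{C}$, not to $A$; the codomain in the statement should read $\cat{C}$). First I would unwind the two sides explicitly. An element of $\lim A$ is a cone over $A : \cat{C} \to \ncat{Set}$ with apex the terminal set, i.e. a family of elements $(x_U \in A(U))_{U \in \cat{C}}$ that is \emph{compatible}: for every morphism $f : U \to V$ in $\cat{C}$, we have $A(f)(x_U) = x_V$. On the other side, a section of $\pi$ is a functor $s : \cat{C} \to \int A$ with $\pi \circ s = 1_{\cat{C}}$. Recall that objects of $\int A$ are pairs $(U, x)$ with $x \in A(U)$ and a morphism $(U,x) \to (V,y)$ is a map $f : U \to V$ with $A(f)(x) = y$.

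**The construction of the bijection.** Given a compatible family $(x_U)$, I would define $s(U) = (U, x_U)$ on objects, and on a morphism $f : U \to V$ set $s(f) = f$, now viewed as a morphism $(U, x_U) \to (V, x_V)$ in $\int A$; this is a legal morphism precisely because compatibility gives $A(f)(x_U) = x_V$. Functoriality of $s$ and the identity $\pi \circ s = 1_{\cat{C}}$ are immediate from the definitions. Conversely, given a section $s$, I would observe that $\pi s = 1_{\cat{C}}$ forces $s(U) = (U, x_U)$ for a unique $x_U \in A(U)$, and that $s$ sending $f$ to a morphism in $\int A$ lying over $f$ forces $A(f)(x_U) = x_V$, so $(x_U)$ is compatible. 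These two assignments are visibly mutually inverse.

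**Main obstacle.** There is no real analytic difficulty here; the one place requiring care is checking that a section of $\pi$ must act on morphisms as the identity-over-$\cat{C}$, so that the data of $s$ is genuinely just the object-assignment $U \mapsto x_U$ together with the compatibility constraint, with no extra freedom. I would spell out that a morphism of $\int A$ lying over a fixed $f : U \to V$ is \emph{uniquely determined} (it is just $f$ together with the proof that $A(f)(x_U) = x_V$), so that functoriality of $s$ contributes nothing beyond the compatibility conditions already recorded. Thus the plan is: (1) identify $\lim A$ with compatible families; (2) identify sections of $\pi$ with functors $s$ satisfying $\pi s = 1_{\cat{C}}$; (3) give the back-and-forth maps above and verify they are inverse. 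The whole argument is a direct diagram-chase through the definition of $\int A$.
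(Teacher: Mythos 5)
Your proof is correct and is essentially the same argument as the paper's: the paper routes through the identification $\lim A \cong \ncat{Set}^{\cat{C}}(\Delta(*), A)$ and observes that natural transformations $\Delta(*) \Rightarrow A$ (i.e.\ your compatible families $(x_U)$) are exactly sections of $\pi$, while you simply unwind this identification and verify the back-and-forth maps directly. Your observation that the projection should land in $\cat{C}$ rather than $A$ is right --- that is a typo in the statement --- and your care about why functoriality of a section adds no data beyond compatibility is exactly the point the paper glosses over with ``in more detail.''
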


\begin{proof}
This can be seen by noting that $\lim A \cong \ncat{Set}(*, \lim A) \cong \ncat{Set}^\cat{C}(\Delta(*), A)$. Natural transformations $\Delta(*) \to A$ are precisely the same thing as sections of $\pi$. In more detail, if $\sigma \in \lim A$, and $\ell_U : \lim A \to A(U)$ is the $U$-component of the limit cone, then $\ell_U(\sigma) \in A(U)$, and $U \mapsto (U, \ell_U(\sigma))$ defines a section of $\pi : \int A \to A$. Conversely such a section defines a map $\Delta(*) \to A$, which is equivalently an element of $\lim A$.
\end{proof}

We now wish to characterize the cofilteredness of $\int A$ using observations about $\lim A$ by appealing to Lemma \ref{lem sections of cat of elements is same as limit}. Given a functor $A: \cat{C} \to \ncat{Set}$ and a diagram $d : I \to \cat{C}$, we obtain a functor $d_* : \int Ad \to \int A$ defined objectwise as follows. If $(i, a) \in \int Ad$, then let $d_*(i, a) = (d(i), a)$. Furthermore the following diagram commutes
\begin{equation} \label{eq maps between cat of elements}
    \begin{tikzcd}
	{\int Ad} & {\int A} \\
	I & {\cat{C}}
	\arrow["{\pi_{Ad}}"', from=1-1, to=2-1]
	\arrow["d"', from=2-1, to=2-2]
	\arrow["{\pi_A}", from=1-2, to=2-2]
	\arrow["{d_*}", from=1-1, to=1-2]
\end{tikzcd}
\end{equation}

Given a functor $A : \cat{C} \to \ncat{Set}$ and a finite diagram $d : I \to \cat{C}$, suppose that $\lambda : \Delta(U) \to d$ is a cone in $\cat{C}$. Then $A(\lambda) : \Delta(A(U)) \to Ad$ is a cone in $\ncat{Set}$. Let $\ell : \Delta(\lim Ad) \to Ad$ denote the limit cone in $\ncat{Set}$. By the universal property of limits, there exists a unique map $h_\lambda : A(U) \to \lim Ad$ such that the following diagram commutes
\begin{equation}
    \begin{tikzcd}
	{\Delta(A(U))} && {Ad} \\
	& {\Delta(\lim Ad)}
	\arrow["{\Delta(h_\lambda)}"', from=1-1, to=2-2]
	\arrow["{A(\lambda)}", from=1-1, to=1-3]
	\arrow["\ell"', from=2-2, to=1-3]
\end{tikzcd}
\end{equation}

\begin{Prop} \label{prop set flat iff cone factorization holds}
A functor $A : \cat{C} \to \ncat{Set}$ is $\Set$-flat if and only if for every finite diagram $d: I \to \cat{C}$, the set of maps $H(A) = \{h_\lambda: A(U) \to \lim Ad \}$ is jointly epimorphic, i.e. the canonical map
$$H: \sum_{\lambda: \Delta(U) \to d} A(U) \to \lim Ad$$ 
is an epimorphism.
\end{Prop}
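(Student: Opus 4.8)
The plan is to route both directions of the equivalence through the cofilteredness of the category of elements $\int A$. By Proposition \ref{prop set-flat functor iff cat of elements is cofiltered}, $A$ is $\Set$-flat precisely when $\int A$ is finitely cofiltered, and I would test the latter against the dual of the three-part characterization of filteredness in Lemma \ref{lem alternate def of filtered}. The first step, common to both directions, is to translate surjectivity of $H$ into concrete data: an element of $\lim Ad$ is a compatible family $\{\sigma_i \in A(d(i))\}_{i \in I}$, a cone $\lambda : \Delta(U) \to d$ is a family of maps $\lambda_i : U \to d(i)$ commuting with $d$, and the $i$th component of $h_\lambda(a)$ is $A(\lambda_i)(a)$. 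Thus $H$ is surjective if and only if every such family $\sigma$ is realized as $\{A(\lambda_i)(a)\}_i$ for some object $U$, cone $\lambda$, and element $a \in A(U)$.

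For the forward direction I would assume $\int A$ cofiltered and fix a finite $d : I \to \cat{C}$ together with $\sigma \in \lim Ad$. The key observation is that $\sigma$ assembles into a finite diagram $\widetilde{d} : I \to \int A$, given on objects by $\widetilde{d}(i) = (d(i), \sigma_i)$ and on morphisms by $d$ itself; the compatibility $A(d(\phi))(\sigma_i) = \sigma_j$ is exactly what makes $\widetilde{d}(\phi)$ a morphism of $\int A$, and $\pi_A \widetilde{d} = d$. Since $\int A$ is cofiltered and $I$ is finite, $\widetilde{d}$ admits a cone with some apex $(U,a)$; projecting this cone down $\pi_A$ yields a cone $\lambda : \Delta(U) \to d$ in $\cat{C}$ whose legs satisfy $A(\lambda_i)(a) = \sigma_i$, so that $h_\lambda(a) = \sigma$ and $\sigma$ lies in the image of $H$.

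For the converse I would assume $H$ is jointly epimorphic for every finite diagram and verify the three dual conditions of Lemma \ref{lem alternate def of filtered} for $\int A$ by specializing to three diagram shapes. The empty diagram gives $\lim Ad \cong *$ and cones indexed by objects of $\cat{C}$, so surjectivity forces some $A(U)$ to be nonempty, i.e. $\int A$ is nonempty. The two-object discrete diagram on $U_0, V_0$ gives $\lim Ad \cong A(U_0) \times A(V_0)$ and cones equal to spans, so surjectivity at a pair $(x,y)$ produces a common source $(W,z)$ mapping to $(U_0,x)$ and $(V_0,y)$ in $\int A$. The parallel pair $f, g : V_0 \rightrightarrows U_0$ has $\lim Ad$ equal to the equalizer of $A(f), A(g)$ and cones given by maps $h$ with $fh = gh$, so surjectivity at $y$ (where $A(f)(y) = A(g)(y) = x$) supplies a morphism $(W,z) \to (V_0,y)$ coequalizing $f$ and $g$. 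Together these are exactly the dualized conditions, so $\int A$ is cofiltered and $A$ is $\Set$-flat.

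The main obstacle I anticipate is the forward direction, specifically justifying that a finite diagram in the cofiltered category $\int A$ admits a cone; this is the standard fact that cofilteredness is equivalent to the existence of cones over all finite diagrams, obtained from the three generating conditions by an induction on the (finite) diagram. The remaining work---identifying $\sigma$ with $\widetilde{d}$, and matching each of the three shapes to its condition---is routine bookkeeping once the surjectivity of $H$ has been spelled out in the first step.
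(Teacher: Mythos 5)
Your proof is correct, and its overall strategy coincides with the paper's: both directions are routed through Proposition \ref{prop set-flat functor iff cat of elements is cofiltered}, and your forward direction (assembling $\sigma \in \lim Ad$ into a finite diagram $\widetilde{d} : I \to \int A$ lying over $d$, taking a cone by cofilteredness, and projecting to get $h_\lambda(a) = \sigma$) is exactly the paper's argument, phrased via Lemma \ref{lem sections of cat of elements is same as limit} there. The only real divergence is in the converse. The paper takes an arbitrary finite diagram $d : I \to \int A$, projects it to $\pi d : I \to \cat{C}$, applies surjectivity of $H$ for $\pi d$ at the element of $\lim A\pi d$ determined by $d$, and lifts the resulting cone back to $\int A$ --- so it verifies the definition of cofilteredness directly and uniformly. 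You instead specialize the hypothesis to three diagram shapes (empty, discrete pair, parallel pair) and invoke the dual of Lemma \ref{lem alternate def of filtered}. Your route shows slightly more, namely that joint surjectivity for just those three shapes already forces flatness, at the cost of leaning on that lemma (stated without proof in the paper); the paper's route avoids the lemma entirely but uses the full hypothesis. One small note: the ``obstacle'' you flag in the forward direction is not one --- in this paper cofiltered is \emph{defined} by the existence of cones over all finite diagrams (Definition \ref{def filtered category}), so no induction from the three generating conditions is needed there.
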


\begin{proof}
By Proposition \ref{prop set-flat functor iff cat of elements is cofiltered}, it is equivalent to prove that $\int A$ is cofiltered if and only if the above hypothesis holds. 

$(\Rightarrow)$ An element $\sigma \in \lim Ad$ is equivalent to a section $\sigma : I \to \int Ad$ of $\pi_{Ad}$ by Lemma \ref{lem sections of cat of elements is same as limit}. Thus given $\sigma \in \lim Ad$, from (\ref{eq maps between cat of elements}), we obtain a finite diagram $d_* \sigma$ in $\int A$. Since $\int A$ is cofiltered, there exists a cone $\Lambda : \Delta((U, a)) \to d_* \sigma$ in $\int A$, with components $\Lambda_i : (U,a) \to (d(i), a_i)$. Projecting this by $\pi_A$ produces a cone $\lambda : \Delta(U) \to d$, which in turn produces a cone $A(\lambda) : \Delta(A(U)) \to Ad$ in $\ncat{Set}$. Thus there exists a unique $h_\lambda : A(U) \to \lim Ad$ such that the following diagram commutes
\begin{equation}
    \begin{tikzcd}
	{\Delta(A(U))} && {Ad} \\
	& {\Delta(\lim Ad)}
	\arrow["{\Delta(h_\lambda)}"', from=1-1, to=2-2]
	\arrow["{A(\lambda)}", from=1-1, to=1-3]
	\arrow["\ell"', from=2-2, to=1-3]
\end{tikzcd}
\end{equation}
Thus if $f : i \to i'$ is a map in $I$, then we have the commutative diagram
\begin{equation*}
    \begin{tikzcd}
	& {A(U)} \\
	& {\lim Ad} \\
	{A(d(i))} && {A(d(i'))}
	\arrow["{h_\lambda}"', from=1-2, to=2-2]
	\arrow["{A(\lambda_i)}"', curve={height=6pt}, from=1-2, to=3-1]
	\arrow["{A(\lambda_{i'})}", curve={height=-6pt}, from=1-2, to=3-3]
	\arrow["{\ell_i}", from=2-2, to=3-1]
	\arrow["{\ell_{i'}}"', from=2-2, to=3-3]
	\arrow["{A(d(f))}"', from=3-1, to=3-3]
\end{tikzcd}
\end{equation*}
and furthermore $A(\lambda_i)(a) = a_i$. Thus $a_i = \ell_i h_\lambda(a)$. But $\ell_i(\sigma) = a_i$ by Lemma \ref{lem sections of cat of elements is same as limit}. But $h_\lambda$ is unique, so $h_\lambda(a) = \sigma$. Thus we have defined a section of $H$, which implies that it is an epimorphism.

$(\Leftarrow)$ Conversely suppose that $d: I \to \int A$ is a finite diagram. We wish to show that it has a cone in $\int A$. The composite $\pi d$ is a finite diagram $\pi d : I \to \cat{C}$. Thus by the hypothesis, the function $\Sigma_{\lambda : \Delta(U) \to \pi d} A(U) \to \lim A \pi d$ is a surjection.

Now there is a unique map $ \varphi : \lim A \to \lim A \pi d$ by the universal property of limits, and $d: I \to \int A$ is equivalently an element of $\lim A$ by Lemma \ref{lem sections of cat of elements is same as limit}. Thus $x = \varphi(d)$ defines an element $x \in \lim A \pi d$. Furthermore $d$ defines for each $i \in I$ a pair $d(i) = (U_i, a_i)$ with $U_i \in \cat{C}$ and $a_i \in A(U_i)$ and if $\ell : \Delta(\lim A \pi d) \to A \pi d$ is the limit cone, then $\ell_i(x) = a_i$ for all $i \in I$. Since $\Sigma_{\lambda : \Delta(U) \to \pi d} A(U) \to \lim A \pi d$ is surjective, there exists a cone $\lambda: \Delta(U) \to \pi d$ and an element $z \in A(U)$ such that $h_\lambda(z) = x$. Now let us consider the element $(z, U)$ in $\int A$. Applying $A$ to $\lambda$ provides a cone $A(\lambda) : A(U) \to A \pi d$, and furthermore $A(\lambda_i)(z) =  a_i$. Thus $(z, U)$ is a cone over $d$, so $\int A$ is cofiltered.
\end{proof}

Let us consider a slightly different way to think about the previous result. For a finite diagram $d : I \to \cat{C}$ consider the presheaf $\Cone(d)$ on $\cat{C}$, that sends $U \in \cat{C}$ to the set of cones $\lambda: \Delta(U) \to d$. In other words
\begin{equation*}
    \Cone(d)(U) = \cat{C}^I(\Delta(U), d).
\end{equation*}

We note that 
\begin{equation*}
    \Cone(d) \cong \lim_{i \in I} y(d(i)).
\end{equation*}

Given a small category $\cat{C}$ and a finite diagram $d : I \to \cat{C}$, consider the canonical map
\begin{equation*}
    q: \sum_{\lambda : \Delta(U) \to d} y(U) \to \lim_{i \in I} y(d(i)),
\end{equation*}
which on each component $\lambda : \Delta(U) \to d$ is defined to be the section $y(U) \to \lim_{i \in I} y(d(i)) \cong \text{Cone}(d)$ to be precisely the cone $\lambda$. This map is an epimorphism of presheaves, since if we have a cone $\sigma: \Delta(U) \to d$ then we can lift it to the identity in $y(U)(U) \cong \cat{C}(U,U)$. 

Now for any functor $A : \cat{C} \to \ncat{Set}$, we get the following commutative diagram
\begin{equation*}
    \begin{tikzcd}
	{A^*\left( \sum_{\lambda : \Delta(U) \to d} y(U) \right)} && {\lim_{i \in I} A^*(y(d(i)))} \\
	& {A^* \left( \lim_{i \in I} y(d(i)) \right)}
	\arrow["H", from=1-1, to=1-3]
	\arrow["{A^*(q)}"', from=1-1, to=2-2]
	\arrow["K"', from=2-2, to=1-3]
\end{tikzcd}
\end{equation*}
Since $A^*$ is a left adjoint, $A^*(q)$ is an epimorphism. Thus $H$ is an epimorphism if and only if $K$ is. That means that a functor $A : \cat{C} \to \ncat{Set}$ is $\ncat{Set}$-flat if and only if the comparison map
\begin{equation*}
   K :  A^*( \lim_{i \in I} y(d(i))) \to \lim_{i \in I} A^*(y(d(i))) = \lim_{i \in I} A(d(i))
\end{equation*}
is an epimorphism.

\begin{Prop} \label{prop rep flat iff cone factorization holds}
A functor $F: \cat{C} \to \cat{D}$ is representably flat if and only if for every finite diagram $d : I \to \cat{C}$, the map
\begin{equation} \label{eq map of presheaves for rep flat}
 H_d: \sum_{\lambda: \Delta(U) \to d} y(F(U)) \to \lim_{i \in I} y(F(d(i))),
\end{equation}
is an epimorphism of presheaves on $\cat{D}$.
\end{Prop}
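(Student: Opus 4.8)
The plan is to reduce the statement objectwise to the $\Set$-valued criterion already established in Proposition \ref{prop set flat iff cone factorization holds}. By Definition \ref{def rep flat functor}, $F$ is representably flat precisely when the functor $A_V \coloneqq \cat{D}(V, F(-)) : \cat{C} \to \ncat{Set}$ is $\Set$-flat for every $V \in \cat{D}$. On the other hand, a map of presheaves on $\cat{D}$ is an epimorphism if and only if each of its components at an object $V \in \cat{D}$ is a surjection of sets, since colimits (in particular epimorphisms) in $\Pre(\cat{D})$ are computed objectwise. So it suffices to show that for each fixed finite diagram $d : I \to \cat{C}$ and each $V \in \cat{D}$, the $V$-component of $H_d$ coincides with the comparison map $H$ of Proposition \ref{prop set flat iff cone factorization holds} for the functor $A_V$.

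First I would compute the two sides of $H_d$ at an object $V$. Since coproducts and limits of presheaves are formed pointwise, and since $y(F(U))(V) = \cat{D}(V, F(U)) = A_V(U)$, we have
\begin{equation*}
 \Big( \sum_{\lambda : \Delta(U) \to d} y(F(U)) \Big)(V) \cong \sum_{\lambda : \Delta(U) \to d} A_V(U), \qquad \Big( \lim_{i \in I} y(F(d(i))) \Big)(V) \cong \lim_{i \in I} A_V(d(i)) = \lim A_V d.
\end{equation*}
Note that both coproducts are indexed by the same set of cones $\lambda : \Delta(U) \to d$ in $\cat{C}$, so the indexing matches exactly. It then remains to check that under these identifications the $V$-component of $H_d$ sends the summand corresponding to a cone $\lambda$ via the map $h_\lambda$ of Proposition \ref{prop set flat iff cone factorization holds}. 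Unwinding the definition of $H_d$ (which on the $\lambda$-summand is the map classifying the cone $F\lambda : \Delta(F(U)) \to F d$), its action on $g \in \cat{D}(V, F(U))$ is $g \mapsto (F(\lambda_i) \circ g)_{i \in I}$; but $A_V(\lambda_i)(g) = \cat{D}(V, F(\lambda_i))(g) = F(\lambda_i) \circ g$, so this is exactly the map $h_\lambda$ induced by the cone $A_V(\lambda)$. Hence the $V$-component of $H_d$ is literally the map $H$ of Proposition \ref{prop set flat iff cone factorization holds} attached to $A_V$.

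With this identification in hand, the equivalence is a matter of chaining the statements: $H_d$ is an epimorphism of presheaves for every finite $d$ iff for every $V$ and every finite $d$ the map $H$ for $A_V$ is surjective (objectwise epimorphism test), iff $A_V$ is $\Set$-flat for every $V$ (Proposition \ref{prop set flat iff cone factorization holds}), iff $F$ is representably flat (Definition \ref{def rep flat functor}). The universal quantifiers over $V$ and over $d$ commute freely, so no care is needed there. The one genuinely substantive step, and the place I would slow down, is the component computation: verifying that $H_d$ restricted to the $\lambda$-summand really is the Yoneda-classified map of the cone $F\lambda$ and that this agrees on the nose with $h_\lambda$ for $A_V$. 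Everything else is bookkeeping about pointwise (co)limits of presheaves and the objectwise characterization of epimorphisms.
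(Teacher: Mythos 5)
Your proposal is correct and follows essentially the same route as the paper: both reduce the statement objectwise to Proposition \ref{prop set flat iff cone factorization holds} applied to $\cat{D}(V,F(-))$, using that epimorphisms of presheaves are detected componentwise. Your version simply spells out the identification of the $V$-component of $H_d$ with $h_\lambda$ more explicitly than the paper does.
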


\begin{proof}
The functor $F$ is representably flat if and only if $\cat{D}(V, F(-)): \cat{C} \to \Set$ is $\Set$-flat for every $V \in \cat{D}$. By Proposition \ref{prop set flat iff cone factorization holds}, this is equivalent to the function
\begin{equation} \label{eq rep flat in terms of surjection}
\pi_V : \sum_{\lambda: \Delta(U) \to d} \cat{D}(V,F(U)) \to \lim_{i \in I} \cat{D}(V, F(d(i)))
\end{equation}
being surjective for every $V \in \cat{D}$. This is equivalent to $H_d$ being an epimorphism of presheaves.
\end{proof}

There is a canonical map 
\begin{equation}
   K_d^\sharp: \Cone(d) \to \Delta_F \Cone(Fd),
\end{equation}
given by pushing forward a cone $\lambda : \Delta(U) \to d$ to a cone $F(\lambda) : \Delta(F(U)) \to Fd$. By the adjunction from Lemma \ref{lem presheaf adjoint triple}, we obtain a map
\begin{equation} \label{eq covering flat cone map}
 K_d : \Sigma_F \Cone(d) \to \Cone(Fd)
\end{equation}
of presheaves over $\cat{D}$.

\begin{Lemma}
The map $H_d$ of (\ref{eq map of presheaves for rep flat}) is an epimorphism of presheaves if and only if $K_d$ is an epimorphism of presheaves.
\end{Lemma}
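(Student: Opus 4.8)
The plan is to realise both $H_d$ and $K_d$ as factors of one common epimorphism, so that the biconditional falls out of two elementary facts: a composite of epimorphisms is an epimorphism, and in any factorisation $g \circ f$ that is epic the left factor $g$ is epic. Recall the canonical map $q : \sum_{\lambda : \Delta(U) \to d} y(U) \to \Cone(d)$ constructed just above, which was shown to be an epimorphism of presheaves on $\cat{C}$ (every cone is hit by lifting it to the identity). Since $\Sigma_F$ is a left adjoint (Lemma \ref{lem presheaf adjoint triple}) it preserves colimits, and hence epimorphisms, and by Lemma \ref{lem sigma on representable} it carries each representable summand $y(U)$ to $y(F(U))$. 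Therefore $\Sigma_F(q)$ is an epimorphism
\begin{equation*}
\Sigma_F(q) : \sum_{\lambda : \Delta(U) \to d} y(F(U)) \longrightarrow \Sigma_F \Cone(d),
\end{equation*}
and its domain is precisely the domain of $H_d$.

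The crux of the argument is to verify commutativity of the triangle
\begin{equation*}
\begin{tikzcd}
{\sum_{\lambda} y(F(U))} && {\Cone(Fd)} \\
& {\Sigma_F \Cone(d)}
\arrow["{H_d}", from=1-1, to=1-3]
\arrow["{\Sigma_F(q)}"', from=1-1, to=2-2]
\arrow["{K_d}"', from=2-2, to=1-3]
\end{tikzcd}
\end{equation*}
where we identify $\Cone(Fd) \cong \lim_{i \in I} y(F(d(i)))$. I would check this componentwise over the coproduct. On the summand indexed by a cone $\lambda : \Delta(U) \to d$, the restriction of $\Sigma_F(q)$ is $\Sigma_F$ applied to the classifying map $\lambda : y(U) \to \Cone(d)$, so $K_d \circ \Sigma_F(\lambda)$ is the map whose adjunct under $\Sigma_F \dashv \Delta_F$ is, by naturality of the adjunction bijection in its contravariant argument, the composite $K_d^\sharp \circ \lambda : y(U) \to \Delta_F \Cone(Fd)$. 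By the definition of $K_d^\sharp$ this composite classifies the pushed-forward cone $F(\lambda) \in \Cone(Fd)(F(U))$; unwinding the adjunction isomorphism $\Pre(\cat{D})(\Sigma_F y(U), -) \cong \Pre(\cat{C})(y(U), \Delta_F(-))$ together with $\Sigma_F y(U) \cong y(F(U))$ then shows that $K_d \circ \Sigma_F(\lambda)$ is the map $y(F(U)) \to \Cone(Fd)$ classifying $F(\lambda)$, which is exactly the $\lambda$-component of $H_d$. This is the one genuine computation and the step I expect to require the most care, since it is where the naturality of the $\Sigma_F \dashv \Delta_F$ adjunction and the identification of $\Sigma_F$ on representables must be combined correctly.

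With the triangle established the conclusion is purely formal. If $K_d$ is an epimorphism, then $H_d = K_d \circ \Sigma_F(q)$ is a composite of epimorphisms and hence an epimorphism. Conversely, if $H_d$ is an epimorphism, then since it factors as $K_d \circ \Sigma_F(q)$, its left factor $K_d$ is an epimorphism. Thus $H_d$ is an epimorphism of presheaves if and only if $K_d$ is.
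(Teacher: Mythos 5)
Your proof is correct and follows essentially the same route as the paper: both factor $H_d$ as $K_d$ composed with an epimorphism onto $\Sigma_F\Cone(d)$ and then invoke the two standard cancellation facts for epimorphisms. The only (cosmetic) difference is that the paper establishes the epimorphism by an explicit coend computation of $(\Sigma_F\Cone(d))(V)$, whereas you obtain it as $\Sigma_F(q)$ using that left adjoints preserve epimorphisms and $\Sigma_F y(U) \cong y(F(U))$ --- and your verification of the commuting triangle via naturality of the adjunction is actually more careful than the paper's, which simply asserts it.
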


\begin{proof}
First note that $\Cone(d) \cong \lim_{i \in I} y(d(i))$ and $\Cone(Fd) \cong \lim_{i \in I} y(Fd(i))$. Now for every $V \in \cat{D}$, by Lemma \ref{lem swapping arity in left kan extensions} we have that
\begin{equation*}
\begin{aligned}
    (\Sigma_F \Cone(d))(V) & \cong \Cone(d) \otimes_{\cat{C}} \cat{D}(V,F(-)) \\ 
    & \cong \cat{D}(V,F(-)) \otimes_{\cat{C}^\op} \Cone(d) \\
    & \cong \colim \left( \int \Cone(d)^\op \to \cat{C} \xrightarrow{\cat{D}(V,F(-))} \ncat{Set} \right) \\
    & \cong \underset{\lambda : \Delta(U) \to d}{\colim} \cat{D}(V, F(U)).
\end{aligned}
\end{equation*}
Thus there is a map $q : \sum_{\lambda : \Delta(U) \to d} y(F(U)) \to \Sigma_F \Cone(d)$ and it is objectwise an epimorphism (the map $q$ takes a pair $(\lambda : \Delta(U) \to d, f : V \to F(U))$ to its equivalence class in $(\Sigma_F \Cone(d))(V)$), and thus an epimorphism of presheaves. Furthermore the following diagram commutes
\begin{equation*}
    \begin{tikzcd}
	{\sum_{\lambda: \Delta(U) \to d} y(F(U))} && {\Cone(Fd)} \\
	& {\Sigma_F \Cone(d)}
	\arrow["{H_d}", from=1-1, to=1-3]
	\arrow["q"', two heads, from=1-1, to=2-2]
	\arrow["{K_d}"', from=2-2, to=1-3]
\end{tikzcd}
\end{equation*}
So if $K_d$ is an epimorphism, then so is $H_d$, since epimorphisms compose. However if $H_d$ is an epimorphism, then $K_d$ is as well. Thus $K_d$ is an epimorphism if and only if $H_d$ is.
\end{proof}

Now we have come to the definition of covering flatness, which will generalize the equivalent definition of representable flatness coming from Proposition \ref{prop rep flat iff cone factorization holds}.

\begin{Def} \label{def covering flat}
Let $F : \cat{C} \to (\cat{D}, j)$ be a functor to a site. We say that $F$ is \textbf{covering flat} if for every finite diagram $ d: I \to \cat{C}$, the canonical map
\begin{equation}
    K_d : \Sigma_F \, \Cone(d) \to \Cone(Fd)
\end{equation}
is a $j$-local epimorphism.
\end{Def}

\begin{Rem}
Note that $K_d$ is a $j$-local epi if and only if $H_d$ is a $j$-local epi by Lemma \ref{lem local epimorphisms closed under composition} and Lemma \ref{lem composition is local epi implies local epi}.
\end{Rem}

Let us concretely spell out what $H_d$ being a $j$-local epimorphism means: For any cone $\sigma : \Delta(V) \to Fd$, there exists a $j$-tree $T_{\sigma}$ with $T^\circ_{\sigma} = \{r_i : V_i \to V \}_{i \in I}$, such that for each $i \in I$ there is a cone $\lambda_i : \Delta(U_i) \to d$ and a map $f_i : V_i \to F(U_i)$ such that the following diagram commutes
\begin{equation} \label{eq covering flat diagram}
 \begin{tikzcd}
	{\Delta(V_i)} & {\Delta(F(U_i))} \\
	{\Delta(V)} & Fd
	\arrow["{\Delta(f_i)}", from=1-1, to=1-2]
	\arrow["{\Delta(r_i)}"', from=1-1, to=2-1]
	\arrow["{F\lambda_i}", from=1-2, to=2-2]
	\arrow["\sigma"', from=2-1, to=2-2]
\end{tikzcd}   
\end{equation}

We can paraphrase this by saying that $F$ is covering flat if for every finite diagram $d : I \to \cat{C}$, every cone over $Fd$ factors locally through the $F$-image of a cone over $d$.

We can also characterize covering flatness using a sort of ``local version'' of Lemma \ref{lem rep flat iff comma category is cofiltered}, along with Lemma \ref{lem alternate def of filtered}.

\begin{Def}[{\cite[Definition 3.2]{caramello2020morphism}}] \label{def locally cofiltered category}
Let $F : (\cat{C}, j) \to (\cat{D}, j')$ be a functor between sites, with $V \in \cat{D}$. We say that $(V \downarrow F)$ is \textbf{locally cofiltered} if
\begin{enumerate}
    \item there exists a $j'$-tree $T$ with $T^\circ = \{r_i : V_i \to V \}_{i \in I}$ such that $(V_i \downarrow F)$ is nonempty for every $i \in I$,
    \item for every pair of morphisms $f : V \to F(U)$ and $g : V \to F(U')$, there exists a $j'$-tree $T$ with $T^\circ = \{r_i : V_i \to V \}_{i \in I}$, and for each $i \in I$ maps $f_i : U_i \to U$, $g_i : U_i \to U'$ and $h_i : V_i \to F(U_i)$ such that the following diagram commutes
    \begin{equation*}
     \begin{tikzcd}
	{V_i} & {F(U_i)} \\
	V && {F(U')} \\
	& {F(U)}
	\arrow["{h_i}", from=1-1, to=1-2]
	\arrow["{r_i}", from=1-1, to=2-1]
	\arrow["{g_i}", from=1-2, to=2-3]
	\arrow["{f_i}"{pos=0.6}, from=1-2, to=3-2]
	\arrow["g"{pos=0.3}, from=2-1, to=2-3]
	\arrow["f"{pos=0.4}, from=2-1, to=3-2]
      \end{tikzcd}   
    \end{equation*}
    \item for every pair of morphisms $f,g : U \to U'$ in $\cat{C}$ and morphism $h : V \to F(U)$ in $\cat{D}$ such that $F(f)h = F(g)h$, there exists a $j'$-tree $T$ with $T^\circ = \{r_i : V_i \to V \}_{i \in I}$ such that for every $i \in I$ there are morphisms $k_i : U_i \to U$ and $h_i : V_i \to F(U_i)$ such that $f k_i = g k_i$ and the following diagram commutes
    \begin{equation*}
        \begin{tikzcd}
	{V_i} & {F(U_i)} \\
	V & {F(U)}
	\arrow["{h_i}", from=1-1, to=1-2]
	\arrow["{r_i}", from=1-1, to=2-1]
	\arrow["{F(k_i),}"{pos=0.6}, from=1-2, to=2-2]
	\arrow["h"{pos=0.4}, from=2-1, to=2-2]
\end{tikzcd}
    \end{equation*}
\end{enumerate}
\end{Def}

We leave the next lemma to the reader. It is basically just a reworking of Lemma \ref{lem alternate def of filtered}.

\begin{Lemma}
A functor $F: (\cat{C}, j) \to (\cat{D}, j')$ is covering flat if and only if for every $V \in \cat{D}$, the category $(V \downarrow F)$ is locally cofiltered.
\end{Lemma}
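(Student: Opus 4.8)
The plan is to prove both directions by reducing covering flatness (Definition \ref{def covering flat}) to its behaviour on the three ``generating'' finite diagrams — the empty diagram, the two-object discrete diagram, and the parallel-pair diagram — in exactly the way the cofilteredness criterion of Lemma \ref{lem alternate def of filtered} is tested on these three shapes. Throughout I would work with the concrete unwinding of covering flatness spelled out just before Definition \ref{def locally cofiltered category}: $F$ is covering flat iff for every finite $d : I \to \cat{C}$ and every cone $\sigma : \Delta(V) \to Fd$ there is a $j'$-tree $T_\sigma$ with $T_\sigma^\circ = \{r_i : V_i \to V\}$ such that each leg $\sigma r_i$ factors through $F(\lambda_i)$ for a cone $\lambda_i : \Delta(U_i) \to d$, via some $f_i : V_i \to F(U_i)$, as in (\ref{eq covering flat diagram}).

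For $(\Rightarrow)$ I would simply specialize this statement. Taking $I = \varnothing$, a cone $\sigma : \Delta(V) \to Fd$ is just the object $V$ and a cone $\lambda_i : \Delta(U_i) \to d$ is just an object $U_i$, while the factorization datum is a map $V_i \to F(U_i)$; so covering flatness yields a $j'$-tree over $V$ whose leaves each admit a map into the $F$-image, i.e. $(V_i \downarrow F)$ is nonempty, which is condition (1) of Definition \ref{def locally cofiltered category}. Taking $I$ the discrete two-object category produces, from a pair $f : V \to F(U)$, $g : V \to F(U')$, precisely the $j'$-tree and maps $f_i,g_i,h_i$ of condition (2). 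Taking $I$ the parallel-pair category with $d$ picking out $f,g : U \to U'$, a cone over $Fd$ is a map $h : V \to F(U)$ with $F(f)h = F(g)h$, and covering flatness delivers exactly the equalizing data of condition (3). Hence covering flatness implies local cofilteredness.

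For $(\Leftarrow)$ I would fix a finite diagram $d : I \to \cat{C}$ and a cone $\sigma : \Delta(V) \to Fd$ and build the required $j'$-tree by induction over the finitely many objects and then the finitely many morphisms of $I$, mimicking the passage from the three conditions of Lemma \ref{lem alternate def of filtered} to full cofilteredness. If $I = \varnothing$, condition (1) supplies the tree directly. Otherwise $\sigma$ gives legs $\sigma_k : V \to F(d(k))$, and I would combine $\sigma_1,\sigma_2$ using condition (2) to get a $j'$-tree over $V$ whose leaves carry an object $U$ with maps $U \to d(1), d(2)$ and a lift into $F(U)$, then fold in $\sigma_3,\dots,\sigma_n$ one at a time by re-applying condition (2) on each leaf and composing the resulting trees via Lemma \ref{lem j-trees closed under composition}. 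This produces a $j'$-tree over $V$ on whose leaves there is a single $U_\ell$ with maps $p_{\ell,k} : U_\ell \to d(k)$ for every object and a lift $h_\ell : V_\ell \to F(U_\ell)$ satisfying $F(p_{\ell,k})h_\ell = \sigma_k r_\ell$. For each of the finitely many morphisms $m$ of $I$, the cone identity $F(m)\sigma_k = \sigma_{k'}$ forces $F(m\,p_{\ell,k})h_\ell = F(p_{\ell,k'})h_\ell$, so condition (3) refines the tree further until $m\,p_{\ell,k} = p_{\ell,k'}$ on every leaf; after treating all morphisms the $p_{\ell,k}$ assemble into genuine cones $\lambda_\ell : \Delta(U_\ell) \to d$ in $\cat{C}$ through whose $F$-image $\sigma$ factors locally. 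Composing all the intermediate $j'$-trees into one (again Lemma \ref{lem j-trees closed under composition}) exhibits $H_d$, hence $K_d$, as a $j'$-local epimorphism.

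The main obstacle is the bookkeeping in $(\Leftarrow)$: each application of conditions (2) and (3) shifts the base from $V$ to a leaf $V_\ell$ and introduces a new intermediate object, so one must carefully track how the factoring maps compose across stages and verify that the successive $j'$-trees really do assemble into a single $j'$-tree (Lemma \ref{lem j-trees closed under composition}) whose composite remains a local epimorphism (Lemma \ref{lem local epimorphisms closed under composition}). Finiteness of $I$ is what makes the induction terminate, and it is precisely the ``local'' analogue, in the world of $j'$-trees, of the bootstrapping step in Lemma \ref{lem alternate def of filtered} that lifts the three elementary conditions to arbitrary finite diagrams — the same mechanism that underlies Lemma \ref{lem rep flat iff comma category is cofiltered} in the non-local case.
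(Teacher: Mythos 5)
Your proposal is correct, and it is precisely the argument the paper intends: the paper omits the proof, remarking only that it is ``basically just a reworking of Lemma \ref{lem alternate def of filtered},'' which is exactly your strategy of specializing covering flatness to the empty, discrete-pair, and parallel-pair diagrams for $(\Rightarrow)$ and bootstrapping the three conditions of Definition \ref{def locally cofiltered category} over the finitely many objects and morphisms of $I$, composing the intermediate $j'$-trees via Lemma \ref{lem j-trees closed under composition}, for $(\Leftarrow)$. The only nitpick is notational: the cone identity should read $F(d(m))\sigma_k = \sigma_{k'}$ rather than $F(m)\sigma_k = \sigma_{k'}$.
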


\begin{Rem} \label{rem special cases of covering flat}
Let us consider Definition \ref{def covering flat} when $(\cat{D}, j) = (\Set, j_{\text{epi}})$ is the category of sets with the jointly epimorphic coverage (Example \ref{ex jointly epi coverage on set}). So let $A: \cat{C} \to (\ncat{Set}, j_{\text{epi}})$ be a covering flat functor. This coverage is clearly composition closed, and therefore $K_d$ is a $j$-local epi if and only if for every cone $y(S) \to \Cone(Ad)$, there is a jointly epimorphic family $r = \{ r_i : S_i \to S \}$ over $S$ forming a commutative diagram
\begin{equation} \label{eq diagram for covering flatness in Set}
  \begin{tikzcd}
	{y(S_i)} & {\sum_{\lambda : \Delta(U) \to d} y(A(U))} \\
	{y(S)} & {\Cone(Ad)}
	\arrow[from=1-1, to=1-2]
	\arrow[from=1-1, to=2-1]
	\arrow["{H_d}", from=1-2, to=2-2]
	\arrow[from=2-1, to=2-2]
\end{tikzcd}  
\end{equation}
If we consider the jointly epimorphic family $(1_*)$, i.e. the identity function on the singleton set, then $A$ being covering-flat implies that we can lift every map $y(*) \to \Cone(Ad)$ to a map $y(*) \to \sum_{\lambda : \Delta(U) \to d } y(A(U))$. In other words it means that
$$ \sum_{\lambda : \Delta(U) \to d} \ncat{Set}(*, A(U)) \to \Cone(Ad)(*)$$
is an epimorphism of sets. But this map is isomorphic to 
\begin{equation*}
    \sum_{\lambda : \Delta(U) \to d} A(U) \to \lim Ad.
\end{equation*}
So if $A$ is covering flat, then it is $\ncat{Set}$-flat by Proposition \ref{prop set flat iff cone factorization holds}. Conversely if $A$ is $\ncat{Set}$-flat, then using the axiom of choice, we can obtain a section to the map above. Thus for any map $s : S \to \lim Ad$, we can lift it to a map $s' : S \to \sum_{\lambda: \Delta(U) \to d} A(U)$, which means that we can write $S \cong \sum_\lambda S_\lambda$, where each $S_\lambda$ is the preimage of $s'$ landing in the $\lambda$ component of $\sum_{\lambda : \Delta(U) \to d} A(U)$, and therefore obtain a commutative diagram of the form (\ref{eq diagram for covering flatness in Set}). 

When $(\cat{D}, j) = (\cat{D}, j_{\text{triv}})$ is the category $\cat{D}$ equipped with the trivial coverage (Example \ref{ex canonical coverages}), then $j_{\text{triv}}$-local epimorphisms are precisely epimorphisms of presheaves. Thus by Proposition \ref{prop rep flat iff cone factorization holds}, a functor $F: \cat{C} \to (\cat{D}, j_{\text{triv}})$ is covering flat if and only if it is representably flat. 

Thus $\ncat{Set}$-flatness and representable flatness are special cases of covering flatness.
\end{Rem}

\begin{Prop}[{\cite[Proposition 4.16]{shulman2012exact}}] \label{prop covering flat iff preserves finite limits}
A functor $F: \cat{C} \to (\cat{D}, j)$ is covering flat if and only if the composite functor
\begin{equation} \label{eq sheafification of yoneda extension for covering flatness}
    \ncat{Pre}(\cat{C}) \xrightarrow{\Sigma_F} \ncat{Pre}(\cat{D}) \xrightarrow{a} \ncat{Sh}(\cat{D}, j)
\end{equation}
preserves finite limits.
\end{Prop}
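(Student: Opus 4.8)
The plan is to prove both directions by reducing the statement to the local-epimorphism characterizations already established, using the fact that the composite $a \circ \Sigma_F$ can be analyzed objectwise through cones over finite diagrams. The key observation is that Definition~\ref{def covering flat} is phrased precisely in terms of the maps $K_d : \Sigma_F\,\Cone(d) \to \Cone(Fd)$ being $j$-local epimorphisms, and by Proposition~\ref{prop sheafification inverts local isos} together with Corollary~\ref{cor j-local iso/epi/mono <-> sheafification is iso/epi/mono}, a map is a $j$-local epimorphism if and only if its sheafification is an epimorphism. So I would like to translate covering flatness into a statement about $a\,\Sigma_F$ applied to the canonical presentation of finite limits of representables.

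First I would set up the comparison map. For any finite diagram $d : I \to \cat{C}$, recall that $\Cone(d) \cong \lim_{i \in I} y(d(i))$ and $\Cone(Fd) \cong \lim_{i \in I} y(Fd(i))$. Since $a$ preserves finite limits (Proposition~\ref{prop sheafification props}.(1)), applying $a$ to $K_d$ yields a map
\begin{equation*}
    a\Sigma_F\left(\lim_{i \in I} y(d(i))\right) \to a\left(\lim_{i \in I} y(Fd(i))\right) \cong \lim_{i \in I} a\,y(Fd(i)) \cong \lim_{i \in I} a\Sigma_F\, y(d(i)),
\end{equation*}
where the last isomorphism uses $\Sigma_F y(U) \cong y(F(U))$. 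This map is exactly the canonical comparison morphism measuring whether $a\Sigma_F$ preserves the finite limit $\lim_{i\in I} y(d(i))$. By Corollary~\ref{cor j-local iso/epi/mono <-> sheafification is iso/epi/mono}, $a(K_d)$ is an epimorphism if and only if $K_d$ is a $j$-local epimorphism, i.e. if and only if $F$ is covering flat on the diagram $d$. So covering flatness is equivalent to all these comparison maps being epimorphisms.

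For the $(\Leftarrow)$ direction, if $a\Sigma_F$ preserves finite limits, then each comparison map above is an isomorphism, hence an epimorphism, so each $a(K_d)$ is an epimorphism and $F$ is covering flat. For the $(\Rightarrow)$ direction I would need to promote ``the comparison map is an epimorphism'' to ``$a\Sigma_F$ preserves all finite limits.'' The main obstacle is that $a\Sigma_F$ is a priori only shown to preserve finite limits of \emph{representables} (via the $K_d$), whereas finite-limit-preservation must hold for arbitrary presheaves. The standard resolution is to note that $a\Sigma_F$ is a left adjoint composite (hence cocontinuous) and that every presheaf is a colimit of representables by the coYoneda lemma; one then uses that $a\Sigma_F$ preserving finite limits of representables, combined with the fact that finite limits in $\Sh(\cat{D},j)$ commute with the relevant filtered colimits (since $a$ is lex and $\Sigma_F$ is cocontinuous), upgrades to preservation of all finite limits. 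Concretely, I would show the comparison maps $a\Sigma_F(\lim_{i} X_i) \to \lim_i a\Sigma_F(X_i)$ for arbitrary finite diagrams of presheaves are isomorphisms by reducing to the representable case through expressing each $X_i$ as a colimit of representables and invoking that filtered colimits commute with finite limits (Proposition~\ref{prop filtered colimits commute with finite limits in Set}) in the lex localization $\Sh(\cat{D},j)$. This reduction step—carefully managing the interaction of the colimit presentation with finite limits in the sheaf category—is where the real work lies, and I expect it to be the most delicate part of the argument.
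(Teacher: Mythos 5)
Your setup of the comparison maps and your $(\Leftarrow)$ direction are fine and match the paper's. The gap is in the $(\Rightarrow)$ direction, and it is twofold. First, covering flatness only tells you that the comparison maps $a\Sigma_F(\lim_i y(d(i))) \to \lim_i a\Sigma_F(y(d(i)))$ are \emph{epimorphisms}, not isomorphisms; your sketch then speaks of ``$a\Sigma_F$ preserving finite limits of representables'' as if that were already in hand, but upgrading epi to iso is genuine content and is never addressed. Second, the route you propose for passing from representables to arbitrary presheaves --- write each $X_i$ as $\colim_{y(U) \to X_i} y(U)$ and invoke commutation of filtered colimits with finite limits --- does not go through, because the coYoneda colimit over $\int X_i$ is not filtered in general, so there is no filtered colimit available to commute past the finite limit.

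The paper closes both gaps simultaneously with a pointwise reduction to $\ncat{Set}$-valued flatness. For each $V \in \cat{D}$ one defines $A_V : \cat{C} \to \ncat{Set}$ by $A_V(U) = [a\,y(F(U))](V)$ and observes that the functor $X \mapsto (a\Sigma_F X)(V)$ is precisely the Yoneda extension $A_V^*$. Covering flatness says exactly that the comparison maps $A_V^*(\lim_i y(d(i))) \to \lim_i A_V^*(y(d(i)))$ are epimorphisms for every finite $d$ and every $V$, and Proposition~\ref{prop set flat iff cone factorization holds} (together with Proposition~\ref{prop set-flat functor iff cat of elements is cofiltered}) says that this epimorphism condition is \emph{equivalent} to $A_V^*$ preserving all finite limits --- the relevant filtered colimit is over the cofiltered category of elements $\int A_V^{\op}$, not over $\int X_i$. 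Since limits in $\Sh(\cat{D},j)$ are computed objectwise, lexness of all the $A_V^*$ gives lexness of $a\Sigma_F$. If you want to complete your proof, this is the mechanism you need to invoke; the machinery you built in the $\ncat{Set}$-flatness section is doing the ``delicate part'' you deferred.
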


\begin{proof}
$(\Leftarrow)$ Suppose that $a \Sigma_F$ preserves finite limits, and let $d : I \to \cat{C}$ be a finite diagram. Then consider the map
\begin{equation*}
    \sum_{\lambda : \Delta(U) \to d} y(U) \to \lim_{i \in I} y(d(i)) \cong \text{Cone}(d)
\end{equation*}
which in component $\lambda$ is the section $y(U) \to \text{Cone}(d)$ defined by $\lambda$. This map is clearly an epimorphism of presheaves. Then since $a \Sigma_F$ preserves finite limits and is a composite of left adjoints, it preserves epimorphisms, so the map
\begin{equation*}
    a\Sigma_F \left( \sum_\lambda y(U) \to \lim_i y(d(i)) \right) \cong a \left( \sum_{\lambda} y(F(U)) \to \lim_i y(F(d(i))) \right)
\end{equation*}
is an epimorphism. Thus $F$ is covering flat.

$(\Rightarrow)$ Now suppose that $F$ is covering flat. Then the map
\begin{equation*}
    a\Sigma_F \lim_{i \in I} y(d(i)) \to \lim_{i \in I} a\Sigma_F y(d(i))
\end{equation*}
is an epimorphism. But if we let $A^*_V$ denote the functor $A^*_V : \Pre(\cat{C}) \to \ncat{Set}$ defined by $X \mapsto (a \Sigma_F X)(V)$, then this is equivalent to the map 
\begin{equation} \label{eq comparison map covering flat}
    A_V^*( \lim_{i \in I} y(d(i))) \to \lim_{i \in I} A_V^*(y(d(i)))
\end{equation}
being an epimorphism for every $V \in \cat{C}$.

Now let $A_V : \cat{C} \to \ncat{Set}$ denote the functor $A_V(U) = [a y(F(U))](V)$. Then $A^*_V$ is the Yoneda extension (Definition \ref{def yoneda extension}) of $A_V$. Therefore $A_V$ is $\ncat{Set}$-flat if and only if (\ref{eq comparison map covering flat}) an epimorphism. But by Proposition \ref{prop set flat iff cone factorization holds} this is the case if and only if $A^*_V$ preserves finite limits. Thus if $F$ is covering flat, then $A^*_V$ preserves finite limits for every $V \in \cat{C}$, and since limits are computed pointwise in sheaf toposes, this implies that $a \Sigma_F$ preserves finite limits.
\end{proof}

\subsection{Site Morphisms}

\begin{Def} \label{def morphism of sites}
Given sites $(\cat{C}, j)$ and $(\cat{D}, j')$, we say a functor $F: \cat{C} \to \cat{D}$ is a \textbf{morphism of sites} if it is covering flat and it sends every $j$-saturating family $r$ to a $j'$-saturating family $F(r)$.
\end{Def}

\begin{Rem} \label{rem morphism of sites sufficient conditions}
In other words $F : (\cat{C}, j) \to (\cat{D}, j')$ is a morphism of sites if and only if $F$ is $j'$-covering flat and $F$ sends $\sat{j}$-covering families to $\sat{j'}$-covering families. 

Note also that if $F$ satisfies the stronger property of sending $j$-covering families to $j'$-covering families, then it sends $j$-trees to $j'$-trees and therefore by Lemma \ref{lem family saturating iff refined by j-tree} sends $j$-saturating families to $j'$-saturating families. So if $F$ is covering flat and sends $j$-covering families to $j'$-covering families then it is a morphism of sites.
\end{Rem}

\begin{Prop} \label{prop morphism of sheaves induced by morphism of sites}
Suppose that $F: (\cat{C},j) \to (\cat{D},j')$ is a morphism of sites. Then the precomposition functor $\Delta_F: \Pre(\cat{D}) \to \Pre(\cat{C})$ takes $j'$-sheaves to $j$-sheaves, namely it descends to a functor
\begin{equation}
    \Delta_F : \Sh(\cat{D}, j') \to \Sh(\cat{C}, j)
\end{equation}
Further, $\Delta_F$ is right adjoint to the functor (\ref{eq sheafification of yoneda extension for covering flatness}) restricted to $\Sh(\cat{C}, j)$.
\end{Prop}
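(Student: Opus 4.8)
I would prove the two claims in sequence: first that $\Delta_F$ lands in sheaves, then that it is right adjoint to the sheafified Yoneda extension. For the first claim, let $Y$ be a $j'$-sheaf on $\cat{D}$ and let $X = \Delta_F Y$, so $X(U) = Y(F(U))$. I need to show $X$ is a $j$-sheaf. By Proposition \ref{prop sheaf iff sheaf on saturation closure} it suffices to check the sheaf condition on $\sat{j}$-covering families, and in fact I would work with a $j$-saturating family $r = \{r_i : U_i \to U\}$. Given an $X$-matching family $\{x_i \in Y(F(U_i))\}$ over $r$, I want to produce a unique amalgamation. The key input is that $F$ sends the $j$-saturating family $r$ to the $j'$-saturating family $F(r) = \{F(r_i) : F(U_i) \to F(U)\}$, which by Lemma \ref{lem family saturating iff refined by j-tree} is refined by some $j'$-tree $T$. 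Because $F$ is covering flat, the intersection data for $r$ (pullback-type cones over diagrams in $\cat{C}$) is sent, locally on a $j'$-tree, into the $F$-image of cones over the corresponding diagrams in $\cat{C}$; this is precisely the content of diagram (\ref{eq covering flat diagram}).

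**The heart of the argument.** The crux is reconciling the matching condition. A priori $\{x_i\}$ matches over the intersection squares that exist in $\cat{C}$ (pushed through $F$), but to amalgamate using the sheaf property of $Y$ on $F(r)$ I need $\{x_i\}$ to be a genuine $Y$-matching family over $F(r)$ in $\cat{D}$, where intersection squares may involve objects not in the image of $F$. This is exactly where covering flatness is indispensable: given an arbitrary intersection square in $\cat{D}$ witnessing two maps $V \to F(U_i)$, $V \to F(U_j)$ agreeing over $F(U)$, covering flatness provides a $j'$-tree on $V$ along whose legs the square factors through $F$ of a cone in $\cat{C}$, i.e. through an actual intersection square of $r$ in $\cat{C}$. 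On those legs the matching condition for $\{x_i\}$ (inherited from $\cat{C}$) forces the relevant restrictions to agree, and then since $Y$ is a $j'$-sheaf and the legs form a covering tree, the two sections agree by uniqueness of amalgamations on the tree. I expect this step --- transporting the $\cat{C}$-side matching condition across $F$ to a genuine $\cat{D}$-side matching condition via covering flatness --- to be the main obstacle, as it requires carefully combining Lemma \ref{lem sheaf on j-trees} with the factorization (\ref{eq covering flat diagram}). Once $\{x_i\}$ is known to match over $F(r)$, the unique amalgamation $y \in Y(F(U)) = X(U)$ exists because $Y$ is a sheaf on the $j'$-saturating family $F(r)$ (via Lemma \ref{lem family saturating iff refined by j-tree} and Lemma \ref{lem sheaf on j-trees}), giving the amalgamation for $\{x_i\}$ in $X$.

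**The adjunction.** For the second claim, I would assemble the adjunction abstractly rather than by hand. On presheaves the left Kan extension $\Sigma_F \dashv \Delta_F$ is a standard adjunction (Lemma \ref{lem presheaf adjoint triple}). The composite $a \Sigma_F : \Pre(\cat{C}) \to \Sh(\cat{D}, j')$ is left adjoint to the composite $\Sh(\cat{D}, j') \xhookrightarrow{i} \Pre(\cat{D}) \xrightarrow{\Delta_F} \Pre(\cat{C})$, since $a \dashv i$ (Proposition \ref{prop sheafification props}) and adjoints compose. By the first part of the proposition, $\Delta_F \circ i$ factors through $\Sh(\cat{C}, j) \hookrightarrow \Pre(\cat{C})$, so for a sheaf $X$ on $\cat{C}$ and a sheaf $Y$ on $\cat{D}$ I obtain
\begin{equation*}
\Sh(\cat{D}, j')(a\Sigma_F X, Y) \cong \Pre(\cat{C})(X, \Delta_F i Y) \cong \Sh(\cat{C}, j)(X, \Delta_F Y),
\end{equation*}
where the last isomorphism uses that $\Sh(\cat{C}, j)$ is a full subcategory of $\Pre(\cat{C})$ and $\Delta_F Y$ already lies there. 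This exhibits $\Delta_F : \Sh(\cat{D}, j') \to \Sh(\cat{C}, j)$ as right adjoint to the restriction of $a\Sigma_F$ to sheaves, which is exactly the functor (\ref{eq sheafification of yoneda extension for covering flatness}) restricted to $\Sh(\cat{C}, j)$. I would note in passing that $a\Sigma_F$ preserves finite limits by Proposition \ref{prop covering flat iff preserves finite limits}, so this adjunction is in fact a geometric morphism, though that observation is not strictly needed for the statement as written.
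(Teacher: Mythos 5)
Your proof is correct, but it takes a genuinely different route from the paper's on the main point. Where you argue concretely with matching families --- showing that a $\Delta_F Y$-matching family over $r$ is already a $Y$-matching family over $F(r)$ by using diagram (\ref{eq covering flat diagram}) to factor an arbitrary intersection square in $\cat{D}$, locally along a $j'$-tree, through the $F$-image of an intersection square in $\cat{C}$, and then invoking separatedness of $Y$ on the tree --- the paper argues entirely formally: it reduces the sheaf condition to the bijectivity of $\Pre(\cat{C})(y(U),\Delta_F X)\to\Pre(\cat{C})(\overline{r},\Delta_F X)$, transports this across the adjunctions $\Sigma_F\dashv\Delta_F$ and $a\dashv i$, and then uses Proposition \ref{prop covering flat iff preserves finite limits} (that $a\Sigma_F$ is lex) applied to the coequalizer presentation of $\overline{r}$ from Proposition \ref{prop sieves are coequalizers of generating family} to identify $a\Sigma_F(\overline{r})\cong a\overline{F(r)}$, at which point Proposition \ref{prop sheaves are local iso local} and the saturating hypothesis finish the job. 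Both arguments consume covering flatness, but through different interfaces: you use the defining local-factorization-of-cones property directly, the paper uses its repackaging as left-exactness of $a\Sigma_F$. Your version is more elementary and makes visible exactly where flatness is needed (to handle intersection objects of $\cat{D}$ not in the image of $F$); the paper's is shorter once the machinery is in place and generalizes more readily. Two small bookkeeping remarks: to conclude that $Y$ is a sheaf on the saturating family $F(r)$ you also need Lemma \ref{lem sheaf on refinement} (or equivalently Proposition \ref{prop sheaf iff sheaf on saturation closure}) on top of Lemmas \ref{lem family saturating iff refined by j-tree} and \ref{lem sheaf on j-trees}; and for uniqueness of the amalgamation you should note that any amalgamation of $\{x_i\}$ over $r$ is in particular an amalgamation over $F(r)$, where uniqueness is known. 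Your adjunction argument by composing $\Sigma_F\dashv\Delta_F$ with $a\dashv i$ and using fullness of $\Sh(\cat{C},j)\hookrightarrow\Pre(\cat{C})$ is cleaner than the paper's coYoneda computation and is perfectly valid.
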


\begin{proof}
Suppose that $X$ is a sheaf on $\cat{D}$. We wish to show that $\Delta_F(X)$ is a sheaf on $(\cat{C},j)$. By Lemma \ref{lem sheaf on covering family iff on sieve it generates}, it is enough to show that if $U \in \cat{C}$ and $r \in j(U)$, then the canonical map
\begin{equation*}
    \Pre(\cat{C})(y(U), \Delta_F(X)) \to \Pre(\cat{C})(\overline{r}, \Delta_F(X))
\end{equation*}
is an isomorphism.

Now by the $\Sigma_F \dashv \Delta_F$ adjunction we have the following commutative diagram
\begin{equation*}
\begin{tikzcd}
	{\Pre(\cat{C})(y(U), \Delta_F(X))} & {\Pre(\cat{C})(\overline{r}, \Delta_F(X))} \\
	{\Pre(\cat{D})(\Sigma_F(y(U)), X)} & {\Pre(\cat{D})(\Sigma_F(\overline{r}), X)}
	\arrow[from=1-1, to=1-2]
	\arrow["\cong"', from=1-1, to=2-1]
	\arrow["\cong", from=1-2, to=2-2]
	\arrow[from=2-1, to=2-2]
\end{tikzcd}    
\end{equation*}
Now since $X$ is a $j'$-sheaf, by the sheafification adjunction we have
\begin{equation*}
    \begin{tikzcd}
	{\Pre(\cat{C})(y(U), \Delta_F(X))} & {\Pre(\cat{C})(\overline{r}, \Delta_F(X))} \\
	{\Sh(\cat{D}, j')(a\Sigma_F(y(U)), X)} & {\Sh(\cat{D}, j')(a\Sigma_F(\overline{r}), X)}
	\arrow[from=1-1, to=1-2]
	\arrow["\cong"', from=1-1, to=2-1]
	\arrow["\cong", from=1-2, to=2-2]
	\arrow[from=2-1, to=2-2]
\end{tikzcd}
\end{equation*}
Now by Lemma \ref{lem sigma on representable}, we have $a\Sigma_F(y(U)) \cong ay(F(U))$, and furthermore by Lemma \ref{prop sieves are coequalizers of generating family} and Proposition \ref{prop covering flat iff preserves finite limits} we have
\begin{equation*}
\begin{aligned}
    a\Sigma_F(\overline{r}) & \cong a\Sigma_F \left( \text{coeq} \left( \sum_{i,j \in I} y(U_i) \times_{y(U)} y(U_j) \rightrightarrows \sum_{i \in I} y(U_i) \right) \right) \\
    & \cong \text{coeq} \left( \sum_{i, j \in I} ay(F(U_i)) \times_{ay(F(U))} ay(F(U_j)) \rightrightarrows \sum_{i \in I} ay(F(U_i)) \right) \\
    & \cong a \, \text{coeq} \left( \sum_{i, j \in I} y(F(U_i)) \times_{y(F(U))} y(F(U_j)) \rightrightarrows \sum_{i \in I} y(F(U_i)) \right) \\
    & \cong a \overline{F(r)}.
\end{aligned}
\end{equation*}
Therefore we have 
\begin{equation*}
    \begin{tikzcd}
	{\Sh(\cat{D}, j')(a\Sigma_F(y(U)), X)} & {\Sh(\cat{D}, j')(a\Sigma_F(\overline{r}), X)} \\
	{\Sh(\cat{D}, j')(ay(F(U)), X)} & {\Sh(\cat{D},j')(a\overline{F(r)}, X)} \\
	{\Pre(\cat{D})(y(F(U)), X)} & {\Pre(\cat{D})(\overline{F(r)}, X)}
	\arrow[from=1-1, to=1-2]
	\arrow["\cong"', from=1-1, to=2-1]
	\arrow["\cong", from=1-2, to=2-2]
	\arrow[from=2-1, to=2-2]
	\arrow["\cong"', from=2-1, to=3-1]
	\arrow["\cong", from=2-2, to=3-2]
	\arrow[from=3-1, to=3-2]
\end{tikzcd}
\end{equation*}
So by Proposition \ref{prop sheaves are local iso local} $\Delta_F(X)$ is a sheaf on $r$ if and only if $\overline{F(r)} \hookrightarrow y(F(U))$ is a $j'$-local epimorphism, which is the same thing as saying that $F(r)$ is $j'$-saturating. Since $F$ is a morphism of sites, $\Delta_F(X)$ is a sheaf on $r$.

We obtain the desired adjunction $a \Sigma_F \dashv \Delta_F$ as follows. First note that if $X \in \Pre(\cat{C})$, then by the coYoneda Lemma \ref{lem coyoneda lemma} we have
\begin{equation*}
   a\Sigma_F(X) \cong a\Sigma_F \left( \ncolim{x : y(U) \to X} \, y(U) \right) \cong \ncolim{x : y(U) \to X} a \Sigma_F(y(U)) 
\end{equation*}
and by Lemma \ref{lem sigma on representable}, we have
\begin{equation*}
    a \Sigma_F(X) \cong \ncolim{x : y(U) \to X} a y(F(U)).
\end{equation*}
Thus if $X \in \Sh(\cat{C}, j)$ and $Y \in \Sh(\cat{D}, j')$, then
\begin{equation}
    \begin{aligned}
        \Sh(\cat{D}, j')(a \Sigma_F(X), Y) & \cong \Sh(\cat{D}, j')\left( \ncolim{x : y(U) \to X} a y(F(U)), Y \right) \\
        & \cong \lim_{x : y(U) \to X} \Sh(\cat{D}, j)(a y(F(U)), Y) \\
        & \cong \lim_{x : y(U) \to X} \Pre(\cat{D})(y(F(U)), iY) \\
        & \cong \lim_{x : y(U) \to X} Y(F(U)) \\
        & \cong \Sh(\cat{C}, j)\left( \ncolim{x : y(U) \to X} y(U), \Delta_F(Y) \right) \\
        & \cong \Sh(\cat{C}, j)(X, \Delta_F(Y))
    \end{aligned}
\end{equation}
\end{proof}

\begin{Not}
In what follows, if $F: (\cat{C}, j) \to (\cat{D}, j')$ is a morphism of sites, then we let $F_* : \Sh(\cat{D}, j') \to \Sh(\cat{C}, j)$ denote the functor $\Delta_F$ restricted to $\Sh(\cat{D}, j')$, and we let $F^*$ denote its left adjoint. This is in accordance with the usual notation used for geometric morphisms.
\end{Not}

\begin{Cor} \label{cor morphism of sites induces geometric morphism of topoi}
A morphism of sites $F: (\cat{C}, j) \to (\cat{D}, j')$ induces a geometric morphism of sheaf topoi:
\begin{equation}
    \begin{tikzcd}
	{\Sh(\cat{C},j)} && {\Sh(\cat{D},j')}
	\arrow[""{name=0, anchor=center, inner sep=0}, "{F^* = a \Sigma_F}", curve={height=-18pt}, from=1-1, to=1-3]
	\arrow[""{name=1, anchor=center, inner sep=0}, "{F_* = \Delta_F}", curve={height=-18pt}, from=1-3, to=1-1]
	\arrow["\dashv"{anchor=center, rotate=-90}, draw=none, from=0, to=1]
\end{tikzcd}
\end{equation}
Furthermore the following diagram commutes up to isomorphism
\begin{equation*}
    \begin{tikzcd}
	{\cat{C}} & {\cat{D}} \\
	{\Sh(\cat{C},j)} & {\Sh(\cat{D},j')}
	\arrow["F", from=1-1, to=1-2]
	\arrow["ay"', from=1-1, to=2-1]
	\arrow["ay", from=1-2, to=2-2]
	\arrow["{F^* = a\Sigma_F}"', from=2-1, to=2-2]
\end{tikzcd}
\end{equation*}
\end{Cor}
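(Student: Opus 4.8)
The plan is to assemble the corollary from the two preceding results, which between them already carry all the substance; what remains is bookkeeping together with a short computation for the commuting triangle. First I would record that $F^* := a\Sigma_F$ and $F_* := \Delta_F$ form an adjunction $F^* \dashv F_*$ between $\Sh(\cat{C}, j)$ and $\Sh(\cat{D}, j')$. This is exactly the content of Proposition \ref{prop morphism of sheaves induced by morphism of sites}, which shows both that $\Delta_F$ carries $j'$-sheaves to $j$-sheaves (so that $F_*$ is a functor with the claimed source and target) and that it is right adjoint to the restriction of $a\Sigma_F$ to $\Sh(\cat{C}, j)$.

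Next I would verify the one remaining hypothesis in Definition \ref{def geometric morphism}, namely that $F^*$ preserves finite limits. Since $F$ is a morphism of sites it is in particular covering flat (Definition \ref{def morphism of sites}), so Proposition \ref{prop covering flat iff preserves finite limits} gives that $a\Sigma_F : \Pre(\cat{C}) \to \Sh(\cat{D}, j')$ preserves finite limits. Because finite limits in $\Sh(\cat{C}, j)$ are created by the inclusion into $\Pre(\cat{C})$ (Corollary \ref{cor sheaf topoi are bicomplete}), the restriction $F^*$ preserves them as well. Combined with the adjunction of the previous step, this is precisely the data of a geometric morphism.

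Finally, for the commuting square I would establish $F^* \circ ay \cong ay \circ F$. The cleanest route is to identify the two functors $\Pre(\cat{C}) \to \Sh(\cat{D}, j')$ given by $a\Sigma_F$ and by $F^* \circ a$ via uniqueness of left adjoints: unravelling the sheafification adjunction and the $\Sigma_F \dashv \Delta_F$ adjunction shows both share the right adjoint $Y \mapsto \Delta_F Y$, so they agree up to canonical isomorphism. Evaluating at $y(U)$ then gives $F^*(ay(U)) \cong a\Sigma_F(y(U))$, and Lemma \ref{lem sigma on representable} (identifying $\Sigma_F y(U) \cong y(F(U))$) yields $a\Sigma_F(y(U)) \cong ay(F(U))$; naturality in $U$ supplies the required isomorphism of functors. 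An alternative to the adjoint-uniqueness identification is to observe that the unit $\eta_{y(U)} : y(U) \to ay(U)$ is a $j$-local isomorphism and that $a\Sigma_F$ inverts such maps (its right adjoint lands in $j$-sheaves, the $W$-local objects of Proposition \ref{prop sheafification inverts local isos}), so that $F^*(ay(U)) = a\Sigma_F(ay(U)) \cong a\Sigma_F(y(U))$.

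The main obstacle — mild, given the machinery already in place — is purely one of care in this last step: keeping track that $F^*$ is literally $a\Sigma_F$ precomposed with the inclusion $\Sh(\cat{C}, j) \hookrightarrow \Pre(\cat{C})$, and justifying the passage between $a\Sigma_F$ on presheaves and $F^*$ on sheaves (equivalently, the legitimacy of inverting $\eta_{y(U)}$). Everything else reduces to a direct citation of Proposition \ref{prop morphism of sheaves induced by morphism of sites}, Proposition \ref{prop covering flat iff preserves finite limits}, and Definition \ref{def geometric morphism}.
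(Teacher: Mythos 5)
Your proposal is correct and is exactly the argument the paper intends: the corollary is stated without proof as an immediate consequence of Proposition \ref{prop morphism of sheaves induced by morphism of sites} (the adjunction), Proposition \ref{prop covering flat iff preserves finite limits} together with Corollary \ref{cor sheaf topoi are bicomplete} (finite-limit preservation of $F^*$), and Lemma \ref{lem sigma on representable} (the commuting square, which already appears implicitly in the computation $a\Sigma_F(y(U)) \cong ay(F(U))$ inside the proof of that proposition). Both of your routes to the square are valid; nothing is missing.
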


\begin{Def} \label{def morita equivalence}
We say that a morphism of sites $F : (\cat{C}, j) \to (\cat{D}, j')$ is a \textbf{Morita equivalence} if $F^*$ (equivalently $F_*$) is an equivalence. We say that two sites are Morita equivalent, if there exists a zig-zag of Morita equivalences between them. 
\end{Def}

\begin{Lemma} \label{lem morphism of sites iff morphism on saturations}
A functor $F : (\cat{C}, j) \to (\cat{D}, j')$ is a morphism of sites if and only if $F: (\cat{C}, \sat{j}) \to (\cat{D}, \sat{j'})$ is morphisms of sites.
\end{Lemma}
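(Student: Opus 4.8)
The plan is to unwind both directions of the equivalence through the characterizations of the defining conditions of a morphism of sites: covering flatness and preservation of saturating families. The key observation is that both conditions, as they appear in Definition \ref{def morphism of sites}, are phrased entirely in terms of $j$-local epimorphisms and $j$-saturating families, and these notions are insensitive to passing from a coverage $j$ to its saturation $\sat{j}$. Specifically, I would first invoke Lemma \ref{lem local epi iff local epi on saturation closure}, which says that a map of presheaves is a $j$-local epimorphism if and only if it is a $\sat{j}$-local epimorphism, and the analogous statement for $j'$ and $\sat{j'}$.

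For the covering flatness condition, recall (Definition \ref{def covering flat}) that $F$ is covering flat for the target site if for every finite diagram $d : I \to \cat{C}$ the map $K_d : \Sigma_F\,\Cone(d) \to \Cone(Fd)$ is a $j'$-local epimorphism. By Lemma \ref{lem local epi iff local epi on saturation closure} applied to the target, $K_d$ is a $j'$-local epimorphism if and only if it is a $\sat{j'}$-local epimorphism. Since covering flatness only references the target coverage, and the source coverage plays no role in the definition of $K_d$, this immediately shows that $F$ is $j'$-covering flat if and only if $F$ is $\sat{j'}$-covering flat, irrespective of whether we use $j$ or $\sat{j}$ on the source.

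For the saturating-family condition, I would use Corollary \ref{cor saturating iff covering} together with the description of saturating families. By Definition \ref{def morphism of sites} and Remark \ref{rem morphism of sites sufficient conditions}, $F$ is a morphism of sites precisely when it is covering flat and sends $\sat{j}$-covering families to $\sat{j'}$-covering families; equivalently, it sends $j$-saturating families to $j'$-saturating families. The crucial point is that the saturating families of $j$ and of $\sat{j}$ coincide: by Corollary \ref{cor saturating iff covering}, the covering families of the saturated coverage $\sat{j}$ are exactly the $j$-saturating families, and moreover $\sat{(\sat{j})} = \sat{j}$ since $\sat{j}$ is already saturated (Proposition \ref{prop sat closure smallest saturated coverage}). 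Hence ``$F$ sends $j$-saturating families to $j'$-saturating families'' and ``$F$ sends $\sat{j}$-saturating families to $\sat{j'}$-saturating families'' are literally the same condition. Combining this with the covering flatness equivalence from the previous paragraph gives that $F$ is a morphism of sites $(\cat{C}, j) \to (\cat{D}, j')$ if and only if it is a morphism of sites $(\cat{C}, \sat{j}) \to (\cat{D}, \sat{j'})$.

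I expect the main (and really only) subtlety to be bookkeeping: making sure the notion of ``saturating family'' is correctly read relative to the right coverage, since it is defined via local epimorphisms which are themselves coverage-dependent, and then confirming via Lemma \ref{lem local epi iff local epi on saturation closure} that this dependence collapses under saturation. There are no genuinely hard calculations here; the entire proof is an assembly of the already-established facts that sheaves, local epimorphisms, and saturating families are all invariant under replacing a coverage by its saturation. The one thing to state carefully is that both the source and target must be saturated simultaneously, so I would explicitly note that replacing $j$ by $\sat{j}$ on the source changes nothing about the target-side covering flatness computation, and replacing $j'$ by $\sat{j'}$ on the target changes nothing by the local-epimorphism invariance.
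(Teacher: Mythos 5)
Your proof is correct and follows essentially the same route as the paper's: Lemma \ref{lem local epi iff local epi on saturation closure} handles the covering-flatness condition on the target, and the observation that a family is $j$-saturating iff it is $\sat{j}$-covering iff it is $\sat{j}$-saturating handles the preservation condition. The extra care you take in noting that the source coverage plays no role in the definition of $K_d$ is a nice touch but does not change the substance of the argument.
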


\begin{proof}
Firstly $F$ is $j'$-covering flat (Definition \ref{def covering flat}) if for every finite diagram $d : I \to \cat{C}$ the map $\Sigma_F \Cone(d) \to \Cone(F(d))$ is a $j'$-local epimorphism. But by Lemma \ref{lem local epi iff local epi on saturation closure}, the above map is a $j'$-local epimorphism if and only if it is a $\sat{j'}$-local epimorphism, and hence $F$ is $j'$-covering flat if and only if it is $\sat{j'}$-covering flat. Thus we need only to show that $F$ preserves $j$-saturating families if and only if it preserves $\sat{j}$-saturating families. But a family is $j$-saturating if and only if it is $\sat{j}$-covering if and only if it is $\sat{j}$-saturating. Thus $F$ is a morphism of sites if and only if it is a morphism of sites on the saturations.
\end{proof}

\subsubsection{Comorphisms of sites}

\begin{Def} \label{def comorphism of sites}
Given sites $(\cat{C}, j)$ and $(\cat{D}, j')$, a \textbf{comorphism of sites}\footnote{This is also called a cover-reflecting functor in \cite[Section C.2.3]{johnstone2002sketches}. We took the name comorphism of sites from \cite[Section 3.3]{caramello2020morphism}.} $F : (\cat{C}, j) \to (\cat{D}, j')$ is a functor $F : \cat{C} \to \cat{D}$ such that for every $U \in \cat{C}$ and every $j'$-saturating family $r$ on $F(U)$, there exists a $j$-saturating family $t$ on $U$ and a refinement $F(t) \leq r$.
\end{Def}

\begin{Lemma} \label{lem comorphism iff comorphism on saturations}
A functor $F: (\cat{C}, j) \to (\cat{D}, j')$ is a comorphism of sites if and only if $F : (\cat{C}, \sat{j}) \to (\cat{D}, \sat{j'})$ is a comorphism of sites.
\end{Lemma}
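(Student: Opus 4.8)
The plan is to exploit the fact that Definition \ref{def comorphism of sites} is already phrased purely in terms of \emph{saturating} families, never directly invoking covering families. Reading the definition for $F : (\cat{C}, j) \to (\cat{D}, j')$, the hypothesis family $r$ on $F(U)$ is required to be $j'$-saturating and the witnessing family $t$ on $U$ is required to be $j$-saturating; reading it for $F : (\cat{C}, \sat{j}) \to (\cat{D}, \sat{j'})$, the same roles are played by $\sat{j'}$-saturating and $\sat{j}$-saturating families respectively. So the entire lemma reduces to the single observation that, for any coverage $k$ on any category, the $k$-saturating families coincide with the $\sat{k}$-saturating families.

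To prove this observation, I would chase the definition of the saturation closure. By Definition \ref{def saturation closure}, $\sat{k}(U)$ is exactly the set of $k$-saturating families on $U$, so the $\sat{k}$-saturating families on $U$ are, by the same token, the members of $\sat{\sat{k}}(U)$. By Proposition \ref{prop sat closure smallest saturated coverage}, $\sat{k}$ is itself a saturated coverage, and hence by Corollary \ref{cor saturating iff covering} we have $\sat{\sat{k}} = \sat{k}$. Evaluating at $U$ gives $\sat{\sat{k}}(U) = \sat{k}(U)$, which is precisely the statement that a family is $\sat{k}$-saturating if and only if it is $k$-saturating.

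With this in hand the equivalence is immediate: applying the observation to $k = j$ on $\cat{C}$ and to $k = j'$ on $\cat{D}$, the two comorphism conditions become word-for-word identical, since each clause ``$r$ is $j'$-saturating'' may be replaced by ``$r$ is $\sat{j'}$-saturating'' and each clause ``$t$ is $j$-saturating'' by ``$t$ is $\sat{j}$-saturating'' without changing its meaning. In particular the refinement condition $F(t) \leq r$ is untouched. I expect no genuine obstacle here; the argument runs parallel to that of Lemma \ref{lem morphism of sites iff morphism on saturations}, and the only point demanding care is the bookkeeping identity $\sat{\sat{k}} = \sat{k}$, i.e. idempotence of the saturation closure, which is exactly what lets saturating families be transported between a coverage and its saturation.
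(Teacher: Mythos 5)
Your proposal is correct and follows essentially the same route as the paper, which likewise reduces the lemma to the observation that a family is $k$-saturating if and only if it is $\sat{k}$-covering if and only if it is $\sat{k}$-saturating; your derivation of this via the idempotence $\sat{\sat{k}} = \sat{k}$ (from Proposition \ref{prop sat closure smallest saturated coverage} and Corollary \ref{cor saturating iff covering}) just makes that one-line observation slightly more explicit. No gaps.
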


\begin{proof}
This follows from the simple observation that a family $r$ is $j$-saturating if and only if it is $\sat{j}$-covering if and only if it is $\sat{j}$-saturating.
\end{proof}

\begin{Prop} \label{prop geometric morphism from comorphism of sites}
If $F :(\cat{C}, j) \to (\cat{D}, j')$ is a comorphism of sites, then it induces a geometric morphism
\begin{equation}
\begin{tikzcd}
	{\Sh(\cat{C},j)} && {\Sh(\cat{D},j')}
	\arrow[""{name=0, anchor=center, inner sep=0}, "{\Pi_F}"', curve={height=18pt}, from=1-1, to=1-3]
	\arrow[""{name=1, anchor=center, inner sep=0}, "{a \Delta_F}"', curve={height=18pt}, from=1-3, to=1-1]
	\arrow["\dashv"{anchor=center, rotate=-90}, draw=none, from=1, to=0]
\end{tikzcd} 
\end{equation}
\end{Prop}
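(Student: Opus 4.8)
The plan is to use the presheaf adjoint triple $\Sigma_F \dashv \Delta_F \dashv \Pi_F$ from Lemma \ref{lem presheaf adjoint triple} and to exhibit the inverse image of the geometric morphism as $a\Delta_F$ and its direct image as the restriction of $\Pi_F$ to sheaves. The heart of the matter is a single claim: if $X$ is a $j$-sheaf on $\cat{C}$, then $\Pi_F X$ is a $j'$-sheaf on $\cat{D}$. Everything else will be formal.

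First I would establish this claim. By Lemma \ref{lem sheaf on covering family iff on sieve it generates} it suffices to verify the sheaf condition of $\Pi_F X$ on every $j'$-covering sieve $R \hookrightarrow y(V)$, and by Corollary \ref{cor sheaf condition on a sieve} this amounts to showing that the restriction map $\Pre(\cat{D})(y(V), \Pi_F X) \to \Pre(\cat{D})(R, \Pi_F X)$ is a bijection. Transporting this along the adjunction $\Delta_F \dashv \Pi_F$, and using that $\Delta_F$ preserves limits (hence monomorphisms) because it is a right adjoint, the claim reduces to showing that the monomorphism $\Delta_F R \hookrightarrow \Delta_F y(V)$ of presheaves on $\cat{C}$ induces a bijection $\Pre(\cat{C})(\Delta_F y(V), X) \to \Pre(\cat{C})(\Delta_F R, X)$. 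Since $X$ is a sheaf, Proposition \ref{prop sheaves are local iso local} together with Corollary \ref{cor sheaves are local iso local even without saturation} shows this bijection holds as soon as $\Delta_F R \hookrightarrow \Delta_F y(V)$ is a $j$-local isomorphism; being a monomorphism it is automatically a $j$-local monomorphism, so I only need it to be a $j$-local epimorphism.

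Proving that $\Delta_F R \hookrightarrow \Delta_F y(V)$ is a $j$-local epimorphism is the key step, and where the comorphism hypothesis enters. A section of $\Delta_F y(V)$ over $U$ is, by Yoneda, a morphism $\phi : F(U) \to V$ in $\cat{D}$, and this section lies in $\Delta_F R$ precisely when $\phi \in R$. The covering sieve $R$ is $j'$-saturating (Lemma \ref{lem saturating iff sifted closure is saturating}), so by Lemma \ref{lem family saturating iff j-tree coverage cond holds} there is a $j'$-tree $S$ on $F(U)$ with $\phi_*(S^\circ) \leq R$, i.e. $\phi\, s \in R$ for every $s \in S^\circ$. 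Now $S^\circ$ is a $j'$-saturating family on $F(U)$, so the comorphism condition (Definition \ref{def comorphism of sites}) furnishes a $j$-saturating family $t$ on $U$ with $F(t) \leq S^\circ$; by Lemma \ref{lem family saturating iff refined by j-tree} I may then choose a $j$-tree $T$ on $U$ with $T^\circ \leq t$. For any $g$ in $T^\circ$, factoring $g$ through some $t_k \in t$ and then $F(t_k)$ through some $s \in S^\circ$, and using that $R$ is closed under precomposition, I obtain $\phi \circ F(g) \in R$; that is, the restriction of $\phi$ along $g$ lands in $\Delta_F R$. This exactly witnesses $\Delta_F R \hookrightarrow \Delta_F y(V)$ as a $j$-local epimorphism in the sense of Definition \ref{def local epimorphism of presheaves}, completing the claim.

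With the claim in hand, $\Pi_F$ restricts to a functor $\Sh(\cat{C}, j) \to \Sh(\cat{D}, j')$, and the adjunction $a\Delta_F \dashv \Pi_F$ follows from the chain of natural bijections
\[
\Sh(\cat{C},j)(a\Delta_F Y, X) \cong \Pre(\cat{C})(\Delta_F Y, X) \cong \Pre(\cat{D})(Y, \Pi_F X) \cong \Sh(\cat{D}, j')(Y, \Pi_F X),
\]
valid for $X \in \Sh(\cat{C},j)$ and $Y \in \Sh(\cat{D},j')$, using the sheafification adjunction with $X$ a sheaf, then $\Delta_F \dashv \Pi_F$, then fullness of $\Sh(\cat{D}, j')$ together with the fact that $\Pi_F X$ is a sheaf. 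Finally, to see that this is a geometric morphism I must check that the inverse image $a\Delta_F$ preserves finite limits: $\Delta_F$ preserves all limits since it is a right adjoint, $a$ preserves finite limits by Proposition \ref{prop sheafification props}, and finite limits in both sheaf categories agree with those computed in the presheaf categories by Corollary \ref{cor sheaf topoi are bicomplete}, so the composite $a\Delta_F$ is left exact. The main obstacle is the local-epimorphism computation in the third paragraph, where the interaction between pulling the saturating sieve $R$ back along $\phi$ and the cover-reflecting property of $F$ must be threaded together carefully.
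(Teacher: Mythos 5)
Your proposal is correct and follows essentially the same route as the paper's proof: reduce the claim that $\Pi_F$ preserves sheaves to showing $\Delta_F R \hookrightarrow \Delta_F y(V)$ is a $j$-local isomorphism, use the comorphism hypothesis to transfer a $j'$-saturating family on $F(U)$ to a $j$-saturating family (hence a $j$-tree) on $U$, and then conclude with the same adjunction chain and the left-exactness of $a\Delta_F$. The only cosmetic difference is that you obtain the $j'$-tree on $F(U)$ via the saturating-family characterization of the covering sieve $R$, where the paper invokes the coverage axiom directly to produce a covering family pushing forward into $R$; these are interchangeable.
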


\begin{proof}
Since limits in $\Sh(\cat{D}, j')$ are computed objectwise (Proposition \ref{prop (co)limits in reflective subcategories}), and $\Delta_F : \Pre(\cat{D}) \to \Pre(\cat{C})$ has a left adjoint, it preserves limits, and $a$ preserves finite limits by Proposition \ref{prop sheafification props}. Thus $a \Delta_F$ preserves finite limits. 

Now we need to show that if $X$ is a sheaf on $(\cat{C}, j)$, then $\Pi_F(X)$ is a sheaf on $(\cat{D}, j')$. Given $V \in \cat{D}$ and $r = \{ r_k : V_k \to V \} \in j(V)$, with $i : \overline{r} \hookrightarrow y(V)$ then by Lemma \ref{cor sheaves on sifted closure}, $\Pi_F(X)$ is a sheaf on $r$ if the canonical map
\begin{equation*}
 \Pre(\cat{D})(i, \Pi_F(X)):  \Pre(\cat{D})(y(V), \Pi_F(X)) \to \Pre(\cat{D})(\overline{r}, \Pi_F(X))
\end{equation*}
is an isomorphism.

Now by the adjunction $\Pi_F : \Pre(\cat{C}) \rightleftarrows \Pre(\cat{D}) : \Delta_F$, we have the following commutative diagram
\begin{equation*}
\begin{tikzcd}
	{\Pre(\cat{D})(y(V), \Pi_F(X))} & {\Pre(\cat{D})(\overline{r}, \Pi_F(X))} \\
	{\Pre(\cat{C})(\Delta_F(y(V)), X)} & {\Pre(\cat{C})(\Delta_F(\overline{r}), X)}
	\arrow[from=1-1, to=1-2]
	\arrow["\cong"', from=1-1, to=2-1]
	\arrow["\cong", from=1-2, to=2-2]
	\arrow[from=2-1, to=2-2]
\end{tikzcd}    
\end{equation*}
Thus by Proposition \ref{prop sheaves are local iso local}, it is enough to show that $\Delta_F(\overline{r}) \to \Delta_F(y(V))$ is a $j$-local isomorphism, because $X$ is a $j$-sheaf. 

Now $\Delta_F$ has a left adjoint and therefore preserves monomorphisms. Thus $\Delta_F(i)$ is a monomorphism, and hence a $j$-local monomorphism.

Now let us show that $\Delta_F(i): \Delta_F(\overline{r}) \to \Delta_F(y(V))$ is a $j$-local epimorphism. Suppose that we have a section $x : y(U) \to \Delta_F(y(V))$. This is the same thing as a map $x : F(U) \to V$. We want to find a $j$-tree $T$ on $U$ such that $F(T^\circ)$ refines $r$. 

Firstly, since $j$ is a coverage, there exists a $j'$-covering family $s = \{ s_\ell : W_\ell \to F(U) \}$ with $s \in j(F(U))$ such that $x_*(s) \leq r$. But since $F$ is a comorphism of sites, and $s$ being $j'$-covering implies that it is $j'$-saturating, there exists a $j$-saturating family $t = \{t_\ell : B_\ell \to U \}$ on $U$ such that $F(t) \leq s$. But by Lemma \ref{lem family saturating iff refined by j-tree}, since $t$ is $j$-saturating, there exists a $j$-tree $T$ on $U$ and a refinement $T^\circ \leq t$. Thus if $T^\circ = \{ g_i : U_i \to U \}_{i \in I}$, then we have the following commutative diagram
\begin{equation*}
\begin{tikzcd}
	{F(U_i)} & {F(B_\ell)} & {W_k} & {V_j} \\
	{F(U)} & {F(U)} & {F(U)} & V
	\arrow[from=1-1, to=1-2]
	\arrow["{F(g_i)}"', from=1-1, to=2-1]
	\arrow[from=1-2, to=1-3]
	\arrow["{{F(t_\ell)}}"', from=1-2, to=2-2]
	\arrow[from=1-3, to=1-4]
	\arrow["s"', from=1-3, to=2-3]
	\arrow["{{r_j}}"', from=1-4, to=2-4]
	\arrow[Rightarrow, no head, from=2-1, to=2-2]
	\arrow[Rightarrow, no head, from=2-2, to=2-3]
	\arrow["x"', from=2-3, to=2-4]
\end{tikzcd}
\end{equation*}
so that $x_*(F(T^\circ)) \leq r$.
Which implies that there are morphisms $y(U_i) \to \Delta_F(\overline{r})$ for every $i \in I$ making the following diagram commute
\begin{equation*}
    \begin{tikzcd}
	{y(U_i)} & {\Delta_F(\overline{r})} \\
	{y(U)} & {\Delta_F(y(V))}
	\arrow[from=1-1, to=1-2]
	\arrow["{t_i}"', from=1-1, to=2-1]
	\arrow["{\Delta_F(i)}", hook, from=1-2, to=2-2]
	\arrow["x", from=2-1, to=2-2]
\end{tikzcd}
\end{equation*}
In other words $\Delta_F(i)$ is a $j$-local epimorphism, and hence a $j$-local isomorphism. Thus $\Pi_F(X)$ is a $j'$-sheaf.

Now suppose that $X \in \Sh(\cat{D}, j')$ and $Y \in \Sh(\cat{C}, j)$, then we have
\begin{equation*}
\begin{aligned}
    \Sh(\cat{C}, j)(a \Delta_F(X), Y) & \cong \Pre(\cat{C})(\Delta_F(X), iY) \\
    & \cong \Pre(\cat{D})(X, \Pi_F(iY)) \\
    & \cong \Sh(\cat{D}, j')(X, \Pi_F(Y)).
\end{aligned}
\end{equation*}
Where the last isomorphism holds because $\Pi_F$ sends sheaves to sheaves, $X$ is a sheaf, and $i : \Pre(\cat{D}) \hookrightarrow \Sh(\cat{D}, j')$ is fully faithful, though we abuse notation above.
\end{proof}

\begin{Lemma} \label{lem slice site is comorphism of sites}
Given a site $(\cat{C}, j)$ with $U \in \cat{C}$, the projection functor $\pi_{/U} : \cat{C}_{/U} \to \cat{C}$ is a comorphism of sites $\pi_{/U} : (\cat{C}_{/U}, j_{/U}) \to (\cat{C}, j)$, see Example \ref{ex slice site}.
\end{Lemma}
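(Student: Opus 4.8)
The plan is to unwind Definition \ref{def comorphism of sites} in this concrete setting and then exploit the fact that, via the projection $\pi_{/U}$, covering families on a slice object are by definition (Example \ref{ex slice site}) exactly the liftings of covering families downstairs. Concretely, fix an object $g : V \to U$ of $\cat{C}_{/U}$, so that $\pi_{/U}(g) = V$, and let $r$ be a $j$-saturating family on $V$. What I must produce is a $j_{/U}$-saturating family $t$ on $g$ together with a refinement $\pi_{/U}(t) \leq r$. The guiding idea is that the slice coverage $j_{/U}$ is a faithful copy of $j$ living above $g$, so trees and composites should transport cleanly through $\pi_{/U}$.

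The key structural step I would carry out first is to establish a correspondence between $j_{/U}$-trees on $g$ and $j$-trees on $V$. A morphism in $\cat{C}_{/U}$ with codomain $g$ is a $\cat{C}$-morphism $\phi : W \to V$ whose structure map to $U$ is forced to be $g\phi$; consequently $\pi_{/U}$ induces a bijection between paths on $g$ (Definition \ref{def tree}) in $\cat{C}_{/U}$ and paths on $V$ in $\cat{C}$, since every path downstairs lifts uniquely by declaring the structure maps to be the induced composites to $U$. Under this bijection, a tree $T$ on $g$ is a $j_{/U}$-tree if and only if its projection $\pi_{/U}(T)$ is a $j$-tree on $V$: indeed, conditions (1) and (2) of Definition \ref{def j-tree} at each node ask precisely that the outgoing maps form a $j_{/U}$-covering family, which by Example \ref{ex slice site} is equivalent to their projections forming a $j$-covering family on the corresponding object of $\cat{C}$. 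Moreover this bijection is compatible with composites, $\pi_{/U}(T^\circ) = (\pi_{/U}T)^\circ$, because the composite of a maximal path of slice morphisms projects to the composite of the underlying $\cat{C}$-morphisms.

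With this in hand the conclusion is short. Since $r$ is $j$-saturating, Lemma \ref{lem family saturating iff refined by j-tree} supplies a $j$-tree $S$ on $V$ with a refinement $S^\circ \leq r$. I lift $S$ to the unique $j_{/U}$-tree $\widetilde{S}$ on $g$ projecting to it, and set $t \coloneqq \widetilde{S}^\circ$. Then $t$ is $j_{/U}$-saturating: it is the composite of the $j_{/U}$-tree $\widetilde{S}$, hence refined by a $j_{/U}$-tree (namely $\widetilde{S}$ itself, via the identity refinement), so it is saturating again by Lemma \ref{lem family saturating iff refined by j-tree}. Finally $\pi_{/U}(t) = \pi_{/U}(\widetilde{S}^\circ) = S^\circ \leq r$, which is exactly the refinement required by Definition \ref{def comorphism of sites}. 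Therefore $\pi_{/U}$ is a comorphism of sites.

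The only place where genuine care is needed — and hence the main obstacle — is verifying the tree correspondence of the second paragraph: one must check that the lifting of a path is well defined and unique (so that the structure maps over $U$ are never a source of ambiguity), and that the node-by-node covering condition really does pass through $\pi_{/U}$ in both directions. Everything else is a direct application of Lemma \ref{lem family saturating iff refined by j-tree} together with the definition of $j_{/U}$, so once the transport of $j_{/U}$-trees to $j$-trees is nailed down the argument closes immediately.
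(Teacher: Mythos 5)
Your argument is correct and complete. The paper states Lemma \ref{lem slice site is comorphism of sites} without proof, so there is no official argument to compare against; but your route is the natural one. The load-bearing observation is exactly the one you isolate: for a fixed object $g : V \to U$ of $\cat{C}_{/U}$, the projection $\pi_{/U}$ restricts to an isomorphism of categories $(\cat{C}_{/U})_{/g} \cong \cat{C}_{/V}$ (a morphism into $g$ is just a $\cat{C}$-morphism $\phi : W \to V$ with structure map forced to be $g\phi$), and by the definition of $j_{/U}$ in Example \ref{ex slice site} this isomorphism matches covering families on the nose. Paths, the node-by-node conditions of Definition \ref{def j-tree}, and composites all transport along this isomorphism, so $j_{/U}$-trees on $g$ correspond bijectively to $j$-trees on $V$ with $\pi_{/U}(T^\circ) = (\pi_{/U}T)^\circ$. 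Combined with Lemma \ref{lem family saturating iff refined by j-tree} in both categories, the comorphism condition of Definition \ref{def comorphism of sites} follows immediately, as you say. The one cosmetic remark: since $t = \widetilde{S}^\circ$ is itself the composite of a $j_{/U}$-tree, you could equally cite Lemma \ref{lem saturating iff sifted closure is saturating} or just the identity refinement in Lemma \ref{lem family saturating iff refined by j-tree}, as you do; either way the step is sound.
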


\subsubsection{Examples of Morphisms of Sites} \label{section examples of morphisms of sites}

\begin{Lemma}
Given a site $(\cat{C}, j)$, the identity functor $1_\cat{C}$ induces a morphism of sites $1_\cat{C} : (\cat{C}, j_{\text{triv}}) \to (\cat{C}, j)$, where $j_{\text{triv}}$ is the trivial coverage on $\cat{C}$, Example \ref{ex canonical coverages}. The induced geometric morphism is precisely the inclusion-sheafification adjunction
\begin{equation*}
  \begin{tikzcd}
	{\Sh(\cat{C},j)} && {\Pre(\cat{C})}
	\arrow["i"', shift right=2, hook, from=1-1, to=1-3]
	\arrow["a"', shift right=2, from=1-3, to=1-1]
\end{tikzcd}  
\end{equation*}
\end{Lemma}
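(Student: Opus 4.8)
The plan is to verify the two assertions in turn: first that $1_\cat{C}$ genuinely is a morphism of sites $(\cat{C}, j_{\text{triv}}) \to (\cat{C}, j)$, and second that the geometric morphism it induces via Corollary \ref{cor morphism of sites induces geometric morphism of topoi} is the sheafification--inclusion adjunction. For the first assertion I would appeal to Remark \ref{rem morphism of sites sufficient conditions}, which tells us it suffices to check that $1_\cat{C}$ is $j$-covering flat and that it carries $j_{\text{triv}}$-covering families to $j$-covering families.

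To check covering flatness I would use the criterion of Proposition \ref{prop covering flat iff preserves finite limits}: the functor $1_\cat{C}$ is covering flat precisely when the composite $\Pre(\cat{C}) \xrightarrow{\Sigma_{1_\cat{C}}} \Pre(\cat{C}) \xrightarrow{a} \Sh(\cat{C}, j)$ preserves finite limits. Since the left Kan extension along the identity is naturally isomorphic to the identity functor, $\Sigma_{1_\cat{C}} \cong 1_{\Pre(\cat{C})}$, so this composite is just the sheafification functor $a$, which preserves finite limits by Proposition \ref{prop sheafification props}.(1). The covering-family condition is immediate: by Example \ref{ex canonical coverages} the only $j_{\text{triv}}$-covering families are the identity singletons $(1_U)$, and $1_\cat{C}(1_U) = (1_U) \in j(U)$ is a $j$-covering family by the coverage axiom (Definition \ref{def coverage}). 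Hence $1_\cat{C}$ is a morphism of sites.

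For the second assertion, I would first observe that $\Sh(\cat{C}, j_{\text{triv}}) = \Pre(\cat{C})$, since every presheaf is a sheaf on the nonempty family of isomorphisms $(1_U)$ by Lemma \ref{lem always a sheaf on an isomorphism}. Corollary \ref{cor morphism of sites induces geometric morphism of topoi} then identifies the induced geometric morphism as the adjunction $F^* = a\Sigma_{1_\cat{C}} \dashv F_* = \Delta_{1_\cat{C}}$ between $\Pre(\cat{C})$ and $\Sh(\cat{C}, j)$. Using $\Sigma_{1_\cat{C}} \cong 1_{\Pre(\cat{C})}$ again gives $F^* \cong a$, while $\Delta_{1_\cat{C}}$ is precomposition with the identity and is therefore the identity functor on presheaves, which restricted to sheaves is exactly the inclusion $i : \Sh(\cat{C}, j) \hookrightarrow \Pre(\cat{C})$, so $F_* \cong i$. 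Thus the induced adjunction is $a \dashv i$, which is precisely the sheafification--inclusion adjunction of Proposition \ref{prop sheafification props}.(2).

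There is no serious obstacle here; the content is entirely in assembling the earlier machinery. The only points that require a little care are the two standard identifications $\Sigma_{1_\cat{C}} \cong 1_{\Pre(\cat{C})}$ and $\Delta_{1_\cat{C}} \cong 1_{\Pre(\cat{C})}$ (left Kan extension and precomposition along an identity functor), together with the attendant reading of ``precisely'' as ``up to canonical natural isomorphism''. Since adjoints are unique up to isomorphism, matching the two functors $F^* \cong a$ and $F_* \cong i$ is enough to identify the entire induced geometric morphism with the adjunction $a \dashv i$.
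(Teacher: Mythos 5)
Your proof is correct; the paper in fact states this lemma without proof, and your argument is exactly the intended assembly of the surrounding machinery (Remark \ref{rem morphism of sites sufficient conditions} plus Proposition \ref{prop covering flat iff preserves finite limits} for covering flatness, the identification $\Sh(\cat{C}, j_{\text{triv}}) = \Pre(\cat{C})$ via Lemma \ref{lem always a sheaf on an isomorphism}, and Corollary \ref{cor morphism of sites induces geometric morphism of topoi} with $\Sigma_{1_\cat{C}} \cong \Delta_{1_\cat{C}} \cong 1_{\Pre(\cat{C})}$). The only remark worth adding is that covering flatness could also be seen even more directly, since $K_d$ for the identity functor is itself an isomorphism $\Cone(d) \to \Cone(d)$ and hence a $j$-local epimorphism without invoking preservation of finite limits by $a$.
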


\begin{Lemma}
Given a site $(\cat{C}, j)$, let $(*, j_{\text{triv}})$ denote the terminal category equipped with the trivial coverage. Then the unique functor $\cat{C} \to *$ is a morphism of sites, and the induced geometric morphism is
\begin{equation*}
    \begin{tikzcd}
	{\Sh(\cat{C},j)} && {\ncat{Set}}
	\arrow[""{name=0, anchor=center, inner sep=0}, "\Gamma"', curve={height=16pt}, from=1-1, to=1-3]
	\arrow[""{name=1, anchor=center, inner sep=0}, "{a(-)_c}"', curve={height=16pt}, from=1-3, to=1-1]
	\arrow["\dashv"{anchor=center, rotate=-90}, draw=none, from=1, to=0]
\end{tikzcd}
\end{equation*}
where $a(-)_c$ sends a set $S$ to the sheafification of the constant presheaf $S_c$, and $\Gamma$ is the functor of global sections, namely $\Gamma(X) = \ncat{Sh}(\cat{C}, j)(\ncat{1}, X)$, where $\ncat{1}$ is the terminal sheaf.
\end{Lemma}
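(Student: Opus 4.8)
The plan is to verify the two requirements of Definition \ref{def morphism of sites} for the unique functor $F : \cat{C} \to *$ by way of the sufficient criterion in Remark \ref{rem morphism of sites sufficient conditions}: that $F$ is covering flat and that it carries $j$-saturating families to $j_{\text{triv}}$-saturating families. Having done so, I would produce the adjoint pair and identify its two legs explicitly with $\Gamma$ and $a(-)_c$. At the outset it helps to note that $\Sh(*, j_{\text{triv}}) = \Pre(*) = \ncat{Set}$, since by Lemma \ref{lem always a sheaf on an isomorphism} every presheaf on the terminal category is a sheaf for $(1_*)$; consequently sheafification on the target is the identity and $F^* = a\Sigma_F$ reduces to $\Sigma_F \cong \colim_{\cat{C}^{\op}}(-) : \Pre(\cat{C}) \to \ncat{Set}$.

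First I would dispose of the saturating-family condition, which is clean. The key point is that every $j$-saturating family is nonempty: by Lemma \ref{lem family saturating iff refined by j-tree} such a family $r$ is refined by the composite $T^\circ$ of some $j$-tree $T$, and by Definition \ref{def tree} the composite of a $j$-tree is never empty (the empty path always extends to a maximal path, with the convention $T^\circ = (1_U)$ in the degenerate case), so the refinement $T^\circ \to r$ forces the index set of $r$ to be nonempty. Now $F$ sends a nonempty family $\{r_i : U_i \to U\}_{i \in I}$ to the family $\{1_* : * \to *\}_{i \in I}$, whose generated sieve is the maximal sieve $y(*)$; since a family $t$ on $*$ is $j_{\text{triv}}$-saturating exactly when $\overline{t} = y(*)$, i.e. exactly when $t$ is nonempty, the image $F(r)$ is $j_{\text{triv}}$-saturating.

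The main obstacle is covering flatness. By Proposition \ref{prop covering flat iff preserves finite limits}, $F$ is covering flat if and only if $a\Sigma_F : \Pre(\cat{C}) \to \ncat{Set}$ preserves finite limits, which by the reduction above is the colimit functor $\colim_{\cat{C}^{\op}}$. I would analyze this through the cone-factorization criterion of Proposition \ref{prop set flat iff cone factorization holds}: for each finite diagram $d : I \to \cat{C}$ the canonical epimorphism $\sum_{\lambda : \Delta(U) \to d} y(U) \twoheadrightarrow \Cone(d)$ must remain epi after applying $\Sigma_F$, which unwinds to the requirement that every finite diagram in $\cat{C}$ admit a cone, i.e. that $\cat{C}$ be cofiltered. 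This is the genuinely delicate step, and it is exactly where any structural hypothesis on $\cat{C}$ must enter; once a cone over each finite $d$ is available, the comparison map $K_d$ is a $j_{\text{triv}}$-local (that is, an ordinary) epimorphism and covering flatness follows.

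Finally I would identify the induced geometric morphism, which I expect to be the most robust part and which I would carry out directly rather than tracking the bookkeeping of Corollary \ref{cor morphism of sites induces geometric morphism of topoi}. The claim is that $a(-)_c \dashv \Gamma$ is a geometric morphism $\Sh(\cat{C}, j) \to \ncat{Set}$. For the adjunction, given $S \in \ncat{Set}$ and a sheaf $X$,
\begin{equation*}
\Sh(\cat{C}, j)(a(S_c), X) \cong \Pre(\cat{C})(S_c, X) \cong \ncat{Set}\!\left(S, \lim X\right) \cong \ncat{Set}(S, \Gamma(X)),
\end{equation*}
where the first isomorphism is the sheafification adjunction, the second records that a natural transformation out of the constant presheaf $S_c$ is precisely a cone, i.e. a map $S \to \lim X$, and the third uses $\lim X \cong \Pre(\cat{C})(\ncat{1}, X) = \Sh(\cat{C}, j)(\ncat{1}, X) = \Gamma(X)$ for the terminal (pre)sheaf $\ncat{1}$. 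For left exactness of the inverse image, $a(-)_c = a \circ \Delta$ factors through the constant-presheaf functor $\Delta : \ncat{Set} \to \Pre(\cat{C})$, which preserves all limits as these are computed pointwise, followed by sheafification $a$, which preserves finite limits by Proposition \ref{prop sheafification props}; hence $a(-)_c$ is left exact and $(a(-)_c, \Gamma)$ is the asserted geometric morphism.
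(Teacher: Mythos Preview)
The paper states this lemma without proof (it sits in the ``Examples of Morphisms of Sites'' subsection), so there is nothing to compare against directly. Your analysis is largely sound, and in fact you have correctly put your finger on a genuine defect in the \emph{statement}, not in your own argument: the functor $F:\cat{C}\to *$ is covering flat with respect to $j_{\text{triv}}$ precisely when every finite diagram in $\cat{C}$ admits a cone, i.e.\ when $\cat{C}$ is cofiltered. Your reduction via Proposition~\ref{prop covering flat iff preserves finite limits} to left exactness of $\colim_{\cat{C}^{\op}}$ is the right diagnosis, and a concrete counterexample is immediate: take $\cat{C}$ discrete on two objects with the trivial coverage, and let $d$ be the inclusion of both objects; then $\Sigma_F\Cone(d)=\varnothing$ while $\Cone(Fd)=*$, so $K_d$ is not epi. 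Your verification that $F$ preserves saturating families (via nonemptiness of $j$-tree composites) is correct.

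There is a second issue you did not flag. Even when $\cat{C}$ \emph{is} cofiltered and $F$ is a morphism of sites, the geometric morphism produced by Corollary~\ref{cor morphism of sites induces geometric morphism of topoi} is
\[
F^*=\Sigma_F=\underset{\cat{C}^{\op}}{\colim}\;\dashv\;F_*=\Delta_F=(-)_c,
\]
a geometric morphism $\ncat{Set}\to\Sh(\cat{C},j)$ (that is, a \emph{point} in the sense of Section~\ref{section points of a site}), not the terminal geometric morphism $a(-)_c\dashv\Gamma:\Sh(\cat{C},j)\to\ncat{Set}$ displayed in the lemma. These sit on opposite sides of the triple $\colim\dashv\Delta\dashv\lim$, and the identification claimed in the statement does not hold. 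Your final paragraph correctly and independently constructs $a(-)_c\dashv\Gamma$ as the terminal geometric morphism of any Grothendieck topos; that argument is fine, but it is not the pair Corollary~\ref{cor morphism of sites induces geometric morphism of topoi} attaches to $F$.
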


\begin{Rem}
It is an unfortunate fact of life that if $(\cat{C}, j)$ is a site with $U \in \cat{C}$, then $\pi_{/U} : \cat{C}_{/U} \to \cat{C}$ is not in general a morphism of sites. For example, if $\cat{C}$ is the discrete category on two objects $0$ and $1$, equipped with the trivial coverage, then $\pi_{/0}$ is a morphism of sites if and only if it is representably flat. But $(1 \downarrow \pi_{/0})$ is empty, and therefore not cofiltered. However it is a comorphism of sites always by Lemma \ref{lem slice site is comorphism of sites}. See the discussion after \cite[Lemma C.2.3.3]{johnstone2002sketches} for more on this. 
\end{Rem}



\subsection{Dense Morphisms of Sites}

\begin{Def}[{\cite[Definition 11.1]{shulman2012exact}}] \label{def site dense functor}
Given sites $(\cat{C}, j)$ and $(\cat{D}, j')$, we say a functor $F : \cat{C} \to \cat{D}$ is \textbf{site dense} if the following conditions hold:
\begin{enumerate}
    \item[(D1)] a family of morphisms $r$ is a $j$-saturating family if and only if $F(r)$ is a $j'$-saturating family,
    \item[(D2)] for every $V \in \cat{D}$, there exists a $j'$-saturating family of $V$ of the form $\{F(U_i) \to V \}$,
    \item[(D3)] for every pair $U, U' \in \cat{C}$, and morphism $g: F(U) \to F(U')$, there exists a $j$-saturating family $a = \{a_i : U_i \to U \}_{i \in I}$ and a family $b = \{ b_i : U_i \to U' \}_{i \in I}$ (notice that $a$ and $b$ have the same domains) over $U'$ such that $g \circ F(a) = F(b)$, and
    \item[(D4)] for every $U, U' \in \cat{C}$ and morphisms $f,g : U \to U'$ such that $F(f) = F(g)$, there exists a $j$-saturating family $r = \{r_i : U_i \to U \}_{i \in I}$ such that $f r_i = g r_i$ for all $i \in I$.
\end{enumerate}
\end{Def}

\begin{Rem} \label{rem usual cases for dense functors}
Usually $\cat{C}$ is taken to be a subcategory of $\cat{D}$, and $F$ is the inclusion functor. In this case condition (D4) holds trivially. Furthermore if $\cat{C}$ is a full subcategory of $\cat{D}$, then condition (D3) holds trivially. By \cite[Remark 5.2]{caramello2020morphism}, if $F$ is a morphism of sites, then (D4) holds. Note that in general site dense functors are not representably flat, hence the necessity of using covering flatness.
\end{Rem}

\begin{Lemma}
Given sites $(\cat{C}, j)$, $(\cat{D}, j')$ and a functor $F: \cat{C} \to \cat{D}$, if $F$ satisfies the condition that a family of morphisms $r$ is $j$-covering if and only if $F(r)$ is $j'$-covering, then $F$ satisfies (D1).
\end{Lemma}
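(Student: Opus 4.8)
The plan is to reduce (D1) to a statement about $j$-trees and then transport it along $F$. By Corollary \ref{cor saturating iff covering} together with Lemma \ref{lem family saturating iff refined by j-tree}, a family $r$ on $U$ is $j$-saturating precisely when there is a $j$-tree $T$ on $U$ with a refinement $T^\circ \leq r$, and likewise $F(r)$ is $j'$-saturating precisely when some $j'$-tree on $F(U)$ refines $F(r)$. So (D1) becomes the assertion that $U$ carries a $j$-tree refining $r$ if and only if $F(U)$ carries a $j'$-tree refining $F(r)$. The two structural facts I would use throughout are that $F$, being a functor, preserves refinements (it carries the commuting triangle witnessing $r \leq t$ to one witnessing $F(r) \leq F(t)$) and commutes with pushforward of families, $F(g_*(t)) = F(g)_*(F(t))$ (Definition \ref{def pushforward of a family}); and the hypothesis, that $F$ both sends $j$-covering families to $j'$-covering families and reflects them.

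For the forward implication I would first establish the key auxiliary claim that $F$ carries $j$-trees to $j'$-trees in the precise form: for every $j$-tree $T$ on $U$ there is a $j'$-tree $S$ on $F(U)$ with $S^\circ = F(T^\circ)$. I would prove this by induction on the height of $T$. The base cases are immediate, since a height-$0$ tree has $T^\circ = (1_U)$ and $F(1_U)=1_{F(U)}$, while a height-$1$ tree has $T^\circ$ a $j$-covering family, whose image is $j'$-covering by hypothesis. For the inductive step I would write a height-$(n+1)$ tree operadically as a composite $T^\circ = (r \circ t) = \bigcup_i (r_i)_*(t^i)$ of its first-level covering family $r$ with the height-$\leq n$ subtrees over the leaves, apply the induction hypothesis to each subtree, and reassemble the resulting $j'$-trees using Lemma \ref{lem j-trees closed under composition}; because $F$ commutes with pushforward and with unions, the composite's composite family is exactly $F(T^\circ)$. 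Granting this, the forward implication follows at once: if $T^\circ \leq r$ then applying $F$ and using preservation of refinements gives $S^\circ = F(T^\circ) \leq F(r)$ with $S$ a $j'$-tree, so $F(r)$ is $j'$-saturating.

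The backward implication is where the genuine difficulty lies, and I expect it to be the main obstacle. Here we are handed a $j'$-tree $S$ on $F(U)$ with $S^\circ \leq F(r)$ and must manufacture a $j$-tree $T$ on $U$ with $T^\circ \leq r$. The naive strategy of "lifting $S$ along $F$" is problematic, because the covering families out of which $S$ is assembled live on objects of $\cat{D}$ and need not be of the form $F(\rho)$ for any family $\rho$ in $\cat{C}$; the reflection half of the hypothesis only constrains families that are literally $F$-images, so it does not directly apply to the internal nodes of $S$. The plan I would pursue is to process $S$ from the root downward, at each stage using the coverage axiom for $j$ to produce a candidate $j$-covering family on the relevant object of $\cat{C}$ whose $F$-image refines the corresponding level of $S$, then invoking the reflection hypothesis to conclude that the candidate is genuinely $j$-covering, and finally using the refinement $S^\circ \leq F(r)$ to force the leaves to descend into $r$; I would book-keep this entirely in terms of $\sat{j}$-covering families via Corollary \ref{cor saturating iff covering} so that the closure operations absorb the recursion. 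The delicate point to verify, and the step I would scrutinize most carefully, is exactly that every covering family appearing in $S$ can be traced back through $F$: this is where the preserve-and-reflect condition must do all of the work, and it is the node of the argument where one may find that an additional hypothesis on $F$ (fullness, or reflection of the relevant factorizations) is what actually makes the reflection of saturating families go through.
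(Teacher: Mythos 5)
Your forward direction is correct and is exactly the paper's argument: send a $j$-tree refining $r$ through $F$, using that $F$ preserves covering families and hence $j$-trees, and that functors preserve refinements and pushforwards. Your induction on tree height is a reasonable way to make the assertion ``$F$ takes $j$-trees to $j'$-trees'' precise.

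Your suspicion about the converse is not just well-founded --- the converse is \emph{false} under the stated hypotheses, and the paper's own proof of this lemma is in fact left unfinished at precisely this point (it breaks off after ``Conversely suppose that $F(r)$ is $j'$-saturating. Then by''). The obstruction is exactly the one you name: the covering families making up a $j'$-tree on $F(U)$ need not be $F$-images, so the reflection hypothesis never gets a chance to act on them; worse, even the refinements witnessing $S^\circ \leq F(r)$ may use morphisms of $\cat{D}$ that do not come from $\cat{C}$. Here is a concrete counterexample. Let $\cat{C}$ have objects $U, A, B$ and non-identity morphisms $a : A \to U$, $b : B \to U$, with $j(U) = \{(1_U), \{a,b\}\}$ and only identity covers elsewhere; one checks directly that this is a coverage. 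Let $\cat{D}$ be $\cat{C}$ with one additional morphism $s : B \to A$ satisfying $a s = b$, let $j'$ be given by the same covering families, and let $F$ be the inclusion. Then $F$ preserves and reflects covering families, since the covering families on both sides are literally the same sets of morphisms. Every $j$-tree on $U$ has composite $(1_U)$ or $\{a,b\}$, and neither refines the singleton family $(a)$ in $\cat{C}$ because $b$ does not factor through $a$ there; so $(a)$ is not $j$-saturating. But in $\cat{D}$ we have $b = a s$, so $\{a,b\} \leq (a)$ and $F((a)) = (a)$ \emph{is} $j'$-saturating. Hence (D1) fails. (Note that (D3) fails for this $F$, since $s$ is not in the image of $F$ and cannot be covered by morphisms that are; so the example does not contradict the Comparison Lemma.) The upshot is that preserve-and-reflect of covering families does not imply (D1); reflection of saturating families genuinely requires additional input, such as the remaining density conditions or fullness of $F$, and the lemma as stated needs to be weakened to the forward implication only or strengthened in its hypotheses.
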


\begin{proof}
Suppose that $r$ is $j$-saturating. Then there exists a $j$-tree $T$ such that $T^\circ \leq r$. Thus $F(T^\circ) \leq F(r)$. Since $F$ takes $j$-covering families to $j'$-covering families, it takes $j$-trees to $j'$-trees. Hence $F(r)$ is $j'$-saturating. 

Conversely suppose that $F(r)$ is $j'$-saturating. Then by 
\end{proof}

\begin{Lemma} \label{lem site dense functor iff site dense on saturation}
A functor $F : (\cat{C}, j) \to (\cat{D}, j')$ is a site dense functor if and only if $F: (\cat{C}, \sat{j}) \to (\cat{D}, \sat{j'})$ is a site dense functor.
\end{Lemma}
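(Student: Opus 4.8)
Lemma (site dense functor iff site dense on saturation).

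The plan is to reduce each of the four conditions (D1)--(D4) for the pair $(\cat{C}, j)$, $(\cat{D}, j')$ to the corresponding condition for the pair $(\cat{C}, \sat{j})$, $(\cat{D}, \sat{j'})$, using throughout the single key fact established earlier: a family $r$ is $j$-saturating if and only if it is $\sat{j}$-covering if and only if it is $\sat{j}$-saturating (this is Corollary \ref{cor saturating iff covering} together with Definition \ref{def saturation closure}, and exactly the observation used to prove Lemma \ref{lem morphism of sites iff morphism on saturations} and Lemma \ref{lem comorphism iff comorphism on saturations}). Because of this equivalence, the phrase ``$j$-saturating family'' and the phrase ``$\sat{j}$-saturating family'' denote literally the same class of families of morphisms over any object. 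This is the entire engine of the proof: every clause in Definition \ref{def site dense functor} is phrased purely in terms of which families are saturating, so replacing $j$ by $\sat{j}$ and $j'$ by $\sat{j'}$ does not change the meaning of any clause.

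Concretely, I would argue clause by clause. For (D1), the condition ``$r$ is $j$-saturating iff $F(r)$ is $j'$-saturating'' is, by the key fact applied on both sides, identical to ``$r$ is $\sat{j}$-saturating iff $F(r)$ is $\sat{j'}$-saturating,'' so the condition is preserved verbatim. For (D2), the existence of a $j'$-saturating family over $V$ of the form $\{F(U_i) \to V\}$ is the same as the existence of a $\sat{j'}$-saturating family of that form, again by the key fact. For (D3), the data required is a $j$-saturating family $a = \{a_i : U_i \to U\}_{i \in I}$ together with a family $b = \{b_i : U_i \to U'\}_{i \in I}$ satisfying $g \circ F(a) = F(b)$; since ``$j$-saturating'' and ``$\sat{j}$-saturating'' pick out the same families $a$ and the auxiliary family $b$ is unconstrained by the coverage, the two versions of (D3) ask for exactly the same witnesses. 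For (D4), the required $j$-saturating family $r = \{r_i : U_i \to U\}$ with $f r_i = g r_i$ is, once more, the same as a $\sat{j}$-saturating family with the same equalizing property.

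Since each of (D1)--(D4) is equivalent across the two pairs of sites, their conjunction is equivalent, which is precisely the statement that $F : (\cat{C}, j) \to (\cat{D}, j')$ is site dense if and only if $F : (\cat{C}, \sat{j}) \to (\cat{D}, \sat{j'})$ is site dense. I do not expect any genuine obstacle here: the proof is essentially a bookkeeping argument that no clause in the definition ever refers to a coverage except through its class of saturating families, and that class is invariant under saturation. The only point requiring care is to cite the correct earlier results for the equivalence ``$j$-saturating $\Leftrightarrow$ $\sat{j}$-covering $\Leftrightarrow$ $\sat{j}$-saturating'' (Corollary \ref{cor saturating iff covering} and Lemma \ref{lem saturating iff sifted closure is saturating}) so that the reader sees the single fact driving all four reductions, exactly as in the analogous proofs of Lemmas \ref{lem morphism of sites iff morphism on saturations} and \ref{lem comorphism iff comorphism on saturations}.
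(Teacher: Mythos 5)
Your proof is correct and takes exactly the same route as the paper, whose entire argument is the one-line observation that a family is $j$-saturating iff it is $\sat{j}$-covering iff it is $\sat{j}$-saturating, so that each clause of Definition \ref{def site dense functor} is literally unchanged when $j$ and $j'$ are replaced by their saturations. Your clause-by-clause elaboration is just a more explicit write-up of that same observation.
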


\begin{proof}
This follows from the simple observation that a family $r$ is $j$-saturating if and only if it is $\sat{j}$-covering if and only if it is $\sat{j}$-saturating.
\end{proof}

\begin{Lemma}[{\cite[Theorem 11.2]{shulman2012exact}}] \label{lem site dense functors are morphisms of sites}
If $(\cat{C}, j)$ and $(\cat{D}, j')$ are sites and $F: (\cat{C}, j) \to (\cat{D}, j')$ is a site dense functor, then it is a morphism of sites.
\end{Lemma}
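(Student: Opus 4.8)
The plan is to reduce to the saturated case and then establish covering flatness through the local cofilteredness characterization, feeding (D2)--(D4) into the three clauses of Definition \ref{def locally cofiltered category}. By Definition \ref{def morphism of sites}, $F$ is a morphism of sites as soon as (a) it sends $j$-saturating families to $j'$-saturating families and (b) it is covering flat. Condition (a) is precisely the forward implication of (D1), so nothing needs doing there, and the real content is (b). For that I would first invoke Lemma \ref{lem site dense functor iff site dense on saturation} and Lemma \ref{lem morphism of sites iff morphism on saturations} to replace $j, j'$ by $\sat{j}, \sat{j'}$; thus I may assume both coverages are saturated, so that by Corollary \ref{cor saturating iff covering} ``saturating'' and ``covering'' coincide, every covering family is the composite of a height-one $j'$-tree, and (by Lemma \ref{lem covering families closed under intersection in saturated coverages}) any two covering families of a fixed object have a common refinement. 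I would then prove covering flatness by checking that $(V \downarrow F)$ is locally cofiltered for every $V \in \cat{D}$, which suffices by the characterization stated just after Definition \ref{def locally cofiltered category}.

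The uniform scheme for all three clauses is: begin with a $j'$-covering family $\{p_k : F(W_k) \to V\}$ supplied by (D2); pull the given data back along the $p_k$ so that its sources become images $F(W_k)$; transport it into $\cat{C}$ using (D3) (and (D4)); and reassemble a $j'$-tree on $V$ by composing $\{p_k\}$ with the $F$-images of the resulting $j$-covering families, which is legitimate because $j'$ is composition closed. In every case the comparison maps $h_i$ will simply be identities $1_{F(Z_i)}$. Clause (1) is then immediate: the (D2)-family is itself a height-one $j'$-tree whose leaves $F(W_k)$ each carry $1_{F(W_k)} \in (F(W_k) \downarrow F)$, so the comma categories are nonempty.

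Clause (2) will be the main obstacle. Given $f : V \to F(U)$ and $g : V \to F(U')$, I would apply (D3) twice: to $f p_k : F(W_k) \to F(U)$ to obtain a $j$-covering family $a^k = \{a^k_m : W^k_m \to W_k\}$ with maps $b^k_m : W^k_m \to U$ satisfying $(f p_k)F(a^k_m) = F(b^k_m)$, and to $g p_k : F(W_k) \to F(U')$ to obtain $c^k = \{c^k_n : Y^k_n \to W_k\}$ with $e^k_n : Y^k_n \to U'$ satisfying $(g p_k)F(c^k_n) = F(e^k_n)$. The delicate point is that $a^k$ and $c^k$ are distinct covering families on $W_k$; here I would use finite cofilteredness of covering families to pick a common refinement $q^k = \{q^k_\ell : X^k_\ell \to W_k\}$, writing $q^k_\ell = a^k_{m(\ell)} t_\ell = c^k_{n(\ell)} s_\ell$. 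Composing $\{p_k\}$ with the families $\{F(q^k_\ell)\}_\ell$ (which are $j'$-covering by (D1)) produces the required $j'$-tree with leaves $r_{k\ell} = p_k F(q^k_\ell)$; setting $Z_{k\ell} = X^k_\ell$, $f_{k\ell} = b^k_{m(\ell)} t_\ell$, $g_{k\ell} = e^k_{n(\ell)} s_\ell$ and $h_{k\ell} = 1$, the required identities $F(f_{k\ell}) = f r_{k\ell}$ and $F(g_{k\ell}) = g r_{k\ell}$ fall out by functoriality from the (D3) relations. The only genuine work is the bookkeeping of this common refinement and the two factorizations.

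Finally, clause (3) follows the same template but leans on (D4). Given $f, g : U \to U'$ and $h : V \to F(U)$ with $F(f)h = F(g)h$, I would apply (D3) to each $h p_k$ to get $a^k = \{a^k_m\}$ and $b^k_m : W^k_m \to U$ with $F(b^k_m) = (h p_k)F(a^k_m)$; a short chase using $F(f)h = F(g)h$ gives $F(f b^k_m) = F(g b^k_m)$, so (D4) yields a $j$-covering family $s^{km} = \{s^{km}_p\}$ on $W^k_m$ with $f b^k_m s^{km}_p = g b^k_m s^{km}_p$. Composing $\{p_k\}$, $\{F(a^k_m)\}$ and $\{F(s^{km}_p)\}$ gives the needed $j'$-tree, and taking $k_{kmp} = b^k_m s^{km}_p$ with $h = 1$ verifies both $f k_{kmp} = g k_{kmp}$ and $F(k_{kmp}) = h\, r_{kmp}$. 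With clauses (1)--(3) established, $(V \downarrow F)$ is locally cofiltered for all $V$, hence $F$ is covering flat, and together with (a) this exhibits $F$ as a morphism of sites.
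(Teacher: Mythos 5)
Your proposal is correct, and the underlying toolkit is the same as the paper's (reduce to the saturated case via Lemma \ref{lem site dense functor iff site dense on saturation}, get covering-preservation from (D1), then combine (D2), (D3), common refinements from Lemma \ref{lem covering families closed under intersection in saturated coverages}, and (D4)), but the decomposition is genuinely different. The paper verifies Definition \ref{def covering flat} head-on: it takes an arbitrary finite diagram $d : I \to \cat{C}$ and an arbitrary cone $\sigma : \Delta(V) \to Fd$, uses (D2) and (D3) to produce candidate legs $b^{k,i}$, takes a meet of covering families over the finite object set of $I$, and then applies (D4) once per morphism of $I$ (with another meet) to force the legs into an actual cone over $d$. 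You instead verify the three clauses of Definition \ref{def locally cofiltered category} -- nonemptiness, binary spans, parallel pairs -- and invoke the equivalence with covering flatness. Your clause-by-clause checks are sound (the identities $F(f_{k\ell}) = f r_{k\ell}$ etc.\ do follow by functoriality from the (D3) relations as you claim, and $F(q^k)$ is $j'$-covering by (D1) in the saturated setting), and each check is smaller and more concrete than the paper's all-at-once argument. What you buy is modularity; what you pay is that the reduction from arbitrary finite diagrams to those three clauses is exactly the lemma the paper states but leaves to the reader, so in a self-contained write-up you would still owe that reworking of Lemma \ref{lem alternate def of filtered}, at which point the total work is comparable to the paper's direct route.
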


\begin{proof}
By Lemma \ref{lem site dense functor iff site dense on saturation}, we can assume that both $(\cat{C}, j)$ and $(\cat{D}, j')$ are saturated sites. Then we can replace all occurrences of ``saturating'' in Definition \ref{def site dense functor} with ``covering''.

Now (D1) implies that $F$ sends $j$-covering families to $j'$-covering families, so we need only to show that $F$ is covering flat.

Suppose that $d : I \to \cat{C}$ is a finite diagram and $\sigma : \Delta(V) \to Fd$ is a cone. We want to show that $\sigma$ locally factors through a cone over $d$. By (D2), we know there is a $j'$-covering family $r = \{r_k : F(U_k) \to V \}_{k \in K}$, and thus for every $k \in K$ and $i \in I$, we have a map
\begin{equation*}
    F(U_k) \xrightarrow{r_k} V \xrightarrow{\sigma_i} F(d(i)).
\end{equation*}
Thus by (D3), there exists a $j$-covering family $a^{k,i} = \{ W^{k,i}_\ell \to U_k \}_{\ell \in L^{k,i}}$ and a family $b^{k,i} = \{b^{k,i}_\ell : W^{k,i}_\ell \to d(i) \}_{\ell \in L^{k,i}}$ such that $\sigma_i r_k F(a^{k,i}) = F(b^{k,i})$.

In other words, for every $\ell$, $k$ and $i$, we have the following commutative diagram
\begin{equation*}
    \begin{tikzcd}
	{F(U_k)} & {F(W^{k,i}_\ell)} \\
	V & {F(d(i))}
	\arrow["{r_k}"', from=1-1, to=2-1]
	\arrow["{F(a^{k,i}_\ell)}"', from=1-2, to=1-1]
	\arrow["{F(b^{k,i}_\ell)}", from=1-2, to=2-2]
	\arrow["{\sigma_i}"', from=2-1, to=2-2]
\end{tikzcd}
\end{equation*}
Note that $r \circ F(a^{k,i})$ is a $j'$-covering family.

Now since $I$ is finite, we can consider the meet $a^k = \bigwedge_{i \in I} a^{k,i}$ by Lemma \ref{lem meets in saturated coverages}. Let us write $a^k = \{ a^k_n : W^k_n \to U_k \}_{n \in N^k}$. So for every $k \in K$ and $i \in I$, there exists some $n \in N^k$ and $\ell \in L^{k,i}$ and a commutative diagram
\begin{equation*}
    \begin{tikzcd}
	{W^k_n} & {W^{k,i}_\ell} & {d(i)} \\
	& {U_k}
	\arrow["{s^{k,i}_\ell}", from=1-1, to=1-2]
	\arrow["{a^k_n}"', from=1-1, to=2-2]
	\arrow["{b^{k,i}_\ell}", from=1-2, to=1-3]
	\arrow["{a^{k,i}_\ell}", from=1-2, to=2-2]
\end{tikzcd}
\end{equation*}
Let us write $b^{k,i} = b^{k,i}_\ell s^{k,i}_\ell$. Then for every fixed $k \in K$, the family $\{ b^{k,i} : W^k_n \to d(i)\}$ forms a cone over the discrete diagram $\{ d(i) \}_{i \in I}$. Now we wish to modify the maps above to form a cone over $d$. 

So suppose that $f : i \to i'$ is a morphism in $I$. Then for each $k \in K$, applying (D4) to the maps $d(f) \circ b^{k,i}$ and $b^{k,i'}$, we obtain a $j$-covering family $g_f = \{ g^f_\alpha : B^f_\alpha \to W^k_n \}_{\alpha \in A_f}$ such that $d(f) b^{k,i} g^f_\alpha = b^{k,i'} g^f_\alpha$ for all $\alpha \in A^{k,n}$. In other words we have
\begin{equation*}
    \begin{tikzcd}
	&& {d(i)} \\
	{B^f_\alpha} & {W^k_n} \\
	&& {d(i')}
	\arrow["{d(f)}", from=1-3, to=3-3]
	\arrow["{g^f_\alpha}", from=2-1, to=2-2]
	\arrow["{b^{k,i}}", from=2-2, to=1-3]
	\arrow["{b^{k,i'}}"', from=2-2, to=3-3]
\end{tikzcd}
\end{equation*}
Now again since $I$ is finite, we can consider the meet $g = \bigwedge_{f \in \text{Mor}(I)} g^f$ which is a $j$-covering family on $W^k_n$. Then if we let $g = \{ g_\beta : B_\beta \to W^k_n \}$, then for each fixed $\beta$, the composite maps $\{ b^{k,i} g_\beta \}_{i \in I}$ form a cone over $d$. Composing $g_\beta$ with $a^k_n$ gives us the following commutative diagram for every $k$, $i$ and $\beta$.
\begin{equation*}
    \begin{tikzcd}
	{F(B_\beta)} & {F(B_\beta)} \\
	{F(W^k_n)} \\
	{F(U_k)} \\
	V & {F(d(i))}
	\arrow[Rightarrow, no head, from=1-1, to=1-2]
	\arrow["{F(g_\beta)}"', from=1-1, to=2-1]
	\arrow["{F(b^{k,i}g_\beta)}", from=1-2, to=4-2]
	\arrow["{F(a^k_n)}"', from=2-1, to=3-1]
	\arrow["{r_k}"', from=3-1, to=4-1]
	\arrow["{\sigma_i}"', from=4-1, to=4-2]
\end{tikzcd}
\end{equation*}
Furthermore the left hand families compose to give a $j$-covering family $(r \circ a^k \circ g)$ over $V$. Thus $F$ is covering flat.
\end{proof}

\begin{Rem}
Thanks to Lemma \ref{lem site dense functors are morphisms of sites}, we may also refer to a site dense functor simply as a \textbf{dense morphism of sites}.
\end{Rem}

\begin{Lemma} \label{lem site dense functors are comorphisms of sites}
If $F : (\cat{C}, j) \to (\cat{D}, j')$ is a site dense functor, then $F$ is a comorphism of sites.
\end{Lemma}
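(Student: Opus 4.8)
The plan is to reduce to the saturated case and then pull covering families back through $F$ by applying the site-density axioms (D1), (D2), (D3) in sequence. By Lemma \ref{lem comorphism iff comorphism on saturations} and Lemma \ref{lem site dense functor iff site dense on saturation}, both being a comorphism of sites and being site dense are insensitive to replacing $j, j'$ by their saturations $\sat{j}, \sat{j'}$. So I would assume from the outset that $j$ and $j'$ are saturated coverages; by Corollary \ref{cor saturating iff covering} this lets me use \emph{saturating} and \emph{covering} interchangeably, and in particular lets me freely compose covering families, since saturated coverages are composition closed.

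Fix $U \in \cat{C}$ and a $j'$-covering family $r = \{r_k : V_k \to F(U)\}_{k}$ on $F(U)$; the goal is to produce a $j$-saturating family $t$ on $U$ with $F(t) \leq r$. First I would apply (D2) to each object $V_k$ to obtain a $j'$-covering family $s^k = \{s^k_\ell : F(W^k_\ell) \to V_k\}_\ell$ whose domains lie in the image of $F$. Composing, the family $r \circ s = \{r_k s^k_\ell : F(W^k_\ell) \to F(U)\}_{k,\ell}$ is again $j'$-covering. Next, for each index $(k,\ell)$ I would feed the morphism $r_k s^k_\ell : F(W^k_\ell) \to F(U)$ into (D3) (applied to the pair of objects $W^k_\ell$ and $U$): this yields a $j$-covering family $a^{k,\ell} = \{a^{k,\ell}_m : Z^{k,\ell}_m \to W^k_\ell\}_m$ and a family $b^{k,\ell} = \{b^{k,\ell}_m : Z^{k,\ell}_m \to U\}_m$ with $(r_k s^k_\ell) \circ F(a^{k,\ell}) = F(b^{k,\ell})$, that is, $F(b^{k,\ell}_m) = r_k\, s^k_\ell\, F(a^{k,\ell}_m)$ for every $m$.

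I would then set $t = \bigcup_{k,\ell} b^{k,\ell}$, a family on $U$. The refinement $F(t) \leq r$ is immediate, since each map $F(b^{k,\ell}_m)$ equals $r_k \circ \big(s^k_\ell F(a^{k,\ell}_m)\big)$ and so factors through the member $r_k$ of $r$. To see that $t$ is $j$-saturating, by (D1) it suffices to check that $F(t)$ is $j'$-saturating. But each $F(a^{k,\ell})$ is a $j'$-covering family on $F(W^k_\ell)$ by (D1), so attaching $F(a^{k,\ell})$ to the member $r_k s^k_\ell$ of the $j'$-covering family $r \circ s$ and invoking composition closure of $\sat{j'}$ shows that $\bigcup_{k,\ell} (r_k s^k_\ell)_*\big(F(a^{k,\ell})\big)$ is $j'$-covering. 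This composite is exactly $\{F(b^{k,\ell}_m)\}_{k,\ell,m} = F(t)$, whence $F(t)$ is $j'$-covering and therefore $t$ is $j$-covering, as required.

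The two invocations of composition closure are routine; the step I expect to require the most care is the bookkeeping that guarantees the composite covering family literally coincides with $F(t)$ under $F$, so that (D1) transfers its coveringness back to $t$. It is worth noting that axiom (D4) plays no role here — it is what promotes site dense functors to genuine \emph{morphisms} of sites (Lemma \ref{lem site dense functors are morphisms of sites}), whereas the comorphism property needs only (D1), (D2), and (D3).
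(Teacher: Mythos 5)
Your proof is correct and follows essentially the same route as the paper: reduce to saturated coverages, use (D2) to cover each $V_k$ by objects in the image of $F$, feed the composites into (D3) to produce the families $a$ and $b$, then use composition closure of $\sat{j'}$ together with (D1) to conclude that $b$ is $j$-covering and refines $r$ under $F$. Your closing observation that (D4) is not needed here also matches the paper's argument.
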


\begin{proof}
By Lemma \ref{lem site dense functor iff site dense on saturation} and Lemma \ref{lem comorphism iff comorphism on saturations}, we can assume that $(\cat{C}, j)$ and $(\cat{D}, j')$ are saturated sites.
Suppose that $U \in \cat{C}$ and $r = \{r_k : V_k \to F(U)\}_{k \in K} \in j'(F(U))$. By (D2), there is a $j'$-covering family $s^k = \{s^k_i : F(U^k_i) \to V_k \}$ for every $k \in K$. Thus for every $i$ and $k$ we obtain a map 
$$F(U^k_i) \xrightarrow{s^k_i} V_k \xrightarrow{r_k} F(U).$$
So by (D3), we have a $j$-covering family $a = \{ a^{k,i}_\ell : W^{k,i}_\ell \to U^k_i\}$ and a family $b =\{b^{k,i}_\ell : W^{k,i}_\ell \to U \}$ such that the following diagram commutes
\begin{equation*}
   \begin{tikzcd}
	{F(U^k_i)} & {F(W^{k,i}_\ell)} \\
	{V_k} & {F(U)}
	\arrow["{s^k_i}"', from=1-1, to=2-1]
	\arrow["{F(a^{k,i}_\ell)}"', from=1-2, to=1-1]
	\arrow["{F(b^{k,i}_\ell)}", from=1-2, to=2-2]
	\arrow["{r_k}"', from=2-1, to=2-2]
\end{tikzcd} 
\end{equation*}
But note that since $F$ sends $j$-covering families to $j'$-covering families, the composite family $F(b) = (r \circ s^k \circ F(a))$ is a $j'$-covering family of $F(U)$. By (D1), $F$ reflects covering families, which implies that $b$ is a $j$-covering family of $U$. But we also have that $F(b) \leq r$, so $F$ is a comorphism of sites.
\end{proof}

\begin{Th}[The Comparison Lemma, {\cite[Theorem C.2.2.3]{johnstone2002sketches}, \cite[Theorem 11.8]{shulman2012exact}}] \label{th comparison lemma}
If $F: (\cat{C},j) \to (\cat{D},j')$ is a site dense functor, then the induced geometric morphism
\begin{equation*}
    \Delta_F = F_* : \Sh(\cat{D}, j') \to \Sh(\cat{C}, j)
\end{equation*}
is an equivalence of categories, i.e. $F$ is a Morita equivalence.
\end{Th}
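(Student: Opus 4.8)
The plan is to show that the geometric morphism induced by a site dense functor $F$ is an equivalence by verifying that $F_* = \Delta_F$ is both fully faithful and essentially surjective, or equivalently, by showing that the adjunction unit and counit of $F^* \dashv F_*$ are isomorphisms. Since by Lemma \ref{lem site dense functors are morphisms of sites} a site dense functor is a morphism of sites, and by Lemma \ref{lem site dense functors are comorphisms of sites} it is a comorphism of sites, $F$ carries \emph{both} adjoint structures. This is the key leverage: from the morphism-of-sites structure (Corollary \ref{cor morphism of sites induces geometric morphism of topoi}) we get the adjunction $F^* = a\Sigma_F \dashv F_* = \Delta_F$, while the comorphism structure (Proposition \ref{prop geometric morphism from comorphism of sites}) gives $a\Delta_F \dashv \Pi_F$. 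The functor $\Delta_F$ therefore appears as a right adjoint in the first adjunction. First I would reduce to the saturated case: by Lemma \ref{lem site dense functor iff site dense on saturation} we may replace $j, j'$ by $\sat{j}, \sat{j'}$ without changing the categories of sheaves (Proposition \ref{prop sheaf iff sheaf on saturation closure}), so throughout we may assume all covering and saturating families coincide and use (D1)--(D4) with ``covering'' in place of ``saturating.''

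The heart of the argument is to establish that $F_* = \Delta_F$ is fully faithful. I would do this by showing the counit $\epsilon: F^* F_* \Rightarrow 1_{\Sh(\cat{D},j')}$ is an isomorphism. Using the coYoneda presentation of any sheaf $X$ as a colimit of representables together with the formula $a\Sigma_F(y(U)) \cong ay(F(U))$ from Lemma \ref{lem sigma on representable} (already exploited in the proof of Proposition \ref{prop morphism of sheaves induced by morphism of sites}), it suffices to check the counit on objects of the form $ay(F(U))$ and, by density (D2), to check that these objects together with $F^*$ generate $\Sh(\cat{D},j')$. Concretely, condition (D2) guarantees that every $V \in \cat{D}$ admits a $j'$-covering family of the form $\{F(U_i) \to V\}$, so the representables $ay(F(U))$ form a dense (generating) family in $\Sh(\cat{D},j')$; conditions (D3) and (D4) then ensure that the hom-sets $\Sh(\cat{D},j')(ay(F(U)), ay(F(U')))$ are computed correctly as $j$-local data on $\cat{C}$, matching $\Sh(\cat{C},j)(ay(U), ay(U'))$. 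I would assemble these to prove that the composite $ay \colon \cat{C} \to \Sh(\cat{D},j')$ (via $F$) exhibits $\Sh(\cat{C},j)$ as equivalent to $\Sh(\cat{D},j')$, since both are the sheaf-theoretic completions of the ``same'' dense diagram.

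For essential surjectivity, I would argue that any sheaf $Y$ on $(\cat{D},j')$ is recovered from its restriction $F_*(Y) = \Delta_F(Y)$: using (D2) to cover each $V$ by images $F(U_i)$ and the sheaf condition on $Y$, the values $Y(V)$ are determined by the values $Y(F(U_i)) = \Delta_F(Y)(U_i)$ and their descent data, which (D3)+(D4) show are exactly the descent data living on $\cat{C}$. Thus the unit $\eta: 1 \Rightarrow F_* F^*$ restricted to sheaves is also an isomorphism, completing the proof that $F_*$ is an equivalence, i.e.\ $F$ is a Morita equivalence in the sense of Definition \ref{def morita equivalence}.

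The main obstacle I anticipate is the bookkeeping in the fully-faithfulness step: one must show that morphisms $ay(F(U)) \to ay(F(U'))$ in $\Sh(\cat{D},j')$ correspond bijectively to morphisms $ay(U) \to ay(U')$ in $\Sh(\cat{C},j)$, and surjectivity here is precisely where (D3) is needed (to factor an arbitrary $g: F(U) \to F(U')$ locally through the image of $\cat{C}$) while injectivity is where (D4) is needed (to kill the ambiguity $F(f) = F(g)$ up to a covering family). Managing the interaction of these local factorizations with sheafification --- ensuring the locally-defined comparison maps glue to genuine morphisms of sheaves and that the gluings are compatible --- is the technically delicate part, and it is exactly the place where covering flatness (rather than representable flatness) is indispensable, as flagged in Remark \ref{rem usual cases for dense functors}.
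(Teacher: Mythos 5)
Your overall strategy---reduce to the saturated case and then prove that the unit and counit of an adjunction involving $\Delta_F$ are isomorphisms---parallels the paper's, but you have chosen the other adjoint: you work with $F^* = a\Sigma_F \dashv F_* = \Delta_F$, whereas the paper works with $\Delta_F \dashv \Pi_F$, exploiting the fact that a site dense functor is simultaneously a morphism and a comorphism of sites, so that both $\Delta_F$ and $\Pi_F$ restrict to sheaves. The paper's choice is what makes the verification tractable: $\Pi_F$ has the explicit limit formula $\Pi_F(X)(V) \cong \lim_{F(U) \to V} X(U)$, so the counit $\Delta_F\Pi_F(X)(U) \cong \lim_{F(V)\to F(U)} X(V) \to X(U)$ and the unit $X(V) \to \lim_{F(U)\to V} X(F(U))$ become concrete statements about matching families, checked directly from (D2)--(D4) and uniqueness of amalgamations, with no sheafification appearing anywhere. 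Your route instead forces you to control $(a\Sigma_F X)(F(U))$, a sheafified left Kan extension, which is considerably harder to compute.

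Beyond the difference in route, there is a genuine gap in your fully-faithfulness step. The counit of $F^*\dashv F_*$ is indexed by sheaves $Y$ on $(\cat{D},j')$, and the composite $F^*F_*$ preserves neither limits nor colimits (a right adjoint followed by a left adjoint), so writing $Y$ as a colimit of objects of the form $ay(F(U))$ does not let you deduce that $\varepsilon_Y$ is invertible from its behaviour on those generators. The standard repair is to test $\varepsilon_Y$ against a strong generating family via hom-sets, but the identification $\Sh(\cat{D},j')(ay(F(U)), F^*F_*Y) \cong Y(F(U))$ that this requires already presupposes the unit isomorphism on representables---exactly the statement you postpone to the end, so the argument as sketched is circular. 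Relatedly, your ``essential surjectivity'' paragraph argues that $Y$ is recovered from $F_*(Y)$, which is again the counit statement; what essential surjectivity of $F_*$ actually needs is that for every sheaf $X$ on $(\cat{C},j)$ the unit $X \to F_*F^*X$ is invertible (so that $X \cong F_*(F^*X)$), and no argument for that is given. The cleanest fix within your framework is to establish the unit isomorphism first by a direct local computation using (D2)--(D4), and only then deduce the counit; alternatively, switch to the adjunction $\Delta_F \dashv \Pi_F$ as the paper does and avoid sheafification altogether.
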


\begin{proof}
By Proposition \ref{prop sheaf iff sheaf on saturation closure} and Lemma \ref{lem site dense functor iff site dense on saturation}, we can assume that $(\cat{C}, j)$ and $(\cat{D}, j')$ are saturated sites. By Lemma \ref{lem site dense functors are morphisms of sites} and Lemma \ref{lem site dense functors are comorphisms of sites} we know that $F$ is both a morphism and comorphism of sites. Thus the functors $\Delta_F$ and $\Pi_F$ take sheaves to sheaves.

Now we want to show that if $X \in \Sh(\cat{C}, j)$, then the counit map
\begin{equation*}
    \varepsilon_X : \Delta_F \Pi_F(X) \to X
\end{equation*}
is an isomorphism.

Suppose that $U \in \cat{C}$, then by Lemma \ref{lem presheaf adjoint triple} we have
\begin{equation*}
    \Delta_F\Pi_F(X)(U) \cong \Pi_F(X)(F(U)) \cong \lim_{F(V) \to F(U)} X(V),
\end{equation*}
and the counit $\varepsilon_X : \Delta_F \Pi_F(X)(U) \to X(U)$ sends an element $x \in \lim_{F(V) \to F(U)} X(V)$ to the component $x_{1_{F(U)}} \in X(U)$ given by the identity map $1_{F(U)} : F(U) \to F(U)$. 

Let us define a map $\varphi_U : X(U) \to \Delta_F \Pi_F(X)(U)$ as follows. Given $x \in X(U)$ and a map $f : F(V) \to F(U)$ in $\cat{D}$, since $F$ is site dense, by (D3) there is a $j$-covering family $r = \{r_i : W_i \to V \}_{i \in I}$ and a family $t = \{t_i : W_i \to U \}_{i \in I}$ such that $f F(r) = F(t)$. So we obtain for every $i \in I$ an element $X(t_i)(x) \in X(W_i)$. Now let us show that the family $\{ X(t_i)(x) \}$ forms an $X$-matching family on $r$.

So suppose we have an intersection square for $r$
\begin{equation*}
    \begin{tikzcd}
	{W_{ij}} & {W_j} & U \\
	{W_i} & V \\
	U
	\arrow["b", from=1-1, to=1-2]
	\arrow["a"', from=1-1, to=2-1]
	\arrow["{t_j}", from=1-2, to=1-3]
	\arrow["{r_j}", from=1-2, to=2-2]
	\arrow["{r_i}"', from=2-1, to=2-2]
	\arrow["{t_i}"', from=2-1, to=3-1]
\end{tikzcd}
\end{equation*}
We want to show that $X(b)X(t_j)(x) = X(a)X(t_i)(x)$. But note that applying $F$ to the above diagram we obtain a commutative diagram
\begin{equation*}
    \begin{tikzcd}
	{F(W_{ij})} & {F(W_j)} & {F(U)} \\
	{F(W_i)} & {F(V)} \\
	{F(U)}
	\arrow["{F(b)}", from=1-1, to=1-2]
	\arrow["{F(a)}"', from=1-1, to=2-1]
	\arrow["{F(t_j)}", from=1-2, to=1-3]
	\arrow["{F(r_j)}"', from=1-2, to=2-2]
	\arrow["{F(r_i)}", from=2-1, to=2-2]
	\arrow["{F(t_i)}"', from=2-1, to=3-1]
	\arrow["f"', from=2-2, to=1-3]
	\arrow["f", from=2-2, to=3-1]
\end{tikzcd}
\end{equation*}
So
\begin{equation*}
    F(t_j)F(b) = f F(r_j)F(b) = fF(r_i)F(a) = F(t_i)F(a).
\end{equation*}
Now by (D4), there exists a $j$-covering family $s = \{ s_k : B_k \to W_{ij} \}_{k \in K_{ij}}$ such that $t_j b s_k = t_i a s_k$ for every $k \in K_{ij}$. Thus we obtain an $X$-matching family $\{ X(s_k)X(t_j b)(x) = X(s_k)X(t_i a)(x) \}$ on $s$. Since $X$ is a $j$-sheaf, this implies that there exists a unique amalgamation $w \in X(W_{ij})$. Thus
\begin{equation*}
    X(t_jb)(x) = w = X(t_ia)(x)
\end{equation*}
for every $i, j \in I$. Thus $\{ X(t_i)(x) \}_{i \in I}$ is an $X$-matching family on $r$, and since $X$ is a $j$-sheaf, this has a unique amalgamation $y \in X(V)$.

Thus having fixed a covering family $r$ and family $t$, we have obtained for every map $f: F(V) \to F(U)$ a map $X(U) \to X(V)$. By construction we know that this map forms a cone and thus we obtain a map $X(U) \to \lim_{F(V) \to F(U)} X(V)$. Now it is not too hard to show that this map is inverse to the $U$-component of the counit map $\varepsilon_X(U)$. Indeed, by considering the above construction for the identity $F(U) \to F(U)$, we can always use the trivial covering family by the identity map. Since inverses are unique, we see that the map above does not depend on $r$ and $t$, and therefore is well-defined. Thus $\varepsilon_X$ is a natural isomorphism.

Now we wish to show that if $X \in \Sh(\cat{D}, j')$, then the unit map
\begin{equation*}
    \eta_X : X(V) \to \Pi_F \Delta_F(X)(V)
\end{equation*}
is an isomorphism. This is equivalently the map
\begin{equation*}
    X(V) \to \lim_{F(U) \to V} X(F(U))
\end{equation*}
that sends an element $x \in X(V)$ to the collection of elements $\{X(f)(x) \}_{f : F(U) \to V}$.

Now by (D2) there exists a $j'$-saturating family $r = \{r_i: F(U_i) \to V \}$ and hence $\eta_X(x)$ restricted to $r$ is an $X$-matching family on $r$ $\{X(r_i)(x) \}$. Now the collection of all morphisms $\{F(U) \to V \}$ is refined by $r$, and since $j'$ is saturated, this implies that $\{F(U) \to V \}$ is $j'$-covering. Since $X$ is a $j'$-sheaf, this implies that $\eta_X$ is an isomorphism.
\end{proof}

\subsubsection{Examples}

\begin{Ex}
Consider the sequence of full subcategory inclusions from Example \ref{ex coverages on Cart and Open}, where each category is equipped with the open cover coverage
\begin{equation*}
    (\ncat{Cart}, j_{\open}) \hookrightarrow (\ncat{Open}, j_{\open}) \hookrightarrow (\ncat{Man}, j_{\open})
\end{equation*}
Since each of these functors is a full subcategory inclusion, (D3) and (D4) hold trivially. Since each of these sites is composition closed, if $r$ is a family of morphisms in say $( \ncat{Open}, j_\text{open})$, then it is a covering family if and only if it is a covering family when included into $(\ncat{Man}, j_\text{open})$. Now if $r$ is a family in $\ncat{Open}$ such that its image in $\ \ncat{Man}$ is refined by a $j_{\text{open}}$-covering family, then by restricting/pulling back each open subset of a cartesian space along the refinement (which are then open subsets of a cartesian space), we obtain a cover by  open subsets of cartesian spaces. Hence (D1) holds. A similar argument holds for $\ncat{Cart}$. Now as discussed in Example \ref{ex j good coverage}, we can refine any open cover of a manifold by a good open cover, and hence (D2) holds for both inclusions. Hence both inclusions are dense site morphisms, so all three sites are Morita equivalent.
\end{Ex}

\begin{Ex}
Consider the sequence of full subcategory inclusions from Example \ref{ex coverages on complex manifolds}
\begin{equation*}
    (\C\ncat{Disk},j_{\text{open}}) \hookrightarrow (\ncat{Stein}, j_{\text{open}}) \hookrightarrow (\C\ncat{Man}, j_{\text{open}})
\end{equation*}
where we are abusing notation and letting $j_{\text{open}}$ also denote the induced coverages. Since each of these functors is a full subcategory inclusion, (D3) and (D4) hold trivially. Since each of these sites is composition closed, if $r$ is a family of morphisms in say $( \ncat{Stein}, j_\text{open})$, then it is a covering family if and only if it is a covering family when included into $(\C\ncat{Man}, j_\text{open})$. Now if $r$ is a family in $\ncat{Stein}$ such that its image in $\C \ncat{Man}$ is refined by a $j_{\text{open}}$-covering family, then by restricting/pulling back each Stein manifold along the refinement (which are then Stein manifolds by \cite[Lemma 4.1]{larusson2003excision}), we obtain a cover by Stein open subsets. Hence (D1) holds. The argument for $(\C\ncat{Disk}, j_{\text{open}})$ being a coverage in Example \ref{ex coverages on complex manifolds} also proves (D1) holds for the inclusion $(\C\ncat{Disk}, j_{\text{open}}) \hookrightarrow (\ncat{Stein}, j_{\text{open}})$. Now if $M$ is any complex manifold, let $\mathcal{U}$ be an open cover of $M$ by coordinate charts, i.e. domains of $\C^n$. Then by \cite[Lemma II.1]{fornaess1977spreading}, we can refine it by a covering by polydisks. Hence both of the above site inclusions satisfy (D2). Hence both inclusions are dense site morphisms, so all three sites are Morita equivalent.
\end{Ex}

One might wish that for every manifold the obvious map $i : \mathcal{O}(M) \to \ncat{Man}_{/M}$ to be site dense, but unfortunately this is not the case.

\begin{Ex} \label{ex gros and petit topos}
From Example \ref{ex slice site}, we know that given any site $(\cat{C}, j)$, and any $U \in \cat{C}$, there is a canonical site structure on the slice category $(\cat{C}_{/U}, j_{/U})$. Consider a manifold $M \in \ncat{Man}$. We obtain two sites $(\mathcal{O}(M), j_M)$ and $(\ncat{Man}_{/M}, j_{/M})$.

There is an obvious functor $i : \mathcal{O}(M) \to \ncat{Man}_{/M}$ that sends an open subset to its inclusion $U \hookrightarrow M$. This functor clearly preserves covering families, and since both sites are saturated, it therefore preserves saturating families. Now $\mathcal{O}(M)$ is finitely complete, with all of its finite limits equal to finite products, which are given by intersections. These are sent to finite products in $\ncat{Man}_{/M}$, so by Lemma \ref{lem rep flat iff preserves finite limits}, $i$ is representably flat, and therefore it is a morphism of sites. 

Unfortunately, $i$ is not a dense morphism of sites in general. For example, take $M = \R^0 = *$. Then $\Sh(\mathcal{O}(*), j_*) \cong \ncat{Set}$, and $\Sh(\ncat{Man}_{/*}, j_{/*}) \cong \Sh(\ncat{Man}, j_{/*}) \cong \Pre(\ncat{Man})$.

We call $(\mathcal{O}(M), j_M)$ the \textbf{petit site} and $(\ncat{Man}_{/M}, j_{/M})$ the \textbf{gros site} of $M$.
\end{Ex}

\begin{Rem}
While the petit and gros sites do not have equivalent sheaf topoi, they are ``homotopy equivalent'' in a certain sense, see \cite[Page 416]{maclane2012sheaves}.
\end{Rem}

\section{Points of a Site} \label{section points of a site}
In this section we discuss points of sites. When a site has a set of enough points, this provides us with another way to test when a morphism of presheaves is a local isomorphism.

\begin{Def} \label{def point of a topos}
If $\cat{E}$ is a Grothendieck topos, then a \textbf{point} of $\cat{E}$ is a geometric morphism $p_* : \ncat{Set} \to \cat{E}$. 
\end{Def}

Let us specialize first to the case of presheaf topoi. We will show that points of a presheaf topos are given precisely by $\ncat{Set}$-flat functors. As we've seen in Section \ref{section morphisms of sites}, if $\cat{C}$ is a small category, a functor $A : \cat{C} \to \ncat{Set}$ is $\ncat{Set}$-flat (Definition \ref{def set-valued flat functor}) if and only if $A^* = (-) \otimes_{\cat{C}} A : \Pre(\cat{C}) \to \ncat{Set}$ preserves finite limits. Let us for the moment just consider arbitrary functors $A : \cat{C} \to \ncat{Set}$. Then $(-) \otimes_{\cat{C}} A$ clearly preserves colimits and has a right adjoint, which we call the \textbf{skyscraper construction}. Let $A_* : \ncat{Set} \to \Pre(\cat{C})$ be defined objectwise for any $S \in \ncat{Set}$ and $U \in \cat{C}$ by
\begin{equation*}
(A_*(S))(U) = \ncat{Set}(A(U),S).
\end{equation*}

Now $A_*$ is right adjoint to $(-) \otimes_{\cat{C}} A$ by the following computation. For every $X \in \Pre(\cat{C})$ and $S \in \ncat{Set}$ we have
\begin{equation*}
    \begin{aligned}
        \ncat{Set}(X \otimes_\cat{C} A, S) & \cong \ncat{Set} \left( \int^{U \in \cat{C}} X(U) \times A(U), S \right) \\
        & \cong \int_{U \in \cat{C}} \ncat{Set}(X(U) \times A(U), S) \\
        & \cong \int_{U \in \cat{C}} \ncat{Set}(X(U), \ncat{Set}(A(U),S)) \\
        & \cong \ncat{Pre}(\cat{C})(X, \ncat{Set}(A(-),S)) \\
        & \cong \ncat{Pre}(\cat{C})(X, A_*(S)). 
    \end{aligned}
\end{equation*}
Thus from any functor $A : \cat{C} \to \ncat{Set}$, we obtain an adjunction $A^* = (-)\otimes_\cat{C} A \dashv A_*$. Now let us show the converse, i.e. that if we have an adjunction $L : \Pre(\cat{C}) \rightleftarrows \ncat{Set} : R$, we obtain a functor $A : \cat{C} \to \ncat{Set}$.

Let $\ncat{Adj}(\ncat{Pre}(\cat{C}), \ncat{Set})$ denote the category whose objects are adjunctions $L : \ncat{Pre}(\cat{C}) \rightleftarrows \ncat{Set} : R$ and whose morphisms are natural transformations between the left adjoints (equivalently right adjoints) of the adjunctions.

Given a functor $A: \cat{C} \to \ncat{Set}$, let $\phi(A)$ denote the adjunction $A^* \dashv A_*$ constructed above. This construction extends to a functor $\phi: \ncat{Set}^{\cat{C}} \to \ncat{Adj}(\ncat{Pre}(\cat{C}), \ncat{Set})$. Indeed, if $f : A \to B$ is a natural transformation, then since $\text{Lan}_y : \ncat{Set}^\cat{C} \to \ncat{Set}^{\ncat{Pre}(\cat{C})}$ is a functor, one obtains a natural transformation $(-)\otimes_\cat{C} f: A^* = (-)\otimes_\cat{C} A \to (-)\otimes_\cat{C} B = B^*$.

Conversely, given an adjunction $L : \ncat{Pre}(\cat{C}) \rightleftarrows \ncat{Set} : R$, let $\psi(L \dashv R)$ denote the composite functor
$$\cat{C} \xrightarrow{y} \ncat{Pre}(\cat{C}) \xrightarrow{L} \ncat{Set}.$$
This also clearly extends to a functor $\psi: \ncat{Adj}(\ncat{Pre}(\cat{C}), \ncat{Set}) \to \ncat{Set}^\cat{C}$. 

\begin{Lemma} \label{lem equivalence between set functors and adjunctions}
The functors $\phi$ and $\psi$ defined above form an equivalence of categories
\begin{equation*}
    \begin{tikzcd}
	{\ncat{Set}^{\cat{C}}} && {\ncat{Adj}(\ncat{Pre}(\cat{C}), \ncat{Set})}
	\arrow[""{name=0, anchor=center, inner sep=0}, "\phi", curve={height=-18pt}, from=1-1, to=1-3]
	\arrow[""{name=1, anchor=center, inner sep=0}, "\psi", curve={height=-18pt}, from=1-3, to=1-1]
	\arrow["\simeq"{description}, draw=none, from=0, to=1]
\end{tikzcd}
\end{equation*}
\end{Lemma}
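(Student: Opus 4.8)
The plan is to establish the equivalence by constructing the unit and counit of an adjunction-equivalence, or more directly, by showing that $\phi$ and $\psi$ are mutually quasi-inverse. The core computation is the isomorphism $\psi \phi (A) \cong A$ for every functor $A : \cat{C} \to \ncat{Set}$, together with $\phi \psi (L \dashv R) \cong (L \dashv R)$ for every adjunction. First I would verify $\psi \phi(A) \cong A$: by definition $\psi \phi(A)$ is the composite $\cat{C} \xrightarrow{y} \Pre(\cat{C}) \xrightarrow{A^*} \ncat{Set}$, where $A^* = (-) \otimes_\cat{C} A$. By Lemma \ref{lem yoneda extension identity on representables}, we have $y(U) \otimes_\cat{C} A \cong A(U)$ naturally in $U$, so $A^* \circ y \cong A$, which is exactly $\psi \phi(A) \cong A$. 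I would check that this isomorphism is natural in $A$, so that it assembles into a natural isomorphism $\psi \phi \cong 1_{\ncat{Set}^\cat{C}}$.

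Next I would tackle the reverse composite $\phi \psi(L \dashv R) \cong (L \dashv R)$, which I expect to be the main obstacle. Given an adjunction $L : \Pre(\cat{C}) \rightleftarrows \ncat{Set} : R$, set $A = \psi(L \dashv R) = L \circ y$. The claim is that $A^* = (-) \otimes_\cat{C} A \cong L$ as left adjoints. The key point is that $L$, being a left adjoint, preserves all colimits, and every presheaf $X$ is canonically a colimit of representables via the coYoneda Lemma \ref{lem coyoneda lemma}: $X \cong \ncolim{y(U) \to X} y(U)$. Therefore
\begin{equation*}
    L(X) \cong L\left( \ncolim{y(U) \to X} y(U) \right) \cong \ncolim{y(U) \to X} L(y(U)) \cong \ncolim{y(U) \to X} A(U).
\end{equation*}
On the other hand, the Yoneda extension $A^*$ is by construction the left Kan extension of $A$ along $y$, which computes exactly this same colimit: $A^*(X) = \ncolim{y(U) \to X} A(U)$. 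Hence $L \cong A^*$, and since right adjoints are unique up to isomorphism, $R \cong A_*$ as well. The care needed here is to confirm that the comparison map $A^* \to L$ (induced by the universal property of the left Kan extension, since $A^* = \text{Lan}_y(Ly)$ and $L$ restricts along $y$ to $Ly = A$) is an isomorphism precisely because $L$ preserves colimits; this is the essential content and the step most likely to require attention to the naturality and universal-property bookkeeping.

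Finally I would assemble these into a statement of equivalence. Having shown $\psi \phi \cong 1$ and $\phi \psi \cong 1$ on objects with compatible naturality, I would confirm these natural isomorphisms respect the morphism structure: a natural transformation $f : A \to B$ in $\ncat{Set}^\cat{C}$ induces $(-) \otimes_\cat{C} f : A^* \to B^*$ via functoriality of $\text{Lan}_y$, and conversely a natural transformation between left adjoints restricts along $y$ to a natural transformation of the associated functors, and these two operations are mutually inverse under the established object-level isomorphisms. Since both composites are naturally isomorphic to the respective identity functors, $\phi$ and $\psi$ form an equivalence of categories. I would keep the morphism-level verification brief, as it follows formally once the two object-level isomorphisms and their naturality are in hand.
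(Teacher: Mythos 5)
Your proposal is correct and follows essentially the same route as the paper's proof: both directions rest on the coYoneda decomposition of a presheaf into a colimit of representables together with Lemma \ref{lem yoneda extension identity on representables}, with the reverse composite handled by observing that $L$ and $A^* = \text{Lan}_y(Ly)$ agree on representables and both preserve colimits. Your version is slightly more explicit about why colimit-preservation of $L$ is the crucial ingredient, but the argument is the same.
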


\begin{proof}
Suppose that $A$ is a functor $A: \cat{C} \to \Set$. By the coYoneda Lemma (Lemma \ref{lem coyoneda lemma}), every presheaf is a colimit of representables, so if $X \in \ncat{Pre}(\cat{C})$, then
$$X \otimes_{\cat{C}} A \cong \left( \ncolim{y(U) \to X} y(U) \right) \otimes_{\cat{C}} A \cong \ncolim{y(U) \to X} A(U).$$
Thus $A^* = (-) \otimes_{\cat{C}} A$ is completely determined by what it does to representables. So if we consider the functor $\psi(\phi(A))$, which is the composite
$$ \cat{C} \xrightarrow{y} \ncat{Pre}(\cat{C}) \xrightarrow{A^*} \ncat{Set},$$
then by Lemma \ref{lem yoneda extension identity on representables}, we have an isomorphism $\psi(\phi(A)) = (A^* \circ y) \cong A$, natural in $A$.

Conversely, given an adjunction $L : \Pre(\cat{C}) \rightleftarrows \ncat{Set}: R$, then $\psi(L \dashv R) = (L \circ y)$, and we have $\phi(\psi(L \dashv R)) = \text{Lan}_y (L \circ y)$. But by Lemma \ref{lem yoneda extension identity on representables}, $\text{Lan}_y (L \circ y)$ and $L$ are isomorphic on representables, and thus by the coYoneda Lemma are naturally isomorphic functors. Thus $\phi(\psi(L \dashv R)) \cong L \dashv R$.
\end{proof}

Now if $\cat{C}$ is a small category, let $\ncat{Geo}(\ncat{Set}, \ncat{Pre}(\cat{C}))$ denote the full subcategory of $\ncat{Adj}(\ncat{Pre}(\cat{C}), \Set)$ whose objects are the geometric morphisms. Let $\ncat{Flat}(\cat{C})$ denote the category whose objects are the $\Set$-flat functors $A: \cat{C} \to \ncat{Set}$ and whose morphisms are natural transformations. The functor $\psi$ from Lemma \ref{lem equivalence between set functors and adjunctions} restricts to a functor $\psi': \ncat{Flat}(\cat{C}) \to \ncat{Geo}(\Set, \Pre(\cat{C}))$. Similarly the functor $\phi$ from Lemma \ref{lem equivalence between set functors and adjunctions} also restricts to a functor $\phi': \ncat{Geo}(\Set, \Pre(\cat{C})) \to \ncat{Flat}(\cat{C})$ because if $A_* : \Set \to \Pre(\cat{C})$ is a geometric morphism, then $\phi(A^* \vdash A_*) = (A^* \circ y)$ is $\Set$-flat, since $\text{Lan}_y (A^* \circ y) \cong A^*$ and $A^*$ preserves finite limits.

\begin{Prop}[{\cite[Theorem VII.5.2]{maclane2012sheaves}}] \label{prop flat functors equiv to geometric morphisms}
Given a small category $\cat{C}$, the functors $\psi', \phi'$ defined above form an equivalence of categories
\begin{equation} \label{eqn equiv points and flat functors}
  \begin{tikzcd}
	{\ncat{Flat}(\cat{C})} && {\ncat{Geo}(\Set, \Pre(\cat{C}))}
	\arrow[""{name=0, anchor=center, inner sep=0}, "\phi'", curve={height=-18pt}, from=1-1, to=1-3]
	\arrow[""{name=1, anchor=center, inner sep=0}, "\psi'", curve={height=-18pt}, from=1-3, to=1-1]
	\arrow["\simeq"{description}, draw=none, from=0, to=1]
\end{tikzcd}
\end{equation}
\end{Prop}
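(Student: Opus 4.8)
The plan is to deduce the equivalence directly from Lemma \ref{lem equivalence between set functors and adjunctions}, which already establishes that $\phi$ and $\psi$ are mutually quasi-inverse equivalences between the larger categories $\ncat{Set}^\cat{C}$ and $\ncat{Adj}(\ncat{Pre}(\cat{C}), \ncat{Set})$. Since $\ncat{Flat}(\cat{C}) \hookrightarrow \ncat{Set}^\cat{C}$ and $\ncat{Geo}(\Set, \Pre(\cat{C})) \hookrightarrow \ncat{Adj}(\ncat{Pre}(\cat{C}), \ncat{Set})$ are full subcategories, it suffices to check two things: that $\phi$ and $\psi$ carry these subcategories into one another, and that the natural isomorphisms witnessing $\psi\phi \cong 1$ and $\phi\psi \cong 1$ restrict. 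The standard fact that an equivalence restricts to an equivalence between full subcategories that it maps into each other then yields the claim for $\phi' = \phi|_{\ncat{Flat}(\cat{C})}$ and $\psi' = \psi|_{\ncat{Geo}(\Set, \Pre(\cat{C}))}$.

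First I would check that $\phi$ restricts to a functor $\ncat{Flat}(\cat{C}) \to \ncat{Geo}(\Set, \Pre(\cat{C}))$. If $A : \cat{C} \to \ncat{Set}$ is $\Set$-flat, then by Definition \ref{def set-valued flat functor} the Yoneda extension $A^* = (-) \otimes_\cat{C} A$ preserves finite limits; since $A^* \dashv A_*$ is an adjunction, this is exactly the condition (Definition \ref{def geometric morphism}) for $\phi(A) = (A^* \dashv A_*)$ to be a geometric morphism. Hence $\phi(\ncat{Flat}(\cat{C})) \subseteq \ncat{Geo}(\Set, \Pre(\cat{C}))$.

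Next I would check that $\psi$ restricts to a functor $\ncat{Geo}(\Set, \Pre(\cat{C})) \to \ncat{Flat}(\cat{C})$. Let $L \dashv R$ be a geometric morphism, so the left adjoint $L$ preserves finite limits. As in the proof of Lemma \ref{lem equivalence between set functors and adjunctions}, Lemma \ref{lem yoneda extension identity on representables} shows that $\text{Lan}_y(L \circ y)$ and $L$ agree on representables, and since both are cocontinuous, the coYoneda Lemma \ref{lem coyoneda lemma} upgrades this to a natural isomorphism $\text{Lan}_y(L \circ y) \cong L$; that is, $(L \circ y)^* \cong L$. As $L$ preserves finite limits, so does $(L \circ y)^*$, which by Definition \ref{def set-valued flat functor} says that $\psi(L \dashv R) = L \circ y$ is $\Set$-flat. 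Hence $\psi(\ncat{Geo}(\Set, \Pre(\cat{C}))) \subseteq \ncat{Flat}(\cat{C})$.

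Finally, because both subcategories are full, the natural isomorphisms $\psi\phi \cong 1_{\ncat{Set}^\cat{C}}$ and $\phi\psi \cong 1_{\ncat{Adj}(\ncat{Pre}(\cat{C}), \ncat{Set})}$ of Lemma \ref{lem equivalence between set functors and adjunctions} restrict componentwise to natural isomorphisms $\psi'\phi' \cong 1_{\ncat{Flat}(\cat{C})}$ and $\phi'\psi' \cong 1_{\ncat{Geo}(\Set, \Pre(\cat{C}))}$, so $\phi'$ and $\psi'$ form an equivalence. I expect the only step with genuine mathematical content, everything else being bookkeeping about restricting an established equivalence, to be the verification that $\psi$ lands in $\ncat{Flat}(\cat{C})$, i.e. that $L \circ y$ is flat whenever $L$ preserves finite limits; but this reduces entirely to the isomorphism $\text{Lan}_y(L \circ y) \cong L$ already extracted in the previous lemma, so I anticipate no real obstacle beyond invoking it.
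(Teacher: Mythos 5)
Your proposal is correct and takes essentially the same approach as the paper, which justifies the proposition in the paragraph immediately preceding it: flatness of $A$ is by definition the finite-limit-preservation required of the left adjoint $A^*$, the isomorphism $\mathrm{Lan}_y(L\circ y)\cong L$ (already extracted in Lemma \ref{lem equivalence between set functors and adjunctions}) gives flatness of $L\circ y$, and fullness of the two subcategories lets the natural isomorphisms restrict. The only discrepancy is notational: your assignment $\phi'=\phi|_{\ncat{Flat}(\cat{C})}$, $\psi'=\psi|_{\ncat{Geo}(\Set,\Pre(\cat{C}))}$ is the one consistent with the proposition's diagram, whereas the paper's preceding paragraph inadvertently swaps the names $\phi'$ and $\psi'$.
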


So we have now shown that points of presheaf toposes $\ncat{Set} \to \Pre(\cat{C})$ are equivalent to $\ncat{Set}$-flat functors $\cat{C} \to \ncat{Set}$. We would like to generalize this result to Grothendieck toposes. For that, we need to put more conditions on the functor $\cat{C} \to \ncat{Set}$.

\begin{Def}
Given a site $(\cat{C}, j)$, we say a functor $A: \cat{C} \to \ncat{Set}$ is \textbf{$j$-continuous} if for every $U \in \cat{C}$ and every covering family $r = \{r_i : U_i \to U \}_{i \in I} \in j(U)$, the corresponding map 
\begin{equation*}
\sum_{i \in I} A(U_i) \xrightarrow{\sum_{i \in I} A(r_i)}A(U).
\end{equation*}
is an epimorphism of sets.
\end{Def}

\begin{Rem} \label{rem equiv def of j-continuous functor to set}
 Note that this is equivalent to $A^*$ sending the map $\sum_{i \in I} r_i : \sum_{i \in I} y(U_i) \to y(U)$ to an epimorphism. By the proof of Lemma \ref{lem family is saturating iff sum of maps is local epi}, this is equivalent to $A^*$ sending the map $i : \overline{r} \hookrightarrow y(U)$ of presheaves to an epimorphism. Note that if $A$ is furthermore $\ncat{Set}$-flat, then $A^*$ preserves monomorphisms and epimorphisms, and therefore sends $i$ to an isomorphism.
\end{Rem}

\begin{Lemma}[{\cite[Lemma VII.5.3]{maclane2012sheaves}}] \label{lem continuous functors give sheaves}
Given a site $(\cat{C}, j)$ and a point $A : \cat{C} \to \ncat{Set}$, the functor $A_* : \ncat{Set} \to \Pre(\cat{C})$ factors through $\Sh(\cat{C}, j)$ if and only if $A$ is $j$-continuous.
\end{Lemma}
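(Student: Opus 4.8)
The plan is to reduce everything to a statement about the presheaf map $A^*(\overline{r}) \to A^*(y(U))$ by using the adjunction $A^* \dashv A_*$ established just before the lemma, together with the fact (Proposition \ref{prop flat functors equiv to geometric morphisms}) that being a point means precisely that $A$ is $\Set$-flat, so that $A^* = (-)\otimes_{\cat{C}} A$ preserves finite limits. Since $A_* : \ncat{Set} \to \Pre(\cat{C})$ factors through $\Sh(\cat{C}, j)$ exactly when $A_*(S)$ is a $j$-sheaf for every set $S$, and by Lemma \ref{lem sheaf on covering family iff on sieve it generates} this is equivalent to $A_*(S)$ being a sheaf on the generated sieve $\overline{r}$ for every covering family $r = \{r_i : U_i \to U\} \in j(U)$, I would first invoke Corollary \ref{cor sheaf condition on a sieve}: $A_*(S)$ is a sheaf on $\overline{r}$ if and only if the precomposition map
\[
\Pre(\cat{C})(y(U), A_*(S)) \to \Pre(\cat{C})(\overline{r}, A_*(S))
\]
induced by the inclusion $i : \overline{r} \hookrightarrow y(U)$ is a bijection.

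Next I would apply the adjunction isomorphism $\Pre(\cat{C})(Y, A_*(S)) \cong \ncat{Set}(A^*(Y), S)$, natural in $Y$, to rewrite the displayed map as the map
\[
\ncat{Set}(A^*(y(U)), S) \to \ncat{Set}(A^*(\overline{r}), S)
\]
induced by $A^*(i) : A^*(\overline{r}) \to A^*(y(U))$, where $A^*(y(U)) \cong A(U)$ by Lemma \ref{lem yoneda extension identity on representables}. This map is a bijection for every set $S$ if and only if, by the Yoneda lemma in $\ncat{Set}$, the map $A^*(i)$ is an isomorphism. Quantifying over all covering families, I conclude that $A_*$ factors through $\Sh(\cat{C}, j)$ if and only if $A^*(i) : A^*(\overline{r}) \to A(U)$ is an isomorphism for every $r \in j(U)$.

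Finally I would identify this last condition with $j$-continuity. Here the $\Set$-flatness of $A$ is decisive: since $A^*$ preserves finite limits it preserves monomorphisms (monos being characterized by the pullback condition of Lemma \ref{lem map is mono iff slice diagonal iso}), so $A^*(i)$ is always a monomorphism. Hence $A^*(i)$ is an isomorphism if and only if it is an epimorphism, and by Remark \ref{rem equiv def of j-continuous functor to set} the latter is exactly the assertion that $\sum_{i \in I} A(U_i) \to A(U)$ is surjective, i.e. that $A$ is $j$-continuous. Chaining these equivalences gives both directions of the claim.

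The only genuinely delicate point, which I would flag at the outset, is the role of flatness. Without $A$ being $\Set$-flat the equivalence ``isomorphism $\Leftrightarrow$ epimorphism'' for $A^*(i)$ fails, and the factorization of $A_*$ through sheaves would instead demand that $A^*$ send $i$ to an isomorphism rather than merely an epimorphism. I would therefore record at the start that, being a point, $A$ is $\Set$-flat by Proposition \ref{prop flat functors equiv to geometric morphisms}, since this is precisely what supplies the monomorphism half of the isomorphism for free and makes $j$-continuity the exact remaining condition.
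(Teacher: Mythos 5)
Your proposal is correct and follows essentially the same route as the paper's proof: reduce to the sheaf condition on the generated sieve, transfer across the adjunction $A^* \dashv A_*$, apply the Yoneda lemma in $\ncat{Set}$, and identify the resulting condition on $A^*(i)$ with $j$-continuity via Remark \ref{rem equiv def of j-continuous functor to set}. The only difference is that you spell out explicitly the role of $\Set$-flatness in upgrading ``epimorphism'' to ``isomorphism'' for $A^*(i)$, a point the paper delegates to that remark.
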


\begin{proof}
We want to show that if $S$ is a set, then $A_*(S)$ is a sheaf if and only if $A$ is $j$-continuous. By Lemma \ref{lem sheaf on covering family iff on sieve it generates}, it is enough to show that for every $U \in \cat{C}$ and every covering family $r \in j(U)$ the canonical map
\begin{equation*}
    \Pre(\cat{C})(y(U), A_*(S)) \to \Pre(\cat{C})(\overline{r}, A_*(S))
\end{equation*}
is an isomorphism. Using the adjunction $A^* \dashv A_*$ adjunction, we have the following commutative diagram
\begin{equation*}
\begin{tikzcd}
	{\Pre(\cat{C})(y(U), A_*(S))} && {\Pre(\cat{C})(\overline{r}, A_*(S))} \\
	{\ncat{Set}(A^*(U), S)} && {\ncat{Set}(A^*(\overline{r}), S)}
	\arrow["{\Pre(\cat{C})(i, A_*(S))}", from=1-1, to=1-3]
	\arrow["\cong"', from=1-1, to=2-1]
	\arrow["\cong", from=1-3, to=2-3]
	\arrow["{\ncat{Set}(A^*(i),S)}"', from=2-1, to=2-3]
\end{tikzcd}  
\end{equation*}
but by Remark \ref{rem equiv def of j-continuous functor to set}, if $A$ is $j$-continuous, then $A^*(i)$ is an isomorphism, so the bottom horizontal map is an isomorphism, so the top one is as well. Thus $A_*(S)$ is a sheaf. Conversely if the top horizontal map is an isomorphism for every $S$, then by the Yoneda lemma $A^*(i)$ is an isomorphism and hence $A$ is $j$-continuous.
\end{proof}

Now given a site $(\cat{C}, j)$, let $\ncat{ConFlat}(\cat{C}, j)$ denote the full subcategory of $\ncat{Flat}(\cat{C})$ on those flat functors $A : \cat{C} \to \ncat{Set}$ that are also $j$-continuous.

\begin{Cor}[{\cite[Corollary VII.5.4]{maclane2012sheaves}}] \label{cor points of sheaf topoi are equivalent to continuous flat functors}
The equivalence of Proposition \ref{prop flat functors equiv to geometric morphisms} restricts to an equivalence
$$ \ncat{ConFlat}(\cat{C}, j) \simeq \ncat{Geo}(\ncat{Set}, \ncat{Sh}(\cat{C})).$$
\end{Cor}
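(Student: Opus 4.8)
The plan is to promote the equivalence of Proposition \ref{prop flat functors equiv to geometric morphisms} to the sheaf setting by pre- and post-composing with the sheafification geometric morphism. Write $\gamma : \Sh(\cat{C}, j) \to \Pre(\cat{C})$ for the geometric morphism whose inverse image is sheafification $a$ (left exact by Proposition \ref{prop sheafification props}) and whose direct image is the inclusion $i$. I would define a functor $\Phi : \ncat{ConFlat}(\cat{C}, j) \to \ncat{Geo}(\ncat{Set}, \Sh(\cat{C}, j))$ sending a $j$-continuous flat functor $A$ to the geometric morphism with direct image $\overline{A_*} : \ncat{Set} \to \Sh(\cat{C}, j)$, the corestriction of $A_*$ (which lands in sheaves by Lemma \ref{lem continuous functors give sheaves}), and inverse image $A^* i$. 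Conversely I would define $\Psi$ on a geometric morphism $p$ by $\Psi(p) = p^* \circ a \circ y : \cat{C} \to \ncat{Set}$.

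First I would check these land in the correct categories. For $\Phi(A)$, the adjunction $A^* i \dashv \overline{A_*}$ follows from the chain $\Sh(\cat{C}, j)(X, \overline{A_*}(S)) \cong \Pre(\cat{C})(iX, A_*(S)) \cong \ncat{Set}(A^* i X, S)$, and $A^* i$ is left exact because $A^*$ is (as $A$ is flat) and $i$ preserves finite limits (it is a right adjoint, and limits in $\Sh(\cat{C}, j)$ agree with those in $\Pre(\cat{C})$ by Corollary \ref{cor sheaf topoi are bicomplete}); hence $\Phi(A)$ is a genuine geometric morphism. For $\Psi(p)$, observe that $\gamma \circ p$ is a geometric morphism $\ncat{Set} \to \Pre(\cat{C})$ with inverse image $p^* a$ and direct image $i p_*$. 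Proposition \ref{prop flat functors equiv to geometric morphisms} identifies $\psi'(\gamma p) = (\gamma p)^* y = p^* a y = \Psi(p)$ as a flat functor, and since $(\gamma p)_* = i p_*$ visibly factors through $\Sh(\cat{C}, j)$, Lemma \ref{lem continuous functors give sheaves} forces $\Psi(p)$ to be $j$-continuous; thus $\Psi$ lands in $\ncat{ConFlat}(\cat{C}, j)$.

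It then remains to exhibit natural isomorphisms $\Psi \Phi \cong 1$ and $\Phi \Psi \cong 1$. The key identity, which I expect to be the main obstacle, is that for $A$ flat and $j$-continuous one has $A^* \circ (i a) \cong A^*$ as functors $\Pre(\cat{C}) \to \ncat{Set}$: this holds because $\gamma \circ \Phi(A)$ has the same direct image $A_* = i \overline{A_*}$ as $\phi'(A)$, so by uniqueness of adjoints their inverse images $A^*(ia)$ and $A^*$ agree. Granting this, $\Psi \Phi(A) = A^* i a y \cong A^* y \cong A$, using $\psi' \phi' \cong 1$ from Proposition \ref{prop flat functors equiv to geometric morphisms}. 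In the other direction, $p^* a$ is cocontinuous (a composite of left adjoints), hence is the Yoneda extension of its restriction $p^* a y = \Psi(p)$, so $\Psi(p)^* \cong p^* a$ and therefore $\Phi(\Psi(p))^* = \Psi(p)^* i \cong p^* a i \cong p^*$ (using $a i \cong 1$ from Proposition \ref{prop sheafification props}); uniqueness of adjoints gives $\Phi \Psi(p) \cong p$. Functoriality of $\Phi$ and $\Psi$ on morphisms (whiskering natural transformations by $i$, respectively by $a y$) and naturality of these two isomorphisms are routine, inherited from the naturality already present in Proposition \ref{prop flat functors equiv to geometric morphisms} and Lemma \ref{lem continuous functors give sheaves}. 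Altogether $\Phi$ and $\Psi$ are mutually inverse equivalences, which is precisely the assertion that the equivalence of Proposition \ref{prop flat functors equiv to geometric morphisms} restricts to the claimed equivalence $\ncat{ConFlat}(\cat{C}, j) \simeq \ncat{Geo}(\ncat{Set}, \Sh(\cat{C}, j))$.
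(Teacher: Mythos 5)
Your proposal is correct and follows essentially the route the paper intends: the paper leaves this corollary as an immediate consequence of Proposition \ref{prop flat functors equiv to geometric morphisms} together with Lemma \ref{lem continuous functors give sheaves} (the direct image $A_*$ lands in sheaves precisely when $A$ is $j$-continuous), and your $\Phi$ and $\Psi$ are exactly the careful working-out of that restriction, with the key identifications ($A^* ia \cong A^*$ by uniqueness of adjoints, and $\Psi(p)^* \cong p^* a$ since a cocontinuous functor is the Yoneda extension of its restriction to representables) all valid.
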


\begin{Def}
By a \textbf{point of a site} $(\cat{C}, j)$, we mean a $j$-continuous flat functor $A : (\cat{C},j) \to \ncat{Set}$ or equivalently a point $A_* : \ncat{Set} \to \Sh(\cat{C}, j)$.
\end{Def}

\begin{Rem}
Note that if $(\cat{C}, j)$ is a site, then a functor $A : \cat{C} \to \ncat{Set}$ is $\ncat{Set}$-flat if and only if it is covering flat by Remark \ref{rem special cases of covering flat}. Therefore if we consider $\ncat{Set}$ with the jointly epimorphic coverage $j_{\text{epi}}$, then since it is a saturated coverage, the functor $A : \cat{C} \to \ncat{Set}$ is a point of $(\cat{C}, j)$ if and only if $A$ is a morphism of sites.
\end{Rem}

\begin{Rem}
Note that any point $A_* : \ncat{Set} \to \Sh(\cat{C}, j)$ of a site $(\cat{C}, j)$ also determines a point $i A_* : \ncat{Set} \to \Pre(\cat{C})$ of the presheaf topos, by composition with the geometric morphism $i : \Sh(\cat{C}, j) \to \Pre(\cat{C})$, and if $a$ denotes sheafification, then its left adjoint is $A^* a$.
\end{Rem}

\begin{Prop} \label{prop sheaf epi implies epi on points}
Given a site $(\cat{C}, j)$, if $f: X \to Y$ is a epi/monomorphism of sheaves and $p$ is a point of $(\cat{C}, j)$, then $p^* f: p^*X \to p^*Y$ is an epi/monomorphism of sets.
\end{Prop}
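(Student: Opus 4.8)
The plan is to use the fact that a point $p$ of the site $(\cat{C}, j)$ is, by Definition \ref{def point of a topos} and Corollary \ref{cor points of sheaf topoi are equivalent to continuous flat functors}, a geometric morphism $p_* : \ncat{Set} \to \Sh(\cat{C}, j)$. By Definition \ref{def geometric morphism} this is an adjunction $p^* : \Sh(\cat{C}, j) \rightleftarrows \ncat{Set} : p_*$ whose inverse image functor $p^*$ preserves finite limits. Thus $p^*$ is simultaneously a left adjoint, hence cocontinuous, and left exact. The entire proposition is then an instance of the general fact that a left exact left adjoint preserves both epimorphisms and monomorphisms, and I would simply record the two short arguments.

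For the epimorphism case I would use only that $p^*$ is a left adjoint. Suppose $f : X \to Y$ is an epimorphism of sheaves, and suppose $g, h : p^* Y \to S$ are maps of sets with $g \circ p^* f = h \circ p^* f$. Transposing across the adjunction $p^* \dashv p_*$ gives maps $\bar{g}, \bar{h} : Y \to p_*(S)$ of sheaves, and naturality of the adjunction bijection turns the equation $g \circ p^* f = h \circ p^* f$ into $\bar{g} \circ f = \bar{h} \circ f$. Since $f$ is epi, $\bar{g} = \bar{h}$, and transposing back yields $g = h$. Hence $p^* f$ is an epimorphism of sets.

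For the monomorphism case I would invoke Lemma \ref{lem map is mono iff slice diagonal iso}: the map $f$ is a monomorphism of sheaves if and only if the slice diagonal $\Delta_f : X \to X \times_Y X$ is an isomorphism. Because $p^*$ preserves finite limits, it preserves the pullback defining $X \times_Y X$, so $p^*(X \times_Y X) \cong p^* X \times_{p^* Y} p^* X$ and, under this identification, $p^*(\Delta_f) = \Delta_{p^* f}$. As $p^*$ carries isomorphisms to isomorphisms, $\Delta_{p^* f}$ is an isomorphism, so by Lemma \ref{lem map is mono iff slice diagonal iso} again (now applied in $\ncat{Set}$) the map $p^* f$ is a monomorphism.

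There is no genuine obstacle here; the only point requiring care is identifying precisely which structure on $p^*$ is being used in each case, namely cocontinuity for epimorphisms and left exactness for monomorphisms, both of which are guaranteed by the definition of a point as a geometric morphism. If one prefers a uniform treatment, both cases can instead be phrased through the pushout (cokernel pair) and pullback characterizations of epi and mono, which $p^*$ preserves by cocontinuity and left exactness respectively, but the transposition and diagonal arguments above are the most economical.
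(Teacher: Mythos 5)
Your argument is correct and is exactly the paper's approach: the paper's proof is the one-line observation that $p^*$ preserves epimorphisms because it is a left adjoint and preserves monomorphisms because it is left exact, which is precisely what your transposition and slice-diagonal arguments make explicit.
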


\begin{proof}
This just follows from the fact that $p^* : \Sh(\cat{C}, j) \to \ncat{Set}$ is a left adjoint and hence preserves epimorphisms, and since it preserves finite limits it preserves monomorphisms.
\end{proof}

\begin{Cor} \label{cor local mono implies mono on points}
Given a site $(\cat{C}, j)$, if $f: X \to Y$ is a $j$-local epi/monomorphism of presheaves, $p$ is a point and $a$ denotes sheafification, then $p^*a f : p^*a X \to p^*a Y$ is a epi/monomorphism of sets.
\end{Cor}

\begin{proof}
This follows from Corollary \ref{cor j-local iso/epi/mono <-> sheafification is iso/epi/mono} and Proposition \ref{prop sheaf epi implies epi on points}.
\end{proof}

\begin{Def} \label{def set of enough points}
Let $(\cat{C}, j)$ be a site. Let $P = \{ p_i \}_{i \in I}$ be a set of points of $(\cat{C}, j)$. We say that $P$ is a \textbf{set of enough points} for $(\cat{C}, j)$ if for every map $f: X \to Y$ of sheaves the following property holds: if $p_i^*(f): p^*_i(X) \to p^*_i(Y)$ is an isomorphism for every $i \in I$, then $f$ is an isomorphism of sheaves.
\end{Def}

If $P = \{p_i \}_{i \in I}$ is a set of points on a site $(\cat{C}, j)$, then we obtain a map
\begin{equation*}
p^* = \langle p_i^* \rangle_{i \in I} : \ncat{Sh}(\cat{C}) \to \prod_{i \in I} \ncat{Set} \cong \ncat{Set}^I,
\end{equation*}
induced by the left adjoints $p_i^*: \ncat{Sh}(\cat{C}) \to \ncat{Set}$ of each of the points. Thus $P$ is a set of enough points for $(\cat{C}, j)$ if and only if $p^*$ reflects isomorphisms. 

Note that $p^*$ preserves finite limits since each $p_i^*$ does. Furthermore, $p^*$ has a right adjoint $p_* : \ncat{Set}^I \to \ncat{Sh}(\cat{C}, j)$ defined for $X \in \ncat{Set}^I$ by
\begin{equation*}
    p_*(X) = \prod_{i \in I} (p_i)_*(X_i).
\end{equation*}

In other words, a set of points $P = \{ p_i \}$ on a site $(\cat{C}, j)$ induces a geometric morphism $p_* : \ncat{Set}^I \to \ncat{Sh}(\cat{C}, j)$.

The following result provides a convenient criterion for such a functor to reflect isomorphisms.

\begin{Lemma}[{\cite[Lemma 3.26]{jardine2015local}}] \label{lem jardine's characterization of points}
Let $f_*: \cat{E} \to \cat{F}$ be a geometric morphism of Grothendieck topoi. Then the following are equivalent:
\begin{enumerate}
    \item The left adjoint $f^*: \cat{F} \to \cat{E}$ is faithful,
    \item $f^*$ reflects isomorphisms,
    \item $f^*$ reflects epimorphisms,
    \item $f^*$ reflects monomorphisms.
\end{enumerate}
\end{Lemma}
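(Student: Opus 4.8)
The plan is to prove the equivalence of the four conditions by establishing a cycle of implications, together with the standard fact that a left adjoint between Grothendieck toposes which reflects isomorphisms is automatically faithful (and conversely). The key structural inputs are that $f^*$ preserves finite limits (so in particular monomorphisms, pullbacks, equalizers) and all colimits (so in particular epimorphisms, image factorizations), and that both $\cat{E}$ and $\cat{F}$ are Grothendieck toposes, hence balanced, exact, and equipped with pullback-stable (regular epi, mono) factorizations. I would prove the chain $(2) \Leftrightarrow (3) \Leftrightarrow (4)$ first, since these are cleanly linked via factorization, and then close the loop with $(1) \Leftrightarrow (2)$.

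First I would record the factorization setup: in a Grothendieck topos every morphism $g$ factors as a regular epimorphism followed by a monomorphism, $g = m \circ e$, and since $f^*$ preserves both finite limits and colimits (Proposition \ref{prop sheafification props} and the analogous statement for general toposes), $f^*$ preserves this factorization: $f^*(g) = f^*(m) \circ f^*(e)$ with $f^*(m)$ a mono and $f^*(e)$ an epi. From this, $(3) \Leftrightarrow (4)$ follows quickly: $g$ is an epimorphism iff $m$ is an isomorphism, and $g$ is a monomorphism iff $e$ is an isomorphism; since a Grothendieck topos is balanced, "$f^*$ reflects epis" and "$f^*$ reflects monos" each translate, via the factorization, into "$f^*$ reflects isomorphisms among monos" and "among epis" respectively, and these combine to give full reflection of isomorphisms. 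To get $(2) \Rightarrow (3)$, if $f^*(g)$ is epi then $f^*(m)$ is an isomorphism, so by (2) $m$ is an isomorphism, hence $g$ is epi; symmetrically $(2) \Rightarrow (4)$. Conversely, if $f^*(g)$ is an isomorphism then it is both epi and mono, so by (3) and (4) both $m$ and $e$ are isomorphisms, whence $g$ is an isomorphism, giving $(3)\wedge(4) \Rightarrow (2)$.

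Next I would handle $(1) \Leftrightarrow (2)$. For $(1) \Rightarrow (2)$, suppose $f^*$ is faithful and $f^*(g)$ is an isomorphism; I would form the factorization $g = m \circ e$ and use that a faithful functor preserving the relevant limits/colimits reflects both monomorphisms and epimorphisms, then conclude as above. More directly, faithfulness of a finite-limit-preserving, colimit-preserving left adjoint is equivalent to the counit (or the relevant comparison) being sufficiently surjective; the cleanest route is to show a faithful $f^*$ reflects monos and epis. For $(2) \Rightarrow (1)$: if $f^*$ reflects isomorphisms, then given parallel maps $u, v : X \to Y$ with $f^*(u) = f^*(v)$, I would form their equalizer $e : E \to X$; since $f^*$ preserves equalizers, $f^*(e)$ is the equalizer of $f^*(u) = f^*(v)$, hence an isomorphism, so by (2) $e$ is an isomorphism, forcing $u = v$. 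This shows $f^*$ is faithful.

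The main obstacle I expect is the direction $(1) \Rightarrow (2)$, where faithfulness must be leveraged to reflect isomorphisms; faithfulness alone does not in general reflect isomorphisms for arbitrary functors, so one genuinely needs the topos-theoretic structure (finite-limit and colimit preservation, balancedness, and pullback-stable epi-mono factorization) to make the argument go through. The careful point is that a faithful functor preserving finite limits reflects monomorphisms, and preserving finite colimits reflects epimorphisms; I would verify these two reflection properties explicitly and then invoke the factorization argument together with balancedness to upgrade to reflection of isomorphisms. The remaining implications are essentially formal consequences of the factorization system and are routine to check.
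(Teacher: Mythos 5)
Your proposal has a genuine gap in the cycle of implications. What your factorization argument actually delivers is $(2)\Rightarrow(3)$, $(2)\Rightarrow(4)$, and $(3)\wedge(4)\Rightarrow(2)$; together with your (correct) treatment of $(1)\Leftrightarrow(2)$ this only proves $(1)\Leftrightarrow(2)\Leftrightarrow\bigl((3)\wedge(4)\bigr)$, which is weaker than the lemma: you never show that $(3)$ \emph{alone} implies any of the other conditions, nor that $(4)$ alone does. The sentence claiming that ``$(3)\Leftrightarrow(4)$ follows quickly'' is not supported by what follows it. Concretely, suppose $f^*$ reflects epimorphisms and $f^*(g)$ is a monomorphism; factoring $g=m\circ e$, you can conclude that $f^*(e)$ is an isomorphism, but reflecting epimorphisms tells you nothing new about $e$ --- it is already an epimorphism by construction --- so you cannot conclude that $e$ is an isomorphism, hence cannot conclude that $g$ is a monomorphism. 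The image factorization controls only the ``wrong half'' of the map: $(3)$ gives reflection of isomorphisms among monomorphisms, $(4)$ gives it among epimorphisms, and you need both halves to assemble reflection of isomorphisms for a general $g$.

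The missing implications are exactly the ones the paper supplies with a different device. To get $(3)\Rightarrow(1)$, take $u,v$ with $f^*(u)=f^*(v)$ and form their equalizer $h$; since $f^*$ preserves equalizers, $f^*(h)$ is the equalizer of two equal maps, hence an isomorphism, hence an epimorphism; $(3)$ then forces $h$ to be an epimorphism, and since $h$ is also a monomorphism and Grothendieck toposes are balanced (Lemma \ref{lem giraud categories are balanced}), $h$ is an isomorphism and $u=v$. Dually, $(4)\Rightarrow(1)$ uses the coequalizer. (An alternative repair in the spirit of your approach: from $(3)$ and $f^*(g)$ an isomorphism, $g$ is an epimorphism; the diagonal $\Delta_g\colon X\to X\times_Y X$ into the kernel pair is a split monomorphism whose image under $f^*$ is an isomorphism, so $(3)$ plus balancedness makes $\Delta_g$ an isomorphism and hence $g$ a monomorphism.) With either repair your argument closes. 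Note also that the paper's proof is more economical than yours: it never invokes the epi--mono factorization, deriving $(1)\Rightarrow(3),(4)$ from the bare cancellation property of faithful functors (no limit or colimit preservation is needed there) and $(1)\Rightarrow(2)$ from balancedness, with all reverse implications handled by the (co)equalizer trick above.
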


\begin{proof}
$(1) \Rightarrow (4)$ Suppose that $f^*$ is faithful, and $g: X \to Y$ is a morphism in $\cat{F}$ such that $f^*(g)$ is a monomorphism. If $h,h': Y \to Z$ are maps such that $h g = h' g$, then $f^*(hg) = f^*(h) f^*(g) = f^*(h') f^*(g) = f^*(h'g)$ implies that $f^*(h) = f^*(h)'$, and since $f^*$ is faithful, $h = h'$. 

A similar argument shows $f^*$ reflects epimorphisms and therefore also isomorphisms. Thus $(1) \Rightarrow (3)$ and $(1) \Rightarrow (2)$.

$(3) \Rightarrow (1)$ Suppose that $f^*$ reflects epimorphisms, and suppose that $g,g': Y \to Z$ are maps such that $f^*(g) = f^* (g')$. Now in any category with equalizers, two maps $g, g'$ are equal if and only if for an equalizer diagram
$$\begin{tikzcd}
	X & Y & Z
	\arrow["{g'}"', shift right=1, from=1-2, to=1-3]
	\arrow["g", shift left=1, from=1-2, to=1-3]
	\arrow["{h}", from=1-1, to=1-2]
\end{tikzcd}$$
the limit map $h$ is an epimorphism. Since $f^*$ preserves finite limits, $f^*(h)$ is an equalizer of $f^*(g)$ and $f^*(g')$. Thus $f^*(h)$ is an epimorphism in $\cat{E}$. But $f^*$ reflects epimorphisms, thus $g = g'$. A similar argument proves $(4) \Rightarrow (1)$ and thus $(2) \Rightarrow (1)$.
\end{proof}

If $P = \{ p_i \}$ is a set of enough points for a site $(\cat{C}, j)$, and $f : X \to Y$ is a map of sheaves, then the following are equivalent:
\begin{enumerate}
    \item $p^*(f) = \langle p^*_i \rangle(f)$ is an isomorphism,
    \item $p^*_i(f)$ is an isomorphism of sets for each $i \in I$, and
    \item $f$ is an isomorphism of sheaves.
\end{enumerate}

It will be useful to characterize when a set of points $\{p_i \}$ is a set of enough points using the actual functors $p_i : \Pre(\cat{C}) \to \ncat{Set}$. For this we must restrict the coverages we consider.

Now suppose that $(\cat{C}, j)$ is a subcanonical site (Definition \ref{def subcanonical site}, so that the Yoneda embedding factors through $\Sh(\cat{C}, j)$, and suppose that $P = \{ p_i \}$ is a set of points of $(\cat{C}, j)$. If $P$ is a set of enough points, then the induced functor
\begin{equation*}
    p^* : \ncat{Sh}(\cat{C}, j) \to \ncat{Set}^I
\end{equation*}
reflects isomorphisms. Since $y : \cat{C} \hookrightarrow \ncat{Sh}(\cat{C}, j)$ is fully faithful, it also reflects isomorphisms, so this implies that the composite functor
\begin{equation*}
    \cat{C} \xhookrightarrow{y} \ncat{Sh}(\cat{C}, j) \xrightarrow{p^*} \ncat{Set}^I
\end{equation*}
reflects isomorphisms. Let us denote this composite functor by $p: \cat{C} \to \ncat{Set}^I$, which is defined objectwise by
\begin{equation*}
    p(U) = ( p_i(U) )_{i \in I}.
\end{equation*}
We've shown that if $P$ is a set of enough points, then $p$ reflects isomorphisms. However the converse is also true. If $p$ reflects isomorphisms, then since every sheaf $X \in \ncat{Sh}(\cat{C}, j)$ is a colimit of representables and $p^*$ is a left adjoint, this implies that $p^*$ will reflect isomorphisms. Thus we have proven the following result.

\begin{Lemma} \label{lem characterizatino for when set of points is enough}
If $(\cat{C}, j)$ is a subcanonical site, and $P = \{ p_i \}_{i \in I}$ is a set of points of $(\cat{C}, j)$, then $P$ is a set of enough points for $(\cat{C}, j)$ if and only if the functor $p : \cat{C} \to \ncat{Set}^I$ given by the product $\langle p_i \rangle_{i \in I}$, reflects isomorphisms.
\end{Lemma}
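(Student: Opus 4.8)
The plan is to reduce the statement to a conservativity property of the joint inverse-image functor $p^* = \langle p_i^* \rangle : \Sh(\cat{C}, j) \to \ncat{Set}^I$ and then to transfer this property between $p^*$ and its restriction $p = p^* \circ y$ to representables. As set up in the discussion preceding the statement, the family $P$ induces a geometric morphism $p_* : \ncat{Set}^I \to \Sh(\cat{C}, j)$ whose left adjoint is $p^*$, and by Definition \ref{def set of enough points} the family $P$ is a set of enough points precisely when $p^*$ reflects isomorphisms. Since $(\cat{C}, j)$ is subcanonical (Definition \ref{def subcanonical site}), the Yoneda embedding corestricts to a fully faithful functor $y : \cat{C} \hookrightarrow \Sh(\cat{C}, j)$ and $p$ is by construction the composite $p^* \circ y$. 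So the lemma amounts to: $p^*$ reflects isomorphisms if and only if $p^* \circ y$ does.

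For the forward direction I would use only that fully faithful functors reflect isomorphisms and that a composite of functors reflecting isomorphisms again reflects isomorphisms: if $p(f) = p^*(y(f))$ is an isomorphism then, assuming $p^*$ reflects isomorphisms, $y(f)$ is an isomorphism, and full faithfulness of $y$ then forces $f$ to be an isomorphism. This needs nothing beyond the definitions.

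The backward direction is the substantive part. Here I would first invoke Jardine's criterion, Lemma \ref{lem jardine's characterization of points}, applied to the geometric morphism $p_*$: for $p^*$ the four conditions of reflecting isomorphisms, reflecting monomorphisms, reflecting epimorphisms, and faithfulness are all equivalent. This trades the global conservativity I want for faithfulness, which is better suited to a generating family. Next I would use that the representables generate $\Sh(\cat{C}, j)$: by the coYoneda Lemma \ref{lem coyoneda lemma}, together with the fact that sheafification is a left adjoint preserving finite limits (Proposition \ref{prop sheafification props}) and that by subcanonicity $ay(U) \cong y(U)$, every sheaf $X$ is a colimit $X \cong \ncolim{y(U) \to X} y(U)$ of representable sheaves. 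Since $p^*$ is cocontinuous it is determined by its values on representables, where it coincides with $p$, and since it is left exact it preserves the equalizers and images through which monomorphisms and epimorphisms are detected; combining these I would reduce the assertion ``$p^*$ is faithful'' to the hypothesis that $p$ reflects isomorphisms between representables.

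The step I expect to be the main obstacle is exactly this last reduction: passing from conservativity of $p$ on the \emph{morphisms of $\cat{C}$} (equivalently, on maps between representables) to faithfulness of $p^*$ on maps out of a representable into an \emph{arbitrary} sheaf, i.e. to the statement that the family $\{p_i^*\}$ separates sections of every sheaf. Concretely, given distinct sections $s, s' \in Y(U)$ one forms the equalizer $E \hookrightarrow y(U)$ of the two corresponding maps $y(U) \rightrightarrows Y$, a proper subobject, and must exhibit some $p_i^*$ witnessing that $E \hookrightarrow y(U)$ is not an isomorphism. It is here that the simultaneous left-exactness and cocontinuity of $p^*$ (that it is the inverse image of a morphism of topoi) together with the generating property of the representables is what makes the propagation from the representable level to all sheaves go through, and getting this bridge right is the delicate point.
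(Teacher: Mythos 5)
Your forward direction is correct and coincides with the paper's argument: $p^{*}$ reflects isomorphisms by the definition of a set of enough points, $y$ reflects them because it is fully faithful, and a composite of isomorphism-reflecting functors reflects isomorphisms.

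The backward direction is where the problem lies, and you have located it exactly: the passage from conservativity of $p = p^{*}\circ y$ on morphisms of $\cat{C}$ to conservativity (equivalently, via Lemma \ref{lem jardine's characterization of points}, faithfulness) of $p^{*}$ on all of $\Sh(\cat{C},j)$. You flag this reduction as ``the delicate point'' but do not carry it out, and in fact it cannot be carried out: the implication is false. The paper's own justification (``every sheaf is a colimit of representables and $p^{*}$ is a left adjoint'') is the same non-argument; a cocontinuous functor whose restriction to a dense generating subcategory reflects isomorphisms need not reflect isomorphisms, because what must be controlled are subobjects of representables (closed sieves $R\hookrightarrow y(U)$) and maps out of representables into arbitrary sheaves, not maps between representables. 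Concretely, let $\cat{C}$ be the poset with objects $a,b,c$ and relations $a\le c$, $b\le c$, equipped with the trivial coverage, which is subcanonical and has $\Sh(\cat{C},j)=\Pre(\cat{C})$. The corepresentables $\cat{C}(a,-)$ and $\cat{C}(b,-)$ are flat and $j$-continuous, giving points $p_a,p_b$ with $p_a^{*}X\cong X(a)$ and $p_b^{*}X\cong X(b)$. Then $p(a)=(*,\varnothing)$, $p(b)=(\varnothing,*)$, $p(c)=(*,*)$, so neither non-identity arrow of $\cat{C}$ is sent to an isomorphism and $p$ reflects isomorphisms. Yet the map from the terminal presheaf to the presheaf with values $*,*,\{0,1\}$ at $a,b,c$ is inverted by both $p_a^{*}$ and $p_b^{*}$ without being an isomorphism, so $\{p_a,p_b\}$ is not a set of enough points. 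The hypothesis that would make your bridge work is the standard, strictly stronger one: the points must jointly detect covering sieves, i.e.\ for every non-maximal closed sieve $R\hookrightarrow y(U)$ some $p_i^{*}$ must fail to invert the inclusion.
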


\begin{Lemma} \label{lem maps of sheaves mono/epi/iso iff on points}
Given a site $(\cat{C}, j)$, and a set $P = \{ p_i \}_{i \in I}$ of enough points for $(\cat{C}, j)$, a map $f: X \to Y$ of sheaves on $(\cat{C}, j)$ is an epi/monomorphism of sheaves if and only if $p_i^* (f)$ is a epi/monomorphism of sets for all $i \in I$.
\end{Lemma}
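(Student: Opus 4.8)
The plan is to reduce the statement about general epimorphisms and monomorphisms of sheaves to the already-established case of isomorphisms, using the fact that $P$ being a set of enough points means the induced functor $p^* = \langle p_i^* \rangle : \Sh(\cat{C},j) \to \Set^I$ reflects isomorphisms, together with Lemma \ref{lem jardine's characterization of points} applied to the geometric morphism $p_* : \Set^I \to \Sh(\cat{C},j)$. First I would observe that since $P$ is a set of enough points, $p^*$ reflects isomorphisms, so by Lemma \ref{lem jardine's characterization of points} (equivalence of conditions (2), (3), (4)), the functor $p^*$ also reflects epimorphisms and monomorphisms. This is the crucial input that upgrades reflection of isos to reflection of epis and monos.

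The forward direction ($\Rightarrow$) is the easy half: each $p_i^*$ is the left adjoint half of a geometric morphism, hence preserves all colimits (so preserves epimorphisms) and preserves finite limits (so preserves monomorphisms, since monos are characterized by the pullback square $X \to X \times_Y X$ being an isomorphism). Thus if $f$ is an epimorphism (resp. monomorphism) of sheaves, each $p_i^*(f)$ is an epimorphism (resp. monomorphism) of sets. This is exactly the content of Proposition \ref{prop sheaf epi implies epi on points}, which I would simply cite.

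For the converse ($\Leftarrow$), suppose $p_i^*(f)$ is an epimorphism (resp. monomorphism) of sets for every $i \in I$. Then $p^*(f) = \langle p_i^*(f) \rangle$ is an epimorphism (resp. monomorphism) in $\Set^I$, since epis and monos in a product category are computed componentwise. By the reflection property established in the first step, $f$ is therefore an epimorphism (resp. monomorphism) of sheaves. I expect the main obstacle to be making sure the reflection property is correctly invoked: Lemma \ref{lem jardine's characterization of points} is stated for a geometric morphism $f_*: \cat{E} \to \cat{F}$ with left adjoint $f^*$, so I must confirm that $p_*: \Set^I \to \Sh(\cat{C},j)$ is genuinely a geometric morphism with left adjoint $p^*$ preserving finite limits, which is exactly the structure noted just before the statement (each $p_i^*$ preserves finite limits, hence so does their tuple $p^*$, and $p^* \dashv p_*$). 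Once that identification is in place, all four conditions of the lemma are equivalent, and the argument closes immediately.
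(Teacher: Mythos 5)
Your proposal is correct and follows essentially the same route as the paper: the forward direction by noting each $p_i^*$ preserves epimorphisms and monomorphisms (Proposition \ref{prop sheaf epi implies epi on points}), and the converse by applying Lemma \ref{lem jardine's characterization of points} to the geometric morphism $p_* : \ncat{Set}^I \to \Sh(\cat{C},j)$ whose left adjoint reflects isomorphisms by the definition of a set of enough points. The paper's proof is just a terser version of the same argument; your added details (componentwise epis/monos in $\ncat{Set}^I$, verification that $p^* \dashv p_*$ is genuinely geometric) are the right ones to fill in.
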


\begin{proof}
$(\Rightarrow)$ Since each $p_i^*$ preserves finite colimits and limits, they each preserves epimorphisms and monomorphisms.

$(\Leftarrow)$ This follows from Lemma \ref{lem jardine's characterization of points}.
\end{proof}

\begin{Cor} \label{cor maps are local epi/mono iff epi/mono on points}
Given a site $(\cat{C}, j)$ with a set $P = \{p_i \}_{i \in I}$ of enough points, and a map $f: X \to Y$ of presheaves on $\cat{C}$, the following are equivalent:
\begin{enumerate}
    \item $f$ is a $j$-local epi/monomorphism of presheaves,
    \item $p_i^*(af)$ is an epi/monomorphism of sets for every $i \in I$,
\end{enumerate}
\end{Cor}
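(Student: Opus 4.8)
The plan is to combine the characterization of local epimorphisms and monomorphisms via sheafification (Corollary \ref{cor j-local iso/epi/mono <-> sheafification is iso/epi/mono}) with the characterization of epimorphisms and monomorphisms of sheaves via a set of enough points (Lemma \ref{lem maps of sheaves mono/epi/iso iff on points}). The key observation is that the statement $p_i^*(af)$ relating to the presheaf map $f$ can be rephrased in terms of $af$, which is a genuine map of sheaves, allowing both cited results to apply directly.

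First I would invoke Corollary \ref{cor j-local iso/epi/mono <-> sheafification is iso/epi/mono}, which states that $f : X \to Y$ is a $j$-local epimorphism (respectively monomorphism) of presheaves if and only if $af : aX \to aY$ is an epimorphism (respectively monomorphism) of sheaves, where $a$ denotes the sheafification functor. This reduces statement (1) to the assertion that $af$ is an epi/monomorphism of sheaves.

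Next I would apply Lemma \ref{lem maps of sheaves mono/epi/iso iff on points} to the map of sheaves $af : aX \to aY$. Since $P = \{p_i\}_{i \in I}$ is a set of enough points for $(\cat{C}, j)$, that lemma tells us $af$ is an epi/monomorphism of sheaves if and only if $p_i^*(af)$ is an epi/monomorphism of sets for every $i \in I$, which is precisely statement (2). Chaining these two equivalences yields $(1) \Leftrightarrow (2)$.

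I do not anticipate a serious obstacle here, as both halves of the argument are direct citations of previously established results; the only care needed is to note that $p_i^*$ here denotes the left adjoint of the point $p_i$ acting on the sheaf $af$, and that this matches the statement of Lemma \ref{lem maps of sheaves mono/epi/iso iff on points} applied to the sheaf map $af$. The proof is therefore essentially a two-line composition of Corollary \ref{cor j-local iso/epi/mono <-> sheafification is iso/epi/mono} and Lemma \ref{lem maps of sheaves mono/epi/iso iff on points}.
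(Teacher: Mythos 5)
Your proof is correct and takes essentially the same route as the paper: the paper's own argument also reduces (1) to a statement about the sheaf map $af$ via Corollary \ref{cor j-local iso/epi/mono <-> sheafification is iso/epi/mono} and then applies the enough-points criterion of Lemma \ref{lem maps of sheaves mono/epi/iso iff on points} (packaging the forward direction as Corollary \ref{cor local mono implies mono on points}). Your single chain of equivalences is a clean presentation of the identical argument.
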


\begin{proof}
$(1) \Rightarrow (2)$ is Corollary \ref{cor local mono implies mono on points}.

$(2) \Rightarrow (1)$ follows from Lemma \ref{lem maps of sheaves mono/epi/iso iff on points} 
 and Corollary \ref{cor j-local iso/epi/mono <-> sheafification is iso/epi/mono}.
\end{proof}

\subsection{Examples}

\subsubsection{Points of the site of a topological space} \label{section site of a topological space has enough points} 

In this section, we will show that if $X$ is a topological space, then the site $(\mathcal{O}(X), j_X)$ as introduced in Example \ref{ex open cover coverage}, is a site with enough points.

For every $x \in X$ consider the full subcategory $\mathcal{O}(X,x)$ of $\mathcal{O}(X)$ whose objects are those open subsets containing $x$. This category is finitely cofiltered. Indeed, $X \in \mathcal{O}(X,x)$, so $\mathcal{O}(X,x)$ is nonempty. Suppose $U,V \in \mathcal{O}(X,x)$, then $x \in U \cap V$ and $U \cap V \leq U$, $U \cap V \leq V$, and since $\mathcal{O}(X, x)$ is a poset, this is sufficient to be finitely cofiltered. Now given $F \in \ncat{Sh}(X) = \ncat{Sh}(\mathcal{O}(X), j_X)$, consider the set:
$$ p_{x}^*F = \ncolim{U \in \mathcal{O}(X,x)^{\op}} \, F(U).$$
In other words, $p_{x}^*F$ is the set of equivalence classes of local sections $s \in F(U)$ where $x \in U$ and where if $x \in V$ and $t \in F(V)$, then $s \sim t$ if there exists a $W \subseteq U \cap V$ with $x \in W$ such that $s|_W = t|_W$. In other words, $p_x^*F$ computes the set of \textbf{germs} of $F$ at $x \in X$. 

It is easy to see that this extends to a functor $p_x^*: \ncat{Sh}(X) \to \ncat{Set}$ by the functoriality of colimits. Now suppose that $d: I \to \ncat{Sh}(X)$ is a finite diagram. Then we obtain a diagram $\widetilde{d} : \mathcal{O}(X, x)^\op \times I \to \ncat{Set}$ with $\widetilde{d}(U, i) = d(i)(U)$. Since $\mathcal{O}(X, x)^\op$ is finitely filtered, by Proposition \ref{prop filtered colimits commute with finite limits in Set}, taking the $I$-limit and $\mathcal{O}(X,x)^\op$-colimit of $\widetilde{d}$ is commutative, and therefore $p_x^*$ preserves finite limits in $\ncat{Sh}(X)$. It is easy to check directly that $p_x^*$ has a right adjoint given using the usual skyscraper sheaf construction, and hence defines a point $p_{x,*} : \ncat{Set} \to \ncat{Sh}(X)$.

Now if $U \in \mathcal{O}(X)$, then it is not hard to see that $y(U)$ is a $j_X$-sheaf. So we can compute the $j_X$-continuous, $\ncat{Set}$-flat functor $\mathcal{O}(X) \to \ncat{Set}$ corresponding to $p_x^*$ by the composite functor
\begin{equation*}
    \mathcal{O}(X) \xhookrightarrow{y} \ncat{Sh}(X) \xrightarrow{p_x^*} \ncat{Set}.
\end{equation*}
So $p_x: \mathcal{O}(X) \to \ncat{Set}$ sends an open subset $V \subseteq X$ to the set 
$$p_x^*(y(V)) \cong \ncolim{U \in \mathcal{O}(X,x)^\op} y(V)(U).$$
Now $y(V)(U)$ is empty unless $U \subseteq V$, in which case its a singleton. In other words, the functor $p_x : \mathcal{O}(X) \to \ncat{Set}$ sends an open subset $V \subseteq X$ to the singleton set if $x \in V,$ and to the empty set otherwise.

Given an element $s \in F(U)$, with $x \in U$, denote the image of $s$ in $p_x^*F$ by $[s]_x$. If $f: F \to G$ is a map of sheaves on $X$, then it is easy to see that $p_x^*f : p_x^*F \to p_x^*G$ is given by $p_x^*f[s]_x = [f(s)]_x$.

\begin{Lemma} \label{lem points of a top space are faithful}
Given a topological space $X$, the map
$$\langle p_x^* \rangle : \ncat{Sh}(X) \to \prod_{x \in X} \ncat{Set}$$
constructed from the geometric morphisms arising from every point $x \in X$ is a faithful functor.
\end{Lemma}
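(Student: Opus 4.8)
The plan is to prove faithfulness directly from the definition. Given sheaves $F, G$ on $X$ and two parallel maps $f, g : F \to G$ with $p_x^*(f) = p_x^*(g)$ for every $x \in X$, I would show $f = g$. Since a map of presheaves is determined by its components, it suffices to fix an open set $U \in \mathcal{O}(X)$ and a section $s \in F(U)$ and prove that $f_U(s) = g_U(s)$ in $G(U)$.

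First I would use the explicit description of the stalk action recorded just before the statement, namely $p_x^*(f)[s]_x = [f_U(s)]_x$. For each $x \in U$ the hypothesis $p_x^*(f) = p_x^*(g)$ then gives $[f_U(s)]_x = [g_U(s)]_x$, so the two sections $f_U(s), g_U(s) \in G(U)$ have equal germs at every point of $U$. Unwinding the definition of the filtered colimit $p_x^* G = \ncolim{W \in \mathcal{O}(X,x)^\op} G(W)$ via the description of elements of a filtered colimit of sets (Lemma \ref{lem filtered colimits description in sets}), equality of germs at $x$ means there is an open neighbourhood $W_x \subseteq U$ of $x$ on which the two sections already agree, i.e. $G(\iota_x)(f_U(s)) = G(\iota_x)(g_U(s))$ for the inclusion $\iota_x : W_x \hookrightarrow U$.

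Next I would assemble these neighbourhoods into the family $\{ W_x \subseteq U \}_{x \in U}$, which is a $j_X$-covering family of $U$ since the $W_x$ cover $U$. By the previous step the restrictions of $f_U(s)$ and of $g_U(s)$ to this cover coincide, so the two sections determine the very same element under the restriction map $\text{res}_{\{W_x\}, G} : G(U) \to \Match(\{W_x\}, G)$. Because $G$ is a sheaf it is in particular separated (Definition \ref{def separated and sheaf}), so this map is injective; hence $f_U(s) = g_U(s)$. As $U$ and $s$ were arbitrary this yields $f = g$, which is exactly the faithfulness of $\langle p_x^* \rangle$.

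This is essentially the classical fact that sections of a sheaf are determined by their germs, and I do not expect a real obstacle. The only point requiring care is the passage from ``equal germs at each point'' to ``locally equal'', which is precisely the description of equality in a filtered colimit of sets, after which one invokes separatedness rather than the full sheaf condition. Note that the enough-points machinery of Lemma \ref{lem jardine's characterization of points} is not needed for this statement; conversely, once $\{ p_x \}_{x \in X}$ is shown to be a set of enough points, faithfulness would also follow from that lemma.
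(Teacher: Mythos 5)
Your proposal is correct and follows essentially the same route as the paper: pass from equal germs to local equality on an open cover $\{W_x\}_{x\in U}$, then conclude by uniqueness of amalgamations (which is exactly the separatedness you invoke). Your remark that only separatedness, not the full sheaf condition, is needed is a fine observation but does not change the argument.
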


\begin{proof}
Suppose that $f,g: F \to G$ are maps of sheaves on $X$ such that $p_x^*f = p_x^* g$ for all $x \in X$. Then if $s \in F(U)$, and $x \in U$, then we have $(p_x^*f)[s]_x = [f(s)]_x = [g(s)]_x = (p_x^*g)[s]_x$. Thus $f(s)|_{U_x} = g(s)|_{U_x}$ for some $U_x \subseteq U$ with $x \in U_x$. Thus if we vary over all $x \in U$, the collection $\{U_x \}$ forms an open cover of $U$. Now $F|_{\mathcal{O}(U)}$ and $G|_{\mathcal{O}(U)}$ are sheaves on $U$, and $\{ f(s)|_{U_x} = g(s)|_{U_x} \}_{x \in U}$ forms a matching family on $U$. Thus $f(s)$ and $g(s)$ are both amalgamations of this matching family, and since amalgamations are unique for sheaves, $f(s) = g(s)$.
\end{proof}

\begin{Cor} \label{Cor sheaves on a topological space have enough points}
Given a topological space $X$, the set $\{ p_x^* \}_{x \in X}$ is a set of enough points for $\ncat{Sh}(X)$.
\end{Cor}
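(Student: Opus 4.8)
The plan is to deduce the corollary directly from Lemma \ref{lem points of a top space are faithful} together with the abstract characterization of enough points in terms of reflecting isomorphisms, so that no new analytic input is needed. First I would observe that the set $P = \{p_x\}_{x \in X}$ assembles into a single geometric morphism: as recorded in the discussion following Definition \ref{def set of enough points}, the left adjoints $p_x^*$ combine into a functor $p^* = \langle p_x^* \rangle : \Sh(X) \to \prod_{x \in X} \Set$, which preserves finite limits (since each $p_x^*$ does) and admits the product right adjoint $p_* = \prod_{x} (p_x)_*$. Thus $p_* : \Set^X \to \Sh(X)$ is a geometric morphism, where I identify $\prod_{x \in X} \Set \cong \Set^X = \Pre(X)$ by viewing the underlying set of $X$ as a discrete category; in particular $\Set^X$ is a presheaf topos, hence a Grothendieck topos, which is what is needed for Lemma \ref{lem jardine's characterization of points} to apply.

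Next I would invoke Lemma \ref{lem points of a top space are faithful}, which states precisely that the inverse image functor $p^* = \langle p_x^* \rangle$ is faithful. Feeding this into Lemma \ref{lem jardine's characterization of points}, whose condition (1) (faithfulness of the left adjoint) is equivalent to condition (2) (reflecting isomorphisms), shows that $p^*$ reflects isomorphisms. Finally, by the reformulation of Definition \ref{def set of enough points} stated just before Lemma \ref{lem jardine's characterization of points} — namely that $P$ is a set of enough points exactly when $p^*$ reflects isomorphisms — I would conclude that $\{p_x^*\}_{x \in X}$ is a set of enough points for $\Sh(X)$.

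The argument is essentially a packaging step, so there is no genuine obstacle; the only point requiring care is the bookkeeping that licenses the use of Lemma \ref{lem jardine's characterization of points}. That lemma is stated for a geometric morphism between Grothendieck topoi, so I must confirm both that $p_*$ really is a geometric morphism (a finite-limit-preserving left adjoint with right adjoint, which the text has already verified) and that its domain $\Set^X$ is a Grothendieck topos. Once these are noted, faithfulness of $p^*$ combined with the stated equivalence immediately yields the claim.
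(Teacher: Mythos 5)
Your proposal is correct and follows exactly the paper's route: the paper's proof is the one-line deduction from Lemma \ref{lem points of a top space are faithful} together with Lemma \ref{lem jardine's characterization of points}, combined with the reformulation of Definition \ref{def set of enough points} in terms of $p^*$ reflecting isomorphisms. Your extra bookkeeping about $p_*$ being a geometric morphism into the Grothendieck topos $\ncat{Set}^X$ is a reasonable precaution but adds nothing beyond what the paper already established in the discussion preceding Lemma \ref{lem jardine's characterization of points}.
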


\begin{proof}
This follows from Lemma \ref{lem points of a top space are faithful} and Lemma \ref{lem jardine's characterization of points}.
\end{proof}

An alternative way we could have proven the above result is by showing that the functor $\langle p_x \rangle_{x \in X} : \mathcal{O}(X) \to \ncat{Set}$ reflects isomorphisms, by Lemma \ref{lem characterizatino for when set of points is enough}. The only way that $\langle p_x \rangle_{x \in X}(U \subseteq V)$ is an isomorphism is if $U$ and $V$ contain precisely the same points, i.e. $U = V$. Thus $\langle p_x \rangle_{x \in X}$ reflects isomorphisms, and so $\{ p_x \}_{x \in X}$ is a set of enough points for $\mathcal{O}(X)$.

\subsubsection{$(\ncat{Set}, j_{\text{epi}})$ has enough points}

Recall the coverage $j_{\text{epi}}$ on $\ncat{Set}$ given by jointly epimorphic families from Example \ref{ex set joint epi coverage}. This site is subcanonical, and the identity functor $1_{\ncat{Set}} : \ncat{Set} \to \ncat{Set}$ is clearly $j_{\text{epi}}$-continuous and $\ncat{Set}$-flat, and therefore defines a point. Since $1_{\ncat{Set}}$ reflects isomorphisms, by Lemma \ref{lem characterizatino for when set of points is enough}, $\{ 1_{\ncat{Set}} \}$ is a set of enough points for $(\ncat{Set}, j_{\text{epi}})$.

\subsubsection{$(\ncat{Man}, j_\text{open})$ has enough points}

Consider the category $\ncat{Man}$ of finite dimensional smooth manifolds, equipped with the open cover coverage $j_{\text{open}}$ of Example \ref{ex open cover coverage}. We wish to show that this site has enough points. Let $p_n : \ncat{Man} \to \ncat{Set}$ denote the functor defined objectwise by
\begin{equation*}
    p_n(M) = \ncolim{r \to \infty} \, \ncat{Man}(B^n(1/r), M),
\end{equation*}
where $B^n(1/r)$ is the $n$-dimensional ball of radius $1/r$, and for $r \leq r'$ the map $B^n(1/r') \to B^n(1/r)$ is just inclusion. The idea here being that we want to probe manifolds with vanishingly small $n$-dimensional balls. We want to show that $\{ p_n \}_{n \geq 0}$ is a set of enough points for $(\ncat{Man}, j_{\text{open}})$. Consider the corresponding functor $p_n^* : \ncat{Sh}(\ncat{Man}, j_{\text{open}}) \to \ncat{Set}$ given by
\begin{equation*}
\begin{aligned}
    p_n^*(X) = X \otimes_{\ncat{Man}} p_n & \cong \int^{y(M) \to X} X(M) \times p_n(M) \\
    & \cong \int^{y(M) \to X} X(M) \times \left( \ncolim{r \to \infty} \ncat{Man}(B^n(1/r), M) \right) \\
    & \cong \ncolim{r \to \infty} \int^{y(M) \to X} X(M) \times y(M)(B^n(1/r)) \\
    & \cong \ncolim{r \to \infty} X(B^n(1/r)).
\end{aligned}
\end{equation*}
Now since the colimit is obviously filtered, it is easy to see that $p_n^*$ preserves finite limits. It also clearly has a right adjoint using the skyscraper construction. Thus we know that each $p_n$ is $\ncat{Set}$-flat and $j_{\text{open}}$-continuous. We now only need to show that $\{ p_n \}_{n \geq 0}$ jointly reflect isomorphisms. 

To do this, we follow Schreiber's proof \cite[Proposition 4.3.1.7]{schreiber2013dcct} and introduce an auxillary site for each manifold $M \in \ncat{Man}$. Let $\ncat{Man}^\text{open}_{/ M}$ denote the full subcategory of $\ncat{Man}_{/M}$ on those maps $f : U \to M$ that are open embeddings. Let $\pi : \ncat{Man}^{\text{open}}_{/ M} \to \mathcal{O}(M)$ be the functor that takes the image of the open embeddings. Let $i : \ncat{Man}_{/M}^\text{open} \to \ncat{Man}_{/M}$ be the inclusion functor. Equip $\ncat{Man}_{/M}^\text{open}$ with the restriction $j^\text{open}_{/M}$ of the coverage $j_{/M}$. Let us show that $i : (\ncat{Man}_{/M}^\text{open}, j^\text{open}_{/M}) \to (\ncat{Man}_{/M}, j_{/M})$ is a dense morphism of sites. Since $i$ is the inclusion of a full subcategory, conditions (D3) and (D4) of Definition \ref{def site dense functor} hold automatically. We need only check (D1) and (D2), but it is easy to see that these hold in this example. Thus by Theorem \ref{th comparison lemma}, $i$ induces an equivalence of sheaf topoi.

For each point $x \in M$ and $r > 0$, let $\varphi_x : B^n_x(1/r) \hookrightarrow M$ denote a fixed open embedding of the $n$-dimensional ball of radius $1/r$ into $M$ such that $\varphi_x(0) = x$. Consider the functor $p_{n,x}^* : \ncat{Sh}(\ncat{Man}^\text{open}_{/M}, j^\text{open}_{/M}) \to \ncat{Set}$ defined objectwise by
\begin{equation*}
    p_{n,x}^*(X) = \ncolim{r \to \infty} X(B^n_x(1/r)).
\end{equation*}
It is easy to see that each $p^*_{n,x}$ is a left adjoint that preserves finite limits. By practically the same argument as in Lemma \ref{lem points of a top space are faithful}, it is not hard to see that $\{ p^*_{n,x} \}_{n \geq 0, x \in M}$ defines a set of enough points for $(\ncat{Man}^\text{open}_{/M}, j^\text{open}_{/M})$, equivalently $(\ncat{Man}_{/M}, j_{/M})$. By Lemma \ref{lem slice site is comorphism of sites}, the projection map $\pi_{/M} : (\ncat{Man}_{/M}, j_{/M}) \to (\ncat{Man}, j_{\text{open}})$ is a comorphism of sites, and furthermore $\Delta_{\pi_{/M}}$ sends sheaves to sheaves.

So if $X \in \ncat{Sh}(\ncat{Man}, j_{\text{open}})$, then $\Delta_{\pi_{/M}}(X) \in \Sh(\ncat{Man}_{/M}, j_{/M})$ and clearly 
\begin{equation*}
    p_{n,x}^*(\Delta_{\pi_{/M}}(X)) \cong p_{n,x'}^*(\Delta_{\pi_{/M}}(X)) \cong p^*_n(X),
\end{equation*}
for all pairs of points $x, x' \in M$, simply because $B^n_{x}(1/r) \cong B^n_{x'}(1/r)$.

So if $f: X \to Y$ is a map of sheaves on $(\ncat{Man}, j_{\text{open}})$ such that $p^*_{n,x}(\Delta_{\pi_{/M}})(f)$ is an isomorphism for all $n \geq 0$ and $x \in M$, then $f$ is an isomorphism. But this is equivalent to $p_n^*(f)$ being an isomorphism for every $n \geq 0$. Therefore $\{p_n^* \}$ is also a set of enough points for $(\ncat{Man}, j_{\text{open}})$.

\sloppy
\begin{Rem}
In fact, Schreiber proves \cite[Proposition 4.3.1.7]{schreiber2013dcct} that the functor 
$p^*_\infty: \Sh(\ncat{Man}, j_{\text{open}}) \to \ncat{Set}$ defined objectwise by
\begin{equation*}
    p_\infty^*(X) = \ncolim{n \to \infty} \ncolim{r \to \infty} X(B^n(1/r))
\end{equation*}
by itself is a set of enough points for $(\ncat{Man}, j_{\text{open}})$.
\end{Rem}

\section{Giraud's Theorem} \label{section girauds theorem}
In this section, we will prove Giraud's Theorem, which characterizes Grothendieck toposes entirely by exactness properties. We will also discuss how Giraud's axioms are equivalent to what Rezk calls \textbf{weak descent} \cite[Section 2]{rezk2010toposes}. We end this section with a discussion of what satisfying descent means, and how $\infty$-toposes repair this ``weakness'' of $1$-categorical Grothendieck toposes.

More precisely, we will be focused on proving the following meta-theorem:
Let $\cat{E}$ be a category, then:
$$(D)' \implies (G) \implies (T) \implies (D)'$$
where
\begin{enumerate}
	\item $(D)'$ is the statement that $\cat{E}$ is locally presentable and satisfies weak descent,
	\item $(G)$ is the statement that $\cat{E}$ is a Giraud category and
	\item $(T)$ is the statement that $\cat{E}$ is a Grothendieck topos.
\end{enumerate}
Thus this will prove that $(D)' \Leftrightarrow (G) \Leftrightarrow (T)$.
This will take a good deal of work, and in places we refer to the literature, but it is the basis for the theory of Grothendieck toposes. Namely giving three equivalent definitions for what a Grothendieck topos is. $(D)'$ provides a perspective that generalizes most efficiently to model topoi and $\infty$-topoi, and makes explicit the difference between Grothendieck topoi and model topoi, while $(G)$ is an internal perspective, it gives a characterization of Grothendieck topoi based on their internal categorical structure.

\subsection{Giraud's Axioms}

In this section we briefly introduce Giraud's axioms. These are a list of conditions whose conjunction is necessary and sufficient for a category to be equivalent to a Grothendieck topos. The first two are easy to state, while the third will need some preliminary definitions.

Let $\cat{E}$ be a locally presentable category:

\begin{itemize}
	\item $(G1):$ (Disjoint coproducts) For any objects $X,Y \in \cat{E}$, the following diagram is a pullback:
\begin{equation}
	\begin{tikzcd}
	\varnothing & Y \\
	X & {X + Y}
	\arrow[from=1-1, to=2-1]
	\arrow[from=1-1, to=1-2]
	\arrow[from=2-1, to=2-2]
	\arrow[from=1-2, to=2-2]
	\arrow["\lrcorner"{anchor=center, pos=0.125}, draw=none, from=1-1, to=2-2]
\end{tikzcd}
\end{equation}
\item $(G2):$ (Universal colimits) Given a morphism $f: X \to Y$ in $\cat{E}$, the pullback functor
$$f^*: \cat{E}_{/Y} \to \cat{E}_{/X}$$
preserves small colimits,
\item $(G3):$ (Effective equivalence relations) equivalence relations in $\cat{E}$ are effective.
\end{itemize}

\begin{Def} \label{def giraud's axioms}
We refer to the conditions $(G1) - (G3)$ as \textbf{Giraud's Axioms}. We call a category $\cat{E}$ a \textbf{Giraud category} if it is locally presentable and satisfies Giraud's axioms. Let $(G)$ be the statement $\cat{E}$ is a Giraud category.
\end{Def}

Now we will explain what we mean by $(G3)$. Because equivalence relations are important in what follows, we dedicate a small section to them.

\subsubsection{Equivalence Relations}

\begin{Def} \label{def equivalence relation}
Let $\cat{C}$ be a category with finite limits and $X \in \cat{C}$ an object. An \textbf{equivalence relation} on $X$ is a pair of morphisms $s,t: R \to X$ such that:
\begin{enumerate}
	\item (Relation) the map $R \xhookrightarrow{(s,t)} X \times X$ is a monomorphism,
	\item (Reflexivity) The diagonal factors through $R$: 
\begin{equation*}
	\begin{tikzcd}
	X && {X \times X} \\
	& R
	\arrow["{\Delta_X}", from=1-1, to=1-3]
	\arrow[hook', from=2-2, to=1-3]
	\arrow["r"', from=1-1, to=2-2]
\end{tikzcd}
\end{equation*}
\item (Symmetry) There exists a map $i: R \to R$ such that $s = t i$ and $t = s i$,
\item (Transitivity) There exists a map $c: R \times_X R \to R$ such that if 
\begin{equation*}
	\begin{tikzcd}
	{R \times_X R} & R \\
	R & X
	\arrow["{s}"', from=2-1, to=2-2]
	\arrow["{q_1}"', from=1-1, to=2-1]
	\arrow["{t}", from=1-2, to=2-2]
	\arrow["{q_2}", from=1-1, to=1-2]
\end{tikzcd}
\end{equation*}
is a pullback, then the following diagram commutes:
\begin{equation*}
\begin{tikzcd}
	{R \times_X R} && {X \times X} \\
	& R
	\arrow["{{(s q_2, t q_1)}}", from=1-1, to=1-3]
	\arrow["c"', from=1-1, to=2-2]
	\arrow["{{(s,t)}}"', hook, from=2-2, to=1-3]
\end{tikzcd}
\end{equation*}
\end{enumerate}
\end{Def}

The definition of an equivalence relation is a bit unwieldy, so it will be helpful to obtain another method to manipulate them. By a relation $R \subseteq X \times X$ on an object $X$ in a category $\cat{C}$, we mean a pair of maps $s,t : R \to X$ such that $R \xhookrightarrow{(s,t)} X \times X$ is a monomorphism.

Given a relation $R \subseteq X \times X$, we obtain a monomorphism of presheaves $y(R) \hookrightarrow y(X \times X) \cong y(X) \times y(X)$.

\begin{Lemma} \label{lem equiv def for equivalence relation}
Given a finitely complete category $\cat{C}$, a relation $R \subseteq X \times X$ is an equivalence relation on $X$ if and only if for every $U \in \cat{C}$, the relation of sets
\begin{equation*}
    y(R)(U) = \cat{C}(U,R) \subseteq \cat{C}(U,X)^2 \cong \cat{C}(U, X \times X) = y(X \times X)(U)
\end{equation*}
is an equivalence relation in the classical sense.
\end{Lemma}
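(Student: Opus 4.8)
The plan is to prove both directions at once by transporting the defining diagrams of Definition \ref{def equivalence relation} across the Yoneda embedding $y : \cat{C} \to \Pre(\cat{C})$, using two facts: $y$ is fully faithful, and $y$ preserves all finite limits that exist in $\cat{C}$ (as recorded in the proof of Corollary \ref{cor set-flat preserves all finite limits that exist}); equivalently, each representable functor $\cat{C}(U,-)$ is continuous. The key observation is that every clause of the definition — reflexivity, symmetry, transitivity — asserts that a particular morphism built from finite limits ($\Delta_X$, the swap map of $X \times X$, and the projections out of $R \times_X R$) factors through the mono $(s,t) : R \hookrightarrow X \times X$, and these factorizations correspond, clause by clause, to the reflexive/symmetric/transitive conditions on the sets $\cat{C}(U,R) \subseteq \cat{C}(U,X)^2$. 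Since a relation $R \subseteq X\times X$ already comes with the monomorphism built in, condition (1) holds automatically on both sides, so only the three structural clauses need to be matched up.

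For the forward direction ($\Rightarrow$), I would apply $\cat{C}(U,-)$ to each structure map. Because $\cat{C}(U,-)$ preserves the mono $(s,t)$, the set $\cat{C}(U,R)$ is genuinely a subset of $\cat{C}(U,X)^2$, and an element $\rho \in \cat{C}(U,R)$ represents the pair $(s\rho, t\rho)$. Reflexivity follows by postcomposing $f : U \to X$ with $r$, showing $(f,f) = \Delta_X f \in \cat{C}(U,R)$; symmetry follows by postcomposing with $i$, which sends $(s\rho, t\rho)$ to $(t\rho, s\rho)$; and transitivity uses that $\cat{C}(U,-)$ preserves the pullback, so that $\cat{C}(U, R\times_X R) \cong \cat{C}(U,R) \times_{\cat{C}(U,X)} \cat{C}(U,R)$, and postcomposing with $c$ together with the identity $(s,t)c = (s q_2, t q_1)$ turns a composable pair $(\rho_1,\rho_2)$ with $s\rho_1 = t\rho_2$ into an element representing $(s\rho_2, t\rho_1)$.

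For the backward direction ($\Leftarrow$), I would reconstruct $r$, $i$, $c$ by Yoneda. Reflexivity of the set relations says $y(\Delta_X) : y(X) \to y(X\times X)$ lands pointwise in the subfunctor $y(R) \hookrightarrow y(X\times X)$; since factorization through a monomorphism of presheaves is detected pointwise, this yields a natural transformation $y(X) \to y(R)$, hence by full faithfulness a map $r : X \to R$ with $(s,t)r = \Delta_X$. Symmetry likewise produces $i : R \to R$ from the swap of $y(X\times X)$, giving $si = t$ and $ti = s$; transitivity produces $c : R\times_X R \to R$ from the map $(s q_2, t q_1)$ (using that $y$ preserves the pullback $R\times_X R$), with $(s,t)c = (s q_2, t q_1)$, the set-level transitivity being exactly what forces this map to factor through $y(R)$. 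All required equations then hold by faithfulness of $y$.

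The only delicate point is the transitivity bookkeeping: I must carefully match the two legs $q_1, q_2$ of the pullback $R\times_X R$ and the twisted pairing $(s q_2, t q_1)$ against the statement ``$a \sim b$ and $b \sim c$ imply $a \sim c$'', verifying that the pullback constraint $s q_1 = t q_2$ encodes the shared middle term. Everything else is a direct translation, since finite limits and monomorphisms are both preserved and reflected by the Yoneda embedding.
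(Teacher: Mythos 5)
Your proof is correct and takes essentially the same route as the paper's: both directions amount to transporting the reflexivity/symmetry/transitivity clauses across the Yoneda embedding, postcomposing with $r$, $i$, $c$ for the forward implication and recovering those maps from the set-level conditions for the converse. The only cosmetic difference is that you package the backward direction as a pointwise factorization through the subfunctor $y(R) \hookrightarrow y(X \times X)$ followed by full faithfulness, whereas the paper instantiates directly at the universal elements ($1_X$ for reflexivity, $1_R$ for symmetry, the pair $(q_1,q_2)$ for transitivity); these are the same Yoneda argument, and your bookkeeping of the twisted pairing $(s q_2, t q_1)$ against the shared middle term $s q_1 = t q_2$ matches the paper's.
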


\begin{proof}
$(\Rightarrow)$ Given an equivalence relation $R \subseteq X \times X$, we want to show that for every $U \in \cat{C}$, the relation $\sim$ on morphisms $f, g: U \to X$ where $f \sim g$ if the map $(f,g) : U \to X \times X$ factors through $R$, is an equivalence relation.

It is now easy to read off the properties of $\sim$ being an equivalence relation diagrammatically from Definition \ref{def equivalence relation}. For example, if $f : U \to X$, then $f \sim f$, since the following diagram commutes by Definition \ref{def equivalence relation}.(1)
\begin{equation*}
 \begin{tikzcd}
	U & X & {X \times X} \\
	&& R
	\arrow["f", from=1-1, to=1-2]
	\arrow["{(f,f)}", curve={height=-24pt}, from=1-1, to=1-3]
	\arrow["{\Delta_X}", from=1-2, to=1-3]
	\arrow["r"', from=1-2, to=2-3]
	\arrow[hook, from=2-3, to=1-3]
\end{tikzcd}   
\end{equation*}
The other conditions follow from similar arguments.

$(\Leftarrow)$ Suppose that $R \subseteq X \times X$ is a relation, and suppose that for every $U \in \cat{C}$, the relation $\sim$ on morphisms $U \to X$ introduced above is an equivalence relation.

Since $\sim$ is reflexive, this means that $1_X \sim 1_X$. In other words, the diagonal map $(1_X, 1_X) : X \to X \times X$ factors through $R$. This defines the map $r : X \to R$ satisfying Definition \ref{def equivalence relation}.(2).

Since $\sim$ is symmetric, that means that the map $(t,s) : R \to X \times X$ factors through $(s,t) : R \hookrightarrow X \times X$. This defines the map $i: R \to R$ satisfying Definition \ref{def equivalence relation}.(3).

Now first note that $s q_2 \sim s q_1$, because $s q_1 = t q_2$, so the following diagram commutes
\begin{equation*}
    \begin{tikzcd}
	{R \times_X R} && {X \times X} \\
	& R
	\arrow["{(sq_2,sq_1)}", from=1-1, to=1-3]
	\arrow["{q_2}"', from=1-1, to=2-2]
	\arrow["{(s,t)}"', hook, from=2-2, to=1-3]
\end{tikzcd}
\end{equation*}
Similarly $tq_2 \sim t q_1$, so the following diagram commutes
\begin{equation*}
    \begin{tikzcd}
	{R \times_X R} && {X \times X} \\
	& R
	\arrow["{(tq_2, tq_1)}", from=1-1, to=1-3]
	\arrow["{q_1}"', from=1-1, to=2-2]
	\arrow["{(s,t)}"', hook, from=2-2, to=1-3]
\end{tikzcd}
\end{equation*}
But since $\sim$ is transitive, this means that $s q_2 \sim t q_1$, so there is a map $c$ satisfying Definition \ref{def equivalence relation}.(4).
\end{proof}

\begin{Ex} \label{ex kernel pair as equiv relation}
Recall the notion of kernel pair from Definition \ref{def kernel pair}. Every kernel pair provides an equivalence relation. Indeed, given $f : X \to Y$ with kernel pair $p_0, p_1 : X \times_Y X \to X$, then for maps $g, h: U \to X$, we have $g \sim h$ if and only if $fg = fh$. This is easily seen to be an equivalence relation.
\end{Ex}

\begin{Ex} \label{ex diagonal as equiv relation}
Given any $X \in \cat{C}$, the diagonal map $\Delta_X : X \to X \times X$ is an equivalence relation. Indeed for maps $f, g: U \to X$, $f \sim g$ if and only if $f = g$.
\end{Ex}

\begin{Lemma}
Given equivalence relations $R \subseteq X \times X$ and $S \subseteq X \times X$, on $X$, the union subobject $R \cup S \subseteq X \times X$ is also an equivalence relation on $X$.
\end{Lemma}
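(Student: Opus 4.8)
The plan is to reduce the entire verification to the set-theoretic case via Lemma \ref{lem equiv def for equivalence relation}. Since the Yoneda embedding $y$ is fully faithful and preserves all limits, the monomorphism $R \cup S \hookrightarrow X \times X$ yields a monomorphism of presheaves $y(R \cup S) \hookrightarrow y(X) \times y(X)$, so by Lemma \ref{lem equiv def for equivalence relation} it suffices to check, for every $U \in \cat{C}$, that the relation
\begin{equation*}
    T_U \coloneqq \cat{C}(U, R \cup S) \subseteq \cat{C}(U, X)^2
\end{equation*}
is a set-theoretic equivalence relation, i.e. that it is reflexive, symmetric and transitive. The point of this reduction is that it lets me manipulate elements of $\cat{C}(U,X)$ exactly as one manipulates points, turning the three closure conditions into concrete factorization statements about maps into $R \cup S$.

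First I would dispatch reflexivity. By Definition \ref{def equivalence relation}.(2) applied to $R$, the diagonal $\Delta_X : X \to X \times X$ factors through $R$, hence through the union subobject $R \cup S$; precomposing with any $f : U \to X$ shows that $(f,f)$ factors through $R \cup S$, so $T_U$ is reflexive. Symmetry I would handle by producing a symmetry map for the union directly: the swap automorphism $\sigma : X \times X \to X \times X$ restricts to $R$ and to $S$ (this is precisely the content of the maps $i$ in Definition \ref{def equivalence relation}.(3) for $R$ and for $S$), so $\sigma$ carries each of $R$ and $S$ into $R \cup S$. Because the union is the join in the subobject lattice of $X \times X$, it follows that $\sigma$ restricts to an involution $i : R \cup S \to R \cup S$ with $s = t\,i$ and $t = s\,i$, and objectwise this says exactly that $T_U$ is symmetric.

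The main obstacle will be transitivity, and I expect it to be the crux of the entire argument. Concretely, one must produce a composition map $c : (R \cup S) \times_X (R \cup S) \to R \cup S$ fitting into the square of Definition \ref{def equivalence relation}.(4), which is equivalent to showing that the relational composite $(R \cup S) \times_X (R \cup S) \xrightarrow{(s q_2,\, t q_1)} X \times X$ factors through the union subobject. Unlike reflexivity and symmetry, this factorization does \emph{not} follow formally from the corresponding data for $R$ and $S$ separately, since a pair that is $R$-related to a third object which is in turn $S$-related to a fourth need not be related by $R \cup S$ at all. The route I would take is to verify transitivity objectwise through Lemma \ref{lem equiv def for equivalence relation}, showing directly that $T_U$ is closed under composition of relations; controlling the composite $(R \cup S) \times_X (R \cup S)$ is where the exactness of $\cat{E}$ must be brought to bear, for instance using universal colimits $(G2)$ to commute the pullback past the image description of the union. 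This is the step carrying essentially all of the mathematical content, and the one that must be carried out with the greatest care.
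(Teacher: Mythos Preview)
Your reduction via Lemma \ref{lem equiv def for equivalence relation} is sound for reflexivity and symmetry, and you correctly single out transitivity as the crux. But the obstacle you flag there is not a difficulty requiring ``the greatest care''; it is fatal. The lemma as stated is \emph{false}, already in $\ncat{Set}$: take $X = \{a,b,c\}$, let $R$ be the equivalence relation identifying $a$ with $b$, and $S$ the one identifying $b$ with $c$. Then $R \cup S$ contains $(a,b)$ and $(b,c)$ but not $(a,c)$, so it is not transitive. The very sentence you wrote --- ``a pair that is $R$-related to a third object which is in turn $S$-related to a fourth need not be related by $R \cup S$ at all'' --- \emph{is} the counterexample. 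No exactness axiom will repair this, since $\ncat{Set}$ satisfies all of Giraud's axioms; your proposed use of $(G2)$ to commute the pullback past the union cannot succeed.

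The paper states this lemma without proof, and it appears to be an error as written. Its only use is through Example \ref{ex smallest equiv relation in special case}, where the union in question is $\Delta_B \cup (f,g) \cup (g,f)$ with an \emph{additional} transitivity hypothesis on $(f,g)$; and even then, the place where this construction is actually invoked (the proof of Lemma \ref{lem technical johnstone lemma pullback squares}) verifies transitivity of the relevant relation directly, exploiting disjoint coproducts $(G1)$ to show that the only composable pairs come from the initial object. So the honest fix is not to prove the lemma but to recognise that the union of equivalence relations is in general only reflexive and symmetric; transitivity must be checked case by case, or one must pass to the equivalence relation \emph{generated} by $R \cup S$.
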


\begin{Ex} \label{ex smallest equiv relation in special case}
Given any monomorphisms $f, g : A \hookrightarrow B$ in $\cat{C}$, the subobject $A \xhookrightarrow{(f,g)} B \times B$ is a relation. Suppose further that $A$ satisfies Definition \ref{def equivalence relation}.(4), i.e. it is transitive. Let $(g,f)$ denote the subobject $A \xhookrightarrow{(g,f)} B \times B$, and let $(f,g)^*$ denote the subobject
\begin{equation*}
    (f,g)^* = \Delta_B \cup (f,g) \cup (g,f).
\end{equation*}
Then $(f,g)^*$ is an equivalence relation. This is really a special case of constructing the smallest equivalence relation containing $(f,g)$, see \cite{Henryequivclosure2024}.
\end{Ex}

\begin{Def}
Recall the notion of quotient object from Definition \ref{def quotient object}. Let $X$ be an object in a finitely bicomplete category $\cat{C}$, and let $R$ be an equivalence relation on $X$. Let $X / R$ denote the coequalizer in $\cat{C}$:
$$ R \overset{s}{\underset{t}{\rightrightarrows}} X \xrightarrow{q} X/R$$
i.e $X / R$ is the quotient object of $X$ by $R$.

An equivalence relation $R$ on an object $X$ is said to be \textbf{effective} if the following diagram is a pullback:
\begin{equation*}
	\begin{tikzcd}
	R & X \\
	X & {X/R}
	\arrow["{s}"', from=1-1, to=2-1]
	\arrow["{t}", from=1-1, to=1-2]
	\arrow["q"', from=2-1, to=2-2]
	\arrow["q", from=1-2, to=2-2]
	\arrow["\lrcorner"{anchor=center, pos=0.125}, draw=none, from=1-1, to=2-2]
\end{tikzcd}
\end{equation*}
\end{Def}

\subsubsection{Technical Results}

Now let us introduce a couple of technical results about categories satisfying Giraud's axioms (G) that will be helpful in what follows. We owe much to Simon Henry and Michael Shulman's guidance in \cite{Henrybalanced2024}. For the next result recall the definition of intersections and unions of subobjects from Lemmas \ref{lem meet of subobjects} and \ref{lem join of subobjects}.

\begin{Lemma}[{\cite[Lemma A.1.4.8]{johnstone2002sketches}}] \label{lem technical johnstone lemma pullback squares}
Given a Giraud category $\cat{E}$ (Definition \ref{def giraud's axioms}), monomorphisms $f : A \hookrightarrow B$, $g: A \hookrightarrow C$ and a pushout square
\begin{equation*}
    \begin{tikzcd}
	A & C \\
	B & P
	\arrow["g", hook', from=1-1, to=1-2]
	\arrow["f"', hook, from=1-1, to=2-1]
	\arrow["k", from=1-2, to=2-2]
	\arrow["h"', from=2-1, to=2-2]
	\arrow["\lrcorner"{anchor=center, pos=0.125, rotate=180}, draw=none, from=2-2, to=1-1]
\end{tikzcd}
\end{equation*}
then the morphisms $h$ and $k$ are monomorphisms. Furthermore the square is a pullback square.
\end{Lemma}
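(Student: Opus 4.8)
The plan is to prove this in a Giraud category by reducing the statement about general monomorphisms $f, g$ to a statement about \emph{subobjects}, and then to exploit the three exactness axioms (G1)--(G3). The claim has two parts: first that the legs $h, k$ of the pushout are monomorphisms, and second that the resulting square is a pullback. These are really the same phenomenon — pushouts of monos along monos behave like unions of subobjects — so I would handle them together by producing an explicit equivalence relation whose quotient is $P$.

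First I would reduce to the case where $B$ and $C$ are subobjects of a common object and $A$ is their intersection. Concretely, form the pushout-corner object and instead consider the coproduct $B + C$ together with the two maps into it; the pair of monos $f, g$ determines a subobject of $(B+C) \times (B+C)$. Following Example \ref{ex smallest equiv relation in special case}, I would build the equivalence relation $(f,g)^* = \Delta_{B+C} \cup (f,g) \cup (g,f)$ on $B + C$, where $(f,g)$ denotes the image of $A \xrightarrow{(f,g)} (B+C)\times(B+C)$. The key point is that $A$ is transitive in the required sense precisely because $f$ and $g$ are monomorphisms, so that $(f,g)^*$ is a genuine equivalence relation. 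I would then claim that the quotient $(B+C)/(f,g)^*$ is exactly the pushout $P$, which follows by comparing the defining coequalizer of the quotient with the defining coequalizer presentation of a pushout.

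Once $P$ is realized as a quotient by an equivalence relation, axiom (G3) (effective equivalence relations) does the heavy lifting: the equivalence relation $(f,g)^*$ is effective, meaning the square
\begin{equation*}
\begin{tikzcd}
(f,g)^* & B+C \\
B+C & P
\arrow[from=1-1, to=1-2]
\arrow[from=1-1, to=2-1]
\arrow[from=2-1, to=2-2]
\arrow[from=1-2, to=2-2]
\end{tikzcd}
\end{equation*}
is a pullback. I would then pull back the maps $h : B \to P$ and $k : C \to P$ against this effective-quotient square to identify the relevant fibered products. To see that $h$ is a mono, I would use Lemma \ref{lem map is mono iff slice diagonal iso}: compute $B \times_P B$ and show the slice diagonal is an isomorphism, which reduces to checking that the restriction of $(f,g)^*$ to $B \times B$ is just the diagonal $\Delta_B$ — true because the only way two elements of $B$ become identified in $P$ is if they were already equal, $A$ being a common subobject via monomorphisms. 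The same argument gives $k$ mono, and computing $B \times_P C$ via the effective relation yields exactly $A$, establishing that the original square is a pullback.

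\textbf{The hard part will be} the careful bookkeeping in constructing $(f,g)^*$ and verifying it is an effective equivalence relation whose quotient matches $P$ — in particular, checking transitivity and that the union of subobjects $\Delta \cup (f,g) \cup (g,f)$ closes up correctly requires that unions and intersections of subobjects interact well with pullback, which is where universal colimits (G2) enters. I expect (G2) to be needed to guarantee that forming these unions is stable and that the coequalizer computing $P$ commutes with the pullbacks used to extract $B \times_P C \cong A$. Disjointness of coproducts (G1) will be used to control the behavior on the $B+C$ level, ensuring the pieces of the relation coming from $B$ and $C$ do not spuriously overlap. I would lean on the cited reference \cite[Lemma A.1.4.8]{johnstone2002sketches} for the precise form of these subobject manipulations rather than re-deriving every lattice-theoretic identity from scratch.
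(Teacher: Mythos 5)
Your proposal is correct and follows essentially the same route as the paper's proof: realize $P$ as the quotient of $B+C$ by the equivalence relation $\Delta_{B+C} \cup (bf,cg) \cup (cg,bf)$ of Example \ref{ex smallest equiv relation in special case} (where $b,c$ are the coproduct inclusions), invoke effectiveness of equivalence relations (G3) to obtain the quotient pullback square, and then paste pullback squares to deduce that $h$ and $k$ are monomorphisms and that $B \cap C = A$ inside $P$. One small correction: transitivity of the relation $(bf,cg)$ on $B+C$ is not a consequence of $f$ and $g$ being monomorphisms, as you assert --- it holds vacuously because any object witnessing $p \sim q$ and $q \sim r$ admits a map into the pullback of the two coproduct inclusions, which is initial by disjointness of coproducts (G1); this is precisely the ``no spurious overlap'' role you assign to (G1) later in your sketch, so your plan goes through once that step is routed through (G1) rather than through the mono hypotheses.
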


\begin{proof}
Let $D = B + C$, $b : B \hookrightarrow B + C$ and $c : C \hookrightarrow B + C$ be the inclusion maps. Let $R = B + A + A + C$. Following \cite[Lemma A.1.4.8]{johnstone2002sketches} we will write this as $R = B + A_1 + A_2 + C$ to help keep track of which $A$ we are talking about in the course of the proof. Let $r$ and $r'$ denote the morphisms
\begin{equation*}
    \begin{tikzcd}
	{(B + C) + A_1 + A_2} && {B + C}
	\arrow["{r = (1_{B + C}, bf, cg)}", shift left=2, from=1-1, to=1-3]
	\arrow["{r' = (1_{B + C}, cg, bf}"', shift right=2, from=1-1, to=1-3]
\end{tikzcd}
\end{equation*}
Now the relation $A \xhookrightarrow{(bf, cg)} (B + C) \times (B + C)$ is transitive. Indeed if $p,q,r : U \to (B + C)$ are maps such that $p \sim q$ and $q \sim r$, then there exist maps $\ell, \ell' : U \to A$ such that $p = bf \ell$, $q = cg \ell$, $q = bf \ell'$ and $r = cg \ell'$. In other words we get an induced map
\begin{equation*}
    \begin{tikzcd}
	U && A \\
	& \varnothing & C \\
	A & B & {B + C}
	\arrow["{\ell'}", from=1-1, to=1-3]
	\arrow[dashed, from=1-1, to=2-2]
	\arrow["\ell"', from=1-1, to=3-1]
	\arrow["g", from=1-3, to=2-3]
	\arrow[from=2-2, to=2-3]
	\arrow[from=2-2, to=3-2]
	\arrow["\lrcorner"{anchor=center, pos=0.125}, draw=none, from=2-2, to=3-3]
	\arrow["c", from=2-3, to=3-3]
	\arrow["f"', from=3-1, to=3-2]
	\arrow["b"', from=3-2, to=3-3]
\end{tikzcd}
\end{equation*}
where $\varnothing$ is initial by $\cat{E}$ having disjoint coproducts (G1). Thus $U \cong \varnothing$, and it then follows that $p \sim r$. Thus $R \xhookrightarrow{(r,r')} D$ is the relation $(bf,cg)^*$ from Example \ref{ex smallest equiv relation in special case} and hence an equivalence relation.

Now let $P$ be the coequalizer
\begin{equation*}
    \begin{tikzcd}
	R & B+C & P
	\arrow["r", shift left=2, from=1-1, to=1-2]
	\arrow["{r'}"', shift right=2, from=1-1, to=1-2]
	\arrow["q", from=1-2, to=1-3]
\end{tikzcd}
\end{equation*}
and let us consider the following commutative diagram 
\begin{equation*}
\begin{tikzcd}
	{A_1} & {B + A_1} & B \\
	{A_1 + C} & R & {B + C} \\
	C & {B+C} & P
	\arrow[hook, from=1-1, to=1-2]
	\arrow[hook', from=1-1, to=2-1]
	\arrow["{(1_B, f)}", from=1-2, to=1-3]
	\arrow[hook', from=1-2, to=2-2]
	\arrow["b", hook', from=1-3, to=2-3]
	\arrow[hook, from=2-1, to=2-2]
	\arrow["{(g, 1_C)}"', from=2-1, to=3-1]
	\arrow["{r'}", from=2-2, to=2-3]
	\arrow["r", from=2-2, to=3-2]
	\arrow["q", from=2-3, to=3-3]
	\arrow["c"', hook, from=3-1, to=3-2]
	\arrow["q"', from=3-2, to=3-3]
\end{tikzcd}
\end{equation*}
where the unlabelled maps are inclusions. The bottom right hand square is a pullback by (G3), i.e. because $R$ is an equivalence relation and therefore effective. All of the other squares can be verified to be pullbacks as well.

Now we wish to show that $qc$ and $qb$ are monomorphisms. So consider the commutative diagram
\begin{equation*}
    \begin{tikzcd}
	B & {B + A_1} & B \\
	{B + A_2} & R & {B + C} \\
	B & {B+C} & P
	\arrow[hook, from=1-1, to=1-2]
	\arrow[hook', from=1-1, to=2-1]
	\arrow["{(1_B, f)}", from=1-2, to=1-3]
	\arrow[hook', from=1-2, to=2-2]
	\arrow["b", hook', from=1-3, to=2-3]
	\arrow[hook, from=2-1, to=2-2]
	\arrow["{(1_B, f)}"', from=2-1, to=3-1]
	\arrow["{r'}", from=2-2, to=2-3]
	\arrow["r", from=2-2, to=3-2]
	\arrow["q", from=2-3, to=3-3]
	\arrow["b"', hook, from=3-1, to=3-2]
	\arrow["q"', from=3-2, to=3-3]
\end{tikzcd}
\end{equation*}
It can again be checked that each square is a pullback, so that the whole outer square is a pullback. But this is the same thing as saying that the square
\begin{equation*}
    \begin{tikzcd}
	B & B \\
	B & P
	\arrow["{1_B}", from=1-1, to=1-2]
	\arrow["{1_B}"', from=1-1, to=2-1]
	\arrow["qb", from=1-2, to=2-2]
	\arrow["qb"', from=2-1, to=2-2]
\end{tikzcd}
\end{equation*}
is a pullback, which is equivalent to $qb$ being a monomorphism. Using the same trick again proves that $qc$ is a monomorphism as well.

Now notice that this also shows that the intersection $B \cap C$ of the subobjects $B \xhookrightarrow{qb} P$ and $C \xhookrightarrow{qc} P$ is $A$. Thus it is then not hard to see that the outermost square is also a pushout.
\end{proof}

\begin{Lemma}[{\cite[Lemma A.1.4.9]{johnstone2002sketches}}] \label{lem giraud categories monos are effective}
Given a Giraud category $\cat{E}$, every monomorphism in $\cat{E}$ is an effective monomorphism (Definition \ref{def effective mono}). 
\end{Lemma}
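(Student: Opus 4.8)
The plan is to prove that every monomorphism $f : A \hookrightarrow B$ in a Giraud category $\cat{E}$ is the equalizer of its cokernel pair, which is precisely the condition of being an effective monomorphism (Definition \ref{def effective mono}). Since $\cat{E}$ is locally presentable it is in particular cocomplete, so I may form the cokernel pair of $f$, i.e. the pushout
\begin{equation*}
\begin{tikzcd}
	A & B \\
	B & P
	\arrow["f", from=1-1, to=1-2]
	\arrow["f"', from=1-1, to=2-1]
	\arrow["{i_1}", from=1-2, to=2-2]
	\arrow["{i_0}"', from=2-1, to=2-2]
\end{tikzcd}
\end{equation*}
with $P = B +_A B$. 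The goal is then to exhibit $f$ as $\mathrm{Eq}(i_0, i_1)$.

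The key step is to apply Lemma \ref{lem technical johnstone lemma pullback squares} to the two monomorphisms $f : A \hookrightarrow B$ and $f : A \hookrightarrow B$ (taking $C = B$ and $g = f$). This immediately yields two facts: the coprojections $i_0, i_1 : B \to P$ are monomorphisms, and the pushout square above is simultaneously a \emph{pullback} square. The latter is the crucial input, since it says that $A$ is the pullback $B \times_P B$ of $i_0$ against $i_1$, with \emph{both} structure maps $A \to B$ equal to $f$.

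From the pullback property the identification of $f$ with the equalizer is formal. Given any $w : U \to B$ with $i_0 w = i_1 w$, I would apply the universal property of the pullback to the pair $(w,w)$: this produces a unique $\tilde{w} : U \to A$ with $f \tilde{w} = w$. Conversely $i_0 f = i_1 f$ holds by commutativity of the pushout, and the factorization of any such $w$ through $f$ is unique because $f$ is monic. Hence $f$ has exactly the universal property of $\mathrm{Eq}(i_0, i_1)$, so $f$ is an effective monomorphism.

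I expect essentially all of the difficulty to be absorbed into Lemma \ref{lem technical johnstone lemma pullback squares}, whose proof is where the Giraud axioms (disjoint coproducts (G1) and effectivity of equivalence relations (G3)) actually do their work. The only point in the present argument that demands care is verifying that ``the pushout square is also a pullback'' translates precisely into the equalizer condition: one must check that the pullback hypothesis $i_0 u = i_1 v$ forces $u = v = f\tilde{w}$, so that a single map $w$ equalizing $i_0$ and $i_1$ factors uniquely through $f$. I would spell out this small diagram chase explicitly, as it is the hinge connecting the two lemmas.
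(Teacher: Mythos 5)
Your proposal is correct and follows essentially the same route as the paper: apply Lemma \ref{lem technical johnstone lemma pullback squares} with $g = f$ to conclude the cokernel-pair pushout is also a pullback, then read off the equalizer property. The paper leaves the final pullback-to-equalizer translation implicit, while you spell it out; the content is identical.
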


\begin{proof}
Given a monomorphism $f: X \to Y$ in $\cat{E}$, by Lemma \ref{lem technical johnstone lemma pullback squares}, the cokernel pair (Definition \ref{def cokernel pair})
\begin{equation*}
\begin{tikzcd}
	X & Y \\
	Y & {Y +_X Y}
	\arrow["f", from=1-1, to=1-2]
	\arrow["f"', from=1-1, to=2-1]
	\arrow["{i_1}", from=1-2, to=2-2]
	\arrow["{i_0}"', from=2-1, to=2-2]
	\arrow["\lrcorner"{anchor=center, pos=0.125, rotate=180}, draw=none, from=2-2, to=1-1]
\end{tikzcd}
\end{equation*}
is also a pullback, which implies that 
\begin{equation*}
\begin{tikzcd}
	X & Y & {Y +_X Y}
	\arrow["f", from=1-1, to=1-2]
	\arrow["{i_0}", shift left=2, from=1-2, to=1-3]
	\arrow["{i_1}"', shift right=2, from=1-2, to=1-3]
\end{tikzcd}
\end{equation*}
is an equalizer. Thus $f$ is an effective monomorphism.
\end{proof}

\begin{Lemma} \label{lem giraud categories are balanced}
If $\cat{E}$ is a Giraud category, then $\cat{E}$ is balanced, i.e. if $f: X \to Y$ is a morphism in $\cat{E}$ that is both a monomorphism and an epimorphism, then it is an isomorphism.
\end{Lemma}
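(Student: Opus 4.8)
The plan is to deduce this quickly from the fact, established in Lemma \ref{lem giraud categories monos are effective}, that every monomorphism in a Giraud category is an effective monomorphism. So suppose $f : X \to Y$ is both a monomorphism and an epimorphism. Since $f$ is a monomorphism, it is effective, meaning that $f$ is the equalizer of its own cokernel pair:
\begin{equation*}
\begin{tikzcd}
	X & Y & {Y +_X Y.}
	\arrow["f", from=1-1, to=1-2]
	\arrow["{i_0}", shift left=2, from=1-2, to=1-3]
	\arrow["{i_1}"', shift right=2, from=1-2, to=1-3]
\end{tikzcd}
\end{equation*}

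First I would exploit the epimorphism hypothesis. Because $f$ equalizes $i_0$ and $i_1$, we have $i_0 f = i_1 f$. But $f$ is an epimorphism, so this forces $i_0 = i_1$. Thus the cokernel pair of $f$ consists of a single map repeated twice.

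Next I would identify the equalizer of a pair of equal maps. When $i_0 = i_1 : Y \to Y +_X Y$, every morphism into $Y$ trivially equalizes them, so the identity $1_Y : Y \to Y$ is an equalizer of the pair. Since $f$ is also an equalizer of the same pair, and equalizers are unique up to a unique isomorphism, the comparison map between $X$ and $Y$ is an isomorphism commuting with $f$ and $1_Y$; concretely this exhibits $f$ itself as an isomorphism. This completes the argument.

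I do not expect a genuine obstacle here: all the real work has already been absorbed into Lemma \ref{lem giraud categories monos are effective} (which in turn relied on Lemma \ref{lem technical johnstone lemma pullback squares} and the Giraud axioms), so that monomorphisms are effective. Given that, the balanced property is a short and purely formal consequence, the only subtlety being to record carefully that the equalizer of a pair of coincident maps is the identity.
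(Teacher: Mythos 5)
Your proof is correct and is essentially identical to the paper's: both invoke Lemma \ref{lem giraud categories monos are effective} to see that $f$ equalizes its cokernel pair, use the epimorphism hypothesis to conclude $i_0 = i_1$, and then identify $f$ with the identity equalizer up to isomorphism. No differences worth noting.
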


\begin{proof}
If $f : X \to Y$ is a monomorphism and an epimorphism, then by Lemma \ref{lem giraud categories monos are effective}, $f$ is an effective monomorphism, so it is the equalizer of its cokernel pair,
\begin{equation*}
\begin{tikzcd}
	X & Y & {Y +_X Y}
	\arrow["f", from=1-1, to=1-2]
	\arrow["{i_0}", shift left=2, from=1-2, to=1-3]
	\arrow["{i_1}"', shift right=2, from=1-2, to=1-3]
\end{tikzcd}
\end{equation*}
But $f$ is also an epimorphism, and $i_0 f = i_1 f$, so $i_0 = i_1$. But the equalizer of $i_0$ with $i_0$ is just the identity map
\begin{equation*}
    \begin{tikzcd}
	Y & Y & {Y +_X Y}
	\arrow["{1_Y}", from=1-1, to=1-2]
	\arrow["{i_0}", shift left=2, from=1-2, to=1-3]
	\arrow["{i_0}"', shift right=2, from=1-2, to=1-3]
\end{tikzcd}
\end{equation*}
Since equalizers are unique up to isomorphism, this shows that $f : X \to Y$ is an isomorphism.
\end{proof}

\begin{Lemma}[{\cite[Appendix Lemma 2.2]{maclane2012sheaves}}] \label{lem epimono factorization for giraud's axioms}
If $\cat{E}$ is a Giraud category, then every morphism $f : X \to Y$ in $\cat{E}$ can be factored as
\begin{equation*}
    X \overset{e_f}{\twoheadrightarrow} Z_f \xhookrightarrow{\iota_f} Y
\end{equation*}
where $e_f$ is an effective epimorphism and $\iota_f$ is a monomorphism. 
\end{Lemma}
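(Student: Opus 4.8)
The plan is to construct the factorization directly from the kernel pair of $f$ and then verify that the induced comparison map is a monomorphism by reducing, via balancedness, to showing a certain diagonal is an epimorphism.

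First I would form the kernel pair $p_0, p_1 : X \times_Y X \to X$ of $f$, which exists since $\cat{E}$ is locally presentable and hence complete. By Example \ref{ex kernel pair as equiv relation} this is an equivalence relation on $X$, so by Giraud's axiom $(G3)$ it is effective: the coequalizer $e_f : X \to Z_f$ exists (writing $Z_f = X/(X\times_Y X)$) and reconstitutes the relation as a pullback $X \times_Y X \cong X \times_{Z_f} X$. In particular $X \times_Y X$ is exactly the kernel pair of $e_f$, so $e_f$ is the coequalizer of its own kernel pair, i.e.\ an effective epimorphism. Since $f p_0 = f p_1$, the universal property of the coequalizer yields a unique $\iota_f : Z_f \to Y$ with $\iota_f e_f = f$. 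This gives the candidate factorization $X \overset{e_f}{\twoheadrightarrow} Z_f \overset{\iota_f}{\to} Y$, and only the monomorphism claim for $\iota_f$ remains.

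To show $\iota_f$ is a monomorphism I would invoke the slice-diagonal criterion (Lemma \ref{lem map is mono iff slice diagonal iso}) together with balancedness (Lemma \ref{lem giraud categories are balanced}): $\iota_f$ is mono iff $\Delta_{\iota_f} : Z_f \to Z_f \times_Y Z_f$ is an isomorphism, and since $\Delta_{\iota_f}$ is automatically a monomorphism, it suffices to prove it is an epimorphism, whence it is an iso in the balanced category $\cat{E}$. The key observation is that the induced map $e_f \times_Y e_f : X \times_Y X \to Z_f \times_Y Z_f$ factors through $\Delta_{\iota_f}$: because $e_f$ coequalizes $p_0, p_1$ we have $e_f p_0 = e_f p_1$, so $(e_f p_0,\, e_f p_1) = \Delta_{\iota_f} \circ (e_f p_0)$. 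Hence if $e_f \times_Y e_f$ is an epimorphism, then $\Delta_{\iota_f}$ is an epimorphism as well, being the left factor of an epi.

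Finally I would show $e_f \times_Y e_f$ is an epimorphism by exhibiting it as a composite of pullbacks of $e_f$: it factors as $X \times_Y X \xrightarrow{1_X \times_Y e_f} X \times_Y Z_f \xrightarrow{e_f \times_Y 1_{Z_f}} Z_f \times_Y Z_f$, where each arrow is the pullback of $e_f$ along a projection out of the relevant fiber product. Since an effective epimorphism is a coequalizer and axiom $(G2)$ guarantees that the pullback functors preserve small colimits (so coequalizers are stable under pullback), each factor is again an effective epimorphism, hence an epimorphism, and their composite is therefore an epimorphism. This forces $\Delta_{\iota_f}$ to be epi, hence an isomorphism by balancedness, hence $\iota_f$ is mono. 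I expect the main obstacle to be precisely this last step: correctly identifying $e_f \times_Y e_f$ as a composite of pullbacks of $e_f$ and justifying pullback-stability of effective epimorphisms from $(G2)$, since everything else is a routine assembly of the kernel-pair construction with the two cited lemmas.
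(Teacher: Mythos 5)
Your proposal is correct, and its skeleton coincides with the paper's: both take $Z_f$ to be the coequalizer of the kernel pair of $f$, observe that $e_f$ is then an effective epimorphism, and reduce the monomorphism claim for $\iota_f$ to the fact that the induced map $q = e_f \times_Y e_f : X \times_Y X \to Z_f \times_Y Z_f$ is an epimorphism, obtained by writing it as a composite of pullbacks of $e_f$ across the pasting diagram $X \times_Y X \to X \times_Y Z_f \to Z_f \times_Y Z_f$. Where you diverge is the endgame. The paper argues directly: given $n, m : A \to Z_f$ with $\iota_f n = \iota_f m$, it pulls the epimorphism $q$ back along the induced map $(n,m) : A \to Z_f \times_Y Z_f$ and checks that the resulting epimorphism onto $A$ equalizes $n$ and $m$, forcing $n = m$. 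You instead note that $q$ factors through the slice diagonal $\Delta_{\iota_f}$ (since $e_f p_0 = e_f p_1$), so $\Delta_{\iota_f}$ is an epimorphism, hence an isomorphism by balancedness (Lemma \ref{lem giraud categories are balanced}), hence $\iota_f$ is mono by Lemma \ref{lem map is mono iff slice diagonal iso}. Both routes work; yours trades one extra pullback for an appeal to balancedness, which is already available at this point in the development. A small bonus of your version: the paper's proof cites Corollary \ref{cor epis in giraud categories are effective} for pullback-stability of epimorphisms, a forward reference to a result that logically depends on the present lemma, whereas you only need pullback-stability of the \emph{effective} epimorphism $e_f$, which you correctly derive from $(G2)$ alone (modulo the routine slice-category bookkeeping that the paper spells out in Lemma \ref{lem giraud categories epimorphisms are stable under pullback}). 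One cosmetic quibble: the existence of the coequalizer comes from cocompleteness, not from $(G3)$; what $(G3)$ buys you is that $X \times_Y X$ is the kernel pair of $e_f$, which is what makes $e_f$ effective.
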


\begin{proof}
Let $Z_f = \coim_{\text{reg}}(f)$ denote the regular coimage (Definition \ref{def regular coimage}) of $f$, i.e. the coequalizer of the kernel pair of $f$
\begin{equation*}
    \begin{tikzcd}
	{X \times_YX} & X & {Z_f} \\
	&& Y
	\arrow["{p_0}", shift left, from=1-1, to=1-2]
	\arrow["{p_1}"', shift right, from=1-1, to=1-2]
	\arrow["{e_f}", two heads, from=1-2, to=1-3]
	\arrow["f"', from=1-2, to=2-3]
	\arrow["\iota_f", dashed, from=1-3, to=2-3]
\end{tikzcd}
\end{equation*}
Now by definition $e_f$ is an effective epimorphism. We wish to show that $\iota_f$ is a monomorphism.

So suppose there are maps $n, m: A \to Z_f$ such that $\iota_f n = \iota_f m$. Then by the universal property of pullbacks we have a unique induced map
\begin{equation*}
\begin{tikzcd}
	A \\
	& {Z_f \times_Y Z_f} & {Z_f} \\
	& {Z_f} & Y
	\arrow["h"{description}, dashed, from=1-1, to=2-2]
	\arrow["m", curve={height=-12pt}, from=1-1, to=2-3]
	\arrow["n"', curve={height=12pt}, from=1-1, to=3-2]
	\arrow[from=2-2, to=2-3]
	\arrow[from=2-2, to=3-2]
	\arrow["\lrcorner"{anchor=center, pos=0.125}, draw=none, from=2-2, to=3-3]
	\arrow[from=2-3, to=3-3]
	\arrow[from=3-2, to=3-3]
\end{tikzcd}
\end{equation*}

Now by the pasting law for pullback diagrams we have a commutative diagram where each square is a pullback
\begin{equation*}
    \begin{tikzcd}
	{X \times_Y X} & {Z_f \times_YX} & X \\
	{X \times_Y Z_f} & {Z_f \times_Y Z_f} & {Z_f} \\
	X & {Z_f} & Y
	\arrow[two heads, from=1-1, to=1-2]
	\arrow[two heads, from=1-1, to=2-1]
	\arrow["\lrcorner"{anchor=center, pos=0.125}, draw=none, from=1-1, to=2-2]
	\arrow[from=1-2, to=1-3]
	\arrow[two heads, from=1-2, to=2-2]
	\arrow["\lrcorner"{anchor=center, pos=0.125}, draw=none, from=1-2, to=2-3]
	\arrow["{e_f}", two heads, from=1-3, to=2-3]
	\arrow[two heads, from=2-1, to=2-2]
	\arrow[from=2-1, to=3-1]
	\arrow["\lrcorner"{anchor=center, pos=0.125}, draw=none, from=2-1, to=3-2]
	\arrow[from=2-2, to=2-3]
	\arrow[from=2-2, to=3-2]
	\arrow["\lrcorner"{anchor=center, pos=0.125}, draw=none, from=2-2, to=3-3]
	\arrow[from=2-3, to=3-3]
	\arrow["{e_f}"', two heads, from=3-1, to=3-2]
	\arrow[from=3-2, to=3-3]
\end{tikzcd}
\end{equation*}
Furthermore, since epimorphisms are stable in $\cat{E}$ by Corollary \ref{cor epis in giraud categories are effective}, the composite map above which we will denote by $q : X \times_Y X \to Z_f \times_Y Z_f$ is an epimorphism.

Therefore the pullback map
\begin{equation*}
\begin{tikzcd}
	{A \times_{Z_f \times_Y Z_f} (X \times_Y X)} & {X \times_Y X} \\
	A & {Z_f \times_Y Z_f}
	\arrow["{(n',m')}", from=1-1, to=1-2]
	\arrow["{h^*(q)}"', two heads, from=1-1, to=2-1]
	\arrow["\lrcorner"{anchor=center, pos=0.125}, draw=none, from=1-1, to=2-2]
	\arrow["q", two heads, from=1-2, to=2-2]
	\arrow["{(n,m)}"', from=2-1, to=2-2]
\end{tikzcd}
\end{equation*}
is an epimorphism as well.

Now note that $p_0 (n',m') = n'$ and $p_1 (n',m') = m'$. Since $e_f p_0 = e_f p_1$ we have 
$$e_f n' = e_f p_0 (n', m') = e_f p_1 (n', m') = e_f m'$$
Therefore
$$n h^*(q) = e_f n' = e_f m' = m h^*(q).$$
But $h^*(q)$ is an epimorphism, so $n = m$. Thus $\iota_f$ is a monomorphism.
\end{proof}

\begin{Lemma} \label{lem giraud categories epi iff coimage is iso}
Given a Giraud category $\cat{E}$, a morphism $f : X \to Y$ in $\cat{E}$ is an epimorphism if and only if for the regular coimage factorization
\begin{equation*}
    \begin{tikzcd}
	X & {\coim_{\text{reg}}(f)} & Y
	\arrow["e", two heads, from=1-1, to=1-2]
	\arrow["i", hook, from=1-2, to=1-3]
\end{tikzcd}
\end{equation*}
the map $i$ is an isomorphism.
\end{Lemma}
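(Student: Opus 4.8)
The plan is to read everything off the regular coimage factorization already established in Lemma~\ref{lem epimono factorization for giraud's axioms}, together with the fact that Giraud categories are balanced (Lemma~\ref{lem giraud categories are balanced}). Write the factorization as $f = i \circ e$, where $e : X \twoheadrightarrow \coim_{\text{reg}}(f)$ is an effective epimorphism and $i : \coim_{\text{reg}}(f) \hookrightarrow Y$ is a monomorphism. Both directions are then immediate formal consequences.

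For the direction $(\Leftarrow)$, I would argue that if $i$ is an isomorphism then $f$ is a composite of two epimorphisms: the map $e$, which is an effective epimorphism and hence an epimorphism, and the map $i$, which is an isomorphism and hence an epimorphism. Since epimorphisms are closed under composition, $f$ is an epimorphism.

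For the direction $(\Rightarrow)$, suppose $f = i \circ e$ is an epimorphism. Here I would invoke the elementary cancellation fact that whenever a composite $i \circ e$ is an epimorphism, the left factor $i$ is an epimorphism: if $u i = v i$ then $u i e = v i e$, so $u (ie) = v(ie)$, and since $ie = f$ is epi we get $u = v$. Thus $i$ is an epimorphism. But $i$ is a monomorphism by the construction of the coimage factorization, so $i$ is simultaneously monic and epic. Since $\cat{E}$ is balanced by Lemma~\ref{lem giraud categories are balanced}, this forces $i$ to be an isomorphism.

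I do not expect a genuine obstacle here, since the statement is essentially a packaging of the two prior structural results; the real work (producing the effective-epi/mono factorization and proving balancedness) has already been carried out upstream. The only point warranting a word of care is the cancellation property for the left factor of an epimorphism, which holds in any category and requires no hypotheses on $\cat{E}$.
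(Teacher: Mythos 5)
Your proof is correct and follows exactly the paper's argument: the backward direction composes two epimorphisms, and the forward direction cancels $e$ to see that $i$ is both monic and epic, then applies balancedness (Lemma \ref{lem giraud categories are balanced}). No gaps.
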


\begin{proof}
$(\Leftarrow)$ if $i$ is an isomorphism, then it is an epimorphism, so $i e = f$ is an epimorphism.
$(\Rightarrow)$ if $f = i e$ is an epimorphism, then $i$ is an epimorphism and a monomorphism, so by Lemma \ref{lem giraud categories are balanced}, $i$ is an isomorphism.
\end{proof}

\begin{Cor} \label{cor epis in giraud categories are effective}
If $\cat{E}$ is a Giraud category, then all epimorphisms in $\cat{E}$ are effective epimorphisms.  
\end{Cor}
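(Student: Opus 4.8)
The plan is to combine the two immediately preceding lemmas. Given an epimorphism $f : X \to Y$ in the Giraud category $\cat{E}$, I would first invoke Lemma \ref{lem epimono factorization for giraud's axioms} to factor it through its regular coimage as
\begin{equation*}
    X \xrightarrow{e} \coim_{\text{reg}}(f) \xrightarrow{i} Y,
\end{equation*}
where $e$ is an effective epimorphism (by construction, $e$ is the coequalizer of the kernel pair of $f$) and $i$ is a monomorphism. Since $f$ is assumed to be an epimorphism, Lemma \ref{lem giraud categories epi iff coimage is iso} tells us that $i$ is an isomorphism.

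It then remains to deduce that $f$ itself is an effective epimorphism, i.e.\ that $f$ is the coequalizer of its own kernel pair. The key observation is that because $i$ is an isomorphism (in particular a monomorphism), the kernel pair of $f = i e$ coincides with the kernel pair of $e$. Now $e$ is the coequalizer of this kernel pair, and postcomposing a coequalizer with an isomorphism yields another coequalizer of the same pair; hence $f = i e$ is also a coequalizer of the kernel pair of $f$. Therefore $f$ is an effective epimorphism.

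The argument is essentially a two-line assembly of the previous results, so there is no serious obstacle; the only point requiring minor care is the final step, namely verifying that the composite of an effective epimorphism with an isomorphism is again effective. This is immediate once one notes that $i$ being iso makes $f$ and $e$ share a kernel pair and that coequalizers are stable under postcomposition by isomorphisms, so I would state it briefly rather than belabor it.
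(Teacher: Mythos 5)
Your proof is correct and follows essentially the same route as the paper: both use the regular coimage factorization together with Lemma \ref{lem giraud categories epi iff coimage is iso} to see that the mono part is an isomorphism, and then conclude that $f$ coequalizes its own kernel pair. You spell out the final step (kernel pairs of $f$ and $e$ agree since $i$ is mono, and coequalizers are stable under postcomposition by isomorphisms) slightly more explicitly than the paper's appeal to uniqueness of coequalizers, but the argument is the same.
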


\begin{proof}
By Lemma \ref{lem giraud categories epi iff coimage is iso}, if $f : X \to Y$ is an epimorphism, then $f$ is isomorphic to $e_f : X \to \coim_{\text{reg}}(f)$. But coequalizers are unique up to isomorphism, so $f$ is therefore the coequalizer of its own kernel pair and hence an effective epimorphism.
\end{proof}

\begin{Lemma} \label{lem giraud categories epimorphisms are stable under pullback}
If $\cat{E}$ is a Giraud category, then epimorphisms in $\cat{E}$ are stable under pullback. In other words if $f: X \to Y$ is an epimorphism in $\cat{E}$ and $g : Z \to Y$ is an arbitrary morphism and the following commutative diagram is a pullback
\begin{equation*}
    \begin{tikzcd}
	{Z \times_Y X} & X \\
	Z & Y
	\arrow[from=1-1, to=1-2]
	\arrow["{g^*(f)}"', from=1-1, to=2-1]
	\arrow["\lrcorner"{anchor=center, pos=0.125}, draw=none, from=1-1, to=2-2]
	\arrow["f", from=1-2, to=2-2]
	\arrow["g"', from=2-1, to=2-2]
\end{tikzcd}
\end{equation*}
then $g^*(f)$ is an epimorphism.
\end{Lemma}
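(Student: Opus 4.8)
The plan is to reduce the statement to a single application of the universal colimits axiom (G2), using the fact that in a Giraud category every epimorphism is an \emph{effective} epimorphism. The key observation is that an effective epimorphism is, by definition, a coequalizer, and coequalizers (being colimits) are preserved by pullback thanks to (G2). Since coequalizer maps are always epimorphisms, this will immediately give the result.

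In detail, I would first invoke Corollary \ref{cor epis in giraud categories are effective} to conclude that the epimorphism $f : X \to Y$ is effective, i.e. it is the coequalizer of its own kernel pair
\begin{equation*}
\begin{tikzcd}
	{X \times_Y X} & X & Y.
	\arrow["{p_0}", shift left, from=1-1, to=1-2]
	\arrow["{p_1}"', shift right, from=1-1, to=1-2]
	\arrow["f", from=1-2, to=1-3]
\end{tikzcd}
\end{equation*}
The next step is to reinterpret this diagram inside the slice category $\cat{E}_{/Y}$: viewing $(X \times_Y X, f p_0)$, $(X, f)$ and $(Y, 1_Y)$ as objects over $Y$, the maps $p_0, p_1$ and $f$ become morphisms in $\cat{E}_{/Y}$, and since the forgetful functor $\cat{E}_{/Y} \to \cat{E}$ creates colimits, this exhibits $(Y, 1_Y)$ as the coequalizer of $p_0, p_1$ in $\cat{E}_{/Y}$.

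Now I would apply (G2) to the morphism $g : Z \to Y$, so that the pullback functor $g^* : \cat{E}_{/Y} \to \cat{E}_{/Z}$ preserves this coequalizer. Computing the values, $g^*(X, f) = (Z \times_Y X,\, g^*(f))$ and $g^*(Y, 1_Y) = (Z, 1_Z)$, so that $g^*$ sends the diagram above to a coequalizer diagram in $\cat{E}_{/Z}$ whose coequalizer map is precisely $g^*(f) : Z \times_Y X \to Z$. Using once more that the projection $\cat{E}_{/Z} \to \cat{E}$ creates colimits, $g^*(f)$ is a coequalizer in $\cat{E}$, and hence an epimorphism, as desired.

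The only real subtlety — and where I would take a little care rather than waving hands — is the passage between $\cat{E}$ and the slice categories: namely that colimits in $\cat{E}_{/Y}$ and $\cat{E}_{/Z}$ are computed in the underlying category, and that $g^*$ applied to the objects of the kernel-pair diagram yields the correct pullbacks (so that the resulting coequalizer map is genuinely $g^*(f)$). Both are standard facts, and one does not even need to identify $g^*(X \times_Y X)$ with the kernel pair of $g^*(f)$; it suffices that $g^*$ preserves the colimit cocone, from which the epimorphism property of $g^*(f)$ follows formally.
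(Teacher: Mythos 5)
Your proposal is correct and follows essentially the same route as the paper's proof: reduce to the effectivity of epimorphisms (Corollary \ref{cor epis in giraud categories are effective}), transport the kernel-pair coequalizer into the slice $\cat{E}_{/Y}$, apply the universal-colimits axiom (G2) to preserve it under $g^*$, and project back down to $\cat{E}$ to conclude that $g^*(f)$ is a (regular, hence plain) epimorphism. The care you flag about the projection functors creating/preserving colimits is exactly the point the paper also makes explicit, citing the relevant facts from Riehl.
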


\begin{proof}
Suppose that $f : X \to Y$ is an epimorphism and $g : Z \to Y$ is an arbitrary morphism in $\cat{E}$. By Corollary \ref{cor epis in giraud categories are effective}, $f$ is an effective epimorphism, hence $f$ is the coequalizer of its kernel pair. Thus 
\begin{equation*}
    \begin{tikzcd}
	{X \times_Y X} & X & Y
	\arrow["{p_0}", shift left=2, from=1-1, to=1-2]
	\arrow["{p_1}"', shift right=2, from=1-1, to=1-2]
	\arrow["f", from=1-2, to=1-3]
\end{tikzcd}
\end{equation*}
is a coequalizer. Now by the dual of \cite[Proposition 3.3.8]{riehl2017category}, the projection functor $\pi_Y : \cat{E}_{/Y} \to \cat{E}$ strictly creates all colimits. In other words, the diagram 
\begin{equation*}
    \begin{tikzcd}
	{X \times_Y X} & X & Y \\
	& Y
	\arrow["{p_0}", shift left=2, from=1-1, to=1-2]
	\arrow["{p_1}"', shift right=2, from=1-1, to=1-2]
	\arrow[from=1-1, to=2-2]
	\arrow["f", from=1-2, to=1-3]
	\arrow["f"', from=1-2, to=2-2]
	\arrow[Rightarrow, no head, from=1-3, to=2-2]
\end{tikzcd}
\end{equation*}
is a coequalizer in $\cat{E}_{/Y}$. Now by (G2), the pullback functor $g^* : \cat{E}_{/Y} \to \cat{E}_{/Z}$ preserves colimits, so the diagram
\begin{equation*}
    \begin{tikzcd}
	{g^*(X) \times_Z g^*(X)} & {g^*(X)} & Z \\
	& Z
	\arrow["{q_0}", shift left=2, from=1-1, to=1-2]
	\arrow["{q_1}"', shift right=2, from=1-1, to=1-2]
	\arrow[from=1-1, to=2-2]
	\arrow["{g^*(f)}", from=1-2, to=1-3]
	\arrow["{g^*(f)}"', from=1-2, to=2-2]
	\arrow[Rightarrow, no head, from=1-3, to=2-2]
\end{tikzcd}
\end{equation*}
is a coequalizer in $\cat{E}_{/Z}$. By the dual of \cite[Proposition 3.3.3]{riehl2017category} the functor $\pi_Z : \cat{E}_{/Z} \to \cat{E}$ preserves colimits, and hence $g^*(f)$ is a coequalizer and hence an epimorphism.
\end{proof}

\subsection{Giraud's Axioms implies Grothendieck Topos}

The goal for this section is to prove that if $\cat{E}$ is a locally presentable category then $(G) \implies (T)$. This is a classical theorem of Giraud \cite{SGA1972}, who actually proves $(G) \Leftrightarrow (T)$, but we prefer to prove the one direction so that we may prove all of the equivalences at once. Our method of proof was inspired by \cite{giraudtheorem}.

For the rest of this section, assume that $\cat{E}$ is a Giraud category, so in particular $\cat{E}$ is locally presentable. This implies that there exists a reflective localization:
\begin{equation}
	\begin{tikzcd}
	{\cat{E}} && {\Pre(\cat{C})}
	\arrow[""{name=0, anchor=center, inner sep=0}, "\iota"', shift right=2, hook, from=1-1, to=1-3]
	\arrow[""{name=1, anchor=center, inner sep=0}, "L"', shift right=2, from=1-3, to=1-1]
	\arrow["\dashv"{anchor=center, rotate=-90}, draw=none, from=1, to=0]
\end{tikzcd}
\end{equation}
where $\cat{C}$ is a small full subcategory of $\cat{E}$ whose objects are $\kappa$-presentable for some regular cardinal $\kappa$, and $\iota$ is the yoneda embedding restricted to $\cat{C}$, i.e. if $U \in \cat{E}$, then if $j: \cat{C} \hookrightarrow \cat{E}$ is the inclusion of the full subcategory, then $\iota$ is the functor $\iota(U) = y(U)|_{\cat{C}} = \cat{E}(j(-), U)$ and furthermore preserves $\kappa$-filtered colimits.

Let us define a coverage $j_\Gir$ on $\cat{C}$ so that we can obtain an equivalence $\Sh(\cat{C}, j_\Gir) \simeq \cat{E}$.

Given $U \in \cat{C}$, let $j_\text{Gir}(U)$ be the set of families $r = \{ r_i : U_i \to U \}_{i \in I}$ in $\cat{C}$ such that
\begin{equation*}
 \sum_{i \in I} r_i : \sum_{i \in I} U_i \to U
\end{equation*}
is an epimorphism in $\cat{E}$.

\begin{Lemma} \label{lem forming the Giraud coverage}
The collection of families $j_{\text{Gir}}$ forms a coverage on $\cat{C}$.
\end{Lemma}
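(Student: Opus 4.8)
The plan is to verify directly the two conditions of Definition \ref{def coverage}. The first is immediate: the single-map family $(1_U)$ has associated morphism $1_U\colon U\to U$, an isomorphism and hence an epimorphism in $\cat{E}$, so $(1_U)\in j_\Gir(U)$. The content lies in the pullback-stability axiom, which I would handle by forming pullbacks inside the ambient category $\cat{E}$ and then re-covering the resulting objects by objects of $\cat{C}$.

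For that axiom, fix $r=\{r_i\colon U_i\to U\}_{i\in I}\in j_\Gir(U)$ and a morphism $g\colon V\to U$ in $\cat{C}$. Since $\cat{E}$ is locally presentable it is complete and cocomplete, so I may form the pullbacks $P_i\coloneqq V\times_U U_i$ in $\cat{E}$, with projections $\pi_i\colon P_i\to V$ and $\rho_i\colon P_i\to U_i$. By universality of coproducts (G2), the canonical comparison $\sum_i P_i\xrightarrow{\ \cong\ }V\times_U\bigl(\sum_i U_i\bigr)$ is an isomorphism under which $\sum_i\pi_i$ is identified with the pullback of $\sum_i r_i$ along $g$. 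As $r\in j_\Gir(U)$ means $\sum_i r_i$ is an epimorphism, Lemma \ref{lem giraud categories epimorphisms are stable under pullback} gives that $\sum_i\pi_i\colon\sum_i P_i\to V$ is an epimorphism as well. Next, because $\iota$ is fully faithful, $\cat{C}$ is a dense, hence (strongly) generating, subcategory of $\cat{E}$; in particular, for each $P_i$ the canonical map $\sum_{\phi\colon W\to P_i,\,W\in\cat{C}}W\to P_i$ out of all arrows from objects of $\cat{C}$ is an epimorphism. I would then set
\[
t\;=\;\bigl\{\,\pi_i\circ\phi\colon W\to V\;\big|\;i\in I,\ W\in\cat{C},\ \phi\colon W\to P_i\,\bigr\},
\]
which is a small set of morphisms because $\cat{C}$ is small and $\cat{E}$ is locally small.

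Two verifications remain, both routine once the picture is in place. To see $t\in j_\Gir(V)$, I would observe that the coproduct over $i$ of the epimorphisms $\sum_{\phi\colon W\to P_i}W\to P_i$, composed with the epimorphism $\sum_i\pi_i$, yields an epimorphism $\sum_{(i,W,\phi)}W\to V$; since this factors through $\sum_{h\in t}\text{dom}(h)\to V$ (the $(i,W,\phi)$-summand maps identically to the summand $h=\pi_i\phi$, whose domain is also $W$), the latter map is an epimorphism, i.e. $t\in j_\Gir(V)$. To exhibit the refinement $g_*(t)\le r$, I would take the index map sending $h=\pi_i\phi$ to $i$ and use the defining pullback square $g\circ\pi_i=r_i\circ\rho_i$, so that $g\circ h=r_i\circ(\rho_i\phi)$ factors $gh$ through $r_i$ via $\rho_i\phi\colon W\to U_i$; these commuting triangles are exactly a refinement $g_*(t)\to r$.

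The main obstacle is precisely the smallness/presentability bookkeeping: the pullbacks $P_i=V\times_U U_i$ live in $\cat{E}$ but in general not in $\cat{C}$, so they cannot serve directly as a covering family on $V$. Overcoming this is what forces the appeal to $\cat{C}$ generating $\cat{E}$, used to re-cover each $P_i$ by objects of $\cat{C}$, together with the check that passing to this finer family preserves joint epimorphicity (via the coproduct-of-epis and factoring argument) while still refining $r$. Everything else—the first axiom and the commutativity of the refinement triangles—is formal.
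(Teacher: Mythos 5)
Your proof is correct, and its core mechanism is the same as the paper's: form the pullbacks in $\cat{E}$, use universality of coproducts (G2) to identify $\sum_i (V\times_U U_i)$ with the pullback of $\sum_i U_i$, and invoke pullback-stability of epimorphisms in a Giraud category (Lemma \ref{lem giraud categories epimorphisms are stable under pullback}) to see that $\sum_i \pi_i \colon \sum_i(V\times_U U_i)\to V$ is an epimorphism. Where you diverge is in what you then declare to be the covering family on $V$: the paper simply takes $\{V\times_U U_i\to V\}_{i\in I}$ itself, implicitly treating the pullbacks $V\times_U U_i$ as objects of $\cat{C}$, whereas you observe that these pullbacks live in $\cat{E}$ and need not be $\kappa$-presentable, and so you re-cover each $P_i=V\times_U U_i$ by the (small, jointly epimorphic by density) family of all maps from objects of $\cat{C}$, taking the composites $\pi_i\circ\phi$ as your family $t$. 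Your two remaining verifications are sound: an epimorphism factoring through a map forces that map to be an epimorphism, so $t\in j_{\Gir}(V)$; and the squares $g\pi_i=r_i\rho_i$ give the refinement $g_*(t)\le r$ via $\rho_i\phi\colon W\to U_i$, which is a morphism of $\cat{C}$ by fullness. The extra density step buys you a proof that works for an arbitrary choice of the small dense subcategory $\cat{C}\subseteq\cat{E}_\kappa$, at the cost of a slightly larger covering family; the paper's shorter argument is only literally valid if one additionally arranges (e.g., by enlarging $\kappa$) that $\cat{C}$ is closed under the relevant pullbacks, a point it does not address. So your version is, if anything, the more careful one.
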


\begin{proof}
Suppose that $U \in \cat{C}$, $r = \{ r_i :U_i \to U \} \in j_\Gir(U)$ and $g : V \to U$ is a morphism in $\cat{C}$. Then we can form the pullback
\begin{equation*}
    \begin{tikzcd}
	{g^*(\sum_i U_i)} & {\sum_i U_i} \\
	V & U
	\arrow[from=1-1, to=1-2]
	\arrow["{g^*(\sum_i r_i)}"', from=1-1, to=2-1]
	\arrow["{\sum_i r_i}", from=1-2, to=2-2]
	\arrow["g"', from=2-1, to=2-2]
\end{tikzcd}
\end{equation*}
in $\cat{E}$. Now by Lemma \ref{lem giraud categories epimorphisms are stable under pullback}, epimorphisms in $\cat{E}$ are stable under pullback. Thus $g^*(\sum_i r_i)$ is an epimorphism in $\cat{E}$ and furthermore by (G2) is isomorphic to
\begin{equation*}
    \begin{tikzcd}
	{\sum_i g^*(U_i)} & {\sum_i U_i} \\
	V & U
	\arrow[from=1-1, to=1-2]
	\arrow["{\sum_i g^*(r_i)}"', from=1-1, to=2-1]
	\arrow["{\sum_i r_i}", from=1-2, to=2-2]
	\arrow["g"', from=2-1, to=2-2]
\end{tikzcd}
\end{equation*}
Thus $g^*(r) = \{g^*(U_i) \to V \}$ (note we are abusing notation here, $g^*(r)$ just denotes the family of pullbacks) is a $j_\Gir$ covering family of $V$. Thus $j_\Gir$ is a coverage.
\end{proof}

Due to Lemma \ref{lem forming the Giraud coverage}, we call $j_{\text{Gir}}$ the \textbf{Giraud Coverage} on $\cat{C}$.

\begin{Lemma} \label{lem giraud coverage is saturated}
The Giraud coverage $j_{\Gir}$ is saturated.
\end{Lemma}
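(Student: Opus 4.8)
The plan is to check directly, from the description of $j_\Gir$, the two conditions defining a saturated coverage: refinement closure (Definition~\ref{def refinement closed coverage}) and composition closure (Definition~\ref{def composition closed coverage}). In each case the condition translates into a statement about epimorphisms in $\cat{E}$, and the only ingredients I would need are three facts valid in any category with the relevant (co)limits: a coproduct of epimorphisms is an epimorphism (immediate from the universal property of the coproduct), a composite of epimorphisms is an epimorphism, and if $g e$ is an epimorphism then so is $g$. Since $\cat{E}$ is locally presentable it is cocomplete, so all coproducts below exist, and since $\cat{C}$ is small every family is a small set of morphisms, so no size issues arise.

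For refinement closure, I would take $r = \{r_i : U_i \to U\}_{i \in I} \in j_\Gir(U)$ and a refinement $f : r \to t$ of $t = \{t_k : W_k \to U\}_{k \in K}$, with index map $\alpha : I \to K$ and components $f_i : U_i \to W_{\alpha(i)}$ satisfying $t_{\alpha(i)} f_i = r_i$. The maps $f_i$ assemble, through the coproduct injections, into a morphism $\phi : \sum_i U_i \to \sum_k W_k$ with $\big(\sum_k t_k\big)\phi = \sum_i r_i$. Since $\sum_i r_i$ is an epimorphism in $\cat{E}$ by hypothesis, and it factors through $\sum_k t_k$, the last of the three facts above forces $\sum_k t_k$ to be an epimorphism, i.e. $t \in j_\Gir(U)$.

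For composition closure, I would take $r = \{r_i : U_i \to U\}_{i \in I} \in j_\Gir(U)$ and, for each $i$, a family $t^i = \{t^i_j : V_{ij} \to U_i\}_{j \in J_i} \in j_\Gir(U_i)$, so that each $\sum_j t^i_j : \sum_j V_{ij} \to U_i$ is an epimorphism. Taking the coproduct over $i$ produces an epimorphism $\sum_i \big(\sum_j t^i_j\big) : \sum_{i,j} V_{ij} \to \sum_i U_i$, and composing with the epimorphism $\sum_i r_i : \sum_i U_i \to U$ yields an epimorphism $\sum_{i,j} r_i t^i_j : \sum_{i,j} V_{ij} \to U$. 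This is precisely the map witnessing that $(r \circ t) = \bigcup_i (r_i)_*(t^i) = \{r_i t^i_j : V_{ij} \to U\}_{i,j}$ lies in $j_\Gir(U)$, so $j_\Gir$ is composition closed.

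Having verified both closure properties, I would conclude that $j_\Gir$ is saturated. I do not expect a genuine obstacle here: all the work sits in the elementary (co)limit facts above, and the Giraud structure enters only through cocompleteness of $\cat{E}$. In particular, the more delicate inputs such as pullback-stability of epimorphisms (Lemma~\ref{lem giraud categories epimorphisms are stable under pullback}) are needed to show $j_\Gir$ is a coverage at all, but not for saturation itself.
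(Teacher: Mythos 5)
Your proof is correct and follows essentially the same route as the paper: both closure conditions are verified directly from the elementary facts that coproducts and composites of epimorphisms are epimorphisms and that $g$ is an epimorphism whenever $g\circ e$ is, and your composition-closure argument is identical to the paper's. In the refinement-closure step your version is in fact the careful one: you use the refinement $f : r \to t$ (the covering family refining $t$) to factor $\sum_i r_i$ through $\sum_k t_k$ and then cancel, whereas the paper's own write-up reverses the direction of the refinement and asserts that $\sum_i r_i \circ g$ is an epimorphism merely because $\sum_i r_i$ is, which does not follow as stated.
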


\begin{proof}
Let us show that $j_\Gir$ is refinement closed. Suppose that $U \in \cat{C}$, $t = \{ V_j \to U \}$ is a family on $U$, $r = \{ U_i \to U \}$ is a $j_\Gir$-covering family on $U$ and there is a refinement $g : t \to r$. That means that there is a map $\sum_j V_j \to \sum_i U_i$ such that the following diagram commutes
\begin{equation*}
\begin{tikzcd}
	{\sum_j V_j} && {\sum_i U_i} \\
	& V
	\arrow["g", from=1-1, to=1-3]
	\arrow["{\sum_j t_j}"', from=1-1, to=2-2]
	\arrow["{\sum_i r_i}", from=1-3, to=2-2]
\end{tikzcd}
\end{equation*}
But $\sum_i r_i$ is an epimorphism, which implies that $\sum_i r_i \circ g = \sum_j t_j$ is an epimorphism. Thus $t$ is a $j_\Gir$-covering family on $U$.

Let us show that $j_\Gir$ is composition-closed. Suppose that $r = \{r_i:  U_i \to U \}_{i \in I} \in j_\Gir(U)$ and $t^i = \{ t^i_j: V^i_j \to U_i \}_{j \in J_i} \in j_\Gir(U_i)$, then $t^i : \sum_j V^i_j \to U_i$ is an epimorphism, and since coproducts of epimorphisms are epimorphisms, the map
\begin{equation*}
    \sum_i \sum_j V^i_j \xrightarrow{\sum_i t^i} \sum_i U_i
\end{equation*}
is an epimorphism. But the map $r : \sum_i U_i \to U$ is also an epimorphism, thus the composite
$$\sum_i \sum_j V^i_j \xrightarrow{\sum_i t^i} \sum_i U_i \xrightarrow{r} U$$ 
is an epimorphism. Thus the composite family $\cup_i (t^i \circ r)$ is an epimorphism and hence in $j_\Gir(U)$.
\end{proof}

We now wish to show that the functor $\iota : \cat{E} \to \Pre(\cat{C})$ factors through $\Sh(\cat{C}, j_\Gir)$. In other words we want to show the following.

\begin{Prop} \label{prop restricted yoneda is giraud sheaf}
For every $V \in \cat{E}$ the presheaf $y(V)|_{\cat{C}} = \cat{E}(j(-), V)$ on $\cat{C}$ is a $j_\Gir$-sheaf.
\end{Prop}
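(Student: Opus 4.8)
The plan is to verify the sheaf condition directly on each $j_{\Gir}$-covering family, exploiting the fact that covering families are effective epimorphisms in $\cat{E}$. By Definition \ref{def separated and sheaf} it suffices to show that for every $U \in \cat{C}$ and every $r = \{r_i : U_i \to U\}_{i \in I} \in j_{\Gir}(U)$, every $X$-matching family over $r$ admits a unique amalgamation, where $X = \cat{E}(j(-),V)$. By definition of $j_{\Gir}$ the map $\sum_i r_i : \sum_i U_i \to U$ is an epimorphism in $\cat{E}$, hence an \emph{effective} epimorphism by Corollary \ref{cor epis in giraud categories are effective}. Thus $\sum_i r_i$ exhibits $U$ as the coequalizer of its kernel pair:
\begin{equation*}
 (\textstyle\sum_i U_i) \times_U (\textstyle\sum_i U_i) \rightrightarrows \textstyle\sum_i U_i \to U.
\end{equation*}

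Next I would describe the kernel pair concretely. Since coproducts in $\cat{E}$ are stable under pullback by the universal colimits axiom (G2) of Definition \ref{def giraud's axioms}, there is an isomorphism $(\sum_i U_i)\times_U(\sum_j U_j) \cong \sum_{i,j} U_i \times_U U_j$, under which the two kernel-pair projections restrict on the $(i,j)$-summand to $\iota_i \pi_0$ and $\iota_j \pi_1$, where $\pi_0 : U_i \times_U U_j \to U_i$ and $\pi_1 : U_i \times_U U_j \to U_j$ are the pullback projections and $\iota_i$, $\iota_j$ are the coproduct inclusions. Given a matching family $\{x_i : U_i \to V\}_{i \in I}$, I assemble it into a single morphism $\langle x_i \rangle : \sum_i U_i \to V$. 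To produce the amalgamation I must show $\langle x_i\rangle$ coequalizes the kernel pair; since the summand inclusions $U_i \times_U U_j \hookrightarrow \sum_{i,j} U_i \times_U U_j$ are jointly epimorphic, this reduces to proving, for each pair $i,j$, the equality
\begin{equation*}
 x_i \pi_0 = x_j \pi_1 \colon U_i \times_U U_j \to V.
\end{equation*}

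The crux is this last equality, and the main obstacle is that $U_i \times_U U_j$ is a pullback formed in $\cat{E}$ and need not lie in $\cat{C}$, whereas the matching-family hypothesis only controls intersection squares whose apex is an object of $\cat{C}$. I would bridge this gap using that the inclusion $\iota : \cat{E} \hookrightarrow \Pre(\cat{C})$ of the reflective subcategory is fully faithful, equivalently that $\cat{C}$ is dense in $\cat{E}$: two morphisms $U_i\times_U U_j \to V$ coincide as soon as they agree after precomposition with every $w : W \to U_i \times_U U_j$ with $W \in \cat{C}$. For such a $w$, the composites $u_i = \pi_0 w : W \to U_i$ and $u_j = \pi_1 w : W \to U_j$ satisfy $r_i u_i = r_j u_j$, so they constitute an intersection square on $r$ with apex $W \in \cat{C}$; the matching condition then yields $x_i u_i = x_j u_j$, that is $(x_i\pi_0)w = (x_j\pi_1)w$. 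Faithfulness of $\iota$ upgrades this to $x_i \pi_0 = x_j \pi_1$.

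Finally I would conclude. The coequalizing morphism $\langle x_i\rangle$ factors, uniquely since $\sum_i r_i$ is epic, through a morphism $x : U \to V$ with $\langle x_i\rangle = x \circ (\sum_i r_i)$; precomposing with each coproduct inclusion $\iota_i$ gives $x r_i = x_i$, so $x$ is an amalgamation. Uniqueness is immediate, since any amalgamation $x'$ satisfies $x'\circ(\sum_i r_i) = \langle x_i\rangle$ and $\sum_i r_i$ epic forces $x' = x$. Hence $X = \cat{E}(j(-),V)$ is a $j_{\Gir}$-sheaf. The only genuinely delicate point is the density argument of the third paragraph; everything else is a bookkeeping translation of effective descent for the epimorphism $\sum_i r_i$ into the language of matching families.
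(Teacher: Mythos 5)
Your proof is correct and follows essentially the same route as the paper's: both rest on the effectiveness of the epimorphism $\sum_i r_i$ (Corollary \ref{cor epis in giraud categories are effective}), the identification of its kernel pair with $\sum_{i,j} U_i \times_U U_j$ via (G2), and the density of $\cat{C}$ in $\cat{E}$ to deal with the fact that $U_i \times_U U_j$ need not lie in $\cat{C}$. The only cosmetic difference is that the paper covers each $U_i \times_U U_j$ by an epimorphism from a coproduct of objects of $\cat{C}$ and argues with equalizer diagrams, whereas you invoke faithfulness of the restricted Yoneda embedding directly and argue with matching families --- the same density fact in two guises.
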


\begin{proof}
Suppose that $U, V \in \cat{C}$ and $r = \{U_i \to U \} \in j_\Gir(U)$. Then $r : \sum_i U_i \to U$ is an epimorphism in $\cat{E}$. By Corollary \ref{cor epis in giraud categories are effective}, $r$ is the coequalizer of its kernel pair, and by (G2), we can write this as the coequalizer
\begin{equation*}
\begin{tikzcd}
	{\sum_{i, j \in I} U_i \times_U U_j} & {\sum_i U_i} & U
	\arrow["{p_0}", shift left=2, from=1-1, to=1-2]
	\arrow["{p_1}"', shift right=2, from=1-1, to=1-2]
	\arrow["r", from=1-2, to=1-3]
\end{tikzcd} 
\end{equation*}
Now each $U_i \times_U U_j$ may not be an object in $\cat{C}$, but it can be written as a colimit $U_i \times_U U_j \cong \colim_k B_k^{ij}$ where each $B^{ij}_k \in \cat{C}$. Since we can write any colimit as a coequalizer, then the cocone map $\sum_k B^{ij}_k \to U_i \times_U U_j$ is an epimorphism. Therefore if we let 
\begin{equation*}
   h: \sum_{i,j} \sum_k B^{ij}_k \to \sum_{i,j} U_i \times_U U_j
\end{equation*}
denote the resulting epimorphism and let $q_0 = p_0 h$, $q_1 = p_1 h$, then the following diagram is still a coequalizer
\begin{equation*}
    \begin{tikzcd}
	{\sum_{i, j} \sum_k B^{ij}_k} & {\sum_i U_i} & U
	\arrow["{q_0}", shift left=2, from=1-1, to=1-2]
	\arrow["{q_1}"', shift right=2, from=1-1, to=1-2]
	\arrow["r", from=1-2, to=1-3]
\end{tikzcd}
\end{equation*}
Indeed if $\ell: \sum_i U_i \to Q$ is a map such that $\ell q_0 = \ell q_1$, then $\ell q_0 = \ell p_0 h = \ell p_1 h = \ell q_1$, but since $h$ is an epimorphism this implies that $\ell p_0 = \ell p_1$. Since $r$ is a coequalizer of $p_0$ and $p_1$ this gives a unique map $U \to Q$ making the diagram commute. Thus $r$ is also a coequalizer of $q_0$ and $q_1$.

Now we apply $y(V)|_{\cat{C}}$ to the above coequalizer diagram and we obtain an equalizer diagram
\begin{equation*}
    y(V)|_{\cat{C}}(U) \to \text{eq} \left( \prod_i y(V)|_{\cat{C}}(U_i) \underset{p_1^*}{\overset{p_0^*}{\rightrightarrows}} \prod_{i,j} y(V)|_{\cat{C}}(U_i) \times_{y(V)|_{\cat{C}}(U)} y(V)|_{\cat{C}}(U_j) \xrightarrow{h^*} \prod_{i,j} \prod_k y(V)|_{\cat{C}}(B^{ij}_k) \right)
\end{equation*}
but since $h$ is an epimorphism, $h^*$ is a monomorphism, which implies that the following diagram is an equalizer
\begin{equation*}
    y(V)|_{\cat{C}}(U) \to \text{eq} \left( \prod_i y(V)|_{\cat{C}}(U_i) \underset{p_1^*}{\overset{p_0^*}{\rightrightarrows}} \prod_{i,j} y(V)|_{\cat{C}}(U_i) \times_{y(V)|_{\cat{C}}(U)} y(V)|_{\cat{C}}(U_j) \right)
\end{equation*}
thus $y(V)|_{\cat{C}}$ is a $j_\Gir$-sheaf.
\end{proof}

So we have managed to factor the reflective localization $L \dashv \iota$ through $\Sh(\cat{C}, j_\Gir)$
\begin{equation*}
    \begin{tikzcd}
	{\ncat{Sh}(\cat{C},j_\Gir)} && {\Pre(\cat{C})} \\
	\\
	& {\cat{E}}
	\arrow["i"', shift right=2, hook, from=1-1, to=1-3]
	\arrow["{L'}", shift left=2, from=1-1, to=3-2]
	\arrow["a"', shift right=2, from=1-3, to=1-1]
	\arrow["L"', shift right=2, from=1-3, to=3-2]
	\arrow["{\iota'}", shift left=2, from=3-2, to=1-1]
	\arrow["\iota"', shift right=2, from=3-2, to=1-3]
\end{tikzcd}
\end{equation*}
So $\iota = i \iota'$ and $L = L' a$ for adjoint functors $L' \dashv \iota'$.

\begin{Prop}
The adjunction $L' \dashv \iota'$ is an adjoint equivalence of categories.
\end{Prop}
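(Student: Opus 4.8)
The plan is to show that both the unit and counit of the adjunction $L' \dashv \iota'$ are natural isomorphisms. The counit is the formal part. Since $L \dashv \iota$ is a reflective localization, $\iota$ is fully faithful, and the inclusion $i : \Sh(\cat{C}, j_\Gir) \hookrightarrow \Pre(\cat{C})$ is fully faithful as a full subcategory. From $\iota = i\iota'$ one concludes that $\iota'$ is fully faithful: for $A, B \in \cat{E}$ the composite
\begin{equation*}
\cat{E}(A,B) \cong \Pre(\cat{C})(\iota A, \iota B) = \Pre(\cat{C})(i\iota' A, i\iota' B) \cong \Sh(\cat{C},j_\Gir)(\iota' A, \iota' B)
\end{equation*}
is exactly the map induced by $\iota'$. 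A fully faithful right adjoint has invertible counit, so $\varepsilon : L'\iota' \Rightarrow 1_{\cat{E}}$ is a natural isomorphism. Thus $\iota'$ exhibits $\cat{E}$ as a reflective subcategory of $\Sh(\cat{C}, j_\Gir)$, and everything reduces to showing that this reflective subcategory is all of $\Sh(\cat{C}, j_\Gir)$, i.e. that the unit $\eta : 1_{\Sh} \Rightarrow \iota'L'$ is invertible.

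For the unit, I would argue via local isomorphisms and the Little Giraud Theorem rather than by hand. For any presheaf $X$ the component of $i\eta_X$ is the localization map $\lambda_X : X \to \iota L X$, which is inverted by $L$ (it is the unit of the reflective localization $L \dashv \iota$), so $\lambda_X$ lies in $W := L^{-1}(\text{iso})$. The strategy is: establish that $L$ preserves finite limits; then by Lemma \ref{lem lex loc -> sys of local isos} $W$ is a system of local isomorphisms, so by the Little Giraud Theorem \ref{th lex localizations <-> grothendieck toposes} there is an equivalence $\cat{E} \simeq \Sh(\cat{C}, j(L))$, under which $W$ is precisely the class of $j(L)$-local isomorphisms. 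It then remains to identify $j(L)$ with $j_\Gir$. By definition $r = \{U_i \to U\} \in j(L)(U)$ iff $\overline{r} \hookrightarrow y(U)$ lies in $W$, i.e. iff $L(\overline{r} \hookrightarrow y(U))$ is an isomorphism; using that $L$ is lex (hence preserves the monomorphism $\overline{r}\hookrightarrow y(U)$) together with $L\iota \cong 1$, the counit isomorphism, and the factorization $\sum_i y(U_i) \twoheadrightarrow \overline{r} \hookrightarrow y(U)$, this is equivalent to $\sum_i U_i \to U$ being an epimorphism in $\cat{E}$ — which is exactly the defining condition for $r \in j_\Gir(U)$. (Here I would lean on the facts that epimorphisms in $\cat{E}$ are effective and pullback-stable, Corollary \ref{cor epis in giraud categories are effective} and Lemma \ref{lem giraud categories epimorphisms are stable under pullback}, and that $\cat{E}$ is balanced, Lemma \ref{lem giraud categories are balanced}.) Hence $j(L) = j_\Gir$, the equivalence $\cat{E}\simeq \Sh(\cat{C}, j_\Gir)$ produced by Little Giraud agrees with $\iota' \dashv L'$, and in particular each $\lambda_X$ is a $j_\Gir$-local isomorphism; for a sheaf $X$, both $X$ and $\iota'L'X$ are sheaves, so applying sheafification (which inverts $j_\Gir$-local isomorphisms by Proposition \ref{prop sheafification inverts local isos} and fixes sheaves) shows $\eta_X$ is an isomorphism.

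The main obstacle is the input to the above, namely proving that the reflector $L$ is left exact. This is the genuine content of the $(G)\Rightarrow(T)$ direction, and it is where Giraud's axioms $(G1)$–$(G3)$ must be used: the preparatory lemmas showing that $\cat{E}$ is balanced (Lemma \ref{lem giraud categories are balanced}), admits epimorphism–monomorphism factorizations (Lemma \ref{lem epimono factorization for giraud's axioms}), and has pullback-stable effective epimorphisms (Corollary \ref{cor epis in giraud categories are effective}, Lemma \ref{lem giraud categories epimorphisms are stable under pullback}) are precisely the tools for verifying that $L$ preserves the terminal object, binary products, and equalizers. I expect this verification — checking that the exactness of $\cat{E}$ forces the localization to be lex — to be the hard and lengthy step, while the passage from lex reflectivity to the equivalence $\cat{E} \simeq \Sh(\cat{C}, j_\Gir)$ is formal once the Little Giraud Theorem and the identification $j(L) = j_\Gir$ are in hand.
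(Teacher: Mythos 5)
Your handling of the counit is exactly the paper's: $\iota'$ is fully faithful because $\iota = i\iota'$ with both $\iota$ and $i$ fully faithful, so $\varepsilon$ is invertible. The problem is the unit. Everything you say about $\eta$ is conditional on the reflector $L : \Pre(\cat{C}) \to \cat{E}$ being left exact, and you defer that to ``the hard and lengthy step'' without supplying it. That step is not a technicality you can extract from Lemmas \ref{lem giraud categories are balanced}, \ref{lem epimono factorization for giraud's axioms}, \ref{cor epis in giraud categories are effective} and \ref{lem giraud categories epimorphisms are stable under pullback}: those concern the internal exactness of $\cat{E}$, not the behaviour of $L$. For a general locally presentable $\cat{E}$ the reflector from $\Pre(\cat{C}_0)$ is \emph{not} lex; in this setup left exactness of $L$ is equivalent (via the Little Giraud Theorem \ref{th lex localizations <-> grothendieck toposes} in one direction, and via $L = L'a$ with $a$ lex in the other) to $\cat{E}$ being a Grothendieck topos, which is the statement being proved. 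So the one claim your argument genuinely needs carries essentially the entire content of $(G)\Rightarrow(T)$, and neither your sketch nor anything in the paper establishes it directly. (Your identification $j(L) = j_\Gir$ is fine \emph{granted} lex-ness, using that $L$ preserves epis and monos and that $\cat{E}$ is balanced, but that is downstream of the missing input.)

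The paper's proof is designed to avoid ever needing $L$ lex. For the unit it computes directly: on a representable, $\eta_{y(U)}$ is the composite $y(U) \to \iota'L'(ay(U)) \cong \iota'(U) \cong \cat{E}(j(-),U) \cong y(U)$, an isomorphism because $L'(ay(U)) = L(y(U)) = U$ and $y(U)$ is already a sheaf by Proposition \ref{prop restricted yoneda is giraud sheaf}. A general sheaf $X$ is a filtered colimit of representables (Lemma \ref{lem presheaf topoi are locally presentable}); that colimit of sheaves is computed by sheafifying the presheaf colimit, which changes nothing since $ay(U_i) \cong y(U_i)$, and $\iota' L'$ commutes with it because the objects of $\cat{C}$ are presentable in $\cat{E}$, so $\cat{E}(j(-), \colim_i U_i) \cong \colim_i \cat{E}(j(-), U_i)$. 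Hence $\eta_X$ is an isomorphism, and left exactness of $L$ falls out afterwards as a corollary rather than going in as a hypothesis. To close your argument you would have to either prove $L$ lex from scratch (e.g.\ by an explicit transfinite construction of the reflection with finite-limit preservation checked at each stage) or switch to this direct computation; as written, the unit half of your proof does not close.
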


\begin{proof}
Since $\iota$ and $i$ are fully faithful, so is $\iota'$. So for every $V \in \cat{E}$, the counit $\varepsilon_V : L'(\iota'(V)) \to V$ is an isomorphism. 

Now if $U \in \cat{C}$, then by Proposition \ref{prop restricted yoneda is giraud sheaf}, $y(U) \in \Sh(\cat{C}, j_\Gir)$ and $L'(y(U)) \cong L'(a y(U)) = L(y(U)) = U$. Thus the unit is isomorphic to
\begin{equation*}
y(U) \to \iota'(L'(ay(U))) \cong \iota'(U) \cong \cat{E}(j(-), U) \cong \cat{C}(-,U) \cong y(U)
\end{equation*}
and hence is an isomorphism.

Now for $X \in \Pre(\cat{C})$, by Lemma \ref{lem presheaf topoi are locally presentable}, $X$ can be written as a filtered colimit of representables $X \cong \ncolim{i \in I} y(U_i)$. Thus if $X$ is a sheaf then 
$$X \cong aX \cong a \ncolim{i \in I} y(U_i) \cong \ncolim{i \in I} ay(U_i) \cong \ncolim{i \in I} y(U_i)$$
where the last isomorphism holds by Proposition \ref{prop restricted yoneda is giraud sheaf}. So the unit is isomorphic to
\begin{equation*}
    \begin{aligned}
       X \xrightarrow{\eta_X} & \iota'(L'(X)) \\
       & \cong \iota'(L'(\ncolim{i\in I} y(U_i))) \\
       & \cong \iota'(\ncolim{i \in I} U_i) \\
       & \cong \cat{E}(j(-), \ncolim{i \in I} U_i) \\
       & \cong \ncolim{i \in I} \cat{E}(j(-), U_i) \\
       & \cong \ncolim{i \in I} y(U_i) \\
       & \cong X
    \end{aligned}
\end{equation*}
where the third to last isomorphism holds because $I$ is filtered and $j : \cat{C} \hookrightarrow \cat{E}$ is the inclusion of subcategory of presentable objects. Thus the unit is an isomorphism. Thus $L' \dashv \iota'$ is an adjoint equivalence of categories.
\end{proof}

Thus we have proven (G) $\Rightarrow$ (T).

\begin{Th} \label{th first half of Giraud's Theorem}
If $\cat{E}$ is a Giraud category $(G)$, then $\cat{E}$ is a Grothendieck topos (T).
\end{Th}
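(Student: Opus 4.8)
The plan is to realize $\cat{E}$ as the category of sheaves on a site built from a small generating subcategory, and then to quote Definition \ref{def grothendieck topos}. First I would exploit local presentability: fixing a regular cardinal $\kappa$ for which $\cat{E}$ is locally $\kappa$-presentable, I take $\cat{C}$ to be a small full subcategory of $\kappa$-presentable objects, so that the restricted Yoneda embedding $\iota = \cat{E}(j(-), -) : \cat{E} \to \Pre(\cat{C})$ is fully faithful, admits a left adjoint $L$, and preserves $\kappa$-filtered colimits. This exhibits $\cat{E}$ as a reflective localization of the presheaf topos $\Pre(\cat{C})$, which is the input the rest of the argument feeds on.

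Next I would produce the coverage. Setting $j_{\Gir}(U)$ to be those families $\{r_i : U_i \to U\}$ for which $\sum_i r_i : \sum_i U_i \to U$ is an epimorphism in $\cat{E}$, Lemma \ref{lem forming the Giraud coverage} shows this is a coverage, the crucial inputs being that epimorphisms are stable under pullback (Lemma \ref{lem giraud categories epimorphisms are stable under pullback}) and that colimits are universal (G2); Lemma \ref{lem giraud coverage is saturated} then shows it is saturated. This makes $(\cat{C}, j_{\Gir})$ a genuine site.

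The third step is to check that $\iota$ lands in sheaves, i.e.\ that each $y(V)|_{\cat{C}}$ is a $j_{\Gir}$-sheaf, which is Proposition \ref{prop restricted yoneda is giraud sheaf}. The idea is that a covering epimorphism $\sum_i U_i \to U$ is effective (Corollary \ref{cor epis in giraud categories are effective}), hence the coequalizer of its kernel pair; resolving the pullbacks $U_i \times_U U_j$ (which need not lie in $\cat{C}$) by colimits of objects of $\cat{C}$ and applying $\cat{E}(j(-), V)$ converts this coequalizer into the equalizer expressing the sheaf condition (Corollary \ref{cor classical sheaf condition}). This factors the localization as $L = L'a$ and $\iota = i\iota'$ through $\Sh(\cat{C}, j_{\Gir})$, producing an adjunction $L' \dashv \iota'$.

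Finally, the decisive step is to show $L' \dashv \iota'$ is an adjoint equivalence. The counit is invertible immediately, since $\iota'$ is a restriction of the fully faithful $\iota$ and hence fully faithful. The unit is the hard part: for a sheaf $X$ I would write $X$ as a $\kappa$-filtered colimit of representables (using that representables generate $\Pre(\cat{C})$ under such colimits and that sheafification fixes representables by Proposition \ref{prop restricted yoneda is giraud sheaf}), and then compute $\iota' L'(X) \cong \cat{E}(j(-), \colim_i U_i) \cong \colim_i \cat{E}(j(-), U_i) \cong X$, where interchanging $\cat{E}(j(-), -)$ with the $\kappa$-filtered colimit is precisely the $\kappa$-presentability of the objects of $\cat{C}$. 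I expect this compatibility of the restricted Yoneda embedding with $\kappa$-filtered colimits to be the main obstacle, since it is where $\kappa$ must be chosen coherently with both the presentability of $\cat{C}$ and the generation of sheaves. Once the equivalence $\cat{E} \simeq \Sh(\cat{C}, j_{\Gir})$ is established, the conclusion that $\cat{E}$ is a Grothendieck topos follows at once from Definition \ref{def grothendieck topos}.
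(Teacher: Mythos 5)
Your proposal is correct and follows essentially the same route as the paper's own proof: the same reflective localization from local presentability, the same Giraud coverage $j_{\Gir}$ of jointly epimorphic families, the same verification via effective epimorphisms and resolution of the pullbacks $U_i \times_U U_j$ that restricted representables are sheaves, and the same filtered-colimit-of-representables computation of the unit using $\kappa$-presentability. Nothing to add.
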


\subsection{Descent} In this section we define what it means for a category to have descent. This property is also known as having Van Kampen colimits. We show that $\ncat{Set}$ does not have all Van Kampen colimits, i.e. does not have descent, but satisfies a weaker property, identified by Rezk \cite{rezk2010toposes}, and which he appropriately refers to as weak descent.

For the first part of this section we will be a little sketchier than usual in motivating descent, and we will use some terms that we will not define, as following the details would quickly lead us down quite an advanced rabbit hole, but we will return to a much more detailed discussion in Section \ref{section weak descent implies giraud axioms}. We recommend the reader consult \cite{anel2019descent} for more details on this advanced topic, which was one of our inspirations for this section.

Let $\cat{C}$ be a category with all small colimits and finite limits (which we will refer to as a cocomplete lex category in what follows), and consider the following pseudo-functor $\mathbb{U}: \cat{C}^{\op} \to \ncat{CAT}$, called the \textbf{universe of $\cat{C}$}, defined on objects by
$$X \mapsto \cat{C}_{/X}$$
and morphisms by
$$ \left( f: X \to Y \right) \mapsto \left( f^*: \cat{C}_{/Y} \to \cat{C}_{/X} \right)$$
clearly for a composable pair $X \xrightarrow{f} Y \xrightarrow{g} Z$ in $\cat{C}$, there exists a natural isomorphism $(g \circ f)^* \cong g^* \circ f^*$, and $(1_X)^* \cong 1_{\cat{C}_{/X}}$, thus defining a pseudofunctor.

Given a cocomplete lex category $\cat{C}$, we can ask whether its universe $\mathbb{U}: \cat{C}^{\op} \to \ncat{CAT}$ sends limits to (pseudo)limits. An equivalent way of asking this is to ask whether for a small diagram $X: I \to \cat{C}$ there is an equivalence of categories
$$\cat{C}_{/{\ncolim{i \in I} X_i}} \simeq \lim_{i \in I} \cat{C}_{/X_i}$$
where the right hand side is a pseudolimit in $\ncat{CAT}$.

There is a nice way of getting a handle on the above question. First let $\ncat{Cart}(\cat{C}^I)$ denote the category whose
\begin{enumerate}
	\item objects are functors $X: I \to \cat{C}$, and
	\item whose morphisms are natural transformations $\alpha: Y \Rightarrow X$ such that every component square is a pullback:
\begin{equation*}
 	\begin{tikzcd}
	{Y_i} & {X_i} \\
	{Y_j} & {X_j}
	\arrow[from=1-1, to=2-1]
	\arrow["{\alpha_i}", from=1-1, to=1-2]
	\arrow[from=1-2, to=2-2]
	\arrow["{\alpha_j}"', from=2-1, to=2-2]
	\arrow["\lrcorner"{anchor=center, pos=0.125}, draw=none, from=1-1, to=2-2]
\end{tikzcd}
 \end{equation*}
\end{enumerate}
We call natural transformations with this property \textbf{Cartesian}.

\begin{Lemma}
Given a cocomplete lex category $\cat{C}$ and a small diagram $X : I \to \cat{C}$, there is an equivalence of categories
$$\lim_{i \in I} \cat{C}_{/X_i} \simeq \ncat{Cart}(\cat{C}^I)_{/X}.$$
\end{Lemma}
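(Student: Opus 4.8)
The plan is to construct an explicit pair of mutually quasi-inverse functors by unwinding both descriptions, the equivalence then being a purely formal consequence of the universal property of pullbacks. First I would spell out the two sides concretely. An object of the pseudolimit $\lim_{i \in I} \cat{C}_{/X_i}$ is a family of objects $(Y_i \xrightarrow{p_i} X_i) \in \cat{C}_{/X_i}$ together with, for each morphism $f : i \to j$ in $I$, a transition isomorphism $\phi_f : X(f)^*(Y_j \to X_j) \xrightarrow{\cong} (Y_i \to X_i)$ in $\cat{C}_{/X_i}$, subject to the usual coherence conditions (that $\phi_{1_i}$ is the canonical iso and that $\phi_{gf}$ agrees with the composite of $\phi_g$, $\phi_f$ and the comparison iso $X(f)^* X(g)^* \cong X(gf)^*$). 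An object of $\ncat{Cart}(\cat{C}^I)_{/X}$ is a functor $Y : I \to \cat{C}$ with a cartesian natural transformation $\alpha : Y \Rightarrow X$, meaning every naturality square is a pullback. Note that only the existence of pullbacks in $\cat{C}$ is used here.

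Next I would define $\Phi : \ncat{Cart}(\cat{C}^I)_{/X} \to \lim_{i} \cat{C}_{/X_i}$ by sending $(Y, \alpha)$ to the family $(Y_i \xrightarrow{\alpha_i} X_i)_i$ equipped with the transition isomorphisms $\phi_f$ obtained as follows: since the $f$-naturality square is a pullback, $(Y_i, \alpha_i, Y(f))$ is a pullback cone over $X_i \xrightarrow{X(f)} X_j \xleftarrow{\alpha_j} Y_j$, hence is canonically isomorphic over $X_i$ to the chosen pullback $X(f)^*(Y_j \to X_j)$, and this canonical iso is $\phi_f$. Coherence of the $\phi_f$ then follows from the functoriality identity $Y(gf) = Y(g)Y(f)$ together with the pasting lemma for pullbacks. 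Conversely I would define $\Psi$ by sending $(\{Y_i \to X_i\}, \{\phi_f\})$ to the diagram $Y : I \to \cat{C}$ with $Y_i$ on objects and $Y(f) := \pi_2 \circ \phi_f^{-1} : Y_i \to X(f)^*(Y_j) \to Y_j$ on morphisms, where $\pi_2$ is the pullback projection to $Y_j$. Functoriality of $Y$ is again exactly the coherence on the $\phi$'s via pasting, and the structure maps $p_i$ assemble into a natural transformation $\alpha$ whose naturality squares are pullbacks by construction, hence cartesian.

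Finally I would check the two functors are mutually quasi-inverse. A clean observation streamlines the morphism-level comparison: if $(Y,\alpha)$ and $(Z,\beta)$ are both cartesian over $X$, then any natural transformation $\gamma : Y \Rightarrow Z$ with $\beta\gamma = \alpha$ is \emph{automatically} cartesian, since the pasting law gives
\[
Z_i \times_{Z_j} Y_j \cong (X_i \times_{X_j} Z_j) \times_{Z_j} Y_j \cong X_i \times_{X_j} Y_j \cong Y_i.
\]
Consequently the morphisms of $\ncat{Cart}(\cat{C}^I)_{/X}$ from $(Y,\alpha)$ to $(Z,\beta)$ are precisely the families $\gamma_i : Y_i \to Z_i$ over $X_i$ that are natural in $i$, which match verbatim the morphisms of the pseudolimit (families over $X_i$ compatible with the transition isomorphisms). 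The required natural isomorphisms $\Phi\Psi \cong \mathrm{id}$ and $\Psi\Phi \cong \mathrm{id}$ are supplied by the uniqueness of maps induced into chosen pullbacks. The main obstacle is purely organizational: matching the coherence data of the pseudolimit against the functoriality of $Y$, where all of the content is the pasting lemma for pullbacks, so I expect no genuine difficulty beyond bookkeeping. This is also why the result is stated as an equivalence rather than an isomorphism, the discrepancy living entirely in the choices of pullbacks defining the functors $X(f)^*$.
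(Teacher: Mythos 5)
Your proposal is correct. The paper actually states this lemma without proof, so there is nothing to compare against directly; your argument is the standard unwinding of the pseudolimit as descent data (objects over each $X_i$ with coherent transition isomorphisms $X(f)^*Y_j \cong Y_i$) matched against cartesian transformations via the universal property of pullbacks and the pasting lemma, which is surely what the author had in mind. Two points in your write-up are worth keeping if this were to be included: the observation that a natural transformation over $X$ between two cartesian objects of $\ncat{Cart}(\cat{C}^I)_{/X}$ is automatically cartesian (by pullback cancellation), which is exactly what makes the morphism-level comparison collapse to a tautology; and the remark that only pullbacks in $\cat{C}$ are used, so the cocomplete lex hypothesis is stronger than needed for this particular lemma.
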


Thanks to the above lemma, we can now re-express our desire for $\mathbb{U}$ to preserves limits as asking that the following adjunction be an adjoint equivalence:

\begin{equation} \label{eq van kampen colimit}
\begin{tikzcd}
	{\cat{C}_{/ \colim_i X_i}} && {\ncat{Cart}(\cat{C}^I)_{/X}}
	\arrow[""{name=0, anchor=center, inner sep=0}, "\colim"', shift right=2, from=1-3, to=1-1]
	\arrow[""{name=1, anchor=center, inner sep=0}, "{\text{cst}}"', shift right=2, from=1-1, to=1-3]
	\arrow["\dashv"{anchor=center, rotate=-90}, draw=none, from=0, to=1]
\end{tikzcd}	
\end{equation}
where $\alpha = \text{cst}(Y \to \colim_i \, X_i)$ is the natural transformation given objectwise by pullback:
\begin{equation*}
	\begin{tikzcd}
	{Y_i} & Y \\
	{X_i} & {\colim_i \, X_i}
	\arrow[from=2-1, to=2-2]
	\arrow[from=1-2, to=2-2]
	\arrow["{\alpha_i}"', from=1-1, to=2-1]
	\arrow[from=1-1, to=1-2]
	\arrow["\lrcorner"{anchor=center, pos=0.125}, draw=none, from=1-1, to=2-2]
\end{tikzcd}
\end{equation*}
 and $\colim ( \alpha: Y \Rightarrow X)$ is simply the induced map of colimits $\colim_i \, Y_i \to \colim_i \, X_i$.

\begin{Def} \label{def van kampen colimit}
Given a cocomplete lex category $\cat{C}$, we say that a small diagram $X : I \to \cat{C}$ is \textbf{Van Kampen} if (\ref{eq van kampen colimit}) exists and is an adjoint equivalence. If $S$ is a class of diagrams in $\cat{C}$, then we say that $\cat{C}$ has Van Kampen $S$-colimits, if for each $s \in S$, $s$ is a Van Kampen diagram. We may also say that $\cat{C}$ satisfies \textbf{descent} with respect to $S$.
\end{Def}

\begin{Rem}
Many kinds of categories that appear in applications are those that have Van Kampen $S$-colimits for various classes $S$. Such examples include lextensive categories, adhesive categories, exhaustive categories and exact categories. See \cite{nlab:van_kampen_colimit} for more.
\end{Rem}

Now if both $\colim$ and $\text{cst}$ are fully faithful, then the unit and counit will be isomorphisms and therefore they will form an adjoint equivalence. 

\begin{Lemma} \label{lem universal and effective colimits description}
Asking that $\colim$ and $\text{cst}$ are fully faithful is equivalent to the following two conditions holding:
 \begin{enumerate}
 	\item[(D1)] (Colimits are Universal) Given a map $Y \to \colim_i X_i$, then
 	$$Y \cong \colim_i \, \left( Y \times_{\colim_i \, X_i} X_i \right),$$ and
 	\item[(D2)] (Colimits are Effective) Given a Cartesian natural transformation with components $\alpha_i: Y_i \to X_i$, then
 	$$Y_i \cong \left( \colim_i \, Y_i \right) \times_{\colim_i \, X_i} X_i.$$
 \end{enumerate}    
\end{Lemma}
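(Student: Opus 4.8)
The plan is to recognise the diagram (\ref{eq van kampen colimit}) as an ordinary adjunction $\colim \dashv \text{cst}$ between the categories $\ncat{Cart}(\cat{C}^I)_{/X}$ and $\cat{C}_{/\colim_i X_i}$, and then invoke the standard fact that in any adjunction $F \dashv G$ the left adjoint $F$ is fully faithful precisely when the unit is a natural isomorphism, while the right adjoint $G$ is fully faithful precisely when the counit is a natural isomorphism \cite{riehl2017category}. The whole content of the lemma then reduces to computing the unit and counit of this particular adjunction and matching them with the comparison maps named in (D1) and (D2).

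First I would spell out the counit. For an object $(Y \to \colim_i X_i)$ of $\cat{C}_{/\colim_i X_i}$, applying $\text{cst}$ yields the Cartesian transformation with components $Y \times_{\colim_i X_i} X_i \to X_i$, and then $\colim$ produces the induced map $\colim_i(Y \times_{\colim_i X_i} X_i) \to \colim_i X_i$. The counit is the canonical comparison
\begin{equation*}
\varepsilon : \colim_i \left( Y \times_{\colim_i X_i} X_i \right) \to Y
\end{equation*}
over $\colim_i X_i$, so $\varepsilon$ is an isomorphism for every such $Y$ exactly when (D1) holds. Hence $\text{cst}$ is fully faithful if and only if colimits are universal.

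Next I would compute the unit. For a Cartesian transformation $\alpha : Y \Rightarrow X$ with components $\alpha_i : Y_i \to X_i$, applying $\colim$ gives $\colim_i Y_i \to \colim_i X_i$, and then $\text{cst}$ returns the Cartesian transformation with components $(\colim_i Y_i) \times_{\colim_i X_i} X_i \to X_i$. The unit is the map induced by $Y_i \to \colim_i Y_i$ together with $\alpha_i$, namely
\begin{equation*}
\eta_i : Y_i \to \left( \colim_i Y_i \right) \times_{\colim_i X_i} X_i .
\end{equation*}
Thus $\eta$ is an isomorphism exactly when (D2) holds, so $\colim$ is fully faithful if and only if colimits are effective. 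Combining the two equivalences gives the claim: both functors are fully faithful if and only if (D1) and (D2) both hold.

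The only genuinely delicate point is the bookkeeping, and it is where I expect the main obstacle. One must confirm that $\colim$ is the left adjoint and $\text{cst}$ the right adjoint, since swapping them would interchange the roles of (D1) and (D2); I would verify this directly on hom-sets, since a map $\colim_i Y_i \to Z$ over $\colim_i X_i$ is the same as a compatible family of maps $Y_i \to Z \times_{\colim_i X_i} X_i$ over $X_i$, by the universal property of the colimit together with the defining pullbacks of $\text{cst}(Z)$. This simultaneously pins down the direction of the adjunction and exhibits the unit and counit as the comparison maps written above. I would also note, as part of this verification, that the components $\eta_i$ and $\varepsilon$ genuinely assemble into morphisms of $\ncat{Cart}(\cat{C}^I)_{/X}$ and $\cat{C}_{/\colim_i X_i}$ respectively — in particular that $\eta$ is itself a Cartesian transformation — which follows from the pasting law for pullbacks since both its source and target lie over $X$ Cartesianly.
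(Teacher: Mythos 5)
Your proof is correct and is exactly the argument the paper intends: the paper states this lemma without proof, but the sentence immediately preceding it ("if both $\colim$ and $\text{cst}$ are fully faithful, then the unit and counit will be isomorphisms") signals precisely your route of identifying the counit of $\colim \dashv \text{cst}$ with the comparison map in (D1) and the unit with the comparison map in (D2), then invoking the standard fact that a right (resp.\ left) adjoint is fully faithful iff the counit (resp.\ unit) is invertible. Your care in verifying the direction of the adjunction on hom-sets, and in noting that the unit is automatically Cartesian by the pasting law, covers the only points where one could slip.
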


 \begin{Ex}
In $\ncat{Set}$, all colimits are universal. Similarly all colimits are universal in locally cartesian closed categories $\cat{C}$, since for every map $f: U \to V$ the pullback functor $f^* : \cat{C}_{/V} \to \cat{C}_{/U}$ has a right adjoint and therefore preserves colimits. In other words, all locally cartesian closed categories satisfy $(D1)$.
\end{Ex}

However $\ncat{Set}$ does \textit{\textbf{not}} satisfy (D2), i.e. not all of its colimits are effective!

\begin{Ex}[{\cite[Example 2.3]{rezk2010toposes}}] \label{ex Set doesn't have effective pushouts}
Let $I$ be the category $\{ a \leftarrow b \to c \}$, and $\cat{C} = \ncat{Set}$. Let $X = \{ x, x' \}$, and fix $\sigma: X \to X$ given by $\sigma(x) = x'$ and $\sigma(x') = x$. Let the functor $A: I \to \ncat{Set}$ be given by 
$$X \xleftarrow{(1_X, 1_X)} X + X \xrightarrow{(1_X, \sigma)} X. $$
We let $X+X = \{(x, 0), (x', 0), (x, 1), (x', 1) \}$.
Similarly define the functor $B : I \to \ncat{Set}$ given by 
$$ * \leftarrow * + * \to * $$
where $*$ is the singleton set. There is a cartesian natural transformation $\alpha : A \Rightarrow B$ given by:
\begin{equation}
\begin{tikzcd}
	X & {X + X} & X \\
	{*} & {* + *} & {*}
	\arrow["{(1_X,1_X)}"', from=1-2, to=1-1]
	\arrow["{(1_X,\sigma)}", from=1-2, to=1-3]
	\arrow[from=1-1, to=2-1]
	\arrow[from=2-2, to=2-1]
	\arrow[from=2-2, to=2-3]
	\arrow[from=1-3, to=2-3]
	\arrow[from=1-2, to=2-2]
	\arrow["\lrcorner"{anchor=center, pos=0.125, rotate=-90}, draw=none, from=1-2, to=2-1]
	\arrow["\lrcorner"{anchor=center, pos=0.125}, draw=none, from=1-2, to=2-3]
\end{tikzcd}	
\end{equation}
Indeed, given maps $q_0 : Q \to X$ and $q_1 : Q \to * + *$, the composite 
$$Q \xrightarrow{q_0} X \xhookrightarrow{i_\ell} X + X$$
where $i_\ell$ is the inclusion into the left component, gives a unique map making the obvious diagram commute, and hence both squares are pullbacks.
The natural transformation is an object $\alpha: A \Rightarrow B$ in $\ncat{Cart}(\cat{C}^I)_{/B}$. But now consider taking the pushouts of $A$ and $B$. The pushout of $B$ is clearly just a singleton $\colim \, B = *$. Let us compute the pushout of $A$. If $q, q' :X \to Q$ are maps making the following diagram commute
\begin{equation*}
    \begin{tikzcd}
	{X+X} & X \\
	X & {*} \\
	&& Q
	\arrow["{(1_X, \sigma)}", from=1-1, to=1-2]
	\arrow["\nabla"', from=1-1, to=2-1]
	\arrow[from=1-2, to=2-2]
	\arrow["{q'}", curve={height=-12pt}, from=1-2, to=3-3]
	\arrow[from=2-1, to=2-2]
	\arrow["q"', curve={height=12pt}, from=2-1, to=3-3]
	\arrow["\lrcorner"{anchor=center, pos=0.125, rotate=180}, draw=none, from=2-2, to=1-1]
	\arrow["h"', dashed, from=2-2, to=3-3]
\end{tikzcd}
\end{equation*}
then 
$$q \nabla(x, 0) = q(x) = q'(x) = q'(1_X, \sigma)(x, 0), \qquad \text{and}, \qquad q\nabla(x, 1) = q(x) = q'(x') = q'(1_X, \sigma)(x, 1),$$
so $q'$ is constant, and therefore $q$ is constant since $q \nabla (x', 0) = q(x') = q'(x') = q'(1_X, \sigma)(x', 0)$.
But now notice that the square
\begin{equation*}
\begin{tikzcd}
	A(a) = X & \colim A \cong * \\
	{B(a) = *} & {\colim B \cong *}
	\arrow[from=1-1, to=2-1]
	\arrow[from=2-1, to=2-2]
	\arrow[from=1-2, to=2-2]
	\arrow[from=1-1, to=1-2]
\end{tikzcd}	
\end{equation*}
is not a pullback. In other words, we have a commutative cube
\begin{equation*}
    \begin{tikzcd}
	&& X \\
	{X+X} &&& {*} \\
	& X & {*} \\
	{*+*} &&& {*} \\
	& {*}
	\arrow[from=1-3, to=2-4]
	\arrow[from=1-3, to=3-3]
	\arrow[""{name=0, anchor=center, inner sep=0}, from=2-1, to=1-3]
	\arrow[from=2-1, to=3-2]
	\arrow["\lrcorner"{anchor=center, pos=0.125}, draw=none, from=2-1, to=3-3]
	\arrow[from=2-1, to=4-1]
	\arrow[from=2-4, to=4-4]
	\arrow[from=3-2, to=2-4]
	\arrow[""{name=1, anchor=center, inner sep=0}, from=3-2, to=5-2]
	\arrow[from=3-3, to=4-4]
	\arrow[""{name=2, anchor=center, inner sep=0}, from=4-1, to=3-3]
	\arrow[from=4-1, to=5-2]
	\arrow[from=5-2, to=4-4]
	\arrow["\lrcorner"{anchor=center, pos=0.125}, draw=none, from=2-1, to=1]
	\arrow["\lrcorner"{anchor=center, pos=0.125, rotate=225}, draw=none, from=2-4, to=0]
	\arrow["\lrcorner"{anchor=center, pos=0.125, rotate=225}, draw=none, from=4-4, to=2]
\end{tikzcd}
\end{equation*}
where the two back faces are pullbacks, and the top and bottom faces are pushouts, but the two front faces are not pullbacks\footnote{This is often the way which effective pushouts are described in the literature, especially for adhesive categories \cite[Definition 2]{lack2004adhesive}.} This implies that not all pushouts in $\ncat{Set}$, and hence not all colimits, are effective.
\end{Ex}

\begin{Rem} \label{rem Van Kampen pushouts in set}
In fact, the precise class of pushouts in $\ncat{Set}$ that are effective, i.e. Van Kampen, (which contains the class of pushouts along monomorphisms) has been classified in \cite{lowe2010van}. 
\end{Rem}

\begin{Rem} \label{rem why set doesn't have effective colimits}
The (very vague) idea here is the following: colimits are universal if given a map $Y \to \colim_i X_i$, then we can ``break apart'' $Y$ into pieces $Y \times_{\colim_i X_i} X_i$ and glue them back together to get $Y$. 

Colimits are effective if given maps $Y_i \to X_i$, we can ``glue the pieces together'' $\colim_i Y_i \to \colim_i X_i$, and then break them apart again to get the individual pieces.

In $\ncat{Set}$, colimits aren't effective because when we glue together sets, we lose information about the original sets that we cannot retrieve.
\end{Rem}

However, as one would hope given the Giraud axiom (G3), there are certain colimits that are effective in $\ncat{Set}$, namely coequalizers by equivalence relations. Let $I = (a \rightrightarrows b)$, and let $A, B : I \to \ncat{Set}$ be defined as the following coequalizers, with their respective coequalizers shown,
\begin{equation*}
\begin{tikzcd}
	R & Y & {Y/R,} & S & X & {X/S}
	\arrow["{p_0}", shift left=2, from=1-1, to=1-2]
	\arrow["{p_1}"', shift right=2, from=1-1, to=1-2]
	\arrow["h", from=1-2, to=1-3]
	\arrow["{q_0}", shift left=2, from=1-4, to=1-5]
	\arrow["{q_1}"', shift right=2, from=1-4, to=1-5]
	\arrow["k", from=1-5, to=1-6]
\end{tikzcd}
\end{equation*}
and where we assume that $R$ and $S$ are equivalence relations. Suppose we have a cartesian natural transformation $\alpha : A \Rightarrow B$, i.e. a diagram of the form
\begin{equation} \label{eq effective equiv rels in Set}
    \begin{tikzcd}
	Y & R & Y \\
	X & S & X
	\arrow["{\alpha}"', from=1-1, to=2-1]
	\arrow["{p_0}"', from=1-2, to=1-1]
	\arrow["{p_1}", from=1-2, to=1-3]
	\arrow["\lrcorner"{anchor=center, pos=0.125, rotate=-90}, draw=none, from=1-2, to=2-1]
	\arrow["{\alpha'}"', from=1-2, to=2-2]
	\arrow["\lrcorner"{anchor=center, pos=0.125}, draw=none, from=1-2, to=2-3]
	\arrow["{\alpha}", from=1-3, to=2-3]
	\arrow["{q_0}", from=2-2, to=2-1]
	\arrow["{q_1}"', from=2-2, to=2-3]
\end{tikzcd}
\end{equation}
We wish to show that the resulting commutative diagram
\begin{equation*}\label{eq effective equiv rels in Set 2}
    \begin{tikzcd}
	Y & {Y/R} \\
	X & {X/S}
	\arrow["{h}", from=1-1, to=1-2]
	\arrow["{\alpha}"', from=1-1, to=2-1]
	\arrow["{\alpha''}", from=1-2, to=2-2]
	\arrow["{k}"', from=2-1, to=2-2]
\end{tikzcd}
\end{equation*}
is a pullback. Now the two squares commuting in (\ref{eq effective equiv rels in Set}) implies that $\alpha : Y \to X$ is a map such that if $yRy'$ then $\alpha(y)S\alpha(y')$. The squares being pullbacks imply that $yRy'$ if and only if $\alpha(y)S \alpha(y')$. Furthermore, it implies that there are bijections
\begin{equation*}
    \alpha^{-1}(x) \cong (\alpha')^{-1}(x,x') \cong \alpha^{-1}(x')
\end{equation*}
of the fibers\footnote{This is an important property of pullbacks. They induce isomorphisms between the fibers of the two vertical (horizontal) maps.} for every $(x,x') \in S$. So if $y \in Y$ and $\alpha(y)Sx$, then there exists a unique $y' \in Y$ such that $\alpha(y') = x$. 

Now to show that (\ref{eq effective equiv rels in Set 2}) is a pullback, it is enough to show that the map
\begin{equation*}
Y \to \left \{ (x, [y]_R) \, : \, \alpha''([y]_R) = [\alpha(y)]_S = [x]_S \right \}, \qquad \left( y \mapsto (\alpha(y), [y]_R) \right)
\end{equation*}
is a bijection. Let us show it is injective. Suppose that $\alpha(y) = \alpha(y')$ and $[y]_R = [y']_R$. We want to show that $y = y'$. Let $\alpha(y') = x$. We have $\alpha(y) S x$, so there exists a unique $y'' \in Y$ such that $\alpha(y'') = x$. Thus $y = y' = y''$. Now let us show that it is surjective. If $x \in X$ and $y \in Y$ such that $\alpha(y) S x$, then there exists a unique $y' \in Y$ such that $\alpha(y') = x$. Thus $(x, [y]_R) = (\alpha(y'), [y']_R)$. Thus we have a bijection. In summary we have proven the following.

\begin{Prop} \label{prop Set has effective equivalence relations}
The category $\ncat{Set}$ satisfies (G3). In other words $\ncat{Set}$ has effective equivalence relations.
\end{Prop}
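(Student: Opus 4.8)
The plan is to verify axiom (G3) by the standard observation that, in $\ncat{Set}$, an equivalence relation is nothing more than the kernel pair of its own quotient map. First I would unwind the categorical definition of an equivalence relation into its familiar set-theoretic form. By Lemma \ref{lem equiv def for equivalence relation} applied with the test object $U = *$ (the terminal set), a relation $s, t : R \to X$ is an equivalence relation in $\ncat{Set}$ exactly when the induced subset $R \hookrightarrow X \times X$ is reflexive, symmetric and transitive in the classical sense; here $s$ and $t$ are the two projections, so $(x,x') \in R$ if and only if $x \mathbin{R} x'$. The quotient $X/R$ is then the usual set of equivalence classes, with quotient map $q : X \to X/R$ given by $q(x) = [x]_R$, and this $q$ is the coequalizer of $s$ and $t$.

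The central step is to check that the commuting square with $s, t : R \to X$ along the top and left and $q : X \to X/R$ along the bottom and right is a pullback, i.e. that $R$ coincides with the kernel pair $X \times_{X/R} X$. Computing the pullback in $\ncat{Set}$ objectwise gives $X \times_{X/R} X = \{(x, x') \in X \times X : q(x) = q(x')\}$. Since $q(x) = q(x')$ means $[x]_R = [x']_R$, which by the partition property of an equivalence relation is equivalent to $(x,x') \in R$, I obtain a set-theoretic equality $X \times_{X/R} X = R$. Under this identification the two projections of the kernel pair are precisely $s$ and $t$, so the square is a pullback and $R$ is effective.

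The argument is essentially a direct set computation, so there is no deep obstacle; the only point requiring a little care is verifying that the bijection $X \times_{X/R} X \cong R$ is compatible with both pairs of maps (the projections of the kernel pair versus the maps $s, t$), which is what makes the square a genuine pullback rather than a mere bijection of vertex sets. An alternative, more descent-flavored route --- the one matching the surrounding discussion --- would instead exhibit a Cartesian natural transformation $\alpha : A \Rightarrow B$ out of the coequalizer diagram for $R$ and use the fiber bijections $\alpha^{-1}(x) \cong \alpha^{-1}(x')$ for $(x,x') \in S$ to build the comparison map into the pullback and prove it bijective; but the kernel-pair computation above is the cleanest way to reach the statement.
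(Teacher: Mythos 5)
Your proof is correct, but it takes a genuinely different route from the one in the text. You verify (G3) head-on: identify $X/R$ with the set of $R$-classes, compute the kernel pair $X \times_{X/R} X = \{(x,x') : q(x) = q(x')\}$, and observe that $q(x)=q(x')$ iff $(x,x') \in R$ because $R$, being already reflexive, symmetric and transitive (via Lemma \ref{lem equiv def for equivalence relation} at $U=*$), equals the equivalence relation generated by the graph of $(s,t)$ --- which is the one point where the hypotheses genuinely enter, and which you correctly flag. The paper instead proves the stronger, \emph{relative} statement that coequalizers of equivalence relations are effective colimits in the descent sense: given a Cartesian natural transformation between two such coequalizer diagrams $R \rightrightarrows Y$ and $S \rightrightarrows X$, the induced square of quotient maps is a pullback, established via the fiber bijections $\alpha^{-1}(x) \cong (\alpha')^{-1}(x,x') \cong \alpha^{-1}(x')$. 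That version is what gets reused later (it is essentially Lemma \ref{lem weak descent for coequalizers} specialized to $\ncat{Set}$, and the template for the proof that weak descent implies (G3)), whereas your kernel-pair computation is the shortest path to the bare statement of the Proposition. One small remark: deducing (G3) literally from the paper's relative statement requires an extra instantiation (the $R^{(2)}$-type argument appearing later in Section \ref{section weak descent implies giraud axioms}), so your direct argument is arguably the cleaner self-contained proof of exactly what the Proposition asserts.
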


In fact, there is a full characterization of effective colimits in $\ncat{Set}$.

\begin{Prop}[{\cite{Hoyois2015vankampen}}]
A colimit of a diagram $d : I \to \ncat{Set}$ is effective if and only if it is Van Kampen if and only if its category of elements $\int d$ has the property that each of its connected components is simply connected.
\end{Prop}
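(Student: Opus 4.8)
The plan is to reduce the two claimed equivalences to a single computation with the fundamental groupoid of the category of elements $\int d$.

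First, the equivalence \emph{effective $\iff$ Van Kampen} is essentially free in $\ncat{Set}$. By Lemma \ref{lem universal and effective colimits description}, a colimit is Van Kampen exactly when it is both universal (D1) and effective (D2). But $\ncat{Set}$ is locally cartesian closed, so every colimit in $\ncat{Set}$ is universal; hence for a diagram $d : I \to \ncat{Set}$ being Van Kampen is equivalent to being effective, and only the equivalence with simple connectivity requires real work.

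Next I would set up the dictionary between the colimit and the category of elements. Recall that $\colim d \cong \pi_0(\int d)$: an element of the colimit is an equivalence class of pairs $(i,x)$ with $x \in d(i)$ under the relation generated by $x \sim d(f)(x)$, and this is precisely the set of connected components of $\int d$; the structure map $d(i) \to \colim d$ sends $x$ to the component of $(i,x)$. The heart of the matter is to understand Cartesian natural transformations $\alpha : Y \Rightarrow d$. Unwinding the pullback condition, for each $f : i \to i'$ and $x \in d(i)$ the map $Y(f)$ restricts to a bijection $\alpha_i^{-1}(x) \xrightarrow{\cong} \alpha_{i'}^{-1}(d(f)(x))$. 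Hence setting $F(i,x) := \alpha_i^{-1}(x)$ defines a functor $F : \int d \to \ncat{Set}$ sending \emph{every} morphism to a bijection --- a local system on $\int d$ --- and conversely every such functor arises this way. Moreover the category of elements of $Y$ is canonically the Grothendieck construction $\int Y \cong \int_{\int d} F$, so that $\colim Y \cong \pi_0\left(\int_{\int d} F\right)$, and under these identifications the induced map $\colim Y \to \colim d$ is the map on $\pi_0$ coming from the projection $\int_{\int d} F \to \int d$.

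With this in hand, effectiveness becomes a fibrewise statement. The map $Y(i) \to (\colim Y)\times_{\colim d} d(i)$ sending $y \mapsto ([\,(i,y)\,],\, \alpha_i(y))$ is required to be a bijection; restricting over a fixed $x \in d(i)$ lying in a component $c$ of $\int d$, the left side is $F(i,x)$ and the right side is the set of components of $\int_{\int d} F$ lying over $c$. Writing $\mathcal{C}_c \subseteq \int d$ for that component, a local system is a functor inverting all morphisms, so it factors through the fundamental groupoid $\Pi_1(\mathcal{C}_c)$; as $\mathcal{C}_c$ is connected this is the one-object groupoid on $\pi := \pi_1(\mathcal{C}_c, (i,x))$, and $F$ is the same data as a $\pi$-set $S = F(i,x)$. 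The components of $\int_{\mathcal{C}_c} F$ are then identified, by transporting every fibre to the basepoint via the bijective monodromy, with the orbit set $S/\pi$. Thus effectiveness over $c$ says precisely that $S \to S/\pi$ is a bijection \emph{for every} $\pi$-set $S$; taking $S = \pi$ with the regular action forces $\pi = 1$, while if $\pi = 1$ it holds trivially. Running over all components, the colimit of $d$ is effective if and only if every connected component of $\int d$ is simply connected.

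The main obstacle I anticipate is the clean identification of $\pi_0\big(\int_{\mathcal{C}_c} F\big)$ with the orbit set $S/\pi$: this is covering-space theory for the classifying space of a small category, and it must be stated carefully (the equivalence between local systems on $\mathcal{C}_c$ and $\pi_1$-sets, and the fact that the components of the associated category of elements are exactly the orbits). The remaining verifications --- the Grothendieck-construction identity $\int Y \cong \int_{\int d} F$ and the bookkeeping of the various structure maps into $\colim d$ --- are routine but need care to keep basepoints and monodromy consistent.
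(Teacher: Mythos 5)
Your argument is correct. Note first that the paper does not actually prove this proposition: it is stated with a citation to Hoyois, so there is no in-text proof to compare against. Your reduction of ``Van Kampen'' to ``effective'' via local cartesian closure of $\ncat{Set}$ matches the paper's own observation that all colimits of sets are universal, and the rest is a correct, elementary unwinding of the cited result: Cartesian natural transformations over $d$ are exactly local systems on $\int d$, the effectiveness comparison map becomes $S \to S/\pi$ fibrewise over each connected component, and the regular $\pi$-set detects nontriviality of $\pi$. For comparison, the cited source runs the same idea more slickly by identifying $\lim_i \ncat{Set}_{/d(i)}$ with functors out of the groupoidification of $\int d$ and $\ncat{Set}_{/\colim d}$ with functors out of $\pi_0(\int d)$, so that the descent comparison is restriction along $\Pi_1(\int d) \to \pi_0(\int d)$; your fibrewise computation is the concrete shadow of that equivalence. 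The two points you flag are indeed the only real content and should be made explicit: (i) a functor $F : \int d \to \ncat{Set}$ inverting all morphisms factors essentially uniquely through the localization of $\int d$ at all its morphisms (Definition \ref{def localization of category}), and ``simply connected'' must be read as triviality of that localization on each component, equivalently of $\pi_1$ of the classifying space; and (ii) $\pi_0$ of the Grothendieck construction of such an $F$ over a connected component is the orbit set $S/\pi$, which follows because colimits of $W$-inverting functors may be computed after localizing at $W$, and the colimit of a $\pi$-set over the one-object groupoid on $\pi$ is its set of orbits. With those two lemmas stated, your proof is complete.
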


Notice that conditions $(D1)$ and $(D2)$ from Lemma \ref{lem universal and effective colimits description} can be reformulated as follows. Consider the following conditions on a cocomplete lex category $\cat{C}$:
\begin{itemize}
	\item $(D1a)$-(Universal coproducts): Given a collection of objects $\{ Y_i \}_{i \in I}$, let $Y = \sum_i Y_i$. Let $f: X \to Y$ be a morphism, and let $X_i = Y_i \times_Y X$. Then the induced map $\sum_i X_i \to X$ is an isomorphism,
	\item $(D1b)$-(Universal pushouts): Given a span $Y_0 \leftarrow Y_1 \to Y_2$, let $Y = Y_0 +_{Y_1} Y_2$. Let $f: X \to Y$ be a morphism and let $X_i = Y_i \times_{Y} X$. Then the induced map $X_0 +_{X_1} X_2 \to X$ is an isomorphism.
	\item $(D2a)$-(Effective coproducts): Given a collection of maps $\{ f_i: X_i \to Y_i \}$, let $X = \sum_i X_i$, and $Y = \sum_i Y_i$, and let $f: X \to Y$ be the coproduct $\sum_i f_i$. Then the natural maps $X_i \to Y_i \times_Y X$ are isomorphisms for each $i$.
	\item $(D2b)$-(Effective pushouts): Given a cartesian map of spans:
	\begin{equation*}
\begin{tikzcd}
	{X_0} & {X_1} & {X_2} \\
	{Y_0} & {Y_1} & {Y_2}
	\arrow["{f_0}"', from=1-1, to=2-1]
	\arrow[from=1-2, to=1-1]
	\arrow[from=2-2, to=2-1]
	\arrow["{f_1}", from=1-2, to=2-2]
	\arrow[from=2-2, to=2-3]
	\arrow[from=1-2, to=1-3]
	\arrow["{f_2}", from=1-3, to=2-3]
	\arrow["\lrcorner"{anchor=center, pos=0.125, rotate=-90}, draw=none, from=1-2, to=2-1]
	\arrow["\lrcorner"{anchor=center, pos=0.125}, draw=none, from=1-2, to=2-3]
\end{tikzcd}
	\end{equation*}
	let $X = X_0 +_{X_1} X_2$ and $Y = Y_0 +_{Y_1} Y_2$, and let $f:X \to Y$ denote the induced map of pushouts. Then the natural maps $X_i \to Y_i \times_Y X$ are isomorphisms.
\end{itemize}
since all small colimits can be computed as coequalizers (and therefore from pushouts and an initial object) of small coproducts, $(D1) \Leftrightarrow (D1a) + (D1b)$ and $(D2) \Leftrightarrow (D2a) + (D2b)$. It is often easier to work with these other conditions.

Now since $\ncat{Set}$ does not satisfy $(D2b)$ by Example \ref{ex Set doesn't have effective pushouts}, we weaken this condition to the following:
\begin{itemize}
    \item $(D2b)'$-(Weak Effective pushouts) Given a Cartesian map of spans:
\begin{equation*}
\begin{tikzcd}
	{X_0} & {X_1} & {X_2} \\
	{Y_0} & {Y_1} & {Y_2}
	\arrow["{f_0}"', from=1-1, to=2-1]
	\arrow[from=1-2, to=1-1]
	\arrow[from=2-2, to=2-1]
	\arrow["{f_1}", from=1-2, to=2-2]
	\arrow[from=2-2, to=2-3]
	\arrow[from=1-2, to=1-3]
	\arrow["{f_2}", from=1-3, to=2-3]
	\arrow["\lrcorner"{anchor=center, pos=0.125, rotate=-90}, draw=none, from=1-2, to=2-1]
	\arrow["\lrcorner"{anchor=center, pos=0.125}, draw=none, from=1-2, to=2-3]
\end{tikzcd}	
\end{equation*}
let $X = X_0 +_{X_1} X_2$ and $Y = Y_0 +_{Y_1} Y_2$, and let $f:X \to Y$ denote the induced map of pushouts. Then the natural maps $X_i \to Y_i \times_Y X$ are effective\footnote{In \cite{rezk2010toposes}, Rezk defines $(D2b)'$ with effective epimorphisms replaced with regular epimorphisms. But by Lemma \ref{lem sufficient condition for reg epi to be effective}, this will make no difference in what follows.} epimorphisms.
\end{itemize}

\begin{Def}
We will call denote the logical conjunction by $(D2)' = (D2a) + (D2b)'$. We let $(D)'$ be the statement that $\cat{E}$ is locally presentable and satisfies $(D1)$ and $(D2)'$. We also say that $\cat{E}$ has \textbf{weak Van Kampen colimits} or \textbf{weak descent}.
\end{Def}

\subsection{Weak Descent implies Giraud's Axioms} \label{section weak descent implies giraud axioms}
In this section we will show that weak descent implies Giraud's axioms, i.e. $(D)' \implies (G)$.

\begin{Prop}\label{prop D1 equiv to G1}
Let $\cat{C}$ be a cocomplete lex category. Then $\cat{C}$ satisfies $(D1)$ if and only if for all maps $f: T \to S$ in $\cat{C}$, the pullback functor $f^*: \cat{C}_{/S} \to \cat{C}_{/T}$ preserves colimits ($G1)$.
\end{Prop}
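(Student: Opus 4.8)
The plan is to prove both implications by exploiting the fact that the forgetful functor $\pi_S : \cat{C}_{/S} \to \cat{C}$ strictly creates colimits (the dual of \cite[Proposition 3.3.8]{riehl2017category}), so that every colimit in a slice category is computed by its underlying colimit in $\cat{C}$, equipped with the structure map induced by the universal property. Throughout write $C = \colim_{i \in I} X_i$ for a diagram $X : I \to \cat{C}$, with cocone maps $\iota_i : X_i \to C$.

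For the direction ($\Leftarrow$), assume every pullback functor preserves colimits and let $g : Y \to C$ be a map into a colimit. I would first observe that the lifted diagram $\bar{X} : I \to \cat{C}_{/C}$, $\bar{X}(i) = (X_i \xrightarrow{\iota_i} C)$, has colimit the terminal object $(1_C : C \to C)$: by creation of colimits $\pi_C(\colim_i \bar{X}(i)) = C$, and the induced structure map $C \to C$ is forced to be $1_C$. Applying $g^* : \cat{C}_{/C} \to \cat{C}_{/Y}$ and using that it preserves colimits gives $\colim_i g^*(\bar{X}(i)) = g^*(1_C : C \to C) = (1_Y : Y \to Y)$, the terminal object of $\cat{C}_{/Y}$. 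Since $g^*(\bar{X}(i)) = (Y \times_C X_i \to Y)$, pushing this equality down along $\pi_Y$ (which preserves colimits) yields $\colim_i (Y \times_C X_i) \cong Y$, which is exactly $(D1)$. As $g$ and $X$ were arbitrary, $(D1)$ holds.

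For the direction ($\Rightarrow$), assume $(D1)$, fix $f : T \to S$, and let $\bar{X} : I \to \cat{C}_{/S}$, $\bar{X}(i) = (X_i \xrightarrow{a_i} S)$, be a diagram. By creation of colimits $\colim_i \bar{X}(i) = (C \xrightarrow{a} S)$, so $f^*(\colim_i \bar{X}(i)) = (T \times_S C \to T)$, whereas $\colim_i f^*(\bar{X}(i)) = (\colim_i (T \times_S X_i) \to T)$; hence it suffices to identify $T \times_S C$ with $\colim_i (T \times_S X_i)$ over $T$. The key computation is to apply $(D1)$ to the projection $g : T \times_S C \to C$, which gives $T \times_S C \cong \colim_i ((T \times_S C) \times_C X_i)$. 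I would then invoke the pasting lemma for pullbacks \cite{bauer2012pastinglaw}: stacking the pullback square defining $T \times_S C$ (over $a : C \to S$ and $f$) beneath the pullback square defining $(T \times_S C) \times_C X_i$ (over $\iota_i$), the outer rectangle is the pullback of $a_i = a \iota_i$ along $f$, so $(T \times_S C) \times_C X_i \cong T \times_S X_i$. These isomorphisms are canonical and commute with the projections to $T$, so the colimit cocones agree and the identification is one of objects over $T$, witnessing preservation of the colimit by $f^*$.

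The routine but slightly delicate part is the bookkeeping in the second direction: one must check that the pasting-law isomorphisms $(T \times_S C) \times_C X_i \cong T \times_S X_i$ are compatible with all the projection maps to $T$ and with the colimit cocone maps, so that the resulting isomorphism $f^*(\colim_i \bar{X}(i)) \cong \colim_i f^*(\bar{X}(i))$ is realized by the canonical comparison map and hence genuinely witnesses preservation rather than merely an abstract isomorphism. This is guaranteed by the uniqueness clauses in the relevant universal properties, but it is the only place where care is required.
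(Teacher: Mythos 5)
Your proof is correct and follows essentially the same route as the paper's: both directions reduce to the fact that the forgetful functor from a slice creates colimits, the $(\Leftarrow)$ direction applies $g^*$ to the lifted diagram whose colimit is the terminal object of $\cat{C}_{/C}$, and the $(\Rightarrow)$ direction applies $(D1)$ to the projection $T \times_S C \to C$ and identifies the fibers via the pasting lemma. The only difference is that you spell out the pasting-lemma identification $(T \times_S C) \times_C X_i \cong T \times_S X_i$ and its compatibility with the cocones, which the paper leaves implicit.
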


\begin{proof}
$(\Rightarrow)$ Suppose that $x : I \to \cat{C}_{/S}$ is a small diagram. Since we can compute colimits in $\cat{C}_{/S}$ by looking at their projections, let $X \xrightarrow{\hat{x}} S = \colim_i \, ( X(i) \xrightarrow{x(i)} S )$ be the colimit in $\cat{C}_{/S}$. This implies that $X \cong \colim_i \, X(i)$ in $\cat{C}$. Consider the pullback
\begin{equation*}
    \begin{tikzcd}
	Y & X \\
	T & S
	\arrow[from=1-1, to=1-2]
	\arrow["{f^*(\hat{x})}"', from=1-1, to=2-1]
	\arrow["\lrcorner"{anchor=center, pos=0.125}, draw=none, from=1-1, to=2-2]
	\arrow["{\hat{x}}", from=1-2, to=2-2]
	\arrow["f"', from=2-1, to=2-2]
\end{tikzcd}
\end{equation*}
We want to show that $f^*(\hat{x}) \cong \colim_i f^*(x(i))$. Now by since $\cat{C}$ satisfies $(D1)$, if we set $Y(i) = X(i) \times_X Y$, then $\colim_i \, Y(i) \cong Y$ in $\cat{C}$. Since colimits in $\cat{C}_{/T}$ are computed by their projections in $\cat{C}$, the object $f^*(\hat{x}) : Y \to T$ in $\cat{C}_{/T}$ is the colimit $\colim_i f^*(x(i)) : Y(i) \to T$. Thus $\colim_i f^*( x(i) ) \cong f^* (\colim_i x(i)) \cong f^*(\hat{x})$. Thus $f^*$ preserves colimits, so $\cat{C}$ satisfies $(G1)$.

$(\Leftarrow)$ Suppose that $\cat{C}$ satisfies $(G1)$. Then if $X = \colim_i \, X(i)$, $g : Y \to X$ is an arbitrary morphism and $Y(i) = X(i) \times_X Y$, then 
\begin{equation*}
\colim_i \, Y(i) = \colim_i \, \left( X(i) \times_X Y \right) \cong \left( \left( \colim_i \, X(i) \right) \times_X Y \right) \cong \left(X \times_X Y \right) \cong Y.
\end{equation*}
Thus $\cat{C}$ satisfies $(D1)$.
\end{proof}

\begin{Prop} \label{prop effective coproducts imply disjoint coproducts}
If a cocomplete lex category $\cat{C}$ satisfies $(D2a)$, i.e. has effective coproducts, then it has disjoint coproducts $(G2)$.
\end{Prop}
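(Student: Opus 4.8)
The plan is to derive disjointness directly from the effective-coproducts hypothesis $(D2a)$ by specializing it to a carefully chosen two-element family. Recall that, for arbitrary $X, Y \in \cat{C}$, what must be produced is a pullback square exhibiting the intersection of the two coproduct inclusions $i_X, i_Y : X, Y \to X + Y$ as the initial object, i.e. $Y \times_{X+Y} X \cong \varnothing$ (the initial object exists since $\cat{C}$ is cocomplete).

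First I would set up the family. Take the index set $I = \{1,2\}$ and on the target side define $Y_1 = X$, $Y_2 = Y$, so that $\sum_i Y_i = X + Y$ with the inclusions $i_X, i_Y$. On the source side put $X_1 = X$ and $X_2 = \varnothing$, with $f_1 = 1_X : X_1 \to Y_1$ and $f_2 : \varnothing \to Y$ the unique map out of the initial object. Then $\sum_i X_i = X + \varnothing \cong X$, and the induced map $f = \sum_i f_i : X \to X + Y$ is canonically identified with $i_X$ (here one uses that $\varnothing$ is initial, so $X + \varnothing \cong X$).

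Now apply $(D2a)$: the natural comparison maps $X_i \to Y_i \times_Y \bigl( \sum_j X_j \bigr)$ are isomorphisms for each $i$. Spelling this out for $i = 2$ yields exactly $\varnothing = X_2 \xrightarrow{\ \sim\ } Y_2 \times_Y X = Y \times_{X+Y} X$, where the right-hand pullback is formed along $i_Y$ and along $f = i_X$. This is precisely the assertion that the square in the definition of disjoint coproducts is a pullback, which is what was to be shown. The case $i = 1$ comes for free and records the companion fact that the inclusions are monomorphisms: the comparison map $X = X_1 \to Y_1 \times_Y X = X \times_{X+Y} X$ is an isomorphism whose two legs are $1_X$ and the canonical iso $X \cong X + \varnothing$, so it is the slice diagonal $\Delta_{i_X}$; by Lemma \ref{lem map is mono iff slice diagonal iso}, $i_X$ is monic, and $i_Y$ is monic by the symmetric choice $X_1 = \varnothing$, $X_2 = Y$.

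There is no deep obstacle here: the entire content lies in choosing the family so that effectiveness specializes to disjointness. The only care required is the bookkeeping of checking that $f = \sum_i f_i$ really is the inclusion $i_X$, and that the ``natural map'' appearing in $(D2a)$ unwinds, for $i = 2$, to the comparison $\varnothing \to Y \times_{X+Y} X$ and, for $i = 1$, to the slice diagonal $\Delta_{i_X}$. I would keep these identifications explicit but brief, as they are routine consequences of the universal properties of coproducts and pullbacks.
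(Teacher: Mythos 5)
Your proof is correct and is essentially the paper's own argument: the paper applies $(D2a)$ to the family $\delta^j$ with $\delta^j(j) = X_j$ and $\delta^j(i) = \varnothing$ for $i \neq j$ (over an arbitrary small discrete index set), which for $I = \{1,2\}$ is exactly your choice $X_1 = X$, $X_2 = \varnothing$. The observation that the $i=1$ case yields monicity of the coproduct inclusions is a pleasant extra not recorded in the paper's proof, but the core specialization is identical.
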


\begin{proof}
Let $I$ be a small, discrete category, and $\{ X(i) \}_{i \in I}$ a collection of objects in $\cat{C}$, then we wish to show that the following diagram 
\begin{equation*}
	\begin{tikzcd}
	\varnothing & {X(k)} \\
	{X(j)} & {\sum_{i \in I} X(i)}
	\arrow[from=1-2, to=2-2]
	\arrow[from=2-1, to=2-2]
	\arrow[from=1-1, to=2-1]
	\arrow[from=1-1, to=1-2]
\end{tikzcd}
\end{equation*}
is a pullback square whenever $j \neq k$.

First, for every $j \in I$, define a functor $\delta^j: I \to \cat{C}$ by 
$$
\delta^j(i) = \begin{cases}
 \varnothing, & \text{ if } 	i \neq j, \\
 X_j, & \text{ if } i = j.
 \end{cases}
 $$
 There is an obvious natural transformation $\delta^j \Rightarrow X$ given by inclusion or the identity. This natural transformation is cartesian vacuously. Thus by $(D2a)$ the following diagram is a pullback:
 \begin{equation*}
 \begin{tikzcd}
	{\delta^j(i)} & {\sum_{i \in I} \delta^j(i)} \\
	{X(i)} & {\sum_{i \in I} X(i)}
	\arrow[from=1-1, to=2-1]
	\arrow[from=1-1, to=1-2]
	\arrow[from=1-2, to=2-2]
	\arrow[from=2-1, to=2-2]
	\arrow["\lrcorner"{anchor=center, pos=0.125}, draw=none, from=1-1, to=2-2]
\end{tikzcd}	
 \end{equation*}
for every $i$ and $j$. If $i \neq j$, then the above pullback diagram implies disjoint coproducts $(G2)$. 
\end{proof}

Now let us prove the last implication $(D1) + (D2b)' \Rightarrow (G3)$. First, let us prove a few technical results that we will need. 

\begin{Lemma}[{\cite{Henryweakdescent2021}}] \label{lem weak descent for coequalizers}
If $\cat{C}$ is a cocomplete lex category satisfying $(D1) + (D2b)'$, then it has weak descent with respect to coequalizers.
\end{Lemma}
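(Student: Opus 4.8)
The plan is to realize every coequalizer as a specific pushout and then invoke the weak effective-pushouts axiom $(D2b)'$. Recall that the coequalizer of a parallel pair $f,g\colon R\rightrightarrows X_0$ is the pushout of the span
\[
R \xleftarrow{\ \nabla\ } R+R \xrightarrow{\ [f,g]\ } X_0,
\]
where $\nabla=[\mathrm{id},\mathrm{id}]$ is the codiagonal: a map out of this pushout is exactly a pair $(h\colon R\to Z,\ k\colon X_0\to Z)$ with $h=kf=kg$, i.e.\ a map $k$ coequalizing $f$ and $g$. So I would begin by fixing a Cartesian natural transformation $\alpha\colon E\Rightarrow B$ of parallel-pair diagrams, writing its components as $\alpha_R\colon R'\to R$ and $\alpha_0\colon E_0\to B_0$; by hypothesis the two naturality squares (one for each parallel arrow, $f$ and $g$) are pullbacks.

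Next I would assemble the induced morphism of spans, from the $E$-span $R'\xleftarrow{\nabla}R'+R'\xrightarrow{[f',g']}E_0$ to the $B$-span $R\xleftarrow{\nabla}R+R\xrightarrow{[f,g]}B_0$, with vertical maps $\alpha_R$, $\alpha_R+\alpha_R$, and $\alpha_0$. The crux is to check this is a \emph{Cartesian} map of spans, i.e.\ that both squares are pullbacks. For the right-hand square I would pull back $\alpha_0\colon E_0\to B_0$ along $[f,g]$; since $(D1)$ holds, the pullback functor $\alpha_0^{*}$ preserves colimits (Proposition \ref{prop D1 equiv to G1}), so it commutes with the binary coproduct and gives $(R+R)\times_{B_0}E_0\cong (R\times_{B_0,f}E_0)+(R\times_{B_0,g}E_0)$. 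Each summand is $R'$ by the Cartesian hypothesis on $\alpha$, so the comparison is exactly $\alpha_R+\alpha_R$ and the square is a pullback. The left (codiagonal) square is handled the same way: pulling the fold $\nabla\colon R+R\to R$ back along $\alpha_R$ and again commuting $\alpha_R^{*}$ past the coproduct yields $R'+R'$ with comparison $\alpha_R+\alpha_R$.

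Having a Cartesian map of spans, I would apply $(D2b)'$ to the pushouts $C_E=\mathrm{coeq}(E)$ and $C_B=\mathrm{coeq}(B)$: it tells me that the natural comparison maps from each vertex of the top span to the pullback of the corresponding bottom vertex along $C_E\to C_B$ are effective epimorphisms. In particular the maps $R'\to R\times_{C_B}C_E$ and $E_0\to B_0\times_{C_B}C_E$ are effective epimorphisms, and these are precisely the two descent comparison maps of the parallel-pair diagram, indexed by the two objects of $I$. (The third span vertex $R+R$ is auxiliary, and its comparison map is discarded.) Together with universality of coequalizers, which is immediate from $(D1)$, this is exactly weak descent with respect to coequalizers.

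The hard part will be the Cartesianness verification in the middle step: one must be certain that universal coproducts (contained in $(D1)$) genuinely let one commute the pullback functor past the binary coproduct, and, more delicately, that the comparison maps produced by $(D2b)'$ at the two genuine diagram vertices coincide with the descent comparison maps wanted for the coequalizer diagram — that is, that the pushout-of-span presentation is natural enough that the identifications $(R+R)\times_{B_0}E_0\cong R'+R'$ and the $R$-leg $R\to C_B$ are compatible with all structure maps. Everything else is formal bookkeeping.
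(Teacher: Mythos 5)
Your proof is correct, and it follows the same overall strategy as the paper's — present the coequalizer as a pushout, use $(D1)$ to transport the Cartesianness of the natural transformation to the resulting map of spans, then invoke $(D2b)'$ — but with a genuinely different decomposition. The paper realizes $\mathrm{coeq}(q_0,q_1\colon Y_0\rightrightarrows Y_1)$ as the pushout of the span $Y_1\xleftarrow{(q_0,1_{Y_1})}Y_0+Y_1\xrightarrow{(q_1,1_{Y_1})}Y_1$, whose two outer vertices are both the \emph{codomain} object; applying $(D2b)'$ there yields the effectivity of the single comparison map $X_1\to Y_1\times_Y X$, which is exactly what the subsequent proof of $(G3)$ consumes. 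You instead use the span $R\xleftarrow{\nabla}R+R\xrightarrow{[f,g]}X_0$, whose outer vertices are the two distinct objects of the parallel-pair shape, so $(D2b)'$ hands you the comparison maps at \emph{both} vertices of the coequalizer diagram — i.e.\ the full weak-descent statement for this diagram shape, with the middle vertex discarded as auxiliary. The Cartesianness verifications are formally identical in the two versions: in each case one pulls a coproduct back along a map, uses Proposition \ref{prop D1 equiv to G1} (that $(D1)$ makes pullback functors cocontinuous) to split the pullback as a coproduct of pullbacks, and identifies each summand via the hypothesis that the two naturality squares are pullbacks. Your worry about whether the $(D2b)'$ comparison maps agree with the descent comparison maps of the parallel-pair diagram is unfounded but worth having raised: the cocone leg $R\to\mathrm{coeq}(f,g)$ induced by your pushout presentation is $qf=qg$, which is precisely the colimit-cocone component at the domain vertex of the parallel pair, so the identification is automatic.
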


\begin{proof}
This is morally true because $(D1) + (D2b)'$ is equivalent to $\cat{C}$ having Van Kampen coproducts and having weak Van Kampen pushouts. Since coequalizers can be built from pushouts and coproducts, the result follows. Suppose we have a commutative diagram
\begin{equation*}
    \begin{tikzcd}
	{X_0} & {X_1} & X \\
	{Y_0} & {Y_1} & Y
	\arrow["{p_0}", shift left=2, from=1-1, to=1-2]
	\arrow["{p_1}"', shift right=2, from=1-1, to=1-2]
	\arrow["{\alpha_0}"', from=1-1, to=2-1]
	\arrow["h", from=1-2, to=1-3]
	\arrow["{\alpha_1}"', from=1-2, to=2-2]
	\arrow["{\alpha_2}"', from=1-3, to=2-3]
	\arrow["{q_0}", shift left=2, from=2-1, to=2-2]
	\arrow["{q_1}"', shift right=2, from=2-1, to=2-2]
	\arrow["k"', from=2-2, to=2-3]
\end{tikzcd}
\end{equation*}
where the two squares
\begin{equation*}
    \begin{tikzcd}
	{X_1} & {X_0} & {X_1} \\
	{Y_1} & {Y_0} & {Y_1}
	\arrow["{\alpha_1}"', from=1-1, to=2-1]
	\arrow["{p_0}"', from=1-2, to=1-1]
	\arrow["{p_1}", from=1-2, to=1-3]
	\arrow["\lrcorner"{anchor=center, pos=0.125, rotate=-90}, draw=none, from=1-2, to=2-1]
	\arrow["{\alpha_0}"', from=1-2, to=2-2]
	\arrow["\lrcorner"{anchor=center, pos=0.125}, draw=none, from=1-2, to=2-3]
	\arrow["{\alpha_1}", from=1-3, to=2-3]
	\arrow["{q_0}"', from=2-2, to=2-1]
	\arrow["{q_1}", from=2-2, to=2-3]
\end{tikzcd}
\end{equation*}
are pullbacks. We wish to prove that the induced map $\ell$
\begin{equation*}
    \begin{tikzcd}
	{X_1} \\
	& {Y_1 \times_Y X} & X \\
	& {Y_1} & Y
	\arrow["\ell"{description}, dashed, from=1-1, to=2-2]
	\arrow["h", curve={height=-12pt}, from=1-1, to=2-3]
	\arrow["{\alpha_1}"', curve={height=12pt}, from=1-1, to=3-2]
	\arrow[from=2-2, to=2-3]
	\arrow[from=2-2, to=3-2]
	\arrow["{\alpha_2}", from=2-3, to=3-3]
	\arrow["k"', from=3-2, to=3-3]
\end{tikzcd}
\end{equation*}
is an effective epimorphism.

First, let us note that $Y_0 \rightrightarrows Y_1 \to Y$ being a coequalizer is equivalent to the commutative square
\begin{equation*}
    \begin{tikzcd}
	{Y_0 + Y_1} & {Y_1} \\
	{Y_1} & Y
	\arrow["{(q_1, 1_{Y_1})}", from=1-1, to=1-2]
	\arrow["{(q_0, 1_{Y_1})}"', from=1-1, to=2-1]
	\arrow["k", from=1-2, to=2-2]
	\arrow["k"', from=2-1, to=2-2]
\end{tikzcd}
\end{equation*}
being a pushout, and similarly for $X$. Now we wish to show that the natural transformation $\alpha'$ given in components as
\begin{equation*}
\begin{tikzcd}
	{X_1} & {X_0 + X_1} & {X_1} \\
	{Y_1} & {Y_0 + Y_1} & {Y_1}
	\arrow["{{\alpha_1}}"', from=1-1, to=2-1]
	\arrow["{{(p_0, 1_{X_1})}}"', from=1-2, to=1-1]
	\arrow["{{(p_1, 1_{X_1})}}", from=1-2, to=1-3]
	\arrow["{{\alpha_0 + \alpha_1}}"', from=1-2, to=2-2]
	\arrow["{{\alpha_1}}", from=1-3, to=2-3]
	\arrow["{{(q_0, 1_{Y_1})}}", from=2-2, to=2-1]
	\arrow["{{(q_1, 1_{Y_1})}}"', from=2-2, to=2-3]
\end{tikzcd}
\end{equation*}
is cartesian, i.e. we wish to show that the two squares above are pullbacks. But since $\cat{C}$ satisfies $(D1)$, and since $\alpha$ is cartesian, we have
\begin{equation*}
    (Y_0 + Y_1) \times_{Y_1} X_1 \cong (Y_0 \times_{Y_1} X_1) + (Y_1 \times_{Y_1} X_1) \cong X_0 + X_1.
\end{equation*}
Thus the natural transformation $\alpha'$ is cartesian, and since $\cat{C}$ has weak Van Kampen pushouts $(D2b)'$, this implies that the resulting map $\ell$ above is an effective epimorphism.
\end{proof}

\begin{Prop}[{\cite{Henryweakdescent2021}}]
If $\cat{C}$ is a cocomplete lex category that satisfies $(D1)$ and $(D2b)'$, then it has effective equivalence relations $(G3)$.
\end{Prop}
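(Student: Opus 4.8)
The plan is to reduce the claim to showing that a single comparison map is an isomorphism, and to obtain that isomorphism by combining a monomorphism fact (cheap) with an effective-epimorphism fact extracted from weak descent (Lemma \ref{lem weak descent for coequalizers}). Concretely, let $R \overset{s}{\underset{t}{\rightrightarrows}} X$ be an equivalence relation with quotient $q \colon X \to Q \coloneqq X/R = \coeq(s,t)$. Since $qs = qt$, there is a canonical comparison map $c \colon R \to P$ where $P \coloneqq X \times_Q X$ is the kernel pair of $q$, and $R$ is effective precisely when $c$ is an isomorphism. So the whole proposition becomes: $c$ is an isomorphism.

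First I would dispose of the easy half, that $c$ is a monomorphism. Both $R$ and $P$ are subobjects of $X \times X$: the map $(s,t) \colon R \hookrightarrow X \times X$ is mono by the Relation axiom, and $(\pi_0,\pi_1)\colon P \hookrightarrow X\times X$ is mono because $P = (X\times X)\times_{Q\times Q} Q$ along the diagonal $\Delta_Q \colon Q \to Q \times Q$, which is mono, and monos are stable under pullback. Since $(\pi_0,\pi_1)\circ c = (s,t)$ is mono, $c$ is mono. I would also record the elementary endgame: a map that is simultaneously a monomorphism and an effective epimorphism is an isomorphism. Indeed, if $c$ is mono its kernel pair is $R \times_P R \cong R$ with both projections equal to $1_R$, and if $c$ is an effective epimorphism then $c = \coeq(1_R, 1_R \colon R \rightrightarrows R)$, which is an isomorphism. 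Thus everything hinges on showing $c$ is an effective epimorphism.

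For that I would apply Lemma \ref{lem weak descent for coequalizers}. I take the bottom coequalizer diagram to be the defining one, $R \overset{s}{\underset{t}{\rightrightarrows}} X \overset{q}{\to} Q$, and build a second coequalizer diagram whose middle object is $R$, whose coequalizer map is the target $t \colon R \to X$, and together with a cartesian natural transformation $\alpha$ to the bottom row given on middle objects by the source $s \colon R \to X$ and on quotients by $q \colon X \to Q$ (consistent since $qt = qs$). The conclusion of Lemma \ref{lem weak descent for coequalizers} is then that the induced map $R \to X \times_Q X = P$, which by construction is $\langle s, t\rangle = c$, is an effective epimorphism, completing the argument. Two things must be established to make this legitimate: (i) that $t \colon R \to X$ genuinely is the coequalizer of the appropriate pair of face maps of the internal groupoid $R$ (built from projection and composition), using that $R$, being an equivalence relation, carries reflexivity, symmetry and transitivity data; and (ii) that $\alpha$ is cartesian, i.e. the two naturality squares are pullbacks.

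The main obstacle is precisely step (ii), the verification that those squares are pullbacks, since a transformation of parallel-pair diagrams forces a single $\alpha_0$ to exhibit $A_0$ as a pullback against both legs $s$ and $t$ of the bottom row. This is where the full internal-groupoid structure of the equivalence relation (inverse and composition) is unavoidable: one must produce, over $R$, the isomorphism between the two relevant fibred products, which is the internal analogue of the fiberwise bijections $\alpha^{-1}(x) \cong (\alpha')^{-1}(x,x') \cong \alpha^{-1}(x')$ used in the explicit $\ncat{Set}$ computation preceding Proposition \ref{prop Set has effective equivalence relations}. I expect this verification to run through the same bookkeeping as Lemma \ref{lem equiv def for equivalence relation} (reading the axioms off on generalized elements) but carried out diagrammatically so that it is valid in any cocomplete lex $\cat{C}$; the $(D1)$ and $(D2b)'$ hypotheses enter only at the very end, through Lemma \ref{lem weak descent for coequalizers}, once the cartesian transformation is in hand.
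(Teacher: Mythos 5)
Your proposal is correct and follows essentially the same route as the paper: the paper likewise reduces $(G3)$ to showing that the comparison map $R \to X \times_{X/R} X$ is a monomorphism (via the factorization through $X \times X$) and an effective epimorphism, the latter obtained by applying Lemma \ref{lem weak descent for coequalizers} to a cartesian transformation from a split coequalizer built on the object of composable pairs $R^{(2)}$ down to the defining coequalizer $R \rightrightarrows X \to X/R$. The two verifications you flag --- that the auxiliary row is a coequalizer (the paper uses reflexivity to exhibit it as a split coequalizer) and that the transformation is cartesian (which does use symmetry and transitivity, exactly as in the $\ncat{Set}$ computation) --- are precisely the steps the paper carries out.
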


\begin{proof}
Let
\begin{equation*}
    \begin{tikzcd}
	R & X & {X/R}
	\arrow["{p_0}", shift left=2, from=1-1, to=1-2]
	\arrow["{p_1}"', shift right=2, from=1-1, to=1-2]
	\arrow["p", from=1-2, to=1-3]
\end{tikzcd}
\end{equation*}
be an equivalence relation on $X$. Consider the equalizer
\begin{equation*}
\begin{tikzcd}
	{R^{(2)}} & {R \times R} & X
	\arrow["{\iota = (\iota_0, \iota_1)}", hook, from=1-1, to=1-2]
	\arrow["{{\pi_1}}", shift left=2, from=1-2, to=1-3]
	\arrow["{{\pi_2}}"', shift right=2, from=1-2, to=1-3]
\end{tikzcd}
\end{equation*}
where $\pi_1$ and $\pi_2$ are the composite maps
\begin{equation*}
    R \times R \xrightarrow{\text{proj}_0} R \xrightarrow{p_0} X, \qquad R \times R \xrightarrow{\text{proj}_1} R \xrightarrow{p_0} X.
\end{equation*}
If $\cat{C}$ were $\ncat{Set}$, then 
\begin{equation*}
    R^{(2)} \cong \{ ((x_0, x_1), (y_0, y_1)) \in R^2 \, : \, x_0 = y_0 \} \cong \{ (x_0, x_1, x_2) \in X^3 \, : \, x_0 R x_1, \text{ and } x_1 R x_2 \}.
\end{equation*} 
So $R^{(2)}$ is a relation on $R$, and we want to show that it is an equivalence relation. Set-theoretically, this is easy to see. $R^{(2)}$ is the relation $\sim$ on $R$ where $(x_0, x_1) \sim (y_0, y_1)$ if $x_0 = y_0$. The diagonal map $\Delta : R \to R \times R$ induces a map $r : R \to R^{(2)}$, by the universal property of the equalizer, which is basically the diagonal map. Thus $R^{(2)}$ is reflexive. The other properties follows similarly, so we will just show them set-theoretically, and trust the reader can translate them to diagrammatic proofs. $R^{(2)}$ is symmetric, because if $(x_0, x_1) \sim (y_0, y_1)$, then $x_0 = y_0$, and thus $(y_0, y_1) \sim (x_0, x_1)$. It is also transitive, because if $(x_0, x_1) \sim (y_0, y_1)$ and $(y_0, y_1) \sim (z_0, z_1)$, then $x_0 = y_0$ and $y_0 = z_0$. So $x_0 = z_0$. Thus $R^{(2)}$ is an equivalence relation on $R$.

Now we wish to show that the following commutative diagram
\begin{equation} \label{eq split coeq for equiv rel}
\begin{tikzcd}
	{R^{(2)}} & R & X
	\arrow["{\iota_0}", shift left=2, from=1-1, to=1-2]
	\arrow["{\iota_1}"', shift right=2, from=1-1, to=1-2]
	\arrow["{p_0}", from=1-2, to=1-3]
\end{tikzcd}
\end{equation}
is a coequalizer. First, consider the map $s : X \to R$, such that $p_0 s = p_1 s = 1_X$, which exists since $R$ is reflexive. Then, since $\iota : R^{(2)} \to R \times R$ is an equalizer, we obtain a map $s' : R \to R^{(2)}$ induced by the map $(1_R, s p_0) : R \to R \times R$. But note that the following relations hold
\begin{equation*}
    \iota_0 s' = 1_R, \qquad \iota_1 s' = s p_0, \qquad r p_0 = 1_X.
\end{equation*}
Thus (\ref{eq split coeq for equiv rel}) is actually a split coequalizer diagram, and hence a coequalizer.

We have a cartesian natural transformation given as follows
\begin{equation*}
    \begin{tikzcd}
	R & {R^{(2)}} & R \\
	X & R & X
	\arrow["{p_1}"', from=1-1, to=2-1]
	\arrow["{\iota_0}"', from=1-2, to=1-1]
	\arrow["{\iota_1}", from=1-2, to=1-3]
	\arrow["\lrcorner"{anchor=center, pos=0.125, rotate=-90}, draw=none, from=1-2, to=2-1]
	\arrow["\ell", from=1-2, to=2-2]
	\arrow["\lrcorner"{anchor=center, pos=0.125}, draw=none, from=1-2, to=2-3]
	\arrow["{p_1}", from=1-3, to=2-3]
	\arrow["{p_0}", from=2-2, to=2-1]
	\arrow["{p_1}"', from=2-2, to=2-3]
\end{tikzcd}
\end{equation*}
where $\ell$ is the map defined as follows. Let us define it set-theoretically first as $\ell(x_0, x_1, x_2) = (x_1, x_2)$. To describe this diagrammatically, we consider the induced map
\begin{equation*}
    \begin{tikzcd}
	{R^{(2)}} \\
	& {R \times_X R} & R \\
	& R & X
	\arrow["h", dashed, from=1-1, to=2-2]
	\arrow["{\iota_1}", curve={height=-12pt}, from=1-1, to=2-3]
	\arrow["{i\iota_0}"', curve={height=12pt}, from=1-1, to=3-2]
	\arrow[from=2-2, to=2-3]
	\arrow[from=2-2, to=3-2]
	\arrow["\lrcorner"{anchor=center, pos=0.125}, draw=none, from=2-2, to=3-3]
	\arrow["{p_0}", from=2-3, to=3-3]
	\arrow["{p_1}"', from=3-2, to=3-3]
\end{tikzcd}
\end{equation*}
Then by using the composition map $ c: R \times_X R \to R$, we define $\ell = c h$.

Thus since $\cat{C}$ satisfies $(D1) + (D2b)'$, by Lemma \ref{lem weak descent for coequalizers}, $\cat{C}$ satisfies weak descent with respect to coequalizers, hence the following induced map
\begin{equation*}
\begin{tikzcd}
	R \\
	& {X \times_{X/R}X} & X \\
	& X & {X/R}
	\arrow["i"{description}, dashed, from=1-1, to=2-2]
	\arrow["{p_1}", curve={height=-12pt}, from=1-1, to=2-3]
	\arrow["{p_0}"', curve={height=12pt}, from=1-1, to=3-2]
	\arrow[from=2-2, to=2-3]
	\arrow[from=2-2, to=3-2]
	\arrow["\lrcorner"{anchor=center, pos=0.125}, draw=none, from=2-2, to=3-3]
	\arrow["q", from=2-3, to=3-3]
	\arrow["q"', from=3-2, to=3-3]
\end{tikzcd}
\end{equation*}
is an effective epimorphism. But since $X \times_{X/R} X$ is a pullback, the following diagram is an equalizer
\begin{equation*}
    \begin{tikzcd}
	{X \times_{X/R} X} & {X \times X} & {X/R}
	\arrow["{i'}", hook, from=1-1, to=1-2]
	\arrow["{q\,\text{proj}_0}", shift left=2, from=1-2, to=1-3]
	\arrow["{q \, \text{proj}_1}"', shift right=2, from=1-2, to=1-3]
\end{tikzcd}
\end{equation*}
Furthermore, the following diagram commutes
\begin{equation*}
    \begin{tikzcd}
	R && {X \times_{X/R} X} \\
	& {X \times X}
	\arrow["i", from=1-1, to=1-3]
	\arrow["{(p_0,p_1)}"', hook, from=1-1, to=2-2]
	\arrow["{i'}", hook', from=1-3, to=2-2]
\end{tikzcd}
\end{equation*}
So therefore $i$ is a monomorphism. But then $i$ is a monomorphism and an effective epimorphism, and therefore an isomorphism. Thus $R$ is an effective equivalence relation. Thus $\cat{C}$ satisfies $(G3)$.
\end{proof}

Thus we have managed to prove that $(D)' \Rightarrow (G)$.

\begin{Prop} \label{prop weak descent implies giraud's axioms}
If $\cat{E}$ is a locally presentable category and satisfies weak descent $(D)'$, then it satisfies Giraud's axioms $(G)$. 
\end{Prop}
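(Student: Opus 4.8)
The plan is to recognize that this Proposition is essentially a bookkeeping assembly of the three results already proved in this subsection, together with the standard fact that a locally presentable category is a cocomplete lex category. Local presentability is hypothesized in $(D)'$ and is itself built into the definition of a Giraud category (Definition \ref{def giraud's axioms}), so it transfers across for free; what remains is to check the three exactness conditions $(G1)$--$(G3)$ for a category $\cat{E}$ satisfying $(D1)$ and $(D2)' = (D2a) + (D2b)'$. So first I would record that every locally presentable category is complete and cocomplete, hence in particular cocomplete and finitely complete, which means $\cat{E}$ is a cocomplete lex category and all the hypotheses of the cited propositions are genuinely available.

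The three Giraud axioms then follow one by one. Universal colimits (the condition that each pullback functor $f^* : \cat{E}_{/Y} \to \cat{E}_{/X}$ preserves colimits) is exactly the forward direction of the equivalence in Proposition \ref{prop D1 equiv to G1}, whose hypothesis $(D1)$ is part of $(D)'$. Disjointness of coproducts is the conclusion of Proposition \ref{prop effective coproducts imply disjoint coproducts}, whose hypothesis $(D2a)$ is the first half of $(D2)'$. Effective equivalence relations is the conclusion of the final Proposition of this subsection (due to \cite{Henryweakdescent2021}), whose hypotheses $(D1)$ and $(D2b)'$ are precisely the remaining ingredients of $(D)'$; that argument in turn rests on Lemma \ref{lem weak descent for coequalizers}, i.e. weak descent for coequalizers. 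Since $\cat{E}$ is locally presentable and now satisfies all of $(G1)$, $(G2)$, and $(G3)$, it is a Giraud category, completing $(D)' \Rightarrow (G)$.

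The hard part is not in this statement at all: every genuine descent-theoretic manipulation (cartesian natural transformations, the split-coequalizer presentation of an equivalence relation, and the passage from weak effective pushouts to effective equivalence relations) has already been discharged in the earlier lemmas and propositions. The only real care needed is clerical: one must match the three pieces of $(D)'$ to the correct hypotheses of each cited result, and in particular note that we only have the \emph{weak} condition $(D2b)'$ rather than $(D2b)$ — which is exactly why Lemma \ref{lem weak descent for coequalizers} had to be proved first in order to obtain $(G3)$. I would also flag the mild labelling mismatch between Definition \ref{def giraud's axioms} (where $(G1)$ is disjointness and $(G2)$ is universality) and the statements of Propositions \ref{prop D1 equiv to G1} and \ref{prop effective coproducts imply disjoint coproducts}, and reference each conclusion by its content to avoid ambiguity.
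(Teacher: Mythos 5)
Your proposal is correct and matches the paper's (implicit) argument exactly: the paper states this Proposition as the summary of the preceding subsection, with $(G2)$ from Proposition \ref{prop D1 equiv to G1}, $(G1)$ from Proposition \ref{prop effective coproducts imply disjoint coproducts}, and $(G3)$ from the final proposition resting on Lemma \ref{lem weak descent for coequalizers}, while local presentability carries over by hypothesis. Your observation about the $(G1)$/$(G2)$ labelling mismatch between Definition \ref{def giraud's axioms} and the statements of those two propositions is accurate and worth flagging.
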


\subsection{Grothendieck Topoi Satisfy Weak Descent}
In this section we prove that $(T) \Rightarrow (D)'$. We will do this by first proving that $\ncat{Set}$ satisfies weak descent, which then immediately implies that presheaf topoi satisfy weak descent, and then using lex localization, show that this implies it for all Grothendieck topoi.

\begin{Lemma} \label{lem set has universal colimits}
$\ncat{Set}$ satisfies $(D1)$, i.e. it has universal colimits.
\end{Lemma}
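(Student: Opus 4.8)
The plan is to obtain $(D1)$ for $\ncat{Set}$ from the general observation, already recorded in the excerpt, that every locally cartesian closed category has universal colimits. Concretely, I would first invoke Proposition \ref{prop D1 equiv to G1}, which reduces $(D1)$ to the statement that for every function $f : T \to S$ the base-change functor $f^* : \ncat{Set}_{/S} \to \ncat{Set}_{/T}$ preserves all small colimits. Since $\ncat{Set}$ is locally cartesian closed, each such $f^*$ sits in an adjoint triple $\Sigma_f \dashv f^* \dashv \Pi_f$, so in particular $f^*$ has the right adjoint $\Pi_f$ (the dependent product) and is therefore a left adjoint; left adjoints preserve colimits, so $f^*$ preserves small colimits and Proposition \ref{prop D1 equiv to G1} yields $(D1)$. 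This is the shortest route and I expect it to have essentially no obstacle.

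If one prefers a self-contained verification that does not cite local cartesian closure, I would argue directly from the usual description of set-colimits. Given a small diagram $X : I \to \ncat{Set}$, write its colimit as $C = \colim_{i} X(i) = \left( \coprod_i X(i) \right)/\!\sim$, where $\sim$ is generated by $x \sim X(\phi)(x)$ for $\phi$ an arrow of $I$, with cocone maps $\iota_i : X(i) \to C$. For a fixed $f : Y \to C$ set $Y(i) = Y \times_C X(i) = \{(y,x) \in Y \times X(i) : f(y) = \iota_i(x)\}$; the universal property of pullbacks makes $i \mapsto Y(i)$ a diagram over $I$ sending $\phi : i \to j$ to $(y,x) \mapsto (y, X(\phi)(x))$. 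The comparison map $\colim_i Y(i) \to Y$ sends $[(y,x)] \mapsto y$, and I would show it is a bijection. Surjectivity is immediate: any $y \in Y$ has $f(y) = \iota_i(x)$ for some $i$ and $x$, whence $(y,x) \in Y(i)$ lies over $y$.

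The crux is injectivity, and this is the step I expect to be the main obstacle. Suppose $(y,x) \in Y(i)$ and $(y',x') \in Y(j)$ both map to the same point, so $y = y'$; then $\iota_i(x) = f(y) = f(y') = \iota_j(x')$, i.e. $[x] = [x']$ in $C$, so $x$ and $x'$ are joined by a finite zig-zag of arrows of $I$. Each intermediate element $x_k \in X(i_k)$ along this zig-zag satisfies $\iota_{i_k}(x_k) = f(y)$, so $(y,x_k) \in Y(i_k)$, and the arrows of the zig-zag carry $(y,x_k)$ to $(y,x_{k+1})$ by the description of $Y(\phi)$ above; hence all the classes $[(y,x_k)]$ coincide in $\colim_i Y(i)$, giving $[(y,x)] = [(y,x')]$. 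The delicate point is keeping the \emph{same} fixed $y$ while propagating $x$ along the zig-zag, but this goes through cleanly precisely because the $Y(i)$ are the fibers of $f$ pulled back fiberwise over $C$. I would present the adjoint-functor argument as the primary proof and relegate this elementary computation to a remark.
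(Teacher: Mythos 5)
Your primary argument is exactly the paper's proof: reduce $(D1)$ to colimit-preservation of the pullback functors via Proposition \ref{prop D1 equiv to G1}, then note that $\ncat{Set}$ is locally cartesian closed so each $f^*$ is a left adjoint. The supplementary hands-on verification is also correct, but the adjoint-functor route is all that is needed.
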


\begin{proof}
By Proposition \ref{prop D1 equiv to G1}, $(D1)$ is equivalent to $(G1)$, i.e. for every function $f: S \to T$, the functor $f^* : \ncat{Set}_{/T} \to \ncat{Set}_{/S}$ preserves colimits. This is true of every cartesian closed category, hence $\ncat{Set}$ satisfies $(D1)$.
\end{proof}

\begin{Lemma} \label{lem set has effective coproducts}
$\ncat{Set}$ satisfies $(D2a)$, i.e. it has effective coproducts. 
\end{Lemma}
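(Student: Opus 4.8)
The plan is to verify condition $(D2a)$ by a direct element-wise computation in $\ncat{Set}$, using the elementary fact that a coproduct of sets is a disjoint union whose elements remember the summand they came from. This disjointness is the only real ingredient, and it is what forces the relevant pullback to collapse onto a single summand.

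First I would fix the data: a family of functions $\{f_i : X_i \to Y_i\}_{i \in I}$, and write $X = \sum_i X_i$ and $Y = \sum_i Y_i$ with coproduct inclusions $\iota^X_i : X_i \to X$ and $\iota^Y_i : Y_i \to Y$, and $f = \sum_i f_i : X \to Y$ the induced map, characterized by $f \circ \iota^X_i = \iota^Y_i \circ f_i$. This commuting square is exactly what induces, via the universal property of the pullback, the natural comparison map $\eta_i : X_i \to Y_i \times_Y X$ whose bijectivity we must establish; concretely $\eta_i = \langle f_i, \iota^X_i \rangle$, so $\eta_i(x) = (f_i(x), \iota^X_i(x))$.

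Next I would describe all the sets explicitly. Realizing $X$ as $\{(x,i) : i \in I,\ x \in X_i\}$ and $Y$ as $\{(y,i) : i \in I,\ y \in Y_i\}$, we have $f(x,i) = (f_i(x), i)$ and $\iota^Y_i(y) = (y,i)$. Then I would compute
\[
Y_i \times_Y X = \{(y, \xi) \in Y_i \times X : \iota^Y_i(y) = f(\xi)\}.
\]
Writing $\xi = (x,j)$, the condition $\iota^Y_i(y) = f(\xi)$ reads $(y,i) = (f_j(x), j)$; because the coproduct is disjoint this forces $j = i$ and $y = f_i(x)$, so
\[
Y_i \times_Y X = \{(f_i(x), (x,i)) : x \in X_i\}.
\]

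Finally I would conclude. Since $\eta_i(x) = (f_i(x), (x,i))$, it is precisely a bijection of $X_i$ onto the set just computed, with inverse $(f_i(x),(x,i)) \mapsto x$. As this holds for every $i \in I$, all comparison maps $\eta_i$ are isomorphisms, which is exactly the statement that coproducts in $\ncat{Set}$ are effective, i.e. $(D2a)$ holds. There is no genuine obstacle here: the whole argument turns on the disjointness of the coproduct, namely that an element of $\sum_i Y_i$ uniquely determines its index, which is what eliminates the case $j \neq i$ and identifies the pullback with the $i$th summand.
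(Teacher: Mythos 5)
Your proof is correct and follows essentially the same route as the paper: compute the pullback $Y_i \times_Y X$ element-wise and observe that the disjointness of the coproduct forces the second coordinate to lie in the $i$th summand, so the comparison map is a bijection. Your version is simply more explicit about tagging elements with their index, which the paper's terser phrasing ("$\alpha(x) = y$ if and only if $x \in X_i$") leaves implicit.
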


\begin{proof}
Suppose we have a collection $\{ \alpha_i : X_i \to Y_i \}$ of morphisms in $\ncat{Set}$, and let $X = \sum_i X_i$ and $Y = \sum_i Y_i$. We want to show that the induced map
\begin{equation*}
    \begin{tikzcd}
	{X_i} \\
	& {Y_i \times_Y X} & X \\
	& {Y_i} & Y
	\arrow["h", dashed, from=1-1, to=2-2]
	\arrow["{\text{in}^X_i}", curve={height=-12pt}, from=1-1, to=2-3]
	\arrow["{\alpha_i}"', curve={height=12pt}, from=1-1, to=3-2]
	\arrow[from=2-2, to=2-3]
	\arrow[from=2-2, to=3-2]
	\arrow["\lrcorner"{anchor=center, pos=0.125}, draw=none, from=2-2, to=3-3]
	\arrow["\alpha"', from=2-3, to=3-3]
	\arrow["{\text{in}^Y_i}"', from=3-2, to=3-3]
\end{tikzcd}
\end{equation*}
is an isomorphism. But $Y_i \times_Y X \cong \{ (y, x) \in Y_i \times X \, : \, \alpha(x) = y \}$, and of course $\alpha(x) = y$ if and only if $x \in X_i$. Thus $h$ is an isomorphism, with inverse given by $h^{-1}(y,x) = x$.
\end{proof}

\begin{Lemma}
$\ncat{Set}$ satisfies $(D2b)'$, i.e. it has weak effective pushouts. 
\end{Lemma}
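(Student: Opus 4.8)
The plan is to reduce the statement to a surjectivity assertion and then to establish that surjectivity by a zig-zag lifting argument powered by the Cartesian hypothesis. First I would recall that in $\ncat{Set}$ every epimorphism is a regular, hence effective, epimorphism (Lemma \ref{lem epis are regular in set} together with the Lemma \ref{lem sufficient condition for reg epi to be effective} cited in the footnote of $(D2b)'$). Consequently it suffices to prove that each natural comparison map $X_i \to Y_i \times_Y X$ is a surjection, which is strictly weaker than the isomorphism demanded by the (failing) strong condition $(D2b)$.

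Write the top span as $X_0 \xleftarrow{g_1} X_1 \xrightarrow{g_2} X_2$ and the bottom span as $Y_0 \xleftarrow{h_1} Y_1 \xrightarrow{h_2} Y_2$; the hypothesis that the natural transformation is Cartesian says precisely that $X_1 \cong X_0 \times_{Y_0} Y_1$ and $X_1 \cong X_2 \times_{Y_2} Y_1$. The pushout $X = X_0 +_{X_1} X_2$ is the quotient of $X_0 \sqcup X_2$ by the equivalence relation generated by $g_1(a) \sim g_2(a)$ for $a \in X_1$, and $Y = Y_0 +_{Y_1} Y_2$ is described symmetrically. The central claim I would prove is the following: if $x \in X$ satisfies $f(x) = [y_0]$ for some $y_0 \in Y_0$, where $[-]$ denotes the class in $Y$, then $x$ admits a representative $x_0 \in X_0$ with $f_0(x_0) = y_0$. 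This claim yields at once the surjectivity of $X_0 \to Y_0 \times_Y X$, and by the symmetric argument the surjectivity of $X_2 \to Y_2 \times_Y X$; the remaining case $X_1 \to Y_1 \times_Y X$ then follows by lifting the $X_0$-representative through the pullback $X_1 \cong X_0 \times_{Y_0} Y_1$.

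To prove the claim I would pick any representative $w$ of $x$ in $X_0 \sqcup X_2$, so that $f(x)$ equals the class of $f_\epsilon(w)$ for the appropriate $\epsilon \in \{0,2\}$; since this class is $[y_0]$, there is a finite zig-zag in $Y_0 \sqcup Y_2$ connecting $f_\epsilon(w)$ to $y_0$ through elements $b_1, \dots, b_n \in Y_1$, each step identifying $h_1(b_k)$ with $h_2(b_k)$. The heart of the argument, and the step I expect to be the main obstacle to state cleanly, is lifting this zig-zag into $X$: given a representative already constructed over one end of a step $\{h_1(b_k), h_2(b_k)\}$, the relevant Cartesian square ($X_1 \cong X_0 \times_{Y_0} Y_1$ or $X_1 \cong X_2 \times_{Y_2} Y_1$) provides a unique element of $X_1$ over $b_k$ sitting above it, and the image of that element under the opposite leg is a representative over the other end that is $\sim$-equivalent to it. Iterating over the whole zig-zag produces an element of $X_0 \sqcup X_2$ lying over $y_0$ and inside the class $x$; because $y_0 \in Y_0$, disjointness of the coproduct forces this element into $X_0$, giving the desired $x_0$. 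I would emphasize that the failure of $(D2b)$ corresponds exactly to the non-uniqueness of such lifts, while weak descent asks only for their existence, which the fiberwise bijections supplied by the Cartesian squares guarantee at every step.
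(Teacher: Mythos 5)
Your proposal is correct and follows essentially the same route as the paper's proof: reduce to surjectivity via the fact that all epimorphisms in $\ncat{Set}$ are effective, describe the pushouts as quotients by generated equivalence relations, and lift the connecting zig-zag step by step using the fiberwise bijections supplied by the two Cartesian squares. The only (welcome) addition is that you explicitly dispatch the $i=1$ case by factoring through the $i=0$ case and the pullback $X_1 \cong X_0 \times_{Y_0} Y_1$, which the paper leaves implicit.
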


\begin{proof}
In $\ncat{Set}$, all epimorphisms are effective epimorphisms. So we need to show that if we have a cartesian natural transformation of pushouts
\begin{equation} \label{eq cartesian map of pushouts}
    \begin{tikzcd}
	{X_0} & {X_1} & {X_2} \\
	{Y_0} & {Y_1} & {Y_2}
	\arrow["{\alpha_0}"', from=1-1, to=2-1]
	\arrow["{f_0}"', from=1-2, to=1-1]
	\arrow["{f_1}", from=1-2, to=1-3]
	\arrow["\lrcorner"{anchor=center, pos=0.125, rotate=-90}, draw=none, from=1-2, to=2-1]
	\arrow["{\alpha_1}"', from=1-2, to=2-2]
	\arrow["\lrcorner"{anchor=center, pos=0.125}, draw=none, from=1-2, to=2-3]
	\arrow["{\alpha_2}", from=1-3, to=2-3]
	\arrow["{g_0}", from=2-2, to=2-1]
	\arrow["{g_1}"', from=2-2, to=2-3]
\end{tikzcd}
\end{equation}
and if we let $X = X_0 +_{X_1} X_2$ and $Y = Y_0 +_{Y_1} Y_2$, then the induced maps
\begin{equation*}
   \begin{tikzcd}
	{X_0} &&& {X_2} \\
	& {Y_0 \times_Y X} & X && {Y_2 \times_Y X} & X \\
	& {Y_0} & Y && {Y_2} & Y
	\arrow["{h_0}", dashed, from=1-1, to=2-2]
	\arrow["{\sigma_0}", curve={height=-12pt}, from=1-1, to=2-3]
	\arrow["{\alpha_0}"', curve={height=12pt}, from=1-1, to=3-2]
	\arrow["{h_2}", dashed, from=1-4, to=2-5]
	\arrow["{\sigma_2}", curve={height=-12pt}, from=1-4, to=2-6]
	\arrow["{\alpha_2}"', curve={height=12pt}, from=1-4, to=3-5]
	\arrow[from=2-2, to=2-3]
	\arrow[from=2-2, to=3-2]
	\arrow["\lrcorner"{anchor=center, pos=0.125}, draw=none, from=2-2, to=3-3]
	\arrow["\alpha", from=2-3, to=3-3]
	\arrow[from=2-5, to=2-6]
	\arrow[from=2-5, to=3-5]
	\arrow["\lrcorner"{anchor=center, pos=0.125}, draw=none, from=2-5, to=3-6]
	\arrow["\alpha", from=2-6, to=3-6]
	\arrow["{\lambda_0}"', from=3-2, to=3-3]
	\arrow["{\lambda_2}"', from=3-5, to=3-6]
\end{tikzcd} 
\end{equation*}
are epimorphisms. Let us prove that $h_0$ is an epimorphism, as showing $h_2$ is an epimorphism follows similarly. 

First, let us consider the set $Y = Y_0 +_{Y_1} Y_2$. This is precisely the set $Y_0 + Y_1/{\sim_{Y_1}}$, where $\sim_{Y_1}$ is the smallest equivalence relation containing the relation $\sim_{Y_1}'$, where for $y, y' \in Y_0 + Y_2$, we have $y \sim_{Y_1}' y'$ if and only if there exists a $y'' \in Y_1$ such that $g_0(y'') = y$ and $g_1(y'') = y'$.

Note that $h_0$ is the map $h_0(x_0) = (\alpha_0(x_0), [x_0])$, which belongs to $Y_0 \times_Y X \cong \{(y_0, [x]) \in Y_0 \times (X_0 +_{X_1} X_2) \, : \, [y_0] = \alpha[x] \},$ since $\alpha[x_0] = [\alpha_0(x_0)]$. Now given $(y_0, [x]) \in Y_0 \times_Y X$, we want to show that there exists a $x_0 \in X_0$ such that $h(x_0) = (y_0, [x])$. So choose some $x_0 \in [x]$. This means that $x_0 \in X_0 + X_2$. Suppose that $x_0 \in X_0$, for if $x_0 \in X_2$, the argument is similar. Then $[y_0] = \alpha[x_0]$ implies that $\alpha_0(x_0) \sim_{Y_1} y_0$. Assume that $\alpha_0(x_0) \neq y_0$, otherwise $h_0$ is surjective. Then since $\alpha_0(x_0) \sim_{Y_1} y_0$, and there exists a zig-zag of the form
\begin{equation*}
   \begin{tikzcd}
	& {Y_1} && {Y_1} && \dots \\
	{Y_0} && {Y_2} && {Y_0} && {Y_0}
	\arrow["{g_0}"', from=1-2, to=2-1]
	\arrow["{g_1}", from=1-2, to=2-3]
	\arrow["{g_1}"', from=1-4, to=2-3]
	\arrow["{g_0}", from=1-4, to=2-5]
	\arrow[from=1-6, to=2-5]
	\arrow["{g_0}", from=1-6, to=2-7]
\end{tikzcd} 
\end{equation*}
and elements $y^1_0, \dots, y^1_n$ such that $g_0(y^1_0) = \alpha_0(x_0)$, $g_1(y^1_0) = g_1(y^1_1)$, $g_0(y^1_1) = g_0(y^1_2)$, $\dots$, $g_1(y^1_{n-1}) = g_1(y^1_n)$ and $g_0(y^1_n) = y_0$. But if $g_0(y^1_0) = \alpha_0(x_0)$, then since the left-hand square in (\ref{eq cartesian map of pushouts}) is a pullback, there exists a $x_1 \in X_1$ such that $f_0(x_1) = x_0$ and $\alpha_1(x_1) = y^1_0$. But then $\alpha_2(f_1(x_1)) = g_1(\alpha_1(x_1)) = g_1(y^1_0) = g_1(y^1_1)$. But since the right-hand square in (\ref{eq cartesian map of pushouts}) is a pullback, that means that there is a $x_1' \in X_1$ such that $\alpha_1(x'_1) = y^1_1$ and $f_1(x'_1) = f_1(x_1)$. So $g_0(y^1_1) = g_0(\alpha_1(x'_1)) = \alpha_0(f_0(x'_1))$. Furthermore $x_0 \sim_{X_1} f_0(x'_1)$ because $x_0 = f_0(x_1)$, and $f_1(x_1) = f_1(x'_1)$. Continuing on in this way, we can show that there exists a $\widetilde{x} \in X_0$ such that $y_0 = \alpha_0(\widetilde{x})$ and such that $\widetilde{x} \sim_{X_1} \alpha_0(x_0)$. In other words, we have shown that $h_0$ is surjective. Thus $\ncat{Set}$ has weak effective pushouts.
\end{proof}

We leave the next result to the reader, it follows straightforwardly from the fact that colimits and limits are computed objectwise in presheaf topoi.

\begin{Lemma} \label{lem presheaf topoi satisfy weak descent}
If $\Pre(\cat{C})$ is a presheaf topos, then it satisfies weak descent.
\end{Lemma}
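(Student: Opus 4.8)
The plan is to reduce every clause in the definition of weak descent to the corresponding statement about $\ncat{Set}$, which has already been established in the three preceding lemmas. Local presentability is immediate, since $\Pre(\cat{C})$ is locally presentable by Lemma \ref{lem presheaf topoi are locally presentable}, so it only remains to verify conditions $(D1)$, $(D2a)$ and $(D2b)'$. The engine driving the reduction is that in a presheaf topos both limits and colimits are computed objectwise, so for every $U \in \cat{C}$ the evaluation functor $\mathrm{ev}_U : \Pre(\cat{C}) \to \ncat{Set}$, $X \mapsto X(U)$, preserves all small limits and colimits simultaneously. In particular it preserves the pullbacks, pushouts, coproducts and coequalizers, together with the canonical comparison maps, appearing in $(D1)$, $(D2a)$ and $(D2b)'$.

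For $(D1)$, given a map $Y \to \colim_i X_i$ of presheaves and setting $X_i' = Y \times_{\colim_i X_i} X_i$, I would evaluate the canonical map $\colim_i X_i' \to Y$ at each object $U$. Since $\mathrm{ev}_U$ preserves the colimit and the pullbacks, this becomes precisely the corresponding comparison map in $\ncat{Set}$, which is an isomorphism by Lemma \ref{lem set has universal colimits}. A natural transformation that is objectwise an isomorphism is itself an isomorphism, so $(D1)$ holds. The verification of $(D2a)$ is identical in spirit: evaluating the comparison maps $X_i \to Y_i \times_Y X$ at each $U$ reduces the claim to Lemma \ref{lem set has effective coproducts}.

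The only clause requiring a little care is $(D2b)'$, because it asserts that certain comparison maps are effective epimorphisms rather than isomorphisms. Here I would first observe that effective epimorphisms are also detected objectwise in $\Pre(\cat{C})$: since kernel pairs are limits and coequalizers are colimits, both computed objectwise, a map $f$ of presheaves is an effective epimorphism if and only if $f(U)$ is an effective epimorphism of sets for every $U \in \cat{C}$. Given a Cartesian map of spans as in $(D2b)'$, evaluating at $U$ produces a Cartesian map of spans in $\ncat{Set}$, and the preceding lemma on weak effective pushouts in $\ncat{Set}$ guarantees that each comparison map $X_i(U) \to (Y_i \times_Y X)(U)$ is an effective epimorphism. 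By the objectwise criterion just noted, the comparison maps $X_i \to Y_i \times_Y X$ in $\Pre(\cat{C})$ are then effective epimorphisms, establishing $(D2b)'$. This objectwise characterization of effective epimorphisms is the main (and only modest) obstacle; once it is in hand, all three conditions follow mechanically from their $\ncat{Set}$-level counterparts, and $\Pre(\cat{C})$ satisfies weak descent.
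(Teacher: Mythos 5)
Your proof is correct and follows exactly the route the paper intends: the paper leaves this lemma to the reader with the remark that it ``follows straightforwardly from the fact that colimits and limits are computed objectwise in presheaf topoi,'' which is precisely the reduction to the $\ncat{Set}$-level lemmas that you carry out via the evaluation functors. The one step you flag as needing care, the objectwise detection of effective epimorphisms, is right as you argue it, and could even be shortcut by noting that in a presheaf topos every epimorphism is effective (Corollary \ref{cor monos in presheaf toposes} and its dual) and epimorphisms are detected objectwise (Remark \ref{rem epis of presheaves computed objectwise}).
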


\begin{Prop}
If $\cat{E}$ is a Grothendieck topos, then $\cat{E}$ satisfies weak descent, i.e. $(T) \Rightarrow (D)'$.
\end{Prop}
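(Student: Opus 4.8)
The plan is to realize $\cat{E}$ as a left exact reflective localization of a presheaf topos and transport weak descent across it. Since $\cat{E}$ is a Grothendieck topos there is a site $(\cat{C},j)$ with $\cat{E} \simeq \Sh(\cat{C},j)$, and as weak descent is invariant under equivalence of categories it suffices to treat $\cat{E} = \Sh(\cat{C},j)$. By Corollary \ref{cor sheafification gives lex localization} the inclusion $i \colon \Sh(\cat{C},j) \hookrightarrow \Pre(\cat{C})$ is a reflective subcategory whose reflector $a$ (sheafification) preserves finite limits (Proposition \ref{prop sheafification props}). I will use four facts throughout: $i$ is fully faithful and preserves all limits, so $ai \cong 1$; $a$ preserves finite limits and all colimits; finite limits in $\cat{E}$ agree with those in $\Pre(\cat{C})$ while colimits in $\cat{E}$ are computed by sheafifying the presheaf colimit (Corollary \ref{cor sheaf topoi are bicomplete}); and $\Pre(\cat{C})$ already satisfies weak descent (Lemma \ref{lem presheaf topoi satisfy weak descent}). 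Local presentability of $\cat{E}$ is part of being a Grothendieck topos, so it remains only to verify $(D1)$, $(D2a)$ and $(D2b)'$.

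For $(D1)$ I would take a diagram $X \colon I \to \cat{E}$ with $\cat{E}$-colimit $c$, a map $f \colon Z \to c$, and set $Z_i = Z \times_c X_i$. Writing $P = \colim^{\Pre}_i iX_i$, we have $c \cong aP$, and the unit $u \colon P \to ic$ satisfies $au \cong \mathrm{id}_c$ because the reflector inverts its own unit. Pulling $iZ$ back along $u$ to form $W = iZ\times_{ic}P$ and applying universality of colimits in $\Pre(\cat{C})$ to $W \to P$, together with the pasting law (which gives $W\times_P iX_i \cong iZ \times_{ic} iX_i = iZ_i$, using that $i$ preserves pullbacks), identifies $\colim^{\Pre}_i iZ_i \cong W$. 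Applying $a$ and using left exactness then yields $\colim^{\cat{E}}_i Z_i \cong a(iZ)\times_{a(ic)}a(P) \cong Z\times_c c \cong Z$, the last pullback being taken along the isomorphism $au$. This is exactly $(D1)$.

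For $(D2a)$ and $(D2b)'$ the scheme is the same and slightly simpler: apply $i$ to the relevant (co)cartesian data (recalling that $i$ preserves pullbacks), invoke the corresponding property of $\Pre(\cat{C})$, and push the result through $a$, commuting it past the finite limits involved and using $aP \cong X$, $aQ \cong Y$ for the presheaf coproducts or pushouts $P,Q$. For effective coproducts this turns the presheaf isomorphism $iX_k \cong iY_k\times_Q P$ into $X_k \cong Y_k\times_Y X$. The main obstacle, and the only place requiring genuine care, is $(D2b)'$: there the comparison maps in $\Pre(\cat{C})$ are merely effective epimorphisms rather than isomorphisms, so I must check that $a$ preserves effective epimorphisms. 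This holds because an effective epimorphism is the coequalizer of its kernel pair, and $a$ preserves both the kernel pair (a finite limit) and the coequalizer (a colimit); hence $a$ carries the presheaf comparison $iX_k \to iY_k\times_Q P$ to an effective epimorphism $X_k \to Y_k\times_Y X$ in $\cat{E}$, establishing $(D2b)'$ and completing the proof that $(T)\Rightarrow (D)'$.
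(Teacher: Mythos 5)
Your proposal is correct and follows essentially the same route as the paper: realize $\cat{E}$ as a lex reflective localization of $\Pre(\cat{C})$ via sheafification, transport $(D1)$, $(D2a)$, $(D2b)'$ from the presheaf topos, and use that $a$ preserves effective epimorphisms because it preserves kernel pairs and coequalizers. If anything, your treatment of $(D1)$ — working with the presheaf colimit $P$, the unit $u\colon P\to ic$, and the intermediate pullback $W$ — is more careful than the paper's, which applies presheaf universality directly to $\iota Y\to\iota X$ without distinguishing the presheaf colimit from the sheaf colimit.
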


\begin{proof}
If $\cat{E}$ is a Grothendieck topos, then by the Little Giraud Theorem \ref{th lex localizations <-> grothendieck toposes}, there exists a site $(\cat{C}, j)$ and a lex localization
\begin{equation*}
    \begin{tikzcd}
	{\Sh(\cat{C},j) \simeq \cat{E}} && {\Pre(\cat{C})}
	\arrow[""{name=0, anchor=center, inner sep=0}, "{\iota}"', shift right=2, hook, from=1-1, to=1-3]
	\arrow[""{name=1, anchor=center, inner sep=0}, "a"', shift right=2, from=1-3, to=1-1]
	\arrow["\dashv"{anchor=center, rotate=-90}, draw=none, from=1, to=0]
\end{tikzcd}
\end{equation*}
Let us show that $\Sh(\cat{C}, j)$ has universal colimits. Suppose we have a small diagram $X: I \to \Sh(\cat{C},j)$, and we abuse notation to let $X = \colim_i X(i)$. Suppose there is a map $f : Y \to X$, and consider the pullback 
\begin{equation*}
   \begin{tikzcd}
	{Y(i)} & Y \\
	{X(i)} & X
	\arrow["{\sigma_i}", from=1-1, to=1-2]
	\arrow["{f(i)}"', from=1-1, to=2-1]
	\arrow["\lrcorner"{anchor=center, pos=0.125}, draw=none, from=1-1, to=2-2]
	\arrow["f", from=1-2, to=2-2]
	\arrow["{\lambda_i}"', from=2-1, to=2-2]
\end{tikzcd} 
\end{equation*}
for every $i \in I$. We want to show that $\colim_i Y(i) \cong Y$. Now $\iota$ preserves the above pullback for every $i \in I$, and since presheaf topoi have universal colimits, we have $\colim_i \iota Y(i) \cong \iota Y$. Then 
\begin{equation*}
    \colim_i Y(i) \cong \colim_i a \iota Y(i) \cong a \left( \colim_i \iota Y(i) \right) \cong  a \iota Y \cong Y.
\end{equation*}
Thus $\Sh(\cat{C}, j) \simeq \cat{E}$ has universal colimits.

The proof for effective coproducts follows similarly.

Now let us show that $(T) \Rightarrow (D2b)'$. So suppose we have a cartesian natural transformation of pushouts in $\Sh(\cat{C}, j)$
\begin{equation*}
       \begin{tikzcd}
	{X_0} & {X_1} & {X_2} \\
	{Y_0} & {Y_1} & {Y_2}
	\arrow["{\alpha_0}"', from=1-1, to=2-1]
	\arrow["{f_0}"', from=1-2, to=1-1]
	\arrow["{f_1}", from=1-2, to=1-3]
	\arrow["\lrcorner"{anchor=center, pos=0.125, rotate=-90}, draw=none, from=1-2, to=2-1]
	\arrow["{\alpha_1}"', from=1-2, to=2-2]
	\arrow["\lrcorner"{anchor=center, pos=0.125}, draw=none, from=1-2, to=2-3]
	\arrow["{\alpha_2}", from=1-3, to=2-3]
	\arrow["{g_0}", from=2-2, to=2-1]
	\arrow["{g_1}"', from=2-2, to=2-3]
\end{tikzcd} 
\end{equation*}
Then applying $\iota$ to the diagrams
\begin{equation*}
   \begin{tikzcd}
	{X_0} &&& {X_2} \\
	& {Y_0 \times_Y X} & X && {Y_2 \times_Y X} & X \\
	& {Y_0} & Y && {Y_2} & Y
	\arrow["{h_0}", dashed, from=1-1, to=2-2]
	\arrow["{\sigma_0}", curve={height=-12pt}, from=1-1, to=2-3]
	\arrow["{\alpha_0}"', curve={height=12pt}, from=1-1, to=3-2]
	\arrow["{h_2}", dashed, from=1-4, to=2-5]
	\arrow["{\sigma_2}", curve={height=-12pt}, from=1-4, to=2-6]
	\arrow["{\alpha_2}"', curve={height=12pt}, from=1-4, to=3-5]
	\arrow[from=2-2, to=2-3]
	\arrow[from=2-2, to=3-2]
	\arrow["\lrcorner"{anchor=center, pos=0.125}, draw=none, from=2-2, to=3-3]
	\arrow["\alpha", from=2-3, to=3-3]
	\arrow[from=2-5, to=2-6]
	\arrow[from=2-5, to=3-5]
	\arrow["\lrcorner"{anchor=center, pos=0.125}, draw=none, from=2-5, to=3-6]
	\arrow["\alpha", from=2-6, to=3-6]
	\arrow["{\lambda_0}"', from=3-2, to=3-3]
	\arrow["{\lambda_2}"', from=3-5, to=3-6]
\end{tikzcd} 
\end{equation*}
we see that $\iota(h_0)$ and $\iota(h_2)$ are effective epimorphisms, since $\Pre(\cat{C})$ has weak effective pushouts. Since $\alpha$ preserves finite limits and colimits, it preserves effective epimorphisms. Thus $\alpha \iota(h_0) \cong h_0$ and $\alpha \iota(h_2) \cong h_2$ are effective epimorphisms.
\end{proof}

Thus we have finally arrived at the major goal of this section.

\begin{Th}[Giraud's Theorem] \label{th Giraud's Theorem}
Let $\cat{E}$ be a locally presentable category, then the following are equivalent:
\begin{enumerate}
    \item $\cat{E}$ satisfies weak descent $(D)'$,
    \item $\cat{E}$ satisfies Giraud's axioms $(G)$, and
    \item $\cat{E}$ is a Grothendieck topos $(T)$.
\end{enumerate}
\end{Th}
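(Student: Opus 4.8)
The plan is to prove the theorem by closing the cycle of implications $(D)' \Rightarrow (G) \Rightarrow (T) \Rightarrow (D)'$, each link of which has already been established in the preceding subsections. Since a cyclic chain of implications forces mutual equivalence, establishing these three arrows is logically sufficient to conclude that $(1)$, $(2)$ and $(3)$ are equivalent for a locally presentable category $\cat{E}$. Thus the role of this final theorem is to collect and assemble the hard work of the section rather than to introduce genuinely new content; the proof is essentially a bookkeeping argument invoking three named results.

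Concretely, I would first cite Proposition \ref{prop weak descent implies giraud's axioms} for the implication $(D)' \Rightarrow (G)$, which itself decomposes into three pieces: that $(D1)$ is equivalent to universal colimits $(G1)$ via Proposition \ref{prop D1 equiv to G1}, that effective coproducts $(D2a)$ yield disjoint coproducts $(G2)$ via Proposition \ref{prop effective coproducts imply disjoint coproducts}, and that $(D1)+(D2b)'$ produce effective equivalence relations $(G3)$ through the weak descent for coequalizers of Lemma \ref{lem weak descent for coequalizers}. Next I would invoke Theorem \ref{th first half of Giraud's Theorem} for $(G) \Rightarrow (T)$: here one uses local presentability to obtain a small dense subcategory $\cat{C} \hookrightarrow \cat{E}$, builds the Giraud coverage $j_\Gir$ (Lemma \ref{lem forming the Giraud coverage}, Lemma \ref{lem giraud coverage is saturated}), shows via Proposition \ref{prop restricted yoneda is giraud sheaf} that the restricted Yoneda embedding lands in $\Sh(\cat{C}, j_\Gir)$, and finally verifies that the factored reflective localization is an adjoint equivalence $\cat{E} \simeq \Sh(\cat{C}, j_\Gir)$.

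Finally I would close the loop with $(T) \Rightarrow (D)'$, bootstrapping upward from $\ncat{Set}$: Lemmas \ref{lem set has universal colimits}, \ref{lem set has effective coproducts} and the weak-effective-pushout lemma show $\ncat{Set}$ satisfies weak descent, Lemma \ref{lem presheaf topoi satisfy weak descent} transports this objectwise to presheaf toposes, and then the Little Giraud Theorem \ref{th lex localizations <-> grothendieck toposes} presents any Grothendieck topos as a lex localization $a : \Pre(\cat{C}) \rightleftarrows \Sh(\cat{C}, j)$, whereupon exactness of $a$ (preserving finite limits and all colimits, hence effective epimorphisms) lets weak descent descend from $\Pre(\cat{C})$ to $\Sh(\cat{C}, j) \simeq \cat{E}$.

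The genuine difficulty of the whole program was never the assembly step itself but lay in two of the individual arrows. In the $(D)' \Rightarrow (G)$ direction, the hard part is extracting effective equivalence relations from the weakened descent axiom $(D2b)'$, since one only has effective epimorphisms rather than genuine pullback squares to work with, and the argument must manufacture the split coequalizer presentation of $R^{(2)} \rightrightarrows R \to X$ and then combine it with weak descent for coequalizers to force the relevant map to be simultaneously a monomorphism and an effective epimorphism, hence an isomorphism by balancedness. In the $(T) \Rightarrow (D)'$ direction the subtle point is that weak effective pushouts in $\ncat{Set}$ genuinely fail for arbitrary pushouts (Example \ref{ex Set doesn't have effective pushouts}), so one must verify the weakened statement $(D2b)'$ by an explicit zig-zag chase through the equivalence relation generated by the pushout. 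Once those two obstacles are cleared in their respective subsections, the present theorem follows immediately by chaining the three implications.
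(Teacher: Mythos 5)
Your proposal is correct and follows exactly the paper's strategy: the theorem is the assembly of the three implications $(D)' \Rightarrow (G)$ (Proposition \ref{prop weak descent implies giraud's axioms}), $(G) \Rightarrow (T)$ (Theorem \ref{th first half of Giraud's Theorem}), and $(T) \Rightarrow (D)'$, each proved in the preceding subsections, with the cycle forcing mutual equivalence. Your identification of where the real difficulty lies (extracting $(G3)$ from $(D2b)'$ and verifying weak effective pushouts in $\ncat{Set}$) matches the paper's emphasis as well.
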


\subsection{Infinity Toposes}
In this very brief section, we discuss the major difference between Grothendieck $1$-toposes and $\infty$-toposes.

\begin{Rem}
By a Grothendieck $1$-topos, we mean what we have been calling a Grothendieck topos this whole time. For this section, we will reference $\infty$-categories, but will not define them, or motivate them. For that we recommend \cite{rezk2022introduction}, \cite{land2021introduction}, \cite{riehl2020homotopical}.
\end{Rem}

In the landmark book \cite{lurie2009higher}, Lurie defines $\infty$-topoi as follows: an $\infty$-category $\cat{E}$ is an $\infty$-topos if there is a small $\infty$-category $\cat{C}$ and an accessible\footnote{i.e. the right adjoint preserves filtered $\infty$-colimits} lex localization (in the $\infty$-category sense) $\Pre^\infty(\cat{C}) \to \cat{E}$, where $\Pre^\infty(\cat{C}) = \cat{S}^{\cat{C}^\op}$ is the $\infty$-category of $\infty$-functors from $\cat{C}^\op$ to the $\infty$-category $\cat{S}$ of spaces (up-to-homotopy equivalence).

When Lurie writes ``$\infty$-topos,'' he means Grothendieck $\infty$-topos. So he takes the Little Giraud Theorem \ref{th lex localizations <-> grothendieck toposes} as the starting point for his definition. He then proves the following theorem\footnote{building off of very similar work from \cite{rezk2010toposes} and \cite{toen2005homotopical} in the context of model categories and model topoi}

\begin{Th}[{\cite[Theorem 6.1.0.6]{lurie2009higher}}]
If $\cat{E}$ is an $\infty$-category, then the following statements are equivalent:
\begin{itemize}
    \item $\cat{E}$ is an $\infty$-topos,
    \item $\cat{E}$ satisfies the $\infty$-categorical analogues of Giraud's axioms:
    \begin{enumerate}
        \item $\cat{E}$ is a (locally) presentable $\infty$-category,
        \item $\cat{E}$ has universal $\infty$-colimits,
        \item $\cat{E}$ has disjoint $\infty$-coproducts, and
        \item $\cat{E}$ has effective groupoid objects.
    \end{enumerate}
\end{itemize}
\end{Th}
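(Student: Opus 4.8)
The plan is to follow exactly the template of the $1$-categorical argument, reading the definition of $\infty$-topos (an accessible lex localization of $\Pre^\infty(\cat{C})$) as the $\infty$-analogue of statement $(T)$, and the four listed conditions as the joint $\infty$-analogue of $(G)$ and $(D)'$. Since the definition already packages ``$\infty$-topos'' as a localization, the two implications to establish are: every accessible lex localization of a presheaf $\infty$-category satisfies the $\infty$-Giraud axioms, and conversely every $\infty$-category satisfying them arises as such a localization. I would organize everything around $\infty$-categorical descent, which here is the genuine (non-weak) statement that the universe pseudofunctor $U \mapsto \cat{E}_{/U}$ sends $\infty$-colimits to $\infty$-limits; conditions (2), (3), (4) together form a convenient generating set for descent, just as $(D1)+(D2a)+(D2b)$ generated descent in the $1$-categorical discussion.

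Forward direction: first I would establish the base case that the $\infty$-category $\cat{S}$ of spaces satisfies descent. This is the decisive point where the $\infty$-world diverges from the $1$-categorical one: whereas $\ncat{Set}$ only had \emph{weak} effective colimits (Example \ref{ex Set doesn't have effective pushouts}), in $\cat{S}$ the higher homotopical data restores exactly the information lost upon gluing, so colimits are genuinely effective and $\cat{S}$ has effective groupoid objects. From descent in $\cat{S}$ I would deduce it objectwise for $\Pre^\infty(\cat{C}) = \Fun(\cat{C}^\op, \cat{S})$, since $\infty$-limits and $\infty$-colimits there are computed pointwise (the analogue of Lemma \ref{lem presheaf topoi satisfy weak descent}). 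Finally I would show the four axioms are inherited by an accessible lex localization $L : \Pre^\infty(\cat{C}) \to \cat{E}$: local presentability is standard because the right adjoint is accessible, and universality, disjointness and effectivity transfer because $L$ preserves finite $\infty$-limits and all $\infty$-colimits while the inclusion reflects them, exactly as $a$ and $\iota$ were used in proving $(T) \Rightarrow (D)'$.

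Reverse direction: assuming the four axioms, local presentability supplies a small dense full subcategory $\cat{C} \hookrightarrow \cat{E}$ of $\kappa$-compact objects and an adjunction $\Pre^\infty(\cat{C}) \rightleftarrows \cat{E}$ whose right adjoint is fully faithful and accessible, realizing $\cat{E}$ as an accessible reflective localization, mirroring the passage from $(G)$ to $(T)$. It then remains to prove the reflector $L$ is left exact. This is the crux, and it is where descent does all the work: one shows that the class $W = L^{-1}(\mathrm{iso})$ of local equivalences is stable under $\infty$-base change, and stability of $W$ under pullback is equivalent to $L$ preserving finite limits. The verification uses effectivity of groupoid objects and universality of colimits to check that pullbacks of local equivalences remain local equivalences, which is the $\infty$-analogue of how effective equivalence relations (G3) entered the $1$-categorical story.

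The hard part will be this left-exactness of the reflector, together with the coherence bookkeeping the $\infty$-setting imposes. In particular, ``effective groupoid objects'' is a condition on a \emph{simplicial} diagram satisfying the Segal/groupoid conditions rather than on a single span, so checking that base change preserves local equivalences requires manipulating coherent simplicial objects instead of the strict pullback squares used in Lemma \ref{lem weak descent for coequalizers} and Proposition \ref{prop Set has effective equivalence relations}. For these reasons I would not attempt a self-contained argument but would cite \cite[Theorem 6.1.0.6]{lurie2009higher}, whose proof proceeds via the existence of object classifiers, indicating only the dictionary above that translates each $1$-categorical step into its homotopy-coherent counterpart and emphasizing that full descent, rather than the weak descent of Theorem \ref{th Giraud's Theorem}, is what makes the $\infty$-analogue clean.
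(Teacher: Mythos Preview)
The paper does not prove this theorem: it is stated with a citation to \cite[Theorem 6.1.0.6]{lurie2009higher} and accompanied only by informal commentary explaining why effective groupoid objects replace effective equivalence relations and noting that full descent (rather than weak descent) holds in the $\infty$-setting. Your proposal is therefore consistent with the paper's treatment, and in fact supplies considerably more structural detail than the paper does; since you also conclude by deferring to Lurie, there is no discrepancy to flag.
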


Now the $\infty$-categorical analogues of Giraud's axioms are straightforward except for (4). The idea being that in the $\infty$-category context, when one ``quotients,'' one needs to ``remember how one quotiented.'' In other words, for sets if we glue together a bunch of points, we have lost the information of \textit{how} the points were identified in the first place. Since equivalence relations are precisely internal groupoids with no automorphisms, suitably interpreted \cite{minichiello2021internalgroupoid}, in the $\infty$-category context, internal groupoids are understand as certain kinds of simplicial objects. So in the $\infty$-category context, quotienting is a much more sophisticated operation, and hence effective equivalence relations become effective groupoid objects.

Now we can restate the conditions for descent, i.e. Van Kampen colimits, and here, using $\infty$-categories actually makes everything easier to understand. We say that an $\infty$-category $\cat{E}$ satisfies descent if for every $\infty$-diagram the canonical $\infty$-adjunction
\begin{equation*}
 \begin{tikzcd}
	{\cat{E}_{/ \colim_i X(i)}} && {\lim_i \cat{E}_{/X(i)}}
	\arrow[""{name=0, anchor=center, inner sep=0}, "{{\text{cst}}}"', shift right=2, from=1-1, to=1-3]
	\arrow[""{name=1, anchor=center, inner sep=0}, "\colim"', shift right=2, from=1-3, to=1-1]
	\arrow["\dashv"{anchor=center, rotate=-90}, draw=none, from=1, to=0]
\end{tikzcd}	
\end{equation*}
is an equivalence of $\infty$-categories, where now the (co)limits shown above are $\infty$-(co)limits in $\cat{E}$ or in the (very large) $\infty$-category $\ncat{Cat}^\infty$ of large infinity categories.

\begin{Th}
An $\infty$-category $\cat{E}$ is an $\infty$-topos if and only if it is presentable and satisfies descent.
\end{Th}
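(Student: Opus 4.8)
The plan is to prove this by running the same circle of implications used for the $1$-categorical Giraud Theorem \ref{th Giraud's Theorem}, transported to the $\infty$-categorical setting. Writing $(D)_\infty$ for ``$\cat{E}$ is presentable and satisfies descent,'' $(G)_\infty$ for ``$\cat{E}$ is presentable and satisfies the $\infty$-categorical Giraud axioms,'' and $(T)_\infty$ for ``$\cat{E}$ is an $\infty$-topos,'' I would establish $(D)_\infty \Rightarrow (G)_\infty \Rightarrow (T)_\infty \Rightarrow (D)_\infty$. The two structural differences from the $1$-categorical case are that full descent replaces weak descent---because in the $\infty$-categorical world colimits genuinely remember how they were formed---and that the effectivity of equivalence relations is upgraded to the effectivity of groupoid objects, which are simplicial objects satisfying a Segal-type groupoid condition rather than truncated set-level relations.

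First I would prove $(D)_\infty \Rightarrow (G)_\infty$ by extracting each Giraud axiom from descent, paralleling Propositions \ref{prop D1 equiv to G1} and \ref{prop effective coproducts imply disjoint coproducts}. Universal $\infty$-colimits are exactly the statement that each slice pullback functor preserves $\infty$-colimits, which is the descent equivalence $\cat{E}_{/\colim_i X(i)} \simeq \lim_i \cat{E}_{/X(i)}$ read off on objects over the colimit; disjoint $\infty$-coproducts follow from descent restricted to coproduct diagrams together with the initiality of the empty fiber. The effectivity of groupoid objects is the genuine $\infty$-analogue of effective equivalence relations: given a groupoid object $G_\bullet$ with geometric realization $|G_\bullet|$, applying descent to the simplicial diagram exhibits $G_\bullet$ as the \v{C}ech nerve of $G_0 \to |G_\bullet|$.

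Next, for $(G)_\infty \Rightarrow (T)_\infty$, I would run the $\infty$-categorical Giraud reconstruction, mirroring Section \ref{section weak descent implies giraud axioms}: since $\cat{E}$ is presentable, choose a small full sub-$\infty$-category $\cat{C} \hookrightarrow \cat{E}$ of $\kappa$-compact generators, form the restricted Yoneda functor $\cat{E} \to \Pre^\infty(\cat{C})$, and prove it is a fully faithful right adjoint whose left adjoint preserves finite limits, with the effectivity of groupoid objects playing the role that effective equivalence relations played before. This realizes $\cat{E}$ as an accessible lex localization of $\Pre^\infty(\cat{C})$, hence an $\infty$-topos. Finally, for $(T)_\infty \Rightarrow (D)_\infty$, presentability is immediate since accessible lex localizations of presentable $\infty$-categories are presentable, and descent transfers in three stages exactly as in the proof that Grothendieck $1$-topoi satisfy weak descent: the $\infty$-category $\cat{S}$ of spaces satisfies descent (the foundational input, equivalent to the straightening/unstraightening classification of left fibrations), this passes to $\Pre^\infty(\cat{C}) = \cat{S}^{\cat{C}^\op}$ since all $\infty$-(co)limits and slices are computed objectwise, and it descends along the localization because the lex reflector $a$ preserves finite limits and all colimits while the inclusion is fully faithful.

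The hard part will be everything involving groupoid objects. In the $1$-categorical proof an equivalence relation is a truncated gadget and effectivity is a single pullback condition on one coequalizer diagram, visible even to the weak-descent argument. In the $\infty$-setting a groupoid object carries an infinite tower of coherences, its quotient is a geometric realization, and effectivity asserts that the full \v{C}ech nerve---not merely a low truncation---is recovered; verifying that descent yields this, and that the reconstruction functor interacts correctly with these coherences, is where all the genuinely $\infty$-categorical subtlety concentrates, and is precisely the content the $1$-categorical weak-descent argument cannot see. I would therefore import the descent of $\cat{S}$ and the formal theory of groupoid objects from \cite{lurie2009higher}, and devote the original effort to checking that the three-stage transfer of descent along localization, and the Giraud reconstruction, go through verbatim once equivalence relations are replaced by groupoid objects.
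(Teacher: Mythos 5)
The paper gives no proof of this statement: the section on $\infty$-toposes is explicitly expository, and the theorem is imported from the literature (it is \cite[Theorem 6.1.3.9]{lurie2009higher}, with the Giraud-axioms version \cite[Theorem 6.1.0.6]{lurie2009higher} quoted just above, and the model-categorical precursors of Rezk and To\"en--Vezzosi footnoted). So there is no ``paper's proof'' to compare against, and the relevant question is only whether your outline is a sound plan. It is: the circle $(D)_\infty \Rightarrow (G)_\infty \Rightarrow (T)_\infty \Rightarrow (D)_\infty$ is exactly the architecture of the $1$-categorical Theorem \ref{th Giraud's Theorem} and of Lurie's proof, your reading of universality and disjointness out of the descent equivalence parallels Propositions \ref{prop D1 equiv to G1} and \ref{prop effective coproducts imply disjoint coproducts}, and your three-stage transfer of descent ($\cat{S}$, then $\Pre^\infty(\cat{C})$ objectwise, then along an accessible lex localization) mirrors the paper's proof that $1$-topoi satisfy weak descent. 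You also correctly locate the genuinely new content in the passage from effective equivalence relations to effective groupoid objects. Be aware, though, that as written this is a roadmap rather than a proof: the step ``applying descent to the simplicial diagram exhibits $G_\bullet$ as the \v{C}ech nerve of $G_0 \to |G_\bullet|$'' compresses a real argument (one must produce the right Cartesian transformation over the colimit cone and identify $G_1 \simeq G_0 \times_{|G_\bullet|} G_0$ together with all higher coherences), and the entire direction $(G)_\infty \Rightarrow (T)_\infty$ is deferred wholesale to the reconstruction in \cite{lurie2009higher}. Since you state these deferrals explicitly and they are precisely the results the paper itself treats as black boxes, I do not count them as gaps, but a self-contained write-up would need to supply them.
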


Notice the difference here with Grothendieck $1$-toposes. The analogue of the above adjunction is not an equivalence in that case. Only weak descent holds.

\appendix

\section{Category Theory Background} \label{section background}

In this section, we provide basic material from category theory that will be important for these notes. We try to be detailed here without being overly pedantic. Much of this material is well-known, but we feel that it could be useful to the reader to gather all these results in one place.

\subsection{Set Theory} \label{section set theory}

Set theory is a subtle and oftentimes difficult subject for mathematicians to wrap their heads around. Much of these notes will not need to be concerned with the subtleties of set theory, however there are several areas where a concrete understanding of certain notions of set theory is absolutely vital, namely the theory of localizations (Section \ref{section localizations}), functor categories (Section \ref{section small and large categories}), large sites (Section \ref{section large sites}) and locally presentable categories (Section \ref{section locally presentable categories}).

In this section we include a bare-bones account of those concepts of set theory that we will need in what follows. We fix here the foundations of set theory using the axiomatic approach \cite{welch2020axiomatic}, \cite{kunen2014set}. We do not define what a set is, rather we use a formal system to manipulate symbols in a way that satisfies the Zermelo-Frankel axioms with choice (ZFC).

Let us say a few informal words on this. We build set theory using first order logic, which is a system for manipulating symbols. We introduce variable symbols $x_1, x_2, \dots$, logical symbols $\vee, \wedge, \to, \leftrightarrow, \neg, \exists, \forall$\footnote{Typically we take a subcollection of such symbols and define the rest of them in terms of this subcollection, for example we can define $\forall = \neg \exists \neg$.}, and two relation symbols $=, \in$. With these we can construct formulae, such as $\exists x_1.\forall x_2.((x_1 = x_2) \vee (x_1 \in x_2))$. Formulae have a notion of free variables, and those without free variables are called sentences. From such formulae we can construct what we'll call \textbf{collections}, but are more often called classes\footnote{For our purposes, we want to fix what it means to be a set, then use Grothendieck universes to distinguish between small and large sets, and we'd rather reserve the terminology for class for large sets.}, expressions of the form $\{ x_1 | \phi \}$, but these are also merely abbreviations, for example, the formula $x_2 \in \{x_1 | \phi \}$ is just the formula $\phi(x_2/x_1)$, where we have replaced all occurences of $x_1$ in $\phi$ with $x_2$. We can use the abbreviation $\varnothing = \{ x \, | \, \neg( x =  x) \}$ and $V = \{x \, | \, x = x \}$, which are meant to be the empty set and the collection of all sets, respectively. We state again that these are purely syntactic constructs, i.e. they are just symbols. From here we use a formal deduction system for first order logic, see \cite[Chapter 3]{johnstone1987notes}, which provides us with axioms and inference rules like modus ponens, and this allows us to make formal deductions of sentences from collections of sentences. Logicians typically write $S \vdash \phi$ to show that there exists a formal deduction of the sentence $\phi$ from the collection $S$ of sentences. We then add the axioms of ZFC to this deductive system, which say things like  $\forall x_1.\forall x_2.(\forall x_3.(x_3 \in x_1 \leftrightarrow x_2 \in x_1) \to (x_1 = x_2))$, which is called the axiom of extensionality.

This kind of description is overly pedantic for our purposes. The reader must merely accept that there is a completely formal way of manipulating collections $\{ x | \phi \}$, and that a collection is called a \textbf{set} if we can prove, using our deductive system, that $\{x | \phi \} \in V$. A collection that is not a set is called a \textbf{proper collection}.

We refer to \cite[Chapter 1]{kunen2014set} for a more in-depth and readable account of the foundations of set theory and the ZFC axioms. The notes \cite{shulman2008set} provides a readable account for other choices for foundations of set theory. The ZFC axioms guarantee that there exists an empty set $\varnothing$, that we have at our disposal the usual operations of set theory like union $\cup$, power set $P$, cartesian product $\times$, and intersection $\cap$. Note that from the axioms of ZFC, one can prove that the collection $V$ of all sets is not a set, and hence is a proper collection. We also work in classical mathematics, namely the law of excluded middle holds.

\subsubsection{Cardinals and Set Theoretic Considerations}
In what follows, we assume that we have fixed the foundations of set theory using ZFC as in the discussion above and know what a set is. This implies that all elements of a set are themselves sets.

\begin{Def} \label{def total order}
A \textbf{total order} on a set $S$ is a binary relation $\leq \, \subseteq S \times S$ satisfying the following conditions:
\begin{itemize}
    \item (\textbf{reflexivity}) for all $x \in S$, $x \leq x$,
    \item (\textbf{transitivity}) for all $x,y,z \in S$, if $x \leq y$ and $y \leq z$, then $x \leq z$,
    \item (\textbf{antisymmetry}) for all $x,y \in S$, if $x \leq y$ and $y \leq z$, then $x = y$,
    \item (\textbf{totality}) for all $x,y \in S$, if $x, y \in S$, then either $x \leq y$ or $y \leq x$.
\end{itemize}
We say that $S$ is a \textbf{preorder} if $\leq$ satisfies reflexivity and transitivity, and a \textbf{partial order} or \textbf{poset} if it is a preorder that also satisfies antisymmetry.
\end{Def}

\begin{Def} \label{def linear order}
A \textbf{linear order} on a set $S$ is a binary relation $< \, \subseteq S \times S$ satisfying the following conditions:
\begin{itemize}
    \item (\textbf{irreflexivity}) for all $x \in S$, $x \nless x$,
    \item (\textbf{trichotomy}) for all $x,y \in S$, exactly one of the following holds: $x < y$, $y < x$ or $x = y$. 
\end{itemize}
\end{Def}

There is a bijection between linear orders on a set $S$ and total orders. Indeed, if $\leq$ is a total order on $S$, then we can define a linear order $<$ on $S$ by declaring $x < y$ if and only if $y \nleq x$, and conversely $x \leq y$ if and only if $y \nless x$.

\begin{Ex}
Every set $S$ has a linear order given by the membership relation $\in \, \subseteq S \times S$. 
\end{Ex}

\begin{Def} \label{def well-order}
A \textbf{well-order} on a set $S$ is a linear order $<$ with the property that every nonempty subset $T \subseteq S$ has a \textbf{least element}, namely an element $t \in T$ such that there exists no $x \in T$ such that $x < t$.
\end{Def}

\begin{Rem}
A well-ordered set could equivalently be defined as a totally-ordered set such that every nonempty subset has a least element.
\end{Rem}

\begin{Def} \label{def transitive set}
We say that a set $S$ is \textbf{transitive} if whenever $y \in S$ and $x \in y$, then $x \in S$.
\end{Def}

\begin{Def} \label{def ordinal number}
An \textbf{ordinal} is a transitive set $S$ such that the membership relation $\in \, \subseteq S \times S$ is a well order.
\end{Def}

The collection of ordinal numbers can be obtained by \textbf{transfinite recursion}. Namely
\begin{itemize}
    \item the empty set $\varnothing$ is an ordinal,
    \item if $\alpha$ is an ordinal, then the \textbf{successor ordinal}, $\alpha + 1$, defined by $\alpha + 1 = \alpha \cup \{ \alpha \}$, is an ordinal, and
    \item given a set $I$ of ordinals, the set $\bigcup_{\alpha \in I} \alpha$ is an ordinal.
\end{itemize}
We say that an ordinal $\alpha$ is a successor ordinal if it is of the form $\alpha = \beta + 1$ for some ordinal $\beta$. We say that $\alpha$ is a \textbf{limit ordinal} if it is nonempty and not a successor ordinal.

We use Von Neumann's convention of labelling
\begin{equation*}
    0 = \varnothing, \qquad 1 = \{ 0 \}, \qquad 2 = \{0, 1 \}, \qquad 3 = \{0, 1, 2 \}, \dots
\end{equation*}
Clearly for $n > 0$, $n$ is a successor ordinal. We let $\omega = \N = \{0, 1, 2, \dots \}$ denote the first limit ordinal.

We say that two sets $A$ and $B$ have the same \textbf{cardinality} if there exists a bijection between them. Notice that there are many infinite ordinals with the same cardinality $$\omega, \; \omega + 1, \; \omega + 2, \; \dots, \; 2 \omega = \bigcup_{n \in \omega} \omega + n.$$
If $\alpha, \beta$ are ordinals, we say that $\alpha < \beta$ if $\alpha \in \beta$.

\begin{Def} \label{def cardinal number}
A \textbf{cardinal number} is an ordinal number $\kappa$ with the property that it is not in bijection with any ordinal number $\lambda$ such that $\lambda < \kappa$. 
\end{Def}

All finite ordinal numbers are cardinal numbers, and the first infinite cardinal number is $\omega$. Since the collection of cardinal numbers is also well-ordered, we label them as $\aleph_\alpha$ for an ordinal number $\alpha$. Thus we write $\aleph_0 = \omega$ for the first infinite cardinal number. If $\kappa$ is a cardinal number, then we write $\kappa^+$ to mean the smallest cardinal number that is greater than $\kappa$, and we call it the \textbf{successor cardinal}. Note that this is very different from the notion of successor ordinal. Indeed, $\omega^+ \neq \omega + 1$. However it is a tautology that $\aleph_{\alpha + 1} = \aleph_{\alpha}^+$. 

The question of the truth of the statement $\omega^+ = 2^\omega$, where $2^\omega$ denotes the powerset of $\omega$, is precisely the Continuum Hypothesis. The question of the truth of the statement of $\aleph_\alpha^+ = 2^{\aleph_\alpha}$ is called the Generalized Continuum Hypothesis. Note that $\R$ is in bijection with $2^{\aleph_0}$. 

The following result goes under the name of the \textbf{Well-Ordering Theorem}, it was proven by Zermelo in 1904, and can be found in any introductory text on set theory.

\begin{Th}
In ZFC, every set can be well-ordered.
\end{Th}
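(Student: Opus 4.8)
The plan is to invoke the Axiom of Choice together with transfinite recursion, precisely the two tools just set up. First I would fix an arbitrary set $S$ and, using AC, choose a function $f : P(S) \setminus \{\varnothing\} \to S$ with $f(A) \in A$ for every nonempty $A \subseteq S$; intuitively $f$ lets us ``pick the next element'' at each stage of an enumeration.

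Next I would define, by transfinite recursion on the ordinals, a function $g$ whose intended meaning is ``the $\alpha$-th element of $S$.'' Fixing a symbol $\star \notin S$, set
$$ g(\alpha) = \begin{cases} f\left(S \setminus \{g(\beta) : \beta < \alpha\}\right) & \text{if } S \setminus \{g(\beta) : \beta < \alpha\} \neq \varnothing, \\ \star & \text{otherwise.} \end{cases} $$
I would then record the basic structural facts: as long as $g(\alpha) \neq \star$, the value $g(\alpha)$ is a genuine element of $S$ distinct from all earlier values $g(\beta)$ with $\beta < \alpha$, since $f$ always selects from the set of not-yet-enumerated elements. In particular $g$ is injective on any initial segment of ordinals on which it avoids $\star$.

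The hard part will be showing that the recursion actually terminates, that is, that $g(\alpha) = \star$ for some ordinal $\alpha$. If it never did, then $g$ would restrict to an injection from the entire collection of all ordinals into the set $S$; composing with its inverse would exhibit the collection of all ordinals as the image of a subset of $S$ under a definable operation, and by the Axiom of Replacement this would force the ordinals to be a set, contradicting the fact (in the spirit of the Burali-Forti paradox, and parallel to the proof that $V$ is a proper collection in Section~\ref{section set theory}) that the collection of all ordinals is proper. This termination step is the crux of the argument, and it is exactly where the set/proper-collection distinction does the real work.

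Finally, letting $\alpha_0$ be the least ordinal with $g(\alpha_0) = \star$, I would check that $g$ restricts to a bijection $g|_{\alpha_0} : \alpha_0 \to S$: it is injective by the structural facts above, and surjective because the recursion halts only once $S \setminus \{g(\beta) : \beta < \alpha_0\}$ is empty. Transporting the membership well-order $\in$ on the ordinal $\alpha_0$ across this bijection then equips $S$ with a well-order, namely $x \prec y$ if and only if $g^{-1}(x) \in g^{-1}(y)$, which completes the proof.
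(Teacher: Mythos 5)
Your proof is correct and complete. Note that the paper itself gives no proof of this theorem --- it simply attributes it to Zermelo (1904) and defers to introductory texts --- so there is nothing to compare against; what you have written is precisely the classical Zermelo argument, and it is the right one. All the key moves are in place: the choice function on $P(S) \setminus \{\varnothing\}$, the transfinite recursion with a sentinel value $\star \notin S$, injectivity on the $\star$-free initial segment, and, crucially, the termination argument via Replacement and Burali--Forti, which you correctly identify as the step where the set/proper-collection distinction carries the load. Two very minor points you could make explicit if you wanted a fully airtight write-up: first, at each stage $\{g(\beta) : \beta < \alpha\}$ is the image of the set $\alpha$ under a class function, hence a set by Replacement, so the recursion is legitimately invoking the transfinite recursion theorem for class functions on the ordinals; second, once $g(\alpha_0) = \star$ the value $\star$ persists for all $\beta \geq \alpha_0$ (since $S \setminus \{g(\gamma) : \gamma < \beta\}$ remains empty), so ``least ordinal with $g(\alpha_0) = \star$'' is well defined and $g|_{\alpha_0}$ really does exhaust $S$. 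Neither is a gap, just bookkeeping.
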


It can be shown by transfinite induction that every well-ordered set is order-isomorphic to a unique ordinal number.

\begin{Cor}
Every set is in bijection with a unique cardinal number.
\end{Cor}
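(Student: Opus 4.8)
The plan is to combine the two facts recalled immediately above the statement: that every set admits a well-ordering (the Well-Ordering Theorem), and that every well-ordered set is order-isomorphic to a unique ordinal. First I would fix an arbitrary set $S$ and, invoking the Well-Ordering Theorem, choose a well-ordering on $S$. The cited transfinite-induction result then produces an ordinal $\alpha$ together with an order-isomorphism $S \cong \alpha$, which is in particular a bijection. Consequently the collection of ordinals that are in bijection with $S$ is nonempty.

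Next I would extract the least such ordinal. Since the class of ordinals is well-ordered by the membership relation $\in$, every nonempty subclass has a least element; let $\kappa$ be the least ordinal in bijection with $S$. I claim that $\kappa$ is a cardinal number in the sense of Definition \ref{def cardinal number}. Indeed, if $\kappa$ were in bijection with some ordinal $\lambda < \kappa$, then composing a bijection $S \to \kappa$ with a bijection $\kappa \to \lambda$ would exhibit $S$ in bijection with $\lambda$, contradicting the minimality of $\kappa$. Hence no such $\lambda$ exists, $\kappa$ is a cardinal, and $S$ is in bijection with $\kappa$; this establishes existence.

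Finally, for uniqueness, I would suppose $S$ is in bijection with two cardinals $\kappa$ and $\kappa'$. Composing the two bijections yields a bijection $\kappa \to \kappa'$. Since the ordinals are totally ordered, we may assume without loss of generality that $\kappa \leq \kappa'$; were the inequality strict, then $\kappa'$ would be in bijection with the strictly smaller ordinal $\kappa$, violating the defining property of $\kappa'$ as a cardinal. Therefore $\kappa = \kappa'$, and the cardinal associated to $S$ is unique.

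There is no genuine obstacle here beyond the two results already quoted; the only point requiring care is that the whole argument rests on the existence and uniqueness of the ordinal order-isomorphic to a given well-ordered set, which the text asserts follows by transfinite induction and which I would take as given. Everything else is routine bookkeeping with composites of bijections and the well-foundedness of the class of ordinals.
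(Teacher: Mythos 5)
Your proof is correct and follows exactly the route the paper intends: the corollary is stated immediately after the Well-Ordering Theorem and the fact that every well-ordered set is order-isomorphic to a unique ordinal, and your argument (well-order $S$, take the least ordinal in bijection with $S$, verify it is a cardinal by minimality, and deduce uniqueness from the defining property of cardinals) is the standard derivation from those two facts. Nothing to add.
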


\begin{Rem}
Let $S$ be a set, and $\kappa$ a cardinal. We will write $|S| < \kappa$ to mean that if $\lambda$ is the unique cardinal number that is in bijection with $S$, then $\lambda < \kappa$. 
\end{Rem}

\begin{Def} \label{def regular cardinal}
An infinite cardinal $\kappa$ is said to be \textbf{regular} if the following condition holds: If $I$ is a set such that $|I| < \kappa$ and $\{ S_i \}_{i \in I}$ is an $I$-indexed family of sets such that $|S_i| < \kappa$ for all $i \in I$, then for $S = \bigcup_{i \in I} S_i$, it follows that $|S| < \kappa$. We say a cardinal $\kappa$ is \textbf{singular} if it is not regular. In other words, a cardinal $\kappa$ is singular if it can be written as $\kappa = \sum_{i < \alpha} \kappa_i$ where $\kappa_i < \kappa$ and $\alpha < \kappa$.
\end{Def}

\begin{Rem} \label{rem regular cardinals}
Note that $\omega = \aleph_0$ is a regular cardinal, as a union of finitely many finite sets is always finite. Every successor cardinal $\aleph_0, \, \aleph_1, \, \dots$ is regular. The cardinal $\aleph_{\omega} = \sum_{i < \omega} \aleph_i$ however is singular.

Regular cardinals are well-behaved in the following sense. Let $\ncat{Set}_\kappa$ denote the category of all sets with cardinality strictly less than $\kappa$. Then this category is closed under unions indexed by sets in $\ncat{Set}_\kappa$. This is not true for singular cardinals. In other words, if $\kappa$ is singular, then $\ncat{Set}_{\kappa}$ is not $\kappa$-cocomplete (Definition \ref{def small diagrams and completeness})! Hence for purposes of category theory, it is best that we stick to regular cardinals.
\end{Rem}

\begin{Def} \label{def strongly inaccessible cardinal}
A cardinal number $\kappa$ is $\textbf{strongly inaccessible}$ if it is uncountable, regular and for every $\lambda < \kappa$ it follows that $2^\lambda < \kappa$.
\end{Def}

The name strongly inaccessible cardinal is justified by the above definition, as it implies that if $\kappa$ is strongly inaccessible, then it cannot be proved to exist using the ZFC axioms \cite[Section 2.6]{welch2020axiomatic}.

\begin{Def} \label{def grothendieck universe}
A \textbf{Grothendieck universe} is a set $\mathbb{U}$ with the following properties:
\begin{enumerate}
    \item it is a transitive set (Definition \ref{def transitive set}),
    \item if $x \in \mathbb{U}$ and $y \in \mathbb{U}$, then $\{ x, y \} \in \mathbb{U}$,
    \item if $x \in \mathbb{U}$, then its powerset, $P(x) \in \mathbb{U}$,
    \item if $x \in \mathbb{U}$, and $f: x \to \mathbb{U}$ is a function, then $\bigcup_{i \in x} f(i) \in \mathbb{U}$, and
    \item the first limit ordinal $\omega \in \mathbb{U}$.
\end{enumerate}
\end{Def}

We take the following statement as an axiom. It was first postulated by Grothendieck and Verdier.
\begin{equation}
    \text{For every set $x$, there exists a Grothendieck universe $\mathbb{U}$ such that $x \in \mathbb{U}$.}
\end{equation}
This is often called the \textbf{universe axiom}. The use of Grothendieck universes is pervasive in modern mathematics, as it is one of the quickest ways to couch one's mathematical discussions in rigor without having to worry much about set-theoretic details. It is convenient to note that Grothendieck universes are equivalent to strongly inaccessible cardinals in the following sense.

\begin{Def} \label{def cumulative hierarchy and rank}
Define a set $V_\alpha$ for every ordinal number $\alpha$ by transfinite induction as follows.
\begin{equation}
\begin{aligned}
    V_0 &= \varnothing \\
    V_{\alpha + 1} &= 2^{V_{\alpha}} \\
    V_{\beta} & = \bigcup_{\alpha < \beta} V_{\alpha}, \qquad \qquad (\beta \text{ a limit ordinal}).
\end{aligned}
\end{equation}
The collection of sets $V_\alpha$ is referred to as the \textbf{cumulative hierarchy} or \textbf{Von Neumann universe}. One of the axioms of ZFC, the axiom of foundation, is equivalent to the statement that every set is an element of some $V_\alpha$. 

The \textbf{rank} of a set $S$ is the smallest $\alpha$ for which $S \in V_{\alpha + 1}$. It can be proved by transfinite induction that the rank of an ordinal $\alpha$ is $\alpha$. Thus $V_\kappa$ is the set of all sets with rank $< \kappa$.
\end{Def}

\begin{Th}[\cite{Williams_1969}]
If $\mathbb{U}$ is a Grothendieck universe, then it is of the form $\mathbb{U} = V_\kappa$ for $\kappa$ a strongly inaccessible cardinal number.
\end{Th}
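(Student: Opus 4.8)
The plan is to take $\kappa = \mathbb{U} \cap \mathrm{Ord}$, the collection of ordinals belonging to $\mathbb{U}$, as the candidate inaccessible cardinal, and to prove both that $\kappa$ is strongly inaccessible and that $\mathbb{U} = V_\kappa$. First I would record the elementary closure properties that follow from Definition \ref{def grothendieck universe}: since $P(x) \in \mathbb{U}$ and $\mathbb{U}$ is transitive, $\mathbb{U}$ is closed under subsets; combining pairing (axiom 2) with the indexed-union axiom (axiom 4) shows $\mathbb{U}$ is closed under finite unions, ordered pairs, and $\mathbb{U}$-indexed unions $\bigcup_{i \in I} A_i$ whenever $I \in \mathbb{U}$ and each $A_i \in \mathbb{U}$. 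Because $\mathbb{U}$ is transitive, $\kappa$ is a transitive set of ordinals and hence itself an ordinal.

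Next I would verify that $\kappa$ is strongly inaccessible. It is a limit ordinal: if $\alpha \in \kappa$ then $\alpha \in \mathbb{U}$, so $\alpha + 1 = \alpha \cup \{\alpha\} \in \mathbb{U}$ by pairing and union, whence $\alpha + 1 \in \kappa$. Moreover $\omega \in \mathbb{U}$ forces $\omega < \kappa$. For regularity, suppose $\kappa = \bigcup_{i \in I} \alpha_i$ with $|I| < \kappa$ and each $\alpha_i < \kappa$; then $I$ and all $\alpha_i$ lie in $\mathbb{U}$, so axiom 4 applied to $i \mapsto \alpha_i$ gives $\kappa = \bigcup_i \alpha_i \in \mathbb{U}$, i.e. $\kappa \in \kappa$, contradicting foundation. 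For the strong limit property, given $\lambda < \kappa$ we have $\lambda \in \mathbb{U}$, hence $P(\lambda) \in \mathbb{U}$ by axiom 3; since $\mathbb{U}$ is transitive it contains a bijection witnessing $|P(\lambda)| = 2^{|\lambda|}$, so the cardinal $2^{|\lambda|}$, being an ordinal in $\mathbb{U}$, satisfies $2^{|\lambda|} < \kappa$. A regular limit ordinal above $\omega$ that is also a strong limit is a strongly inaccessible cardinal.

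Then I would prove $V_\kappa \subseteq \mathbb{U}$ by transfinite induction, showing $V_\alpha \in \mathbb{U}$ for every $\alpha < \kappa$: the base case $V_0 = \varnothing \in \mathbb{U}$ is immediate from transitivity, the successor step is $V_{\alpha+1} = P(V_\alpha) \in \mathbb{U}$ by axiom 3, and at a limit $\beta < \kappa$ I use that $\beta \in \mathbb{U}$ together with axiom 4 applied to $\alpha \mapsto V_\alpha$ to conclude $V_\beta = \bigcup_{\alpha < \beta} V_\alpha \in \mathbb{U}$. Transitivity of $\mathbb{U}$ then yields $V_\kappa = \bigcup_{\alpha < \kappa} V_\alpha \subseteq \mathbb{U}$.

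The reverse inclusion $\mathbb{U} \subseteq V_\kappa$ is where the real content lies, and I expect it to be the main obstacle. The goal is to bound ranks: I would show by $\in$-induction (using the axiom of foundation) that every $x \in \mathbb{U}$ has $\mathrm{rank}(x) < \kappa$. For such an $x$, transitivity puts every $y \in x$ in $\mathbb{U}$, so the inductive hypothesis gives $\mathrm{rank}(y) + 1 \in \mathbb{U}$ for each $y \in x$; since $x \in \mathbb{U}$, axiom 4 applied to $y \mapsto \mathrm{rank}(y) + 1$ shows $\mathrm{rank}(x) = \bigcup_{y \in x}(\mathrm{rank}(y)+1) \in \mathbb{U}$, i.e. $\mathrm{rank}(x) < \kappa$, so $x \in V_\kappa$. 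The delicate points to get right are the faithfulness of the rank and cardinality computations inside the transitive set $\mathbb{U}$, so that internal and external values agree, and the careful bookkeeping of the indexed-union axiom in both the strong-limit and rank-bounding arguments; these are exactly the places where the transitivity hypothesis and axiom 4 must be invoked in tandem.
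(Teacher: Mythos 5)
The paper offers no proof of this statement (it is only cited to Williams), so I am measuring your argument against the standard one, whose architecture you follow exactly: set $\kappa = \mathbb{U} \cap \mathrm{Ord}$, prove that $\kappa$ is strongly inaccessible, and establish the two inclusions $V_\kappa \subseteq \mathbb{U}$ (by transfinite induction showing $V_\alpha \in \mathbb{U}$ for $\alpha < \kappa$) and $\mathbb{U} \subseteq V_\kappa$ (by bounding ranks via $\in$-induction). The two inclusions, the limit-ordinal computation, and the rank bound are all correct as you state them; in particular your $\in$-induction is sound precisely because axiom (4) of Definition \ref{def grothendieck universe} applies to the \emph{external} class function $y \mapsto \mathrm{rank}(y)+1$ on the index set $x \in \mathbb{U}$, with no need for that function to be an element of $\mathbb{U}$.

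The genuine gap is in the strong-limit step (and, less seriously, the regularity step). You write that ``since $\mathbb{U}$ is transitive it contains a bijection witnessing $|P(\lambda)| = 2^{|\lambda|}$.'' Transitivity only says that elements of elements of $\mathbb{U}$ again lie in $\mathbb{U}$; it cannot produce the bijection as an \emph{element} of $\mathbb{U}$. That bijection $f \colon P(\lambda) \to 2^{\lambda}$ is a subset of $P(\lambda) \times 2^{\lambda}$, and the only evident way to place that product inside $\mathbb{U}$ (so that $f$, as a subset of an element, lies in $\mathbb{U}$) is to already know $2^{\lambda} \in \mathbb{U}$ --- which is the thing being proved. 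Similarly, from $|I| < \kappa$ you cannot conclude $I \in \mathbb{U}$ (take $I = \{\mathbb{U}\}$); you must first reindex along a cardinal $< \kappa$. The missing ingredient in both places is the lemma: \emph{every $X \in \mathbb{U}$ satisfies $|X| < \kappa$}. This is proved from axiom (4) without ever internalizing a bijection: if $\kappa \le |X|$, choose an injection $h \colon \kappa \to X$; its image $h[\kappa]$ is a subset of the element $X$, hence itself an element of $\mathbb{U}$, and
\begin{equation*}
\kappa \;=\; \bigcup_{y \in h[\kappa]} \bigl(h^{-1}(y) + 1\bigr)
\end{equation*}
exhibits $\kappa$ as a $\mathbb{U}$-indexed union of elements of $\mathbb{U}$, so $\kappa \in \mathbb{U}$, i.e.\ $\kappa \in \kappa$, a contradiction. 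With this lemma in hand, $P(\lambda) \in \mathbb{U}$ immediately yields $2^{\lambda} = |P(\lambda)| < \kappa$, and the regularity argument goes through after reindexing. You correctly flagged the ``faithfulness of cardinality computations inside $\mathbb{U}$'' as the delicate point, but the specific appeal to transitivity you make there does not close it; the lemma above is what does.
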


Thus the universe axiom is equivalent to the statement: for every cardinal $\lambda$ there exists a strongly inaccessible cardinal $\kappa$ with $\lambda < \kappa$.

\begin{Rem}
It can be shown that for $\kappa$ inaccessible, $V_\kappa$ is a model of ZFC. Note that this means that all sets that are elements of $V_\kappa$ have bounded rank, and not cardinality. This is an important distinction, see \cite[Section 8]{shulman2008set} for more information.
\end{Rem}

\begin{Def}
Given a Grothendieck universe $\mathbb{U}$, let $\kappa(\mathbb{U})$ denote the set of ordinal numbers that are elements of $\mathbb{U}$.
\end{Def}

\begin{Lemma}[{\cite[Proposition 0.1.16]{Low_homotopical_2015}}]
If $\mathbb{U}$ is a Grothendieck universe, then $\kappa(\mathbb{U})$ is an ordinal number such that $\kappa(\mathbb{U}) \notin \mathbb{U}$. Further, if $\mathbb{U} = V_\kappa$ for a strongly inaccessible cardinal $\kappa$, then $\kappa(\mathbb{U}) = \kappa$.
\end{Lemma}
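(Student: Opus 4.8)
The plan is to establish the three assertions of the lemma in turn: that $\kappa(\mathbb{U})$ is an ordinal, that $\kappa(\mathbb{U}) \notin \mathbb{U}$, and that $\kappa(\mathbb{U}) = \kappa$ in the case $\mathbb{U} = V_\kappa$. The first two parts use only the fact that a Grothendieck universe is a transitive set (axiom (1) of Definition \ref{def grothendieck universe}) together with elementary facts about ordinals, while the third invokes the rank computation recorded in Definition \ref{def cumulative hierarchy and rank}. Note that $\kappa(\mathbb{U})$ is a set, being the subset of the set $\mathbb{U}$ consisting of those of its elements which are ordinals.

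First I would show that $\kappa(\mathbb{U})$ is an ordinal. Since it is a set of ordinals, it is well-ordered by the membership relation $\in$: the relation $\in$ well-orders the ordinals, and the induced order on any subset of a well-order is again a well-order. By Definition \ref{def ordinal number} it then suffices to check transitivity (Definition \ref{def transitive set}). So suppose $\beta \in \alpha \in \kappa(\mathbb{U})$. Then $\alpha$ is an ordinal with $\alpha \in \mathbb{U}$; as ordinals are transitive sets whose elements are themselves ordinals, $\beta$ is an ordinal. Moreover, since $\mathbb{U}$ is a transitive set and $\alpha \in \mathbb{U}$, the relation $\beta \in \alpha$ forces $\beta \in \mathbb{U}$. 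Hence $\beta \in \kappa(\mathbb{U})$, so $\kappa(\mathbb{U})$ is transitive and therefore an ordinal.

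Next I would show $\kappa(\mathbb{U}) \notin \mathbb{U}$, which I regard as the conceptual heart of the statement, being a localized form of the Burali--Forti observation that the collection of all ordinals is not a set. Suppose toward a contradiction that $\kappa(\mathbb{U}) \in \mathbb{U}$. Since $\kappa(\mathbb{U})$ is an ordinal by the previous step, the very definition of $\kappa(\mathbb{U})$ as the set of \emph{all} ordinals lying in $\mathbb{U}$ would then give $\kappa(\mathbb{U}) \in \kappa(\mathbb{U})$. But no ordinal is a member of itself, since $\in$ restricts to a linear, hence irreflexive, order on any ordinal (equivalently by the axiom of foundation). This contradiction yields $\kappa(\mathbb{U}) \notin \mathbb{U}$.

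Finally, for $\mathbb{U} = V_\kappa$ with $\kappa$ strongly inaccessible, I would use the two facts from Definition \ref{def cumulative hierarchy and rank}: that $V_\kappa$ is exactly the set of sets of rank $< \kappa$, and that the rank of an ordinal $\alpha$ equals $\alpha$. For an ordinal $\alpha$ these give $\alpha \in V_\kappa$ if and only if $\text{rank}(\alpha) = \alpha < \kappa$, i.e. if and only if $\alpha \in \kappa$. Thus the ordinals in $V_\kappa$ are precisely the elements of $\kappa$, whence $\kappa(\mathbb{U}) = \kappa$. The only point demanding a little care — the mild obstacle here — is that $\kappa$, being strongly inaccessible (Definition \ref{def strongly inaccessible cardinal}), is in particular a limit ordinal, which is exactly what makes $V_\kappa = \bigcup_{\beta < \kappa} V_\beta$ coincide with the set of sets of rank $< \kappa$; this is immediate once the definitions are unwound.
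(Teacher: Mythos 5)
Your proof is correct. The paper itself gives no proof of this lemma, deferring entirely to the cited reference, so there is nothing to compare against line by line; but your argument is the standard one and each step checks out: transitivity of $\mathbb{U}$ plus the fact that elements of ordinals are ordinals gives transitivity of $\kappa(\mathbb{U})$, the localized Burali--Forti argument gives $\kappa(\mathbb{U}) \notin \mathbb{U}$, and the rank computation identifies the ordinals in $V_\kappa$ with the elements of $\kappa$. One small remark: your closing caveat about $\kappa$ being a limit ordinal is not actually needed, since the identity ``$V_\alpha$ is the set of sets of rank $< \alpha$'' holds for every ordinal $\alpha$ (successor stages included), and the paper's Definition \ref{def cumulative hierarchy and rank} already records this fact in the form you use; you could simply cite it and drop the last sentence.
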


\begin{Def}
Fix two Grothendieck universes $\mathbb{U}$ and $\mathbb{V}$ such that $\mathbb{U} \in \mathbb{V}$. We say that a set $x$ is \textbf{small} if it is in bijection with a set $y$ such that $y \in \mathbb{U}$, \textbf{large} if it is in bijection with a set $y'$ such that $y' \in \mathbb{V}$, and \textbf{very large} otherwise. We will sometimes refer to a large set as a \textbf{class}.
\end{Def}

\begin{Rem}
Some authors prefer to use a single Grothendieck universe $\mathbb{U}$, and say that elements of $\mathbb{U}$ are small and \textit{subsets} of $\mathbb{U}$ are large. The following lemma shows that such large sets are still large in our context with multiple Grothendieck universes.
\end{Rem}

\begin{Lemma} \label{lem small classes are large sets}
If $x \subseteq \mathbb{U}$, then $x \in \mathbb{V}$.
\end{Lemma}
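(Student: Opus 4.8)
The plan is to read the statement straight off the defining axioms of a Grothendieck universe (Definition \ref{def grothendieck universe}), using that $\mathbb{U} \in \mathbb{V}$. The key observation is that $\mathbb{V}$ is itself a Grothendieck universe, so it is closed under power sets and is a transitive set; these two facts together force any subset of $\mathbb{U}$ to land inside $\mathbb{V}$.

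Concretely, I would proceed in three short steps. First, since $\mathbb{U} \in \mathbb{V}$ and $\mathbb{V}$ satisfies axiom (3) of Definition \ref{def grothendieck universe} (closure under power sets), I conclude that $P(\mathbb{U}) \in \mathbb{V}$. Second, the hypothesis $x \subseteq \mathbb{U}$ says exactly that $x \in P(\mathbb{U})$. Third, because $\mathbb{V}$ satisfies axiom (1) of Definition \ref{def grothendieck universe} (it is a transitive set), and we have $x \in P(\mathbb{U})$ with $P(\mathbb{U}) \in \mathbb{V}$, transitivity gives $x \in \mathbb{V}$, as desired.

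There is no real obstacle here: the argument is a direct unwinding of the universe axioms, and the only point requiring the slightest care is invoking transitivity in the correct direction, namely that an element ($x$) of an element ($P(\mathbb{U})$) of $\mathbb{V}$ is again an element of $\mathbb{V}$. No cardinality or rank considerations are needed, so the preceding material on strongly inaccessible cardinals and the cumulative hierarchy plays no role in this particular proof.
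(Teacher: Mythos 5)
Your proof is correct and is essentially identical to the paper's own argument: both observe $x \in P(\mathbb{U})$, apply closure under power sets (Definition \ref{def grothendieck universe}.(3)) to get $P(\mathbb{U}) \in \mathbb{V}$, and conclude by transitivity (Definition \ref{def grothendieck universe}.(1)). No changes needed.
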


\begin{proof}
First note that $x \in P(\mathbb{U})$ since $x \subseteq \mathbb{U}$. But since $\mathbb{U} \in \mathbb{V}$, and $\mathbb{V}$ is a Grothendieck universe, this implies that $P(\mathbb{U}) \in \mathbb{V}$ by Definition \ref{def grothendieck universe}.(3). Therefore $x \in \mathbb{V}$ by Definition \ref{def grothendieck universe}.(1).
\end{proof}


\subsubsection{Small and Large Categories} \label{section small and large categories}
Now with the convenient framework of Grothendieck universes, we can now turn to set theory used in category theory. We fix two nested Grothendieck universes $\mathbb{U} \in \mathbb{V}$. Any set mentioned below without disambiguation will always mean a small set, as will any cardinal.

\begin{Def} \label{def small and large categories}
A \textbf{small category} $\cat{C}$ consists of a small set $\Obj(\cat{C})$ of objects and a small set $\Mor(\cat{C})$ of morphisms, equipped with the usual structure defining a category. A \textbf{large category} consists of a large set of objects and a large set of morphisms. When we say a category in these notes, we mean a large category, and use both terms interchangeably. A \textbf{very large category} consists of very large sets of objects and morphisms. A \textbf{locally small category} is a large category $\cat{C}$ such that for every pair of objects $x,y \in \cat{C}$, the set $\cat{C}(x,y)$ of morphisms from $x$ to $y$ is small. We say a category $\cat{C}$ is \textbf{essentially small} if it is equivalent to a small category. In what follows we will treat essentially small categories as if they were small.
\end{Def}

\begin{Def}
Given a regular cardinal $\kappa$, we say a set $S$ is $\kappa$-small if $|S| < \kappa$. If $\kappa$ is a regular cardinal, we say that a category $\cat{C}$ is $\kappa$-small if its set of objects and each hom-set is $\kappa$-small. Note that $\kappa$ being regular implies that $\Mor(\cat{C})$ is $\kappa$-small, which provides another advantage to working with regular cardinals.
\end{Def}

\begin{Ex}
Let $\ncat{Set}$ denote the category of small sets. This is a large category that is locally small, as the set of functions between two small sets $A$ and $B$ is small since it can be encoded as a certain subset of $A \times B$. However $\ncat{Set}$ is not essentially small. Indeed, suppose it was, then there would be a small set $S \in \mathbb{U}$ such that every set in $\mathbb{U}$ is in bijection with a set in $S$. This would imply that $\mathbb{U} \in \mathbb{U}$, which is a contradiction. 

Similarly, the category $\ncat{Cat}$ of small categories is locally small. However, the category $\ncat{CAT}$ of large categories is a very large category whose hom-sets are large sets.
\end{Ex}

\begin{Ex} \label{ex Man is essentially small}
The category $\ncat{Man}$ whose objects are finite dimensional smooth manifolds and whose morphisms are smooth maps is a locally small category, but it is also essentially small. Indeed, every smooth manifold can be embedded into $\R^\infty$ by the Whitney Embedding Theorem. Thus we can consider the category whose objects are finite dimensional smooth manifolds embedded in $\R^\infty$, and this will be a small category equivalent to $\ncat{Man}$. Similarly the category $\ncat{Cart}$ whose objects are finite dimensional smooth manifolds diffeomorphic to $\R^n$ for some $n \geq 0$ and whose morphisms are smooth maps, is a large category that is essentially small. In fact it is equivalent to the category whose objects are $\R^n$ for $n \geq 0$ with smooth maps between them.
\end{Ex}

While the above example is convenient, and the proof straightforward, it does not generalize well to other kinds of manifolds. Let us remedy that.

\begin{Ex} \label{ex topman is essentially small}
Let $\ncat{TopMan}$ denote the category whose objects are finite dimensional topological manifolds and whose morphisms are continuous functions. Let us show that $\ncat{TopMan}$ is essentially small.

For us a topological manifold $M$ is a topological space that is Hausdorff, paracompact and locally Euclidean. Topological manifolds can only have countably many connected components, and each component is path connected.

By \cite{stephens2011cardinalitymanifolds}, every connected topological manifold has cardinality at most $\mathfrak{c} = 2^{\aleph_0} \cong |\R|$. Hence by \cite{karagila2017cardinalityofunion}, assuming the axiom of choice, the cardinality of $M$ is also at most $\mathfrak{c}= 2^{\aleph_0}$.

For each set $X$ of cardinality $\mathfrak{c}$, there are at most $2^{\mathfrak{c}}$ topologies on $X$. Hence there can be at most $2^{\mathfrak{c}}$ many non-isomorphic topological manifolds. Hence $\ncat{TopMan}$ is essentially small, by using the axiom of choice to choose one topological manifold from each isomorphism class.

Now a finite dimensional smooth manifold $M$ is a topological manifold equipped with a smooth structure. We want to show that there is a set of isomorphism classes of smooth manifolds. A smooth atlas $\mathcal{A}$ for $M$ consists of a set of topological embeddings $\{\varphi_i : \R^n \to M \}_{i \in I}$ such that the transition functions $\varphi_j^{-1}\varphi_i : \R^n \to \R^n$ are smooth. We say that $\varphi_i$ and $\varphi_j$ are compatible. Two atlases $\mathcal{A}, \mathcal{A}'$ are deemed equivalent if every pair $\varphi \in \mathcal{A}$ and $\psi \in \mathcal{A}'$ are compatible. A smooth structure on $M$ is an equivalence class of atlases. Using the fact that $M$ is second countable, every atlas is equivalent to a countable atlas. Hence the isomorphism type of a smooth manifold is determined by an underlying topological manifold and a countable atlas. There are precisely $\mathfrak{c}^{\mathfrak{c}}$ functions from $\R^n$ to $M$. Hence there are at most $\sum_{\omega} \mathfrak{c}^{\mathfrak{c}}$ distinct smooth atlases on a topological manifold and hence there are at most$\sum_\omega \mathfrak{c}^{\mathfrak{c}} \cdot \mathfrak{c} = \sum_\omega \mathfrak{c}^{\mathfrak{c}}$ distinct isomorphism classes of finite dimensional smooth manifolds, which is the cardinality of a small set. Hence the category $\ncat{Man}$ is essentially small.\footnote{We thank Kevin Carlson and John Baez for discussion around this example.}
\end{Ex}

\begin{Ex}
We will see in Section \ref{section localizations} that the localization of a locally small category $\cat{C}$ at a class of morphisms $W$ can lead to a large category $\cat{D}$ that is not locally small. Finding ways to ensure that $\cat{D}$ is locally small is a huge motivation for studying set-theoretical techniques in category theory.
\end{Ex}

\begin{Lemma} \label{lem smallness of functor categories}
Let $\cat{C}$ and $\cat{D}$ be categories, if
\begin{enumerate}
    \item $\cat{C}$ and $\cat{D}$ are small, then the category $\cons{Fun}(\cat{C}, \cat{D})$ whose objects are functors and whose morphisms are natural transformations, is a small category,
    \item $\cat{C}$ is small and $\cat{D}$ is locally small, then $\cons{Fun}(\cat{C}, \cat{D})$ is locally small,
    \item $\cat{C}$ and $\cat{D}$ are large categories, then $\Fun(\cat{C}, \cat{D})$ is a large category,
    \item $\cat{C}$ and $\cons{Fun}(\cat{C}, \ncat{Set})$ are locally small, then $\cat{C}$ is essentially small.
\end{enumerate}
\end{Lemma}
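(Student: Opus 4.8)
The plan is to reduce everything to a counting statement about objects. Since $\cat{C}$ is assumed locally small, the only thing missing for essential smallness is that the isomorphism classes of objects form a \emph{small} set: once that is known, choosing one representative per class (using the axiom of choice, which we assume) produces a full subcategory whose objects form a small set and whose hom-sets are small, hence a small category equivalent to $\cat{C}$. So I would first record the covariant Yoneda embedding $y^{\text{co}} : \cat{C}^{\op} \to \Fun(\cat{C}, \ncat{Set})$, $U \mapsto \cat{C}(U,-)$, from Lemma \ref{lem swapping arity in left kan extensions}, which is fully faithful; thus the representables $\cat{C}(U,-)$ are pairwise non-isomorphic exactly when the $U$ are, and the isomorphism classes of $\cat{C}$ embed into those of $\Fun(\cat{C},\ncat{Set})$. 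The proof then proceeds by contraposition: assuming the objects of $\cat{C}$ have a non-small class of isomorphism classes, I would manufacture two functors $\cat{C} \to \ncat{Set}$ whose collection of natural transformations is not small, contradicting local smallness of $\Fun(\cat{C},\ncat{Set})$.

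To locate the largeness, I would first analyze the constant functors $\Delta_{*}$ and $\Delta_{\{0,1\}}$ (constant at a one- and a two-element set). A natural transformation $\Delta_{*} \Rightarrow \Delta_{\{0,1\}}$ is exactly a choice of element of $\{0,1\}$ on each connected component of $\cat{C}$, so $\Fun(\cat{C},\ncat{Set})(\Delta_{*}, \Delta_{\{0,1\}}) \cong \{0,1\}^{\pi_0(\cat{C})}$; local smallness therefore forces $\pi_0(\cat{C})$ to be small, which both warms up the argument and reduces the problem to bounding isomorphism classes inside a single connected component. More generally, for any functor $F : \cat{C} \to \ncat{Set}$ one has $\Fun(\cat{C},\ncat{Set})(F, \Delta_{\{0,1\}}) \cong \{0,1\}^{\pi_0(\int F)}$, where $\int F$ is the category of elements, so it suffices to produce a single $F$ with \emph{small} values whose category of elements has a non-small set of connected components.

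The hard part will be constructing such an $F$. For sparse categories (for instance posets, where each object receives morphisms from only a controlled class of others) one can take $F = \coprod_{i} \cat{C}(U_i,-)$ over a non-small family of pairwise non-isomorphic $U_i$: the values $\coprod_i \cat{C}(U_i, V)$ stay small while $\pi_0(\int F)$ records the index class and is hence large. The obstruction is that this coproduct need not be $\ncat{Set}$-valued in general: if some object $V$ (for example a terminal object, as in $\cat{C} = \ncat{Set}$ itself) receives morphisms from a non-small class of the $U_i$, then $F(V)$ is large and $F$ is not a functor into $\ncat{Set}$; indeed a terminal object makes $\pi_0(\int F)$ automatically small for \emph{every} $F$, so $\Delta_{\{0,1\}}$ cannot be used as target there. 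Overcoming this is the crux: following the Freyd--Street analysis of essentially small categories, I would replace the naive coproduct with a copower/cardinality argument and a more carefully chosen target functor $G$ (with small, even singleton, values on the objects that receive many maps), arranged so that morphisms into any fixed object contribute only boundedly while a non-small family of independent natural transformations into $G$ nonetheless survives. This yields the required non-small hom-set and completes the contraposition. The delicate point throughout is keeping every constructed functor pointwise small while still witnessing the largeness of the class of objects.
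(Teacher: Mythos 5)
Your proposal addresses only part (4) of the lemma; parts (1)--(3) are not touched. They are easy (the paper disposes of them by encoding functors as relations on $\Obj(\cat{C}) \times \Obj(\cat{D})$ and natural transformations as relations on $\Obj(\cat{C}) \times \Mor(\cat{D})$, which are small sets when the inputs are small), but they are part of the statement and should at least be dispatched in a sentence.

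For part (4) your reduction is sound: given that $\cat{C}$ is locally small, essential smallness is equivalent to the isomorphism classes forming a small set, and the contrapositive strategy of exhibiting a non-small hom-set in $\Fun(\cat{C}, \ncat{Set})$ is the right shape. The computations $\Fun(\cat{C},\ncat{Set})(\Delta_{*}, \Delta_{\{0,1\}}) \cong \{0,1\}^{\pi_0(\cat{C})}$ and $\Fun(\cat{C},\ncat{Set})(F, \Delta_{\{0,1\}}) \cong \{0,1\}^{\pi_0(\int F)}$ are correct, and so is your diagnosis that a terminal object forces $\pi_0(\int F)$ to be small for every small-valued $F$ (it is a quotient of $F(1)$), so the target $\Delta_{\{0,1\}}$ cannot detect largeness in general. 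But that is precisely where the proposal stops being a proof. The entire content of the statement lives in constructing, for an \emph{arbitrary} locally small $\cat{C}$ with a non-small class of isomorphism classes, two pointwise-small functors with a non-small class of natural transformations; you defer this to ``the Freyd--Street analysis'' and an unspecified ``copower/cardinality argument and a more carefully chosen target functor $G$'' without producing such a $G$ or arguing that one exists. The paper itself does not prove part (4) either --- it handles it by citation to \cite{freyd1995size} --- so you are attempting to supply what the paper outsources, which is commendable; but as written you have correctly located the crux and then asserted, rather than shown, that it can be overcome. Until the witnessing functors are actually constructed (or the cited argument is reproduced), part (4) is not established.
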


\begin{proof}
(1) The collection of functors $F : \cat{C} \to \cat{D}$ can be encoded as a relation on $\Obj(\cat{C}) \times \Obj(\cat{D})$, which is a small set. Similarly the collection of natural transformations can be encoded as a relation on $\Obj(\cat{C}) \times \Mor(\cat{D})$, which is a small set.
(2) Using the same argument as (1), we see $\Fun(\cat{C}, \cat{D})$ is locally small.
(3) Using the same argument as (1), we see $\Fun(\cat{C}, \cat{D})$ is large.
(4) This is \cite{freyd1995size}.
\end{proof}

\begin{Rem}
If $\cat{C}$ and $\cat{D}$ are very large categories, then there is no guarantee that the functor category $\Fun(\cat{C},\cat{D})$ exists in the sense that even if we can construct its collections of objects and morphisms, we may not be able to prove that these collections are sets.
\end{Rem}

\begin{Def} \label{def small diagrams and completeness}
Given a category $\cat{C}$, a \textbf{small diagram} in $\cat{C}$ is a functor $F: I \to \cat{C}$ from a small category $I$. Similarly for a regular cardinal $\kappa$, a $\kappa$-small diagram is a functor whose domain is a category that is $\kappa$-small. We say a category $\cat{C}$ is \textbf{$\kappa$-(co)complete} if it admits (co)limits over all $\kappa$-small diagrams. We say $\cat{C}$ is \textbf{(co)complete} if it admits (co)limits over all small diagrams.
\end{Def}

Being careful about set-theoretic issues in category theory is necessary, as the following result shows.

\begin{Prop}[Freyd's Theorem] \label{prop freyd's theorem}
Let $\cat{C}$ be a small, complete category. Then $\cat{C}$ is a preorder.
\end{Prop}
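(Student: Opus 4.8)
The plan is to argue by contradiction, playing smallness against completeness: smallness bounds the cardinality of the entire collection of morphisms of $\cat{C}$, while completeness lets us manufacture products so large that a single hom-set outgrows that bound. This is a cardinality argument, powered by Cantor's theorem.

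First I would suppose that $\cat{C}$ is \emph{not} a preorder. By definition this means there exist objects $X, Y \in \cat{C}$ together with two distinct parallel morphisms $f, g : X \to Y$; equivalently, the hom-set $\cat{C}(X, Y)$ has at least two elements. Since $\cat{C}$ is small, its set of morphisms $\Mor(\cat{C})$ is a small set; write $\kappa = |\Mor(\cat{C})|$ for its cardinality. The key observation is that $\kappa$, being the cardinality of a small set, is a small cardinal, so the discrete category on $\kappa$ is a small category and the constant diagram at $Y$ indexed by $\kappa$ is a genuinely small diagram in $\cat{C}$.

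Next, using completeness I would form the limit of this constant diagram, namely the $\kappa$-fold product $Y^\kappa = \prod_{i \in \kappa} Y$, which exists precisely because $\cat{C}$ admits all small limits. The universal property of the product then yields a natural bijection
$$\cat{C}(X, Y^\kappa) \cong \prod_{i \in \kappa} \cat{C}(X, Y) = \cat{C}(X, Y)^\kappa.$$
Since $|\cat{C}(X, Y)| \geq 2$, the right-hand side has cardinality at least $2^\kappa$, which by Cantor's theorem is strictly greater than $\kappa = |\Mor(\cat{C})|$.

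This is where the contradiction lands, and it is the heart of the argument: every morphism $X \to Y^\kappa$ is an element of $\Mor(\cat{C})$, so there can be at most $|\Mor(\cat{C})| = \kappa$ of them, yet we have just exhibited strictly more than $\kappa$. Hence no object can receive two distinct parallel morphisms from any other, i.e. every hom-set has at most one element, and $\cat{C}$ is a preorder. The only step that genuinely warrants care is verifying that the $\kappa$-indexed product is a legitimate small diagram so that completeness applies; this is precisely why it matters that $\kappa$ is the cardinality of a \emph{small} set rather than an arbitrary class, and it is exactly the kind of smallness bookkeeping that Freyd's theorem is designed to illustrate. Beyond that, the Cantor cardinality blow-up does all the work, so no other step should present an obstacle.
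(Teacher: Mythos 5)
Your proof is correct and follows essentially the same route as the paper's: assume two distinct parallel morphisms $f,g\colon X\to Y$, form the $\kappa$-fold power $Y^\kappa$ where $\kappa = |\Mor(\cat{C})|$, and use the universal property to exhibit at least $2^\kappa > \kappa$ morphisms $X \to Y^\kappa$, contradicting smallness. Your write-up is slightly more explicit about the hom-set bijection and the smallness of the indexing category, but the argument is identical.
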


\begin{proof}
Suppose that $\cat{C}$ is small and complete. Let $\kappa$ be the unique cardinal such that $| \Mor \cat{C} | = \kappa$. Suppose that $\cat{C}$ is not a preorder, namely given two objects $c$ and $d$ in $\cat{C}$, there exist two distinct morphisms $f, g: c \to d$. Consider the product $\prod_{\kappa} d$. Then there are $2^\kappa$ many morphisms $c \to \prod_{\kappa} d$, since there are $2^\kappa$ many $\kappa$-indexed sequences of the form $(a_0, a_1, \dots)$ where $a_i \in \{f, g \}$. Since $2^\kappa > \kappa$, this implies that $| \Mor \cat{C} | > \kappa$, contradiction.  
\end{proof}

\subsection{The Zoo of Monos and Epis}

\subsubsection{Monos}

\begin{Def} \label{def monomorphism}
A monomorphism $f : X \to Y$ in a category $\cat{C}$ is a morphism with the property that if $g,h : Z \to X$ are morphisms such that $fg = fh$, then $g = h$.
\end{Def}

\begin{Def} \label{def subobject}
Let $\cat{C}$ be a category and $U \in \cat{C}$. A \textbf{subobject} of $U$ is an equivalence class of monomorphisms $i: V \hookrightarrow U$, where identify two monomorphisms $i: V \hookrightarrow U$ and $i': V' \hookrightarrow U$ if there is an isomorphism $\varphi : V \to V'$ such that $i' \varphi = i$. We typically denote an equivalence class $[i: V \hookrightarrow U]$ by $V \subseteq U$.
\end{Def}

\begin{Def}
We say that a category $\cat{C}$ is \textbf{well-powered} if for every $U \in \cat{C}$, the subobjects of $U$ form a small set. Note that every small category is well-powered.
\end{Def}

\begin{Def} \label{def subobject poset}
Suppose that $\cat{C}$ is well-powered, with $U \in \cat{C}$. Let $\cons{Sub}(U)$ denote the (small) poset (with order denoted by $\subseteq$) of subobjects\footnote{We can equivalently consider it as the posetal reflection of the category of monomorphisms over $U$.} of $U$, where for subobjects $V, W \subseteq U$, then $V \subseteq W$ if some monomorphism $i: V \hookrightarrow U$ representing $V$ factors through some monomorphism $j: W \hookrightarrow U$ representing $W$. It is not hard to see this is well defined and makes $\cons{Sub}(U)$ into a poset. 

This extends to a functor $\cons{Sub} : \cat{C}^\op \to \ncat{Set}$\footnote{we can also put the codomain to be the category of posets}, if $f: W \to U$ is a morphism in $\cat{C}$, then set $\cons{Sub}(f)(V \subseteq U) = (f^*V \subseteq W)$ to be the subobject given by the pullback
\begin{equation*}
\begin{tikzcd}
	{f^*(V)} & V \\
	W & U
	\arrow[from=1-1, to=1-2]
	\arrow[hook', from=1-1, to=2-1]
	\arrow["\lrcorner"{anchor=center, pos=0.125}, draw=none, from=1-1, to=2-2]
	\arrow[hook', from=1-2, to=2-2]
	\arrow["f", from=2-1, to=2-2]
\end{tikzcd}
\end{equation*}
Note what we have written above is not technically correct, since we should have chosen a representative monomorphism for $V \subseteq U$ in order for the pullback to make sense. This is something we will do often, as it can be easily checked to be well defined, because pullbacks are only defined up to isomorphism anyway, and pullbacks of monomorphisms are monomorphisms.
\end{Def}

\begin{Lemma} \label{lem meet of subobjects}
If $\cat{C}$ is a well-powered category with finite limits, then for every $U \in \cat{C}$, $\cons{Sub}(U)$ is a meet-semilattice. Namely for every pair $V, W \hookrightarrow U$ of subobjects of $U$, the pullback
\begin{equation*}
\begin{tikzcd}
	{V \cap W} & W \\
	V & U
	\arrow[hook, from=2-1, to=2-2]
	\arrow[hook', from=1-2, to=2-2]
	\arrow[hook', from=1-1, to=2-1]
	\arrow[hook, from=1-1, to=1-2]
	\arrow["\lrcorner"{anchor=center, pos=0.125}, draw=none, from=1-1, to=2-2]
\end{tikzcd}
\end{equation*}
is a subobject of $U$ such that $V \cap W \subseteq V$, $V \cap W \subseteq W$ and for any other subobject $X$ such that $X \subseteq V$, $X \subseteq W$, then $V \cap W \subseteq X$. In other words $V \cap W$ is the meet of $V$ and $W$ in the poset $\cons{Sub}(U)$. We call $V \cap W$ the \textbf{intersection} of $V$ and $W$ as subobjects.
\end{Lemma}

\begin{Lemma}[{\cite[Proposition 0.4]{nlab:subobject}}] \label{lem join of subobjects}
If $\cat{C}$ is a well-powered category with finite limits and universal finite colimits (see Section \ref{section girauds theorem}), then for every $U \in \cat{C}$, the subobject poset $\cons{Sub}(U)$ also has binary joins given as follows. If $V, W \subseteq U$ are subobjects of $U$, then consider the pushout
\begin{equation*}
    \begin{tikzcd}
	{V \cap W} & W \\
	V & {V \cup W}
	\arrow[from=1-1, to=1-2]
	\arrow[from=1-1, to=2-1]
	\arrow[from=1-2, to=2-2]
	\arrow[from=2-1, to=2-2]
	\arrow["\lrcorner"{anchor=center, pos=0.125, rotate=180}, draw=none, from=2-2, to=1-1]
\end{tikzcd}
\end{equation*}
Then $V \cup W$ is the join of $V$ and $W$ in $\cons{Sub}(U)$, and we call it the \textbf{union} of $V$ and $W$ as subobjects.
\end{Lemma}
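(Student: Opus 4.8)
The plan is to establish two things: first, that the canonical map $m : V \cup W \to U$ induced by the pushout is a monomorphism, so that $V \cup W$ genuinely defines an element of $\cons{Sub}(U)$; and second, that this element is the join of $V$ and $W$. The map $m$ exists because the two legs $a : V \hookrightarrow U$ and $b : W \hookrightarrow U$ agree after restriction to the intersection: writing $p : V \cap W \to V$ and $q : V \cap W \to W$ for the projections of the pullback square defining $V \cap W$ (Lemma \ref{lem meet of subobjects}), we have $a p = b q$, which is exactly the cocone condition needed to factor through $V \cup W = V +_{V \cap W} W$. The bulk of the work, and the main obstacle, is the monomorphism claim; the join property follows formally once $m$ is known to be monic.

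For the mono claim, write $P = V \cup W$ with pushout legs $u : V \to P$, $v : W \to P$, so $m u = a$ and $m v = b$. I would invoke Lemma \ref{lem map is mono iff slice diagonal iso}: $m$ is monic if and only if the slice diagonal $\Delta_m : P \to P \times_U P$ is an isomorphism. The engine is that finite colimits are universal, which by the finite analogue of Proposition \ref{prop D1 equiv to G1} means every pullback functor $f^\ast$ preserves finite colimits. First I would compute $V \times_U P$ by applying $a^\ast : \cat{C}_{/U} \to \cat{C}_{/V}$ to the object $(P,m) = V +_{V \cap W} W$ of $\cat{C}_{/U}$: since $a^\ast$ preserves this pushout and $a$ is monic (so $V \times_U V \cong V$, and $V \times_U (V \cap W) \cong V \cap W \cong V \times_U W$, using $V \cap W = V \times_U W$ from Lemma \ref{lem meet of subobjects}), the resulting pushout has one leg an identity and collapses to $V \times_U P \cong V$; symmetrically $W \times_U P \cong W$ and $(V \cap W) \times_U P \cong V \cap W$. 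Then, applying $m^\ast$ to $(P,m)$ and using preservation of the pushout together with these computations, I would obtain
\[
P \times_U P \;\cong\; (V \times_U P) +_{(V \cap W) \times_U P} (W \times_U P) \;\cong\; V +_{V \cap W} W \;=\; P .
\]
The delicate point, which I expect to be the real obstacle, is to check that this chain of isomorphisms identifies $\Delta_m$ with the identity, equivalently that the two projections $\pi_0, \pi_1 : P \times_U P \to P$ coincide. Tracking the pushout legs through each $f^\ast$ should show that the maps $V \times_U P \to P$ and $W \times_U P \to P$ are precisely $u$ and $v$, and one must verify the comparison isomorphism is compatible with both projections; granting this, $\Delta_m$ is an isomorphism and $m$ is monic.

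For the join property, suppose $X \hookrightarrow U$ is any subobject with $V \subseteq X$ and $W \subseteq X$. Since $X \to U$ is monic, the chosen factorizations $V \to X$ and $W \to X$ agree after restriction to $V \cap W$, so the universal property of the pushout yields a unique map $P \to X$ over $U$, whence $V \cup W = P \subseteq X$ in $\cons{Sub}(U)$. Conversely $V \subseteq P$ and $W \subseteq P$, because $a = m u$ and $b = m v$ factor the monos $a, b$ through the mono $m$. Thus $P$ is the least upper bound of $V$ and $W$, i.e. their join, and well-poweredness ensures that this join lives in the genuine poset $\cons{Sub}(U)$.
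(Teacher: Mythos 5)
The paper never proves this lemma --- it is stated with a citation to the nLab --- so there is no in-house argument to compare against; the closest relative in the text is Lemma \ref{lem technical johnstone lemma pullback squares}, which establishes a stronger conclusion (the pushout square of monos is also a pullback) under the stronger Giraud hypotheses via effective equivalence relations. Your route, using only universality of finite colimits, is the right one for the weaker hypotheses here, and it is correct. The one step you leave as ``granting this'' closes routinely: pulling the defining pushout back along $m$ gives a comparison isomorphism $\phi : P \to P \times_U P$ whose restrictions along the pushout legs are $\phi u = (u,u)$ and $\phi v = (v,v)$ (because $1_P \times_U u$ composed with the identification $P \times_U V \cong V$ via $(u,1_V)$ is exactly $(u,u)$, and likewise for $v$). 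Since $\Delta_m u = (u,u)$ and $\Delta_m v = (v,v)$ as well, and $u,v$ are jointly epimorphic as legs of a pushout, $\phi = \Delta_m$, so $\Delta_m$ is an isomorphism and Lemma \ref{lem map is mono iff slice diagonal iso} gives that $m$ is monic. Two small points worth making explicit in a write-up: the identification $V \times_U P \cong V$ really is induced by $(1_V,u)$, because the leg $V\times_U(V\cap W)\to V\times_U W$ of the pulled-back span is an isomorphism (both sides are $V\cap W$ since $a$ is monic), and in a pushout with one leg invertible the opposite leg of the square is invertible; and in the join argument the induced map $P \to X$ need not be checked to be monic, since the ordering on $\cons{Sub}(U)$ only requires a factorization over $U$.
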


\begin{Rem} \label{rem categorical logic}
Note that $\cat{C}$ having finite limits and colimits is not enough for $\cons{Sub}$ to take values in lattices. For that to be the case we must also demand that images be preserved by pullbacks. As we ask for $\cons{Sub}$ to take values in more structured lattices (like distributive lattices, Heyting algebras, Boolean algebras), we must demand more structure on the category $\cat{C}$ (like being a coherent category, Heyting category and Boolean category). The study of this correspondence between structure on subobjects and structure on the category is a part of \textbf{categorical logic}. We will not delve too far into this subject, but the reader should be aware of its deep connections with Grothendieck topos theory. See \cite{awodey2009introduction}, \cite[Section D1]{johnstone2002sketches}.
\end{Rem}

\begin{Def} \label{def image}
Given a category $\cat{C}$ and a morphism $f: U \to V$, an \textbf{image} of $f$ is a monomorphism $m : A \hookrightarrow V$ that $f$ factors through such that if $m' : B \hookrightarrow V$ is another monomorphism through which $f$ factors, then there exists a unique monomorphism $i : A \hookrightarrow B$ such that the following diagram commutes
\begin{equation*}
    \begin{tikzcd}
	& X \\
	A && B \\
	& Y
	\arrow[from=1-2, to=2-1]
	\arrow[from=1-2, to=2-3]
	\arrow["i", hook, from=2-1, to=2-3]
	\arrow["m"', hook, from=2-1, to=3-2]
	\arrow["{m'}", hook', from=2-3, to=3-2]
\end{tikzcd}
\end{equation*}
In other words, $[m]$ is the smallest subobject of $V$ through which $f$ factors. If it exists, then it is clearly unique up to unique isomorphism.
\end{Def}

\begin{Def} \label{def regular mono}
We say that a monomorphism $f : U \to V$ in a category $\cat{C}$ is \textbf{regular} if there exist maps $g,h: V \to W$ in $\cat{C}$ such that
\begin{equation*}
\begin{tikzcd}
	U & V & W
	\arrow["f", from=1-1, to=1-2]
	\arrow["g", shift left, from=1-2, to=1-3]
	\arrow["h"', shift right, from=1-2, to=1-3]
\end{tikzcd}
\end{equation*}
is an equalizer.
\end{Def}

\begin{Def} \label{def cokernel pair}
Given a category $\cat{C}$ and a morphism $f : U \to V$ in $\cat{C}$, we call a pushout of the form
\begin{equation*}
    \begin{tikzcd}
	U & V \\
	V & {V +_U V}
	\arrow["f"', from=1-1, to=2-1]
	\arrow["f", from=1-1, to=1-2]
	\arrow["{i_0}"', from=2-1, to=2-2]
	\arrow["{i_1}", from=1-2, to=2-2]
	\arrow["\lrcorner"{anchor=center, pos=0.125, rotate=180}, draw=none, from=2-2, to=1-1]
\end{tikzcd}
\end{equation*}
a \textbf{cokernel pair} of $f$, if it exists.
\end{Def}

\begin{Def} \label{def regular image}
Given a category $\cat{C}$ with cokernel pairs and equalizers and a morphism $f: U \to V$, we call an equalizer of the form
\begin{equation*}
\begin{tikzcd}
	{\im_{\text{reg}}(f)} & V & {V +_U V}
	\arrow["\iota_f", hook, from=1-1, to=1-2]
	\arrow["{i_0}", shift left, from=1-2, to=1-3]
	\arrow["{i_1}"', shift right, from=1-2, to=1-3]
\end{tikzcd}
\end{equation*}
a \textbf{regular image} of $f$.
\end{Def}

\begin{Def} \label{def effective mono}
We say that a monomorphism $f: U \to V$ in a category $\cat{C}$ is \textbf{effective} if the following diagram
\begin{equation*}
\begin{tikzcd}
	U & V & {V +_U V}
	\arrow["f", from=1-1, to=1-2]
	\arrow["{{i_0}}", shift left, from=1-2, to=1-3]
	\arrow["{{i_1}}"', shift right, from=1-2, to=1-3]
\end{tikzcd}   
\end{equation*}
is an equalizer, i.e. $f$ is a regular image of itself.
\end{Def}

\begin{Rem} \label{rem motivation for regular and effective monos}
The differences between monos, regular monos and effective monos might seem abstract and arbitrary at first glance, but there are important differences. Regular and effective monomorphisms behave better in general than arbitrary monomorphisms. To not get ourselves too distracted on this point, we defer to \cite{Yuan2012} and \cite{LowQuotients}.
\end{Rem}

\begin{Lemma} \label{lem sufficient condition for reg mono to be effective}
If $\cat{C}$ is a category with equalizers and cokernel pairs, then a monomorphism in $\cat{C}$ is regular if and only if it is effective.    
\end{Lemma}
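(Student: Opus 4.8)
The statement to prove is Lemma \ref{lem sufficient condition for reg mono to be effective}: in a category $\cat{C}$ with equalizers and cokernel pairs, a monomorphism is regular if and only if it is effective.

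The plan is to prove both implications, with the backward direction being essentially immediate from the definitions. First I would dispatch $(\Leftarrow)$: if $f: U \to V$ is effective, then by Definition \ref{def effective mono} the diagram $U \xrightarrow{f} V \rightrightarrows V +_U V$ (with the two cokernel pair inclusions $i_0, i_1$) is an equalizer. This exhibits $f$ directly as the equalizer of a specific pair of maps out of $V$, which is exactly what Definition \ref{def regular mono} requires for regularity. So effective immediately implies regular, and this holds even without needing cokernel pairs to exist abstractly, since they are assumed present here.

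The substance is in $(\Rightarrow)$. Suppose $f: U \to V$ is a regular monomorphism, so there exist maps $g, h : V \to W$ with $U \xrightarrow{f} V \rightrightarrows W$ an equalizer. I want to show $f$ is effective, i.e. that $f$ is also the equalizer of its \emph{own} cokernel pair $i_0, i_1 : V \rightrightarrows V +_U V$. The key observation is that $i_0 f = i_1 f$ by the defining property of the pushout, so $f$ certainly factors through the equalizer $E$ of $(i_0, i_1)$; conversely I must show this equalizer is no larger than $U$. The main obstacle, and the heart of the argument, is to compare the two universal properties. Since $g f = h f$, the pair $(g,h)$ induces a unique map $u : V +_U V \to W$ out of the cokernel pair with $u i_0 = g$ and $u i_1 = h$. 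Then for any test map $k : Z \to V$ with $i_0 k = i_1 k$, I can postcompose with $u$ to get $g k = u i_0 k = u i_1 k = h k$, so $k$ equalizes $(g,h)$ and therefore factors uniquely through $f$ (as $f$ is the equalizer of $(g,h)$). This shows $f$ satisfies the universal property of the equalizer of $(i_0, i_1)$: every map equalizing the cokernel pair factors uniquely through $f$, using that $f$ is monic for uniqueness.

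I would organize the forward direction as: (1) note $i_0 f = i_1 f$ from the pushout; (2) build $u : V +_U V \to W$ from $(g,h)$ using the pushout universal property; (3) show any $k$ with $i_0 k = i_1 k$ equalizes $(g,h)$ via $u$, hence factors through $f$; (4) conclude uniqueness from $f$ being monic, establishing that $f$ is the equalizer of its cokernel pair. The only delicate point to state carefully is that the factorization obtained in step (3) is genuinely unique, which follows since $f$ is a monomorphism. No calculation beyond these diagram chases is needed, and everything uses only the assumed existence of equalizers and cokernel pairs.
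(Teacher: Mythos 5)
Your proof is correct and follows essentially the same route as the paper: both directions hinge on the induced map $u : V +_U V \to W$ out of the cokernel pair satisfying $u i_0 = g$, $u i_1 = h$, so that anything equalizing $(i_0, i_1)$ also equalizes $(g,h)$ and hence factors through $f$. The only cosmetic difference is that the paper phrases the forward direction via the equalizer object of the cokernel pair and exhibits an isomorphism with $U$, whereas you verify the universal property directly against arbitrary test maps; these are the same argument.
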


\begin{proof}
$(\Leftarrow)$ This is clear, all effective epimorphisms are regular.

$(\Rightarrow)$ Suppose that $f : U \to V$ is a regular monomorphism, given as an equalizer
\begin{equation*}
\begin{tikzcd}
	U & V & W
	\arrow["f", from=1-1, to=1-2]
	\arrow["p", shift left, from=1-2, to=1-3]
	\arrow["q"', shift right, from=1-2, to=1-3]
\end{tikzcd}    
\end{equation*}
Then by the universal property of pushouts we obtain a unique map
\begin{equation*}
\begin{tikzcd}
	U & V \\
	V & {V +_U V} \\
	&& W
	\arrow["f", from=1-1, to=1-2]
	\arrow["f"', from=1-1, to=2-1]
	\arrow["{i_1}", from=1-2, to=2-2]
	\arrow["q", curve={height=-12pt}, from=1-2, to=3-3]
	\arrow["{i_0}", from=2-1, to=2-2]
	\arrow["p"', curve={height=12pt}, from=2-1, to=3-3]
	\arrow["{{\exists! h}}"{description}, dashed, from=2-2, to=3-3]
\end{tikzcd}
\end{equation*}
But if we let
\begin{equation*}
\begin{tikzcd}
	{\im_{\text{reg}}(f)} & V & {V+_UV}
	\arrow["{\iota_f}", from=1-1, to=1-2]
	\arrow["{i_0}", shift left, from=1-2, to=1-3]
	\arrow["{i_1}"', shift right, from=1-2, to=1-3]
\end{tikzcd}
\end{equation*}
denote the regular image, then
\begin{equation*}
   q \iota_f = h i_1 \iota_f = h i_0 \iota_f = p \iota_f
\end{equation*}
so $\iota_f$ factors uniquely through $f$ since it is an equalizer of $p$ and $q$, and it is easy to see that this map provides an inverse to $h$. Thus $f$ is an effective monomorphism.
\end{proof}

\begin{Lemma} \label{lem regular image is image in cat with pushouts and cokernal pairs}
If $\cat{C}$ is a category with pushouts and cokernel pairs, $f : U \to V$ is a morphism in $\cat{C}$, and all monomorphisms in $\cat{C}$ are regular monomorphisms, then the regular image of $f$ is an image of $f$.
\end{Lemma}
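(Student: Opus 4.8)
The plan is to verify directly that the regular image $\iota_f : \im_{\text{reg}}(f) \hookrightarrow V$ satisfies the universal property in Definition \ref{def image}. Recall that $\im_{\text{reg}}(f)$ is the equalizer of the cokernel pair $i_0, i_1 : V \to V +_U V$ of $f$. Since every equalizer is a monomorphism, $\iota_f$ is automatically a monomorphism. Moreover, because $i_0, i_1$ form a cokernel pair we have $i_0 f = i_1 f$, so $f$ equalizes $i_0$ and $i_1$; the universal property of the equalizer then yields a unique factorization $f = \iota_f \circ e_f$ for some $e_f : U \to \im_{\text{reg}}(f)$. Thus $f$ factors through $\iota_f$, and it only remains to show that $\iota_f$ is the smallest subobject of $V$ with this property.

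So suppose $m : B \hookrightarrow V$ is any monomorphism through which $f$ factors, say $f = m g$ with $g : U \to B$. I would first form the cokernel pair $j_0, j_1 : V \to V +_B V$ of $m$, which exists by hypothesis. The key computation is $j_0 f = j_0 m g = j_1 m g = j_1 f$, so $j_0, j_1$ also equalize the pair $(f,f)$; by the universal property of the pushout $V +_U V$ there is a unique comparison map $\phi : V +_U V \to V +_B V$ with $\phi i_0 = j_0$ and $\phi i_1 = j_1$. Using that $\iota_f$ equalizes $i_0$ and $i_1$, we obtain $j_0 \iota_f = \phi i_0 \iota_f = \phi i_1 \iota_f = j_1 \iota_f$, so $\iota_f$ equalizes $j_0$ and $j_1$.

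Now the hypothesis that all monomorphisms are regular is what makes the argument close. Because $m$ is regular it is the equalizer of some pair $p, q : V \to W$ with $p m = q m$; factoring this pair through the cokernel pair via the universal property of $V +_B V$ (there is a unique $\psi : V +_B V \to W$ with $\psi j_0 = p$ and $\psi j_1 = q$) and combining with the previous paragraph gives $p \iota_f = \psi j_0 \iota_f = \psi j_1 \iota_f = q \iota_f$. Hence $\iota_f$ equalizes $p$ and $q$, and since $m$ is the equalizer of $p, q$, there is a unique $i : \im_{\text{reg}}(f) \to B$ with $m i = \iota_f$. Finally $i$ is a monomorphism, since $m i = \iota_f$ is a monomorphism and $m i$ monic forces $i$ monic. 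This $i$ is exactly the comparison map required by Definition \ref{def image}, and its uniqueness is forced by $m$ being monic.

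The step I expect to be the main obstacle is the middle one: producing the comparison map $\phi$ between the two cokernel pairs and checking that it intertwines them correctly, so that equalizing $(i_0, i_1)$ forces equalizing $(j_0, j_1)$. Everything else is a direct application of universal properties, but this bridging step is where the hypotheses (existence of cokernel pairs together with regularity of every monomorphism) are genuinely used. An alternative would be to invoke Lemma \ref{lem sufficient condition for reg mono to be effective} to replace ``$m$ regular'' by ``$m$ effective'' and then compare cokernel pairs directly, but routing the argument through an arbitrary defining pair $(p,q)$ avoids assuming general equalizers beyond the one already present in the definition of the regular image.
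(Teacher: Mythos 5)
Your proof is correct and takes essentially the same route as the paper's: compare the cokernel pair of $f$ with the data attached to $m$ via the universal property of the pushout, then use that $m$ is an equalizer to factor $\iota_f$ through it. The only difference is cosmetic — the paper first upgrades ``regular'' to ``effective'' via Lemma \ref{lem sufficient condition for reg mono to be effective} and compares cokernel pairs directly, whereas you work with an arbitrary defining fork $(p,q)$ for $m$ at the cost of one extra comparison map $\psi$.
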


\begin{proof}
If $f = me$ for some monomorphism $m : A \hookrightarrow V$, then since all monomorphisms in $\cat{C}$ are regular, $m$ is regular, and by Lemma \ref{lem sufficient condition for reg mono to be effective}, $m$ is effective. So the following diagram is an equalizer
\begin{equation*}
    \begin{tikzcd}[ampersand replacement=\&]
	A \& V \& {V+_AV}
	\arrow["m", hook, from=1-1, to=1-2]
	\arrow["s", shift left=2, from=1-2, to=1-3]
	\arrow["t"', shift right=2, from=1-2, to=1-3]
\end{tikzcd}
\end{equation*}
But by the universal property of pushouts, we get a unique map $h : V+_U V \to V+_A V$ making the following solid diagram commute
\begin{equation*}
 \begin{tikzcd}[ampersand replacement=\&]
	{\im_{\reg}(f)} \& V \& {V +_U V} \\
	A \& V \& {V+_AV}
	\arrow["i", hook, from=1-1, to=1-2]
	\arrow["k"', dashed, hook, from=1-1, to=2-1]
	\arrow["p", shift left=2, from=1-2, to=1-3]
	\arrow["q"', shift right=2, from=1-2, to=1-3]
	\arrow[equals, from=1-2, to=2-2]
	\arrow["h", from=1-3, to=2-3]
	\arrow["m", hook, from=2-1, to=2-2]
	\arrow["s", shift left=2, from=2-2, to=2-3]
	\arrow["t"', shift right=2, from=2-2, to=2-3]
\end{tikzcd}
\end{equation*}
Now $m$ is an equalizer, and $si = hpi = hqi = ti$, so by the universal property of the equalizer we obtain a uniqued dotted map $k$ making the above diagram commute. Since $i$ is a monomorphism, so is $k$. Thus, every monomorphism factoring through $f$ must factor through $i$, and hence $\im_\reg(f) \hookrightarrow V$ is an image of $f$.
\end{proof}

\begin{Lemma} \label{lem monos are regular in Set}
In $\ncat{Set}$ all monomorphisms are regular and effective.
\end{Lemma}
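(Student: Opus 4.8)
The plan is to show directly that every monomorphism of sets, i.e. every injective function $f : U \to V$, is effective (Definition \ref{def effective mono}), and then invoke Lemma \ref{lem sufficient condition for reg mono to be effective} to conclude that it is also regular, using the fact that $\ncat{Set}$ is bicomplete and hence has both equalizers and cokernel pairs.

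First I would recall the explicit description of the cokernel pair (Definition \ref{def cokernel pair}) of $f$ in $\ncat{Set}$. The pushout $V +_U V$ is the quotient of the disjoint union $V \sqcup V$ by the smallest equivalence relation identifying $i_0(f(u)) \sim i_1(f(u))$ for every $u \in U$, where $i_0, i_1 : V \to V +_U V$ are the two inclusion maps. The crucial computation is to determine exactly when $i_0(v) = i_1(v')$ for $v, v' \in V$. Because $f$ is injective, distinct elements of $U$ produce distinct elements of $f(U)$, so the generated equivalence relation performs no collapsing beyond the prescribed identifications; explicitly, one checks that $i_0(v) = i_1(v')$ holds if and only if $v = v' \in f(U)$, and that $i_0(v) = i_0(v')$ holds if and only if $v = v'$.

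Next I would verify that $f$ is the equalizer of $i_0$ and $i_1$. The equalizer consists of those $v \in V$ with $i_0(v) = i_1(v)$, which by the previous step is exactly the image $f(U) \subseteq V$; since $f$ is injective it restricts to a bijection $U \cong f(U)$, and it remains to confirm the universal property: any $g : Z \to V$ with $i_0 g = i_1 g$ lands in $f(U)$ and therefore factors uniquely through $f$. This exhibits the diagram of Definition \ref{def effective mono} as an equalizer, so $f$ is effective.

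Finally, since $\ncat{Set}$ has all equalizers and cokernel pairs, Lemma \ref{lem sufficient condition for reg mono to be effective} applies and tells us that effective and regular monomorphisms coincide in $\ncat{Set}$; hence $f$ is regular as well. The only mildly delicate point — and the main thing to get right — is the explicit analysis of the equivalence relation defining $V +_U V$, in particular the claim that injectivity of $f$ prevents any extra identifications, since this is precisely what forces the equalizer of $i_0$ and $i_1$ to be $f(U)$ rather than something strictly larger.
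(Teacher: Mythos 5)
Your proof is correct and follows essentially the same route as the paper's: both establish effectiveness by showing that an injective function is the equalizer of its cokernel pair, with regularity then being immediate. Your version is somewhat more explicit about the structure of the pushout $V +_U V$ (which the paper's terser argument implicitly relies on); the only small quibble is that the absence of extra identifications in the generated equivalence relation does not actually depend on injectivity of $f$ — injectivity is what you need at the next step, to identify $U$ with the equalizer $f(U)$.
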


\begin{proof}
Let $i : S \hookrightarrow T$ be an injective function. We want to show that the following diagram is an equalizer
\begin{equation*}
    \begin{tikzcd}[ampersand replacement=\&]
	S \& T \& {T+_ST}
	\arrow["i", hook, from=1-1, to=1-2]
	\arrow["p", shift left, from=1-2, to=1-3]
	\arrow["q"', shift right, from=1-2, to=1-3]
\end{tikzcd}
\end{equation*}
where $p, q: T \to T+_S T$ are just the quotient maps. Suppose that $f : A \to T$ is a function such that $pf = qf$. Then for every $a \in A$, there exists an $x_a \in S$ such that $i(x_a) = f(a)$, and since $i$ is injective, this $x_a$ is unique. Thus the map $x : A \to S$ given by $x(a) = x_a$ defines a unique map such that $ix = a$, and hence $i$ is an equalizer.
\end{proof}

\begin{Cor} \label{cor monos in presheaf toposes}
Monomorphisms in any presheaf topos $\Pre(\cat{C})$ are regular and effective.
\end{Cor}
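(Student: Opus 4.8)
The statement to prove is Corollary \ref{cor monos in presheaf toposes}: monomorphisms in any presheaf topos $\Pre(\cat{C})$ are regular and effective.

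The plan is to leverage the fact that all the relevant categorical structure in $\Pre(\cat{C})$ is computed pointwise (objectwise) in $\ncat{Set}$, combined with Lemma \ref{lem monos are regular in Set}, which already establishes the result for $\ncat{Set}$. First I would recall that limits and colimits in a presheaf category $\Pre(\cat{C}) = \Fun(\cat{C}^\op, \ncat{Set})$ are computed pointwise; in particular equalizers, cokernel pairs (which are certain pushouts), and monomorphisms are all detected and constructed objectwise. A natural transformation $f \colon X \to Y$ is a monomorphism in $\Pre(\cat{C})$ if and only if each component $f_U \colon X(U) \to Y(U)$ is an injective function, i.e. a monomorphism in $\ncat{Set}$.

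The key steps, in order, are as follows. Given a monomorphism $f \colon X \hookrightarrow Y$ of presheaves, I would form its cokernel pair, obtaining the pushout $Y +_X Y$ together with the two coprojections $i_0, i_1 \colon Y \to Y +_X Y$; since pushouts are computed pointwise, for each $U \in \cat{C}$ the square of sets
\begin{equation*}
    \begin{tikzcd}
	{X(U)} & {Y(U)} \\
	{Y(U)} & {(Y +_X Y)(U)}
	\arrow["{f_U}", from=1-1, to=1-2]
	\arrow["{f_U}"', from=1-1, to=2-1]
	\arrow["{(i_1)_U}", from=1-2, to=2-2]
	\arrow["{(i_0)_U}"', from=2-1, to=2-2]
\end{tikzcd}
\end{equation*}
is a pushout of sets with $(Y+_XY)(U) \cong Y(U) +_{X(U)} Y(U)$. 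By Lemma \ref{lem monos are regular in Set}, since $f_U$ is an injective function for each $U$, the diagram $X(U) \to Y(U) \rightrightarrows Y(U) +_{X(U)} Y(U)$ is an equalizer in $\ncat{Set}$. I would then observe that an equalizer in $\Pre(\cat{C})$ is likewise computed objectwise, so the diagram $X \to Y \rightrightarrows Y +_X Y$ is an equalizer of presheaves, because it is an equalizer at every object $U$. This is exactly the statement that $f$ is an effective monomorphism (Definition \ref{def effective mono}), and effective monomorphisms are in particular regular monomorphisms (Definition \ref{def regular mono}), which handles both claims at once.

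I expect no substantial obstacle here; the corollary is a routine transport of Lemma \ref{lem monos are regular in Set} across the pointwise computation of (co)limits in presheaf categories. The only point requiring mild care is to justify cleanly that equalizers, pushouts, and the monomorphism property are all genuinely pointwise in $\Fun(\cat{C}^\op, \ncat{Set})$, so that an objectwise equalizer diagram assembles into an equalizer of presheaves; this is standard and follows from limits and colimits in functor categories being computed objectwise. Thus the proof reduces to applying Lemma \ref{lem monos are regular in Set} at each object and reassembling.
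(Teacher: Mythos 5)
Your proof is correct and is exactly the argument the paper intends: the corollary is stated without proof immediately after Lemma \ref{lem monos are regular in Set}, relying on the fact that monomorphisms, pushouts, and equalizers in $\Pre(\cat{C})$ are all computed objectwise (Lemma \ref{lem presheaves (co)limits computed objectwise} and Remark \ref{rem epis of presheaves computed objectwise}), so the $\ncat{Set}$ case transports pointwise. Your write-up just makes this routine transport explicit.
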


\begin{Def} \label{def subobject classifier}
Given a category $\cat{C}$ with finite limits, we say a monomorphism $t: 1 \hookrightarrow \Omega$ out of the terminal object of $\cat{C}$ is a \textbf{subobject classifier} for $\cat{C}$ if given any object $U \in \cat{C}$ and any subobject $V \hookrightarrow U$, there is a unique morphism $\chi_V: U \to \Omega$ such that the following diagram commutes and is a pullback:
\begin{equation*}
    \begin{tikzcd}
	V & 1 \\
	U & \Omega
	\arrow[hook, from=1-1, to=2-1]
	\arrow["{\chi_V}"', from=2-1, to=2-2]
	\arrow["t", hook, from=1-2, to=2-2]
	\arrow[from=1-1, to=1-2]
	\arrow["\lrcorner"{anchor=center, pos=0.125}, draw=none, from=1-1, to=2-2]
\end{tikzcd}
\end{equation*}
\end{Def}

\begin{Lemma}
Given a (locally small) category $\cat{C}$ with finite limits, then $\Omega \in \cat{C}$ is a subobject classifier for $\cat{C}$ if and only if it is a representing object for the subobject presheaf defined above, namely there is an isomorphism
\begin{equation}
    \cons{Sub}(U) \cong \cat{C}(U, \Omega), 
\end{equation}
natural in $U$.
\end{Lemma}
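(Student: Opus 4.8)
The plan is to establish the natural isomorphism $\cons{Sub}(U) \cong \cat{C}(U,\Omega)$ by directly unwinding the defining pullback property of the subobject classifier, and conversely by showing that a representing object for the subobject presheaf yields a subobject classifier via the Yoneda lemma. The two directions are essentially formal once one is careful about naturality, so the main work is in tracking the bookkeeping rather than any deep idea.

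For the forward direction, I would assume $t : 1 \hookrightarrow \Omega$ is a subobject classifier and define a map $\cons{Sub}(U) \to \cat{C}(U,\Omega)$ by sending a subobject $V \hookrightarrow U$ to its classifying map $\chi_V$, which exists and is unique by Definition \ref{def subobject classifier}. The inverse sends a morphism $g : U \to \Omega$ to the subobject obtained by pulling back $t$ along $g$:
\begin{equation*}
\begin{tikzcd}
	{g^*(1)} & 1 \\
	U & \Omega
	\arrow[hook, from=1-1, to=2-1]
	\arrow[from=1-1, to=1-2]
	\arrow["t", hook, from=1-2, to=2-2]
	\arrow["g"', from=2-1, to=2-2]
	\arrow["\lrcorner"{anchor=center, pos=0.125}, draw=none, from=1-1, to=2-2]
\end{tikzcd}
\end{equation*}
That these two assignments are mutually inverse follows from the uniqueness clause: $\chi_{g^*(1)} = g$ because $g$ itself exhibits the pullback square, and $\chi_V^*(1) = V$ because the classifying square of $V$ is a pullback of $t$. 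I would then check naturality in $U$: for $f : U' \to U$, the outer rectangle in the pasted pullback diagram (using the pasting lemma, which the paper assumes as background) shows that $\chi_V \circ f = \chi_{f^*(V)}$, which is exactly the compatibility of the bijection with the restriction functors $\cons{Sub}(f)$ and $\cat{C}(f,\Omega)$ from Definition \ref{def subobject poset}.

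For the converse direction, I would assume a natural isomorphism $\cons{Sub}(U) \cong \cat{C}(U,\Omega)$ and apply the Yoneda lemma: the identity element $1_\Omega \in \cat{C}(\Omega,\Omega)$ corresponds under the isomorphism (at $U = \Omega$) to a universal subobject $t : \Omega_0 \hookrightarrow \Omega$. Naturality then forces every subobject $V \hookrightarrow U$ to arise as the pullback of $t$ along the morphism $\chi_V$ corresponding to $V$, and uniqueness of $\chi_V$ follows from injectivity of the bijection. The only subtlety, which I expect to be the mildest obstacle, is verifying that the universal subobject $\Omega_0$ is in fact the terminal object $1$; this comes from evaluating the isomorphism at $U = 1$, where $\cons{Sub}(1)$ has a top element corresponding to $1_1$, and tracing through naturality to identify the domain of $t$ with $1$. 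All of this is routine diagram-chasing with the pasting lemma and Yoneda, so I would present it compactly rather than belabor each square.
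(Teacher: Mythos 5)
The paper states this lemma without giving any proof, so there is nothing to compare against; your argument is the standard one and is essentially correct. The forward direction (classifying map $\chi_V$ one way, pullback of $t$ the other way, mutual inverses by the uniqueness clause, naturality by the pasting lemma) is complete as written, modulo the routine check that $\chi_V$ is independent of the chosen representative of the subobject class, which again follows from uniqueness.

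The one place where your sketch is thinner than the argument actually requires is the identification of the domain of the universal subobject with the terminal object. Writing $m : \Omega_0 \hookrightarrow \Omega$ for the subobject corresponding to $1_\Omega$ under Yoneda, evaluating at $U = 1$ only produces a map $1 \to \Omega_0$ (as the top arrow of the pullback of $m$ along $\tau_1 := \phi_1(1_1)$); it does not by itself show $\Omega_0 \cong 1$. The missing ingredient is that $m^*(\Omega_0) = 1_{\Omega_0}$, since pulling back a monomorphism along itself yields the maximal subobject. Combined with the naturality computation $\tau_U = \tau_1 \circ {!}_U$ and injectivity of $\phi_{\Omega_0}$, this gives $m = \tau_{\Omega_0} = \tau_1 \circ {!}_{\Omega_0}$, so $m$ and $\tau_1$ coincide as subobjects of $\Omega$ and hence $\Omega_0 \cong 1$. (Equivalently, one shows $\Omega_0$ is terminal: every $U$ admits a map to $\Omega_0$ because $\tau_U$ pulls $\Omega_0$ back to the maximal subobject of $U$, and any two maps $a,b : U \to \Omega_0$ satisfy $ma = \tau_U = mb$ by the same computation, whence $a = b$ since $m$ is monic.) With that step made explicit your proof is complete.
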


\begin{Ex}
In the category $\cat{C} = \ncat{Set}$, it is easy to see that the subobject classifier is the set $2 = \{0, 1 \}$, with $t: 1 \to 2$ picking out $0$. Given a subset $V \subseteq U$, the map $\chi_V: U \to 2$ is given by $$\chi_V(u) = \begin{cases}
0 & u \notin V \\
1 & u \in V
\end{cases}.$$
It is then easy to check that $t: 1 \to 2$ is a subobject classifier for $\ncat{Set}$.
\end{Ex}

\begin{Ex}
Every presheaf topos $\Pre(\cat{C})$ has a subobject classifier $\Omega$, defined as the presheaf which sends $U \in \cat{C}$ to the set of sieves $R \hookrightarrow y(U)$. The map $* \to \Omega$ is given by picking out the maximal sieve $y(U)$.
\end{Ex}

\begin{Ex}
Given a Grothendieck site $(\cat{C}, J)$, we say that a sieve $R \hookrightarrow y(U)$ is $J$-closed if it has the property that if for every map $f : V \to U$, the sieve $f^*R$ is $J$-covering, then $R$ is $J$-covering.  The sheaf topos $\ncat{Sh}(\cat{C}, j)$ has a subobject classifier $\Omega$, defined as the sheaf which sends $U \in \cat{C}$ to the set of $J$-closed sieves on $U$. Again $* \to \Omega$ picks out the maximal sieve.
\end{Ex}

\subsubsection{Epis}

\begin{Def} \label{def quotient object}
Let $\cat{C}$ be a category and $U \in \cat{C}$. A \textbf{quotient object} of $U$ is an equivalence class of epimorphisms $q : U \twoheadrightarrow V$, where we identify two epimorphisms $q$ and $q' : U \twoheadrightarrow V'$ if there exists an isomorphism $\varphi : V \to V'$ such that $\varphi q = q'$.
\end{Def}

\begin{Def}
Given a category $\cat{C}$ and a morphism $f: U \to V$, the \textbf{coimage} of $f$ is the largest quotient object $U \twoheadrightarrow \text{coim}(f)$ through which $f$ factors.
\end{Def}

\begin{Def} \label{def regular epi}
We say that an epimorphism $f : V \to U$ in a category $\cat{C}$ is \textbf{regular} if there exist maps $g,h: W \to V$ in $\cat{C}$ such that
\begin{equation*}
\begin{tikzcd}[ampersand replacement=\&]
	W \& V \& U
	\arrow["g", shift left=2, from=1-1, to=1-2]
	\arrow["h"', shift right=2, from=1-1, to=1-2]
	\arrow["f", two heads, from=1-2, to=1-3]
\end{tikzcd}
\end{equation*}
is a coequalizer.
\end{Def}

\begin{Def} \label{def kernel pair}
Given a category $\cat{C}$ and a morphism $f: U \to V$ in $\cat{C}$, we call a pullback square of the form
\begin{equation*}
\begin{tikzcd}
	{U \times_V U} & U \\
	U & V
	\arrow["{p_1}", from=1-1, to=1-2]
	\arrow["{p_0}"', from=1-1, to=2-1]
	\arrow["\lrcorner"{anchor=center, pos=0.125}, draw=none, from=1-1, to=2-2]
	\arrow["f", from=1-2, to=2-2]
	\arrow["f"', from=2-1, to=2-2]
\end{tikzcd}
\end{equation*}
a \textbf{kernel pair} of $f$, if it exists.
\end{Def}

\begin{Def} \label{def regular coimage}
Given a category $\cat{C}$ with pullbacks and coequalizers and a morphism $f: U \to V$, we call a coequalizer of the form
\begin{equation*}
\begin{tikzcd}
	{U \times_V U} & U & {\text{coim}_{\text{reg}}(f)}
	\arrow["{p_1}"', shift right=2, from=1-1, to=1-2]
	\arrow["{p_0}", shift left=2, from=1-1, to=1-2]
	\arrow["{e_f}", two heads, from=1-2, to=1-3]
\end{tikzcd}
\end{equation*}
a \textbf{regular coimage} of $f$.
\end{Def}

\begin{Def} \label{def effective epimorphism}
We say that an epimorphism $f: V \to U$ in a category $\cat{C}$ is \textbf{effective} if the following diagram
\begin{equation*}
\begin{tikzcd}
	V & U & {U +_V U}
	\arrow["f", from=1-1, to=1-2]
	\arrow["{{i_0}}", shift left, from=1-2, to=1-3]
	\arrow["{{i_1}}"', shift right, from=1-2, to=1-3]
\end{tikzcd}   
\end{equation*}
is an equalizer, i.e. $f$ is a regular coimage of itself.
\end{Def}

\begin{Lemma} \label{lem sufficient condition for reg epi to be effective}
If $\cat{C}$ is a category with coequalizers and kernel pairs, then an epimorphism in $\cat{C}$ is regular if and only if it is effective. 
\end{Lemma}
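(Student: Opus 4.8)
The plan is to dualize the proof of Lemma \ref{lem sufficient condition for reg mono to be effective}, since effective epimorphisms are related to kernel pairs and coequalizers exactly as effective monomorphisms are related to cokernel pairs and equalizers. The hypothesis that $\cat{C}$ has kernel pairs and coequalizers is precisely what is needed to form the regular coimage $\text{coim}_{\text{reg}}(f)$ of a morphism, so both constructions appearing in the statement are available.

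The easy direction is $(\Leftarrow)$: by definition (interpreting ``$f$ is a regular coimage of itself'') an effective epimorphism is the coequalizer of its kernel pair, and any coequalizer is in particular a regular epimorphism by Definition \ref{def regular epi}.

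For $(\Rightarrow)$, let $f : V \to U$ be a regular epimorphism, exhibited as a coequalizer of some pair $g, h : W \to V$. First I would form the kernel pair $p_0, p_1 : V \times_U V \to V$ and its coequalizer $e_f : V \twoheadrightarrow \text{coim}_{\text{reg}}(f)$. Since $f p_0 = f p_1$, the universal property of the coequalizer $e_f$ produces a unique map $m : \text{coim}_{\text{reg}}(f) \to U$ with $m e_f = f$; the goal is to show $m$ is an isomorphism, which says exactly that $f$ is (isomorphic to) the coequalizer of its own kernel pair, i.e. is effective. To produce the inverse, I would exploit that $f$ is itself a coequalizer. Since $f g = f h$, the universal property of the pullback $V \times_U V$ yields a unique map $k : W \to V \times_U V$ with $p_0 k = g$ and $p_1 k = h$. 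Then
$$e_f g = e_f p_0 k = e_f p_1 k = e_f h,$$
using $e_f p_0 = e_f p_1$. Thus $e_f$ coequalizes $g$ and $h$, so it factors uniquely through the coequalizer $f$, giving $\ell : U \to \text{coim}_{\text{reg}}(f)$ with $\ell f = e_f$. Finally $m \ell f = m e_f = f$ together with $f$ being epi give $m \ell = 1_U$, while $\ell m e_f = \ell f = e_f$ together with $e_f$ being epi give $\ell m = 1_{\text{coim}_{\text{reg}}(f)}$. Hence $m$ is an isomorphism and $f$ is effective.

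I expect no genuine obstacle here: the argument is formally dual to Lemma \ref{lem sufficient condition for reg mono to be effective}, with equalizers and cokernel pairs replaced throughout by coequalizers and kernel pairs. The only point requiring a moment's care is the bookkeeping of the two comparison maps $m$ and $\ell$ and verifying that they are mutually inverse, which uses that both $f$ and $e_f$ are epimorphisms.
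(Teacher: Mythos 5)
Your proof is correct and is exactly the route the paper takes: the paper's own proof consists of the single line "by the dual of the proof of Lemma \ref{lem sufficient condition for reg mono to be effective}," and your argument is precisely that dualization, carried out in detail (kernel pair and coequalizer replacing cokernel pair and equalizer, with the comparison maps $m$ and $\ell$ shown to be mutually inverse via epi-cancellation). If anything, your write-up is more careful than the paper's sketch of the mono case, and all the steps check out.
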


\begin{proof}
By the dual of the proof of Lemma \ref{lem sufficient condition for reg mono to be effective}.
\end{proof}

\begin{Lemma} \label{lem regular coimage is image for cats with pullbacks and kernel pairs}
If $\cat{C}$ is a category with pullbacks and kernel pairs, $f : U \to V$ is a morphism in $\cat{C}$, and all epimorphisms in $\cat{C}$ are regular epimorphisms, then the regular coimage of $f$ is a coimage of $f$.
\end{Lemma}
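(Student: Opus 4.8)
The final statement to prove is Lemma~\ref{lem regular coimage is image for cats with pullbacks and kernel pairs}: if $\cat{C}$ has pullbacks and kernel pairs, $f : U \to V$ is a morphism, and every epimorphism in $\cat{C}$ is a regular epimorphism, then the regular coimage of $f$ is a coimage of $f$. The statement is explicitly flagged as the dual of Lemma~\ref{lem regular image is image in cat with pushouts and cokernal pairs}, so the plan is simply to dualize that proof. Recall that the coimage is the \emph{largest} quotient object through which $f$ factors (Definition~\ref{def quotient object} and the coimage definition), which is precisely the dual notion to the image being the smallest subobject through which $f$ factors. So I would mirror the image proof step by step, reversing all arrows and swapping pullbacks for pushouts, monos for epis, equalizers for coequalizers, and cokernel pairs for kernel pairs.

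First I would set up the regular coimage $e_f : U \twoheadrightarrow \coim_{\reg}(f)$ as the coequalizer of the kernel pair $p_0, p_1 : U \times_V U \rightrightarrows U$, as in Definition~\ref{def regular coimage}, and note that $f$ factors through $e_f$ by the universal property of the coequalizer (since $fp_0 = fp_1$ by the kernel pair square). Then, given any factorization $f = m e$ through an epimorphism $e : U \twoheadrightarrow A$, the hypothesis that all epimorphisms in $\cat{C}$ are regular, combined with Lemma~\ref{lem sufficient condition for reg epi to be effective} (regular epis are effective when kernel pairs and coequalizers exist), tells me that $e$ is effective, hence $e$ is the coequalizer of \emph{its own} kernel pair. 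The core of the argument is then to produce a unique epimorphism $\coim_{\reg}(f) \twoheadrightarrow A$ through which $e_f$ factors, establishing that $\coim_{\reg}(f)$ is the largest such quotient.

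The key diagram, dual to the one in Lemma~\ref{lem regular image is image in cat with pushouts and cokernal pairs}, would use the universal property of the pullback $U \times_V U$ to obtain a canonical comparison map $U \times_A U \to U \times_V U$ between the two kernel pairs (since $f = me$ factoring through $A$ means every pair identified by $e$ is identified by $f$). I would then invoke the universal property of $e_f$ as the coequalizer of the $V$-kernel pair to get the desired factorization map $k : \coim_{\reg}(f) \to A$, and argue $k$ is an epimorphism because $e_f = k \circ (\text{something})$ forces it, following the dual of how $k$ was shown to be a monomorphism in the image proof. The main obstacle, as in the dual, is checking that the comparison of kernel pairs interacts correctly with the two coequalizer universal properties so that $k$ is both well-defined and epic; this is a careful but routine diagram chase, with no genuinely new idea beyond what appears in the proof of Lemma~\ref{lem regular image is image in cat with pushouts and cokernal pairs}. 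Since the excerpt already supplies that dual argument in full, I expect the proof to read simply ``By the dual of the proof of Lemma~\ref{lem regular image is image in cat with pushouts and cokernal pairs}.''
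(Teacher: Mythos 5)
Your top-level strategy coincides with the paper's: the proof given there is literally ``By the dual of the proof of Lemma \ref{lem regular image is image in cat with pushouts and cokernal pairs}.'' However, the way you spell out that dualization contains a concrete error that would derail the argument if written in full.

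In the image proof the comparison map $k$ goes \emph{from} the regular image \emph{to} the arbitrary subobject $A$, exhibiting $\im_{\reg}(f)$ as the smallest subobject through which $f$ factors. Dually, the comparison map must go \emph{from} the arbitrary quotient $A$ \emph{to} $\coim_{\reg}(f)$: one needs $k : A \to \coim_{\reg}(f)$ with $k e = e_f$, which is what ``largest quotient through which $f$ factors'' amounts to here (check it in $\ncat{Set}$: the kernel pair of $e$ is contained in that of $f$, so $A \cong U/{\sim_e}$ surjects onto $U/{\sim_f}$, not the other way around). You instead propose a map $k : \coim_{\reg}(f) \to A$ obtained from ``the universal property of $e_f$ as the coequalizer of the $V$-kernel pair.'' That universal property is not applicable: to factor a map through $e_f$ you need a morphism out of $U$ coequalizing $p_0, p_1 : U \times_V U \rightrightarrows U$, and the only candidate, $e$, satisfies $m e p_0 = f p_0 = f p_1 = m e p_1$ but not $e p_0 = e p_1$ unless $m$ is already known to be a monomorphism --- which is not a hypothesis, and is in fact the kind of statement one proves downstream (Lemma \ref{lem epimono factorization for giraud's axioms}). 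The correct dual uses the \emph{other} coequalizer: since all epimorphisms are regular, $e$ is effective (Lemma \ref{lem sufficient condition for reg epi to be effective}), hence the coequalizer of its own kernel pair $s,t : U \times_A U \rightrightarrows U$. The universal property of the pullback $U \times_V U$ gives $h$ with $p_0 h = s$ and $p_1 h = t$ (because $f s = m e s = m e t = f t$), so $e_f s = e_f p_0 h = e_f p_1 h = e_f t$, and the coequalizer property of $e$ then produces the unique $k : A \to \coim_{\reg}(f)$ with $k e = e_f$; it is an epimorphism because $e_f$ is. With the arrows turned around, your argument becomes the intended dual.
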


\begin{proof}
By the dual of the proof of Lemma \ref{lem regular image is image in cat with pushouts and cokernal pairs}.
\end{proof}

\begin{Lemma} \label{lem epis are regular in set}
In $\ncat{Set}$, all epimorphisms are regular and effective.
\end{Lemma}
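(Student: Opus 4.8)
The plan is to dualize the proof of Lemma \ref{lem monos are regular in Set}. Since $\ncat{Set}$ has all coequalizers and kernel pairs, Lemma \ref{lem sufficient condition for reg epi to be effective} guarantees that an epimorphism in $\ncat{Set}$ is regular if and only if it is effective, so it suffices to establish either property; I would prove effectiveness directly and then deduce regularity from that lemma.

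Concretely, let $f : S \twoheadrightarrow T$ be a surjective function and form its kernel pair
\begin{equation*}
\begin{tikzcd}
	{S \times_T S} & S \\
	S & T
	\arrow["{p_1}", from=1-1, to=1-2]
	\arrow["{p_0}"', from=1-1, to=2-1]
	\arrow["\lrcorner"{anchor=center, pos=0.125}, draw=none, from=1-1, to=2-2]
	\arrow["f", from=1-2, to=2-2]
	\arrow["f"', from=2-1, to=2-2]
\end{tikzcd}
\end{equation*}
where $S \times_T S = \{ (s,s') \in S \times S \, : \, f(s) = f(s') \}$ and $p_0, p_1$ are the two projections. The first step is to observe that $f p_0 = f p_1$ by the definition of the pullback, so $f$ is a candidate coequalizer. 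The second step is to interpret the universal test condition: given any $g : S \to A$ with $g p_0 = g p_1$, the equation $g p_0 = g p_1$ says precisely that $g$ is constant on the fibers of $f$, i.e. $f(s) = f(s')$ implies $g(s) = g(s')$.

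The final step is to construct the induced map and check uniqueness. Since $f$ is surjective, every $t \in T$ has a nonempty fiber, so I would pick any $s_t \in f^{-1}(t)$ and set $h(t) = g(s_t)$; this assignment is independent of the choice because any two elements of the same fiber constitute a point of $S \times_T S$, on which $g$ agrees. Then $hf = g$, and $h$ is unique because $f$ is an epimorphism. Hence $f$ is the coequalizer of its kernel pair, i.e. $f$ is an effective epimorphism, and by Lemma \ref{lem sufficient condition for reg epi to be effective} it is therefore also regular. I anticipate no genuine obstacle here, as the argument is entirely routine and exactly parallel to the monomorphism case; the only point requiring a moment's care is the well-definedness of $h$, which is precisely where surjectivity and the kernel-pair condition combine.
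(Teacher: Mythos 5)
Your proof is correct and is precisely the dualization the paper itself invokes (its proof of Lemma \ref{lem epis are regular in set} simply says ``by the dual of the proof of Lemma \ref{lem monos are regular in Set}''): you exhibit a surjection as the coequalizer of its kernel pair, which gives effectiveness and hence regularity. The only cosmetic remark is that regularity follows immediately from effectiveness by definition, so the appeal to Lemma \ref{lem sufficient condition for reg epi to be effective} is not strictly needed in that direction.
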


\begin{proof}
By the dual of the proof of Lemma \ref{lem monos are regular in Set}.
\end{proof}

\begin{Cor}
In any presheaf topos $\Pre(\cat{C})$, all epimorphisms are regular and effective.
\end{Cor}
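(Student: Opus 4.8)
The plan is to reduce everything to the pointwise situation in $\ncat{Set}$, exploiting the two basic facts about presheaf categories: first, that both limits and colimits in $\Pre(\cat{C})$ are computed objectwise, and second, that a natural transformation is an epimorphism precisely when each of its components is. The statement then follows by applying Lemma \ref{lem epis are regular in set} componentwise, exactly as Corollary \ref{cor monos in presheaf toposes} follows from Lemma \ref{lem monos are regular in Set}.

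First I would record the pointwise detection of epimorphisms. Recall that $\Pre(\cat{C}) = \Fun(\cat{C}^\op, \ncat{Set})$, and that for each object $U \in \cat{C}$ the evaluation functor $\mathrm{ev}_U : \Pre(\cat{C}) \to \ncat{Set}$, $X \mapsto X(U)$, preserves all small limits and colimits, since these are computed objectwise. Because each $\mathrm{ev}_U$ is a left adjoint (it has a right adjoint given by a right Kan extension along $U$), it preserves epimorphisms; conversely, using that epimorphisms in $\Pre(\cat{C})$ coincide with the pointwise-surjective natural transformations, a map $f : X \to Y$ is an epimorphism in $\Pre(\cat{C})$ if and only if each component $f_U : X(U) \to Y(U)$ is an epimorphism of sets.

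Next, given an epimorphism $f : X \to Y$ in $\Pre(\cat{C})$, I would form its kernel pair $p_0, p_1 : X \times_Y X \rightrightarrows X$. Since $\Pre(\cat{C})$ has all small limits and colimits (both computed objectwise), both the kernel pair and the coequalizer of $f$ exist, and evaluating at any $U$ yields the kernel pair $p_{0,U}, p_{1,U} : X(U) \times_{Y(U)} X(U) \rightrightarrows X(U)$ of $f_U$ in $\ncat{Set}$. By the observation above, each $f_U$ is an epimorphism of sets, hence by Lemma \ref{lem epis are regular in set} it is the coequalizer of its own kernel pair in $\ncat{Set}$. Since coequalizers and kernel pairs in $\Pre(\cat{C})$ are computed objectwise, and a cocone in $\Pre(\cat{C})$ is a colimit precisely when it is one after evaluation at every $U$, it follows that $f$ is the coequalizer of $p_0, p_1$, i.e. $f$ is a regular epimorphism.

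Finally, $f$ is effective: since $\Pre(\cat{C})$ has coequalizers and kernel pairs, Lemma \ref{lem sufficient condition for reg epi to be effective} gives that any regular epimorphism is effective, completing the argument. I do not expect any genuine obstacle here; the only point requiring care is the clean justification that epimorphisms are detected objectwise and that the formation of kernel pairs and coequalizers commutes with each evaluation functor $\mathrm{ev}_U$, both of which are standard consequences of limits and colimits in $\Pre(\cat{C})$ being computed pointwise.
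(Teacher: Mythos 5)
Your proposal is correct and is essentially the argument the paper intends: the paper states this corollary without proof, leaving implicit exactly the objectwise reduction you spell out (epimorphisms, kernel pairs, and coequalizers in $\Pre(\cat{C})$ are all computed pointwise, so Lemma \ref{lem epis are regular in set} applies componentwise, just as Corollary \ref{cor monos in presheaf toposes} follows from Lemma \ref{lem monos are regular in Set}). The only cosmetic redundancy is that once you have shown $f$ is the coequalizer of its own kernel pair you have already established effectiveness directly, so the final appeal to Lemma \ref{lem sufficient condition for reg epi to be effective} is not needed, though it is harmless.
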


\begin{Rem} \label{rem im and coim iso}
Note that given a morphism $f: U \to V$ in a category $\cat{C}$ with finite limits and colimits, by universal properties, there is a canonical morphism $\text{coim}(f) \to \im(f)$. In $\Set$, and for presheaf toposes $\Pre(\cat{C})$, this map is always an isomorphism, as can be checked directly.
\end{Rem}

\subsection{Presheaf Toposes} \label{section presheaf topoi}

\begin{Lemma} \label{lem presheaves (co)limits computed objectwise}
Given a small category $\cat{C}$, its presheaf topos $\Pre(\cat{C})$ is a locally small category with all small limits and colimits. Furthermore limits and colimits are computed objectwise in the following sense: given a (small) diagram $d : I \to \Pre(\cat{C})$ and $U \in \cat{C}$, there is an isomorphism
\begin{equation*}
   \left( \ncolim{i \in I} d(i) \right)(U) \cong \ncolim{i \in I} d(i)(U), \qquad \left( \lim_{i \in I} d(i) \right)(U) \cong \lim_{i \in I} d(i)(U).
\end{equation*}
\end{Lemma}

\begin{proof}
That $\Pre(\cat{C})$ is locally small is Lemma \ref{lem smallness of functor categories}. The fact that (co)limits are computed objectwise is a standard fact about functor categories, see \cite[Proposition 3.3.9]{riehl2017category} for instance.
\end{proof}

\begin{Rem} \label{rem epis of presheaves computed objectwise}
The condition for a map being an (monomorphism) epimorphism can be characterized using (pullbacks) pushouts, thus Lemma \ref{lem presheaves (co)limits computed objectwise} implies that a map $f : X \to Y$ of presheaves is an (mono) epi if and only if it is objectwise an (mono) epi.  
\end{Rem}

\begin{Lemma} \label{lem presheaf adjoint triple}
Given a functor $F : \cat{C} \to \cat{D}$ between (small) categories, there is a pair of adjunctions
\begin{equation}
    \begin{tikzcd}
	{\Pre(\cat{D})} && {\Pre(\cat{C})}
	\arrow[""{name=0, anchor=center, inner sep=0}, "{\Delta_F}"{description}, from=1-1, to=1-3]
	\arrow[""{name=1, anchor=center, inner sep=0}, "{\Sigma_F}"', curve={height=18pt}, from=1-3, to=1-1]
	\arrow[""{name=2, anchor=center, inner sep=0}, "{\Pi_F}", curve={height=-18pt}, from=1-3, to=1-1]
	\arrow["\dashv"{anchor=center, rotate=-91}, draw=none, from=0, to=2]
	\arrow["\dashv"{anchor=center, rotate=-89}, draw=none, from=1, to=0]
\end{tikzcd}
\end{equation}
where if $Y$ is a presheaf on $\cat{D}$ and $X$ is a presheaf on $\cat{C}$, then 
\begin{enumerate}
    \item for $U \in \cat{C}$, $\Delta_F(Y)$ is the presheaf defined objectwise by $(\Delta_F(Y))(U) = Y(F(U))$,
    \item $\Sigma_F(X)$ is the left Kan extension of $X$ along $F^\op$,
    \item $\Pi_F(X)$ is the right Kan extension of $X$ along $F^\op$.
\end{enumerate}
Thus for $V \in \cat{D}$ we have
\begin{equation*}
    \Sigma_F(X)(V) \cong \ncolim{F^{\op}(U) \to V} X(U) \cong \ncolim{V \to F(U)} X(U),
\end{equation*}
and
\begin{equation*}
    \Pi_F(X)(V) \cong \lim_{V \to F^\op(U)} X(U) \cong \lim_{F(U) \to V} X(U).
\end{equation*}
\end{Lemma}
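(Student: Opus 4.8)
The plan is to recognize $\Delta_F$ as the restriction functor along $F^\op$ and then obtain $\Sigma_F$ and $\Pi_F$ as left and right Kan extensions, whose existence and adjointness are guaranteed by the bicompleteness of $\Set$. First I would unwind the definition: for a presheaf $Y$ on $\cat{D}$, the presheaf sending $U \mapsto Y(F(U))$ on objects and $f \mapsto Y(F(f))$ on morphisms of $\cat{C}$ is exactly the composite $Y \circ F^\op$. Thus, under the identifications $\Pre(\cat{C}) = \Fun(\cat{C}^\op, \Set)$ and $\Pre(\cat{D}) = \Fun(\cat{D}^\op, \Set)$, the functor $\Delta_F$ is precisely precomposition with $F^\op : \cat{C}^\op \to \cat{D}^\op$, which establishes claim (1).

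Next I would invoke the general theory of Kan extensions. Since $\cat{C}$ and $\cat{D}$ are small and $\Set$ is both complete and cocomplete, every presheaf $X$ on $\cat{C}$ admits a pointwise left Kan extension and a pointwise right Kan extension along $F^\op$. Setting $\Sigma_F \coloneqq \text{Lan}_{F^\op}$ and $\Pi_F \coloneqq \text{Ran}_{F^\op}$, the adjunctions $\Sigma_F \dashv \Delta_F$ and $\Delta_F \dashv \Pi_F$ are then nothing other than the defining universal properties of the left and right Kan extensions (as functors) along $F^\op$. This delivers the adjoint triple with essentially no computation beyond citing the standard existence theorem, and verifies claims (2) and (3).

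Finally I would extract the explicit $(\mathrm{co})$limit formulas from the pointwise Kan extension formulas $(\Sigma_F X)(V) \cong \colim\left( (F^\op \downarrow V) \to \cat{C}^\op \xrightarrow{X} \Set \right)$ and $(\Pi_F X)(V) \cong \lim\left( (V \downarrow F^\op) \to \cat{C}^\op \xrightarrow{X} \Set \right)$. The one step requiring genuine care — and the only real obstacle in an otherwise formal argument — is the bookkeeping with opposite categories: an object of $(F^\op \downarrow V)$ is a morphism $F^\op(U) \to V$ in $\cat{D}^\op$, which is the same datum as a morphism $V \to F(U)$ in $\cat{D}$, whence the index $\ncolim{V \to F(U)} X(U)$; dually, an object of $(V \downarrow F^\op)$ is a morphism $V \to F^\op(U)$ in $\cat{D}^\op$, i.e. a morphism $F(U) \to V$ in $\cat{D}$, giving $\lim_{F(U) \to V} X(U)$. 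Tracking these identifications carefully yields exactly the displayed formulas, and the alternative forms $\ncolim{F^\op(U) \to V} X(U)$ and $\lim_{V \to F^\op(U)} X(U)$ are simply the same indexing categories written inside $\cat{D}^\op$.
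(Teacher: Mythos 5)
Your proposal is correct. The paper states this lemma without proof, treating it as a standard fact, and your argument is exactly the canonical one: identify $\Delta_F$ with precomposition along $F^\op$, obtain $\Sigma_F$ and $\Pi_F$ as pointwise left and right Kan extensions (which exist because $\cat{C}$, $\cat{D}$ are small and $\Set$ is bicomplete), read off the adjunctions from the universal property of Kan extensions, and translate the comma categories $(F^\op \downarrow V)$ and $(V \downarrow F^\op)$ back into $\cat{D}$ to get the displayed (co)limit formulas. Your handling of the opposite-category bookkeeping is accurate, and it matches how the paper itself uses these formulas later (e.g.\ in the computation of $\Pi_s$ and $\Sigma_r$ in the variant of the plus construction).
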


\begin{Lemma} \label{lem sigma on representable}
Given a functor $F: \cat{C} \to \cat{D}$ between small categories and $U \in \cat{C}$, we have
\begin{equation*}
    \Sigma_F(y(U)) \cong y(F(U)).
\end{equation*}
\end{Lemma}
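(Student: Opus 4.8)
The statement to prove is that for a functor $F : \cat{C} \to \cat{D}$ between small categories and $U \in \cat{C}$, we have $\Sigma_F(y(U)) \cong y(F(U))$. My plan is to verify this objectwise using the colimit formula for the left Kan extension from Lemma \ref{lem presheaf adjoint triple}, together with the (co)Yoneda lemma.

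First I would fix an object $V \in \cat{D}$ and apply the formula $\Sigma_F(X)(V) \cong \ncolim{V \to F(U')} X(U')$ from Lemma \ref{lem presheaf adjoint triple}, specializing $X = y(U)$. This gives
\begin{equation*}
    \Sigma_F(y(U))(V) \cong \ncolim{V \to F(U')} y(U)(U') \cong \ncolim{V \to F(U')} \cat{C}(U', U),
\end{equation*}
where the colimit is indexed over the comma category $(V \downarrow F)$, whose objects are pairs $(U', f: V \to F(U'))$. The key step is to recognize this colimit as a coend-style computation: by the coYoneda Lemma \ref{lem coyoneda lemma}, colimits weighted by representables collapse. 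Concretely I would argue that the colimit of $\cat{C}(-, U)$ over $(V \downarrow F)$ picks out exactly those maps $V \to F(U)$, i.e. it computes $\cat{D}(V, F(U))$.

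The cleanest way to carry this out is to exhibit a natural bijection directly. An element of the colimit is represented by a pair consisting of $g : U' \to U$ in $\cat{C}$ and $f : V \to F(U')$ in $\cat{D}$, modulo the equivalence relation identifying $(g, f)$ along morphisms in $(V \downarrow F)$. I would send such a representative to the composite $F(g) \circ f : V \to F(U)$, and check this is well-defined on equivalence classes and bijective. Surjectivity is immediate by taking $U' = U$, $g = 1_U$, and any $h : V \to F(U)$; injectivity follows because any two representatives mapping to the same composite are connected through the object $(U, h)$ in the comma category. Thus $\Sigma_F(y(U))(V) \cong \cat{D}(V, F(U)) = y(F(U))(V)$, naturally in $V$, giving the desired isomorphism of presheaves.

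I expect the main obstacle to be bookkeeping the equivalence relation defining the colimit and confirming naturality in $V$ (and implicitly in $U$), rather than any deep conceptual difficulty; the whole statement is essentially a restatement of the fact that left Kan extension along the Yoneda embedding followed by restriction is the identity on representables, which is the content of Lemma \ref{lem yoneda extension identity on representables} transported across the $\Sigma_F \dashv \Delta_F$ picture. An alternative, even shorter route I would consider is an adjunction argument: for any presheaf $Z$ on $\cat{D}$, the Yoneda lemma and the $\Sigma_F \dashv \Delta_F$ adjunction give $\Pre(\cat{D})(\Sigma_F(y(U)), Z) \cong \Pre(\cat{C})(y(U), \Delta_F(Z)) \cong \Delta_F(Z)(U) = Z(F(U)) \cong \Pre(\cat{D})(y(F(U)), Z)$, and then conclude by the Yoneda lemma that $\Sigma_F(y(U)) \cong y(F(U))$. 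This avoids manipulating the colimit explicitly and is likely the slicker presentation.
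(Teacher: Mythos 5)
Your proposal is correct, and in fact contains two valid arguments. Your primary route is essentially the paper's proof in disguise: the paper computes $\Sigma_F(y(U))(V)$ objectwise via the coend formula, applies Lemma \ref{lem swapping arity in left kan extensions} to rewrite it as $y(U) \otimes_{\cat{C}} \cat{D}(V,F(-))$, and then invokes the covariant coYoneda Lemma \ref{lem yoneda extension identity on representables} to collapse it to $\cat{D}(V,F(U))$. You perform the same collapse by hand, unwinding the colimit over the comma category $(V \downarrow F)$ and exhibiting the bijection $(g,f) \mapsto F(g)\circ f$ explicitly; your surjectivity and injectivity arguments (funneling any representative through the object $(U,h)$ of the comma category) are sound, so this is the same computation carried out at the level of elements rather than through coend calculus. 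Your alternative route --- showing $\Pre(\cat{D})(\Sigma_F(y(U)), Z) \cong \Pre(\cat{C})(y(U), \Delta_F(Z)) \cong Z(F(U)) \cong \Pre(\cat{D})(y(F(U)), Z)$ and concluding by Yoneda --- is genuinely different from the paper's proof and is not circular, since the adjunction $\Sigma_F \dashv \Delta_F$ of Lemma \ref{lem presheaf adjoint triple} is established independently. What the paper's approach buys is that the intermediate identity $\Sigma_F(X)(V) \cong X \otimes_{\cat{C}} \cat{D}(V,F(-))$ is reused verbatim elsewhere (e.g.\ in the proof of Lemma \ref{lem rep flat iff left kan extension preserves finite limits}); what your adjunction argument buys is brevity and the avoidance of any equivalence-relation bookkeeping.
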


\begin{proof}
Given $V \in \cat{D}$, we can write
\begin{equation*}
    \Sigma_F(X)(V) \cong (\text{Lan}_{F^\op} X)(V) \cong \int^{U \in \cat{C}^\op} \cat{D}^\op(F^\op(U), V) \times X(U)
\end{equation*}
equivalently
\begin{equation*}
    \Sigma_F(X)(V) \cong \cat{D}^\op(F^\op(-),V) \otimes_{\cat{C}^\op} X
\end{equation*}
and by Lemma \ref{lem swapping arity in left kan extensions} we have
\begin{equation*}
    \Sigma_F(X)(V) \cong X \otimes_{\cat{C}} \cat{D}(V, F(-)).
\end{equation*}
Now if $U \in \cat{C}$, and $X = y(U)$, then by Lemma \ref{lem yoneda extension identity on representables}, we have
\begin{equation*}
    \Sigma_F(y(U))(V) \cong y(U) \otimes_{\cat{C}} \cat{D}(V, F(-)) \cong \cat{D}(V, F(U)).
\end{equation*}
Thus for every $V \in \cat{D}$, $\Sigma_F(y(U))(V) \cong \cat{D}(V, F(U)) \cong y(F(U))(V)$. Thus we have $\Sigma_F(X) \cong y(F(U))$.
\end{proof}

\begin{Def} \label{def category of elements}
Given a presheaf $X$ on a category $\cat{C}$, let $\int X$, denote its \textbf{category of elements}, namely $\int X$ is the category whose objects are pairs $(U, x)$ with $U \in \cat{C}$ and $x \in X(U)$, and whose morphisms $f : (U, x) \to (V, y)$ are those morphisms $f: U \to V$ such that $X(f)(y) = x$. There is a canonical functor $\pi_X : \int X \to \cat{C}$ given by $\pi_X(U,x) = U$. Note that $\int X$ is isomorphic to the comma category $( y \downarrow X)$, where $y : \cat{C} \to \Pre(\cat{C})$ is the Yoneda embedding.
\end{Def}

\begin{Lemma} \label{lem presheaf slices are presheaf topoi}
Given a (small) category $\cat{C}$ and a presheaf $X$ on $\cat{C}$, there is an equivalence of categories
\begin{equation*}
    \Pre(\cat{C})/X \simeq \Pre(\smallint X).
\end{equation*}
\end{Lemma}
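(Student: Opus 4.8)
This is the statement that $\Pre(\cat{C})/X \simeq \Pre(\smallint X)$ for a presheaf $X$ on a small category $\cat{C}$.

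The plan is to construct explicit functors in both directions and show they are mutually quasi-inverse, which is the cleanest approach for a result of this type. First I would describe the functor $\Phi : \Pre(\cat{C})/X \to \Pre(\smallint X)$. Given an object $(p : Y \to X)$ of the slice, I would define a presheaf $\Phi(Y \to X)$ on $\smallint X$ by sending an object $(U, x) \in \smallint X$ (with $x \in X(U)$) to the fiber of $p_U : Y(U) \to X(U)$ over $x$, namely $\{ y \in Y(U) \mid p_U(y) = x \}$. On a morphism $f : (U, x) \to (V, x')$ of $\smallint X$ (so $f : U \to V$ with $X(f)(x') = x$), the map $Y(f) : Y(V) \to Y(U)$ restricts to the appropriate fibers because $p$ is a map of presheaves, making $p_U Y(f) = X(f) p_V$, so $Y(f)$ carries the fiber over $x'$ into the fiber over $X(f)(x') = x$. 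This gives a well-defined functor on $\smallint X^{\op}$, hence a presheaf.

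Next I would construct the inverse functor $\Psi : \Pre(\smallint X) \to \Pre(\cat{C})/X$. Given a presheaf $Z$ on $\smallint X$, I would define a presheaf $Y_Z$ on $\cat{C}$ by $Y_Z(U) = \coprod_{x \in X(U)} Z(U, x)$, together with the evident projection $Y_Z(U) \to X(U)$ sending the component indexed by $x$ to $x$; functoriality in $U$ uses the action of $Z$ on the morphisms of $\smallint X$ exactly as above, and the projection is plainly natural, yielding an object of $\Pre(\cat{C})/X$. I would then verify that $\Phi$ and $\Psi$ are functorial (on morphisms of the slice, a commuting triangle over $X$ induces fiberwise maps; on morphisms of $\Pre(\smallint X)$, a natural transformation induces a map of disjoint unions compatible with the projection).

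Finally I would exhibit the natural isomorphisms $\Psi\Phi \cong \mathrm{id}$ and $\Phi\Psi \cong \mathrm{id}$. In one direction, for $(p : Y \to X)$ we have $\Psi\Phi(Y)(U) = \coprod_{x \in X(U)} p_U^{-1}(x)$, and since the fibers of $p_U$ partition $Y(U)$, this coproduct is canonically isomorphic to $Y(U)$ over $X(U)$; this is natural in $U$ and compatible with the projections. In the other direction, $\Phi\Psi(Z)(U,x)$ is the fiber over $x$ of the projection $\coprod_{x' \in X(U)} Z(U,x') \to X(U)$, which is exactly $Z(U,x)$. I expect the main obstacle to be bookkeeping rather than conceptual: carefully checking that all the maps respect the presheaf structure over $\smallint X^{\op}$ (the contravariance and the condition $X(f)(x') = x$ must be tracked consistently), and confirming that the two natural isomorphisms are genuinely natural and that $\Phi, \Psi$ respect morphisms — no single step is deep, but the indexing by elements $x \in X(U)$ makes it easy to slip on variance conventions. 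Alternatively, one could prove this more slickly by noting both sides are presheaf toposes and invoking the universal property, but the hands-on fiber/coproduct description is the most transparent and self-contained.
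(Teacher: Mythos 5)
Your proposal is correct and follows essentially the same route as the paper's proof: both construct the functor to $\Pre(\smallint X)$ by taking fibers $p_U^{-1}(x)$ and the inverse by forming $\coprod_{x \in X(U)} Z(U,x)$ with the evident projection, then check the two composites are naturally isomorphic to the identities. No substantive differences.
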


\begin{proof}
Let us construct a functor $\varphi : \Pre(\cat{C})/X \to \Pre(\smallint X)$. Given a map $f: Y \to X$ of presheaves, let $\varphi(f)$ denote the presheaf on $\int X$ where $\varphi(f)(x,U)$ is the set of sections $s \in Y(U)$ such that $f_U(s) = x$. If $g : (x,U) \to (y,V)$ is a morphism in $\int X$, then let $\varphi(f)(g) : \varphi(f)(y,V) \to \varphi(f)(x,U)$ take an element $s \in f_V^{-1}(y)$ to $f(g)(s) \in f_U^{-1}(x)$. Conversely, let $\psi : \Pre(\smallint X) \to \Pre(\cat{C})/X$ take a presheaf $A$ on $\smallint X$ to the presheaf $\pi_*(A)$ defined objectwise by $\pi_*(A)(U) = \sum_{x \in X(U)} A(U,x)$. If $g : A \to B$ is a map of presheaves on $\int X$, then let $\pi_*(g) : \pi_*(A) \to \pi_*(B)$ be defined in the obvious way. It is not hard to show that these functors are quasi-inverses to each other and hence define an equivalence of categories.
\end{proof}

\begin{Rem}
We note that the above result extends to sheaves, see \cite[Page 157]{maclane2012sheaves}.
\end{Rem}
 
\begin{Rem}
The following important result is known both as the \textbf{Density Theorem} and the \textbf{coYoneda Lemma}. We will just refer to it as the coYoneda Lemma.
\end{Rem}

\begin{Lemma}[coYoneda Lemma] \label{lem coyoneda lemma}
Given a small category $\cat{C}$ and $X \in \Pre(\cat{C})$, there is an isomorphism
\begin{equation*}
    X \cong \colim \left( \int X \xrightarrow{\pi} \cat{C} \xrightarrow{y} \Pre(\cat{C}) \right) \cong \int^{(x,U) \in \int X} X(U) \otimes y(U) \cong X \otimes_{\cat{C}} y(U) \cong \ncolim{y(U) \to X} y(U),
\end{equation*}
where $X(U) \otimes y(U) = \sum_{X(U)} y(U)$ is the tensoring of $y(U)$ by the set $X(U)$.
\end{Lemma}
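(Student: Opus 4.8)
The statement bundles four isomorphic descriptions of the same object, so the plan is first to recognize that the content is really a single isomorphism and then to prove that one isomorphism. First I would observe that the category of elements $\int X$ is isomorphic to the comma category $(y \downarrow X)$ (Definition \ref{def category of elements}), under which the composite $\int X \xrightarrow{\pi} \cat{C} \xrightarrow{y} \Pre(\cat{C})$ sends $(x,U) \mapsto y(U)$. Hence the first and last colimits in the display, $\colim(\int X \to \cat{C} \to \Pre(\cat{C}))$ and $\ncolim{y(U) \to X} y(U)$, are literally the same colimit. The coend $\int^{(x,U)} X(U) \otimes y(U)$ is the standard presentation of a colimit over the category of elements as a coend: since $X(U) \otimes y(U) = \sum_{X(U)} y(U)$, writing the coend as a coequalizer of coproducts reproduces exactly the coequalizer presentation of $\ncolim{(x,U) \in \int X} y(U)$, and $X \otimes_{\cat{C}} y$ is merely the functor-tensor-product notation for this same coend. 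Thus every expression to the right of the first $\cong$ denotes one and the same presheaf, and only the isomorphism $X \cong \ncolim{(x,U) \in \int X} y(U)$ carries content.

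To prove that isomorphism I would argue via the universal property of the colimit together with the Yoneda lemma, which keeps the computation coordinate-free. For an arbitrary presheaf $Y \in \Pre(\cat{C})$ I would compute
\begin{equation*}
\Pre(\cat{C})\left(\ncolim{(x,U) \in \int X} y(U),\, Y\right) \cong \lim_{(x,U) \in \int X} \Pre(\cat{C})(y(U), Y) \cong \lim_{(x,U) \in \int X} Y(U),
\end{equation*}
where the first step turns the colimit into a limit of hom-sets and the second is the Yoneda lemma applied factorwise. I would then identify the limit on the right with the set of cocones over the diagram, which unwinds to a family $\{ y_{x,U} \in Y(U) \}$ compatible with the morphisms of $\int X$, i.e. precisely a natural transformation $X \to Y$; hence the whole expression is naturally isomorphic to $\Pre(\cat{C})(X, Y)$. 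Since this holds naturally in $Y$, the Yoneda lemma for $\Pre(\cat{C})$ forces $\ncolim{(x,U)} y(U) \cong X$.

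As an alternative, and as a sanity check on the coend form, I would give the explicit objectwise computation. Using that colimits in $\Pre(\cat{C})$ are computed objectwise (Lemma \ref{lem presheaves (co)limits computed objectwise}), evaluating at $V \in \cat{C}$ reduces the coend to a coend of sets $\int^{U} X(U) \times \cat{C}(V, U)$, i.e. the set of pairs $(x, f)$ with $x \in X(U)$ and $f : V \to U$ modulo the relation $(X(g)(x'), f) \sim (x', gf)$ for $g : U \to U'$. The evaluation map $(x,f) \mapsto X(f)(x)$ lands in $X(V)$, and I would check it is a bijection with inverse $x \mapsto [(x, 1_V)]$: surjectivity is immediate, and every class contains the representative $(X(f)(x), 1_V)$, giving injectivity. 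Naturality in $V$ is then routine.

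The main obstacle is not a deep one, since this is a foundational lemma, but rather the careful bookkeeping across the several equivalent presentations (colimit over the category of elements, coend, functor tensor product) and, in the universal-property argument, verifying that the identification of the limit of hom-sets with natural transformations $X \to Y$ is genuinely natural in $Y$, so that the final appeal to the Yoneda lemma is legitimate.
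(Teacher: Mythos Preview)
Your main argument is correct and is essentially the paper's proof: both compute $\Pre(\cat{C})(-,Y)$ of the colimit/coend, reduce via Yoneda to obtain $\Pre(\cat{C})(X,Y)$, and conclude by representability; the paper phrases this in coend calculus (turning the hom out of the coend into an end and invoking the tensoring adjunction $\Pre(\cat{C})(X(U)\otimes y(U),Y)\cong \ncat{Set}(X(U),Y(U))$), while you use the equivalent colimit/limit formulation directly. Your alternative objectwise computation is a valid elementary proof that the paper does not give.
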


\begin{proof}
This can be proven in a variety of ways, here we give a coend calculus proof. Let $Y \in \Pre(\cat{C})$, then
\begin{equation*}
\begin{aligned}
    \Pre(\cat{C})\left( \int^{(x,U) \in \int X} X(U) \otimes y(U), Y \right) & \cong \int_{(x,U) \in \int X} \Pre(\cat{C})( X(U) \otimes y(U), Y) \\
    & \cong \int_{(x,U) \in \int X} \ncat{Set}(X(U),\Pre(\cat{C})(y(U),Y)) \\
    & \cong \int_{(x,U) \in \int X} \ncat{Set}(X(U), Y(U)) \\
    & \cong \Pre(\cat{C})(X, Y),
\end{aligned}
\end{equation*}
where the second isomorphism is the defining property of tensoring, and the third is the Yoneda Lemma.
\end{proof}

\begin{Lemma} \label{lem presheaf topoi are locally cartesian closed}
Given a small category $\cat{C}$, its presheaf topos $\Pre(\cat{C})$ is Cartesian closed, with internal hom defined as follows. Given presheaves $X$ and $Y$ on $\cat{C}$, let $[X,Y]$ denote the presheaf defined objectwise by
\begin{equation} \label{eq internal hom of presheaf topoi}
[X,Y](U) \coloneqq \Pre(\cat{C})(X \times y(U), Y).
\end{equation}
For a fixed presheaf $X$, the functor $[X,-]$ defines a right adjoint to the functor $- \times X$. Furthermore $\Pre(\cat{C})$ is locally cartesian closed, i.e. all of its slice categories $\Pre(\cat{C})/X$ are Cartesian closed.
\end{Lemma}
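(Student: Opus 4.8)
The plan is to establish Cartesian closure first and then deduce local Cartesian closure from it via the slice equivalence already recorded. To begin I would confirm that the assignment $U \mapsto [X,Y](U) = \Pre(\cat{C})(X \times y(U), Y)$ is genuinely a presheaf: a morphism $f : U' \to U$ induces $X \times y(f) : X \times y(U') \to X \times y(U)$, and precomposition with this map furnishes the restriction $[X,Y](f)$, with functoriality immediate from that of $y$ and $\times$. Each value is a small set because $\Pre(\cat{C})$ is locally small (Lemma \ref{lem presheaves (co)limits computed objectwise}), so $[X,Y]$ is a well-defined object of $\Pre(\cat{C})$.

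The core step is to produce a natural isomorphism $\Pre(\cat{C})(Z \times X, Y) \cong \Pre(\cat{C})(Z, [X,Y])$ exhibiting $[X,-]$ as right adjoint to $- \times X$. I would reduce to representables via the coYoneda Lemma \ref{lem coyoneda lemma}, writing $Z \cong \ncolim{y(V) \to Z} y(V)$. The key subsidiary fact, which I expect to be the main (though routine) obstacle, is that $- \times X$ preserves colimits: since limits and colimits in $\Pre(\cat{C})$ are computed objectwise (Lemma \ref{lem presheaves (co)limits computed objectwise}) and $- \times S$ preserves colimits in $\ncat{Set}$ (as $\ncat{Set}$ is Cartesian closed), the functor $- \times X$ preserves colimits objectwise and hence in $\Pre(\cat{C})$. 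Granting this, both sides compute the same limit:
\begin{align*}
\Pre(\cat{C})(Z \times X, Y) &\cong \lim_{y(V) \to Z} \Pre(\cat{C})(y(V) \times X, Y) = \lim_{y(V) \to Z} [X,Y](V), \\
\Pre(\cat{C})(Z, [X,Y]) &\cong \lim_{y(V) \to Z} \Pre(\cat{C})(y(V), [X,Y]) \cong \lim_{y(V) \to Z} [X,Y](V),
\end{align*}
the first line using that $- \times X$ carries the coYoneda colimit to a colimit and that homming out of a colimit yields a limit of homs, the second using the same presentation of $Z$ together with the Yoneda Lemma. Naturality in $Z$ and $Y$ is inherited from the naturality of coYoneda and Yoneda, which establishes the adjunction $- \times X \dashv [X,-]$ and hence Cartesian closure.

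Finally, for local Cartesian closure I would invoke Lemma \ref{lem presheaf slices are presheaf topoi}, which gives an equivalence $\Pre(\cat{C})/X \simeq \Pre(\int X)$ for every presheaf $X$, where $\int X$ is the category of elements. Since $\cat{C}$ is small and $X$ is valued in small sets, $\int X$ is again small, so the first part applies to show that $\Pre(\int X)$ is Cartesian closed. Because Cartesian closure is characterized by the existence of right adjoints to the product functors, and finite products and adjunctions transport across equivalences of categories, $\Pre(\cat{C})/X$ is itself Cartesian closed. As $X$ was arbitrary, every slice of $\Pre(\cat{C})$ is Cartesian closed, which is exactly the assertion that $\Pre(\cat{C})$ is locally Cartesian closed.
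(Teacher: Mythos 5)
Your proposal is correct and follows essentially the same route as the paper: the paper leaves the verification of Cartesian closure to the reader (which you supply via the coYoneda reduction to representables and the cocontinuity of $- \times X$), and it deduces local Cartesian closure from the equivalence $\Pre(\cat{C})/X \simeq \Pre(\smallint X)$ of Lemma \ref{lem presheaf slices are presheaf topoi} exactly as you do.
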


\begin{proof}
We leave it to the reader to check that the internal hom defined by (\ref{eq internal hom of presheaf topoi}) makes $\Pre(\cat{C})$ cartesian closed. Now the fact that $\Pre(\cat{C})$ is locally cartesian closed follows Lemma \ref{lem presheaf slices are presheaf topoi}.
\end{proof}

\begin{Def} \label{def image presheaf}
Given a map $f: X \to Y$ of presheaves on a small category $\cat{C}$, let $\im(f)$ be the presheaf defined as follows. For $U \in \cat{C}$, let $\im(f)(U)$ be the set of sections $s: y(U) \to Y$ such that there exists a dotted map (called a lift of $s$) making the following diagram commute
\begin{equation*}
    \begin{tikzcd}
	& X \\
	{y(U)} & Y
	\arrow["s"', from=2-1, to=2-2]
	\arrow["f", from=1-2, to=2-2]
	\arrow["{s'}", dashed, from=2-1, to=1-2]
\end{tikzcd}
\end{equation*}
Clearly $\im(f) \subseteq Y$. Equivalently, $\im(f)(U) = \im(f_U)$ where $f_U : X(U) \to Y(U)$ is the component map.
\end{Def}

\begin{Lemma} \label{lem image of presheaf map}
Given a map $f : X \to Y$ of presheaves, the presheaf $\im(f)$ is the image of $f$, and is isomorphic to both the regular image and coimage of $f$ in $\Pre(\cat{C})$.
\end{Lemma}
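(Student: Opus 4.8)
The plan is to prove everything by reducing to an objectwise computation in $\ncat{Set}$, exploiting the fact that all the relevant constructions (image, regular image, coimage, regular coimage) are built from finite limits and colimits, which in $\Pre(\cat{C})$ are computed pointwise by Lemma \ref{lem presheaves (co)limits computed objectwise}. First I would record the two basic naturality facts: the inclusions $\im(f)(U) = \im(f_U) \subseteq Y(U)$ assemble into a subpresheaf $\im(f) \hookrightarrow Y$, because if $s = f_U(x) \in \im(f_U)$ and $g : V \to U$ then $Y(g)(s) = f_V(X(g)(x)) \in \im(f_V)$ by naturality of $f$; and this inclusion is objectwise a monomorphism, hence a monomorphism of presheaves by Remark \ref{rem epis of presheaves computed objectwise}. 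Likewise $f$ factors as $X \to \im(f) \hookrightarrow Y$, where the first map is objectwise the surjection onto $\im(f_U)$, hence an epimorphism of presheaves.

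For the first claim, that $\im(f)$ is the image in the sense of Definition \ref{def image}, I would take any monomorphism $m : A \hookrightarrow Y$ through which $f$ factors. Applying the forgetful evaluation at each $U$ and using that $m_U : A(U) \hookrightarrow Y(U)$ is a monomorphism through which $f_U$ factors, the defining minimality of the set-theoretic image $\im(f_U)$ gives $\im(f)(U) = \im(f_U) \subseteq A(U)$ inside $Y(U)$. These inclusions are natural (the same naturality argument as above, transported along $m$), so they assemble into a monomorphism $\im(f) \hookrightarrow A$ commuting with the inclusions into $Y$; uniqueness is automatic since $A \hookrightarrow Y$ is mono. This exhibits $\im(f)$ as the smallest subobject of $Y$ through which $f$ factors.

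For the remaining two claims I would compute the regular image and regular coimage pointwise. The regular image $\im_{\reg}(f)$ is the equalizer of the cokernel pair $Y \rightrightarrows Y +_X Y$, and since limits and colimits in $\Pre(\cat{C})$ are objectwise, $\im_{\reg}(f)(U) \cong \im_{\reg}(f_U)$ in $\ncat{Set}$. Because every monomorphism in $\ncat{Set}$ is regular and effective (Lemma \ref{lem monos are regular in Set}), Lemma \ref{lem regular image is image in cat with pushouts and cokernal pairs} identifies $\im_{\reg}(f_U)$ with the image $\im(f_U) = \im(f)(U)$, so the canonical comparison $\im_{\reg}(f) \to \im(f)$ is an objectwise, hence a presheaf, isomorphism. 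Dually, $\coim_{\reg}(f)(U) \cong \coim_{\reg}(f_U)$; since every epimorphism in $\ncat{Set}$ is regular and effective (Lemma \ref{lem epis are regular in set}), Lemma \ref{lem regular coimage is image for cats with pullbacks and kernel pairs} makes $\coim_{\reg}(f_U)$ a coimage of $f_U$, and Remark \ref{rem im and coim iso} says the canonical map $\coim(f_U) \to \im(f_U)$ is an isomorphism in $\ncat{Set}$, yielding $\coim_{\reg}(f) \cong \im(f)$ objectwise. The only real care needed — and the mild obstacle — is checking that these pointwise isomorphisms are the components of the \emph{canonical} comparison morphisms of presheaves (rather than ad hoc bijections), so that they are genuinely natural; this follows because the comparison maps are themselves induced by universal properties of (co)limits, which are computed objectwise, so naturality is inherited from the set-level comparisons.
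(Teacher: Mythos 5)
Your proposal is correct and follows essentially the same route as the paper's (very terse) proof: reduce to the objectwise computation via Lemma \ref{lem presheaves (co)limits computed objectwise}, use that monos and epis in $\ncat{Set}$ (equivalently in $\Pre(\cat{C})$, Corollary \ref{cor monos in presheaf toposes}) are regular and effective, and invoke Lemma \ref{lem regular image is image in cat with pushouts and cokernal pairs} and its dual. Your version simply spells out the naturality checks and the minimality argument that the paper leaves implicit.
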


\begin{proof}
This follows from Lemma \ref{lem presheaves (co)limits computed objectwise}, Corollary \ref{cor monos in presheaf toposes} and Lemma \ref{lem regular image is image in cat with pushouts and cokernal pairs}.
\end{proof}

\begin{Lemma} \label{lem presheaf topoi have pullback stable image factorization}
Suppose that $f : X \to Y$ is a map of presheaves on a small category $\cat{C}$, such that the right hand vertical map is its image factorization, and such that both squares are pullbacks
\begin{equation*}
\begin{tikzcd}[ampersand replacement=\&]
	{Z \times_Y X} \& X \\
	{g^*(\text{im}(f))} \& {\text{im}(f)} \\
	Z \& Y
	\arrow[from=1-1, to=1-2]
	\arrow[two heads, from=1-1, to=2-1]
	\arrow["\lrcorner"{anchor=center, pos=0.125}, draw=none, from=1-1, to=2-2]
	\arrow["{g^*(f)}"', curve={height=30pt}, from=1-1, to=3-1]
	\arrow[two heads, from=1-2, to=2-2]
	\arrow["f", curve={height=-30pt}, from=1-2, to=3-2]
	\arrow[from=2-1, to=2-2]
	\arrow[hook', from=2-1, to=3-1]
	\arrow["\lrcorner"{anchor=center, pos=0.125}, draw=none, from=2-1, to=3-2]
	\arrow[hook', from=2-2, to=3-2]
	\arrow["g"', from=3-1, to=3-2]
\end{tikzcd}
\end{equation*}
Then $g^*(\text{im}(f))$ is an image of the map $g^*(f) : Z \times_Y X \to Z$. We say that $\ncat{Pre}(\cat{C})$ has pullback-stable image factorizations.
\end{Lemma}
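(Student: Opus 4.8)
The plan is to prove that images in presheaf toposes are stable under pullback by reducing everything to the corresponding statement about sets, and then invoking that limits and images are both computed objectwise in $\Pre(\cat{C})$. The key facts I would assemble are Lemma \ref{lem presheaves (co)limits computed objectwise} (limits computed objectwise), Lemma \ref{lem image of presheaf map} (the image presheaf $\im(f)$ computes the image objectwise, as $\im(f)(U) = \im(f_U)$), and Remark \ref{rem epis of presheaves computed objectwise} (epimorphisms of presheaves are objectwise epimorphisms). Since a presheaf pullback is computed objectwise, and the image factorization is computed objectwise, the whole statement will follow if the analogous statement holds in $\ncat{Set}$ for each $U \in \cat{C}$.

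So the first step is to reduce to $\ncat{Set}$. Fix $U \in \cat{C}$ and evaluate the entire diagram at $U$. By Lemma \ref{lem presheaves (co)limits computed objectwise}, the pullback squares become pullback squares of sets $(Z \times_Y X)(U) = Z(U) \times_{Y(U)} X(U)$ and $g^*(\im(f))(U) = Z(U) \times_{Y(U)} \im(f)(U)$, and by Lemma \ref{lem image of presheaf map} the map $X(U) \twoheadrightarrow \im(f)(U)$ is precisely the (epi, mono) image factorization of $f_U : X(U) \to Y(U)$. Thus I would next prove the purely set-theoretic claim: given a function $f_U : X(U) \to Y(U)$ with image factorization $X(U) \twoheadrightarrow \im(f_U) \hookrightarrow Y(U)$ and any function $g_U : Z(U) \to Y(U)$, the induced map $Z(U) \times_{Y(U)} X(U) \to Z(U) \times_{Y(U)} \im(f_U)$ is surjective and the inclusion $Z(U) \times_{Y(U)} \im(f_U) \hookrightarrow Z(U)$ is injective, exhibiting $g_U^*(\im(f_U))$ as the image of $g_U^*(f_U)$. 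This is a direct verification: surjectivity of the left map follows because any element of $Z(U) \times_{Y(U)} \im(f_U)$ is a pair $(z, y)$ with $g_U(z) = y \in \im(f_U)$, so $y = f_U(x)$ for some $x$, giving a preimage $(z,x)$; and the right map is monic because pullbacks of monomorphisms are monomorphisms.

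Having established the objectwise claim, the final step is to reassemble. The epimorphism $Z \times_Y X \to g^*(\im(f))$ is objectwise an epimorphism by the above, hence an epimorphism of presheaves by Remark \ref{rem epis of presheaves computed objectwise}; and $g^*(\im(f)) \hookrightarrow Z$ is objectwise a monomorphism, hence a monomorphism of presheaves. Since these factorizations are natural in $U$ (naturality follows because pullbacks, images, and all the comparison maps are functorial), they assemble into a factorization of $g^*(f)$ in $\Pre(\cat{C})$ through a monomorphism, and by Lemma \ref{lem image of presheaf map} applied to $g^*(f)$ this factorization must be the image factorization. Therefore $g^*(\im(f))$ is indeed the image of $g^*(f)$.

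I do not anticipate a serious obstacle here; the content is entirely in the objectwise reduction, and the main thing to be careful about is bookkeeping, namely checking that the comparison map $Z \times_Y X \to g^*(\im(f))$ really is the canonical one induced by the universal property of the lower pullback (so that its objectwise components are exactly the set-level comparison maps analyzed above). The mild subtlety worth flagging is naturality: one must confirm the objectwise image factorizations are compatible with restriction maps, but this is automatic since all constructions involved (pullback, image in $\ncat{Set}$) are functorial and the image presheaf of Definition \ref{def image presheaf} is defined precisely so that restriction maps act correctly.
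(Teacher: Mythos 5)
Your proposal is correct and follows exactly the route the paper takes: the paper's own proof is the one-line observation that the statement can be checked pointwise and reduces to the corresponding (easy) fact in $\ncat{Set}$, which is precisely your objectwise reduction. You have simply written out the details (the set-level surjectivity check and the reassembly via Remark \ref{rem epis of presheaves computed objectwise}) that the paper leaves implicit.
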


\begin{proof}
This can be proven pointwise, and thus follows from the same result holding in $\ncat{Set}$, which is easy to see.
\end{proof}

\section{Localization} \label{section localizations}
In this section we deal in detail with the important concept of localization. This is a powerful way of ``inverting'' a class of morphisms $W$ in a category $\cat{C}$ to obtain a new category $\cat{C}[W^{-1}]$.

\begin{Def} \label{def inverting morphisms}
Let $\cat{C}$ be a locally small category and $W$ a class (i.e. large set) of morphisms in $\cat{C}$. We say that a functor $F: \cat{C} \to \cat{D}$ \textbf{inverts} $W$ if $F(w)$ is an isomorphism for every $w \in W$. Given a category $\cat{D}$, let $\Fun(\cat{C}, \cat{D})$ denote the category whose objects are functors and morphisms natural transformations, and let $\Fun_W(\cat{C},\cat{D})$ denote the full subcategory on those functors that invert $W$.
\end{Def}

\begin{Def} \label{def localization of category}
 A \textbf{localization}\footnote{This is sometimes called weak localization. See \cite{govzmann2022comparison} for a comparison of different kinds of localization} of $\cat{C}$ at $W$ consists of a functor $L : \cat{C} \to \cat{D}$ such that
\begin{enumerate}
    \item $L$ inverts $W$,
    \item for any functor $F : \cat{C} \to \cat{E}$ that inverts $W$, there exists a functor $\widetilde{F} : \cat{D} \to \cat{E}$ along with a natural isomorphism $\eta : F \Rightarrow \widetilde{F} L$, and
    \item for every category $\cat{E}$, the functor
    \begin{equation*}
        \Fun(L, \cat{E}) : \Fun(\cat{D}, \cat{E}) \to \Fun_W(\cat{C}, \cat{E})
    \end{equation*}
    is fully faithful.
\end{enumerate}
\end{Def}

\begin{Lemma}
Given a locally small category $\cat{C}$ and a class $W$ of morphisms, a functor $L : \cat{C} \to \cat{D}$ is a localization of $\cat{C}$ by $W$ if and only if $L$ inverts $W$ and for every category $\cat{E}$ the functor
\begin{equation} \label{eq universal prop of localization}
    \Fun(L, \cat{E}) : \Fun(\cat{D}, \cat{E}) \to \Fun_W(\cat{C}, \cat{E})
\end{equation}
is an equivalence of categories.
\end{Lemma}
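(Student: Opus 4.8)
The plan is to observe that this lemma is, at bottom, a matter of unwinding definitions: the three clauses of Definition \ref{def localization of category} translate exactly into the statements that the precomposition functor $\Fun(L,\cat{E})$ is well-defined into $\Fun_W(\cat{C},\cat{E})$, is essentially surjective, and is fully faithful, respectively. Since a functor is an equivalence of categories precisely when it is fully faithful and essentially surjective, the two formulations coincide. So the argument is not a calculation but a careful bookkeeping exercise, and both directions of the biconditional will be read off from the same observations.

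First I would record the elementary fact that, whenever $L$ inverts $W$ (clause (1), which is assumed on both sides of the biconditional), precomposition $G \mapsto G \circ L$ sends any functor $G : \cat{D} \to \cat{E}$ to a functor $GL : \cat{C} \to \cat{E}$ that inverts $W$, simply because $G$, being a functor, preserves the isomorphisms $L(w)$ for $w \in W$. Hence $\Fun(L,\cat{E})$ genuinely lands in the full subcategory $\Fun_W(\cat{C},\cat{E})$, so asking whether it is an equivalence makes sense exactly when (1) holds. On objects $\Fun(L,\cat{E})$ is $G \mapsto GL$, and on a natural transformation $\alpha : G \Rightarrow G'$ it is the whiskering $\alpha L : GL \Rightarrow G'L$.

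Next I would match the clauses to the two halves of being an equivalence. Clause (3) literally asserts that $\Fun(L,\cat{E})$ is fully faithful, so there is nothing to prove there. For clause (2), essential surjectivity of $\Fun(L,\cat{E}) : \Fun(\cat{D},\cat{E}) \to \Fun_W(\cat{C},\cat{E})$ says that every object $F$ of the target, that is, every functor $F : \cat{C} \to \cat{E}$ inverting $W$, is isomorphic in $\Fun_W(\cat{C},\cat{E})$ to some $\Fun(L,\cat{E})(\widetilde{F}) = \widetilde{F}L$. An isomorphism in the full subcategory $\Fun_W(\cat{C},\cat{E}) \subseteq \Fun(\cat{C},\cat{E})$ is the same as an isomorphism in $\Fun(\cat{C},\cat{E})$, since a full subcategory inclusion reflects isomorphisms; that is, it is a natural isomorphism $\eta : F \Rightarrow \widetilde{F}L$. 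This is word-for-word clause (2).

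With these identifications, both implications are immediate. For the forward direction, assuming (1)--(3): clause (1) makes $\Fun(L,\cat{E})$ land in $\Fun_W(\cat{C},\cat{E})$, clause (3) gives full faithfulness, clause (2) gives essential surjectivity, and a fully faithful, essentially surjective functor is an equivalence. For the reverse direction, assuming (1) together with the hypothesis that $\Fun(L,\cat{E})$ is an equivalence for every $\cat{E}$: clause (1) is assumed, full faithfulness of an equivalence yields (3), and essential surjectivity yields (2). The only point that warrants any care, and the nearest thing to an obstacle, is the translation in the previous paragraph between ``isomorphic objects of $\Fun_W(\cat{C},\cat{E})$'' and ``natural isomorphism $F \cong \widetilde{F}L$'', together with keeping track that the universal quantifier over the category $\cat{E}$ appears in clauses (2) and (3) and corresponds exactly to the ``for every category $\cat{E}$'' in the statement.
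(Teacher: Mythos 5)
Your proof is correct; the paper states this lemma without any proof, treating it as an immediate unwinding of Definition \ref{def localization of category}, and your bookkeeping --- clause (1) as well-definedness of $\Fun(L,\cat{E})$ into the full subcategory $\Fun_W(\cat{C},\cat{E})$, clause (2) as essential surjectivity (using that isomorphisms in a full subcategory agree with those in the ambient functor category), clause (3) as full faithfulness --- is exactly the intended justification. Nothing is missing.
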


\begin{Lemma}[{\cite[Proposition 4]{govzmann2022comparison}}] \label{lem uniqueness of localizations}
Let $\cat{C}$ be a locally small category and $W$ a class of morphisms of $\cat{C}$. Suppose that $L : \cat{C} \to \cat{D}$ and $L' : \cat{C} \to \cat{D}'$ are localizations of $\cat{C}$ at $W$, equipped with natural isomorphisms $\widetilde{L} L \cong L'$ and $\widetilde{L}' L' \cong L$, then there exist unique natural isomorphisms $\varphi : \widetilde{L} \widetilde{L}' \cong 1_{\cat{D}'}$ and $\psi: \widetilde{L}' \widetilde{L} \cong 1_{\cat{D}}$, defining an equivalence of categories $\phi: \cat{D} \to \cat{D}'$ such that $\phi L \cong L'$.
\end{Lemma}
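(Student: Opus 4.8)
The plan is to extract everything from the third clause of Definition \ref{def localization of category}, namely that for every category $\cat{E}$ the whiskering functor $\Fun(L,\cat{E}) : \Fun(\cat{D},\cat{E}) \to \Fun_W(\cat{C},\cat{E})$, $G \mapsto GL$, is fully faithful (and symmetrically for $L'$). The two functors $\widetilde{L} : \cat{D} \to \cat{D}'$ and $\widetilde{L}' : \cat{D}' \to \cat{D}$ are already in hand by clause (2) applied to $L'$ and $L$ respectively, together with the comparison isomorphisms $\widetilde{L}L \cong L'$ and $\widetilde{L}'L' \cong L$. Setting $\phi = \widetilde{L}$, it then remains only to produce the two round-trip isomorphisms $\varphi$ and $\psi$ and verify they are invertible; the required relation $\phi L \cong L'$ is just the given isomorphism $\widetilde{L}L \cong L'$.

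First I would analyze the round-trip $\widetilde{L}'\widetilde{L} : \cat{D} \to \cat{D}$. Whiskering the isomorphism $\widetilde{L}L \cong L'$ on the left by $\widetilde{L}'$ and then composing with $\widetilde{L}'L' \cong L$ yields
$$\widetilde{L}'\widetilde{L}L \cong \widetilde{L}'L' \cong L,$$
so $\Fun(L,\cat{D})$ sends both $\widetilde{L}'\widetilde{L}$ and $1_{\cat{D}}$ to objects linked by a natural isomorphism $\theta$. Since $\Fun(L,\cat{D})$ is fully faithful, whiskering with $L$ is a bijection on natural transformations, so there is a unique $\psi : \widetilde{L}'\widetilde{L} \Rightarrow 1_{\cat{D}}$ with $\psi L = \theta$; as fully faithful functors reflect isomorphisms and $\theta$ is invertible, $\psi$ is a natural isomorphism. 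Symmetrically, whiskering the other way gives $\widetilde{L}\widetilde{L}'L' \cong \widetilde{L}L \cong L'$, and fully faithfulness of $\Fun(L',\cat{D}')$ produces a unique natural isomorphism $\varphi : \widetilde{L}\widetilde{L}' \cong 1_{\cat{D}'}$. Together $\varphi$ and $\psi$ exhibit $\phi = \widetilde{L}$ as an equivalence of categories with quasi-inverse $\widetilde{L}'$.

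The uniqueness clause must be read relative to the chosen comparison data: $\psi$ is unique as the preimage under the whiskering bijection of the specific composite $\theta$ built from $\widetilde{L}L \cong L'$ and $\widetilde{L}'L' \cong L$, and likewise for $\varphi$ — not as an arbitrary natural isomorphism between the two endofunctors, which in general need not be unique. The only real subtlety, and the point I would be most careful about, is exactly this bookkeeping: ensuring the isomorphism fed into the fully-faithfulness bijection is the correctly whiskered composite, and invoking that fully faithful functors reflect isomorphisms to upgrade each of $\psi$ and $\varphi$ from a mere natural transformation to a natural isomorphism. Beyond composing and whiskering natural isomorphisms, no computation is needed.
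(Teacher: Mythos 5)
Your argument is correct. The paper itself does not prove this lemma (it defers to the cited reference), but your proof is exactly the standard one: obtain $\widetilde{L}$ and $\widetilde{L}'$ from clause (2), whisker the comparison isomorphisms to get $\widetilde{L}'\widetilde{L}L \cong L$ and $\widetilde{L}\widetilde{L}'L' \cong L'$, and then use full faithfulness of $\Fun(L,\cat{D})$ and $\Fun(L',\cat{D}')$ --- both to produce the unique preimages $\psi$ and $\varphi$ and, since fully faithful functors reflect isomorphisms, to see that they are invertible. Your reading of the uniqueness clause as uniqueness relative to the fixed comparison isomorphisms (rather than uniqueness among all natural isomorphisms between the two endofunctors) is also the right one.
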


\begin{Rem}
The universal property of a localization of $\cat{C}$ at $W$ can be stated much more succinctly as a pushout
\begin{equation*}
\begin{tikzcd}
	{W \times I} & {\cat{C}} \\
	{W \times J} & {\cat{D}}
	\arrow[hook, from=1-1, to=2-1]
	\arrow["i", from=1-1, to=1-2]
	\arrow["L", from=1-2, to=2-2]
	\arrow[from=2-1, to=2-2]
	\arrow["\lrcorner"{anchor=center, pos=0.125, rotate=180}, draw=none, from=2-2, to=1-1]
\end{tikzcd}
\end{equation*}
in the very large category $\ncat{CAT}$ of large (and not necessarily locally small) categories, where $W$ is thought of as a discrete category, $I$ is the poset $\{ 0 \leq 1 \}$, and $J$ is the category with two objects $0$ and $1$ and a single isomorphism between them. The functor $i$ takes a pair $(w, \leq )$ to the actual morphism $w$ in $\cat{C}$. See \cite{simpson2005explaining} for more details.
\end{Rem}

Given a category $\cat{C}$ and a class of morphisms $W$ in $\cat{C}$, we can construct a (possibly not locally small) category $\cat{C}[W^{-1}]$, which we call the \textbf{Gabriel-Zisman localization} of $\cat{C}$ at $W$. Its objects are the same as $\cat{C}$, but its morphisms $f: U \to V$ consist of equivalence classes of finite zig-zags of morphisms in $\cat{C}$ of the form 
$$ U \xrightarrow{f_0} V_0 \xleftarrow{w_0} V_1 \xrightarrow{f_1} V_2 \leftarrow \dots \rightarrow V_n \xleftarrow{w_k} V$$
where each $w_i \in W$. See \cite[Chapter 1]{gabriel1967calculus}, \cite[Section 2]{simpson2005explaining} or \cite{nlab:category_of_fractions} for more details. This implies the following result.

\begin{Prop}[{\cite[Lemma 1.2]{gabriel1967calculus}}] \label{prop localizations of categories exist}
For any category $\cat{C}$ and class of morphisms $W$, the (possibly not locally small) category $\cat{C}[W^{-1}]$ is a localization of $\cat{C}$ at $W$.
\end{Prop}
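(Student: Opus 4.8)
The plan is to exhibit the canonical functor $L\colon \cat{C} \to \cat{C}[W^{-1}]$ and verify directly the three conditions of Definition \ref{def localization of category}. First I would pin down the construction: $\cat{C}[W^{-1}]$ has the same objects as $\cat{C}$, a morphism is an equivalence class of finite zig-zags whose backward-pointing arrows lie in $W$, and the equivalence relation is the smallest one forcing (a) two consecutive same-direction arrows to compose, (b) identity arrows to be dropped, and (c) an adjacent pair consisting of some $w \in W$ followed by the same $w$ in the opposite direction to cancel. The functor $L$ is the identity on objects and sends $f\colon U \to V$ to the length-one zig-zag $(f)$.

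Before checking the universal property I would establish that $\cat{C}[W^{-1}]$ is genuinely a category: composition is concatenation of zig-zags followed by passage to equivalence classes, and the bulk of the work is checking that this operation is well-defined on equivalence classes and associative, with the classes of $L(1_U)$ serving as identities. This is exactly the step I expect to be the main obstacle, since one must verify that the generating relations assemble into a congruence compatible with concatenation; everything afterwards is formal.

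For condition (1), that $L$ inverts $W$: for $w\colon U \to V$ in $W$ the reversed length-one zig-zag gives a morphism $V \to U$ in $\cat{C}[W^{-1}]$, and concatenating it with $(w)$ on either side collapses to an identity by relation (c), so $L(w)$ is an isomorphism. For condition (2): given $F\colon \cat{C} \to \cat{E}$ inverting $W$, I would define $\widetilde{F}$ to agree with $F$ on objects and to send a zig-zag $U \xrightarrow{f_0} \cdots \xleftarrow{w_k} V$ to the evident composite $\cdots \circ F(w_0)^{-1} \circ F(f_0)$ in $\cat{E}$, using that each $F(w_i)$ is invertible. One checks this is constant on equivalence classes (the three generating relations are respected precisely because $F$ is a functor inverting $W$) and functorial, and by construction $\widetilde{F}L = F$, so the natural isomorphism $\eta$ may be taken to be the identity.

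For condition (3), full faithfulness of $\Fun(L,\cat{E})$: since $L$ is the identity on objects, a natural transformation $\alpha\colon G \Rightarrow G'$ between functors out of $\cat{C}[W^{-1}]$ is determined by its components, which coincide with those of $\alpha L$, giving faithfulness. For fullness, given $\gamma\colon GL \Rightarrow G'L$ I would set $\widetilde{\gamma}_U = \gamma_U$ and verify naturality against an arbitrary morphism of $\cat{C}[W^{-1}]$; since every such morphism is a composite of arrows of the form $L(f)$ and inverses $L(w)^{-1}$, naturality on the $L(f)$ is the hypothesis and naturality on each $L(w)^{-1}$ follows by inverting the naturality square for $L(w)$, so naturality propagates through composites. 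Together with the uniqueness-up-to-equivalence supplied by Lemma \ref{lem uniqueness of localizations}, this establishes that $L\colon \cat{C}\to \cat{C}[W^{-1}]$ is a localization of $\cat{C}$ at $W$.
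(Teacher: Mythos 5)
Your proposal is correct and follows the standard Gabriel--Zisman argument; the paper itself gives no proof beyond describing the zig-zag construction and citing \cite[Lemma 1.2]{gabriel1967calculus}, and your direct verification of the three conditions of Definition \ref{def localization of category} is exactly what that reference carries out. The one step you flag but defer --- that the generating relations yield a congruence so that concatenation of zig-zags is well defined on equivalence classes --- is indeed the only real technical content, and it goes through routinely by presenting $\cat{C}[W^{-1}]$ as a quotient of the free category on the graph obtained from $\Mor(\cat{C})$ together with formal reversals of the arrows in $W$.
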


Unfortunately, Proposition \ref{prop localizations of categories exist} is not as useful as it might seem. The explicit description of $\cat{C}[W^{-1}]$ is not amenable to concrete computations.

\subsubsection{Reflective Localization}
In this section, we introduce reflective localizations, a very important and useful class of localizations.

\begin{Def} \label{def reflective functor}
We say that a functor $L : \cat{C} \to \cat{D}$ is \textbf{reflective} if it has a fully faithful right adjoint $i$:
\begin{equation*}
\begin{tikzcd}
	{\cat{D}} && {\cat{C}}
	\arrow[""{name=0, anchor=center, inner sep=0}, "L"', shift right=2, from=1-3, to=1-1]
	\arrow[""{name=1, anchor=center, inner sep=0}, "i"', shift right=2, hook, from=1-1, to=1-3]
	\arrow["\dashv"{anchor=center, rotate=-90}, draw=none, from=0, to=1]
\end{tikzcd}
\end{equation*}
If $i: \cat{E} \hookrightarrow \cat{C}$ is a full subcategory with a left adjoint $L: \cat{C} \to \cat{E}$, we call $\cat{E}$ a \textbf{reflective subcategory}, and we call $L$ the \textbf{reflector}. Let $\cat{L} \coloneqq iL$, we call this the \textbf{localizer} functor.
\end{Def}

In what follows we will be interested in reflective functors $L: \cat{C} \to \cat{D}$ that are furthermore localizations, which we call reflective localizations. These are very important kinds of localizations because the category $\cat{D}$ is equivalent to a certain full subcategory of $\cat{C}$ on the $W$-local objects (Proposition \ref{prop local objects equiv to localization}) and because reflective localizations inherit the (co)limits of $\cat{C}$ in a certain sense (Proposition \ref{prop (co)limits in reflective subcategories}).

\begin{Def} \label{def W local object and local equivalence}
Let $\cat{C}$ be a category and $W$ a class of morphisms in $\cat{C}$. We say that an object $X \in \cat{C}$ is \textbf{$W$-local}\footnote{This terminology is really unfortunate considering how over-used the term local is used in these notes, but this terminology is by now very standard.}if for every $w: A \to B$ in $W$, the map
\begin{equation*}
    w^* : \cat{C}(B,X) \to \cat{C}(A,X)
\end{equation*}
is a bijection. In other words $X$ is $W$-local if and only if $y(X)$ inverts $W^\op$.

We say that a map $f: A \to B$ is a \textbf{$W$-local equivalence} if for every $W$-local object $Z$, the map
\begin{equation*}
    f^* : \cat{C}(B,Z) \to \cat{C}(A,Z)
\end{equation*}
is a bijection.
\end{Def}

\begin{Rem} \label{rem counit of reflective adjunction is iso}
Since $i$ is fully faithful, we know that the counit of the adjunction $L \dashv i$ is an isomorphism. Thus for every $d \in \cat{D}$, the component $\varepsilon_d: Li(d) \to d$ of the counit is an isomorphism.
\end{Rem}


\begin{Lemma} \label{lem objects in reflective localizations}
Let $L: \cat{C} \to \cat{D}$ be a reflective functor with right adjoint $i: \cat{D} \hookrightarrow \cat{C}$. Let $W$ denote the class of morphisms in $\cat{C}$ inverted by $L$. If $X \in \cat{C}$, then the following are equivalent:
\begin{enumerate}
	\item $X$ is isomorphic to an object $i Y$, with $Y \in \cat{D}$,
	\item $X$ is $W$-local, and
	\item the unit $\eta_X : X \to iLX = \cat{L}X$ is an isomorphism.
\end{enumerate}
\end{Lemma}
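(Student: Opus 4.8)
The plan is to prove the three equivalences cyclically: $(1) \Rightarrow (2) \Rightarrow (3) \Rightarrow (1)$. This is the cleanest route because each implication uses a distinct piece of the reflective adjunction structure, and the hypotheses ($i$ fully faithful, hence the counit is an isomorphism by Remark \ref{rem counit of reflective adjunction is iso}) are exactly what makes the cycle close.

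For $(1) \Rightarrow (2)$, suppose $X \cong iY$. I would unwind the definition of $W$-local: I need that for every $w : A \to B$ in $W$, precomposition $w^* : \cat{C}(B, iY) \to \cat{C}(A, iY)$ is a bijection. Using the adjunction $L \dashv i$, there is a natural isomorphism $\cat{C}(-, iY) \cong \cat{D}(L(-), Y)$, so $w^*$ is identified with $\cat{D}(L(w), Y) : \cat{D}(LB, Y) \to \cat{D}(LA, Y)$. Since $w \in W$ means precisely that $L(w)$ is an isomorphism in $\cat{D}$, the induced map $\cat{D}(L(w), Y)$ is a bijection, and naturality transports this back to $w^*$. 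Then since local objects are closed under isomorphism (the condition only references hom-sets), $X \cong iY$ is itself $W$-local.

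For $(2) \Rightarrow (3)$, assume $X$ is $W$-local and consider the unit $\eta_X : X \to iLX$. The key observation is that $\eta_X$ is itself a morphism inverted by $L$, i.e.\ $\eta_X \in W$: this follows because $L\eta_X$ is a section of the counit $\varepsilon_{LX}$ (by the triangle identity $\varepsilon_{LX} \circ L\eta_X = 1_{LX}$), and $\varepsilon_{LX}$ is an isomorphism by Remark \ref{rem counit of reflective adjunction is iso}, so $L\eta_X$ is an isomorphism. Now apply $W$-locality of $X$ to the morphism $\eta_X \in W$: the map $(\eta_X)^* : \cat{C}(iLX, X) \to \cat{C}(X, X)$ is a bijection. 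The identity $1_X$ has a unique preimage $r : iLX \to X$ with $r \circ \eta_X = 1_X$; a short diagram chase (using that $(\eta_X)^*$ is injective and that both $\eta_X \circ r \circ \eta_X = \eta_X$ and $1_{iLX} \circ \eta_X = \eta_X$, where the injectivity argument is applied against the local object $iLX$, which is local by $(1)\Rightarrow(2)$) shows $\eta_X \circ r = 1_{iLX}$, so $\eta_X$ is an isomorphism.

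For $(3) \Rightarrow (1)$, this is immediate: if $\eta_X : X \to iLX$ is an isomorphism, then taking $Y = LX \in \cat{D}$ exhibits $X \cong iY$. The main obstacle I anticipate is the $(2) \Rightarrow (3)$ step, specifically justifying that the retraction $r$ produced by local-ness of $X$ is actually a two-sided inverse; the subtlety is that one must feed the right local object into the defining bijection. The clean way around this is to establish $(1) \Rightarrow (2)$ first so that $iLX$ is known to be local, then run the injectivity argument for $(\eta_X)^*$ computed against $iLX$ rather than against $X$. Everything else is routine application of the adjunction hom-set isomorphism and the triangle identities.
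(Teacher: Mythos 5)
Your proposal is correct and follows essentially the same route as the paper: the cycle $(1)\Rightarrow(2)\Rightarrow(3)\Rightarrow(1)$, with $(1)\Rightarrow(2)$ via the adjunction naturality square, and $(2)\Rightarrow(3)$ by first showing $\eta_X\in W$ using the triangle identity and the invertible counit, then producing a retraction from $W$-locality of $X$ and closing the loop using $W$-locality of $iLX$. The only (cosmetic) difference is in the last step: the paper checks that the retraction $g$ itself lies in $W$ and tests locality of $iLX$ against $g$, whereas you test locality of $iLX$ against $\eta_X$ directly to identify $\eta_X\circ r$ with $1_{iLX}$ — both are the same standard uniqueness chase.
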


\begin{proof}
$(1 \Rightarrow 2)$ Suppose that $X \cong iY$ for some $Y \in \cat{D}$. If $w: A \to B$ is a morphism in $W$, then naturality of the adjunction $L \dashv i$ implies that the following diagram commutes
\begin{equation*}
\begin{tikzcd}
	{\cat{C}(B,X)} & {\cat{C}(A,X)} \\
	{\cat{C}(B,iY)} & {\cat{C}(A,iY)} \\
	{\cat{D}(LB,Y)} & {\cat{D}(LA,Y)}
	\arrow["{{w^*}}", from=1-1, to=1-2]
	\arrow["{{(Lw)^*}}"', from=3-1, to=3-2]
	\arrow["\cong"', from=1-1, to=2-1]
	\arrow["\cong", from=1-2, to=2-2]
	\arrow["\cong"', from=2-1, to=3-1]
	\arrow["\cong", from=2-2, to=3-2]
	\arrow["{w^*}", from=2-1, to=2-2]
\end{tikzcd}	
\end{equation*}
and since $w \in W$, $L(w)$ is an isomorphism, so $(Lw)^*$ is also an isomorphism. Thus $w^*$ is an isomorphism. Thus $X$ is $W$-local.

$(2 \Rightarrow 3)$ The triangle identities of the adjunction imply that the composition
$$LX \xrightarrow{L(\eta_X)} LiLX \xrightarrow{\varepsilon_{LX}} LX$$
is the identity on $X$, but $i$ is fully faithful so $\varepsilon_{LX}$ is an isomorphism. Therefore $L(\eta_X)$ is an isomorphism, and thus $\eta_X \in W$.

Now since $X$ is $W$-local, the map 
$$\eta^*_X: \cat{C}(iLX, X) \to \cat{C}(X,X)$$
is a bijection. Thus there exists a unique map $g : iLX \to X$ such that $(\eta^*_X)(g) = g \eta_X= 1_X$. But by (1) above, $iLX$ is $W$-local as well, and $L(g \eta_X) = L(g) L(\eta_X) = 1_{LX}$, thus $L(g)$ is an isomorphism, so $g \in W$. Thus the map
\begin{equation*}
    g^*: \cat{C}(X, iLX) \to \cat{C}(iLX, iLX)
\end{equation*}
is also a bijection. Thus there exists a unique $h: X \to iLX$ such that $hg = 1_{iLX}$. But then $\eta_X = (hg) \eta_X = h (g \eta_{X}) = h$. Thus $\eta_X$ is an isomorphism.

$(3 \Rightarrow 1)$ This is clear.
\end{proof}

\begin{Lemma} \label{lem W-local objects and equivalences of reflective localizations}
Let $L: \cat{C} \to \cat{D}$ be a reflective functor and let $W$ denote the class of morphisms in $\cat{C}$ that are inverted by $L$. Then
\begin{enumerate}
\item the essential image of $iL$ consists precisely of the $W$-local objects, and
\item the $W$-local equivalences are precisely the maps in $W$.
\end{enumerate}
\end{Lemma}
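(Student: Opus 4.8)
The statement to prove is Lemma \ref{lem W-local objects and equivalences of reflective localizations}, which characterizes, for a reflective functor $L : \cat{C} \to \cat{D}$ with $W = L^{-1}(\text{iso})$, both the essential image of the localizer $iL$ and the class of $W$-local equivalences.

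\textbf{Plan for part (1).} The essential image of $iL$ consists of those objects isomorphic to some $iLX$, equivalently (by idempotency) isomorphic to some $iY$ for $Y \in \cat{D}$. But this is exactly condition (1) of Lemma \ref{lem objects in reflective localizations}, which is shown there to be equivalent to condition (2), namely being $W$-local. So part (1) is an immediate consequence of the previously-established equivalence $(1) \Leftrightarrow (2)$ in Lemma \ref{lem objects in reflective localizations}. The only thing worth spelling out is that the essential image of $iL$ coincides with the essential image of $i$: every $iY$ lies in the essential image of $iL$ because $iY \cong iLiY$ (using that the counit $\varepsilon_Y : LiY \to Y$ is an isomorphism by Remark \ref{rem counit of reflective adjunction is iso}, hence $iY \cong iLiY$), and conversely every $iLX$ is visibly of the form $iY$.

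\textbf{Plan for part (2).} I must show $f \in W$ if and only if $f$ is a $W$-local equivalence. For the forward direction, suppose $f : A \to B$ lies in $W$, and let $Z$ be any $W$-local object. Then by definition of $W$-local (Definition \ref{def W local object and local equivalence}), the map $f^* : \cat{C}(B,Z) \to \cat{C}(A,Z)$ is a bijection, which is exactly the condition for $f$ to be a $W$-local equivalence. So this direction is essentially definitional once one observes that being $W$-local is phrased against all of $W$. For the converse, suppose $f : A \to B$ is a $W$-local equivalence; I want to conclude $L(f)$ is an isomorphism. The natural move is to test against local objects of the form $Z = iLX$, which are $W$-local by part (1). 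Using the adjunction $L \dashv i$, the bijection $f^* : \cat{C}(B, iLX) \to \cat{C}(A, iLX)$ transports to a bijection $\cat{D}(LB, LX) \to \cat{D}(LA, LX)$, natural in $X$. Taking this bijection for all $X \in \cat{D}$ and invoking the Yoneda lemma in $\cat{D}$ (applied to the representable functors $\cat{D}(L(-), -)$, or more directly letting $X$ range so that $L(f)^*$ is a natural isomorphism of representables) shows $L(f)$ is an isomorphism, hence $f \in W$.

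\textbf{Main obstacle.} The genuinely nontrivial step is the converse in part (2): converting ``$f$ is a $W$-local equivalence'' into ``$L(f)$ is an isomorphism.'' The subtlety is making sure the Yoneda argument is applied in the right category. One cannot directly Yoneda in $\cat{C}$ because $f^*$ is only known to be a bijection against $W$-local objects, not against all objects. The clean route is to restrict attention to local objects $iLX$, use the adjunction isomorphism $\cat{C}(-, iLX) \cong \cat{D}(L(-), X)$ to obtain that $\cat{D}(L(f), X) : \cat{D}(LB, X) \to \cat{D}(LA, X)$ is a bijection for every $X \in \cat{D}$, and then apply the Yoneda lemma in $\cat{D}$ to deduce $L(f)$ is an isomorphism. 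I expect the write-up to hinge on stating this naturality carefully so the Yoneda step is unambiguous.
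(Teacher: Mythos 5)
Your proof is correct and takes essentially the same route as the paper: part (1) reduces to Lemma \ref{lem objects in reflective localizations}, and the converse of part (2) tests $f^*$ against objects in the image of $i$, transports the bijection across the adjunction, and applies Yoneda in $\cat{D}$ (your forward direction of (2) is in fact slightly more direct, being purely definitional, where the paper re-routes through the adjunction square and the characterization of local objects). One small notational point: in your closing paragraph the test objects should be written $iX$ with $X$ ranging over $\cat{D}$, so that the adjunction reads $\cat{C}(-,iX)\cong\cat{D}(L(-),X)$; with $iLX$ for $X\in\cat{C}$ the argument still works, but then you need the extra remark that $L$ is essentially surjective (the counit being an isomorphism) so that the objects $LX$ exhaust $\cat{D}$ up to isomorphism before invoking Yoneda.
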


\begin{proof}
(1) This follows from Lemma \ref{lem objects in reflective localizations}. 

(2) Suppose $f: A \to B$ is a $W$-local equivalence. We want to show that $L(f)$ is an isomorphism. Suppose that $Y \in \cat{D}$, then by the naturality of the adjunction $L \vdash i$, the following diagram commutes
\begin{equation*}
    \begin{tikzcd}
	{\cat{D}(LB,Y)} & {\cat{D}(LA,Y)} \\
	{\cat{C}(B,iY)} & {\cat{C}(A,iY)}
	\arrow["{(Lf)^*}", from=1-1, to=1-2]
	\arrow["\cong"', from=1-1, to=2-1]
	\arrow["\cong", from=1-2, to=2-2]
	\arrow["{f^*}", from=2-1, to=2-2]
\end{tikzcd}
\end{equation*}
Now since $f$ is a $W$-local equivalence, and $iY$ is $W$-local by Lemma \ref{lem objects in reflective localizations}, then $f^*$ is an isomorphism, which implies that $(Lf)^*$ is an isomorphism. Since $Y$ was arbitrary, by the Yoneda lemma this implies that $Lf$ is an isomorphism. Thus $f \in W$.

Now suppose $f \in W$, then $Lf$ is an isomorphism, so $(Lf)^*$ is an isomorphism so $f^*$ is an isomorphism in the above commuting diagram. Since every $W$-local object is isomorphic to an object of the form $iY$ by Lemma \ref{lem objects in reflective localizations}, this implies that $f$ is a $W$-local equivalence.
\end{proof}

\begin{Lemma}[{\cite[Proposition 0.8]{nlab:reflective_localization}}]
Let $L: \cat{C} \to \cat{D}$ be a reflective functor, and let $W$ denote the class of morphisms in $\cat{C}$ that are inverted by $L$. Then $L : \cat{C} \to \cat{D}$ is a weak localization of $\cat{C}$ at $W$.
\end{Lemma}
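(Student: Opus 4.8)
The goal is to show that a reflective functor $L : \cat{C} \to \cat{D}$, with $W$ the class of morphisms it inverts, is a localization of $\cat{C}$ at $W$ in the sense of Definition \ref{def localization of category}. The plan is to verify the three conditions of that definition directly, leaning heavily on the structural results already established for reflective functors, namely Lemma \ref{lem objects in reflective localizations} and Lemma \ref{lem W-local objects and equivalences of reflective localizations}. Condition (1) is immediate: $L$ inverts $W$ by the very definition of $W$ as the class of $L$-inverted morphisms.

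For condition (2), suppose $F : \cat{C} \to \cat{E}$ inverts $W$. I would define $\widetilde{F} = F \circ i : \cat{D} \to \cat{E}$, where $i$ is the fully faithful right adjoint of $L$. The required natural isomorphism $\eta : F \Rightarrow \widetilde{F} L = F i L = F \cat{L}$ should be obtained by applying $F$ to the unit $\eta^{L} : 1_{\cat{C}} \Rightarrow iL$ of the reflective adjunction. The key point is that although $\eta^L_X : X \to iLX$ need not be an isomorphism for general $X$, it lies in $W$: indeed, by the triangle identities $L(\eta^L_X)$ is an isomorphism (this is exactly the argument in the proof of $(2 \Rightarrow 3)$ of Lemma \ref{lem objects in reflective localizations}), so $\eta^L_X \in W$. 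Since $F$ inverts $W$, the whiskered transformation $F \eta^L : F \Rightarrow F i L$ is a natural isomorphism, giving the desired $\eta$.

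For condition (3), I must show that $\Fun(L, \cat{E}) : \Fun(\cat{D}, \cat{E}) \to \Fun_W(\cat{C}, \cat{E})$ is fully faithful for every $\cat{E}$. Here I would use that $i : \cat{D} \hookrightarrow \cat{C}$ is fully faithful, so that the essential image of $iL$ is precisely the full subcategory of $W$-local objects by Lemma \ref{lem W-local objects and equivalences of reflective localizations}.(1), and that $L$ restricted to this subcategory is an equivalence with inverse $i$ (the counit $\varepsilon$ being invertible by Remark \ref{rem counit of reflective adjunction is iso}). Precomposition with $L$ and precomposition with $i$ are then mutually quasi-inverse on the relevant functor categories: given $G, H : \cat{D} \to \cat{E}$, a natural transformation $GL \Rightarrow HL$ of $W$-inverting functors can be transported back along $i$ using $\varepsilon$ to recover a unique $G \Rightarrow H$, establishing both fullness and faithfulness.

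The main obstacle I anticipate is condition (3), specifically the careful bookkeeping needed to check that the correspondence between natural transformations $G \Rightarrow H$ and $GL \Rightarrow HL$ is genuinely bijective rather than merely surjective or injective. The subtlety is that one recovers a transformation on all of $\cat{D}$ from its values after precomposing with $L$, which only directly sees the $W$-local objects $iLX$; reconstructing the component at an arbitrary $d \in \cat{D}$ requires using the isomorphism $d \cong L i d$ coherently. I expect this to reduce cleanly to the fact that $L$ and $i$ form an equivalence onto the $W$-local subcategory together with the universal property already packaged in Lemma \ref{lem W-local objects and equivalences of reflective localizations}, so the bulk of the work is verifying naturality of the assignments rather than any deep new idea.
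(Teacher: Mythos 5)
Your proposal is correct and follows essentially the same route as the paper: condition (2) is handled by setting $\widetilde{F} = Fi$ and observing that the unit components $\eta_X$ lie in $W$ (so $F\eta$ is invertible), and condition (3) is handled by whiskering with $i$ and using the invertible counit $\varepsilon$ together with the triangle identities to transport transformations back uniquely. The paper phrases (3) as uniqueness of the factorization of a given $\alpha : F \Rightarrow \widetilde{F}'L$ through $F\eta$, while you phrase it as fully faithfulness of $\Fun(L,\cat{E})$ directly, but the underlying argument is identical.
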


\begin{proof}
Let $F: \cat{C} \to \cat{E}$ be a functor that inverts all the morphisms in $W$. We wish to show that it factors through $L$ up to natural isomorphism. Consider the following diagram
\begin{equation*}
\begin{tikzcd}
	{\cat{C}} && {\cat{C}} & {\cat{E}} \\
	& {\cat{D}}
	\arrow[""{name=0, anchor=center, inner sep=0}, "{1_{\cat{C}}}", from=1-1, to=1-3]
	\arrow["L"', from=1-1, to=2-2]
	\arrow["i"', from=2-2, to=1-3]
	\arrow["F", from=1-3, to=1-4]
	\arrow["\eta", shorten <=3pt, Rightarrow, from=0, to=2-2]
\end{tikzcd}	
\end{equation*}
where $\eta$ denotes the unit of the adjunction $L \dashv i$. Let $\widetilde{F} \coloneqq F i$. We want to show that $F \eta : F \Rightarrow \widetilde{F}L$ is a natural isomorphism. Notice that $L(\eta_X)$ is an isomorphism for every object $X \in \cat{C}$ as argued in the proof of Lemma \ref{lem objects in reflective localizations}. That means that $\eta_X \in W$, so $F(\eta_X)$ is also an isomorphism. This implies that $F \eta$ is a natural isomorphism.

Now suppose there is another functor $\widetilde{F}' : \cat{D} \to \cat{E}$ and a natural isomorphism $\alpha: F \Rightarrow \widetilde{F}'L$. Consider the pasting diagram:
\begin{equation*}
\begin{tikzcd}
	& {\cat{C}} && {\cat{E}} \\
	{\cat{D}} && {\cat{D}}
	\arrow["i", from=2-1, to=1-2]
	\arrow["L"{description}, from=1-2, to=2-3]
	\arrow[""{name=0, anchor=center, inner sep=0}, "{1_\cat{D}}"', from=2-1, to=2-3]
	\arrow[""{name=1, anchor=center, inner sep=0}, "F", from=1-2, to=1-4]
	\arrow["{\widetilde{F}'}"', from=2-3, to=1-4]
	\arrow["\varepsilon"', shorten >=3pt, Rightarrow, from=1-2, to=0]
	\arrow["\alpha", shorten <=3pt, Rightarrow, from=1, to=2-3]
\end{tikzcd}	
\end{equation*}
this provides a natural isomorphism $Fi \cong \widetilde{F}'$. If we now paste the above diagram to the unit $\eta$ we will get $\alpha$ back by the triangle identities:
\begin{equation*}
\begin{tikzcd}
	{\cat{C}} && {\cat{C}} && {\cat{E}} & {=} & {\cat{C}} && {\cat{E}} \\
	& {\cat{D}} && {\cat{D}} &&&& {\cat{D}}
	\arrow["i"{description}, from=2-2, to=1-3]
	\arrow["L"{description}, from=1-3, to=2-4]
	\arrow[""{name=0, anchor=center, inner sep=0}, "{1_\cat{D}}"', from=2-2, to=2-4]
	\arrow[""{name=1, anchor=center, inner sep=0}, "F", from=1-3, to=1-5]
	\arrow["{\widetilde{F}'}"', from=2-4, to=1-5]
	\arrow["L"', from=1-1, to=2-2]
	\arrow[""{name=2, anchor=center, inner sep=0}, "{1_\cat{C}}", from=1-1, to=1-3]
	\arrow["L"', from=1-7, to=2-8]
	\arrow["{\widetilde{F}'}"', from=2-8, to=1-9]
	\arrow[""{name=3, anchor=center, inner sep=0}, "F", from=1-7, to=1-9]
	\arrow["\varepsilon"', shorten >=3pt, Rightarrow, from=1-3, to=0]
	\arrow["\alpha", shorten <=3pt, Rightarrow, from=1, to=2-4]
	\arrow["\eta"', shorten <=3pt, Rightarrow, from=2, to=2-2]
	\arrow["\alpha", shorten <=3pt, Rightarrow, from=3, to=2-8]
\end{tikzcd}	
\end{equation*}
Thus $\alpha$ factors through $F \eta$ uniquely, showing that (\ref{eq universal prop of localization}) is fully faithful, and therefore $L : \cat{C} \to \cat{D}$ is a localization of $\cat{C}$ at $W$.
\end{proof}

\begin{Def} \label{def reflective localization}
Suppose that $L: \cat{C} \to \cat{D}$ is a localization of $\cat{C}$ at a class of morphisms $W$. We say that $L$ is a \textbf{reflective localization} if $L$ is furthermore a reflective functor.
\end{Def}

For plenty of examples of reflective localizations, see \cite[Example 4.5.14]{riehl2017category}.

\begin{Cor}
Every reflective functor $L : \cat{C} \to \cat{D}$ is a reflective localization of $\cat{C}$ at the class $L^{-1}(\text{iso})$ of morphisms inverted by $L$.
\end{Cor}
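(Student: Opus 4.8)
The plan is to recognize that this corollary is a direct repackaging of the lemma immediately preceding it together with the definition of a reflective localization, so the proof will be very short. First I would identify the notation: the class $L^{-1}(\text{iso})$ appearing in the statement is, by the definition of a functor inverting a morphism (Definition \ref{def inverting morphisms}), exactly the class $W$ of all morphisms $w$ in $\cat{C}$ such that $L(w)$ is an isomorphism, which is precisely the class $W$ used in the preceding lemma. This identification is immediate and requires no argument beyond unwinding definitions.

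Next I would invoke the preceding lemma, which asserts that any reflective functor $L : \cat{C} \to \cat{D}$ is a (weak) localization of $\cat{C}$ at the class $W$ of morphisms it inverts; that is, $L$ satisfies conditions (1)--(3) of Definition \ref{def localization of category} with $W = L^{-1}(\text{iso})$. Since $L$ is given to be reflective, it possesses a fully faithful right adjoint in the sense of Definition \ref{def reflective functor}. By Definition \ref{def reflective localization}, a functor that is simultaneously a localization at some class of morphisms and a reflective functor is exactly a reflective localization. Combining the output of the lemma with the reflectivity hypothesis therefore yields the claim.

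I expect no genuine obstacle here: the corollary is purely a matter of assembling an already-proved lemma with two definitions. The only point deserving an explicit word is the identification of $L^{-1}(\text{iso})$ with the class $W$ from the lemma, after which the conclusion follows formally. Accordingly the written proof would consist of a single sentence citing the preceding lemma and Definition \ref{def reflective localization}.

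\begin{proof}
By Definition \ref{def inverting morphisms}, the class $L^{-1}(\text{iso})$ is exactly the class $W$ of morphisms of $\cat{C}$ inverted by $L$. By the preceding lemma, $L$ is a localization of $\cat{C}$ at $W = L^{-1}(\text{iso})$, and by hypothesis $L$ is a reflective functor (Definition \ref{def reflective functor}). Hence, by Definition \ref{def reflective localization}, $L$ is a reflective localization of $\cat{C}$ at $L^{-1}(\text{iso})$.
\end{proof}
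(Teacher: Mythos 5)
Your proof is correct and is exactly the argument the paper intends: the corollary is stated without proof precisely because it follows immediately from the preceding lemma together with Definitions \ref{def reflective functor} and \ref{def reflective localization}, as you observe.
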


\begin{Prop} \label{prop local objects equiv to localization}
Suppose that $L : \cat{C} \to \cat{D}$ is a reflective localization of $\cat{C}$ at $W$, with right adjoint $i: \cat{D} \hookrightarrow \cat{C}$. Then if we let $\iota : \cat{C}_W \hookrightarrow \cat{C}$ denote the full subcategory of $\cat{C}$ on the $W$-local objects, there is an equivalence $\phi: \cat{D} \to \cat{C}_W$ making the following diagram commutes
\begin{equation*}
\begin{tikzcd}
	{\cat{D}} && {\cat{C}} \\
	& {\cat{C}_W}
	\arrow["i", hook, from=1-1, to=1-3]
	\arrow["{{{\iota}}}"', hook, from=2-2, to=1-3]
	\arrow["\phi"', from=1-1, to=2-2]
\end{tikzcd}
\end{equation*}
\end{Prop}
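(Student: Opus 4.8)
The plan is to exhibit $\phi$ as the corestriction of $i$ to the full subcategory $\cat{C}_W$ of $W$-local objects, and then deduce that it is an equivalence directly from the two preceding lemmas, which already do all the real work. First I would observe that $i$ lands inside $\cat{C}_W$: for every $Y \in \cat{D}$, the object $iY$ is of the form ``$iY$ with $Y \in \cat{D}$,'' so by the equivalence $(1) \Leftrightarrow (2)$ of Lemma \ref{lem objects in reflective localizations} (equivalently by Lemma \ref{lem W-local objects and equivalences of reflective localizations}.(1), which identifies the essential image of $iL$ with the $W$-local objects), $iY$ is $W$-local. Hence $iY$ is an object of $\cat{C}_W$, and since $\iota : \cat{C}_W \hookrightarrow \cat{C}$ is literally the inclusion of a full subcategory, I can define $\phi : \cat{D} \to \cat{C}_W$ to agree with $i$ on objects and morphisms. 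This makes the triangle commute strictly, i.e. $\iota \phi = i$, so in particular it commutes up to (the identity) isomorphism.

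Next I would check that $\phi$ is fully faithful. Both $i$ and $\iota$ are fully faithful (the former by hypothesis, since $L$ is reflective; the latter because $\cat{C}_W$ is a full subcategory), and $\iota \phi = i$. Since $\iota$ is fully faithful, the map $\cat{C}_W(\phi Y, \phi Y') \to \cat{C}(\iota \phi Y, \iota \phi Y') = \cat{C}(iY, iY')$ is a bijection, and this composite equals the action of $i$ on hom-sets, which is a bijection. Therefore the action of $\phi$ on hom-sets is a bijection, so $\phi$ is fully faithful.

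Finally I would verify essential surjectivity. Let $X \in \cat{C}_W$, so $X$ is a $W$-local object of $\cat{C}$. By the implication $(2) \Rightarrow (3)$ of Lemma \ref{lem objects in reflective localizations}, the unit $\eta_X : X \to iLX$ is an isomorphism in $\cat{C}$; since $X$ and $iLX = \phi(LX)$ both lie in the full subcategory $\cat{C}_W$, this isomorphism lives in $\cat{C}_W$ as well. Thus $X \cong \phi(LX)$, and $\phi$ is essentially surjective. A fully faithful, essentially surjective functor is an equivalence, so $\phi : \cat{D} \to \cat{C}_W$ is an equivalence making the stated diagram commute.

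There is no serious obstacle here, since the proposition is essentially a bookkeeping consequence of Lemma \ref{lem objects in reflective localizations} and Lemma \ref{lem W-local objects and equivalences of reflective localizations}. The only point requiring a little care is the well-definedness of the corestriction $\phi$ — one must confirm that $i$ genuinely factors through $\cat{C}_W$ on objects (handled by the first lemma) — together with the observation that the isomorphism witnessing essential surjectivity, and the hom-set bijections witnessing full faithfulness, can all be read off inside the full subcategory precisely because $\cat{C}_W \hookrightarrow \cat{C}$ is full.
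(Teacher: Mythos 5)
Your overall architecture — corestrict $i$ to $\cat{C}_W$, check full faithfulness from that of $i$ and $\iota$, check essential surjectivity — is exactly the paper's, but there is a genuine gap in the essential surjectivity step. Lemma \ref{lem objects in reflective localizations} is stated for $W$ equal to the class $L^{-1}(\text{iso})$ of \emph{all} morphisms inverted by $L$, whereas the proposition's $W$ is merely the class at which one localizes; since $L$ inverts $W$ we only know $W \subseteq L^{-1}(\text{iso})$, and the containment can be proper. Being local with respect to a \emph{smaller} class is a \emph{weaker} condition, so from ``$X$ is $W$-local'' you cannot invoke the implication $(2)\Rightarrow(3)$ of that lemma: its proof uses that $\eta_X$ lies in the class with respect to which $X$ is local, and $\eta_X$ is only known to lie in $L^{-1}(\text{iso})$, not in $W$. (The same conflation occurs in your first step, but there it is harmless: the lemma gives that $iY$ is $L^{-1}(\text{iso})$-local, which implies $W$-local because $W \subseteq L^{-1}(\text{iso})$.)

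The missing idea is the one place where the hypothesis that $L$ is a \emph{localization} at $W$ — and not merely a reflective functor inverting $W$ — is actually used. Given $X$ a $W$-local object, the functor $y(X) = \cat{C}(-,X)$, viewed as a functor $\cat{C} \to \ncat{Set}^{\op}$, inverts $W$, so by the universal property of the localization it factors through $L$ up to natural isomorphism; consequently $y(X)$ inverts every morphism that $L$ inverts, i.e.\ $X$ is $L^{-1}(\text{iso})$-local. Only after this bridge does Lemma \ref{lem objects in reflective localizations} apply to produce $Y \in \cat{D}$ with $X \cong iY$ (equivalently, to show $\eta_X$ is an isomorphism). With that paragraph inserted, the rest of your argument goes through as written.
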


\begin{proof}
First note that $W \subseteq L^{-1}(\text{iso})$ since $L$ inverts $W$. Let us show that $i$ factors through $\cat{C}_W$. In other words if $Y \in \cat{D}$, then we wish to show that $iY$ is $W$-local. But by Lemma \ref{lem objects in reflective localizations}, $iY$ is $L^{-1}(\text{iso})$-local, and therefore $W$-local. Therefore $i$ factors through the inclusion $\iota$, let us denote the corestriction of $i$ by $\phi$. Let us show that $\phi$ is essentially surjective.

Suppose that $X \in \cat{C}$ is $W$-local. We want to show that $X$ is also $L^{-1}(\text{iso})$-local. Now since $X$ is $W$-local, that implies that $y(X)$ inverts $W$. Therefore by the universal property of localization, we obtain a factorization of $y(X)$ up to natural isomorphism 
\begin{equation*}
\begin{tikzcd}
	{\cat{C}} && {\Set^{\op}} \\
	{\cat{D}}
	\arrow[""{name=0, anchor=center, inner sep=0}, "{y(X)}", from=1-1, to=1-3]
	\arrow["L"', from=1-1, to=2-1]
	\arrow[""{name=1, anchor=center, inner sep=0}, "G"', from=2-1, to=1-3]
	\arrow["\ell"', shift right=5, shorten <=2pt, shorten >=2pt, Rightarrow, from=0, to=1]
\end{tikzcd} 
\end{equation*}
But this implies that $y(X)$ inverts $L^{-1}(\text{iso})$. Thus $X$ is $L^{-1}(\text{iso})$-local. Thus by Lemma \ref{lem objects in reflective localizations}, this implies that there exists a $Y \in \cat{D}$ such that $X \cong iY$. Therefore $\phi$ is essentially surjective. Furthermore, $\phi$ is fully faithful because $i$ is. Thus $\phi$ is an equivalence and clearly $\iota \phi = i$.
\end{proof}

\begin{Cor}
Given a reflective functor $L : \cat{C} \to \cat{D}$, then $L$ is a reflective localization at $L^{-1}(\text{iso})$ and $\cat{D}$ is equivalent to the full subcategory $\cat{C}_{L^{-1}(\text{iso})}$ of $L^{-1}(\text{iso})$-local objects in $\cat{C}$.
\end{Cor}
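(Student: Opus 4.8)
The final statement reads: given a reflective functor $L : \cat{C} \to \cat{D}$, then $L$ is a reflective localization at $L^{-1}(\text{iso})$ and $\cat{D}$ is equivalent to the full subcategory $\cat{C}_{L^{-1}(\text{iso})}$ of $L^{-1}(\text{iso})$-local objects in $\cat{C}$.

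The plan is to observe that this corollary is a direct consequence of results already established immediately before it, so the proof should be short and should simply assemble those pieces. First I would note that $W := L^{-1}(\text{iso})$ is by definition the class of morphisms inverted by $L$, so the preceding unnamed lemma (the one asserting that a reflective functor $L$ is a weak localization of $\cat{C}$ at the class of morphisms it inverts) applies verbatim: it tells us that $L$ is a localization of $\cat{C}$ at $W$. Since $L$ is moreover a reflective functor by hypothesis, Definition \ref{def reflective localization} says precisely that $L$ is a \emph{reflective} localization at $W$. This settles the first half of the claim.

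For the second half, I would invoke Proposition \ref{prop local objects equiv to localization}, whose hypothesis is exactly that $L$ is a reflective localization of $\cat{C}$ at some class $W$ with fully faithful right adjoint $i$. Applying it with $W = L^{-1}(\text{iso})$ yields the equivalence $\phi : \cat{D} \to \cat{C}_W$ onto the full subcategory of $W$-local objects, fitting into the stated commuting triangle $\iota \phi = i$. Thus $\cat{D} \simeq \cat{C}_{L^{-1}(\text{iso})}$, which is the second assertion.

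I do not anticipate any genuine obstacle here, as the corollary is essentially a packaging of Definition \ref{def reflective localization} together with the weak-localization lemma and Proposition \ref{prop local objects equiv to localization}. The only point deserving a word of care is confirming that the class $W$ appearing in those earlier results — ``the morphisms inverted by $L$'' — is literally the notation $L^{-1}(\text{iso})$ used in the corollary, which is immediate from the definitions. So the proof would consist of exactly these two sentences of citation, with a remark that $i$ factors through $\cat{C}_W$ as already shown, and no computation is required.

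\begin{proof}
Let $W = L^{-1}(\text{iso})$ be the class of morphisms inverted by $L$. By the weak localization lemma for reflective functors, $L$ is a localization of $\cat{C}$ at $W$, and since $L$ is reflective, Definition \ref{def reflective localization} shows that it is a reflective localization at $W$. The equivalence $\cat{D} \simeq \cat{C}_{L^{-1}(\text{iso})}$ then follows immediately from Proposition \ref{prop local objects equiv to localization}.
\end{proof}
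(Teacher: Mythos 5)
Your proof is correct and is exactly the argument the paper intends: the corollary is an immediate assembly of the preceding lemma (a reflective functor is a localization at the class of morphisms it inverts), Definition \ref{def reflective localization}, and Proposition \ref{prop local objects equiv to localization} applied with $W = L^{-1}(\text{iso})$. No gaps.
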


The following result gives a big motivation for understanding reflective localizations. It can help us obtain (co)completeness results just by knowing a category is a reflective localization.

\begin{Prop}[{\cite[Proposition 4.5.15]{riehl2017category}}] \label{prop (co)limits in reflective subcategories}
Given a reflective subcategory $i: \cat{D} \hookrightarrow \cat{C}$ with left adjoint $L$ then
\begin{itemize}
    \item if $d : I \to \cat{D}$ is a (small) diagram, then a cone $\lambda : \Delta(V) \Rightarrow d$ in $\cat{D}$ is a limit cone if and only if $i\lambda : \Delta(iV) \Rightarrow id$ is a limit cone in $\cat{C}$. Furthermore if there exists a limit cone $\lambda' : \Delta(U) \Rightarrow id$ in $\cat{C}$, then there exists a limit cone $\lambda : \Delta(V) \Rightarrow d$ in $\cat{D}$ such that $iV = U$ and $i \lambda = \lambda'$,
    \item if $d : I \to \cat{D}$ is a (small) diagram, and $id : I \to \cat{C}$ admits a colimit $\colim \, id$ in $\cat{C}$, then $L(\colim \, id)$ is a colimit of $d$ in $\cat{D}$.
\end{itemize}
Therefore if $\cat{D} \hookrightarrow \cat{C}$ is a reflective subcategory, with $\cat{C}$ (co)complete, then so is $\cat{D}$. Furthermore limits in $\cat{D}$ agree with those in $\cat{C}$, while colimits in $\cat{D}$ are computed by applying $L$ to the colimit computed in $\cat{C}$.
\end{Prop}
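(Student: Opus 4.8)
The plan is to treat the limit and colimit assertions separately, in each case exploiting three structural facts: a right adjoint preserves limits, a left adjoint preserves colimits, and the inclusion $i$ is fully faithful (so in particular the counit $\varepsilon$ is an isomorphism, Remark \ref{rem counit of reflective adjunction is iso}). Throughout I would write $W = L^{-1}(\text{iso})$ for the class of morphisms inverted by $L$, so that Lemma \ref{lem objects in reflective localizations} is available to identify the essential image of $i$ with the $W$-local objects.

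For the limit statement, the forward direction of the biconditional is immediate, since $i$ is a right adjoint and hence preserves whatever limits exist: a limit cone $\lambda : \Delta(V) \Rightarrow d$ in $\cat{D}$ is carried to a limit cone $i\lambda$ in $\cat{C}$. For the converse I would show that $i$ reflects limits of diagrams landing in $\cat{D}$: given that $i\lambda : \Delta(iV) \Rightarrow id$ is a limit cone and a competing cone $\mu : \Delta(W) \Rightarrow d$ in $\cat{D}$, apply $i$, factor $i\mu$ uniquely through $i\lambda$ by some map $iW \to iV$, and then use full faithfulness of $i$ to lift this uniquely to a mediating map $W \to V$ in $\cat{D}$, with uniqueness transferring along $i$ as well. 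For the ``furthermore'' clause I would first isolate the key lemma that the $W$-local objects are closed under limits in $\cat{C}$: if $\lambda' : \Delta(U) \Rightarrow id$ is a limit cone, then for each $w : A \to B$ in $W$ the map $\cat{C}(B,U) \to \cat{C}(A,U)$ is, via $\cat{C}(-,U) \cong \lim_j \cat{C}(-, id(j))$, a limit of the maps $\cat{C}(B, id(j)) \to \cat{C}(A, id(j))$, and these are bijections since each $id(j)$ is $W$-local by Lemma \ref{lem objects in reflective localizations}; a limit of bijections is a bijection, so $\cat{C}(B,U) \to \cat{C}(A,U)$ is a bijection and $U$ is $W$-local. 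Then Lemma \ref{lem objects in reflective localizations} gives that the unit $\eta_U : U \to iLU$ is an isomorphism, and transporting $\lambda'$ along $\eta_U$ and lifting through $i$ yields the desired limit cone in $\cat{D}$ over $V = LU$ with $iV \cong U$.

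For the colimit statement I would dualize only partway, since colimits in $\cat{D}$ are not assumed to exist a priori. Starting from a colimit cocone $\gamma : id \Rightarrow \Delta(\colim id)$ in $\cat{C}$, I would apply the left adjoint $L$ (which preserves colimits) and precompose with the counit isomorphisms $\varepsilon_{d(j)} : Lid(j) \to d(j)$ to obtain a cocone $d \Rightarrow \Delta(L \colim id)$ in $\cat{D}$. To see it is a colimit cocone, I would verify the representable criterion: for $Z \in \cat{D}$, the chain of natural isomorphisms $\cat{D}(L\colim id, Z) \cong \cat{C}(\colim id, iZ) \cong \lim_j \cat{C}(id(j), iZ) \cong \lim_j \cat{D}(d(j), Z)$, using the adjunction $L \dashv i$, the universal property of the colimit, and full faithfulness of $i$, exhibits $L\colim id$ as representing the cone functor of $d$, which is exactly the statement that it is $\colim d$ in $\cat{D}$.

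Finally I would assemble the ``therefore'': completeness of $\cat{C}$ supplies, for any small $d : I \to \cat{D}$, a limit of $id$ in $\cat{C}$, and the limit clause then produces a limit of $d$ in $\cat{D}$ agreeing with the one in $\cat{C}$; cocompleteness of $\cat{C}$ supplies $\colim id$ in $\cat{C}$, and the colimit clause computes $\colim d$ as $L(\colim id)$. The only genuine obstacle is the ``furthermore'' clause of the limit statement, namely verifying that limits of $W$-local objects remain $W$-local, but this reduces cleanly to the commutation of limits with the representable functors $\cat{C}(-,X)$ and to Lemma \ref{lem objects in reflective localizations}; everything else is routine manipulation of the adjunction together with the full faithfulness of $i$.
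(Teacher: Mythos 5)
The paper does not actually prove this proposition --- it is stated with a citation to Riehl --- so there is no in-text argument to compare against; judged on its own, your proposal is correct and is essentially the standard textbook proof. The one genuinely nonroutine step, closing the essential image of $i$ under limits, you handle cleanly by showing the limit vertex $U$ is $W$-local (a limit of bijections $\cat{C}(B,id(j)) \to \cat{C}(A,id(j))$ is a bijection) and then invoking Lemma \ref{lem objects in reflective localizations} to get that $\eta_U$ is an isomorphism; this fits the paper's localization framework better than Riehl's retract argument, though it amounts to the same thing. The only caveat is that you obtain $iV \cong U$ rather than the literal equality $iV = U$ asserted in the statement; this is harmless if $\cat{D}$ is taken to be replete (or if one reads the claim up to isomorphism, as is standard), but it is worth a sentence acknowledging the discrepancy.
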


\section{Locally Presentable Categories} \label{section locally presentable categories}
Directed posets and filtered categories are massively important objects in category theory, and show up often in these notes, especially in the theory of flat functors (Section \ref{section morphisms of sites}). In this section, we build up from these concepts to the theory of locally presentable categories. These constitute a huge class of categories that have very nice properties. We try to keep this discussion motivated and down to earth.

This section owes itself to the classic reference \cite{rosicky1994locally}, and the reader should consult there for more details. We also recommend the excellent introductory survey \cite{sarazola2017}.

To motivate locally presentable categories, let us begin with the following elementary observation about sets: suppose that $S$ is a set, and let $\cons{Sub}_{\text{fin}}(S)$ denote the poset of finite subsets of $S$. Then $S$ is a colimit of the included diagram $i_S : \cons{Sub}_{\fin}(S) \hookrightarrow \ncat{Set}$. Indeed, $S$ forms a cocone $\lambda : i_S \Rightarrow \Delta(S)$ over $i_S$ by including every finite subset into $S$. If $T$ is another set with a cocone $\lambda' : i_S \Rightarrow \Delta(T)$, then we can define a map $h : S \to T$ by sending $x \in S$ to wherever $\lambda'_{\{x\}} : \{ x \} \to T$ sends $x$ in $T$. This is easily seen to be well-defined and unique, showing that $S$ is a colimit of $i_S$. This shows that the large category $\ncat{Set}$ is generated in some sense by the essentially small full subcategory $\ncat{FinSet}$. Large categories in general are unwieldy and can be hard to work with directly. Locally presentable categories are large categories that are controlled in a very strong way by a small full subcategory. Let us now try and abstract this important property of $\ncat{Set}$.

\subsection{Directed and Filtered Categories}

\begin{Def} \label{def directed poset}
We say that a poset $I$ is \textbf{finitely directed} if:
\begin{enumerate}
    \item it is nonempty, and
    \item every pair of elements $x, y \in I$ have an upper bound in $I$, i.e. there exists a $z \in I$ such that $x \leq z$ and $y \leq z$.
\end{enumerate}
The second condition is equivalent to every finite subset $A \subseteq I$ having an upper bound, i.e. for all $a \in A$ there is a $z \in I$ such that $a \leq z$. We say a diagram $d: I \to \cat{C}$ is finitely directed if $I$ is a finitely directed poset, and say that $\colim \, d$ is a finitely directed colimit.
\end{Def}

Directed posets are those posets which ``flow'' in one direction, namely the direction of the upper bounds.

\begin{Ex}
If $S$ is a set, then $\Sub_{\fin}(S)$ is directed, since if $A, B \in \Sub_{\fin}(S)$, then $A \cup B$ is a finite subset of $S$ and $A, B \subseteq A \cup B$.
\end{Ex}

Now note that if $S$ is a countably infinite set, say $\mathbb{N} = \omega$, then there will be infinite subsets of $\Sub_{\fin}(\omega)$ with no upper bound. For example, the set $\{ \{0 \}, \{0, 1 \},  \{0, 1, 2 \}, \dots \}$ has no upper bound in $\Sub_{\fin}(\omega)$. This motivates the next definition.

\begin{Def}
Given a regular cardinal (Definition \ref{def regular cardinal}) $\kappa$, we say that a poset $I$ is \textbf{$\kappa$-directed}, if for every subset $A \subseteq I$ with $|A| < \kappa$, $A$ has an upper bound in $I$. We say that $I$ is directed if it is $\kappa$-directed for some $\kappa$.
\end{Def}

Thus being finitely directed is equivalent to being $\omega$-directed. Thus $\Sub_{\fin}(\omega)$ is $\omega$-directed, but not $\aleph_1$-directed. However the poset of all subsets $\Sub(\omega)$ is $\kappa$-directed for all cardinals $\kappa$ since $\omega$ is an upper bound for every subset of $\Sub(\omega)$. This same idea as above extends powerfully to all diagrams to give the following result.

\begin{Lemma} \label{lem every colimit is iso to a filtered colimit}
Let $\cat{C}$ be a category with all small colimits, and let $d: I \to \cat{C}$ be a small diagram. Let $\Sub_{\fin}(I)$ denote the poset of finite full subcategories of $I$ ordered by inclusion. This is a finitely directed poset, and furthermore we obtain a functor $s(d) : \Sub_{\fin}(I) \to \cat{C}$ given by sending $J \in \Sub_{\fin}(I)$ to $\colim \, d|_J$. We have the following
\begin{equation*}
    \colim \, d \cong \ncolim{J \in \Sub_{\fin}(I)} s(d)(J) \cong \ncolim{J \in \Sub_{\fin}(I)} \colim \, d|_J.
\end{equation*}
In other words, every colimit is isomorphic to a directed colimit of finite ``partial colimits''.
\end{Lemma}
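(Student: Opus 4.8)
The plan is to verify the claimed isomorphism by checking it against the universal property of colimits, reducing everything to a cofinality/interchange argument. First I would establish that $\Sub_{\fin}(I)$ is finitely directed: it is nonempty (the empty full subcategory, or any singleton, is finite), and given two finite full subcategories $J, J' \subseteq I$, their union $J \cup J'$ is again a finite full subcategory containing both, providing the required upper bound. I would also confirm that $s(d)$ is genuinely a functor: an inclusion $J \hookrightarrow J'$ of finite full subcategories induces a canonical map $\colim\, d|_J \to \colim\, d|_{J'}$ by the universal property, since the cocone for $d|_{J'}$ restricts to a cocone for $d|_J$, and these canonical maps compose correctly.

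The heart of the argument is the following. I would show that giving a cocone $\mu : s(d) \Rightarrow \Delta(Z)$ over the directed diagram is equivalent to giving a cocone $\nu : d \Rightarrow \Delta(Z)$ over the original diagram $d$. In one direction, a cocone $\nu$ over $d$ restricts, for each $J \in \Sub_{\fin}(I)$, to a cocone over $d|_J$, hence induces a unique map $\colim\, d|_J \to Z$; naturality in $J$ is immediate, giving a cocone $\mu$ over $s(d)$. Conversely, given $\mu$ over $s(d)$, for each object $i \in I$ the singleton full subcategory $\{i\}$ lies in $\Sub_{\fin}(I)$ with $\colim\, d|_{\{i\}} \cong d(i)$, and composing with $\mu_{\{i\}}$ recovers a cocone component $\nu_i : d(i) \to Z$; compatibility of $\nu$ with morphisms of $I$ follows because any single morphism $f : i \to i'$ lives inside the finite full subcategory on $\{i, i'\}$, and the cocone condition for $\mu$ on that subcategory encodes exactly the required triangle. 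I would then check these two assignments are mutually inverse, which is a routine diagram chase. This bijection is natural in $Z$, so by the Yoneda lemma the two colimits corepresent the same functor and are canonically isomorphic.

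The main obstacle I anticipate is the bookkeeping in the converse direction: one must confirm that the cocone data assembled from the individual $\colim\, d|_J$ is fully determined by, and consistent with, the data on singletons and on two-object subcategories, i.e. that the directed system $s(d)$ does not impose extra constraints beyond those already present in $d$. This is really a statement that the canonical functor from $I$ into (the category of elements of) $\Sub_{\fin}(I)$ exhibiting each object via its singleton is suitably cofinal/final, so that colimits computed over the two shapes agree. Making precise the claim that $\colim\, d|_{\{i\}} \cong d(i)$ and that inclusions of singletons into larger finite subcategories are compatible is where care is needed, but no hard analysis is involved — it is the standard ``every colimit is a directed colimit of finite subcolimits'' interchange, which I would phrase cleanly using the universal property rather than manipulating elements.
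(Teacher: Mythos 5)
Your proposal is correct and follows essentially the same route as the paper: both arguments hinge on recovering the cocone components of $d$ from the singleton subcategories $\{i\}$ (using $\colim\, d|_{\{i\}} \cong d(i)$) and extracting compatibility with a morphism $f : i \to i'$ from the two-object full subcategory on $\{i, i'\}$. The only difference is packaging — you phrase the conclusion as a natural bijection of cocones plus Yoneda, while the paper verifies the universal property of the colimit directly — and this is not a substantive divergence.
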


\begin{proof}
Let us show that $\colim \, d$ forms a cocone over $s(d)$. Let $\lambda : d \Rightarrow \Delta(\colim \, d)$ denote the colimit cocone over $d$, and $\lambda^J : d|_J \Rightarrow \Delta(\colim \, d|_J)$ the colimit cocone of the restriction. Then for $J \in \Sub_{\fin}(I)$, we get a cocone $\lambda|_J : d|_J \Rightarrow \Delta(\colim \, d)$, and thus a unique map $\sigma_J : s(d)(J) \to \colim \, d$ such that $\Delta(\sigma_J) \circ \lambda^J = \lambda|_J$. Now if $J \hookrightarrow J'$, then its easy to see that these maps are compatible, i.e. the $\sigma_J$ form a cocone $\sigma : s(d) \Rightarrow \Delta(\colim \, d)$, again by the uniqueness part of the universal property of colimits. Now if $\tau : s(d) \Rightarrow \Delta(V)$ is a cocone over $s(d)$, let us define a map $h : \colim \, d \to V$, which is the same thing as defining a cocone $h : d \Rightarrow \Delta(V)$. Given $i \in I$, let $h(i) : d(i) \to V$ denote the map given by considering the finite full subcategory $\{ i \}$ of $I$ given by the single object $i$. This defines a map $\tau_{\{ i \}} : s(d)(\{ i \}) \to V$, but of course $s(d)(\{i \}) = \colim \, d|_{\{ i \}} \cong d(i)$, with colimit cocone component $\lambda^{\{i\}}_i : d(i) \to \colim \, d|_{\{i \}}$ given by the identity. So we have a map $\tau_{\{ i \}} : d(i) \to V$. So let $h(i) = \tau_{\{i \}}$. We still have to prove that this forms a cocone over $d$. So suppose that $f : i \to j$ is a morphism in $I$. We need to show that $h(j) d(f) = h(i)$. But this follows by considering the following commutative diagram
\begin{equation*}
\begin{tikzcd}
	& V \\
	\\
	{\colim \, d|_{\{i \}}} & {\colim \, d|_{\{i, j \}}} & {\colim \, d|_{\{j\}}} \\
	{d(i)} && {d(j)}
	\arrow["{h(i) = \tau_{\{i\}}}", from=3-1, to=1-2]
	\arrow[from=3-1, to=3-2]
	\arrow["{\tau_{\{i,j\}}}"', from=3-2, to=1-2]
	\arrow["{h(j) = \tau_{\{j\}}}"', from=3-3, to=1-2]
	\arrow[from=3-3, to=3-2]
	\arrow[Rightarrow, no head, from=4-1, to=3-1]
	\arrow["{\lambda^{\{i,j\}}_i}"{description}, from=4-1, to=3-2]
	\arrow["{d(f)}"{description}, from=4-1, to=4-3]
	\arrow["{\lambda_j^{\{i,j\}}}"{description}, from=4-3, to=3-2]
	\arrow[Rightarrow, no head, from=4-3, to=3-3]
\end{tikzcd}
\end{equation*}
In other words, the $\lambda^{\{i,j\}}$ being compatible with $f$ forces the $\tau_{\{i\}}$ to be compatible with it as well. Thus we have defined a cocone $h : d \Rightarrow \Delta(V)$, and therefore a unique map $h : \colim \, d \to V$ such that $\tau_{\{i \}} = h \lambda_i$ for every $i \in I$. Now all we have to prove is that $\Delta(h) \sigma = \tau$, or equivalently for each $J \in \Sub_{\fin}(I)$, that $h \sigma_J = \tau_J$. Now $h \sigma_J : \colim \, d|_J \to V$ and $\tau_J : \colim \, d|_J \to V$ are maps out of $\colim \, d|_J$ and therefore are equivalent to cocones over $d|_J$. Thus it is enough to check that that they are equal as cocones. But since $\Delta(\sigma_J) \circ \lambda^J = \lambda|_J$, this implies that for every $i \in I$ we have $\sigma_J \lambda^J_i = \lambda_i$, and $\tau_J$ precomposed with $\lambda^J_i : d(i) \to \colim \, d|_J$ is just $\tau_{\{i \}}$. Thus since $\tau_{\{i \}} = h \lambda_i$ for each $i \in I$, this proves that $\sigma$ is a colimit cocone.
\end{proof}

Given a category $\cat{C}$ with a full subcategory $\cat{C}_0$, let us say that $\cat{C}$ is \textbf{generated by finitely directed colimits} from $\cat{C}_0$ if every object $U \in \cat{C}$ is isomorphic to \textit{some} finitely directed colimit in $\cat{C}_0$. In other words, there is a finitely directed diagram $d : I \to \cat{C}_0$, such that $U$ is a colimit in $\cat{C}$ of the resulting diagram
\begin{equation*}
    I \xrightarrow{d} \cat{C}_0 \hookrightarrow \cat{C}.
\end{equation*}

So what we have discovered is that the category $\ncat{Set}$, which is a large category, is generated by finitely directed colimits from the essentially small full subcategory $\ncat{FinSet}$ of finite sets. 

It turns out that we can actually strengthen the above description of how $\ncat{Set}$ is generated from $\ncat{FinSet}$. Above we said that every set is a colimit of \textit{some} finitely directed diagram. However, we can actually assign to every set a canonical diagram that it is the colimit of.

\begin{Def} \label{def canonical diagram}
Let $\cat{C}$ be a category, and $\cat{C}_0$ a full subcategory of $\cat{C}$. For an object $U \in \cat{C}$, consider the diagram $\pi: (\cat{C}_0 \downarrow U) \to \cat{C}$, where $\pi(A \to U) = A$ is the forgetful functor. We call this the \textbf{canonical diagram of $U$} with respect to $\cat{C}_0$. We say that $\cat{C}_0$ is \textbf{dense} in $\cat{C}$ if every object $U \in \cat{C}$ is a colimit of its canonical diagram with respect to $\cat{C}_0$,
$$U \cong \colim \left( (\cat{C}_0 \downarrow U) \xrightarrow{\pi} \cat{C} \right).$$
\end{Def}

The concept of density can be generalized as follows.

\begin{Def}
We say that a functor $i : \cat{C}_0 \to \cat{C}$ is \textbf{dense} if $\text{Lan}_i i$ exists and is naturally isomorphic to the identity functor on $\cat{C}$. 
\end{Def}

\begin{Lemma} \label{lem equiv def of density}
Given a category $\cat{C}$ and a full subcategory $\cat{C}_0$, the inclusion functor $i : \cat{C}_0 \hookrightarrow \cat{C}$ is dense if and only if $\cat{C}_0$ is dense in $\cat{C}$.
\end{Lemma}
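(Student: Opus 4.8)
The plan is to prove the equivalence by unwinding the definition of $\text{Lan}_i\, i$ via its pointwise colimit formula and recognizing that this colimit is exactly the canonical diagram colimit. Recall that for a functor $i : \cat{C}_0 \to \cat{C}$, the left Kan extension along $i$ evaluated at $U \in \cat{C}$ is computed (when it exists) as a colimit over the comma category:
\begin{equation*}
(\text{Lan}_i\, i)(U) \cong \colim \left( (i \downarrow U) \xrightarrow{\pi} \cat{C}_0 \xrightarrow{i} \cat{C} \right).
\end{equation*}
Since $\cat{C}_0$ is a full subcategory and $i$ is the inclusion, the comma category $(i \downarrow U)$ is precisely the slice $(\cat{C}_0 \downarrow U)$, and the composite $i \circ \pi$ is exactly the functor $\pi : (\cat{C}_0 \downarrow U) \to \cat{C}$ from Definition \ref{def canonical diagram}. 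So the right-hand side is literally the colimit of the canonical diagram of $U$ with respect to $\cat{C}_0$.

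First I would establish the forward direction. Suppose $i$ is dense, so $\text{Lan}_i\, i$ exists and the natural isomorphism $\text{Lan}_i\, i \cong 1_{\cat{C}}$ holds. Evaluating at each $U \in \cat{C}$ and applying the pointwise formula above gives
\begin{equation*}
U \cong (\text{Lan}_i\, i)(U) \cong \colim \left( (\cat{C}_0 \downarrow U) \xrightarrow{\pi} \cat{C} \right),
\end{equation*}
which is exactly the statement that $U$ is the colimit of its canonical diagram. Since this holds for all $U$, the subcategory $\cat{C}_0$ is dense in $\cat{C}$ in the sense of Definition \ref{def canonical diagram}. For the converse, suppose $\cat{C}_0$ is dense in $\cat{C}$, so each $U$ is the colimit of its canonical diagram. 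This shows in particular that the relevant colimits exist, so $\text{Lan}_i\, i$ exists and is computed pointwise by the formula. The object-level isomorphisms $U \cong (\text{Lan}_i\, i)(U)$ are then immediate; the remaining work is to check that these assemble into a \emph{natural} isomorphism $1_{\cat{C}} \cong \text{Lan}_i\, i$.

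The main obstacle, and the only step requiring genuine care, is verifying naturality of the comparison isomorphism in the converse direction. I would use the universal property of the colimit: the canonical cocone from $\pi$ to $U$ has as its components the structure morphisms $A \to U$ themselves, and the comparison map $U \cong \colim\, \pi$ is the unique map induced by this cocone. Given a morphism $f : U \to V$ in $\cat{C}$, there is an induced functor $(\cat{C}_0 \downarrow U) \to (\cat{C}_0 \downarrow V)$ by postcomposition with $f$, and one checks that $f$ commutes with the two canonical cocones, so by uniqueness the comparison maps form a commuting naturality square. This is precisely the content of the general fact that the identity functor, equipped with the tautological cocones, is the left Kan extension of a dense inclusion along itself; the naturality is forced by the universal property of each colimit. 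I expect this verification to be routine once the pointwise colimit formula is invoked, since the two notions are essentially definitionally the same object viewed from two angles.
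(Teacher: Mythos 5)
Your proposal is correct and follows essentially the same route as the paper: identify $(\text{Lan}_i\, i)(U)$ with the colimit of $(\cat{C}_0 \downarrow U) \to \cat{C}_0 \xrightarrow{i} \cat{C}$ via the pointwise formula and observe that this composite is exactly the canonical diagram functor $\pi$. The paper's proof is in fact terser than yours — it does not spell out the naturality check in the converse direction, which you rightly flag as the only step needing care — so your version is, if anything, slightly more complete.
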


\begin{proof}
The functor $i$ is dense if and only for every $U_0 \in \cat{C}$ we have
\begin{equation*}
    \text{Lan}_i i(U) \cong \colim \left( (\cat{C}_0 \downarrow U) \xrightarrow{\pi'} \cat{C}_0 \xrightarrow{i} \cat{C} \right) \cong U.
\end{equation*}
But the functor $\pi : (\cat{C}_0 \downarrow U) \to \cat{C}$ is equal to $i \pi'$. Thus $\cat{C}_0$ is dense in $\cat{C}$ if and only if $i$ is a dense functor.
\end{proof}

Note that in the above definition if we take $\cat{C} = \ncat{Set}$ and $\cat{C}_0 = \ncat{FinSet}$, then $\ncat{FinSet}$ is dense in $\ncat{Set}$ if every set $S$ can be written as the colimit of the canonical diagram $(\ncat{FinSet} \downarrow S) $ of all maps from finite sets into $S$. This is different from $\Sub_{\fin}(S)$, which is a poset, but it is still the case that $S$ is the colimit of its canonical diagram with respect to $\ncat{FinSet}$, using practically the same argument.

So we have managed to strengthen \textit{some colimit} above to \textit{a particular colimit}, but we have lost something in the sense that these particular colimits are no longer directed. Thus we introduce the following notion, which is a generalization of directed posets.

\begin{Def} \label{def filtered category}
A small category $\cat{C}$ is \textbf{finitely filtered} if for every finite category $I$ and diagram $d: I \to \cat{C}$, there is a cocone $\lambda : d \Rightarrow \Delta(U)$ in $\cat{C}$. More generally a small category $\cat{C}$ is \textbf{$\kappa$-filtered} if every $\kappa$-small diagram $d$ admits a cocone in $\cat{C}$. We say that $\cat{C}$ is filtered if it is $\kappa$-filtered for some regular cardinal $\kappa$. We say a diagram $d: I \to \cat{C}$ is filtered if $I$ is a filtered category.

We say a category $\cat{C}$ is ($\kappa$-)\textbf{cofiltered} if $\cat{C}^\op$ is ($\kappa$-)filtered.
\end{Def}

The following alternate description of finite filteredness is also very useful.

\begin{Lemma} \label{lem alternate def of filtered}
A category $\cat{C}$ is finitely filtered if and only if
\begin{enumerate}
    \item it is nonempty,
    \item for every pair of objects $U, V \in \cat{C}$, there exists an object $W$ and morphisms $U \to W$ and $V \to W$,
    \item for every pair of parallel morphisms $f, g: U \to V$, there exists a morphism $h : V \to W$ such that $hf = hg$.
\end{enumerate}
\end{Lemma}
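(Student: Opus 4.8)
The plan is to prove both implications by specializing (respectively building up) the defining cocone condition to the three listed conditions. The forward direction $(\Rightarrow)$ is immediate: I would feed particular finite index categories into the definition. Taking $I = \varnothing$ the empty category, a cocone over the empty diagram is just an object of $\cat{C}$, so finite filteredness forces $\cat{C}$ to be nonempty, giving (1). Taking $I$ to be the discrete two-object category, a diagram picks out a pair $U, V$ and a cocone supplies morphisms $U \to W$ and $V \to W$, giving (2). Taking $I$ to be the category $0 \rightrightarrows 1$ with two parallel non-identity arrows, a diagram picks out a parallel pair $f, g : U \to V$, and a cocone $\lambda : d \Rightarrow \Delta(W)$ has components $\lambda_0 : U \to W$, $\lambda_1 : V \to W$ satisfying $\lambda_1 f = \lambda_0 = \lambda_1 g$; setting $h = \lambda_1$ yields (3).

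For the reverse direction $(\Leftarrow)$, I would assume (1)--(3) and, given an arbitrary finite diagram $d : I \to \cat{C}$, construct a cocone in two stages. First, since $I$ has only finitely many objects, I would use (1) to dispose of the empty case and then apply (2) iteratively to produce an object $W_0$ together with morphisms $c^0_i : d(i) \to W_0$ for every object $i$ of $I$ --- these are candidate legs, not yet compatible with the arrows of $d$. Second, I would enumerate the finitely many morphisms $\alpha_1, \dots, \alpha_m$ of $I$ and coequalize them one at a time: having reached an object $W_k$ with legs $c^k_i$, for $\alpha_{k+1} : i \to j$ I apply (3) to the parallel pair $c^k_j \, d(\alpha_{k+1}),\, c^k_i : d(i) \to W_k$ to obtain a morphism $h : W_k \to W_{k+1}$ equalizing them, and then set $c^{k+1}_i = h \, c^k_i$ for every object $i$. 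After all $m$ steps the legs $c^m_i$ satisfy $c^m_j \, d(\alpha) = c^m_i$ for every arrow $\alpha : i \to j$, which is precisely a cocone $\lambda : d \Rightarrow \Delta(W_m)$.

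The main obstacle is the bookkeeping in the second stage: I must verify that coequalizing $\alpha_{k+1}$ by post-composition with $h$ does not destroy the equalities already secured for $\alpha_1, \dots, \alpha_k$. This holds because equalities are stable under post-composition --- if $c^k_j \, d(\alpha_\ell) = c^k_i$ then $h \, c^k_j \, d(\alpha_\ell) = h \, c^k_i$ --- so each application of (3) strictly enlarges the set of coequalized arrows without undoing previous ones. Finiteness of both $\Obj(I)$ and $\Mor(I)$ is used essentially here, to guarantee that both stages terminate; the argument makes no use of any cardinal bound, so it specializes verbatim to the characterization of finite filteredness and would only require the obvious $\kappa$-small modifications for the $\kappa$-filtered analogue.
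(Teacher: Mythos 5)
Your proof is correct and is the standard argument; the paper actually states this lemma without supplying a proof, so there is nothing to diverge from. Both directions check out: the forward direction correctly instantiates the cocone condition at the empty, discrete two-object, and parallel-pair index categories, and the reverse direction's two-stage construction (first bounding the objects via (2), then coequalizing the finitely many arrows one at a time via (3), noting that already-secured equalities are preserved under post-composition) is exactly the usual way this equivalence is established.
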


The category $(\ncat{FinSet} \downarrow S)$ is finitely filtered for any set $S$. Indeed, for every $A, B \in \ncat{FinSet}$ and every pair of maps $f : A \to S$, $g: B \to S$, then $f + g : A + B \to S$ is a map such that the following diagram commutes
\begin{equation*}
    \begin{tikzcd}
	A & {A + B} & B \\
	\\
	& S
	\arrow[from=1-1, to=1-2]
	\arrow["f"', curve={height=12pt}, from=1-1, to=3-2]
	\arrow["{f + g}"{description}, from=1-2, to=3-2]
	\arrow[from=1-3, to=1-2]
	\arrow["g", curve={height=-12pt}, from=1-3, to=3-2]
\end{tikzcd}
\end{equation*}
Furthermore, suppose that $a: A \to S$ and $b: A \to S$ are maps, and suppose that there are parallel maps $f, g : A \to B$ over $S$, with $A$ and $B$ finite. Then $b f = b g = a$, as can be seen in the following commutative diagram
\begin{equation*}
    \begin{tikzcd}
	A && B & S \\
	& S
	\arrow["g"', shift right=2, from=1-1, to=1-3]
	\arrow["f", shift left=2, from=1-1, to=1-3]
	\arrow["a"', from=1-1, to=2-2]
	\arrow["b", from=1-3, to=1-4]
	\arrow["b", from=1-3, to=2-2]
	\arrow[curve={height=-12pt}, Rightarrow, no head, from=1-4, to=2-2]
\end{tikzcd}
\end{equation*}
So by Lemma \ref{lem alternate def of filtered}, $(\ncat{FinSet} \downarrow S)$ is finitely filtered.

We lose very little by abstracting from directed posets to filtered categories. Let us first recall the notion of final functor.

\begin{Def} \label{def final functor}
We say that a functor $F: \cat{C} \to \cat{D}$ between cocomplete categories is \textbf{final} if for every functor $G: \cat{D} \to \cat{E}$, the canonical map
\begin{equation*}
  \ncolim{U \in \cat{C}} (G \circ F)(U) \to \ncolim{V \in \cat{D}} G(V)
\end{equation*}
is an isomorphism. Similarly, we say that $F$ is \textbf{initial} if the canonical map 
\begin{equation*}
    \lim_{V \in \cat{D}} G(V) \to \lim_{U \in \cat{C}} (G \circ F)(U)
\end{equation*}
is an isomorphism. Equivalently, $F$ is initial if $F^\op : \cat{C}^\op \to \cat{D}^\op$ is final.
\end{Def}

\begin{Prop}[{\cite[Lemma 8.3.4]{riehl2017category}}] \label{prop final functors}
A functor $F: \cat{C} \to \cat{D}$ is final if and only if for every $V \in \cat{D}$, the comma category $(V \downarrow F)$ is nonempty and connected. Similarly, $F$ is initial if and only if for all $V \in \cat{D}$, the comma category $(F \downarrow V)$ is nonempty and connected.
\end{Prop}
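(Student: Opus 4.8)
The plan is to prove the characterization of final functors first, and then deduce the statement about initial functors by duality, since the excerpt defines $F$ to be initial precisely when $F^{\op}$ is final. For the final case, I would work through the universal property of colimits rather than manipulating colimits directly: the canonical comparison map $\ncolim{U \in \cat{C}} (G \circ F)(U) \to \ncolim{V \in \cat{D}} G(V)$ is an isomorphism for all $G$ if and only if, for every target object $E$, precomposition with $F$ gives a bijection between cocones $G \Rightarrow \Delta(E)$ and cocones $(G \circ F) \Rightarrow \Delta(E)$. So the theorem reduces to analyzing this restriction-of-cocones map.

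For the direction $(\Leftarrow)$, I would assume each comma category $(V \downarrow F)$ is nonempty and connected. Given a cocone $\nu$ under $G \circ F$, I define a cocone $\mu$ under $G$ by choosing, for each $V \in \cat{D}$, some object $(U, f : V \to F(U))$ of $(V \downarrow F)$ and setting $\mu_V = \nu_U \circ G(f)$. Nonemptiness guarantees such a choice exists. The key verification is that $\mu_V$ is independent of the choice: any two choices are joined by a zig-zag in $(V \downarrow F)$ by connectedness, and along each morphism $g : (U,f) \to (U', f')$ (i.e. $F(g) f = f'$) naturality of $\nu$ gives $\nu_{U'} \circ G(f') = \nu_{U'} \circ (G \circ F)(g) \circ G(f) = \nu_U \circ G(f)$, so the value is constant on connected components. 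One then checks that $\mu$ is genuinely a cocone and that it restricts along $F$ to $\nu$ (using the object $(U, 1_{F(U)})$ at $V = F(U)$), while injectivity of restriction follows immediately from nonemptiness together with cocone naturality. This shows restriction is a bijection, hence the comparison map is an isomorphism and $F$ is final.

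For the direction $(\Rightarrow)$, I would test finality against the family of corepresentable functors. Fixing $V$ and taking $G = \cat{D}(V,-) : \cat{D} \to \ncat{Set}$, I use the standard identification of a $\ncat{Set}$-valued colimit with the set of connected components of its category of elements: $\colim_{\cat{D}} \cat{D}(V,-) \cong \pi_0(\int \cat{D}(V,-)) = \pi_0(V \downarrow \cat{D})$, which is a single point because $(V \downarrow \cat{D})$ has the initial object $(V, 1_V)$. On the other side, $\int \bigl(\cat{D}(V,-)\circ F\bigr)$ is exactly the comma category $(V \downarrow F)$, so finality forces $\pi_0(V \downarrow F) \cong \pi_0(V \downarrow \cat{D}) \cong \ast$; and $\pi_0$ of a category being a singleton says precisely that the category is nonempty and connected.

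Finally, the initial case follows by applying the result just proved to $F^{\op} : \cat{C}^{\op} \to \cat{D}^{\op}$, using the natural isomorphism $(V \downarrow F^{\op}) \cong (F \downarrow V)^{\op}$ together with the fact that being nonempty and connected is self-dual. The main obstacle I anticipate is the well-definedness argument in $(\Leftarrow)$: it is the one place where connectedness (not merely nonemptiness) is essential, and some care is needed to confirm that the cocone-naturality equation propagates correctly across an arbitrary zig-zag rather than just a single morphism.
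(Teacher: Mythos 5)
Your proof is correct, and since the paper offers no proof of this proposition (it simply cites \cite[Lemma 8.3.4]{riehl2017category}), your argument fills that gap with essentially the standard one: reducing finality to a bijection on cocones for the $(\Leftarrow)$ direction, and testing against the corepresentables $\cat{D}(V,-)$ together with the identification $\colim_{\cat{C}} \bigl(\cat{D}(V,-)\circ F\bigr) \cong \pi_0(V \downarrow F)$ for the $(\Rightarrow)$ direction. The well-definedness step you flag is handled correctly — the single-morphism computation $\nu_{U'} \circ G(f') = \nu_{U'} \circ (G \circ F)(g) \circ G(f) = \nu_U \circ G(f)$ propagates across arbitrary zig-zags since each edge gives an equality — and the duality reduction for the initial case via $(V \downarrow F^{\op}) \cong (F \downarrow V)^{\op}$ is sound.
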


\begin{Lemma}[{\cite[Theorem 1.5 and Remark 1.21
]{rosicky1994locally}}]
For every small $\kappa$-filtered category $\cat{C}$ there exists a small $\kappa$-directed poset $\cat{D}$ and a final\footnote{The reference refers to these as cofinal functors, an unfortunate clash of terminology.} functor $F : \cat{D} \to \cat{C}$. In other words, an object can be written as a filtered colimit if and only if it can be written as a directed colimit.
\end{Lemma}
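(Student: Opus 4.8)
The plan is to construct $\cat{D}$ explicitly and then verify finality through the comma-category criterion of Proposition~\ref{prop final functors}. The ``in other words'' clause is the easy half: a $\kappa$-directed poset is in particular a $\kappa$-filtered category (checking Lemma~\ref{lem alternate def of filtered} is immediate), so every directed colimit is a filtered colimit; conversely, once we have produced a final functor $F : \cat{D} \to \cat{C}$ from a $\kappa$-directed poset, any diagram $d : \cat{C} \to \cat{E}$ satisfies $\ncolim{\cat{C}} d \cong \ncolim{\cat{D}} (d \circ F)$ by the definition of finality (Definition~\ref{def final functor}), which exhibits the filtered colimit as a directed one. So the entire content lies in the construction of $\cat{D}$ and $F$.

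For the construction, I would let the elements of $\cat{D}$ be $\kappa$-small subcategories $\cat{S} \subseteq \cat{C}$ equipped with a distinguished \emph{peak}: an object $p(\cat{S}) \in \cat{S}$ together with, for every object $A$ of $\cat{S}$, a chosen morphism $A \to p(\cat{S})$ lying in $\cat{S}$, and such that $\cat{S}$ is closed under the peak-data already recorded on its smaller pieces. I would order $\cat{D}$ by $\cat{S} \le \cat{S}'$ when $\cat{S} \subseteq \cat{S}'$ as subcategories and the recorded peak-data of $\cat{S}$ is retained inside $\cat{S}'$; the functor is $F(\cat{S}) = p(\cat{S})$, and the morphism $F(\cat{S}) \to F(\cat{S}')$ attached to $\cat{S} \le \cat{S}'$ is the comparison map $p(\cat{S}) \to p(\cat{S}')$ recorded in $\cat{S}'$. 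Building the peaks and comparison maps into the data of each object (rather than choosing them externally) is what forces functoriality: composition and identities hold because the relevant triangles already live inside the subcategories.

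Then I would check the three required properties. That $\cat{D}$ is $\kappa$-directed uses regularity of $\kappa$ (Definition~\ref{def regular cardinal}): given fewer than $\kappa$ elements $\{\cat{S}_i\}$, their union is again $\kappa$-small, and $\kappa$-filteredness of $\cat{C}$ supplies a cocone over the (still $\kappa$-small) inclusion of this union into $\cat{C}$, whose vertex and legs I adjoin to obtain an upper bound in $\cat{D}$; antisymmetry is arranged by passing to the posetal reflection if needed, which does not change colimits. For finality I invoke Proposition~\ref{prop final functors} and show $(V \downarrow F)$ is nonempty and connected for each $V \in \cat{C}$: nonemptiness because $\{V\}$ extends to some $\cat{S} \in \cat{D}$ whose peak receives a map from $V$; connectedness because any two objects of $(V \downarrow F)$ lie below a common $\cat{S}'$ (directedness), and the two resulting maps $V \to p(\cat{S}')$ can be coequalized by a further morphism out of $p(\cat{S}')$ via Lemma~\ref{lem alternate def of filtered}(3), producing the required zig-zag of connecting morphisms.

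The main obstacle is the simultaneous juggling inside the construction: I must keep each $\cat{S}$ genuinely $\kappa$-small while recording enough peak-and-comparison data to make $F$ a strict functor, keep $\cat{D}$ antisymmetric, and still have unions stay $\kappa$-small so that directedness survives---these requirements pull against one another, since enlarging the recorded data threatens $\kappa$-smallness. The most delicate verification is connectedness of $(V \downarrow F)$, where the full strength of $\kappa$-filteredness is used (both merging objects and equalizing parallel pairs, i.e.\ clauses (2) and (3) of Lemma~\ref{lem alternate def of filtered}), rather than mere directedness of $\cat{D}$.
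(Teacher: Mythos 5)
Your overall strategy is the right one and is essentially the argument of the cited reference (index $\cat{D}$ by $\kappa$-small pieces of $\cat{C}$ decorated with a chosen cocone, send each piece to its cocone vertex, get directedness from regularity of $\kappa$ and finality from Proposition \ref{prop final functors}); note that the paper itself offers no proof here, only the citation. Your finality verification is correct. But there is one genuine gap, and it sits exactly where you locate the ``pull against one another'' tension: taking the elements of $\cat{D}$ to be $\kappa$-small \emph{subcategories} breaks the directedness argument. The union of two subcategories is only a subgraph, and any honest subcategory containing it must also contain all mixed composites, which can force it out of the $\kappa$-small range --- so your proposed upper bound is not an element of $\cat{D}$, and in general no element of $\cat{D}$ dominates both pieces. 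Concretely, for $\kappa=\omega$: in the finitely filtered category $\ncat{Set}$ take $X=\N\times\{0\}$, $Y=\N\times\{1\}$, and maps $f:X\to Y$, $g:Y\to X$ with $gf$ of infinite order. The finite subcategories $\cat{S}$ (objects $X,Y,*$; nonidentity morphisms $f$ and the unique maps to the singleton $*$, with peak $*$) and $\cat{S}'$ (same but with $g$ in place of $f$) are legitimate elements of your $\cat{D}$, yet every subcategory containing both contains all powers of $gf$ and is therefore infinite, so $\cat{S}$ and $\cat{S}'$ have no upper bound. (For uncountable regular $\kappa$ the subcategory generated by a $\kappa$-small subgraph is still $\kappa$-small, so your version survives there; it is precisely the paradigmatic finitary case that fails.)

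The repair is to weaken ``$\kappa$-small subcategory'' to ``$\kappa$-small set of morphisms,'' i.e.\ a $\kappa$-small subgraph $S$ of $\cat{C}$, with the peak data being a cocone over the corresponding diagram whose legs all lie in $S$ and whose leg at the vertex is the identity; order these by inclusion of the underlying morphism sets. No closure under composition is ever demanded, so the upper bound of fewer than $\kappa$ elements is simply the union of their morphism sets together with the legs of a fresh cocone supplied by $\kappa$-filteredness, and this union is again $\kappa$-small by regularity (Definition \ref{def regular cardinal}). This also resolves the coherence worry you gloss over with ``the relevant triangles already live inside the subcategories'': once the legs of $\gamma$ belong to $S'$, the cocone condition for $\gamma'$ forces $\gamma'_{c_\gamma}\circ\gamma_a=\gamma'_a$, which is exactly the compatibility needed for $F$ to be a functor, with no further choices to reconcile. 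Antisymmetry is handled by your posetal-reflection remark, and your connectedness argument for $(V\downarrow F)$ --- merge by directedness, then equalize the two composites $V\to F(\cat{S}'')$ using clause (3) of Lemma \ref{lem alternate def of filtered} and adjoin the equalizing morphism as the new peak --- goes through verbatim in the subgraph formulation.
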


Now before we go further, let us better understand filtered colimits in $\ncat{Set}$. This detour will have a surprisingly large pay-off (Proposition \ref{prop filtered colimits commute with finite limits in Set}), but we must first muck about with set-level details.

Let us show that filtered colimits in $\ncat{Set}$ admit a convenient description. First, let us consider an arbitrary diagram $d : I \to \ncat{Set}$. By the usual way of decomposing a colimit into a coequalizer of coproducts we have
\begin{equation*}
    \underset{i \in I}{\colim} \, d(i) \cong \text{coeq} \left( \sum_{f : i \to j} d(i) \underset{\psi}{\overset{\varphi}{\rightrightarrows}} \sum_{j \in I} d(j) \right)
\end{equation*}
where $\varphi$ takes a pair $(f : i \to j, \, x \in d(i))$ to $d(f)(x) \in d(j)$ and $\psi$ takes $(f : i \to j, \, x \in d(i))$ to $x \in d(i)$.

Thus we can describe $\colim_{i \in I} d(i)$ as the set $\sum_{i \in I} d(i)$ quotiented by the smallest equivalence relation $\approx$ generated by the relation where if $x \in d(i)$, then $x \sim d(f)(x)$. We can describe $\approx$ completely as the reflexive, symmetric, transitive closure of the relation $\sim$. Another way of saying this is that $x \approx y$ if and only if there is a finite zig-zag in $I$ of the following form (or of the form where we reverse the directions of the arrows)
\begin{equation} \label{eq zig-zag filtered colimit}
\begin{tikzcd}
	i & {k_0} & {k_1} & \dots & {k_n} & j
	\arrow["f", from=1-1, to=1-2]
	\arrow["{h_0}"', from=1-3, to=1-2]
	\arrow["{h_1}", from=1-3, to=1-4]
	\arrow["{h_{n-1}}", from=1-4, to=1-5]
	\arrow["g"', from=1-6, to=1-5]
\end{tikzcd}    
\end{equation}
and there exist elements $x_0 \in d(k_0)$, $x_1 \in d(k_1)$, $\dots$, $x_n \in d(k_n)$ such that
\begin{equation*}
    d(f)(x) = x_0 = d(h_0)(x_1), \; d(h_1)(x_1) = x_2 = d(h_3)(x_3), \, \dots, \, d(h_{n-1})(x_{n-1}) = x_n = d(g)(y).
\end{equation*}
So far, this is just the general description of an equivalence class $[x]$ of an element in a general colimit in $\ncat{Set}$. Now if $I$ is filtered, and $x \approx y$, then the zig-zag (\ref{eq zig-zag filtered colimit}) admits a cocone
\begin{equation*}
    \begin{tikzcd}
	&& k \\
	\\
	i & {k_0} & {k_1} & \dots & {k_n} & j
	\arrow["p", from=3-1, to=1-3]
	\arrow["f"', from=3-1, to=3-2]
	\arrow["{\ell_0}"{description}, from=3-2, to=1-3]
	\arrow["{\ell_1}"{description}, from=3-3, to=1-3]
	\arrow["{h_0}", from=3-3, to=3-2]
	\arrow["{h_1}"', from=3-3, to=3-4]
	\arrow["{h_{n-1}}"', from=3-4, to=3-5]
	\arrow["{\ell_n}"{description}, from=3-5, to=1-3]
	\arrow["q"', from=3-6, to=1-3]
	\arrow["g", from=3-6, to=3-5]
\end{tikzcd}
\end{equation*}
and since
\begin{equation*}
d(p)(x) = d(\ell_0 f)(x) = d(\ell_0) d(f)(x) = d(\ell_0)(x_0) = d(\ell_0) d(h_0)(x_1) = d(\ell_1)(x_1) = \dots = d(\ell_n)(x_n) = d(q)(y)
\end{equation*}
this implies that there is a $z = d(p)(x) = d(q)(y)$ in $d(k)$ such that $x \sim z$ and $y \sim z$. We have thus reduced general zig-zags to spans $i \to k \leftarrow j$, and therefore have proved the following result.

\begin{Lemma} \label{lem filtered colimits description in sets}
Suppose that $d: I \to \ncat{Set}$ is a filtered diagram of sets, then $\colim_{i \in I} \, d(i)$ is isomorphic to the set $\sum_{i \in I} d(i)$ quotiented by the equivalence relation $\approx$ where $x \approx y$ if there exists a $k \in I$ and maps $f: i \to k$, $g : j \to k$ such that $d(f)(x) = d(g)(y)$.
\end{Lemma}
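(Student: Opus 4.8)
The plan is to give an explicit description of the colimit and then verify that $\approx$, as defined in the statement, is already an equivalence relation, so no further closure is needed. First I would recall the general formula for a colimit in $\ncat{Set}$ as a coequalizer of coproducts, exactly as set up in the paragraph preceding the statement: $\colim_{i\in I} d(i)$ is $\sum_{i\in I}d(i)$ modulo the smallest equivalence relation $\sim$ generated by $x \sim d(f)(x)$ for every morphism $f$. The content of the lemma is that when $I$ is filtered, this generated equivalence relation coincides with the relation $\approx$ described via spans. So the real work is to show that the span relation is reflexive, symmetric, and transitive, and that it generates (equivalently, equals) $\sim$.

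The key steps, in order, would be the following. Reflexivity of $\approx$ is immediate using the identity morphism. Symmetry is built into the definition (the span $i \to k \leftarrow j$ is symmetric in $i$ and $j$). The crux is transitivity: suppose $x\approx y$ via a span $i \xrightarrow{f} k \xleftarrow{g} j$ with $d(f)(x) = d(g)(y)$, and $y \approx z$ via $j \xrightarrow{f'} k' \xleftarrow{g'} \ell$ with $d(f')(y) = d(g')(z)$. Here I would invoke Lemma \ref{lem alternate def of filtered}: since $I$ is filtered, the cospan $k \xleftarrow{f'} j \xrightarrow{g} $ wait — more carefully, I use filteredness to find an object $m$ receiving maps from $k$ and $k'$, say $p: k \to m$ and $q : k' \to m$, and then I must arrange that the two images of $y$ agree in $d(m)$. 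Since $d(g)(y)$ sits in $d(k)$ and $d(f')(y)$ sits in $d(k')$, pushing both forward along $p$ and $q$ respectively need not agree on the nose, but filteredness condition (3) of Lemma \ref{lem alternate def of filtered} lets me coequalize the two parallel composites $j \to k \to m$ and $j \to k' \to m$ by a further morphism $m \to m'$. Chasing through, $d(f)(x)$ and $d(g')(z)$ then become equal in $d(m')$, giving a span witnessing $x \approx z$.

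Having shown $\approx$ is an equivalence relation, I would then argue it equals $\sim$. One inclusion is clear: if $x \sim y$ in one generating step, i.e.\ $y = d(f)(x)$ for $f: i \to j$, then the span $i \xrightarrow{f} j \xleftarrow{1_j} j$ shows $x \approx y$, so $\sim \subseteq \approx$; since $\sim$ is the smallest equivalence relation containing these generators and $\approx$ is an equivalence relation containing them, the equivalence relation generated by $\sim$ is contained in $\approx$. For the reverse inclusion, a span $i \xrightarrow{f} k \xleftarrow{g} j$ with $d(f)(x) = d(g)(y)$ exhibits $x \sim d(f)(x) = d(g)(y) \sim y$ through two generating steps, so $\approx \subseteq \sim$.

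The main obstacle, and the only step requiring genuine care, is the transitivity argument: coordinating the two spans requires applying both the upper-bound property and the coequalizing property of filtered categories (Lemma \ref{lem alternate def of filtered}, parts (2) and (3)) and then verifying that the two elements genuinely become equal in $d(m')$ rather than merely mapping to a common object. I would keep the diagram small and explicit, tracking the images of $x$, $y$, and $z$ at each stage, since this is precisely the bookkeeping that the filtered hypothesis is designed to make work. Everything else is routine and follows the template already laid out in the text immediately before the statement.
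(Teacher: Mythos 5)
Your proof is correct, but it takes a genuinely different route from the one in the text. The paper works with the general description of the generated equivalence relation as finite zig-zags $i \to k_0 \leftarrow k_1 \to \cdots \to k_n \leftarrow j$ carrying compatible elements, and then applies filteredness \emph{once}: the whole zig-zag is a finite diagram, so it admits a cocone with vertex $k$, and chasing the elements along the cocone legs collapses the zig-zag to a single span $i \to k \leftarrow j$. You instead never mention zig-zags: you verify directly that the span relation $\approx$ is an equivalence relation, with the only real work in transitivity, where you use the decomposed characterization of filteredness (Lemma \ref{lem alternate def of filtered}, parts (2) and (3)) to first find a common target $m$ for $k$ and $k'$ and then coequalize the two parallel composites out of $j$; you then observe that $\approx$ contains the generating relation and is contained in the generated equivalence relation, forcing equality. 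Both arguments are sound. Yours is the classical textbook argument and has the advantage of making explicit exactly which consequences of filteredness are used (an upper bound plus one coequalization), which is useful when one later wants to weaken hypotheses; the paper's version is shorter and scales without extra effort to chains of arbitrary finite length, since a single cocone handles the entire zig-zag at once rather than requiring an induction on its length (which is implicitly what your transitivity-based argument amounts to). Your parenthetical self-correction in the middle of the transitivity step resolves itself correctly, and the final bookkeeping with $d(hpf)(x) = d(hpg)(y) = d(hqf')(y) = d(hqg')(z)$ is exactly right.
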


Let $\cat{C}$ be a category with $I$ and $J$-shaped colimits and limits. Suppose that $d : I \times J \to \cat{C}$ is a diagram. There is a canonical map
\begin{equation} \label{eq commuting canonical map}
    \sigma : \underset{i \in I}{\colim} \lim_{j \in J} d(i,j) \to \lim_{j \in J} \underset{i \in I}{\colim} \, d(i,j),
\end{equation}
that can be described as follows. For every $i_0 \in I$ and $j_0 \in J$, there are canonical maps
\begin{equation}
    \lim_{j \in J} d(i_0, j) \to d(i_0, j_0) \to \underset{i \in I}{\colim} \, d(i, j_0),
\end{equation}
given by the respective (co)limit (co)cones. Now the composite map forms a cone over the diagram $\colim_{i \in I} \, d(i, -) : J \to \cat{C}$ for if $f: j_0 \to j'_0$ is a morphism in $J$, then the following diagram commutes
\begin{equation*}
    \begin{tikzcd}
	& {\lim_{j \in J} d(i_0,j)} \\
	{d(i_0, j_0)} && {d(i_0, j'_0)} \\
	{\colim_{i \in I} \, d(i, j_0)} && {\colim_{i \in I} \, d(i, j'_0)}
	\arrow[from=1-2, to=2-1]
	\arrow[from=1-2, to=2-3]
	\arrow["{d(i_0, f)}", from=2-1, to=2-3]
	\arrow[from=2-1, to=3-1]
	\arrow[from=2-3, to=3-3]
	\arrow["{\colim_{i \in I} \, d(i,f)}", from=3-1, to=3-3]
\end{tikzcd}
\end{equation*}
So for every $i_0 \in I$ there is a map
\begin{equation*}
    \lim_{j \in J} d(i_0, j) \to \lim_{j \in J} \underset{i \in I}{\colim} \, d(i, j),
\end{equation*}
and a similar argument shows this provides a cocone over $\lim_{j \in J} d(-, j) : I \to \cat{C}$, and thus gives the map $\sigma$ in (\ref{eq commuting canonical map}).

\begin{Def} \label{def commuting (co)limits}
We say that \textbf{$I$-colimits commute with $J$-limits} in $\cat{C}$ if the map $\sigma$ in (\ref{eq commuting canonical map}) is an isomorphism for every diagram $d: I \times J \to \cat{C}$.
\end{Def} 

\begin{Prop}[{\cite[Theorem 3.8.9]{riehl2017category}}] \label{prop filtered colimits commute with finite limits in Set}
Finitely filtered colimits commute with finite limits in $\ncat{Set}$.
\end{Prop}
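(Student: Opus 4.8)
The plan is to prove that for a finitely filtered category $I$ and a finite category $J$, the canonical comparison map
\begin{equation*}
    \sigma : \ncolim{i \in I} \lim_{j \in J} d(i,j) \to \lim_{j \in J} \ncolim{i \in I} d(i,j)
\end{equation*}
is a bijection of sets, for every diagram $d : I \times J \to \ncat{Set}$. The strategy is to exploit the two explicit descriptions we have available: the description of finitely filtered colimits in $\ncat{Set}$ from Lemma \ref{lem filtered colimits description in sets} (an element of the colimit is represented by a single section $x \in d(i,j)$ for some $i$, with two sections identified when they agree after pushing forward along a common cocone), and the description of limits in $\ncat{Set}$ as compatible families. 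Since finite limits are generated by finite products and equalizers, one could in principle reduce to checking that filtered colimits preserve finite products and equalizers, but it is cleaner here to argue directly with the explicit formulas, since both are concrete.

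First I would unwind the right-hand side. An element of $\lim_{j \in J} \ncolim{i \in I} d(i,j)$ is a family $\{ \xi_j \}_{j \in J}$ with $\xi_j \in \ncolim{i} d(i,j)$, compatible under the maps induced by morphisms of $J$. Each $\xi_j$ is represented by some $x_j \in d(i_j, j)$. Since $J$ is finite, there are finitely many indices $i_j$, and since $I$ is finitely filtered, Lemma \ref{lem alternate def of filtered}.(2) lets me find a single object $k \in I$ with maps $i_j \to k$ for all $j$; pushing each $x_j$ forward, I may assume all representatives live over a common index $k$, so $x_j \in d(k,j)$. The compatibility of the $\xi_j$ in the colimit, combined with the finitely many relations imposed by morphisms $j \to j'$ in $J$, holds only after further pushforward; using Lemma \ref{lem alternate def of filtered}.(3) finitely many times (once per morphism of $J$, of which there are finitely many) I can find a further map $k \to k'$ making the family $\{ d(\text{--})(x_j) \}$ genuinely compatible over $k'$, i.e. an honest element of $\lim_{j} d(k',j)$. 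This produces a preimage under $\sigma$, establishing surjectivity.

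For injectivity, I would take two elements of the left-hand side represented by compatible families $x = \{x_j\}_{j}$ over $i$ and $y = \{y_j\}_j$ over $i'$ (each an element of $\lim_j d(i,j)$, resp. $\lim_j d(i',j)$) whose images under $\sigma$ agree. Agreement means that for each $j$ the images of $x_j$ and $y_j$ coincide in $\ncolim{i} d(i,j)$, which by Lemma \ref{lem filtered colimits description in sets} gives, for each $j$, a common index $k_j$ and maps coequalizing $x_j$ and $y_j$. Again using finiteness of $J$ together with the filteredness conditions of Lemma \ref{lem alternate def of filtered}, I amalgamate these finitely many $k_j$ into a single $k$ over which all the components agree simultaneously, showing $x$ and $y$ represent the same element of the colimit of limits.

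The main obstacle, and the place where finiteness of $J$ and filteredness of $I$ are both genuinely used, is the amalgamation step: one must absorb finitely many independently-chosen ``coincidence'' maps into a single common target, which requires iterating conditions (2) and (3) of Lemma \ref{lem alternate def of filtered} a number of times bounded by the number of objects and morphisms of $J$. This is exactly why $J$ must be finite (so only finitely many amalgamations are needed, and a filtered—rather than directed—colimit suffices) and why $I$ must be filtered rather than an arbitrary shape. I would present this bookkeeping carefully but without belaboring each diagram chase, citing Lemma \ref{lem filtered colimits description in sets} and Lemma \ref{lem alternate def of filtered} as the workhorses.
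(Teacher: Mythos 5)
Your proposal is correct and follows essentially the same route as the paper's proof: both unwind the two sides via the explicit description of finitely filtered colimits in $\ncat{Set}$ (Lemma \ref{lem filtered colimits description in sets}) and use finiteness of $J$ together with filteredness of $I$ to amalgamate representatives over a single index, first for surjectivity and then for injectivity. The only difference is presentational — you spell out the iterated use of conditions (2) and (3) of Lemma \ref{lem alternate def of filtered} that the paper's proof leaves implicit in the phrase ``since $J$ is finite and $I$ is filtered.''
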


\begin{proof}
Suppose that $I$ is a (small) finitely filtered category, $J$ is a finite category and $d: I \times J \to \ncat{Set}$ is a diagram. We want to show that $\sigma$ is a bijection. First note that an element of $\lim_{j \in J} \colim_{i \in I} \, d(i,j)$ consists of a finite collection of elements $[x_{j_0}] \in \colim_{i  \in I} \, d(i,j_0)$ which are compatible in the sense that for every map $f: j \to j'$ in $J$, $\colim_{i \in I}\, d(i,f)([x_j]) = [d(i_0, f)(x_j)] = [x_{j'}]$. Since $J$ is finite and $I$ is filtered, by Lemma \ref{lem filtered colimits description in sets}, this means that there exists a fixed $k \in I$, and elements $y_{j_0} \in d(k, j_0)$ such that $[x_{j_0}] = [y_{j_0}]$ for every $j_0 \in J$. In other words, we can represent an element of $\lim_{j \in J} \colim_{i \in I} \, d(i,j)$ by the collection $\{ y_j \}_{j \in J}$, and they will be compatible on the nose, $d(k,f)(y_j) = y_{j'}$. But such a collection is the same thing as an element of the set $\lim_{j \in J} d(k, j)$. In other words, $\sigma$ is surjective.

Now an element of $\colim_{i \in I} \lim_{j \in J} d(i,j)$ consists of an equivalence class $[ \{ y_j \}_{j \in J}]$ of compatible families of elements $y_j \in d(i, j)$, where $\{ y_j \} \approx \{ y'_j \}$ if there are maps $f : i \to k$, $g : i' \to k$ such that $d(f,j)(y_j) = d(g,j)(y'_j)$ for every $j \in J$, by Lemma \ref{lem filtered colimits description in sets}. Now if $[\{y_j \}]$ and $[\{y'_j \}]$ are elements such that $\sigma([\{y_j \}]) = \sigma([\{y'_j\}])$, then we know that there is a $k \in I$ and a family $\{ z_j \}$ with each $z_j \in d(k, j)$ such that $[\{y_j \}] = [\{ z_j \}] = [\{ y'_j \}]$ such that the $z_j$ are compatible on the nose. But this implies that $\{y_j \} \approx \{z_j \} \approx \{y'_j \}$. Thus $\sigma$ is injective.
\end{proof}

In fact Proposition \ref{prop filtered colimits commute with finite limits in Set} has a converse.

\begin{Prop}[{\cite[Lemma 2.1]{bjerrum2015notes}}] \label{prop commute with finite limits implies filtered}
Suppose that $I$ is a small category and $I$-colimits commute with all finite limits in $\ncat{Set}$. Then $I$ is a finitely filtered category.
\end{Prop}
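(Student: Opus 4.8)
The plan is to verify the three conditions for finite filteredness listed in Lemma \ref{lem alternate def of filtered} one at a time, in each case feeding a cleverly chosen diagram $d : I \times J \to \ncat{Set}$, with $J$ a specific finite category, into the isomorphism
\begin{equation*}
\sigma : \underset{i \in I}{\colim} \lim_{j \in J} d(i,j) \xrightarrow{\ \cong\ } \lim_{j \in J} \underset{i \in I}{\colim} \, d(i,j)
\end{equation*}
guaranteed by the hypothesis (Definition \ref{def commuting (co)limits}). The engine driving all three cases is the elementary observation that for any $U \in I$ the covariant representable $I(U,-) : I \to \ncat{Set}$ satisfies $\underset{i \in I}{\colim}\, I(U,i) \cong *$: the colimit of a $\ncat{Set}$-valued functor is $\pi_0$ of its category of elements (a special case of the coYoneda Lemma \ref{lem coyoneda lemma}), and the category of elements of $I(U,-)$ is the coslice $U/I$, which has the initial object $1_U$ and is therefore connected. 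Crucially this holds for \emph{arbitrary} small $I$, so there is no circularity. I will use the three finite limit shapes $J = \varnothing$, $J = \{1,2\}$ (discrete), and $J = (0 \rightrightarrows 1)$, whose $J$-limits are respectively the terminal set, binary products, and equalizers, i.e. the building blocks of all finite limits.

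For nonemptiness, I would take $J = \varnothing$, so that $\lim_{j \in J} d(i,j) = *$ for every $i$ regardless of $d$; then the source of $\sigma$ is $\underset{i \in I}{\colim}\, * \cong \pi_0(I)$ while the target is the empty limit $*$. Hence $\pi_0(I) \cong *$, so $I$ is connected and in particular nonempty. For condition (2), given $U, V \in I$ I would set $J = \{1,2\}$ discrete and $d(-,1) = I(U,-)$, $d(-,2) = I(V,-)$, so that $\lim_{j} d(i,j) = I(U,i) \times I(V,i)$. The target of $\sigma$ then becomes $\bigl(\underset{i}{\colim}\, I(U,i)\bigr) \times \bigl(\underset{i}{\colim}\, I(V,i)\bigr) \cong * \times * \cong *$ by the computation above, so $\underset{i \in I}{\colim}\bigl(I(U,i) \times I(V,i)\bigr) \cong *$ is in particular nonempty; a representing element lives at some stage $W \in I$ and consists of a pair of maps $U \to W$ and $V \to W$, which is exactly condition (2).

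For condition (3), given parallel maps $f, g : U \to V$, I would take $J = (0 \rightrightarrows 1)$ and let $d(-,0) = I(V,-)$, $d(-,1) = I(U,-)$ with the two structure maps $I(V,i) \to I(U,i)$ given by precomposition with $f$ and with $g$. Then $\lim_{j} d(i,j)$ is the equalizer $\{\phi : V \to i \mid \phi f = \phi g\}$, while in the target both induced maps $\underset{i}{\colim}\, I(V,i) \to \underset{i}{\colim}\, I(U,i)$ are maps $* \to *$, hence equal, so their equalizer is $*$. Therefore $\underset{i \in I}{\colim}\{\phi : V \to i \mid \phi f = \phi g\} \cong *$ is nonempty, and a representative at some stage $W$ supplies a map $h : V \to W$ with $hf = hg$.

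The only genuinely non-formal ingredient, and hence the main obstacle, is the representable computation $\underset{i}{\colim}\, I(U,i) \cong *$ together with its consequence that the relevant right-hand colimits collapse to $*$. Once that is established, all three conditions fall out by the trivial remark that a nonempty colimit of sets is represented by an element at some stage $i \in I$, and that stage furnishes the required object $W$ and cocone/coequalizing map; the remainder is pure bookkeeping with the three limit shapes. With all three clauses of Lemma \ref{lem alternate def of filtered} in hand, $I$ is finitely filtered, which is the desired converse to Proposition \ref{prop filtered colimits commute with finite limits in Set}.
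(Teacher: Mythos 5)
Your proof is correct and is essentially the paper's argument: both hinge on the computation $\colim_{i \in I} I(U,i) \cong *$ together with the commutation isomorphism applied to corepresentables, the paper doing this once for an arbitrary finite diagram $d : J^{\op} \to I$ via $F(i,j) = I(d(j),i)$ and the identification of $\lim_{j \in J} F(i,j)$ with the set of cocones under $d$ with vertex $i$, while you run the same computation for the three shapes $J = \varnothing$, a discrete pair, and a parallel pair and conclude via Lemma \ref{lem alternate def of filtered}. Your bookkeeping of the variances is in fact the correct one: the paper's displayed formulas (and the second clause of Lemma \ref{lem colimits of representables are singletons}) interchange $\lim_{i \in I}$ and $\colim_{i \in I}$, since it is the colimit, not the limit, of $y^{\text{co}}(U)$ that is a singleton.
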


Before proving Proposition \ref{prop commute with finite limits implies filtered}, let us prove the following.

\begin{Lemma} \label{lem colimits of representables are singletons}
Let $\cat{C}$ be a small category with $U \in \cat{C}$ and consider the representable presheaf $y(U) \in \Pre(\cat{C})$. Then
\begin{equation*}
   \ncolim{V \in \cat{C}^{\op}} y(U)(V) \cong \lim_{V \in \cat{C}} y^{\text{co}}(U)(V) \cong * 
\end{equation*}
where $*$ denotes a singleton set.
\end{Lemma}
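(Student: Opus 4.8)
The plan is to identify each side with a set of connected components and to exhibit a terminal or initial object in the relevant indexing category, which forces that set to be a singleton. Throughout I would use the standard description of colimits of $\ncat{Set}$-valued functors: the colimit of $F$ is the quotient of $\sum_d F(d)$ by the smallest equivalence relation identifying $x \in F(d)$ with $F(\phi)(x)$ for every morphism $\phi$, equivalently it is $\pi_0$ of the category of elements $\int F$ (Definition \ref{def category of elements}).

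First I would compute $\colim_{V \in \cat{C}^\op} y(U)(V)$. By Definition \ref{def category of elements}, the category of elements $\int y(U)$ has objects $(V, x\colon V \to U)$ and a morphism $(V,x) \to (V',x')$ for each $f\colon V \to V'$ with $x' f = x$; that is, $\int y(U) \cong \cat{C}_{/U}$. This slice category has the terminal object $1_U \colon U \to U$, so it is connected and $\pi_0(\int y(U)) \cong *$. Concretely, in the coequalizer presentation every pair $(V, x)$ is related to $(U, 1_U)$ by the morphism $x$ itself, so there is exactly one equivalence class, which gives the colimit directly without invoking $\pi_0$.

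The remaining computation is the same statement run in $\cat{C}$ rather than $\cat{C}^\op$: since $\cat{C}(U,V) = \cat{C}^\op(V,U)$, the covariant representable $y^{\text{co}}(U) = \cat{C}(U,-)$ has category of elements the coslice $(U \downarrow \cat{C})$, whose \emph{initial} object $1_U$ again makes it connected, so $\colim_{V \in \cat{C}} y^{\text{co}}(U)(V) \cong *$. Both sides are then singletons, hence isomorphic, and the two computations are literally one fact applied to $\cat{C}$ and to $\cat{C}^\op$. The one point demanding care — and the only real obstacle — is the middle term: read as a genuine limit, $\lim_{V \in \cat{C}} y^{\text{co}}(U)(V)$ is the set of natural families $\{\lambda_V \colon U \to V\}_V$, which can be empty (for instance when $\cat{C}$ has a pair of distinct parallel arrows out of $U$, forcing $\lambda$ to equalize them), so it is not a singleton in general. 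The identity actually used later, in Proposition~\ref{prop commute with finite limits implies filtered}, is the \emph{colimit} $\colim_{V \in \cat{C}} y^{\text{co}}(U)(V) \cong *$, and that is the form I would prove and record.
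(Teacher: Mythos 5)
Your computation of the first term is correct but runs along a different track from the paper. The paper argues by corepresentability: for an arbitrary set $S$,
\begin{equation*}
\ncat{Set}\Bigl(\ncolim{V \in \cat{C}^{\op}} y(U)(V),\, S\Bigr) \cong \Pre(\cat{C})(y(U), \Delta(S)) \cong \Delta(S)(U) \cong S \cong \ncat{Set}(*,S),
\end{equation*}
and concludes by the Yoneda lemma that the colimit is a singleton. You instead identify the colimit with $\pi_0$ of the category of elements $\int y(U) \cong \cat{C}_{/U}$ and use the terminal object $1_U$. Both are valid; the paper's version is slicker, while yours is more concrete and has the advantage of making visible exactly why the argument does not dualize to limits: only the \emph{colimit} of a $\ncat{Set}$-valued functor is computed by connected components of its category of elements.

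Your objection to the middle term is well-founded, and the defect is in the statement of the lemma rather than in your proof. Read literally, $\lim_{V \in \cat{C}} y^{\text{co}}(U)(V)$ is the set of natural families $\{\lambda_V : U \to V\}_{V \in \cat{C}}$, and your counterexample (two distinct parallel arrows $g,h : U \to W$ and no other nonidentity morphisms) makes it empty, since naturality forces $g\lambda_U = \lambda_W = h\lambda_U$ with $\lambda_U = 1_U$. The paper's ``similar argument'' for the second isomorphism only goes through for the colimit: $\ncat{Set}(\ncolim{V \in \cat{C}} y^{\text{co}}(U)(V), S) \cong \Fun(\cat{C},\ncat{Set})(\cat{C}(U,-), \Delta(S)) \cong S$ by the covariant Yoneda lemma, whereas there is no Yoneda lemma computing $\Fun(\cat{C},\ncat{Set})(\Delta(S), \cat{C}(U,-))$. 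So the middle term should be $\ncolim{V \in \cat{C}} y^{\text{co}}(U)(V)$, which is exactly what you prove via the initial object of the coslice $(U \downarrow \cat{C})$. You are also right about the downstream use: the same $\lim$/$\colim$ swap appears in the displayed computation in Proposition \ref{prop commute with finite limits implies filtered}, and the reading that makes that proof work is the colimit form of this lemma together with $\lim_{j \in J} I(d(j), i) \cong \text{coCone}(d)(i)$.
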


\begin{proof}
Given an arbitrary set $S$, we have
\begin{equation*}
    \begin{aligned}
        \ncat{Set}\left( \ncolim{V \in \cat{C}^{\op}} y(U)(V), S) \right) & \cong \Pre(\cat{C})(y(U), \Delta(S)) \\
        & \cong \Delta(S)(U) \\
        & \cong S \\
        & \cong \ncat{Set}(*, S),
    \end{aligned}
\end{equation*}
where the first isomorphism is the definition of the colimit functor as representing the constant functor. Since $S$ was arbitrary, by the Yoneda lemma, this implies that $\colim_{V \in \cat{C}} y(U)(V) \cong *$. A similar argument proves that $\lim_{V \in \cat{C}} y^{\text{co}}(U)(V) \cong *$, where $y^{\text{co}}(U) = \cat{C}(U,-)$.
\end{proof}

\begin{proof}[Proof of Proposition \ref{prop commute with finite limits implies filtered}]
Suppose that $d : J^{\op} \to I$ is a finite diagram. Consider the functor $F: I \times J \to \ncat{Set}$ defined by $F(i,j) = I(d(j), i)$. Now by Lemma \ref{lem colimits of representables are singletons}, for each $j \in J$, we have 
\begin{equation*}
    \lim_{i \in I} F(i,j) = \lim_{i \in I} y^{\text{co}}(d(j))(i) \cong *.
\end{equation*}
Thus $\colim_{j \in J} \lim_{i \in I} F(i,j) \cong *$. But 
\begin{equation*}
\ncolim{j \in J} F(i,j) \cong \ncolim{j \in J} I(d(j),i) \cong I(\ncolim{j \in J} d(j), i) \cong \text{coCone}(d)(i),
\end{equation*}
where $\text{coCone}(d)(i)$ is the set of cocones over $d$ with vertex $i$. Thus if $\text{coCone}(d)(i)$ is empty for all $i$, then $\lim_{i \in I} \Cone(d)(i) \cong \varnothing$. So if $I$-shaped colimits commute with finite limits, then $d$ must have a cocone. Since $d$ was an arbitrary finite diagram, this proves that $I$ is finitely filtered.
\end{proof}

\begin{Cor} \label{cor colimits commute with finite limits iff filtered}
Given a small category $I$, $I$-colimits commute with finite limits in $\ncat{Set}$ if and only if $I$ is finitely filtered.
\end{Cor}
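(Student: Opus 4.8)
The statement to prove is Corollary \ref{cor colimits commute with finite limits iff filtered}: for a small category $I$, the property that $I$-colimits commute with finite limits in $\ncat{Set}$ is equivalent to $I$ being finitely filtered.

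The plan is to simply assemble the two implications, both of which have already been established as separate propositions immediately preceding this corollary. The forward direction, that finite filteredness implies commutation with finite limits, is exactly Proposition \ref{prop filtered colimits commute with finite limits in Set}. The reverse direction, that commutation with finite limits implies finite filteredness, is exactly Proposition \ref{prop commute with finite limits implies filtered}. So the entire content of the corollary is the observation that these two propositions are converse to one another, and the proof is a one-line citation of both.

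Concretely, I would write: ``$(\Leftarrow)$ If $I$ is finitely filtered, then $I$-colimits commute with finite limits by Proposition \ref{prop filtered colimits commute with finite limits in Set}. $(\Rightarrow)$ Conversely, if $I$-colimits commute with all finite limits in $\ncat{Set}$, then $I$ is finitely filtered by Proposition \ref{prop commute with finite limits implies filtered}.'' There is no genuine obstacle here, since all the real work—the set-level zig-zag analysis of filtered colimits in $\ncat{Set}$ via Lemma \ref{lem filtered colimits description in sets}, and the clever choice of the bifunctor $F(i,j) = I(d(j),i)$ together with Lemma \ref{lem colimits of representables are singletons}—has already been carried out in proving the two propositions. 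The corollary is purely a packaging statement that records the biconditional.

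If I were to identify any subtlety at all, it would be purely cosmetic: ensuring that the quantifiers match exactly between the two propositions (both speak of \emph{finite} limits and \emph{finitely} filtered, with $I$ small), so that the "if and only if" is stated for precisely the same class of diagrams. Since Proposition \ref{prop filtered colimits commute with finite limits in Set} is phrased for a small finitely filtered $I$ and finite $J$, and Proposition \ref{prop commute with finite limits implies filtered} assumes $I$ small with commutation against all finite limits, the hypotheses align perfectly and no further argument is needed.

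\begin{proof}
$(\Leftarrow)$ If $I$ is finitely filtered, then $I$-colimits commute with finite limits in $\ncat{Set}$ by Proposition \ref{prop filtered colimits commute with finite limits in Set}.

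$(\Rightarrow)$ Conversely, if $I$-colimits commute with all finite limits in $\ncat{Set}$, then $I$ is finitely filtered by Proposition \ref{prop commute with finite limits implies filtered}.
\end{proof}
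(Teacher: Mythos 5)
Your proposal is correct and matches the paper's intent exactly: the corollary is stated immediately after Proposition \ref{prop filtered colimits commute with finite limits in Set} and Proposition \ref{prop commute with finite limits implies filtered}, and its (implicit) proof is precisely the conjunction of those two results. There is nothing to add.
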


This result can be strengthened to the following.

\begin{Cor} \label{cor kappa filtered commutes with kappa small in set}
Given a small category $I$ and a regular cardinal $\kappa$, $I$-colimits commute with $\kappa$-small limits in $\ncat{Set}$ if and only if $I$ is $\kappa$-filtered.
\end{Cor}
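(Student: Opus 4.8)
The statement to prove is Corollary \ref{cor kappa filtered commutes with kappa small in set}: for a small category $I$ and regular cardinal $\kappa$, $I$-colimits commute with $\kappa$-small limits in $\ncat{Set}$ if and only if $I$ is $\kappa$-filtered. This is the evident generalization of Corollary \ref{cor colimits commute with finite limits iff filtered}, which handled the case $\kappa = \omega$, and the plan is to mimic that proof while replacing ``finite'' with ``$\kappa$-small'' throughout and using the regularity of $\kappa$ at the one place where finiteness was genuinely used.

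For the forward direction ($\Leftarrow$), I would adapt the proof of Proposition \ref{prop filtered colimits commute with finite limits in Set}. Given a $\kappa$-filtered category $I$, a $\kappa$-small category $J$, and a diagram $d : I \times J \to \ncat{Set}$, one shows the canonical map $\sigma$ of (\ref{eq commuting canonical map}) is a bijection. Surjectivity: an element of $\lim_{j \in J} \colim_{i \in I} d(i,j)$ is a $J$-indexed compatible family of classes $[x_j]$; since $J$ is $\kappa$-small and $I$ is $\kappa$-filtered, Lemma \ref{lem filtered colimits description in sets} lets us find a single $k \in I$ representing all the $[x_j]$ and, crucially, $\kappa$-filteredness lets us further equalize the $\kappa$-small collection of compatibility conditions so the representatives match on the nose, yielding an element of $\lim_{j \in J} d(k,j)$. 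This is exactly where regularity of $\kappa$ matters: the number of morphisms in $J$ (hence the number of coherence conditions to enforce) is $\kappa$-small, and a $\kappa$-filtered category admits cocones over $\kappa$-small diagrams, so a single upper bound suffices. Injectivity is the same diagram chase as in the finite case, again using Lemma \ref{lem filtered colimits description in sets}.

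For the reverse direction ($\Rightarrow$), I would generalize Proposition \ref{prop commute with finite limits implies filtered}. Assume $I$-colimits commute with $\kappa$-small limits. Given a $\kappa$-small diagram $d : J^\op \to I$, form $F : I \times J \to \ncat{Set}$ with $F(i,j) = I(d(j), i)$. By Lemma \ref{lem colimits of representables are singletons} each $\lim_{i \in I} F(i,j) \cong *$, so $\colim_{j \in J} \lim_{i \in I} F(i,j) \cong *$; on the other hand $\colim_{j \in J} F(i,j) \cong I(\colim_{j} d(j), i) \cong \mathrm{coCone}(d)(i)$. Commutation forces $\lim_{i \in I} \mathrm{coCone}(d)(i) \cong *$, which is nonempty, so $d$ must admit a cocone. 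Since $d$ was an arbitrary $\kappa$-small diagram, $I$ is $\kappa$-filtered.

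The two directions combine to give the corollary. I expect the main obstacle to be the surjectivity argument in the forward direction: one must be careful that $\kappa$-filteredness, which a priori only gives cocones over $\kappa$-small diagrams, suffices to simultaneously (i) find a common representative object $k$ for all the $\kappa$-many classes $[x_j]$ and (ii) coequalize the $\kappa$-many compatibility witnesses so that the resulting family is strictly compatible rather than merely compatible up to the equivalence $\approx$. Both steps reduce to finding a cocone over an auxiliary $\kappa$-small diagram assembled from $J$ together with the chosen morphisms, and regularity of $\kappa$ guarantees that this assembled diagram is still $\kappa$-small, so a single cocone exists. Everything else is a routine transcription of the $\kappa = \omega$ proofs, so I would simply note at the outset that the argument is identical to Proposition \ref{prop filtered colimits commute with finite limits in Set} and Proposition \ref{prop commute with finite limits implies filtered} with ``finite'' replaced by ``$\kappa$-small,'' and then indicate the single point where regularity of $\kappa$ is invoked.
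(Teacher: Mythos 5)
Your proof is correct and follows exactly the route the paper intends: the paper states this corollary without proof as a strengthening of Corollary \ref{cor colimits commute with finite limits iff filtered}, and your argument is the straightforward generalization of Propositions \ref{prop filtered colimits commute with finite limits in Set} and \ref{prop commute with finite limits implies filtered} with ``finite'' replaced by ``$\kappa$-small.'' You correctly identify the one point needing care — simultaneously choosing a common representative and coequalizing the $\kappa$-small family of compatibility witnesses via a cocone over an auxiliary $\kappa$-small diagram, where regularity of $\kappa$ guarantees the diagram stays $\kappa$-small.
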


\begin{Cor} \label{cor filtered colimits commute in presheaf topoi}
Given a small category $I$ and a regular cardinal $\kappa$, $I$-colimits commute with $\kappa$-small limits in presheaf topoi if and only if $I$ is $\kappa$-filtered.
\end{Cor}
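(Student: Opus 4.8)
The plan is to deduce this corollary directly from the corresponding statement for $\ncat{Set}$ (Corollary \ref{cor kappa filtered commutes with kappa small in set}) together with the fact that both limits and colimits in a presheaf topos are computed objectwise (Lemma \ref{lem presheaves (co)limits computed objectwise}). No genuinely new input is needed; the content is entirely in transporting the $\ncat{Set}$-level result through the objectwise description.

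First, for the ``if'' direction, I would assume $I$ is $\kappa$-filtered and let $\cat{C}$ be any small category. Given a diagram $d : I \times J \to \Pre(\cat{C})$ with $J$ a $\kappa$-small category, I would consider the canonical comparison map $\sigma$ of Definition \ref{def commuting (co)limits}. The key observation is that evaluating $\sigma$ at an object $U \in \cat{C}$ yields precisely the canonical comparison map for the $\ncat{Set}$-valued diagram $d(-,-)(U) : I \times J \to \ncat{Set}$, since by Lemma \ref{lem presheaves (co)limits computed objectwise} both $\ncolim{i} \lim_{j} d(i,j)$ and $\lim_j \ncolim{i} d(i,j)$ are computed by taking the respective objectwise $(\mathrm{co})$limits in $\ncat{Set}$. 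By Corollary \ref{cor kappa filtered commutes with kappa small in set}, since $I$ is $\kappa$-filtered, this evaluated map is a bijection for every $U$. A map of presheaves is an isomorphism exactly when it is objectwise a bijection (again Lemma \ref{lem presheaves (co)limits computed objectwise}), so $\sigma$ is an isomorphism. Hence $I$-colimits commute with $\kappa$-small limits in every presheaf topos.

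For the ``only if'' direction, the cleanest route is to observe that $\ncat{Set}$ is itself a presheaf topos: taking $\cat{C} = *$ to be the terminal category gives $\Pre(*) \cong \ncat{Set}$. Thus if $I$-colimits commute with $\kappa$-small limits in presheaf topoi, they in particular commute in $\ncat{Set}$, and Corollary \ref{cor kappa filtered commutes with kappa small in set} immediately yields that $I$ is $\kappa$-filtered.

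There is no real obstacle here; everything follows mechanically once the two cited results are in hand. The only point requiring a moment of care is verifying that the comparison map $\sigma$ genuinely commutes with evaluation at $U$ — that is, that the canonical map built from the $(\mathrm{co})$limit $(\mathrm{co})$cones in $\Pre(\cat{C})$ restricts to the canonical map built from the objectwise $(\mathrm{co})$limit $(\mathrm{co})$cones in $\ncat{Set}$ — but this is immediate from the relevant universal properties together with the objectwise description of $(\mathrm{co})$limits, so I would state it and move on rather than belabor the bookkeeping.
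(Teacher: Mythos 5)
Your proof is correct and is exactly the intended argument: the paper states this as an immediate corollary of Corollary \ref{cor kappa filtered commutes with kappa small in set}, with the objectwise computation of (co)limits in $\Pre(\cat{C})$ (Lemma \ref{lem presheaves (co)limits computed objectwise}) handling the ``if'' direction and $\ncat{Set} \simeq \Pre(*)$ handling the ``only if'' direction. Nothing further is needed.
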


Now with Proposition \ref{prop filtered colimits commute with finite limits in Set} under our belt, let us return to our new goal. We wish to abstract the following property of $\ncat{Set}$: the category $\ncat{FinSet}$ is dense in $\ncat{Set}$. First we want a notion similar to that of finiteness for an arbitrary category.

\begin{Def} \label{def small objects}
Let $\cat{C}$ be a category and $U \in \cat{C}$. We say that $U$ is \textbf{finitely presentable} if for every finitely filtered diagram $d: I \to \cat{C}$, the canonical map
\begin{equation} \label{eq canonical map for small object def}
    \tau: \underset{i \in I}{\colim} \, \cat{C}(U, d(i)) \to \cat{C}(U, \underset{i \in I}{\colim} \, d(i))
\end{equation}
is an isomorphism. In other words $U$ is finitely presentable if and only if $\cat{C}(U,-)$ preserves finitely filtered colimits. More generally we say that $U$ is $\kappa$-presentable if $\cat{C}(U,-)$ preserves $\kappa$-filtered colimits.
\end{Def}

\begin{Rem} \label{rem kappa pres implies kappa' pres}
If $U \in \cat{C}$ is $\kappa$-presentable, then it is $\kappa'$-presentable for any pair of regular cardinals $\kappa < \kappa'$. This is just because $\kappa'$-filtered categories are $\kappa$-filtered. In particular if $U$ is finitely presentable, then it is $\kappa$-presentable for all infinite regular cardinals $\kappa$.
\end{Rem}

We can equivalently swap the word ``filtered'' with ``directed'' in the above definition thanks to the following result.

\begin{Prop}[{\cite[Corollary of Theorem 1.5 and Remark 1.21]{rosicky1994locally}}]
A category $\cat{C}$ admits $\kappa$-filtered colimits if and only if it admits $\kappa$-directed colimits. Furthermore a functor $F$ preserves $\kappa$-filtered colimits if and only if it preserves $\kappa$-directed colimits.
\end{Prop}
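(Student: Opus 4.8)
The plan is to reduce both claims to the key fact, stated just above, that every small $\kappa$-filtered category receives a final functor $G : \cat{D} \to \cat{C}$ from a small $\kappa$-directed poset $\cat{D}$. One direction of each assertion is immediate: a $\kappa$-directed poset, regarded as a category, is in particular $\kappa$-filtered (directedness supplies upper bounds for $\kappa$-small families, which gives conditions (1) and (2) of Lemma \ref{lem alternate def of filtered}, while the poset condition makes the parallel-arrows condition (3) vacuous). Hence every $\kappa$-directed diagram is already a $\kappa$-filtered diagram, so admitting (resp.\ preserving) $\kappa$-filtered colimits trivially implies admitting (resp.\ preserving) $\kappa$-directed colimits. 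It remains to upgrade the $\kappa$-directed hypotheses to the $\kappa$-filtered conclusions.

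First I would treat existence of colimits. Suppose $\cat{C}$ admits all $\kappa$-directed colimits and let $d : I \to \cat{C}$ be a $\kappa$-filtered diagram. Choosing a final functor $G : \cat{D} \to I$ out of a $\kappa$-directed poset $\cat{D}$, the composite $dG : \cat{D} \to \cat{C}$ is a $\kappa$-directed diagram, so $\colim dG$ exists by hypothesis. By Definition \ref{def final functor}, finality of $G$ provides a canonical isomorphism $\colim dG \cong \colim d$, so $\colim d$ exists as well. Thus $\cat{C}$ admits $\kappa$-filtered colimits.

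Next, for the functor statement, suppose $F : \cat{C} \to \cat{E}$ preserves $\kappa$-directed colimits, let $d : I \to \cat{C}$ be a $\kappa$-filtered diagram, and take $G : \cat{D} \to I$ as above. Applying finality of $G$ to both $d$ and $Fd$, together with the hypothesis on $F$, I would compute
\[
F\left(\colim_{I} d\right) \cong F\left(\colim_{\cat{D}} dG\right) \cong \colim_{\cat{D}} F(dG) \cong \colim_{\cat{D}} (Fd)G \cong \colim_{I} Fd,
\]
where the first and last isomorphisms are finality (Definition \ref{def final functor}) applied to $d$ and to $Fd$ respectively, and the middle isomorphism is the assumption that $F$ preserves $\kappa$-directed colimits.

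The only genuinely non-formal ingredient is the cited existence of the final functor $G$; once it is granted, the remainder is a diagram chase. The step I expect to require the most care is verifying that the displayed chain of isomorphisms is induced by, and hence identifies, the \emph{canonical} comparison map $\colim_{I} Fd \to F(\colim_{I} d)$, rather than merely exhibiting an abstract isomorphism between the two objects. This amounts to checking that the finality isomorphisms are natural with respect to the colimit cocones (equivalently, that the comparison map built from the $\kappa$-filtered cocones restricts along $G$ to the one built from the $\kappa$-directed cocones), which is routine but should be stated explicitly to conclude genuine preservation.
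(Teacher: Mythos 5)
Your proof is correct and is exactly the argument the paper intends: the paper states this proposition without proof, as an immediate corollary of the preceding lemma that every small $\kappa$-filtered category admits a final functor from a small $\kappa$-directed poset, and your reduction via that final functor (plus the observation that $\kappa$-directed posets are $\kappa$-filtered) is precisely that derivation. Your closing remark about checking that the finality isomorphisms identify the canonical comparison cocones is the right point to be careful about, and it is indeed routine.
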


From Lemma \ref{lem filtered colimits description in sets}, we can understand the map (\ref{eq canonical map for small object def}) explicitly. The left hand side is the set of maps $s_i : U \to d(i)$ quotiented by the equivalence relation $\approx$, where $s_i \approx s_j$ if there are maps $f: i \to k$ and $g : j \to k$ such that $d(f) s_i = d(g) s_j$. The right hand side is the set of maps from $U$ to the colimit of $d$. If we let $\lambda_i : d(i) \to \colim \, d$ denote the colimit cocone components, then the map $\tau$ sends an element $[s_i]$ to the map $ \lambda_i s_i$.

So $\tau$ is surjective if and only if every map $U \to \colim \, d$ factors through some map $U \to d(i)$. Now $\tau$ is injective if and only if whenever we have maps $s_i : U \to d(i)$ and $s_j : U \to d(j)$ such that $\lambda_i s_i = \lambda_j s_j$, then $s_i \approx s_j$. In other words, $\tau$ is a bijection if and only if every map $U \to \colim \, d$ factors through some $U \to d(i)$ uniquely up to $\approx$.

\begin{Lemma} \label{lem finitely presentable in Set iff finite}
A set $S$ is finitely presentable in $\ncat{Set}$ if and only if it is finite.
\end{Lemma}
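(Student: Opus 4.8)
The plan is to prove both directions of the biconditional, showing that finite presentability in $\ncat{Set}$ coincides with finiteness. The key tool will be the explicit description of the canonical map $\tau$ in Definition \ref{def small objects}, together with the fact that every set is the finitely directed colimit of its finite subsets, which we established in the discussion surrounding Lemma \ref{lem every colimit is iso to a filtered colimit}.

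First I would handle the easy direction $(\Leftarrow)$: suppose $S$ is finite. Given a finitely filtered diagram $d : I \to \ncat{Set}$, I must show that every map $f : S \to \colim_{i \in I} d(i)$ factors through some $d(i)$ uniquely up to the equivalence relation $\approx$ of Lemma \ref{lem filtered colimits description in sets}. For each of the finitely many elements $x \in S$, the element $f(x)$ is represented by some $y_x \in d(i_x)$. Since $S$ is finite and $I$ is filtered, using the alternate characterization in Lemma \ref{lem alternate def of filtered}.(2) repeatedly (or directly that finite collections of objects have an upper bound), I can find a single object $k \in I$ and maps into $d(k)$ so that all the $f(x)$ are simultaneously represented by elements of $d(k)$; this yields a lift $\tilde{f} : S \to d(k)$ with $\lambda_k \tilde{f} = f$, giving surjectivity of $\tau$. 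For injectivity, if two lifts $s_i : S \to d(i)$ and $s_j : S \to d(j)$ satisfy $\lambda_i s_i = \lambda_j s_j$, then for each $x \in S$ we have $s_i(x) \approx s_j(x)$, so there is a map equalizing them; again using finiteness of $S$ and filteredness of $I$ via Lemma \ref{lem alternate def of filtered}.(3) to find a common such map, we conclude $s_i \approx s_j$. Thus $\tau$ is a bijection and $S$ is finitely presentable.

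For the harder direction $(\Rightarrow)$, I would argue by contrapositive: if $S$ is infinite, I exhibit a finitely directed colimit presentation that $\cat{C}(S,-)$ fails to preserve. The natural choice is to write $S \cong \ncolim{A \in \Sub_{\fin}(S)} A$ as the finitely directed colimit of its finite subsets, as in the motivating discussion before Definition \ref{def directed poset}. Then I evaluate the canonical map $\tau : \colim_{A} \ncat{Set}(S, A) \to \ncat{Set}(S, \colim_A A) \cong \ncat{Set}(S,S)$. The identity map $1_S$ lives in the codomain, but since $S$ is infinite, it does not factor through any finite subset $A \subsetneq S$ — no function $S \to A$ composed with the inclusion can be the identity when $A$ is a proper finite subset. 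Hence $1_S$ is not in the image of $\tau$, so $\tau$ is not surjective, and $S$ is not finitely presentable.

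The main obstacle will be the bookkeeping in the $(\Leftarrow)$ direction, specifically making rigorous the step where I pass from finitely many individually-chosen representatives $y_x \in d(i_x)$ to a single common object $k \in I$ carrying all of them at once. This requires an inductive application of the upper-bound property of filtered categories over the finite index set $S$, and care that the transition maps $d(f)$ send each chosen representative to the appropriate element of $d(k)$; the injectivity argument similarly requires coequalizing finitely many pairs simultaneously. I would organize this cleanly by invoking Lemma \ref{lem alternate def of filtered} and Lemma \ref{lem filtered colimits description in sets} rather than reproving the set-theoretic manipulation, so the finiteness of $S$ is used exactly where it is needed.
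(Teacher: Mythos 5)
Your proof is correct, and for the forward direction it is essentially the paper's argument: the paper proves it directly (finite presentability applied to the colimit of $\Sub_{\fin}(S) \hookrightarrow \ncat{Set}$ forces $1_S$ to factor through a finite subset, hence $S$ is finite), whereas you phrase the same observation contrapositively; the content is identical. For the converse, however, you take a genuinely different route from the paper's official proof. The paper writes a finite set as $S \cong \sum_{i=0}^n *$ and then deduces that $\ncat{Set}(S,-)$ preserves finitely filtered colimits from Proposition \ref{prop filtered colimits commute with finite limits in Set}, since $\ncat{Set}(S,-) \cong \prod_{i=0}^n \ncat{Set}(*,-)$ is a finite limit of evaluation functors. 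Your argument instead chases elements through the explicit description of filtered colimits in Lemma \ref{lem filtered colimits description in sets}: choose representatives $y_x \in d(i_x)$ for the finitely many values $f(x)$, use filteredness to push them all into a single $d(k)$ for surjectivity of $\tau$, and coequalize finitely many pairs simultaneously for injectivity. This is precisely the ``more hands-on'' proof the paper records in the remark immediately following the lemma, so it is a recognized alternative. The trade-off is clear: the paper's route is shorter and reuses the commutation theorem it has already proved, making the role of finiteness conceptually transparent (finite sets are finite colimits of points, so mapping out of them is a finite limit); your route is self-contained at the level of elements and makes visible exactly where finiteness of $S$ enters, at the cost of the bookkeeping you flag. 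One small refinement for the injectivity step: rather than repeatedly applying conditions (2) and (3) of Lemma \ref{lem alternate def of filtered}, it is cleanest to take a cocone over the single finite diagram consisting of $i$, $j$, the objects $k_x$, and the maps $f_x : i \to k_x$, $g_x : j \to k_x$ witnessing $s_i(x) \approx s_j(x)$; the cocone legs then give one pair of maps out of $i$ and $j$ equalizing $s_i$ and $s_j$ at every $x$ at once.
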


\begin{proof}
$(\Rightarrow)$ Suppose that $S$ is finitely small. Then $\ncat{Set}(S, -)$ preserves the colimit of the diagram $\Sub_{\fin}(S) \hookrightarrow \ncat{Set}$. In other words, the identity function $1_S : S \to S$ factors through some finite subset of $S$. Thus $S$ is finite.

$(\Leftarrow)$ Suppose that $S$ is a finite set and $d : I \to \ncat{Set}$ is a finitely filtered diagram. If we let $S = \{x_0, \dots, x_n \}$, then $S \cong \sum_{i = 0}^n \{ x_i \} \cong \sum_{i = 0}^n *$. Thus by Proposition \ref{prop filtered colimits commute with finite limits in Set},
\begin{equation*}
    \ncat{Set}(S, \underset{i \in I}{\colim} d(i)) \cong \prod_{i = 0}^n \ncat{Set}(*, \underset{i \in I}{\colim} d(i)) \cong \prod_{i = 0}^n \underset{i \in I}{\colim} d(i) \cong \underset{i \in I}{\colim} \prod_{i = 0}^n d(i) \cong \underset{i \in I}{\colim} \, \ncat{Set}(S, d(i)). 
\end{equation*}
So $S$ is finitely presentable.
\end{proof}

\begin{Rem}
We offer a more hands-on proof of Lemma \ref{lem finitely presentable in Set iff finite}.$(\Leftarrow)$ here, we think this proof really gets the idea of finite presentability across. If $h : S \to \colim \, d$ is a function, then by Lemma \ref{lem filtered colimits description in sets}, we know that $\colim \, d = \sum_{i \in I} d(i) /{\approx}$. So since $S$ is finite, if we let $S = \{ x_0, \dots, x_n \}$, then there are objects $i_\ell \in I$ and elements $y_\ell \in d(i_\ell)$ such that $[y_\ell] = h(x_\ell)$ for all $0 \leq \ell \leq n$. Since $I$ is finitely filtered, there exists a $k \in I$ with maps $f_\ell : i_\ell \to k$. So there is a function $h_k : S \to d(k)$ that sends $x_\ell$ to $d(i_\ell)(y_\ell)$, and furthermore $[d(i_\ell)(y_\ell)] = [y_\ell] = h(x_\ell)$. In other words, $h$ factors through $h_k$. Thus $\tau$ in (\ref{eq canonical map for small object def}) is surjective. Making different choices of the $i_\ell$ and $y_\ell$ could result in different choices of $k$, but the resulting maps $h_k : S \to d(k)$ and $h_{k'} : S \to d(k')$ would have the property that $h_k \approx h_{k'}$. Thus $\tau$ is injective.
\end{Rem}

A very similar argument proves the following.

\begin{Cor} \label{cor kappa presentable sets}
Given a regular cardinal $\kappa$, a set $S$ is $\kappa$-presentable if and only if $|S| < \kappa$.
\end{Cor}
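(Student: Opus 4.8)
The plan is to transcribe the proof of Lemma~\ref{lem finitely presentable in Set iff finite} almost verbatim, replacing the poset of finite subsets by the poset of subsets of cardinality less than $\kappa$, and replacing Proposition~\ref{prop filtered colimits commute with finite limits in Set} by its $\kappa$-ary refinement Corollary~\ref{cor kappa filtered commutes with kappa small in set}. Both implications of the biconditional are handled separately, exactly as in the finite case.

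For the forward direction, I would introduce $\Sub_{<\kappa}(S)$, the poset of subsets of $S$ of cardinality less than $\kappa$, ordered by inclusion. First I would observe that this poset is $\kappa$-directed: a family of fewer than $\kappa$ such subsets has a union which again has cardinality less than $\kappa$, precisely because $\kappa$ is regular (Definition~\ref{def regular cardinal}). Since every singleton lies in $\Sub_{<\kappa}(S)$, the inclusion diagram $\Sub_{<\kappa}(S) \hookrightarrow \ncat{Set}$ has colimit $S$. Then, because $S$ is $\kappa$-presentable, the functor $\ncat{Set}(S,-)$ preserves this $\kappa$-directed (hence $\kappa$-filtered) colimit, so the identity $1_S$ factors through the inclusion of some $T \in \Sub_{<\kappa}(S)$; the composite $S \to T \hookrightarrow S$ being the identity forces $S = T$, whence $|S| < \kappa$.

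For the reverse direction, I would write $S \cong \sum_{x \in S} *$ as a coproduct of $|S| < \kappa$ copies of the singleton, so that for any $\kappa$-filtered diagram $d : I \to \ncat{Set}$ the computation reads
\begin{equation*}
\ncat{Set}\!\left(S, \ncolim{i \in I} d(i)\right) \cong \prod_{x \in S} \ncolim{i \in I} d(i) \cong \ncolim{i \in I} \prod_{x \in S} d(i) \cong \ncolim{i \in I} \ncat{Set}(S, d(i)),
\end{equation*}
where the middle isomorphism is legitimate because $\prod_{x \in S}$ is a $\kappa$-small limit (as $|S| < \kappa$), and Corollary~\ref{cor kappa filtered commutes with kappa small in set} permits exchanging a $\kappa$-small limit with a $\kappa$-filtered colimit in $\ncat{Set}$. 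This shows $\ncat{Set}(S,-)$ preserves $\kappa$-filtered colimits, so $S$ is $\kappa$-presentable.

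The only point requiring genuine care, as opposed to routine bookkeeping, is the role of regularity of $\kappa$. It enters twice: directly, to guarantee that $\Sub_{<\kappa}(S)$ is $\kappa$-directed in the forward direction, and indirectly in the reverse direction, since the commutation of $\kappa$-filtered colimits with $\kappa$-small limits (Corollary~\ref{cor kappa filtered commutes with kappa small in set}) fails for singular $\kappa$. I expect no other obstacle, as the remainder is a direct analogue of the $\kappa = \omega$ argument already recorded in Lemma~\ref{lem finitely presentable in Set iff finite}.
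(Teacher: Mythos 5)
Your proposal is correct and is essentially the argument the paper intends: the paper gives no separate proof of this corollary, stating only that ``a very similar argument'' to Lemma \ref{lem finitely presentable in Set iff finite} applies, and your write-up is precisely that argument with $\Sub_{\fin}(S)$ replaced by $\Sub_{<\kappa}(S)$ and Proposition \ref{prop filtered colimits commute with finite limits in Set} replaced by Corollary \ref{cor kappa filtered commutes with kappa small in set}. You also correctly isolate the two places where regularity of $\kappa$ is needed, which is the only nontrivial point in upgrading the finite case.
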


Let us also note that $\kappa$-presentable objects are closed under $\kappa$-small colimits.

\begin{Lemma} \label{lem colimits of presentable are presentable}
Let $\cat{C}$ be a category with all $\kappa$-small colimits, and let $c : I \to \cat{C}$ be a $\kappa$-small diagram. If $d$ factors through $\cat{C}_\kappa$, i.e. every $c(i)$ is $\kappa$-presentable, then $\colim \, c$ is $\kappa$-presentable.
\end{Lemma}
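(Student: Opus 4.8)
The plan is to reduce the statement to the fact that $\kappa$-filtered colimits commute with $\kappa$-small limits in $\ncat{Set}$ (Corollary \ref{cor kappa filtered commutes with kappa small in set}). By Definition \ref{def small objects}, I must show that the functor $\cat{C}(\colim_{i \in I} c(i), -)$ preserves $\kappa$-filtered colimits, so I would fix an arbitrary $\kappa$-filtered diagram $d : J \to \cat{C}$ and argue that the canonical comparison map
\begin{equation*}
\ncolim{j \in J} \cat{C}\left(\ncolim{i \in I} c(i), d(j)\right) \to \cat{C}\left(\ncolim{i \in I} c(i), \ncolim{j \in J} d(j)\right)
\end{equation*}
is a bijection.

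The first key step is the observation that a hom-functor out of a colimit is a limit of hom-functors: for every $X \in \cat{C}$, the universal property of $\colim_{i \in I} c(i)$ gives a natural isomorphism $\cat{C}(\colim_{i \in I} c(i), X) \cong \lim_{i \in I^\op} \cat{C}(c(i), X)$, the variance flipping because $\cat{C}(-,X)$ is contravariant. The crucial bookkeeping point is that since $I$ is $\kappa$-small, so is $I^\op$, and hence $\lim_{i \in I^\op}$ is a $\kappa$-small limit. I would then assemble the chain of isomorphisms: rewrite the left-hand side as $\colim_{j \in J} \lim_{i \in I^\op} \cat{C}(c(i), d(j))$; apply Corollary \ref{cor kappa filtered commutes with kappa small in set} (valid precisely because $J$ is $\kappa$-filtered and $I^\op$ is $\kappa$-small) to swap the colimit past the limit, obtaining $\lim_{i \in I^\op} \colim_{j \in J} \cat{C}(c(i), d(j))$; use the $\kappa$-presentability of each $c(i)$, i.e. that $\cat{C}(c(i), -)$ preserves the $\kappa$-filtered colimit of $d$, to rewrite the inner term as $\cat{C}(c(i), \colim_{j \in J} d(j))$; and finally collapse the outer $\kappa$-small limit back into a hom out of the colimit using the first step again.

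The main thing I would need to be careful about — and the chief obstacle beyond the cardinal bookkeeping above — is verifying that this composite isomorphism is in fact the canonical comparison map, rather than merely an abstract bijection between the two sets. Each of the four isomorphisms in the chain is natural in the relevant variable, so I would check naturality component-by-component and track the colimit and limit cone maps through each step, confirming that the composite sends the class of a map $U \to d(j)$ (factoring through the $c(i)$) to its postcomposition with the colimit cocone leg $\lambda_j : d(j) \to \colim_{j} d(j)$. Once that identification is made, the composite being an isomorphism immediately gives that $\colim_{i \in I} c(i)$ is $\kappa$-presentable, completing the proof.
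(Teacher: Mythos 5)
Your proposal is correct and follows essentially the same route as the paper: both arguments chain together the isomorphism $\cat{C}(\colim_i c(i), -) \cong \lim_i \cat{C}(c(i), -)$, the $\kappa$-presentability of each $c(i)$, and Corollary \ref{cor kappa filtered commutes with kappa small in set} to swap the $\kappa$-filtered colimit past the $\kappa$-small limit. The paper simply writes the chain of natural isomorphisms without explicitly verifying that the composite is the canonical comparison map; your attention to that point is a reasonable extra layer of rigor but does not change the argument.
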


\begin{proof}
Given a $\kappa$-small diagram $c : I \to \cat{C}$ and a $\kappa$-filtered diagram $d : J \to \cat{C}$, we have
\begin{equation*}
\begin{aligned}
	\cat{C}(\ncolim{i \in I} c(i), \ncolim{j \in J} d(j)) & \cong \lim_{i \in I} \cat{C}(c(i), \ncolim{j \in J} d(j)) \\
	& \cong \lim_{i \in I} \ncolim{j \in J} \cat{C}(c(i), d(j)) \\
	& \cong \ncolim{j \in J} \lim_{i \in I} \cat{C}(c(i), d(j)) \\
	& \cong \ncolim{j \in J} \cat{C}(\ncolim{i \in I} c(i), d(j)).
\end{aligned}
\end{equation*}
where the second isomorphism holds because each $c(i)$ is $\kappa$-presentable and the third holds by Corollary \ref{cor kappa filtered commutes with kappa small in set}. Thus $\ncolim{i \in I} c(i)$ is $\kappa$-presentable.
\end{proof}

\begin{Ex}
There are many interesting characterizations of finitely presentable objects in various categories, see \cite[Example 1.2]{rosicky1994locally}. As just one example, to see where the terminology comes from, a group $G$ is a finitely presentable object in $\ncat{Grp}$ if it is literally finitely presentable, i.e. if it can be presented using finitely many generators and finitely many relations.
\end{Ex}

\begin{Lemma} \label{lem representables are presentable}
Let $\cat{C}$ be a small category with $U \in \cat{C}$. Then the representable presheaf $y(U)$ is finitely presentable in $\Pre(\cat{C})$.
\end{Lemma}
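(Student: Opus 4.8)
The goal is to show that the representable presheaf $y(U)$ is finitely presentable in $\Pre(\cat{C})$, i.e. that $\Pre(\cat{C})(y(U), -)$ preserves finitely filtered colimits. The plan is to reduce this entirely to the Yoneda lemma together with the fact that colimits in presheaf toposes are computed objectwise (Lemma \ref{lem presheaves (co)limits computed objectwise}) and that finitely filtered colimits commute with finite limits in $\ncat{Set}$ (Proposition \ref{prop filtered colimits commute with finite limits in Set}).

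First I would fix a finitely filtered diagram $d : I \to \Pre(\cat{C})$ and compute directly. By the Yoneda lemma, for each $i \in I$ we have $\Pre(\cat{C})(y(U), d(i)) \cong d(i)(U)$, naturally in $i$. Then the chain of isomorphisms
\begin{equation*}
\Pre(\cat{C})\left( y(U), \ncolim{i \in I} d(i) \right) \cong \left( \ncolim{i \in I} d(i) \right)(U) \cong \ncolim{i \in I} d(i)(U) \cong \ncolim{i \in I} \Pre(\cat{C})(y(U), d(i))
\end{equation*}
does the job: the first isomorphism is Yoneda, the second is the objectwise computation of colimits (Lemma \ref{lem presheaves (co)limits computed objectwise}), and the third is Yoneda again applied termwise inside the colimit. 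One then checks that this composite isomorphism is indeed the canonical comparison map $\tau$ of Definition \ref{def small objects}, which is routine naturality bookkeeping.

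Notice that this argument actually proves something stronger with no extra work: it never uses finiteness of the indexing shape, so $y(U)$ is not merely finitely presentable but $\kappa$-presentable for \emph{every} regular cardinal $\kappa$, since the objectwise-colimit computation and the Yoneda lemma hold for arbitrary small colimits. In fact $y(U)$ is a \emph{tiny} (atomic) object: $\Pre(\cat{C})(y(U), -)$ preserves \emph{all} small colimits, not just filtered ones. I would likely remark on this, but the stated claim only needs the filtered case, so the proof as written suffices.

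The only genuine subtlety — and the step I would be most careful about — is verifying that the composite isomorphism above coincides with the canonical map $\tau$ induced by the colimit cocone, rather than merely asserting that the two sides are abstractly isomorphic. This is where Definition \ref{def small objects} demands that the \emph{specific} comparison map be an isomorphism. The check is purely formal: one traces an element $[s_i] \in \colim_i \Pre(\cat{C})(y(U), d(i))$, identifies $s_i$ with an element of $d(i)(U)$ via Yoneda, and confirms that postcomposing with the colimit cocone component $\lambda_i$ matches the objectwise colimit structure on $d(U)$. Since everything in sight is natural, this is immediate, so there is no real obstacle — the entire content of the statement is the Yoneda lemma plus objectwise colimits.
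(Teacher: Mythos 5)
Your proof is correct and is essentially identical to the paper's: the same three-step chain of isomorphisms (Yoneda, objectwise colimits, Yoneda again), with the only additions being the worthwhile observation that the argument never uses filteredness (so $y(U)$ is in fact tiny) and the naturality check identifying the composite with the canonical comparison map. The one small quibble is that Proposition \ref{prop filtered colimits commute with finite limits in Set}, announced in your opening plan, is never actually needed in the argument.
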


\begin{proof}
This follows from the Yoneda lemma. Let $d : I \to \Pre(\cat{C})$ be a finitely filtered diagram, then
\begin{equation*}
    \begin{aligned}
        \Pre(\cat{C})(y(U), \ncolim{i \in I} d(i) ) & \cong [\ncolim{i \in I} d(i)](U) \\
        & \cong \ncolim{i \in I} d(i)(U) \\
        & \cong \ncolim{i \in I} \Pre(\cat{C})(y(U), d(i)).
    \end{aligned}
\end{equation*}
\end{proof}

\begin{Rem} \label{rem finitely presentable objects in presheaf topoi}
In fact, it turns out that the finitely presentable objects in a presheaf topos are precisely those presheaves that can be written as finite colimits of representables, see \cite{campion2020}.
\end{Rem}

Given a category $\cat{C}$, let $\cat{C}_{\fp}$ denote the full subcategory on the finitely presentable objects. As we have seen, every set $S$ can be written as the colimit over its canonical diagram with respect to $\ncat{Set}_{\fp} \simeq \ncat{FinSet}$. In other words $\ncat{Set}_{\fp}$ is dense in $\ncat{Set}$. We can similarly define $\cat{C}_\kappa$ to be the full subcategory on the $\kappa$-presentable objects. Note however, that technically speaking, $\ncat{FinSet}$ is not a small category. It is essentially small though. Indeed, it is equivalent to the full subcategory $\ncat{FinSet}'$ on the sets $\{ 0, 1, \dots, n \}$, which is small.

\begin{Def} \label{def locally presentable category}
We say that a (locally small) category $\cat{C}$ is \textbf{locally finitely presentable} if:
\begin{enumerate}
    \item it is cocomplete, and
    \item the category $\cat{C}_{\fp}$ of finitely presentable objects is essentially small and dense in $\cat{C}$.
\end{enumerate}
More generally we say that $\cat{C}$ is locally $\kappa$-presentable if $\cat{C}_k$ is essentially small and fully dense in $\cat{C}$. We say that $\cat{C}$ is \textbf{locally presentable} if it is locally $\kappa$-presentable for some infinite regular cardinal $\kappa$.
\end{Def}

We can actually give an equivalent definition for locally presentable categories that is easier to check in practice, thanks to the following pair of technical results.

\begin{Lemma}[{\cite[Proposition 1.22]{rosicky1994locally}}] \label{lem every object is colimit of its canonical diagram}
Let $\cat{C}$ be a cocomplete category. Then an object $U \in \cat{C}$ can be written as \textit{some} $\kappa$-filtered colimit of $\kappa$-presentable objects if and only if $U$ can be written as the colimit of its canonical diagram with respect to $\cat{C}_\kappa$. Furthermore the category $(\cat{C}_\kappa \downarrow U)$ is $\kappa$-filtered for every $U \in \cat{C}$. 
\end{Lemma}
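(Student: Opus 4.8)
The plan is to establish the ``furthermore'' clause first, since the easy direction of the equivalence follows from it immediately, and then to attack the harder direction with a finality argument. For the ``furthermore'' clause I would show directly that $(\cat{C}_\kappa \downarrow U)$ is $\kappa$-filtered for every $U \in \cat{C}$. By Definition \ref{def filtered category} it suffices to produce a cocone over any $\kappa$-small diagram $D : J \to (\cat{C}_\kappa \downarrow U)$. Writing $D(j) = (A_j \xrightarrow{a_j} U)$, I would form the colimit $P = \ncolim{j \in J} A_j$ in $\cat{C}$, which exists by cocompleteness. Since $J$ is $\kappa$-small and each $A_j$ is $\kappa$-presentable, Lemma \ref{lem colimits of presentable are presentable} guarantees that $P$ is again $\kappa$-presentable. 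The maps $a_j$ form a cocone over $\pi D$ and hence induce a unique $p : P \to U$ with $p \iota_j = a_j$, where $\iota_j : A_j \to P$ are the colimit cocone components. Then $(P \xrightarrow{p} U)$ is an object of $(\cat{C}_\kappa \downarrow U)$ and the $\iota_j$ assemble into a cocone over $D$, proving $\kappa$-filteredness. Once this is known, the implication ``$U$ is the colimit of its canonical diagram $\Rightarrow$ $U$ is \emph{some} $\kappa$-filtered colimit of $\kappa$-presentable objects'' is immediate, since the canonical diagram $\pi : (\cat{C}_\kappa \downarrow U) \to \cat{C}$ lands in $\cat{C}_\kappa$ by construction and has $\kappa$-filtered domain.

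For the converse, suppose $U \cong \colim d$ for some $\kappa$-filtered $d : I \to \cat{C}$ factoring through $\cat{C}_\kappa$, with colimit cocone $\lambda_i : d(i) \to U$. I would introduce the comparison functor $\Phi : I \to (\cat{C}_\kappa \downarrow U)$ sending $i$ to $(d(i) \xrightarrow{\lambda_i} U)$ and $u : i \to i'$ to $d(u)$, so that $\pi \Phi = d$. The goal is to show that $\Phi$ is final (Definition \ref{def final functor}), for then $\colim \pi \cong \colim(\pi \Phi) = \colim d \cong U$, which is precisely the assertion (Definition \ref{def canonical diagram}) that $U$ is the colimit of its canonical diagram. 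By Proposition \ref{prop final functors}, finality reduces to showing that for every object $(A \xrightarrow{a} U)$ of $(\cat{C}_\kappa \downarrow U)$ the comma category $\big((A \to U) \downarrow \Phi\big)$ is nonempty and connected.

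Nonemptiness is a direct application of $\kappa$-presentability of $A$ (Definition \ref{def small objects}): the comparison map $\ncolim{i} \cat{C}(A, d(i)) \to \cat{C}(A, U)$ is a bijection, so $a$ factors as $\lambda_i f$ for some $i$ and $f : A \to d(i)$, giving an object $(i,f)$. For connectedness, given objects $(i,f)$ and $(j,g)$, I would use $\kappa$-filteredness of $I$ to choose $u : i \to k$ and $v : j \to k$, producing morphisms $(i,f) \to (k, d(u)f)$ and $(j,g) \to (k, d(v)g)$. The maps $d(u)f, d(v)g : A \to d(k)$ satisfy $\lambda_k d(u)f = a = \lambda_k d(v)g$, so they represent the same class in $\ncolim{i} \cat{C}(A, d(i))$ by injectivity of the comparison map; by the explicit description of filtered colimits in $\ncat{Set}$ (Lemma \ref{lem filtered colimits description in sets}) there are $w_1, w_2 : k \to \ell$ with $d(w_1)d(u)f = d(w_2)d(v)g =: m$. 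These yield morphisms $(k, d(u)f) \xrightarrow{w_1} (\ell, m)$ and $(k, d(v)g) \xrightarrow{w_2} (\ell, m)$, completing a zig-zag joining $(i,f)$ to $(j,g)$.

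The main obstacle I anticipate is exactly this connectedness argument: it is the one place where I must combine the \emph{injectivity} half of $\kappa$-presentability with the set-level description of $\kappa$-filtered colimits of hom-sets, while carefully tracking which morphisms of $I$ induce which morphisms of the comma category. Everything else — the cocone construction in the first paragraph and the verification $\pi\Phi = d$ — is a routine assembly of universal properties.
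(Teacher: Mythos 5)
Your proof is correct. The paper gives no argument of its own for this lemma---it only cites Ad\'amek--Rosick\'y---and what you have written is precisely the standard proof from that source: $\kappa$-filteredness of $(\cat{C}_\kappa \downarrow U)$ via Lemma \ref{lem colimits of presentable are presentable}, the easy implication as an immediate consequence, and the hard implication by showing the comparison functor $\Phi$ is final, with nonemptiness of $\bigl((A \to U) \downarrow \Phi\bigr)$ coming from the surjectivity half of $\kappa$-presentability and connectedness from the injectivity half together with Lemma \ref{lem filtered colimits description in sets}. The only step you leave implicit is that $(\ell, m)$ really is an object of the comma category, i.e. $\lambda_\ell m = \lambda_\ell d(w_1) d(u) f = \lambda_k d(u) f = \lambda_i f = a$ by the cocone identities; this is routine and does not affect the argument.
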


\begin{Lemma}[{\cite[Remark 1.9, 1.19]{rosicky1994locally}}]
A cocomplete category $\cat{C}$ has a set $S$ of $\kappa$-presentable objects such that every object of $\cat{C}$ is a $\kappa$-filtered colimit of objects in $S$ if and only if every object in $\cat{C}$ is a $\kappa$-filtered colimit of $\kappa$-presentable objects and there exists, up to isomorphism, only a set of $\kappa$-presentable objects.
\end{Lemma}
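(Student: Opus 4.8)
The plan is to prove the biconditional by treating the two directions separately, with the forward direction carrying essentially all the content. Write (LHS) for the existence of a set $S$ of $\kappa$-presentable objects with every object a $\kappa$-filtered colimit of objects in $S$, and (RHS) for the conjunction of (a) every object is a $\kappa$-filtered colimit of $\kappa$-presentable objects, and (b) the $\kappa$-presentable objects form an essentially small collection.

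The direction (RHS) $\Rightarrow$ (LHS) is routine. Given (b), the isomorphism classes of $\kappa$-presentable objects form a set, so by the axiom of choice I would pick one representative from each class to form a set $S$. Given any object $U$, by (a) we may write $U \cong \ncolim{i \in I} d(i)$ with $I$ being $\kappa$-filtered and each $d(i)$ $\kappa$-presentable. For each $i$ choose an isomorphism $\phi_i : d(i) \to s_i$ onto the representative $s_i \in S$; defining $d'(i) = s_i$ and $d'(f) = \phi_j d(f) \phi_i^{-1}$ on morphisms $f : i \to j$ produces a diagram $d' : I \to \cat{C}$ landing in $S$ together with a natural isomorphism $d \cong d'$, hence $U \cong \colim d'$. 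This gives (LHS).

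The direction (LHS) $\Rightarrow$ (RHS) is where the real work lies. Part (a) is immediate since objects of $S$ are $\kappa$-presentable by assumption. For part (b), the key observation is that every $\kappa$-presentable object is a retract of an object of $S$. Indeed, let $U$ be $\kappa$-presentable and write $U \cong \ncolim{i \in I} d(i)$ with $I$ being $\kappa$-filtered and each $d(i) \in S$, with colimit cocone components $\lambda_i : d(i) \to U$. Since $U$ is $\kappa$-presentable, $\cat{C}(U,-)$ preserves this colimit, so the identity $1_U$, viewed as an element of $\cat{C}(U, \ncolim{i} d(i)) \cong \ncolim{i} \cat{C}(U, d(i))$, is represented (via the description of filtered colimits of sets in Lemma \ref{lem filtered colimits description in sets}) by a single map $s : U \to d(i_0)$ satisfying $\lambda_{i_0} s = 1_U$. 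Thus $U$ is a retract of $d(i_0) \in S$.

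Finally I would bound the $\kappa$-presentable objects using the idempotent splitting argument. Each such retraction exhibits $U$ as the splitting of the idempotent $e_U = s \lambda_{i_0} : d(i_0) \to d(i_0)$, and the splitting of an idempotent is unique up to isomorphism. Hence the assignment sending the isomorphism class of a $\kappa$-presentable $U$ to the pair $(d(i_0), e_U)$ is injective into the collection of pairs $(X, e)$ with $X \in S$ and $e \in \cat{C}(X,X)$ idempotent. Since $S$ is a set and each $\cat{C}(X,X)$ is small by local smallness, this collection is a set, so there are only a set of $\kappa$-presentable objects up to isomorphism, giving (b). The main obstacle is precisely this last step: one must pass from ``retract of an object in a set'' to ``essentially small collection,'' which the uniqueness of idempotent splittings makes possible.
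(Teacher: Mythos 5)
Your proof is correct, and since the paper supplies no proof of this lemma (it simply cites Ad\'amek--Rosick\'y), the relevant comparison is with the standard argument there, which is exactly yours: the only nontrivial direction is showing the $\kappa$-presentable objects are essentially small, and this is done by exhibiting each as a retract of an object of $S$ (via preservation of the given $\kappa$-filtered colimit by $\cat{C}(U,-)$ and the explicit description of filtered colimits of sets) and then bounding retracts by split idempotents, which form a set by local smallness. Both directions, including the routine replacement of a diagram by an isomorphic one landing in the chosen set of representatives, are handled correctly.
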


\begin{Cor} \label{cor equiv def of locally presentable category}
A cocomplete category $\cat{C}$ is locally $\kappa$-presentable if and only if there is a set $S$ of $\kappa$-presentable objects such that every object $U \in \cat{C}$ can be written as a $\kappa$-filtered colimit of objects in $S$. 
\end{Cor}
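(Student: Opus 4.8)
The plan is to derive this Corollary directly from the two results immediately preceding it, since the statement is essentially a repackaging of Definition \ref{def locally presentable category} into a form that is easier to verify in practice. The key translation device is Lemma \ref{lem every object is colimit of its canonical diagram}, which for a cocomplete category identifies the density condition ``$U$ is the colimit of its canonical diagram with respect to $\cat{C}_\kappa$'' with the seemingly weaker statement ``$U$ is \emph{some} $\kappa$-filtered colimit of $\kappa$-presentable objects.'' Everything else is logical bookkeeping.

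For the forward direction I would assume $\cat{C}$ is locally $\kappa$-presentable. By Definition \ref{def locally presentable category}, $\cat{C}_\kappa$ is essentially small, so that up to isomorphism there is only a set of $\kappa$-presentable objects, and $\cat{C}_\kappa$ is dense, so that every $U \in \cat{C}$ is the colimit of its canonical diagram with respect to $\cat{C}_\kappa$. Applying Lemma \ref{lem every object is colimit of its canonical diagram} turns density into the assertion that every object is a $\kappa$-filtered colimit of $\kappa$-presentable objects. Both hypotheses of the preceding Lemma are then satisfied, so I may conclude that there is a set $S$ of $\kappa$-presentable objects, for instance a small skeleton of $\cat{C}_\kappa$, such that every object of $\cat{C}$ is a $\kappa$-filtered colimit of objects in $S$.

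For the converse I would assume such a set $S$ exists. Then in particular every object is \emph{some} $\kappa$-filtered colimit of $\kappa$-presentable objects, so Lemma \ref{lem every object is colimit of its canonical diagram} gives that each object is the colimit of its canonical diagram with respect to $\cat{C}_\kappa$; that is, $\cat{C}_\kappa$ is dense. The preceding Lemma also yields, from the mere existence of the set $S$, that up to isomorphism there is only a set of $\kappa$-presentable objects, so $\cat{C}_\kappa$ is essentially small. Combined with the standing assumption of cocompleteness, this is precisely Definition \ref{def locally presentable category}, so $\cat{C}$ is locally $\kappa$-presentable.

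The only real subtlety, rather than a genuine obstacle, is handling the logical equivalence packaged in the preceding Lemma correctly: one must be careful that ``there exists a \emph{set} $S$ of generators'' is equivalent to the conjunction of ``every object is a $\kappa$-filtered colimit of $\kappa$-presentables'' with ``$\cat{C}_\kappa$ is essentially small,'' and that the set $S$ witnessing the Corollary may be taken to be a skeleton of $\cat{C}_\kappa$ itself. Since the substantive work is already carried out in the two cited lemmas, the argument should be brief.
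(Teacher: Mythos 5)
Your proposal is correct and is exactly the deduction the paper intends: the Corollary is stated as an immediate consequence of Lemma \ref{lem every object is colimit of its canonical diagram} (translating density of $\cat{C}_\kappa$ into ``every object is some $\kappa$-filtered colimit of $\kappa$-presentables'') together with the preceding lemma from Ad\'amek--Rosick\'y (equating the existence of a generating set $S$ with that condition plus essential smallness of $\cat{C}_\kappa$). Your handling of the logical bookkeeping, including taking $S$ to be a skeleton of $\cat{C}_\kappa$, matches the paper's implicit argument.
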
 

\begin{Lemma} \label{lem presheaf topoi are locally presentable}
Given a small category $\cat{C}$, its presheaf topos $\Pre(\cat{C})$ is locally finitely presentable.
\end{Lemma}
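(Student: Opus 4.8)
The plan is to verify the criterion of Corollary \ref{cor equiv def of locally presentable category} with $\kappa = \omega$. Recall that this says a cocomplete category is locally finitely presentable as soon as there is a \emph{set} $S$ of finitely presentable objects such that every object is a finitely filtered colimit of objects of $S$. The needed cocompleteness (and local smallness, which is part of the very notion of locally presentable category) is already supplied by Lemma \ref{lem presheaves (co)limits computed objectwise}, so the work is to produce such an $S$ and the required colimit decompositions.

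First I would take $S$ to be the collection of presheaves that can be written as a finite colimit of representables. This collection is essentially small: since $\cat{C}$ is small, the representables $\{y(U)\}_{U \in \cat{C}}$ form a small set, there is only a set's worth of finite diagrams valued in representables up to isomorphism, and each such diagram has a colimit determined up to isomorphism. Each object of $S$ is finitely presentable, since representables are finitely presentable by Lemma \ref{lem representables are presentable}, and finitely presentable objects are closed under finite colimits by Lemma \ref{lem colimits of presentable are presentable}.

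Next I would show that every presheaf $X$ is a finitely filtered colimit of objects of $S$. By the coYoneda Lemma \ref{lem coyoneda lemma}, $X$ is the colimit of the canonical diagram
\begin{equation*}
    \int X \xrightarrow{\pi} \cat{C} \xrightarrow{y} \Pre(\cat{C}),
\end{equation*}
indexed by the (small) category of elements $\int X$. I would then apply Lemma \ref{lem every colimit is iso to a filtered colimit} to the small diagram $y\pi$, which exhibits $X$ as the finitely directed colimit
\begin{equation*}
    X \cong \ncolim{J \in \Sub_{\fin}(\int X)} \colim\, (y\pi)|_J,
\end{equation*}
over the finitely directed poset of finite full subcategories of $\int X$. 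For each finite $J$, the restriction $(y\pi)|_J$ is a finite diagram of representables, so its colimit lies in $S$. Since a finitely directed colimit is in particular finitely filtered, this writes $X$ as a finitely filtered colimit of objects of $S$, and Corollary \ref{cor equiv def of locally presentable category} then gives the conclusion.

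I expect the only delicate point to be the bookkeeping guaranteeing that $S$ is genuinely a set (i.e. essentially small) rather than a proper class, together with the clean combination of the coYoneda presentation with the partial-colimit decomposition of Lemma \ref{lem every colimit is iso to a filtered colimit}; both are routine once the smallness of $\cat{C}$ is used, and neither requires further machinery. No deep obstacle arises, as all the substantive inputs (finite presentability of representables, closure under finite colimits, cocompleteness, and the two colimit lemmas) are already in hand.
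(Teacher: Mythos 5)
Your proposal is correct and follows essentially the same route as the paper: coYoneda to present $X$ as a colimit of representables over $\int X$, Lemma \ref{lem every colimit is iso to a filtered colimit} to convert this into a finitely directed colimit of finite partial colimits, Lemma \ref{lem colimits of presentable are presentable} to see each partial colimit is finitely presentable, and Corollary \ref{cor equiv def of locally presentable category} to conclude. The only difference is that you make explicit the (routine but genuinely needed) check that the finite colimits of representables form an essentially small collection, a point the paper leaves implicit.
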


\begin{proof}
By the coYoneda Lemma \ref{lem coyoneda lemma}, we know that any presheaf $X$ can be written as a colimit $X \cong \colim_{y(U) \to X} \, y(U)$. Now by Lemma \ref{lem representables are presentable}, we know that representables are finitely presentable objects in $\Pre(\cat{C})$. Unfortunately, the category $(y \downarrow X) \cong \int X$ is not filtered in general. However, by Lemma \ref{lem every object is colimit of its canonical diagram}, if we let $d : \int X \to \Pre(\cat{C})$ denote the diagram $d(x,U) = y(U)$, then $s(d) : \Sub_{\fin}(\int X) \to \Pre(\cat{C})$ is the functor defined by $s(d)(J) = \colim \, d|_J$. Now $\colim \, d|_J$ is a finite colimit of representables, so by Lemma \ref{lem colimits of presentable are presentable}, each $\colim \, d|_J$ is a finitely presentable object in $\Pre(\cat{C})$. Since $\Sub_{\fin}(\int X)$ is a directed poset, it is filtered, so $X$ can be written as a filtered colimit of finitely presentable objects. Finally by Corollary \ref{cor equiv def of locally presentable category}, this implies that $\Pre(\cat{C})$ is a locally finitely presentable category.
\end{proof}

Suppose that $\cat{C}$ is a locally $\kappa$-presentable category for some regular cardinal $\kappa$. Thus by Corollary \ref{cor equiv def of locally presentable category}, there exists a set $S$ of $\kappa$-presentable objects that generate $\cat{C}$ by $\kappa$-filtered colimits. Let $\cat{C}_0$ denote the full subcategory of $\cat{C}$ on the set $S$. For every $U \in \cat{C}$, we can consider the functor $y_0(U) : \cat{C}_0^\op \to \ncat{Set}$ that sends a $V \in \cat{C}_0$ to $\cat{C}(j(V),U)$. In other words, $y_0 : \cat{C} \to \Pre(\cat{C}_0)$ is the Yoneda embedding restricted to $\cat{C}_0$. 

The functor $y_0$ has a left adjoint defined as follows. Let $j : \cat{C}_0 \hookrightarrow \cat{C}$ denote inclusion functor. Consider the Yoneda extension of $j$, i.e. the left Kan extension
\begin{equation*}
\begin{tikzcd}
	{\cat{C}_0} & \cat{C} \\
	{\Pre(\cat{C}_0)}
	\arrow["j", hook, from=1-1, to=1-2]
	\arrow["y"', hook, from=1-1, to=2-1]
	\arrow["{\text{Lan}_{y} j}"', from=2-1, to=1-2]
\end{tikzcd}	
\end{equation*}
which exists because $\cat{C}$ is cocomplete.

Let $L \coloneqq \text{Lan}_{y} j$. Then for any $X \in \Pre(\cat{C}_0)$ we have
$$L(X) = (\text{Lan}_{y_0} j)(X) = \int^{U_0 \in \cat{C_0}} \Pre(\cat{C}_0)(y(U_0), X) \otimes j(U_0) \cong \ncolim{y(U_0) \to X} j(U_0).$$

It is easy to see that this defines a left adjoint to $y_0$. Given $X \in \Pre(\cat{C}_0)$ and $V \in \cat{C}$, we have
\begin{equation*}
\begin{aligned}
	\cat{C}(L(X), V) & \cong \cat{C}\left( \int^{U_0 \in \cat{C_0}} \Pre(\cat{C}_0)(y(U_0),X) \otimes j(U_0), V \right) \\
	& \cong \int_{U_0 \in \cat{C}_0} \cat{C}( X(U_0) \otimes j(U_0), V) \\
	& \cong \int_{U_0 \in \cat{C}_0} \ncat{Set}(X(U_0), \cat{C}(j(U_0), V)) \\
	& \cong \Pre(\cat{C}_0)(X, y_0(V)).
\end{aligned}
\end{equation*}
Thus $y_0$ is fully faithful, and has a left adjoint. It is good to note here that since $y_0$ is fully faithful, it is conservative, i.e. it reflects isomorphisms. So if $f : U \to V$ is a morphism such that $y_0(f) : y_0(U) \to y_0(V)$ is an isomorphism of presheaves, then $f$ is an isomorphism. This explicitly points out the philosophical idea that objects in $\cat{C}$ are completely determined by the objects in $\cat{C}_0$.

\begin{Lemma} \label{lem locally presentable implies refl subcat of presheaves}
For $\cat{C}$ a locally presentable category, the functor $y_0 : \cat{C} \to \Pre(\cat{C}_0)$ is fully faithful. In other words, $\cat{C}$ is a reflective localization of a presheaf topos,
\begin{equation*}
\begin{tikzcd}
	{\cat{C}} && {\Pre(\cat{C}_0)}
	\arrow[""{name=0, anchor=center, inner sep=0}, "y_0"', shift right=2, hook, from=1-1, to=1-3]
	\arrow[""{name=1, anchor=center, inner sep=0}, "L"', shift right=2, from=1-3, to=1-1]
	\arrow["\dashv"{anchor=center, rotate=-90}, draw=none, from=1, to=0]
\end{tikzcd}	
\end{equation*}
and furthermore $y_0 : \cat{C} \to \Pre(\cat{C}_0)$ preserves filtered colimits.
\end{Lemma}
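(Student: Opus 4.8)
The plan is to deduce the whole statement from the adjunction $L \dashv y_0$ established just above, via the standard fact that a right adjoint is fully faithful precisely when the counit of the adjunction is a natural isomorphism. So the heart of the matter is to show that for every $U \in \cat{C}$ the counit component $\varepsilon_U : L y_0(U) \to U$ is an isomorphism. First I would compute $L y_0(U)$ explicitly. By the coYoneda formula for $L = \text{Lan}_y j$ we have $L(X) \cong \colim_{y(U_0) \to X} j(U_0)$ for any $X \in \Pre(\cat{C}_0)$, the colimit being indexed by the category of elements $\int X \cong (y \downarrow X)$. Taking $X = y_0(U)$ and using that a map $y(U_0) \to y_0(U)$ corresponds by the Yoneda lemma to an element of $y_0(U)(U_0) = \cat{C}(j(U_0), U)$, i.e.\ to a morphism $j(U_0) \to U$ in $\cat{C}$, I obtain a natural identification $\int y_0(U) \cong (\cat{C}_0 \downarrow U)$. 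Hence
$$L y_0(U) \cong \colim_{(U_0 \to U) \in (\cat{C}_0 \downarrow U)} j(U_0),$$
which is exactly the colimit of the canonical diagram of $U$ with respect to $\cat{C}_0$, and under this identification $\varepsilon_U$ is the canonical comparison map out of this colimit.

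Next I would invoke the local presentability of $\cat{C}$. By Corollary \ref{cor equiv def of locally presentable category}, every object of $\cat{C}$ is a $\kappa$-filtered colimit of objects of $\cat{C}_0$, and by the argument of Lemma \ref{lem every object is colimit of its canonical diagram} this forces $(\cat{C}_0 \downarrow U)$ to be $\kappa$-filtered and $U$ to be the colimit of its canonical diagram with respect to $\cat{C}_0$. Therefore $\varepsilon_U$ is an isomorphism for every $U$, so $y_0$ is fully faithful, and the adjunction $L \dashv y_0$ exhibits $\cat{C}$ as a reflective subcategory of the presheaf topos $\Pre(\cat{C}_0)$, hence (by the appendix on localization) as a reflective localization at the class $L^{-1}(\text{iso})$.

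Finally, preservation of $\kappa$-filtered colimits is a direct pointwise computation. Since colimits in $\Pre(\cat{C}_0)$ are computed objectwise (Lemma \ref{lem presheaves (co)limits computed objectwise}), for a $\kappa$-filtered diagram $d : I \to \cat{C}$ and any $U_0 \in \cat{C}_0$ I would check
$$y_0(\colim_i d(i))(U_0) = \cat{C}(j(U_0), \colim_i d(i)) \cong \colim_i \cat{C}(j(U_0), d(i)) = \colim_i y_0(d(i))(U_0),$$
where the middle isomorphism is precisely $\kappa$-presentability of $U_0 = j(U_0)$ (Definition \ref{def small objects}). As this holds for every $U_0$, the map $y_0(\colim_i d(i)) \to \colim_i y_0(d(i))$ is an objectwise isomorphism, so $y_0$ preserves $\kappa$-filtered colimits; in the locally finitely presentable case these are all filtered colimits.

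The step I expect to be the main obstacle is the second one: carefully matching the index category $\int y_0(U)$ with $(\cat{C}_0 \downarrow U)$ and justifying that the latter is $\kappa$-filtered with colimit $U$. The subtlety is that Lemma \ref{lem every object is colimit of its canonical diagram} is phrased for the full subcategory $\cat{C}_\kappa$ of \emph{all} $\kappa$-presentable objects, whereas here $\cat{C}_0$ is only the chosen (essentially small) generating set; I would need the routine but essential observation that its proof goes through verbatim for any essentially small, dense, $\kappa$-presentable generating subcategory, so that the canonical diagram with respect to $\cat{C}_0$ still reconstructs $U$.
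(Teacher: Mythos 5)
Your proof is correct and follows essentially the same route as the paper: both reduce full faithfulness to showing the counit $\varepsilon_U$ is an isomorphism, both identify $Ly_0(U)$ with the colimit of the canonical diagram of $U$ over $(\cat{C}_0\downarrow U)$ (you do this directly via the Yoneda lemma, the paper via the adjunction $L\dashv y_0$ together with $Ly\cong j$, i.e.\ Lemma \ref{lem yoneda extension identity on representables} and Lemma \ref{lem equiv def of density}), and the filtered-colimit computation is identical. The subtlety you flag at the end is real but is resolved exactly as you suggest: the paper simply takes $\cat{C}_0$ to be a dense essentially small subcategory of $\cat{C}_\kappa$ (e.g.\ a small skeleton of $\cat{C}_\kappa$, whose density is part of Definition \ref{def locally presentable category}), so the canonical diagram with respect to $\cat{C}_0$ does reconstruct $U$.
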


\begin{proof}
Suppose that $\cat{C}$ is locally $\kappa$-presentable for some regular cardinal $\kappa$ with dense small full subcategory $\cat{C}_0 \hookrightarrow \cat{C}_\kappa$. To prove that $y_0$ is fully faithful, it is equivalent to show that the counit of the adjunction $\varepsilon: L y_0 \Rightarrow 1_{\cat{C}}$ is a natural isomorphism. For $V \in \cat{C}$, we have
$$L y_0(V) = (\text{Lan}_y j)(y_0(V)) \cong \underset{y(U) \to y_0(V)}{\colim} \, j(U) \cong \underset{L y(U) \to V}{\colim} \, j(U) \cong \underset{j(U) \to V}{\colim} \, j(U) \cong \text{Lan}_j j(V) \cong V$$
where the third isomorphism follows from Lemma \ref{lem yoneda extension identity on representables} and the last isomorphism follows from Lemma \ref{lem equiv def of density}. This shows that $\varepsilon$ is a natural isomorphism and thus that $y_0$ is fully faithful.

Now let us show that $y_0$ preserves $\kappa$-filtered colimits. Suppose that $d : I \to \cat{C}$ is a $\kappa$-filtered diagram, and $V \in \cat{C}_0$, then
\begin{equation*}
    \begin{aligned}
        \Pre(\cat{C}_0)(y(V), \ncolim{i \in I} y_0(d(i)))) & \cong \ncolim{i \in I} \Pre(\cat{C}_0)(y(V), y_0(d(i))) \\
        & \cong \ncolim{i \in I} \cat{C}(j(V), d(i)) \\
        & \cong \cat{C}(j(V), \ncolim{i \in I} d(i)) \\
        & \cong \Pre(\cat{C}_0) \left( y_0j(V), y_0 \left[ \ncolim{i \in I} d(i) \right] \right) \\
        & \cong \Pre(\cat{C}_0) \left( y(V), y_0 \left[ \ncolim{i \in I} d(i) \right] \right),
    \end{aligned} 
\end{equation*}
where the first isomorphism follows because representables are finitely presentable in $\Pre(\cat{C}_0)$ by Lemma \ref{lem representables are presentable} and therefore $\kappa$-presentable, and the third isomorphism holds because $V$ is $\kappa$-presentable. In other words, $\ncolim{i \in I} y_0(d(i))$ and $y_0(\ncolim{i \in I} d(i))$ are objectwise isomorphic as presheaves, and therefore isomorphic. Thus $y_0$ preserves $\kappa$-filtered colimits.
\end{proof}

\begin{Th} \label{th local pres iff refl subcat of presheaf topos} 
A category $\cat{C}$ is locally presentable if and only if it is a reflective subcategory of a presheaf topos such that the right adjoint preserves filtered colimits.
\end{Th}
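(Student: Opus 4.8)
The forward implication is already established: it is precisely Lemma \ref{lem locally presentable implies refl subcat of presheaves}, which shows that a locally $\kappa$-presentable category $\cat{C}$ with essentially small dense subcategory $\cat{C}_0 \hookrightarrow \cat{C}_\kappa$ embeds fully faithfully via $y_0 : \cat{C} \to \Pre(\cat{C}_0)$, with left adjoint $L$ exhibiting $\cat{C}$ as a reflective subcategory, and that $y_0$ preserves $\kappa$-filtered colimits. So the plan is to supply the converse. Throughout I read ``preserves filtered colimits'' as ``preserves $\kappa$-filtered colimits for some regular cardinal $\kappa$,'' matching the behaviour exhibited in that lemma.

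For the converse, suppose we are given a reflective subcategory
\begin{equation*}
\begin{tikzcd}
	{\cat{C}} && {\Pre(\cat{D})}
	\arrow[""{name=0, anchor=center, inner sep=0}, "i"', shift right=2, hook, from=1-1, to=1-3]
	\arrow[""{name=1, anchor=center, inner sep=0}, "L"', shift right=2, from=1-3, to=1-1]
	\arrow["\dashv"{anchor=center, rotate=-90}, draw=none, from=1, to=0]
\end{tikzcd}
\end{equation*}
with $\cat{D}$ small, $i$ fully faithful, and $i$ preserving $\kappa$-filtered colimits for some regular cardinal $\kappa$. First I would note that since $\Pre(\cat{D})$ is cocomplete (Lemma \ref{lem presheaves (co)limits computed objectwise}), Proposition \ref{prop (co)limits in reflective subcategories} makes $\cat{C}$ cocomplete, with colimits computed by applying $L$ to colimits in $\Pre(\cat{D})$. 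The key lemma is then that $L(X)$ is $\kappa$-presentable in $\cat{C}$ whenever $X$ is $\kappa$-presentable in $\Pre(\cat{D})$. For a $\kappa$-filtered diagram $d : I \to \cat{C}$, the index $I$ makes $i \circ d$ a $\kappa$-filtered diagram in $\Pre(\cat{D})$, and a routine adjunction computation gives
\begin{equation*}
\cat{C}(L(X), \ncolim{j \in I} d(j)) \cong \Pre(\cat{D})(X, i\ncolim{j \in I} d(j)) \cong \Pre(\cat{D})(X, \ncolim{j \in I} i(d(j))) \cong \ncolim{j \in I} \Pre(\cat{D})(X, i(d(j))) \cong \ncolim{j \in I} \cat{C}(L(X), d(j)),
\end{equation*}
using $L \dashv i$, that $i$ preserves the $\kappa$-filtered colimit, and that $X$ is $\kappa$-presentable. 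Hence $\cat{C}(L(X),-)$ preserves $\kappa$-filtered colimits, so $L(X)$ is $\kappa$-presentable.

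Next I would exhibit the $L(X)$ as generators. Since $\Pre(\cat{D})$ is locally finitely presentable (Lemma \ref{lem presheaf topoi are locally presentable}), it is locally $\kappa$-presentable, so the full subcategory $\Pre(\cat{D})_\kappa$ of $\kappa$-presentable presheaves is essentially small and dense (it contains the dense, essentially small $\Pre(\cat{D})_{\fp}$). By Lemma \ref{lem every object is colimit of its canonical diagram}, for each $V \in \cat{C}$ the object $i(V)$ is the colimit of its canonical diagram with respect to $\Pre(\cat{D})_\kappa$, indexed by the $\kappa$-filtered comma category $(\Pre(\cat{D})_\kappa \downarrow i(V))$. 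Applying $L$, using that the counit $Li \cong 1_{\cat{C}}$ is an isomorphism (Remark \ref{rem counit of reflective adjunction is iso}) and that $L$ preserves colimits, yields
\begin{equation*}
V \cong L(i(V)) \cong L\left( \ncolim{(X \to i(V)) \in (\Pre(\cat{D})_\kappa \downarrow i(V))} X \right) \cong \ncolim{(X \to i(V))} L(X),
\end{equation*}
a $\kappa$-filtered colimit of the $\kappa$-presentable objects $L(X)$. Taking $S$ to be a set of representatives for $\{L(X) : X \in \Pre(\cat{D})_\kappa\}$, every object of $\cat{C}$ is a $\kappa$-filtered colimit of objects of $S$, so Corollary \ref{cor equiv def of locally presentable category} gives that $\cat{C}$ is locally $\kappa$-presentable.

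The hard part will be the cardinal bookkeeping rather than any single computation: the cardinal $\kappa$ witnessing that $i$ preserves $\kappa$-filtered colimits must coincide with the one used to decompose presheaves as $\kappa$-filtered colimits of $\kappa$-presentable objects. This is what forces me to pass from local finite presentability of $\Pre(\cat{D})$ to local $\kappa$-presentability for the given $\kappa$, so that $\Pre(\cat{D})_\kappa$ is simultaneously dense and essentially small --- the standard enlargement fact that a locally finitely presentable category is locally $\kappa$-presentable for every regular $\kappa \geq \omega$ (cf. \cite{rosicky1994locally}). Once that is in place, the remaining steps are the adjunction identity above together with the counit isomorphism of the reflective adjunction.
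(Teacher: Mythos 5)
Your proof is correct, and the forward direction is indeed just Lemma \ref{lem locally presentable implies refl subcat of presheaves}. For the converse you take a route that is recognizably parallel to the paper's but differs in two places worth noting. First, your key lemma is more general: you show that $L$ carries \emph{every} $\kappa$-presentable presheaf to a $\kappa$-presentable object of $\cat{C}$, whereas the paper only runs the analogous adjunction computation for the representables $y(U_0)$ (which suffice, being finitely presentable by Lemma \ref{lem representables are presentable} and hence $\kappa$-presentable by Remark \ref{rem kappa pres implies kappa' pres}). Second, your decomposition of $V \cong Li(V)$ uses the canonical diagram of $i(V)$ with respect to all of $\Pre(\cat{D})_\kappa$, while the paper uses the coYoneda decomposition of $i(V)$ into representables, deduces that $Ly$ is dense, and only then invokes Lemma \ref{lem every object is colimit of its canonical diagram} to get $\kappa$-filteredness. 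The trade-off is that your route leans on the enlargement fact that a locally finitely presentable category is locally $\kappa$-presentable for every regular $\kappa$ (so that $\Pre(\cat{D})_\kappa$ is essentially small and the comma category $(\Pre(\cat{D})_\kappa \downarrow i(V))$ is $\kappa$-filtered); you correctly flag this and cite \cite{rosicky1994locally}, but it is not proved in the paper, so if you wanted a self-contained argument you would need to adapt the proof of Lemma \ref{lem presheaf topoi are locally presentable} from finite to $\kappa$-small subdiagrams. In exchange you get a cleaner handle on why the indexing category of your final colimit is $\kappa$-filtered — the paper's last step, which applies the canonical-diagram lemma to the essential image of $Ly$ rather than to all of $\cat{C}_\kappa$, glosses over exactly this point. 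Both arguments then close with Corollary \ref{cor equiv def of locally presentable category} in the same way.
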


\begin{proof}
$(\Rightarrow)$ is the content of Lemma \ref{lem locally presentable implies refl subcat of presheaves}.

$(\Leftarrow)$ Suppose that $\cat{C}$ is a reflective subcategory of a presheaf topos
\begin{equation*}
    \begin{tikzcd}
	{\cat{C}} && {\Pre(\cat{C}_0)}
	\arrow[""{name=0, anchor=center, inner sep=0}, "j"', shift right=2, hook, from=1-1, to=1-3]
	\arrow[""{name=1, anchor=center, inner sep=0}, "L"', shift right=2, from=1-3, to=1-1]
	\arrow["\dashv"{anchor=center, rotate=-90}, draw=none, from=1, to=0]
\end{tikzcd}
\end{equation*}
where $\cat{C}_0$ is a small category, and $j$ preserves $\kappa$-filtered colimits for some regular cardinal $\kappa$. We want to show that $\cat{C}$ is locally $\kappa$-presentable. 

By Proposition \ref{prop (co)limits in reflective subcategories}, $\cat{C}$ is cocomplete, so we need to show that every object $U \in \cat{C}$ can be written as a $\kappa$-filtered colimit of $\kappa$-presentable objects in $\cat{C}$.

By the coYoneda Lemma (Lemma \ref{lem coyoneda lemma}) we have
\begin{equation*}
    j(U) \cong \ncolim{y(U_0) \to j(U)} y(U_0)
\end{equation*}
But since $j$ is fully faithful, this means that the counit $\varepsilon : Lj \Rightarrow 1_{\cat{C}}$ is a natural isomorphism, so for $U \in \cat{C}$, we have
\begin{equation} \label{eq loc pres density}
    U \cong Lj(U) \cong L \left[\ncolim{y(U_0) \to j(U)} y(U_0) \right] \cong \ncolim{ y(U_0) \to j(U)} Ly(U_0) \cong \ncolim{Ly(U_0) \to U} Ly(U_0) \cong (\text{Lan}_{Ly} Ly)(U).
\end{equation}
Thus the functor $Ly : \cat{C}_0 \to \cat{C}$ is dense, and if $\cat{C}'_0$ denotes the essential image of $Ly$, then $\cat{C}'_0 \hookrightarrow \cat{C}$ is dense in $\cat{C}$ and essentially small. Thus every object $U \in \cat{C}$ can be written as a $\kappa$-filtered colimit (by Lemma \ref{lem every object is colimit of its canonical diagram}) of objects of the form $Ly(U_0)$ for $U_0 \in \cat{C}_0$.

Let us show that each $Ly(U_0)$ for $U_0 \in \cat{C}_0$ is $\kappa$-presentable in $\cat{C}$. Let $d : I \to \cat{C}$ be a $\kappa$-filtered diagram, then
\begin{equation*}
    \begin{aligned}
        \cat{C}(Ly(U_0), \ncolim{i \in I} d(i)) & \cong \Pre(\cat{C}_0) \left( y(U_0), j \left[ \ncolim{i \in I} d(i) \right] \right) \\
        & \cong \Pre(\cat{C}_0)(y(U_0), \ncolim{i \in I} j(d(i))) \\
        & \cong \left[ \ncolim{i \in I} j(d(i)) \right](U_0) \\
        & \cong \ncolim{i \in I} \left[ j(d(i))(U_0) \right] \\
        & \cong \ncolim{i \in I} \Pre(\cat{C}_0)(y(U_0), j(d(i))) \\
        & \cong \ncolim{i \in I} \cat{C}(Ly(U_0), d(i)),
    \end{aligned}
\end{equation*}
where the second isomorphism holds because $j$ was assumed to preserve $\kappa$-filtered colimits.

Thus every object in $\cat{C}$ can be written as a $\kappa$-filtered colimit of a set $\{ Ly(U_0) \}_{U_0 \in \cat{C}_0}$ of $\kappa$-presentable objects, and thus by Corollary \ref{cor equiv def of locally presentable category}, this proves that $\cat{C}$ is locally $\kappa$-presentable.
\end{proof}

Now we turn to one of the most important applications of the theory of locally presentable categories.

\begin{Th} \label{th adjoint functor thm for pres cats}
Let $\cat{C}$ and $\cat{D}$ be locally presentable categories, and let $F: \cat{C} \to \cat{D}$ be a functor that preserves small colimits. Then $F$ has a right adjoint.
\end{Th}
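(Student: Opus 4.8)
The plan is to reduce the statement to the case of presheaf toposes, where a colimit-preserving functor always admits a right adjoint via a nerve-type formula, and then transport that adjoint back across the reflective localization that exhibits $\cat{C}$ as a localization of a presheaf topos. By Theorem \ref{th local pres iff refl subcat of presheaf topos}, I may fix a small category $\cat{C}_0$ together with a reflective localization $L : \Pre(\cat{C}_0) \rightleftarrows \cat{C} : y_0$, where $y_0$ is the fully faithful right adjoint. Since $L$ is a left adjoint it preserves all small colimits, and $F$ does so by hypothesis, so the composite $FL : \Pre(\cat{C}_0) \to \cat{D}$ preserves all small colimits.

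First I would construct a right adjoint $R : \cat{D} \to \Pre(\cat{C}_0)$ to $FL$ directly. Define $R(D)$ objectwise by $R(D)(U_0) = \cat{D}(FL(y(U_0)), D)$. Writing an arbitrary presheaf $X$ as a colimit of representables via the coYoneda Lemma \ref{lem coyoneda lemma}, and using that $FL$ preserves colimits together with the Yoneda lemma, one checks the chain of natural isomorphisms
\[ \cat{D}(FL(X), D) \cong \lim_{y(U_0) \to X} \cat{D}(FL(y(U_0)), D) \cong \lim_{y(U_0) \to X} R(D)(U_0) \cong \Pre(\cat{C}_0)(X, R(D)), \]
which exhibits $FL \dashv R$. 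This step is routine coYoneda and Yoneda bookkeeping.

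The crux — and the one genuinely non-formal step — is to show that each $R(D)$ is a local object, i.e. lies in the essential image of $y_0$. Let $W = L^{-1}(\text{iso})$ be the class of morphisms inverted by $L$. For any $w : A \to B$ in $W$, the morphism $L(w)$ is an isomorphism, hence so is $FL(w)$, and therefore $\cat{D}(FL(w), D)$ is a bijection for every $D$; by the adjunction $FL \dashv R$ this says precisely that $R(D)$ is $W$-local. Lemma \ref{lem objects in reflective localizations} then gives that the unit $R(D) \to y_0 L R(D)$ is an isomorphism. I would then set $G := LR : \cat{D} \to \cat{C}$ and verify $F \dashv G$ by assembling the pieces: using that $y_0$ is fully faithful, that the counit $Ly_0 \cong 1_{\cat{C}}$ is an isomorphism (Remark \ref{rem counit of reflective adjunction is iso}), and the two facts just established,
\[ \cat{C}(C, G(D)) \cong \Pre(\cat{C}_0)(y_0 C, y_0 L R(D)) \cong \Pre(\cat{C}_0)(y_0 C, R(D)) \cong \cat{D}(FL y_0 C, D) \cong \cat{D}(F C, D), \]
naturally in $C$ and $D$. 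The main obstacle is therefore the locality argument of the third step; everything else is formal manipulation of the two adjunctions $L \dashv y_0$ and $FL \dashv R$.
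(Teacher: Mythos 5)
Your proof is correct and follows essentially the same route as the paper: reduce to the presheaf topos via Theorem \ref{th local pres iff refl subcat of presheaf topos}, construct the right adjoint of $FL$ by the formula $R(D)(U_0) = \cat{D}(FL(y(U_0)), D)$ (this is Lemma \ref{lem adjoint functor thm for presheaf topoi}, which you re-derive inline), prove $W$-locality of $R(D)$ exactly as the paper does, and set $G = LR$. Your final verification of $F \dashv G$ via the unit isomorphism $R(D) \cong y_0 L R(D)$ is a slightly more streamlined bookkeeping than the paper's limit computation over the canonical diagram, but it rests on the same facts and is not a genuinely different argument.
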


Before we prove this result, let us prove it in the case where the domain is a presheaf topos. 

\begin{Lemma} \label{lem adjoint functor thm for presheaf topoi}
Let $F: \Pre(\cat{C}) \to \cat{D}$ be a functor where $\cat{D}$ is a locally small and cocomplete category. If $F$ preserves small colimits, then $F$ has a right adjoint.
\end{Lemma}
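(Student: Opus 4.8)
The plan is to construct the right adjoint explicitly by ``probing'' with representables, exploiting that $\Pre(\cat{C})$ is the free cocompletion of $\cat{C}$ and that a colimit-preserving functor out of it is therefore determined by its restriction along the Yoneda embedding. First I would define a candidate right adjoint $G : \cat{D} \to \Pre(\cat{C})$ objectwise by
\begin{equation*}
    G(d)(U) = \cat{D}(F(y(U)), d).
\end{equation*}
This is a legitimate presheaf: since $\cat{D}$ is locally small, each $\cat{D}(F(y(U)), d)$ is a small set, and since $\cat{C}$ is small the assignment $U \mapsto G(d)(U)$ assembles into a functor $\cat{C}^{\op} \to \ncat{Set}$, with functoriality in $U$ given by precomposition with $F(y(f))$ for morphisms $f$ in $\cat{C}$ (using Lemma \ref{lem smallness of functor categories} to know $\Pre(\cat{C})$ is locally small). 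Functoriality of $G$ in the variable $d$ is given by postcomposition, so $G$ is a genuine functor.

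Next I would verify the adjunction isomorphism $\cat{D}(F(X), d) \cong \Pre(\cat{C})(X, G(d))$, natural in $X$ and $d$. The key input is the coYoneda Lemma \ref{lem coyoneda lemma}, which expresses any presheaf as a canonical colimit of representables, $X \cong \ncolim{y(U) \to X} y(U)$. Chaining isomorphisms gives
\begin{equation*}
    \cat{D}(F(X), d) \cong \cat{D}\left( \ncolim{y(U) \to X} F(y(U)),\, d \right) \cong \lim_{y(U) \to X} \cat{D}(F(y(U)), d) \cong \lim_{y(U) \to X} G(d)(U),
\end{equation*}
where the first isomorphism uses that $F$ preserves colimits, the second is the universal property of colimits, and the third is the definition of $G$. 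On the other side, applying $\Pre(\cat{C})(-, G(d))$ to the same colimit presentation of $X$ and then invoking the Yoneda lemma yields
\begin{equation*}
    \Pre(\cat{C})(X, G(d)) \cong \lim_{y(U) \to X} \Pre(\cat{C})(y(U), G(d)) \cong \lim_{y(U) \to X} G(d)(U),
\end{equation*}
so the two sides agree.

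The main obstacle is not the construction itself but checking that the resulting bijection is \emph{natural} in both arguments, so that it truly exhibits an adjunction rather than a mere objectwise bijection. This requires tracking how the coYoneda presentation of $X$ varies with a map $X \to X'$ and confirming compatibility with the colimit cocones; it is routine diagram-chasing but is where all the bookkeeping lives. Once naturality is in hand, $F \dashv G$ follows, and I would remark that this lemma is the engine behind Theorem \ref{th adjoint functor thm for pres cats}: since every locally presentable $\cat{C}$ is a reflective subcategory of a presheaf topos (Theorem \ref{th local pres iff refl subcat of presheaf topos}), the general adjoint functor theorem for presentable categories reduces, after composing with the reflector, to precisely this presheaf-topos case.
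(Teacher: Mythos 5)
Your proposal is correct and follows essentially the same route as the paper's proof: the same candidate right adjoint $G(d)(U) = \cat{D}(F(y(U)), d)$, the same use of the coYoneda Lemma to write $X$ as a colimit of representables, and the same chain of isomorphisms converting colimit preservation into the adjunction bijection. Your additional remarks on local smallness and naturality are welcome but do not change the argument.
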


\begin{proof}
Let $G: \cat{D} \to \Pre(\cat{C})$ be the functor defined objectwise for $U \in \cat{C}$ and $Y \in \cat{D}$ by
\begin{equation*}
   G(Y)(U) = \cat{D}(F(y(U)), Y). 
\end{equation*}
Let us show that this defines a right adjoint to $F$. Let $X \in \Pre(\cat{C})$, then
\begin{equation*}
    \begin{aligned}
        \cat{D}(F(X), Y) & \cong \cat{D}\left( F \left( \ncolim{y(U) \to X} y(U) \right), Y \right) \\
        & \cong \cat{D} \left( \ncolim{y(U) \to X} F(y(U)), Y \right) \\
        & \cong \lim_{y(U) \to X} \cat{D}(F(y(U)), Y) \\
        & \cong \lim_{y(U) \to X} G(Y)(U) \\
        & \cong \lim_{y(U) \to X} \Pre(\cat{C})(y(U), G(Y)) \\
        & \cong \Pre(\cat{C})\left( \ncolim{y(U) \to X} y(U), G(Y) \right) \\
        & \cong \Pre(\cat{C})(X, G(Y)).
    \end{aligned} 
\end{equation*}
where the first and last isomorphisms follow from the coYoneda Lemma \ref{lem coyoneda lemma}, and the second isomorphism follows because $F$ was assumed to preserve colimits. 
\end{proof}

\begin{proof}[Proof of Theorem \ref{th adjoint functor thm for pres cats}]
Suppose that $\cat{C}$ and $\cat{D}$ are locally presentable categories and $F: \cat{C} \to \cat{D}$ is a cocontinuous functor. Since $\cat{C}$ is locally presentable, by Theorem \ref{th local pres iff refl subcat of presheaf topos} there is a small full subcategory $\cat{C}_0 \hookrightarrow \cat{C}$ such that $\cat{C}$ is a reflective localization of its presheaf topos $j: \cat{C} \hookrightarrow \Pre(\cat{C}_0)$, where $j = y_0$ is the restricted Yoneda embedding, and with reflector $L : \Pre(\cat{C}_0) \to \cat{C}$. The functor $FL : \Pre(\cat{C}_0) \to \cat{D}$ preserves small colimits, so by Lemma \ref{lem adjoint functor thm for presheaf topoi}, it has a right adjoint $G' : \cat{D} \to \Pre(\cat{C}_0)$.

Now by Proposition \ref{prop local objects equiv to localization}, $\cat{C}$ is also equivalent to the category of $W$-local objects in $\Pre(\cat{C}_0)$, where $W = L^{-1}(\text{iso})$. Let us show that for any $V \in \cat{D}$, that $G'(V)$ is an $W$-local object.

Let $f: X \to Y$ be a map of presheaves over $\cat{C}_0$ that is a $W$-local equivalence (Definition \ref{def W local object and local equivalence}), by Lemma \ref{lem W-local objects and equivalences of reflective localizations}, this is equivalent to being a map inverted by $L$. So suppose that $f$ is inverted by $L$. We want to show that the map
\begin{equation*}
    \Pre(\cat{C}_0)(Y, G'(V)) \xrightarrow{\Pre(\cat{C}_0)(f, G'(V))} \Pre(\cat{C}_0)(X, G'(V))
\end{equation*}
is an isomorphism. But since $G'$ is a right adjoint, we have the following commutative diagram
\begin{equation*}
    \begin{tikzcd}
	{\Pre(\cat{C}_0)(Y,G'(V))} && {\Pre(\cat{C}_0)(X,G'(V))} \\
	{\cat{D}(FL(Y),V)} && {\cat{D}(FL(X),V)}
	\arrow["{\Pre(\cat{C}_0)(f, G'(V))}", from=1-1, to=1-3]
	\arrow["\cong"', from=1-1, to=2-1]
	\arrow["\cong", from=1-3, to=2-3]
	\arrow["{\cat{D}(FL(f),V)}"', from=2-1, to=2-3]
\end{tikzcd}
\end{equation*}
But $f$ is inverted by $L$, thus the bottom horizontal map is an isomorphism, so the top horizontal map is also an isomorphism. Thus $G'(V)$ is $W$-local for every $V \in \cat{D}$. Thus there exists an essentially unique $G(V) \in \cat{C}$ such that $jG(V) \cong G'(V)$. This can be shown to define a functor $G : \cat{D} \to \cat{C}$ such that $j G \cong G'$, or equivalently $G \cong L j G \cong L G'$.

Then $G : \cat{D} \to \cat{C}$ is a right adjoint to $F$, as we have
\begin{equation*}
    \begin{aligned}
        \cat{D}(F(U), V) & \cong \cat{D}\left( F \left( \ncolim{Ly(U_0) \to U} Ly(U_0) \right), V \right) \\
        & \cong \lim_{Ly(U_0) \to U} \cat{D}(FL(y(U_0)), V) \\
        & \cong \lim_{Ly(U_0) \to U} \Pre(\cat{C}_0)(y(U_0), G'(V)) \\
        & \cong \lim_{Ly(U_0) \to U} \cat{C}(U_0, G(V)) \\
        & \cong \cat{C}\left(\ncolim{Ly(U_0) \to U} U_0, G(V)\right) \\
        & \cong \cat{C}\left(\ncolim{U_0 \to U} U_0, G(V)\right) \\
        & \cong \cat{C}(U, G(V)).
    \end{aligned}
\end{equation*}
The first isomorphism holds since if $U_0 \in \cat{C}_0$, then $y(U_0) = j(U_0)$ and thus $Ly(U_0) \cong U_0$, and $\cat{C}_0$ is assumed to be dense in $\cat{C}$. The fourth isomorphism holds because $j$ is fully faithful, with $j(U_0) = y(U_0)$ and $G'(V) \cong jG(V)$.
\end{proof}

In fact, the above result can be strengthened to the following result.

\begin{Th}[{\cite[Theorem 1.66]{rosicky1994locally}}]
Let $F : \cat{C} \to \cat{D}$ be a functor between locally presentable categories. Then
\begin{enumerate}
    \item if $F$ preserves colimits, then $F$ has a right adjoint, and
    \item if $F$ preserves limits and filtered colimits, then $F$ has a left adjoint.
\end{enumerate}
\end{Th}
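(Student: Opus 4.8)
Part (1) is exactly Theorem \ref{th adjoint functor thm for pres cats}, so the plan is to concentrate on part (2): assuming $F$ preserves all small limits and all filtered colimits, I would produce a left adjoint $G \dashv F$. Since an adjoint is determined objectwise, the strategy is to build $G(V)$ for each $V \in \cat{D}$ by corepresenting the functor
\[
H_V \coloneqq \cat{D}(V, F(-)) : \cat{C} \to \ncat{Set},
\]
that is, by finding an object $G(V) \in \cat{C}$ together with a natural isomorphism $H_V \cong \cat{C}(G(V), -)$. Such a family of isomorphisms is precisely the data of an adjunction $\cat{C}(G(V), U) \cong \cat{D}(V, F(U))$, so once every $H_V$ is corepresentable the functoriality of $G$ and naturality of the bijection follow formally from the Yoneda lemma.

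First I would check that $H_V$ preserves limits: it is the composite of $F$, which preserves limits by hypothesis, with the representable $\cat{D}(V, -)$, which preserves all limits. A limit-preserving functor $H_V : \cat{C} \to \ncat{Set}$ out of a complete, locally small category is corepresentable as soon as it satisfies the solution-set condition; equivalently, its category of elements is complete (limits being created from those of $\cat{C}$ because $H_V$ preserves them) and therefore has an initial object once it admits a weakly initial set. This is the corepresentable form of Freyd's adjoint functor theorem, which I would invoke (or reprove in the form of the initial-object statement) at this point.

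The heart of the argument is verifying the solution-set condition, and this is where local presentability enters. For a fixed $V$, I would choose a regular cardinal $\mu$ large enough that simultaneously: $V$ is $\mu$-presentable in $\cat{D}$ (possible by Remark \ref{rem kappa pres implies kappa' pres}, since $V$ is $\lambda$-presentable for some $\lambda$); $F$ preserves $\mu$-filtered colimits (it preserves $\kappa_F$-filtered colimits for some $\kappa_F$, and $\mu$-filtered colimits are $\kappa_F$-filtered once $\mu \ge \kappa_F$); and $\cat{C}$ is locally $\mu$-presentable, so that $\cat{C}_\mu$ is essentially small and every object of $\cat{C}$ is a $\mu$-filtered colimit of objects of $\cat{C}_\mu$ (Corollary \ref{cor equiv def of locally presentable category}, Lemma \ref{lem every object is colimit of its canonical diagram}). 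I claim $\cat{C}_\mu$ is then a solution set for $H_V$: given any $U \in \cat{C}$ and any element $x \in H_V(U)$, that is a map $x : V \to F(U)$, write $U \cong \colim_{i} U_i$ as a $\mu$-filtered colimit with each $U_i \in \cat{C}_\mu$; since $F$ preserves this colimit we get $F(U) \cong \colim_i F(U_i)$, and since $V$ is $\mu$-presentable the map $x$ factors as $V \xrightarrow{y} F(U_{i_0}) \xrightarrow{F(\lambda_{i_0})} F(U)$ for some coprojection $\lambda_{i_0} : U_{i_0} \to U$. Thus $H_V(\lambda_{i_0})(y) = x$ with $U_{i_0} \in \cat{C}_\mu$, which is exactly the solution-set condition.

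The main obstacle is the coherent choice of $\mu$ in the previous paragraph: I must secure local $\mu$-presentability of $\cat{C}$ at the very cardinal at which $V$ becomes $\mu$-presentable and $F$ becomes $\mu$-accessible. This rests on the fact that a locally $\kappa$-presentable category is locally $\mu$-presentable for all sufficiently large $\mu$ (those with $\kappa$ ``sharply smaller'' than $\mu$), and that such $\mu$ may be taken arbitrarily large, a point not developed in Appendix \ref{section locally presentable categories}, which I would quote from \cite{rosicky1994locally}. Granting this, the remainder is formal: corepresentability yields the objects $G(V)$, their universal elements assemble $G$ into a functor, and the resulting bijections $\cat{C}(G(V), U) \cong \cat{D}(V, F(U))$ are natural in both variables, establishing $G \dashv F$.
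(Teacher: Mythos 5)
Your proposal is correct, but note that the paper does not actually prove this theorem---it is stated with a citation to \cite[Theorem 1.66]{rosicky1994locally} and left unproved, with only part (1) established earlier as Theorem \ref{th adjoint functor thm for pres cats}. So there is no in-paper proof to compare against; what you have written is the standard argument (and essentially the one in the cited reference): compose with representables to get limit-preserving functors $H_V = \cat{D}(V,F(-))$, verify the solution-set condition using accessibility, and invoke the corepresentable form of Freyd's adjoint functor theorem. The solution-set verification is the right one: raise the index $\mu$ until $V$ is $\mu$-presentable, $F$ is $\mu$-accessible, and every object of $\cat{C}$ is a $\mu$-filtered colimit of objects from an essentially small subcategory; then every $x : V \to F(U)$ factors through some $F(U_{i_0})$.

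One remark on the step you flag as the ``main obstacle.'' You do not need the sharply-smaller relation $\kappa \triangleleft \mu$, nor even the full statement that $\cat{C}$ is locally $\mu$-presentable for large $\mu$. All the solution-set argument requires is that every $U \in \cat{C}$ be a $\mu$-filtered colimit of objects drawn from \emph{some} essentially small full subcategory of $\mu$-presentables, and this is already available from the appendix: write $U$ as the canonical $\kappa$-filtered colimit over $(\cat{C}_\kappa \downarrow U)$ and apply (the $\mu$-small variant of) Lemma \ref{lem every colimit is iso to a filtered colimit} to rewrite it as a $\mu$-filtered colimit of $\mu$-small partial colimits of $\kappa$-presentables; each such partial colimit is $\mu$-presentable by Lemma \ref{lem colimits of presentable are presentable}, and there is only a set of them up to isomorphism since $\cat{C}_\kappa$ is essentially small. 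With that substitution your argument is self-contained relative to Appendix \ref{section locally presentable categories}, and the rest---completeness of $\cat{C}$ as a reflective subcategory of a presheaf topos, local smallness of $(V \downarrow F)$, and the formal assembly of $G$ from the universal arrows---goes through as you describe.
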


\begin{Rem}
Requiring $F$ to preserve both limits and filtered colimits in order to have a left adjoint is necessary. A counterexample in the case where $F$ only preserves limits is given in \cite[Section 1]{adamek2001large}.
\end{Rem}

\printbibliography
\end{document}